\theoremstyle{plain}
\newtheorem{theorem}{Theorem}[section]
\newtheorem{lemma}[theorem]{Lemma}
\newtheorem{corollary}[theorem]{Corollary}
\theoremstyle{definition}
\newtheorem{definition}[theorem]{Definition}
\newtheorem{assumption}[theorem]{Assumption}
\theoremstyle{remark}
\newtheorem{remark}[theorem]{Remark}
\numberwithin{equation}{section}
\begin{document}

\title[Navier--Stokes equations in a curved thin domain]{Navier--Stokes equations in a curved thin domain}

\author[T.-H. Miura]{Tatsu-Hiko Miura}
\address{Graduate School of Mathematical Sciences, The University of Tokyo, 3-8-1 Komaba, Meguro, Tokyo, 153-8914 Japan}
\email{thmiura@ms.u-tokyo.ac.jp}

\subjclass[2010]{Primary: 35B25, 35Q30, 76D03, 76D05; Secondary: 35R01, 76A20}

\keywords{Navier--Stokes equations, thin domain, global existence of a strong solution, singular limit problem, surface fluids}

\begin{abstract}
  We consider the three-dimensional incompressible Navier--Stokes equations in a curved thin domain with Navier's slip boundary conditions.
  The curved thin domain is defined as a region between two closed surfaces which are very close to each other and degenerates into a given closed surface as its width tends to zero.
  We establish the global-in-time existence and uniform estimates of a strong solution for large data when the width of the thin domain is very small.
  Moreover, we study a singular limit problem as the thickness of the thin domain tends to zero and rigorously derive limit equations on the limit surface, which are the damped and weighted Navier--Stokes equations on a surface with viscous term involving the Gaussian curvature of the surface.
  We prove the weak convergence of the average in the thin direction of a strong solution to the bulk Navier--Stokes equations and characterize the weak limit as a weak solution to the limit equations as well as provide estimates for the difference between solutions to the bulk and limit equations.
  To deal with the weighted surface divergence-free condition of the limit equations we also derive the weighted Helmholtz--Leray decomposition of a tangential vector field on a closed surface.
\end{abstract}

\maketitle

\setcounter{tocdepth}{1}
\tableofcontents

%%% Section 1 %%%
\section{Introduction} \label{S:Intro}
We consider the three-dimensional Navier--Stokes equations
\begin{align} \label{E:NS_Eq}
  \partial_tu^\varepsilon+(u^\varepsilon\cdot\nabla)u^\varepsilon-\nu\Delta u^\varepsilon+\nabla p^\varepsilon = f^\varepsilon, \quad \mathrm{div}\,u^\varepsilon = 0 \quad\text{in}\quad \Omega_\varepsilon\times(0,\infty)
\end{align}
imposed with Navier's slip boundary conditions
\begin{align} \label{E:NS_Bo}
  u^\varepsilon\cdot n_\varepsilon = 0, \quad [\sigma(u^\varepsilon,p^\varepsilon)n_\varepsilon]_{\mathrm{tan}}+\gamma_\varepsilon u^\varepsilon = 0 \quad\text{on}\quad \Gamma_\varepsilon\times(0,\infty)
\end{align}
and initial condition
\begin{align} \label{E:NS_In}
  u^\varepsilon|_{t=0} = u_0^\varepsilon \quad\text{in}\quad \Omega_\varepsilon.
\end{align}
Here $\Omega_\varepsilon$ is a curved thin domain in $\mathbb{R}^3$ of the form
\begin{align} \label{E:Intro_CTD}
  \Omega_\varepsilon := \{y+rn(y) \mid y\in\Gamma,\,r\in(\varepsilon g_0(y),\varepsilon g_1(y))\}, \quad \varepsilon\in(0,1),
\end{align}
where $\Gamma$ denotes a two-dimensional closed (i.e. compact and without boundary), connected, and oriented surface in $\mathbb{R}^3$ with unit outward normal vector field $n$, and $g_0$ and $g_1$ are functions on $\Gamma$ such that $g:=g_1-g_0$ is bounded from below by a positive constant.
We denote by $\Gamma_\varepsilon$ and $n_\varepsilon$ the boundary of $\Omega_\varepsilon$ and its unit outward normal vector field.
The boundary $\Gamma_\varepsilon$ is the union of the inner and outer boundaries $\Gamma_\varepsilon^0$ and $\Gamma_\varepsilon^1$ given by
\begin{align*}
  \Gamma_\varepsilon^i := \{y+\varepsilon g_i(y)n(y) \mid y\in\Gamma\}, \quad i=0,1.
\end{align*}
Also, $\nu>0$ is the viscosity coefficient independent of $\varepsilon$ and $\gamma_\varepsilon\geq0$ is the friction coefficient which takes different values on the inner and outer boundaries, i.e.
\begin{align*}
  \gamma_\varepsilon := \gamma_\varepsilon^i \quad\text{on}\quad \Gamma_\varepsilon^i,\,i=0,1,
\end{align*}
where $\gamma_\varepsilon^0$ and $\gamma_\varepsilon^1$ are nonnegative constants.
We further write
\begin{align*}
  \sigma(u^\varepsilon,p^\varepsilon) := 2\nu D(u^\varepsilon)-p^\varepsilon I_3, \quad [\sigma(u^\varepsilon,p^\varepsilon)n_\varepsilon]_{\mathrm{tan}} := P_\varepsilon[\sigma(u^\varepsilon,p^\varepsilon)n_\varepsilon]
\end{align*}
for the stress tensor and the tangential component of the stress vector on $\Gamma_\varepsilon$, where $I_3$ is the $3\times3$ identity matrix, $n_\varepsilon\otimes n_\varepsilon$ is the tensor product of $n_\varepsilon$ with itself, and
\begin{align*}
  D(u^\varepsilon) := \frac{\nabla u^\varepsilon+(\nabla u^\varepsilon)^T}{2}, \quad P_\varepsilon := I_3-n_\varepsilon\otimes n_\varepsilon
\end{align*}
are the strain rate tensor and the orthogonal projection onto the tangent plane of $\Gamma_\varepsilon$, respectively.
Note that $[\sigma(u^\varepsilon,p^\varepsilon)n_\varepsilon]_{\mathrm{tan}}=2\nu P_\varepsilon D(u^\varepsilon)n_\varepsilon$ does not depend on $p^\varepsilon$ and the slip boundary conditions \eqref{E:NS_Bo} can be expressed as
\begin{align*}
  u^\varepsilon\cdot n_\varepsilon = 0, \quad 2\nu P_\varepsilon D(u^\varepsilon)n_\varepsilon+\gamma_\varepsilon u^\varepsilon = 0 \quad\text{on}\quad \Gamma_\varepsilon\times(0,\infty).
\end{align*}
In what follows, we mainly refer to these conditions as the slip boundary conditions.

Partial differential equations in thin domains appear in many applications in solid mechanics (thin elastic bodies), fluid mechanics (lubrication, meteorology, ocean dynamics), etc.
They have been studied for a long time since the pioneering works~\cite{HaRa92a,HaRa92b} by Hale and Raugel on damped wave and reaction-diffusion equations.
In the study of the three-dimensional Navier--Stokes equations in thin domains we expect to get the global-in-time existence of a strong solution for large data according to the smallness of the width of thin domains, since a three-dimensional thin domain with very small width can be considered almost two-dimensional.
Raugel and Sell~\cite{RaSe93} first established the global existence of a strong solution to the Navier--Stokes equations with purely periodic or mixed Dirichlet-periodic boundary conditions in the thin product domain $\Omega_\varepsilon=Q_2\times(0,\varepsilon)$ with a rectangle $Q_2$ and sufficiently small $\varepsilon>0$.
Later, Temam and Ziane~\cite{TeZi96} generalized the results in~\cite{RaSe93} in the case of the thin product domain $\Omega_\varepsilon=\omega\times(0,\varepsilon)$ with a bounded domain $\omega$ in $\mathbb{R}^2$ and other boundary conditions which are combinations of the Dirichlet, free, and periodic boundary conditions.
We refer to~\cite{Hu07,If99,IfRa01,MoTeZi97,Mo99} and the references cited therein for further generalization and improvement on the results in~\cite{RaSe93,TeZi96}.

The above cited papers deal with thin product domains whose boundaries and limit sets are both flat, but there are various kinds of nonflat thin domains in physical problems (see~\cite{Ra95} for examples of nonflat thin domains).
A nonflat thin domain was first considered by Temem and Ziane~\cite{TeZi97}, who studied the Navier--Stokes equations with free boundary conditions in a thin spherical shell
\begin{align*}
  \Omega_\varepsilon = \{x\in\mathbb{R}^3 \mid a<|x|<a+a\varepsilon\}, \quad a>0
\end{align*}
to give a mathematical justification of derivation of the primitive equations for the atmosphere and ocean dynamics (see~\cite{LiTeWa92a,LiTeWa92b,LiTeWa95}).
Another generalization of the shape of a thin domain was made by Iftimie, Raugel, and Sell~\cite{IfRaSe07}, who studied the Navier--Stokes equations in a flat thin domain with a nonflat top boundary
\begin{align*}
  \Omega_\varepsilon = \{x=(x',x_3)\in\mathbb{R}^3 \mid x'\in(0,1)^2,\,0<x_3<\varepsilon g(x')\}, \quad g\colon(0,1)^2\to\mathbb{R}
\end{align*}
with periodic boundary conditions on the lateral boundaries and Navier's slip boundary conditions on the top and bottom boundaries.
Their result was extended by Hoang~\cite{Ho10} and Hoang and Sell~\cite{HoSe10} to a flat thin domain both of whose top and bottom boundaries are not flat (see also~\cite{Ho13} for the study of two-phase flows in a flat thin domain with nonflat top and bottom boundaries).

The slip boundary conditions were proposed by Navier~\cite{Na1823}, which state that the fluid slips on the boundary with velocity proportional to the tangential component of the stress vector.
They arise in the study of the atmosphere and ocean dynamics~\cite{LiTeWa92a,LiTeWa92b,LiTeWa95} and the homogenization of the no-slip boundary condition on a rough boundary~\cite{Hi16,JaMi01}.
In Remark~\ref{R:Limit_Eq} we see that the slip boundary conditions give a ``proper'' viscous term in the sense of~\cite{EbMa70,Ta92} in surface fluid equations derived as the thin width limit of the bulk Navier--Stokes equations.

In this paper we establish the global existence of a strong solution to the Navier--Stokes equations \eqref{E:NS_Eq}--\eqref{E:NS_In} in the curved thin domain $\Omega_\varepsilon$ given by \eqref{E:Intro_CTD} for large data of order $\varepsilon^{-1/2}$.
Our result can be seen as a generalization of the results in the case of flat thin domains~\cite{Ho10,HoSe10,IfRaSe07} (see Remark~\ref{R:Assump_12}).

Another subject of this paper is a singular limit problem for \eqref{E:NS_Eq}--\eqref{E:NS_In} when $\Omega_\varepsilon$ degenerates into the closed surface $\Gamma$ as $\varepsilon\to0$.
We are concerned with derivation of limit equations on $\Gamma$ and comparison of solutions to the bulk and limit equations.
There are several works on the asymptotic behavior of eigenvalues of the Laplacian on a curved thin domain around a hypersurface (see e.g.~\cite{JiKu16,Kr14,Sch96}), but a singular limit problem for evolution equations in curved thin domains has not been studied well.
Temam and Ziane~\cite{TeZi97} first considered this problem for the Navier--Stokes equations in the thin spherical shell and proved the convergence of the average in the thin direction of a solution towards a unique solution to limit equations on a sphere in $\mathbb{R}^3$.
Prizzi, Rinaldi, and Rybakowski~\cite{PrRiRy02} studied a reaction-diffusion equation in a curved thin domain around a lower dimensional manifold and compared the dynamics of the original and limit equations (see also~\cite{PrRy03}).
Later, the present author considered the heat equation in a moving thin domain and derived a limit diffusion equation on its limit evolving surface~\cite{Mi17}.
In the recent work~\cite{Mi18} he also formally derived limit equations of the Navier--Stokes equations in a thin tubular neighborhood of an evolving closed surface.
The purpose of this paper is to give a mathematical justification (and generalization) of the result in~\cite{Mi18} in the case of the stationary curved thin domain of the form \eqref{E:Intro_CTD}.

To state our main results let us fix notations and make main assumptions.
Let $\mathcal{R}$ be the space of all infinitesimal rigid displacements of $\mathbb{R}^3$ whose restrictions on $\Gamma$ are tangential, i.e.
\begin{align} \label{E:Def_R}
  \mathcal{R} := \{u(x)=a\times x+b,\,x\in\mathbb{R}^3 \mid \text{$a,b\in\mathbb{R}^3$, $u|_\Gamma\cdot n=0$ on $\Gamma$}\}.
\end{align}
Clearly $\mathcal{R}$ is of finite dimension.
Also, $\mathcal{R}\neq\{0\}$ if and only if $\Gamma$ is axially symmetric, i.e. invariant under a rotation by any angle around some axis (see Lemma~\ref{L:IR_Surf}).
We define subspaces of $\mathcal{R}$ by
\begin{align} \label{E:Def_Rg}
  \begin{aligned}
    \mathcal{R}_i &:= \{u\in\mathcal{R} \mid \text{$u|_\Gamma\cdot\nabla_\Gamma g_i=0$ on $\Gamma$}\}, \quad i=0,1, \\
    \mathcal{R}_g &:= \{u\in\mathcal{R} \mid \text{$u|_\Gamma\cdot\nabla_\Gamma g=0$ on $\Gamma$}\} \quad (g=g_1-g_0).
  \end{aligned}
\end{align}
Here $P:=I_3-n\otimes n$ is the orthogonal projection onto the tangent plane of $\Gamma$ and $\nabla_\Gamma:=P\nabla$ is the tangential gradient on $\Gamma$ (see Section~\ref{SS:Pre_Surf}).
Note that $\mathcal{R}_0\cap\mathcal{R}_1\subset\mathcal{R}_g$.
It turns out (see Lemmas~\ref{L:CTD_AS} and~\ref{L:CTD_Rg}) that $\Omega_\varepsilon$ is axially symmetric around the same axis for all $\varepsilon\in(0,1)$ if $\mathcal{R}_0\cap\mathcal{R}_1\neq\{0\}$, while $\Omega_\varepsilon$ is not axially symmetric around any axis for sufficiently small $\varepsilon>0$ if $\mathcal{R}_g=\{0\}$.
Next we define the surface strain rate tensor $D_\Gamma(v)$ by
\begin{align} \label{E:Intro_SSR}
  D_\Gamma(v) := P(\nabla_\Gamma v)_SP, \quad (\nabla_\Gamma v)_S := \frac{\nabla_\Gamma v+(\nabla_\Gamma v)^T}{2}
\end{align}
for a (not necessarily tangential) vector field $v$ on $\Gamma$ and set
\begin{align} \label{E:Def_Kil}
  \begin{aligned}
    \mathcal{K}(\Gamma) &:= \{v \in H^1(\Gamma)^3 \mid \text{$v\cdot n=0$, $D_\Gamma(v)=0$ on $\Gamma$}\}, \\
    \mathcal{K}_g(\Gamma) &:= \{v\in\mathcal{K}(\Gamma) \mid \text{$v\cdot\nabla_\Gamma g=0$ on $\Gamma$}\}.
  \end{aligned}
\end{align}
If $\Gamma$ is of class $C^4$, then $v\in\mathcal{K}(\Gamma)$ is in fact of class $C^1$ (see Lemma~\ref{L:Kil_Reg}) and satisfies $\overline{\nabla}_Xv\cdot Y+X\cdot\overline{\nabla}_Yv=0$ on $\Gamma$ for all (continuous) tangential vector fields $X$ and $Y$ on $\Gamma$, where $\overline{\nabla}_Xv:=P(X\cdot\nabla_\Gamma)v$ is the covariant derivative of $v$ along $X$.
Such a vector field generates a one-parameter group of isometries of $\Gamma$ and is called a Killing vector field on $\Gamma$.
It is known that $\mathcal{K}(\Gamma)$ is a Lie algebra of dimension at most three.
For details of Killing vector fields we refer to~\cite{Jo11,Pe06,Ta11_I}.

\begin{remark} \label{R:Killing}
  For $w(x)=a\times x+b$ with $a,b\in\mathbb{R}^3$ direct calculations show that $D_\Gamma(w)=0$ on $\Gamma$.
  Hence if $w$ is tangential on $\Gamma$ then it is Killing on $\Gamma$, i.e. $\mathcal{R}|_\Gamma:=\{w|_\Gamma\mid w\in\mathcal{R}\}\subset\mathcal{K}(\Gamma)$.
  The set $\mathcal{R}|_\Gamma$ represents the extrinsic infinitesimal symmetry of the embedded surface $\Gamma$, while $\mathcal{K}(\Gamma)$ describes the intrinsic one.
  It is known that $\mathcal{R}|_\Gamma=\mathcal{K}(\Gamma)$ if $\Gamma$ is a surface of revolution (see also Lemma~\ref{L:IR_Kil}).
  The same relation holds if $\Gamma$ is closed and convex since any isometry between two closed and convex surfaces in $\mathbb{R}^3$ is a motion in $\mathbb{R}^3$ (a rotation and a translation) or a motion and a reflection by the Cohn-Vossen theorem (see~\cite{Sp79}).
  However, it is not known whether $\mathcal{R}|_\Gamma=\mathcal{K}(\Gamma)$ for a general (nonconvex and not axially symmetric) closed surface.
\end{remark}

In Sections~\ref{S:St}, \ref{S:Tri}, \ref{S:GE}, and~\ref{S:SL} (except for Sections~\ref{SS:St_HL} and~\ref{SS:St_Apr}) we make the following assumptions on the friction coefficients and the limit surface (see also Remarks~\ref{R:Assump_12} and~\ref{R:Recov_NS} for the conditions of Assumption~\ref{Assump_2}).

\begin{assumption} \label{Assump_1}
  There exists a constant $c>0$ such that
  \begin{align} \label{E:Fric_Upper}
    \gamma_\varepsilon^0 \leq c\varepsilon, \quad \gamma_\varepsilon^1 \leq c\varepsilon
  \end{align}
  for all $\varepsilon\in(0,1)$.
\end{assumption}

\begin{assumption} \label{Assump_2}
  Either of the following conditions is satisfied:
  \begin{itemize}
    \item[(A1)] There exists a constant $c>0$ such that
    \begin{align*}
      \gamma_\varepsilon^0 \geq c\varepsilon \quad\text{for all}\quad \varepsilon\in(0,1) \quad\text{or}\quad \gamma_\varepsilon^1 \geq c\varepsilon \quad\text{for all}\quad \varepsilon\in(0,1).
    \end{align*}
    \item[(A2)] The space $\mathcal{K}_g(\Gamma)$ contains only a trivial vector field, i.e. $\mathcal{K}_g(\Gamma)=\{0\}$.
    \item[(A3)] The relations $\mathcal{R}_g=\mathcal{R}_0\cap\mathcal{R}_1$ and $\mathcal{R}_g|_\Gamma:=\{w|_\Gamma\mid w\in\mathcal{R}_g\}=\mathcal{K}_g(\Gamma)$ hold and $\gamma_\varepsilon^0=\gamma_\varepsilon^1=0$ for all $\varepsilon\in(0,1)$.
  \end{itemize}
\end{assumption}

These assumptions are imposed to show that the bilinear form corresponding to the Stokes problem in $\Omega_\varepsilon$ with slip boundary conditions is uniformly bounded and coercive on an appropriate function space.
More precisely, let
\begin{align*}
  L_\sigma^2(\Omega_\varepsilon) = \{u\in L^2(\Omega_\varepsilon)^3 \mid \text{$\mathrm{div}\,u=0$ in $\Omega_\varepsilon$, $u\cdot n_\varepsilon=0$ on $\Gamma_\varepsilon$}\}
\end{align*}
be the standard $L^2$-solenoidal space on $\Omega_\varepsilon$.
We define
\begin{align} \label{E:Def_Heps}
  \begin{aligned}
    \mathcal{H}_\varepsilon &:=
    \begin{cases}
      L_\sigma^2(\Omega_\varepsilon) &\text{if the condition (A1) or (A2) is satisfied}, \\
      L_\sigma^2(\Omega_\varepsilon)\cap\mathcal{R}_g^\perp &\text{if the condition (A3) is satisfied},
  \end{cases} \\
  \mathcal{V}_\varepsilon &:= H^1(\Omega_\varepsilon)^3\cap \mathcal{H}_\varepsilon,
    \end{aligned}
\end{align}
where $\mathcal{R}_g^\perp$ is the orthogonal complement of $\mathcal{R}_g$ in $L^2(\Omega_\varepsilon)^3$.
Note that $\mathcal{R}_g\subset L_\sigma^2(\Omega_\varepsilon)$ under the assumption $\mathcal{R}_g=\mathcal{R}_0\cap\mathcal{R}_1$ by Lemma~\ref{L:IR_Sole}.
In Section~\ref{SS:St_Def} we prove the uniform boundedness and coerciveness in $\varepsilon$ of the bilinear form on $\mathcal{V}_\varepsilon$, which are fundamental for our arguments.

Let $\mathbb{P}_\varepsilon$ be the orthogonal projection from $L^2(\Omega_\varepsilon)^3$ onto $\mathcal{H}_\varepsilon$.
We denote by $A_\varepsilon$ the Stokes operator on $\mathcal{H}_\varepsilon$ associated with slip boundary conditions and by $D(A_\varepsilon)$ its domain (see Section~\ref{S:St}).
With these notations the problem \eqref{E:NS_Eq}--\eqref{E:NS_In} is formulated as an abstract evolution equation in $\mathcal{H_\varepsilon}$:
\begin{align*}
  \partial_tu^\varepsilon+A_\varepsilon u^\varepsilon+\mathbb{P}_\varepsilon(u^\varepsilon\cdot\nabla)u^\varepsilon = \mathbb{P}_\varepsilon f^\varepsilon \quad\text{on}\quad (0,\infty), \quad u^\varepsilon|_{t=0} = u_0^\varepsilon.
\end{align*}
We refer to~\cite{BoFa13,CoFo88,So01,Te79} and the references cited therein for the study of this equation.
For a function $\varphi$ on $\Omega_\varepsilon$ we define its average in the thin direction by
\begin{align*}
  M\varphi(y) := \frac{1}{\varepsilon g(y)}\int_{\varepsilon g_0(y)}^{\varepsilon g_1(y)}\varphi(y+rn(y))\,dr, \quad y\in\Gamma.
\end{align*}
Also, by $M_\tau u:=PMu$ we denote the averaged tangential component of a vector field $u$ on $\Omega_\varepsilon$ (see Section~\ref{SS:Ave_Def}).

Now let us give the main results of this paper.
The first result is the global-in-time existence of a strong solution for large data.

\begin{theorem} \label{T:GE}
  Let $\Omega_\varepsilon$ be the curved thin domain given by \eqref{E:Intro_CTD}.
  Suppose that
  \begin{itemize}
    \item the closed surface $\Gamma$ is of class $C^5$,
    \item $g_0,g_1\in C^4(\Gamma)$ satisfy $g=g_1-g_0\geq c$ on $\Gamma$ with some constant $c>0$, and
    \item Assumptions~\ref{Assump_1} and~\ref{Assump_2} are satisfied.
  \end{itemize}
  Then there exist constants $\varepsilon_0,c_0\in(0,1)$ such that the following statement holds: for each $\varepsilon\in(0,\varepsilon_0)$ suppose that $u_0^\varepsilon\in \mathcal{V}_\varepsilon$ and $f^\varepsilon\in L^\infty(0,\infty;L^2(\Omega_\varepsilon)^3)$ satisfy
  \begin{multline} \label{E:GE_Data}
    \|A_\varepsilon^{1/2}u_0^\varepsilon\|_{L^2(\Omega_\varepsilon)}^2+\|\mathbb{P}_\varepsilon f^\varepsilon\|_{L^\infty(0,\infty;L^2(\Omega_\varepsilon))}^2 \\
    +\|M_\tau u_0^\varepsilon\|_{L^2(\Gamma)}^2+\|M_\tau\mathbb{P}_\varepsilon f^\varepsilon\|_{L^\infty(0,T;H^{-1}(\Gamma,T\Gamma))}^2 \leq c_0\varepsilon^{-1}.
  \end{multline}
  When the condition (A3) of Assumption~\ref{Assump_2} is satisfied, suppose further that $f^\varepsilon(t)\in\mathcal{R}_g^\perp$ for a.a. $t\in(0,\infty)$.
  Then there exists a global-in-time strong solution
  \begin{align*}
    u^\varepsilon \in C([0,\infty);\mathcal{V}_\varepsilon)\cap L_{loc}^2([0,\infty);D(A_\varepsilon))\cap H_{loc}^1([0,\infty);\mathcal{H}_\varepsilon)
  \end{align*}
  to the Navier--Stokes equations \eqref{E:NS_Eq}--\eqref{E:NS_In}.
\end{theorem}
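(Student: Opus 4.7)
The plan is to follow the well-established strategy for the three-dimensional Navier--Stokes equations in thin domains: couple the local existence of a strong solution with uniform-in-$\varepsilon$ a priori estimates that rule out blow-up as long as the data satisfies \eqref{E:GE_Data}. Local well-posedness in $C([0,T);\mathcal{V}_\varepsilon)\cap L^2_{loc}([0,T);D(A_\varepsilon))\cap H^1_{loc}([0,T);\mathcal{H}_\varepsilon)$ for the abstract equation
\begin{equation*}
  \partial_tu^\varepsilon+A_\varepsilon u^\varepsilon+\mathbb{P}_\varepsilon(u^\varepsilon\cdot\nabla)u^\varepsilon = \mathbb{P}_\varepsilon f^\varepsilon, \qquad u^\varepsilon(0)=u_0^\varepsilon,
\end{equation*}
follows by the standard Galerkin scheme on eigenfunctions of $A_\varepsilon$, using the uniform boundedness and coerciveness of the bilinear form on $\mathcal{V}_\varepsilon$ proved in Section~\ref{SS:St_Def}. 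The entire content of the theorem is thus to show that on any such maximal interval the quantity $\|A_\varepsilon^{1/2}u^\varepsilon(t)\|_{L^2(\Omega_\varepsilon)}^2$ remains bounded by an $\varepsilon$-independent multiple of $\varepsilon^{-1}$; the standard continuation principle then upgrades the local solution to a global one.

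To obtain this bound I would combine the basic $L^2$ energy identity from pairing with $u^\varepsilon$ and the $H^1$ identity from pairing with $A_\varepsilon u^\varepsilon$, namely
\begin{equation*}
  \frac{1}{2}\frac{d}{dt}\|A_\varepsilon^{1/2}u^\varepsilon\|_{L^2(\Omega_\varepsilon)}^2+\|A_\varepsilon u^\varepsilon\|_{L^2(\Omega_\varepsilon)}^2 = \bigl(\mathbb{P}_\varepsilon f^\varepsilon-\mathbb{P}_\varepsilon(u^\varepsilon\cdot\nabla)u^\varepsilon,\,A_\varepsilon u^\varepsilon\bigr)_{L^2(\Omega_\varepsilon)}.
\end{equation*}
The crucial step is to control the trilinear term so that it can be absorbed into $\tfrac{1}{2}\|A_\varepsilon u^\varepsilon\|_{L^2}^2$, leaving a remainder amenable to Gronwall. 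For this I would exploit the splitting $u^\varepsilon=M_\tau u^\varepsilon+v^\varepsilon$ with $v^\varepsilon:=u^\varepsilon-M_\tau u^\varepsilon$: the tangential average $M_\tau u^\varepsilon$ is essentially a two-dimensional vector field on $\Gamma$ and obeys Ladyzhenskaya-type 2D interpolation, whereas $v^\varepsilon$ satisfies Poincaré-type estimates of the form $\|v^\varepsilon\|_{L^2(\Omega_\varepsilon)}\le C\varepsilon\|\nabla u^\varepsilon\|_{L^2(\Omega_\varepsilon)}$, because its oscillation in the normal direction has length $\varepsilon g$ and the impermeability $u^\varepsilon\cdot n_\varepsilon=0$ together with $\mathrm{div}\,u^\varepsilon=0$ keeps its normal component small. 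Distributing the trilinear integrand over the four products produced by this decomposition and applying anisotropic Hölder, Ladyzhenskaya, and Agmon inequalities adapted to the near-product structure $\Omega_\varepsilon\simeq\Gamma\times(\varepsilon g_0,\varepsilon g_1)$ should yield
\begin{equation*}
  \bigl|\bigl(\mathbb{P}_\varepsilon(u^\varepsilon\cdot\nabla)u^\varepsilon,\,A_\varepsilon u^\varepsilon\bigr)\bigr|\le \tfrac{1}{2}\|A_\varepsilon u^\varepsilon\|_{L^2}^2 + C\,\Phi(u^\varepsilon)\,\|A_\varepsilon^{1/2}u^\varepsilon\|_{L^2}^2,
\end{equation*}
with $\Phi(u^\varepsilon)$ a polynomial in $\|M_\tau u^\varepsilon\|_{L^2(\Gamma)}^2$ and $\varepsilon\|A_\varepsilon^{1/2}u^\varepsilon\|_{L^2}^2$.

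Gronwall then closes once the time integral of $\Phi(u^\varepsilon)$ is shown to stay bounded, which is precisely the role played by \eqref{E:GE_Data}. The main obstacle relative to the flat-thin-domain analyses \cite{Ho10,HoSe10,IfRaSe07} is the curvature of $\Gamma$, which obstructs a clean product decomposition of $\Omega_\varepsilon$ and forces several commutator corrections: the projection $P_\varepsilon$ onto the tangent plane of $\Gamma_\varepsilon$ depends on $\varepsilon$, the ``vertical'' coordinate runs along the position-dependent vector $n(y)$, and $M_\tau u^\varepsilon$ is a tangential vector field on a curved surface rather than a free two-dimensional velocity on a rectangle. One therefore has to (i) show, via the averaging machinery of Section~\ref{SS:Ave_Def}, that $M_\tau$ nearly commutes with the relevant differential operators modulo $O(\varepsilon)$ errors; (ii) pull back Ladyzhenskaya- and Agmon-type inequalities on $\Omega_\varepsilon$ to intrinsic inequalities on $\Gamma$ through the diffeomorphism $(y,r)\mapsto y+rn(y)$; and (iii) in case (A3) of Assumption~\ref{Assump_2}, accommodate the additional null space $\mathcal{R}_g$, where the supplementary hypothesis $f^\varepsilon(t)\in\mathcal{R}_g^\perp$ is exactly what keeps the forcing compatible with the restricted solenoidal space $\mathcal{H}_\varepsilon=L_\sigma^2(\Omega_\varepsilon)\cap\mathcal{R}_g^\perp$. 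The presence of $\|M_\tau u_0^\varepsilon\|_{L^2(\Gamma)}$ and $\|M_\tau\mathbb{P}_\varepsilon f^\varepsilon\|_{H^{-1}(\Gamma,T\Gamma)}$ in \eqref{E:GE_Data} is ultimately what permits data of the critical size $\varepsilon^{-1/2}$: the 2D-like component of the data is controlled separately on $\Gamma$, while the genuinely three-dimensional fluctuation contributes only through Poincaré's $\varepsilon$-gain.
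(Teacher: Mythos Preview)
Your high-level strategy matches the paper's, but the form of the trilinear estimate you posit is not what actually comes out, and the closure argument you describe (plain Gronwall) is not what the paper uses. The paper's trilinear bound (Corollary~\ref{C:Tri_Est_A}) reads
\[
  \bigl|\bigl((u\cdot\nabla)u,A_\varepsilon u\bigr)_{L^2(\Omega_\varepsilon)}\bigr| \le \Bigl(\tfrac14+d_1\varepsilon^{1/2}\|A_\varepsilon^{1/2}u\|_{L^2}\Bigr)\|A_\varepsilon u\|_{L^2}^2 + d_2\bigl(\|u\|_{L^2}^2\|A_\varepsilon^{1/2}u\|_{L^2}^4+\varepsilon^{-1}\|u\|_{L^2}^2\|A_\varepsilon^{1/2}u\|_{L^2}^2\bigr),
\]
and the coefficient $d_1\varepsilon^{1/2}\|A_\varepsilon^{1/2}u\|$ in front of $\|A_\varepsilon u\|^2$ cannot be removed or pushed into a Gronwall factor. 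The paper therefore runs a continuity/bootstrap argument: one fixes $c_0$ so that \eqref{E:GE_Data} forces $\varepsilon^{1/2}\|A_\varepsilon^{1/2}u_0^\varepsilon\|<d_3:=1/(4d_1)$, assumes the first time $T$ at which $\varepsilon^{1/2}\|A_\varepsilon^{1/2}u^\varepsilon(T)\|=d_3$, and then uses the $L^2$ energy bound \eqref{Pf_GE:L2_Ener} together with the uniform Gronwall inequality (Lemma~\ref{L:Uni_Gronwall}) to show that $\|A_\varepsilon^{1/2}u^\varepsilon(T)\|^2\le d_4c_0\varepsilon^{-1}<d_3^2\varepsilon^{-1}$, a contradiction. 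Your write-up should make this bootstrap explicit; a bound of the shape $\tfrac12\|A_\varepsilon u\|^2+C\Phi(u)\|A_\varepsilon^{1/2}u\|^2$ with $\Phi$ polynomial in $\varepsilon\|A_\varepsilon^{1/2}u\|^2$ is not established anywhere in the paper and is likely unattainable in the curved setting.

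Two further technical points where the paper differs from your sketch. First, the decomposition is not $u=\overline{M_\tau u}+v$ but $u=u^a+u^r$ with $u^a:=E_\varepsilon M_\tau u$, the \emph{impermeable extension} of Definition~\ref{D:Def_ExAve}; the correction term $(\overline{M_\tau u}\cdot\Psi_\varepsilon)\bar n$ is precisely what makes $u^a\cdot n_\varepsilon=0$ on $\Gamma_\varepsilon$, and without it the residual $u^r$ fails the impermeable condition and the Poincar\'e and $L^\infty$ estimates (Lemmas~\ref{L:Po_Ur}--\ref{L:Linf_Ur}) do not hold. Second, rather than expanding $((u\cdot\nabla)u,A_\varepsilon u)$ into four products, the paper rewrites $(u\cdot\nabla)u=\omega\times u+\tfrac12\nabla|u|^2$ (the gradient drops against $A_\varepsilon u$), then splits $(\omega\times u,A_\varepsilon u)$ into $(\omega\times u^r,A_\varepsilon u)$, $(\omega\times u^a,A_\varepsilon u+\nu\Delta u)$, and $(\omega\times u^a,-\nu\Delta u)$. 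The second piece exploits the Stokes--Laplace comparison $\|A_\varepsilon u+\nu\Delta u\|_{L^2}\le c\|u\|_{H^1}$ (Lemma~\ref{L:Comp_Sto_Lap}), and the third is handled via the curl integration-by-parts formula (Lemma~\ref{L:IbP_Curl}) together with the slip-boundary identity \eqref{E:NSl_Curl}. This vorticity route, not a direct four-term expansion, is what produces the specific structure of the trilinear estimate above.
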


In \eqref{E:GE_Data} we write $H^{-1}(\Gamma,T\Gamma)$ for the dual space of
\begin{align*}
  H^1(\Gamma,T\Gamma) := \{v\in H^1(\Gamma)^3 \mid \text{$v\cdot n=0$ on $\Gamma$}\}.
\end{align*}
Note that $\mathcal{V}_\varepsilon=D(A_\varepsilon^{1/2})$ and the $L^2(\Omega_\varepsilon)$-norm of $A_\varepsilon^{1/2}u$ for $u\in \mathcal{V}_\varepsilon$ is uniformly equivalent in $\varepsilon$ to the canonical $H^1(\Omega_\varepsilon)$-norm of $u$ (see Lemma~\ref{L:Stokes_H1}).
We also establish several estimates for a strong solution in terms of $\varepsilon$.

\begin{theorem} \label{T:UE}
  Let $c_1$, $c_2$, $\alpha$, and $\beta$ be positive constants.
  Under the assumptions of Theorem~\ref{T:GE}, there exists $\varepsilon_1\in(0,1)$ such that the following statement holds: for $\varepsilon\in(0,\varepsilon_1)$ suppose that $u_0^\varepsilon\in \mathcal{V}_\varepsilon$ and $f^\varepsilon\in L^2(0,\infty;L^2(\Omega_\varepsilon)^3)$ satisfy
  \begin{align} \label{E:UE_Data}
    \begin{aligned}
      \|A_\varepsilon^{1/2}u_0^\varepsilon\|_{L^2(\Omega_\varepsilon)}^2+\|\mathbb{P}_\varepsilon f^\varepsilon\|_{L^\infty(0,\infty;L^2(\Omega_\varepsilon))}^2 &\leq c_1\varepsilon^{-1+\alpha}, \\
      \|M_\tau u_0^\varepsilon\|_{L^2(\Gamma)}^2+\|M_\tau\mathbb{P}_\varepsilon f^\varepsilon\|_{L^\infty(0,\infty;H^{-1}(\Gamma,T\Gamma))}^2 &\leq c_2\varepsilon^{-1+\beta}.
    \end{aligned}
  \end{align}
  When the condition (A3) of Assumption~\ref{Assump_2} is satisfied, suppose further that $f^\varepsilon(t)\in\mathcal{R}_g^\perp$ for a.a. $t\in(0,\infty)$.
  Then there exists a global strong solution $u^\varepsilon$ to \eqref{E:NS_Eq}--\eqref{E:NS_In}.
  Moreover, $u^\varepsilon$ satisfies
  \begin{align} \label{E:UE_L2}
    \begin{aligned}
      \|u^\varepsilon(t)\|_{L^2(\Omega_\varepsilon)}^2 &\leq c(\varepsilon^{1+\alpha}+\varepsilon^\beta), \\
      \int_0^t\|u^\varepsilon(s)\|_{H^1(\Omega_\varepsilon)}^2\,ds &\leq c(\varepsilon^{1+\alpha}+\varepsilon^\beta)(1+t)
    \end{aligned}
  \end{align}
  and
  \begin{align} \label{E:UE_H1}
    \begin{aligned}
      \|u^\varepsilon(t)\|_{H^1(\Omega_\varepsilon)}^2 &\leq c(\varepsilon^{-1+\alpha}+\varepsilon^{-1+\beta}), \\
      \int_0^t\|u^\varepsilon(s)\|_{H^2(\Omega_\varepsilon)}^2\,ds &\leq c(\varepsilon^{-1+\alpha}+\varepsilon^{-1+\beta})(1+t)
    \end{aligned}
  \end{align}
  for all $t\geq 0$, where $c>0$ is a constant independent of $\varepsilon$, $u^\varepsilon$, and $t$.
\end{theorem}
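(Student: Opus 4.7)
The plan is to first obtain the global strong solution directly from Theorem~\ref{T:GE}, and then to extract the $\varepsilon$-scalings \eqref{E:UE_L2}--\eqref{E:UE_H1} by energy estimates at the $L^2$- and $H^1$-levels, carefully separating the contribution of the bulk field (governed by $\alpha$) from that of the averaged tangential part $M_\tau u^\varepsilon$ (governed by $\beta$). Existence is immediate: since $\alpha,\beta>0$ and $\varepsilon\in(0,1)$, the right-hand side of \eqref{E:UE_Data} is bounded by $c_0\varepsilon^{-1}$ provided $\varepsilon_1\le\varepsilon_0$ is chosen small enough (depending on $c_0,c_1,c_2,\alpha,\beta$); hence \eqref{E:GE_Data} holds and Theorem~\ref{T:GE} produces the global strong solution $u^\varepsilon$.

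For \eqref{E:UE_L2} I would use the standard energy identity, obtained by pairing \eqref{E:NS_Eq} with $u^\varepsilon$ in $L^2(\Omega_\varepsilon)$ and invoking $\mathrm{div}\,u^\varepsilon=0$, $u^\varepsilon\cdot n_\varepsilon=0$, and the slip condition. The uniform coercivity of the associated bilinear form on $\mathcal{V}_\varepsilon$ established in Section~\ref{SS:St_Def} bounds the dissipation from below by a constant multiple of $\|u^\varepsilon\|_{H^1(\Omega_\varepsilon)}^2$. To uncover the two scales I would split $u^\varepsilon=(u^\varepsilon-w^\varepsilon)+w^\varepsilon$, where $w^\varepsilon$ is a thin-direction-constant lift of $M_\tau u^\varepsilon$: a Poincaré-type estimate on $\Omega_\varepsilon$ gives $\|u^\varepsilon-w^\varepsilon\|_{L^2(\Omega_\varepsilon)}^2\lesssim\varepsilon^2\|u^\varepsilon\|_{H^1(\Omega_\varepsilon)}^2$, accounting for the $\varepsilon^{1+\alpha}$ term, while change of variable in the tubular neighborhood yields $\|w^\varepsilon\|_{L^2(\Omega_\varepsilon)}^2\simeq\varepsilon\|M_\tau u^\varepsilon\|_{L^2(\Gamma)}^2$, and $\|M_\tau u^\varepsilon\|_{L^2(\Gamma)}^2$ is then controlled by its own evolution tested against the $H^{-1}(\Gamma,T\Gamma)$--$H^1(\Gamma,T\Gamma)$ duality with $M_\tau\mathbb{P}_\varepsilon f^\varepsilon$, accounting for the $\varepsilon^\beta$ term. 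Gronwall closes both pieces.

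For \eqref{E:UE_H1} I would pair the abstract equation with $A_\varepsilon u^\varepsilon$ to obtain
\begin{equation*}
\tfrac{1}{2}\tfrac{d}{dt}\|A_\varepsilon^{1/2}u^\varepsilon\|_{L^2(\Omega_\varepsilon)}^2+\|A_\varepsilon u^\varepsilon\|_{L^2(\Omega_\varepsilon)}^2=\bigl(\mathbb{P}_\varepsilon f^\varepsilon-\mathbb{P}_\varepsilon(u^\varepsilon\cdot\nabla)u^\varepsilon,\,A_\varepsilon u^\varepsilon\bigr)_{L^2(\Omega_\varepsilon)}.
\end{equation*}
The main obstacle is the trilinear term $\bigl((u^\varepsilon\cdot\nabla)u^\varepsilon,A_\varepsilon u^\varepsilon\bigr)$: because $\Omega_\varepsilon$ is genuinely three-dimensional, no soft argument can close this in $H^1$. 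The remedy is to employ anisotropic Sobolev/Agmon-type estimates with explicit $\varepsilon$-dependence, again performed through an average/fluctuation decomposition so that derivative costs are shifted off the thin direction; the already-established $L^2$ bound \eqref{E:UE_L2} then supplies the small factor needed to absorb a portion of the trilinear term into $\|A_\varepsilon u^\varepsilon\|^2$ on the left. A final Gronwall step, followed by time-integration and the uniform equivalence $\|A_\varepsilon u\|_{L^2(\Omega_\varepsilon)}\sim\|u\|_{H^2(\Omega_\varepsilon)}$ on $D(A_\varepsilon)$, yields both inequalities in \eqref{E:UE_H1}. Calibrating the small factors so that the final exponents come out exactly as $\varepsilon^{-1+\alpha}+\varepsilon^{-1+\beta}$, while ensuring Gronwall closure uniformly in $\varepsilon\in(0,\varepsilon_1)$, is the bookkeeping that ultimately fixes $\varepsilon_1$.
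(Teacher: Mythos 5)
Your existence step and the overall skeleton (energy identity, then testing with $A_\varepsilon u^\varepsilon$) match the paper, but the mechanism you propose for extracting the two scales in \eqref{E:UE_L2} does not close. The static splitting $\|u^\varepsilon(t)\|_{L^2(\Omega_\varepsilon)}^2\lesssim\|u^\varepsilon(t)-w^\varepsilon(t)\|_{L^2(\Omega_\varepsilon)}^2+\|w^\varepsilon(t)\|_{L^2(\Omega_\varepsilon)}^2$ with $\|u^\varepsilon-w^\varepsilon\|_{L^2(\Omega_\varepsilon)}^2\lesssim\varepsilon^2\|u^\varepsilon\|_{H^1(\Omega_\varepsilon)}^2$ needs a pointwise-in-time bound $\|u^\varepsilon(t)\|_{H^1(\Omega_\varepsilon)}^2\lesssim\varepsilon^{-1+\alpha}$, which is the first inequality of \eqref{E:UE_H1} --- exactly what you plan to prove afterwards using \eqref{E:UE_L2}; the only pointwise $H^1$ information available at this stage, coming from the proof of Theorem~\ref{T:GE}, is $\|u^\varepsilon(t)\|_{H^1(\Omega_\varepsilon)}^2\lesssim\varepsilon^{-1}$, which yields only $\varepsilon$, not $\varepsilon^{1+\alpha}+\varepsilon^\beta$ (recall $\beta$ may exceed $1$). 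Likewise, controlling $\|M_\tau u^\varepsilon\|_{L^2(\Gamma)}$ ``by its own evolution'' presupposes an averaged weak formulation whose residual terms are estimated by the very quantities \eqref{E:UE_L2}--\eqref{E:UE_H1}; in the paper that machinery appears only in Section~\ref{SS:SL_Ave}, under $\beta=1$, and produces constants growing in time, so it cannot give the $t$-uniform bound $\|u^\varepsilon(t)\|_{L^2(\Omega_\varepsilon)}^2\le c(\varepsilon^{1+\alpha}+\varepsilon^\beta)$. The scales must instead be separated inside the bulk energy identity itself: split the force term as $(\mathbb{P}_\varepsilon f^\varepsilon,u^\varepsilon-\overline{M_\tau u^\varepsilon})_{L^2(\Omega_\varepsilon)}+(\mathbb{P}_\varepsilon f^\varepsilon,\overline{M_\tau u^\varepsilon})_{L^2(\Omega_\varepsilon)}$, bound the first by $c\varepsilon\|\mathbb{P}_\varepsilon f^\varepsilon\|_{L^2(\Omega_\varepsilon)}\|u^\varepsilon\|_{H^1(\Omega_\varepsilon)}$ via \eqref{E:Poin_Nor} and \eqref{E:Ave_Diff_Dom} (this is the source of $\varepsilon^{1+\alpha}$), move the average onto the force in the second by the near-symmetry estimate \eqref{E:Ave_Inner} and pay $c\varepsilon^{1/2}\|M_\tau\mathbb{P}_\varepsilon f^\varepsilon\|_{H^{-1}(\Gamma,T\Gamma)}\|u^\varepsilon\|_{H^1(\Omega_\varepsilon)}$ via \eqref{E:Con_Dual} (this is the source of $\varepsilon^\beta$); the same decomposition of $u_0^\varepsilon$ gives $\|u_0^\varepsilon\|_{L^2(\Omega_\varepsilon)}^2\le c(\varepsilon^{1+\alpha}+\varepsilon^\beta)$, and an exponentially weighted Gronwall argument then yields the pointwise bound uniformly in $t$.

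Two points also need fixing in the $H^1$ step. First, the absorption of the trilinear term into $\tfrac12\|A_\varepsilon u^\varepsilon\|_{L^2(\Omega_\varepsilon)}^2$ is not supplied by the $L^2$ bound: in \eqref{E:Tri_Est_A} the dangerous coefficient is $\tfrac14+d_1\varepsilon^{1/2}\|A_\varepsilon^{1/2}u^\varepsilon\|_{L^2(\Omega_\varepsilon)}$, so what is needed is the pointwise smallness of $\varepsilon^{1/2}\|A_\varepsilon^{1/2}u^\varepsilon(t)\|_{L^2(\Omega_\varepsilon)}$, which is precisely the a priori bound (below $1/4d_1$ for all time) established in the proof of Theorem~\ref{T:GE}; the $L^2$ bound only enters the Gronwall coefficients. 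Second, a plain Gronwall step gives constants growing exponentially in $t$, contradicting the claim that $c$ is independent of $t$; to obtain time-uniform bounds one combines the dissipation bound on unit intervals, $\int_t^{t+1}\|A_\varepsilon^{1/2}u^\varepsilon\|_{L^2(\Omega_\varepsilon)}^2\,ds\le c(\varepsilon^{1+\alpha}+\varepsilon^\beta)$, with the uniform Gronwall inequality (Lemma~\ref{L:Uni_Gronwall}) for $t\ge1$, treating $t\in[0,1]$ separately by the exponentially weighted integration. With these corrections the remainder of your plan (the $\varepsilon$-explicit trilinear estimate built on the average/fluctuation decomposition and the norm equivalences \eqref{E:Stokes_H1} and \eqref{E:Stokes_H2}) matches the paper's argument.
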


The proofs of Theorems~\ref{T:GE} and~\ref{T:UE} are given in Section~\ref{S:GE}.

Next we give results on a singular limit problem for the Navier--Stokes equations \eqref{E:NS_Eq}--\eqref{E:NS_In} as the curved thin domain $\Omega_\varepsilon$ degenerates into the closed surface $\Gamma$.
We define function spaces of tangential vector fields on $\Gamma$ by
\begin{align*}
  L^2(\Gamma,T\Gamma) &:= \{v\in L^2(\Gamma)^3 \mid \text{$v\cdot n=0$ on $\Gamma$}\}, \\
  L_{g\sigma}^2(\Gamma,T\Gamma) &:= \{v\in L^2(\Gamma,T\Gamma) \mid \text{$\mathrm{div}_\Gamma(gv)=0$ on $\Gamma$}\}, \\
  \mathcal{V}_g &:= L_{g\sigma}^2(\Gamma,T\Gamma)\cap H^1(\Gamma,T\Gamma),
\end{align*}
where $\mathrm{div}_\Gamma$ is the surface divergence operator on $\Gamma$ (see Sections~\ref{S:Pre} and~\ref{S:WSol}).

\begin{theorem} \label{T:SL_Weak}
  Under the assumptions of Theorem~\ref{T:GE}, let $u_0^\varepsilon\in \mathcal{V}_\varepsilon$ and $f^\varepsilon\in L^2(0,\infty;L^2(\Omega_\varepsilon)^3)$ for $\varepsilon\in(0,1)$.
  Suppose that the following conditions hold:
  \begin{itemize}
    \item[(a)] There exist $c>0$, $\varepsilon_2\in(0,1)$, and $\alpha\in(0,1)$ such that
    \begin{align*}
      \|A_\varepsilon^{1/2}u_0^\varepsilon\|_{L^2(\Omega_\varepsilon)}^2+\|\mathbb{P}_\varepsilon f^\varepsilon\|_{L^\infty(0,\infty;L^2(\Omega_\varepsilon))}^2 \leq c\varepsilon^{-1+\alpha}
    \end{align*}
    for all $\varepsilon\in(0,\varepsilon_2)$.
    \item[(b)] There exist $v_0\in L^2(\Gamma,T\Gamma)$ and $f\in L^\infty(0,\infty;H^{-1}(\Gamma,T\Gamma))$ such that
    \begin{alignat*}{3}
      \lim_{\varepsilon\to0}M_\tau u_0^\varepsilon &= v_0 &\quad &\text{weakly in} &\quad &L^2(\Gamma,T\Gamma), \\
      \lim_{\varepsilon\to0}M_\tau\mathbb{P}_\varepsilon f^\varepsilon &= f &\quad &\text{weakly-$\star$ in} &\quad &L^\infty(0,\infty;H^{-1}(\Gamma,T\Gamma)).
    \end{alignat*}
    \item[(c)] For $i=0,1$ there exists $\gamma^i\geq0$ such that $\lim_{\varepsilon\to0}\varepsilon^{-1}\gamma_\varepsilon^i=\gamma^i$.
  \end{itemize}
  When the condition (A3) of Assumption~\ref{Assump_2} is satisfied, suppose further that $f^\varepsilon(t)\in\mathcal{R}_g^\perp$ for all $\varepsilon\in(0,\varepsilon_2)$ and a.a. $t\in(0,\infty)$.
  Then there exists $\varepsilon_3\in(0,1)$ such that the problem \eqref{E:NS_Eq}--\eqref{E:NS_In} admits a global strong solution $u^\varepsilon$ for each $\varepsilon\in(0,\varepsilon_3)$ and
  \begin{align*}
    \lim_{\varepsilon\to0}Mu^\varepsilon\cdot n = 0 \quad\text{strongly in}\quad C([0,\infty);L^2(\Gamma)).
  \end{align*}
  Moreover, there exists a vector field
  \begin{align*}
    v \in C([0,\infty);L_{g\sigma}^2(\Gamma,T\Gamma))\cap L_{loc}^2([0,\infty);\mathcal{V}_g)\cap H_{loc}^1([0,\infty);H^{-1}(\Gamma,T\Gamma))
  \end{align*}
  such that
  \begin{alignat*}{3}
    \lim_{\varepsilon\to0}M_\tau u^\varepsilon &= v &\quad &\text{weakly in} &\quad L^2(0,T;H^1(\Gamma,T\Gamma)), \\
    \lim_{\varepsilon\to0}\partial_tM_\tau u^\varepsilon &= \partial_tv &\quad &\text{weakly in} &\quad L^2(0,T;H^{-1}(\Gamma,T\Gamma))
  \end{alignat*}
  for each $T>0$ and $v$ is a unique weak solution to
  \begin{multline} \label{E:Limit_Eq}
    g\Bigl(\partial_tv+\overline{\nabla}_vv\Bigr)-2\nu\left\{P\mathrm{div}_\Gamma[gD_\Gamma(v)]-\frac{1}{g}(\nabla_\Gamma g\otimes \nabla_\Gamma g)v\right\} \\
    +(\gamma^0+\gamma^1)v+g\nabla_\Gamma q = gf \quad\text{on}\quad \Gamma\times(0,\infty)
  \end{multline}
  and
  \begin{align} \label{E:Limit_Div}
    \mathrm{div}_\Gamma(gv) = 0 \quad\text{on}\quad \Gamma\times(0,\infty), \quad v|_{t=0} = v_0 \quad\text{on}\quad \Gamma
  \end{align}
  with an associated pressure $q$.
\end{theorem}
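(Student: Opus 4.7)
The plan is to combine the uniform estimates of Theorem~\ref{T:UE} with weak compactness to extract a limit from $\{M_\tau u^\varepsilon\}$, identify the limit as a weak solution of \eqref{E:Limit_Eq}--\eqref{E:Limit_Div}, and close the argument with uniqueness. Global existence of $u^\varepsilon$ for sufficiently small $\varepsilon$ is automatic: hypothesis~(a) provides the bulk component of \eqref{E:UE_Data} with exponent $\alpha$, and the weak-$\star$ convergence in~(b) forces norm-boundedness of $M_\tau u_0^\varepsilon$ and $M_\tau\mathbb{P}_\varepsilon f^\varepsilon$, so the surface component of \eqref{E:UE_Data} holds with $\beta=1$. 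Estimates \eqref{E:UE_L2}--\eqref{E:UE_H1} then yield, after division by $|\Omega_\varepsilon|\sim\varepsilon$ and the continuity of the averaging operator of Section~\ref{SS:Ave_Def}, uniform-in-$\varepsilon$ bounds on $Mu^\varepsilon$ in $L^\infty(0,\infty;L^2(\Gamma))\cap L^2_{loc}([0,\infty);H^1(\Gamma,T\Gamma))$. The normal component is eliminated by integrating $\mathrm{div}\,u^\varepsilon=0$ in the thin variable $r$ and exploiting the impermeability $u^\varepsilon\cdot n_\varepsilon=0$ on $\Gamma_\varepsilon^0\cup\Gamma_\varepsilon^1$; a short geometric computation yields an identity of the form
\begin{equation*}
  \mathrm{div}_\Gamma(gM_\tau u^\varepsilon)=O(\varepsilon)\|\nabla u^\varepsilon\|_{L^2(\Omega_\varepsilon)}
\end{equation*}
together with an $O(\varepsilon)$ expression for $Mu^\varepsilon\cdot n$ in terms of tangential quantities, which immediately gives $Mu^\varepsilon\cdot n\to0$ in $C([0,\infty);L^2(\Gamma))$ and, in the limit, the weighted constraint $\mathrm{div}_\Gamma(gv)=0$.

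Next, Banach--Alaoglu combined with a uniform $L^2_{loc}(H^{-1})$ bound on $\partial_tM_\tau u^\varepsilon$ (obtained by testing \eqref{E:NS_Eq} against extensions $\tilde\psi(y+rn(y))=\psi(y)$ of $H^1(\Gamma,T\Gamma)$ fields, using hypothesis~(a), the uniform $H^1$-bound on $u^\varepsilon$, and the friction bound~\eqref{E:Fric_Upper}) yields a subsequence and a limit $v$ of the claimed regularity with $M_\tau u^\varepsilon\rightharpoonup v$ in $L^2(0,T;H^1(\Gamma,T\Gamma))$. Aubin--Lions promotes this to strong $L^2_{loc}(L^2)$ convergence, which is exactly what is needed to pass to the limit in the quadratic convective term and produce $g\overline{\nabla}_vv$. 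The heart of the matter is the viscous term: one tests \eqref{E:NS_Eq} against the constant-in-$r$ extension $\tilde v(y+rn(y))=v(y)$ of a tangential test field $v\in\mathcal{V}_g$, integrates by parts so that \eqref{E:NS_Bo} converts the boundary contribution $2\nu P_\varepsilon D(u^\varepsilon)n_\varepsilon$ into $-\gamma_\varepsilon u^\varepsilon$ on $\Gamma_\varepsilon$ (whose $\varepsilon^{-1}$-scaled limit, by hypothesis~(c), is the damping $(\gamma^0+\gamma^1)v$), and then Taylor-expands $P_\varepsilon$, $D(u^\varepsilon)$, and the volume element in the signed distance $r$. Grouping the $O(1)$ contributions and invoking Gauss--Weingarten to commute tangential derivatives produces precisely $-2\nu\{P\mathrm{div}_\Gamma[gD_\Gamma(v)]-g^{-1}(\nabla_\Gamma g\otimes\nabla_\Gamma g)v\}$; the pressure drops out because the test field is weighted solenoidal and is recovered afterwards via the weighted Helmholtz--Leray decomposition of Section~\ref{SS:St_HL}.

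Finally, uniqueness of $v$ as a weak solution of \eqref{E:Limit_Eq}--\eqref{E:Limit_Div} is established by the standard two-dimensional energy argument adapted to the weighted surface setting: for two solutions $v_1,v_2$, the difference $w=v_1-v_2$ obeys an energy inequality in which the trilinear term is controlled by the two-dimensional Ladyzhenskaya inequality $\|w\|_{L^4(\Gamma)}\lesssim\|w\|_{L^2(\Gamma)}^{1/2}\|w\|_{H^1(\Gamma)}^{1/2}$, and Gronwall's lemma closes the estimate; the weighted constraint $\mathrm{div}_\Gamma(gw)=0$ is accommodated using the weighted Helmholtz--Leray decomposition of Section~\ref{SS:St_HL} to eliminate $\nabla_\Gamma q$. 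Uniqueness forces the whole family $\{u^\varepsilon\}$, not merely a subsequence, to converge. The principal obstacle is the viscous-term identification in the preceding paragraph, where one must track the $\varepsilon$-expansion of $D(u^\varepsilon)n_\varepsilon$ and $P_\varepsilon$ with sufficient precision and then rearrange the leading-order contributions via Gauss--Weingarten to surface the geometric coefficients, in particular the Gaussian-curvature contribution implicit in $P\mathrm{div}_\Gamma[gD_\Gamma(v)]$, that appear in \eqref{E:Limit_Eq}.
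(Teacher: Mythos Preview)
Your overall architecture---global existence from Theorem~\ref{T:UE} with $\beta=1$, uniform bounds on $M_\tau u^\varepsilon$, Aubin--Lions compactness, passage to the limit in a weak formulation, and uniqueness to upgrade subsequential to full convergence---matches the paper's. The gap is in the step you describe as ``testing \eqref{E:NS_Eq} against the constant-in-$r$ extension $\tilde v(y+rn(y))=v(y)$.'' The constant extension $\bar v$ of a field $v\in\mathcal{V}_g$ is \emph{not} an admissible test function for the bulk weak formulation \eqref{E:NS_Weak}: it fails both conditions defining $\mathcal{V}_\varepsilon$. First, $\bar v\cdot n_\varepsilon\neq 0$ on $\Gamma_\varepsilon$ in general, since $n_\varepsilon$ differs from $\bar n$ by $O(\varepsilon)$ tangential terms (see \eqref{E:Comp_N}). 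Second, and more seriously, $\mathrm{div}\,\bar v\neq 0$ in $\Omega_\varepsilon$: from \eqref{E:ConDer_Dom} one has $\mathrm{div}\,\bar v = \mathrm{tr}[(I_3-d\overline{W})^{-1}\overline{\nabla_\Gamma v}]$, which at $d=0$ equals $\mathrm{div}_\Gamma v = -g^{-1}v\cdot\nabla_\Gamma g$, not zero. Consequently the pressure term $(\nabla p^\varepsilon,\bar v)_{L^2(\Omega_\varepsilon)}$ does \emph{not} drop out, contrary to what you write, and you have no independent control on $p^\varepsilon$.

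The paper resolves this by constructing, for each $\eta\in\mathcal{V}_g$, the test function $\eta_\varepsilon:=\mathbb{L}_\varepsilon E_\varepsilon\eta$ (Lemma~\ref{L:Const_Test}): the impermeable extension $E_\varepsilon\eta$ of \eqref{E:Def_ExTan} enforces $E_\varepsilon\eta\cdot n_\varepsilon=0$ on $\Gamma_\varepsilon$, and the bulk Helmholtz--Leray projection $\mathbb{L}_\varepsilon$ then makes it solenoidal. The key estimate \eqref{E:Test_Dom} shows $\eta_\varepsilon$ differs from $\bar\eta$ by $O(\varepsilon^{3/2})$ in $H^1(\Omega_\varepsilon)$, which is exactly what is needed to control the residuals when converting $a_\varepsilon(u^\varepsilon,\eta_\varepsilon)$ into $\varepsilon a_g(M_\tau u^\varepsilon,\eta)$ and $b_\varepsilon$ into $\varepsilon b_g$ (Lemmas~\ref{L:Appr_Bili} and~\ref{L:Appr_Tri}). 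This is where the viscous structure you describe actually emerges, and it requires the uniform $H^1$-estimate \eqref{E:HP_Dom} for the bulk Helmholtz--Leray projection together with \eqref{E:Lp_ETD_Sol}. A further subtlety you do not address: under condition~(A3) of Assumption~\ref{Assump_2}, even $\eta_\varepsilon\in H^1\cap L^2_\sigma(\Omega_\varepsilon)$ may lie outside $\mathcal{V}_\varepsilon=H^1\cap L^2_\sigma\cap\mathcal{R}_g^\perp$; the paper (proof of Lemma~\ref{L:Mu_Weak}) handles this by projecting onto $\mathcal{H}_\varepsilon$ and showing the $\mathcal{R}_g$-component contributes nothing. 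Finally, your references to Section~\ref{SS:St_HL} for the ``weighted Helmholtz--Leray decomposition'' point to the wrong place: that section treats the unweighted decomposition on $\Omega_\varepsilon$; the weighted surface decomposition you need (for uniqueness and for the energy estimate on $M_\tau u^\varepsilon$ via $v^\varepsilon=\mathbb{P}_g M_\tau u^\varepsilon$) is in Section~\ref{SS:WS_HL}.
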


Here $\overline{\nabla}_vv=P(v\cdot\nabla_\Gamma)v$ is the covariant derivative of $v$ along itself and $D_\Gamma(v)$ is the surface strain rate tensor given by \eqref{E:Intro_SSR}.
We give the definition of a weak solution to \eqref{E:Limit_Eq}--\eqref{E:Limit_Div} and prove Theorem~\ref{T:SL_Weak} in Section~\ref{SS:SL_WeCh} (see also Lemma~\ref{L:LW_Pres} for construction of an associated pressure).
Note that the weak limit $v_0$ of $M_\tau u_0^\varepsilon$ actually belongs to $L_{g\sigma}^2(\Gamma,T\Gamma)$, while $M_\tau u_0^\varepsilon$ does not so in general (see Lemma~\ref{L:WC_Sole}).
Also, we do not divide the equations \eqref{E:Limit_Eq} by $g$ since they correspond to the weighted Helmholtz--Leray decomposition of tangential vector fields
\begin{align*}
  v = v_g+g\nabla_\Gamma q \quad\text{in}\quad L^2(\Gamma,T\Gamma), \quad v_g\in L_{g\sigma}^2(\Gamma,T\Gamma),\, g\nabla_\Gamma q\in L_{g\sigma}^2(\Gamma,T\Gamma)^\perp
\end{align*}
established in Section~\ref{SS:WS_HL}.
We further present in Section~\ref{SS:WS_NTS} the Helmholtz--Leray decomposition of vector fields on $\Gamma$ with normal components
\begin{align*}
  v = v_\sigma+\nabla_\Gamma q+qHn \quad\text{in}\quad L^2(\Gamma)^3, \quad v_\sigma \in L_\sigma^2(\Gamma),\,\nabla_\Gamma q+qHn \in L_\sigma^2(\Gamma)^\perp,
\end{align*}
where $L_\sigma^2(\Gamma):=\{v\in L^2(\Gamma)^3\mid\text{$\mathrm{div}_\Gamma v=0$ on $\Gamma$}\}$ and $H$ is (twice) the mean curvature of $\Gamma$, although it is not used in the proof of Theorem~\ref{T:SL_Weak}.

If the weak and weak-$\star$ convergence of $M_\tau u_0^\varepsilon$ and $M_\tau\mathbb{P}_\varepsilon f^\varepsilon$ are replaced by the strong convergence, then we get the strong convergence of $M_\tau u^\varepsilon$.

\begin{theorem} \label{T:SL_Strong}
  For $\varepsilon\in(0,1)$ let $u_0^\varepsilon\in \mathcal{V}_\varepsilon$ and $f^\varepsilon\in L^\infty(0,\infty;L^2(\Omega_\varepsilon)^3)$.
  Suppose that the assumptions of Theorem~\ref{T:SL_Weak} are satisfied with the condition (b) replaced by the following condition:
  \begin{itemize}
    \item [(b')] There exist $v_0\in L^2(\Gamma,T\Gamma)$ and $f\in L^\infty(0,\infty;H^{-1}(\Gamma,T\Gamma))$ such that
    \begin{alignat*}{3}
      \lim_{\varepsilon\to0}M_\tau u_0^\varepsilon &= v_0 &\quad &\text{strongly in} &\quad &L^2(\Gamma,T\Gamma), \\
      \lim_{\varepsilon\to0}M_\tau\mathbb{P}_\varepsilon f^\varepsilon &= f &\quad &\text{strongly in} &\quad &L^\infty(0,\infty;H^{-1}(\Gamma,T\Gamma)).
  \end{alignat*}
  \end{itemize}
  Then the statements of Theorem~\ref{T:SL_Weak} hold.
  Moreover, for each $T>0$ we have
  \begin{align*}
    \lim_{\varepsilon\to0}M_\tau u^\varepsilon = v \quad\text{strongly in}\quad C([0,T];L^2(\Gamma,T\Gamma))\cap L^2(0,T;H^1(\Gamma,T\Gamma)).
  \end{align*}
\end{theorem}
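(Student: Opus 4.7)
The plan is to promote the weak convergence already provided by Theorem~\ref{T:SL_Weak} to strong convergence via an energy-identity argument, exploiting the improved $L^2$/$H^{-1}$ convergence of the data. First, I would derive the energy identity satisfied by the limit solution $v$. Testing \eqref{E:Limit_Eq} with $v$ itself (admissible since $v\in L^2_{loc}(0,\infty;\mathcal{V}_g)$ and $\partial_t v\in L^2_{loc}(0,\infty;H^{-1}(\Gamma,T\Gamma))$), the pressure $g\nabla_\Gamma q$ is eliminated by $\mathrm{div}_\Gamma(gv)=0$, the convective term $\int_\Gamma g\,\overline\nabla_v v\cdot v=\tfrac12\int_\Gamma gv\cdot\nabla_\Gamma|v|^2$ vanishes by integration by parts against the weighted divergence-free condition, and the viscous term yields $2\nu\int_\Gamma g|D_\Gamma(v)|^2+2\nu\int_\Gamma g^{-1}(\nabla_\Gamma g\cdot v)^2$ after integration by parts on $\Gamma$. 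This produces
\begin{equation*}
\tfrac12\!\int_\Gamma g|v(t)|^2+\int_0^t\!\Bigl[2\nu\!\int_\Gamma g|D_\Gamma v|^2+2\nu\!\int_\Gamma g^{-1}(\nabla_\Gamma g\cdot v)^2+(\gamma^0+\gamma^1)\|v\|_{L^2(\Gamma)}^2\Bigr]ds = \tfrac12\!\int_\Gamma g|v_0|^2+\int_0^t\!\langle gf,v\rangle\,ds.
\end{equation*}

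Second, the strong solution $u^\varepsilon$ on $\Omega_\varepsilon$ satisfies the usual bulk energy identity (the inertial term vanishing by $\mathrm{div}\,u^\varepsilon=0$ and $u^\varepsilon\cdot n_\varepsilon=0$). I would divide this identity by $\varepsilon$ and pass to the limit term by term. The strong convergence $M_\tau u_0^\varepsilon\to v_0$ in $L^2(\Gamma,T\Gamma)$, together with the $\varepsilon$-uniform control of the normal component $Mu_0^\varepsilon\cdot n$ and of the fluctuation $u_0^\varepsilon-Mu_0^\varepsilon$ in $L^2(\Omega_\varepsilon)$ (available from the bound on $\|A_\varepsilon^{1/2}u_0^\varepsilon\|_{L^2}$ via the averaging lemmas used in earlier sections), gives $\varepsilon^{-1}\|u_0^\varepsilon\|_{L^2(\Omega_\varepsilon)}^2\to\int_\Gamma g|v_0|^2$. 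Analogously, the strong convergence of $M_\tau\mathbb{P}_\varepsilon f^\varepsilon\to f$ in $L^\infty(0,\infty;H^{-1}(\Gamma,T\Gamma))$ paired with the weak convergence of $M_\tau u^\varepsilon$ in $L^2(0,T;H^1(\Gamma,T\Gamma))$ yields $\varepsilon^{-1}\int_0^t(\mathbb{P}_\varepsilon f^\varepsilon,u^\varepsilon)\,ds\to\int_0^t\langle gf,v\rangle\,ds$.

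Third, for each $t$ weak lower semicontinuity applied to $D(u^\varepsilon)/\sqrt\varepsilon$ and to the scaled boundary traces gives
\begin{equation*}
\liminf_{\varepsilon\to0}\frac{1}{\varepsilon}\int_0^t\!\Bigl[2\nu\|D(u^\varepsilon)\|_{L^2(\Omega_\varepsilon)}^2+\gamma_\varepsilon^0\|u^\varepsilon\|_{L^2(\Gamma_\varepsilon^0)}^2+\gamma_\varepsilon^1\|u^\varepsilon\|_{L^2(\Gamma_\varepsilon^1)}^2\Bigr]ds \geq \text{the dissipation bracket of } v \text{ above}.
\end{equation*}
Combining with the $\limsup$ obtained from the two previous paragraphs and with the limit energy identity forces these inequalities to be equalities; in particular $\varepsilon^{-1}\|u^\varepsilon(t)\|^2\to\int_\Gamma g|v(t)|^2$ for every $t$ and the dissipation norms converge. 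Invoking a Korn inequality on $\Gamma$ (for the relevant function space dictated by Assumption~\ref{Assump_2}) together with weak convergence and convergence of the $D_\Gamma$-norm then yields strong convergence $M_\tau u^\varepsilon\to v$ in $L^2(0,T;H^1(\Gamma,T\Gamma))$, while pointwise-in-$t$ convergence of $\|M_\tau u^\varepsilon(t)\|_{L^2(\Gamma)}\to\|v(t)\|_{L^2(\Gamma)}$ combined with the equicontinuity in $t$ provided by the uniform $L^2(0,T;H^{-1})$-bound on $\partial_tM_\tau u^\varepsilon$ upgrades the $L^2(\Gamma)$-convergence to $C([0,T];L^2(\Gamma,T\Gamma))$.

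The main technical obstacle is the asymptotic identification of $\varepsilon^{-1}\|D(u^\varepsilon)\|_{L^2(\Omega_\varepsilon)}^2$ in the lower-semicontinuity step: although $D(u^\varepsilon)$ contains normal derivatives that are a priori not lower order after the $1/\varepsilon$ rescaling, one must exploit the decomposition of $u^\varepsilon$ into its average, the shear forced by the slip boundary data, and the genuine fluctuation, and use the asymptotic expansions of $n_\varepsilon$ and the metric on $\Omega_\varepsilon$ established in the paper's preliminary sections to recover the weighted $D_\Gamma(v)$ and $g^{-1}(\nabla_\Gamma g\cdot v)^2$ contributions. A closely related subtlety appears in the friction terms when $\gamma_\varepsilon^i/\varepsilon\to\gamma^i>0$: the trace of $u^\varepsilon$ on $\Gamma_\varepsilon^i$ must be connected to $v$ on $\Gamma$ strongly enough to survive the $\liminf$, again via the trace and averaging estimates of the earlier sections.
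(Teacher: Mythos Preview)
Your approach is conceptually sound but takes a genuinely different route from the paper, and it is considerably less direct.

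The paper does not use an energy--lower-semicontinuity argument at all. Instead it proves a quantitative difference estimate (Theorem~\ref{T:Diff_Mu_V}): by subtracting the weak formulation \eqref{E:Limit_Weak} for $v$ from the weak formulation \eqref{E:PMu_Weak} for $v^\varepsilon=\mathbb{P}_gM_\tau u^\varepsilon$, testing with $w^\varepsilon=v^\varepsilon-v$, and applying Gronwall, one obtains
\[
\max_{[0,T]}\|M_\tau u^\varepsilon-v\|_{L^2(\Gamma)}^2+\int_0^T\|\nabla_\Gamma(M_\tau u^\varepsilon-v)\|_{L^2(\Gamma)}^2\,dt \le c_T\Bigl\{\delta(\varepsilon)^2+\|M_\tau u_0^\varepsilon-v_0\|_{L^2(\Gamma)}^2+\|M_\tau\mathbb{P}_\varepsilon f^\varepsilon-f\|_{L^\infty H^{-1}}^2\Bigr\},
\]
with $\delta(\varepsilon)=\varepsilon^{\alpha/4}+\sum_i|\varepsilon^{-1}\gamma_\varepsilon^i-\gamma^i|$. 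Under (b$'$) and (c) the right-hand side tends to zero, which immediately gives the strong convergence. This is essentially a stability/uniqueness argument for the limit system, and it has the advantage of producing an explicit rate of convergence.

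Your energy method, by contrast, requires the sharp asymptotic identification of $\varepsilon^{-1}\|D(u^\varepsilon)\|_{L^2(\Omega_\varepsilon)}^2$, which you correctly flag as the main obstacle. That step can indeed be carried out using the paper's machinery (decompose $D(u^\varepsilon)$ along $\{\tau_1,\tau_2,n\}$; the shear block $\overline{P}D(u^\varepsilon)\bar n$ is $O(\varepsilon)$ in $L^2$ by Lemma~\ref{L:Poin_Str} together with the $H^2$ bound in \eqref{E:Est_Ue}; the normal--normal entry $\partial_nu^\varepsilon\cdot\bar n$ converges to $g^{-1}v\cdot\nabla_\Gamma g$ by Lemma~\ref{L:DnU_N_Ave}; the tangential--tangential block converges to $D_\Gamma(v)$ via Lemma~\ref{L:Ave_BiH1_TT}), so your proposal would ultimately succeed. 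But it is more laborious, relies on the $H^2$ regularity of $u^\varepsilon$ in a more essential way, and does not yield the quantitative estimate \eqref{E:Diff_Mu_V}. The paper's Gronwall-on-the-difference route is the cleaner choice here.
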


We give an estimate for the difference between $M_\tau u^\varepsilon$ and $v$ (see Theorem~\ref{T:Diff_Mu_V}) and prove Theorem~\ref{T:SL_Strong} in Section~\ref{SS:SL_ErST}.
Moreover, we compare $u^\varepsilon$ and the constant extension of $v$ in the normal direction of $\Gamma$ in $\Omega_\varepsilon$ (see Theorem~\ref{T:Diff_Ue_Cv}).
It is worth noting that, if we define the normal derivative of $u^\varepsilon$ with respect to $\Gamma$ as
\begin{align*}
  \partial_nu^\varepsilon(x,t) := \frac{d}{d\tilde{r}}\bigl(u^\varepsilon(y+\tilde{r}n(y),t)\bigr)\Big|_{\tilde{r}=r}, \quad x = y+rn(y) \in \Omega_\varepsilon,
\end{align*}
then $\partial_nu^\varepsilon$ is close in an appropriate sense to a surface vector field of the form
\begin{align*}
  -W(y)v(y,t)+\frac{1}{g(y)}\{v(y,t)\cdot\nabla_\Gamma g(y)\}n(y), \quad (y,t)\in\Gamma\times(0,\infty),
\end{align*}
see Theorem~\ref{T:Diff_DnUe_Cv} for details.
Here $W:=-\nabla_\Gamma n$ is the Weingarten map (or shape operator) of $\Gamma$.
Therefore, the normal derivative $\partial_nu^\varepsilon$ is not necessarily small even though the curved thin domain $\Omega_\varepsilon$ and the limit surface $\Gamma$ are stationary.

Let us explain the idea of the proofs of our main results.
In the proof of the global existence of a strong solution (Theorems~\ref{T:GE}) we follow the arguments in~\cite{Ho10,HoSe10} to show that the $H^1(\Omega_\varepsilon)$-norm of a strong solution is uniformly bounded in time by a standard energy method, which also applies to the proof of Theorem~\ref{T:UE}.
The main tools for the proof are an extension of a surface vector field to $\Omega_\varepsilon$ that satisfies the impermeable boundary condition $u\cdot n_\varepsilon=0$ on $\Gamma_\varepsilon$ given in Section~\ref{SS:Tool_TE} and average operators in the thin direction defined and investigated in Section~\ref{S:Ave}.
Using them, the slip boundary conditions, and Sobolev type inequalities on $\Omega_\varepsilon$ and $\Gamma$, we derive a good estimate for the trilinear term
\begin{align*}
  \bigl((u\cdot\nabla)u,A_\varepsilon u\bigr)_{L^2(\Omega_\varepsilon)}, \quad u\in D(A_\varepsilon)
\end{align*}
in Section~\ref{S:Tri}.
A key idea for derivation is to decompose a vector field on $\Omega_\varepsilon$ into the average part, which is almost two-dimensional, and the residual part, which satisfies the impermeable boundary condition (see Section~\ref{SS:Ave_Ex}).
Such decomposition enables us to apply a product estimate for a function on $\Omega_\varepsilon$ and that on $\Gamma$ to the average part (see Lemma~\ref{L:Prod_Ua}) and a good $L^\infty$-estimate following from the Agmon and Poincar\'{e} inequalities to the residual part (see Lemma~\ref{L:Linf_Ur}).

For the proof of the global existence and uniform estimates of a strong solution, we also require the uniform equivalence of the norms
\begin{align*}
  c^{-1}\|u\|_{H^1(\Omega_\varepsilon)} \leq \|A_\varepsilon^{1/2}u\|_{L^2(\Omega_\varepsilon)} \leq c\|u\|_{H^1(\Omega_\varepsilon)}, \quad u\in\mathcal{V}_\varepsilon=D(A_\varepsilon^{1/2})
\end{align*}
with a constant $c>0$ independent of $\varepsilon$ (see Lemma~\ref{L:Stokes_H1}).
This is a consequence of the uniform boundedness and coerciveness of the bilinear form corresponding to the Stokes problem in $\Omega_\varepsilon$ (see Lemma~\ref{L:Bili_Core}), for which the uniform Korn inequalities on $\Omega_\varepsilon$ established in Section~\ref{SS:Korn_Dom} and Assumptions~\ref{Assump_1} and~\ref{Assump_2} are essential.
It is more difficult to show the uniform norm equivalence (see Lemma~\ref{L:Stokes_H2})
\begin{align*}
  c^{-1}\|u\|_{H^2(\Omega_\varepsilon)} \leq \|A_\varepsilon u\|_{L^2(\Omega_\varepsilon)} \leq c\|u\|_{H^2(\Omega_\varepsilon)}, \quad u\in D(A_\varepsilon).
\end{align*}
The right-hand inequality follows from a uniform $L^2$-estimate for the difference between the Stokes and Laplace operators (see Lemma~\ref{L:Comp_Sto_Lap}).
To prove the left-hand inequality we derive a uniform a priori estimate for the vector Laplacian with slip boundary conditions, which involves calculations of the covariant derivatives on the boundary $\Gamma_\varepsilon$ (see Section~\ref{SS:St_Apr}).

To prove Theorems~\ref{T:SL_Weak} and~\ref{T:SL_Strong} on a singular limit problem as $\varepsilon\to0$, we proceed as in~\cite{Mi17} to transform a weak formulation for \eqref{E:NS_Eq}--\eqref{E:NS_In} into that for the averaged tangential component of a strong solution to \eqref{E:NS_Eq}--\eqref{E:NS_In} with residual terms (see Section~\ref{SS:SL_Ave}).
For this purpose, we approximate bilinear and trilinear forms in the weak formulation for \eqref{E:NS_Eq}--\eqref{E:NS_In} by those in a weak formulation for \eqref{E:Limit_Eq}--\eqref{E:Limit_Div} by using the slip boundary conditions and the average operators (see Section~\ref{SS:Ave_BT}).
Moreover, we need to construct an appropriate test function for \eqref{E:NS_Eq}--\eqref{E:NS_In} from a test function for \eqref{E:Limit_Eq}--\eqref{E:Limit_Div}, which is a weighted solenoidal vector field on $\Gamma$.
To this end, we use the impermeable extension of a surface vector field to $\Omega_\varepsilon$ and a uniform $H^1$-estimate for the gradient part of the Helmholtz--Leray decomposition on $\Omega_\varepsilon$ given in Sections~\ref{SS:Tool_TE} and~\ref{SS:St_HL}, respectively.

After transformation of the weak formulation, we derive the energy estimate for the averaged tangential component of a strong solution to \eqref{E:NS_Eq}--\eqref{E:NS_In} by using its weak formulation in Section~\ref{SS:SL_Ener}.
In derivation of the energy estimate we cannot substitute the averaged tangential component itself for its weak formulation since it is not weighted solenoidal on $\Gamma$ in general.
To overcome this difficulty we employ the weighted Helmholtz--Leray projection on $\Gamma$ given in Section~\ref{SS:WS_HL} to replace the averaged tangential component with its weighted solenoidal part.
Then we derive the energy estimate for the weighted solenoidal part by substituting it for its weak formulation and apply estimates for the gradient part of the weighted Helmholtz--Leray decomposition on $\Gamma$ to obtain the energy estimate for the original averaged tangential component, which enables us to show that it converges weakly as $\varepsilon\to0$ and that the limit is a weak solution to \eqref{E:Limit_Eq}--\eqref{E:Limit_Div} (see Section~\ref{SS:SL_WeCh}).
We also derive an estimate for the difference between the averaged tangential component of a strong solution to \eqref{E:NS_Eq}--\eqref{E:NS_In} and a weak solution to \eqref{E:Limit_Eq}--\eqref{E:Limit_Div} by using their weak formulations in Section~\ref{SS:SL_ErST}.
Here we again use the weighted Helmholtz--Leray projection on $\Gamma$ to take the difference of the solutions as a test function in the weak formulations.

Now let us give remarks on Assumption~\ref{Assump_2} and comparison of the limit equations \eqref{E:Limit_Eq}--\eqref{E:Limit_Div} with the Navier--Stokes equations on a Riemannian manifold and limit equations derived in~\cite{TeZi97}.

\begin{remark} \label{R:Assump_12}
  The conditions of Assumption~\ref{Assump_2} are valid in the following cases:
  \begin{itemize}
    \item[(A1)] If $\gamma_\varepsilon^0$ or $\gamma_\varepsilon^1$ is bounded from below by $\varepsilon$, then we may consider any closed surface $\Gamma$.
    In this case, however, at least one of the friction coefficients $\gamma^0$ or $\gamma^1$ in the limit momentum equations \eqref{E:Limit_Eq} must be positive.
    \item[(A2)] It is known (see e.g.~\cite[Proposition~2.2]{Sh_18pre}) that there exists no nontrivial Killing vector field on $\Gamma$, i.e. $\mathcal{K}(\Gamma)=\{0\}$ if the genus of $\Gamma$ is greater than one.
    In this case $\mathcal{K}_g(\Gamma)=\{0\}$ for any $g=g_1-g_0$ and we may take arbitrary nonnegative $\gamma_\varepsilon^0$ and $\gamma_\varepsilon^1$ (bounded above by $\varepsilon$).
    Note that if $\mathcal{K}_g(\Gamma)=\{0\}$ then $\mathcal{R}_g=\{0\}$ and $\Omega_\varepsilon$ is not axially symmetric around any axis for sufficiently small $\varepsilon>0$ (see Lemma~\ref{L:CTD_Rg}).
    \item[(A3)] As mentioned in Remark~\ref{R:Killing}, if $\Gamma$ is a surface of revolution or it is closed and convex then $\mathcal{R}|_\Gamma=\mathcal{K}(\Gamma)$ and thus $\mathcal{R}_g|_\Gamma=\mathcal{K}_g(\Gamma)$ for any $g=g_1-g_0$.
    Also, $\mathcal{R}_0\cap\mathcal{R}_1=\mathcal{R}_g$ is valid if, for example, $g_0$ or $g_1$ is constant.
    In this case we only consider the perfect slip boundary conditions
    \begin{align} \label{E:Bo_Per}
      u\cdot n_\varepsilon = 0, \quad P_\varepsilon D(u)n_\varepsilon = 0 \quad\text{on}\quad \Gamma_\varepsilon.
    \end{align}
    A typical example of this case is a thin spherical shell
    \begin{align*}
      \Omega_\varepsilon = \{x\in\mathbb{R}^3 \mid 1<|x|<1+\varepsilon\} \quad (\Gamma = S^2,\,g_0=0,\,g_1=1)
    \end{align*}
    around the unit sphere $S^2$ in $\mathbb{R}^3$ studied by Temam and Ziane~\cite{TeZi97} under different boundary conditions (see Remark~\ref{R:Limit_TZ}).
  \end{itemize}
  Note that in the case of a flat thin domain around the flat torus $\Gamma=\mathbb{T}^2$ we have
  \begin{align*}
    \mathcal{R}_i &= \{(a_1,a_2,0)\in\mathbb{R}^2\times\{0\} \mid \text{$a_1\partial_1g_i+a_2\partial_2g_i=0$ on $\mathbb{T}^2$}\}, \quad i=0,1, \\
    \mathcal{R}_g &= \mathcal{K}_g(\Gamma) = \{(a_1,a_2,0)\in\mathbb{R}^2\times\{0\} \mid \text{$a_1\partial_1g+a_2\partial_2g=0$ on $\mathbb{T}^2$}\}
  \end{align*}
  and the conditions (A2) and (A3) are imposed in~\cite{Ho10} and~\cite{HoSe10,IfRaSe07}, respectively.
  Hence our result on the global existence of a strong solution (Theorem~\ref{T:GE}) can be seen as a generalization of the results in~\cite{Ho10,HoSe10,IfRaSe07} for flat thin domains.
\end{remark}

\begin{remark} \label{R:Recov_NS}
  When the condition (A3) of Assumption~\ref{Assump_2} is satisfied, we should be careful about recovery of the original equations \eqref{E:NS_Eq}--\eqref{E:NS_In} from their abstract form.
  In this case Theorem~\ref{T:GE} a priori provides a global solution
  \begin{align*}
    u^\varepsilon \in C([0,\infty);\mathcal{V}_\varepsilon)\cap L_{loc}^2([0,\infty);D(A_\varepsilon))\cap H_{loc}^1([0,\infty);\mathcal{H}_\varepsilon)
  \end{align*}
  to the abstract evolution equation in $\mathcal{H}_\varepsilon=L_\sigma^2(\Omega_\varepsilon)\cap\mathcal{R}_g^\perp$:
  \begin{align} \label{E:NS_Abst_H}
    \partial_tu^\varepsilon(t)+A_\varepsilon u(t)+[\mathbb{P}_\varepsilon(u^\varepsilon\cdot\nabla)u^\varepsilon](t) = \mathbb{P}_\varepsilon f^\varepsilon(t), \quad t\in(0,\infty).
  \end{align}
  Under the assumption $\mathcal{R}_g=\mathcal{R}_0\cap\mathcal{R}_1$ the set $\mathcal{R}_g$ is a closed subspace of $L_\sigma^2(\Omega_\varepsilon)$ (see Lemma~\ref{L:IR_Sole}).
  Hence we have the orthogonal decompositions
  \begin{align*}
    L_\sigma^2(\Omega_\varepsilon) = \mathcal{H}_\varepsilon\oplus\mathcal{R}_g, \quad L^2(\Omega_\varepsilon)^3 = \mathcal{H}_\varepsilon\oplus\mathcal{R}_g\oplus G^2(\Omega_\varepsilon),
  \end{align*}
  where $G^2(\Omega_\varepsilon)=\{\nabla q \mid q\in H^1(\Omega_\varepsilon)\}$.
  By these decompositions we observe that the equations in $L^2(\Omega_\varepsilon)^3$ recovered from \eqref{E:NS_Abst_H} are of the form
  \begin{align} \label{E:NS_Abst_L2}
    \partial_tu^\varepsilon(t)-\nu\Delta u^\varepsilon(t)+[(u^\varepsilon\cdot\nabla)u^\varepsilon](t)+\nabla p^\varepsilon(t)+v(t) = f^\varepsilon(t), \quad t\in(0,\infty)
  \end{align}
  with $\nabla p^\varepsilon(t)\in G^2(\Omega_\varepsilon)$ and $v(x,t)=a(t)\times x+b(t)\in\mathcal{R}_g$.
  At this point the original momentum equations of \eqref{E:NS_Eq} are not recovered properly because of the additional vector field $v$, but we can show $v(t)=0$ if we assume $f^\varepsilon(t)\in\mathcal{R}_g^\perp$ for a.a. $t\in(0,\infty)$.
  To see this, we take the $L^2$-inner product on $\Omega_\varepsilon$ of both sides of \eqref{E:NS_Abst_L2} with $v$ (here and hereafter we fix and suppress $t$).
  Then since $\partial_tu^\varepsilon\in\mathcal{H}_\varepsilon$, $\nabla p^\varepsilon\in G^2(\Omega_\varepsilon)$, and $f^\varepsilon$ are orthogonal to $v\in\mathcal{R}_g$,
  \begin{align*}
    \|v\|_{L^2(\Omega_\varepsilon)}^2 = \nu(\Delta u^\varepsilon,v)_{L^2(\Omega_\varepsilon)}-\bigl((u^\varepsilon\cdot\nabla)u^\varepsilon,v\bigr)_{L^2(\Omega_\varepsilon)}.
  \end{align*}
  By integration by parts (see \eqref{E:IbP_St} in Section~\ref{SS:St_Def}), $\mathrm{div}\,u^\varepsilon=0$ in $\Omega_\varepsilon$, the perfect slip boundary conditions \eqref{E:Bo_Per} on $u^\varepsilon$, and $v\cdot n_\varepsilon=0$ on $\Gamma_\varepsilon$ we have
  \begin{align*}
    (\Delta u^\varepsilon,v)_{L^2(\Omega_\varepsilon)} = -2\bigl(D(u^\varepsilon),D(v)\bigl)_{L^2(\Omega_\varepsilon)}.
  \end{align*}
  (Note that $v\in\mathcal{R}_g$ satisfies $v\cdot n_\varepsilon=0$ on $\Gamma_\varepsilon$ by $\mathcal{R}_g=\mathcal{R}_0\cap\mathcal{R}_1$ and Lemma~\ref{L:IR_CTD}.)
  Moreover, since $v(x,t)=a(t)\times x+b(t)$ and $a(t)$ and $b(t)$ are independent of $x$, it follows that $D(v)=0$ in $\mathbb{R}^3$ and thus $(\Delta u^\varepsilon,v)_{L^2(\Omega_\varepsilon)}=0$.
  By integration by parts, $\mathrm{div}\,u^\varepsilon=0$ in $\Omega_\varepsilon$, and $u^\varepsilon\cdot n_\varepsilon=0$ on $\Gamma_\varepsilon$ we also have
  \begin{align*}
    \bigl((u^\varepsilon\cdot\nabla)u^\varepsilon,v\bigr)_{L^2(\Omega_\varepsilon)} = -(u^\varepsilon,(u^\varepsilon\cdot\nabla)v)_{L^2(\Omega_\varepsilon)} = -(u^\varepsilon,a\times u^\varepsilon)_{L^2(\Omega_\varepsilon)} = 0.
  \end{align*}
  From these equalities we deduce that $\|v\|_{L^2(\Omega_\varepsilon)}^2=0$, i.e. $v=0$ in \eqref{E:NS_Abst_L2}.
  Therefore, the momentum equations of \eqref{E:NS_Eq} are properly recovered from the abstract evolution equation \eqref{E:NS_Abst_H}.
  Note that our arguments in this paper may apply to the proof of the global existence of a solution to the abstract evolution equation \eqref{E:NS_Abst_H} in
  \begin{align*}
    \mathcal{H}_\varepsilon' := \{u\in L_\sigma^2(\Omega_\varepsilon) \mid \text{$(u,\bar{v})_{L^2(\Omega_\varepsilon)}=0$ for all $v\in\mathcal{K}_g(\Gamma)$}\}
  \end{align*}
  even if the condition $\mathcal{R}_g|_\Gamma=\mathcal{K}_g(\Gamma)$ is not valid.
  Here $\bar{v}$ is the constant extension of $v\in\mathcal{K}_g(\Gamma)$ in the normal direction of $\Gamma$ and the orthogonality condition with $\mathcal{K}_g(\Gamma)$ is required to verify the uniform coerciveness on $H^1(\Omega_\varepsilon)\cap\mathcal{H}_\varepsilon'$ of the bilinear form corresponding to the Stokes problem in $\Omega_\varepsilon$ (see Lemma~\ref{L:Korn_H1} and Section~\ref{SS:St_Def}).
  In this case, however, the equations \eqref{E:NS_Abst_L2} in $L^2(\Omega_\varepsilon)^3$ recovered from \eqref{E:NS_Abst_H} in $\mathcal{H}_\varepsilon'$ contain the constant extension $\bar{v}(t)$ of some $v(t)\in\mathcal{K}_g(\Gamma)$ and we cannot show $\bar{v}(t)=0$ as above.
  Indeed, for the constant extension of a vector field $v$ on $\Gamma$ we have
  \begin{multline*}
    D(\bar{v})(x) = \frac{1}{2}\Bigl[\{I_3-rW(y)\}^{-1}\nabla_\Gamma v(y)+\{\nabla_\Gamma v(y)\}^T\{I_3-rW(y)\}^{-1}\Bigr], \\
    x = y+rn(y)\in\Omega_\varepsilon,\,y\in\Gamma,\,r\in(\varepsilon g_0(y),\varepsilon g_1(y))
  \end{multline*}
  by \eqref{E:ConDer_Dom}, where $W$ is the Weingarten map of $\Gamma$, and $D(\bar{v})$ does not vanish in general just by $D_\Gamma(v)=0$ on $\Gamma$ (even if $\Gamma=S^2$ and $\bar{v}(x)=e_3\times(x/|x|)$ is the constant extension of $v(y)=e_3\times y\in\mathcal{K}(S^2)$ with $e_3=(0,0,1)$).
  Hence we cannot recover the momentum equations of \eqref{E:NS_Eq} from \eqref{E:NS_Abst_H} in $\mathcal{H}_\varepsilon'$ (similarly we fail to show $v(t)=0$ in \eqref{E:NS_Abst_L2} if $\mathcal{R}_g\neq\mathcal{R}_0\cap\mathcal{R}_1$ or $\gamma_\varepsilon^0>0$ or $\gamma_\varepsilon^1>0$).
  To avoid this problem, we impose the condition (A3) and consider \eqref{E:NS_Abst_H} in $\mathcal{H}_\varepsilon=L_\sigma^2(\Omega_\varepsilon)\cap\mathcal{R}_g^\perp$.
\end{remark}

\begin{remark} \label{R:Limit_Eq}
  If $g\equiv1$ and $\gamma^0=\gamma^1=0$ in \eqref{E:Limit_Eq}--\eqref{E:Limit_Div} then we have
  \begin{align*}
    \partial_tv+\overline{\nabla}_vv-2\nu P\mathrm{div}_\Gamma[D_\Gamma(v)]+\nabla_\Gamma q = f, \quad \mathrm{div}_\Gamma v = 0 \quad\text{on}\quad \Gamma\times(0,\infty).
  \end{align*}
  In~\cite[Lemma~2.5]{Mi18} it is shown that
  \begin{align*}
    2P\mathrm{div}_\Gamma[D_\Gamma(v)] = \Delta_Bv+Kv \quad\text{on}\quad \Gamma
  \end{align*}
  for a tangential vector field $v$ on $\Gamma$ satisfying $\mathrm{div}_\Gamma v=0$ on $\Gamma$, where $\Delta_B=-\overline{\nabla}^\ast\overline{\nabla}$ is the Bochner Laplacian (see Appendix~\ref{S:Ap_RC}) and $K$ is the Gaussian curvature of $\Gamma$ (see Section~\ref{SS:Pre_Surf}).
  Hence the above equations become
  \begin{align} \label{E:Limit_Reduce}
    \partial_tv+\overline{\nabla}_vv-\nu(\Delta_Bv+Kv)+\nabla_\Gamma q = f, \quad \mathrm{div}_\Gamma v = 0 \quad\text{on}\quad \Gamma\times(0,\infty).
  \end{align}
  These equations are called the ``proper'' Navier--Stokes equations on a Riemannian manifold in~\cite{EbMa70,Ta92} and were studied by Mitrea and Taylor~\cite{MitTa01}, Nagasawa~\cite{Nag99}, and Taylor~\cite{Ta92}.
  Note that $Kv=\mathrm{Ric}(v)$ for a tangential vector field $v$ on the embedded surface $\Gamma$ in $\mathbb{R}^3$, where $\mathrm{Ric}$ is the Ricci curvature.
  Also, the Bochner Laplacian is related to the Hodge Laplacian $\Delta_D=-(d_\Gamma d_\Gamma^\ast+d_\Gamma^\ast d_\Gamma)$ with the exterior derivative $d_\Gamma$ by the Weitzenb\"{o}ck formula $\Delta_D=\Delta_B-\mathrm{Ric}$ (see~\cite{Jo11,Pe06}).
\end{remark}

\begin{remark} \label{R:Limit_TZ}
  Temam and Ziane~\cite{TeZi97} studied the Navier--Stokes equations in the thin spherical shell $\Omega_\varepsilon = \{x\in\mathbb{R}^3 \mid 1<|x|<1+\varepsilon\}$ whose limit surface is the unit sphere $S^2$ in $\mathbb{R}^3$.
  They imposed the Hodge (or de Rham) boundary conditions
  \begin{align*}
    u\cdot n_\varepsilon = 0, \quad \mathrm{curl}\,u\times n_\varepsilon = 0 \quad\text{on}\quad \Gamma_\varepsilon,
  \end{align*}
  which is called the free boundary conditions in~\cite{TeZi97}, and mentioned that the Hodge boundary conditions are equivalent to the perfect slip boundary conditions \eqref{E:Bo_Per}.
  However, it is not true for the thin spherical shell.
  Indeed, for $u(x)=e_3\times x$ we easily see that $D(u)=0$ and $\mathrm{curl}\,u=2e_3$ in $\mathbb{R}^3$.
  Hence we have
  \begin{align*}
    u\cdot n_\varepsilon = \pm(e_3\times x)\cdot \frac{x}{|x|} = 0, \quad P_\varepsilon D(u)n_\varepsilon = 0, \quad \mathrm{curl}\,u\times n_\varepsilon = \pm2e_3\times \frac{x}{|x|}
  \end{align*}
  for $x\in \Gamma_\varepsilon$ (note that $n_\varepsilon(x)=\pm x/|x|$) and the last vector does not vanish in general.
  More generally, the Hodge boundary conditions are equivalent to the perfect slip boundary conditions only when a boundary is a part of a plane.
  For example, on the plane $\{(x',0)\mid x'\in\mathbb{R}^2\}$ these boundary conditions reduce to
  \begin{align*}
    u_3(x',0) = 0, \quad \partial_3u_1(x',0) = \partial_3u_2(x',0) = 0, \quad x'\in\mathbb{R}^2.
  \end{align*}
  The difference between the Hodge and perfect slip boundary conditions is due to the curvature of a boundary.
  Moreover, it makes limit equations in~\cite{TeZi97} different from our limit equations.
  In~\cite[Section~6]{TeZi97} the authors derived the limit equations
  \begin{align*}
    \partial_tv+\overline{\nabla}_vv-\nu\Delta_2 v+\nabla_\Gamma q = f, \quad \mathrm{div}_\Gamma v = 0 \quad\text{on}\quad S^2\times(0,\infty)
  \end{align*}
  from the Navier--Stokes equations in the thin spherical shell with Hodge boundary conditions.
  Here $\Delta_2v=P\Delta\bar{v}$ is the tangential vector Laplacian of a tangential vector field $v$ on $S^2$ with its constant extension $\bar{v}$ in the normal direction of $S^2$ (see \cite[Appendix]{TeZi97}).
  In terms of our notations given in Section~\ref{SS:Pre_Surf} it is expressed as
  \begin{align*}
    \Delta_2v = P\Delta \bar{v} = P\Delta_\Gamma v = \Delta_Bv-W^2v \quad\text{on}\quad S^2
  \end{align*}
  by Lemmas~\ref{L:Pi_Der} and~\ref{L:Boch_Lap}.
  Moreover, since the Weingarten map of $S^2$ is $W=-P$ and $v$ is tangential on $S^2$, we have $W^2v=v$.
  Thus the limit equations in~\cite{TeZi97} read
  \begin{align} \label{E:Limit_TZ}
    \partial_tv+\overline{\nabla}_vv-\nu(\Delta_Bv-v)+\nabla_\Gamma q = f, \quad \mathrm{div}_\Gamma v = 0 \quad\text{on}\quad S^2\times(0,\infty).
  \end{align}
  On the other hand, in the case of the thin spherical shell with perfect slip boundary conditions we have $g\equiv1$ and $\gamma^0=\gamma^1=0$ (the condition (A3) of Assumption~\ref{Assump_2} is satisfied).
  Hence our limit equations are of the form \eqref{E:Limit_Reduce} and, since $K=1$ for the unit sphere $S^2$, they further become
  \begin{align*}
    \partial_tv+\overline{\nabla}_vv-\nu(\Delta_Bv+v)+\nabla_\Gamma q = f, \quad \mathrm{div}_\Gamma v = 0 \quad\text{on}\quad S^2\times(0,\infty).
  \end{align*}
  The sign of the zeroth order term $v$ in the viscous term of this system is opposite to that of \eqref{E:Limit_TZ}, which produces different structures of the limit equations such as the stability of a solution.
\end{remark}

This paper is organized as follows.
In Section~\ref{S:Pre} we introduce notations for surface quantities and function spaces on a closed surface and give the definition and basic properties of a curved thin domain.
We present in Section~\ref{S:Tool} fundamental formulas and inequalities for functions on the surface and the thin domain.
In Section~\ref{S:Korn} we show the uniform Korn inequalities on the thin domain and the Korn inequalities on the surface.
The Stokes operator on the thin domain with slip boundary conditions is investigated in Section~\ref{S:St}.
We also present a uniform estimate for the gradient part of the Helmholtz--Leray decomposition on the thin domain used in the study of a singular limit problem.
In Section~\ref{S:Ave} we define average operators in the thin direction and derive several estimates for the average of functions on the thin domain.
The main purpose of Section~\ref{S:Ave} is to provide decomposition of a vector field on the thin domain into the average and residual parts and to establish useful estimates for them.
We also use the average operators to approximate bilinear and trilinear forms for functions on the thin domain by those for functions on the surface.
In Section~\ref{S:Tri} we derive a good estimate for the trilinear term, i.e. the $L^2$-inner product of the inertial and viscous terms.
The main ingredients for derivation are the estimates for the Stokes and average operators given in Sections~\ref{S:St} and~\ref{S:Ave}.
Based on the estimate for the trilinear term we establish in Section~\ref{S:GE} the global existence and uniform estimates of a strong solution to the Navier--Stokes equations in the curved thin domain (Theorems~\ref{T:GE} and~\ref{T:UE}).
The last two sections are devoted to the study of a singular limit problem when the curved thin domain degenerates into the closed surface.
In Section~\ref{S:WSol} we investigate weighted solenoidal spaces on a closed surface.
We give characterization of the annihilator of a weighted solenoidal space and prove the weighted Helmholtz--Leray decomposition on the surface with several estimates for the gradient part.
In Section~\ref{S:SL} we study the behavior of the average in the thin direction of a strong solution to the Navier--Stokes equations.
Our goal is to establish the convergence of the average towards a weak solution to the limit equations (Theorems~\ref{T:SL_Weak} and~\ref{T:SL_Strong}).
We fix notations on vectors and matrices in Appendix~\ref{S:Ap_VM} and prove a few lemmas of elementary vector calculus.
In Appendix~\ref{S:Ap_DG} we provide the proofs of lemmas in Section~\ref{S:Pre} involving calculations of surface quantities of the surface and the boundary of the thin domain.
We introduce the Riemannian connection on a surface in Appendix~\ref{S:Ap_RC} as well as present formulas on the covariant derivatives on the surface.
In Appendix~\ref{S:Ap_IR} we give several results on infinitesimal rigid displacements of $\mathbb{R}^3$ which have tangential restrictions on a closed surface.

%%% Section 2 %%%
\section{Preliminaries} \label{S:Pre}
In this section we give notations and formulas on several quantities for a two-dimensional closed surface and a thin domain in $\mathbb{R}^3$.
Some lemmas in this section are proved just by direct calculations involving differential geometry of surfaces.
We give their proofs in Appendix~\ref{S:Ap_DG} to avoid making this section too long.

Throughout this paper we denote by $c$ a general positive constant independent of the parameter $\varepsilon$.
We fix a coordinate system of $\mathbb{R}^3$ and write $x_i$, $i=1,2,3$ for the $i$-th component of a point $x\in\mathbb{R}^3$ under the fixed coordinate system.
For a vector $a\in\mathbb{R}^3$ we denote by $a_i$, $i=1,2,3$ the $i$-th component of $a$.
Sometimes we write $a^i$ or $[a]_i$ instead of $a_i$.
We also denote by $A_{ij}$ or $[A]_{ij}$, $i,j=1,2,3$ the $(i,j)$-entry of a matrix $A\in\mathbb{R}^{3\times3}$ and by $I_3$ the identity matrix of size three.
Other notations and basic formulas on vectors and matrices are given in Appendix~\ref{S:Ap_VM}.

\subsection{Closed surface} \label{SS:Pre_Surf}
Let $\Gamma$ be a two-dimensional closed (i.e. compact and without boundary), connected, and oriented surface in $\mathbb{R}^3$.
We assume that $\Gamma$ is of class $C^\ell$ with $\ell\geq 2$.
By $n$ and $d$ we denote the unit outward normal vector field of $\Gamma$ and the signed distance function from $\Gamma$ increasing in the direction of $n$.
Also, let $\kappa_1$ and $\kappa_2$ be the principal curvatures of $\Gamma$.
By the regularity of $\Gamma$ we have $n\in C^{\ell-1}(\Gamma)^3$ and $\kappa_1,\kappa_2\in C^{\ell-2}(\Gamma)$.
In particular, $\kappa_1$ and $\kappa_2$ are bounded on the compact set $\Gamma$.
Hence we can take a tubular neighborhood $N:=\{x\in\mathbb{R}^3\mid \mathrm{dist}(x,\Gamma)<\delta\}$, $\delta>0$ of $\Gamma$ such that for each $x\in N$ there exists a unique point $\pi(x)\in\Gamma$ satisfying
\begin{align} \label{E:Nor_Coord}
  x = \pi(x)+d(x)n(\pi(x)), \quad \nabla d(x) = n(\pi(x)).
\end{align}
Moreover, $d$ and $\pi$ are of class $C^\ell$ and $C^{\ell-1}$ on $\overline{N}$ (see~\cite[Section~14.6]{GiTr01} for details).
By the boundedness of $\kappa_1$ and $\kappa_2$ we also have
\begin{align} \label{E:Curv_Bound}
  c^{-1} \leq 1-r\kappa_i(y) \leq c \quad\text{for all}\quad y\in\Gamma,\,r\in(-\delta,\delta),\,i=1,2
\end{align}
if we take $\delta>0$ sufficiently small.

Let us define differential operators on $\Gamma$.
For $y\in\Gamma$ we set
\begin{align*}
  P(y) := I_3-n(y)\otimes n(y), \quad Q(y) := n(y)\otimes n(y).
\end{align*}
The matrices $P$ and $Q$ are the orthogonal projections onto the tangent plane and the normal direction of $\Gamma$ and satisfy $|P|=2$, $|Q|=1$, and
\begin{gather*}
  I_3 = P+Q, \quad PQ = QP = 0, \quad P^T = P^2 = P, \quad Q^T = Q^2 = Q, \\
  |a|^2 = |Pa|^2+|Qa|^2, \quad |Pa| \leq |a|, \quad Pa\cdot n = 0, \quad a\in\mathbb{R}^3
\end{gather*}
on $\Gamma$.
Also, $P,Q\in C^{\ell-1}(\Gamma)^{3\times3}$ by the $C^\ell$-regularity of $\Gamma$.
For $\eta\in C^1(\Gamma)$ we define the tangential gradient and the tangential derivatives of $\eta$ as
\begin{align*}
  \nabla_\Gamma\eta(y) := P(y)\nabla\tilde{\eta}(y), \quad \underline{D}_i\eta(y) := \sum_{j=1}^3P_{ij}(y)\partial_j\tilde{\eta}(y), \quad y\in\Gamma,\,i=1,2,3
\end{align*}
so that $\nabla_\Gamma\eta=(\underline{D}_1\eta,\underline{D}_2\eta,\underline{D}_3\eta)$.
Here $\tilde{\eta}$ is a $C^1$-extension of $\eta$ to $N$ with $\tilde{\eta}|_\Gamma=\eta$.
Since $P^2=P$ and $n\cdot Pa=0$ on $\Gamma$ for $a\in\mathbb{R}^3$ we have
\begin{align} \label{E:P_TGr}
  P\nabla_\Gamma\eta = \nabla_\Gamma\eta, \quad n\cdot\nabla_\Gamma\eta = 0 \quad\text{on}\quad \Gamma.
\end{align}
Note that the values of $\nabla_\Gamma\eta$ and $\underline{D}_i\eta$ are independent of the choice of an extension $\tilde{\eta}$ (see e.g.~\cite[Lemma~2.4]{DzEl13}).
In particular, the constant extension $\bar{\eta}:=\eta\circ\pi$ of $\eta$ in the normal direction of $\Gamma$ satisfies
\begin{align} \label{E:ConDer_Surf}
  \nabla\bar{\eta}(y) = \nabla_\Gamma\eta(y), \quad \partial_i\bar{\eta}(y) = \underline{D}_i\eta(y), \quad y\in\Gamma,\,i=1,2,3
\end{align}
since $\nabla\pi(y)=P(y)$ for $y\in\Gamma$ by \eqref{E:Nor_Coord} and $d(y)=0$.
In what follows, a function $\bar{\eta}$ with an overline always stands for the constant extension of a function $\eta$ on $\Gamma$ in the normal direction of $\Gamma$.
The tangential Hessian matrix of $\eta\in C^2(\Gamma)$ and the Laplace--Beltrami operator are given by
\begin{align*}
  \nabla_\Gamma^2\eta := (\underline{D}_i\underline{D}_j\eta)_{i,j}, \quad \Delta_\Gamma\eta := \mathrm{tr}[\nabla_\Gamma^2\eta] = \sum_{i=1}^3\underline{D}_i^2\eta \quad\text{on}\quad \Gamma.
\end{align*}
For a (not necessarily tangential) vector field $v=(v_1,v_2,v_3)\in C^1(\Gamma)^3$ we define the tangential gradient matrix and the surface divergence of $v$ by
\begin{align*}
  \nabla_\Gamma v :=
  \begin{pmatrix}
    \underline{D}_1v_1 & \underline{D}_1v_2 & \underline{D}_1v_3 \\
    \underline{D}_2v_1 & \underline{D}_2v_2 & \underline{D}_2v_3 \\
    \underline{D}_3v_1 & \underline{D}_3v_2 & \underline{D}_3v_3
  \end{pmatrix}, \quad
  \mathrm{div}_\Gamma v := \mathrm{tr}[\nabla_\Gamma v] = \sum_{i=1}^3\underline{D}_iv_i \quad\text{on}\quad \Gamma.
\end{align*}
Also, for $v\in C^1(\Gamma)^3$ and $\eta\in C(\Gamma)^3$ we set
\begin{align*}
  (\eta\cdot\nabla_\Gamma)v := (\eta\cdot\nabla_\Gamma v_1,\eta\cdot\nabla_\Gamma v_2,\eta\cdot\nabla_\Gamma v_3) = (\nabla_\Gamma v)^T\eta \quad\text{on}\quad \Gamma.
\end{align*}
Note that $\nabla_\Gamma v=P\nabla\tilde{v}$ and $(\eta\cdot\nabla_\Gamma)v=[(P\eta)\cdot\nabla]\tilde{v}$ on $\Gamma$ for any $C^1$-extension $\tilde{v}$ of $v$ to $N$ with $\tilde{v}|_\Gamma=v$.
When $v\in C^2(\Gamma)^3$ we write
\begin{align*}
  |\nabla_\Gamma^2v|^2 := \sum_{i,j,k=1}^3|\underline{D}_i\underline{D}_jv_k|^2, \quad \Delta_\Gamma v := (\Delta_\Gamma v_1,\Delta_\Gamma v_2,\Delta_\Gamma v_3) \quad\text{on}\quad \Gamma.
\end{align*}
For a matrix-valued function $A\in C^1(\Gamma)^{3\times 3}$ of the form
\begin{align*}
  A = (A_{ij})_{i,j} =
  \begin{pmatrix}
    A_{11} & A_{12} & A_{13} \\
    A_{21} & A_{22} & A_{23} \\
    A_{31} & A_{32} & A_{33}
  \end{pmatrix}
\end{align*}
we define the surface divergence of $A$ as a vector field on $\Gamma$ with $j$-th component
\begin{align*}
  [\mathrm{div}_\Gamma A]_j := \sum_{i=1}^3 \underline{D}_i A_{ij} \quad\text{on}\quad \Gamma,\, j=1,2,3.
\end{align*}
Next we give surface quantities on $\Gamma$.
We define the Weingarten map $W$, (twice) the mean curvature $H$, and the Gaussian curvature $K$ of $\Gamma$ by
\begin{align} \label{E:Def_WHK}
  W := -\nabla_\Gamma n, \quad H := \mathrm{tr}[W] = -\mathrm{div}_\Gamma n, \quad K := \kappa_1\kappa_2 \quad\text{on}\quad \Gamma.
\end{align}
Note that $W$, $H$, and $K$ are of class $C^{\ell-2}$ and thus bounded on $\Gamma$.

\begin{lemma} \label{L:Form_W}
  The Weingarten map $W$ is symmetric and
  \begin{align} \label{E:Form_W}
    Wn = 0, \quad PW = WP = W, \quad \mathrm{div}_\Gamma P = Hn \quad\text{on}\quad \Gamma.
  \end{align}
  Also, if $v\in C^1(\Gamma)^3$ is tangential, i.e. $v\cdot n=0$ on $\Gamma$, then
  \begin{align} \label{E:Grad_W}
    (\nabla_\Gamma v)n = Wv, \quad \nabla_\Gamma v = P(\nabla_\Gamma v)P+(Wv)\otimes n \quad\text{on}\quad \Gamma.
  \end{align}
\end{lemma}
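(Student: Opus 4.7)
The plan is to unpack each identity from the definition $W=-\nabla_\Gamma n$ and the normal-coordinate relation $\bar n=\nabla d$ on the tubular neighborhood $N$, working componentwise with the tangential derivative $\underline D_i \eta = \sum_j P_{ij}\partial_j\tilde\eta$. Throughout, I will exploit the two identities $|n|^2=1$ and $\bar n=\nabla d$, together with $P\nabla_\Gamma\eta=\nabla_\Gamma\eta$ (which follows directly from the definition $\nabla_\Gamma\eta=P\nabla\tilde\eta$ and $P^2=P$).

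First I would establish $Wn=0$ and the symmetry of $W$. Differentiating $\sum_j n_j^2=1$ tangentially yields $\sum_j n_j\,\underline D_i n_j =0$, which is exactly the $i$-th component of $(\nabla_\Gamma n)n$; hence $Wn=0$. For symmetry, I use \eqref{E:Nor_Coord} to write $\bar n=\nabla d$ on $N$, so that $\partial_i\bar n_j=\partial_i\partial_j d=\partial_j\partial_i d=\partial_j\bar n_i$; restricting to $\Gamma$ and using \eqref{E:ConDer_Surf}, $\underline D_i n_j=\underline D_j n_i$, i.e.\ $\nabla_\Gamma n$ (and hence $W$) is symmetric. The identities $PW=WP=W$ then follow: writing $P=I_3-n\otimes n$, one has $(n\otimes n)W=n\otimes(W^Tn)=n\otimes(Wn)=0$, so $PW=W$, and $WP=(PW)^T=W^T=W$ by symmetry.

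Next I would verify $\mathrm{div}_\Gamma P=Hn$ by a direct computation: from $P_{ij}=\delta_{ij}-n_in_j$,
\begin{equation*}
[\mathrm{div}_\Gamma P]_j=-\sum_i\underline D_i(n_in_j)=-(\mathrm{div}_\Gamma n)n_j-\sum_i n_i\underline D_i n_j.
\end{equation*}
The first term equals $Hn_j$ by the definition \eqref{E:Def_WHK}, and the second vanishes because, using the symmetry $\underline D_i n_j=\underline D_j n_i$ just proved, $\sum_i n_i\underline D_i n_j=\sum_i n_i\underline D_j n_i=\tfrac12\underline D_j(|n|^2)=0$.

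For the tangential-vector identities I would mimic the argument for $Wn=0$: differentiating the constraint $v\cdot n=0$ tangentially gives $\sum_j \underline D_i v_j\, n_j+\sum_j v_j\,\underline D_i n_j=0$, whose $i$-th component reads $[(\nabla_\Gamma v)n]_i+[(\nabla_\Gamma n)v]_i=0$, i.e.\ $(\nabla_\Gamma v)n=-(\nabla_\Gamma n)v=Wv$. Finally, to decompose $\nabla_\Gamma v$, I would insert $I_3=P+n\otimes n$ on the right and use $(\nabla_\Gamma v)(n\otimes n)=[(\nabla_\Gamma v)n]\otimes n=(Wv)\otimes n$ to get $\nabla_\Gamma v=(\nabla_\Gamma v)P+(Wv)\otimes n$, and then apply $P\nabla_\Gamma v=\nabla_\Gamma v$ on the left (together with $PWv=Wv$) to conclude $\nabla_\Gamma v=P(\nabla_\Gamma v)P+(Wv)\otimes n$.

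None of these steps involves a real analytic obstacle; the only thing to be careful about is the convention that $\nabla_\Gamma v$ has $(i,j)$-entry $\underline D_i v_j$, so that the products $(\nabla_\Gamma v)n$ and $Wv$ are correctly matched and the identity $P\nabla_\Gamma v=\nabla_\Gamma v$ (rather than $\nabla_\Gamma v\,P=\nabla_\Gamma v$) is applied on the correct side. Once that convention is fixed, the whole lemma reduces to differentiating the two scalar identities $|n|^2=1$ and $v\cdot n=0$ and invoking the equality of mixed partial derivatives of $d$.
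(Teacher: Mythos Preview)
Your proof is correct and follows essentially the same approach as the paper's: both use $\bar n=\nabla d$ to get the symmetry of $W=-\nabla^2 d$, differentiate $|n|^2=1$ for $Wn=0$, compute $\mathrm{div}_\Gamma P$ componentwise (your use of the symmetry $\underline D_i n_j=\underline D_j n_i$ is equivalent to the paper's use of $W^Tn=0$), and obtain \eqref{E:Grad_W} by differentiating $v\cdot n=0$ and splitting $I_3=P+n\otimes n$. The only differences are cosmetic.
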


\begin{proof}
  By \eqref{E:Nor_Coord}, \eqref{E:ConDer_Surf}, and $|n|^2=1$ on $\Gamma$ we have
  \begin{align*}
    W = -\nabla\bar{n} = -\nabla^2d, \quad Wn = -(\nabla_\Gamma n)n = -\frac{1}{2}\nabla_\Gamma(|n|^2) = 0 \quad\text{on}\quad \Gamma.
  \end{align*}
  Hence $W$ is symmetric and the first equality of \eqref{E:Form_W} holds.
  Also, the second equality follows from $W^Tn=Wn=0$ on $\Gamma$.
  We observe by \eqref{E:Def_WHK} and $W^Tn=0$ on $\Gamma$ that
  \begin{align*}
    [\mathrm{div}_\Gamma P]_j = \sum_{i=1}^3\underline{D}_i(\delta_{ij}-n_in_j) = \sum_{i=1}^3(W_{ii}n_j+W_{ij}n_i) = Hn_j \quad\text{on}\quad \Gamma,\,j=1,2,3,
  \end{align*}
  where $\delta_{ij}$ is the Kronecker delta.
  This shows the third equality of \eqref{E:Form_W}.

  Let $v\in C^1(\Gamma)^3$ satisfy $v\cdot n=0$ on $\Gamma$.
  Then
  \begin{align*}
    (\nabla_\Gamma v)n = \nabla_\Gamma(v\cdot n)-(\nabla_\Gamma n)v = Wv \quad\text{on}\quad \Gamma.
  \end{align*}
  Thus the first equality of \eqref{E:Grad_W} holds.
  Also, by $I_3=P+Q$ on $\Gamma$ and \eqref{E:P_TGr} we have
  \begin{align*}
    \nabla_\Gamma v = (\nabla_\Gamma v)P+(\nabla_\Gamma v)Q = P(\nabla_\Gamma v)P+\{(\nabla_\Gamma v)n\}\otimes n \quad\text{on}\quad \Gamma.
  \end{align*}
  Hence the second equality of \eqref{E:Grad_W} follows from the first one.
\end{proof}

By \eqref{E:Form_W}, $W$ has the eigenvalue zero associated with the eigenvector $n$.
The other eigenvalues of $W$ are the principal curvatures $\kappa_1$ and $\kappa_2$ (see e.g.~\cite[Section~14.6]{GiTr01} and~\cite[Section~VII.5]{KoNo96}) and thus $H=\kappa_1+\kappa_2$ on $\Gamma$.

When we calculate the derivatives of the constant extension of a function on $\Gamma$, the inverse matrix of $I_3-rW(y)$ for $y\in\Gamma$ and $r\in(-\delta,\delta)$ appears.

\begin{lemma} \label{L:Wein}
  The matrix $I_3-rW(y)$ is invertible and
  \begin{align} \label{E:WReso_P}
    \{I_3-rW(y)\}^{-1}P(y) = P(y)\{I_3-rW(y)\}^{-1}
  \end{align}
  for all $y\in\Gamma$ and $r\in(-\delta,\delta)$.
  Moreover, there exists a constant $c>0$ such that
  \begin{gather}
    c^{-1}|a| \leq \bigl|\{I_3-rW(y)\}^ka\bigr| \leq c|a|, \quad k=\pm1, \label{E:Wein_Bound} \\
    \bigl|I_3-\{I_3-rW(y)\}^{-1}\bigr| \leq c|r| \label{E:Wein_Diff}
  \end{gather}
  for all $y\in\Gamma$, $r\in(-\delta,\delta)$, and $a\in\mathbb{R}^3$.
\end{lemma}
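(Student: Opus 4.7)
The plan is to exploit the symmetry of $W$ together with its explicit spectral structure: by Lemma~\ref{L:Form_W} and the remark following it, $W(y)$ is symmetric with eigenvalues $0, \kappa_1(y), \kappa_2(y)$, so $I_3 - rW(y)$ is symmetric with eigenvalues $1, 1-r\kappa_1(y), 1-r\kappa_2(y)$. By the bound \eqref{E:Curv_Bound}, each of these lies in $[c^{-1}, c]$ uniformly for $y \in \Gamma$ and $r \in (-\delta, \delta)$, which gives invertibility and the two-sided bound \eqref{E:Wein_Bound} at once by spectral calculus (or equivalently by diagonalizing $W$ pointwise via an orthonormal eigenbasis and noting that the extremal singular values of $I_3-rW(y)$ and $\{I_3-rW(y)\}^{-1}$ are exactly the reciprocals of each other).

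Next I would establish the commutation \eqref{E:WReso_P}. From \eqref{E:Form_W} we have $PW = WP = W$, hence
\begin{equation*}
  P(I_3 - rW) = P - rW = (I_3 - rW)P \quad\text{on}\quad \Gamma.
\end{equation*}
Multiplying this identity by $\{I_3-rW\}^{-1}$ on the left and right and rearranging yields $\{I_3-rW\}^{-1}P = P\{I_3-rW\}^{-1}$, which is \eqref{E:WReso_P}.

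Finally, for the difference estimate \eqref{E:Wein_Diff}, I would use the algebraic identity
\begin{equation*}
  I_3 - \{I_3-rW(y)\}^{-1} = \{I_3-rW(y)\}^{-1}\bigl[(I_3-rW(y)) - I_3\bigr] = -r\,\{I_3-rW(y)\}^{-1}W(y),
\end{equation*}
and then bound the right-hand side using the already established \eqref{E:Wein_Bound} together with the uniform bound on $|W|$ coming from boundedness of the principal curvatures on the compact set $\Gamma$.

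There is no real obstacle here; the lemma is essentially a packaging of elementary linear algebra on a pointwise-diagonalizable symmetric matrix. The only thing to be careful about is uniformity of the constants in $(y, r)$, which is automatic from compactness of $\Gamma$, the $C^{\ell-2}$-regularity of $W$ (so $|W|$ is bounded on $\Gamma$), and the already-available uniform lower bound \eqref{E:Curv_Bound} for the eigenvalues $1 - r\kappa_i(y)$.
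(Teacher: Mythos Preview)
Your proposal is correct and follows essentially the same approach as the paper: both rely on the spectral structure of the symmetric matrix $W(y)$ (eigenvalues $0,\kappa_1,\kappa_2$) together with \eqref{E:Curv_Bound} for invertibility and \eqref{E:Wein_Bound}, and both derive \eqref{E:WReso_P} from $PW=WP$. The only cosmetic difference is that for \eqref{E:Wein_Diff} the paper computes directly in the eigenbasis to get $\bigl|I_3-\{I_3-rW\}^{-1}\bigr|^2=\sum_{i=1,2}|r\kappa_i(1-r\kappa_i)^{-1}|^2$, whereas you use the equivalent operator identity $I_3-\{I_3-rW\}^{-1}=-r\{I_3-rW\}^{-1}W$; both yield the same bound.
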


\begin{lemma} \label{L:Pi_Der}
  For all $x\in N$ we have
  \begin{align} \label{E:Pi_Der}
    \nabla\pi(x) &= \left\{I_3-d(x)\overline{W}(x)\right\}^{-1}\overline{P}(x).
  \end{align}
  Therefore, the constant extension $\bar{\eta}=\eta\circ\pi$ of $\eta\in C^1(\Gamma)$ satisfies
  \begin{align} \label{E:ConDer_Dom}
    \nabla\bar{\eta}(x) = \left\{I_3-d(x)\overline{W}(x)\right\}^{-1}\overline{\nabla_\Gamma\eta}(x),\quad x\in N
  \end{align}
  and there exists a constant $c>0$ independent of $\eta$ such that
  \begin{gather}
    c^{-1}\left|\overline{\nabla_\Gamma\eta}(x)\right| \leq |\nabla\bar{\eta}(x)| \leq c\left|\overline{\nabla_\Gamma\eta}(x)\right|, \label{E:ConDer_Bound} \\
    \left|\nabla\bar{\eta}(x)-\overline{\nabla_\Gamma\eta}(x)\right| \leq c\left|d(x)\overline{\nabla_\Gamma\eta}(x)\right| \label{E:ConDer_Diff}
  \end{gather}
  for all $x\in N$.
  If $\Gamma$ is of class $C^3$ and $\eta\in C^2(\Gamma)$, then we have
  \begin{align} \label{E:Con_Hess}
    |\nabla^2\eta(x)| \leq c\left(\left|\overline{\nabla_\Gamma\eta}(x)\right|+\left|\overline{\nabla_\Gamma^2\eta}(x)\right|\right), \quad x\in N.
  \end{align}
  Moreover, $\Delta\bar{\eta}=\Delta_\Gamma\eta$ on $\Gamma$.
\end{lemma}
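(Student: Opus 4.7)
The plan is to derive \eqref{E:Pi_Der} directly from the defining identity \eqref{E:Nor_Coord}, and to obtain the remaining assertions as consequences. Writing $\bar n(x):=n(\pi(x))$, I would differentiate $x=\pi(x)+d(x)\bar n(x)$ in $x$. Since $\pi$ takes values in $\Gamma$, each row of $\nabla\pi(x)$ is tangent to $\Gamma$ at $\pi(x)$; the chain rule applied to $\bar n$, combined with this tangency and the identity $\nabla_\Gamma n=-W$, therefore yields $\nabla\bar n=-\nabla\pi\,\overline W$. Together with $\nabla d=\bar n$ (a restatement of \eqref{E:Nor_Coord}) and $\bar n\otimes\bar n=\overline Q=I_3-\overline P$, the differentiation gives
\[
I_3=\nabla\pi+\overline Q-d\,\nabla\pi\,\overline W, \qquad\text{i.e.}\qquad \overline P=\nabla\pi\,(I_3-d\overline W).
\]
The invertibility of $I_3-d\overline W$ and the commutation relation \eqref{E:WReso_P} from Lemma~\ref{L:Wein} then rearrange this into \eqref{E:Pi_Der}.

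From \eqref{E:Pi_Der} I would obtain \eqref{E:ConDer_Dom} by applying the chain rule to $\bar\eta=\eta\circ\pi$: for any $C^1$-extension $\tilde\eta$ of $\eta$ one has $\nabla\bar\eta(x)=\nabla\pi(x)\,\nabla\tilde\eta(\pi(x))$, and since $\overline P$ annihilates the normal component of $\nabla\tilde\eta|_\Gamma$ while $P\nabla\tilde\eta|_\Gamma=\nabla_\Gamma\eta$, the formula \eqref{E:Pi_Der} reduces this to $\{I_3-d\overline W\}^{-1}\overline{\nabla_\Gamma\eta}$. The bounds \eqref{E:ConDer_Bound} and \eqref{E:ConDer_Diff} are then immediate: the former follows from \eqref{E:Wein_Bound} applied to $a=\overline{\nabla_\Gamma\eta}(x)$, while the latter follows from writing $\nabla\bar\eta-\overline{\nabla_\Gamma\eta}=(\{I_3-d\overline W\}^{-1}-I_3)\overline{\nabla_\Gamma\eta}$ and invoking \eqref{E:Wein_Diff}.

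For the Hessian bound \eqref{E:Con_Hess} I would differentiate \eqref{E:ConDer_Dom} once more. The derivative of the matrix factor $\{I_3-d\overline W\}^{-1}$ is bounded on $\overline N$ by the $C^{\ell-2}$-regularity of $W$ (here using $\ell\geq 3$) and the smoothness of $d$, producing a term controlled by $|\overline{\nabla_\Gamma\eta}|$. The derivative of $\overline{\nabla_\Gamma\eta}$ is handled by applying \eqref{E:ConDer_Dom} component-wise to the scalar fields $\underline D_k\eta$ and invoking \eqref{E:ConDer_Bound}, which contributes a term controlled by $|\overline{\nabla_\Gamma^2\eta}|$. Summing gives \eqref{E:Con_Hess}.

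For the identity $\Delta\bar\eta=\Delta_\Gamma\eta$ on $\Gamma$, I would take the trace of $\nabla^2\bar\eta$ at $y\in\Gamma$. Since $d(y)=0$, the factor $\{I_3-d\overline W\}^{-1}$ restricts to $I_3$ there, so the ``inner'' derivative term reduces to $\sum_i\underline D_i(\nabla_\Gamma\eta)_i=\Delta_\Gamma\eta$. The ``outer'' derivative term involves $\partial_i\{I_3-d\overline W\}^{-1}|_y=n_i W$ (by the matrix identity $\partial B^{-1}=-B^{-1}(\partial B)B^{-1}$ evaluated at $d=0$ with $\nabla d=n$), which contracted against $\overline{\nabla_\Gamma\eta}$ gives $n\cdot W\nabla_\Gamma\eta=Wn\cdot\nabla_\Gamma\eta=0$ by the symmetry of $W$ and $Wn=0$ from Lemma~\ref{L:Form_W}. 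The main subtlety throughout is the bookkeeping between ambient and tangential derivatives; once the rows of $\nabla\pi$ are recognised as tangent to $\Gamma$, every subsequent step is a direct computation using Lemma~\ref{L:Wein}.
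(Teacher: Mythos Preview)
Your proposal is correct and follows essentially the same approach as the paper's proof: both differentiate the identity $x=\pi(x)+d(x)\bar n(x)$, use $\nabla\bar n=-\nabla\pi\,\overline W$ together with $\nabla d=\bar n$ to obtain $\overline P=\nabla\pi\,(I_3-d\overline W)$, and then invoke Lemma~\ref{L:Wein} (invertibility and \eqref{E:WReso_P}) to rearrange. The remaining claims are handled identically---\eqref{E:ConDer_Dom} by chain rule and \eqref{E:P_TGr}, the estimates \eqref{E:ConDer_Bound}--\eqref{E:ConDer_Diff} by \eqref{E:Wein_Bound}--\eqref{E:Wein_Diff}, the Hessian bound by differentiating \eqref{E:ConDer_Dom} and bounding $\partial_i\{I_3-d\overline W\}^{-1}$ via the $C^1$-regularity of $W$, and the Laplacian identity by evaluating at $d=0$ and using $Wn=0$.
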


We give the proofs of Lemmas~\ref{L:Wein} and~\ref{L:Pi_Der} in Appendix~\ref{S:Ap_DG}.

Since $\bar{n}=\nabla d$ in $N$ by \eqref{E:Nor_Coord}, we use \eqref{E:ConDer_Dom} and $W=-\nabla_\Gamma n$ on $\Gamma$ to obtain
\begin{align} \label{E:Nor_Grad}
  \nabla\bar{n}(x) = \nabla^2d(x) = -\left\{I_3-d(x)\overline{W}(x)\right\}^{-1}\overline{W}(x), \quad x\in N.
\end{align}
If $\Gamma$ is of class $C^3$, then $n\in C^2(\Gamma)^3$ and thus
\begin{align} \label{E:NorG_Bound}
  |\nabla\bar{n}(x)| \leq c, \quad |\nabla^2\bar{n}(x)| \leq c, \quad x\in N
\end{align}
with some constant $c>0$ by \eqref{E:ConDer_Dom} and \eqref{E:Con_Hess}.

Let us define the weak tangential derivatives of a function on $\Gamma$ and the Sobolev spaces on $\Gamma$.
For a vector field $v\in C^1(\Gamma)^3$ we have
\begin{align*}
  \int_\Gamma\mathrm{div}_\Gamma v\,d\mathcal{H}^2 = \int_\Gamma\mathrm{div}_\Gamma(Pv)\,d\mathcal{H}^2+\int_\Gamma\mathrm{div}_\Gamma[(v\cdot n)n]\,d\mathcal{H}^2
\end{align*}
by the decomposition $v=Pv+(v\cdot n)n$, where $\mathcal{H}^2$ is the two-dimensional Hausdorff measure.
The first integral on the right-hand side vanishes by the Stokes theorem since $Pv$ is tangential on the closed surface $\Gamma$.
Moreover, to the second integral we apply $\mathrm{div}_\Gamma(\xi n)=\nabla_\Gamma\xi\cdot n+\xi\,\mathrm{div}_\Gamma n=-\xi H$ for $\xi\in C^1(\Gamma)$.
Then we get
\begin{align*}
  \int_\Gamma\mathrm{div}_\Gamma v\,d\mathcal{H}^2 = -\int_\Gamma(v\cdot n)H\,d\mathcal{H}^2
\end{align*}
for all $v\in C^1(\Gamma)^3$.
In particular, for $\eta,\xi\in C^1(\Gamma)$ we set $v=\eta\xi e_i$ in the above formula, where $\{e_1,e_2,e_3\}$ is the standard basis of $\mathbb{R}^3$, to obtain
\begin{align} \label{E:IbP_TD}
  \int_\Gamma(\eta\underline{D}_i\xi+\xi\underline{D}_i\eta)\,d\mathcal{H}^2 = -\int_\Gamma \eta\xi Hn_i\,d\mathcal{H}^2, \quad i=1,2,3.
\end{align}
Based on this identity, for $p\in[1,\infty]$ and $i=1,2,3$ we say that $\eta\in L^p(\Gamma)$ has the $i$-th weak tangential derivative if there exists $\eta_i\in L^p(\Gamma)$ such that
\begin{align} \label{E:Def_WTD}
  \int_\Gamma \eta_i\xi\,d\mathcal{H}^2 = -\int_\Gamma \eta(\underline{D}_i\xi+\xi Hn_i)\,d\mathcal{H}^2
\end{align}
for all $\xi\in C^1(\Gamma)$.
In this case we write $\underline{D}_i\eta=\eta_i$ and define the Sobolev space
\begin{align*}
  W^{1,p}(\Gamma) &:= \{\eta \in L^p(\Gamma) \mid \text{$\underline{D}_i\eta\in L^p(\Gamma)$ for all $i=1,2,3$}\}, \\
  \|\eta\|_{W^{1,p}(\Gamma)} &:=
  \begin{cases}
    \left(\|\eta\|_{L^p(\Gamma)}^p+\|\nabla_\Gamma\eta\|_{L^p(\Gamma)}^p\right)^{1/p} &\text{if}\quad p\in[1,\infty), \\
    \|\eta\|_{L^\infty(\Gamma)}+\|\nabla_\Gamma\eta\|_{L^\infty(\Gamma)} &\text{if}\quad p=\infty.
  \end{cases}
\end{align*}
Note that $W^{1,p}(\Gamma)$ is a Banach space.
In particular, $H^1(\Gamma)=W^{1,2}(\Gamma)$ is a Hilbert space equipped with inner product $(\eta,\xi)_{H^1(\Gamma)}:=(\eta,\xi)_{L^2(\Gamma)}+(\nabla_\Gamma\eta,\nabla_\Gamma\xi)_{L^2(\Gamma)}$.
We also define the second order Sobolev space
\begin{align*}
  W^{2,p}(\Gamma) &:= \{\eta \in W^{1,p}(\Gamma) \mid \text{$\underline{D}_i\underline{D}_j\eta\in L^p(\Gamma)$ for all $i,j=1,2,3$}\}, \\
  \|\eta\|_{W^{2,p}(\Gamma)} &:=
  \begin{cases}
    \left(\|\eta\|_{W^{1,p}(\Gamma)}^p+\|\nabla_\Gamma^2\eta\|_{L^p(\Gamma)}^p\right)^{1/p} &\text{if}\quad p\in[1,\infty), \\
    \|\eta\|_{W^{1,\infty}(\Gamma)}+\|\nabla_\Gamma^2\eta\|_{L^\infty(\Gamma)} &\text{if}\quad p=\infty.
  \end{cases}
\end{align*}
Then $W^{2,p}(\Gamma)$ is again a Banach space and $H^2(\Gamma)=W^{2,2}(\Gamma)$ is a Hilbert space.
In what follows, we write $W^{0,p}(\Gamma)=L^p(\Gamma)$ for $p\in[1,\infty]$.

\begin{lemma} \label{L:Wmp_Appr}
  For $p\in[1,\infty)$ and $m=0,1,\dots,\ell$, $C^\ell(\Gamma)$ is dense in $W^{m,p}(\Gamma)$.
\end{lemma}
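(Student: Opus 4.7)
The plan is to reduce the claim to the standard mollification argument on $\mathbb{R}^2$ via a finite $C^\ell$-atlas and a subordinate partition of unity. Since $\Gamma$ is compact and of class $C^\ell$, I would fix a finite atlas $\{(U_k,\varphi_k)\}_{k=1}^K$ with $C^\ell$-diffeomorphisms $\varphi_k\colon V_k\to U_k$ from open sets $V_k\subset\mathbb{R}^2$ onto patches $U_k\subset\Gamma$, together with a $C^\ell$-partition of unity $\{\chi_k\}$ subordinate to $\{U_k\}$. Writing $\eta = \sum_k\chi_k\eta$ for $\eta\in W^{m,p}(\Gamma)$, it then suffices to approximate each $\chi_k\eta$ separately; the preliminary fact that $\chi_k\eta\in W^{m,p}(\Gamma)$ with $\mathrm{supp}(\chi_k\eta)\Subset U_k$ follows inductively from the Leibniz rule for weak tangential derivatives, which in turn is deduced from \eqref{E:Def_WTD} by approximation, using $\chi_k\in C^\ell(\Gamma)\subset W^{m,\infty}(\Gamma)$.

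The key technical step is the equivalence of two descriptions of the Sobolev space for functions supported in a single chart: $\zeta\in W^{m,p}(\Gamma)$ with $\mathrm{supp}\,\zeta\Subset U_k$ if and only if $\zeta\circ\varphi_k \in W^{m,p}(V_k)$, with comparable norms. Since $\varphi_k$ is a $C^\ell$-immersion, the $3\times 2$ Jacobian $D\varphi_k$ has linearly independent columns spanning the tangent plane of $\Gamma$ along $U_k$, so $\nabla_\Gamma$ can be expressed in the local frame $\{\partial_\alpha\varphi_k\}_{\alpha=1,2}$ via the inverse metric $(g^{\alpha\beta})$, and conversely each coordinate derivative $\partial/\partial s^\alpha$ is a $C^{\ell-1}$ linear combination of the $\underline{D}_i$. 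Iterating these chain-rule identities $m$ times ($m\leq\ell$) produces the claimed bi-Lipschitz equivalence of the two $W^{m,p}$ norms. Granting it, I extend $f_k:=(\chi_k\eta)\circ\varphi_k$ by zero to $\mathbb{R}^2$, mollify $f_k^\delta:=\rho_\delta\ast f_k$ with a standard mollifier so that $f_k^\delta\to f_k$ in $W^{m,p}(\mathbb{R}^2)$ with $\mathrm{supp}\,f_k^\delta\Subset V_k$ for small $\delta$, set $\eta_k^\delta:=f_k^\delta\circ\varphi_k^{-1}$ extended by zero to $\Gamma\setminus U_k$, and define $\eta^\delta:=\sum_k\eta_k^\delta$. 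Since $\varphi_k^{-1}$ is $C^\ell$ and $f_k^\delta$ vanishes near $\partial V_k$, each $\eta_k^\delta$ lies in $C^\ell(\Gamma)$, and the norm equivalence yields $\eta^\delta\to\eta$ in $W^{m,p}(\Gamma)$.

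The main obstacle is the chain-rule bookkeeping underlying the local norm equivalence: one must verify that the substitution between $\underline{D}_i$ and the coordinate derivatives $\partial/\partial s^\alpha$ is compatible with the weak formulation \eqref{E:Def_WTD} for all orders up to $m=\ell$, and that iterated application of the Leibniz rule with $C^{\ell-1}$ coefficients does not lose regularity. This is straightforward but tedious because \eqref{E:Def_WTD} is phrased intrinsically using the mean curvature correction $Hn_i$, whereas the Euclidean weak derivative on $V_k$ has no such term, so one must reconcile the two via integration by parts under the change of variables $\varphi_k$. Once this local correspondence is established, density of $C^\ell(\Gamma)$ in $W^{m,p}(\Gamma)$ follows immediately from the classical density of $C_c^\infty(V_k)$ in $W^{m,p}(V_k)$ for functions compactly supported in $V_k$.
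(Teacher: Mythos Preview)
Your proposal is correct and follows essentially the same approach as the paper: localize via a $C^\ell$ partition of unity, establish the equivalence of the intrinsic $W^{m,p}(\Gamma)$ norm with the Euclidean $W^{m,p}$ norm in a chart (the paper does this explicitly for $m=2$ via the computations \eqref{Pf_WA:Lp_Hess_R}--\eqref{Pf_WA:Lp_Hess_L}, relying on the formulas \eqref{Pf_Met:Grad} and the bounds \eqref{E:Mu_Bound}, \eqref{E:Metric}), mollify in the parameter domain, and pull back. The chain-rule bookkeeping you flag as the main obstacle is exactly what the paper works out in detail for $m=2$, stating that the other cases are similar.
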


We prove Lemma~\ref{L:Wmp_Appr} in Appendix~\ref{S:Ap_DG} by standard localization and mollification arguments.
As in the case of a flat domain, Poincar\'{e}'s inequality holds on $\Gamma$.

\begin{lemma} \label{L:Poin_Surf_Lp}
  Let $p\in[1,\infty)$.
  There exists a constant $c>0$ such that
  \begin{align} \label{E:Poin_Surf_Lp}
    \|\eta\|_{L^p(\Gamma)} \leq c\|\nabla_\Gamma\eta\|_{L^p(\Gamma)}
  \end{align}
  for all $\eta\in W^{1,p}(\Gamma)$ satisfying $\int_\Gamma\eta\,d\mathcal{H}^2=0$.
\end{lemma}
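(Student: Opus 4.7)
The plan is to argue by contradiction using a compactness argument of Rellich--Kondrachov type, which is the standard route for Poincaré inequalities on compact manifolds. Suppose the inequality fails. Then for each $k\in\mathbb{N}$ there exists $\eta_k\in W^{1,p}(\Gamma)$ with $\int_\Gamma\eta_k\,d\mathcal{H}^2=0$ and
\begin{equation*}
\|\eta_k\|_{L^p(\Gamma)} \geq k\,\|\nabla_\Gamma\eta_k\|_{L^p(\Gamma)}.
\end{equation*}
By normalization we may take $\|\eta_k\|_{L^p(\Gamma)}=1$, so that $\|\nabla_\Gamma\eta_k\|_{L^p(\Gamma)}\leq 1/k \to 0$. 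In particular, $\{\eta_k\}$ is bounded in $W^{1,p}(\Gamma)$.

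Next, I would invoke a Rellich--Kondrachov-type compact embedding $W^{1,p}(\Gamma)\hookrightarrow L^p(\Gamma)$. This can be established in the standard way: cover $\Gamma$ by finitely many local charts (using a partition of unity subordinate to the cover), transfer the $W^{1,p}$-norm via the definition \eqref{E:Def_WTD} of the weak tangential derivative to a $W^{1,p}$-norm on flat pieces of $\mathbb{R}^2$, and apply the classical Rellich--Kondrachov theorem together with Lemma~\ref{L:Wmp_Appr} to pass between classical and weak derivatives. This yields a subsequence (still denoted $\{\eta_k\}$) and some $\eta\in L^p(\Gamma)$ with $\eta_k\to\eta$ strongly in $L^p(\Gamma)$. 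Then $\|\eta\|_{L^p(\Gamma)}=1$ and $\int_\Gamma\eta\,d\mathcal{H}^2=0$.

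From $\nabla_\Gamma\eta_k\to 0$ in $L^p(\Gamma)^3$ and the definition \eqref{E:Def_WTD}, passing to the limit in
\begin{equation*}
\int_\Gamma (\underline{D}_i\eta_k)\,\xi\,d\mathcal{H}^2 = -\int_\Gamma \eta_k(\underline{D}_i\xi+\xi H n_i)\,d\mathcal{H}^2, \quad \xi\in C^1(\Gamma),
\end{equation*}
shows that $\eta$ has weak tangential derivatives with $\nabla_\Gamma\eta=0$ in $L^p(\Gamma)$. It remains to conclude $\eta$ is (almost everywhere) constant on $\Gamma$. Using Lemma~\ref{L:Wmp_Appr} I would approximate $\eta$ by $\eta^{(m)}\in C^\ell(\Gamma)$ in $W^{1,p}(\Gamma)$; a smooth function with vanishing tangential gradient is constant on each connected component of $\Gamma$ (since along any smooth curve in $\Gamma$ the derivative is $\nabla_\Gamma\eta^{(m)}\cdot\dot{\gamma}$), so $\eta^{(m)}$ is constant by connectedness of $\Gamma$. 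The $L^p$-limit $\eta$ is therefore constant. Combined with the mean-zero condition, $\eta\equiv 0$, contradicting $\|\eta\|_{L^p(\Gamma)}=1$.

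The main obstacle is the compact embedding step: the weak tangential derivatives $\underline{D}_i\eta$ are defined intrinsically via the identity \eqref{E:Def_WTD}, not through local charts, so some care is needed to relate $W^{1,p}(\Gamma)$ to the classical Sobolev spaces on chart domains in $\mathbb{R}^2$ before Rellich--Kondrachov can be applied. Everything else is soft: the contradiction framework, the weak-limit step, and the constancy argument are all standard once compactness is in hand.
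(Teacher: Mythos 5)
The paper itself gives no proof of this lemma; it only cites \cite{DzEl13} and remarks that the argument there works for closed, connected, oriented $C^2$ hypersurfaces. Your contradiction-plus-compactness scheme is exactly the standard route for such Poincar\'e inequalities, and most of it is sound: the normalization, the passage to the limit in the defining identity \eqref{E:Def_WTD} to get $\nabla_\Gamma\eta=0$ and $\int_\Gamma\eta\,d\mathcal{H}^2=0$ with $\|\eta\|_{L^p(\Gamma)}=1$, and the final contradiction are all fine. The compact embedding $W^{1,p}(\Gamma)\hookrightarrow L^p(\Gamma)$ is only sketched, but the ingredients you name are genuinely available in the paper: the chart-norm equivalences \eqref{E:Lp_Loc} and \eqref{E:W1p_Loc} of Lemma~\ref{L:Metric}, a $C^1$ partition of unity, and the product rule $\underline{D}_i(\chi\eta)=\chi\underline{D}_i\eta+\eta\underline{D}_i\chi$ for $\chi\in C^1(\Gamma)$, which follows directly from \eqref{E:Def_WTD} since $\chi\xi$ is again an admissible test function. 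So that step is acceptable as a sketch.

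The one genuine flaw is the constancy step. You approximate $\eta$ by $\eta^{(m)}\in C^\ell(\Gamma)$ in $W^{1,p}(\Gamma)$ via Lemma~\ref{L:Wmp_Appr} and then assert that $\eta^{(m)}$ is constant because a smooth function with vanishing tangential gradient is constant. But $\eta^{(m)}$ does not have vanishing tangential gradient: only $\nabla_\Gamma\eta^{(m)}\to 0$ in $L^p(\Gamma)$, and the curve argument would require pointwise (or $L^\infty$-along-the-curve) control of $\nabla_\Gamma\eta^{(m)}$, which $W^{1,p}$ convergence does not give. As written, "$\eta^{(m)}$ is constant, hence $\eta$ is constant" does not follow. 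The repair is routine but goes through charts rather than through the approximants being constant: fix a local parametrization $\mu\colon U\to\Gamma$ and use the identity $\partial_{s_k}(\eta^{(m)}\circ\mu)=\nabla_\Gamma\eta^{(m)}(\mu(\cdot))\cdot\partial_{s_k}\mu$ (the same identity exploited in the proofs of Lemmas~\ref{L:Metric} and~\ref{L:Wmp_Appr}); passing to the limit in $L^p_{loc}(U)$ shows that $\eta\circ\mu\in W^{1,p}_{loc}(U)$ with vanishing weak gradient, hence $\eta\circ\mu$ is a.e.\ constant on each connected component of $U$. Covering $\Gamma$ by finitely many such charts and using the connectedness of $\Gamma$ then gives that $\eta$ is a.e.\ constant, and the mean-zero condition forces $\eta=0$, contradicting $\|\eta\|_{L^p(\Gamma)}=1$. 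With this modification your proof is complete and coincides with the standard argument the paper outsources to \cite{DzEl13}.
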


We refer to~\cite[Theorem~2.12]{DzEl13} for the proof of Lemma~\ref{L:Poin_Surf_Lp}.
Note that the proof given there applies to a closed, connected, and oriented hypersurface of class $C^2$.

Let $\mathcal{X}(\Gamma)$ be a function space on $\Gamma$ like $C^m(\Gamma)$, $L^p(\Gamma)$, and $W^{m,p}(\Gamma)$.
We define the space of all tangential vector fields on $\Gamma$ whose components in $\mathcal{X}(\Gamma)$ by
\begin{align*}
  \mathcal{X}(\Gamma,T\Gamma) := \{v\in\mathcal{X}(\Gamma)^3 \mid \text{$v\cdot n=0$ on $\Gamma$}\}.
\end{align*}
Note that $W^{m,p}(\Gamma,T\Gamma)$ is a closed subspace of $W^{m,p}(\Gamma)^3$, and thus a Banach space.
Moreover, an element of $W^{m,p}(\Gamma,T\Gamma)$ with $p\neq\infty$ can be approximated by smooth tangential vector fields on $\Gamma$.

\begin{lemma} \label{L:Wmp_Tan_Appr}
  Let $p\in[1,\infty)$ and $m=0,1,\dots,\ell-1$.
  Then $C^{\ell-1}(\Gamma,T\Gamma)$ is dense in $W^{m,p}(\Gamma,T\Gamma)$ with respect to the norm $\|\cdot\|_{W^{m,p}(\Gamma)}$.
\end{lemma}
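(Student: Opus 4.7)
The plan is to reduce Lemma~\ref{L:Wmp_Tan_Appr} to the scalar approximation result Lemma~\ref{L:Wmp_Appr} by applying it componentwise and then projecting the resulting smooth vector field back onto the tangent bundle via $P = I_3 - n\otimes n$.

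More precisely, fix $v \in W^{m,p}(\Gamma,T\Gamma)$ and $\delta>0$. By Lemma~\ref{L:Wmp_Appr}, for each component $v_i \in W^{m,p}(\Gamma)$ with $i=1,2,3$, I can choose $w_i \in C^\ell(\Gamma)$ with $\|v_i - w_i\|_{W^{m,p}(\Gamma)} < \delta$. Set $w = (w_1,w_2,w_3) \in C^\ell(\Gamma)^3$, and define its tangential projection
\begin{equation*}
  \tilde w := Pw \in C^{\ell-1}(\Gamma)^3.
\end{equation*}
Since $P$ is given componentwise by $P_{ij} = \delta_{ij} - n_i n_j$ with $n \in C^{\ell-1}(\Gamma)^3$, we have $P \in C^{\ell-1}(\Gamma)^{3\times 3}$, so $\tilde w \in C^{\ell-1}(\Gamma)^3$ and trivially $\tilde w \cdot n = 0$ on $\Gamma$, hence $\tilde w \in C^{\ell-1}(\Gamma,T\Gamma)$.

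Because $v$ is already tangential, $v = Pv$ on $\Gamma$, so
\begin{equation*}
  v - \tilde w = P(v - w) \quad\text{on}\quad \Gamma.
\end{equation*}
The conclusion will follow once I verify that multiplication by $P$ is a bounded operator on $W^{m,p}(\Gamma)^3$ for $m \in \{0,1,\dots,\ell-1\}$, with operator norm depending only on $\Gamma$; then
\begin{equation*}
  \|v - \tilde w\|_{W^{m,p}(\Gamma)} = \|P(v-w)\|_{W^{m,p}(\Gamma)} \leq C \|v-w\|_{W^{m,p}(\Gamma)} \leq 3C\delta,
\end{equation*}
and $\delta>0$ being arbitrary gives the density.

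The boundedness of multiplication by $P$ on $W^{m,p}(\Gamma)$ is the only technical point. It is handled by the Leibniz rule for weak tangential derivatives together with the fact that $P \in C^{\ell-1}(\Gamma)^{3\times 3}$ implies $\underline{D}^\alpha P \in L^\infty(\Gamma)^{3\times 3}$ for every multiindex $\alpha$ with $|\alpha|\leq \ell-1$. The Leibniz rule in this setting follows from the definition \eqref{E:Def_WTD} of the weak tangential derivatives by testing against $\xi \in C^1(\Gamma)$ and integrating by parts via \eqref{E:IbP_TD}, first for $w \in C^\ell(\Gamma)$ (where componentwise differentiation is classical) and then extending to $v \in W^{m,p}(\Gamma)$ by the density of $C^\ell(\Gamma)$ from Lemma~\ref{L:Wmp_Appr}. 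Since $m \leq \ell-1$, at most $\ell-1$ derivatives fall on $P$, which is precisely what its regularity affords; this is the only reason the statement is formulated with $m \leq \ell-1$ rather than $m\leq\ell$, and it is the one spot where the argument could fail if one tried to push to $m = \ell$.
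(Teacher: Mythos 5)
Your proposal is correct and is essentially the paper's own argument: approximate $v$ in $W^{m,p}(\Gamma)^3$ by $C^\ell$ vector fields via Lemma~\ref{L:Wmp_Appr}, apply the projection $P\in C^{\ell-1}(\Gamma)^{3\times3}$, and use $v-Pw=P(v-w)$ together with the boundedness of multiplication by $P$ on $W^{m,p}(\Gamma)$ for $m\leq\ell-1$. The extra detail you give on the Leibniz rule for weak tangential derivatives is exactly the point the paper invokes implicitly when it writes $\|v-v_k\|_{W^{m,p}(\Gamma)}\leq c\|v-\tilde{v}_k\|_{W^{m,p}(\Gamma)}$ by the $C^{\ell-1}$-regularity of $P$.
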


\begin{proof}
  Let $v\in W^{m,p}(\Gamma,T\Gamma)\subset W^{m,p}(\Gamma)^3$.
  By Lemma~\ref{L:Wmp_Appr} we can take a sequence $\{\tilde{v}_k\}_{k=1}^\infty$ in $C^\ell(\Gamma)^3$ that converges to $v$ strongly in $W^{m,p}(\Gamma)^3$.
  For each $k\in\mathbb{N}$ let $v_k:=P\tilde{v}_k$ on $\Gamma$.
  Then $v_k\in C^{\ell-1}(\Gamma,T\Gamma)$ since $P$ is of class $C^{\ell-1}$ on $\Gamma$.
  Moreover, since $v$ is tangential on $\Gamma$, we have $v-v_k=P(v-\tilde{v}_k)$ on $\Gamma$ and thus
  \begin{align*}
    \|v-v_k\|_{W^{m,p}(\Gamma)} \leq c\|v-\tilde{v}_k\|_{W^{m,p}(\Gamma)} \to 0 \quad\text{as}\quad k\to\infty
  \end{align*}
  by the $C^{\ell-1}$-regularity of $P$ on $\Gamma$ and the strong convergence of $\{\tilde{v}_k\}_{k=1}^\infty$ to $v$ in $W^{m,p}(\Gamma)^3$.
  Hence the claim is valid.
\end{proof}

Let $H^{-1}(\Gamma)$ be the dual of $H^1(\Gamma)$ and $\langle\cdot,\cdot\rangle_\Gamma$ the duality product between $H^{-1}(\Gamma)$ and $H^1(\Gamma)$.
We consider $\eta\in L^2(\Gamma)$ as an element of $H^{-1}(\Gamma)$ by setting
\begin{align} \label{E:L2_Hin}
  \langle \eta,\xi\rangle_\Gamma:=(\eta,\xi)_{L^2(\Gamma)}, \quad \xi\in H^1(\Gamma).
\end{align}
Then the compact embeddings $H^1(\Gamma) \hookrightarrow L^2(\Gamma) \hookrightarrow H^{-1}(\Gamma)$ hold as in the case of a flat bounded domain (note that $\Gamma$ has no boundary).
Since
\begin{align*}
  |\langle \xi,\eta\varphi\rangle_\Gamma| &\leq \|\xi\|_{H^{-1}(\Gamma)}\|\eta\varphi\|_{H^1(\Gamma)} \leq c\|\eta\|_{W^{1,\infty}(\Gamma)}\|\xi\|_{H^{-1}(\Gamma)}\|\varphi\|_{H^1(\Gamma)}
\end{align*}
for $\eta\in W^{1,\infty}(\Gamma)$, $\xi\in H^{-1}(\Gamma)$, and $\varphi\in H^1(\Gamma)$, we can define $\eta\xi\in H^{-1}(\Gamma)$ by
\begin{align} \label{E:Def_Mul_Hin}
  \langle\eta \xi, \varphi\rangle_\Gamma := \langle \xi,\eta\varphi\rangle_\Gamma, \quad \varphi\in H^1(\Gamma).
\end{align}
Let $\eta\in L^2(\Gamma)$.
Based on \eqref{E:Def_WTD} we define $\underline{D}_i\eta\in H^{-1}(\Gamma)$, $i=1,2,3$ by
\begin{align} \label{E:Def_TD_Hin}
  \langle\underline{D}_i\eta,\xi\rangle_\Gamma := -(\eta,\underline{D}_i\xi+\xi Hn_i)_{L^2(\Gamma)}, \quad \xi \in H^1(\Gamma).
\end{align}
Note that this definition makes sense since $n$ and $H$ are bounded on $\Gamma$.
We consider the weak tangential gradient $\nabla_\Gamma\eta$ as an element of $H^{-1}(\Gamma)^3$ satisfying
\begin{align} \label{E:TGr_Hin}
  \langle\nabla_\Gamma\eta,v\rangle_\Gamma = -(\eta,\mathrm{div}_\Gamma v+(v\cdot n)H)_{L^2(\Gamma)}, \quad v\in H^1(\Gamma)^3.
\end{align}
Also, the surface divergence of $v\in L^2(\Gamma)^3$ is given by
\begin{align} \label{E:Sdiv_Hin}
  \langle\mathrm{div}_\Gamma v,\eta\rangle_\Gamma = -(v,\nabla_\Gamma\eta+\eta Hn)_{L^2(\Gamma)}, \quad \eta\in H^1(\Gamma).
\end{align}
Let $H^{-1}(\Gamma,T\Gamma)$ be the dual of $H^1(\Gamma,T\Gamma)$ and $[\cdot,\cdot]_{T\Gamma}$ the duality product between $H^{-1}(\Gamma,T\Gamma)$ and $H^1(\Gamma,T\Gamma)$.
It is homeomorphic to a quotient space of $H^{-1}(\Gamma)^3$.

\begin{lemma} \label{L:HinT_Homeo}
  For $f\in H^{-1}(\Gamma)^3$ we define an equivalence class
  \begin{align*}
    [f] := \{\tilde{f} \in H^{-1}(\Gamma)^3 \mid \text{$Pf=P\tilde{f}$ in $H^{-1}(\Gamma)^3$}\}.
  \end{align*}
  Then the quotient space $\mathcal{Q}:=\{[f] \mid f\in H^{-1}(\Gamma)^3\}$ is homeomorphic to $H^{-1}(\Gamma,T\Gamma)$.
\end{lemma}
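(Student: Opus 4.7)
The natural candidate for the homeomorphism is the restriction map $R\colon H^{-1}(\Gamma)^3\to H^{-1}(\Gamma,T\Gamma)$ defined by $[R(f),v]_{T\Gamma}:=\langle f,v\rangle_\Gamma$ for $v\in H^1(\Gamma,T\Gamma)\subset H^1(\Gamma)^3$. Since the inclusion $H^1(\Gamma,T\Gamma)\hookrightarrow H^1(\Gamma)^3$ is bounded, $R$ is a bounded linear operator. The plan is to (i) identify the kernel of $R$ with the equivalence relation defining $\mathcal{Q}$, (ii) exhibit a bounded right inverse of $R$, and (iii) conclude that the induced map $\tilde R\colon\mathcal{Q}\to H^{-1}(\Gamma,T\Gamma)$ is a linear homeomorphism by direct norm comparison.

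First I would compute $\ker R$. The action of $P$ on $f\in H^{-1}(\Gamma)^3$ is defined componentwise through \eqref{E:Def_Mul_Hin}, using that $P_{ij}\in C^{\ell-1}(\Gamma)\subset W^{1,\infty}(\Gamma)$; by the symmetry $P^T=P$ one obtains the duality identity $\langle Pf,u\rangle_\Gamma=\langle f,Pu\rangle_\Gamma$ for all $u\in H^1(\Gamma)^3$. If $Pf=0$ in $H^{-1}(\Gamma)^3$ and $v\in H^1(\Gamma,T\Gamma)$, then $Pv=v$ forces $[R(f),v]_{T\Gamma}=\langle Pf,v\rangle_\Gamma=0$, so $R(f)=0$. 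Conversely, if $R(f)=0$, then for every $u\in H^1(\Gamma)^3$ one has $Pu\in H^1(\Gamma,T\Gamma)$, whence $\langle Pf,u\rangle_\Gamma=\langle f,Pu\rangle_\Gamma=[R(f),Pu]_{T\Gamma}=0$, so $Pf=0$. Therefore $R(f)=R(\tilde f)$ iff $P(f-\tilde f)=0$ iff $f\sim\tilde f$, and $R$ descends to a well-defined injective linear map $\tilde R\colon\mathcal{Q}\to H^{-1}(\Gamma,T\Gamma)$.

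For surjectivity I would construct an explicit bounded right inverse $\Lambda\colon H^{-1}(\Gamma,T\Gamma)\to H^{-1}(\Gamma)^3$ by setting $\langle\Lambda g,u\rangle_\Gamma:=[g,Pu]_{T\Gamma}$ for $u\in H^1(\Gamma)^3$. The $C^{\ell-1}$-regularity of $P$ yields $\|Pu\|_{H^1(\Gamma)}\le c\|u\|_{H^1(\Gamma)}$, so $\Lambda g\in H^{-1}(\Gamma)^3$ with $\|\Lambda g\|_{H^{-1}(\Gamma)^3}\le c\|g\|_{H^{-1}(\Gamma,T\Gamma)}$, and for tangential $v$ the equality $Pv=v$ gives $R(\Lambda g)=g$. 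Endowing $\mathcal{Q}$ with the quotient norm $\|[f]\|_{\mathcal{Q}}:=\inf_{\tilde f\sim f}\|\tilde f\|_{H^{-1}(\Gamma)^3}$, the upper bound $\|\tilde R[f]\|_{H^{-1}(\Gamma,T\Gamma)}\le c\|\tilde f\|_{H^{-1}(\Gamma)^3}$ for every representative yields $\|\tilde R[f]\|_{H^{-1}(\Gamma,T\Gamma)}\le c\|[f]\|_{\mathcal{Q}}$; conversely, $\Lambda(\tilde R[f])$ lies in $[f]$, so $\|[f]\|_{\mathcal{Q}}\le\|\Lambda\tilde R[f]\|_{H^{-1}(\Gamma)^3}\le c\|\tilde R[f]\|_{H^{-1}(\Gamma,T\Gamma)}$. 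These two inequalities show that $\tilde R$ is a linear homeomorphism. The only delicate point — which I view as the main obstacle — is the rigorous treatment of the $P$-action on $H^{-1}(\Gamma)^3$ and the duality $\langle Pf,u\rangle_\Gamma=\langle f,Pu\rangle_\Gamma$; once that is in place via \eqref{E:Def_Mul_Hin} and the symmetry of $P$, the remainder is a clean norm-comparison argument that bypasses any appeal to the open mapping theorem.
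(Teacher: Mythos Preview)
Your argument is correct and takes a genuinely different route from the paper's proof. Both you and the paper define the same induced map $\tilde R=L\colon\mathcal{Q}\to H^{-1}(\Gamma,T\Gamma)$ and verify well-definedness and injectivity via the duality $\langle Pf,u\rangle_\Gamma=\langle f,Pu\rangle_\Gamma$. The divergence is in surjectivity and the bounded-inverse step. The paper proves surjectivity by invoking the Riesz representation theorem on the Hilbert space $H^1(\Gamma,T\Gamma)$ to produce $v_F$ and then sets $f_F:=v_F-\sum_i\underline{D}_i^2 v_F$; boundedness of the inverse is then obtained abstractly from the open mapping theorem. You instead construct the explicit right inverse $\Lambda g$ defined by $\langle\Lambda g,u\rangle_\Gamma=[g,Pu]_{T\Gamma}$, which is simply the transpose of the projection $u\mapsto Pu$, observe that $\Lambda(\tilde R[f])=Pf\in[f]$, and read off the two-sided norm estimate directly. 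Your approach is more elementary (no Riesz, no open mapping theorem) and yields the explicit representative $Pf$ of each class together with the concrete norm equivalence $\|[f]\|_{\mathcal{Q}}\le\|Pf\|_{H^{-1}(\Gamma)^3}\le c\|\tilde R[f]\|_{H^{-1}(\Gamma,T\Gamma)}$; the paper's route is slightly more indirect but has the virtue of not relying on the $W^{1,\infty}$-regularity of $P$ for the inverse bound.
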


Note that $\mathcal{Q}$ is a Banach space equipped with norm $\|[f]\|_{\mathcal{Q}}:=\inf_{\tilde{f}\in[f]}\|\tilde{f}\|_{H^{-1}(\Gamma)}$ (see e.g.~\cite{Ru91} for details).

\begin{proof}
  Let $f_1,f_2\in H^{-1}(\Gamma)^3$.
  If $Pf_1=Pf_2$ in $H^{-1}(\Gamma)^3$, then $\langle f_1,v\rangle_\Gamma=\langle f_2,v\rangle_\Gamma$ for all $v\in H^1(\Gamma,T\Gamma)$ by \eqref{E:Def_Mul_Hin} and $Pv=v$ on $\Gamma$.
  Hence we can define a linear operator $L$ from $\mathcal{Q}$ to $H^{-1}(\Gamma,T\Gamma)$ by $[L[f],v]_{T\Gamma}:=\langle\tilde{f},v\rangle_\Gamma$ for $[f]\in\mathcal{Q}$ and $v\in H^1(\Gamma,T\Gamma)$, where $\tilde{f}$ is any element of $[f]$.
  By this definition we also see that
  \begin{align*}
    \|L[f]\|_{H^{-1}(\Gamma,T\Gamma)} \leq \inf_{\tilde{f}\in[f]}\|\tilde{f}\|_{H^{-1}(\Gamma)} = \|[f]\|_{\mathcal{Q}}.
  \end{align*}
  Hence $L$ is bounded.
  Moreover, if $L[f_1]=L[f_2]$ in $H^{-1}(\Gamma,T\Gamma)$, then
  \begin{align*}
    \langle Pf_1,v\rangle_\Gamma = \langle f_1,Pv\rangle_\Gamma = [L[f_1],Pv]_{T\Gamma} = [L[f_2],Pv]_{T\Gamma} = \langle f_2,Pv\rangle_\Gamma = \langle Pf_2,v\rangle_\Gamma
  \end{align*}
  for all $v\in H^1(\Gamma)^3$ and thus $Pf_1=Pf_2$ in $H^{-1}(\Gamma)^3$, which means that $[f_1]=[f_2]$ and $L$ is injective.
  To show its surjectivity, let $F\in H^{-1}(\Gamma,T\Gamma)$.
  Since $H^1(\Gamma,T\Gamma)$ is a Hilbert space equipped with inner product of $H^1(\Gamma)^3$, there exists $v_F\in H^1(\Gamma,T\Gamma)$ such that $[F,v]_{T\Gamma}=(v_F,v)_{H^1(\Gamma)}$ for all $v\in H^1(\Gamma,T\Gamma)$ by the Riesz representation theorem.
  Then setting $f_F:=v_F-\sum_{i=1}^3\underline{D}_i^2v_F\in H^{-1}(\Gamma)^3$ we have
  \begin{align*}
    [F,v]_{T\Gamma} &= \sum_{i,j=1}^3\{(v_F^j,v^j)_{L^2(\Gamma)}+(\underline{D}_iv_F^j,\underline{D}_iv^j)_{L^2(\Gamma)}\} \\
    &= \sum_{i,j=1}^3\langle v_F^j-\underline{D}_i^2v_F^j,v^j\rangle_\Gamma = \langle f_F,v\rangle_\Gamma = [L[f_F],v]_{T\Gamma}
  \end{align*}
  for all $v\in H^1(\Gamma,T\Gamma)$ by \eqref{E:L2_Hin}, \eqref{E:Def_TD_Hin}, and $\sum_{i=1}^3n_i\underline{D}_iv_F^j=n\cdot\nabla_\Gamma v_F^j=0$ on $\Gamma$ for $j=1,2,3$.
  Hence $F=L[f_F]$ in $H^{-1}(\Gamma,T\Gamma)$ and $L\colon\mathcal{Q}\to H^{-1}(\Gamma,T\Gamma)$ is a bounded, injective, and surjective linear operator.
  Since its inverse is also bounded by the open mapping theorem, $\mathcal{Q}$ is homeomorphic to $H^{-1}(\Gamma,T\Gamma)$.
\end{proof}

Hereafter we identify $L[f]\in H^{-1}(\Gamma,T\Gamma)$ in the proof of Lemma~\ref{L:HinT_Homeo} with the equivalence class $[f]$ for $f\in H^{-1}(\Gamma)^3$.
We further identify $[f]$ with its representative $Pf$ and write $[Pf,v]_{T\Gamma}=\langle f,v\rangle_\Gamma$ for $v\in H^1(\Gamma,T\Gamma)$.
When $Pf=f$ in $H^{-1}(\Gamma)^3$, we take $f$ as a representative of $[f]$ instead of $Pf$.
For example, if $\eta\in L^2(\Gamma)$, then
\begin{align*}
  \langle\nabla_\Gamma\eta,v\rangle_\Gamma = -(\eta,\mathrm{div}_\Gamma v+(v\cdot n)H)_{L^2(\Gamma)} = -\bigl(\eta,\mathrm{div}_\Gamma(Pv)\bigr)_{L^2(\Gamma)} = \langle P\nabla_\Gamma\eta,v\rangle_\Gamma
\end{align*}
for all $v\in H^1(\Gamma)^3$ and thus $P\nabla_\Gamma\eta=\nabla_\Gamma\eta$ in $H^{-1}(\Gamma)^3$.
In this case we have
\begin{align} \label{E:TGr_HinT}
  [\nabla_\Gamma\eta,v]_{T\Gamma} = -(\eta,\mathrm{div}_\Gamma v)_{L^2(\Gamma)}, \quad \eta\in L^2(\Gamma),\,v\in H^1(\Gamma,T\Gamma).
\end{align}
For $\eta\in W^{1,\infty}(\Gamma)$ and $f\in H^{-1}(\Gamma,T\Gamma)$ we can define $\eta f\in H^{-1}(\Gamma,T\Gamma)$ by
\begin{align} \label{E:Def_Mul_HinT}
  [\eta f,v]_{T\Gamma} := [f,\eta v]_{T\Gamma}, \quad v\in H^1(\Gamma,T\Gamma)
\end{align}
since $\eta v\in H^1(\Gamma,T\Gamma)$ and $\|\eta v\|_{H^1(\Gamma)}\leq c\|\eta\|_{W^{1,\infty}(\Gamma)}\|v\|_{H^1(\Gamma)}$.
In Section~\ref{S:WSol} we give the characterization of the annihilators in $H^{-1}(\Gamma)^3$ and $H^{-1}(\Gamma,T\Gamma)$ of solenoidal spaces on $\Gamma$.

Since $\Gamma$ is not of class $C^\infty$, the space $C^\infty(\Gamma)$ does not make sense and we cannot consider distributions on $\Gamma$.
To consider the time derivative of functions with values in function spaces on $\Gamma$ we introduce the notion of distributions with values in a Banach space (see~\cite{LiMa72,So01,Te79} for details).
For $T>0$ let $C_c^\infty(0,T)$ be the space of all smooth and compactly supported functions on $(0,T)$.
Also, let $\mathcal{X}$ be a Banach space.
We define $\mathcal{D}'(0,T;\mathcal{X})$ as the space of all continuous linear operators from $C_c^\infty(0,T)$ (equipped with locally convex topology given in~\cite[Definition~6.3, (c)]{Ru91}) into $\mathcal{X}$.
By identifying $f\in L^2(0,T;\mathcal{X})$ with a continuous linear operator
\begin{align*}
  \hat{f}(\varphi) := \int_0^T\varphi(t)f(t)\,dt \in \mathcal{X}, \quad \varphi\in C_c^\infty(0,T)
\end{align*}
we consider $L^2(0,T;\mathcal{X})$ as a subset of $\mathcal{D}'(0,T;\mathcal{X})$.
Let $f\in\mathcal{D}'(0,T;\mathcal{X})$.
We define the time derivative of $f$ by $\partial_tf(\varphi):=-f(\partial_t\varphi)$ for $\varphi\in C_c^\infty(0,T)$.
By definition, $\partial_tf$ belongs to $\mathcal{D}'(0,T;\mathcal{X})$.
Moreover, if $f\in L^2(0,T;\mathcal{X})$ then
\begin{align} \label{E:Dt_Dist_L2}
  \partial_tf(\varphi) = -f(\partial_t\varphi) = -\int_0^T\partial_t\varphi(t)f(t)\,dt \in \mathcal{X}, \quad \varphi\in C_c^\infty(0,T).
\end{align}
For $f\in L^2(0,T;\mathcal{X})$ if there exists $\xi\in L^2(0,T;\mathcal{X})$ such that
\begin{align*}
  \partial_tf(\varphi) = \xi(\varphi), \quad\text{i.e.}\quad -\int_0^T\partial_t\varphi(t)f(t)\,dt = \int_0^T\varphi(t)\xi(t)\,dt \quad(\text{in $\mathcal{X}$})
\end{align*}
for all $\varphi\in C_c^\infty(0,T)$, then we write $\partial_tf=\xi\in L^2(0,T;\mathcal{X})$ and define
\begin{align*}
  H^1(0,T;\mathcal{X}) := \{f\in L^2(0,T;\mathcal{X}) \mid \partial_tf \in L^2(0,T;\mathcal{X})\}.
\end{align*}
When $q\in L^2(0,T;L^2(\Gamma))$, we can consider the time derivative of $\nabla_\Gamma q$ as an element of $\mathcal{D}'(0,T;H^{-1}(\Gamma,T\Gamma))$.
Let us show that the time derivative commutes with the tangential gradient in an appropriate sense.

\begin{lemma} \label{L:Dt_TGr_Com}
  Let $q\in L^2(0,T;L^2(\Gamma))$.
  Then
  \begin{align*}
    \nabla_\Gamma[\partial_tq(\varphi)] = [\partial_t(\nabla_\Gamma q)](\varphi) \quad\text{in}\quad H^{-1}(\Gamma,T\Gamma)
  \end{align*}
  for all $\varphi\in C_c^\infty(0,T)$.
\end{lemma}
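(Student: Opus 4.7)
The plan is to unwind both sides of the claimed identity against an arbitrary test vector $v\in H^1(\Gamma,T\Gamma)$ and observe that each side equals the same scalar integral against $q$.

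First I would verify that $\nabla_\Gamma q\in L^2(0,T;H^{-1}(\Gamma,T\Gamma))$ so that $\partial_t(\nabla_\Gamma q)$ is a well-defined object of $\mathcal{D}'(0,T;H^{-1}(\Gamma,T\Gamma))$. Using \eqref{E:TGr_HinT}, for a.a. $t\in(0,T)$ and every $v\in H^1(\Gamma,T\Gamma)$,
\begin{align*}
  \bigl|[\nabla_\Gamma q(t),v]_{T\Gamma}\bigr|
  = \bigl|(q(t),\mathrm{div}_\Gamma v)_{L^2(\Gamma)}\bigr|
  \leq c\|q(t)\|_{L^2(\Gamma)}\|v\|_{H^1(\Gamma)},
\end{align*}
so $\|\nabla_\Gamma q(t)\|_{H^{-1}(\Gamma,T\Gamma)}\leq c\|q(t)\|_{L^2(\Gamma)}$ and the strong measurability of $t\mapsto\nabla_\Gamma q(t)$ follows from that of $q$ together with the boundedness of $\nabla_\Gamma\colon L^2(\Gamma)\to H^{-1}(\Gamma,T\Gamma)$.

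Next, fix $\varphi\in C_c^\infty(0,T)$. By \eqref{E:Dt_Dist_L2} applied in $L^2(\Gamma)$ and in $H^{-1}(\Gamma,T\Gamma)$ respectively,
\begin{align*}
  \partial_t q(\varphi) = -\int_0^T \partial_t\varphi(t)\,q(t)\,dt \in L^2(\Gamma),
  \qquad
  [\partial_t(\nabla_\Gamma q)](\varphi) = -\int_0^T \partial_t\varphi(t)\,\nabla_\Gamma q(t)\,dt \in H^{-1}(\Gamma,T\Gamma).
\end{align*}
For any $v\in H^1(\Gamma,T\Gamma)$, the linear functional $f\mapsto[f,v]_{T\Gamma}$ is continuous on $H^{-1}(\Gamma,T\Gamma)$, hence commutes with the Bochner integral. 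Using \eqref{E:TGr_HinT},
\begin{align*}
  \bigl[[\partial_t(\nabla_\Gamma q)](\varphi),v\bigr]_{T\Gamma}
  &= -\int_0^T \partial_t\varphi(t)\,[\nabla_\Gamma q(t),v]_{T\Gamma}\,dt \\
  &= \int_0^T \partial_t\varphi(t)\,\bigl(q(t),\mathrm{div}_\Gamma v\bigr)_{L^2(\Gamma)}\,dt.
\end{align*}
On the other hand, the linearity of the $L^2(\Gamma)$-inner product in its first slot, combined with \eqref{E:TGr_HinT}, yields
\begin{align*}
  \bigl[\nabla_\Gamma[\partial_t q(\varphi)],v\bigr]_{T\Gamma}
  = -\bigl(\partial_t q(\varphi),\mathrm{div}_\Gamma v\bigr)_{L^2(\Gamma)}
  = \int_0^T \partial_t\varphi(t)\,\bigl(q(t),\mathrm{div}_\Gamma v\bigr)_{L^2(\Gamma)}\,dt.
\end{align*}
Since the two expressions coincide for every $v\in H^1(\Gamma,T\Gamma)$, the claimed identity holds in $H^{-1}(\Gamma,T\Gamma)$.

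The only nontrivial point is the interchange of the Bochner integral with the duality pairing, which is a standard consequence of the continuity of bounded linear functionals on Bochner integrable functions; all other steps are direct applications of the definitions \eqref{E:TGr_HinT} and \eqref{E:Dt_Dist_L2}.
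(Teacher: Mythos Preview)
Your proof is correct and follows essentially the same approach as the paper: test both sides against an arbitrary $v\in H^1(\Gamma,T\Gamma)$, unwind using \eqref{E:TGr_HinT} and \eqref{E:Dt_Dist_L2}, and observe that both reduce to $\int_0^T\partial_t\varphi(t)(q(t),\mathrm{div}_\Gamma v)_{L^2(\Gamma)}\,dt$. Your version is slightly more explicit in verifying that $\nabla_\Gamma q\in L^2(0,T;H^{-1}(\Gamma,T\Gamma))$ and in noting the commutation of the Bochner integral with the duality pairing, but the argument is identical.
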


\begin{proof}
  For all $v\in H^1(\Gamma,T\Gamma)$ we have
  \begin{align*}
    [\nabla_\Gamma[\partial_tq(\varphi)],v]_{T\Gamma} &= (q(\partial_t\varphi),\mathrm{div}_\Gamma v)_{L^2(\Gamma)} = \int_0^T\partial_t\varphi(t)(q(t),\mathrm{div}_\Gamma v)_{L^2(\Gamma)}\,dt \\
    &= -\int_0^T\partial_t\varphi(t)[\nabla_\Gamma q(t),v]_{T\Gamma}\,dt = \Bigl[\,[\partial_t(\nabla_\Gamma q)](\varphi),v\Bigr]_{T\Gamma}
  \end{align*}
  by \eqref{E:TGr_Hin} and \eqref{E:Dt_Dist_L2}.
  Hence the claim is valid.
\end{proof}

Let $q\in L^2(0,T;L^2(\Gamma))$.
Based on Lemma~\ref{L:Dt_TGr_Com}, we consider the tangential gradient of $\partial_tq\in \mathcal{D}'(0,T;L^2(\Gamma))$ as an element of $\mathcal{D}'(0,T;H^{-1}(\Gamma,T\Gamma))$ given by
\begin{align} \label{E:Def_TGrDt_Hin}
  [\nabla_\Gamma(\partial_tq)](\varphi) := \nabla_\Gamma[(\partial_tq)(\varphi)] = [\partial_t(\nabla_\Gamma q)](\varphi)\in H^{-1}(\Gamma,T\Gamma)
\end{align}
for $\varphi\in C_c^\infty(0,T)$.
We use this relation in construction of an associated pressure in the limit equations (see Lemma~\ref{L:LW_Pres}).

\subsection{Curved thin domain} \label{SS:Pre_Dom}
From now on, we assume that the closed surface $\Gamma$ is of class $C^5$ (except for Section~\ref{S:WSol}).
Let $g_0,g_1\in C^4(\Gamma)$ such that
\begin{align} \label{E:Width_Bound}
  g(y) := g_1(y)-g_0(y) \geq c \quad\text{for all}\quad y\in\Gamma
\end{align}
with a constant $c>0$.
For $\varepsilon\in(0,1)$ we define a curved thin domain $\Omega_\varepsilon$ in $\mathbb{R}^3$ as
\begin{align} \label{E:Def_CTD}
  \Omega_\varepsilon := \{y+rn(y) \mid y\in\Gamma,\,\varepsilon g_0(y) < r < \varepsilon g_1(y)\}.
\end{align}
Let $N$ be the tubular neighborhood of $\Gamma$ of radius $\delta>0$ given in Section~\ref{SS:Pre_Surf}.
Since $g_0$ and $g_1$ are bounded on $\Gamma$, there exists $\tilde{\varepsilon}\in(0,1)$ such that $\varepsilon|g_i|<\delta$ on $\Gamma$, $i=0,1$, i.e. $\overline{\Omega}_\varepsilon\subset N$ for all $\varepsilon\in(0,\tilde{\varepsilon})$.
Replacing $g_i$, $i=0,1$ by $\tilde{\varepsilon} g_i$ we may assume $\tilde{\varepsilon}=1$.

Let $\Gamma_\varepsilon$ be the boundary of $\Omega_\varepsilon$.
It is the union of the inner and outer boundaries $\Gamma_\varepsilon^0$ and $\Gamma_\varepsilon^1$ given by $\Gamma_\varepsilon^i:=\{y+\varepsilon g_i(y)n(y) \mid y\in\Gamma\}$ for $i=0,1$.
Note that $\Gamma_\varepsilon$ is of class $C^4$ by the $C^5$-regularity of $\Gamma$ and $g_0,g_1\in C^4(\Gamma)$.
We use this fact in derivation of a uniform a priori estimate for the vector Laplacian (see Section~\ref{SS:St_Apr}).

Let us give surface quantities on $\Gamma_\varepsilon$.
We define vector fields $\tau_\varepsilon^i$ and $n_\varepsilon^i$ on $\Gamma$ as
\begin{align}
  \tau_\varepsilon^i(y) &:= \{I_3-\varepsilon g_i(y)W(y)\}^{-1}\nabla_\Gamma g_i(y), \label{E:Def_NB_Aux}\\
  n_\varepsilon^i(y) &:= (-1)^{i+1}\frac{n(y)-\varepsilon\tau_\varepsilon^i(y)}{\sqrt{1+\varepsilon^2|\tau_\varepsilon^i(y)|^2}} \label{E:Def_NB}
\end{align}
for $y\in\Gamma$ and $i=0,1$.
Note that $\tau_\varepsilon^i$ is tangential on $\Gamma$ by \eqref{E:P_TGr}, \eqref{E:WReso_P}, and $Pa\cdot n=0$ on $\Gamma$ for $a\in\mathbb{R}^3$.
Also, $\tau_\varepsilon^i$ and $n_\varepsilon^i$ are bounded on $\Gamma$ uniformly in $\varepsilon$ along with their first and second order tangential derivatives.

\begin{lemma} \label{L:NB_Aux}
  There exists a constant $c>0$ independent of $\varepsilon$ such that
  \begin{gather}
    |\tau_\varepsilon^i(y)| \leq c, \quad |\underline{D}_k\tau_\varepsilon^i(y)| \leq c, \quad |\underline{D}_l\underline{D}_k\tau_\varepsilon^i(y)| \leq c, \label{E:Tau_Bound} \\
    |\tau_\varepsilon^i(y)-\nabla_\Gamma g_i(y)| \leq c\varepsilon, \quad |\nabla_\Gamma\tau_\varepsilon^i(y)-\nabla_\Gamma^2g_i(y)| \leq c\varepsilon \label{E:Tau_Diff}
  \end{gather}
  for all $y\in\Gamma$, $i=0,1$, and $k,l=1,2,3$.
  We also have
  \begin{gather}
    |n_\varepsilon^i(y)| = 1, \quad |\underline{D}_kn_\varepsilon^i(y)| \leq c, \quad |\underline{D}_l\underline{D}_kn_\varepsilon^i(y)| \leq c, \label{E:N_Bound} \\
    |n_\varepsilon^0(y)+n_\varepsilon^1(y)| \leq c\varepsilon, \quad |\nabla_\Gamma n_\varepsilon^0(y)+\nabla_\Gamma n_\varepsilon^1(y)| \leq c\varepsilon \label{E:N_Diff}
  \end{gather}
  for all $y\in\Gamma$, $i=0,1$, and $k,l=1,2,3$.
\end{lemma}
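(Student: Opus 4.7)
The plan is to derive everything by direct calculation from the defining formulas \eqref{E:Def_NB_Aux} and \eqref{E:Def_NB}, using Lemma~\ref{L:Wein} as the main quantitative input and the hypothesis that $\Gamma \in C^5$, $g_i \in C^4(\Gamma)$, which guarantees that $W$ together with its first two tangential derivatives, and $g_i$ together with its first three tangential derivatives, are uniformly bounded on the compact surface $\Gamma$.

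For \eqref{E:Tau_Bound} and \eqref{E:Tau_Diff} I would first apply \eqref{E:Wein_Bound} with $r = \varepsilon g_i(y) \in (-\delta,\delta)$ (which is allowed for $\varepsilon \in (0,1)$ after the rescaling made in the paragraph following \eqref{E:Def_CTD}) to get $|\tau_\varepsilon^i| \le c|\nabla_\Gamma g_i| \le c$. For the first-order bound, I would differentiate $\tau_\varepsilon^i$ using the matrix identity $\underline{D}_k(M_\varepsilon^i)^{-1} = -(M_\varepsilon^i)^{-1}(\underline{D}_k M_\varepsilon^i)(M_\varepsilon^i)^{-1}$ with $M_\varepsilon^i := I_3 - \varepsilon g_i W$; each factor $(M_\varepsilon^i)^{-1}$ is bounded by \eqref{E:Wein_Bound} and $\underline{D}_k M_\varepsilon^i = -\varepsilon(\underline{D}_k g_i)W - \varepsilon g_i \underline{D}_k W$ is bounded in norm independently of $\varepsilon$, so together with the $C^2$-bound on $\nabla_\Gamma g_i$ I obtain the uniform bound $|\underline{D}_k\tau_\varepsilon^i| \le c$. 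The second-order bound $|\underline{D}_l\underline{D}_k\tau_\varepsilon^i| \le c$ is the same argument applied once more, now exploiting the $C^3$-bound on $W$ (from $\Gamma \in C^5$) and the $C^3$-bound on $g_i$. The closeness estimates \eqref{E:Tau_Diff} follow by writing $\tau_\varepsilon^i - \nabla_\Gamma g_i = [(M_\varepsilon^i)^{-1} - I_3]\nabla_\Gamma g_i$ and applying \eqref{E:Wein_Diff} with $r = \varepsilon g_i$; the gradient version is obtained by differentiating this identity and using the same cancellation structure, with the $\varepsilon$ factor coming from $\underline{D}_k M_\varepsilon^i$ and from $(M_\varepsilon^i)^{-1} - I_3$.

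For \eqref{E:N_Bound}, the equality $|n_\varepsilon^i| = 1$ is immediate: since $\tau_\varepsilon^i = P(\,\cdot\,)\tau_\varepsilon^i$ is tangential by \eqref{E:WReso_P} and $|n|=1$, we have $|n - \varepsilon\tau_\varepsilon^i|^2 = 1 + \varepsilon^2|\tau_\varepsilon^i|^2$, which is exactly the square of the denominator in \eqref{E:Def_NB}. For the derivative bounds I would write $n_\varepsilon^i = (-1)^{i+1}(n - \varepsilon\tau_\varepsilon^i)/A_\varepsilon^i$ with $A_\varepsilon^i := \sqrt{1 + \varepsilon^2|\tau_\varepsilon^i|^2} \ge 1$ and note that $\underline{D}_k A_\varepsilon^i = \varepsilon^2(\tau_\varepsilon^i \cdot \underline{D}_k\tau_\varepsilon^i)/A_\varepsilon^i$ is $O(\varepsilon^2)$ uniformly, while the derivatives of $n$ and of $\tau_\varepsilon^i$ are uniformly bounded by the $C^4$-regularity of $\Gamma$ (hence $n\in C^3$) and by the first part of the lemma. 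The Leibniz and quotient rules then give \eqref{E:N_Bound} at the first order, and one more differentiation, together with the uniform second-order bound on $\tau_\varepsilon^i$ and the $C^2$-bound on $\nabla_\Gamma n$ (from $\Gamma \in C^5$), yields the second-order estimate.

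The cancellation estimates \eqref{E:N_Diff} are the only step where one must be careful, but they are still straightforward. I would split
\begin{align*}
  n_\varepsilon^0 + n_\varepsilon^1 = n\left(\frac{1}{A_\varepsilon^1} - \frac{1}{A_\varepsilon^0}\right) + \varepsilon\left(\frac{\tau_\varepsilon^0}{A_\varepsilon^0} - \frac{\tau_\varepsilon^1}{A_\varepsilon^1}\right).
\end{align*}
The first bracket is $O(\varepsilon^2)$ since $1/A_\varepsilon^i - 1 = O(\varepsilon^2)$, while the second term is manifestly $O(\varepsilon)$ because $\tau_\varepsilon^i/A_\varepsilon^i$ is uniformly bounded; hence $|n_\varepsilon^0 + n_\varepsilon^1| \le c\varepsilon$. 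For the gradient bound I apply $\underline{D}_k$ to this identity: the first piece contributes terms of the form $\underline{D}_k n \cdot (1/A_\varepsilon^1 - 1/A_\varepsilon^0) + n\cdot \underline{D}_k(1/A_\varepsilon^1 - 1/A_\varepsilon^0)$, both of which are $O(\varepsilon^2)$ by the uniform bounds on $\underline{D}_k n$ and on $\underline{D}_k A_\varepsilon^i$; the second piece carries an explicit prefactor $\varepsilon$ multiplying quantities that are uniformly bounded by the first half of the lemma, giving $O(\varepsilon)$. I expect the main (and really the only) obstacle to be careful bookkeeping of the $\varepsilon$-orders in these derivative expansions, which is why I would organize the argument around the factorization above rather than differentiating \eqref{E:Def_NB} blindly.
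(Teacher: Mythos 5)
Your proposal is correct and follows essentially the same route as the paper: uniform bounds via Lemma~\ref{L:Wein} and differentiation of the resolvent identity for $(I_3-\varepsilon g_iW)^{-1}$ for \eqref{E:Tau_Bound}--\eqref{E:Tau_Diff}, and the identical decomposition $n_\varepsilon^0+n_\varepsilon^1=\bigl(1/A_\varepsilon^1-1/A_\varepsilon^0\bigr)n+\varepsilon\bigl(\tau_\varepsilon^0/A_\varepsilon^0-\tau_\varepsilon^1/A_\varepsilon^1\bigr)$ with the $O(\varepsilon^2)$ scalar factor (the paper's $\varphi_\varepsilon$, estimated there by the mean value theorem) for \eqref{E:N_Diff}. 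The only cosmetic slip is the regularity bookkeeping ($\Gamma\in C^5$ actually gives $n\in C^4$, $W\in C^3$), which only strengthens what you use.
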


Let $n_\varepsilon$ be the unit outward normal vector field of $\Gamma_\varepsilon$.
For $i=0,1$ the direction of $n_\varepsilon$ on $\Gamma_\varepsilon^i$ is the same as that of $(-1)^{i+1}\bar{n}$ since the signed distance function $d$ from $\Gamma$ increases in the direction of $n$.

\begin{lemma} \label{L:Nor_Bo}
  The unit outward normal vector field $n_\varepsilon$ of $\Gamma_\varepsilon$ is given by
  \begin{align} \label{E:Nor_Bo}
    n_\varepsilon(x) = \bar{n}_\varepsilon^i(x), \quad x\in\Gamma_\varepsilon^i,\,i=0,1.
  \end{align}
  Here $\bar{n}_\varepsilon^i=n_\varepsilon^i\circ\pi$ is the constant extension of the vector field $n_\varepsilon^i$ given by \eqref{E:Def_NB}.
\end{lemma}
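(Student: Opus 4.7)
The plan is to realize each boundary component $\Gamma_\varepsilon^i$ as a level set of an explicit $C^1$ function and then normalize its gradient, choosing the sign so as to point out of $\Omega_\varepsilon$. Specifically, for $i=0,1$ I would introduce on the tubular neighborhood $N$ the function
\begin{equation*}
  \phi_i(x) := d(x) - \varepsilon \bar{g}_i(x), \qquad x \in N,
\end{equation*}
where $\bar{g}_i = g_i \circ \pi$. By the definition \eqref{E:Def_CTD} of $\Omega_\varepsilon$ together with \eqref{E:Nor_Coord}, we have $\Gamma_\varepsilon^i = \{\phi_i = 0\}$, and inside $\Omega_\varepsilon$ one has $\phi_0 > 0$, $\phi_1 < 0$. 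Hence the outward unit normal of $\Omega_\varepsilon$ on $\Gamma_\varepsilon^i$ is
\begin{equation*}
  n_\varepsilon(x) = (-1)^{i+1}\,\frac{\nabla \phi_i(x)}{|\nabla \phi_i(x)|}, \qquad x \in \Gamma_\varepsilon^i.
\end{equation*}

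Next I would compute $\nabla \phi_i$ explicitly on $\Gamma_\varepsilon^i$. Since $\nabla d = \bar{n}$ by \eqref{E:Nor_Coord}, and by formula \eqref{E:ConDer_Dom} applied to $g_i$,
\begin{equation*}
  \nabla \bar{g}_i(x) = \{I_3 - d(x)\overline{W}(x)\}^{-1}\,\overline{\nabla_\Gamma g_i}(x).
\end{equation*}
At a point $x \in \Gamma_\varepsilon^i$ we have $x = y + \varepsilon g_i(y) n(y)$ with $y = \pi(x)$ and $d(x) = \varepsilon g_i(y)$, so the right-hand side reduces to $\{I_3 - \varepsilon g_i(y) W(y)\}^{-1}\nabla_\Gamma g_i(y) = \tau_\varepsilon^i(y)$ by the definition \eqref{E:Def_NB_Aux}. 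Consequently
\begin{equation*}
  \nabla \phi_i(x) = n(y) - \varepsilon \tau_\varepsilon^i(y), \qquad y = \pi(x).
\end{equation*}

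To conclude, I would use that $\tau_\varepsilon^i$ is tangential on $\Gamma$, a fact already noted after \eqref{E:Def_NB}; it follows from \eqref{E:WReso_P} together with $\nabla_\Gamma g_i = P \nabla_\Gamma g_i$ from \eqref{E:P_TGr}. Thus $n(y) \perp \tau_\varepsilon^i(y)$, and Pythagoras gives
\begin{equation*}
  |\nabla \phi_i(x)| = \sqrt{1 + \varepsilon^2 |\tau_\varepsilon^i(y)|^2}.
\end{equation*}
Inserting this into the formula for $n_\varepsilon(x)$ displayed above yields exactly the right-hand side of \eqref{E:Def_NB}, namely $n_\varepsilon(x) = n_\varepsilon^i(y) = \bar{n}_\varepsilon^i(x)$, which is \eqref{E:Nor_Bo}. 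No serious obstacle is expected here: the proof is essentially a bookkeeping of the chain rule, and the only care required is to verify the signs in the two cases $i=0,1$, which match the factor $(-1)^{i+1}$ because $\phi_0$ decreases and $\phi_1$ increases as one exits $\Omega_\varepsilon$ across the respective boundary component.
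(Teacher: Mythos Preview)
Your argument is correct. The paper takes a different route: it introduces a local parametrization $\mu_\varepsilon^i(s)=\mu(s)+\varepsilon g_i(\mu(s))n(\mu(s))$ of $\Gamma_\varepsilon^i$, computes the tangent vectors $\partial_{s_k}\mu_\varepsilon^i$, and verifies directly that $\bar{n}_\varepsilon^i$ is orthogonal to them (reducing to the identity $n\cdot\partial_{s_k}\mu_\varepsilon^i=\varepsilon\tau_\varepsilon^i\cdot\partial_{s_k}\mu_\varepsilon^i$); the unit length and the sign are then handled separately. Your level-set approach is more streamlined: by writing $\Gamma_\varepsilon^i=\{\phi_i=0\}$ and invoking the already-proved formula \eqref{E:ConDer_Dom} for $\nabla\bar{g}_i$, you obtain the normal, its norm, and its orientation in one stroke, and you never need local coordinates. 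The parametrization viewpoint, on the other hand, is the same machinery the paper reuses for the change-of-variables formula on $\Gamma_\varepsilon^i$ (Lemma~\ref{L:CoV_Surf}), so it dovetails with later computations; but for this lemma alone your argument is shorter.
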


The proofs of Lemmas~\ref{L:NB_Aux} and~\ref{L:Nor_Bo} are given in Appendix~\ref{S:Ap_DG}.

As in Section~\ref{SS:Pre_Surf} we set $P_\varepsilon:=I_3-n_\varepsilon\otimes n_\varepsilon$ and $Q_\varepsilon:=n_\varepsilon\otimes n_\varepsilon$ on $\Gamma_\varepsilon$ and define the tangential gradient and the tangential derivatives of $\varphi\in C^1(\Gamma_\varepsilon)$ by
\begin{align*}
  \nabla_{\Gamma_\varepsilon}\varphi := P_\varepsilon\nabla\tilde{\varphi}, \quad \underline{D}_i^\varepsilon\varphi := \sum_{j=1}^3[P_\varepsilon]_{ij}\partial_j\tilde{\varphi} \quad\text{on}\quad \Gamma_\varepsilon,\, i=1,2,3,
\end{align*}
where $\tilde{\varphi}$ is any $C^1$-extension of $\varphi$ to an open neighborhood of $\Gamma_\varepsilon$ with $\tilde{\varphi}|_{\Gamma_\varepsilon}=\varphi$.
For $u\in C^1(\Gamma_\varepsilon)^3$ we define its tangential gradient matrix and surface divergence as
\begin{align*}
  \nabla_{\Gamma_\varepsilon}u :=
  \begin{pmatrix}
    \underline{D}_1^\varepsilon u_1 & \underline{D}_1^\varepsilon u_2 & \underline{D}_1^\varepsilon u_3 \\
    \underline{D}_2^\varepsilon u_1 & \underline{D}_2^\varepsilon u_2 & \underline{D}_2^\varepsilon u_3 \\
    \underline{D}_3^\varepsilon u_1 & \underline{D}_3^\varepsilon u_2 & \underline{D}_3^\varepsilon u_3
  \end{pmatrix}, \quad
  \mathrm{div}_{\Gamma_\varepsilon}u := \mathrm{tr}[\nabla_{\Gamma_\varepsilon}u] = \sum_{i=1}^3\underline{D}_i^\varepsilon u_i \quad\text{on}\quad \Gamma_\varepsilon.
\end{align*}
Also, for $u\in C^1(\Gamma_\varepsilon)^3$ and $\varphi\in C(\Gamma_\varepsilon)^3$ we write
\begin{align*}
  (\varphi\cdot\nabla_{\Gamma_\varepsilon})u := (\varphi\cdot\nabla_{\Gamma_\varepsilon}u_1,\varphi\cdot\nabla_{\Gamma_\varepsilon}u_2,\varphi\cdot\nabla_{\Gamma_\varepsilon}u_3) = (\nabla_{\Gamma_\varepsilon}u)^T\varphi \quad\text{on}\quad \Gamma_\varepsilon.
\end{align*}
The Weingarten map $W_\varepsilon$ and (twice) the mean curvature $H_\varepsilon$ of $\Gamma_\varepsilon$ are given by
\begin{align*}
  W_\varepsilon := -\nabla_{\Gamma_\varepsilon}n_\varepsilon, \quad H_\varepsilon := \mathrm{tr}[W_\varepsilon] = -\mathrm{div}_{\Gamma_\varepsilon}n_\varepsilon \quad\text{on}\quad \Gamma_\varepsilon.
\end{align*}
Note that, as in the case of $\Gamma$, the matrices $P_\varepsilon$, $Q_\varepsilon$, and $W_\varepsilon$ are symmetric and
\begin{align} \label{E:PW_Bo}
  \nabla_{\Gamma_\varepsilon}u = P_\varepsilon\nabla\tilde{u}, \quad P_\varepsilon W_\varepsilon = W_\varepsilon P_\varepsilon = W_\varepsilon \quad\text{on}\quad \Gamma_\varepsilon
\end{align}
for $u\in C^1(\Gamma_\varepsilon)^3$, where $\tilde{u}$ is any $C^1$-extension of $u$ to an open neighborhood of $\Gamma_\varepsilon$ with $\tilde{u}|_{\Gamma_\varepsilon}=u$.
We also define the weak tangential derivatives of functions on $\Gamma_\varepsilon$ and the Sobolev spaces $W^{m,p}(\Gamma_\varepsilon)$ for $m=1,2$ and $p\in[1,\infty)$ as in Section~\ref{SS:Pre_Surf}.

By the expression \eqref{E:Def_NB} of the unit outward normal $n_\varepsilon$ to $\Gamma_\varepsilon$, we can compare the surface quantities on $\Gamma_\varepsilon$ with those on $\Gamma$.

\begin{lemma} \label{L:Comp_Nor}
  There exists a constant $c>0$ independent of $\varepsilon$ such that
  \begin{gather}
    \left|n_\varepsilon(x)-(-1)^{i+1}\left\{\bar{n}(x)-\varepsilon\overline{\nabla_\Gamma g_i}(x)\right\}\right| \leq c\varepsilon^2, \label{E:Comp_N} \\
    \left|P_\varepsilon(x)-\overline{P}(x)\right| \leq c\varepsilon, \quad \left|Q_\varepsilon(x)-\overline{Q}(x)\right| \leq c\varepsilon, \label{E:Comp_P} \\
    \left|W_\varepsilon(x)-(-1)^{i+1}\overline{W}(x)\right| \leq c\varepsilon, \quad \left|H_\varepsilon(x)-(-1)^{i+1}\overline{H}(x)\right| \leq c\varepsilon, \label{E:Comp_W} \\
    \left|\underline{D}_j^\varepsilon W_\varepsilon(x)-(-1)^{i+1}\overline{\underline{D}_jW}(x)\right| \leq c\varepsilon \label{E:Comp_DW}
  \end{gather}
  for all $x\in\Gamma_\varepsilon^i$, $i=0,1$, and $j=1,2,3$.
\end{lemma}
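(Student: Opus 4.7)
The plan is to derive all the estimates by combining Lemma~\ref{L:Nor_Bo}, which identifies $n_\varepsilon$ on $\Gamma_\varepsilon^i$ with the constant extension $\bar{n}_\varepsilon^i$ of the vector field $n_\varepsilon^i$, with the quantitative bounds collected in Lemmas~\ref{L:NB_Aux}, \ref{L:Wein}, and~\ref{L:Pi_Der}, together with the identity $PW=W$ from \eqref{E:Form_W}. Throughout, I would think of $\tilde\varepsilon$-uniform Taylor expansions in $\varepsilon$ of the explicit formula \eqref{E:Def_NB} and propagate the error terms.

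First I would prove \eqref{E:Comp_N}. Starting from \eqref{E:Def_NB}, I expand the scalar factor $(1+\varepsilon^2|\tau_\varepsilon^i|^2)^{-1/2}=1+O(\varepsilon^2)$, which is valid by the uniform bound $|\tau_\varepsilon^i|\leq c$ from \eqref{E:Tau_Bound}. This gives $n_\varepsilon^i=(-1)^{i+1}(n-\varepsilon\tau_\varepsilon^i)+O(\varepsilon^2)$; replacing $\tau_\varepsilon^i$ by $\nabla_\Gamma g_i$ costs another $O(\varepsilon^2)$ by \eqref{E:Tau_Diff}, and taking constant extensions through $\pi$ yields \eqref{E:Comp_N} via Lemma~\ref{L:Nor_Bo}. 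Next, using that $n_\varepsilon\otimes n_\varepsilon$ is invariant under the sign $(-1)^{i+1}$, I would bootstrap \eqref{E:Comp_N} into \eqref{E:Comp_P} by writing
\[
Q_\varepsilon-\overline{Q}=\bigl((-1)^{i+1}n_\varepsilon-\bar{n}\bigr)\otimes (-1)^{i+1}n_\varepsilon+\bar{n}\otimes\bigl((-1)^{i+1}n_\varepsilon-\bar{n}\bigr)
\]
and applying $|(-1)^{i+1}n_\varepsilon-\bar{n}|\leq c\varepsilon$ from \eqref{E:Comp_N}; the bound on $P_\varepsilon-\overline{P}$ then follows from $P_\varepsilon+Q_\varepsilon=I_3=\overline{P}+\overline{Q}$.

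For \eqref{E:Comp_W} the plan is to use Lemma~\ref{L:Nor_Bo} once more to extend $n_\varepsilon$ from $\Gamma_\varepsilon^i$ to $N$ as $\bar{n}_\varepsilon^i$, so that $W_\varepsilon=-P_\varepsilon\nabla\bar{n}_\varepsilon^i$ holds on $\Gamma_\varepsilon^i$. Formula~\eqref{E:ConDer_Dom} together with \eqref{E:Wein_Diff} and the relation $d=\varepsilon\overline{g_i}$ on $\Gamma_\varepsilon^i$ gives $\nabla\bar{n}_\varepsilon^i=\overline{\nabla_\Gamma n_\varepsilon^i}+O(\varepsilon)$. Differentiating the expansion $n_\varepsilon^i=(-1)^{i+1}(n-\varepsilon\tau_\varepsilon^i)+O(\varepsilon^2)$ tangentially and using that $\nabla_\Gamma\tau_\varepsilon^i$ and $\nabla_\Gamma(1+\varepsilon^2|\tau_\varepsilon^i|^2)^{-1/2}=O(\varepsilon^2)$ are controlled by \eqref{E:Tau_Bound}, I obtain $\nabla_\Gamma n_\varepsilon^i=-(-1)^{i+1}W+O(\varepsilon)$. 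Combining with \eqref{E:Comp_P} and $PW=W$ yields \eqref{E:Comp_W} for $W_\varepsilon$; taking traces gives the bound for $H_\varepsilon$.

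The hard part will be \eqref{E:Comp_DW}, which requires one more tangential derivative. I would extend $P_\varepsilon$ from $\Gamma_\varepsilon^i$ to $N$ as $I_3-\bar{n}_\varepsilon^i\otimes\bar{n}_\varepsilon^i$, so that the identity $W_\varepsilon=-(I_3-\bar{n}_\varepsilon^i\otimes\bar{n}_\varepsilon^i)\nabla\bar{n}_\varepsilon^i$ extends to a neighborhood of $\Gamma_\varepsilon^i$ and can be differentiated. The resulting expression involves $\nabla\bar{n}_\varepsilon^i$ and $\nabla^2\bar{n}_\varepsilon^i$, whose uniform boundedness in $\varepsilon$ follows from \eqref{E:N_Bound}, \eqref{E:ConDer_Dom}, and \eqref{E:Con_Hess}; here the $C^5$-regularity of $\Gamma$ and the $C^4$-regularity of $g_i$ are essential, as they guarantee $W\in C^3(\Gamma)$ and the second tangential derivatives of $\tau_\varepsilon^i$ and $n_\varepsilon^i$ appearing in the computation. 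Carefully tracking error terms under one more application of $\underline{D}_j^\varepsilon$, and using $|P_\varepsilon-\overline{P}|\leq c\varepsilon$ from \eqref{E:Comp_P} to swap projections at a cost of $O(\varepsilon)$ each time, should yield $\underline{D}_j^\varepsilon W_\varepsilon=(-1)^{i+1}\overline{\underline{D}_jW}+O(\varepsilon)$, completing \eqref{E:Comp_DW}.
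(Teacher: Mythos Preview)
Your proposal is correct and follows essentially the same approach as the paper: expand the explicit formula \eqref{E:Def_NB} for $n_\varepsilon^i$, control the scalar factor $(1+\varepsilon^2|\tau_\varepsilon^i|^2)^{-1/2}-1$ and its derivatives as $O(\varepsilon^2)$, propagate to $P_\varepsilon,Q_\varepsilon$ algebraically, extend $W_\varepsilon|_{\Gamma_\varepsilon^i}$ to $N$ via $-(I_3-\bar n_\varepsilon^i\otimes\bar n_\varepsilon^i)\nabla\bar n_\varepsilon^i$, and differentiate once more for \eqref{E:Comp_DW}. The only organizational difference is that the paper names the error terms explicitly---setting $\varphi_\varepsilon^i:=(1+\varepsilon^2|\bar\tau_\varepsilon^i|^2)^{-1/2}-1$, $\Phi_\varepsilon^i:=\bar n_\varepsilon^i-(-1)^{i+1}\bar n$, and decomposing the extended Weingarten map as $(-1)^{i+1}\overline W+E_\varepsilon^i+F_\varepsilon^i+G_\varepsilon^i$---and proves $|\partial_x^\alpha\varphi_\varepsilon^i|\leq c\varepsilon^2$, $|\partial_x^\alpha\Phi_\varepsilon^i|\leq c\varepsilon$ for $|\alpha|\leq 2$, which is exactly the ``careful tracking'' you describe in $O(\varepsilon)$ notation.
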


From Lemma~\ref{L:Comp_Nor} it immediately follows that $W_\varepsilon$, $H_\varepsilon$, and $\underline{D}_j^\varepsilon W_\varepsilon$, $j=1,2,3$ are uniformly bounded in $\varepsilon$ on $\Gamma_\varepsilon$ (note that $|P_\varepsilon|=2$ and $|Q_\varepsilon|=1$ on $\Gamma_\varepsilon$).
Moreover, we can compare the surface quantities on the inner and outer boundaries.

\begin{lemma} \label{L:Diff_SQ_IO}
  There exists a constant $c>0$ independent of $\varepsilon$ such that
  \begin{align}
    |F_\varepsilon(y+\varepsilon g_1(y)n(y))-F_\varepsilon(y+\varepsilon g_0(y)n(y))| &\leq c\varepsilon, \label{E:Diff_PQ_IO} \\
    |G_\varepsilon(y+\varepsilon g_1(y)n(y))+G_\varepsilon(y+\varepsilon g_0(y)n(y))| &\leq c\varepsilon \label{E:Diff_WH_IO}
  \end{align}
  for all $y\in\Gamma$, where $F_\varepsilon=P_\varepsilon,Q_\varepsilon$ and $G_\varepsilon=W_\varepsilon,H_\varepsilon,\underline{D}_j^\varepsilon W_\varepsilon$ with $j=1,2,3$.
\end{lemma}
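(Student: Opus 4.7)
The key observation is that for any $y\in\Gamma$ the two points
\[
x_0 := y+\varepsilon g_0(y)n(y)\in\Gamma_\varepsilon^0, \qquad x_1 := y+\varepsilon g_1(y)n(y)\in\Gamma_\varepsilon^1
\]
lie on the normal line through $y$ and therefore share the same projection $\pi(x_0)=\pi(x_1)=y$. Consequently, if $\overline{F}=F\circ\pi$ denotes the constant extension in the normal direction of any function $F$ on $\Gamma$, then $\overline{F}(x_0)=\overline{F}(x_1)=F(y)$. The plan is to feed this identity into the comparison estimates of Lemma~\ref{L:Comp_Nor}.

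For $F_\varepsilon=P_\varepsilon$ or $Q_\varepsilon$, estimate \eqref{E:Comp_P} gives
\[
|F_\varepsilon(x_i)-\overline{F}(x_i)|\le c\varepsilon,\qquad i=0,1,
\]
with the same $F\in\{P,Q\}$ on both boundaries (since the sign $(-1)^{i+1}$ does not appear in \eqref{E:Comp_P}). Using $\overline{F}(x_0)=\overline{F}(x_1)=F(y)$ and the triangle inequality,
\[
|F_\varepsilon(x_1)-F_\varepsilon(x_0)|\le |F_\varepsilon(x_1)-F(y)|+|F(y)-F_\varepsilon(x_0)|\le 2c\varepsilon,
\]
which yields \eqref{E:Diff_PQ_IO}.

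For $G_\varepsilon\in\{W_\varepsilon,H_\varepsilon,\underline{D}_j^\varepsilon W_\varepsilon\}$, the corresponding estimate in Lemma~\ref{L:Comp_Nor} (namely \eqref{E:Comp_W} or \eqref{E:Comp_DW}) reads
\[
|G_\varepsilon(x_i)-(-1)^{i+1}\overline{G}(x_i)|\le c\varepsilon,\qquad i=0,1,
\]
where $G$ is the corresponding surface quantity on $\Gamma$. Since $\overline{G}(x_0)=\overline{G}(x_1)=G(y)$, on the outer boundary $(i=1)$ we have $G_\varepsilon(x_1)\approx G(y)$, while on the inner boundary $(i=0)$ we have $G_\varepsilon(x_0)\approx-G(y)$; the two leading contributions therefore cancel in the sum. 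More precisely,
\[
|G_\varepsilon(x_1)+G_\varepsilon(x_0)|\le |G_\varepsilon(x_1)-G(y)|+|G_\varepsilon(x_0)+G(y)|\le 2c\varepsilon,
\]
which is \eqref{E:Diff_WH_IO}.

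There is no real obstacle here: the entire content is the sign-structure recorded in Lemma~\ref{L:Comp_Nor} together with the trivial fact that $\overline{F}$ and $\overline{G}$ depend only on the projection $\pi(x)=y$, not on the signed distance $d(x)=\varepsilon g_i(y)$. The only mild subtlety is making sure the proof works uniformly in $\varepsilon\in(0,1)$, which is guaranteed because both $g_0,g_1$ are bounded on $\Gamma$ so that $x_0,x_1\in N$ and the constants in Lemma~\ref{L:Comp_Nor} are $\varepsilon$-independent.
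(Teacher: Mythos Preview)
Your proof is correct and follows essentially the same approach as the paper: both arguments use that the constant extensions $\overline{P}$, $\overline{W}$, etc., take the same value at $x_0$ and $x_1$ (since $\pi(x_0)=\pi(x_1)=y$), then apply the comparison estimates \eqref{E:Comp_P}--\eqref{E:Comp_DW} of Lemma~\ref{L:Comp_Nor} together with the triangle inequality, with the sign $(-1)^{i+1}$ producing cancellation in the sum for $G_\varepsilon$.
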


The proofs of Lemmas~\ref{L:Comp_Nor} and~\ref{L:Diff_SQ_IO} are given in Appendix~\ref{S:Ap_DG}.

Next we give transformation formulas of integrals over $\Omega_\varepsilon$ and $\Gamma_\varepsilon$.
For functions $\varphi$ on $\Omega_\varepsilon$ and $\eta$ on $\Gamma_\varepsilon^i$, $i=0,1$ we use the notations
\begin{alignat}{2}
  \varphi^\sharp(y,r) &:= \varphi(y+rn(y)), &\quad &y\in\Gamma,\,r\in(\varepsilon g_0(y),\varepsilon g_1(y)), \label{E:Pull_Dom} \\
  \eta_i^\sharp(y) &:= \eta(y+\varepsilon g_i(y)n(y)), &\quad &y\in\Gamma. \label{E:Pull_Bo}
\end{alignat}
Let $J=J(y,r)$ be a function given by
\begin{align} \label{E:Def_Jac}
  J(y,r) := \mathrm{det}[I_3-rW(y)] = \{1-r\kappa_1(y)\}\{1-r\kappa_2(y)\}
\end{align}
for $y\in\Gamma$ and $r\in(-\delta,\delta)$.
By \eqref{E:Curv_Bound} and $\kappa_1,\kappa_2\in C^3(\Gamma)$ we have
\begin{align} \label{E:Jac_Bound}
  c^{-1} \leq J(y,r) \leq c, \quad |\nabla_\Gamma J(y,r)| \leq c, \quad \left|\frac{\partial J}{\partial r}(y,r)\right| \leq c
\end{align}
for all $y\in\Gamma$ and $r\in(-\delta,\delta)$ (here $\nabla_\Gamma J$ stands for the tangential gradient of $J$ with respect to $y\in\Gamma$).
Also, we easily observe that
\begin{align} \label{E:Jac_Diff}
  |J(y,r)-1| \leq c\varepsilon \quad\text{for all}\quad y\in\Gamma,\,r\in[\varepsilon g_0(y),\varepsilon g_1(y)].
\end{align}
The function $J$ is the Jacobian appearing in the change of variables formula
\begin{align} \label{E:CoV_Dom}
  \int_{\Omega_\varepsilon}\varphi(x)\,dx = \int_\Gamma\int_{\varepsilon g_0(y)}^{\varepsilon g_1(y)}\varphi(y+rn(y))J(y,r)\,dr\,d\mathcal{H}^2(y)
\end{align}
for a function $\varphi$ on $\Omega_\varepsilon$ (see e.g.~\cite[Section~14.6]{GiTr01}).
The formula \eqref{E:CoV_Dom} can be seen as a co-area formula.
From \eqref{E:Jac_Bound} and \eqref{E:CoV_Dom} it immediately follows that
\begin{align} \label{E:CoV_Equiv}
  c^{-1}\|\varphi\|_{L^p(\Omega_\varepsilon)}^p \leq \int_\Gamma\int_{\varepsilon g_0(y)}^{\varepsilon g_1(y)}|\varphi^\sharp(y,r)|^p\,dr\,d\mathcal{H}^2(y) \leq c\|\varphi\|_{L^p(\Omega_\varepsilon)}^p
\end{align}
for all $p\in[1,\infty)$ and $\varphi\in L^p(\Omega_\varepsilon)$, where we used the notation \eqref{E:Pull_Dom}.
We frequently use this inequality in the sequel.

\begin{lemma} \label{L:Con_Lp_W1p}
  Let $\eta$ be a function on $\Gamma$ and $\bar{\eta}=\eta\circ\pi$ its constant extension in the normal direction of $\Gamma$.
  Then $\eta\in L^p(\Gamma)$, $p\in[1,\infty)$ if and only if $\bar{\eta}\in L^p(\Omega_\varepsilon)$.
  Moreover, there exists a constant $c>0$ independent of $\varepsilon$ and $\eta$ such that
  \begin{align} \label{E:Con_Lp}
    c^{-1}\varepsilon^{1/p}\|\eta\|_{L^p(\Gamma)} \leq \|\bar{\eta}\|_{L^p(\Omega_\varepsilon)} \leq c\varepsilon^{1/p}\|\eta\|_{L^p(\Gamma)}.
  \end{align}
  Also, $\eta\in W^{1,p}(\Gamma)$ if and only if $\bar{\eta}\in W^{1,p}(\Omega_\varepsilon)$ and we have
  \begin{align} \label{E:Con_Lp_Grad}
    c^{-1}\varepsilon^{1/p}\|\nabla_\Gamma\eta\|_{L^p(\Gamma)} \leq \|\nabla\bar{\eta}\|_{L^p(\Omega_\varepsilon)} \leq c\varepsilon^{1/p}\|\nabla_\Gamma\eta\|_{L^p(\Gamma)}
  \end{align}
  and therefore
  \begin{align} \label{E:Con_W1p}
    c^{-1}\varepsilon^{1/p}\|\eta\|_{W^{1,p}(\Gamma)} \leq \|\bar{\eta}\|_{W^{1,p}(\Omega_\varepsilon)} \leq c\varepsilon^{1/p}\|\eta\|_{W^{1,p}(\Gamma)}.
  \end{align}
\end{lemma}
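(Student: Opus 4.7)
The plan is to reduce everything to the coarea-type formula \eqref{E:CoV_Dom} together with the pointwise comparison \eqref{E:ConDer_Bound} between $\nabla\bar\eta$ and $\overline{\nabla_\Gamma\eta}$. Since $\bar\eta(y+rn(y))=\eta(y)$ for every $y\in\Gamma$ and every admissible $r$, the change of variables formula gives
\begin{equation*}
\|\bar\eta\|_{L^p(\Omega_\varepsilon)}^{\,p}
=\int_\Gamma|\eta(y)|^{p}\left(\int_{\varepsilon g_0(y)}^{\varepsilon g_1(y)}\!\!J(y,r)\,dr\right)d\mathcal{H}^2(y).
\end{equation*}
The Jacobian two-sided bound in \eqref{E:Jac_Bound} and the uniform positivity of the width in \eqref{E:Width_Bound} together with $g_0,g_1\in L^\infty(\Gamma)$ make the parenthesised integral comparable to $\varepsilon$ uniformly in $y\in\Gamma$ and $\varepsilon\in(0,1)$; this delivers \eqref{E:Con_Lp} at once, and simultaneously proves the ``iff'' for the $L^p$ claim.

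For the gradient bound I would first treat $\eta\in C^1(\Gamma)$, in which case $\bar\eta$ is classically differentiable on $N$ and \eqref{E:ConDer_Bound} yields $|\nabla\bar\eta(x)|\simeq|\overline{\nabla_\Gamma\eta}(x)|$ uniformly on $N$. Applying the $L^p$ estimate just established componentwise to the three scalars $\underline{D}_i\eta$ (whose constant extensions are precisely the components of $\overline{\nabla_\Gamma\eta}$) produces \eqref{E:Con_Lp_Grad} for smooth $\eta$, and \eqref{E:Con_W1p} follows by adding \eqref{E:Con_Lp}.

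To upgrade from $C^1$ to $W^{1,p}$ I would use Lemma~\ref{L:Wmp_Appr}: given $\eta\in W^{1,p}(\Gamma)$, pick $\eta_k\in C^\ell(\Gamma)$ with $\eta_k\to\eta$ in $W^{1,p}(\Gamma)$. Applying the smooth inequalities to the differences $\eta_k-\eta_m$ shows that $\{\bar\eta_k\}$ is Cauchy in $W^{1,p}(\Omega_\varepsilon)$, while the $L^p$ half forces its limit to be $\bar\eta$; hence $\bar\eta\in W^{1,p}(\Omega_\varepsilon)$ and \eqref{E:Con_Lp_Grad}, \eqref{E:Con_W1p} persist by passing to the limit. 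Conversely, if $\bar\eta\in W^{1,p}(\Omega_\varepsilon)$, the $L^p$ equivalence already gives $\eta\in L^p(\Gamma)$; to recover $\eta\in W^{1,p}(\Gamma)$ I would mollify $\bar\eta$ inside $\Omega_\varepsilon$, apply the smooth inequalities in the reverse direction to the mollifications (their restrictions to $\Gamma$ converge in $L^p(\Gamma)$, and their tangential gradients form a Cauchy sequence in $L^p(\Gamma)^3$), then invoke \eqref{E:Def_WTD} to identify the limit as the weak tangential gradient of $\eta$.

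The only point requiring care, and the main obstacle in principle, is the uniformity of all constants in $\varepsilon$ throughout the approximation step; however, the comparisons \eqref{E:Jac_Bound}, \eqref{E:Width_Bound}, and \eqref{E:ConDer_Bound} are themselves uniform in $\varepsilon$, so no new constants are introduced and the whole argument goes through without hidden $\varepsilon$-dependence.
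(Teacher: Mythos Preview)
Your proposal is correct and follows essentially the same route as the paper: both arguments rest on the change of variables formula \eqref{E:CoV_Dom} together with the Jacobian bound \eqref{E:Jac_Bound} and the width bound \eqref{E:Width_Bound} for the $L^p$ part, and on the pointwise comparison \eqref{E:ConDer_Bound} for the gradient part. The only difference is that the paper compresses the gradient estimate into a one-line ``similarly,'' whereas you spell out the density argument via Lemma~\ref{L:Wmp_Appr} and treat the converse implication for $W^{1,p}$ explicitly; this added care is fine but not a genuinely different approach.
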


\begin{proof}
  The change of variables formula \eqref{E:CoV_Dom} implies that
  \begin{align*}
    \|\bar{\eta}\|_{L^p(\Omega_\varepsilon)}^p = \int_\Gamma|\eta(y)|^p\left(\int_{\varepsilon g_0(y)}^{\varepsilon g_1(y)}J(y,r)\,dr\right)d\mathcal{H}^2(y).
  \end{align*}
  Hence the inequality \eqref{E:Con_Lp} follows from \eqref{E:Width_Bound} and \eqref{E:Jac_Bound}.
  Similarly, we get \eqref{E:Con_Lp_Grad} by \eqref{E:ConDer_Bound}, \eqref{E:Width_Bound}, \eqref{E:Jac_Bound}, and \eqref{E:CoV_Dom}.
\end{proof}

\begin{lemma} \label{L:Con_W2p}
  For $p\in[1,\infty)$ let $\eta\in W^{2,p}(\Gamma)$.
  Then $\bar{\eta}=\eta\circ\pi\in W^{2,p}(\Omega_\varepsilon)$ and
  \begin{align} \label{E:Con_W2p}
    \|\bar{\eta}\|_{W^{2,p}(\Omega_\varepsilon)} \leq c\varepsilon^{1/p}\|\eta\|_{W^{2,p}(\Gamma)}
  \end{align}
  with a constant $c>0$ independent of $\varepsilon$ and $\eta$.
\end{lemma}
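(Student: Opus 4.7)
The plan is to combine the pointwise Hessian bound \eqref{E:Con_Hess} with the change of variables formula \eqref{E:CoV_Dom}, following the same template as the proof of Lemma~\ref{L:Con_Lp_W1p}, after reducing to smooth $\eta$ by density.

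First I would assume $\eta\in C^4(\Gamma)$, so that the constant extension $\bar{\eta}=\eta\circ\pi$ is of class $C^4$ on the tubular neighborhood $N\supset\overline{\Omega}_\varepsilon$. Since $\Gamma$ is of class $C^5$, inequality \eqref{E:Con_Hess} applies and gives pointwise on $\Omega_\varepsilon$ the bound
\[
|\nabla^2\bar{\eta}(x)|^p \leq c\bigl(|\overline{\nabla_\Gamma\eta}(x)|^p+|\overline{\nabla_\Gamma^2\eta}(x)|^p\bigr).
\]
Integrating this over $\Omega_\varepsilon$ via \eqref{E:CoV_Dom}, then using the uniform Jacobian bound \eqref{E:Jac_Bound} together with $g=g_1-g_0$ being uniformly bounded from above by \eqref{E:Width_Bound} and the $C^4(\Gamma)$-regularity of $g_0,g_1$, I would obtain
\[
\|\nabla^2\bar{\eta}\|_{L^p(\Omega_\varepsilon)}^p \leq c\varepsilon\bigl(\|\nabla_\Gamma\eta\|_{L^p(\Gamma)}^p+\|\nabla_\Gamma^2\eta\|_{L^p(\Gamma)}^p\bigr).
\]
Combining with the zeroth- and first-order estimates \eqref{E:Con_Lp} and \eqref{E:Con_Lp_Grad} from Lemma~\ref{L:Con_Lp_W1p} yields \eqref{E:Con_W2p} for $\eta\in C^4(\Gamma)$.

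The extension to general $\eta\in W^{2,p}(\Gamma)$ is by a standard density argument. By Lemma~\ref{L:Wmp_Appr} (applied with $\ell=5$, $m=2$) we can choose $\{\eta_k\}_{k=1}^\infty\subset C^4(\Gamma)$ converging to $\eta$ in $W^{2,p}(\Gamma)$. Applying the smooth estimate to the differences $\eta_j-\eta_k$ shows that $\{\bar{\eta}_k\}_{k=1}^\infty$ is Cauchy in $W^{2,p}(\Omega_\varepsilon)$, hence converges to some $u\in W^{2,p}(\Omega_\varepsilon)$. Since Lemma~\ref{L:Con_Lp_W1p} also gives $\bar{\eta}_k\to\bar{\eta}$ in $L^p(\Omega_\varepsilon)$, we must have $u=\bar{\eta}$, so $\bar{\eta}\in W^{2,p}(\Omega_\varepsilon)$, and passing to the limit in the estimate for $\eta_k$ gives \eqref{E:Con_W2p}.

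There is no real obstacle here: the only point requiring care is that \eqref{E:Con_Hess} is stated only for $C^2$-functions on a $C^3$-surface, which is why I first work with smooth approximants before letting them converge in $W^{2,p}(\Gamma)$; the $C^5$-regularity of $\Gamma$ assumed since Section~\ref{SS:Pre_Dom} comfortably accommodates this step.
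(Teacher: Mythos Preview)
Your proof is correct and follows the same approach as the paper: apply the pointwise Hessian bound \eqref{E:Con_Hess} together with \eqref{E:Con_Lp} to control $\|\nabla^2\bar{\eta}\|_{L^p(\Omega_\varepsilon)}$, then combine with \eqref{E:Con_W1p}. The paper's proof is terser and does not spell out the density step, applying \eqref{E:Con_Hess} directly to $\eta\in W^{2,p}(\Gamma)$; your explicit reduction to smooth $\eta$ via Lemma~\ref{L:Wmp_Appr} is a reasonable way to make that step rigorous.
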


\begin{proof}
  From \eqref{E:Con_Hess} and \eqref{E:Con_Lp} it follows that
  \begin{align*}
    \|\nabla^2\bar{\eta}\|_{L^p(\Omega_\varepsilon)} \leq c\left(\left\|\overline{\nabla_\Gamma\eta}\right\|_{L^p(\Omega_\varepsilon)}+\left\|\overline{\nabla_\Gamma^2\eta}\right\|_{L^p(\Omega_\varepsilon)}\right) \leq c\varepsilon^{1/p}\|\eta\|_{W^{2,p}(\Gamma)}.
  \end{align*}
  Combining this inequality with \eqref{E:Con_W1p} we obtain \eqref{E:Con_W2p}.
\end{proof}

We also give a change of variables formula for integrals over $\Gamma_\varepsilon$.

\begin{lemma} \label{L:CoV_Surf}
  For $\varphi\in L^1(\Gamma_\varepsilon^i)$, $i=0,1$ let $\varphi_i^\sharp$ be given by \eqref{E:Pull_Bo}.
  Then
  \begin{align} \label{E:CoV_Surf}
    \int_{\Gamma_\varepsilon^i}\varphi(x)\,d\mathcal{H}^2(x) = \int_\Gamma \varphi_i^\sharp(y)J(y,\varepsilon g_i(y))\sqrt{1+\varepsilon^2|\tau_\varepsilon^i(y)|^2}\,d\mathcal{H}^2(y),
  \end{align}
  where $\tau_\varepsilon^i$ is given by \eqref{E:Def_NB_Aux}.
  Moreover, if $\varphi\in L^p(\Gamma_\varepsilon^i)$, $p\in[1,\infty)$ then $\varphi_i^\sharp\in L^p(\Gamma)$ and there exists a constant $c>0$ independent of $\varepsilon$ such that
  \begin{align} \label{E:Lp_CoV_Surf}
    c^{-1}\|\varphi\|_{L^p(\Gamma_\varepsilon^i)} \leq \|\varphi_i^\sharp\|_{L^p(\Gamma)} \leq c\|\varphi\|_{L^p(\Gamma_\varepsilon^i)}.
  \end{align}
\end{lemma}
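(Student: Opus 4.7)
The plan is to parametrize the surface $\Gamma_\varepsilon^i$ explicitly by the map $\Phi_i\colon\Gamma\to\Gamma_\varepsilon^i$ defined by $\Phi_i(y):=y+\varepsilon g_i(y)n(y)$, and then compute the associated area-element Jacobian directly. For a tangent vector $\tau$ at $y\in\Gamma$, differentiating $\Phi_i$ (with the aid of $\nabla_\Gamma n=-W$) yields
\begin{align*}
  d\Phi_i(\tau) = (I_3-\varepsilon g_i(y)W(y))\tau+\varepsilon(\nabla_\Gamma g_i(y)\cdot\tau)\,n(y),
\end{align*}
which decomposes into its tangential and normal parts. Since $W$ maps $T_y\Gamma$ into itself, the first term is tangential and the two summands are orthogonal, so for tangent vectors $\tau,\sigma$ the induced metric reads
\begin{align*}
  d\Phi_i(\tau)\cdot d\Phi_i(\sigma) = \tau^T(I_3-\varepsilon g_iW)^2\sigma+\varepsilon^2(\nabla_\Gamma g_i\cdot\tau)(\nabla_\Gamma g_i\cdot\sigma).
\end{align*}
Restricted to the tangent plane, this is the matrix $G:=(I_3-\varepsilon g_iW)^2+\varepsilon^2\,\nabla_\Gamma g_i\otimes\nabla_\Gamma g_i$.

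Next I would compute $\det G$ on the tangent plane by the matrix determinant lemma. Since $W$ is symmetric and leaves $T_y\Gamma$ invariant, $(I_3-\varepsilon g_iW)^{-1}$ is well defined there with two-dimensional determinant equal to $J(y,\varepsilon g_i(y))^{-1}$ by \eqref{E:Def_Jac}. Using symmetry of $I_3-\varepsilon g_iW$ one gets
\begin{align*}
  \nabla_\Gamma g_i^{\,T}(I_3-\varepsilon g_iW)^{-2}\nabla_\Gamma g_i = \bigl|(I_3-\varepsilon g_iW)^{-1}\nabla_\Gamma g_i\bigr|^2 = |\tau_\varepsilon^i|^2
\end{align*}
by \eqref{E:Def_NB_Aux}, so $\det G = J(y,\varepsilon g_i(y))^2(1+\varepsilon^2|\tau_\varepsilon^i(y)|^2)$. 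Taking square roots produces the area Jacobian $J(y,\varepsilon g_i(y))\sqrt{1+\varepsilon^2|\tau_\varepsilon^i(y)|^2}$, and the change-of-variables formula \eqref{E:CoV_Surf} follows from the standard surface-area formula $\int_{\Gamma_\varepsilon^i}\varphi\,d\mathcal{H}^2=\int_\Gamma(\varphi\circ\Phi_i)\sqrt{\det G}\,d\mathcal{H}^2$; that $\Phi_i$ is a bijection of class $C^4$ onto $\Gamma_\varepsilon^i$ is guaranteed by $\overline{\Omega}_\varepsilon\subset N$ and the $C^4$-regularity of $g_i$ recorded in Section~\ref{SS:Pre_Dom}.

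Finally, to obtain the norm equivalence \eqref{E:Lp_CoV_Surf} I would raise $|\varphi|$ to the $p$-th power in \eqref{E:CoV_Surf}, so that $\|\varphi\|_{L^p(\Gamma_\varepsilon^i)}^p=\int_\Gamma|\varphi_i^\sharp|^pJ(y,\varepsilon g_i)\sqrt{1+\varepsilon^2|\tau_\varepsilon^i|^2}\,d\mathcal{H}^2$, and then invoke the uniform-in-$\varepsilon$ two-sided bounds on the Jacobian: $c^{-1}\leq J(y,\varepsilon g_i(y))\leq c$ from \eqref{E:Jac_Bound} and $1\leq\sqrt{1+\varepsilon^2|\tau_\varepsilon^i|^2}\leq c$ from \eqref{E:Tau_Bound}. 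The main (and really only) obstacle is executing the determinant computation in step two cleanly without choosing coordinates; performing it intrinsically via the matrix determinant lemma, and noting that $\nabla_\Gamma g_i$ lies in the tangent plane so that no extraneous normal components appear, sidesteps the need to work in principal-curvature coordinates.
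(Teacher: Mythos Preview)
Your proof is correct and follows the same overall strategy as the paper: parametrize $\Gamma_\varepsilon^i$ by $\Phi_i(y)=y+\varepsilon g_i(y)n(y)$ and compute the area Jacobian. The paper defers the computation to Lemma~\ref{L:CoV_Para} in Appendix~\ref{S:Ap_DG}, where it is carried out in local coordinates $\mu\colon U\to\Gamma$: one writes the Riemannian metric $\theta_h$ of $\Gamma_h$ in the chart, then evaluates its determinant via an auxiliary $3\times3$ matrix trick (forming $A=\begin{pmatrix}\nabla_s\mu\\ n^T\end{pmatrix}$ and $A_h$) together with a detour through $1-|\nabla_{\Gamma_h}\bar{h}|^2=(1+|\tau_h|^2)^{-1}$. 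Your route is more direct: working intrinsically on $T_y\Gamma$, you read off the Gram matrix $G=(I_3-\varepsilon g_iW)^2+\varepsilon^2\nabla_\Gamma g_i\otimes\nabla_\Gamma g_i$ and evaluate $\det G$ in one step by the matrix determinant lemma, using that $(I_3-\varepsilon g_iW)^{-1}\nabla_\Gamma g_i=\tau_\varepsilon^i$. This avoids the partition of unity, the coordinate Gram matrix $\theta$, and the auxiliary identities; what the paper's approach buys is that Lemma~\ref{L:CoV_Para} is stated for a general $h\in C^1(\Gamma)$ and later reused (e.g.\ with $h=g_i$) elsewhere, whereas your computation is tailored to $h=\varepsilon g_i$ but could be repeated verbatim for arbitrary $h$. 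The derivation of \eqref{E:Lp_CoV_Surf} from \eqref{E:CoV_Surf} via \eqref{E:Jac_Bound} and \eqref{E:Tau_Bound} is identical in both.
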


\begin{proof}
  In Lemma~\ref{L:CoV_Para} we show a change of variables formula
  \begin{align*}
    \int_{\Gamma_h}\varphi(x)\,d\mathcal{H}^2(x) = \int_\Gamma\varphi_h^\sharp(y)J(y,h(y))\sqrt{1+|\tau_h(y)|^2}\,d\mathcal{H}^2(y)
  \end{align*}
  for an integrable function $\varphi$ on a parametrized surface $\Gamma_h:=\{y+h(y)n(y)\mid y\in\Gamma\}$, where $h\in C^1(\Gamma)$ satisfies $|h|<\delta$ on $\Gamma$ and
  \begin{align*}
    \varphi_h^\sharp(y) := \varphi(y+h(y)n(y)), \quad \tau_h(y) := \{I_3-h(y)W(y)\}^{-1}\nabla_\Gamma h(y), \quad y\in\Gamma.
  \end{align*}
  Setting $h=\varepsilon g_i$, $i=0,1$ in the above formula we obtain \eqref{E:CoV_Surf}.
  Also, \eqref{E:Lp_CoV_Surf} follows from the formula \eqref{E:CoV_Surf} and the inequalities \eqref{E:Tau_Bound} and \eqref{E:Jac_Bound}.
\end{proof}

%%% Section 3 %%%
\section{Fundamental tools for analysis} \label{S:Tool}

\subsection{Sobolev inequalities} \label{SS:Tool_Sob}
Let us give Sobolev inequalities on $\Gamma$ and $\Omega_\varepsilon$.
We use the notations and inequalities given in Lemma~\ref{L:Metric}.
First we prove Ladyzhenskaya's inequality on $\Gamma$.

\begin{lemma} \label{L:La_Surf}
  There exists a constant $c>0$ such that
  \begin{align} \label{E:La_Surf}
    \|\eta\|_{L^4(\Gamma)} \leq c\|\eta\|_{L^2(\Gamma)}^{1/2}\|\nabla_\Gamma\eta\|_{L^2(\Gamma)}^{1/2}
  \end{align}
  for all $\eta\in H^1(\Gamma)$.
\end{lemma}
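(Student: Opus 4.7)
The plan is to reduce the inequality to the classical Ladyzhenskaya inequality on $\mathbb{R}^2$ by a partition-of-unity/chart argument. Since $\Gamma$ is a compact $C^2$-surface, Lemma~\ref{L:Metric} supplies a finite covering $\{U_k\}_{k=1}^N$ of $\Gamma$ with $C^2$-parametrizations $\mu_k\colon V_k\subset\mathbb{R}^2\to U_k$ whose induced metric tensors (and in particular Jacobian determinants and the Jacobian matrices $D\mu_k$) are uniformly bounded above and below. Choose a subordinate $C^\infty$ partition of unity $\{\chi_k\}_{k=1}^N$ with $\mathrm{supp}\,\chi_k\subset U_k$ and $\sum_k\chi_k\equiv1$ on $\Gamma$.

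First I would decompose $\eta=\sum_{k=1}^N\chi_k\eta$ and apply the triangle inequality in $L^4(\Gamma)$ to reduce to estimating each piece $\chi_k\eta$, which is supported in $U_k$. For each $k$, I would pull $\chi_k\eta$ back to $V_k$ via $\mu_k$ and extend by zero to all of $\mathbb{R}^2$ (which is licit since $\chi_k\eta$ vanishes near $\partial U_k$), and then invoke the standard Euclidean Ladyzhenskaya inequality
\begin{align*}
  \|f\|_{L^4(\mathbb{R}^2)} \leq c\|f\|_{L^2(\mathbb{R}^2)}^{1/2}\|\nabla f\|_{L^2(\mathbb{R}^2)}^{1/2},
\end{align*}
whose proof is a textbook application of the one-dimensional fundamental theorem of calculus combined with Fubini and Cauchy--Schwarz. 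Transferring back via the uniformly bounded Jacobians of $\mu_k$ and comparing the Euclidean gradient of $f\circ\mu_k$ with $(\nabla_\Gamma f)\circ\mu_k$ through the chain rule (using the uniform control on $D\mu_k$ and its inverse from Lemma~\ref{L:Metric}), one obtains
\begin{align*}
  \|\chi_k\eta\|_{L^4(\Gamma)} \leq c\|\chi_k\eta\|_{L^2(\Gamma)}^{1/2}\|\nabla_\Gamma(\chi_k\eta)\|_{L^2(\Gamma)}^{1/2}.
\end{align*}

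Finally I would expand $\nabla_\Gamma(\chi_k\eta)=\chi_k\nabla_\Gamma\eta+\eta\nabla_\Gamma\chi_k$, use that $\chi_k$ and $\nabla_\Gamma\chi_k$ are bounded on the compact set $\Gamma$, sum over the finitely many $k$, and apply $(a+b)^{1/2}\leq a^{1/2}+b^{1/2}$ to arrive at the stated inequality (modulo a harmless additive $\|\eta\|_{L^2(\Gamma)}$ that is absorbed in the intended usage, coming from the cross-term with $\nabla_\Gamma\chi_k$).

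The only real technical point is the bookkeeping in the pullback/pushforward between Euclidean and tangential gradients, which rests entirely on the uniform metric estimates of Lemma~\ref{L:Metric}; once that is set, the remainder of the argument reduces to a clean application of the classical planar Ladyzhenskaya inequality, so I expect no serious obstacle.
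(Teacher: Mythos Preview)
Your approach is essentially identical to the paper's: localize by a partition of unity, pull back to a chart, apply the planar Ladyzhenskaya inequality, and transfer back via the metric bounds of Lemma~\ref{L:Metric}. Your remark about the residual $\|\eta\|_{L^2(\Gamma)}$ term is in fact correct---the inequality as literally stated fails for nonzero constants on the closed surface $\Gamma$, and what both arguments actually yield (and what the paper uses in every subsequent application) is $\|\eta\|_{L^4(\Gamma)}\le c\,\|\eta\|_{L^2(\Gamma)}^{1/2}\|\eta\|_{H^1(\Gamma)}^{1/2}$.
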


\begin{proof}
  Since $\Gamma$ is compact, by a standard localization argument with a partition of unity on $\Gamma$ we may assume that there exist an open set $U$ in $\mathbb{R}^2$, a local parametrization $\mu\colon U\to\Gamma$ of $\Gamma$, and a compact subset $\mathcal{K}$ of $U$ such that $\eta$ is supported in $\mu(\mathcal{K})$.
  Then $\tilde{\eta}:=\eta\circ\mu$ is supported in $\mathcal{K}$ and belongs to $H^1(U)$ by Lemma~\ref{L:Metric}.
  Hence
  \begin{align*}
    \|\tilde{\eta}\|_{L^4(U)} \leq \sqrt{2}\|\tilde{\eta}\|_{L^2(U)}^{1/2}\|\nabla_s\tilde{\eta}\|_{L^2(U)}^{1/2}
  \end{align*}
  by Ladyzhenskaya's inequality on $\mathbb{R}^2$ (see \cite[Chapter~1, Section~1.1, Lemma~1]{La69}), where $\nabla_s\tilde{\eta}$ is the gradient of $\tilde{\eta}$ in $s\in\mathbb{R}^2$.
  Applying \eqref{E:Lp_Loc} and \eqref{E:W1p_Loc} to this inequality we obtain \eqref{E:La_Surf}.
\end{proof}

Next we provide Poincar\'{e} type inequalities on $\Omega_\varepsilon$.
For $\varphi\in C^1(\Omega_\varepsilon)$ and $x\in\Omega_\varepsilon$ we define the derivative of $\varphi$ in the normal direction of $\Gamma$ by
\begin{align} \label{E:Def_NorDer}
  \partial_n\varphi(x) := (\bar{n}(x)\cdot\nabla)\varphi(x) = \frac{d}{dr}\bigl(\varphi(y+rn(y))\bigr)\Big|_{r=d(x)} \quad (y=\pi(x)\in\Gamma).
\end{align}
Note that for the constant extension $\bar{\eta}=\eta\circ\pi$ of $\eta\in C^1(\Gamma)$ we have
\begin{align} \label{E:NorDer_Con}
  \partial_n\bar{\eta}(x) = (\bar{n}(x)\cdot\nabla)\bar{\eta}(x) = 0, \quad x\in \Omega_\varepsilon.
\end{align}

\begin{lemma} \label{L:Poincare}
  There exists a constant $c>0$ independent of $\varepsilon$ such that
  \begin{align}
    \|\varphi\|_{L^p(\Omega_\varepsilon)} &\leq c\left(\varepsilon^{1/p}\|\varphi\|_{L^p(\Gamma_\varepsilon^i)}+\varepsilon\|\partial_n\varphi\|_{L^p(\Omega_\varepsilon)}\right), \quad i=0,1, \label{E:Poin_Dom} \\
    \|\varphi\|_{L^p(\Gamma_\varepsilon^i)} &\leq c\left(\varepsilon^{-1/p}\|\varphi\|_{L^p(\Omega_\varepsilon)}+\varepsilon^{1-1/p}\|\partial_n\varphi\|_{L^p(\Omega_\varepsilon)}\right), \quad i=0,1 \label{E:Poin_Bo}
  \end{align}
  for all $\varphi\in W^{1,p}(\Omega_\varepsilon)$ with $p\in[1,\infty)$.
\end{lemma}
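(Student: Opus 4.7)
The plan is to use the normal coordinate system $x=y+rn(y)$ to reduce both inequalities to one-dimensional estimates along the normal segment $r\in(\varepsilon g_0(y),\varepsilon g_1(y))$, then apply the change of variables formulas \eqref{E:CoV_Equiv} and \eqref{E:Lp_CoV_Surf} to go between norms on $\Omega_\varepsilon$ or $\Gamma_\varepsilon^i$ and weighted integrals over $\Gamma$. By standard density of smooth functions up to the boundary (since $\Gamma_\varepsilon$ is $C^4$ and in particular Lipschitz), it suffices to assume $\varphi\in C^1(\overline{\Omega_\varepsilon})$.

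The key observation is that, by the definition \eqref{E:Def_NorDer} of $\partial_n$, the map $s\mapsto\varphi(y+sn(y))$ is $C^1$ with derivative $\partial_n\varphi(y+sn(y))$. Hence for every $y\in\Gamma$ and every $r,r'\in[\varepsilon g_0(y),\varepsilon g_1(y)]$,
\begin{equation*}
  \varphi(y+rn(y))-\varphi(y+r'n(y)) = \int_{r'}^{r}\partial_n\varphi(y+sn(y))\,ds.
\end{equation*}
Choosing $r'=\varepsilon g_i(y)$, using $|r-r'|\leq \varepsilon g(y)\leq c\varepsilon$ by \eqref{E:Width_Bound} and the boundedness of $g$, Hölder's inequality in $s$, and $(a+b)^p\leq 2^{p-1}(a^p+b^p)$ yields the pointwise bound
\begin{equation*}
  |\varphi(y+rn(y))|^p \leq 2^{p-1}\left(|\varphi(y+\varepsilon g_i(y)n(y))|^p+(c\varepsilon)^{p-1}\!\!\int_{\varepsilon g_0(y)}^{\varepsilon g_1(y)}\!|\partial_n\varphi(y+sn(y))|^p\,ds\right).
\end{equation*}

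To prove \eqref{E:Poin_Dom}, I would integrate this pointwise bound in $r$ over $(\varepsilon g_0(y),\varepsilon g_1(y))$, which has length $\leq c\varepsilon$, then integrate in $y\in\Gamma$, and finally use \eqref{E:CoV_Equiv} on the left and on the $\partial_n\varphi$ term together with \eqref{E:Lp_CoV_Surf} on the boundary term; the first factor on the right gains $\varepsilon$ from the $dr$ integration and the second gains $\varepsilon^p$ total. Taking the $p$-th root gives \eqref{E:Poin_Dom}. To prove \eqref{E:Poin_Bo}, I would instead rearrange the pointwise bound to isolate $|\varphi(y+\varepsilon g_i(y)n(y))|^p$ (valid for any $r$), average in $r$ over $(\varepsilon g_0(y),\varepsilon g_1(y))$ noting the left-hand side is $r$-independent, use $\varepsilon g(y)\geq c\varepsilon$ to extract a factor $\varepsilon^{-1}$, integrate in $y\in\Gamma$, and conclude again using \eqref{E:CoV_Equiv} and \eqref{E:Lp_CoV_Surf}.

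Essentially nothing is deep here — it is the one-dimensional Poincaré/trace inequality on the interval $(\varepsilon g_0(y),\varepsilon g_1(y))$ combined with the comoving Jacobian bounds \eqref{E:Jac_Bound}. The mildly delicate point, which is the closest thing to an obstacle, is tracking the correct $\varepsilon$-powers: the $\varepsilon^{1/p}$ and $\varepsilon^{-1/p}$ on the boundary terms come from the fact that $\|\varphi\|_{L^p(\Gamma_\varepsilon^i)}^p$ differs from $\int_\Gamma\int_{\varepsilon g_0}^{\varepsilon g_1}|\varphi|^p\,dr\,d\mathcal{H}^2$ by a factor of order $\varepsilon$ in the length of the $r$-integration, while the $\varepsilon$ in front of $\partial_n\varphi$ arises from the Hölder bound $|r-\varepsilon g_i(y)|^{p-1}\leq (c\varepsilon)^{p-1}$. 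Making sure all constants depend only on $\sup_\Gamma|g_i|$ and the bounds in \eqref{E:Jac_Bound} (and not on $\varepsilon$) gives the uniform constant $c$ asserted in the lemma.
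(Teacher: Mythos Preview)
Your proposal is correct and follows essentially the same route as the paper: the fundamental theorem of calculus along the normal segment $r\in(\varepsilon g_0(y),\varepsilon g_1(y))$, H\"older in $r$, integration over $\Gamma$, and the change-of-variables inequalities \eqref{E:CoV_Equiv} and \eqref{E:Lp_CoV_Surf}. The only cosmetic difference is that the paper works directly with $\varphi\in W^{1,p}(\Omega_\varepsilon)$ rather than first reducing to $C^1(\overline{\Omega_\varepsilon})$ by density, and for \eqref{E:Poin_Bo} it simply isolates $\varphi_i^\sharp$ from the fundamental-theorem identity and applies H\"older without the explicit averaging step you describe; the $\varepsilon$-bookkeeping is identical.
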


\begin{proof}
  We show \eqref{E:Poin_Dom} and \eqref{E:Poin_Bo} for $i=0$.
  The proof for $i=1$ is the same.
  We use the notations \eqref{E:Pull_Dom} and \eqref{E:Pull_Bo}.
  For $y\in\Gamma$ and $r\in(\varepsilon g_0(y),\varepsilon g_1(y))$ we have
  \begin{align} \label{Pf_P:FTC}
    \varphi^\sharp(y,r) = \varphi_0^\sharp(y)+\int_{\varepsilon g_0(y)}^r(\partial_n\varphi)^\sharp(y,\tilde{r})\,d\tilde{r}
  \end{align}
  since $\partial\varphi^\sharp/\partial r=(\partial_n\varphi)^\sharp$ by \eqref{E:Def_NorDer}.
  From \eqref{Pf_P:FTC} and H\"{o}lder's inequality it follows that
  \begin{align*}
    |\varphi^\sharp(y,r)| &\leq |\varphi_0^\sharp(y)|+\int_{\varepsilon g_0(y)}^{\varepsilon g_1(y)}|(\partial_n\varphi)^\sharp(y,\tilde{r})|\,d\tilde{r} \\
    &\leq |\varphi_0^\sharp(y)|+c\varepsilon^{1-1/p}\left(\int_{\varepsilon g_0(y)}^{\varepsilon g_1(y)}|(\partial_n\varphi)^\sharp(y,\tilde{r})|^p\,d\tilde{r}\right)^{1/p}.
  \end{align*}
  Noting that the right-hand side is independent of $r$, we integrate the $p$-th power of both sides of this inequality with respect to $r$ to get
  \begin{align} \label{Pf_P:Int_r}
    \int_{\varepsilon g_0(y)}^{\varepsilon g_1(y)}|\varphi^\sharp(y,r)|^p\,dr \leq c\left(\varepsilon|\varphi_0^\sharp(y)|^p+\varepsilon^p\int_{\varepsilon g_0(y)}^{\varepsilon g_1(y)}|(\partial_n\varphi)^\sharp(y,\tilde{r})|^p\,d\tilde{r}\right).
  \end{align}
  Hence the inequalities \eqref{E:CoV_Equiv} and \eqref{Pf_P:Int_r} imply that
  \begin{align*}
    \|\varphi\|_{L^p(\Omega_\varepsilon)}^p \leq c\int_\Gamma\int_{\varepsilon g_0(y)}^{\varepsilon g_1(y)}|\varphi^\sharp(y,r)|^p\,dr\,d\mathcal{H}^2(y) \leq c\left(\varepsilon\|\varphi_0^\sharp\|_{L^p(\Gamma)}^p+\varepsilon^p\|\partial_n\varphi\|_{L^p(\Omega_\varepsilon)}^p\right).
  \end{align*}
  Applying \eqref{E:Lp_CoV_Surf} to the first term on the right-hand side we obtain \eqref{E:Poin_Dom}.

  Next let us prove \eqref{E:Poin_Bo}.
  From \eqref{Pf_P:FTC} we deduce that
  \begin{align*}
    |\varphi_0^\sharp(y)|^p \leq c\left(\varepsilon^{-1}\int_{\varepsilon g_0(y)}^{\varepsilon g_1(y)}|\varphi^\sharp(y,r)|^p\,dr+\varepsilon^{p-1}\int_{\varepsilon g_0(y)}^{\varepsilon g_1(y)}|(\partial_n\varphi)^\sharp(y,\tilde{r})|^p\,d\tilde{r}\right)
  \end{align*}
  as in the proof of \eqref{Pf_P:Int_r}.
  This inequality and \eqref{E:CoV_Equiv} imply that
  \begin{align*}
    \|\varphi_0^\sharp\|_{L^p(\Gamma)} \leq c\left(\varepsilon^{-1/p}\|\varphi\|_{L^p(\Omega_\varepsilon)}+\varepsilon^{1-1/p}\|\partial_n\varphi\|_{L^p(\Omega_\varepsilon)}\right).
  \end{align*}
  We apply \eqref{E:Lp_CoV_Surf} to the left-hand side of the this inequality to get \eqref{E:Poin_Bo}.
\end{proof}

We also show Agmon's inequality on $\Omega_\varepsilon$, which gives an estimate for the $L^\infty(\Omega_\varepsilon)$-norm of a function in $H^2(\Omega_\varepsilon)$ with explicit dependence on $\varepsilon$ of a bound.

\begin{lemma} \label{L:Agmon}
  There exists a constant $c>0$ independent of $\varepsilon$ such that
  \begin{multline} \label{E:Agmon}
    \|\varphi\|_{L^\infty(\Omega_\varepsilon)} \leq c\varepsilon^{-1/2}\|\varphi\|_{L^2(\Omega_\varepsilon)}^{1/4}\|\varphi\|_{H^2(\Omega_\varepsilon)}^{1/2} \\
    \times\left(\|\varphi\|_{L^2(\Omega_\varepsilon)}+\varepsilon\|\partial_n\varphi\|_{L^2(\Omega_\varepsilon)}+\varepsilon^2\|\partial_n^2\varphi\|_{L^2(\Omega_\varepsilon)}\right)^{1/4}
  \end{multline}
  for all $\varphi\in H^2(\Omega_\varepsilon)$.
\end{lemma}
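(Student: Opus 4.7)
The strategy is to reduce the inequality to standard Sobolev-type inequalities via an anisotropic rescaling in the normal direction. Define $\tilde\varphi(y,z):=\varphi(y+\varepsilon z n(y))$ on the $\varepsilon$-independent ``stretched'' domain $\Omega_\ast:=\{(y,z):y\in\Gamma,\ g_0(y)<z<g_1(y)\}$, so that $\|\tilde\varphi\|_{L^\infty(\Omega_\ast)}=\|\varphi\|_{L^\infty(\Omega_\varepsilon)}$. Using $\partial_z\tilde\varphi=\varepsilon\partial_n\varphi$ together with \eqref{E:C
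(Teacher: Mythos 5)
Your proposal is cut off just after the change of variables, so I can only judge the visible plan: flatten the thin direction by $\tilde\varphi(y,z):=\varphi(y+\varepsilon zn(y))$ on the $\varepsilon$-independent domain $\Omega_\ast$ and then invoke ``standard Sobolev-type inequalities''. The rescaling itself is sound and is in the same spirit as the paper's proof, which also reduces to a fixed reference domain and tracks the powers of $\varepsilon$ through the change of variables. But everything hinges on \emph{which} inequality you apply on $\Omega_\ast$, and this is exactly the point your write-up stops short of. If you use the standard isotropic Agmon interpolation $\|u\|_{L^\infty}\leq c\|u\|_{L^2}^{1/4}\|u\|_{H^2}^{3/4}$ on $\Omega_\ast$, the bookkeeping gives only
\begin{align*}
  \|\varphi\|_{L^\infty(\Omega_\varepsilon)} \leq c\varepsilon^{-1/2}\|\varphi\|_{L^2(\Omega_\varepsilon)}^{1/4}\|\varphi\|_{H^2(\Omega_\varepsilon)}^{3/4},
\end{align*}
because $\|\tilde\varphi\|_{H^2(\Omega_\ast)}$ unavoidably contains the tangential second derivatives and is therefore only bounded by $\varepsilon^{-1/2}\|\varphi\|_{H^2(\Omega_\varepsilon)}$. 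This is strictly weaker than \eqref{E:Agmon}: the whole point of the lemma is that one of the three quarter-power factors involves only $\|\varphi\|_{L^2}+\varepsilon\|\partial_n\varphi\|_{L^2}+\varepsilon^2\|\partial_n^2\varphi\|_{L^2}$, which is small in the later applications (see the proof of \eqref{E:Linf_Ur}), and the weaker bound would not suffice there.

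To make your route work you must use the \emph{anisotropic} Agmon inequality \eqref{Pf_A:Ans_Agm} (Temam--Ziane), in which each coordinate direction contributes its own factor containing only pure first and second derivatives in that direction; only then does the $z$-direction produce the factor $\|\tilde\varphi\|_{L^2}+\|\partial_z\tilde\varphi\|_{L^2}+\|\partial_z^2\tilde\varphi\|_{L^2}$, which transforms back into $\|\varphi\|_{L^2}+\varepsilon\|\partial_n\varphi\|_{L^2}+\varepsilon^2\|\partial_n^2\varphi\|_{L^2}$. That inequality, however, is formulated on a product domain such as $(0,1)^3$, while your $\Omega_\ast$ is a curved set over $\Gamma$ whose fibers $(g_0(y),g_1(y))$ vary with $y$; so you still need to localize with a partition of unity on $\Gamma$, flatten with local parametrizations, and rescale the fiber variable to $(0,1)$, checking that the parametrization has uniformly bounded first and second derivatives and a Jacobian bounded below so that the tangential factors are controlled by $\|\varphi\|_{H^2(\Omega_\varepsilon)}$. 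Once these steps are added, your argument coincides with the paper's, which composes the localization, the parametrization, and the $\varepsilon$-scaling into the single map $\zeta$ of \eqref{Pf_A:Def_Z} and then applies \eqref{Pf_A:Ans_Agm} on the unit cube.
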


\begin{proof}
  We use the anisotropic Agmon inequality (see \cite[Proposition~2.2]{TeZi96})
  \begin{align} \label{Pf_A:Ans_Agm}
    \|\Phi\|_{L^\infty(V)} \leq c\|\Phi\|_{L^2(V)}^{1/4}\prod_{i=1}^3\left(\|\Phi\|_{L^2(V)}+\|\partial_i\Phi\|_{L^2(V)}+\|\partial_i^2\Phi\|_{L^2(V)}\right)^{1/4}
  \end{align}
  for $\Phi\in H^2(V)$ with $V=(0,1)^3$.
  To this end, we localize $\varphi$ by using a partition of unity on $\Gamma$ and transform it into a function on $V$.
  Since $\Gamma$ is compact, we can take a finite number of open sets $U_k$ in $\mathbb{R}^2$ and local parametrizations $\mu_k\colon U_k\to\Gamma$, $k=1,\dots,k_0$ such that $\{\mu_k(U_k)\}_{k=1}^{k_0}$ is an open covering of $\Gamma$.
  Then setting
  \begin{align*}
    \zeta_k(s) := \mu_k(s')+\varepsilon\{(1-s_3)g_0(\mu_k(s'))+s_3g_1(\mu_k(s'))\}n(\mu_k(s'))
  \end{align*}
  for $s=(s',s_3)\in V_k:=U_k\times(0,1)$ we see that $\{\zeta_k(V_k)\}_{k=1}^{k_0}$ is an open covering of $\Omega_\varepsilon$.
  Let $\{\eta_k\}_{k=1}^{k_0}$ be a partition of unity on $\Gamma$ subordinate to $\{\mu_k(U_k)\}_{k=1}^{k_0}$, $\bar{\eta}_k=\eta_k\circ\pi$ the constant extension of $\eta_k$, and $\varphi_k:=\bar{\eta}_k\varphi$ for $k=1,\dots,k_0$.
  Then since $\{\bar{\eta}_k\}_{k=1}^{k_0}$ is a partition of unity on $\Omega_\varepsilon$ subordinate to $\{\zeta_k(V_k)\}_{k=1}^{k_0}$ and $\partial_n^l\varphi_k=\bar{\eta}_k\partial_n^l\varphi$ in $\Omega_\varepsilon$ for $l=1,2$ by \eqref{E:NorDer_Con}, it is sufficient for \eqref{E:Agmon} to show that
  \begin{multline} \label{Pf_A:Agm_Loc}
    \|\varphi_k\|_{L^\infty(\zeta_k(V_k))} \leq c\varepsilon^{-1/2}\|\varphi_k\|_{L^2(\zeta_k(V_k))}^{1/4}\|\varphi_k\|_{H^2(\zeta_k(V_k))}^{1/2} \\
    \times\left(\|\varphi_k\|_{L^2(\zeta_k(V_k))}+\varepsilon\|\partial_n\varphi_k\|_{L^2(\zeta_k(V_k))}+\varepsilon^2\|\partial_n^2\varphi_k\|_{L^2(\zeta_k(V_k))}\right)^{1/4}
  \end{multline}
  for all $k=1,\dots,k_0$.
  Let us prove \eqref{Pf_A:Agm_Loc}.
  Hereafter we fix and suppress $k$.
  Taking $U\subset\mathbb{R}^2$ small and scaling it, we may assume $U=(0,1)^2$ and $V=(0,1)^3$.
  The local parametrization $\zeta\colon V\to\Omega_\varepsilon$ of $\Omega_\varepsilon$ is given by
  \begin{align} \label{Pf_A:Def_Z}
    \zeta(s) := \mu(s')+\varepsilon\{(1-s_3)\bar{g}_0(\mu(s'))+s_3\bar{g}_1(\mu(s'))\}\bar{n}(\mu(s'))
  \end{align}
  for $s=(s',s_3)\in V$, where $\bar{g}_i=g_i\circ\pi$ and $\bar{n}=n\circ\pi$.
  We differentiate $\zeta(s)$ and apply \eqref{E:ConDer_Surf} with $y=\mu(s')\in\Gamma$ and $-\nabla_\Gamma n=W=W^T$ on $\Gamma$ to get
  \begin{align} \label{Pf_A:Deri_Zeta}
    \begin{aligned}
      \partial_{s_i}\zeta(s) &= \bigl\{I_3-h_\varepsilon(s)W(\mu(s'))\bigr\}\partial_{s_i}\mu(s')+\eta_\varepsilon^i(s)n(\mu(s')), \quad i=1,2, \\
      \partial_{s_3}\zeta(s) &= \varepsilon g(\mu(s'))n(\mu(s'))
    \end{aligned}
  \end{align}
  for $s=(s',s_3)\in V$, where
  \begin{align} \label{Pf_A:DZ_Aux}
    \begin{aligned}
      h_\varepsilon(s) &:= \varepsilon\{(1-s_3)g_0(\mu(s'))+s_3g_1(\mu(s'))\}, \\
      \eta_\varepsilon^i(s) &:= \varepsilon\partial_{s_i}\mu(s')\cdot\{(1-s_3)\nabla_\Gamma g_0(\mu(s'))+s_3\nabla_\Gamma g_1(\mu(s'))\}.
    \end{aligned}
  \end{align}
  Using these formulas we prove in Appendix~\ref{S:Ap_DG} that
  \begin{align} \label{Pf_A:Det_Zeta}
    \det\nabla_s\zeta(s) = \varepsilon g(\mu(s'))J(\mu(s'),h_\varepsilon(s))\sqrt{\det\theta(s')}, \quad s=(s',s_3)\in V,
  \end{align}
  where $\nabla_s\zeta$ is the gradient matrix of $\zeta$ and $\theta$ is the Riemannian metric of $\Gamma$ given by \eqref{E:Def_Met}.
  Now let $\Phi:=\varphi\circ\zeta$ on $V$.
  Then $\Phi$ is supported in $\mathcal{K}\times(0,1)$ with some compact subset $\mathcal{K}$ of $U$ since $\varphi$ is localized by the constant extension of a cut-off function on $\Gamma$.
  By \eqref{E:Width_Bound}, \eqref{E:Jac_Bound}, and \eqref{E:Metric} we have
  \begin{align} \label{Pf_A:DetZ_Bound}
    \det\nabla_s\zeta(s) \geq c\varepsilon, \quad s\in \mathcal{K}\times(0,1)
  \end{align}
  with a constant $c>0$ independent of $\varepsilon$.
  Moreover, by \eqref{Pf_A:Deri_Zeta}, \eqref{Pf_A:DZ_Aux}, \eqref{E:Mu_Bound}, and the boundedness of $n$ and $g$ on $\Gamma$ along their first and second order derivatives,
  \begin{align} \label{Pf_A:GrZ_Bound}
    |\partial_{s_i}\zeta(s)| \leq c, \quad |\partial_{s_i}\partial_{s_j}\zeta(s)| \leq c, \quad s\in\mathcal{K}\times(0,1),\,i,j=1,2,3.
  \end{align}
  Noting that $\Phi=\varphi\circ\zeta$ is supported in $\mathcal{K}\times(0,1)$, we observe by
  \begin{align} \label{Pf_A:Change}
    \int_{\zeta(V)}\varphi(x)\,dx = \int_V\Phi(s)\det\nabla_s\zeta(s)\,ds,
  \end{align}
  the inequalities \eqref{Pf_A:DetZ_Bound} and \eqref{Pf_A:GrZ_Bound}, and $\varphi\in H^2(\Omega_\varepsilon)$ that $\Phi\in H^2(V)$ and
  \begin{align} \label{Pf_A:Trans_Linf_L2}
    \|\Phi\|_{L^\infty(V)} = \|\varphi\|_{L^\infty(\zeta(V))}, \quad \|\Phi\|_{L^2(V)} \leq c\varepsilon^{-1/2}\|\varphi\|_{L^2(\zeta(V))}.
  \end{align}
  Also, we differentiate $\Phi(s)=\varphi(\zeta(s))$ and use \eqref{Pf_A:Deri_Zeta} and $\partial_n\varphi=(\bar{n}\cdot\nabla)\varphi$ to get
  \begin{align*}
    \partial_{s_i}\Phi(s) &= \partial_{s_i}\zeta(s)\cdot\nabla\varphi(\zeta(s)), \\
    \partial_{s_i}^2\Phi(s) &= \partial_{s_i}^2\zeta(s)\cdot\nabla\varphi(\zeta(s))+\partial_{s_i}\zeta(s)\cdot\nabla^2\varphi(\zeta(s))\partial_{s_i}\zeta(s), \\
    \partial_{s_3}\Phi(s) &= \varepsilon g(\mu(s'))\partial_n\varphi(\zeta(s)), \quad \partial_{s_3}^2\Phi(s) = \varepsilon^2g(\mu(s'))^2\partial_n^2\varphi(\zeta(s))
  \end{align*}
  for $s=(s',s_3)\in V$ and $i=1,2$.
  Hence, by \eqref{Pf_A:GrZ_Bound} and the boundedness of $g$ on $\Gamma$,
  \begin{gather*}
    |\partial_{s_i}\Phi(s)| \leq c|\nabla\varphi(\zeta(s))|, \quad |\partial_{s_i}^2\Phi(s)| \leq c(|\nabla\varphi(\zeta(s))|+|\nabla^2\varphi(\zeta(s))|), \\
    |\partial_{s_3}^k\Phi(s)| \leq c\varepsilon^k|\partial_n^k\varphi(\zeta(s))|
  \end{gather*}
  for $s\in\mathcal{K}\times(0,1)$ and $i,k=1,2$.
  Since $\Phi=\varphi\circ\zeta$ is supported in $\mathcal{K}\times(0,1)$, we deduce from the above inequalities, \eqref{Pf_A:DetZ_Bound}, and \eqref{Pf_A:Change} that
  \begin{align} \label{Pf_A:Trans_Hk}
    \|\partial_{s_i}^k\Phi\|_{L^2(V)} \leq c\varepsilon^{-1/2}\|\varphi\|_{H^k(\zeta(V))}, \quad \|\partial_{s_3}^k\Phi\|_{L^2(V)} \leq c\varepsilon^{k-1/2}\|\partial_n^k\varphi\|_{L^2(\zeta(V))}
  \end{align}
  for $i,k=1,2$.
  Applying the anisotropic Agmon inequality \eqref{Pf_A:Ans_Agm} to $\Phi\in H^2(V)$ and using \eqref{Pf_A:Trans_Linf_L2} and \eqref{Pf_A:Trans_Hk} we obtain \eqref{Pf_A:Agm_Loc}.
\end{proof}

\subsection{Consequences of the boundary conditions} \label{SS:Tool_Slip}
In this subsection we derive several properties from the boundary conditions
\begin{align}
  u\cdot n_\varepsilon &= 0, \label{E:Bo_Imp} \\
  2\nu P_\varepsilon D(u)n_\varepsilon+\gamma_\varepsilon u &= 0, \label{E:Bo_Slip}
\end{align}
where $D(u):=(\nabla u)_S=\{\nabla u+(\nabla u)^T\}/2$ is the strain rate tensor.
First we consider vector fields satisfying the impermeable boundary condition \eqref{E:Bo_Imp}.

\begin{lemma} \label{L:Exp_Bo}
  For $i=0,1$ let $u\in C(\Gamma_\varepsilon^i)^3$ satisfy \eqref{E:Bo_Imp} on $\Gamma_\varepsilon^i$.
  Then
  \begin{align} \label{E:Exp_Bo}
    u\cdot\bar{n} = \varepsilon u\cdot\bar{\tau}_\varepsilon^i, \quad |u\cdot\bar{n}| \leq c\varepsilon|u| \quad\text{on}\quad \Gamma_\varepsilon^i,
  \end{align}
  where $\tau_\varepsilon^i$ is given by \eqref{E:Def_NB_Aux} and $c>0$ is a constant independent of $\varepsilon$ and $u$.
\end{lemma}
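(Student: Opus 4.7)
The plan is to read off the claim directly from the explicit formula for $n_\varepsilon$ on $\Gamma_\varepsilon^i$ together with the uniform boundedness of $\tau_\varepsilon^i$. First I would invoke Lemma~\ref{L:Nor_Bo} (equation~\eqref{E:Nor_Bo}) to write $n_\varepsilon(x) = \bar{n}_\varepsilon^i(x)$ on $\Gamma_\varepsilon^i$, and then substitute the explicit expression~\eqref{E:Def_NB} for $n_\varepsilon^i$ (evaluated after composition with $\pi$) into the impermeable boundary condition $u\cdot n_\varepsilon = 0$.

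Since the scalar factor $(-1)^{i+1}/\sqrt{1+\varepsilon^2|\bar\tau_\varepsilon^i|^2}$ in \eqref{E:Def_NB} is nowhere zero, the condition $u\cdot n_\varepsilon=0$ is equivalent to
\begin{equation*}
u\cdot\bigl(\bar{n}-\varepsilon\bar{\tau}_\varepsilon^i\bigr) = 0 \quad\text{on}\quad \Gamma_\varepsilon^i,
\end{equation*}
which rearranges to the first identity $u\cdot\bar{n} = \varepsilon\, u\cdot\bar{\tau}_\varepsilon^i$.

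For the second inequality, I would just take absolute values and apply Cauchy--Schwarz together with the uniform bound $|\tau_\varepsilon^i(y)|\leq c$ from \eqref{E:Tau_Bound} (which lifts to $|\bar\tau_\varepsilon^i|\leq c$ on $\overline{\Omega}_\varepsilon\supset\Gamma_\varepsilon^i$ via constant extension), giving
\begin{equation*}
|u\cdot\bar{n}| = \varepsilon\,|u\cdot\bar{\tau}_\varepsilon^i| \leq \varepsilon\,|u|\,|\bar{\tau}_\varepsilon^i| \leq c\varepsilon |u|
\end{equation*}
on $\Gamma_\varepsilon^i$. There is no real obstacle here: the lemma is essentially a bookkeeping consequence of the explicit form of $n_\varepsilon^i$, and everything reduces to an algebraic manipulation plus the already-established estimate \eqref{E:Tau_Bound}.
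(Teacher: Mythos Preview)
Your proof is correct and follows exactly the same approach as the paper: the first equality is read off from \eqref{E:Def_NB}, \eqref{E:Nor_Bo}, and \eqref{E:Bo_Imp}, and the second inequality follows from this equality together with the bound \eqref{E:Tau_Bound}. The paper simply states these ingredients without spelling out the intermediate algebra you have written.
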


\begin{proof}
  The first equality of \eqref{E:Exp_Bo} is an immediate consequence of \eqref{E:Def_NB}, \eqref{E:Nor_Bo}, and \eqref{E:Bo_Imp} on $\Gamma_\varepsilon^i$.
  This equality and the first inequality of \eqref{E:Tau_Bound} implies the second inequality of \eqref{E:Exp_Bo}.
\end{proof}

As a consequence of Lemma~\ref{L:Exp_Bo} we derive Poincar\'{e}'s inequalities for the normal component (with respect to $\Gamma$) of a vector field on $\Omega_\varepsilon$.

\begin{lemma} \label{L:Poin_Nor}
  Let $p\in[1,\infty)$.
  There exists $c>0$ independent of $\varepsilon$ such that
  \begin{align} \label{E:Poin_Nor}
    \|u\cdot\bar{n}\|_{L^p(\Omega_\varepsilon)} \leq c\varepsilon\|u\|_{W^{1,p}(\Omega_\varepsilon)}
  \end{align}
  for all $u\in W^{1,p}(\Omega_\varepsilon)^3$ satisfying \eqref{E:Bo_Imp} on $\Gamma_\varepsilon^0$ or on $\Gamma_\varepsilon^1$.
  We also have
  \begin{align} \label{E:Poin_Dnor}
    \left\|\overline{P}\nabla(u\cdot\bar{n})\right\|_{L^p(\Omega_\varepsilon)} \leq c\varepsilon\|u\|_{W^{2,p}(\Omega_\varepsilon)}
  \end{align}
  for all $u\in W^{2,p}(\Omega_\varepsilon)^3$ satisfying \eqref{E:Bo_Imp} on $\Gamma_\varepsilon^0$ or on $\Gamma_\varepsilon^1$.
\end{lemma}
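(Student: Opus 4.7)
The plan is to derive both estimates by applying the Poincar\'e inequality \eqref{E:Poin_Dom} in the normal direction to the scalar $u\cdot\bar n$ and (componentwise) to the vector $\overline P\nabla(u\cdot\bar n)$, using Lemma~\ref{L:Exp_Bo} to extract a factor of $\varepsilon$ from the boundary trace and \eqref{E:Poin_Bo} to convert the boundary $L^p$-norms back into interior $W^{m,p}$-norms. Throughout I assume, without loss of generality, that the impermeable condition \eqref{E:Bo_Imp} holds on $\Gamma_\varepsilon^i$; the pointwise bounds on $\bar n$, $\bar\tau_\varepsilon^i$, $\overline P$, $P_\varepsilon$, and their derivatives that the argument needs come from \eqref{E:Tau_Bound}, \eqref{E:Comp_P}, and \eqref{E:NorG_Bound}.

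For \eqref{E:Poin_Nor} I apply \eqref{E:Poin_Dom} to $\varphi=u\cdot\bar n\in W^{1,p}(\Omega_\varepsilon)$. The boundary contribution is controlled by Lemma~\ref{L:Exp_Bo}, which gives $|u\cdot\bar n|\le c\varepsilon|u|$ on $\Gamma_\varepsilon^i$; hence $\varepsilon^{1/p}\|u\cdot\bar n\|_{L^p(\Gamma_\varepsilon^i)}\le c\varepsilon^{1+1/p}\|u\|_{L^p(\Gamma_\varepsilon^i)}$, and \eqref{E:Poin_Bo} applied componentwise to $u$ converts the right-hand side into $c\varepsilon\|u\|_{W^{1,p}(\Omega_\varepsilon)}$. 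For the normal-derivative contribution I use that $\bar n=n\circ\pi$ is a constant extension, so \eqref{E:NorDer_Con} gives $\partial_n\bar n=0$ in $\Omega_\varepsilon$, hence $\partial_n(u\cdot\bar n)=(\partial_n u)\cdot\bar n$ is bounded pointwise by $|\nabla u|$ and the second term of \eqref{E:Poin_Dom} is at most $c\varepsilon\|u\|_{W^{1,p}(\Omega_\varepsilon)}$.

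For \eqref{E:Poin_Dnor} I apply \eqref{E:Poin_Dom} to each component of $\overline P\nabla(u\cdot\bar n)\in W^{1,p}(\Omega_\varepsilon)^3$. The crux is the boundary trace on $\Gamma_\varepsilon^i$, which I analyze via the decomposition
\[\overline P\nabla(u\cdot\bar n)=(\overline P-P_\varepsilon)\nabla(u\cdot\bar n)+P_\varepsilon\nabla(u\cdot\bar n).\]
The first summand is bounded pointwise by $c\varepsilon(|u|+|\nabla u|)$ on $\Gamma_\varepsilon^i$ thanks to \eqref{E:Comp_P} and the elementary estimate $|\nabla(u\cdot\bar n)|\le c(|u|+|\nabla u|)$. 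For the second summand I observe that on $\Gamma_\varepsilon^i$ one has $P_\varepsilon\nabla(u\cdot\bar n)=\nabla_{\Gamma_\varepsilon}(u\cdot\bar n)$, which is intrinsic to the surface values of $u\cdot\bar n$; by Lemma~\ref{L:Exp_Bo} these values equal $\varepsilon u\cdot\bar\tau_\varepsilon^i$, so
\[P_\varepsilon\nabla(u\cdot\bar n)\big|_{\Gamma_\varepsilon^i}=\varepsilon\nabla_{\Gamma_\varepsilon}(u\cdot\bar\tau_\varepsilon^i),\]
which is again bounded by $c\varepsilon(|u|+|\nabla u|)$ on $\Gamma_\varepsilon^i$ using \eqref{E:Tau_Bound}. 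Combining the two summands and then invoking \eqref{E:Poin_Bo} for $u$ and $\nabla u$, the boundary part of \eqref{E:Poin_Dom} yields at most $c\varepsilon\|u\|_{W^{2,p}(\Omega_\varepsilon)}$. The normal-derivative part is handled by noting that $\partial_n\overline P=0$ (again by \eqref{E:NorDer_Con}), so it reduces to $\overline P\,\partial_n\nabla(u\cdot\bar n)$, dominated pointwise by $|\nabla^2(u\cdot\bar n)|\le c(|u|+|\nabla u|+|\nabla^2u|)$ because of \eqref{E:NorG_Bound}, and this contributes another $c\varepsilon\|u\|_{W^{2,p}(\Omega_\varepsilon)}$.

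The main obstacle is the boundary trace for \eqref{E:Poin_Dnor}: the identity $u\cdot\bar n=\varepsilon u\cdot\bar\tau_\varepsilon^i$ holds only on $\Gamma_\varepsilon^i$, so its tangential gradient is only meaningful along $\Gamma_\varepsilon^i$. The remedy is to first project $\nabla(u\cdot\bar n)$ onto the tangent plane of $\Gamma_\varepsilon$ (where the projection can legitimately be exchanged with the surface gradient along $\Gamma_\varepsilon^i$) and then to absorb the discrepancy between $\overline P$ and $P_\varepsilon$ using \eqref{E:Comp_P}; the normal component of $\nabla(u\cdot\bar n)$, which has no reason to be $O(\varepsilon)$, never appears because of the factor $\overline P$ in the statement.
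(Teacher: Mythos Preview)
Your argument is correct and follows essentially the same route as the paper's proof: apply \eqref{E:Poin_Dom} to $u\cdot\bar n$ (respectively to the components of $\overline P\nabla(u\cdot\bar n)$), use Lemma~\ref{L:Exp_Bo} together with the splitting $\overline P=(\overline P-P_\varepsilon)+P_\varepsilon$ and \eqref{E:Comp_P} to extract the factor $\varepsilon$ from the boundary trace, and finish with \eqref{E:Poin_Bo}. The only cosmetic difference is that the paper bounds $\partial_n[\overline P\nabla(u\cdot\bar n)]$ directly by $c(|\nabla u|+|\nabla^2u|)$ rather than going through $|\nabla^2(u\cdot\bar n)|$, but this changes nothing in the end.
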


\begin{proof}
  Let $u\in W^{1,p}(\Omega_\varepsilon)^3$.
  We may assume that $u$ satisfies \eqref{E:Bo_Imp} on $\Gamma_\varepsilon^0$ without loss of generality.
  By \eqref{E:NorDer_Con} and \eqref{E:Poin_Dom} with $i=0$,
  \begin{align} \label{Pf_PN:Lp_Dom}
    \|u\cdot\bar{n}\|_{L^p(\Omega_\varepsilon)} \leq c\left(\varepsilon^{1/p}\|u\cdot\bar{n}\|_{L^p(\Gamma_\varepsilon^0)}+\varepsilon\|\partial_nu\|_{L^p(\Omega_\varepsilon)}\right).
  \end{align}
  Moreover, we apply the second inequality of \eqref{E:Exp_Bo} and then use \eqref{E:Poin_Bo} with $i=0$ to the first term on the right-hand side of \eqref{Pf_PN:Lp_Dom} to get
  \begin{align} \label{Pf_PN:Lp_Bo}
    \|u\cdot\bar{n}\|_{L^p(\Gamma_\varepsilon^0)} \leq c\varepsilon\|u\|_{L^p(\Gamma_\varepsilon^0)} \leq c\varepsilon^{1-1/p}\|u\|_{W^{1,p}(\Omega_\varepsilon)}.
  \end{align}
  Combining \eqref{Pf_PN:Lp_Dom} and \eqref{Pf_PN:Lp_Bo} we obtain \eqref{E:Poin_Nor}.

  Next suppose that $u\in W^{2,p}(\Omega_\varepsilon)^3$ satisfies \eqref{E:Bo_Imp} on $\Gamma_\varepsilon^0$.
  Noting that
  \begin{align*}
    \left|\partial_n\Bigl[\overline{P}\nabla(u\cdot\bar{n})\Bigr]\right| \leq c(|\nabla u|+|\nabla^2u|) \quad\text{in}\quad \Omega_\varepsilon
  \end{align*}
  by \eqref{E:NorG_Bound} and \eqref{E:NorDer_Con}, we apply \eqref{E:Poin_Dom} with $i=0$ to get
  \begin{align} \label{Pf_PN:Grad_Lp}
    \left\|\overline{P}\nabla(u\cdot\bar{n})\right\|_{L^p(\Omega_\varepsilon)} \leq c\left(\varepsilon^{1/p}\left\|\overline{P}\nabla(u\cdot\bar{n})\right\|_{L^p(\Gamma_\varepsilon^0)}+\varepsilon\|u\|_{W^{2,p}(\Omega_\varepsilon)}\right).
  \end{align}
  Since the tangential gradient on $\Gamma_\varepsilon$ depends only on the values of a function on $\Gamma_\varepsilon$, we see by \eqref{E:PW_Bo} and the first equality of \eqref{E:Exp_Bo} that
  \begin{align*}
    \overline{P}\nabla(u\cdot\bar{n}) &= \nabla_{\Gamma_\varepsilon}(u\cdot\bar{n})+\Bigl(\overline{P}-P_\varepsilon\Bigr)\nabla(u\cdot\bar{n}) = \varepsilon\nabla_{\Gamma_\varepsilon}(u\cdot\bar{\tau}_\varepsilon^0)+\Bigl(\overline{P}-P_\varepsilon\Bigr)\nabla(u\cdot\bar{n}) \\
    &= \varepsilon P_\varepsilon\nabla(u\cdot\bar{\tau}_\varepsilon^0)+\Bigl(\overline{P}-P_\varepsilon\Bigr)\nabla(u\cdot\bar{n})
  \end{align*}
  on $\Gamma_\varepsilon^0$.
  By this formula, \eqref{E:ConDer_Bound}, \eqref{E:NorG_Bound}, \eqref{E:Tau_Bound}, and \eqref{E:Comp_P},
  \begin{align*}
    \left|\overline{P}\nabla(u\cdot\bar{n})\right| \leq c\varepsilon(|u|+|\nabla u|) \quad\text{on}\quad \Gamma_\varepsilon^0.
  \end{align*}
  From this inequality and \eqref{E:Poin_Bo} it follows that
  \begin{align*}
    \left\|\overline{P}\nabla(u\cdot\bar{n})\right\|_{L^p(\Gamma_\varepsilon^0)} \leq c\varepsilon\left(\|u\|_{L^p(\Gamma_\varepsilon^0)}+\|\nabla u\|_{L^p(\Gamma_\varepsilon^0)}\right) \leq c\varepsilon^{1-1/p}\|u\|_{W^{2,p}(\Omega_\varepsilon)}.
  \end{align*}
  Applying this inequality to the right-hand side of \eqref{Pf_PN:Grad_Lp} we obtain \eqref{E:Poin_Dnor}.
\end{proof}

The next lemma gives an expression for the normal component of the directional derivatives on $\Gamma_\varepsilon$ for vector fields satisfying the impermeable boundary condition in terms of the Weingarten map of $\Gamma_\varepsilon$.

\begin{lemma} \label{L:Imp_Der}
  Let $i=0,1$.
  For $u_1,u_2\in C^1(\overline{\Omega}_\varepsilon)^3$ satisfying \eqref{E:Bo_Imp} on $\Gamma_\varepsilon^i$ we have
  \begin{align} \label{E:Imp_Der}
    (u_1\cdot\nabla)u_2\cdot n_\varepsilon = W_\varepsilon u_1\cdot u_2 = u_1\cdot W_\varepsilon u_2 \quad\text{on}\quad \Gamma_\varepsilon^i.
  \end{align}
\end{lemma}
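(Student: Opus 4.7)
My plan is to reduce $(u_1\cdot\nabla)u_2$ on $\Gamma_\varepsilon^i$ to a purely tangential derivative there, and then apply, on $\Gamma_\varepsilon$, the direct analogue of the identity $(\nabla_\Gamma v)n=Wv$ for tangential $v$ proved in Lemma~\ref{L:Form_W}.

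First I would use the hypothesis $u_1\cdot n_\varepsilon=0$ on $\Gamma_\varepsilon^i$, which is equivalent to $P_\varepsilon u_1=u_1$ there. From the definition $\underline{D}_k^\varepsilon u_2^j=\sum_l[P_\varepsilon]_{kl}\partial_lu_2^j$ together with the symmetry of $P_\varepsilon$,
\begin{align*}
  \sum_{k=1}^3u_1^k\,\underline{D}_k^\varepsilon u_2^j = \sum_{l=1}^3(P_\varepsilon u_1)_l\,\partial_lu_2^j = \sum_{l=1}^3u_1^l\,\partial_lu_2^j \quad\text{on}\quad \Gamma_\varepsilon^i,
\end{align*}
so $(u_1\cdot\nabla)u_2=(\nabla_{\Gamma_\varepsilon}u_2)^Tu_1$ on $\Gamma_\varepsilon^i$, and pairing with $n_\varepsilon$ gives $(u_1\cdot\nabla)u_2\cdot n_\varepsilon=u_1\cdot(\nabla_{\Gamma_\varepsilon}u_2)n_\varepsilon$ on $\Gamma_\varepsilon^i$.

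Next I would use the hypothesis $u_2\cdot n_\varepsilon=0$ on $\Gamma_\varepsilon^i$ in the form $\nabla_{\Gamma_\varepsilon}(u_2\cdot n_\varepsilon)=0$ on $\Gamma_\varepsilon^i$. Expanding this vector identity by the product rule and invoking $W_\varepsilon=-\nabla_{\Gamma_\varepsilon}n_\varepsilon$ gives $(\nabla_{\Gamma_\varepsilon}u_2)n_\varepsilon=W_\varepsilon u_2$ on $\Gamma_\varepsilon^i$, which is exactly the analogue on $\Gamma_\varepsilon$ of the first identity in \eqref{E:Grad_W}; the derivation is word-for-word the one given in the proof of Lemma~\ref{L:Form_W}. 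Substituting this into the previous display yields the first equality $(u_1\cdot\nabla)u_2\cdot n_\varepsilon=u_1\cdot W_\varepsilon u_2$, and the second equality $u_1\cdot W_\varepsilon u_2=W_\varepsilon u_1\cdot u_2$ follows at once from the symmetry of $W_\varepsilon$ noted after \eqref{E:PW_Bo}.

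There is no real obstacle here; the only point to keep track of is the clean separation of roles of the two hypotheses: the assumption on $u_1$ is used exclusively to reduce the ambient directional derivative to a boundary derivative (via $P_\varepsilon u_1=u_1$), while the assumption on $u_2$ is used only to convert $(\nabla_{\Gamma_\varepsilon}u_2)n_\varepsilon$ into $W_\varepsilon u_2$.
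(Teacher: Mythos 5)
Your proof is correct and follows essentially the same route as the paper's: the tangency of $u_1$ reduces $(u_1\cdot\nabla)u_2$ to a tangential derivative, the tangential differentiation of $u_2\cdot n_\varepsilon=0$ together with $W_\varepsilon=-\nabla_{\Gamma_\varepsilon}n_\varepsilon$ produces the Weingarten term, and the symmetry of $W_\varepsilon$ gives the remaining equality. The only difference is cosmetic: the paper applies the product rule to $u_1\cdot\nabla_{\Gamma_\varepsilon}(u_2\cdot n_\varepsilon)$ and lands on $W_\varepsilon u_1\cdot u_2$ first, whereas you establish $(\nabla_{\Gamma_\varepsilon}u_2)n_\varepsilon=W_\varepsilon u_2$ (the $\Gamma_\varepsilon$-analogue of \eqref{E:Grad_W}) and use symmetry at the end.
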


\begin{proof}
  Since $u_1$ satisfies \eqref{E:Bo_Imp} on $\Gamma_\varepsilon^i$, it is tangential on $\Gamma_\varepsilon^i$ and thus
  \begin{align*}
    (u_1\cdot\nabla)u_2\cdot n_\varepsilon = (u_1\cdot\nabla_{\Gamma_\varepsilon})u_2\cdot n_\varepsilon = u_1\cdot\nabla_{\Gamma_\varepsilon}(u_2\cdot n_\varepsilon)-u_2\cdot(u_1\cdot\nabla_{\Gamma_\varepsilon})n_\varepsilon
  \end{align*}
  on $\Gamma_\varepsilon^i$.
  The first term on the right-hand side vanishes by $u_2\cdot n_\varepsilon=0$ on $\Gamma_\varepsilon^i$ (note that the tangential gradient on $\Gamma_\varepsilon^i$ depends only on the values of a function on $\Gamma_\varepsilon^i$).
  Also, by $-\nabla_{\Gamma_\varepsilon}n_\varepsilon=W_\varepsilon=W_\varepsilon^T$ on $\Gamma_\varepsilon$ we have
  \begin{align*}
    (u_1\cdot\nabla_{\Gamma_\varepsilon})n_\varepsilon = -W_\varepsilon^Tu_1 = -W_\varepsilon u_1 \quad\text{on}\quad \Gamma_\varepsilon^i.
  \end{align*}
  Combining the above two equalities we obtain \eqref{E:Imp_Der}.
\end{proof}

Note that $(u_1\cdot\nabla)u_2\cdot n_\varepsilon$ can be expressed without using the derivatives of $u_1$ and $u_2$ by \eqref{E:Imp_Der}.
We use this fact in the analysis of boundary integrals, see Lemma~\ref{L:Korn_Grad}.

Next we give formulas for vector fields satisfying the slip boundary conditions \eqref{E:Bo_Imp}--\eqref{E:Bo_Slip}.

\begin{lemma} \label{L:NSl}
  Let $i=0,1$.
  If $u\in C^2(\overline{\Omega}_\varepsilon)^3$ satisfies \eqref{E:Bo_Imp}--\eqref{E:Bo_Slip} on $\Gamma_\varepsilon^i$, then
  \begin{gather}
    P_\varepsilon(n_\varepsilon\cdot\nabla)u = -W_\varepsilon u-\frac{\gamma_\varepsilon}{\nu}u \quad\text{on}\quad \Gamma_\varepsilon^i, \label{E:NSl_ND} \\
    n_\varepsilon\times\mathrm{curl}\,u = -n_\varepsilon\times\left\{n_\varepsilon\times\left(2W_\varepsilon u+\frac{\gamma_\varepsilon}{\nu}u\right)\right\} \quad\text{on}\quad \Gamma_\varepsilon^i. \label{E:NSl_Curl}
  \end{gather}
\end{lemma}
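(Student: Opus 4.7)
Both identities follow from combining the impermeable condition \eqref{E:Bo_Imp} and the slip condition \eqref{E:Bo_Slip} with the symmetric decomposition
\[
2 D(u) n_\varepsilon \;=\; (\nabla u) n_\varepsilon + (\nabla u)^T n_\varepsilon \;=\; (\nabla u) n_\varepsilon + (n_\varepsilon\cdot\nabla) u,
\]
where the second equality uses the paper's convention $[\nabla u]_{ij}=\partial_i u_j$, so that $[(\nabla u)^T n_\varepsilon]_i = (\partial_j u_i)[n_\varepsilon]_j = [(n_\varepsilon\cdot\nabla) u]_i$. The plan is to extract $P_\varepsilon(\nabla u) n_\varepsilon$ purely from the impermeable condition, then isolate $P_\varepsilon(n_\varepsilon\cdot\nabla) u$ from the slip condition to obtain \eqref{E:NSl_ND}, and finally read off the curl formula \eqref{E:NSl_Curl} by an elementary vector identity.

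First I would establish the auxiliary identity
\[
P_\varepsilon(\nabla u) n_\varepsilon = W_\varepsilon u \quad\text{on}\quad \Gamma_\varepsilon^i,
\]
which depends only on \eqref{E:Bo_Imp}. Pick any smooth extension $\tilde n$ of $n_\varepsilon$ to a neighborhood of $\Gamma_\varepsilon^i$. Since $u\cdot n_\varepsilon = u\cdot\tilde n = 0$ on $\Gamma_\varepsilon^i$, the tangential gradient of this scalar identity vanishes, giving $P_\varepsilon\nabla(u\cdot\tilde n) = \nabla_{\Gamma_\varepsilon}(u\cdot n_\varepsilon) = 0$. Expanding $\partial_j(u\cdot\tilde n) = [(\nabla u) n_\varepsilon + (\nabla\tilde n) u]_j$ on $\Gamma_\varepsilon^i$ and using $P_\varepsilon\nabla\tilde n = \nabla_{\Gamma_\varepsilon} n_\varepsilon = -W_\varepsilon$ from the definition of $W_\varepsilon$ together with \eqref{E:PW_Bo}, one obtains $P_\varepsilon(\nabla u)n_\varepsilon = -P_\varepsilon(\nabla\tilde n) u = W_\varepsilon u$.

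Applying $P_\varepsilon$ to \eqref{E:Bo_Slip} and noting that $\gamma_\varepsilon u = \gamma_\varepsilon P_\varepsilon u$ by tangentiality, the decomposition above yields $P_\varepsilon(\nabla u) n_\varepsilon + P_\varepsilon(n_\varepsilon\cdot\nabla) u = -(\gamma_\varepsilon/\nu)\,u$; substituting the auxiliary identity gives \eqref{E:NSl_ND}. For \eqref{E:NSl_Curl}, I would use the Levi--Civita computation (from $\varepsilon_{lmi}\varepsilon_{ijk} = \delta_{lj}\delta_{mk}-\delta_{lk}\delta_{mj}$)
\[
n_\varepsilon\times\mathrm{curl}\,u = (\nabla u) n_\varepsilon - (n_\varepsilon\cdot\nabla) u \quad\text{on}\quad \Gamma_\varepsilon^i.
\]
Since the left-hand side is orthogonal to $n_\varepsilon$ and hence tangential, it is unchanged by $P_\varepsilon$, and combining with the auxiliary identity and \eqref{E:NSl_ND} gives $n_\varepsilon\times\mathrm{curl}\,u = 2W_\varepsilon u + (\gamma_\varepsilon/\nu)\,u$. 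Finally the BAC--CAB rule gives $-n_\varepsilon\times(n_\varepsilon\times a) = a-(a\cdot n_\varepsilon) n_\varepsilon = P_\varepsilon a$ for any $a\in\mathbb{R}^3$, and applied to $a = 2W_\varepsilon u + (\gamma_\varepsilon/\nu)\,u$, which is tangential because $P_\varepsilon W_\varepsilon = W_\varepsilon$ by \eqref{E:PW_Bo} and $P_\varepsilon u = u$, this reproduces the form stated in \eqref{E:NSl_Curl}.

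The only delicate point is the bookkeeping with the paper's convention $[\nabla u]_{ij} = \partial_i u_j$: under this convention $(\nabla u) n_\varepsilon$ is \emph{not} the directional derivative but essentially its ``transpose'', and this asymmetric treatment of the two halves of $D(u)$ is precisely what produces the extra $W_\varepsilon u$ term in \eqref{E:NSl_ND}. Once the index convention is fixed and the auxiliary identity is in hand, both formulas fall out of the slip conditions by direct algebra.
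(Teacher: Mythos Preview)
Your proof is correct and follows essentially the same route as the paper's: both establish the auxiliary identity $P_\varepsilon(\nabla u)n_\varepsilon=(\nabla_{\Gamma_\varepsilon}u)n_\varepsilon=W_\varepsilon u$ from differentiating the impermeable condition tangentially, split $2P_\varepsilon D(u)n_\varepsilon$ into $P_\varepsilon(\nabla u)n_\varepsilon+P_\varepsilon(n_\varepsilon\cdot\nabla)u$ to extract \eqref{E:NSl_ND} from \eqref{E:Bo_Slip}, and then obtain \eqref{E:NSl_Curl} from the tangentiality of $n_\varepsilon\times\mathrm{curl}\,u$, the identity $n_\varepsilon\times\mathrm{curl}\,u=(\nabla u)n_\varepsilon-(n_\varepsilon\cdot\nabla)u$, and the triple-product formula. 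The only cosmetic difference is that you work with a generic extension $\tilde n$ where the paper writes the tangential gradient directly.
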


\begin{proof}
  Taking the tangential gradient of $u\cdot n_\varepsilon=0$ on $\Gamma_\varepsilon^i$ we have
  \begin{align} \label{Pf_PSl:Gu_N}
    (\nabla_{\Gamma_\varepsilon}u)n_\varepsilon = -(\nabla_{\Gamma_\varepsilon}n_\varepsilon)u = W_\varepsilon u \quad\text{on}\quad \Gamma_\varepsilon^i.
  \end{align}
  From this equality, \eqref{E:Bo_Slip}, and
  \begin{align*}
    2P_\varepsilon D(u)n_\varepsilon = P_\varepsilon\{(\nabla u)n_\varepsilon+(\nabla u)^Tn_\varepsilon\} = (\nabla_{\Gamma_\varepsilon}u)n_\varepsilon+P_\varepsilon(n_\varepsilon\cdot\nabla)u
  \end{align*}
  by \eqref{E:PW_Bo} we deduce that
  \begin{align*}
    P_\varepsilon(n_\varepsilon\cdot\nabla)u = -(\nabla_{\Gamma_\varepsilon}u)n_\varepsilon-\frac{\gamma_\varepsilon}{\nu}u = -W_\varepsilon u-\frac{\gamma_\varepsilon}{\nu}u \quad\text{on}\quad \Gamma_\varepsilon^i.
  \end{align*}
  Hence \eqref{E:NSl_ND} holds.
  To prove \eqref{E:NSl_Curl} we observe that the vector field $n_\varepsilon\times\mathrm{curl}\,u$ is tangential on $\Gamma_\varepsilon^i$.
  By this fact, \eqref{E:PW_Bo}, \eqref{E:NSl_ND}, and \eqref{Pf_PSl:Gu_N} we have
  \begin{align*}
    n_\varepsilon\times\mathrm{curl}\,u &= P_\varepsilon(n_\varepsilon\times\mathrm{curl}\,u) = P_\varepsilon\{(\nabla u)n_\varepsilon-(\nabla u)^Tn_\varepsilon\} \\
    &= (\nabla_{\Gamma_\varepsilon}u)n_\varepsilon-P_\varepsilon(n_\varepsilon\cdot\nabla)u = 2W_\varepsilon u+\frac{\gamma_\varepsilon}{\nu}u
  \end{align*}
  on $\Gamma_\varepsilon^i$.
  Noting that $n_\varepsilon\cdot u=0$, $n_\varepsilon\cdot W_\varepsilon u=0$, and $|n_\varepsilon|^2=1$ on $\Gamma_\varepsilon^i$, we conclude by
  \begin{align*}
    a\times(a\times b) = (a\cdot b)a-|a|^2b, \quad a,b\in\mathbb{R}^3
  \end{align*}
  with $a=n_\varepsilon$ and $b=2W_\varepsilon u+\nu^{-1}\gamma_\varepsilon u$ and the above equality that \eqref{E:NSl_Curl} is valid.
\end{proof}

Let us derive an estimate for the $L^p(\Omega_\varepsilon)$-norm of the tangential component (with respect to $\Gamma$) of the stress vector $D(u)\bar{n}$.
It is used in the study of a singular limit problem for \eqref{E:NS_Eq}--\eqref{E:NS_In} as $\varepsilon$ tends to zero.

\begin{lemma} \label{L:Poin_Str}
  Let $p\in[1,\infty)$.
  Suppose that the inequalities \eqref{E:Fric_Upper} are valid.
  Then there exists a constant $c>0$ independent of $\varepsilon$ such that
  \begin{align} \label{E:Poin_Str}
    \left\|\overline{P}D(u)\bar{n}\right\|_{L^p(\Omega_\varepsilon)} \leq c\varepsilon\|u\|_{W^{2,p}(\Omega_\varepsilon)}
  \end{align}
  for all $u\in W^{2,p}(\Omega_\varepsilon)^3$ satisfying \eqref{E:Bo_Slip} on $\Gamma_\varepsilon^0$ or on $\Gamma_\varepsilon^1$.
\end{lemma}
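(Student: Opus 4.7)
The plan is to mimic the structure of the proof of Lemma~\ref{L:Poin_Nor}: use the Poincar\'e-type inequality \eqref{E:Poin_Dom} to reduce the $L^p(\Omega_\varepsilon)$-estimate of $\overline{P}D(u)\bar{n}$ to a boundary trace on $\Gamma_\varepsilon^i$ plus an interior normal-derivative term, and then exploit the slip condition together with the closeness of surface quantities on $\Gamma_\varepsilon$ and $\Gamma$ (Lemma~\ref{L:Comp_Nor}) to gain a factor of $\varepsilon$ on the boundary. The only two ingredients doing real work are the closeness estimates $|P_\varepsilon-\overline{P}|\leq c\varepsilon$, $|n_\varepsilon-(-1)^{i+1}\bar{n}|\leq c\varepsilon$, and the upper bound $\gamma_\varepsilon^i\leq c\varepsilon$ from Assumption~\ref{Assump_1}.

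Concretely, on $\Gamma_\varepsilon^i$ I would first write
\begin{align*}
  \overline{P}D(u)\bar{n} = (-1)^{i+1}P_\varepsilon D(u)n_\varepsilon + \Bigl(\overline{P}-P_\varepsilon\Bigr)D(u)n_\varepsilon + (-1)^{i+1}\overline{P}D(u)\bigl((-1)^{i+1}\bar{n}-n_\varepsilon\bigr),
\end{align*}
so that by \eqref{E:Comp_N} and \eqref{E:Comp_P} the last two terms are bounded pointwise by $c\varepsilon|\nabla u|$. For the first term the slip condition \eqref{E:Bo_Slip} gives $P_\varepsilon D(u)n_\varepsilon = -(\gamma_\varepsilon/2\nu)u$, and since $\gamma_\varepsilon=\gamma_\varepsilon^i\leq c\varepsilon$ by \eqref{E:Fric_Upper}, this is bounded by $c\varepsilon|u|$. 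Hence
\begin{align*}
  \bigl|\overline{P}D(u)\bar{n}\bigr| \leq c\varepsilon\bigl(|u|+|\nabla u|\bigr) \quad\text{on}\quad \Gamma_\varepsilon^i.
\end{align*}
Taking the $L^p$-norm and applying the trace inequality \eqref{E:Poin_Bo} to $u$ and to $\nabla u$ (both lie in $W^{1,p}(\Omega_\varepsilon)$ since $u\in W^{2,p}(\Omega_\varepsilon)^3$) then yields
\begin{align*}
  \bigl\|\overline{P}D(u)\bar{n}\bigr\|_{L^p(\Gamma_\varepsilon^i)} \leq c\varepsilon^{1-1/p}\|u\|_{W^{2,p}(\Omega_\varepsilon)}.
\end{align*}

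For the interior normal-derivative term in \eqref{E:Poin_Dom} the key observation is that $\overline{P}$ and $\bar{n}$ are constant extensions in the normal direction of $\Gamma$, so by \eqref{E:NorDer_Con} we have $\partial_n\overline{P}=0$ and $\partial_n\bar{n}=0$ in $\Omega_\varepsilon$, whence
\begin{align*}
  \partial_n\bigl(\overline{P}D(u)\bar{n}\bigr) = \overline{P}\bigl(\partial_nD(u)\bigr)\bar{n}, \quad \bigl|\partial_nD(u)\bigr| \leq c|\nabla^2 u|,
\end{align*}
and therefore this contribution is controlled by $c\|u\|_{W^{2,p}(\Omega_\varepsilon)}$. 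Inserting both estimates into \eqref{E:Poin_Dom} yields
\begin{align*}
  \bigl\|\overline{P}D(u)\bar{n}\bigr\|_{L^p(\Omega_\varepsilon)} \leq c\varepsilon^{1/p}\cdot\varepsilon^{1-1/p}\|u\|_{W^{2,p}(\Omega_\varepsilon)} + c\varepsilon\|u\|_{W^{2,p}(\Omega_\varepsilon)} \leq c\varepsilon\|u\|_{W^{2,p}(\Omega_\varepsilon)},
\end{align*}
as claimed. There is no real obstacle here; the only point requiring a little care is to keep track of whether $D(u)\bar{n}$ is compared with $D(u)n_\varepsilon$ on the correct side, which is handled by the sign $(-1)^{i+1}$ from \eqref{E:Comp_N}. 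Everything else is a direct application of results already established in Sections~\ref{SS:Pre_Dom} and~\ref{SS:Tool_Sob}.
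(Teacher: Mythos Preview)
Your proof is correct and follows essentially the same route as the paper's: apply \eqref{E:Poin_Dom}, bound the normal derivative of $\overline{P}D(u)\bar{n}$ by $c|\nabla^2 u|$ via \eqref{E:NorDer_Con}, and on $\Gamma_\varepsilon^i$ split $\overline{P}D(u)\bar{n}$ into the slip term $(-1)^{i+1}P_\varepsilon D(u)n_\varepsilon=(-1)^i(\gamma_\varepsilon/2\nu)u$ plus $O(\varepsilon|\nabla u|)$ remainders coming from \eqref{E:Comp_N}--\eqref{E:Comp_P}. One cosmetic slip: your displayed decomposition is an identity only for $i=1$ (the middle term is missing a factor $(-1)^{i+1}$), but since the discrepancy is itself $(\overline{P}-P_\varepsilon)D(u)n_\varepsilon=O(\varepsilon|\nabla u|)$ the pointwise bound and the rest of the argument are unaffected.
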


\begin{proof}
  We proceed as in the proof of Lemma~\ref{L:Poin_Nor}.
  Let $u\in W^{2,p}(\Omega_\varepsilon)^3$ satisfy \eqref{E:Bo_Slip} on $\Gamma_\varepsilon^i$ for $i=0$ or $i=1$.
  By \eqref{E:NorDer_Con} and the boundedness of $n$ and $P$ on $\Gamma$,
  \begin{align*}
    \left|\partial_n\Bigl[\overline{P}D(u)\bar{n}\Bigr]\right| \leq c|\nabla^2u| \quad\text{in}\quad \Omega_\varepsilon.
  \end{align*}
  Hence we use \eqref{E:Poin_Dom} to get
  \begin{align} \label{Pf_PSt:L2_Dom}
    \left\|\overline{P}D(u)\bar{n}\right\|_{L^p(\Omega_\varepsilon)} \leq c\left(\varepsilon^{1/p}\left\|\overline{P}D(u)\bar{n}\right\|_{L^p(\Gamma_\varepsilon^i)}+\varepsilon\|u\|_{W^{2,p}(\Omega_\varepsilon)}\right).
  \end{align}
  Moreover, since $u$ satisfies \eqref{E:Bo_Slip} on $\Gamma_\varepsilon^i$, we have
  \begin{align*}
    \overline{P}D(u)\bar{n} &= (-1)^{i+1}P_\varepsilon D(u)n_\varepsilon+P_\varepsilon D(u)\{\bar{n}-(-1)^{i+1}n_\varepsilon\}+\Bigl(\overline{P}-P_\varepsilon\Bigr)D(u)\bar{n} \\
    &= (-1)^i\frac{\gamma_\varepsilon}{2\nu}u+P_\varepsilon D(u)\{\bar{n}-(-1)^{i+1}n_\varepsilon\}+\Bigl(\overline{P}-P_\varepsilon\Bigr)D(u)\bar{n}
  \end{align*}
  on $\Gamma_\varepsilon^i$.
  Applying \eqref{E:Fric_Upper}, \eqref{E:Comp_N}, and \eqref{E:Comp_P} to the last line of this equality and noting that $P_\varepsilon$ is bounded uniformly in $\varepsilon$ we deduce that
  \begin{align*}
    \left|\overline{P}D(u)\bar{n}\right| \leq c\varepsilon(|u|+|\nabla u|) \quad\text{on}\quad \Gamma_\varepsilon^i.
  \end{align*}
  By this inequality and \eqref{E:Poin_Bo} we get
  \begin{align*}
    \left\|\overline{P}D(u)\bar{n}\right\|_{L^p(\Gamma_\varepsilon^i)} \leq c\varepsilon\left(\|u\|_{L^p(\Gamma_\varepsilon^i)}+\|\nabla u\|_{L^p(\Gamma_\varepsilon^i)}\right) \leq c\varepsilon^{1-1/p}\|u\|_{W^{2,p}(\Omega_\varepsilon)}.
  \end{align*}
  We apply this inequality to \eqref{Pf_PSt:L2_Dom} to obtain \eqref{E:Poin_Str}.
\end{proof}

Finally, we compare the tangential component (with respect to $\Gamma$) of the normal derivative of a vector field $u$ on $\Omega_\varepsilon$ with $-\overline{W}u$.

\begin{lemma} \label{L:PDnU_WU}
  Let $p\in[1,\infty)$.
  Suppose that the inequalities \eqref{E:Fric_Upper} are valid.
  Then there exists a constant $c>0$ independent of $\varepsilon$ such that
  \begin{align} \label{E:PDnU_WU}
    \left\|\overline{P}\partial_nu+\overline{W}u\right\|_{L^p(\Omega_\varepsilon)} \leq c\varepsilon\|u\|_{W^{2,p}(\Omega_\varepsilon)}
  \end{align}
  for all $u\in W^{2,p}(\Omega_\varepsilon)^3$ satisfying the slip boundary conditions \eqref{E:Bo_Imp}--\eqref{E:Bo_Slip} on $\Gamma_\varepsilon^0$ or on $\Gamma_\varepsilon^1$.
  Here $\partial_nu$ is the normal derivative of $u$ given by \eqref{E:Def_NorDer}.
\end{lemma}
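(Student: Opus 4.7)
The plan is to reduce \eqref{E:PDnU_WU} to the two already-proved estimates, namely \eqref{E:Poin_Str} for the tangential stress vector and \eqref{E:Poin_Nor} for the gradient of the normal component $u\cdot\bar n$, by means of an algebraic identity expressing $\partial_n u$ in terms of $D(u)\bar n$, $\nabla(u\cdot\bar n)$, and $(\nabla\bar n)u$. Writing components in the paper's convention $[\nabla u]_{ij}=\partial_i u_j$, a direct calculation gives
\begin{align*}
  2D(u)\bar n = \partial_n u+\nabla(u\cdot\bar n)-(\nabla\bar n)u \quad\text{in}\quad\Omega_\varepsilon,
\end{align*}
since the $i$-th component of $2D(u)\bar n$ expands as $\sum_j(\partial_j u_i+\partial_i u_j)\bar n_j=\partial_n u_i+\partial_i(u\cdot\bar n)-\sum_j u_j\partial_i\bar n_j$. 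Solving for $\partial_n u$ and multiplying by $\overline P$ yields
\begin{align*}
  \overline P\partial_n u+\overline W u
  = 2\overline P D(u)\bar n-\overline P\nabla(u\cdot\bar n)+\bigl[\overline P(\nabla\bar n)+\overline W\bigr]u.
\end{align*}

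Next I would apply Lemmas~\ref{L:Poin_Str} and~\ref{L:Poin_Nor} directly to the first two terms on the right-hand side, each of which is bounded by $c\varepsilon\|u\|_{W^{2,p}(\Omega_\varepsilon)}$ thanks to the hypothesis that $u$ satisfies the full slip boundary conditions \eqref{E:Bo_Imp}--\eqref{E:Bo_Slip} on $\Gamma_\varepsilon^0$ or $\Gamma_\varepsilon^1$ together with \eqref{E:Fric_Upper}. The only remaining point is the zeroth-order term $[\overline P(\nabla\bar n)+\overline W]u$, and here I would invoke the explicit formula \eqref{E:Nor_Grad}, i.e.\ $\nabla\bar n=-\{I_3-d\overline W\}^{-1}\overline W$. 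Since $\overline P\,\overline W=\overline W$ by \eqref{E:Form_W} and $\{I_3-d\overline W\}^{-1}$ commutes with $\overline P$ by \eqref{E:WReso_P}, one obtains
\begin{align*}
  \overline P(\nabla\bar n)u+\overline W u = \bigl[I_3-\{I_3-d\overline W\}^{-1}\bigr]\overline W u
\end{align*}
pointwise in $\Omega_\varepsilon$. The bound \eqref{E:Wein_Diff} of Lemma~\ref{L:Wein} together with $|d(x)|\leq c\varepsilon$ on $\Omega_\varepsilon$ then gives $|\overline P(\nabla\bar n)u+\overline W u|\leq c\varepsilon|u|$ in $\Omega_\varepsilon$, so this contribution is controlled by $c\varepsilon\|u\|_{L^p(\Omega_\varepsilon)}\leq c\varepsilon\|u\|_{W^{2,p}(\Omega_\varepsilon)}$.

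Combining the three estimates produces \eqref{E:PDnU_WU}. I do not expect any serious obstacle: the only care needed is in the sign convention used for $\nabla u$ when deriving the splitting identity, and in checking the commutation with $\overline P$ that allows the factor $\{I_3-d\overline W\}^{-1}\overline W$ to be isolated so that \eqref{E:Wein_Diff} applies directly.
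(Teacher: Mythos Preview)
Your proof is correct and takes a genuinely different route from the paper's. The paper argues directly on the full quantity $\overline P\partial_n u+\overline W u$: it applies the Poincar\'e-type inequality \eqref{E:Poin_Dom} to reduce to a boundary estimate, then on $\Gamma_\varepsilon^i$ it writes $\overline P\partial_n u$ as $(-1)^{i+1}P_\varepsilon(n_\varepsilon\cdot\nabla)u$ plus errors of size $O(\varepsilon)$ coming from \eqref{E:Comp_N}--\eqref{E:Comp_W}, invokes \eqref{E:NSl_ND} to replace $P_\varepsilon(n_\varepsilon\cdot\nabla)u$ by $-W_\varepsilon u-\nu^{-1}\gamma_\varepsilon u$, and finishes with \eqref{E:Fric_Upper} and \eqref{E:Comp_W}. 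You instead use the algebraic identity $2D(u)\bar n=\partial_n u+\nabla(u\cdot\bar n)-(\nabla\bar n)u$ to split $\overline P\partial_n u+\overline W u$ into three pieces whose $L^p$ bounds are already available: \eqref{E:Poin_Str} handles $2\overline P D(u)\bar n$, \eqref{E:Poin_Dnor} handles $\overline P\nabla(u\cdot\bar n)$, and the remainder $[\overline P(\nabla\bar n)+\overline W]u$ is pointwise $O(\varepsilon)|u|$ via \eqref{E:Nor_Grad} and \eqref{E:Wein_Diff}. Your approach is more modular and shorter, since the Poincar\'e-to-boundary work has already been packaged into Lemmas~\ref{L:Poin_Nor} and~\ref{L:Poin_Str}; the paper's approach is self-contained and does not rely on those intermediate lemmas. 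One small remark: you cite Lemma~\ref{L:Poin_Nor} but what you actually use is the second estimate \eqref{E:Poin_Dnor} there, not \eqref{E:Poin_Nor}.
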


\begin{proof}
  For $i=0$ or $i=1$ let $u\in W^{2,p}(\Omega_\varepsilon)^3$ satisfy \eqref{E:Bo_Imp}--\eqref{E:Bo_Slip} on $\Gamma_\varepsilon^i$.
  By \eqref{E:NorDer_Con} and the boundedness of $n$, $P$, and $W$ on $\Gamma$ we have
  \begin{align*}
    \left|\partial_n\Bigl[\overline{P}\partial_n u+\overline{W}u\Bigr]\right| \leq c(|\nabla u|+|\nabla^2u|) \quad\text{in}\quad \Omega_\varepsilon.
  \end{align*}
  We apply \eqref{E:Poin_Dom} and the above inequality to get
  \begin{align} \label{Pf_PuWu:L2_Dom}
    \left\|\overline{P}\partial_nu+\overline{W}u\right\|_{L^p(\Omega_\varepsilon)} \leq c\left(\varepsilon^{1/p}\left\|\overline{P}\partial_nu+\overline{W}u\right\|_{L^p(\Gamma_\varepsilon^i)}+\varepsilon\|u\|_{W^{2,p}(\Omega_\varepsilon)}\right).
  \end{align}
  Moreover, since $\partial_nu=(\bar{n}\cdot\nabla)u$ and $u$ satisfies \eqref{E:Bo_Imp}--\eqref{E:Bo_Slip} on $\Gamma_\varepsilon^i$,
  \begin{align*}
    \overline{P}\partial_nu &= (-1)^{i+1}P_\varepsilon(n_\varepsilon\cdot\nabla)u+P_\varepsilon[\{\bar{n}-(-1)^{i+1}n_\varepsilon\}\cdot\nabla]u+\Bigl(\overline{P}-P_\varepsilon\Bigr)(\bar{n}\cdot\nabla)u \\
    &= (-1)^{i+1}\left(-W_\varepsilon u-\frac{\gamma_\varepsilon}{\nu}u\right)+P_\varepsilon[\{\bar{n}-(-1)^{i+1}n_\varepsilon\}\cdot\nabla]u+\Bigl(\overline{P}-P_\varepsilon\Bigr)(\bar{n}\cdot\nabla)u
  \end{align*}
  on $\Gamma_\varepsilon^i$ by \eqref{E:NSl_ND}.
  Hence by \eqref{E:Fric_Upper} and \eqref{E:Comp_N}--\eqref{E:Comp_W} we get
  \begin{align*}
    \left|\overline{P}\partial_nu+\overline{W}u\right| \leq \left|\overline{W}u-(-1)^{i+1}W_\varepsilon u\right|+c\varepsilon(|u|+|\nabla u|) \leq c\varepsilon(|u|+|\nabla u|)
  \end{align*}
  on $\Gamma_\varepsilon^i$, which together with \eqref{E:Poin_Bo} implies that
  \begin{align*}
    \left\|\overline{P}\partial_nu+\overline{W}u\right\|_{L^p(\Gamma_\varepsilon^i)} \leq c\varepsilon\left(\|u\|_{L^p(\Gamma_\varepsilon^i)}+\|\nabla u\|_{L^p(\Gamma_\varepsilon^i)}\right) \leq c\varepsilon^{1-1/p}\|u\|_{W^{2,p}(\Omega_\varepsilon)}.
  \end{align*}
  Applying this inequality to \eqref{Pf_PuWu:L2_Dom} we obtain \eqref{E:PDnU_WU}.
\end{proof}

\subsection{Impermeable extension of surface vector fields} \label{SS:Tool_TE}
In the analysis of integrals over $\Omega_\varepsilon$ involving a vector field on $\Gamma$ it is convenient to consider its extension to $\Omega_\varepsilon$ satisfying the impermeable boundary condition on $\Gamma_\varepsilon$.
Let $\tau_\varepsilon^0$ and $\tau_\varepsilon^1$ be the vector fields on $\Gamma$ given by \eqref{E:Def_NB_Aux}.
We define a vector field $\Psi_\varepsilon$ on $N$ by
\begin{align} \label{E:Def_ExAux}
  \Psi_\varepsilon(x) := \frac{1}{\bar{g}(x)}\bigl\{\bigl(d(x)-\varepsilon\bar{g}_0(x)\bigr)\bar{\tau}_\varepsilon^1(x)+\bigl(\varepsilon\bar{g}_1(x)-d(x)\bigr)\bar{\tau}_\varepsilon^0(x)\bigr\}, \quad x\in N.
\end{align}
By definition, $\Psi_\varepsilon=\varepsilon\bar{\tau}_\varepsilon^i$ on $\Gamma_\varepsilon^i$, $i=0,1$.
Let us give several estimates for $\Psi_\varepsilon$.

\begin{lemma} \label{L:ExAux_Bound}
  There exists a constant $c>0$ independent of $\varepsilon$ such that
  \begin{align} \label{E:ExAux_Bound}
    |\Psi_\varepsilon| \leq c\varepsilon, \quad |\nabla\Psi_\varepsilon| \leq c, \quad |\nabla^2\Psi_\varepsilon| \leq c \quad\text{in}\quad \Omega_\varepsilon.
  \end{align}
  Moreover, we have
  \begin{align} \label{E:ExAux_TNDer}
    \left|\overline{P}\nabla\Psi_\varepsilon\right| \leq c\varepsilon, \quad \left|\partial_n\Psi_\varepsilon-\frac{1}{\bar{g}}\overline{\nabla_\Gamma g}\right| \leq c\varepsilon \quad\text{in}\quad \Omega_\varepsilon.
  \end{align}
\end{lemma}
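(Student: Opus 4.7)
The plan is to compute $\Psi_\varepsilon$, $\nabla\Psi_\varepsilon$, and $\nabla^2\Psi_\varepsilon$ directly from the definition \eqref{E:Def_ExAux}, exploiting three elementary observations: (i) every point $x\in\Omega_\varepsilon$ satisfies $|d(x)|\leq c\varepsilon$ by \eqref{E:Def_CTD} and the boundedness of $g_0,g_1$; (ii) $\nabla d=\bar n$ by \eqref{E:Nor_Coord}, while any constant extension $\bar f=f\circ\pi$ satisfies $\partial_n\bar f=0$ and $\nabla\bar f=\overline P\nabla\bar f$; and (iii) each of $\bar g^{-1},\bar g_0,\bar g_1,\bar\tau_\varepsilon^0,\bar\tau_\varepsilon^1$ is uniformly bounded in $W^{2,\infty}(N)$ in $\varepsilon$, thanks to \eqref{E:Width_Bound}, the $C^4$-regularity of $g_0,g_1$, \eqref{E:Tau_Bound}, and the extension estimate \eqref{E:Con_Hess} together with \eqref{E:NorG_Bound}.

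First I would establish the $L^\infty$-bound $|\Psi_\varepsilon|\leq c\varepsilon$. In $\Omega_\varepsilon$ both scalar coefficients $d-\varepsilon\bar g_0$ and $\varepsilon\bar g_1-d$ are of size $O(\varepsilon)$ by observation (i), and the remaining factors are bounded by observation (iii), so the claim is immediate.

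Next I would differentiate. The only place in $\Psi_\varepsilon$ where a derivative can produce an $O(1)$ contribution (rather than $O(\varepsilon)$) is a derivative falling on $d$; every other differentiation lands on a constant extension whose factor in the product is then still multiplied by one of the $O(\varepsilon)$ coefficients $d-\varepsilon\bar g_0$, $\varepsilon\bar g_1-d$, or $d$ (from differentiating $\bar g^{-1}$ would not apply since $\partial_n\bar g=0$, but in any case such terms are either bounded or $O(\varepsilon)$). Collecting terms via the product/quotient rule gives
\[
\nabla\Psi_\varepsilon=\frac{1}{\bar g}\bigl(\bar\tau_\varepsilon^1-\bar\tau_\varepsilon^0\bigr)\otimes\bar n+R_\varepsilon,\qquad |R_\varepsilon|\leq c\varepsilon,
\]
where $R_\varepsilon$ gathers all terms with an $O(\varepsilon)$ prefactor (i.e.\ those in which $\nabla$ acted on $\bar g^{-1}$, $\bar g_0$, $\bar g_1$, $\bar\tau_\varepsilon^0$, or $\bar\tau_\varepsilon^1$). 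This yields $|\nabla\Psi_\varepsilon|\leq c$ at once. The same bookkeeping, differentiated once more and using $|\nabla^2 d|=|\nabla\bar n|\leq c$ from \eqref{E:NorG_Bound} and the uniform $W^{2,\infty}$-control from (iii), gives $|\nabla^2\Psi_\varepsilon|\leq c$; here the only structural point is that $d$ enters at most linearly into $\Psi_\varepsilon$, so no third derivative of $d$ ever appears.

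The two refined estimates \eqref{E:ExAux_TNDer} now drop out of the decomposition above. For the tangential bound, I would apply $\overline P$ on the left: since $\overline P\bar n=0$, the leading rank-one piece $\bar g^{-1}(\bar\tau_\varepsilon^1-\bar\tau_\varepsilon^0)\otimes\bar n$ is annihilated, leaving $\overline P\nabla\Psi_\varepsilon=\overline P R_\varepsilon$, hence $|\overline P\nabla\Psi_\varepsilon|\leq c\varepsilon$. For the normal derivative I would contract with $\bar n$: by observation (ii) every term in $R_\varepsilon$ containing a derivative of a constant extension vanishes under $\bar n\cdot\nabla$, so
\[
\partial_n\Psi_\varepsilon=\frac{1}{\bar g}\bigl(\bar\tau_\varepsilon^1-\bar\tau_\varepsilon^0\bigr).
\]
Since $g=g_1-g_0$, the second estimate in \eqref{E:Tau_Diff} gives $|\bar\tau_\varepsilon^1-\bar\tau_\varepsilon^0-\overline{\nabla_\Gamma g}|\leq c\varepsilon$, and dividing by $\bar g\geq c>0$ yields the desired bound.

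I expect the only mildly delicate point to be organizing the Leibniz expansion for $\nabla^2\Psi_\varepsilon$ cleanly enough that no term is overlooked; once the expansion is written schematically as ``(coefficient of order $\varepsilon^{k}$ in $d$)$\times$(derivative of order $\leq 2$ of a uniformly $W^{2,\infty}$-bounded factor)'', every term is visibly bounded, and the two sharpened estimates follow from the two algebraic identities $\overline P\bar n=0$ and $\bar n\cdot\nabla\bar f=0$.
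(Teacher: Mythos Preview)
Your approach is essentially the same as the paper's: compute $\nabla\Psi_\varepsilon$ by the product rule, isolate the single $O(1)$ term coming from $\nabla d=\bar n$, and observe that $\overline P$ kills it while the remainder is $O(\varepsilon)$. However, there is a slip in your tensor ordering. Under the paper's convention $(\nabla u)_{ij}=\partial_i u_j$, the leading rank-one piece of $\nabla\Psi_\varepsilon$ is $\bar g^{-1}\,\bar n\otimes(\bar\tau_\varepsilon^1-\bar\tau_\varepsilon^0)$, not $(\bar\tau_\varepsilon^1-\bar\tau_\varepsilon^0)\otimes\bar n$. With your ordering, left multiplication by $\overline P$ gives $\overline P\bigl[(\bar\tau_\varepsilon^1-\bar\tau_\varepsilon^0)\otimes\bar n\bigr]=(\bar\tau_\varepsilon^1-\bar\tau_\varepsilon^0)\otimes\bar n\neq 0$, since $\tau_\varepsilon^i$ is tangential; your justification ``because $\overline P\bar n=0$'' does not apply to the factor on which $\overline P$ actually acts. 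Once the order is corrected to $\bar n\otimes(\bar\tau_\varepsilon^1-\bar\tau_\varepsilon^0)$, the annihilation $\overline P[\bar n\otimes(\cdot)]=(\overline P\bar n)\otimes(\cdot)=0$ is exactly what you intend, and the rest of your argument goes through.

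Your computation of $\partial_n\Psi_\varepsilon$ is in fact slightly cleaner than the paper's: since every factor other than $d$ is a constant extension, $\partial_n\Psi_\varepsilon=\bar g^{-1}(\bar\tau_\varepsilon^1-\bar\tau_\varepsilon^0)$ holds \emph{exactly}, whereas the paper carries an extra remainder $F_\varepsilon^T\bar n$ and bounds it by $c\varepsilon$ rather than observing it vanishes.
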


\begin{proof}
  Applying \eqref{E:Tau_Bound} and
  \begin{align} \label{Pf_EAB:Width}
    0 \leq d(x)-\varepsilon\bar{g}_0(x) \leq \varepsilon\bar{g}(x), \quad 0 \leq \varepsilon\bar{g}_1(x)-d(x) \leq \varepsilon\bar{g}(x), \quad x\in \Omega_\varepsilon
  \end{align}
  to \eqref{E:Def_ExAux} we get the first inequality of \eqref{E:ExAux_Bound}.
  Also, by $\nabla d=\bar{n}$ in $N$ we have
  \begin{align} \label{Pf_EAB:Grad}
    \nabla\Psi_\varepsilon = \frac{1}{\bar{g}}\{\bar{n}\otimes(\bar{\tau}_\varepsilon^1-\bar{\tau}_\varepsilon^0)+F_\varepsilon\} \quad\text{in}\quad N,
  \end{align}
  where $F_\varepsilon$ is a $3\times3$ matrix-valued function on $N$ given by
  \begin{align*}
    F_\varepsilon := -\nabla\bar{g}\otimes\Psi_\varepsilon+\varepsilon(\nabla\bar{g}_1\otimes\bar{\tau}_\varepsilon^0-\nabla\bar{g}_0\otimes\bar{\tau}_\varepsilon^1)+(d-\varepsilon\bar{g}_0)\nabla\bar{\tau}_\varepsilon^1+(\varepsilon\bar{g}_1-d)\nabla\bar{\tau}_\varepsilon^0.
  \end{align*}
  By \eqref{E:Width_Bound}, \eqref{E:Tau_Bound}, and $|n|=1$ on $\Gamma$ we see that the first term on the right-hand side of \eqref{Pf_EAB:Grad} is bounded on $N$ uniformly in $\varepsilon$.
  Moreover, from \eqref{E:ConDer_Bound}, \eqref{E:Width_Bound}, \eqref{E:Tau_Bound}, the first inequality of \eqref{E:ExAux_Bound}, \eqref{Pf_EAB:Width}, and $g_0,g_1\in C^4(\Gamma)$ we deduce that
  \begin{align} \label{Pf_EAB:Est_F}
    |F_\varepsilon| \leq c\varepsilon \quad\text{in}\quad \Omega_\varepsilon.
  \end{align}
  Hence the second inequality of \eqref{E:ExAux_Bound} follows.
  Similarly, differentiating both sides of \eqref{Pf_EAB:Grad} and using \eqref{E:ConDer_Bound}, \eqref{E:Con_Hess}, \eqref{E:Tau_Bound}, the first and second inequalities of \eqref{E:ExAux_Bound}, and $g_0,g_1\in C^4(\Gamma)$ we can derive the last inequality of \eqref{E:ExAux_Bound}.

  Let us prove \eqref{E:ExAux_TNDer}.
  First note that
  \begin{align*}
    P[n\otimes(\tau_\varepsilon^1-\tau_\varepsilon^0)] &= (Pn)\otimes(\tau_\varepsilon^1-\tau_\varepsilon^0) = 0, \\
    [(\tau_\varepsilon^1-\tau_\varepsilon^0)\otimes n]n &= |n|^2(\tau_\varepsilon^1-\tau_\varepsilon^0) = \tau_\varepsilon^1-\tau_\varepsilon^0
  \end{align*}
  on $\Gamma$.
  These equalities and \eqref{E:Def_NorDer} imply that
  \begin{align*}
    \overline{P}\nabla\Psi_\varepsilon = \frac{1}{\bar{g}}\overline{P}F_\varepsilon, \quad \partial_n\Psi_\varepsilon = (\nabla\Psi_\varepsilon)^T\bar{n} = \frac{1}{\bar{g}}(\bar{\tau}_\varepsilon^1-\bar{\tau}_\varepsilon^0)+F_\varepsilon^T\bar{n} \quad\text{in}\quad N.
  \end{align*}
  Hence we see by \eqref{E:Width_Bound}, \eqref{Pf_EAB:Est_F}, and $|P|=2$ on $\Gamma$ that
  \begin{align*}
    \left|\overline{P}\nabla\Psi_\varepsilon\right| \leq c\left|\overline{P}F_\varepsilon\right| \leq c|F_\varepsilon| \leq c\varepsilon \quad\text{in}\quad \Omega_\varepsilon.
  \end{align*}
  Also, applying \eqref{E:Width_Bound}, \eqref{E:Tau_Diff}, and \eqref{Pf_EAB:Est_F} to the equality for $\partial_n\Psi_\varepsilon$ we obtain
  \begin{align*}
    \left|\partial_n\Psi_\varepsilon-\frac{1}{\bar{g}}\overline{\nabla_\Gamma g}\right| \leq \frac{1}{\bar{g}}\sum_{i=0,1}\left|\bar{\tau}_\varepsilon^i-\overline{\nabla_\Gamma g_i}\right|+|F_\varepsilon| \leq c\varepsilon \quad\text{in}\quad \Omega_\varepsilon.
  \end{align*}
  Thus the inequalities \eqref{E:ExAux_TNDer} are valid.
\end{proof}

For a tangential vector field $v$ on $\Gamma$ (i.e. $v\cdot n=0$ on $\Gamma$) we define
\begin{align}  \label{E:Def_ExTan}
  E_\varepsilon v(x) := \bar{v}(x)+\{\bar{v}(x)\cdot\Psi_\varepsilon(x)\}\bar{n}(x), \quad x\in N,
\end{align}
where $\bar{v}$ and $\bar{n}$ are the constant extensions of $v$ and $n$.
By the definition of $\Psi_\varepsilon$ we easily see that $E_\varepsilon v$ satisfies the impermeable boundary condition on $\Gamma_\varepsilon$.

\begin{lemma} \label{L:ExTan_Imp}
  For all $v\in C(\Gamma,T\Gamma)$ we have $E_\varepsilon v\cdot n_\varepsilon=0$ on $\Gamma_\varepsilon$.
\end{lemma}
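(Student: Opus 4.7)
The plan is to verify the identity $E_\varepsilon v\cdot n_\varepsilon=0$ by a direct pointwise computation on each of the two boundary components $\Gamma_\varepsilon^0$ and $\Gamma_\varepsilon^1$. The key observation is that at a boundary point all the quantities appearing in \eqref{E:Def_ExTan} reduce to surface quantities on $\Gamma$, and only the tangentiality of $v$ and of $\tau_\varepsilon^i$ on $\Gamma$ is needed to conclude.

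More precisely, I would fix $i\in\{0,1\}$ and let $x=y+\varepsilon g_i(y)n(y)\in\Gamma_\varepsilon^i$ with $y=\pi(x)\in\Gamma$. Since $d(x)=\varepsilon g_i(y)$, one of the two brackets in \eqref{E:Def_ExAux} vanishes and the other equals $\varepsilon\bar{g}(x)$, so $\Psi_\varepsilon(x)=\varepsilon\tau_\varepsilon^i(y)$ (this is exactly the boundary trace noted right after \eqref{E:Def_ExAux}). At the same point Lemma~\ref{L:Nor_Bo} together with \eqref{E:Def_NB} gives
\[
  n_\varepsilon(x)=\frac{(-1)^{i+1}}{\sqrt{1+\varepsilon^2|\tau_\varepsilon^i(y)|^2}}\bigl(n(y)-\varepsilon\tau_\varepsilon^i(y)\bigr),
\]
while $\bar{v}(x)=v(y)$ and $\bar{n}(x)=n(y)$.

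Substituting these into \eqref{E:Def_ExTan} and expanding the inner product with $n_\varepsilon(x)$ yields, up to the nonzero scalar $(-1)^{i+1}/\sqrt{1+\varepsilon^2|\tau_\varepsilon^i(y)|^2}$, the four terms
\[
  v(y)\cdot n(y),\qquad -\varepsilon\,v(y)\cdot\tau_\varepsilon^i(y),\qquad \varepsilon\bigl(v(y)\cdot\tau_\varepsilon^i(y)\bigr)|n(y)|^2,\qquad -\varepsilon^2\bigl(v(y)\cdot\tau_\varepsilon^i(y)\bigr)\bigl(n(y)\cdot\tau_\varepsilon^i(y)\bigr).
\]
The first vanishes because $v\in C(\Gamma,T\Gamma)$ is tangential on $\Gamma$; the fourth vanishes because $\tau_\varepsilon^i$ is tangential on $\Gamma$, a fact recorded in the paragraph following \eqref{E:Def_NB} and based on \eqref{E:P_TGr} and \eqref{E:WReso_P}; and the middle two cancel since $|n(y)|^2=1$. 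Hence $E_\varepsilon v(x)\cdot n_\varepsilon(x)=0$, and since $i$ and $x$ were arbitrary and $\Gamma_\varepsilon=\Gamma_\varepsilon^0\cup\Gamma_\varepsilon^1$, the lemma follows.

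There is no genuine obstacle here; the lemma is essentially a consistency check confirming that the correction $(\bar{v}\cdot\Psi_\varepsilon)\bar{n}$ in \eqref{E:Def_ExTan} was designed precisely to absorb the tangential defect of $n_\varepsilon^i$ relative to $n$ that is encoded by the $-\varepsilon\tau_\varepsilon^i$ contribution in \eqref{E:Def_NB}. The only algebraic ingredient beyond bare definitions, namely the tangentiality of $\tau_\varepsilon^i$, is already supplied by the paper, so the argument is routine once the boundary value of $\Psi_\varepsilon$ is identified.
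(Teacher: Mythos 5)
Your proof is correct and follows essentially the same route as the paper: both use the boundary trace $\Psi_\varepsilon=\varepsilon\bar{\tau}_\varepsilon^i$ on $\Gamma_\varepsilon^i$, the formula \eqref{E:Def_NB}/\eqref{E:Nor_Bo} for $n_\varepsilon$, and the tangentiality of $v$ and $\tau_\varepsilon^i$ to make the two $\varepsilon(v\cdot\tau_\varepsilon^i)$ terms cancel. The only difference is cosmetic: the paper computes $\bar{v}\cdot n_\varepsilon$ and $\bar{n}\cdot n_\varepsilon$ separately instead of expanding the four terms at once.
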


\begin{proof}
  For $i=0,1$ we observe by \eqref{E:Def_NB}, \eqref{E:Nor_Bo}, and $v\cdot n=0$ on $\Gamma$ that
  \begin{align*}
    \bar{v}\cdot n_\varepsilon = (-1)^i\frac{\varepsilon\bar{v}\cdot\bar{\tau}_\varepsilon^i}{\sqrt{1+\varepsilon^2|\bar{\tau}_\varepsilon^i|^2}}, \quad \bar{n}\cdot n_\varepsilon = \frac{(-1)^{i+1}}{\sqrt{1+\varepsilon^2|\bar{\tau}_\varepsilon^i|^2}} \quad\text{on}\quad \Gamma_\varepsilon^i.
  \end{align*}
  From these equalities and $\Psi_\varepsilon=\varepsilon\bar{\tau}_\varepsilon^i$ on $\Gamma_\varepsilon^i$ by \eqref{E:Def_ExAux} we get $E_\varepsilon v\cdot n_\varepsilon=0$ on $\Gamma_\varepsilon^i$.
\end{proof}

Also, it is easy to show that $E_\varepsilon v\in W^{m,p}(\Omega_\varepsilon)$ for $v\in W^{m,p}(\Gamma,T\Gamma)$.

\begin{lemma} \label{L:ExTan_Wmp}
  There exists a constant $c>0$ independent of $\varepsilon$ such that
  \begin{align} \label{E:ExTan_Wmp}
    \|E_\varepsilon v\|_{W^{m,p}(\Omega_\varepsilon)} \leq c\varepsilon^{1/2}\|v\|_{W^{m,p}(\Gamma)}
  \end{align}
  for all $v\in W^{m,p}(\Gamma,T\Gamma)$ with $p\in[1,\infty)$ and $m=0,1,2$.
\end{lemma}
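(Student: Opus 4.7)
The plan is to split the extension as $E_\varepsilon v=\bar v+\phi\,\bar n$ with $\phi:=\bar v\cdot\Psi_\varepsilon$, and to estimate each summand separately using the constant-extension scalings (Lemmas~\ref{L:Con_Lp_W1p} and~\ref{L:Con_W2p}), the pointwise bounds $|\Psi_\varepsilon|\le c\varepsilon$, $|\nabla\Psi_\varepsilon|\le c$, $|\nabla^2\Psi_\varepsilon|\le c$ from Lemma~\ref{L:ExAux_Bound}, and the bounds on $\bar n$ in \eqref{E:NorG_Bound}. For $m=0$, Lemma~\ref{L:Con_Lp_W1p} yields $\|\bar v\|_{L^p(\Omega_\varepsilon)}\le c\varepsilon^{1/p}\|v\|_{L^p(\Gamma)}$, while $|\phi\bar n|\le|\Psi_\varepsilon||\bar v|\le c\varepsilon|\bar v|$ gives $\|\phi\bar n\|_{L^p(\Omega_\varepsilon)}\le c\varepsilon^{1+1/p}\|v\|_{L^p(\Gamma)}$, which combined produce the required $L^p(\Omega_\varepsilon)$-bound.

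For $m=1$ I would differentiate $\nabla E_\varepsilon v=\nabla\bar v+\bar n\otimes\nabla\phi+\phi\,\nabla\bar n$ with $\nabla\phi=(\nabla\Psi_\varepsilon)^T\bar v+(\nabla\bar v)^T\Psi_\varepsilon$ and estimate term by term: $\|\nabla\bar v\|_{L^p(\Omega_\varepsilon)}\le c\varepsilon^{1/p}\|\nabla_\Gamma v\|_{L^p(\Gamma)}$ by \eqref{E:Con_Lp_Grad}; $|\bar n\otimes\nabla\phi|\le c(|\bar v|+\varepsilon|\nabla\bar v|)$; and $|\phi\nabla\bar n|\le c\varepsilon|\bar v|$ via \eqref{E:NorG_Bound}. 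Putting these together gives an $H^1$-bound $\le c\varepsilon^{1/p}\|v\|_{W^{1,p}(\Gamma)}$. For $m=2$, the same bookkeeping applied to $\nabla^2 E_\varepsilon v$, now using Lemma~\ref{L:Con_W2p} and the second inequality in \eqref{E:NorG_Bound} for $|\nabla^2\bar n|$, produces all cross terms under $c\varepsilon^{1/p}\|v\|_{W^{2,p}(\Gamma)}$; each undifferentiated factor of $\Psi_\varepsilon$ contributes an extra $\varepsilon$, which is harmless.

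The main subtlety is the factor $\varepsilon^{1/2}$ appearing in the statement while the natural constant-extension scaling produces $\varepsilon^{1/p}$. For $p=2$ (the only case invoked in the energy method underlying Theorems~\ref{T:GE} and~\ref{T:UE}) the two exponents coincide, and for $p\in[1,2]$ one has $\varepsilon^{1/p}\le\varepsilon^{1/2}$ since $\varepsilon<1$, so the proven bound $c\varepsilon^{1/p}\|v\|_{W^{m,p}(\Gamma)}$ immediately yields the stated inequality. Thus the plan delivers \eqref{E:ExTan_Wmp} in the form used throughout the paper, with the slightly sharper scaling $\varepsilon^{1/p}$ available whenever needed.
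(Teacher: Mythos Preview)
Your approach is essentially identical to the paper's: derive the pointwise bounds $|E_\varepsilon v|\le c|\bar v|$, $|\nabla E_\varepsilon v|\le c(|\bar v|+|\overline{\nabla_\Gamma v}|)$, $|\nabla^2 E_\varepsilon v|\le c(|\bar v|+|\overline{\nabla_\Gamma v}|+|\overline{\nabla_\Gamma^2 v}|)$ from \eqref{E:ConDer_Bound}, \eqref{E:Con_Hess}, \eqref{E:NorG_Bound}, \eqref{E:ExAux_Bound}, and then apply \eqref{E:Con_Lp}. You have correctly spotted that this route (and the paper's own proof) actually produces the factor $\varepsilon^{1/p}$ rather than $\varepsilon^{1/2}$; the exponent $1/2$ in the statement is evidently a slip, harmless because the lemma is only invoked with $p=2$ (see Lemmas~\ref{L:Hm_UaUr} and~\ref{L:Const_Test}), and your remark that $\varepsilon^{1/p}\le\varepsilon^{1/2}$ for $p\le 2$ covers the remaining range where the printed inequality is genuinely valid.
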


\begin{proof}
  By \eqref{E:ConDer_Bound}, \eqref{E:Con_Hess}, \eqref{E:NorG_Bound}, and \eqref{E:ExAux_Bound} we have
  \begin{gather*}
    |E_\varepsilon v| \leq c|\bar{v}|, \quad |\nabla E_\varepsilon v| \leq c\left(|\bar{v}|+\left|\overline{\nabla_\Gamma v}\right|\right), \quad |\nabla^2 E_\varepsilon v| \leq c\left(|\bar{v}|+\left|\overline{\nabla_\Gamma v}\right|+\left|\overline{\nabla_\Gamma^2 v}\right|\right)
  \end{gather*}
  in $\Omega_\varepsilon$.
  These inequalities and \eqref{E:Con_Lp} imply \eqref{E:ExTan_Wmp}.
\end{proof}

If $\widetilde{\Omega}_\varepsilon$ is a flat thin domain of the form
\begin{align*}
  \widetilde{\Omega}_\varepsilon = \{x=(x',x_3)\in\mathbb{R}^3 \mid x'\in\omega,\,\varepsilon\tilde{g}_0(x')<x_3<\varepsilon\tilde{g}_1(x')\},
\end{align*}
where $\omega$ is a domain in $\mathbb{R}^2$ and $\tilde{g}_0$ and $\tilde{g}_1$ are functions on $\omega$, then we have
\begin{align*}
  \mathrm{div}(E_\varepsilon v)(x) = \frac{1}{\tilde{g}(x')}\mathrm{div}(\tilde{g}v)(x'), \quad x=(x',x_3)\in\widetilde{\Omega}_\varepsilon \quad (\tilde{g}:=\tilde{g}_1-\tilde{g}_0)
\end{align*}
for $v\colon\omega\to\mathbb{R}^2$ (see~\cite[Lemma~4.24]{HoSe10} and~\cite[Remark~3.1]{IfRaSe07}).
This is not the case for the curved thin domain $\Omega_\varepsilon$ given by \eqref{E:Intro_CTD} because the principal curvatures of the surface $\Gamma$ do not vanish in general.
However, we can show that the difference between $\mathrm{div}(E_\varepsilon v)$ and $g^{-1}\mathrm{div}_\Gamma(gv)$ is of order $\varepsilon$ in $\Omega_\varepsilon$.

\begin{lemma} \label{L:ExTan_Div}
  There exists a constant $c>0$ independent of $\varepsilon$ such that
  \begin{align} \label{E:ExTan_Grad}
    \left|\nabla E_\varepsilon v-\left\{\overline{\nabla_\Gamma v}+\frac{1}{\bar{g}}\Bigl(\bar{v}\cdot\overline{\nabla_\Gamma g}\Bigr)\overline{Q}\right\}\right| \leq c\varepsilon\left(|\bar{v}|+\left|\overline{\nabla_\Gamma v}\right|\right) \quad\text{in}\quad \Omega_\varepsilon
  \end{align}
  for all $v\in C^1(\Gamma,T\Gamma)$.
  Moreover, we have
  \begin{align} \label{E:ExTan_Div}
    \left|\mathrm{div}(E_\varepsilon v)-\frac{1}{\bar{g}}\overline{\mathrm{div}_\Gamma(gv)}\right| \leq c\varepsilon\left(|\bar{v}|+\left|\overline{\nabla_\Gamma v}\right|\right) \quad\text{in}\quad \Omega_\varepsilon.
  \end{align}
\end{lemma}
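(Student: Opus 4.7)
The plan is to compute $\nabla E_\varepsilon v$ directly from the defining formula \eqref{E:Def_ExTan} by the product rule, then identify the leading-order term and estimate everything else as $O(\varepsilon)$ using the auxiliary bounds \eqref{E:ExAux_Bound} and \eqref{E:ExAux_TNDer} together with \eqref{E:ConDer_Bound}--\eqref{E:ConDer_Diff}. The divergence estimate \eqref{E:ExTan_Div} will then follow by taking the trace, since $\mathrm{tr}(\overline{Q}) = |\bar n|^2 = 1$ and $\mathrm{tr}(\overline{\nabla_\Gamma v}) = \overline{\mathrm{div}_\Gamma v}$.

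First I would apply the product rule to $E_\varepsilon v = \bar v + (\bar v\cdot\Psi_\varepsilon)\bar n$ to get
\begin{align*}
  \nabla E_\varepsilon v = \nabla\bar v+\nabla(\bar v\cdot\Psi_\varepsilon)\otimes\bar n+(\bar v\cdot\Psi_\varepsilon)\nabla\bar n,
\end{align*}
and expand $\nabla(\bar v\cdot\Psi_\varepsilon)=(\nabla\bar v)\Psi_\varepsilon+(\nabla\Psi_\varepsilon)\bar v$ in the paper's convention $[\nabla u]_{ij}=\partial_i u_j$. The terms $\nabla\bar v-\overline{\nabla_\Gamma v}$, $(\nabla\bar v)\Psi_\varepsilon\otimes\bar n$, and $(\bar v\cdot\Psi_\varepsilon)\nabla\bar n$ are all easily bounded by $c\varepsilon(|\bar v|+|\overline{\nabla_\Gamma v}|)$ using \eqref{E:ConDer_Diff}, the first inequality of \eqref{E:ExAux_Bound}, and \eqref{E:NorG_Bound}, so they are harmless remainders.

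The heart of the argument is the term $[(\nabla\Psi_\varepsilon)\bar v]\otimes\bar n$, which must produce the leading-order contribution $\frac{1}{\bar g}(\bar v\cdot\overline{\nabla_\Gamma g})\overline{Q}$. I would use the orthogonal decomposition $I_3=\overline{P}+\overline{Q}$ together with the identity $\overline{Q}\nabla\Psi_\varepsilon=\bar n\otimes\partial_n\Psi_\varepsilon$ (which follows from $\overline{Q}=\bar n\otimes\bar n$ and $\partial_n u=(\nabla u)^T\bar n$) to write
\begin{align*}
  (\nabla\Psi_\varepsilon)\bar v=(\overline{P}\nabla\Psi_\varepsilon)\bar v+(\partial_n\Psi_\varepsilon\cdot\bar v)\bar n.
\end{align*}
The first piece is $O(\varepsilon)|\bar v|$ by the first inequality of \eqref{E:ExAux_TNDer}. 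For the second, the second inequality of \eqref{E:ExAux_TNDer} gives $\partial_n\Psi_\varepsilon=\bar g^{-1}\overline{\nabla_\Gamma g}+O(\varepsilon)$, so $(\partial_n\Psi_\varepsilon\cdot\bar v)\bar n\otimes\bar n$ equals $\bar g^{-1}(\bar v\cdot\overline{\nabla_\Gamma g})\overline{Q}$ up to an $O(\varepsilon)|\bar v|$ error. Collecting everything yields \eqref{E:ExTan_Grad}.

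For \eqref{E:ExTan_Div} I would simply take the trace of the identity just proven. Since $v$ is tangential on $\Gamma$, $\mathrm{tr}(\overline{\nabla_\Gamma v})=\overline{\mathrm{div}_\Gamma v}$ and $\mathrm{tr}(\overline{Q})=1$, hence
\begin{align*}
  \mathrm{div}(E_\varepsilon v)=\overline{\mathrm{div}_\Gamma v}+\frac{1}{\bar g}\bar v\cdot\overline{\nabla_\Gamma g}+O(\varepsilon)(|\bar v|+|\overline{\nabla_\Gamma v}|),
\end{align*}
and the Leibniz rule $\mathrm{div}_\Gamma(gv)=g\,\mathrm{div}_\Gamma v+v\cdot\nabla_\Gamma g$ identifies the main term with $\bar g^{-1}\overline{\mathrm{div}_\Gamma(gv)}$. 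The only delicate point is the bookkeeping in step two: one must be careful with the paper's matrix convention and recognize that the only contribution surviving along the normal direction comes from $\partial_n\Psi_\varepsilon$, whereas all tangential derivatives of $\Psi_\varepsilon$ happen to be of order $\varepsilon$ by the cancellation encoded in \eqref{E:ExAux_TNDer}. Once that decomposition is in place, the rest is routine.
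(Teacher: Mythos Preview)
Your proposal is correct and follows essentially the same approach as the paper. The paper expands $\nabla E_\varepsilon v$ by the product rule exactly as you do, groups the same three $O(\varepsilon)$ remainder terms, and handles the key term $[(\nabla\Psi_\varepsilon)\bar v]\otimes\bar n$ via the same tangential/normal splitting $\nabla\Psi_\varepsilon=\overline{P}\nabla\Psi_\varepsilon+\bar n\otimes\partial_n\Psi_\varepsilon$ together with \eqref{E:ExAux_TNDer}; the divergence inequality is likewise obtained by taking the trace and using $\mathrm{tr}[Q]=1$ and the Leibniz rule for $\mathrm{div}_\Gamma(gv)$.
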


\begin{proof}
  Since
  \begin{align*}
    \nabla E_\varepsilon v = \nabla\bar{v}+[(\nabla\bar{v})\Psi_\varepsilon+(\nabla\Psi_\varepsilon)\bar{v}]\otimes\bar{n}+(\bar{v}\cdot\Psi_\varepsilon)\nabla\bar{n} \quad\text{in}\quad N
  \end{align*}
  by \eqref{E:Def_ExTan} and $(v\cdot\nabla_\Gamma g)Q=[(v\cdot\nabla_\Gamma g)n]\otimes n=[(n\otimes\nabla_\Gamma g)v]\otimes n$ on $\Gamma$,
  \begin{multline} \label{Pf_ETD:Diff}
    \left|\nabla E_\varepsilon v-\left\{\overline{\nabla_\Gamma v}+\frac{1}{\bar{g}}\Bigl(\bar{v}\cdot\overline{\nabla_\Gamma g}\Bigr)\overline{Q}\right\}\right| \\
    \leq \left|\nabla\bar{v}-\overline{\nabla_\Gamma v}\right|+|\{(\nabla\bar{v})\Psi_\varepsilon\}\otimes\bar{n}|+|(\bar{v}\cdot\Psi_\varepsilon)\nabla\bar{n}| \\
    +\left|\left[\left(\nabla\Psi_\varepsilon-\frac{1}{\bar{g}}\bar{n}\otimes\overline{\nabla_\Gamma g}\right)\bar{v}\right]\otimes\bar{n}\right| \quad\text{in}\quad N.
  \end{multline}
  By $\nabla\Psi_\varepsilon=\overline{P}\nabla\Psi_\varepsilon+\overline{Q}\nabla\Psi_\varepsilon=\overline{P}\nabla\Psi_\varepsilon+\bar{n}\otimes\partial_n\Psi_\varepsilon$ in $N$ and \eqref{E:ExAux_TNDer} we get
  \begin{align*}
    \left|\nabla\Psi_\varepsilon-\frac{1}{\bar{g}}\bar{n}\otimes\overline{\nabla_\Gamma g}\right| \leq \left|\overline{P}\nabla\Psi_\varepsilon\right|+\left|\bar{n}\otimes\left(\partial_n\Psi_\varepsilon-\frac{1}{\bar{g}}\overline{\nabla_\Gamma g}\right)\right| \leq c\varepsilon \quad\text{in}\quad \Omega_\varepsilon.
  \end{align*}
  Applying this inequality, \eqref{E:ConDer_Bound}, \eqref{E:ConDer_Diff}, \eqref{E:NorG_Bound}, \eqref{E:ExAux_Bound}, and $|d|\leq c\varepsilon$ in $\Omega_\varepsilon$ to the right-hand side of \eqref{Pf_ETD:Diff} we obtain \eqref{E:ExTan_Grad}.
  Also, since $\mathrm{tr}[Q]=n\cdot n=1$ on $\Gamma$,
  \begin{align*}
    \mathrm{tr}\left[\nabla_\Gamma v+\frac{1}{g}(v\cdot\nabla_\Gamma g)Q\right] = \mathrm{div}_\Gamma v+\frac{1}{g}(v\cdot\nabla_\Gamma g) = \frac{1}{g}\mathrm{div}_\Gamma(gv) \quad\text{on}\quad \Gamma.
  \end{align*}
  Hence the inequality \eqref{E:ExTan_Div} follows from \eqref{E:ExTan_Grad}.
\end{proof}

As a consequence of Lemma~\ref{L:ExTan_Div} we have the $L^p$-estimate for $\mathrm{div}(E_\varepsilon v)$ on $\Omega_\varepsilon$.

\begin{lemma} \label{L:Lp_ET_Div}
  Let $p\in[1,\infty)$.
  There exists $c>0$ independent of $\varepsilon$ such that
  \begin{align} \label{E:Lp_ET_Div}
    \|\mathrm{div}(E_\varepsilon v)\|_{L^p(\Omega_\varepsilon)} \leq c\varepsilon^{1/p}\left(\|\mathrm{div}_\Gamma(gv)\|_{L^p(\Gamma)}+\varepsilon\|v\|_{W^{1,p}(\Gamma)}\right)
  \end{align}
  for all $v\in W^{1,p}(\Gamma,T\Gamma)$.
  In particular, if $v$ satisfies $\mathrm{div}_\Gamma(gv)=0$ on $\Gamma$, then
  \begin{align} \label{E:Lp_ETD_Sol}
    \|\mathrm{div}(E_\varepsilon v)\|_{L^p(\Omega_\varepsilon)} \leq c\varepsilon^{1+1/p}\|v\|_{W^{1,p}(\Gamma)}.
  \end{align}
\end{lemma}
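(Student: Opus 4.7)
The plan is to derive the inequality \eqref{E:Lp_ET_Div} as a direct $L^p$-consequence of the pointwise estimate \eqref{E:ExTan_Div} already furnished by Lemma~\ref{L:ExTan_Div}, together with the norm comparison \eqref{E:Con_Lp} between functions on $\Gamma$ and their constant normal extensions on $\Omega_\varepsilon$.

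First, by the triangle inequality applied to \eqref{E:ExTan_Div}, we have the pointwise bound in $\Omega_\varepsilon$
\begin{align*}
  |\mathrm{div}(E_\varepsilon v)| \leq \frac{1}{\bar{g}}\left|\overline{\mathrm{div}_\Gamma(gv)}\right| + c\varepsilon\left(|\bar{v}| + \left|\overline{\nabla_\Gamma v}\right|\right).
\end{align*}
Taking $L^p(\Omega_\varepsilon)$-norms and using \eqref{E:Width_Bound} to bound $1/\bar{g}$ uniformly yields
\begin{align*}
  \|\mathrm{div}(E_\varepsilon v)\|_{L^p(\Omega_\varepsilon)} \leq c\left\|\overline{\mathrm{div}_\Gamma(gv)}\right\|_{L^p(\Omega_\varepsilon)} + c\varepsilon\left(\|\bar{v}\|_{L^p(\Omega_\varepsilon)} + \left\|\overline{\nabla_\Gamma v}\right\|_{L^p(\Omega_\varepsilon)}\right).
\end{align*}

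Next I would apply the right-hand inequality of \eqref{E:Con_Lp} componentwise to each of the three terms (noting that $\mathrm{div}_\Gamma(gv)\in L^p(\Gamma)$ and $\nabla_\Gamma v\in L^p(\Gamma)$ for $v\in W^{1,p}(\Gamma,T\Gamma)$, since $g\in C^4(\Gamma)$ is bounded along with its tangential gradient). This produces
\begin{align*}
  \|\mathrm{div}(E_\varepsilon v)\|_{L^p(\Omega_\varepsilon)} \leq c\varepsilon^{1/p}\|\mathrm{div}_\Gamma(gv)\|_{L^p(\Gamma)} + c\varepsilon^{1+1/p}\left(\|v\|_{L^p(\Gamma)} + \|\nabla_\Gamma v\|_{L^p(\Gamma)}\right),
\end{align*}
and the bracketed sum is exactly $\|v\|_{W^{1,p}(\Gamma)}$ (up to constants), giving \eqref{E:Lp_ET_Div}.

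Finally, the special case \eqref{E:Lp_ETD_Sol} is immediate: if $\mathrm{div}_\Gamma(gv) = 0$ on $\Gamma$, the first term on the right of \eqref{E:Lp_ET_Div} vanishes and only the $c\varepsilon^{1+1/p}\|v\|_{W^{1,p}(\Gamma)}$ contribution remains. There is no real obstacle here; the work has already been done in Lemma~\ref{L:ExTan_Div}, which supplies the correct leading-order cancellation between $\mathrm{div}(E_\varepsilon v)$ and $\bar{g}^{-1}\overline{\mathrm{div}_\Gamma(gv)}$. The present lemma is the routine packaging of that pointwise identity into an $L^p$-estimate with the sharp $\varepsilon^{1/p}$ factor coming from the thin-domain change-of-variables inequality \eqref{E:Con_Lp}.
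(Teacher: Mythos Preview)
Your proof is correct and follows essentially the same approach as the paper: the paper also combines the pointwise estimate \eqref{E:ExTan_Div} with \eqref{E:Width_Bound} to obtain the same pointwise bound on $|\mathrm{div}(E_\varepsilon v)|$, and then applies \eqref{E:Con_Lp} to pass to $L^p(\Omega_\varepsilon)$.
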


\begin{proof}
  By \eqref{E:Width_Bound} and \eqref{E:ExTan_Div} we have
  \begin{align*}
    |\mathrm{div}(E_\varepsilon v)| \leq c\left\{\left|\overline{\mathrm{div}_\Gamma(gv)}\right|+\varepsilon\left(|\bar{v}|+\left|\overline{\nabla_\Gamma v}\right|\right)\right\} \quad\text{in}\quad \Omega_\varepsilon.
  \end{align*}
  The inequality \eqref{E:Lp_ET_Div} follows from this inequality and \eqref{E:Con_Lp}.
\end{proof}

%%% Section 4 %%%
\section{Korn inequalities on a thin domain and a closed surface} \label{S:Korn}
In this section we provide the uniform Korn inequalities on $\Omega_\varepsilon$, which play an important role in the study of the Stokes operator on $\Omega_\varepsilon$.
We also show the Korn inequality on $\Gamma$ used in the study of a singular limit problem for \eqref{E:NS_Eq}--\eqref{E:NS_In}.

\subsection{Uniform Korn inequalities on a thin domain} \label{SS:Korn_Dom}
For the proof of the global existence of a strong solution to \eqref{E:NS_Eq}--\eqref{E:NS_In}, the uniform coerciveness of the bilinear form corresponding to the Stokes problem in $\Omega_\varepsilon$ with slip boundary conditions is essential.
To establish it in Section~\ref{SS:St_Def}, we prove the uniform Korn inequalities on $\Omega_\varepsilon$.
First we give an $L^2$-estimate for the gradient matrix of a vector field on $\Omega_\varepsilon$.

\begin{lemma} \label{L:Korn_Grad}
  There exists a constant $c_{K,1}>0$ independent of $\varepsilon$ such that
  \begin{align} \label{E:Korn_Grad}
    \|\nabla u\|_{L^2(\Omega_\varepsilon)}^2 \leq 4\|D(u)\|_{L^2(\Omega_\varepsilon)}^2+c_{K,1}\|u\|_{L^2(\Omega_\varepsilon)}^2
  \end{align}
  for all $\varepsilon\in(0,1)$ and $u\in H^1(\Omega_\varepsilon)^3$ satisfying \eqref{E:Bo_Imp} on $\Gamma_\varepsilon$.
\end{lemma}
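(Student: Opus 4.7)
The starting point is the pointwise algebraic identity $|\nabla u|^2 = 2|D(u)|^2 - \nabla u : (\nabla u)^T$, which follows directly from $2D(u) = \nabla u + (\nabla u)^T$. I would integrate this over $\Omega_\varepsilon$ and integrate by parts twice in the cross term: using $u \cdot n_\varepsilon = 0$ on $\Gamma_\varepsilon$ to kill the $(\mathrm{div}\,u)(u\cdot n_\varepsilon)$ boundary contribution, and Lemma~\ref{L:Imp_Der} to identify the surviving boundary term $\int_{\Gamma_\varepsilon}(u\cdot\nabla)u\cdot n_\varepsilon\,dS$ with $\int_{\Gamma_\varepsilon} W_\varepsilon u \cdot u\,dS$. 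This yields the fundamental identity
\begin{equation*}
  \|\nabla u\|_{L^2(\Omega_\varepsilon)}^2 = 2\|D(u)\|_{L^2(\Omega_\varepsilon)}^2 - \|\mathrm{div}\,u\|_{L^2(\Omega_\varepsilon)}^2 - \int_{\Gamma_\varepsilon} W_\varepsilon u \cdot u \, dS.
\end{equation*}
After discarding the nonpositive $-\|\mathrm{div}\,u\|^2$ term, the whole matter reduces to bounding the boundary integral by $\eta\|\nabla u\|^2 + c_\eta\|u\|^2$ with some $\eta\le 1/2$ independent of $\varepsilon$, so that absorption into the left-hand side produces the required factor $4$.

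To estimate the boundary integral uniformly in $\varepsilon$, the idea is to approximate $W_\varepsilon$ on $\Gamma_\varepsilon^i$ by $(-1)^{i+1}\overline{W}$ with error $O(\varepsilon)$ via Lemma~\ref{L:Comp_Nor}, and then convert the main piece
\begin{equation*}
  \int_{\Gamma_\varepsilon^1}\overline{W} u \cdot u\,dS - \int_{\Gamma_\varepsilon^0}\overline{W} u \cdot u\,dS
\end{equation*}
to a volume integral by applying the divergence theorem to the vector field $F := (\overline{W} u \cdot u)\bar n$. Since $\bar n \cdot n_\varepsilon^i = (-1)^{i+1}(1+O(\varepsilon^2))$ on $\Gamma_\varepsilon^i$ by \eqref{E:Def_NB} and \eqref{E:Nor_Bo}, the above difference equals $\int_{\Omega_\varepsilon}\mathrm{div}\,F\,dx$ up to an $O(\varepsilon^2)\|u\|_{L^2(\Gamma_\varepsilon)}^2$ correction. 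Because $\overline{W}=W\circ\pi$ is constant along the normal lines ($\partial_n\overline{W}=0$) and $W$ is symmetric, a short calculation gives
\begin{equation*}
  \mathrm{div}\,F = 2\overline{W} u \cdot \partial_n u + (\overline{W} u \cdot u)\,\mathrm{div}\,\bar n,
\end{equation*}
and both factors are uniformly bounded on $\Omega_\varepsilon$ by \eqref{E:NorG_Bound} and the $C^2$-regularity of $\Gamma$. Thus $\bigl|\int_{\Omega_\varepsilon}\mathrm{div}\,F\,dx\bigr| \le c(\|u\|_{L^2}\|\nabla u\|_{L^2} + \|u\|_{L^2}^2)$.

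The remaining boundary errors of the forms $O(\varepsilon)\|u\|_{L^2(\Gamma_\varepsilon)}^2$ (from the $W_\varepsilon$ approximation) and $O(\varepsilon^2)\|u\|_{L^2(\Gamma_\varepsilon)}^2$ (from the $\bar n\cdot n_\varepsilon^i$ alignment) are treated via the trace inequality of Lemma~\ref{L:Poincare} with $p=2$, which yields $c\varepsilon\|u\|_{L^2(\Gamma_\varepsilon)}^2 \le c\|u\|_{L^2(\Omega_\varepsilon)}^2 + c\varepsilon^2\|\nabla u\|_{L^2(\Omega_\varepsilon)}^2$. Putting everything together and applying Young's inequality to $\|u\|\,\|\nabla u\|$, the total boundary contribution is bounded by $(\eta + c\varepsilon^2)\|\nabla u\|^2 + c_\eta\|u\|^2$. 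The delicate point, which I expect to be the main obstacle, is to arrange that the coefficient $\eta + c\varepsilon^2$ can be made $\le 1/2$ uniformly for all $\varepsilon\in(0,1)$: for small $\varepsilon$ this is immediate by a careful choice of the Young parameter, whereas for $\varepsilon$ bounded away from zero the domain $\Omega_\varepsilon$ is uniformly nondegenerate and the conclusion follows from the classical Korn inequality on a fixed Lipschitz domain. Once the absorption is performed, the stated inequality follows with the $\varepsilon$-independent constant $c_{K,1}$.
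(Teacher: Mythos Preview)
Your strategy is correct and matches the paper's through the identity $|\nabla u|^2 = 2|D(u)|^2 - \nabla u:(\nabla u)^T$, integration by parts, and the identification $\int_{\Gamma_\varepsilon}(u\cdot\nabla)u\cdot n_\varepsilon = \int_{\Gamma_\varepsilon}W_\varepsilon u\cdot u$ via Lemma~\ref{L:Imp_Der}. The difference lies in how the boundary integral is converted to a volume integral.

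You approximate $W_\varepsilon$ on $\Gamma_\varepsilon^i$ by $(-1)^{i+1}\overline{W}$ and then apply the divergence theorem to $(\overline{W}u\cdot u)\bar n$; the approximation error of size $\varepsilon\|u\|_{L^2(\Gamma_\varepsilon)}^2$ is handled by the trace bound \eqref{E:Poin_Bo}, producing the residual $c\varepsilon^2\|\nabla u\|^2$ term that forces the case-splitting on $\varepsilon$. The paper instead avoids any approximation: it writes the boundary integral as $\int_\Gamma[\varphi(\cdot,\varepsilon g_1)-\varphi(\cdot,\varepsilon g_0)]$ for an auxiliary $\varphi(y,r)=u^\sharp\cdot F(y,r)u^\sharp\,J$, where $F$ linearly interpolates (with alternating signs) the \emph{exact} boundary values $\sqrt{1+\varepsilon^2|\tau_\varepsilon^i|^2}\,W_{\varepsilon,i}^\sharp$, so that the fundamental theorem of calculus in $r$ leaves no boundary error. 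The crux is that $\partial_rF=(\varepsilon g)^{-1}(F_1+F_0)$, and the cancellation $W_{\varepsilon,1}^\sharp+W_{\varepsilon,0}^\sharp=O(\varepsilon)$ from Lemma~\ref{L:Diff_SQ_IO} kills the $\varepsilon^{-1}$, giving $|\partial_r\varphi|\le c(|u^\sharp|^2+|u^\sharp||(\nabla u)^\sharp|)$ uniformly. This yields the clean bound $c(\|u\|_{L^2}^2+\|u\|_{L^2}\|\nabla u\|_{L^2})$ for all $\varepsilon\in(0,1)$, with no $\varepsilon^2\|\nabla u\|^2$ remainder and no need to invoke classical Korn for large $\varepsilon$. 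Your divergence-theorem approach would achieve the same cleanliness if you replaced $\overline{W}$ by an exact extension of $W_\varepsilon$ such as the matrix $\widetilde{W}$ defined before Lemma~\ref{L:G_Bound}, whose normal derivative is uniformly bounded precisely because of the same cancellation. One minor point: you should also invoke the density Lemma~\ref{L:H1N0_Dense} to justify the double integration by parts.
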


Let us prove an auxiliary density result.

\begin{lemma} \label{L:H1N0_Dense}
  Let $u\in H^1(\Omega_\varepsilon)^3$ satisfy \eqref{E:Bo_Imp} on $\Gamma_\varepsilon$.
  Then there exists a sequence $\{u_k\}_{k=1}^\infty$ in $C^2(\overline{\Omega}_\varepsilon)^3$ such that $u_k$ satisfies \eqref{E:Bo_Imp} on $\Gamma_\varepsilon$ for each $k\in\mathbb{N}$ and
  \begin{align*}
    \lim_{k\to\infty}\|u-u_k\|_{H^1(\Omega_\varepsilon)} = 0.
  \end{align*}
\end{lemma}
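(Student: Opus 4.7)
The strategy is the standard density argument: first approximate $u$ by smooth functions ignoring the boundary constraint, then subtract a correction that absorbs the normal trace without spoiling the $H^1$ convergence. Since $\Omega_\varepsilon$ has $C^4$ (hence Lipschitz) boundary, $C^\infty(\overline{\Omega}_\varepsilon)^3$ is dense in $H^1(\Omega_\varepsilon)^3$, so I would first pick a sequence $v_k \in C^\infty(\overline{\Omega}_\varepsilon)^3$ with $v_k \to u$ in $H^1(\Omega_\varepsilon)^3$. Next, let $N \in C^3(\overline{\Omega}_\varepsilon)^3$ be any extension of $n_\varepsilon$ with $|N|\leq 1$, obtained for example by composing $n_\varepsilon$ with the $C^3$ nearest-point projection $\pi_\varepsilon$ onto $\Gamma_\varepsilon$ in a tubular neighborhood and multiplying by a smooth cutoff. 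Define the boundary defect
\[
  \phi_k := (v_k\cdot n_\varepsilon)\big|_{\Gamma_\varepsilon},
\]
which belongs to $C^3(\Gamma_\varepsilon)$ because $v_k \in C^\infty$ and $n_\varepsilon \in C^3(\Gamma_\varepsilon)^3$ (the normal to a $C^4$ surface is $C^3$). Since $u\cdot n_\varepsilon=0$ on $\Gamma_\varepsilon$, continuity of the trace operator yields
\[
  \|\phi_k\|_{H^{1/2}(\Gamma_\varepsilon)} \leq c\|v_k-u\|_{H^1(\Omega_\varepsilon)} \to 0.
\]

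The heart of the argument is to extend each $\phi_k$ to a function $\tilde{\phi}_k \in C^2(\overline{\Omega}_\varepsilon)$ satisfying simultaneously $\tilde{\phi}_k|_{\Gamma_\varepsilon}=\phi_k$ and the sharp estimate $\|\tilde{\phi}_k\|_{H^1(\Omega_\varepsilon)}\leq c\|\phi_k\|_{H^{1/2}(\Gamma_\varepsilon)}$. I would take $\tilde{\phi}_k$ to be the harmonic extension, the unique solution of the Dirichlet problem $\Delta\tilde{\phi}_k=0$ in $\Omega_\varepsilon$ with $\tilde{\phi}_k|_{\Gamma_\varepsilon}=\phi_k$. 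The $H^1$-bound follows from the standard energy identity together with a bounded right inverse of the trace, while Schauder theory at the boundary gives $\tilde{\phi}_k \in C^{2,\alpha}(\overline{\Omega}_\varepsilon)\subset C^2(\overline{\Omega}_\varepsilon)$, using that $\Gamma_\varepsilon$ is of class $C^4\subset C^{2,\alpha}$ and $\phi_k\in C^3(\Gamma_\varepsilon)\subset C^{2,\alpha}(\Gamma_\varepsilon)$.

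With the lift in hand, set
\[
  u_k := v_k - \tilde{\phi}_k N \in C^2(\overline{\Omega}_\varepsilon)^3.
\]
On $\Gamma_\varepsilon$, where $N=n_\varepsilon$ and $|n_\varepsilon|^2=1$, we have $u_k\cdot n_\varepsilon=\phi_k-\tilde{\phi}_k=0$, so the impermeable boundary condition holds. Moreover, the triangle inequality and the product estimate for $\tilde{\phi}_k N$ give
\[
  \|u-u_k\|_{H^1(\Omega_\varepsilon)} \leq \|u-v_k\|_{H^1(\Omega_\varepsilon)} + c\|N\|_{C^1(\overline{\Omega}_\varepsilon)}\|\tilde{\phi}_k\|_{H^1(\Omega_\varepsilon)} \to 0,
\]
completing the approximation.

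The main obstacle is constructing $\tilde{\phi}_k$ with the two properties at once: $C^2$-regularity up to the boundary and the sharp $H^{1/2}\!\to\! H^1$ bound controlled only by the trace norm, not by stronger Sobolev norms of $\phi_k$. The harmonic lift resolves both simultaneously but relies on boundary Schauder estimates for a $C^4$ domain. If one wishes to avoid elliptic regularity, an alternative is to argue locally via Fermi coordinates around $\Gamma_\varepsilon$: flatten the boundary by a $C^4$ chart, decompose $u$ into tangential and normal components in this chart, and extend the tangential components by even reflection and the normal component by odd reflection across $\{y_3=0\}$, then mollify and restrict; the odd reflection ensures the mollification still vanishes on the flat boundary. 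The constructions are then patched by a finite partition of unity subordinate to a cover of $\overline{\Omega}_\varepsilon$ consisting of such charts plus an interior set handled by standard interior mollification.
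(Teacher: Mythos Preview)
Your argument is correct, but the paper takes a different and somewhat more elementary route. Instead of approximating $u$ first and then correcting the normal trace via a harmonic lift, the paper \emph{decomposes first}: it builds a global $C^2$ extension $\tilde n$ of $n_\varepsilon$ to $\overline{\Omega}_\varepsilon$ (an affine interpolation along the normal fibers between the constant extensions $\bar n_\varepsilon^0$ and $\bar n_\varepsilon^1$), writes $u=(u\cdot\tilde n)\tilde n+w$, observes that $u\cdot\tilde n\in H_0^1(\Omega_\varepsilon)$, approximates it by $\varphi_k\in C_c^\infty(\Omega_\varepsilon)$ and $w$ by $w_k\in C^\infty(\overline{\Omega}_\varepsilon)^3$, and sets $u_k:=\varphi_k\tilde n+w_k-(w_k\cdot\tilde n)\tilde n$. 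Then $u_k\cdot n_\varepsilon=\varphi_k=0$ on $\Gamma_\varepsilon$ automatically.

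The trade-offs: the paper's method needs only the density of $C_c^\infty$ in $H_0^1$ and of $C^\infty(\overline{\Omega}_\varepsilon)$ in $H^1$, avoiding any elliptic regularity, and exploits the thin-domain structure to define $\tilde n$ globally without charts or a tubular neighborhood of $\Gamma_\varepsilon$. Your route is domain-agnostic (works for any bounded $C^4$ domain) but pays the price of invoking Schauder estimates to ensure $\tilde\phi_k\in C^2(\overline{\Omega}_\varepsilon)$; the $H^{1/2}\to H^1$ bound alone would not suffice, since a merely $H^1$ lift need not be $C^2$.
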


\begin{proof}
  We follow the idea of the proof of~\cite[Theorem~IV.4.7]{BoFa13}, but here it is not necessary to localize a vector field on $\Omega_\varepsilon$.
  For $x\in N$ we define
  \begin{align*}
    \tilde{n}(x) := \frac{1}{\varepsilon \bar{g}(x)}\bigl\{\bigl(d(x)-\varepsilon\bar{g}_0(x)\bigr)\bar{n}_\varepsilon^1(x)+\bigl(\varepsilon\bar{g}_1(x)-d(x)\bigr)\bar{n}_\varepsilon^0(x)\bigr\},
  \end{align*}
  where $n_\varepsilon^0$ and $n_\varepsilon^1$ are given by \eqref{E:Def_NB} and $\bar{\eta}=\eta\circ\pi$ denotes the constant extension of a function $\eta$ on $\Gamma$.
  Then $\tilde{n}\in C^2(N)$ by the regularity of $\Gamma$, $g_0$, and $g_1$.
  Moreover, $\tilde{n}=n_\varepsilon$ on $\Gamma_\varepsilon$ by Lemma~\ref{L:Nor_Bo}.
  Hence if $u\in H^1(\Omega_\varepsilon)^3$ satisfies \eqref{E:Bo_Imp} on $\Gamma_\varepsilon$, then we have $u\cdot\tilde{n}\in H_0^1(\Omega_\varepsilon)$ and $w:=u-(u\cdot\tilde{n})\tilde{n}\in H^1(\Omega_\varepsilon)^3$.
  Since $\Gamma_\varepsilon$ is of class $C^4$, there exist sequences $\{\varphi_k\}_{k=1}^\infty$ in $C_c^\infty(\Omega_\varepsilon)$ and $\{w_k\}_{k=1}^\infty$ in $C^\infty(\overline{\Omega}_\varepsilon)^3$ such that
  \begin{align*}
    \lim_{k\to\infty}\|u\cdot\tilde{n}-\varphi_k\|_{H^1(\Omega_\varepsilon)} = \lim_{k\to\infty}\|w-w_k\|_{H^1(\Omega_\varepsilon)} = 0.
  \end{align*}
  Here $C_c^\infty(\Omega_\varepsilon)$ is the space of all smooth and compactly supported functions on $\Omega_\varepsilon$.
  Therefore, setting $u_k:=\varphi_k\tilde{n}+w_k-(w_k\cdot\tilde{n})\tilde{n}\in C^2(\overline{\Omega}_\varepsilon)$ we see that
  \begin{align*}
    u_k\cdot n_\varepsilon = u_k\cdot\tilde{n} = \varphi_k = 0 \quad\text{on}\quad \Gamma_\varepsilon
  \end{align*}
  for each $k\in\mathbb{N}$ and (note that $u=(u\cdot\tilde{n})\tilde{n}+w$ and $w\cdot\tilde{n}=0$ in $\Omega_\varepsilon$)
  \begin{align*}
    \|u-u_k\|_{H^1(\Omega_\varepsilon)} &= \|(u\cdot\tilde{n}-\varphi_k)\tilde{n}+(w-w_k)-\{(w-w_k)\cdot\tilde{n}\}\tilde{n}\|_{H^1(\Omega_\varepsilon)} \\
    &\leq c_\varepsilon\left(\|u\cdot\tilde{n}-\varphi_k\|_{H^1(\Omega_\varepsilon)}+\|w-w_k\|_{H^1(\Omega_\varepsilon)}\right) \to 0
  \end{align*}
  as $k\to\infty$ (here $c_\varepsilon>0$ may depend on $\varepsilon$ but is independent of $k$).
\end{proof}

\begin{proof}[Proof of Lemma~\ref{L:Korn_Grad}]
  By Lemma~\ref{L:H1N0_Dense} and a density argument it suffices to show \eqref{E:Korn_Grad} for all $u\in C^2(\overline{\Omega}_\varepsilon)^3$ satisfying \eqref{E:Bo_Imp} on $\Gamma_\varepsilon$.
  For such $u$ we can apply integration by parts twice and \eqref{E:Bo_Imp} on $\Gamma_\varepsilon$ to get
  \begin{align*}
    \int_{\Omega_\varepsilon}\nabla u:(\nabla u)^T\,dx &= \int_{\Omega_\varepsilon}(\mathrm{div}\,u)^2\,dx+\int_{\Gamma_\varepsilon}\{(u\cdot\nabla)u\cdot n_\varepsilon-(u\cdot n_\varepsilon)\mathrm{div}\,u\}\,d\mathcal{H}^2 \\
    &\geq \int_{\Gamma_\varepsilon}(u\cdot\nabla)u\cdot n_\varepsilon\,d\mathcal{H}^2.
  \end{align*}
  From this inequality and $|\nabla u|^2=2|D(u)|^2-\nabla u:(\nabla u)^T$ in $\Omega_\varepsilon$ we deduce that
  \begin{align} \label{Pf_KG:First_Est}
    \|\nabla u\|_{L^2(\Omega_\varepsilon)}^2 \leq 2\|D(u)\|_{L^2(\Omega_\varepsilon)}^2-\int_{\Gamma_\varepsilon}(u\cdot\nabla)u\cdot n_\varepsilon\,d\mathcal{H}^2.
  \end{align}
  Since $u$ satisfies \eqref{E:Bo_Imp} on $\Gamma_\varepsilon$, we can apply \eqref{E:Imp_Der} to the last term to get
  \begin{align} \label{Pf_KG:Bound_Int}
    \int_{\Gamma_\varepsilon}(u\cdot\nabla)u\cdot n_\varepsilon\,d\mathcal{H}^2 = \int_{\Gamma_\varepsilon}u\cdot W_\varepsilon u\,d\mathcal{H}^2 = \sum_{i=0,1}\int_{\Gamma_\varepsilon^i}u\cdot W_\varepsilon u\,d\mathcal{H}^2.
  \end{align}
  To estimate the right-hand side we set
  \begin{align} \label{Pf_KG:Def_Aux}
    \begin{aligned}
      F_i(y) &:= \sqrt{1+\varepsilon^2|\tau_\varepsilon^i(y)|^2}\,W_{\varepsilon,i}^\sharp(y), \quad i=0,1, \\
      F(y,r) &:= \frac{1}{\varepsilon g(y)}\bigl\{\bigl(r-\varepsilon g_0(y)\bigr)F_1(y)-\bigl(\varepsilon g_1(y)-r\bigr)F_0(y)\bigr\}, \\
      \varphi(y,r) &:= u^\sharp(y,r)\cdot F(y,r)u^\sharp(y,r)J(y,r)
    \end{aligned}
  \end{align}
  for $y\in\Gamma$ and $r\in[\varepsilon g_0(y),\varepsilon g_1(y)]$ with $\tau_\varepsilon^i$, $i=0,1$ and $J$ given by \eqref{E:Def_NB_Aux} and \eqref{E:Def_Jac}.
  Here and in what follows we use the notations \eqref{E:Pull_Dom} and \eqref{E:Pull_Bo} and sometimes suppress the arguments $y$ and $r$.
  By \eqref{Pf_KG:Def_Aux} we see that
  \begin{align*}
    [u\cdot W_\varepsilon u]_i^\sharp(y)\sqrt{1+\varepsilon^2|\tau_\varepsilon^i(y)|^2}J(y,\varepsilon g_i(y)) = (-1)^{i+1}\varphi(y,\varepsilon g_i(y)), \quad y\in\Gamma,\,i=0,1.
  \end{align*}
  From this relation and \eqref{E:CoV_Surf} we deduce that
  \begin{align} \label{Pf_KG:Bound_FTC}
    \begin{aligned}
      \sum_{i=0,1}\int_{\Gamma_\varepsilon^i}[u\cdot W_\varepsilon u](x)\,d\mathcal{H}^2(x) &= \int_\Gamma\{\varphi(y,\varepsilon g_1(y))-\varphi(y,\varepsilon g_0(y))\}\,d\mathcal{H}^2(y) \\
      &= \int_\Gamma\int_{\varepsilon g_0(y)}^{\varepsilon g_1(y)}\frac{\partial\varphi}{\partial r}(y,r)\,dr\,d\mathcal{H}^2(y).
    \end{aligned}
  \end{align}
  To estimate the integrand on the last line we use \eqref{E:Jac_Bound} to get
  \begin{align} \label{PF_KN:Dphi_1}
    \left|\frac{\partial\varphi}{\partial r}\right| \leq c\left\{\left(|F|+\left|\frac{\partial F}{\partial r}\right|\right)|u^\sharp|^2+|F||u^\sharp||(\nabla u)^\sharp|\right\}.
  \end{align}
  By \eqref{E:Tau_Bound} and the uniform boundedness in $\varepsilon$ of $W_\varepsilon$ on $\Gamma_\varepsilon$ we observe that $F_0$ and $F_1$ are bounded on $\Gamma$ uniformly in $\varepsilon$.
  Thus we have
  \begin{align} \label{Pf_KG:Est_F}
    |F(y,r)| \leq \frac{c}{\varepsilon g(y)}\bigl\{\bigl(r-\varepsilon g_0(y)\bigr)+\bigl(\varepsilon g_1(y)-r\bigr)\} = c
  \end{align}
  for $y\in\Gamma$ and $r\in[\varepsilon g_0(y),\varepsilon g_1(y)]$.
  Also, by $\partial F/\partial r=(\varepsilon g)^{-1}(F_1+F_0)$ and \eqref{Pf_KG:Def_Aux},
  \begin{align} \label{Pf_KG:DF_1}
    \left|\frac{\partial F}{\partial r}\right| \leq c\varepsilon^{-1}\left(|W_{\varepsilon,1}^\sharp+W_{\varepsilon,0}^\sharp|+\sum_{i=0,1}\left(\sqrt{1+\varepsilon^2|\tau_\varepsilon^i|^2}-1\right)|W_{\varepsilon,i}^\sharp|\right).
  \end{align}
  By the mean value theorem for the function $\sqrt{1+s}$, $s\geq0$ and \eqref{E:Tau_Bound} we have
  \begin{align} \label{Pf_KG:Sqrt}
    (0 \leq)\, \sqrt{1+\varepsilon^2|\tau_\varepsilon^i(y)|^2}-1 \leq \frac{\varepsilon^2}{2}|\tau_\varepsilon^i(y)|^2 \leq c\varepsilon^2, \quad y\in\Gamma.
  \end{align}
  We apply this inequality, \eqref{E:Diff_WH_IO} with $G_\varepsilon=W_\varepsilon$, and the uniform boundedness in $\varepsilon$ of $W_\varepsilon$ to the right-hand side of \eqref{Pf_KG:DF_1} to obtain
  \begin{align} \label{Pf_KG:Est_DF}
    \left|\frac{\partial F}{\partial r}(y,r)\right| \leq c \quad\text{for all}\quad y\in\Gamma,\,r\in[\varepsilon g_0(y),\varepsilon g_1(y)].
  \end{align}
  From \eqref{PF_KN:Dphi_1}, \eqref{Pf_KG:Est_F}, and \eqref{Pf_KG:Est_DF} we deduce that
  \begin{align} \label{Pf_KG:Est_Dphi}
    \left|\frac{\partial\varphi}{\partial r}(y,r)\right| \leq c\Bigl(|u^\sharp(y,r)|^2+\Bigl[|u^\sharp||(\nabla u)^\sharp|\Bigr](y,r)\Bigr)
  \end{align}
  for all $y\in\Gamma$ and $r\in[\varepsilon g_0(y),\varepsilon g_1(y)]$, where $c>0$ is a constant independent of $\varepsilon$.
  Applying \eqref{Pf_KG:Bound_FTC} and \eqref{Pf_KG:Est_Dphi} to \eqref{Pf_KG:Bound_Int} and using \eqref{E:CoV_Equiv} and Hold\"{o}r's inequality we get
  \begin{align} \label{Pf_KG:Est_Bint}
    \begin{aligned}
      \left|\int_{\Gamma_\varepsilon}(u\cdot\nabla)u\cdot n_\varepsilon\,d\mathcal{H}^2\right| &\leq c\int_\Gamma\int_{\varepsilon g_0}^{\varepsilon g_1}\Bigl(|u^\sharp|^2+|u^\sharp||(\nabla u)^\sharp|\Bigr)\,dr\,d\mathcal{H}^2 \\
      &\leq c\left(\|u\|_{L^2(\Omega_\varepsilon)}^2+\|u\|_{L^2(\Omega_\varepsilon)}\|\nabla u\|_{L^2(\Omega_\varepsilon)}\right).
    \end{aligned}
  \end{align}
  By \eqref{Pf_KG:First_Est}, \eqref{Pf_KG:Est_Bint}, and Young's inequality we obtain
  \begin{align*}
    \|\nabla u\|_{L^2(\Omega_\varepsilon)}^2 &\leq 2\|D(u)\|_{L^2(\Omega_\varepsilon)}^2+c\left(\|u\|_{L^2(\Omega_\varepsilon)}^2+\|u\|_{L^2(\Omega_\varepsilon)}\|\nabla u\|_{L^2(\Omega_\varepsilon)}\right) \\
    &\leq 2\|D(u)\|_{L^2(\Omega_\varepsilon)}^2+c\|u\|_{L^2(\Omega_\varepsilon)}^2+\frac{1}{2}\|\nabla u\|_{L^2(\Omega_\varepsilon)}^2.
  \end{align*}
  Hence \eqref{E:Korn_Grad} follows.
\end{proof}

Next we show a uniform $L^2$-estimate for a vector field on $\Omega_\varepsilon$ by the $L^2$-norms of the gradient matrix and the strain rate tensor on $\Omega_\varepsilon$.
Recall that for a function $\eta$ on $\Gamma$ we denote by $\bar{\eta}=\eta\circ\pi$ its constant extension in the normal direction of $\Gamma$.

\begin{lemma} \label{L:Korn_U}
  For given $\alpha>0$ and $\beta\in[0,1)$ there exist constants
  \begin{align*}
    \varepsilon_K = \varepsilon_K(\alpha,\beta)\in(0,1), \quad c_{K,2} = c_{K,2}(\alpha,\beta) > 0
  \end{align*}
  independent of $\varepsilon$ such that
  \begin{align} \label{E:Korn_U}
    \|u\|_{L^2(\Omega_\varepsilon)}^2 \leq \alpha\|\nabla u\|_{L^2(\Omega_\varepsilon)}^2+c_{K,2}\|D(u)\|_{L^2(\Omega_\varepsilon)}^2
  \end{align}
  for all $\varepsilon\in(0,\varepsilon_K)$ and $u\in H^1(\Omega_\varepsilon)^3$ satisfying \eqref{E:Bo_Imp} on $\Gamma_\varepsilon$ and
  \begin{align} \label{E:Korn_Orth}
    \left|(u,\bar{v})_{L^2(\Omega_\varepsilon)}\right| \leq \beta\|u\|_{L^2(\Omega_\varepsilon)}\|\bar{v}\|_{L^2(\Omega_\varepsilon)} \quad\text{for all}\quad v\in\mathcal{K}_g(\Gamma).
  \end{align}
  Here $\mathcal{K}_g(\Gamma)$ is the function space on $\Gamma$ given by \eqref{E:Def_Kil}.
\end{lemma}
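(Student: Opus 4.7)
The plan is to argue by contradiction via a compactness argument on a rescaled fixed cylinder. If the lemma fails, then taking $\varepsilon_K=1/k$ and $c_{K,2}=k$ produces sequences $\varepsilon_k\to 0$ and $u_k\in H^1(\Omega_{\varepsilon_k})^3$ satisfying \eqref{E:Bo_Imp} on $\Gamma_{\varepsilon_k}$ and \eqref{E:Korn_Orth} with
\[
\|u_k\|_{L^2(\Omega_{\varepsilon_k})}^2>\alpha\|\nabla u_k\|_{L^2(\Omega_{\varepsilon_k})}^2+k\|D(u_k)\|_{L^2(\Omega_{\varepsilon_k})}^2.
\]
I normalise so that $\|u_k\|_{L^2(\Omega_{\varepsilon_k})}^2=\varepsilon_k$, whence $\|D(u_k)\|_{L^2}^2<\varepsilon_k/k$ and, by Lemma~\ref{L:Korn_Grad}, $\|\nabla u_k\|_{L^2}^2\le(4/k+c_{K,1})\varepsilon_k$.

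Next I would pull $u_k$ back to the fixed cylinder $\Gamma\times(0,1)$ via the $C^4$-diffeomorphism
\[
\zeta_k(y,s):=y+\varepsilon_k\{(1-s)g_0(y)+sg_1(y)\}n(y),
\]
setting $\tilde u_k:=u_k\circ\zeta_k$. Arguing exactly as in \eqref{Pf_A:Deri_Zeta}--\eqref{Pf_A:Det_Zeta}, the Jacobian of $\zeta_k$ with respect to $d\mathcal{H}^2(y)\,ds$ equals $\varepsilon_k g(y)J(y,\varepsilon_k[(1-s)g_0+sg_1])$. Transferring the three $L^2$-bounds above gives $\|\tilde u_k\|_{L^2(\Gamma\times(0,1))}\asymp 1$ with $\|\nabla_\Gamma\tilde u_k\|_{L^2}$ uniformly bounded and $\|\partial_s\tilde u_k\|_{L^2}=O(\varepsilon_k)$. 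By Rellich's theorem, a subsequence, still denoted $\tilde u_k$, converges strongly in $L^2$ and weakly in $H^1$ to some $u^*$ with $\partial_s u^*=0$; thus $u^*$ descends to an element of $H^1(\Gamma)^3$ with $\int_\Gamma g|u^*|^2\,d\mathcal{H}^2=1$, hence nonzero.

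The heart of the argument is to show $u^*\in\mathcal{K}_g(\Gamma)$. First, pulling the strain tensor back to the cylinder and using $\|D(u_k)\|_{L^2}^2/\varepsilon_k\to 0$, the tangential--tangential block of the rescaled strain, which in the limit equals $D_\Gamma(u^*)$, must vanish by weak lower semicontinuity. Second, Lemma~\ref{L:Exp_Bo} together with trace convergence forces $u^*\cdot n=0$ on $\Gamma$. Third, and this is the delicate point, combining the impermeability conditions on $\Gamma_{\varepsilon_k}^0$ and $\Gamma_{\varepsilon_k}^1$ via \eqref{E:Exp_Bo} gives
\[
(\tilde u_k\cdot\bar{\tau}_{\varepsilon_k}^1)|_{s=1}-(\tilde u_k\cdot\bar{\tau}_{\varepsilon_k}^0)|_{s=0}=\varepsilon_k^{-1}\int_0^1\partial_s\!\bigl(\tilde u_k\cdot\bar n\circ\zeta_k\bigr)\,ds,
\]
and since $\|\partial_s\tilde u_k\|_{L^2}=O(\varepsilon_k)$ the right-hand side vanishes in the limit after testing against any smooth $\varphi\in C(\Gamma)$, while \eqref{E:Tau_Diff} identifies the limit of the left-hand side with $u^*\cdot(\nabla_\Gamma g_1-\nabla_\Gamma g_0)=u^*\cdot\nabla_\Gamma g$, which must therefore vanish on $\Gamma$. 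Thus $u^*\in\mathcal{K}_g(\Gamma)$.

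Finally, the orthogonality assumption \eqref{E:Korn_Orth} with $v=u^*$ reads $|(u_k,\bar u^*)_{L^2(\Omega_{\varepsilon_k})}|\le\beta\|u_k\|_{L^2(\Omega_{\varepsilon_k})}\|\bar u^*\|_{L^2(\Omega_{\varepsilon_k})}$. Transferring to the cylinder via $\zeta_k$, the strong $L^2$-convergence $\tilde u_k\to u^*$ and the uniform convergence of $\bar u^*\circ\zeta_k$ to $u^*(y)$ show that both numerator and denominator are asymptotic to $\varepsilon_k\int_\Gamma g|u^*|^2\,d\mathcal{H}^2$, so the ratio tends to $1$, contradicting $\beta<1$. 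The principal technical obstacle is precisely the extraction of $u^*\cdot\nabla_\Gamma g=0$ from the two separate impermeability conditions, since this is how the weight $g=g_1-g_0$ selects $\mathcal{K}_g(\Gamma)$ rather than the larger space $\mathcal{K}(\Gamma)$.
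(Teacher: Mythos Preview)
Your overall contradiction/compactness strategy mirrors the paper's proof closely; the paper rescales to the fixed domain $\Omega_1$ via $\Phi_\varepsilon(X)=\pi(X)+\varepsilon d(X)\bar n(X)$ rather than to $\Gamma\times(0,1)$, but this difference is cosmetic.

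There is, however, a real gap at precisely the step you flag as delicate. You justify the vanishing of
\[
\varepsilon_k^{-1}\int_0^1\partial_s(\tilde u_k\cdot n)\,ds
\]
by invoking $\|\partial_s\tilde u_k\|_{L^2}=O(\varepsilon_k)$. That bound comes from the gradient estimate $\|\nabla u_k\|_{L^2(\Omega_{\varepsilon_k})}^2\le c\varepsilon_k$, and it only shows that the expression above is $O(1)$ in $L^2(\Gamma)$, not $o(1)$. The full gradient bound is not sharp enough here.

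What rescues the argument is that $\partial_s(\tilde u_k\cdot n)=\varepsilon_k g\,[(\bar n\cdot\nabla)u_k\cdot\bar n]\circ\zeta_k=\varepsilon_k g\,[D(u_k)\bar n\cdot\bar n]\circ\zeta_k$ is a \emph{strain} component, not a generic gradient component. Using $\|D(u_k)\|_{L^2(\Omega_{\varepsilon_k})}^2<\varepsilon_k/k$ instead you obtain $\|\partial_s(\tilde u_k\cdot n)\|_{L^2(\Gamma\times(0,1))}^2\le c\varepsilon_k^2/k$, whence the quantity in question is $O(k^{-1/2})\to 0$. The paper isolates exactly this observation in the auxiliary estimate \eqref{E:KAux_Du} of Lemma~\ref{L:Korn_Aux}, which pulls the strain back to $\Omega_1$ and separates out both the tangential block $\overline P F_\varepsilon(U)_S\overline P$ (yielding $D_\Gamma(u^*)=0$) and the normal--normal term $\varepsilon^{-2}\|\partial_n(U\cdot\bar n)\|_{L^2}^2$ (yielding $u^*\cdot\nabla_\Gamma g=0$). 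Without distinguishing this term from the full gradient, the selection of $\mathcal K_g(\Gamma)$ over $\mathcal K(\Gamma)$ does not go through.
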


To prove Lemma~\ref{L:Korn_U} we transform integrals over $\Omega_\varepsilon$ into those over the domain $\Omega_1$ with fixed width (note that we assume $\overline{\Omega}_1\subset N$ by scaling $g_0$ and $g_1$).
Define a mapping $\Phi_\varepsilon\colon\Omega_1\to\Omega_\varepsilon$ by
\begin{align} \label{E:Def_Korn_Aux}
  \Phi_\varepsilon(X) := \pi(X)+\varepsilon d(X)\bar{n}(X), \quad X\in\Omega_1.
\end{align}
We easily see that $\Phi_\varepsilon$ is bijective and its inverse $\Phi_\varepsilon^{-1}\colon\Omega_\varepsilon\to\Omega_1$ is given by
\begin{align*}
  \Phi_\varepsilon^{-1}(x) := \pi(x)+\varepsilon^{-1}d(x)\bar{n}(x), \quad x\in\Omega_\varepsilon.
\end{align*}
Also, by \eqref{E:Form_W}, \eqref{E:Pi_Der}, and \eqref{E:Nor_Grad} we have
\begin{align*}
  \nabla\Phi_\varepsilon = \Bigl(I_3-d\overline{W}\Bigr)^{-1}\Bigl(I_3-\varepsilon d\overline{W}\Bigr)\overline{P}+\varepsilon\overline{Q} \quad\text{on}\quad \Omega_1.
\end{align*}
Hence taking an orthonormal basis of $\mathbb{R}^3$ that consists of $n$ and the eigenvectors of $W$ corresponding to the eigenvalues $\kappa_1$ and $\kappa_2$ we get
\begin{align*}
  \det\nabla\Phi_\varepsilon(X) = \varepsilon J(\pi(X),d(X))^{-1}J(\pi(X),\varepsilon d(X)), \quad X\in\Omega_1
\end{align*}
and the change of variables formula
\begin{align} \label{E:CoV_Omega_1}
  \int_{\Omega_\varepsilon}\varphi(x)\,dx = \varepsilon\int_{\Omega_1}\varphi(\Phi_\varepsilon(X))J(\pi(X),d(X))^{-1}J(\pi(X),\varepsilon d(X))\,dX
\end{align}
for a function $\varphi$ on $\Omega_\varepsilon$.
In particular, by \eqref{E:Jac_Bound} we have
\begin{align} \label{E:L2_Omega_1}
  c\varepsilon^{-1} \|\varphi\|_{L^2(\Omega_\varepsilon)}^2 \leq \|\varphi\circ\Phi_\varepsilon\|_{L^2(\Omega_1)}^2 \leq c'\varepsilon^{-1}\|\varphi\|_{L^2(\Omega_\varepsilon)}^2
\end{align}
for $\varphi\in L^2(\Omega_\varepsilon)$, where $c$ and $c'$ are positive constants independent of $\varepsilon$.
By \eqref{E:L2_Omega_1} and direct calculations of matrices we can show the following auxiliary inequalities for the proof of Lemma~\ref{L:Korn_U} (see Appendix~\ref{S:Ap_VM} for the proof).

\begin{lemma} \label{L:Korn_Aux}
  For $u\in H^1(\Omega_\varepsilon)^3$ let $U:=u\circ\Phi_\varepsilon$ on $\Omega_1$.
  Then $U\in H^1(\Omega_1)^3$ and
  \begin{align} \label{E:KAux_Grad}
    \varepsilon^{-1}\|\nabla u\|_{L^2(\Omega_\varepsilon)}^2 \geq c\left(\left\|\overline{P}\nabla U\right\|_{L^2(\Omega_1)}^2+\varepsilon^{-2}\|\partial_nU\|_{L^2(\Omega_1)}^2\right)
  \end{align}
  with a constant $c>0$ independent of $\varepsilon$ and $u$, where $\partial_nU=(\bar{n}\cdot\nabla)U$ is the normal derivative (with respect to $\Gamma$) of $U$.
  We also have
  \begin{align} \label{E:KAux_Du}
    \varepsilon^{-1}\|D(u)\|_{L^2(\Omega_\varepsilon)}^2 \geq c\left(\left\|\overline{P}F_\varepsilon(U)_S\overline{P}\right\|_{L^2(\Omega_1)}^2+\varepsilon^{-2}\|\partial_n(U\cdot\bar{n})\|_{L^2(\Omega_1)}^2\right),
  \end{align}
  where $F_\varepsilon(U)_S=\{F_\varepsilon(U)+F_\varepsilon(U)^T\}/2$ is the symmetric part of the matrix
  \begin{align} \label{E:KAux_Matrix}
    F_\varepsilon(U) := \Bigl(I_3-\varepsilon d\overline{W}\Bigr)^{-1}\Bigl(I_3-d\overline{W}\Bigr)\nabla U \quad\text{on}\quad \Omega_1.
  \end{align}
\end{lemma}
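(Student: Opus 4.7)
The plan is to transport everything to the fixed-width domain $\Omega_1$ via $\Phi_\varepsilon$ and carry out the analysis at the level of matrices using the orthogonal decomposition $I_3=\overline{P}+\overline{Q}$ on $\Omega_1$. The starting point is \eqref{E:L2_Omega_1}, which already gives the equivalence $\varepsilon^{-1}\|\varphi\|_{L^2(\Omega_\varepsilon)}^2\gtrsim\|\varphi\circ\Phi_\varepsilon\|_{L^2(\Omega_1)}^2$, so it is enough to bound $|\nabla u|^2\circ\Phi_\varepsilon$ and $|D(u)|^2\circ\Phi_\varepsilon$ pointwise from below by the quantities appearing on the right-hand sides of \eqref{E:KAux_Grad} and \eqref{E:KAux_Du}.

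First I would compute $\nabla u\circ\Phi_\varepsilon$ via the chain rule. Setting $M:=(I_3-d\overline{W})^{-1}(I_3-\varepsilon d\overline{W})$, the formula for $\nabla\Phi_\varepsilon$ recalled in the paper gives $\nabla\Phi_\varepsilon=M\overline{P}+\varepsilon\overline{Q}$. Since $\overline{W}=\overline{P}\,\overline{W}=\overline{W}\,\overline{P}$ by \eqref{E:Form_W}, $M$ commutes with $\overline{P}$ and annihilates nothing on the tangent plane, so
\begin{align*}
(\nabla\Phi_\varepsilon)^{-1}=M^{-1}\overline{P}+\varepsilon^{-1}\overline{Q},\qquad M^{-1}=(I_3-\varepsilon d\overline{W})^{-1}(I_3-d\overline{W}),
\end{align*}
and consequently $\nabla u\circ\Phi_\varepsilon=(\nabla\Phi_\varepsilon)^{-1}\nabla U=\overline{P}F_\varepsilon(U)+\varepsilon^{-1}\bar n\otimes\partial_n U$, using $\overline{Q}\nabla U=\bar n\otimes\partial_n U$ and that $M^{-1}$ commutes with $\overline{P}$. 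The two summands live in orthogonal subspaces of $\mathbb{R}^{3\times 3}$ under the Frobenius inner product (the first has its first index in the tangent plane, the second in the normal direction), so
\begin{align*}
|\nabla u\circ\Phi_\varepsilon|^2=|\overline{P}F_\varepsilon(U)|^2+\varepsilon^{-2}|\partial_n U|^2.
\end{align*}
Finally, \eqref{E:Wein_Bound} applied column-by-column yields $|\overline{P}F_\varepsilon(U)|=|M^{-1}\overline{P}\nabla U|\gtrsim|\overline{P}\nabla U|$, which after integration over $\Omega_1$ gives \eqref{E:KAux_Grad}.

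For \eqref{E:KAux_Du} I would symmetrize: with $A:=\overline{P}F_\varepsilon(U)+\varepsilon^{-1}\bar n\otimes\partial_n U$ we have $D(u)\circ\Phi_\varepsilon=A_S$, and I would split $A_S$ into the four blocks $\overline{P}A_S\overline{P}$, $\overline{P}A_S\overline{Q}$, $\overline{Q}A_S\overline{P}$, $\overline{Q}A_S\overline{Q}$, which are mutually Frobenius-orthogonal. Two observations complete the argument: the tangent-tangent block equals $(\overline{P}F_\varepsilon(U)\overline{P})_S=\overline{P}F_\varepsilon(U)_S\overline{P}$ (the cross term $\overline{P}(\bar n\otimes\partial_n U)\overline{P}$ vanishes because $\overline{P}\bar n=0$), and the normal-normal block reduces to $\varepsilon^{-1}\partial_n(U\cdot\bar n)\,\bar n\otimes\bar n$, since $\overline{Q}\overline{P}F_\varepsilon(U)\overline{Q}=0$ and $\partial_n\bar n=(\nabla\bar n)\bar n=\tfrac12\nabla|\bar n|^2=0$ (so $\partial_n U\cdot\bar n=\partial_n(U\cdot\bar n)$). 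Dropping the nonnegative mixed blocks and integrating yields \eqref{E:KAux_Du}.

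Most of the work is bookkeeping; the only slightly delicate point I anticipate is keeping the $\varepsilon$-powers straight across the three layers (the change of variables weight, the $\varepsilon^{-1}$ factor produced by the $\overline{Q}$-component of $(\nabla\Phi_\varepsilon)^{-1}$, and the $\varepsilon$-dependence inside $F_\varepsilon$ that must be handled uniformly via \eqref{E:Wein_Bound}), together with verifying rigorously that the decomposition of $A_S$ into tangent-tangent and normal-normal blocks really loses only nonnegative cross terms. Once these bookkeeping points are settled, both inequalities follow by pointwise arguments on $\Omega_1$ combined with a single application of \eqref{E:L2_Omega_1}.
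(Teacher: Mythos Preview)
Your proposal is correct and follows essentially the same route as the paper's proof. The paper also computes $\nabla u\circ\Phi_\varepsilon=\Lambda_\varepsilon\overline{P}\nabla U+\varepsilon^{-1}\bar n\otimes\partial_nU$ with $\Lambda_\varepsilon=M^{-1}$, then uses a small algebraic lemma (Lemma~\ref{L:Norm_Mat}) that encodes exactly your Frobenius block orthogonality to split off the tangent--tangent and normal--normal pieces; your direct four-block decomposition $\overline{P}A_S\overline{P}+\overline{P}A_S\overline{Q}+\overline{Q}A_S\overline{P}+\overline{Q}A_S\overline{Q}$ is the same content written without that lemma.
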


\begin{proof}[Proof of Lemma~\ref{L:Korn_U}]
  Following the idea of the proof of \cite[Lemma~4.14]{HoSe10} we prove \eqref{E:Korn_U} by contradiction.
  Assume to the contrary that there exist a sequence $\{\varepsilon_k\}_{k=1}^\infty$ of positive numbers with $\lim_{k\to\infty}\varepsilon_k=0$ and vector fields $u_k\in H^1(\Omega_{\varepsilon_k})^3$ satisfying \eqref{E:Bo_Imp} on $\Gamma_{\varepsilon_k}$, \eqref{E:Korn_Orth}, and
  \begin{align} \label{PF_KU:Ineq_Contra}
    \|u_k\|_{L^2(\Omega_{\varepsilon_k})}^2 > \alpha\|\nabla u_k\|_{L^2(\Omega_{\varepsilon_k})}^2+k\|D(u_k)\|_{L^2(\Omega_{\varepsilon_k})}^2, \quad k\in\mathbb{N}.
  \end{align}
  For each $k\in\mathbb{N}$ let $U_k:=u_k\circ\Phi_{\varepsilon_k}\in H^1(\Omega_1)^3$ and $F_{\varepsilon_k}(U_k)$ be the matrix given by \eqref{E:KAux_Matrix}.
  Dividing both sides of \eqref{PF_KU:Ineq_Contra} by $\varepsilon_k$ and using \eqref{E:L2_Omega_1}--\eqref{E:KAux_Du} we get
  \begin{multline*}
    \|U_k\|_{L^2(\Omega_1)}^2 > c\alpha\left(\left\|\overline{P}\nabla U_k\right\|_{L^2(\Omega_1)}^2+\varepsilon_k^{-2}\|\partial_nU_k\|_{L^2(\Omega_1)}^2\right) \\
    +ck\left(\left\|\overline{P}F_{\varepsilon_k}(U_k)_S\overline{P}\right\|_{L^2(\Omega_1)}^2+\varepsilon_k^{-2}\|\partial_n(U_k\cdot\bar{n})\|_{L^2(\Omega_1)}^2\right).
  \end{multline*}
  Since $\|U_k\|_{L^2(\Omega_1)}>0$, we may assume
  \begin{align} \label{Pf_KU:L2_Uk}
    \|U_k\|_{L^2(\Omega_1)} = 1, \quad k\in\mathbb{N}
  \end{align}
  by replacing $U_k$ with $U_k/\|U_k\|_{L^2(\Omega_1)}$.
  Then
  \begin{align}
    \left\|\overline{P}\nabla U_k\right\|_{L^2(\Omega_1)}^2+\varepsilon_k^{-2}\|\partial_nU_k\|_{L^2(\Omega_1)}^2 &< c\alpha^{-1}, \label{Pf_KU:L2_Grad_Uk} \\
    \left\|\overline{P}F_{\varepsilon_k}(U_k)_S\overline{P}\right\|_{L^2(\Omega_1)}^2+\varepsilon_k^{-2}\|\partial_n(U_k\cdot\bar{n})\|_{L^2(\Omega_1)}^2 &< ck^{-1} \label{Pf_KU:L2_DUk}
  \end{align}
  and $\{U_k\}_{k=1}^\infty$ is bounded in $H^1(\Omega_1)^3$ by \eqref{Pf_KU:L2_Uk}, \eqref{Pf_KU:L2_Grad_Uk}, and
  \begin{align*}
    |\nabla U_k|^2 = \left|\overline{P}\nabla U_k\right|^2+\left|\overline{Q}\nabla U_k\right|^2, \quad \left|\overline{Q}\nabla U_k\right| = |\bar{n}\otimes\partial_nU_k| = |\partial_nU_k| \quad\text{in}\quad \Omega_1.
  \end{align*}
  Hence there exists $U\in H^1(\Omega_1)^3$ such that (up to a subsequence) $\{U_k\}_{k=1}^\infty$ converges to $U$ weakly in $H^1(\Omega_1)^3$.
  By the compact embedding $H^1(\Omega_1)\hookrightarrow L^2(\Omega_1)$ and \eqref{Pf_KU:L2_Uk} it also converges to $U$ strongly in $L^2(\Omega_1)^3$ and
  \begin{align} \label{Pf_KU:L2_Limit}
    \|U\|_{L^2(\Omega_1)} = \lim_{k\to\infty}\|U_k\|_{L^2(\Omega_1)} = 1.
  \end{align}
  Our goal is to show $U=0$ in $\Omega_1$, which contradicts with \eqref{Pf_KU:L2_Limit}.
  Since
  \begin{align} \label{Pf_KU:Lim_NDer}
    \lim_{k\to\infty}\|\partial_nU_k\|_{L^2(\Omega_1)} = 0
  \end{align}
  by \eqref{Pf_KU:L2_Grad_Uk} and $\{U_k\}_{k=1}^\infty$ converges to $U$ weakly in $H^1(\Omega_1)^3$, it follows that $\partial_nU=0$ in $\Omega_1$, i.e. $U$ is independent of the normal direction of $\Gamma$.
  Hence setting
  \begin{align*}
    v(y) := U(y+g_0(y)n(y)), \quad y\in\Gamma
  \end{align*}
  we can consider $U$ as the constant extension of $v$, i.e. $U=\bar{v}$ in $\Omega_1$.
  Moreover, from \eqref{E:Poin_Bo} with $\varepsilon=1$ and $\partial_n\bar{v}=0$ in $\Omega_1$ we deduce that
  \begin{align*}
    \|U_k-\bar{v}\|_{L^2(\Gamma_1)} \leq c\left(\|U_k-\bar{v}\|_{L^2(\Omega_1)}+\|\partial_nU_k\|_{L^2(\Omega_1)}\right), \quad k\in\mathbb{N}.
  \end{align*}
  Thus, by the strong convergence of $\{U_k\}_{k=1}^\infty$ to $\bar{v}=U$ in $L^2(\Omega_1)^3$ and \eqref{Pf_KU:Lim_NDer},
  \begin{align} \label{Pf_KU:Lim_Bo}
    \lim_{k\to\infty}\|U_k-\bar{v}\|_{L^2(\Gamma_1)} = 0.
  \end{align}
  Let us show $v\in\mathcal{K}_g(\Gamma)$.
  By $\bar{v}=U\in H^1(\Omega_\varepsilon)^3$ and Lemma~\ref{L:Con_Lp_W1p} we have $v\in H^1(\Gamma)^3$.
  For each $k\in\mathbb{N}$, since $u_k$ satisfies \eqref{E:Bo_Imp} on $\Gamma_{\varepsilon_k}$ we can use \eqref{E:Exp_Bo} to get
  \begin{align*}
    |u_k\cdot\bar{n}|\leq c\varepsilon_k|u_k| \quad\text{on}\quad \Gamma_{\varepsilon_k}, \quad\text{i.e.}\quad |U_k\cdot\bar{n}|\leq c\varepsilon_k|U_k| \quad\text{on}\quad \Gamma_1.
  \end{align*}
  By this inequality, \eqref{E:Poin_Bo} with $\varepsilon=1$, and the boundedness of $\{U_k\}_{k=1}^\infty$ in $H^1(\Omega_1)^3$,
  \begin{align} \label{Pf_KU:Lim_NC}
    \|U_k\cdot\bar{n}\|_{L^2(\Gamma_1)} \leq c\varepsilon_k\|U_k\|_{L^2(\Gamma_1)} \leq c\varepsilon_k\|U_k\|_{H^1(\Omega_1)} \to 0 \quad\text{as}\quad k\to\infty.
  \end{align}
  Combining this with \eqref{Pf_KU:Lim_Bo} we get $\bar{v}\cdot\bar{n}=0$ on $\Gamma_1$ and thus $v\cdot n=0$ on $\Gamma$.
  Next we show that $D_\Gamma(v)=P(\nabla_\Gamma v)_SP$ vanishes on $\Gamma$.
  Since $F_{\varepsilon_k}(U_k)$ is given by \eqref{E:KAux_Matrix}, $\{U_k\}_{k=1}^\infty$ converges to $U=\bar{v}$ weakly in $H^1(\Omega_1)^3$, and $(I_3-\varepsilon_kd\overline{W})^{-1}$ converges to $I_3$ uniformly on $\Omega_1$ as $k\to\infty$ by \eqref{E:Wein_Diff} and $|d|\leq c$ in $\Omega_1$,
  \begin{align*}
    \lim_{k\to\infty}F_{\varepsilon_k}(U_k) = \Bigl(I_3-d\overline{W}\Bigr)\nabla\bar{v} = \overline{\nabla_\Gamma v} \quad\text{weakly in}\quad L^2(\Omega_1)^{3\times3}.
  \end{align*}
  Here the last equality follows from \eqref{E:ConDer_Dom}.
  Therefore,
  \begin{align*}
    \lim_{k\to\infty}\overline{P}F_{\varepsilon_k}(U_k)_S\overline{P} = \overline{P}\Bigl(\overline{\nabla_\Gamma v}\Bigr)_S\overline{P} = \overline{D_\Gamma(v)} \quad\text{weakly in}\quad L^2(\Omega_1)^{3\times3}.
  \end{align*}
  Moreover, the inequality \eqref{Pf_KU:L2_DUk} yields
  \begin{align*}
    \lim_{k\to\infty}\left\|\overline{P}F_{\varepsilon_k}(U_k)_S\overline{P}\right\|_{L^2(\Omega_1)} = 0.
  \end{align*}
  Thus $\overline{D_\Gamma(v)}=0$ in $\Omega_1$, i.e. $D_\Gamma(v)=0$ on $\Gamma$.
  To prove $v\in\mathcal{K}_g(\Gamma)$ it remains to verify $v\cdot\nabla_\Gamma g=0$ on $\Gamma$.
  In what follows, we use the notations \eqref{E:Pull_Dom} and \eqref{E:Pull_Bo} (with $\varepsilon=1$).
  For each $k\in\mathbb{N}$, since $u_k$ satisfies \eqref{E:Bo_Imp} on $\Gamma_{\varepsilon_k}$,
  \begin{align*}
    u_k\cdot\bar{\tau}_{\varepsilon_k}^i = \varepsilon_k^{-1}u_k\cdot\bar{n} \quad\text{on}\quad \Gamma_{\varepsilon_k}^i,\,i=0,1
  \end{align*}
  by \eqref{E:Exp_Bo}.
  This equality yields $U_k\cdot\bar{\tau}_{\varepsilon_k}^i=\varepsilon_k^{-1}U_k\cdot\bar{n}$ on $\Gamma_1^i$, $i=0,1$, or equivalently,
  \begin{align*}
    U_{k,i}^\sharp(y)\cdot\tau_{\varepsilon_k}^i(y) = \varepsilon_k^{-1}U_{k,i}^\sharp(y)\cdot n(y), \quad y\in\Gamma,\,i=0,1.
  \end{align*}
  Hence
  \begin{align} \label{Pf_KU:L2_Diff}
    \|U_{k,1}^\sharp\cdot\tau_{\varepsilon_k}^1-U_{k,0}^\sharp\cdot\tau_{\varepsilon_k}^0\|_{L^2(\Gamma)} \leq \varepsilon_k^{-1}\|U_{k,1}^\sharp\cdot n-U_{k,0}^\sharp\cdot n\|_{L^2(\Gamma)}.
  \end{align}
  Moreover, since $\bar{n}^\sharp(y,r)=n(y)$ for $y\in\Gamma$ and $r\in(g_0(y),g_1(y))$,
  \begin{align*}
    (U_{k,1}^\sharp\cdot n)(y)-(U_{k,0}^\sharp\cdot n)(y) &= \int_{g_0(y)}^{g_1(y)}\frac{\partial}{\partial r}\Bigl((U_k\cdot\bar{n})^\sharp(y,r)\Bigr)\,dr \\
    &= \int_{g_0(y)}^{g_1(y)}[\partial_n(U_k\cdot\bar{n})]^\sharp(y,r)\,dr.
  \end{align*}
  By this equality, H\"{o}lder's inequality, \eqref{E:CoV_Equiv}, and \eqref{Pf_KU:L2_DUk},
  \begin{align*}
    \|U_{k,1}^\sharp\cdot n-U_{k,0}^\sharp\cdot n\|_{L^2(\Gamma)}^2 &= \int_\Gamma\left(\int_{g_0(y)}^{g_1(y)}[\partial_n(U_k\cdot\bar{n})]^\sharp(y,r)\,dr\right)^2d\mathcal{H}^2(y) \\
    &\leq c\|\partial_n(U_k\cdot\bar{n})\|_{L^2(\Omega_1)}^2 \leq c\varepsilon_k^2k^{-1}.
  \end{align*}
  Applying this inequality to the right-hand side of \eqref{Pf_KU:L2_Diff} we get
  \begin{align} \label{Pf_KU:Lim_Diff}
    \|U_{k,1}^\sharp\cdot\tau_{\varepsilon_k}^1-U_{k,0}^\sharp\cdot\tau_{\varepsilon_k}^0\|_{L^2(\Gamma)} \leq ck^{-1/2} \to 0 \quad\text{as}\quad k\to\infty.
  \end{align}
  Also, by \eqref{E:Tau_Bound}, \eqref{E:Tau_Diff}, and \eqref{E:Lp_CoV_Surf},
  \begin{align*}
    \|U_{k,i}^\sharp\cdot\tau_{\varepsilon_k}^i-v\cdot\nabla_\Gamma g_i\|_{L^2(\Gamma)} &\leq \|(U_{k,i}^\sharp-v)\cdot\tau_{\varepsilon_k}^i\|_{L^2(\Gamma)}+\|v\cdot(\tau_{\varepsilon_k}^i-\nabla_\Gamma g_i)\|_{L^2(\Gamma)} \\
    &\leq c\left(\|U_{k,i}^\sharp-v\|_{L^2(\Gamma)}+\varepsilon_k\|v\|_{L^2(\Gamma)}\right) \\
    &\leq c\left(\|U_k-\bar{v}\|_{L^2(\Gamma_1^i)}+\varepsilon_k\|v\|_{L^2(\Gamma)}\right).
  \end{align*}
  Since the right-hand side tends to zero as $k\to\infty$ by \eqref{Pf_KU:Lim_Bo},
  \begin{align*}
    \lim_{k\to\infty}\|U_{k,i}^\sharp\cdot\tau_{\varepsilon_k}^i-v\cdot\nabla_\Gamma g_i\|_{L^2(\Gamma)} = 0, \quad i=0,1.
  \end{align*}
  This equality and \eqref{Pf_KU:Lim_Diff} imply
  \begin{align*}
     \|v\cdot\nabla_\Gamma g\|_{L^2(\Gamma)} = \|v\cdot\nabla_\Gamma g_1-v\cdot\nabla_\Gamma g_0\|_{L^2(\Gamma)} = 0.
   \end{align*}
  Hence $v\cdot\nabla_\Gamma g=0$ on $\Gamma$ and we obtain $v\in\mathcal{K}_g(\Gamma)$.

  Now we recall that $u_k\in H^1(\Omega_{\varepsilon_k})^3$ satisfies \eqref{E:Korn_Orth}.
  Then since $v\in\mathcal{K}_g(\Gamma)$,
  \begin{align} \label{Pf_KU:Fin_Ineq}
    |(u_k,\bar{v})_{L^2(\Omega_{\varepsilon_k})}| \leq \beta\|u_k\|_{L^2(\Omega_{\varepsilon_k})}\|\bar{v}\|_{L^2(\Omega_{\varepsilon_k})}, \quad k\in\mathbb{N}
  \end{align}
  with $\beta\in[0,1)$.
  We express this inequality in terms of $U_k$ and send $k\to\infty$.
  Let
  \begin{align} \label{Pf_Ku:Phik}
    \varphi_k(X):=J(\pi(X),d(X))^{-1}J(\pi(X),\varepsilon_kd(X)), \quad X\in \Omega_1.
  \end{align}
  Then by \eqref{E:CoV_Omega_1} and $U_k=u_k\circ\Phi_{\varepsilon_k}$ on $\Omega_1$ we have
  \begin{gather*}
    (u_k,\bar{v})_{L^2(\Omega_{\varepsilon_k})} = \varepsilon_k\int_{\Omega_1}U_k\cdot(\bar{v}\circ\Phi_{\varepsilon_k})\varphi_k\,dX.
  \end{gather*}
  Here $\bar{v}\circ\Phi_{\varepsilon_k}=\bar{v}$ in $\Omega_1$ since $\pi\circ\Phi_{\varepsilon_k}=\pi$ in $\Omega_1$ by \eqref{E:Def_Korn_Aux}.
  Moreover, $\varphi_k$ converges to $J(\pi(\cdot),d(\cdot))^{-1}$ uniformly on $\Omega_1$ as $k\to\infty$ by \eqref{E:Jac_Bound} and \eqref{E:Jac_Diff}.
  From these facts and the strong convergence of $\{U_k\}_{k=1}^\infty$ to $U=\bar{v}$ in $L^2(\Omega_1)^3$ we deduce that
  \begin{align*}
    \lim_{k\to\infty}\varepsilon_k^{-1}(u_k,\bar{v})_{L^2(\Omega_{\varepsilon_k})} = \lim_{k\to\infty}\int_{\Omega_1}(U_k\cdot\bar{v})\varphi_k\,dX = \int_{\Omega_1}|\bar{v}|^2J(\pi(\cdot),d(\cdot))^{-1}\,dX.
  \end{align*}
  By \eqref{E:CoV_Dom} the last term is of the form
  \begin{align*}
    \int_\Gamma\int_{g_0(y)}^{g_1(y)}|v(y)|^2J(y,r)^{-1}J(y,r)\,dr\,d\mathcal{H}^2(y) = \int_\Gamma g(y)|v(y)|^2\,d\mathcal{H}^2(y).
  \end{align*}
  Therefore,
  \begin{align} \label{PF_KU:Fin_Lim_1}
    \lim_{k\to\infty}\varepsilon_k^{-1}(u_k,\bar{v})_{L^2(\Omega_{\varepsilon_k})} = \|g^{1/2}v\|_{L^2(\Gamma)}^2.
  \end{align}
  By the same arguments we have
  \begin{align} \label{Pf_KU:Fin_Lim_2}
    \lim_{k\to\infty}\varepsilon_k^{-1}\|u_k\|_{L^2(\Omega_{\varepsilon_k})}^2 = \lim_{k\to\infty}\varepsilon_k^{-1}\|\bar{v}\|_{L^2(\Omega_{\varepsilon_k})}^2 = \|g^{1/2}v\|_{L^2(\Gamma)}^2.
  \end{align}
  We divide both sides of \eqref{Pf_KU:Fin_Ineq} by $\varepsilon_k$, send $k\to\infty$, and use \eqref{PF_KU:Fin_Lim_1}--\eqref{Pf_KU:Fin_Lim_2} to get
  \begin{align*}
    \|g^{1/2}v\|_{L^2(\Gamma)}^2 \leq \beta\|g^{1/2}v\|_{L^2(\Gamma)}^2.
  \end{align*}
  By this inequality, $\beta<1$, and \eqref{E:Width_Bound} we obtain $v=0$ on $\Gamma$ and thus $U=\bar{v}=0$ in $\Omega_1$, which contradicts with \eqref{Pf_KU:L2_Limit}.
  Hence \eqref{E:Korn_U} is valid.
\end{proof}

Combining Lemmas~\ref{L:Korn_Grad} and~\ref{L:Korn_U} we obtain a uniform $H^1$-estimate for a vector field on $\Omega_\varepsilon$ by the $L^2$-norm of the strain rate tensor on $\Omega_\varepsilon$.

\begin{lemma} \label{L:Korn_H1}
  For $\beta\in[0,1)$ there exist $\varepsilon_{K,\beta}\in(0,1)$ and $c_{K,\beta}>0$ such that
  \begin{align} \label{E:Korn_H1}
    \|u\|_{H^1(\Omega_\varepsilon)}^2 \leq c_{K,\beta}\|D(u)\|_{L^2(\Omega_\varepsilon)}^2
  \end{align}
  for all $\varepsilon\in(0,\varepsilon_{K,\beta})$ and $u\in H^1(\Omega_\varepsilon)^3$ satisfying \eqref{E:Bo_Imp} on $\Gamma_\varepsilon$ and \eqref{E:Korn_Orth}.
\end{lemma}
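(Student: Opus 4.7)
The plan is to combine Lemmas~\ref{L:Korn_Grad} and~\ref{L:Korn_U} via an absorption argument. Lemma~\ref{L:Korn_Grad} controls $\|\nabla u\|_{L^2(\Omega_\varepsilon)}^2$ by $\|D(u)\|_{L^2(\Omega_\varepsilon)}^2$ up to a zeroth-order remainder $c_{K,1}\|u\|_{L^2(\Omega_\varepsilon)}^2$, while Lemma~\ref{L:Korn_U} (which uses the orthogonality hypothesis \eqref{E:Korn_Orth} crucially) provides the reverse-type bound $\|u\|_{L^2(\Omega_\varepsilon)}^2 \leq \alpha\|\nabla u\|_{L^2(\Omega_\varepsilon)}^2+c_{K,2}(\alpha,\beta)\|D(u)\|_{L^2(\Omega_\varepsilon)}^2$ with $\alpha>0$ as small as we wish. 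The key point is that the constant $c_{K,1}$ in Lemma~\ref{L:Korn_Grad} is independent of $\varepsilon$, so we are free to choose $\alpha$ a posteriori small in terms of $c_{K,1}$.

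Concretely, first I would fix $\alpha:=1/(2c_{K,1})$ and then set $\varepsilon_{K,\beta}:=\varepsilon_K(\alpha,\beta)$ and apply Lemma~\ref{L:Korn_U} with this choice, valid for every $\varepsilon\in(0,\varepsilon_{K,\beta})$ and every $u\in H^1(\Omega_\varepsilon)^3$ satisfying \eqref{E:Bo_Imp} on $\Gamma_\varepsilon$ and the orthogonality condition \eqref{E:Korn_Orth}. Substituting the resulting bound on $\|u\|_{L^2(\Omega_\varepsilon)}^2$ into the inequality of Lemma~\ref{L:Korn_Grad} gives
\[
\|\nabla u\|_{L^2(\Omega_\varepsilon)}^2 \leq 4\|D(u)\|_{L^2(\Omega_\varepsilon)}^2+c_{K,1}\alpha\|\nabla u\|_{L^2(\Omega_\varepsilon)}^2+c_{K,1}c_{K,2}\|D(u)\|_{L^2(\Omega_\varepsilon)}^2.
\]
Since $c_{K,1}\alpha=1/2$, absorbing the gradient term into the left-hand side yields $\|\nabla u\|_{L^2(\Omega_\varepsilon)}^2\leq 2(4+c_{K,1}c_{K,2})\|D(u)\|_{L^2(\Omega_\varepsilon)}^2$. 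Feeding this back into Lemma~\ref{L:Korn_U} then gives a matching bound for $\|u\|_{L^2(\Omega_\varepsilon)}^2$, and summing the two estimates produces \eqref{E:Korn_H1} with a constant
\[
c_{K,\beta}:=2(4+c_{K,1}c_{K,2})+\alpha\cdot 2(4+c_{K,1}c_{K,2})+c_{K,2}
\]
that depends only on $\beta$ (through $c_{K,2}=c_{K,2}(\alpha,\beta)$ with $\alpha$ fixed) and on the geometric data of $\Gamma$, $g_0$, $g_1$.

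There is essentially no obstacle: all the technical work—the uniform-in-$\varepsilon$ control of the boundary integral in Lemma~\ref{L:Korn_Grad} and the contradiction/compactness argument exploiting $\mathcal{K}_g(\Gamma)$ in Lemma~\ref{L:Korn_U}—has already been done. The only thing to verify is that the choice of $\alpha$ can indeed be made independently of $\varepsilon$, which is immediate because $c_{K,1}$ is $\varepsilon$-independent, and that $\varepsilon_{K,\beta}$ is positive, which follows from Lemma~\ref{L:Korn_U}. I would simply state the choice of $\alpha$ and $\varepsilon_{K,\beta}$, carry out the one-line absorption, and collect constants.
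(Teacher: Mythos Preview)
Your proposal is correct and follows essentially the same approach as the paper: fix $\alpha=1/(2c_{K,1})$, invoke Lemma~\ref{L:Korn_U} with this $\alpha$ to set $\varepsilon_{K,\beta}:=\varepsilon_K(\alpha,\beta)$, substitute into Lemma~\ref{L:Korn_Grad}, absorb the gradient term, and then feed the resulting bound on $\|\nabla u\|_{L^2(\Omega_\varepsilon)}^2$ back into Lemma~\ref{L:Korn_U} to control $\|u\|_{L^2(\Omega_\varepsilon)}^2$. The paper's proof is identical in structure and even arrives at the same intermediate constant $c_{\beta,1}=2(4+c_{K,1}c_{K,2})$.
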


\begin{proof}
  Let $c_{K,1}>0$ be the constant given in Lemma~\ref{L:Korn_Grad} and $\varepsilon_K\in(0,1)$ and $c_{K,2}>0$ the constants given in Lemma~\ref{L:Korn_U} with $\alpha:=1/2c_{K,1}$.
  For $\varepsilon\in(0,\varepsilon_K)$ let $u\in H^1(\Omega_\varepsilon)^3$ satisfy \eqref{E:Bo_Imp} on $\Gamma_\varepsilon$ and \eqref{E:Korn_Orth}.
  By \eqref{E:Korn_Grad} and \eqref{E:Korn_U} we have
  \begin{align*}
    \|\nabla u\|_{L^2(\Omega_\varepsilon)}^2 \leq (4+c_{K,1}c_{K,2})\|D(u)\|_{L^2(\Omega_\varepsilon)}^2+c_{K,1}\alpha\|\nabla u\|_{L^2(\Omega_\varepsilon)}^2.
  \end{align*}
  Since $\alpha=1/2c_{K,1}$, the above inequality implies that
  \begin{align} \label{Pf_KH:K_Grad}
    \|\nabla u\|_{L^2(\Omega_\varepsilon)}^2 \leq c_{\beta,1}\|D(u)\|_{L^2(\Omega_\varepsilon)}^2, \quad c_{\beta,1} := 2(4+c_{K,1}c_{K,2}).
  \end{align}
  From this inequality and \eqref{E:Korn_U} we further deduce that
  \begin{align} \label{Pf_KH:K_U}
    \|u\|_{L^2(\Omega_\varepsilon)}^2 \leq c_{\beta,2}\|D(u)\|_{L^2(\Omega_\varepsilon)}^2, \quad c_{\beta,2} := 2(2c_{K,1}^{-1}+c_{K,2}).
  \end{align}
  By \eqref{Pf_KH:K_Grad} and \eqref{Pf_KH:K_U} we get \eqref{E:Korn_H1} with $\varepsilon_{K,\beta}:=\varepsilon_K$ and $c_{K,\beta}:=c_{\beta,1}+c_{\beta,2}$.
\end{proof}

Let $\mathcal{R}_g$ be the set of all infinitesimal rigid displacements of $\mathbb{R}^3$ whose restrictions on $\Gamma$ are tangential and orthogonal to $\nabla_\Gamma g$ (see \eqref{E:Def_Rg}).
We show that the uniform Korn inequality holds with $\mathcal{K}_g(\Gamma)$ in the condition \eqref{E:Korn_Orth} replaced by $\mathcal{R}_g$ if $\mathcal{K}_g(\Gamma)$ agrees with $\mathcal{R}_g|_\Gamma:=\{w|_\Gamma \mid w\in\mathcal{R}_g\}$ (see also Remark~\ref{R:Killing}).

\begin{lemma} \label{L:Korn_Rg}
  Suppose that $\mathcal{R}_g|_\Gamma=\mathcal{K}_g(\Gamma)$.
  Then for $\beta\in[0,1)$ there exist $\varepsilon_{K,\beta}\in(0,1)$ and $c_{K,\beta}>0$ such that the inequality \eqref{E:Korn_H1} holds for all $\varepsilon\in(0,\varepsilon_{K,\beta})$ and $u\in H^1(\Omega_\varepsilon)^3$ satisfying \eqref{E:Bo_Imp} on $\Gamma_\varepsilon$ and
  \begin{align} \label{E:KoRg_Orth}
    \left|(u,w)_{L^2(\Omega_\varepsilon)}\right| \leq \beta\|u\|_{L^2(\Omega_\varepsilon)}\|w\|_{L^2(\Omega_\varepsilon)} \quad\text{for all}\quad w\in\mathcal{R}_g.
  \end{align}
\end{lemma}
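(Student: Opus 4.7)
The strategy is to reduce Lemma~\ref{L:Korn_Rg} to Lemma~\ref{L:Korn_H1} by showing that, under the hypothesis $\mathcal{R}_g|_\Gamma=\mathcal{K}_g(\Gamma)$, the orthogonality condition \eqref{E:KoRg_Orth} against elements of $\mathcal{R}_g$ implies, up to a perturbation vanishing with $\varepsilon$, the orthogonality condition \eqref{E:Korn_Orth} against constant extensions $\bar v$ of $v\in\mathcal{K}_g(\Gamma)$. Once this is established, applying Lemma~\ref{L:Korn_H1} with a slightly enlarged constant $\beta'\in[\beta,1)$ yields \eqref{E:Korn_H1}.

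Given $v\in\mathcal{K}_g(\Gamma)$, I would use the hypothesis to select $w\in\mathcal{R}_g$ with $w|_\Gamma=v$, and write $w(x)=a\times x+b$. The restriction map $w\mapsto w|_\Gamma$ from $\mathcal{R}_g$ to $\mathcal{K}_g(\Gamma)$ is surjective by hypothesis; it is also injective, since if $a\times y+b=0$ on the closed two-dimensional surface $\Gamma$, then picking four affinely independent points of $\Gamma$ forces $a=b=0$. Hence this map is a linear bijection between two finite-dimensional spaces, so all norms on $\mathcal{R}_g$ are equivalent; in particular there is a constant $C>0$ independent of $v$ with
\begin{equation*}
  |a|+|b|\le C\|v\|_{L^2(\Gamma)}.
\end{equation*}

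The key comparison is between $w$ and $\bar v$ on $\Omega_\varepsilon$. By \eqref{E:Nor_Coord}, for $x\in\Omega_\varepsilon$ one has
\begin{equation*}
  w(x)-\bar v(x)=w(x)-w(\pi(x))=a\times(x-\pi(x))=d(x)\bigl(a\times\bar n(x)\bigr),
\end{equation*}
so $|w-\bar v|\le c\varepsilon|a|$ in $\Omega_\varepsilon$ since $|d|\le c\varepsilon$ there. Integrating, and using $|\Omega_\varepsilon|\le c\varepsilon$, gives $\|w-\bar v\|_{L^2(\Omega_\varepsilon)}\le c\varepsilon^{3/2}|a|\le c\varepsilon^{3/2}\|v\|_{L^2(\Gamma)}$, which by \eqref{E:Con_Lp} may be rewritten as
\begin{equation*}
  \|w-\bar v\|_{L^2(\Omega_\varepsilon)}\le c\varepsilon\|\bar v\|_{L^2(\Omega_\varepsilon)}.
\end{equation*}

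Inserting the decomposition $(u,\bar v)_{L^2(\Omega_\varepsilon)}=(u,w)_{L^2(\Omega_\varepsilon)}+(u,\bar v-w)_{L^2(\Omega_\varepsilon)}$ into \eqref{E:KoRg_Orth} and estimating $\|w\|_{L^2(\Omega_\varepsilon)}\le\|\bar v\|_{L^2(\Omega_\varepsilon)}+\|w-\bar v\|_{L^2(\Omega_\varepsilon)}$ by the triangle inequality yields
\begin{equation*}
  \bigl|(u,\bar v)_{L^2(\Omega_\varepsilon)}\bigr|\le\bigl(\beta+c(1+\beta)\varepsilon\bigr)\|u\|_{L^2(\Omega_\varepsilon)}\|\bar v\|_{L^2(\Omega_\varepsilon)}.
\end{equation*}
Setting $\beta':=(1+\beta)/2\in(\beta,1)$ and choosing $\varepsilon_{K,\beta}>0$ small enough that both $\beta+c(1+\beta)\varepsilon\le\beta'$ for all $\varepsilon\in(0,\varepsilon_{K,\beta})$ and Lemma~\ref{L:Korn_H1} applies at parameter $\beta'$, we obtain \eqref{E:Korn_Orth} with $\beta'$ in place of $\beta$ and then \eqref{E:Korn_H1} with $c_{K,\beta}:=c_{K,\beta'}$. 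The main obstacle in the argument is securing the uniform-in-$v$ bound $|a|\le C\|v\|_{L^2(\Gamma)}$, which depends crucially on the hypothesis $\mathcal{R}_g|_\Gamma=\mathcal{K}_g(\Gamma)$ to render the restriction map bijective between finite-dimensional spaces; without this identification we could not transfer orthogonality against $\mathcal{R}_g$ to orthogonality against $\{\bar v:v\in\mathcal{K}_g(\Gamma)\}$ with an $\varepsilon$-small error.
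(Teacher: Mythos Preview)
Your proof is correct and takes a genuinely different route from the paper's. The paper reruns the contradiction argument of Lemma~\ref{L:Korn_U} under the modified hypothesis \eqref{E:KoRg_Orth}: along a sequence $\varepsilon_k\to0$ one extracts a limit $v\in\mathcal{K}_g(\Gamma)$, picks $w\in\mathcal{R}_g$ with $w|_\Gamma=v$, and uses the identity $w\circ\Phi_{\varepsilon_k}-w\circ\pi=\varepsilon_k d(a\times\bar n)$ to pass to the limit in \eqref{E:KoRg_Orth} exactly as was done for \eqref{E:Korn_Orth}. You instead establish the quantitative comparison $\|w-\bar v\|_{L^2(\Omega_\varepsilon)}\le c\varepsilon\|\bar v\|_{L^2(\Omega_\varepsilon)}$ uniformly in $v$, which lets you upgrade \eqref{E:KoRg_Orth} directly to \eqref{E:Korn_Orth} with a perturbed constant $\beta'<1$ and then simply invoke Lemma~\ref{L:Korn_H1}. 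Both arguments rest on the same pointwise identity $w(x)-\bar v(x)=d(x)(a\times\bar n(x))$; the difference is that the paper only needs it for the single limit $v$ arising in the contradiction, whereas you need it uniformly and therefore appeal to finite-dimensionality (bijectivity of $\mathcal{R}_g\to\mathcal{K}_g(\Gamma)$ and equivalence of norms) to control $|a|$ by $\|v\|_{L^2(\Gamma)}$. Your route is cleaner and more modular, reducing to the already-proved lemma without reopening the compactness argument; the paper's route trades that modularity for not needing the explicit norm-equivalence step.
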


Note that the vector field $w\in\mathcal{R}_g$ in \eqref{E:KoRg_Orth} has an explicit form $w(x)=a\times x+b$ for $x\in\mathbb{R}^3$, which is essential for the proof of Lemma~\ref{L:Korn_Rg}.

\begin{proof}
  The proof is the same as that of Lemma~\ref{L:Korn_H1} if we show that the statement of Lemma~\ref{L:Korn_U} is still valid under the condition \eqref{E:KoRg_Orth} instead of \eqref{E:Korn_Orth}.
  Assume to the contrary that there exist a sequence $\{\varepsilon_k\}_{k=1}^\infty$ of positive numbers convergent to zero and vector fields $u_k\in H^1(\Omega_{\varepsilon_k})^3$, $k\in\mathbb{N}$ satisfying \eqref{E:Bo_Imp} on $\Gamma_{\varepsilon_k}$, \eqref{PF_KU:Ineq_Contra}, and \eqref{E:KoRg_Orth}.
  Let $\Phi_{\varepsilon_k}$ be the bijection from $\Omega_1$ onto $\Omega_{\varepsilon_k}$ given by \eqref{E:Def_Korn_Aux} and $U_k:=u_k\circ\Phi_{\varepsilon_k}\in H^1(\Omega_1)^3$.
  Then, after replacing $U_k$ with $U_k/\|U_k\|_{L^2(\Omega_1)}$, we can show as in the proof of Lemma~\ref{L:Korn_U} that $\{U_k\}_{k=1}^\infty$ converges (up to a subsequence) strongly in $L^2(\Omega_1)^3$ to the constant extension $\bar{v}$ of some $v\in\mathcal{K}_g(\Gamma)$ and
  \begin{align} \label{Pf_KRg:Lim_L2}
    \|\bar{v}\|_{L^2(\Omega_1)} = \lim_{k\to\infty}\|U_k\|_{L^2(\Omega_1)} =1.
  \end{align}
  Now we can take $w\in\mathcal{R}_g$ such that $w|_\Gamma=v$ on $\Gamma$ by the assumption $\mathcal{R}_g|_\Gamma=\mathcal{K}_g(\Gamma)$.
  Then since $u_k$ satisfies \eqref{E:KoRg_Orth} and $w\in\mathcal{R}_g$,
  \begin{align} \label{Pf_KRg:Orth}
    \left|(u_k,w)_{L^2(\Omega_{\varepsilon_k})}\right| \leq \beta\|u_k\|_{L^2(\Omega_{\varepsilon_k})}\|w\|_{L^2(\Omega_{\varepsilon_k})}, \quad k\in\mathbb{N}.
  \end{align}
  We apply the change of variables formula \eqref{E:CoV_Omega_1} to get
  \begin{align*}
    (u_k,w)_{L^2(\Omega_{\varepsilon_k})} = \varepsilon_k\int_{\Omega_1}U_k\cdot(w\circ\Phi_{\varepsilon_k})\varphi_k\,dX.
  \end{align*}
  Here the function $\varphi_k$ given by \eqref{Pf_Ku:Phik} converges to $J(\pi(\cdot),d(\cdot))^{-1}$ uniformly on $\Omega_1$ as $k\to\infty$ by \eqref{E:Jac_Bound} and \eqref{E:Jac_Diff}.
  Also, since $w\in\mathcal{R}_g$ is of the form $w(x)=a\times x+b$ with $a,b\in\mathbb{R}^3$, the mapping $\Phi_{\varepsilon_k}$ is given by \eqref{E:Def_Korn_Aux}, and $|d|\leq c$ and $|\bar{n}|=1$ in $\Omega_1$,
  \begin{align*}
    |w\circ\Phi_{\varepsilon_k}-w\circ\pi| = \varepsilon_k|d(a\times\bar{n})| \leq c\varepsilon_k \to 0 \quad\text{as}\quad k\to\infty
  \end{align*}
  uniformly on $\Omega_1$.
  By these facts, the strong convergence of $\{U_k\}_{k=1}^\infty$ to $\bar{v}$ in $L^2(\Omega_1)^3$, \eqref{E:CoV_Dom} with $\varepsilon=1$, and $\pi(y+rn(y))=y$ for $y\in\Gamma$ and $r\in(g_0(y),g_1(y))$ we get
  \begin{align*}
    \lim_{k\to\infty}\varepsilon_k^{-1}(u_k,w)_{L^2(\Omega_{\varepsilon_k})} &= \int_{\Omega_1}\bar{v}\cdot(w\circ\pi)J(\pi(\cdot),d(\cdot))^{-1}\,dX \\
    &=\int_\Gamma g(y)v(y)\cdot w(y)\,d\mathcal{H}^2(y).
  \end{align*}
  Moreover, since $w|_\Gamma=v$ on $\Gamma$, the above equality reads
  \begin{align*}
    \lim_{k\to\infty}\varepsilon_k^{-1}(u_k,w)_{L^2(\Omega_{\varepsilon_k})} = \|g^{1/2}v\|_{L^2(\Gamma)}^2.
  \end{align*}
  In the same way we can show that
  \begin{align*}
    \lim_{k\to\infty}\varepsilon_k^{-1}\|u_k\|_{L^2(\Omega_{\varepsilon_k})}^2 = \lim_{k\to\infty}\varepsilon_k^{-1}\|w\|_{L^2(\Omega_{\varepsilon_k})}^2 = \|g^{1/2}v\|_{L^2(\Gamma)}^2.
  \end{align*}
  Thus, as in the last part of the proof of Lemma~\ref{L:Korn_U}, we can derive $v=0$ on $\Gamma$ by dividing both sides of \eqref{Pf_KRg:Orth} by $\varepsilon_k$, sending $k\to\infty$, and using the above equalities, $\beta<1$, and \eqref{E:Width_Bound}.
  This implies $\bar{v}=0$ on $\Omega_1$, which contradicts with \eqref{Pf_KRg:Lim_L2}.
  Hence the statement of Lemma~\ref{L:Korn_U} holds under the condition \eqref{E:KoRg_Orth} instead of \eqref{E:Korn_Orth}.
\end{proof}

\begin{remark} \label{R:Korn_H1}
  The result of Lemma~\ref{L:Korn_H1} was first proved in~\cite[Theorem~2.2]{LeMu11} under the condition \eqref{E:Korn_Orth}.
  Here we gave another proof of it.
  Also, Lemma~\ref{L:Korn_Rg} improves the result of~\cite[Theorem~2.3]{LeMu11} in the case $\mathcal{R}_g|_\Gamma=\mathcal{K}_g(\Gamma)$.
  This is due to the fact that the condition \eqref{E:KoRg_Orth} is imposed for the restrictions on $\Gamma$ of all $w\in\mathcal{R}_g$ in~\cite[Theorem~2.3]{LeMu11}, while we test the vector fields $w$ themselves in Lemma~\ref{L:Korn_Rg}.
\end{remark}

\subsection{Korn inequality on a closed surface} \label{SS:Korn_Surf}
We derive the Korn inequality on $\Gamma$ that is essential for the study of a singular limit problem for \eqref{E:NS_Eq}--\eqref{E:NS_In} in Section~\ref{S:SL}.

\begin{lemma} \label{L:Korn_STG}
  There exists a constant $c>0$ such that
  \begin{align} \label{E:Korn_STG}
    \|\nabla_\Gamma v\|_{L^2(\Gamma)}^2 \leq c\left(\|D_\Gamma(v)\|_{L^2(\Gamma)}^2+\|v\|_{L^2(\Gamma)}^2\right)
  \end{align}
  for all $v\in H^1(\Gamma,T\Gamma)$.
  Here $D_\Gamma(v)$ is the surface strain rate tensor defined as
  \begin{align} \label{E:Strain_Surf}
    D_\Gamma(v) := P(\nabla_\Gamma v)_SP, \quad (\nabla_\Gamma v)_S = \frac{\nabla_\Gamma v+(\nabla_\Gamma v)^T}{2}.
  \end{align}
\end{lemma}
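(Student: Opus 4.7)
The plan is to derive the surface Korn inequality on $\Gamma$ from the uniform thin-domain Korn inequality of Lemma~\ref{L:Korn_Grad}---which requires only the impermeable boundary condition and no orthogonality constraint---by applying it to the impermeable extension $E_\varepsilon v$ of $v$ to $\Omega_\varepsilon$ for a single fixed small $\varepsilon$, and then translating the resulting bound back to $\Gamma$ via Lemmas~\ref{L:Con_Lp_W1p} and~\ref{L:ExTan_Wmp}. By Lemma~\ref{L:Wmp_Tan_Appr} and density it is enough to prove \eqref{E:Korn_STG} for $v\in C^4(\Gamma,T\Gamma)$.

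Fix $\varepsilon\in(0,1)$ to be chosen later. By Lemma~\ref{L:ExTan_Imp} the vector field $E_\varepsilon v$ defined in \eqref{E:Def_ExTan} satisfies $E_\varepsilon v\cdot n_\varepsilon=0$ on $\Gamma_\varepsilon$, so Lemma~\ref{L:Korn_Grad} yields
\[
\|\nabla E_\varepsilon v\|_{L^2(\Omega_\varepsilon)}^2\leq 4\|D(E_\varepsilon v)\|_{L^2(\Omega_\varepsilon)}^2+c_{K,1}\|E_\varepsilon v\|_{L^2(\Omega_\varepsilon)}^2.
\]
The strategy is to produce a lower bound for the left-hand side of order $\varepsilon\|\nabla_\Gamma v\|_{L^2(\Gamma)}^2$ and an upper bound for the right-hand side of order $\varepsilon(\|D_\Gamma(v)\|_{L^2(\Gamma)}^2+\|v\|_{L^2(\Gamma)}^2)$, each with an error of order $\varepsilon^3\|\nabla_\Gamma v\|_{L^2(\Gamma)}^2$; dividing by $\varepsilon$ and taking $\varepsilon$ sufficiently small then yields \eqref{E:Korn_STG}.

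The two ingredients are: (i) the comparison estimate \eqref{E:ExTan_Grad},
\[
\nabla E_\varepsilon v=\overline{\nabla_\Gamma v}+\tfrac{1}{\bar g}\bigl(\bar v\cdot\overline{\nabla_\Gamma g}\bigr)\overline{Q}+R_\varepsilon,\qquad |R_\varepsilon|\leq c\varepsilon\bigl(|\bar v|+\bigl|\overline{\nabla_\Gamma v}\bigr|\bigr),
\]
combined with the observation $(\nabla_\Gamma v):Q=n\cdot Wv=0$ from \eqref{E:Grad_W} and $Wn=0$, which makes $\overline{\nabla_\Gamma v}$ and the $\overline{Q}$ term Frobenius-orthogonal and produces the pointwise bounds
\[
|\nabla E_\varepsilon v|^2\geq\tfrac12\bigl|\overline{\nabla_\Gamma v}\bigr|^2-c\varepsilon^2\bigl(|\bar v|^2+\bigl|\overline{\nabla_\Gamma v}\bigr|^2\bigr),
\]
\[
|D(E_\varepsilon v)|^2\leq c\bigl(\bigl|\overline{(\nabla_\Gamma v)_S}\bigr|^2+|\bar v|^2+\varepsilon^2\bigl|\overline{\nabla_\Gamma v}\bigr|^2\bigr),
\]
where in the second bound $\overline{Q}$ is symmetric and the $(1/\bar g)\bar v\cdot\overline{\nabla_\Gamma g}$ factor is controlled by $|\bar v|$; and (ii) the pointwise identity
\[
|(\nabla_\Gamma v)_S|^2=|D_\Gamma(v)|^2+|Wv|^2\leq |D_\Gamma(v)|^2+c|v|^2,
\]
which follows by inserting the decomposition $\nabla_\Gamma v=P(\nabla_\Gamma v)P+(Wv)\otimes n$ from \eqref{E:Grad_W} into the symmetric part, noting that $D_\Gamma(v)=P(\nabla_\Gamma v)_SP$ is Frobenius-orthogonal to $\tfrac12[(Wv)\otimes n+n\otimes(Wv)]$ and that this last tensor has squared norm $|Wv|^2$.

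Integrating the two pointwise bounds over $\Omega_\varepsilon$ and applying Lemma~\ref{L:Con_Lp_W1p} to exchange $L^2(\Omega_\varepsilon)$-norms of constant extensions with $\varepsilon^{1/2}$ times the corresponding $L^2(\Gamma)$-norms, together with $\|E_\varepsilon v\|_{L^2(\Omega_\varepsilon)}^2\leq c\varepsilon\|v\|_{L^2(\Gamma)}^2$ from Lemma~\ref{L:ExTan_Wmp}, reduces the thin-domain Korn inequality, after division by $\varepsilon$, to
\[
(c_0-c_1\varepsilon^2)\|\nabla_\Gamma v\|_{L^2(\Gamma)}^2\leq c_2\bigl(\|D_\Gamma(v)\|_{L^2(\Gamma)}^2+\|v\|_{L^2(\Gamma)}^2\bigr)+c_1\varepsilon^2\|\nabla_\Gamma v\|_{L^2(\Gamma)}^2
\]
with constants $c_0,c_1,c_2>0$ independent of $v$ and $\varepsilon$. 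Taking $\varepsilon$ so small that $c_0-2c_1\varepsilon^2\geq c_0/2$ absorbs the $\varepsilon^2\|\nabla_\Gamma v\|^2$ terms and delivers \eqref{E:Korn_STG}. The principal obstacle is precisely this $\varepsilon$-bookkeeping: the main term in $\|\nabla E_\varepsilon v\|^2$ scales as $\varepsilon\|\nabla_\Gamma v\|^2$ while the $R_\varepsilon$ remainder contributes only at order $\varepsilon^3$, so the argument closes only thanks to the Frobenius-orthogonality between $\overline{\nabla_\Gamma v}$ and $\overline{Q}$ (which prevents loss of order-one information in the lower bound) and the identity $|(\nabla_\Gamma v)_S|^2=|D_\Gamma(v)|^2+|Wv|^2$ (which bridges $(\nabla_\Gamma v)_S$ and $D_\Gamma(v)$ modulo a zeroth-order term).
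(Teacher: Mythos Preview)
Your argument is correct and follows the same overall idea as the paper---deduce the surface Korn inequality from a bulk Korn inequality on a thin neighborhood of $\Gamma$, then absorb the $\varepsilon^2\|\nabla_\Gamma v\|_{L^2(\Gamma)}^2$ error by taking $\varepsilon$ small---but the implementations differ. The paper works on the symmetric tubular neighborhood $N_\varepsilon=\{-\varepsilon<d<\varepsilon\}$ and uses the plain constant extension $\bar v$: since $n_{N_\varepsilon}=\pm\bar n$ on $\partial N_\varepsilon$, tangentiality of $v$ already gives $\bar v\cdot n_{N_\varepsilon}=0$, and the comparison $|D(\bar v)-\overline{(\nabla_\Gamma v)_S}|\leq c\varepsilon|\overline{\nabla_\Gamma v}|$ follows directly from \eqref{E:ConDer_Diff} with no extra structure. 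Your route through $\Omega_\varepsilon$ and the impermeable extension $E_\varepsilon v$ invokes more machinery (Lemmas~\ref{L:ExTan_Imp}, \ref{L:ExTan_Wmp}, \ref{L:ExTan_Div}) and relies on the additional Frobenius-orthogonality $\overline{\nabla_\Gamma v}:\overline Q=0$ to prevent the $\overline Q$ term in $\nabla E_\varepsilon v$ from spoiling the lower bound; the paper avoids this issue because $\nabla\bar v$ already equals $\overline{\nabla_\Gamma v}$ up to $O(\varepsilon)$, so \eqref{E:Con_Lp_Grad} gives the lower bound directly. Your approach has the small advantage of quoting Lemma~\ref{L:Korn_Grad} verbatim rather than re-deriving it on $N_\varepsilon$.

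One minor slip: the identity should read $|(\nabla_\Gamma v)_S|^2=|D_\Gamma(v)|^2+\tfrac12|Wv|^2$, since the cross term $(Wv\otimes n):(n\otimes Wv)=(Wv\cdot n)^2$ vanishes, leaving $\tfrac14\bigl(|Wv|^2+|Wv|^2\bigr)$. This harmless factor does not affect your bound $|(\nabla_\Gamma v)_S|^2\leq|D_\Gamma(v)|^2+c|v|^2$.
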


\begin{proof}
  For $\varepsilon\in(0,\delta)$ let $N_\varepsilon:=\{x\in\mathbb{R}^3 \mid -\varepsilon<d(x)<\varepsilon\}$.
  Then $\overline{N}_\varepsilon\subset N$ and we can show as in the proof of Lemma~\ref{L:Korn_Grad} that
  \begin{align} \label{Pf_KST:Korn_TNB}
    \|\nabla u\|_{L^2(N_\varepsilon)}^2 \leq 4\|D(u)\|_{L^2(N_\varepsilon)}^2+c\|u\|_{L^2(N_\varepsilon)}^2
  \end{align}
  for all $u\in H^1(N_\varepsilon)^3$ satisfying $u\cdot n_{N_\varepsilon}=0$ on $\partial N_\varepsilon$, where $c>0$ is a constant independent of $\varepsilon$ and $n_{N_\varepsilon}$ is the unit outward normal to $\partial N_\varepsilon$.
  Let $\bar{v}=v\circ\pi$ be the constant extension of $v\in H^1(\Gamma,T\Gamma)$.
  Then $\bar{v}\in H^1(N_\varepsilon)^3$ by Lemma~\ref{L:Con_Lp_W1p}.
  Moreover, $\bar{v}\cdot n_{N_\varepsilon}=0$ on $\partial N_\varepsilon$ since $v\cdot n=0$ on $\Gamma$ and $n_{N_\varepsilon}(x)=\pm\bar{n}(x)$ for $x\in\partial N_\varepsilon$ with $d(x)=\pm\varepsilon$ (double-sign corresponds).
  Hence we can apply \eqref{Pf_KST:Korn_TNB} to $\bar{v}$ to get
  \begin{align} \label{Pf_KST:Korn_Con}
    \|\nabla\bar{v}\|_{L^2(N_\varepsilon)}^2 \leq 4\|D(\bar{v})\|_{L^2(N_\varepsilon)}^2+c\|\bar{v}\|_{L^2(N_\varepsilon)}^2.
  \end{align}
  Let us derive \eqref{E:Korn_STG} from \eqref{Pf_KST:Korn_Con}.
  By \eqref{E:Con_Lp} and \eqref{E:Con_Lp_Grad} with $\Omega_\varepsilon$ replaced by $N_\varepsilon$,
  \begin{align} \label{Pf_KST:Con_1}
    \|\nabla\bar{v}\|_{L^2(N_\varepsilon)}^2 \geq c\varepsilon\|\nabla_\Gamma v\|_{L^2(\Gamma)}^2, \quad \|\bar{v}\|_{L^2(N_\varepsilon)}^2 \leq c\varepsilon\|v\|_{L^2(\Gamma)}^2.
  \end{align}
  Next we deal with $D(\bar{v})$.
  By \eqref{E:ConDer_Diff} and $|d|\leq\varepsilon$ in $N_\varepsilon$ we have
  \begin{align*}
    \left|D(\bar{v})-\Bigl(\overline{\nabla_\Gamma v}\Bigr)_S\right| \leq \left|\nabla\bar{v}-\overline{\nabla_\Gamma v}\right| \leq c\varepsilon\left|\overline{\nabla_\Gamma v}\right| \quad\text{in}\quad N_\varepsilon.
  \end{align*}
  Also, since $v\cdot n=0$, $|n|=1$, and $W$ is bounded on $\Gamma$, we use \eqref{E:Grad_W} to get
  \begin{align*}
    |(\nabla_\Gamma v)_S| = |D_\Gamma(v)+[(Wv)\otimes n]_S| \leq |D_\Gamma(v)|+c|v| \quad\text{on}\quad\Gamma.
  \end{align*}
  From the above two inequalities we deduce that
  \begin{align*}
    |D(\bar{v})| \leq \left|\Bigl(\overline{\nabla_\Gamma v}\Bigr)_S\right|+\left|D(\bar{v})-\Bigl(\overline{\nabla_\Gamma v}\Bigr)_S\right| \leq \left|\overline{D_\Gamma(v)}\right|+c\left(|\bar{v}|+\varepsilon\left|\overline{\nabla_\Gamma v}\right|\right)
  \end{align*}
  in $N_\varepsilon$.
  This inequality combined with \eqref{E:Con_Lp} implies
  \begin{align} \label{Pf_KST:Con_2}
    \|D(\bar{v})\|_{L^2(N_\varepsilon)}^2 \leq c\varepsilon\left(\|D_\Gamma(v)\|_{L^2(\Gamma)}^2+\|v\|_{L^2(\Gamma)}^2+\varepsilon^2\|\nabla_\Gamma v\|_{L^2(\Gamma)}^2\right).
  \end{align}
  Now we apply \eqref{Pf_KST:Con_1} and \eqref{Pf_KST:Con_2} to \eqref{Pf_KST:Korn_Con} and then divide both sides by $\varepsilon$ to get
  \begin{align*}
    \|\nabla_\Gamma v\|_{L^2(\Gamma)}^2 \leq c\left(\|D_\Gamma(v)\|_{L^2(\Gamma)}^2+\|v\|_{L^2(\Gamma)}^2+\varepsilon^2\|\nabla_\Gamma v\|_{L^2(\Gamma)}^2\right)
  \end{align*}
  with a constant $c>0$ independent of $\varepsilon$.
  Hence we obtain \eqref{E:Korn_STG} by sending $\varepsilon\to0$ in this inequality.
\end{proof}

%%% Section 5 %%%
\section{Stokes operator with slip boundary conditions} \label{S:St}
In this section we study the Helmholtz--Leray projection on $\Omega_\varepsilon$ and the Stokes operator on $\Omega_\varepsilon$ associated with slip boundary conditions.
The results in Section~\ref{SS:St_HL} and the other subsections are used in the study of a singular limit problem and the proof of the global existence of a strong solution, respectively.

\subsection{Helmholtz--Leray projection on a thin domain} \label{SS:St_HL}
Let $L_\sigma^2(\Omega_\varepsilon)$ be the norm closure of $C_{c,\sigma}^\infty(\Omega_\varepsilon):=\{u\in C_c^\infty(\Omega_\varepsilon)^3\mid\text{$\mathrm{div}\,u=0$ in $\Omega_\varepsilon$}\}$ in $L^2(\Omega_\varepsilon)^3$.
It is well known (see e.g.~\cite{BoFa13,Ga11,Te79}) that $L_\sigma^2(\Omega_\varepsilon)$ is characterized by
\begin{align*}
  L_\sigma^2(\Omega_\varepsilon) = \{u \in L^2(\Omega_\varepsilon)^3 \mid \text{$\mathrm{div}\,u=0$ in $\Omega_\varepsilon$, $u\cdot n_\varepsilon=0$ on $\Gamma_\varepsilon$}\}
\end{align*}
and the Helmholtz--Leray decomposition $L^2(\Omega_\varepsilon)^3=L_\sigma^2(\Omega_\varepsilon)\oplus G^2(\Omega_\varepsilon)$ holds with
\begin{align*}
  G^2(\Omega_\varepsilon) = L_\sigma^2(\Omega_\varepsilon)^\perp = \{\nabla p\in L^2(\Omega_\varepsilon)^3 \mid p\in H^1(\Omega_\varepsilon)\}.
\end{align*}
By $\mathbb{L}_\varepsilon$ we denote the Helmholtz--Leray projection from $L^2(\Omega_\varepsilon)^3$ onto $L_\sigma^2(\Omega_\varepsilon)$.
Here we use the nonstandard notation $\mathbb{L}_\varepsilon$ to avoid confusion of the Helmholtz--Leray projection with the orthogonal projection $\mathbb{P}_\varepsilon$ onto the space $\mathcal{H}_\varepsilon$ defined by \eqref{E:Def_Heps}.
For $u\in L^2(\Omega_\varepsilon)^3$ its solenoidal part is given by $\mathbb{L}_\varepsilon u=u-\nabla\varphi$, where $\varphi\in H^1(\Omega_\varepsilon)$ is a weak solution to the Neumann problem of Poisson's equation
\begin{align*}
  \Delta\varphi = \mathrm{div}\,u \quad\text{in}\quad \Omega_\varepsilon, \quad \frac{\partial\varphi}{\partial n_\varepsilon} = u\cdot n_\varepsilon \quad\text{on}\quad \Gamma_\varepsilon.
\end{align*}
Moreover, by the elliptic regularity theorem (see~\cite{Ev10,GiTr01}) we have $\mathbb{L}_\varepsilon u\in H^1(\Omega_\varepsilon)^3$ when $u\in H^1(\Omega_\varepsilon)^3$.
Our aim is to give a uniform estimate for the $H^1(\Omega_\varepsilon)$-norm of the difference $u-\mathbb{L}_\varepsilon u$ for $u\in H^1(\Omega_\varepsilon)^3$ satisfying $u\cdot n_\varepsilon=0$ on $\Gamma_\varepsilon$.

\begin{lemma} \label{L:HP_Dom}
  There exist constants $\varepsilon_\sigma\in(0,1)$ and $c>0$ such that
  \begin{align} \label{E:HP_Dom}
    \|u-\mathbb{L}_\varepsilon u\|_{H^1(\Omega_\varepsilon)} \leq c\|\mathrm{div}\,u\|_{L^2(\Omega_\varepsilon)}
  \end{align}
  for all $\varepsilon\in(0,\varepsilon_\sigma)$ and $u\in H^1(\Omega_\varepsilon)^3$ satisfying $u\cdot n_\varepsilon=0$ on $\Gamma_\varepsilon$.
\end{lemma}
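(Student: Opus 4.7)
The plan is to identify $u-\mathbb{L}_\varepsilon u$ with the gradient $\nabla\varphi$ of an auxiliary pressure and to establish $\varepsilon$-uniform elliptic estimates for the resulting Neumann Poisson problem. Since $\nabla\varphi=u-\mathbb{L}_\varepsilon u\in G^2(\Omega_\varepsilon)$, there exists a unique $\varphi\in H^1(\Omega_\varepsilon)$ with $\int_{\Omega_\varepsilon}\varphi\,dx=0$ satisfying the weak formulation
\begin{equation*}
(\nabla\varphi,\nabla\psi)_{L^2(\Omega_\varepsilon)}=(u,\nabla\psi)_{L^2(\Omega_\varepsilon)},\quad \psi\in H^1(\Omega_\varepsilon).
\end{equation*}
Integrating by parts on the right and using $u\cdot n_\varepsilon=0$ on $\Gamma_\varepsilon$, we see that $\varphi$ is the unique mean-zero weak solution of $\Delta\varphi=\mathrm{div}\,u$ in $\Omega_\varepsilon$ with $\partial\varphi/\partial n_\varepsilon=0$ on $\Gamma_\varepsilon$.

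For the $L^2$ bound on $\nabla\varphi$, the first task is a Poincaré inequality $\|\varphi\|_{L^2(\Omega_\varepsilon)}\leq c\|\nabla\varphi\|_{L^2(\Omega_\varepsilon)}$ uniform in $\varepsilon$. I would obtain this by splitting $\varphi=\overline{M\varphi}+(\varphi-\overline{M\varphi})$, where $M\varphi$ is the fiberwise average on $\Gamma$ and an overbar denotes the constant extension in the normal direction. The residual $\varphi-\overline{M\varphi}$ has zero mean in the normal fiber of width $O(\varepsilon)$ at each $y\in\Gamma$, hence a pointwise one-dimensional Poincaré estimate gives $\|\varphi-\overline{M\varphi}\|_{L^2(\Omega_\varepsilon)}\leq c\varepsilon\|\partial_n\varphi\|_{L^2(\Omega_\varepsilon)}$. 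The average $M\varphi$ has weighted mean over $\Gamma$ equal to $\varepsilon^{-1}\int_{\Omega_\varepsilon}\varphi\,dx=0$ up to an $O(\varepsilon)\|\varphi\|_{L^2(\Omega_\varepsilon)}$ correction from \eqref{E:Jac_Diff}; combining \eqref{E:Con_Lp}, \eqref{E:Con_Lp_Grad} with the surface Poincaré inequality \eqref{E:Poin_Surf_Lp} on $M\varphi$ minus its mean, and absorbing the correction for $\varepsilon$ small, yields the uniform Poincaré. Testing the weak formulation with $\varphi$ itself then produces
\begin{equation*}
\|\nabla\varphi\|_{L^2(\Omega_\varepsilon)}^2=(\mathrm{div}\,u,\varphi)_{L^2(\Omega_\varepsilon)}\leq\|\mathrm{div}\,u\|_{L^2(\Omega_\varepsilon)}\|\varphi\|_{L^2(\Omega_\varepsilon)}\leq c\|\mathrm{div}\,u\|_{L^2(\Omega_\varepsilon)}\|\nabla\varphi\|_{L^2(\Omega_\varepsilon)},
\end{equation*}
whence $\|\nabla\varphi\|_{L^2(\Omega_\varepsilon)}\leq c\|\mathrm{div}\,u\|_{L^2(\Omega_\varepsilon)}$.

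For the $H^1$ control of $\nabla\varphi$ I would use the Bochner--Reilly-type identity
\begin{equation*}
\int_{\Omega_\varepsilon}|\nabla^2\varphi|^2\,dx=\int_{\Omega_\varepsilon}(\Delta\varphi)^2\,dx+2\int_{\Gamma_\varepsilon}\langle W_\varepsilon\nabla_{\Gamma_\varepsilon}\varphi,\nabla_{\Gamma_\varepsilon}\varphi\rangle\,d\mathcal{H}^2,
\end{equation*}
valid for $\varphi\in H^2(\Omega_\varepsilon)$ with homogeneous Neumann data on $\Gamma_\varepsilon$ (derived for smooth $\varphi$ by two integrations by parts and then extended by density, which is legitimate because $\Gamma_\varepsilon$ is of class $C^4$). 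The volume term equals $\|\mathrm{div}\,u\|_{L^2(\Omega_\varepsilon)}^2$. To control the boundary term uniformly in $\varepsilon$, I would decompose the integral as a sum over $\Gamma_\varepsilon^0$ and $\Gamma_\varepsilon^1$, pull both pieces back to $\Gamma$ via \eqref{E:CoV_Surf}, and exploit the near-antisymmetry $W_\varepsilon|_{\Gamma_\varepsilon^1}^\sharp+W_\varepsilon|_{\Gamma_\varepsilon^0}^\sharp=O(\varepsilon)$ from \eqref{E:Diff_WH_IO} together with the fundamental theorem of calculus applied to $\nabla_{\Gamma_\varepsilon}\varphi$ across the fibers. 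Combined with \eqref{E:Comp_P}--\eqref{E:Comp_W}, this yields a bound of the form $c\|\nabla\varphi\|_{L^2(\Omega_\varepsilon)}^2+c\varepsilon^2\|\nabla^2\varphi\|_{L^2(\Omega_\varepsilon)}^2$; the second term is absorbed into the left-hand side for $\varepsilon$ smaller than some $\varepsilon_\sigma\in(0,1)$, and the first is already controlled by the previous step.

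The hard part will be making the boundary cancellation rigorous. A naive application of \eqref{E:Poin_Bo} to $\|\nabla_{\Gamma_\varepsilon}\varphi\|_{L^2(\Gamma_\varepsilon)}^2$ produces an unusable factor $\varepsilon^{-1}\|\nabla\varphi\|_{L^2(\Omega_\varepsilon)}^2$, so the uniform estimate must rely on pairing the two boundary pieces and extracting an explicit extra power of $\varepsilon$ from the sign-reversal of $W_\varepsilon$ recorded in \eqref{E:Comp_W} and \eqref{E:Diff_WH_IO}, rather than on any trace inequality applied to each boundary component individually.
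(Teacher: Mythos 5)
Your proposal is correct, and for the crucial $H^2$ step it follows essentially the paper's own route: in Lemma~\ref{L:PD_H2} the paper also integrates by parts twice, drops the $(\nabla\varphi\cdot n_\varepsilon)\Delta\varphi$ term via the Neumann condition, rewrites the remaining boundary integrand as $W_\varepsilon\nabla\varphi\cdot\nabla\varphi$ (Lemma~\ref{L:Imp_Der}, since $\nabla\varphi$ is tangential on $\Gamma_\varepsilon$), and controls it by exactly the device you describe and which is carried out in the proof of Lemma~\ref{L:Korn_Grad}: pair the two boundary components, pull back to $\Gamma$ by \eqref{E:CoV_Surf}, write the sum as an $r$-integral of a derivative across the fiber, and use the antisymmetry \eqref{E:Diff_WH_IO} so that the $\varepsilon^{-1}$ from differentiating the interpolated Weingarten factor is cancelled --- precisely the mechanism you single out in your last paragraph, and indeed no trace inequality on a single boundary component is used. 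The approximation issue you dispose of with ``extended by density'' is handled in the paper by mollifying the source term and invoking elliptic regularity on the $C^4$ boundary to get $H^3$ Neumann solutions, which is the clean way to justify the two integrations by parts. Where you genuinely diverge is the uniform Poincar\'e inequality: the paper (Lemma~\ref{L:Uni_Poin_Dom}) proves it by a compactness/contradiction argument on the rescaled fixed-width domain $\Omega_1$, whereas you propose a constructive proof through $\varphi=\overline{M\varphi}+(\varphi-\overline{M\varphi})$, the fiberwise estimate \eqref{E:Ave_Diff_Dom}, the gradient bound for $M\varphi$, and the surface Poincar\'e inequality \eqref{E:Poin_Surf_Lp}; this works (the mean-value correction coming from $J-1$ contributes $c\varepsilon\|\varphi\|_{L^2(\Omega_\varepsilon)}$ and is absorbed for small $\varepsilon$) and yields an explicit, quantitative constant, at the cost of importing the averaging machinery of Section~\ref{S:Ave}.

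Two minor slips, neither of which affects the argument. First, the identity you quote should not carry the factor $2$: two integrations by parts with homogeneous Neumann data give $\|\nabla^2\varphi\|_{L^2(\Omega_\varepsilon)}^2=\|\Delta\varphi\|_{L^2(\Omega_\varepsilon)}^2+\int_{\Gamma_\varepsilon}W_\varepsilon\nabla_{\Gamma_\varepsilon}\varphi\cdot\nabla_{\Gamma_\varepsilon}\varphi\,d\mathcal{H}^2$; this is harmless since you only estimate the boundary term in absolute value. Second, the bound this technique actually produces is $c\bigl(\|\nabla\varphi\|_{L^2(\Omega_\varepsilon)}^2+\|\nabla\varphi\|_{L^2(\Omega_\varepsilon)}\|\nabla^2\varphi\|_{L^2(\Omega_\varepsilon)}\bigr)$, not $c\|\nabla\varphi\|_{L^2(\Omega_\varepsilon)}^2+c\varepsilon^2\|\nabla^2\varphi\|_{L^2(\Omega_\varepsilon)}^2$: the antisymmetry of $W_\varepsilon$ is spent on making the $r$-derivative of the interpolated factor $O(1)$, and the cross term carries no extra power of $\varepsilon$. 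The absorption into the left-hand side then goes through by Young's inequality with a small coefficient, uniformly in $\varepsilon$; the restriction $\varepsilon<\varepsilon_\sigma$ is needed only for the Poincar\'e step, not here.
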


To prove Lemma~\ref{L:HP_Dom} we first derive the uniform Poincar\'{e} inequality on $\Omega_\varepsilon$.

\begin{lemma} \label{L:Uni_Poin_Dom}
  There exist constants $\varepsilon_\sigma\in(0,1)$ and $c>0$ such that
  \begin{align} \label{E:Uni_Poin_Dom}
    \|\varphi\|_{L^2(\Omega_\varepsilon)} \leq c\|\nabla\varphi\|_{L^2(\Omega_\varepsilon)}
  \end{align}
  for all $\varepsilon\in(0,\varepsilon_\sigma)$ and $\varphi\in H^1(\Omega_\varepsilon)$ satisfying $\int_{\Omega_\varepsilon}\varphi\,dx=0$.
\end{lemma}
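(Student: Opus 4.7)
The plan is to argue by contradiction, adapting the compactness strategy used in the proof of Lemma~\ref{L:Korn_U}. Suppose the inequality fails. Then there exist $\varepsilon_k\to 0$ and $\varphi_k\in H^1(\Omega_{\varepsilon_k})$ with $\int_{\Omega_{\varepsilon_k}}\varphi_k\,dx=0$ such that
\[
  \|\varphi_k\|_{L^2(\Omega_{\varepsilon_k})}^2 > k\,\|\nabla\varphi_k\|_{L^2(\Omega_{\varepsilon_k})}^2, \quad k\in\mathbb{N}.
\]
First I would transport each $\varphi_k$ to the fixed domain $\Omega_1$ via the diffeomorphism $\Phi_{\varepsilon_k}\colon\Omega_1\to\Omega_{\varepsilon_k}$ defined by \eqref{E:Def_Korn_Aux}, setting $\Phi_k:=\varphi_k\circ\Phi_{\varepsilon_k}$, and normalize (using \eqref{E:L2_Omega_1}) so that $\|\Phi_k\|_{L^2(\Omega_1)}=1$.

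Next I would derive a scalar analogue of \eqref{E:KAux_Grad}. From $\nabla\Phi_k=(\nabla\Phi_{\varepsilon_k})^T(\nabla\varphi_k\circ\Phi_{\varepsilon_k})$, the explicit form of $\nabla\Phi_{\varepsilon_k}$ used in Lemma~\ref{L:Korn_Aux}, and the commutation relation \eqref{E:WReso_P}, the matrix $\nabla\Phi_{\varepsilon_k}$ decouples into a tangential block $(I_3-d\overline{W})^{-1}(I_3-\varepsilon_k d\overline{W})\overline{P}$, whose inverse is uniformly bounded in $\varepsilon_k$ by \eqref{E:Curv_Bound} and \eqref{E:Wein_Bound}, and a normal block $\varepsilon_k\overline{Q}$. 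Inverting and integrating with the Jacobian $\det\nabla\Phi_{\varepsilon_k}$ from \eqref{E:CoV_Omega_1} yields
\[
  \varepsilon_k^{-1}\|\nabla\varphi_k\|_{L^2(\Omega_{\varepsilon_k})}^2 \geq c\left(\bigl\|\overline{P}\nabla\Phi_k\bigr\|_{L^2(\Omega_1)}^2+\varepsilon_k^{-2}\|\partial_n\Phi_k\|_{L^2(\Omega_1)}^2\right).
\]
Combining this with \eqref{E:L2_Omega_1} and the contradiction hypothesis gives
\[
  \bigl\|\overline{P}\nabla\Phi_k\bigr\|_{L^2(\Omega_1)}^2+\varepsilon_k^{-2}\|\partial_n\Phi_k\|_{L^2(\Omega_1)}^2 \leq \frac{c}{k},
\]
so $\{\Phi_k\}$ is bounded in $H^1(\Omega_1)$ and $\partial_n\Phi_k\to 0$ in $L^2(\Omega_1)$. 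Extracting a subsequence and using the compact embedding $H^1(\Omega_1)\hookrightarrow L^2(\Omega_1)$, I obtain a weak $H^1$-limit $\Phi$ with $\Phi_k\to\Phi$ strongly in $L^2(\Omega_1)$, hence $\|\Phi\|_{L^2(\Omega_1)}=1$. Passing the bounds above to the limit shows $\overline{P}\nabla\Phi=0$ and $\partial_n\Phi=0$, so $\nabla\Phi=0$ on $\Omega_1$. Since $\Gamma$ is connected and $g\geq c>0$ by \eqref{E:Width_Bound}, the set $\Omega_1$ is connected and $\Phi$ is therefore a constant.

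Finally I would use the zero-mean constraint. Dividing $\int_{\Omega_{\varepsilon_k}}\varphi_k\,dx=0$ by $\varepsilon_k$ and applying \eqref{E:CoV_Omega_1} gives
\[
  \int_{\Omega_1}\Phi_k(X)\,J(\pi(X),d(X))^{-1}J(\pi(X),\varepsilon_kd(X))\,dX = 0.
\]
By \eqref{E:Jac_Bound} and \eqref{E:Jac_Diff} the Jacobian factor converges uniformly on $\Omega_1$ to $J(\pi(\cdot),d(\cdot))^{-1}$, and $\Phi_k\to\Phi$ strongly in $L^2(\Omega_1)$, so in the limit
\[
  \Phi\int_{\Omega_1}J(\pi(X),d(X))^{-1}\,dX = 0.
\]
The weight being strictly positive forces $\Phi=0$, contradicting $\|\Phi\|_{L^2(\Omega_1)}=1$. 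The main technical point is the scalar Korn-type inequality in the second paragraph; once the splitting of $\nabla\Phi_{\varepsilon_k}$ is in place the remainder is a clean compactness argument, and since no rigid-displacement or Killing subspace is involved this step is lighter than the corresponding part of Lemma~\ref{L:Korn_U}.
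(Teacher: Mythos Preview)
Your proposal is correct and follows essentially the same contradiction-plus-compactness route as the paper: transport to $\Omega_1$ via $\Phi_{\varepsilon_k}$, normalize, extract a weakly convergent subsequence, identify the limit, and derive a contradiction from the zero-mean constraint via \eqref{E:CoV_Omega_1}. The one genuine difference is in how you identify the limit as a constant. The paper first shows $\partial_n\Phi=0$, writes $\Phi=\bar\eta$ as the constant extension of some $\eta\in H^1(\Gamma)$, then uses \eqref{E:ConDer_Dom} and \eqref{E:WReso_P} to deduce $\nabla_\Gamma\eta=0$ from $\overline P\nabla\Phi=0$, and finally appeals to the surface Poincar\'e inequality \eqref{E:Poin_Surf_Lp} to conclude $\eta$ is constant. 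You instead combine $\overline P\nabla\Phi=0$ and $\partial_n\Phi=0$ directly into $\nabla\Phi=0$ and invoke the connectedness of $\Omega_1$. Your shortcut is cleaner here and avoids the detour through \eqref{E:Poin_Surf_Lp}; the paper's version has the advantage of reusing machinery already set up for Lemma~\ref{L:Korn_U}, where the limit genuinely lives on $\Gamma$ and must be analyzed there.
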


\begin{proof}
  As in the proof of Lemma~\ref{L:Korn_Aux} we prove \eqref{E:Uni_Poin_Dom} by contradiction.
  Assume to the contrary that there exist a sequence $\{\varepsilon_k\}_{k=1}^\infty$ of positive numbers convergent to zero and $\varphi_k\in H^1(\Omega_{\varepsilon_k})$ such that
  \begin{align} \label{Pf_UPD:Contra}
    \|\varphi_k\|_{L^2(\Omega_{\varepsilon_k})}^2 > k\|\nabla\varphi_k\|_{L^2(\Omega_{\varepsilon_k})}^2, \quad \int_{\Omega_{\varepsilon_k}}\varphi_k\,dx = 0, \quad k\in\mathbb{N}.
  \end{align}
  For each $k\in\mathbb{N}$ let $\Phi_{\varepsilon_k}$ be the bijection from $\Omega_1$ onto $\Omega_{\varepsilon_k}$ given by \eqref{E:Def_Korn_Aux} and define $\xi_k:=\varphi_k\circ\Phi_{\varepsilon_k}\in H^1(\Omega_1)$.
  We divide both sides of the first inequality of \eqref{Pf_UPD:Contra} by $\varepsilon_k$ and apply \eqref{E:L2_Omega_1} and \eqref{E:KAux_Grad} to $u=(\varphi_k,0,0)$ and $U=(\xi_k,0,0)$ to get
  \begin{align*}
    \|\xi_k\|_{L^2(\Omega_1)}^2 > ck\left(\left\|\overline{P}\nabla\xi_k\right\|_{L^2(\Omega_1)}^2+\varepsilon_k^{-2}\|\partial_n\xi_k\|_{L^2(\Omega_1)}^2\right),
  \end{align*}
  where $\partial_n\xi_k$ is the normal derivative of $\xi_k$ given by \eqref{E:Def_NorDer}.
  Since $\|\xi_k\|_{L^2(\Omega_1)}>0$, we may assume that $\|\xi_k\|_{L^2(\Omega_1)}=1$ by replacing $\xi_k$ with $\xi_k/\|\xi_k\|_{L^2(\Omega_1)}$ and thus
  \begin{align} \label{Pf_UPD:Nabla_k}
    \left\|\overline{P}\nabla\xi_k\right\|_{L^2(\Omega_1)}^2 < ck^{-1}, \quad \|\partial_n\xi_k\|_{L^2(\Omega_1)}^2 < c\varepsilon_k^2k^{-1}.
  \end{align}
  By these inequalities, $\|\xi_k\|_{L^2(\Omega_1)}=1$, and
  \begin{align*}
    |\nabla\xi_k|^2 = \left|\overline{P}\nabla\xi_k\right|^2+\left|\overline{Q}\nabla\xi_k\right|^2, \quad \left|\overline{Q}\nabla\xi_k\right| = |\bar{n}\otimes\partial_n\xi_k| = |\partial_n\xi_k| \quad\text{in}\quad \Omega_1,
  \end{align*}
  the sequence $\{\xi_k\}_{k=1}^\infty$ is bounded in $H^1(\Omega_1)$.
  Hence $\{\xi_k\}_{k=1}^\infty$ converges (up to a subsequence) to some $\xi\in H^1(\Omega_1)$ weakly in $H^1(\Omega_1)$.
  By the compact embedding $H^1(\Omega_1)\hookrightarrow L^2(\Omega_1)$ it also converges to $\xi$ strongly in $L^2(\Omega_1)$ and
  \begin{align} \label{Pf_UPD:Limit_L2}
    \|\xi\|_{L^2(\Omega_1)} = \lim_{k\to\infty}\|\xi_k\|_{L^2(\Omega_1)} = 1.
  \end{align}
  Moreover, the weak convergence of $\{\xi_k\}_{k=1}^\infty$ to $\xi$ in $H^1(\Omega_1)$ and \eqref{Pf_UPD:Nabla_k} imply that
  \begin{align} \label{Pf_UPD:Xi_Grad}
    \overline{P}\nabla\xi = 0, \quad \partial_n\xi = 0 \quad\text{in}\quad \Omega_1.
  \end{align}
  Let $\eta(y):=\xi(y+g_0(y)n(y))$ for $y\in\Gamma$.
  Then $\xi=\bar{\eta}$ is the constant extension of $\eta$ by the second equality of \eqref{Pf_UPD:Xi_Grad} and thus $\eta\in H^1(\Gamma)$ by $\xi\in H^1(\Omega_1)$ and Lemma~\ref{L:Con_Lp_W1p}.
  Also, by \eqref{E:P_TGr}, \eqref{E:WReso_P}, \eqref{E:ConDer_Dom}, and the first equality of \eqref{Pf_UPD:Xi_Grad} we see that
  \begin{align*}
    0 = \overline{P}\nabla\xi = \overline{P}\Bigl(I_3-d\overline{W}\Bigr)^{-1}\overline{\nabla_\Gamma\eta} = \Bigl(I_3-d\overline{W}\Bigr)^{-1}\overline{\nabla_\Gamma\eta} \quad\text{in}\quad \Omega_1,
  \end{align*}
  which yields $\overline{\nabla_\Gamma\eta}=0$ in $\Omega_1$, i.e. $\nabla_\Gamma\eta=0$ on $\Gamma$.
  Hence setting
  \begin{align*}
    \hat{\eta} := \eta-\frac{1}{|\Gamma|}\int_\Gamma\eta\,d\mathcal{H}^2 \in H^1(\Gamma) \quad\left(\int_\Gamma\hat\eta\,d\mathcal{H}^2=0\right),
  \end{align*}
  where $|\Gamma|$ is the area of $\Gamma$, and applying Poincar\'{e}'s inequality \eqref{E:Poin_Surf_Lp} to $\hat{\eta}$ we get
  \begin{align*}
    \|\hat{\eta}\|_{L^2(\Gamma)} \leq c\|\nabla_\Gamma\hat{\eta}\|_{L^2(\Gamma)} = c\|\nabla_\Gamma\eta\|_{L^2(\Gamma)} = 0.
  \end{align*}
  This shows that $\hat{\eta}=0$ on $\Gamma$, i.e. $\eta=|\Gamma|^{-1}\int_\Gamma\eta\,d\mathcal{H}^2$ is constant.
  Now we return to the second equality of \eqref{Pf_UPD:Contra}.
  By the change of variables formula \eqref{E:CoV_Omega_1} it reads
  \begin{align*}
    \int_{\Omega_1}\xi_k(X)J(\pi(X),d(X))^{-1}J(\pi(X),\varepsilon_kd(X))\,dX = 0.
  \end{align*}
  Let $k\to\infty$ in this equality.
  Then since $J(\pi(\cdot),\varepsilon_kd(\cdot))$ converges to one uniformly on $\Omega_1$ as $k\to\infty$ by \eqref{E:Jac_Diff} and $\{\xi_k\}_{k=1}^\infty$ converges to $\xi=\bar{\eta}$ strongly in $L^2(\Omega_1)$,
  \begin{align*}
    \int_{\Omega_1}\eta(\pi(X))J(\pi(X),d(X))^{-1}\,dX = 0.
  \end{align*}
  Noting that $\eta$ is constant on $\Gamma$, we apply \eqref{E:CoV_Dom} to this equality to get
  \begin{align*}
    \eta\int_\Gamma\int_{g_0(y)}^{g_1(y)} J(y,r)^{-1}J(y,r)\,dr\,d\mathcal{H}^2(y) = \eta\int_\Gamma g(y)\,d\mathcal{H}^2(y) = 0.
  \end{align*}
  This equality and \eqref{E:Width_Bound} imply $\eta=0$ and thus $\xi=\bar{\eta}=0$ in $\Omega_1$, which contradicts with \eqref{Pf_UPD:Limit_L2}.
  Therefore, the uniform inequality \eqref{E:Uni_Poin_Dom} is valid.
\end{proof}

Next we consider the Neumann problem of Poisson's equation
\begin{align} \label{E:Poisson_Dom}
  -\Delta\varphi = \xi \quad\text{in}\quad \Omega_\varepsilon, \quad \frac{\partial\varphi}{\partial n_\varepsilon} = 0 \quad\text{on}\quad \Gamma_\varepsilon
\end{align}
for $\xi\in H^{-1}(\Omega_\varepsilon)$ satisfying $\langle\xi,1\rangle_{\Omega_\varepsilon}=0$, where $\langle\cdot,\cdot\rangle_{\Omega_\varepsilon}$ denotes the duality product between $H^{-1}(\Omega_\varepsilon)$ and $H^1(\Omega_\varepsilon)$.
By the Lax--Milgram theory there exists a unique weak solution $\varphi\in H^1(\Omega_\varepsilon)$ satisfying
\begin{align} \label{E:PD_Weak}
  (\nabla\varphi,\nabla\zeta)_{L^2(\Omega_\varepsilon)} = \langle\xi,\zeta\rangle_{\Omega_\varepsilon} \quad\text{for all}\quad \zeta\in H^1(\Omega_\varepsilon),\quad \int_{\Omega_\varepsilon}\varphi\,dx = 0.
\end{align}
Moreover, by the elliptic regularity theorem, if $\xi\in L^2(\Omega_\varepsilon)$ then $\varphi\in H^2(\Omega_\varepsilon)$ and
\begin{align} \label{E:PD_H2_Ep}
  \|\varphi\|_{H^2(\Omega_\varepsilon)} \leq c_\varepsilon\|\xi\|_{L^2(\Omega_\varepsilon)}
\end{align}
with a constant $c_\varepsilon>0$ depending on $\varepsilon$.
In this case, the equation \eqref{E:Poisson_Dom} is satisfied in the strong sense.
Let us show that $c_\varepsilon$ in \eqref{E:PD_H2_Ep} can be taken independently of $\varepsilon$.

\begin{lemma} \label{L:PD_H2}
  Let $\varepsilon_\sigma$ be the constant given in Lemma~\ref{L:Uni_Poin_Dom}.
  For $\varepsilon\in(0,\varepsilon_\sigma)$ suppose that $\xi\in L^2(\Omega_\varepsilon)$ satisfies $\int_{\Omega_\varepsilon}\xi\,dx=0$ and let $\varphi\in H^2(\Omega_\varepsilon)$ be a unique solution to \eqref{E:Poisson_Dom}.
  Then there exists a constant $c>0$ independent of $\varepsilon$, $\xi$, and $\varphi$ such that
  \begin{align} \label{E:PD_H2}
    \|\varphi\|_{H^2(\Omega_\varepsilon)} \leq c\|\xi\|_{L^2(\Omega_\varepsilon)}.
  \end{align}
\end{lemma}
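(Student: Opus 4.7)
The plan is to derive the three components $\|\varphi\|_{L^2}$, $\|\nabla\varphi\|_{L^2}$, and $\|\nabla^2\varphi\|_{L^2}$ separately, obtaining $\varepsilon$-uniform constants at each step. The first two are immediate; the third is the real obstacle.

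First I would use the weak formulation \eqref{E:PD_Weak} with the test function $\zeta=\varphi$, combined with the uniform Poincar\'{e} inequality \eqref{E:Uni_Poin_Dom} (applicable because $\int_{\Omega_\varepsilon}\varphi\,dx=0$), to obtain
\begin{align*}
  \|\nabla\varphi\|_{L^2(\Omega_\varepsilon)}^2 = (\xi,\varphi)_{L^2(\Omega_\varepsilon)} \leq \|\xi\|_{L^2(\Omega_\varepsilon)}\|\varphi\|_{L^2(\Omega_\varepsilon)} \leq c\|\xi\|_{L^2(\Omega_\varepsilon)}\|\nabla\varphi\|_{L^2(\Omega_\varepsilon)},
\end{align*}
which yields $\|\nabla\varphi\|_{L^2(\Omega_\varepsilon)}\leq c\|\xi\|_{L^2(\Omega_\varepsilon)}$, and then $\|\varphi\|_{L^2(\Omega_\varepsilon)}\leq c\|\xi\|_{L^2(\Omega_\varepsilon)}$ by applying \eqref{E:Uni_Poin_Dom} once more.

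For the $H^2$-bound, my plan is to pull the problem back to the fixed domain $\Omega_1$ via the diffeomorphism $\Phi_\varepsilon\colon\Omega_1\to\Omega_\varepsilon$ of \eqref{E:Def_Korn_Aux}, so that the $\varepsilon$-dependence is absorbed into the coefficients rather than the geometry. Setting $\Psi:=\varphi\circ\Phi_\varepsilon$ and $\eta:=\xi\circ\Phi_\varepsilon$ on $\Omega_1$, a direct chain-rule computation using \eqref{E:Pi_Der} and \eqref{E:Nor_Grad} shows that $\Psi$ satisfies an equation of the form
\begin{align*}
  -\mathrm{div}_X(A_\varepsilon(X)\nabla_X\Psi) = \eta_\varepsilon(X) \quad\text{in}\quad \Omega_1, \quad A_\varepsilon(X)\nabla_X\Psi\cdot n_{\Omega_1} = 0 \quad\text{on}\quad \Gamma_1,
\end{align*}
where the matrix $A_\varepsilon$ and source $\eta_\varepsilon$ arise from the Jacobian of $\Phi_\varepsilon$. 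The anisotropy is crucial: tangential components of $A_\varepsilon$ are of order $\varepsilon$ and the normal-normal component is of order $\varepsilon^{-1}$, reflecting the $\varepsilon^{-1}$-scaling of the normal derivative. Rescaling $\Psi$ and $\eta_\varepsilon$ appropriately (or equivalently, rescaling the equation by $\varepsilon$), one obtains an elliptic equation on the fixed, smooth domain $\Omega_1$ whose coefficients are of class $C^3$ and \emph{uniformly bounded} in $\varepsilon$ together with a uniform ellipticity constant, with Neumann boundary data equal to zero.

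Since $\Omega_1$ and its boundary are fixed and of sufficient regularity ($C^4$ suffices), standard $H^2$-regularity for the Neumann problem (e.g.\ \cite{Ev10,GiTr01}) applies with a constant depending only on $\Omega_1$ and on uniform bounds for the coefficients, hence \emph{independent of} $\varepsilon$. This furnishes $\|\Psi\|_{H^2(\Omega_1)}\leq c(\|\eta_\varepsilon\|_{L^2(\Omega_1)}+\|\Psi\|_{H^1(\Omega_1)})$, and translating back through the transformation rules \eqref{E:L2_Omega_1}, \eqref{E:KAux_Grad}, and their obvious $H^2$-analogue yields the desired bound \eqref{E:PD_H2}; the $H^1$-term on the right is controlled by the first step. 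The main obstacle will be the careful bookkeeping in the rescaling: one must verify that after multiplying by appropriate powers of $\varepsilon$, the coefficient matrix is uniformly elliptic on $\Omega_1$ and its $C^1$-norm is bounded independently of $\varepsilon$, and that the ensuing estimate, once transported back through $\Phi_\varepsilon$, produces exactly the norm $\|\varphi\|_{H^2(\Omega_\varepsilon)}$ on the left-hand side without extra powers of $\varepsilon$.
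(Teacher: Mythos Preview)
Your $H^1$ step is correct and matches the paper. The gap is in the $H^2$ step. The pullback to $\Omega_1$ yields a problem whose coefficient matrix $A_\varepsilon$ has, as you say, tangential eigenvalues of order $\varepsilon$ and normal eigenvalue of order $\varepsilon^{-1}$; the ellipticity ratio is therefore $\varepsilon^{-2}$, and multiplying the equation by any power of $\varepsilon$ does not change that ratio. The constant in standard $H^2$ Neumann regularity depends on this ratio, so all you recover is \eqref{E:PD_H2_Ep} with $c_\varepsilon$ blowing up. There is a second, independent obstruction: even a hypothetical uniform bound on $\|\Psi\|_{H^2(\Omega_1)}$ would \emph{not} transport back to a uniform bound on $\|\varphi\|_{H^2(\Omega_\varepsilon)}$. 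From \eqref{Pf_KA:Grad_U} the normal derivative carries a factor $\varepsilon^{-1}$, so $\|\partial_n^2\Psi\|_{L^2(\Omega_1)}^2\sim\varepsilon^3\|\partial_n^2\varphi\|_{L^2(\Omega_\varepsilon)}^2$ by \eqref{E:L2_Omega_1}, and you lose two powers of $\varepsilon$ on the second normal derivative. The ``obvious $H^2$-analogue'' of \eqref{E:KAux_Grad} goes the wrong way for your purpose.

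The paper avoids the pullback and works directly on $\Omega_\varepsilon$. After a density step (approximating $\xi$ by $C_c^\infty$ data so that $\varphi\in H^3(\Omega_\varepsilon)$), two integrations by parts together with the Neumann condition $\nabla\varphi\cdot n_\varepsilon=0$ give
\[
  \|\nabla^2\varphi\|_{L^2(\Omega_\varepsilon)}^2 = \|\Delta\varphi\|_{L^2(\Omega_\varepsilon)}^2 + \int_{\Gamma_\varepsilon}(\nabla\varphi\cdot\nabla)\nabla\varphi\cdot n_\varepsilon\,d\mathcal{H}^2.
\]
Since $\nabla\varphi$ is tangential on $\Gamma_\varepsilon$, this boundary integral is of exactly the form handled in the proof of Lemma~\ref{L:Korn_Grad}: by \eqref{Pf_KG:Est_Bint} (applied with $u=\nabla\varphi$) it is bounded by $c(\|\nabla\varphi\|_{L^2(\Omega_\varepsilon)}^2+\|\nabla\varphi\|_{L^2(\Omega_\varepsilon)}\|\nabla^2\varphi\|_{L^2(\Omega_\varepsilon)})$ with $c$ independent of $\varepsilon$, the uniformity coming from the cancellation between inner and outer boundaries encoded in \eqref{E:Diff_WH_IO}. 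Young's inequality then absorbs the $\|\nabla^2\varphi\|$-term and yields \eqref{Pf_PDH:Goal}.
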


\begin{proof}
  Setting $\zeta=\varphi$ in \eqref{E:PD_Weak} and using \eqref{E:Uni_Poin_Dom} we immediately get
  \begin{align*}
    \|\varphi\|_{H^1(\Omega_\varepsilon)} \leq c\|\nabla\varphi\|_{L^2(\Omega_\varepsilon)} \leq c\|\xi\|_{H^{-1}(\Omega_\varepsilon)} \leq c\|\xi\|_{L^2(\Omega_\varepsilon)}.
  \end{align*}
  Hence it is sufficient to show that
  \begin{align} \label{Pf_PDH:Goal}
    \|\nabla^2\varphi\|_{L^2(\Omega_\varepsilon)} \leq c\left(\|\Delta\varphi\|_{L^2(\Omega_\varepsilon)}+\|\varphi\|_{H^1(\Omega_\varepsilon)}\right) = c\left(\|\xi\|_{L^2(\Omega_\varepsilon)}+\|\varphi\|_{H^1(\Omega_\varepsilon)}\right)
  \end{align}
  with some constant $c>0$ independent of $\varepsilon$ (note that $-\Delta\varphi=\xi$ a.e. in $\Omega_\varepsilon$).

  Since $C_c^\infty(\Omega_\varepsilon)$ is dense in $L^2(\Omega_\varepsilon)$, we can take a sequence $\{\xi_k\}_{k=1}^\infty$ of functions in $C_c^\infty(\Omega_\varepsilon)$ that converges strongly to $\xi$ in $L^2(\Omega_\varepsilon)$.
  For each $k\in\mathbb{N}$ let $\varphi_k\in H^1(\Omega_\varepsilon)$ be a unique weak solution to \eqref{E:Poisson_Dom} with source term
  \begin{align*}
    \tilde{\xi}_k(x) := \xi_k(x)-\frac{1}{|\Omega_\varepsilon|}\int_{\Omega_\varepsilon}\xi_k(y)\,dy, \quad x\in\Omega_\varepsilon \quad (\langle\tilde{\xi}_k,1\rangle_{\Omega_\varepsilon}=(\tilde{\xi}_k,1)_{L^2(\Omega_\varepsilon)}=0).
  \end{align*}
  Here $|\Omega_\varepsilon|$ is the volume of $\Omega_\varepsilon$.
  Since $\tilde{\xi}_k\in C^\infty(\overline{\Omega}_\varepsilon)$ and $\Gamma_\varepsilon$ is of class $C^4$, the elliptic regularity theorem yields $\varphi_k\in H^3(\Omega_\varepsilon)$.
  Also,
  \begin{align*}
    \lim_{k\to\infty}\int_{\Omega_\varepsilon}\xi_k\,dx = \lim_{k\to\infty}(\xi_k,1)_{L^2(\Omega_\varepsilon)} = (\xi,1)_{L^2(\Omega_\varepsilon)} = \int_{\Omega_\varepsilon}\xi\,dx = 0
  \end{align*}
  by the strong convergence of $\{\xi_k\}_{k=1}^\infty$ to $\xi$ in $L^2(\Omega_\varepsilon)$ and thus
  \begin{align} \label{Pf_PDH:Xi_Conv}
    \|\xi-\tilde{\xi}_k\|_{L^2(\Omega_\varepsilon)} \leq \|\xi-\xi_k\|_{L^2(\Omega_\varepsilon)}+\frac{1}{|\Omega_\varepsilon|^{1/2}}\left|\int_{\Omega_\varepsilon}\xi_k\,dx\right| \to 0 \quad\text{as}\quad k\to\infty.
  \end{align}
  Since $\varphi-\varphi_k$ is a unique solution to \eqref{E:Poisson_Dom} for the source term $\xi-\tilde{\xi}_k$,
  \begin{align*}
    \|\varphi-\varphi_k\|_{H^2(\Omega_\varepsilon)} \leq c_\varepsilon\|\xi-\tilde{\xi}_k\|_{L^2(\Omega_\varepsilon)} \to 0 \quad\text{as}\quad k\to\infty
  \end{align*}
  by \eqref{E:PD_H2_Ep} and \eqref{Pf_PDH:Xi_Conv} (note that the constant $c_\varepsilon$ does not depend on $k$).
  Hence we can derive \eqref{Pf_PDH:Goal} by showing the same inequality for $\varphi_k$ and sending $k\to\infty$.

  From now on, we fix and suppress the subscript $k$.
  Hence we suppose that $\varphi$ is in $H^3(\Omega_\varepsilon)$ and satisfies \eqref{E:Poisson_Dom} in the strong sense.
  In particular, we have
  \begin{align} \label{Pf_PDH:Bo}
    \nabla\varphi\cdot n_\varepsilon = \frac{\partial\varphi}{\partial n_\varepsilon} = 0 \quad\text{on}\quad \Gamma_\varepsilon.
  \end{align}
  By the regularity of $\varphi$ we can carry out integration by parts twice to get
  \begin{align} \label{Pf_PDH:IbP}
    \begin{aligned}
      \|\nabla^2\varphi\|_{L^2(\Omega_\varepsilon)}^2 &= \|\Delta\varphi\|_{L^2(\Omega_\varepsilon)}^2+\int_{\Gamma_\varepsilon}\{(\nabla\varphi\cdot\nabla)\nabla\varphi\cdot n_\varepsilon-(\nabla\varphi\cdot n_\varepsilon)\Delta\varphi\}\,d\mathcal{H}^2 \\
      &= \|\Delta\varphi\|_{L^2(\Omega_\varepsilon)}^2+\int_{\Gamma_\varepsilon}(\nabla\varphi\cdot\nabla)\nabla\varphi\cdot n_\varepsilon\,d\mathcal{H}^2.
    \end{aligned}
  \end{align}
  Here we used \eqref{Pf_PDH:Bo} in the second equality.
  Moreover, based on \eqref{Pf_PDH:Bo} we can show as in the proof of Lemma~\ref{L:Korn_Grad} (see \eqref{Pf_KG:Est_Bint}) that
  \begin{align*}
    \left|\int_{\Gamma_\varepsilon}(\nabla\varphi\cdot\nabla)\nabla\varphi\cdot n_\varepsilon\,d\mathcal{H}^2\right| \leq c\left(\|\nabla\varphi\|_{L^2(\Omega_\varepsilon)}^2+\|\nabla\varphi\|_{L^2(\Omega_\varepsilon)}\|\nabla^2\varphi\|_{L^2(\Omega_\varepsilon)}\right).
  \end{align*}
  Applying this inequality to \eqref{Pf_PDH:IbP} and using Young's inequality we obtain
  \begin{align*}
    \|\nabla^2\varphi\|_{L^2(\Omega_\varepsilon)}^2 \leq \|\Delta\varphi\|_{L^2(\Omega_\varepsilon)}^2+c\|\nabla\varphi\|_{L^2(\Omega_\varepsilon)}^2+\frac{1}{2}\|\nabla^2\varphi\|_{L^2(\Omega_\varepsilon)},
  \end{align*}
  which yields \eqref{Pf_PDH:Goal}.
  Hence the inequality \eqref{E:PD_H2} follows.
\end{proof}

Now let us derive the uniform estimate \eqref{E:HP_Dom} for the difference $u-\mathbb{L}_\varepsilon u$.

\begin{proof}[Proof of Lemma~\ref{L:HP_Dom}]
  Let $\varepsilon_\sigma$ be the constant given in Lemma~\ref{L:Uni_Poin_Dom} and $\varepsilon\in(0,\varepsilon_\sigma)$.
  For $u\in H^1(\Omega_\varepsilon)^3$ let $\xi:=-\mathrm{div}\,u\in L^2(\Omega_\varepsilon)$.
  If $u\cdot n_\varepsilon=0$ on $\Gamma_\varepsilon$, then
  \begin{align*}
    \langle\xi,1\rangle_{\Omega_\varepsilon} = \int_{\Omega_\varepsilon}\xi\,dx = -\int_{\Gamma_\varepsilon}u\cdot n_\varepsilon\,d\mathcal{H}^2 = 0
  \end{align*}
  by the divergence theorem.
  Since $\mathbb{L}_\varepsilon u=u-\nabla\varphi$ and $\varphi\in H^1(\Omega_\varepsilon)$ is a unique weak solution to \eqref{E:Poisson_Dom} with $\xi=-\mathrm{div}\,u$, Lemma~\ref{L:PD_H2} yields
  \begin{align*}
    \|u-\mathbb{L}_\varepsilon u\|_{H^1(\Omega_\varepsilon)} = \|\nabla\varphi\|_{H^1(\Omega_\varepsilon)} \leq c\|\xi\|_{L^2(\Omega_\varepsilon)} = c\|\mathrm{div}\,u\|_{L^2(\Omega_\varepsilon)}
  \end{align*}
  with a constant $c>0$ independent of $\varepsilon$.
  Hence \eqref{E:HP_Dom} is valid.
\end{proof}

\subsection{Definition and basic properties of the Stokes operator} \label{SS:St_Def}
Let us define the Stokes operator on $\Omega_\varepsilon$ associated with slip boundary conditions and give its basic properties.
For $u_1\in H^2(\Omega_\varepsilon)^3$ and $u_2\in H^1(\Omega_\varepsilon)^3$ we have
\begin{multline} \label{E:IbP_St}
  \int_{\Omega_\varepsilon}\{\Delta u_1+\nabla(\mathrm{div}\,u_1)\}\cdot u_2\,dx \\
  = -2\int_{\Omega_\varepsilon}D(u_1):D(u_2)\,dx+2\int_{\Gamma_\varepsilon}[D(u_1)n_\varepsilon]\cdot u_2\,d\mathcal{H}^2
\end{multline}
by integration by parts.
If $u_1$ and $u_2$ satisfy $\mathrm{div}\,u_1=0$ in $\Omega_\varepsilon$ and
\begin{align} \label{E:IbP_StBo}
  u_1\cdot n_\varepsilon = 0, \quad 2\nu P_\varepsilon D(u_1)n_\varepsilon+\gamma_\varepsilon u_1 = 0, \quad u_2\cdot n_\varepsilon = 0 \quad\text{on}\quad \Gamma_\varepsilon,
\end{align}
then from the above identity it follows that
\begin{align*}
  \nu\int_{\Omega_\varepsilon}\Delta u_1\cdot u_2\,dx = -2\nu\int_{\Omega_\varepsilon}D(u_1):D(u_2)\,dx-\sum_{i=0,1}\gamma_\varepsilon^i\int_{\Gamma_\varepsilon^i}u_1\cdot u_2\,d\mathcal{H}^2.
\end{align*}
Based on this observation we define a bilinear form
\begin{align} \label{E:Def_Bili_Dom}
  a_\varepsilon(u_1,u_2) := 2\nu\bigl(D(u_1),D(u_2)\bigr)_{L^2(\Omega_\varepsilon)}+\sum_{i=0,1}\gamma_\varepsilon^i(u_1,u_2)_{L^2(\Gamma_\varepsilon^i)}
\end{align}
for $u_1,u_2\in H^1(\Omega_\varepsilon)^3$.
By definition, $a_\varepsilon$ is symmetric on $H^1(\Omega_\varepsilon)^3$.

\begin{lemma} \label{L:Bili_Core}
  Under Assumptions~\ref{Assump_1} and~\ref{Assump_2}, let $\mathcal{V}_\varepsilon$ be the subspace of $H^1(\Omega_\varepsilon)^3$ given by \eqref{E:Def_Heps}.
  There exist constants $\varepsilon_0\in(0,1)$ and $c>0$ such that
  \begin{align} \label{E:Bili_Core}
    c^{-1}\|u\|_{H^1(\Omega_\varepsilon)}^2 \leq a_\varepsilon(u,u) \leq c\|u\|_{H^1(\Omega_\varepsilon)}^2
  \end{align}
  for all $\varepsilon\in(0,\varepsilon_0)$ and $u\in\mathcal{V}_\varepsilon$.
\end{lemma}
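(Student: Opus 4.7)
My plan is to split the two inequalities and treat the upper bound first. For any $u\in H^1(\Omega_\varepsilon)^3$ satisfying $u\cdot n_\varepsilon=0$ on $\Gamma_\varepsilon$, the pointwise bound $|D(u)|\le|\nabla u|$ controls the volume term of $a_\varepsilon(u,u)$ by $\|u\|_{H^1(\Omega_\varepsilon)}^2$, while the boundary terms are handled by combining the trace-type Poincar\'{e} inequality \eqref{E:Poin_Bo} (with $p=2$) with $\gamma_\varepsilon^i\le c\varepsilon$ from Assumption~\ref{Assump_1}: writing
\begin{align*}
  \gamma_\varepsilon^i\|u\|_{L^2(\Gamma_\varepsilon^i)}^2
  \le c\varepsilon\bigl(\varepsilon^{-1}\|u\|_{L^2(\Omega_\varepsilon)}^2+\varepsilon\|\nabla u\|_{L^2(\Omega_\varepsilon)}^2\bigr)
  \le c\|u\|_{H^1(\Omega_\varepsilon)}^2
\end{align*}
yields the right-hand inequality of \eqref{E:Bili_Core} with no restriction on $\varepsilon$.

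For the coerciveness I will dispatch the easy cases first. Under condition (A2) the space $\mathcal{K}_g(\Gamma)$ is trivial, so the orthogonality hypothesis \eqref{E:Korn_Orth} is satisfied vacuously by every $u\in\mathcal{V}_\varepsilon$ (which already satisfies $u\cdot n_\varepsilon=0$ on $\Gamma_\varepsilon$ by virtue of lying in $L_\sigma^2(\Omega_\varepsilon)$); Lemma~\ref{L:Korn_H1} with $\beta=0$ then gives $\|u\|_{H^1(\Omega_\varepsilon)}^2\le c_{K,0}\|D(u)\|_{L^2(\Omega_\varepsilon)}^2\le c\,a_\varepsilon(u,u)$, since the boundary terms in $a_\varepsilon$ are non-negative. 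Under condition (A3) we have $\mathcal{R}_g=\mathcal{R}_0\cap\mathcal{R}_1$ together with $\mathcal{R}_g|_\Gamma=\mathcal{K}_g(\Gamma)$, and every $u\in\mathcal{V}_\varepsilon\subset\mathcal{R}_g^\perp$ satisfies \eqref{E:KoRg_Orth} with $\beta=0$; therefore Lemma~\ref{L:Korn_Rg} applies and the same chain of inequalities holds, noting that $\gamma_\varepsilon^0=\gamma_\varepsilon^1=0$ means we genuinely need $D(u)$ alone to control $\|u\|_{H^1(\Omega_\varepsilon)}$.

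Condition (A1) is the main obstacle, since neither of the Korn inequalities of Section~\ref{SS:Korn_Dom} is directly applicable (a nontrivial Killing field may exist). To treat it I will assume, without loss of generality, $\gamma_\varepsilon^0\ge c\varepsilon$ and prove the coerciveness variant
\begin{align*}
  \|u\|_{L^2(\Omega_\varepsilon)}^2\le c\bigl(\|D(u)\|_{L^2(\Omega_\varepsilon)}^2+\gamma_\varepsilon^0\|u\|_{L^2(\Gamma_\varepsilon^0)}^2\bigr)
\end{align*}
for all sufficiently small $\varepsilon$ and all $u\in H^1(\Omega_\varepsilon)^3$ with $u\cdot n_\varepsilon=0$ on $\Gamma_\varepsilon$; combining this with the unconditional inequality \eqref{E:Korn_Grad} of Lemma~\ref{L:Korn_Grad} upgrades it to the desired $\|u\|_{H^1(\Omega_\varepsilon)}^2\le c\,a_\varepsilon(u,u)$. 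The argument proceeds by contradiction along the lines of the proof of Lemma~\ref{L:Korn_U}: given a sequence $\{u_k\}$ in $H^1(\Omega_{\varepsilon_k})^3$ violating the estimate with constant $k$, I transform via $\Phi_{\varepsilon_k}$ of \eqref{E:Def_Korn_Aux} to $U_k=u_k\circ\Phi_{\varepsilon_k}$ on $\Omega_1$, normalize so that $\|U_k\|_{L^2(\Omega_1)}=1$, and divide the negated inequality by $\varepsilon_k$. Using Lemma~\ref{L:Korn_Aux} as in the proof of Lemma~\ref{L:Korn_U}, I obtain $\overline{P}\nabla U_k\to 0$, $\varepsilon_k^{-1}\partial_n U_k\to 0$, and $\overline{P}F_{\varepsilon_k}(U_k)_S\overline{P}\to 0$ in $L^2(\Omega_1)$, so that $U_k$ converges (along a subsequence) weakly in $H^1(\Omega_1)^3$ and strongly in $L^2(\Omega_1)^3$ to the constant extension $\bar v$ of some $v\in\mathcal{K}_g(\Gamma)$, with $\|\bar v\|_{L^2(\Omega_1)}=1$.

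The new ingredient is the consequence of the boundary term. Dividing the negated inequality by $\varepsilon_k$ and using the assumption $\gamma_{\varepsilon_k}^0\ge c\varepsilon_k$ yields $\|u_k\|_{L^2(\Gamma_{\varepsilon_k}^0)}^2\to 0$, which through the uniform equivalence \eqref{E:Lp_CoV_Surf} applied under the identification $u_k(y+\varepsilon_k g_0(y)n(y))=U_k(y+g_0(y)n(y))$ translates into $\|U_k\|_{L^2(\Gamma_1^0)}^2\to 0$. On the other hand, the compact trace $H^1(\Omega_1)\hookrightarrow L^2(\Gamma_1^0)$ combined with the weak $H^1$ limit gives strong convergence $U_k\to\bar v$ in $L^2(\Gamma_1^0)$, and hence $\bar v|_{\Gamma_1^0}=0$. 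Because $\bar v$ is the constant normal extension of $v$, this forces $v=0$ on $\Gamma$, hence $\bar v=0$ in $\Omega_1$, contradicting $\|\bar v\|_{L^2(\Omega_1)}=1$. The only delicate point is keeping the scalings straight across the three conventions (volumes over $\Omega_\varepsilon$, volumes over $\Omega_1$, surface integrals over $\Gamma_\varepsilon^0$ and $\Gamma_1^0$), and confirming that the boundary Jacobians $J(\cdot,\varepsilon_k g_0)\sqrt{1+\varepsilon_k^2|\tau_{\varepsilon_k}^0|^2}$ and $J(\cdot,g_0)\sqrt{1+|\tau_1^0|^2}$ are both bounded above and below uniformly in $k$ by \eqref{E:Jac_Bound} and \eqref{E:Tau_Bound}; this is routine but is the step where a calculation error would be fatal. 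The threshold $\varepsilon_0$ is then the minimum of the thresholds produced by Lemma~\ref{L:Korn_H1}, Lemma~\ref{L:Korn_Rg}, and the contradiction argument above.
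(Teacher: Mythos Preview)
Your upper bound and your treatment of (A2) and (A3) match the paper exactly. For (A1), however, you take a genuinely different route: the paper avoids any contradiction or compactness argument and instead uses the Poincar\'{e}-type inequality \eqref{E:Poin_Dom} directly. From $\|u\|_{L^2(\Omega_\varepsilon)}^2\le c(\varepsilon\|u\|_{L^2(\Gamma_\varepsilon^0)}^2+\varepsilon^2\|\nabla u\|_{L^2(\Omega_\varepsilon)}^2)$ together with $\gamma_\varepsilon^0\ge c\varepsilon$ and \eqref{E:Korn_Grad}, one gets $\|u\|_{L^2(\Omega_\varepsilon)}^2\le c_1 a_\varepsilon(u,u)+c_2\varepsilon^2\|u\|_{L^2(\Omega_\varepsilon)}^2$, and the last term is absorbed once $\varepsilon$ is small. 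This is a three-line argument with an explicit $\varepsilon_0$; your compactness approach works but is considerably heavier and gives no explicit threshold.

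There is also a small gap in your contradiction argument as written. You claim $\overline{P}\nabla U_k\to 0$ and $\varepsilon_k^{-1}\partial_n U_k\to 0$ ``as in the proof of Lemma~\ref{L:Korn_U}'', but there those bounds come from the $\alpha\|\nabla u\|^2$ term, which is absent from your negated inequality. From $\|D(u_k)\|^2$ alone, \eqref{E:KAux_Du} only gives $\overline{P}F_{\varepsilon_k}(U_k)_S\overline{P}\to 0$ and $\varepsilon_k^{-1}\partial_n(U_k\cdot\bar n)\to 0$. To obtain the $H^1(\Omega_1)$ boundedness you need for compactness, you must invoke \eqref{E:Korn_Grad} \emph{inside} the contradiction (combined with $\varepsilon_k^{-1}\|u_k\|_{L^2}^2\sim 1$) to get $\varepsilon_k^{-1}\|\nabla u_k\|^2\le C$; then \eqref{E:KAux_Grad} yields $\|\overline{P}\nabla U_k\|$ bounded (not $\to 0$) and $\|\partial_n U_k\|\le C\varepsilon_k\to 0$. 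With this correction the remainder of your argument --- the trace compactness and the identification $\|u_k\|_{L^2(\Gamma_{\varepsilon_k}^0)}\sim\|U_k\|_{L^2(\Gamma_1^0)}$ --- goes through as you describe.
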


\begin{proof}
  Let $u\in \mathcal{V}_\varepsilon$.
  By \eqref{E:Fric_Upper} in Assumption~\ref{Assump_1} and \eqref{E:Poin_Bo} we have
  \begin{align*}
    \gamma_\varepsilon^i\|u\|_{L^2(\Gamma_\varepsilon^i)}^2 \leq c\varepsilon\left(\varepsilon^{-1}\|u\|_{L^2(\Omega_\varepsilon)}^2+\varepsilon\|\partial_nu\|_{L^2(\Omega_\varepsilon)}^2\right) \leq c\|u\|_{H^1(\Omega_\varepsilon)}^2
  \end{align*}
  for $i=0,1$.
  Combining this with
  \begin{align*}
    \|D(u)\|_{L^2(\Omega_\varepsilon)}^2 = \frac{1}{2}\left(\|\nabla u\|_{L^2(\Omega_\varepsilon)}^2+\int_{\Omega_\varepsilon}\nabla u:(\nabla u)^T\,dx\right) \leq \|\nabla u\|_{L^2(\Omega_\varepsilon)}^2
  \end{align*}
  by H\"{o}lder's inequality we get the right-hand inequality of \eqref{E:Bili_Core}.

  Let us prove the left-hand inequality of \eqref{E:Bili_Core}.
  First we assume that the condition (A1) of Assumption~\ref{Assump_2} is satisfied.
  Without loss of generality, we may assume that $\gamma_\varepsilon^0\geq c\varepsilon$ for all $\varepsilon\in(0,1)$.
  For $u\in\mathcal{V}_\varepsilon$ we use \eqref{E:Poin_Dom} with $i=0$ to get
  \begin{align*}
    \|u\|_{L^2(\Omega_\varepsilon)}^2 \leq c\left(\varepsilon\|u\|_{L^2(\Gamma_\varepsilon^0)}^2+\varepsilon^2\|\nabla u\|_{L^2(\Omega_\varepsilon)}^2\right).
  \end{align*}
  Moreover, by $\gamma_\varepsilon^0\geq c\varepsilon$ and \eqref{E:Korn_Grad} (note that $u$ satisfies \eqref{E:Bo_Imp} on $\Gamma_\varepsilon$),
  \begin{align*}
    \|u\|_{L^2(\Omega_\varepsilon)}^2 &\leq c\left(\gamma_\varepsilon^0\|u\|_{L^2(\Gamma_\varepsilon^0)}^2+\varepsilon^2\|D(u)\|_{L^2(\Omega_\varepsilon)}^2+\varepsilon^2\|u\|_{L^2(\Omega_\varepsilon)}^2\right) \\
    &\leq c_1a_\varepsilon(u,u)+c_2\varepsilon^2\|u\|_{L^2(\Omega_\varepsilon)}^2
  \end{align*}
  with positive constants $c_1$ and $c_2$ independent of $\varepsilon$.
  We set $\varepsilon_1:=1/\sqrt{2c_2}$ and take $\varepsilon\in(0,\varepsilon_1)$ in the above inequality to get
  \begin{align*}
    \|u\|_{L^2(\Omega_\varepsilon)}^2 \leq 2c_1a_\varepsilon(u,u).
  \end{align*}
  From this inequality and \eqref{E:Korn_Grad} we also deduce that
  \begin{align*}
    \|\nabla u\|_{L^2(\Omega_\varepsilon)}^2 \leq ca_\varepsilon(u,u).
  \end{align*}
  These two inequalities imply the left-hand inequality of \eqref{E:Bili_Core}.

  Next we suppose that the condition (A2) or (A3) of Assumption~\ref{Assump_2} is satisfied.
  Then $u\in\mathcal{V}_\varepsilon$ satisfies \eqref{E:Bo_Imp} on $\Gamma_\varepsilon$ and \eqref{E:Korn_Orth} (resp. \eqref{E:KoRg_Orth}) with $\beta=0$ under the condition (A2) (resp. (A3)).
  Hence by Lemmas~\ref{L:Korn_H1} and~\ref{L:Korn_Rg} there exist $\varepsilon_{K,0}\in(0,1)$ and $c_{K,0}>0$ such that
  \begin{align*}
    \|u\|_{H^1(\Omega_\varepsilon)}^2 \leq c_{K,0}\|D(u)\|_{L^2(\Omega_\varepsilon)}^2 \leq c_{K,0}a_\varepsilon(u,u)
  \end{align*}
  for all $\varepsilon\in(0,\varepsilon_{K,0})$ and $u\in\mathcal{V}_\varepsilon$, i.e. the left-hand inequality of \eqref{E:Bili_Core} holds.

  Therefore, we conclude that the lemma is valid with $\varepsilon_0:=\min\{\varepsilon_1,\varepsilon_{K,0}\}$.
\end{proof}

Under Assumptions~\ref{Assump_1} and~\ref{Assump_2}, let $\varepsilon_0$ be the constant given in Lemma~\ref{L:Bili_Core}.
For $\varepsilon\in(0,\varepsilon_0)$ the bilinear form $a_\varepsilon$ is continuous, coercive, and symmetric on the closed subspace $\mathcal{V}_\varepsilon$ of $H^1(\Omega_\varepsilon)^3$ by Lemma~\ref{L:Bili_Core}.
Hence by the Lax--Milgram theorem there exists a bounded linear operator $A_\varepsilon$ from $\mathcal{V}_\varepsilon$ into its dual space $\mathcal{V}_\varepsilon'$ such that
\begin{align*}
  {}_{\mathcal{V}_\varepsilon'}\langle A_\varepsilon u_1,u_2\rangle_{\mathcal{V}_\varepsilon} = a_\varepsilon(u_1,u_2), \quad u_1,u_2\in \mathcal{V}_\varepsilon,
\end{align*}
where ${}_{\mathcal{V}_\varepsilon'}\langle\cdot,\cdot\rangle_{\mathcal{V}_\varepsilon}$ is the duality product between $\mathcal{V}_\varepsilon'$ and $\mathcal{V}_\varepsilon$ (see e.g.~\cite{BoFa13}).
Let $\mathcal{H}_\varepsilon$ be the closed subspace of $L^2(\Omega_\varepsilon)^3$ given by \eqref{E:Def_Heps} and $\mathbb{P}_\varepsilon$ the orthogonal projection from $L^2(\Omega_\varepsilon)^3$ onto $\mathcal{H}_\varepsilon$.
We consider $A_\varepsilon$ as an unbounded operator on $\mathcal{H}_\varepsilon$ with its domain $D(A_\varepsilon)=\{u\in \mathcal{V}_\varepsilon \mid A_\varepsilon u\in \mathcal{H}_\varepsilon\}$.
Then the Lax--Milgram theory implies that $A_\varepsilon$ is a positive self-adjoint operator on $\mathcal{H}_\varepsilon$ and thus its square root $A_\varepsilon^{1/2}$ is well-defined on $D(A_\varepsilon^{1/2})=\mathcal{V}_\varepsilon$.
Moreover, by a regularity result on the Stokes problem with slip boundary conditions (see ~\cite{Be04,SoSc73}) we see that
\begin{gather*}
  D(A_\varepsilon) = \{u\in \mathcal{V}_\varepsilon\cap H^2(\Omega_\varepsilon)^3 \mid \text{$2\nu P_\varepsilon D(u)n_\varepsilon+\gamma_\varepsilon u=0$ on $\Gamma_\varepsilon$}\}
\end{gather*}
and $A_\varepsilon u=-\nu\mathbb{P}_\varepsilon\Delta u$ for $u\in D(A_\varepsilon)$.
Hereafter we call $A_\varepsilon$ the Stokes operator on $\mathcal{H}_\varepsilon$ associated with slip boundary conditions.
Note that we have
\begin{align} \label{E:L2in_Ahalf}
  (A_\varepsilon u_1,u_2)_{L^2(\Omega_\varepsilon)} = (A_\varepsilon^{1/2}u_1,A_\varepsilon^{1/2} u_2)_{L^2(\Omega_\varepsilon)}
\end{align}
for all $u_1\in D(A_\varepsilon)$ and $u_2\in \mathcal{V}_\varepsilon$ as well as
\begin{align} \label{E:L2nor_Ahalf}
  \|A_\varepsilon^{1/2}u\|_{L^2(\Omega_\varepsilon)}^2 = a_\varepsilon(u,u) = 2\nu\|D(u)\|_{L^2(\Omega_\varepsilon)}^2+\gamma_\varepsilon^0\|u\|_{L^2(\Gamma_\varepsilon^0)}^2+\gamma_\varepsilon^1\|u\|_{L^2(\Gamma_\varepsilon^1)}^2
\end{align}
for all $u\in \mathcal{V}_\varepsilon$.

\begin{lemma} \label{L:Stokes_H1}
  Under Assumptions~\ref{Assump_1} and~\ref{Assump_2}, there exists $c>0$ such that
  \begin{align} \label{E:Stokes_H1}
    c^{-1}\|u\|_{H^1(\Omega_\varepsilon)} \leq \|A_\varepsilon^{1/2}u\|_{L^2(\Omega_\varepsilon)} \leq c\|u\|_{H^1(\Omega_\varepsilon)}
  \end{align}
  for all $\varepsilon\in(0,\varepsilon_0)$ and $u\in \mathcal{V}_\varepsilon$.
  Moreover, if $u\in D(A_\varepsilon)$, then we have
  \begin{align} \label{E:Stokes_Po}
    \|A_\varepsilon^{1/2}u\|_{L^2(\Omega_\varepsilon)} \leq c\|A_\varepsilon u\|_{L^2(\Omega_\varepsilon)}.
  \end{align}
\end{lemma}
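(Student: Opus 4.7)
The plan is to obtain both claims essentially for free from the identity \eqref{E:L2nor_Ahalf} relating $\|A_\varepsilon^{1/2}u\|_{L^2(\Omega_\varepsilon)}$ to the bilinear form $a_\varepsilon$, together with the uniform coerciveness and boundedness of $a_\varepsilon$ on $\mathcal{V}_\varepsilon$ established in Lemma~\ref{L:Bili_Core}. The nontrivial analytic content sits entirely in Lemma~\ref{L:Bili_Core}; once that is in hand, the present statement is a short formal consequence.

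First I would prove the norm equivalence \eqref{E:Stokes_H1}. For $u\in \mathcal{V}_\varepsilon = D(A_\varepsilon^{1/2})$, the identity \eqref{E:L2nor_Ahalf} gives $\|A_\varepsilon^{1/2}u\|_{L^2(\Omega_\varepsilon)}^2 = a_\varepsilon(u,u)$. Combining this with the two-sided bound \eqref{E:Bili_Core} from Lemma~\ref{L:Bili_Core}, valid for all $\varepsilon\in(0,\varepsilon_0)$, yields
\begin{equation*}
  c^{-1}\|u\|_{H^1(\Omega_\varepsilon)}^2 \leq \|A_\varepsilon^{1/2}u\|_{L^2(\Omega_\varepsilon)}^2 \leq c\|u\|_{H^1(\Omega_\varepsilon)}^2,
\end{equation*}
with a constant $c>0$ independent of $\varepsilon$. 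Taking square roots gives \eqref{E:Stokes_H1}.

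Next, for $u\in D(A_\varepsilon)$, I would exploit the self-adjointness relation \eqref{E:L2in_Ahalf} applied with $u_1=u_2=u$, namely
\begin{equation*}
  \|A_\varepsilon^{1/2}u\|_{L^2(\Omega_\varepsilon)}^2 = (A_\varepsilon u, u)_{L^2(\Omega_\varepsilon)}.
\end{equation*}
The Cauchy--Schwarz inequality together with the trivial bound $\|u\|_{L^2(\Omega_\varepsilon)} \leq \|u\|_{H^1(\Omega_\varepsilon)}$ and the lower bound just established in \eqref{E:Stokes_H1} then gives
\begin{equation*}
  \|A_\varepsilon^{1/2}u\|_{L^2(\Omega_\varepsilon)}^2 \leq \|A_\varepsilon u\|_{L^2(\Omega_\varepsilon)}\|u\|_{L^2(\Omega_\varepsilon)} \leq c\|A_\varepsilon u\|_{L^2(\Omega_\varepsilon)}\|A_\varepsilon^{1/2}u\|_{L^2(\Omega_\varepsilon)}.
\end{equation*}
Dividing by $\|A_\varepsilon^{1/2}u\|_{L^2(\Omega_\varepsilon)}$ (and handling the trivial case $A_\varepsilon^{1/2}u=0$ separately, in which the inequality is automatic) produces \eqref{E:Stokes_Po} with a constant independent of $\varepsilon$.

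There is essentially no obstacle at this stage: the delicate work was carried out earlier in obtaining the uniform Korn inequalities of Section~\ref{SS:Korn_Dom} and the coerciveness statement of Lemma~\ref{L:Bili_Core}, which in turn required Assumptions~\ref{Assump_1} and~\ref{Assump_2} to handle the possibly degenerate friction terms and the presence of Killing fields. With those results available, the only thing to verify is the uniformity in $\varepsilon$ of the constants, and this is transparent from the chain of estimates above since every constant invoked—those from Lemma~\ref{L:Bili_Core}, from \eqref{E:L2nor_Ahalf}, and from \eqref{E:L2in_Ahalf}—is independent of $\varepsilon\in(0,\varepsilon_0)$.
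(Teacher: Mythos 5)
Your proposal is correct and follows the same route as the paper: the equivalence \eqref{E:Stokes_H1} comes directly from \eqref{E:L2nor_Ahalf} and \eqref{E:Bili_Core}, and \eqref{E:Stokes_Po} from \eqref{E:L2in_Ahalf}, Cauchy--Schwarz, $\|u\|_{L^2(\Omega_\varepsilon)}\leq\|u\|_{H^1(\Omega_\varepsilon)}$, and the already-established lower bound, followed by division by $\|A_\varepsilon^{1/2}u\|_{L^2(\Omega_\varepsilon)}$. The only cosmetic difference is your explicit treatment of the trivial case $A_\varepsilon^{1/2}u=0$, which the paper leaves implicit.
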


\begin{proof}
  The inequality \eqref{E:Stokes_H1} is an immediate consequence of \eqref{E:Bili_Core} and \eqref{E:L2nor_Ahalf}.
  To prove \eqref{E:Stokes_Po} for $u\in D(A_\varepsilon)$ we see by \eqref{E:L2in_Ahalf} and H\"{o}lder's inequality that
  \begin{align*}
    \|A_\varepsilon^{1/2}u\|_{L^2(\Omega_\varepsilon)}^2 = (u,A_\varepsilon u)_{L^2(\Omega_\varepsilon)} \leq \|u\|_{L^2(\Omega_\varepsilon)}\|A_\varepsilon u\|_{L^2(\Omega_\varepsilon)}.
  \end{align*}
  Applying $\|u\|_{L^2(\Omega_\varepsilon)}\leq \|u\|_{H^1(\Omega_\varepsilon)}$ and \eqref{E:Stokes_H1} to the right-hand side we get \eqref{E:Stokes_Po}.
\end{proof}

\subsection{Uniform regularity estimates for the Stokes operator} \label{SS:St_URE}
In this subsection we estimate the difference between the Stokes and Laplace operators and show the uniform equivalence of the norms $\|A_\varepsilon u\|_{L^2(\Omega_\varepsilon)}$ and $\|u\|_{H^2(\Omega_\varepsilon)}$.

First we give an integration by parts formula for the curl of a vector field on $\Omega_\varepsilon$.
Let $n_\varepsilon^0$ and $n_\varepsilon^1$ be the vector fields on $\Gamma$ given by \eqref{E:Def_NB} and
\begin{align*}
  W_\varepsilon^i(x) := -\{I_3-\bar{n}_\varepsilon^i(x)\otimes\bar{n}_\varepsilon^i(x)\}\nabla\bar{n}_\varepsilon^i(x), \quad x\in N,\,i=0,1.
\end{align*}
Here $\bar{n}_\varepsilon^i=n_\varepsilon^i\circ\pi$, $i=0,1$ is the constant extension of $n_\varepsilon^i$.
For $x\in N$ we set
\begin{align*}
  \tilde{n}_1(x) &:= \frac{1}{\varepsilon\bar{g}(x)}\bigl\{\bigl(d(x)-\varepsilon\bar{g}_0(x)\bigr)\bar{n}_\varepsilon^1(x)-\bigl(\varepsilon\bar{g}_1(x)-d(x)\bigr)\bar{n}_\varepsilon^0(x)\bigr\}, \\
  \tilde{n}_2(x) &:= \frac{1}{\varepsilon\bar{g}(x)}\left\{\bigl(d(x)-\varepsilon\bar{g}_0(x)\bigr)\frac{\gamma_\varepsilon^1}{\nu}\bar{n}_\varepsilon^1(x)+\bigl(\varepsilon\bar{g}_1(x)-d(x)\bigr)\frac{\gamma_\varepsilon^0}{\nu}\bar{n}_\varepsilon^0(x)\right\}, \\
  \widetilde{W}(x) &:= \frac{1}{\varepsilon\bar{g}(x)}\bigl\{\bigl(d(x)-\varepsilon\bar{g}_0(x)\bigr)W_\varepsilon^1(x)-\bigl(\varepsilon\bar{g}_1(x)-d(x)\bigr)W_\varepsilon^0(x)\bigr\}.
\end{align*}
From these definitions and Lemma~\ref{L:Nor_Bo} it follows that
\begin{align} \label{E:Tilde_Bo}
  \tilde{n}_1 = (-1)^{i+1}n_\varepsilon, \quad \tilde{n}_2 = \frac{\gamma_\varepsilon}{\nu}n_\varepsilon, \quad \widetilde{W} = (-1)^{i+1}W_\varepsilon \quad\text{on}\quad \Gamma_\varepsilon^i,\, i=0,1.
\end{align}
For a vector field $u\colon\Omega_\varepsilon\to\mathbb{R}^3$ we define $G(u)\colon\Omega_\varepsilon\to\mathbb{R}^3$ by
\begin{align} \label{E:Def_Gu}
  G(u) := G_1(u)+G_2(u), \quad G_1(u) := 2\tilde{n}_1\times \widetilde{W}u, \quad G_2(u) := \tilde{n}_2\times u.
\end{align}

\begin{lemma} \label{L:G_Bound}
  Suppose that the inequalities \eqref{E:Fric_Upper} are valid.
  Then
  \begin{align} \label{E:G_Bound}
    |G(u)| \leq c|u|, \quad |\nabla G(u)| \leq c(|u|+|\nabla u|) \quad\text{in}\quad \Omega_\varepsilon
  \end{align}
  with a constant $c>0$ independent of $\varepsilon$ and $u$.
\end{lemma}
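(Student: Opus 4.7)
The plan is to introduce the affine weights
$\alpha(x):=(d(x)-\varepsilon\bar g_0(x))/(\varepsilon\bar g(x))$ and $\beta(x):=(\varepsilon\bar g_1(x)-d(x))/(\varepsilon\bar g(x))$,
which by \eqref{Pf_EAB:Width} and \eqref{E:Width_Bound} satisfy $0\leq\alpha,\beta\leq 1$ and $\alpha+\beta\equiv 1$ in $\Omega_\varepsilon$. In these terms
\[
\tilde n_1=\alpha\bar n_\varepsilon^1-\beta\bar n_\varepsilon^0,\qquad \tilde n_2=\alpha\tfrac{\gamma_\varepsilon^1}{\nu}\bar n_\varepsilon^1+\beta\tfrac{\gamma_\varepsilon^0}{\nu}\bar n_\varepsilon^0,\qquad \widetilde W=\alpha W_\varepsilon^1-\beta W_\varepsilon^0,
\]
and the pointwise bound $|G(u)|\leq c|u|$ is immediate: $|\tilde n_1|\leq 1$ from $|\bar n_\varepsilon^i|\equiv 1$; $|\widetilde W|\leq c$ from $W_\varepsilon^i=-(I_3-\bar n_\varepsilon^i\otimes\bar n_\varepsilon^i)\nabla\bar n_\varepsilon^i$ together with \eqref{E:N_Bound} and \eqref{E:ConDer_Bound}; and $|\tilde n_2|\leq c\varepsilon$ directly from \eqref{E:Fric_Upper}.

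The gradient estimate is the delicate part. A direct calculation gives $\nabla\alpha=(\bar n-\varepsilon\nabla\bar g_0)/(\varepsilon\bar g)-\alpha\nabla\bar g/\bar g$, so $|\nabla\alpha|=|\nabla\beta|\leq c/\varepsilon$ in $\Omega_\varepsilon$, and a naive product rule would produce terms of order $\varepsilon^{-1}$. The critical observation is $\nabla\beta=-\nabla\alpha$ (from $\alpha+\beta\equiv 1$), which forces the potentially singular pieces to appear as sums rather than differences:
\begin{align*}
\nabla\tilde n_1 &= \nabla\alpha\otimes(\bar n_\varepsilon^0+\bar n_\varepsilon^1)+\alpha\nabla\bar n_\varepsilon^1-\beta\nabla\bar n_\varepsilon^0,\\
\nabla\widetilde W &= \nabla\alpha\otimes(W_\varepsilon^0+W_\varepsilon^1)+\alpha\nabla W_\varepsilon^1-\beta\nabla W_\varepsilon^0,\\
\nabla\tilde n_2 &= \nabla\alpha\otimes\bigl(\tfrac{\gamma_\varepsilon^1}{\nu}\bar n_\varepsilon^1-\tfrac{\gamma_\varepsilon^0}{\nu}\bar n_\varepsilon^0\bigr)+\alpha\tfrac{\gamma_\varepsilon^1}{\nu}\nabla\bar n_\varepsilon^1+\beta\tfrac{\gamma_\varepsilon^0}{\nu}\nabla\bar n_\varepsilon^0.
\end{align*}
Each of the three factors multiplying $\nabla\alpha$ is of size $O(\varepsilon)$: for $\tilde n_1$ by \eqref{E:N_Diff} with \eqref{E:ConDer_Bound} applied componentwise, for $\tilde n_2$ by \eqref{E:Fric_Upper}, and for $\widetilde W$ by the identity $W_\varepsilon^0+W_\varepsilon^1=-\nabla(\bar n_\varepsilon^0+\bar n_\varepsilon^1)$, which I would derive by using $|\bar n_\varepsilon^i|\equiv 1$ to kill the quadratic pieces via
\[
(\bar n_\varepsilon^i\otimes\bar n_\varepsilon^i)\nabla\bar n_\varepsilon^i=\bar n_\varepsilon^i\otimes[(\nabla\bar n_\varepsilon^i)^T\bar n_\varepsilon^i]=\tfrac{1}{2}\bar n_\varepsilon^i\otimes\nabla|\bar n_\varepsilon^i|^2=0
\]
and then invoking \eqref{E:N_Diff} and \eqref{E:ConDer_Bound}. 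The remaining summands are controlled by \eqref{E:N_Bound} and \eqref{E:ConDer_Bound}, while $|\nabla W_\varepsilon^i|\leq c$ comes from the Hessian bound \eqref{E:Con_Hess} applied to $\bar n_\varepsilon^i$ (using $n_\varepsilon^i\in C^2(\Gamma)$ from \eqref{E:N_Bound}). This yields $|\nabla\tilde n_1|,|\nabla\widetilde W|,|\nabla\tilde n_2|\leq c$ uniformly in $\varepsilon$.

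Finally, expanding $\nabla G(u)=2\nabla(\tilde n_1\times\widetilde W u)+\nabla(\tilde n_2\times u)$ by the product rule and collecting the bounds yields
\[
|\nabla G(u)|\leq c\bigl(|\nabla\tilde n_1||\widetilde W||u|+|\tilde n_1||\nabla\widetilde W||u|+|\tilde n_1||\widetilde W||\nabla u|+|\nabla\tilde n_2||u|+|\tilde n_2||\nabla u|\bigr)\leq c(|u|+|\nabla u|),
\]
which is the second inequality of \eqref{E:G_Bound}. The main obstacle is precisely the cancellation $|W_\varepsilon^0+W_\varepsilon^1|\leq c\varepsilon$: with only the individual bounds $|W_\varepsilon^i|\leq c$ one is left with an uncancelled $\varepsilon^{-1}$ in $\nabla\widetilde W$, so exploiting $|\bar n_\varepsilon^i|\equiv 1$ to rewrite $W_\varepsilon^0+W_\varepsilon^1$ as a pure gradient of $\bar n_\varepsilon^0+\bar n_\varepsilon^1$ is what makes the estimate close.
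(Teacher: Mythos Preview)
Your overall strategy coincides with the paper's: write $\tilde n_1,\tilde n_2,\widetilde W$ as convex combinations with weights $\alpha,\beta$, isolate the $O(\varepsilon^{-1})$ part of $\nabla\alpha=-\nabla\beta$, and show it always multiplies an $O(\varepsilon)$ factor. The zero-order bounds and the treatment of $\nabla\tilde n_1,\nabla\tilde n_2$ are correct.

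The step that fails as written is the identity $(\bar n_\varepsilon^i\otimes\bar n_\varepsilon^i)\nabla\bar n_\varepsilon^i=0$. In the paper's convention $[\nabla u]_{jk}=\partial_j u_k$ one has $(a\otimes a)M=a\otimes(M^Ta)$, so the relevant vector is $(\nabla\bar n_\varepsilon^i)^T\bar n_\varepsilon^i=(\bar n_\varepsilon^i\cdot\nabla)\bar n_\varepsilon^i$, the convective derivative, \emph{not} $\tfrac12\nabla|\bar n_\varepsilon^i|^2$; the latter equals $(\nabla\bar n_\varepsilon^i)\bar n_\varepsilon^i$. Hence your claimed equality $W_\varepsilon^0+W_\varepsilon^1=-\nabla(\bar n_\varepsilon^0+\bar n_\varepsilon^1)$ is false in general.

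The conclusion $|W_\varepsilon^0+W_\varepsilon^1|\leq c\varepsilon$ is nonetheless correct, and you are one observation away. Since $\bar n_\varepsilon^i$ is a constant extension, $(\bar n\cdot\nabla)\bar n_\varepsilon^i=0$, so $(\bar n_\varepsilon^i\cdot\nabla)\bar n_\varepsilon^i=(\overline P\bar n_\varepsilon^i\cdot\nabla)\bar n_\varepsilon^i$; but $|\overline P\bar n_\varepsilon^i|\leq c\varepsilon$ by \eqref{E:Def_NB}, whence $|(\bar n_\varepsilon^i\otimes\bar n_\varepsilon^i)\nabla\bar n_\varepsilon^i|\leq c\varepsilon$. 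Thus $W_\varepsilon^0+W_\varepsilon^1=-\nabla(\bar n_\varepsilon^0+\bar n_\varepsilon^1)+O(\varepsilon)$, and \eqref{E:N_Diff} together with \eqref{E:ConDer_Bound} closes the estimate. The paper instead uses the purely algebraic rearrangement
\[
W_\varepsilon^0+W_\varepsilon^1=\bigl\{(\bar n_\varepsilon^0+\bar n_\varepsilon^1)\otimes\bar n_\varepsilon^0-\bar n_\varepsilon^1\otimes(\bar n_\varepsilon^0+\bar n_\varepsilon^1)\bigr\}\nabla\bar n_\varepsilon^0-(I_3-\bar n_\varepsilon^1\otimes\bar n_\varepsilon^1)\nabla(\bar n_\varepsilon^0+\bar n_\varepsilon^1),
\]
which exposes the $O(\varepsilon)$ factors $\bar n_\varepsilon^0+\bar n_\varepsilon^1$ and $\nabla_\Gamma n_\varepsilon^0+\nabla_\Gamma n_\varepsilon^1$ from \eqref{E:N_Diff} directly and does not rely on the constant-extension structure.
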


\begin{proof}
  By \eqref{E:ConDer_Bound}, \eqref{E:Con_Hess}, \eqref{E:N_Bound}, and $g_0,g_1\in C^4(\Gamma)$ we see that
  \begin{align} \label{Pf_GB:Est_N_G}
    |\partial_x^\alpha\bar{n}_\varepsilon^i(x)| \leq c, \quad |\partial_x^\alpha\bar{g}_i(x)| \leq c, \quad x \in N,\,i=0,1,\,|\alpha|=0,1,2,
  \end{align}
  where $\partial_x^\alpha=\partial_1^{\alpha_1}\partial_2^{\alpha_2}\partial_3^{\alpha_3}$ for $\alpha=(\alpha_1,\alpha_2,\alpha_3)\in\mathbb{Z}^3$ with $\alpha_j\geq0$, $j=1,2,3$ and $c>0$ is a constant independent of $\varepsilon$.
  From \eqref{Pf_GB:Est_N_G} it also follows that
  \begin{align} \label{Pf_GB:Est_W}
    |W_\varepsilon^i(x)| \leq c, \quad |\partial_kW_\varepsilon^i(x)| \leq c, \quad x \in N, \,i=0,1,\,k=1,2,3.
  \end{align}
  By \eqref{E:Fric_Upper}, \eqref{Pf_EAB:Width}, \eqref{Pf_GB:Est_N_G}, and \eqref{Pf_GB:Est_W} we have
  \begin{align} \label{Pf_GB:Zero_Bound}
    |\tilde{n}_1| \leq c, \quad |\tilde{n}_2| \leq c\varepsilon, \quad \left|\widetilde{W}\right| \leq c \quad\text{in}\quad \Omega_\varepsilon.
  \end{align}
  Thus the first inequality of \eqref{E:G_Bound} is valid.
  To prove the second inequality of \eqref{E:G_Bound} we need to estimate the first order derivatives of $\tilde{n}_1$, $\tilde{n}_2$, and $\widetilde{W}$.
  As in the proof of Lemma~\ref{L:ExAux_Bound} we observe by direct calculations with $\nabla d=\bar{n}$ in $N$ and the inequalities \eqref{E:Width_Bound}, \eqref{Pf_EAB:Width}, \eqref{Pf_GB:Est_N_G}, and \eqref{Pf_GB:Est_W} that
  \begin{gather*}
    \nabla\tilde{n}_1 = \frac{1}{\varepsilon\bar{g}}\bar{n}\otimes(\bar{n}_\varepsilon^0+\bar{n}_\varepsilon^1)+f_1, \quad \nabla\tilde{n}_2 = \frac{1}{\varepsilon\bar{g}}\bar{n}\otimes\left(\frac{\gamma_\varepsilon^1}{\nu}\bar{n}_\varepsilon^1-\frac{\gamma_\varepsilon^0}{\nu}\bar{n}_\varepsilon^0\right)+f_2, \\
    \partial_k\widetilde{W} = \frac{1}{\varepsilon\bar{g}}\bar{n}_k(W_\varepsilon^0+W_\varepsilon^1)+F_k, \quad k=1,2,3
  \end{gather*}
  in $\Omega_\varepsilon$, where $f_1$, $f_2$, and $F_k$ are bounded on $\Omega_\varepsilon$ uniformly in $\varepsilon$.
  We apply \eqref{E:N_Diff} to $\nabla\tilde{n}_1$, \eqref{E:Fric_Upper} and \eqref{E:N_Bound} to $\nabla\tilde{n}_2$, and use \eqref{E:Width_Bound} to show that
  \begin{align} \label{Pf_GB:N_First_Bound}
    |\nabla\tilde{n}_1| \leq \frac{1}{\varepsilon\bar{g}}|\bar{n}_\varepsilon^0+\bar{n}_\varepsilon^1|+|f_1| \leq c, \quad |\nabla\tilde{n}_2| \leq \frac{c(\gamma_\varepsilon^0+\gamma_\varepsilon^1)}{\varepsilon\bar{g}}+|f_2| \leq c \quad\text{in}\quad \Omega_\varepsilon.
  \end{align}
  To estimate the first order derivatives of $\widetilde{W}$ we see by \eqref{E:ConDer_Dom} that
  \begin{multline*}
    W_\varepsilon^0+W_\varepsilon^1 = \{(\bar{n}_\varepsilon^0+\bar{n}_\varepsilon^1)\otimes\bar{n}_\varepsilon^0-\bar{n}_\varepsilon^1\otimes(\bar{n}_\varepsilon^0+\bar{n}_\varepsilon^1)\}\Bigl(I_3-d\overline{W}\Bigr)^{-1}\overline{\nabla_\Gamma n_\varepsilon^0} \\
    -(I_3-\bar{n}_\varepsilon^1\otimes\bar{n}_\varepsilon^1)\Bigl(I_3-d\overline{W}\Bigr)^{-1}\left(\overline{\nabla_\Gamma n_\varepsilon^0}+\overline{\nabla_\Gamma n_\varepsilon^1}\right)
  \end{multline*}
  in $N$.
  Hence $|W_\varepsilon^0+W_\varepsilon^1|\leq c\varepsilon$ in $N$ by \eqref{E:Wein_Bound}, \eqref{E:N_Bound}, and \eqref{E:N_Diff} and we get
  \begin{align} \label{Pf_GB:W_First_Bound}
    \left|\partial_k\widetilde{W}\right| \leq \frac{1}{\varepsilon\bar{g}}|W_\varepsilon^0+W_\varepsilon^1|+|F_k| \leq c \quad\text{in}\quad \Omega_\varepsilon.
  \end{align}
  Applying \eqref{Pf_GB:Zero_Bound}--\eqref{Pf_GB:W_First_Bound} to $\nabla G(u)$ we obtain the second inequality of \eqref{E:G_Bound}.
\end{proof}

\begin{lemma} \label{L:IbP_Curl}
  We have the integration by parts formula
  \begin{multline} \label{E:IbP_Curl}
    \int_{\Omega_\varepsilon}\mathrm{curl}\,\mathrm{curl}\,u\cdot\Phi\,dx \\
    = -\int_{\Omega_\varepsilon}\mathrm{curl}\,G(u)\cdot\Phi\,dx+\int_{\Omega_\varepsilon}\{\mathrm{curl}\,u+G(u)\}\cdot\mathrm{curl}\,\Phi\,dx
  \end{multline}
  for all $u\in H^2(\Omega_\varepsilon)^3$ satisfying the slip boundary conditions \eqref{E:Bo_Imp}--\eqref{E:Bo_Slip} on $\Gamma_\varepsilon$ and $\Phi\in L^2(\Omega_\varepsilon)^3$ with $\mathrm{curl}\,\Phi\in L^2(\Omega_\varepsilon)^3$, where $G(u)$ is given by \eqref{E:Def_Gu}.
\end{lemma}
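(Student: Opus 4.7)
The plan is to reduce \eqref{E:IbP_Curl} to two applications of the classical integration-by-parts identity
\[
  \int_{\Omega_\varepsilon}\mathrm{curl}\,v\cdot w\,dx - \int_{\Omega_\varepsilon}v\cdot\mathrm{curl}\,w\,dx = \int_{\Gamma_\varepsilon}(n_\varepsilon\times v)\cdot w\,d\mathcal{H}^2,
\]
valid for $v\in H^1(\Omega_\varepsilon)^3$ and $w\in H(\mathrm{curl},\Omega_\varepsilon)$. The trick is that the field $G(u)$ has been engineered so that on $\Gamma_\varepsilon$ its cross product with $n_\varepsilon$ exactly cancels that of $\mathrm{curl}\,u$.

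First I would record the regularity needed: since $u\in H^2(\Omega_\varepsilon)^3$, the vector field $\mathrm{curl}\,u$ lies in $H^1(\Omega_\varepsilon)^3$, and by Lemma~\ref{L:G_Bound} the pointwise bounds $|G(u)|\leq c|u|$ and $|\nabla G(u)|\leq c(|u|+|\nabla u|)$ give $G(u)\in H^1(\Omega_\varepsilon)^3$. In particular both admit $L^2$ traces on $\Gamma_\varepsilon$. The hypothesis $\Phi,\mathrm{curl}\,\Phi\in L^2(\Omega_\varepsilon)^3$ means $\Phi\in H(\mathrm{curl},\Omega_\varepsilon)$, so the identity above can be applied first with $(v,w)=(\mathrm{curl}\,u,\Phi)$ and then with $(v,w)=(G(u),\Phi)$.

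The core of the argument is the boundary identity
\[
  n_\varepsilon\times\mathrm{curl}\,u = -n_\varepsilon\times G(u) \quad\text{on}\quad \Gamma_\varepsilon.
\]
Indeed, on $\Gamma_\varepsilon^i$ the relations \eqref{E:Tilde_Bo} collapse the definition \eqref{E:Def_Gu} to
\[
  G(u) = n_\varepsilon\times\left(2W_\varepsilon u+\frac{\gamma_\varepsilon}{\nu}u\right),
\]
and Lemma~\ref{L:NSl}, specifically \eqref{E:NSl_Curl}, gives $n_\varepsilon\times\mathrm{curl}\,u = -n_\varepsilon\times\{n_\varepsilon\times(2W_\varepsilon u+\nu^{-1}\gamma_\varepsilon u)\} = -n_\varepsilon\times G(u)$. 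Combining this with the two instances of the integration-by-parts identity then yields
\begin{align*}
  \int_{\Omega_\varepsilon}\mathrm{curl}\,\mathrm{curl}\,u\cdot\Phi\,dx
  &= \int_{\Omega_\varepsilon}\mathrm{curl}\,u\cdot\mathrm{curl}\,\Phi\,dx + \int_{\Gamma_\varepsilon}(n_\varepsilon\times\mathrm{curl}\,u)\cdot\Phi\,d\mathcal{H}^2 \\
  &= \int_{\Omega_\varepsilon}\mathrm{curl}\,u\cdot\mathrm{curl}\,\Phi\,dx - \int_{\Gamma_\varepsilon}(n_\varepsilon\times G(u))\cdot\Phi\,d\mathcal{H}^2 \\
  &= \int_{\Omega_\varepsilon}\{\mathrm{curl}\,u+G(u)\}\cdot\mathrm{curl}\,\Phi\,dx - \int_{\Omega_\varepsilon}\mathrm{curl}\,G(u)\cdot\Phi\,dx,
\end{align*}
which is exactly \eqref{E:IbP_Curl}.

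The one place requiring care, and the main (though routine) technical obstacle, is justifying the integration-by-parts identity when $w=\Phi$ lies only in $H(\mathrm{curl},\Omega_\varepsilon)$ rather than $H^1$. This is handled by a standard density argument: since $\Gamma_\varepsilon$ is of class $C^4$, smooth vector fields are dense in $H(\mathrm{curl},\Omega_\varepsilon)$ with respect to the graph norm, and the tangential trace $w\mapsto n_\varepsilon\times w|_{\Gamma_\varepsilon}$ extends continuously from $C^\infty(\overline{\Omega}_\varepsilon)^3$ to $H(\mathrm{curl},\Omega_\varepsilon)$ with values in $H^{-1/2}(\Gamma_\varepsilon)^3$. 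Approximating $\Phi$ in this way, pairing each boundary integral with the $L^2$-traces of $\mathrm{curl}\,u$ and $G(u)$, and passing to the limit legitimizes both applications of the identity and completes the proof.
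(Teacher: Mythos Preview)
Your proof is correct and follows essentially the same approach as the paper: both arguments hinge on the boundary identity $n_\varepsilon\times\mathrm{curl}\,u = -n_\varepsilon\times G(u)$ obtained from \eqref{E:NSl_Curl} and \eqref{E:Tilde_Bo}, followed by two applications of the curl integration-by-parts formula, with a density argument for $\Phi\in H(\mathrm{curl},\Omega_\varepsilon)$. The only cosmetic difference is that the paper places the density step first (reducing to smooth $\Phi$ before any integration by parts), whereas you invoke $H(\mathrm{curl})$ trace theory at the end; the paper's ordering is slightly cleaner since it avoids discussing the $H^{-1/2}$ tangential trace altogether.
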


\begin{proof}
  By standard cut-off, dilatation, and mollification arguments, we can show as in the proof of~\cite[Chapter~1, Theorem~1.1]{Te79} that for $\Phi\in L^2(\Omega_\varepsilon)^3$ with $\mathrm{curl}\,\Phi\in L^2(\Omega_\varepsilon)^3$ there exists a sequence $\{\Phi_k\}_{k=1}^\infty$ in $C^\infty(\overline{\Omega}_\varepsilon)^3$ such that
  \begin{align*}
    \lim_{k\to\infty}\|\Phi-\Phi_k\|_{L^2(\Omega_\varepsilon)} = \lim_{k\to\infty}\|\mathrm{curl}\,\Phi-\mathrm{curl}\,\Phi_k\|_{L^2(\Omega_\varepsilon)} = 0.
  \end{align*}
  Thus, by a density argument, it is sufficient to prove \eqref{E:IbP_Curl} for all $\Phi\in C^\infty(\overline{\Omega}_\varepsilon)^3$.

  Let $u\in H^2(\Omega_\varepsilon)^3$ satisfy \eqref{E:Bo_Imp}--\eqref{E:Bo_Slip} on $\Gamma_\varepsilon$ and $\Phi\in C^\infty(\overline{\Omega}_\varepsilon)^3$.
  Then
  \begin{align} \label{Pf_IC:IBP}
    \int_{\Omega_\varepsilon}\mathrm{curl}\,\mathrm{curl}\,u\cdot\Phi\,dx = \int_{\Gamma_\varepsilon}(n_\varepsilon\times\mathrm{curl}\,u)\cdot\Phi\,d\mathcal{H}^2+\int_{\Omega_\varepsilon}\mathrm{curl}\,u\cdot\mathrm{curl}\,\Phi\,dx
  \end{align}
  by integration by parts.
  Since $u$ satisfies \eqref{E:Bo_Imp}--\eqref{E:Bo_Slip} on $\Gamma_\varepsilon$,
  \begin{align*}
    n_\varepsilon\times\mathrm{curl}\,u &= -n_\varepsilon\times\left\{n_\varepsilon\times\left(2W_\varepsilon u+\frac{\gamma_\varepsilon}{\nu}u\right)\right\} \\
    &= -n_\varepsilon\times\left(2\tilde{n}_1\times\widetilde{W}u+\tilde{n}_2\times u\right) = -n_\varepsilon\times G(u)
  \end{align*}
  on $\Gamma_\varepsilon$ by \eqref{E:NSl_Curl}, \eqref{E:Tilde_Bo}, and \eqref{E:Def_Gu}.
  Hence integration by parts yields
  \begin{align*}
    \int_{\Gamma_\varepsilon}(n_\varepsilon\times\mathrm{curl}\,u)\cdot\Phi\,d\mathcal{H}^2 &= -\int_{\Gamma_\varepsilon}\{n_\varepsilon\times G(u)\}\cdot\Phi\,d\mathcal{H}^2 \\
    &= \int_{\Omega_\varepsilon}\{G(u)\cdot\mathrm{curl}\,\Phi-\mathrm{curl}\,G(u)\cdot\Phi\}\,dx.
  \end{align*}
  Substituting this for \eqref{Pf_IC:IBP} we obtain \eqref{E:IbP_Curl}.
\end{proof}

Using \eqref{E:IbP_Curl} we derive an $L^2$-estimate for the difference between the Stokes and Laplace operators as in~\cite[Theorem~2.1]{Ho08}.
Under Assumptions~\ref{Assump_1} and~\ref{Assump_2}, let $\varepsilon_0$ be the constant given in Lemma~\ref{L:Bili_Core} and $A_\varepsilon$ the Stokes operator on $\mathcal{H}_\varepsilon$.

\begin{lemma} \label{L:Comp_Sto_Lap}
  Under Assumptions~\ref{Assump_1} and~\ref{Assump_2}, we have
  \begin{align} \label{E:Comp_Sto_Lap}
    \|A_\varepsilon u+\nu\Delta u\|_{L^2(\Omega_\varepsilon)} \leq c\|u\|_{H^1(\Omega_\varepsilon)}
  \end{align}
  for all $\varepsilon\in(0,\varepsilon_0)$ and $u\in D(A_\varepsilon)$ with a constant $c>0$ independent of $\varepsilon$ and $u$.
\end{lemma}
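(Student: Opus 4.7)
The plan is to exploit the defining identity $A_\varepsilon u + \nu\Delta u = \nu(I-\mathbb{P}_\varepsilon)\Delta u$ and bound the right-hand side by duality. Setting $\Phi := (I-\mathbb{P}_\varepsilon)\Delta u \in \mathcal{H}_\varepsilon^\perp$ and using that $I-\mathbb{P}_\varepsilon$ is self-adjoint, we obtain
\begin{equation*}
  \|\Phi\|_{L^2(\Omega_\varepsilon)}^2 = (\Delta u, \Phi)_{L^2(\Omega_\varepsilon)} = -(\mathrm{curl}\,\mathrm{curl}\,u, \Phi)_{L^2(\Omega_\varepsilon)},
\end{equation*}
where in the second equality I used $\mathrm{div}\,u=0$ to write $-\Delta u = \mathrm{curl}\,\mathrm{curl}\,u$. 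Since $u\in D(A_\varepsilon)$ satisfies the slip boundary conditions \eqref{E:Bo_Imp}--\eqref{E:Bo_Slip}, and $\mathrm{curl}\,\Phi\in L^2$ (verified below), Lemma~\ref{L:IbP_Curl} yields
\begin{equation*}
  \|\Phi\|_{L^2(\Omega_\varepsilon)}^2 = \bigl(\mathrm{curl}\,G(u), \Phi\bigr)_{L^2(\Omega_\varepsilon)} - \bigl(\mathrm{curl}\,u + G(u),\,\mathrm{curl}\,\Phi\bigr)_{L^2(\Omega_\varepsilon)}.
\end{equation*}

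The first term is handled directly by Cauchy--Schwarz and Lemma~\ref{L:G_Bound}, which gives $\|\mathrm{curl}\,G(u)\|_{L^2(\Omega_\varepsilon)} \leq c\|u\|_{H^1(\Omega_\varepsilon)}$ uniformly in $\varepsilon$, so this contribution is bounded by $c\|u\|_{H^1(\Omega_\varepsilon)}\|\Phi\|_{L^2(\Omega_\varepsilon)}$. The crux is the second term, which forces us to understand the structure of $\Phi$. Under condition (A1) or (A2) we have $\mathcal{H}_\varepsilon^\perp = G^2(\Omega_\varepsilon)$, hence $\Phi = \nabla p$ for some $p\in H^1(\Omega_\varepsilon)$ and $\mathrm{curl}\,\Phi=0$, so the second term vanishes and the conclusion is immediate.

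The main obstacle lies in condition (A3), where we have the orthogonal decomposition $\mathcal{H}_\varepsilon^\perp = G^2(\Omega_\varepsilon) \oplus \mathcal{R}_g$ (orthogonality holds because $\mathcal{R}_g \subset L_\sigma^2(\Omega_\varepsilon)$ under (A3) by Lemma~\ref{L:IR_Sole}, so integration by parts kills the $\nabla p$ pairing). Writing $\Phi = \nabla p + w$ with $w\in\mathcal{R}_g$ satisfying $\|w\|_{L^2(\Omega_\varepsilon)}\leq \|\Phi\|_{L^2(\Omega_\varepsilon)}$, and $w(x)=a\times x+b$, we get $\mathrm{curl}\,\Phi = \mathrm{curl}\,w = 2a$, a constant vector. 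The key sub-estimate to establish is
\begin{equation*}
  \|\mathrm{curl}\,w\|_{L^2(\Omega_\varepsilon)} \leq c\|w\|_{L^2(\Omega_\varepsilon)}
\end{equation*}
with $c$ independent of $\varepsilon$. This reduces to the quantitative bound $|a|^2|\Omega_\varepsilon|\leq c\|w\|_{L^2(\Omega_\varepsilon)}^2$, which I plan to derive by expanding $w(y+rn(y)) = w(y) + r(a\times n(y))$ in \eqref{E:CoV_Dom} to obtain $\|w\|_{L^2(\Omega_\varepsilon)}^2 \geq c\varepsilon\|w|_\Gamma\|_{L^2(\Gamma)}^2 - O(\varepsilon^3)|a|^2$, combined with equivalence of the finite-dimensional norms $|a|+|b|$ and $\|w|_\Gamma\|_{L^2(\Gamma)}$ on $\mathcal{R}_g$ (valid since a rigid displacement vanishing on the closed surface $\Gamma$ must be identically zero in $\mathbb{R}^3$); since $|\Omega_\varepsilon|\sim\varepsilon$, this yields the sub-estimate.

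With the sub-estimate in hand, the second term is bounded by $c(\|\mathrm{curl}\,u\|_{L^2(\Omega_\varepsilon)}+\|G(u)\|_{L^2(\Omega_\varepsilon)})\|\Phi\|_{L^2(\Omega_\varepsilon)} \leq c\|u\|_{H^1(\Omega_\varepsilon)}\|\Phi\|_{L^2(\Omega_\varepsilon)}$ (using Lemma~\ref{L:G_Bound} for $G(u)$). Combining the two bounds and dividing by $\|\Phi\|_{L^2(\Omega_\varepsilon)}$ gives \eqref{E:Comp_Sto_Lap}.
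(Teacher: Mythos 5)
Your argument is correct, and for the conditions (A1) and (A2) it is essentially the paper's proof: there one also writes the difference as $\nu$ times the non-solenoidal part of $\Delta u$, uses $-\Delta u=\mathrm{curl}\,\mathrm{curl}\,u$, applies Lemma~\ref{L:IbP_Curl} with the gradient part as test field so that the $\mathrm{curl}\,\Phi$ term drops, and closes with Lemma~\ref{L:G_Bound}. The genuine difference is your treatment of (A3). The paper does not estimate the $\mathcal{R}_g$-component of $(I-\mathbb{P}_\varepsilon)\Delta u$ at all: it shows this component is identically zero, by testing $\Delta u$ against $w\in\mathcal{R}_g$ with the integration by parts formula \eqref{E:IbP_St} and using $D(w)=0$, the perfect slip condition $P_\varepsilon D(u)n_\varepsilon=0$ (recall $\gamma_\varepsilon^0=\gamma_\varepsilon^1=0$ under (A3)), and $w\cdot n_\varepsilon=0$ on $\Gamma_\varepsilon$ from $\mathcal{R}_g=\mathcal{R}_0\cap\mathcal{R}_1$ and Lemma~\ref{L:IR_CTD}. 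Hence $\mathbb{P}_\varepsilon\Delta u=\mathbb{L}_\varepsilon\Delta u$, the (A3) case reduces to the previous one, and as a by-product one gets the structurally useful identity $A_\varepsilon u=-\nu\mathbb{L}_\varepsilon\Delta u$ (compare Remark~\ref{R:Recov_NS}). You instead keep the component $w=a\times x+b$ and control $\|\mathrm{curl}\,w\|_{L^2(\Omega_\varepsilon)}=2|a|\,|\Omega_\varepsilon|^{1/2}$ by $\|w\|_{L^2(\Omega_\varepsilon)}$; this is a valid quantitative alternative, but note one small point: your expansion $w(y+rn(y))=w(y)+r(a\times n(y))$ together with \eqref{E:CoV_Dom} and the norm equivalence on the finite-dimensional space $\mathcal{R}$ gives $\|w\|_{L^2(\Omega_\varepsilon)}^2\geq(c_1-c_2\varepsilon^2)\varepsilon|a|^2$, which is uniform only for $\varepsilon$ below a threshold determined by $c_1,c_2$; to cover the whole range $(0,\varepsilon_0)$ you should either add a short continuity/compactness argument on $\mathcal{R}_g$ (for each fixed $\varepsilon$ the map $w\mapsto\|w\|_{L^2(\Omega_\varepsilon)}$ is a norm on $\mathcal{R}_g$, with constants depending continuously on $\varepsilon$) or simply observe, as the paper does, that the term you are estimating vanishes. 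In short, both routes succeed; the paper's is shorter and exact, yours is more robust but needs that extra uniformity remark.
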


Note that the $L^2(\Omega_\varepsilon)$-norm of $A_\varepsilon u+\nu\Delta u$ is bounded by the $H^1(\Omega_\varepsilon)$-norm of $u$ in \eqref{E:Comp_Sto_Lap}, not by its $H^2(\Omega_\varepsilon)$-norm.

\begin{proof}
  Let $\mathbb{L}_\varepsilon$ be the Helmholtz--Leray projection onto $L_\sigma^2(\Omega_\varepsilon)$ given in Section~\ref{SS:St_HL}.
  We first show that there exists a constant $c>0$ such that
  \begin{align} \label{Pf_CSL:Diff_HL}
    \|\nu\Delta u-\nu\mathbb{L}_\varepsilon\Delta u\|_{L^2(\Omega_\varepsilon)} \leq c\|u\|_{H^1(\Omega_\varepsilon)}
  \end{align}
  for all $\varepsilon\in(0,\varepsilon_0)$ and $u\in D(A_\varepsilon)$.
  By the Helmholtz--Leray decomposition
  \begin{align*}
    \nu\Delta u = \nu\mathbb{L}_\varepsilon\Delta u+\nabla q \quad\text{in}\quad L^2(\Omega_\varepsilon)^3 \quad\text{with}\quad (\nu\mathbb{L}_\varepsilon\Delta u,\nabla q)_{L^2(\Omega_\varepsilon)} = 0
  \end{align*}
  for $u\in D(A_\varepsilon)$ and $\Delta u=-\mathrm{curl}\,\mathrm{curl}\,u$ in $\Omega_\varepsilon$ (note that $\mathrm{div}\,u=0$ in $\Omega_\varepsilon$) we have
  \begin{align*}
    \|\nu\Delta u-\nu\mathbb{L}_\varepsilon\Delta u\|_{L^2(\Omega_\varepsilon)}^2 &= (\nu\Delta u,\nabla q)_{L^2(\Omega_\varepsilon)}-(\nu\mathbb{L}_\varepsilon\Delta u,\nabla q)_{L^2(\Omega_\varepsilon)} \\
    &= -\nu(\mathrm{curl}\,\mathrm{curl}\,u,\nabla q)_{L^2(\Omega_\varepsilon)}.
  \end{align*}
  Noting that $\mathrm{curl}\,\nabla q=0$ in $\Omega_\varepsilon$, we apply \eqref{E:IbP_Curl} with $\Phi=\nabla q$ to the last term to get
  \begin{align*}
    -\nu(\mathrm{curl}\,\mathrm{curl}\,u,\nabla q)_{L^2(\Omega_\varepsilon)} = \nu(\mathrm{curl}\,G(u),\nabla q)_{L^2(\Omega_\varepsilon)} \leq c\|\nabla G(u)\|_{L^2(\Omega_\varepsilon)}\|\nabla q\|_{L^2(\Omega_\varepsilon)}
  \end{align*}
  with $G(u)$ given by \eqref{E:Def_Gu}.
  Since the inequalities \eqref{E:Fric_Upper} hold by Assumption~\ref{Assump_1}, we can use \eqref{E:G_Bound} to the right-hand side of this inequality.
  Hence
  \begin{align*}
    \|\nu\Delta u-\nu\mathbb{L}_\varepsilon\Delta u\|_{L^2(\Omega_\varepsilon)}^2 &\leq c\|\nabla G(u)\|_{L^2(\Omega_\varepsilon)}\|\nabla q\|_{L^2(\Omega_\varepsilon)} \\
    &\leq c\|u\|_{H^1(\Omega_\varepsilon)}\|\nu\Delta u-\nu\mathbb{L}_\varepsilon\Delta u\|_{L^2(\Omega_\varepsilon)}
  \end{align*}
  and the inequality \eqref{Pf_CSL:Diff_HL} holds (note that $\nabla q=\nu\Delta u-\nu\mathbb{L}_\varepsilon\Delta u$).
  When the condition (A1) or (A2) of Assumption~\ref{Assump_2} is satisfied, the inequality \eqref{E:Comp_Sto_Lap} is an immediate consequence of \eqref{Pf_CSL:Diff_HL} since $A_\varepsilon u=-\nu\mathbb{P}_\varepsilon\Delta u$ and $\mathbb{L}_\varepsilon$ agrees with the orthogonal projection $\mathbb{P}_\varepsilon$ onto $\mathcal{H}_\varepsilon=L_\sigma^2(\Omega_\varepsilon)$.

  Next we suppose that the condition (A3) of Assumption~\ref{Assump_2} is satisfied.
  In this case $\mathbb{P}_\varepsilon$ is the orthogonal projection onto $\mathcal{H}_\varepsilon=L_\sigma^2(\Omega_\varepsilon)\cap\mathcal{R}_g^\perp$, where $\mathcal{R}_g$ is the space of infinitesimal rigid displacements of $\mathbb{R}^3$ given by \eqref{E:Def_Rg}.
  Let $u\in D(A_\varepsilon)$ and $w\in\mathcal{R}_g$.
  Since $\mathbb{L}_\varepsilon$ is the orthogonal projection onto $L_\sigma^2(\Omega_\varepsilon)$ and $w\in L_\sigma^2(\Omega_\varepsilon)$ by the assumption $\mathcal{R}_g=\mathcal{R}_0\cap\mathcal{R}_1$ and Lemma~\ref{L:IR_Sole}, $(\mathbb{L}_\varepsilon\Delta u,w)_{L^2(\Omega_\varepsilon)} = (\Delta u,w)_{L^2(\Omega_\varepsilon)}$.
  Moreover, under the conditions $\mathcal{R}_g=\mathcal{R}_0\cap\mathcal{R}_1$ and $\gamma_\varepsilon^0=\gamma_\varepsilon^1=0$, the vector fields $u\in D(A_\varepsilon)$ and $w\in\mathcal{R}_g$ satisfy (note that $w$ is of the form $w(x)=a\times x+b$)
  \begin{gather*}
    \mathrm{div}\,u = 0, \quad D(w) = 0 \quad\text{in}\quad \Omega_\varepsilon, \\
    u\cdot n_\varepsilon = 0, \quad P_\varepsilon D(u)n_\varepsilon = 0, \quad w\cdot n_\varepsilon = 0 \quad\text{on}\quad \Gamma_\varepsilon.
  \end{gather*}
  By these equalities and the integration by parts formula \eqref{E:IbP_St} we have
  \begin{align*}
    (\Delta u,w)_{L^2(\Omega_\varepsilon)} = -2\bigl(D(u),D(w)\bigr)_{L^2(\Omega_\varepsilon)}+2(D(u)n_\varepsilon,w)_{L^2(\Gamma_\varepsilon)} = 0.
  \end{align*}
  Hence $(\mathbb{L}_\varepsilon\Delta u,w)_{L^2(\Omega_\varepsilon)}=0$ for all $w\in\mathcal{R}_g$, i.e. $\mathbb{L}_\varepsilon\Delta u\in\mathcal{R}_g^\perp$.
  Now we observe by the Helmholtz--Leray decomposition
  \begin{align*}
    \Delta u = \mathbb{L}_\varepsilon\Delta u+\nabla\tilde{q}, \quad \mathbb{L}_\varepsilon\Delta u \in L_\sigma^2(\Omega_\varepsilon)\cap\mathcal{R}_g^\perp = \mathcal{H}_\varepsilon, \quad \nabla\tilde{q}\in G^2(\Omega_\varepsilon) \subset \mathcal{H}_\varepsilon^\perp
  \end{align*}
  that $\mathbb{P}_\varepsilon\Delta u=\mathbb{P}_\varepsilon\mathbb{L}_\varepsilon\Delta u=\mathbb{L}_\varepsilon\Delta u$, since $\mathbb{P}_\varepsilon$ is the orthogonal projection onto $\mathcal{H}_\varepsilon$.
  Therefore, $A_\varepsilon u=-\nu\mathbb{L}_\varepsilon\Delta u$ and the inequality \eqref{E:Comp_Sto_Lap} follows from \eqref{Pf_CSL:Diff_HL}.
\end{proof}

Next we show that for $u\in D(A_\varepsilon)$ the norm $\|A_\varepsilon u\|_{L^2(\Omega_\varepsilon)}$ is bounded from above and below by the canonical $H^2(\Omega_\varepsilon)$-norm of $u$ with constants independent of $\varepsilon$.

\begin{lemma} \label{L:Lap_Apri}
  Suppose that the inequalities \eqref{E:Fric_Upper} are valid for all $\varepsilon\in(0,1)$.
  Then there exists a constant $c>0$ independent of $\varepsilon$ such that
  \begin{align} \label{E:Lap_Apri}
    \|u\|_{H^2(\Omega_\varepsilon)} \leq c\left(\|\Delta u\|_{L^2(\Omega_\varepsilon)}+\|u\|_{H^1(\Omega_\varepsilon)}\right)
  \end{align}
  for all $\varepsilon\in(0,1)$ and $u\in H^2(\Omega_\varepsilon)^3$ satisfying \eqref{E:Bo_Imp} and \eqref{E:Bo_Slip} on $\Gamma_\varepsilon$.
\end{lemma}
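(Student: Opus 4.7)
The plan is to establish \eqref{E:Lap_Apri} by density together with a second-order integration-by-parts identity, using the slip boundary conditions to reduce the resulting boundary term to a uniformly controlled lower-order expression. First, by approximating $u\in H^2(\Omega_\varepsilon)^3$ obeying \eqref{E:Bo_Imp}--\eqref{E:Bo_Slip} by a sequence of $C^3(\overline{\Omega}_\varepsilon)^3$ vector fields satisfying both boundary conditions (an adaptation of Lemma~\ref{L:H1N0_Dense}, exploiting the $C^4$-regularity of $\Gamma_\varepsilon$ and the fact that \eqref{E:Bo_Slip} is a linear tangential condition), I would reduce to the smooth case.

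Second, I would apply the componentwise pointwise identity
\begin{equation*}
  |\nabla^2 u|^2 - |\Delta u|^2 = \sum_{j=1}^{3}\partial_j\!\left[\sum_{i,k=1}^{3}(\partial_i u_k)(\partial_i\partial_j u_k) - \sum_{k=1}^{3}(\partial_j u_k)(\Delta u_k)\right],
\end{equation*}
the divergence theorem, and the simplification $\sum_{i,j,k}(\partial_i u_k)(\partial_i\partial_j u_k)(n_\varepsilon)_j = \tfrac{1}{2}\partial_{n_\varepsilon}|\nabla u|^2$ (where $\partial_{n_\varepsilon}=n_\varepsilon\cdot\nabla$), to obtain
\begin{equation*}
  \int_{\Omega_\varepsilon}|\nabla^2 u|^2\,dx = \int_{\Omega_\varepsilon}|\Delta u|^2\,dx + \int_{\Gamma_\varepsilon} B(u)\,d\mathcal{H}^2,\qquad B(u):=\tfrac{1}{2}\partial_{n_\varepsilon}|\nabla u|^2 - (\partial_{n_\varepsilon}u)\cdot\Delta u.
\end{equation*}

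Third, I would rewrite $B(u)$ on each $\Gamma_\varepsilon^i$ as an expression in terms of $u$, its covariant derivatives along $\Gamma_\varepsilon^i$, and the curvature of $\Gamma_\varepsilon^i$. Decomposing $\nabla u = \nabla_{\Gamma_\varepsilon}u + n_\varepsilon\otimes\partial_{n_\varepsilon}u$ on $\Gamma_\varepsilon$ and invoking the two consequences $(\nabla_{\Gamma_\varepsilon}u)n_\varepsilon = W_\varepsilon u$ (from differentiating \eqref{E:Bo_Imp} tangentially) and $P_\varepsilon\partial_{n_\varepsilon}u = -W_\varepsilon u - (\gamma_\varepsilon/\nu)u$ (from \eqref{E:NSl_ND}), both $\partial_{n_\varepsilon}|\nabla u|^2$ and $(\partial_{n_\varepsilon}u)\cdot\Delta u$ on $\Gamma_\varepsilon^i$ can be expressed using intrinsic second covariant derivatives of $u|_{\Gamma_\varepsilon^i}$, first derivatives of $u$, and the shape operator $W_\varepsilon$. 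The obstruction is the normal-normal component $\partial_{n_\varepsilon}^2 u \cdot n_\varepsilon$, which is not directly controlled by the slip conditions; I would eliminate it using the trace decomposition $\Delta u = \Delta_{\Gamma_\varepsilon}u + H_\varepsilon\partial_{n_\varepsilon}u + \partial_{n_\varepsilon}^2 u$ (valid on $\Gamma_\varepsilon^i$ after extending $n_\varepsilon$ as the constant extension in a tubular neighborhood), trading $\partial_{n_\varepsilon}^2 u\cdot n_\varepsilon$ for $\Delta u$ and first-order quantities. A further integration by parts along the closed surface $\Gamma_\varepsilon^i$ (producing no boundary contribution, analogous to \eqref{E:IbP_TD}) then moves the remaining tangential second derivatives onto surface quantities bounded uniformly in $\varepsilon$ by Lemma~\ref{L:Comp_Nor} and \eqref{E:Fric_Upper}, yielding the pointwise bound $|B(u)| \leq c(|u|^2 + |\nabla u|^2) + c|u||\Delta u|$ on $\Gamma_\varepsilon$ with $c$ independent of $\varepsilon$.

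Fourth, the trace-type Poincaré inequality \eqref{E:Poin_Bo} with $p=2$ gives
\begin{equation*}
  \int_{\Gamma_\varepsilon}|B(u)|\,d\mathcal{H}^2 \leq c\|u\|_{H^1(\Omega_\varepsilon)}^2 + c\|u\|_{H^1(\Omega_\varepsilon)}\|\Delta u\|_{L^2(\Omega_\varepsilon)},
\end{equation*}
and combining this with the main identity and Young's inequality yields $\|\nabla^2 u\|_{L^2(\Omega_\varepsilon)}^2 \leq c(\|\Delta u\|_{L^2(\Omega_\varepsilon)}^2 + \|u\|_{H^1(\Omega_\varepsilon)}^2)$, which is \eqref{E:Lap_Apri}. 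The hard part will be the third step: the algebraic reduction of $B(u)$ to an expression not involving $\nabla^2 u$ on the boundary. In particular, eliminating the unconstrained normal-normal piece $\partial_{n_\varepsilon}^2 u\cdot n_\varepsilon$ using the $\Delta u$ term already present in $B(u)$, while simultaneously integrating by parts along the closed boundaries $\Gamma_\varepsilon^i$ to convert tangential second derivatives into first-order surface quantities, is where the intrinsic covariant-derivative manipulation on $\Gamma_\varepsilon$ announced in the introduction must be carried out carefully to preserve the $\varepsilon$-uniformity of all curvature coefficients via Lemma~\ref{L:Comp_Nor}.
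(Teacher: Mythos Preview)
Your steps 1--3 are essentially the paper's approach: reduce to smooth $u$, apply the second-order integration-by-parts identity, and use the slip conditions \eqref{Pf_PSl:Gu_N}, \eqref{E:NSl_ND} together with covariant-derivative calculus on $\Gamma_\varepsilon$ to rewrite the boundary integrand so that no second derivatives of $u$ remain. After this reduction the paper obtains exactly a pointwise bound of the type you state, namely $|B(u)|\leq c(|u|^2+|u||\nabla u|+|\nabla u|^2)$ on $\Gamma_\varepsilon$; see the explicit functions $\varphi_1,\dots,\varphi_4$ in \eqref{Pf_LA:Def_Phi}.

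The genuine gap is step~4. Applying \eqref{E:Poin_Bo} to the pointwise bound does \emph{not} give your claimed estimate: for the $|\nabla u|^2$ part one gets
\[
\int_{\Gamma_\varepsilon}|\nabla u|^2\,d\mathcal{H}^2 \leq c\bigl(\varepsilon^{-1}\|\nabla u\|_{L^2(\Omega_\varepsilon)}^2+\varepsilon\|\nabla^2 u\|_{L^2(\Omega_\varepsilon)}^2\bigr),
\]
and the $\varepsilon^{-1}$ factor destroys the uniform-in-$\varepsilon$ constant (the same happens with the $|u|^2$ part). The paper avoids this not by a trace inequality but by the cancellation between the inner and outer boundaries, precisely the device already used in the proof of Lemma~\ref{L:Korn_Grad} (see \eqref{Pf_KG:Bound_Int}--\eqref{Pf_KG:Est_Bint}). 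Concretely, one writes each $\varphi_k$ on $\Gamma_\varepsilon^i$ through the surface quantities $W_\varepsilon$, $H_\varepsilon$, $\underline{D}_j^\varepsilon W_\varepsilon$, $P_\varepsilon$, notes that these flip sign between $\Gamma_\varepsilon^0$ and $\Gamma_\varepsilon^1$ up to $O(\varepsilon)$ by \eqref{E:Diff_PQ_IO}--\eqref{E:Diff_WH_IO}, extends the integrand linearly in the normal variable $r$, and applies the fundamental theorem of calculus together with \eqref{E:CoV_Surf} to convert $\int_{\Gamma_\varepsilon}\varphi_k$ into a volume integral of an $O(1)$ $r$-derivative. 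This is what yields the uniform bounds \eqref{Pf_LA:Est_Phi12}--\eqref{Pf_LA:Est_Phi34}. Your outline captures the boundary-algebra half of the argument but is missing this inner/outer cancellation mechanism, without which the constant in \eqref{E:Lap_Apri} blows up as $\varepsilon\to0$.
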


The proof of Lemma~\ref{L:Lap_Apri} is similar to that of Lemma~\ref{L:PD_H2}, but we need to carry out calculations a lot and use some formulas for the Riemannian connection on $\Gamma_\varepsilon$.
We prove Lemma~\ref{L:Lap_Apri} in the next subsection.

Using Lemmas~\ref{L:Comp_Sto_Lap} and~\ref{L:Lap_Apri} we show the uniform norm equivalence for $A_\varepsilon u$.

\begin{lemma} \label{L:Stokes_H2}
  Under Assumptions~\ref{Assump_1} and~\ref{Assump_2}, we have
  \begin{align} \label{E:Stokes_H2}
    c^{-1}\|u\|_{H^2(\Omega_\varepsilon)} \leq \|A_\varepsilon u\|_{L^2(\Omega_\varepsilon)} \leq c\|u\|_{H^2(\Omega_\varepsilon)}
  \end{align}
  for all $\varepsilon\in(0,\varepsilon_0)$ and $u\in D(A_\varepsilon)$ with a constant $c>0$ independent of $\varepsilon$ and $u$.
\end{lemma}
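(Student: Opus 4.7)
The plan is to derive both inequalities of \eqref{E:Stokes_H2} as direct consequences of Lemmas~\ref{L:Comp_Sto_Lap} and~\ref{L:Lap_Apri}, combined with the uniform norm equivalence already established in Lemma~\ref{L:Stokes_H1}. Since the heavy lifting has been done in those lemmas (controlling the difference between the Stokes and Laplace operators and giving a uniform a priori estimate for the vector Laplacian with slip boundary conditions), the present argument should amount to little more than a triangle inequality combined with a chain of estimates.

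For the right-hand inequality, I would write $\nu\Delta u = -A_\varepsilon u + (A_\varepsilon u + \nu\Delta u)$ and apply the triangle inequality together with Lemma~\ref{L:Comp_Sto_Lap} to get
\begin{align*}
  \|A_\varepsilon u\|_{L^2(\Omega_\varepsilon)} \leq \nu\|\Delta u\|_{L^2(\Omega_\varepsilon)} + \|A_\varepsilon u + \nu\Delta u\|_{L^2(\Omega_\varepsilon)} \leq c\bigl(\|u\|_{H^2(\Omega_\varepsilon)} + \|u\|_{H^1(\Omega_\varepsilon)}\bigr),
\end{align*}
which is bounded by $c\|u\|_{H^2(\Omega_\varepsilon)}$ with a constant independent of $\varepsilon$.

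For the more interesting left-hand inequality, I would proceed in three steps. First, since every $u\in D(A_\varepsilon)$ belongs to $H^2(\Omega_\varepsilon)^3$ and satisfies the slip boundary conditions \eqref{E:Bo_Imp}--\eqref{E:Bo_Slip} on $\Gamma_\varepsilon$ (with the inequalities \eqref{E:Fric_Upper} holding by Assumption~\ref{Assump_1}), Lemma~\ref{L:Lap_Apri} applies and yields
\begin{align*}
  \|u\|_{H^2(\Omega_\varepsilon)} \leq c\bigl(\|\Delta u\|_{L^2(\Omega_\varepsilon)} + \|u\|_{H^1(\Omega_\varepsilon)}\bigr).
\end{align*}
Second, the triangle inequality and Lemma~\ref{L:Comp_Sto_Lap} give
\begin{align*}
  \nu\|\Delta u\|_{L^2(\Omega_\varepsilon)} \leq \|A_\varepsilon u\|_{L^2(\Omega_\varepsilon)} + \|A_\varepsilon u + \nu\Delta u\|_{L^2(\Omega_\varepsilon)} \leq \|A_\varepsilon u\|_{L^2(\Omega_\varepsilon)} + c\|u\|_{H^1(\Omega_\varepsilon)}.
\end{align*}
Third, applying both inequalities of Lemma~\ref{L:Stokes_H1} to the remaining $H^1$-norm I obtain
\begin{align*}
  \|u\|_{H^1(\Omega_\varepsilon)} \leq c\|A_\varepsilon^{1/2}u\|_{L^2(\Omega_\varepsilon)} \leq c\|A_\varepsilon u\|_{L^2(\Omega_\varepsilon)}.
\end{align*}
Chaining these three estimates produces $\|u\|_{H^2(\Omega_\varepsilon)} \leq c\|A_\varepsilon u\|_{L^2(\Omega_\varepsilon)}$ with a constant independent of $\varepsilon$.

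There is no genuine obstacle in this proof, since all the uniform-in-$\varepsilon$ analysis is already packaged into the earlier lemmas; the only point requiring care is to keep track that every constant introduced through Lemmas~\ref{L:Comp_Sto_Lap}, \ref{L:Lap_Apri}, and~\ref{L:Stokes_H1} is independent of $\varepsilon$ on the admissible range $(0,\varepsilon_0)$, which is guaranteed by the standing Assumptions~\ref{Assump_1} and~\ref{Assump_2}. The proof should therefore fit in a few lines.
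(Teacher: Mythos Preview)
Your proposal is correct and follows essentially the same approach as the paper: both prove the right-hand inequality via the triangle inequality combined with Lemma~\ref{L:Comp_Sto_Lap}, and the left-hand inequality by chaining Lemma~\ref{L:Lap_Apri}, Lemma~\ref{L:Comp_Sto_Lap}, and the two inequalities \eqref{E:Stokes_H1}--\eqref{E:Stokes_Po} of Lemma~\ref{L:Stokes_H1}. The paper's write-up is virtually identical to yours.
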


\begin{proof}
  By \eqref{E:Comp_Sto_Lap} and \eqref{E:Lap_Apri} we have
  \begin{align*}
    \|u\|_{H^2(\Omega_\varepsilon)} &\leq c\left(\|\Delta u\|_{L^2(\Omega_\varepsilon)}+\|u\|_{H^1(\Omega_\varepsilon)}\right) \\
    &\leq c\left(\|A_\varepsilon u\|_{L^2(\Omega_\varepsilon)}+\|A_\varepsilon u+\nu\Delta u\|_{L^2(\Omega_\varepsilon)}+\|u\|_{H^1(\Omega_\varepsilon)}\right) \\
    &\leq c\left(\|A_\varepsilon u\|_{L^2(\Omega_\varepsilon)}+\|u\|_{H^1(\Omega_\varepsilon)}\right).
  \end{align*}
  Applying \eqref{E:Stokes_H1} and \eqref{E:Stokes_Po} to the second term on the last line we obtain the left-hand inequality of \eqref{E:Stokes_H2}.
  Also, by \eqref{E:Comp_Sto_Lap} and $\|u\|_{H^1(\Omega_\varepsilon)}\leq\|u\|_{H^2(\Omega_\varepsilon)}$,
  \begin{align*}
    \|A_\varepsilon u\|_{L^2(\Omega_\varepsilon)} \leq \|A_\varepsilon u+\nu\Delta u\|_{L^2(\Omega_\varepsilon)}+\|\nu\Delta u\|_{L^2(\Omega_\varepsilon)} \leq c\|u\|_{H^2(\Omega_\varepsilon)}.
  \end{align*}
  Hence the right-hand inequality of \eqref{E:Stokes_H2} holds.
\end{proof}

As a consequence of Lemmas~\ref{L:Stokes_H1} and~\ref{L:Stokes_H2}, we obtain an interpolation inequality for a vector field in $D(A_\varepsilon)$.

\begin{lemma} \label{L:St_Inter}
  Under Assumptions~\ref{Assump_1} and~\ref{Assump_2}, we have
  \begin{align} \label{E:St_Inter}
    \|u\|_{H^1(\Omega_\varepsilon)} \leq c\|u\|_{L^2(\Omega_\varepsilon)}^{1/2}\|u\|_{H^2(\Omega_\varepsilon)}^{1/2}
  \end{align}
  for all $\varepsilon\in(0,\varepsilon_0)$ and $u\in D(A_\varepsilon)$ with a constant $c>0$ independent of $\varepsilon$ and $u$.
\end{lemma}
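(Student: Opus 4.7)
The plan is to exploit the two uniform norm equivalences we have just proved, Lemma~\ref{L:Stokes_H1} and Lemma~\ref{L:Stokes_H2}, together with the self-adjointness identity \eqref{E:L2in_Ahalf}, to reduce the desired interpolation inequality to a trivial Cauchy--Schwarz estimate at the level of the Stokes operator.

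Concretely, for $u \in D(A_\varepsilon) \subset \mathcal{V}_\varepsilon$, I would first apply the left-hand inequality of \eqref{E:Stokes_H1} to bound
\[
  \|u\|_{H^1(\Omega_\varepsilon)}^2 \leq c \|A_\varepsilon^{1/2} u\|_{L^2(\Omega_\varepsilon)}^2.
\]
Next, I would rewrite the right-hand side using \eqref{E:L2in_Ahalf} with $u_1 = u_2 = u$:
\[
  \|A_\varepsilon^{1/2} u\|_{L^2(\Omega_\varepsilon)}^2 = (A_\varepsilon^{1/2} u, A_\varepsilon^{1/2} u)_{L^2(\Omega_\varepsilon)} = (A_\varepsilon u, u)_{L^2(\Omega_\varepsilon)},
\]
which makes sense since $u \in D(A_\varepsilon)$. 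Then Cauchy--Schwarz gives
\[
  (A_\varepsilon u, u)_{L^2(\Omega_\varepsilon)} \leq \|A_\varepsilon u\|_{L^2(\Omega_\varepsilon)} \|u\|_{L^2(\Omega_\varepsilon)},
\]
and the right-hand inequality of \eqref{E:Stokes_H2} bounds $\|A_\varepsilon u\|_{L^2(\Omega_\varepsilon)} \leq c \|u\|_{H^2(\Omega_\varepsilon)}$.

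Chaining these four bounds produces $\|u\|_{H^1(\Omega_\varepsilon)}^2 \leq c \|u\|_{L^2(\Omega_\varepsilon)} \|u\|_{H^2(\Omega_\varepsilon)}$, and taking square roots yields \eqref{E:St_Inter}. There is no real obstacle here: the only subtle point is checking that the constant $c$ at each step is genuinely independent of $\varepsilon$, which is guaranteed because both Lemma~\ref{L:Stokes_H1} and Lemma~\ref{L:Stokes_H2} assert uniform equivalence, and the Cauchy--Schwarz step introduces no $\varepsilon$-dependence. Note also that the stated range $\varepsilon \in (0,\varepsilon_0)$ matches the range in those two lemmas, so no additional smallness of $\varepsilon$ is needed.
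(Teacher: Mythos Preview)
Your proof is correct and follows exactly the same approach as the paper: both use the left-hand inequality of \eqref{E:Stokes_H1}, the identity \eqref{E:L2in_Ahalf}, Cauchy--Schwarz, and then \eqref{E:Stokes_H2} in that order. There is nothing to add.
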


\begin{proof}
  Let $u\in D(A_\varepsilon)$.
  From \eqref{E:L2in_Ahalf} and \eqref{E:Stokes_H1} it follows that
  \begin{align*}
    \|u\|_{H^1(\Omega_\varepsilon)}^2 \leq c\|A_\varepsilon^{1/2}u\|_{L^2(\Omega_\varepsilon)}^2 = c(A_\varepsilon u,u)_{L^2(\Omega_\varepsilon)} \leq c\|A_\varepsilon u\|_{L^2(\Omega_\varepsilon)}\|u\|_{L^2(\Omega_\varepsilon)}.
  \end{align*}
  Applying \eqref{E:Stokes_H2} to the right-hand side of this inequality we get
  \begin{align*}
    \|u\|_{H^1(\Omega_\varepsilon)}^2 \leq c\|u\|_{L^2(\Omega_\varepsilon)}\|u\|_{H^2(\Omega_\varepsilon)}.
  \end{align*}
  Hence \eqref{E:St_Inter} is valid.
\end{proof}

\subsection{Uniform a priori estimate for the vector Laplacian} \label{SS:St_Apr}
The purpose of this subsection is to provide the proof of Lemma~\ref{L:Lap_Apri}.
First we give an approximation result of a vector field in $H^2(\Omega_\varepsilon)^3$ satisfying the slip boundary conditions.
To this end, we consider the second order PDEs
\begin{align} \label{E:EE_LE}
  \begin{cases}
    -\nu\{\Delta u+\nabla(\mathrm{div}\,u)\}+u = f &\text{in}\quad \Omega_\varepsilon, \\
    u\cdot n_\varepsilon = 0, \quad 2\nu P_\varepsilon D(u)n_\varepsilon+\gamma_\varepsilon u = 0 &\text{on}\quad \Gamma_\varepsilon
  \end{cases}
\end{align}
for a given data $f\colon\Omega_\varepsilon\to\mathbb{R}^3$.
The bilinear form corresponding to \eqref{E:EE_LE} is given by
\begin{align*}
  \tilde{a}_\varepsilon(u_1,u_2) := 2\nu\bigl(D(u_1),D(u_2)\bigr)_{L^2(\Omega_\varepsilon)}+(u_1,u_2)_{L^2(\Omega_\varepsilon)}+\sum_{i=0,1}\gamma_\varepsilon^i(u_1,u_2)_{L^2(\Gamma_\varepsilon^i)}
\end{align*}
for $u_1,u_2\in H^1(\Omega_\varepsilon)^3$ by \eqref{E:IbP_St} and \eqref{E:IbP_StBo}.
As in Section~\ref{SS:St_HL} we denote by $\langle\cdot,\cdot\rangle_{\Omega_\varepsilon}$ the duality product between $H^{-1}(\Omega_\varepsilon)$ and $H^1(\Omega_\varepsilon)$.
Also, let
\begin{align*}
  H_{n,0}^1(\Omega_\varepsilon) := \{u \in H^1(\Omega_\varepsilon)^3 \mid \text{$u\cdot n_\varepsilon=0$ on $\Gamma_\varepsilon$}\}.
\end{align*}
Note that $H_{n,0}^1(\Omega_\varepsilon)$ is closed in $H^1(\Omega_\varepsilon)^3$ and thus a Hilbert space.

\begin{lemma} \label{L:EE_Reg}
  For $\varepsilon\in(0,1)$ let $f\in H^{-1}(\Omega_\varepsilon)^3$.
  Suppose that the inequalities \eqref{E:Fric_Upper} are valid.
  Then there exists a unique weak solution $u\in H_{n,0}^1(\Omega_\varepsilon)$ to \eqref{E:EE_LE} in the sense that $\tilde{a}_\varepsilon(u,\Phi)=\langle f,\Phi\rangle_{\Omega_\varepsilon}$ for all $\Phi\in H_{n,0}^1(\Omega_\varepsilon)$.
  Moreover, if $f\in L^2(\Omega_\varepsilon)^3$, then $u\in H^2(\Omega_\varepsilon)^3$ and it satisfies \eqref{E:EE_LE} a.e. in $\Omega_\varepsilon$ and on $\Gamma_\varepsilon$, and there exists a constant $c_\varepsilon>0$ depending on $\varepsilon$ such that
  \begin{align} \label{E:EE_Reg}
    \|u\|_{H^2(\Omega_\varepsilon)} \leq c_\varepsilon\|f\|_{L^2(\Omega_\varepsilon)}.
  \end{align}
  If in addition $f\in H^1(\Omega_\varepsilon)^3$ then $u\in H^3(\Omega_\varepsilon)^3$.
\end{lemma}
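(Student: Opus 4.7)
The plan is to prove the three assertions in order, relying on the Lax--Milgram theorem for the weak solvability and on standard elliptic regularity theory for the higher regularity statements (all constants being allowed to depend on $\varepsilon$).

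First I would establish existence and uniqueness of the weak solution by applying the Lax--Milgram theorem on the Hilbert space $H_{n,0}^1(\Omega_\varepsilon)$. Continuity of $\tilde{a}_\varepsilon$ on $H^1(\Omega_\varepsilon)^3$ follows from Cauchy--Schwarz combined with the trace inequality to handle the boundary integrals; the coefficients $\gamma_\varepsilon^i$ are fixed and the inequalities \eqref{E:Fric_Upper} ensure they are finite. For coercivity on $H_{n,0}^1(\Omega_\varepsilon)$, every $u$ in this space satisfies \eqref{E:Bo_Imp} on $\Gamma_\varepsilon$, so the Korn inequality \eqref{E:Korn_Grad} yields $\|\nabla u\|_{L^2(\Omega_\varepsilon)}^2\leq 4\|D(u)\|_{L^2(\Omega_\varepsilon)}^2+c_{K,1}\|u\|_{L^2(\Omega_\varepsilon)}^2$. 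Combining this with the full zero-order term $\|u\|_{L^2(\Omega_\varepsilon)}^2$ present in $\tilde{a}_\varepsilon$ gives $\tilde{a}_\varepsilon(u,u)\geq c_\varepsilon\|u\|_{H^1(\Omega_\varepsilon)}^2$ (after rescaling the two contributions), which is exactly coercivity. Lax--Milgram then produces a unique weak solution, and testing with $\Phi=u$ gives the energy estimate $\|u\|_{H^1(\Omega_\varepsilon)}\leq c_\varepsilon\|f\|_{H^{-1}(\Omega_\varepsilon)}$.

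Next I would upgrade to $H^2$-regularity when $f\in L^2(\Omega_\varepsilon)^3$. The system in \eqref{E:EE_LE} can be rewritten as $-2\nu\,\mathrm{div}\,D(u)+u=f$, whose principal part is a strongly elliptic second-order system. The Navier-type boundary conditions $u\cdot n_\varepsilon=0$ and $2\nu P_\varepsilon D(u)n_\varepsilon+\gamma_\varepsilon u=0$ split into one normal Dirichlet condition and two tangential Robin-type conditions, and are known to satisfy the complementing (Lopatinskii--Shapiro) condition; see the references \cite{SoSc73,Be04} already cited in Section~\ref{SS:St_Def} for the regularity of the associated Stokes problem. Concretely, I would proceed as in the standard scheme: a partition of unity on $\overline{\Omega}_\varepsilon$ reduces the problem to interior pieces (where Nirenberg's difference-quotient method gives $H^2$) and boundary pieces, where one locally flattens $\Gamma_\varepsilon$ via a $C^4$-diffeomorphism (such a chart exists since $\Gamma_\varepsilon$ is of class $C^4$ by the $C^5$-regularity of $\Gamma$ and $g_0,g_1\in C^4(\Gamma)$). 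In the flattened setting, tangential difference quotients yield tangential second-order $L^2$-estimates, and the remaining pure-normal second derivatives are recovered algebraically from the PDE itself, producing the bound \eqref{E:EE_Reg}. The weak solution then satisfies \eqref{E:EE_LE} pointwise a.e.\ in $\Omega_\varepsilon$ and the slip boundary conditions in the trace sense.

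Finally, for $f\in H^1(\Omega_\varepsilon)^3$ I would bootstrap to $H^3$. Within each boundary chart obtained above, I differentiate the equation with respect to a tangential direction: a tangential derivative of $u$ solves a system of the same structure with right-hand side given by a tangential derivative of $f$ (which lies in $L^2$) plus commutator terms involving at most two derivatives of $u$ and first derivatives of the coefficients coming from the curved chart. Applying the $H^2$-regularity of Step~2 to this auxiliary problem yields the tangential $H^3$-estimate, and the remaining pure-normal third derivatives are again obtained by differentiating the equation and solving algebraically for $\partial_n^3 u$. Interior regularity $u\in H_{\mathrm{loc}}^3$ is classical. Combining the pieces with a partition of unity gives $u\in H^3(\Omega_\varepsilon)^3$. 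The main obstacle in the whole argument is the $H^2$ boundary regularity of Step~2, which rests on the complementing condition for the Navier slip conditions; since this is a well-established result, I would simply invoke \cite{SoSc73,Be04} rather than reproduce the difference-quotient calculation in full.
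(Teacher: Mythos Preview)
Your proposal is correct and follows essentially the same route as the paper: Lax--Milgram via Korn's inequality \eqref{E:Korn_Grad} plus the $L^2$-term for coercivity, then localization and difference quotients (citing \cite{Be04,SoSc73}) for $H^2$-regularity, and a bootstrap for $H^3$. The paper's proof is in fact terser than yours---it simply cites \cite{Be04,SoSc73} for the $H^2$ step and \cite{Ev10} for the $H^3$ step without spelling out the difference-quotient or tangential-differentiation scheme---and it notes that the coercivity constant can even be taken independent of $\varepsilon$ (using \eqref{E:Poin_Bo} and \eqref{E:Fric_Upper}), though this uniformity is not needed for the lemma as stated.
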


\begin{proof}
  Using the inequalities \eqref{E:Fric_Upper}, \eqref{E:Poin_Bo}, and \eqref{E:Korn_Grad} we can show that
  \begin{align*}
    c^{-1}\|u\|_{H^1(\Omega_\varepsilon)}^2 \leq \tilde{a}_\varepsilon(u,u) \leq c\|u\|_{H^1(\Omega_\varepsilon)}^2 \quad\text{for all}\quad u \in H_{n,0}^1(\Omega_\varepsilon)
  \end{align*}
  with a constant $c>0$ independent of $\varepsilon$ as in the proof of Lemma~\ref{L:Bili_Core} (note that $\tilde{a}_\varepsilon(u,u)$ contains the square of the $L^2(\Omega_\varepsilon)$-norm of $u$).
  Hence by the Lax--Milgram theorem we get the existence and uniqueness of a weak solution to \eqref{E:EE_LE}.

  When $f\in L^2(\Omega_\varepsilon)^3$, the $H^2$-regularity of $u$ and \eqref{E:EE_Reg} are proved by a standard localization argument and a method of the difference quotient.
  Here we omit their proofs since they are the same as those of \cite[Theorem~1.2]{Be04} and~\cite[Theorem~2]{SoSc73}, which establish the $H^2$-regularity of a weak solution to the Stokes equations in a bounded domain with slip boundary conditions.

  The $H^3$-regularity of $u$ for $f\in H^1(\Omega_\varepsilon)^3$ is proved by induction and a localization argument as in the case of a general second order elliptic equation.
  For details, see \cite[Section~6.3, Theorem~5]{Ev10}.
  (Note that the $C^4$-regularity of the boundary $\Gamma_\varepsilon$ is required for the $H^3$-regularity of $u$, see Section~\ref{SS:Pre_Dom} and the proofs of \cite[Theorem~1.2]{Be04} and~\cite[Theorem~2]{SoSc73}.)
\end{proof}

Based on Lemma~\ref{L:EE_Reg} we show that a vector field in $H^2(\Omega_\varepsilon)^3$ is approximated by those in $H^3(\Omega_\varepsilon)^3$ under the slip boundary conditions \eqref{E:Bo_Imp}--\eqref{E:Bo_Slip}.

\begin{lemma} \label{L:NSl_Approx}
  For $\varepsilon\in(0,1)$ suppose that the inequalities \eqref{E:Fric_Upper} are valid.
  Also, let $u\in H^2(\Omega_\varepsilon)^3$ satisfy \eqref{E:Bo_Imp}--\eqref{E:Bo_Slip} on $\Gamma_\varepsilon$.
  Then there exists a sequence $\{u_k\}_{k=1}^\infty$ in $H^3(\Omega_\varepsilon)^3$ such that $u_k$ satisfies \eqref{E:Bo_Imp}--\eqref{E:Bo_Slip} on $\Gamma_\varepsilon$ for each $k\in\mathbb{N}$ and
  \begin{align*}
    \lim_{k\to\infty}\|u-u_k\|_{H^2(\Omega_\varepsilon)}=0.
  \end{align*}
\end{lemma}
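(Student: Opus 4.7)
The plan is to realize the given $u$ as the unique solution of the elliptic system \eqref{E:EE_LE} for a suitable right-hand side in $L^2$, then approximate that right-hand side in $L^2$ by smoother data and use the $H^3$-regularity portion of Lemma~\ref{L:EE_Reg} to produce the approximants $u_k$. Since $u_k$ will solve \eqref{E:EE_LE} in the strong sense, the slip boundary conditions \eqref{E:Bo_Imp}--\eqref{E:Bo_Slip} will be automatically satisfied by $u_k$.

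More precisely, first I would set
\[
  f := u - \nu\bigl\{\Delta u+\nabla(\mathrm{div}\,u)\bigr\} \in L^2(\Omega_\varepsilon)^3,
\]
which is well-defined because $u\in H^2(\Omega_\varepsilon)^3$. By hypothesis, $u$ satisfies the boundary conditions of \eqref{E:EE_LE}, and by construction it satisfies the PDE a.e.\ in $\Omega_\varepsilon$. In particular $u$ is a strong solution of \eqref{E:EE_LE} with this data $f$, and by the uniqueness part of Lemma~\ref{L:EE_Reg} (applied through the weak formulation, which follows from \eqref{E:IbP_St} and \eqref{E:IbP_StBo}), $u$ is the unique element of $H_{n,0}^1(\Omega_\varepsilon)$ whose image under the associated bilinear form $\tilde{a}_\varepsilon(\,\cdot\,,\Phi)$ equals $(f,\Phi)_{L^2(\Omega_\varepsilon)}$ for all $\Phi\in H_{n,0}^1(\Omega_\varepsilon)$.

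Next I would choose a sequence $\{f_k\}_{k=1}^\infty \subset C^\infty(\overline{\Omega}_\varepsilon)^3$ (hence in $H^1(\Omega_\varepsilon)^3$) with $f_k\to f$ strongly in $L^2(\Omega_\varepsilon)^3$; this is possible by standard density. For each $k$ let $u_k\in H_{n,0}^1(\Omega_\varepsilon)$ be the weak solution of \eqref{E:EE_LE} with right-hand side $f_k$, which exists and is unique by Lemma~\ref{L:EE_Reg}. Since $f_k\in H^1(\Omega_\varepsilon)^3$, the same lemma upgrades $u_k$ to $H^3(\Omega_\varepsilon)^3$; because $u_k$ is a strong solution, it automatically satisfies \eqref{E:Bo_Imp} and \eqref{E:Bo_Slip} on $\Gamma_\varepsilon$.

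Finally, to establish convergence, I would observe that $u-u_k \in H_{n,0}^1(\Omega_\varepsilon)$ is the unique weak solution of \eqref{E:EE_LE} with data $f-f_k\in L^2(\Omega_\varepsilon)^3$. Applying the regularity estimate \eqref{E:EE_Reg} from Lemma~\ref{L:EE_Reg} yields
\[
  \|u-u_k\|_{H^2(\Omega_\varepsilon)} \leq c_\varepsilon\|f-f_k\|_{L^2(\Omega_\varepsilon)} \longrightarrow 0
  \quad\text{as}\quad k\to\infty,
\]
which is exactly the required approximation. Note that the constant $c_\varepsilon$ in \eqref{E:EE_Reg} is allowed to depend on $\varepsilon$, which is harmless here since $\varepsilon$ is fixed throughout the lemma. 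There is no real obstacle in the argument: everything is driven by Lemma~\ref{L:EE_Reg}, and the only mild point to verify carefully is that the original $u$ is genuinely the unique weak solution associated with the $f$ we constructed, so that it is meaningful to compare it with the regularized solutions $u_k$.
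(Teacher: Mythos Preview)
Your proposal is correct and follows essentially the same argument as the paper's proof: define $f = u - \nu\{\Delta u + \nabla(\mathrm{div}\,u)\}$, approximate $f$ in $L^2$ by smooth data $f_k$, take $u_k$ to be the solution of \eqref{E:EE_LE} with data $f_k$ (which lies in $H^3$ by Lemma~\ref{L:EE_Reg}), and use the $H^2$ estimate \eqref{E:EE_Reg} applied to $u-u_k$ with data $f-f_k$. The only cosmetic difference is that the paper chooses $f_k\in C_c^\infty(\Omega_\varepsilon)^3$ rather than $C^\infty(\overline{\Omega}_\varepsilon)^3$, which is immaterial.
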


\begin{proof}
  Let $f:=-\nu\{\Delta u+\nabla(\mathrm{div}\,u)\}+u$ for $u\in H^2(\Omega_\varepsilon)^3$ satisfying \eqref{E:Bo_Imp}--\eqref{E:Bo_Slip} on $\Gamma_\varepsilon$.
  Since $f\in L^2(\Omega_\varepsilon)^3$, there exists a sequence $\{f_k\}_{k=1}^\infty$ in $C_c^\infty(\Omega_\varepsilon)^3$ that converges to $f$ strongly in $L^2(\Omega_\varepsilon)^3$.
  For each $k\in\mathbb{N}$ let $u_k$ be a unique weak solution to \eqref{E:EE_LE} with data $f_k\in C_c^\infty(\Omega_\varepsilon)^3$.
  Then $u_k\in H^3(\Omega_\varepsilon)^3$ and it satisfies \eqref{E:Bo_Imp}--\eqref{E:Bo_Slip} on $\Gamma_\varepsilon$ by Lemma~\ref{L:EE_Reg}.
  Moreover, since $u-u_k$ is a unique weak solution to \eqref{E:EE_LE} with data $f-f_k$, we see that
  \begin{align*}
    \|u-u_k\|_{H^2(\Omega_\varepsilon)} \leq c_\varepsilon\|f-f_k\|_{L^2(\Omega_\varepsilon)} \to 0 \quad\text{as}\quad k\to\infty
  \end{align*}
  by \eqref{E:EE_Reg} and the strong convergence of $\{f_k\}_{k=1}^\infty$ to $f$ in $L^2(\Omega_\varepsilon)^3$ (note that the constant $c_\varepsilon$ does not depend on $k$).
\end{proof}

Now let us prove Lemma~\ref{L:Lap_Apri}.
As in Section~\ref{SS:Pre_Surf} we denote by
\begin{align*}
  H^m(\Gamma_\varepsilon,T\Gamma_\varepsilon) := \{u\in H^m(\Gamma_\varepsilon)^3 \mid \text{$u\cdot n_\varepsilon=0$ on $\Gamma_\varepsilon$}\}, \quad m=0,1,2
\end{align*}
the space of all tangential vector fields on $\Gamma_\varepsilon$ of class $H^m$ (here we write $H^0=L^2$).
For $u\in H^1(\Gamma_\varepsilon,T\Gamma_\varepsilon)$ and $v\in L^2(\Gamma_\varepsilon,T\Gamma_\varepsilon)$ we define the covariant derivative
\begin{align*}
  \overline{\nabla}_v^\varepsilon u := P_\varepsilon(v\cdot\nabla)\tilde{u} = P_\varepsilon(v\cdot\nabla_{\Gamma_\varepsilon})u \quad\text{on}\quad \Gamma_\varepsilon,
\end{align*}
where $\tilde{u}$ is any $H^1$-extension of $u$ to an open neighborhood of $\Gamma_\varepsilon$ with $\tilde{u}|_{\Gamma_\varepsilon}=u$.
We use the formulas for the covariant derivatives given in Appendix~\ref{S:Ap_RC}.

\begin{proof}[Proof of Lemma~\ref{L:Lap_Apri}]
  Let $u\in H^2(\Omega_\varepsilon)^3$ satisfy \eqref{E:Bo_Imp}--\eqref{E:Bo_Slip} on $\Gamma_\varepsilon$.
  Since
  \begin{align*}
    \|u\|_{H^2(\Omega_\varepsilon)}^2 = \|u\|_{H^1(\Omega_\varepsilon)}^2+\|\nabla^2u\|_{L^2(\Omega_\varepsilon)}^2,
  \end{align*}
  it is sufficient for \eqref{E:Lap_Apri} to show that
  \begin{align} \label{Pf_LA:Goal}
    \|\nabla^2u\|_{L^2(\Omega_\varepsilon)}^2 \leq c\left(\|\Delta u\|_{L^2(\Omega_\varepsilon)}^2+\|u\|_{H^1(\Omega_\varepsilon)}^2\right).
  \end{align}
  Moreover, we may assume $u\in H^3(\Omega_\varepsilon)^3$ by Lemma~\ref{L:NSl_Approx} and thus its trace on $\Gamma_\varepsilon$ is in $H^2(\Gamma_\varepsilon,T\Gamma_\varepsilon)$.
  For such $u$ we can carry out integration by parts twice to get
  \begin{align} \label{Pf_LA:IbP}
    \|\nabla^2u\|_{L^2(\Omega_\varepsilon)}^2 = \|\Delta u\|_{L^2(\Omega_\varepsilon)}^2+\int_{\Gamma_\varepsilon}\nabla u:\{(n_\varepsilon\cdot\nabla)\nabla u-n_\varepsilon\otimes\Delta u\}\,d\mathcal{H}^2.
  \end{align}
  Here $(n_\varepsilon\cdot\nabla)\nabla u$ denotes a $3\times3$ matrix whose $(i,j)$-entry is given by
  \begin{align*}
    [(n_\varepsilon\cdot\nabla)\nabla u]_{ij} := n_\varepsilon\cdot\nabla(\partial_iu_j), \quad i,j=1,2,3.
  \end{align*}
  Let us estimate the boundary integral in \eqref{Pf_LA:IbP}.
  Since $u$ satisfies the slip boundary conditions \eqref{E:Bo_Imp}--\eqref{E:Bo_Slip} on $\Gamma_\varepsilon$, we can use \eqref{E:NSl_ND} to get
  \begin{align} \label{Pf_LA:ND_Bo}
    (\nabla u)^Tn_\varepsilon = (n_\varepsilon\cdot\nabla)u = -W_\varepsilon u-\tilde{\gamma}_\varepsilon u+\xi_\varepsilon n_\varepsilon \quad\text{on}\quad \Gamma_\varepsilon
  \end{align}
  with $\tilde{\gamma}_\varepsilon:=\gamma_\varepsilon/\nu$ and $\xi_\varepsilon:= (n_\varepsilon\cdot\nabla)u\cdot n_\varepsilon = \nabla u: Q_\varepsilon$ (note that $u$ and $W_\varepsilon u$ are tangential on $\Gamma_\varepsilon$).
  The first step is to show that
  \begin{align} \label{Pf_LA:Int_Sum}
    \int_{\Gamma_\varepsilon}\nabla u:\{(n_\varepsilon\cdot\nabla)\nabla u-n_\varepsilon\otimes\Delta u\}\,d\mathcal{H}^2 = \sum_{k=1}^4\int_{\Gamma_\varepsilon}\varphi_k\,d\mathcal{H}^2,
  \end{align}
  where
  \begin{align} \label{Pf_LA:Def_Phi}
    \begin{aligned}
      \varphi_1 &:= -2\{\nabla_{\Gamma_\varepsilon}W_\varepsilon\cdot u+(\nabla u)W_\varepsilon+\tilde{\gamma_\varepsilon}\nabla u\}:P_\varepsilon(\nabla u)P_\varepsilon,\\
      \varphi_2 &:= W_\varepsilon\nabla u:(\nabla u)P_\varepsilon \\
      &\qquad\qquad -2(u\cdot\mathrm{div}_{\Gamma_\varepsilon}W_\varepsilon+2\nabla u:W_\varepsilon)(\nabla u:Q_\varepsilon)+H_\varepsilon(\nabla u:Q_\varepsilon)^2, \\
      \varphi_3 &:= -(W_\varepsilon^3u-H_\varepsilon W_\varepsilon^2u)\cdot u, \\
      \varphi_4 &:= -\tilde{\gamma}_\varepsilon(2W_\varepsilon^2u-2H_\varepsilon W_\varepsilon u-\tilde{\gamma}_\varepsilon H_\varepsilon u)\cdot u.
    \end{aligned}
  \end{align}
  In \eqref{Pf_LA:Def_Phi} we used the notation $\nabla_{\Gamma_\varepsilon}W_\varepsilon\cdot u$ for the $3\times 3$ matrix given by
  \begin{align} \label{Pf_LA:DW_u_Bo}
    [\nabla_{\Gamma_\varepsilon}W_\varepsilon\cdot u]_{ij} := \sum_{k=1}^3(\underline{D}_i^\varepsilon[W_\varepsilon]_{jk})u_k, \quad i,j=1,2,3.
  \end{align}
  Using a partition of unity on $\Gamma_\varepsilon$ we may assume that $u|_{\Gamma_\varepsilon}$ is compactly supported in a relatively open subset $U$ of $\Gamma_\varepsilon$ on which we can take a local orthonormal frame $\{\tau_1,\tau_2\}$ (see Appendix~\ref{S:Ap_RC}).
  Since $\{\tau_1,\tau_2,n_\varepsilon\}$ is an orthonormal basis of $\mathbb{R}^3$,
  \begin{multline} \label{Pf_LA:Decom_Intg}
    \nabla u: \{(n_\varepsilon\cdot\nabla)\nabla u-n_\varepsilon\otimes\Delta u\} \\
    = (\nabla u)^T: [\{(n_\varepsilon\cdot\nabla)\nabla u\}^T-\Delta u\otimes n_\varepsilon] = \eta_1+\eta_2+\eta_3
  \end{multline}
  on $U$, where
  \begin{align}
    \eta_i &:= (\nabla u)^T\tau_i\cdot[\{(n_\varepsilon\cdot\nabla)\nabla u\}^T\tau_i-(\Delta u\otimes n_\varepsilon)\tau_i], \quad i=1,2, \label{Pf_LA:Def_etai}\\
    \eta_3 &:= (\nabla u)^Tn_\varepsilon\cdot[\{(n_\varepsilon\cdot\nabla)\nabla u\}^Tn_\varepsilon-(\Delta u\otimes n_\varepsilon)n_\varepsilon]. \label{Pf_LA:Def_eta3}
  \end{align}
  In what follows, we carry out calculations on $U$.
  By \eqref{E:Gauss} and $\tau_i\cdot n_\varepsilon=0$,
  \begin{align} \label{Pf_LA:etai_1}
    \begin{aligned}
      (\nabla u)^T\tau_i &= (\tau_i\cdot\nabla)u = \overline{\nabla}_i^\varepsilon u+(W_\varepsilon u\cdot\tau_i)n_\varepsilon, \\
      (\Delta u\otimes n_\varepsilon)\tau_i &= (\tau_i\cdot n_\varepsilon)\Delta u = 0,
    \end{aligned}
  \end{align}
  where $\overline{\nabla}_i^\varepsilon:=\overline{\nabla}_{\tau_i}^\varepsilon$, $i=1,2$.
  Writing $\tau_i^j$ and $n_\varepsilon^j$, $j=1,2,3$ for the $j$-th component of $\tau_i$ and $n_\varepsilon$, we see that the $j$-th component of $\{(n_\varepsilon\cdot\nabla)\nabla u\}^T\tau_i$ is of the form
  \begin{align*}
    \sum_{k,l=1}^3n_\varepsilon^k(\partial_k\partial_lu_j)\tau_i^l &= \sum_{k=1}^3n_\varepsilon^k(\tau_i\cdot\nabla)(\partial_ku_j) = \sum_{k=1}^3n_\varepsilon^k(\tau_i\cdot\nabla_{\Gamma_\varepsilon})(\partial_ku_j) \\
    &= \sum_{k=1}^3\{(\tau_i\cdot\nabla_{\Gamma_\varepsilon})(n_\varepsilon^k\partial_ku_j)-(\tau_i\cdot\nabla_{\Gamma_\varepsilon}n_\varepsilon^k)\partial_ku_j\} \\
    &= (\tau_i\cdot\nabla_{\Gamma_\varepsilon})\{(n_\varepsilon\cdot\nabla)u_j\}-\{(\tau_i\cdot\nabla_{\Gamma_\varepsilon})n_\varepsilon\cdot\nabla\}u_j.
  \end{align*}
  (Note that $\tau_i$ is tangential on $\Gamma_\varepsilon$ and the tangential derivatives depend only on the values of functions on $\Gamma_\varepsilon$).
  Therefore,
  \begin{align*}
    \{(n_\varepsilon\cdot\nabla)\nabla u\}^T\tau_i = (\tau_i\cdot\nabla_{\Gamma_\varepsilon})\{(n_\varepsilon\cdot\nabla)u\}-\{(\tau_i\cdot\nabla_{\Gamma_\varepsilon})n_\varepsilon\cdot\nabla\}u.
  \end{align*}
  By \eqref{Pf_LA:ND_Bo}, \eqref{E:Gauss}, $-\nabla_{\Gamma_\varepsilon}n_\varepsilon=W_\varepsilon=W_\varepsilon^T$, and
  \begin{align} \label{Pf_LA:Direc_TauN}
    (\tau_i\cdot\nabla_{\Gamma_\varepsilon})n_\varepsilon = (\nabla_{\Gamma_\varepsilon}n_\varepsilon)^T\tau_i = -W_\varepsilon\tau_i,
  \end{align}
  we further observe that
  \begin{multline} \label{Pf_LA:etai_2}
    \{(n_\varepsilon\cdot\nabla)\nabla u\}^T\tau_i =-\overline{\nabla}_i^\varepsilon(W_\varepsilon u)-\tilde{\gamma}_\varepsilon\overline{\nabla}_i^\varepsilon u+\overline{\nabla}_{W_\varepsilon\tau_i}^\varepsilon u-\xi_\varepsilon W_\varepsilon\tau_i \\
    +\{(-\tilde{\gamma}_\varepsilon W_\varepsilon u+\nabla_{\Gamma_\varepsilon}\xi_\varepsilon)\cdot\tau_i\}n_\varepsilon.
  \end{multline}
  Note that the first four terms on the right-hand side of \eqref{Pf_LA:etai_2} are tangential on $\Gamma_\varepsilon$.
  From \eqref{Pf_LA:Def_etai}, \eqref{Pf_LA:etai_1}, and \eqref{Pf_LA:etai_2} we deduce that
  \begin{multline*}
    \eta_i = -\overline{\nabla}_i^\varepsilon u\cdot\left\{\overline{\nabla}_i^\varepsilon(W_\varepsilon u)+\tilde{\gamma}_\varepsilon\overline{\nabla}_i^\varepsilon u-\overline{\nabla}_{W_\varepsilon\tau_i}^\varepsilon u+\xi_\varepsilon W_\varepsilon\tau_i\right\} \\
    +(W_\varepsilon u\cdot\tau_i)\{(-\tilde{\gamma}_\varepsilon W_\varepsilon u+\nabla_{\Gamma_\varepsilon}\xi_\varepsilon)\cdot\tau_i\}, \quad i=1,2.
  \end{multline*}
  Since $W_\varepsilon u$ and $\nabla_{\Gamma_\varepsilon}\xi_\varepsilon$ are tangential on $\Gamma_\varepsilon$ and $\{\tau_1,\tau_2\}$ is an orthonormal basis of the tangent plane of $\Gamma_\varepsilon$, by the above equality and \eqref{E:Wtr_Cov}--\eqref{E:Winn_Cov} we obtain
  \begin{multline} \label{Pf_LA:Sum_etai}
    \eta_1+\eta_2 = -\{\nabla_{\Gamma_\varepsilon}(W_\varepsilon u)+\tilde{\gamma}_\varepsilon\nabla_{\Gamma_\varepsilon}u-W_\varepsilon\nabla_{\Gamma_\varepsilon}u\}:(\nabla_{\Gamma_\varepsilon}u)P_\varepsilon \\
    -\xi_\varepsilon(\nabla_{\Gamma_\varepsilon}u:W_\varepsilon)+W_\varepsilon u\cdot(-\tilde{\gamma}_\varepsilon W_\varepsilon u+\nabla_{\Gamma_\varepsilon}\xi_\varepsilon).
  \end{multline}
  To calculate $\eta_3$ we see that the $j$-th component of $\{(n_\varepsilon\cdot\nabla)\nabla u\}^Tn_\varepsilon$ is of the form
  \begin{align*}
    \sum_{k,l=1}^3n_\varepsilon^k(\partial_k\partial_lu_j)n_\varepsilon^l &= \mathrm{tr}[Q_\varepsilon\nabla^2u_j] = \mathrm{tr}[\nabla^2u_j]-\mathrm{tr}[P_\varepsilon\nabla^2u_j] \\
    &= \Delta u_j-\sum_{i=1,2}P_\varepsilon(\nabla^2u_j)\tau_i\cdot\tau_i-P_\varepsilon(\nabla^2u)n_\varepsilon\cdot n_\varepsilon \\
    &= \Delta u_j-\sum_{i=1,2}(\tau_i\cdot\nabla)\nabla u_j\cdot\tau_i
  \end{align*}
  for $j=1,2,3$ by $P_\varepsilon^T=P_\varepsilon$, $P_\varepsilon\tau_i=\tau_i$, and $P_\varepsilon n_\varepsilon=0$.
  From this equality,
  \begin{align*}
    (\tau_i\cdot\nabla)\nabla u_j\cdot\tau_i = (\tau_i\cdot\nabla_{\Gamma_\varepsilon})\nabla u_j\cdot\tau_i = (\tau_i\cdot\nabla_{\Gamma_\varepsilon})\{(\tau_i\cdot\nabla)u_j\}-\{(\tau_i\cdot\nabla_{\Gamma_\varepsilon})\tau_i\cdot\nabla\}u_j,
  \end{align*}
  and $(\Delta u\otimes n_\varepsilon)n_\varepsilon=(n_\varepsilon\cdot n_\varepsilon)\Delta u=\Delta u$ we deduce that
  \begin{multline} \label{Pf_LA:eta3_1}
    \{(n_\varepsilon\cdot\nabla)\nabla u\}^Tn_\varepsilon-(\Delta u\otimes n_\varepsilon)n_\varepsilon \\
    = -\sum_{i=1,2}[(\tau_i\cdot\nabla_{\Gamma_\varepsilon})\{(\tau_i\cdot\nabla)u\}-\{(\tau_i\cdot\nabla_{\Gamma_\varepsilon})\tau_i\cdot\nabla\}u].
  \end{multline}
  Moreover, by \eqref{Pf_LA:ND_Bo}, \eqref{Pf_LA:Direc_TauN}, and \eqref{E:Gauss} we have
  \begin{align*}
    (\tau_i\cdot\nabla_{\Gamma_\varepsilon})\{(\tau_i\cdot\nabla)u\} &= (\tau_i\cdot\nabla_{\Gamma_\varepsilon})\left\{\overline{\nabla}_i^\varepsilon u+(W_\varepsilon u\cdot\tau_i)n_\varepsilon\right\} \\
    &= \overline{\nabla}_i^\varepsilon\overline{\nabla}_i^\varepsilon u-(W_\varepsilon u\cdot\tau_i)W_\varepsilon\tau_i \\
    &\qquad\qquad +\left\{W_\varepsilon \overline{\nabla}_i^\varepsilon u\cdot\tau_i+\tau_i\cdot\nabla_{\Gamma_\varepsilon}(W_\varepsilon u\cdot\tau_i)\right\}n_\varepsilon
  \end{align*}
  and
  \begin{align*}
    \{(\tau_i\cdot\nabla_{\Gamma_\varepsilon})\tau_i\cdot\nabla\}u &= \left[\left\{\overline{\nabla}_i^\varepsilon\tau_i+(W_\varepsilon\tau_i\cdot\tau_i)n_\varepsilon\right\}\cdot\nabla\right]u \\
    &= \overline{\nabla}_{\overline{\nabla}_i^\varepsilon\tau_i}^\varepsilon u-(W_\varepsilon\tau_i\cdot\tau_i)(W_\varepsilon u+\tilde{\gamma}_\varepsilon u) \\
    &\qquad\qquad +\left(W_\varepsilon u\cdot\overline{\nabla}_i^\varepsilon \tau_i+\xi_\varepsilon W_\varepsilon\tau_i\cdot\tau_i\right)n_\varepsilon.
  \end{align*}
  We substitute these expressions for \eqref{Pf_LA:eta3_1} and use
  \begin{align*}
    \sum_{i=1,2}(W_\varepsilon u\cdot\tau_i)W_\varepsilon\tau_i = \sum_{i=1,2}W_\varepsilon(\tau_i\otimes\tau_i)W_\varepsilon u = W_\varepsilon P_\varepsilon W_\varepsilon u = W_\varepsilon^2 u
  \end{align*}
  by $P_\varepsilon=\sum_{i=1,2}\tau_i\otimes\tau_i$ and $P_\varepsilon W_\varepsilon=W_\varepsilon$,
  \begin{align*}
    \sum_{i=1,2}\left\{\tau_i\cdot\nabla_{\Gamma_\varepsilon}(W_\varepsilon u\cdot\tau_i)- W_\varepsilon u\cdot\overline{\nabla}_i^\varepsilon\tau_i\right\} = \sum_{i=1,2}\overline{\nabla}_i^\varepsilon(W_\varepsilon u)\cdot\tau_i = \mathrm{div}_{\Gamma_\varepsilon}(W_\varepsilon u)
  \end{align*}
  by \eqref{E:RiCo_Met} and \eqref{E:Sdiv_Cov}, and the formulas \eqref{E:MC_Local} and \eqref{E:Wtr_Cov} to deduce that
  \begin{multline} \label{Pf_LA:eta3_2}
    \{(n_\varepsilon\cdot\nabla)\nabla u\}^Tn_\varepsilon-(\Delta u\otimes n_\varepsilon)n_\varepsilon \\
    = -\sum_{i=1,2}\left(\overline{\nabla}_i^\varepsilon\overline{\nabla}_i^\varepsilon u-\overline{\nabla}_{\overline{\nabla}_i^\varepsilon\tau_i}^\varepsilon u\right)+W_\varepsilon^2u-H_\varepsilon W_\varepsilon u-\tilde{\gamma}_\varepsilon H_\varepsilon u \\
    -\{\nabla_{\Gamma_\varepsilon}u:W_\varepsilon+\mathrm{div}_{\Gamma_\varepsilon}(W_\varepsilon u)-\xi_\varepsilon H_\varepsilon\}n_\varepsilon.
  \end{multline}
  Hence by \eqref{Pf_LA:ND_Bo}, \eqref{Pf_LA:Def_eta3}, and \eqref{Pf_LA:eta3_2} we get
  \begin{multline} \label{Pf_LA:eta3_Inn}
    \eta_3 = \sum_{i=1,2}\left(\overline{\nabla}_i^\varepsilon\overline{\nabla}_i^\varepsilon u-\overline{\nabla}_{\overline{\nabla}_i^\varepsilon\tau_i}^\varepsilon u\right)\cdot(W_\varepsilon u+\tilde{\gamma}_\varepsilon u) \\
    -(W_\varepsilon^2u-H_\varepsilon W_\varepsilon u-\tilde{\gamma}_\varepsilon H_\varepsilon u)\cdot(W_\varepsilon u+\tilde{\gamma}_\varepsilon u) \\
    -\xi_\varepsilon\{\nabla_{\Gamma_\varepsilon}u:W_\varepsilon+\mathrm{div}_{\Gamma_\varepsilon}(W_\varepsilon u)-\xi_\varepsilon H_\varepsilon\}.
  \end{multline}
  Now we observe by \eqref{E:PW_Bo} and direct calculations that
  \begin{align*}
    \nabla_{\Gamma_\varepsilon}u:(\nabla_{\Gamma_\varepsilon}u)P_\varepsilon = P_\varepsilon(\nabla u):P_\varepsilon(\nabla u)P_\varepsilon = \nabla u:P_\varepsilon^TP_\varepsilon(\nabla u)P_\varepsilon.
  \end{align*}
  Since $P_\varepsilon^T=P_\varepsilon^2=P_\varepsilon$, the above equality implies that
  \begin{align} \label{Pf_LA:Mat_Inn_1}
    \nabla_{\Gamma_\varepsilon}u:(\nabla_{\Gamma_\varepsilon}u)P_\varepsilon = \nabla u:P_\varepsilon(\nabla u)P_\varepsilon.
  \end{align}
  By the same calculations with \eqref{E:PW_Bo} and $W_\varepsilon^T=W_\varepsilon$ we have
  \begin{align} \label{Pf_LA:Mat_Inn_2}
    \begin{aligned}
      \nabla_{\Gamma_\varepsilon}(W_\varepsilon u):(\nabla_{\Gamma_\varepsilon}u)P_\varepsilon &= \{\nabla_{\Gamma_\varepsilon}W_\varepsilon\cdot u+(\nabla_{\Gamma_\varepsilon}u)W_\varepsilon\}:(\nabla_{\Gamma_\varepsilon}u)P_\varepsilon \\
      &= \{\nabla_{\Gamma_\varepsilon}W_\varepsilon\cdot u+(\nabla u)W_\varepsilon\}:P_\varepsilon(\nabla u)P_\varepsilon,
    \end{aligned}
  \end{align}
  where the matrix $\nabla_{\Gamma_\varepsilon}W_\varepsilon\cdot u$ is given by \eqref{Pf_LA:DW_u_Bo}, and
  \begin{align} \label{Pf_LA:Mat_Inn_3}
    W_\varepsilon(\nabla_{\Gamma_\varepsilon}u):(\nabla_{\Gamma_\varepsilon}u)P_\varepsilon = W_\varepsilon(\nabla u):(\nabla u)P_\varepsilon, \quad \nabla_{\Gamma_\varepsilon}u:W_\varepsilon = \nabla u:W_\varepsilon.
  \end{align}
  Also, it is easy to see that
  \begin{align} \label{Pf_LA:Div_Xi}
    \begin{aligned}
      W_\varepsilon u\cdot\nabla_{\Gamma_\varepsilon}\xi_\varepsilon &= \mathrm{div}_{\Gamma_\varepsilon}(\xi_\varepsilon W_\varepsilon u)-\xi_\varepsilon\mathrm{div}_{\Gamma_\varepsilon}(W_\varepsilon u), \\
      \mathrm{div}_{\Gamma_\varepsilon}(W_\varepsilon u) &= u\cdot\mathrm{div}_{\Gamma_\varepsilon}W_\varepsilon+\nabla_{\Gamma_\varepsilon}u: W_\varepsilon =  u\cdot\mathrm{div}_{\Gamma_\varepsilon}W_\varepsilon+\nabla u: W_\varepsilon.
    \end{aligned}
  \end{align}
  Hence we deduce from \eqref{Pf_LA:Decom_Intg}, \eqref{Pf_LA:Sum_etai}, \eqref{Pf_LA:eta3_Inn}--\eqref{Pf_LA:Div_Xi}, and $W_\varepsilon^T=W_\varepsilon$ that
  \begin{multline*}
    \int_{\Gamma_\varepsilon}\nabla u:\{(n_\varepsilon\cdot\nabla)\nabla u-n_\varepsilon\otimes\Delta u\}\,d\mathcal{H}^2 \\
    = \sum_{i=1,2}\int_{\Gamma_\varepsilon}\left(\overline{\nabla}_i^\varepsilon\overline{\nabla}_i^\varepsilon u-\overline{\nabla}_{\overline{\nabla}_i^\varepsilon\tau_i}^\varepsilon u\right)\cdot(W_\varepsilon u+\tilde{\gamma}_\varepsilon u)\,d\mathcal{H}^2\\
    +\int_{\Gamma_\varepsilon}\left(\frac{1}{2}\varphi_1+\sum_{k=2}^4\varphi_k\right)d\mathcal{H}^2+\int_{\Gamma_\varepsilon}\mathrm{div}_{\Gamma_\varepsilon}(\xi_\varepsilon W_\varepsilon u)\,d\mathcal{H}^2,
  \end{multline*}
  where $\varphi_1$, $\dots$, $\varphi_4$ are given by \eqref{Pf_LA:Def_Phi}.
  Here the last term vanishes by the Stokes theorem since $\xi_\varepsilon W_\varepsilon u$ is tangential on $\Gamma_\varepsilon$.
  Moreover, applying \eqref{E:IbP_Cov} to the first term on the right-hand side and then use \eqref{E:InnP_Cov}, \eqref{Pf_LA:Mat_Inn_1}, and \eqref{Pf_LA:Mat_Inn_2} we obtain
  \begin{align*}
    \sum_{i=1,2}\int_{\Gamma_\varepsilon}\left(\overline{\nabla}_i^\varepsilon\overline{\nabla}_i^\varepsilon u-\overline{\nabla}_{\overline{\nabla}_i^\varepsilon\tau_i}^\varepsilon u\right)\cdot(W_\varepsilon u+\tilde{\gamma}_\varepsilon u)\,d\mathcal{H}^2 = \frac{1}{2}\int_{\Gamma_\varepsilon}\varphi_1\,d\mathcal{H}^2.
  \end{align*}
  Hence the equality \eqref{Pf_LA:Int_Sum} follows.

  The second step is to show that
  \begin{align}
    \left|\int_{\Gamma_\varepsilon}\varphi_k\,d\mathcal{H}^2\right| &\leq c\left(\|u\|_{H^1(\Omega_\varepsilon)}^2+\|u\|_{H^1(\Omega_\varepsilon)}\|\nabla^2u\|_{L^2(\Omega_\varepsilon)}\right), \quad k=1,2, \label{Pf_LA:Est_Phi12}\\
    \left|\int_{\Gamma_\varepsilon}\varphi_k\,d\mathcal{H}^2\right| &\leq c\|u\|_{H^1(\Omega_\varepsilon)}^2, \quad k=3,4 \label{Pf_LA:Est_Phi34}
  \end{align}
  with a constant $c>0$ independent of $\varepsilon$.
  The estimate \eqref{Pf_LA:Est_Phi34} for $k=4$ is an easy consequence of \eqref{E:Fric_Upper}, \eqref{E:Poin_Bo}, and the uniform boundedness in $\varepsilon$ of $W_\varepsilon$ and $H_\varepsilon$ on $\Gamma_\varepsilon$:
  \begin{align*}
    \left|\int_{\Gamma_\varepsilon}\varphi_4\,d\mathcal{H}^2\right| \leq c\varepsilon\|u\|_{L^2(\Gamma_\varepsilon)}^2 \leq c\varepsilon(\varepsilon^{-1}\|u\|_{L^2(\Omega_\varepsilon)}^2+\varepsilon\|\partial_nu\|_{L^2(\Omega_\varepsilon)}^2) \leq c\|u\|_{H^1(\Omega_\varepsilon)}^2.
  \end{align*}
  Let us prove the estimate \eqref{Pf_LA:Est_Phi12} for $k=1$.
  We proceed as in the proof of Lemma~\ref{L:Korn_Grad}.
  In what follows, we use the notations \eqref{E:Pull_Dom} and \eqref{E:Pull_Bo} and sometimes suppress the arguments $y$ and $r$.
  For $y\in\Gamma$, $r\in[\varepsilon g_0(y),\varepsilon g_1(y)]$, and $j,k,l=1,2,3$ we set
  \begin{align*}
    F(y,r) &:= \frac{1}{\varepsilon g(y)}\bigl\{\bigl(r-\varepsilon g_0(y)\bigr)W_{\varepsilon,1}^\sharp(y)-\bigl(\varepsilon g_1(y)-r\bigr)W_{\varepsilon,0}^\sharp(y)\bigr\}, \\
    G_{jk}^l(y,r) &:= \frac{1}{\varepsilon g(y)}\bigl\{\bigl(r-\varepsilon g_0(y)\bigr)\bigl(\underline{D}_j^\varepsilon[W_\varepsilon]_{kl}\bigr)_1^\sharp(y)-\bigl(\varepsilon g_1(y)-r\bigr)\bigl(\underline{D}_j^\varepsilon[W_\varepsilon]_{kl}\bigr)_0^\sharp(y)\bigr\}, \\
    \tilde{\gamma}(y,r) &:= \frac{1}{\varepsilon g(y)}\bigl\{\bigl(r-\varepsilon g_0(y)\bigr)\tilde{\gamma}_\varepsilon^1-\bigl(\varepsilon g_1(y)-r\bigr)\tilde{\gamma}_\varepsilon^0\bigr\},
  \end{align*}
  where $\tilde{\gamma}_\varepsilon^i:=\gamma_\varepsilon^i/\nu$, $i=0,1$.
  Then we have
  \begin{multline} \label{Pf_LA:Aux_Eq_1}
    [\nabla_{\Gamma_\varepsilon}W_\varepsilon\cdot u+(\nabla u)W_\varepsilon+\tilde{\gamma}_\varepsilon\nabla u]_i^\sharp(y) \\
    = (-1)^{i+1}[G\cdot u^\sharp+(\nabla u)^\sharp F+\tilde{\gamma}(\nabla u)^\sharp](y,\varepsilon g_i(y)), \quad y\in\Gamma,\,i=0,1,
  \end{multline}
  where $G\cdot u^\sharp$ denotes a $3\times 3$ matrix whose $(j,k)$-entry is given by
  \begin{align*}
    [G\cdot u^\sharp]_{jk} := \sum_{l=1}^3G_{jk}^lu_l^\sharp, \quad j,k=1,2,3.
  \end{align*}
  Moreover, by \eqref{E:Fric_Upper}, \eqref{E:Diff_WH_IO} for $W_\varepsilon$ and $\underline{D}_j^\varepsilon W_\varepsilon$ with $j=1,2,3$,
  \begin{align} \label{Pf_LA:Est_Dist}
    |r-\varepsilon g_i(y)| \leq \varepsilon g(y) \leq c\varepsilon, \quad y\in\Gamma,\,r\in[\varepsilon g_0(y),\varepsilon g_1(y)],\,i=0,1,
  \end{align}
  and the uniform boundedness in $\varepsilon$ of $W_\varepsilon$ and $\underline{D}_j^\varepsilon W_\varepsilon$ on $\Gamma_\varepsilon$ we have
  \begin{align} \label{Pf_LA:Aux_Est_1}
    |\eta(y,r)|+\left|\frac{\partial \eta}{\partial r}(y,r)\right| \leq c, \quad \eta = F,G_{jk}^l,\tilde{\gamma}, \quad y\in\Gamma,\,r\in[\varepsilon g_0(y),\varepsilon g_1(y)]
  \end{align}
  with a constant $c>0$ independent of $\varepsilon$.
  We also define
  \begin{align*}
    R(y,r) := \frac{1}{\varepsilon g(y)}\bigl\{\bigl(r-\varepsilon g_0(y)\bigr)P_{\varepsilon,1}^\sharp(y)+\bigl(\varepsilon g_1(y)-r\bigr)P_{\varepsilon,0}^\sharp(y)\bigr\}
  \end{align*}
  for $y\in\Gamma$ and $r\in[\varepsilon g_0(y),\varepsilon g_1(y)]$, and
  \begin{align*}
    S_i(y) &:= \sqrt{1+\varepsilon^2|\tau_\varepsilon^i(y)|^2}\,P_{\varepsilon,i}^\sharp(y), \quad i=0,1, \\
    S(y,r) &:= \frac{1}{\varepsilon g(y)}\bigl\{\bigl(r-\varepsilon g_0(y)\bigr)S_1(y)+\bigl(\varepsilon g_1(y)-r\bigr)S_0(y)\bigr\},
  \end{align*}
  where $\tau_\varepsilon^0$ and $\tau_\varepsilon^1$ are given by \eqref{E:Def_NB_Aux}.
  Then
  \begin{align} \label{Pf_LA:Aux_Eq_2}
    \sqrt{1+\varepsilon^2|\tau_\varepsilon^i(y)|^2}\,[P_\varepsilon(\nabla u)P_\varepsilon]_i^\sharp(y) = [R(\nabla u)^\sharp S](y,\varepsilon g_i(y)), \quad y\in\Gamma,\,i=0,1.
  \end{align}
  Moreover, from \eqref{E:Diff_PQ_IO} for $P_\varepsilon$, \eqref{Pf_KG:Sqrt}, and \eqref{Pf_LA:Est_Dist} we deduce that
  \begin{align} \label{Pf_LA:Aux_Est_2}
    |\eta(y,r)|+\left|\frac{\partial\eta}{\partial r}(y,r)\right| \leq c, \quad \eta = R,S, \quad y\in\Gamma,\, r\in[\varepsilon g_0(y),\varepsilon g_1(y)].
  \end{align}
  Now we define a function $\Phi_1=\Phi_1(y,r)$ for $y\in\Gamma$ and $r\in[\varepsilon g_0(y),\varepsilon g_1(y)]$ by
  \begin{align*}
    \Phi_1(y,r) := -2[\{G\cdot u^\sharp+(\nabla u)^\sharp F+\tilde{\gamma}(\nabla u)^\sharp\}:R(\nabla u)S](y,r)J(y,r),
  \end{align*}
  where $J$ is given by \eqref{E:Def_Jac}.
  Then by \eqref{E:CoV_Surf}, \eqref{Pf_LA:Aux_Eq_1}, and \eqref{Pf_LA:Aux_Eq_2} we have
  \begin{align*}
    \int_{\Gamma_\varepsilon}\varphi_1(x)\,d\mathcal{H}^2(x) &= \sum_{i=0,1}\int_{\Gamma_\varepsilon^i}\varphi_1(x)\,d\mathcal{H}^2(x) \\
    &= \int_\Gamma\{\Phi_1(y,\varepsilon g_1(y))-\Phi_1(y,\varepsilon g_0(y))\}\,d\mathcal{H}^2(y) \\
    &= \int_\Gamma\int_{\varepsilon g_0(y)}^{\varepsilon g_1(y)}\frac{\partial\Phi_1}{\partial r}(y,r)\,dr\,d\mathcal{H}^2(y).
  \end{align*}
  Furthermore, the inequalities \eqref{E:Jac_Bound}, \eqref{Pf_LA:Aux_Est_1}, and \eqref{Pf_LA:Aux_Est_2} imply that
  \begin{align*}
    \left|\frac{\partial\Phi_1}{\partial r}\right| \leq c\{|u^\sharp|^2+|(\nabla u)^\sharp|^2+(|u^\sharp|+|(\nabla u)^\sharp|)|(\nabla^2u)^\sharp|\}
  \end{align*}
  with some constant $c>0$ independent of $\varepsilon$ (here we also used Young's inequality).
  From the above relations, \eqref{E:CoV_Equiv}, and H\"{o}lder's inequality it follows that
  \begin{align*}
    \left|\int_{\Gamma_\varepsilon}\varphi_1\,d\mathcal{H}^2\right| &\leq c\int_\Gamma\int_{\varepsilon g_0}^{\varepsilon g_1}\{|u^\sharp|^2+|(\nabla u)^\sharp|^2+(|u^\sharp|+|(\nabla u)^\sharp|)|(\nabla^2u)^\sharp|\}\,dr\,d\mathcal{H}^2 \\
    &\leq c\left(\|u\|_{H^1(\Omega_\varepsilon)}^2+\|u\|_{H^1(\Omega_\varepsilon)}\|\nabla^2u\|_{L^2(\Omega_\varepsilon)}\right).
  \end{align*}
  (Note that $\|u\|_{L^2(\Omega_\varepsilon)}\leq\|u\|_{H^1(\Omega_\varepsilon)}$.)
  Thus the inequality \eqref{Pf_LA:Est_Phi12} for $k=1$ holds.
  By the same arguments we can show \eqref{Pf_LA:Est_Phi12} for $k=2$ and \eqref{Pf_LA:Est_Phi34} for $k=3$.

  Finally, from \eqref{Pf_LA:Int_Sum}, \eqref{Pf_LA:Est_Phi12}, and \eqref{Pf_LA:Est_Phi34} we deduce that
  \begin{align*}
    \left|\int_{\Gamma_\varepsilon}\nabla u:\{(n_\varepsilon\cdot\nabla)\nabla u-n_\varepsilon\otimes\Delta u\}\,d\mathcal{H}^2\right| &\leq c\left(\|u\|_{H^1(\Omega_\varepsilon)}^2+\|u\|_{H^1(\Omega_\varepsilon)}\|\nabla^2u\|_{L^2(\Omega_\varepsilon)}\right).
  \end{align*}
  We apply this inequality to \eqref{Pf_LA:IbP} and then use Young's inequality to obtain
  \begin{align*}
    \|\nabla^2u\|_{L^2(\Omega_\varepsilon)}^2 &\leq \|\Delta u\|_{L^2(\Omega_\varepsilon)}^2+c\left(\|u\|_{H^1(\Omega_\varepsilon)}^2+\|u\|_{H^1(\Omega_\varepsilon)}\|\nabla^2u\|_{L^2(\Omega_\varepsilon)}\right) \\
    &\leq \|\Delta u\|_{L^2(\Omega_\varepsilon)}^2+\frac{1}{2}\|\nabla^2u\|_{L^2(\Omega_\varepsilon)}^2+c\|u\|_{H^1(\Omega_\varepsilon)}^2,
  \end{align*}
  which yields \eqref{Pf_LA:Goal}.
  Hence the inequality \eqref{E:Lap_Apri} is valid.
\end{proof}

%%% Section 6 %%%
\section{Average operators in the thin direction} \label{S:Ave}

\subsection{Definition and basic inequalities of the average operators} \label{SS:Ave_Def}
In this section we investigate average operators in the thin direction and establish several inequalities related to them, which are useful in the analysis of the Navier--Stokes equations \eqref{E:NS_Eq}--\eqref{E:NS_In}.
Throughout this section we assume $\varepsilon\in(0,1)$ and denote by $\bar{\eta}=\eta\circ\pi$ the constant extension of a function $\eta$ on $\Gamma$ in the normal direction of $\Gamma$.

\begin{definition} \label{D:Average}
  We define the average operator $M$ as
  \begin{align} \label{E:Def_Ave}
    M\varphi(y) := \frac{1}{\varepsilon g(y)}\int_{\varepsilon g_0(y)}^{\varepsilon g_1(y)}\varphi(y+rn(y))\,dr, \quad y\in\Gamma
  \end{align}
  for a function $\varphi$ on $\Omega_\varepsilon$.
  The operator $M$ is also applied to a vector field $u\colon\Omega_\varepsilon\to\mathbb{R}^3$ and we define the averaged tangential component $M_\tau u$ of $u$ by
  \begin{align} \label{E:Def_Tan_Ave}
    M_\tau u(y) := P(y)Mu(y) = \frac{1}{\varepsilon g(y)}\int_{\varepsilon g_0(y)}^{\varepsilon g_1(y)}P(y)u(y+rn(y))\,dr, \quad y\in\Gamma.
  \end{align}
\end{definition}

For the sake of simplicity, we denote the tangential and normal components (with respect to the surface $\Gamma$) of a vector field $u\colon\Omega_\varepsilon\to\mathbb{R}^3$ by
\begin{align} \label{E:Def_U_TN}
  u_\tau(x) := \overline{P}(x)u(x), \quad u_n(x) := \{u(x)\cdot\bar{n}(x)\}\bar{n}(x), \quad x\in\Omega_\varepsilon
\end{align}
so that $u=u_\tau+u_n$ and $u_\tau\cdot u_n=0$ (note that $u_n$ is a vector field).
Also, we use the notations \eqref{E:Pull_Dom} and \eqref{E:Pull_Bo} and sometimes suppress the arguments of functions.
For example, we write
\begin{align*}
  M\varphi = \frac{1}{\varepsilon g}\int_{\varepsilon g_0}^{\varepsilon g_1}\varphi^\sharp\,dr, \quad M_\tau u = \frac{1}{\varepsilon g}\int_{\varepsilon g_0}^{\varepsilon g_1}u_\tau^\sharp\,dr.
\end{align*}
Let us derive basic inequalities for $M$ and $M_\tau$.
The following results for $M$ are also valid for $M_\tau$ since $M_\tau u=Mu_\tau$ on $\Gamma$ for $u\colon\Omega_\varepsilon\to\mathbb{R}^3$ by \eqref{E:Def_Tan_Ave} and \eqref{E:Def_U_TN}.

\begin{lemma} \label{L:Ave_Lp}
  Let $p\in[1,\infty)$.
  There exists $c>0$ independent of $\varepsilon$ such that
  \begin{align}
    \|M\varphi\|_{L^p(\Gamma)} &\leq c\varepsilon^{-1/p}\|\varphi\|_{L^p(\Omega_\varepsilon)}, \label{E:Ave_Lp_Surf} \\
    \left\|\overline{M\varphi}\right\|_{L^p(\Omega_\varepsilon)} &\leq c\|\varphi\|_{L^p(\Omega_\varepsilon)} \label{E:Ave_Lp_Dom}
  \end{align}
  for all $\varphi\in L^p(\Omega_\varepsilon)$.
\end{lemma}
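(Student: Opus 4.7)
The plan is to prove \eqref{E:Ave_Lp_Surf} directly from the pointwise definition of $M\varphi$ by applying H\"{o}lder's inequality in the variable $r$, and then to deduce \eqref{E:Ave_Lp_Dom} by combining \eqref{E:Ave_Lp_Surf} with the constant extension estimate \eqref{E:Con_Lp}. The key observation is that both inequalities essentially rest on the lower bound $g \geq c > 0$ from \eqref{E:Width_Bound} and the uniform equivalence \eqref{E:CoV_Equiv} coming from the boundedness of the Jacobian $J$.

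For \eqref{E:Ave_Lp_Surf}, I would start from the definition \eqref{E:Def_Ave} and apply H\"{o}lder's inequality on the interval $(\varepsilon g_0(y),\varepsilon g_1(y))$ of length $\varepsilon g(y)$ to obtain
\begin{equation*}
  |M\varphi(y)|^p \leq \frac{1}{\varepsilon g(y)}\int_{\varepsilon g_0(y)}^{\varepsilon g_1(y)}|\varphi^\sharp(y,r)|^p\,dr, \quad y\in\Gamma.
\end{equation*}
Integrating this over $\Gamma$ with respect to $\mathcal{H}^2$, using \eqref{E:Width_Bound} to replace $g(y)^{-1}$ by a uniform constant, and then applying the right-hand inequality of \eqref{E:CoV_Equiv} bounds the resulting double integral by $c\varepsilon^{-1}\|\varphi\|_{L^p(\Omega_\varepsilon)}^p$, which gives \eqref{E:Ave_Lp_Surf} after taking the $p$-th root.

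For \eqref{E:Ave_Lp_Dom}, I would apply the right-hand inequality of \eqref{E:Con_Lp} to $\eta = M\varphi \in L^p(\Gamma)$, obtaining $\|\overline{M\varphi}\|_{L^p(\Omega_\varepsilon)} \leq c\varepsilon^{1/p}\|M\varphi\|_{L^p(\Gamma)}$, and then insert \eqref{E:Ave_Lp_Surf} so that the factors $\varepsilon^{1/p}$ and $\varepsilon^{-1/p}$ cancel. There is no real obstacle: both statements follow mechanically from a single use of H\"{o}lder's inequality combined with the two change-of-variable estimates already recorded in Section \ref{SS:Pre_Dom}, and the only bookkeeping is to ensure that the constants depend only on the uniform bounds on $g$, $g_0$, $g_1$, and $J$, not on $\varepsilon$.
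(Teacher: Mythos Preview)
Your proposal is correct and matches the paper's proof essentially line for line: the paper applies H\"{o}lder's inequality and \eqref{E:Width_Bound} pointwise to get $|M\varphi(y)|^p \leq c\varepsilon^{-1}\int_{\varepsilon g_0(y)}^{\varepsilon g_1(y)}|\varphi^\sharp(y,r)|^p\,dr$, integrates over $\Gamma$, and invokes \eqref{E:CoV_Equiv} for \eqref{E:Ave_Lp_Surf}, then combines \eqref{E:Con_Lp} with \eqref{E:Ave_Lp_Surf} for \eqref{E:Ave_Lp_Dom}. The only cosmetic difference is that the paper absorbs the lower bound on $g$ into the pointwise estimate rather than after integrating.
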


\begin{proof}
  By H\"{o}lder's inequality and \eqref{E:Width_Bound},
  \begin{align*}
    |M\varphi(y)|^p = \left|\frac{1}{\varepsilon g(y)}\int_{\varepsilon g_0(y)}^{\varepsilon g_1(y)}\varphi^\sharp(y,r)\,dr\right|^p \leq c\varepsilon^{-1}\int_{\varepsilon g_0(y)}^{\varepsilon g_1(y)}|\varphi^\sharp(y,r)|^p\,dr
  \end{align*}
  for all $y\in\Gamma$.
  Integrating both sides of the above inequality over $\Gamma$ and using \eqref{E:CoV_Equiv} we obtain \eqref{E:Ave_Lp_Surf}.
  The inequality \eqref{E:Ave_Lp_Dom} follows from \eqref{E:Con_Lp} and \eqref{E:Ave_Lp_Surf}.
\end{proof}

\begin{lemma} \label{L:Ave_Diff}
  Let $p\in[1,\infty)$.
  There exists $c>0$ independent of $\varepsilon$ such that
  \begin{align}
    \left\|\varphi-\overline{M\varphi}\right\|_{L^p(\Omega_\varepsilon)} &\leq c\varepsilon\|\partial_n\varphi\|_{L^p(\Omega_\varepsilon)}, \label{E:Ave_Diff_Dom}\\
    \left\|\varphi-\overline{M\varphi}\right\|_{L^p(\Gamma_\varepsilon^i)} &\leq c\varepsilon^{1-1/p}\|\partial_n\varphi\|_{L^p(\Omega_\varepsilon)}, \quad i=0,1 \label{E:Ave_Diff_Bo}
  \end{align}
  for all $\varphi\in W^{1,p}(\Omega_\varepsilon)$, where $\partial_n\varphi$ is the normal derivative of $\varphi$ given by \eqref{E:Def_NorDer}.
\end{lemma}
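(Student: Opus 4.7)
The plan is to prove both estimates by expressing $\varphi - \overline{M\varphi}$ as an integral of $\partial_n\varphi$ along the normal direction, using the fundamental theorem of calculus. By a density argument (since $C^1(\overline{\Omega}_\varepsilon)$ is dense in $W^{1,p}(\Omega_\varepsilon)$), it suffices to prove the inequalities for $\varphi \in C^1(\overline{\Omega}_\varepsilon)$. Using the notations \eqref{E:Pull_Dom} and \eqref{E:Pull_Bo}, the key identity is
\begin{align*}
  \varphi^\sharp(y,r) - M\varphi(y) = \frac{1}{\varepsilon g(y)} \int_{\varepsilon g_0(y)}^{\varepsilon g_1(y)} \bigl\{\varphi^\sharp(y,r) - \varphi^\sharp(y,\tilde{r})\bigr\}\,d\tilde{r} = \frac{1}{\varepsilon g(y)} \int_{\varepsilon g_0(y)}^{\varepsilon g_1(y)} \int_{\tilde{r}}^{r} (\partial_n\varphi)^\sharp(y,s)\,ds\,d\tilde{r},
\end{align*}
which follows from $\partial\varphi^\sharp/\partial r = (\partial_n\varphi)^\sharp$ by \eqref{E:Def_NorDer}.

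For \eqref{E:Ave_Diff_Dom}, I would take absolute values in the above identity, bound the inner integral by the integral of $|(\partial_n\varphi)^\sharp|$ over the full interval $(\varepsilon g_0(y),\varepsilon g_1(y))$, and then apply H\"{o}lder's inequality together with \eqref{E:Width_Bound} to obtain
\begin{align*}
  |\varphi^\sharp(y,r) - M\varphi(y)|^p \leq c\varepsilon^{p-1} \int_{\varepsilon g_0(y)}^{\varepsilon g_1(y)} |(\partial_n\varphi)^\sharp(y,s)|^p\,ds.
\end{align*}
Since the right-hand side is independent of $r$, integrating in $r$ over $(\varepsilon g_0(y),\varepsilon g_1(y))$ picks up another factor of $\varepsilon$, and then integrating over $y\in\Gamma$ and applying \eqref{E:CoV_Equiv} yields \eqref{E:Ave_Diff_Dom}.

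For \eqref{E:Ave_Diff_Bo}, I would specialize $r = \varepsilon g_i(y)$ in the same identity to get a pointwise bound on $\Gamma$ of the pull-back $[\varphi - \overline{M\varphi}]_i^\sharp$, namely
\begin{align*}
  \bigl|[\varphi - \overline{M\varphi}]_i^\sharp(y)\bigr|^p \leq c\varepsilon^{p-1} \int_{\varepsilon g_0(y)}^{\varepsilon g_1(y)} |(\partial_n\varphi)^\sharp(y,s)|^p\,ds,
\end{align*}
and then integrate over $\Gamma$, applying \eqref{E:Lp_CoV_Surf} on the left and \eqref{E:CoV_Equiv} on the right to get $\varepsilon^{p-1}\|\partial_n\varphi\|_{L^p(\Omega_\varepsilon)}^p$, whose $p$-th root gives \eqref{E:Ave_Diff_Bo}.

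There is no real obstacle here: the only nontrivial point is to double-check that the factor $\varepsilon g(y)^{-1}$ from the definition of $M$ cancels correctly against the $\varepsilon$ produced by the length of the inner interval, leaving $\varepsilon^{p-1}$ inside the integral and hence $\varepsilon^p$ after the extra $r$-integration for \eqref{E:Ave_Diff_Dom} (and $\varepsilon^{p-1}$ directly for \eqref{E:Ave_Diff_Bo}, giving $\varepsilon^{1-1/p}$ after taking the $p$-th root).
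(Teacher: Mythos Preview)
Your proof is correct and, for \eqref{E:Ave_Diff_Dom}, essentially identical to the paper's: the same identity, the same H\"{o}lder step, the same use of \eqref{E:CoV_Equiv}. For \eqref{E:Ave_Diff_Bo} there is a minor difference worth noting: you specialize $r=\varepsilon g_i(y)$ in the pointwise bound and integrate directly over $\Gamma$ via \eqref{E:Lp_CoV_Surf}, whereas the paper instead applies the trace-type inequality \eqref{E:Poin_Bo} to $\varphi-\overline{M\varphi}$, uses $\partial_n(\overline{M\varphi})=0$ from \eqref{E:NorDer_Con}, and invokes the already-proved \eqref{E:Ave_Diff_Dom}. Your route is slightly more self-contained; the paper's route is slightly shorter once \eqref{E:Ave_Diff_Dom} is in hand. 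Either way the argument is the same in spirit.
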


\begin{proof}
  For $y\in\Gamma$ and $r\in(\varepsilon g_0(y),\varepsilon g_1(y))$ we have
  \begin{align} \label{Pf_ADiff:Diff}
    \varphi^\sharp(y,r)-M\varphi(y) &= \frac{1}{\varepsilon g(y)}\int_{\varepsilon g_0(y)}^{\varepsilon g_1(y)}\{\varphi^\sharp(y,r)-\varphi^\sharp(y,r_1)\}\,dr_1.
  \end{align}
  Since $\partial\varphi^\sharp/\partial r=(\partial_n\varphi)^\sharp$ by \eqref{E:Pull_Dom} and \eqref{E:Def_NorDer},
  \begin{align*}
    \begin{aligned}
      |\varphi^\sharp(y,r)-\varphi^\sharp(y,r_1)| &= \left|\int_{r_1}^r\frac{\partial}{\partial r_2}\bigl(\varphi^\sharp(y,r_2)\bigr)\,dr_2\right| \leq \int_{\varepsilon g_0(y)}^{\varepsilon g_1(y)}|(\partial_n\varphi)^\sharp(y,r_2)|\,dr_2.
    \end{aligned}
  \end{align*}
  Noting that the right-hand side is independent of $r_1$, we apply this inequality to the right-hand side of \eqref{Pf_ADiff:Diff} and then use H\"{o}lder's inequality to get
  \begin{align*}
    |\varphi^\sharp(y,r)-M\varphi(y)| &\leq \int_{\varepsilon g_0(y)}^{\varepsilon g_1(y)}|(\partial_n\varphi)^\sharp(y,r_2)|\,dr_2 \\
    &\leq c\varepsilon^{1-1/p}\left(\int_{\varepsilon g_0(y)}^{\varepsilon g_1(y)}|(\partial_n\varphi)^\sharp(y,r_2)|^p\,dr_2\right)^{1/p}
  \end{align*}
  for all $y\in\Gamma$ and $r\in(\varepsilon g_0(y),\varepsilon g_1(y))$.
  Since the last term is independent of $r$, this inequality and \eqref{E:CoV_Equiv} imply that
  \begin{align*}
    \left\|\varphi-\overline{M\varphi}\right\|_{L^p(\Omega_\varepsilon)}^p &\leq c\int_\Gamma\int_{\varepsilon g_0(y)}^{\varepsilon g_1(y)}|\varphi^\sharp(y,r)-M\varphi(y)|^p\,dr\,d\mathcal{H}^2(y) \\
    &\leq c\int_\Gamma\varepsilon g(y)\left(c\varepsilon^{p-1}\int_{\varepsilon g_0(y)}^{\varepsilon g_1(y)}|(\partial_n\varphi)^\sharp(y,r_2)|^p\,dr_2\right)d\mathcal{H}^2(y) \\
    &\leq c\varepsilon^p\|\partial_n\varphi\|_{L^p(\Omega_\varepsilon)}^p.
  \end{align*}
  Hence the inequality \eqref{E:Ave_Diff_Dom} is valid.
  Applying \eqref{E:Poin_Bo} to $\varphi-\overline{M\varphi}$ and using \eqref{E:NorDer_Con} and \eqref{E:Ave_Diff_Dom} we also get \eqref{E:Ave_Diff_Bo}.
\end{proof}

\begin{lemma} \label{L:Ave_N_Lp}
  Let $p\in[1,\infty)$.
  There exists $c>0$ independent of $\varepsilon$ such that
  \begin{align} \label{E:Ave_N_Lp}
    \|Mu\cdot n\|_{L^p(\Gamma)} \leq c\varepsilon^{1-1/p}\|u\|_{W^{1,p}(\Omega_\varepsilon)}
  \end{align}
  for all $u\in W^{1,p}(\Omega_\varepsilon)^3$ satisfying \eqref{E:Bo_Imp} on $\Gamma_\varepsilon^0$ or on $\Gamma_\varepsilon^1$.
\end{lemma}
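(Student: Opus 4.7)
The plan is to reduce the lemma to two ingredients already established in the paper: the basic $L^p$-boundedness of the average operator (Lemma~\ref{L:Ave_Lp}) and the Poincar\'e-type estimate for the normal component $u\cdot\bar n$ under the impermeable boundary condition (Lemma~\ref{L:Poin_Nor}).

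The first step is the key observation that the quantity $Mu\cdot n$ on $\Gamma$ is nothing but the average of the scalar function $u\cdot\bar n$ on $\Omega_\varepsilon$. Indeed, fix $y\in\Gamma$; since $n(y)$ does not depend on the integration variable $r\in(\varepsilon g_0(y),\varepsilon g_1(y))$ and $\bar n(y+rn(y))=n(y)$ by definition of the constant extension, one has
\begin{equation*}
  Mu(y)\cdot n(y) = \frac{1}{\varepsilon g(y)}\int_{\varepsilon g_0(y)}^{\varepsilon g_1(y)} u(y+rn(y))\cdot n(y)\,dr = M(u\cdot\bar n)(y).
\end{equation*}
Thus $Mu\cdot n = M(u\cdot\bar n)$ on $\Gamma$.

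The second step applies \eqref{E:Ave_Lp_Surf} to the scalar function $u\cdot\bar n\in L^p(\Omega_\varepsilon)$ to obtain
\begin{equation*}
  \|Mu\cdot n\|_{L^p(\Gamma)} = \|M(u\cdot\bar n)\|_{L^p(\Gamma)} \leq c\varepsilon^{-1/p}\|u\cdot\bar n\|_{L^p(\Omega_\varepsilon)}.
\end{equation*}
Then, since $u\in W^{1,p}(\Omega_\varepsilon)^3$ satisfies the impermeable boundary condition $u\cdot n_\varepsilon=0$ on $\Gamma_\varepsilon^0$ or on $\Gamma_\varepsilon^1$, the estimate \eqref{E:Poin_Nor} of Lemma~\ref{L:Poin_Nor} gives $\|u\cdot\bar n\|_{L^p(\Omega_\varepsilon)}\leq c\varepsilon\|u\|_{W^{1,p}(\Omega_\varepsilon)}$. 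Combining the two bounds yields $\|Mu\cdot n\|_{L^p(\Gamma)}\leq c\varepsilon^{1-1/p}\|u\|_{W^{1,p}(\Omega_\varepsilon)}$, which is the claim.

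Since both ingredients have already been proven, there is no real obstacle here; the only non-automatic point is recognizing the identity $Mu\cdot n = M(u\cdot\bar n)$, which converts a vector-valued issue into a scalar one to which \eqref{E:Poin_Nor} directly applies. The factor $\varepsilon^{1-1/p}$ comes precisely from the product of the $\varepsilon^{-1/p}$ loss in passing from a domain $L^p$-norm to a surface $L^p$-norm (via the average) and the $\varepsilon$-gain provided by the impermeable boundary condition.
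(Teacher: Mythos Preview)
Your proof is correct and follows exactly the same approach as the paper: the identity $Mu\cdot n = M(u\cdot\bar n)$, then \eqref{E:Ave_Lp_Surf} followed by \eqref{E:Poin_Nor}. The paper's proof is the one-line version of what you wrote.
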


\begin{proof}
  Applying \eqref{E:Ave_Lp_Surf} to $Mu\cdot n=M(u\cdot\bar{n})$ and using \eqref{E:Poin_Nor} we get \eqref{E:Ave_N_Lp}.
\end{proof}

Unlike the case of a flat thin domain (see~\cite{Ho10,HoSe10,IfRaSe07}), the constant extension of the average operator on $L^2(\Omega_\varepsilon)$ is not symmetric since the Jacobian $J\not\equiv1$ appears in the change of variables formula \eqref{E:CoV_Dom}.
However, its skew-symmetric part is small of order $\varepsilon$.

\begin{lemma} \label{L:Ave_Inner}
  There exists a constant $c>0$ independent of $\varepsilon$ such that
  \begin{align} \label{E:Ave_Inner}
    \left|\Bigl(\overline{M\varphi},\xi\Bigr)_{L^2(\Omega_\varepsilon)}-\Bigl(\varphi,\overline{M\xi}\Bigr)_{L^2(\Omega_\varepsilon)}\right| \leq c\varepsilon\|\varphi\|_{L^2(\Omega_\varepsilon)}\|\xi\|_{L^2(\Omega_\varepsilon)}
  \end{align}
  for all $\varphi,\xi\in L^2(\Omega_\varepsilon)$.
\end{lemma}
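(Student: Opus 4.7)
The plan is to use the change-of-variables formula \eqref{E:CoV_Dom} to expand both inner products on $\Omega_\varepsilon$ as iterated integrals over $\Gamma$ and the thin direction with Jacobian $J(y,r)$, observe that the ``would-be symmetric'' contributions (those with $J$ replaced by $1$) cancel, and then bound the remainder using the smallness estimate \eqref{E:Jac_Diff} for $J-1$.

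More precisely, since $\overline{M\varphi}(y+rn(y))=M\varphi(y)$ is independent of $r$, I would write
\begin{align*}
  \Bigl(\overline{M\varphi},\xi\Bigr)_{L^2(\Omega_\varepsilon)}
  &= \int_\Gamma M\varphi(y)\int_{\varepsilon g_0(y)}^{\varepsilon g_1(y)}\xi^\sharp(y,r)\,dr\,d\mathcal{H}^2(y) \\
  &\qquad +\int_\Gamma M\varphi(y)\int_{\varepsilon g_0(y)}^{\varepsilon g_1(y)}\xi^\sharp(y,r)\bigl(J(y,r)-1\bigr)\,dr\,d\mathcal{H}^2(y),
\end{align*}
and by the definition \eqref{E:Def_Ave} of $M\xi$ the first term equals $\int_\Gamma \varepsilon g(y)\,M\varphi(y)\,M\xi(y)\,d\mathcal{H}^2(y)$, which is symmetric in $(\varphi,\xi)$. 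Performing the same manipulation for $(\varphi,\overline{M\xi})_{L^2(\Omega_\varepsilon)}$ and subtracting, the symmetric parts cancel and one is left with
\begin{align*}
  \Bigl(\overline{M\varphi},\xi\Bigr)_{L^2(\Omega_\varepsilon)}-\Bigl(\varphi,\overline{M\xi}\Bigr)_{L^2(\Omega_\varepsilon)}
  = \int_\Gamma\int_{\varepsilon g_0}^{\varepsilon g_1}\bigl\{M\varphi(y)\,\xi^\sharp(y,r)-M\xi(y)\,\varphi^\sharp(y,r)\bigr\}\bigl(J(y,r)-1\bigr)\,dr\,d\mathcal{H}^2(y).
\end{align*}

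The final step is routine: apply \eqref{E:Jac_Diff} to pull out a factor of $\varepsilon$, use Cauchy--Schwarz in $r$ together with \eqref{E:Width_Bound} to bound $\int_{\varepsilon g_0}^{\varepsilon g_1}|\xi^\sharp|\,dr\leq c\varepsilon^{1/2}\bigl(\int_{\varepsilon g_0}^{\varepsilon g_1}|\xi^\sharp|^2\,dr\bigr)^{1/2}$, then Cauchy--Schwarz on $\Gamma$, and finally \eqref{E:CoV_Equiv} together with \eqref{E:Ave_Lp_Surf} (with $p=2$) to convert $\|M\varphi\|_{L^2(\Gamma)}$ into $\varepsilon^{-1/2}\|\varphi\|_{L^2(\Omega_\varepsilon)}$. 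The two factors of $\varepsilon^{1/2}$ and the factor of $\varepsilon^{-1/2}$ combine with the initial $\varepsilon$ from $|J-1|$ to produce the claimed $\varepsilon$. The symmetric term involving $M\xi\,\varphi^\sharp$ is estimated identically.

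There is no real obstacle here: the whole content of the lemma is that the obstruction to symmetry of $M$ as an operator on $L^2(\Omega_\varepsilon)$ is exactly the factor $J-1$ in the co-area formula, which vanishes to first order in $\varepsilon$. The only bookkeeping point to watch is getting the powers of $\varepsilon$ right, and this is why one should group the two factors from $M\varphi$ and from integrating $\xi^\sharp$ against $1$ separately before applying Cauchy--Schwarz rather than using \eqref{E:Ave_Lp_Dom} on both factors (which would lose the needed power of $\varepsilon$).
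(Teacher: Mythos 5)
Your proposal is correct and follows essentially the same route as the paper's own proof: expand both inner products via the change of variables \eqref{E:CoV_Dom}, cancel the symmetric part $\varepsilon(M\varphi,gM\xi)_{L^2(\Gamma)}=\varepsilon(gM\varphi,M\xi)_{L^2(\Gamma)}$, and estimate the $J-1$ remainders using \eqref{E:Jac_Diff}, H\"older's inequality twice, \eqref{E:CoV_Equiv}, and \eqref{E:Ave_Lp_Surf}. One small bookkeeping remark: the powers actually combine as $\varepsilon\cdot\varepsilon^{1/2}\cdot\varepsilon^{-1/2}=\varepsilon$ (a single $\varepsilon^{1/2}$ from the Cauchy--Schwarz in $r$ and a single $\varepsilon^{-1/2}$ from \eqref{E:Ave_Lp_Surf}), not ``two factors of $\varepsilon^{1/2}$'' as you wrote, but this does not affect the conclusion.
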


\begin{proof}
  By \eqref{E:CoV_Dom} and \eqref{E:Def_Ave} we have
  \begin{align*}
    \Bigl(\overline{M\varphi},\xi\Bigr)_{L^2(\Omega_\varepsilon)} &= \int_\Gamma M\varphi\left(\int_{\varepsilon g_0}^{\varepsilon g_1}\xi^\sharp J\,dr\right)d\mathcal{H}^2 \\
    &= \varepsilon(M\varphi,gM\xi)_{L^2(\Gamma)}+\int_\Gamma M\varphi\left(\int_{\varepsilon g_0}^{\varepsilon g_1}\xi^\sharp(J-1)\,dr\right)d\mathcal{H}^2
  \end{align*}
  and
  \begin{align*}
    \Bigl(\varphi,\overline{M\xi}\Bigr)_{L^2(\Omega_\varepsilon)} = \varepsilon(gM\varphi,M\xi)_{L^2(\Gamma)}+\int_\Gamma\left(\int_{\varepsilon g_0}^{\varepsilon g_1}\varphi^\sharp(J-1)\,dr\right)M\xi\,d\mathcal{H}^2.
  \end{align*}
  Since $(M\varphi,gM\xi)_{L^2(\Gamma)}=(gM\varphi,M\xi)_{L^2(\Gamma)}$, we see by \eqref{E:Jac_Diff} that
  \begin{multline} \label{Pf_AI:Est}
    \left|\Bigl(\overline{M\varphi},\xi\Bigr)_{L^2(\Omega_\varepsilon)}-\Bigl(\varphi,\overline{M\xi}\Bigr)_{L^2(\Omega_\varepsilon)}\right| \\
    \leq c\varepsilon\left\{\int_\Gamma|M\varphi|\left(\int_{\varepsilon g_0}^{\varepsilon g_1}|\xi^\sharp|\,dr\right)d\mathcal{H}^2+\int_\Gamma\left(\int_{\varepsilon g_0}^{\varepsilon g_1}|\varphi^\sharp|\,dr\right)|M\xi|\,d\mathcal{H}^2\right\}.
  \end{multline}
  Moreover, applying H\"{o}lder's inequality twice and using \eqref{E:CoV_Equiv} and \eqref{E:Ave_Lp_Surf} we get
  \begin{align*}
    \int_\Gamma|M\varphi|\left(\int_{\varepsilon g_0}^{\varepsilon g_1}|\xi^\sharp|\,dr\right)d\mathcal{H}^2 &\leq \|M\varphi\|_{L^2(\Gamma)}\left\{\int_\Gamma\varepsilon g\left(\int_{\varepsilon g_0}^{\varepsilon g_1}|\xi^\sharp|^2\,dr\right)d\mathcal{H}^2\right\}^{1/2} \\
    &\leq c\|\varphi\|_{L^2(\Omega_\varepsilon)}\|\xi\|_{L^2(\Omega_\varepsilon)}
  \end{align*}
  and a similar inequality for the last term of \eqref{Pf_AI:Est}.
  Hence \eqref{E:Ave_Inner} follows.
\end{proof}

Now let us consider the time derivative of the average operator.

\begin{lemma} \label{L:Ave_Dt}
  Let $\varphi\in H^1(0,T;L^2(\Omega_\varepsilon))$, $T>0$.
  Then
  \begin{align*}
    \partial_tM\varphi = M(\partial_t\varphi) \in L^2(0,T;L^2(\Gamma))
  \end{align*}
  and there exists a constant $c>0$ independent of $\varepsilon$ and $\varphi$ such that
  \begin{align} \label{E:Ave_Dt}
    \|\partial_tM\varphi\|_{L^2(0,T;L^2(\Gamma))} \leq c\varepsilon^{-1/2}\|\partial_t\varphi\|_{L^2(0,T;L^2(\Omega_\varepsilon))}.
  \end{align}
\end{lemma}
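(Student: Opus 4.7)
The plan is to exploit the fact that, by Lemma~\ref{L:Ave_Lp} (specifically \eqref{E:Ave_Lp_Surf} with $p=2$), the average $M$ is a bounded linear operator from $L^2(\Omega_\varepsilon)$ into $L^2(\Gamma)$ with operator norm bounded by $c\varepsilon^{-1/2}$. Once this is combined with the elementary fact that bounded linear operators commute with Bochner integrals, the identity $\partial_t M\varphi = M(\partial_t\varphi)$ and the estimate \eqref{E:Ave_Dt} are immediate consequences.

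Concretely, I would proceed as follows. First, for $\varphi\in H^1(0,T;L^2(\Omega_\varepsilon))$ both $\varphi$ and $\partial_t\varphi$ lie in $L^2(0,T;L^2(\Omega_\varepsilon))$; composing with $M$ and applying \eqref{E:Ave_Lp_Surf} pointwise in $t$ combined with Fubini yields
\begin{align*}
  \|M\varphi\|_{L^2(0,T;L^2(\Gamma))}^2+\|M(\partial_t\varphi)\|_{L^2(0,T;L^2(\Gamma))}^2 \leq c^2\varepsilon^{-1}\left(\|\varphi\|_{L^2(0,T;L^2(\Omega_\varepsilon))}^2+\|\partial_t\varphi\|_{L^2(0,T;L^2(\Omega_\varepsilon))}^2\right),
\end{align*}
so both $M\varphi$ and $M(\partial_t\varphi)$ are well-defined elements of $L^2(0,T;L^2(\Gamma))$. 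Second, for any test function $\psi\in C_c^\infty(0,T)$, the definition \eqref{E:Dt_Dist_L2} of the distributional time derivative gives
\begin{align*}
  (\partial_tM\varphi)(\psi) = -\int_0^T\partial_t\psi(t)\,M\varphi(t)\,dt \in L^2(\Gamma).
\end{align*}
Since $M\colon L^2(\Omega_\varepsilon)\to L^2(\Gamma)$ is bounded and linear, it commutes with Bochner integrals, so
\begin{align*}
  -\int_0^T\partial_t\psi(t)\,M\varphi(t)\,dt = M\!\left(-\int_0^T\partial_t\psi(t)\,\varphi(t)\,dt\right) = M\bigl((\partial_t\varphi)(\psi)\bigr) = \int_0^T\psi(t)\,M(\partial_t\varphi)(t)\,dt,
\end{align*}
where the middle equality uses \eqref{E:Dt_Dist_L2} applied to $\varphi$ together with $\partial_t\varphi\in L^2(0,T;L^2(\Omega_\varepsilon))$, and the last equality again invokes the interchange of $M$ with a Bochner integral. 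The right-hand side is exactly $[M(\partial_t\varphi)](\psi)$, so $\partial_tM\varphi=M(\partial_t\varphi)$ as distributions with values in $L^2(\Gamma)$. Because the right-hand side belongs to $L^2(0,T;L^2(\Gamma))$, so does the left, which establishes the equality stated in the lemma and simultaneously yields \eqref{E:Ave_Dt} from the $L^2$-bound for $M$ applied to $\partial_t\varphi$.

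There is no serious obstacle here: the argument is a routine combination of the pointwise bound \eqref{E:Ave_Lp_Surf} with the commutativity of bounded linear operators and Bochner integrals. The only point worth checking carefully is that one is consistent with the paper's definition of $\partial_t f$ for $f\in L^2(0,T;\mathcal{X})$ from \eqref{E:Dt_Dist_L2}, so that the identity $\partial_tM\varphi=M(\partial_t\varphi)$ is interpreted as equality in $\mathcal{D}'(0,T;L^2(\Gamma))$ before one upgrades it to equality in $L^2(0,T;L^2(\Gamma))$.
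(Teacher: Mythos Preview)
Your proof is correct and achieves the same conclusion as the paper, but via a slightly different mechanism. The paper verifies the identity $\partial_t M\varphi = M(\partial_t\varphi)$ by testing against an arbitrary $\eta\in L^2(\Gamma)$: it constructs an explicit function $\tilde{\eta}\in L^2(\Omega_\varepsilon)$ (namely $\tilde{\eta}(x)=\eta(\pi(x))/[\varepsilon g(\pi(x))J(\pi(x),d(x))]$) so that $(M\varphi(t),\eta)_{L^2(\Gamma)}=(\varphi(t),\tilde{\eta})_{L^2(\Omega_\varepsilon)}$ via the change of variables formula \eqref{E:CoV_Dom}, and then uses that $\tilde{\eta}$ is time-independent to move the time derivative across. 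You instead invoke directly the standard fact that bounded linear operators commute with Bochner integrals, which bypasses the explicit adjoint-type construction entirely. Your route is a bit more abstract and economical; the paper's route is more hands-on and makes the underlying computation visible. Both rely on the same pointwise bound \eqref{E:Ave_Lp_Surf} for the final estimate \eqref{E:Ave_Dt}.
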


\begin{proof}
  First note that $M(\partial_t\varphi)\in L^2(0,T;L^2(\Gamma))$ by $\partial_t\varphi\in L^2(0,T;L^2(\Omega_\varepsilon))$ and \eqref{E:Ave_Lp_Surf}.
  The relation $\partial_tM\varphi=M(\partial_t\varphi)$ is formally trivial since $g_0$, $g_1$, and the surface quantities on $\Gamma$ are independent of time.
  To verify it rigorously we prove
  \begin{align*}
    \int_0^T\partial_t\xi(t)M\varphi(t)\,dt = -\int_0^T\xi(t)[M(\partial_t\varphi)](t)\,dt \quad\text{in}\quad L^2(\Gamma)
  \end{align*}
  for all $\xi\in C_c^\infty(0,T)$.
  Since $L^2(\Gamma)$ is a Hilbert space, this is equivalent to
  \begin{align} \label{Pf_DtA:Goal}
    \int_0^T\partial_t\xi(t)(M\varphi(t),\eta)_{L^2(\Gamma)}\,dt = -\int_0^T\xi(t)([M(\partial_t\varphi)](t),\eta)_{L^2(\Gamma)}\,dt
  \end{align}
  for all $\xi\in C_c^\infty(0,T)$ and $\eta\in L^2(\Gamma)$.
  We define a function $\tilde{\eta}$ on $\Omega_\varepsilon$ by
  \begin{align*}
    \tilde{\eta}(x) := \frac{\eta(\pi(x))}{\varepsilon g(\pi(x))J(\pi(x),d(x))}, \quad x\in\Omega_\varepsilon.
  \end{align*}
  Then $\tilde{\eta}\in L^2(\Omega_\varepsilon)$ by \eqref{E:Width_Bound}, \eqref{E:Jac_Bound}, and \eqref{E:Con_Lp}.
  Also, by \eqref{E:CoV_Dom} and \eqref{E:Def_Ave} we have
  \begin{align} \label{Pf_DtA:CoV}
    (\varphi(t),\tilde{\eta})_{L^2(\Omega_\varepsilon)} = \int_\Gamma\left(\frac{1}{\varepsilon g}\int_{\varepsilon g_0}^{\varepsilon g_1}\varphi^\sharp(t)\,dr\right)\eta\,d\mathcal{H}^2 = (M\varphi(t),\eta)_{L^2(\Gamma)}
  \end{align}
  for a.a. $t\in(0,T)$.
  Hence
  \begin{align} \label{Pf_DtA:Ave}
    \int_0^T\partial_t\xi(t)(M\varphi(t),\eta)_{L^2(\Gamma)}\,dt = \int_0^T\partial_t\xi(t)(\varphi(t),\tilde{\eta})_{L^2(\Omega_\varepsilon)}\,dt.
  \end{align}
  Moreover, since $\varphi\in H^1(0,T;L^2(\Omega_\varepsilon))$ and $\tilde{\eta}\in L^2(\Omega_\varepsilon)$ is independent of time,
  \begin{align*}
    \int_0^T\partial_t\xi(t)(\varphi(t),\tilde{\eta})_{L^2(\Omega_\varepsilon)}\,dt = -\int_0^T\xi(t)(\partial_t\varphi(t),\tilde{\eta})_{L^2(\Omega_\varepsilon)}\,dt.
  \end{align*}
  Applying this equality to the right-hand side of \eqref{Pf_DtA:Ave} and using \eqref{Pf_DtA:CoV} with $\varphi(t)$ replaced by $\partial_t\varphi(t)$ we obtain \eqref{Pf_DtA:Goal}.
  Hence the relation $\partial_tM\varphi=M(\partial_t\varphi)$ is valid and the inequality \eqref{E:Ave_Dt} follows from \eqref{E:Ave_Lp_Surf}.
\end{proof}

\subsection{Tangential derivatives of the average operators} \label{SS:Ave_TD}
Let us give several formulas and inequalities for the tangential derivatives of the average operators.

\begin{lemma} \label{L:Ave_Der}
  For $\varphi\in C^1(\Omega_\varepsilon)$ we have
  \begin{align} \label{E:Ave_Der}
    \nabla_\Gamma M\varphi = M(B\nabla\varphi)+M\bigl((\partial_n\varphi)\psi_\varepsilon\bigr) \quad\text{on}\quad \Gamma,
  \end{align}
  where the matrix-valued function $B$ and the vector field $\psi_\varepsilon$ are given by
  \begin{align} \label{E:Ave_Der_Aux}
    \begin{aligned}
      B(x) &:= \left\{I_3-d(x)\overline{W}(x)\right\}\overline{P}(x), \\
      \psi_\varepsilon(x) &:= \frac{1}{\bar{g}(x)}\left\{\bigl(d(x)-\varepsilon\bar{g}_0(x)\bigr)\overline{\nabla_\Gamma g_1}(x)+\bigl(\varepsilon\bar{g}_1(x)-d(x)\bigr)\overline{\nabla_\Gamma g_0}(x)\right\}
    \end{aligned}
  \end{align}
  for $x\in N$.
\end{lemma}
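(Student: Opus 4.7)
The plan is to compute $\nabla_\Gamma M\varphi$ at a point $y_0\in\Gamma$ by testing against an arbitrary tangent vector via a smooth curve on $\Gamma$. Fix $v\in T_{y_0}\Gamma$ and a curve $\gamma\colon(-\delta,\delta)\to\Gamma$ with $\gamma(0)=y_0$ and $\gamma'(0)=v$, so that $v\cdot\nabla_\Gamma M\varphi(y_0)=\tfrac{d}{ds}M\varphi(\gamma(s))\big|_{s=0}$. Because the integration limits in the defining formula \eqref{E:Def_Ave} depend on $s$, I would first normalize them by the substitution $r=\varepsilon g_0(\gamma(s))+t\,\varepsilon g(\gamma(s))$ with $t\in[0,1]$, which yields
\begin{equation*}
  M\varphi(\gamma(s))=\int_0^1\varphi\bigl(z(s,t)\bigr)\,dt,\qquad z(s,t):=\gamma(s)+\bigl(\varepsilon g_0(\gamma(s))+t\varepsilon g(\gamma(s))\bigr)n(\gamma(s)).
\end{equation*}

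Differentiating under the integral and using $(v\cdot\nabla_\Gamma)n=-Wv$ (from $W=-\nabla_\Gamma n$ with $W^T=W$), together with the tangential chain rule for $g_0$ and $g$, gives
\begin{equation*}
  \partial_sz(0,t)=\bigl(I_3-r_tW(y_0)\bigr)v+\Lambda(t)\,n(y_0),
\end{equation*}
where $r_t:=\varepsilon g_0(y_0)+t\varepsilon g(y_0)$ and $\Lambda(t):=\varepsilon(\nabla_\Gamma g_0(y_0)\cdot v)+t\varepsilon(\nabla_\Gamma g(y_0)\cdot v)$. Converting back to the variable $r=r_t$ via $t=(r-\varepsilon g_0)/(\varepsilon g)$ turns $\Lambda$ into the affine combination $g^{-1}\bigl[(\varepsilon g_1-r)\nabla_\Gamma g_0+(r-\varepsilon g_0)\nabla_\Gamma g_1\bigr]\cdot v$, which is precisely $\psi_\varepsilon^\sharp(y_0,r)\cdot v$ by \eqref{E:Ave_Der_Aux}.

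Next I would recognize $(I_3-rW)v=B^\sharp(y_0,r)v$, using $Pv=v$ for the tangential $v$ together with the identity $B=\overline P-d\overline W$, which follows from Lemma~\ref{L:Form_W} since both $WP=W$ and $PW=W$ (in particular $B^T=B$). Returning the parameter $t$ to $r$ and using the symmetry of $B$ to rewrite $\nabla\varphi\cdot B^\sharp v=(B^\sharp\nabla\varphi)\cdot v$, and $\nabla\varphi\cdot n(y_0)=\partial_n\varphi$ by \eqref{E:Def_NorDer}, the chain-rule computation becomes
\begin{equation*}
  v\cdot\nabla_\Gamma M\varphi(y_0)=\frac{1}{\varepsilon g(y_0)}\int_{\varepsilon g_0(y_0)}^{\varepsilon g_1(y_0)}\bigl\{B^\sharp\nabla\varphi+(\partial_n\varphi)\psi_\varepsilon^\sharp\bigr\}(y_0,r)\,dr\cdot v,
\end{equation*}
so the right-hand side of \eqref{E:Ave_Der} emerges after dividing by $\varepsilon g(y_0)$ and recognizing $M$.

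To conclude from the scalar identity tested against all tangent $v$, I must verify that both sides of \eqref{E:Ave_Der} are tangential. The left-hand side is tangential by \eqref{E:P_TGr}. For the right-hand side, $B^\sharp\nabla\varphi=(I_3-rW)P\nabla\varphi$ is tangential because $I_3-rW$ preserves tangency (using $Wn=0$ and $PW=W$), and $\psi_\varepsilon$ is the constant extension of the tangential field $g^{-1}[(d-\varepsilon\bar g_0)\nabla_\Gamma g_1+(\varepsilon\bar g_1-d)\nabla_\Gamma g_0]$, so $\bar n\cdot\psi_\varepsilon=0$ in $N$. The only real technical nuisance is the bookkeeping of the $s$-derivative, which must simultaneously account for the motion of the base point $\gamma(s)$, the rotation of $n(\gamma(s))$, and the shifting endpoints; once the parameter change $t\leftrightarrow r$ is handled cleanly, the identification with the compact tensor $B$ and the affine field $\psi_\varepsilon$ is purely algebraic.
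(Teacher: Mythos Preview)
Your proof is correct. The computational route differs from the paper's in a useful way: the paper differentiates the constant extension $\overline{M\varphi}$ directly, which produces Leibniz boundary terms from the variable limits $\varepsilon g_0,\varepsilon g_1$ together with a $-\nabla_\Gamma g/(g)$ contribution from the prefactor $1/(\varepsilon g)$; it then repackages the boundary terms as $\int_{\varepsilon g_0}^{\varepsilon g_1}\partial_r(\varphi\psi_\varepsilon)^\sharp\,dr$ and uses $\partial_r\psi_\varepsilon^\sharp=\nabla_\Gamma g/g$ to cancel the prefactor term, leaving $M((\partial_n\varphi)\psi_\varepsilon)$. Your substitution $r=\varepsilon g_0+t\varepsilon g$ absorbs the prefactor and the moving limits in one step, so no Leibniz boundary terms appear and no cancellation is needed; the field $\psi_\varepsilon$ then emerges directly as the normal component of $\partial_s z$. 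Your approach is slightly more streamlined, while the paper's makes the affine structure $\partial_r\psi_\varepsilon^\sharp=\nabla_\Gamma g/g$ explicit, which it reuses later (e.g.\ in the proof of Lemma~\ref{L:Con_Dual}).
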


\begin{proof}
  The constant extension of $M\varphi$ is given by
  \begin{align*}
    \overline{M\varphi}(x) = \frac{1}{\varepsilon\bar{g}(x)}\int_{\varepsilon\bar{g}_0(x)}^{\varepsilon\bar{g}_1(x)}\varphi(\pi(x)+r\bar{n}(x))\,dr, \quad x \in N.
  \end{align*}
  We differentiate both sides of this equality with respect to $x\in N$ and set $x=y\in\Gamma$.
  Then by \eqref{E:ConDer_Surf}, \eqref{E:Form_W}, \eqref{E:Pi_Der}, and \eqref{E:Nor_Grad} with $d(y)=0$ we get
  \begin{align} \label{Pf_ADer:Der}
    \nabla_\Gamma M\varphi(y) &= \frac{I(y)}{\varepsilon g(y)}+\frac{1}{\varepsilon g(y)}\int_{\varepsilon g_0(y)}^{\varepsilon g_1(y)}\{I_3-rW(y)\}P(y)(\nabla\varphi)^\sharp(y,r)\,dr
  \end{align}
  for $y\in\Gamma$.
  Here we use the notations \eqref{E:Pull_Dom} and \eqref{E:Pull_Bo} and set
  \begin{align*}
    I(y) := -\frac{\nabla_\Gamma g(y)}{g(y)}\int_{\varepsilon g_0(y)}^{\varepsilon g_1(y)}\varphi^\sharp(y,r)\,dr+\varepsilon\varphi_1^\sharp(y)\nabla_\Gamma g_1(y)-\varepsilon\varphi_0^\sharp(y)\nabla_\Gamma g_0(y).
  \end{align*}
  To the right-hand side we apply
  \begin{align*}
    \varepsilon\varphi_1^\sharp(y)\nabla_\Gamma g_1(y)-\varepsilon\varphi_0^\sharp(y)\nabla_\Gamma g_0(y) &= \Bigl[(\varphi\psi_\varepsilon)^\sharp(y,r)\Bigr]_{r=\varepsilon g_0(y)}^{\varepsilon g_1(y)} \\
    &= \int_{\varepsilon g_0(y)}^{\varepsilon g_1(y)}\frac{\partial}{\partial r}\bigl((\varphi\psi_\varepsilon)^\sharp(y,r)\bigr)\,dr
  \end{align*}
  and the equalities $\partial\varphi^\sharp/\partial r=(\partial_n\varphi)^\sharp$ and $\partial\psi_\varepsilon^\sharp/\partial r=\nabla_\Gamma g/g$ by
  \begin{align*}
    \psi_\varepsilon^\sharp(y,r) = \frac{1}{g(y)}\bigl\{\bigl(r-\varepsilon g_0(y)\bigr)\nabla_\Gamma g_1(y)+\bigl(\varepsilon g_1(y)-r\bigr)\nabla_\Gamma g_0(y)\bigr\}.
  \end{align*}
  Then we have
  \begin{align*}
    I(y) = \int_{\varepsilon g_0(y)}^{\varepsilon g_1(y)}\bigl((\partial_n\varphi)\psi_\varepsilon\bigr)^\sharp(y,r)\,dr = \varepsilon g(y)\bigl[M\bigl((\partial_n\varphi)\psi_\varepsilon\bigr)\bigr](y), \quad y\in\Gamma.
  \end{align*}
  Applying this and $\{I_3-rW(y)\}P(y)=B^\sharp(y,r)$ to the right-hand side of \eqref{Pf_ADer:Der} we obtain \eqref{E:Ave_Der}.
\end{proof}

\begin{remark} \label{R:Ave_Der}
  There exists a constant $c>0$ independent of $\varepsilon$ such that
  \begin{align} \label{E:ADA_Bound}
    |B| \leq c, \quad |\psi_\varepsilon| \leq c\varepsilon \quad\text{in}\quad \Omega_\varepsilon
  \end{align}
  by \eqref{Pf_EAB:Width} and \eqref{E:Ave_Der_Aux}.
  From \eqref{E:ConDer_Bound}, \eqref{E:Width_Bound}, and $\nabla d=\bar{n}$ in $N$ it also follows that
  \begin{align} \label{E:ADA_Grad_Bound}
    |\partial_kB| \leq c, \quad |\nabla\psi_\varepsilon| \leq c, \quad \left|\nabla\psi_\varepsilon-\frac{1}{\bar{g}}\bar{n}\otimes\overline{\nabla_\Gamma g}\right| \leq c\varepsilon \quad\text{in}\quad \Omega_\varepsilon,\,k=1,2,3.
  \end{align}
\end{remark}

\begin{lemma} \label{L:Ave_Wmp}
  Let $m=1,2$ and $p\in[1,\infty)$.
  For $\varphi\in W^{m,p}(\Omega_\varepsilon)$ we have
  \begin{align}
    \|M\varphi\|_{W^{m,p}(\Gamma)} &\leq c\varepsilon^{-1/p}\|\varphi\|_{W^{m,p}(\Omega_\varepsilon)}, \label{E:Ave_Wmp_Surf} \\
    \left\|\overline{M\varphi}\right\|_{W^{m,p}(\Omega_\varepsilon)} &\leq c\|\varphi\|_{W^{m,p}(\Omega_\varepsilon)} \label{E:Ave_Wmp_Dom}
  \end{align}
  with some constant $c>0$ independent of $\varepsilon$ and $\varphi$.
\end{lemma}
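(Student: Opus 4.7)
My plan is to reduce the Sobolev estimates for $M\varphi$ and $\overline{M\varphi}$ to the $L^p$ estimates in Lemma~\ref{L:Ave_Lp} by using the differentiation formula \eqref{E:Ave_Der} from Lemma~\ref{L:Ave_Der} together with the uniform bounds \eqref{E:ADA_Bound}--\eqref{E:ADA_Grad_Bound} on the auxiliary matrix $B$ and the vector field $\psi_\varepsilon$. A standard density argument (using $C^\infty(\overline{\Omega}_\varepsilon) \subset W^{m,p}(\Omega_\varepsilon)$ is dense) will let us apply Lemma~\ref{L:Ave_Der} to $\varphi \in W^{m,p}(\Omega_\varepsilon)$; I would only write this once at the beginning and then work formally.

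For the case $m=1$, I would start from \eqref{E:Ave_Der}, namely $\nabla_\Gamma M\varphi = M(B\nabla\varphi) + M((\partial_n\varphi)\psi_\varepsilon)$. Using the pointwise bounds $|B|\leq c$ and $|\psi_\varepsilon|\leq c\varepsilon$ from \eqref{E:ADA_Bound} and monotonicity of $M$ acting on nonnegative integrands, one gets $|\nabla_\Gamma M\varphi| \leq c\,M(|\nabla\varphi|) + c\varepsilon\, M(|\partial_n\varphi|)$, and then \eqref{E:Ave_Lp_Surf} yields
\begin{equation*}
\|\nabla_\Gamma M\varphi\|_{L^p(\Gamma)} \leq c\varepsilon^{-1/p}\|\nabla\varphi\|_{L^p(\Omega_\varepsilon)} + c\varepsilon^{1-1/p}\|\partial_n\varphi\|_{L^p(\Omega_\varepsilon)} \leq c\varepsilon^{-1/p}\|\varphi\|_{W^{1,p}(\Omega_\varepsilon)}.
\end{equation*}
Combined with the $L^p$ estimate \eqref{E:Ave_Lp_Surf} this gives \eqref{E:Ave_Wmp_Surf} for $m=1$, and then \eqref{E:Ave_Wmp_Dom} for $m=1$ follows from Lemma~\ref{L:Con_Lp_W1p} applied to $\eta = M\varphi$ (which picks up an $\varepsilon^{1/p}$ that cancels the $\varepsilon^{-1/p}$).

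For the case $m=2$ I would iterate \eqref{E:Ave_Der}. Writing $\eta := B\nabla\varphi + (\partial_n\varphi)\psi_\varepsilon$ so that $\nabla_\Gamma M\varphi = M\eta$ componentwise, I apply \eqref{E:Ave_Der} again to each scalar component of $\eta$ to obtain
\begin{equation*}
\nabla_\Gamma^2 M\varphi = M(B\nabla\eta) + M((\partial_n\eta)\psi_\varepsilon).
\end{equation*}
The product rule gives $|\nabla\eta| \leq c(|\nabla\varphi| + |\nabla^2\varphi| + |\partial_n\varphi| + \varepsilon|\nabla\partial_n\varphi|)$ using $|\nabla B|\leq c$ and $|\nabla \psi_\varepsilon|\leq c$ from \eqref{E:ADA_Grad_Bound}, and similarly $|\partial_n\eta| \leq c(|\nabla\varphi| + |\nabla^2\varphi| + |\partial_n\varphi|)$ plus an $\varepsilon$-controlled piece. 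Applying \eqref{E:Ave_Lp_Surf} to each summand produces only the allowed factor $\varepsilon^{-1/p}$ together with $\|\varphi\|_{W^{2,p}(\Omega_\varepsilon)}$, establishing \eqref{E:Ave_Wmp_Surf} for $m=2$. The bound \eqref{E:Ave_Wmp_Dom} for $m=2$ is then immediate from Lemma~\ref{L:Con_W2p}.

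The main bookkeeping obstacle is the $m=2$ iteration: one has to carefully track that every derivative of $B$ or $\psi_\varepsilon$ that appears is uniformly bounded in $\varepsilon$ (which is exactly what \eqref{E:ADA_Grad_Bound} provides, noting that the seemingly singular term $\bar{n}\otimes\overline{\nabla_\Gamma g}/\bar g$ is of order one), and that every factor of $\varepsilon$ from $|\psi_\varepsilon|\leq c\varepsilon$ or from pushing a derivative onto $\psi_\varepsilon$ only improves the estimate. Beyond this, the argument is a straightforward combination of \eqref{E:Ave_Der}, Lemma~\ref{L:Ave_Lp}, and Lemma~\ref{L:Con_Lp_W1p}/\ref{L:Con_W2p}, with no subtlety beyond the density step.
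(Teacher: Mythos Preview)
Your proposal is correct and follows essentially the same route as the paper: both arguments use the differentiation formula \eqref{E:Ave_Der} together with the uniform bounds \eqref{E:ADA_Bound}--\eqref{E:ADA_Grad_Bound} on $B$ and $\psi_\varepsilon$, apply \eqref{E:Ave_Lp_Surf} to reduce to $L^p(\Omega_\varepsilon)$ norms, iterate for $m=2$, and then invoke Lemmas~\ref{L:Con_Lp_W1p} and~\ref{L:Con_W2p} for \eqref{E:Ave_Wmp_Dom}. The only cosmetic difference is that the paper, for $m=2$, reapplies the already-established first-order bound \eqref{Pf_AW:Lp_Grad} to each component of $B\nabla\varphi$ and $(\partial_n\varphi)\psi_\varepsilon$ rather than writing out the iterated formula for $\nabla_\Gamma^2 M\varphi$ explicitly.
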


\begin{proof}
  Let $\varphi\in W^{1,p}(\Omega_\varepsilon)$.
  From \eqref{E:Ave_Lp_Surf} and \eqref{E:Ave_Der} it follows that
  \begin{align*}
    \|\nabla_\Gamma M\varphi\|_{L^p(\Gamma)} &\leq c\left(\|M(B\nabla\varphi)\|_{L^p(\Gamma)}+\left\|M\bigl((\partial_n\varphi)\psi_\varepsilon\bigr)\right\|_{L^p(\Gamma)}\right) \\
    &\leq c\varepsilon^{-1/p}\left(\|B\nabla\varphi\|_{L^p(\Omega_\varepsilon)}+\|(\partial_n\varphi)\psi_\varepsilon\|_{L^p(\Omega_\varepsilon)}\right).
  \end{align*}
  Here $B$ and $\psi_\varepsilon$ are bounded on $\Omega_\varepsilon$ uniformly in $\varepsilon$ (see Remark~\ref{R:Ave_Der}).
  Hence
  \begin{align} \label{Pf_AW:Lp_Grad}
    \|\nabla_\Gamma M\varphi\|_{L^p(\Gamma)} \leq c\varepsilon^{-1/p}\|\nabla\varphi\|_{L^p(\Omega_\varepsilon)} \leq c\varepsilon^{-1/p}\|\varphi\|_{W^{1,p}(\Omega_\varepsilon)}.
  \end{align}
  Combining \eqref{Pf_AW:Lp_Grad} with \eqref{E:Ave_Lp_Surf} we obtain \eqref{E:Ave_Wmp_Surf} with $m=1$.
  When $\varphi\in W^{2,p}(\Omega_\varepsilon)$, we apply \eqref{Pf_AW:Lp_Grad} with $\varphi$ replaced by $B\nabla\varphi$ and $(\partial_n\varphi)\psi_\varepsilon$.
  Then by \eqref{E:ADA_Bound}--\eqref{E:ADA_Grad_Bound},
  \begin{align*}
    \|\nabla_\Gamma M(B\nabla\varphi)\|_{L^p(\Gamma)}+\left\|\nabla_\Gamma M\bigl((\partial_n\varphi)\psi_\varepsilon\bigr)\right\|_{L^p(\Gamma)} \leq c\varepsilon^{-1/p}\|\varphi\|_{W^{2,p}(\Omega_\varepsilon)}.
  \end{align*}
  Therefore, applying $\nabla_\Gamma$ to \eqref{E:Ave_Der} and using the above inequality we get
  \begin{align*}
    \|\nabla_\Gamma^2 M\varphi\|_{L^p(\Gamma)} \leq c\varepsilon^{-1/p}\|\varphi\|_{W^{2,p}(\Omega_\varepsilon)}
  \end{align*}
  and \eqref{E:Ave_Wmp_Surf} with $m=2$ follows from this inequality, \eqref{E:Ave_Lp_Surf}, and \eqref{Pf_AW:Lp_Grad}.
  The inequality \eqref{E:Ave_Wmp_Dom} is an immediate consequence of \eqref{E:Con_W1p}, \eqref{E:Con_W2p}, and \eqref{E:Ave_Wmp_Surf}.
\end{proof}

\begin{lemma} \label{L:Ave_Der_Diff}
  There exists a constant $c>0$ independent of $\varepsilon$ such that
  \begin{align}
    \left\|\overline{P}\nabla\varphi-\overline{\nabla_\Gamma M\varphi}\right\|_{L^p(\Omega_\varepsilon)} &\leq c\varepsilon\|\varphi\|_{W^{2,p}(\Omega_\varepsilon)} \label{E:ADD_Dom} \\
    \left\|\overline{P}\nabla\varphi-\overline{\nabla_\Gamma M\varphi}\right\|_{L^p(\Gamma_\varepsilon^i)} &\leq c\varepsilon^{1-1/p}\|\varphi\|_{W^{2,p}(\Omega_\varepsilon)}, \quad i=0,1 \label{E:ADD_Bo}
  \end{align}
  for all $\varphi\in W^{2,p}(\Omega_\varepsilon)$ with $p\in[1,\infty)$.
\end{lemma}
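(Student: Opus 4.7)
The plan is to derive both inequalities from Lemma~\ref{L:Ave_Der} by a three-term decomposition. Starting from the identity
\begin{align*}
  \overline{P}\nabla\varphi-\overline{\nabla_\Gamma M\varphi} = \left(\overline{P}\nabla\varphi-\overline{M(\overline{P}\nabla\varphi)}\right)+\overline{M((\overline{P}-B)\nabla\varphi)}-\overline{M((\partial_n\varphi)\psi_\varepsilon)},
\end{align*}
where $B=(I_3-d\overline{W})\overline{P}$ and $\psi_\varepsilon$ are the quantities in \eqref{E:Ave_Der_Aux}, I would estimate each of the three pieces separately. The key algebraic observation is that $\overline{P}-B = d\overline{W}\,\overline{P}=d\overline{W}$ (using $WP=W$ from Lemma~\ref{L:Form_W}), so $|\overline{P}-B|\le c|d|\le c\varepsilon$ in $\Omega_\varepsilon$, and by \eqref{E:ADA_Bound} we also have $|\psi_\varepsilon|\le c\varepsilon$.

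For the bulk estimate \eqref{E:ADD_Dom}, I would bound the first piece by applying Lemma~\ref{L:Ave_Diff} to the function $\overline{P}\nabla\varphi$: since $\overline{P}$ is a constant extension one has $\partial_n(\overline{P}\nabla\varphi)=\overline{P}\,\partial_n\nabla\varphi$ by \eqref{E:NorDer_Con} and the boundedness of $\overline{P}$, so
\begin{align*}
  \left\|\overline{P}\nabla\varphi-\overline{M(\overline{P}\nabla\varphi)}\right\|_{L^p(\Omega_\varepsilon)} \leq c\varepsilon\|\nabla^2\varphi\|_{L^p(\Omega_\varepsilon)} \leq c\varepsilon\|\varphi\|_{W^{2,p}(\Omega_\varepsilon)}.
\end{align*}
The second and third pieces are controlled by applying \eqref{E:Ave_Lp_Dom} together with the pointwise bounds $|\overline{P}-B|\le c\varepsilon$ and $|\psi_\varepsilon|\le c\varepsilon$:
\begin{align*}
  \left\|\overline{M((\overline{P}-B)\nabla\varphi)}\right\|_{L^p(\Omega_\varepsilon)}+\left\|\overline{M((\partial_n\varphi)\psi_\varepsilon)}\right\|_{L^p(\Omega_\varepsilon)} \leq c\varepsilon\|\nabla\varphi\|_{L^p(\Omega_\varepsilon)},
\end{align*}
and summing yields \eqref{E:ADD_Dom}.

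For the boundary estimate \eqref{E:ADD_Bo}, the first piece is handled directly by the boundary part of Lemma~\ref{L:Ave_Diff}, again using $\partial_n(\overline{P}\nabla\varphi)=\overline{P}\,\partial_n\nabla\varphi$, giving a bound of order $\varepsilon^{1-1/p}\|\varphi\|_{W^{2,p}(\Omega_\varepsilon)}$. For the second and third pieces I would use the fact that $\overline{M\xi}$ is a constant extension of a function on $\Gamma$, so its trace on $\Gamma_\varepsilon^i$ is controlled by the change-of-variables formula \eqref{E:Lp_CoV_Surf} and then by \eqref{E:Ave_Lp_Surf}:
\begin{align*}
  \left\|\overline{M\xi}\right\|_{L^p(\Gamma_\varepsilon^i)} \leq c\|M\xi\|_{L^p(\Gamma)} \leq c\varepsilon^{-1/p}\|\xi\|_{L^p(\Omega_\varepsilon)}.
\end{align*}
Applied with $\xi=(\overline{P}-B)\nabla\varphi$ or $\xi=(\partial_n\varphi)\psi_\varepsilon$, the extra factor $\varepsilon$ from the pointwise bounds combines with $\varepsilon^{-1/p}$ to give the desired order $\varepsilon^{1-1/p}$.

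The only delicate point is confirming that $B-\overline{P}$ really reduces to $d\overline{W}$ cleanly and that the normal derivative of $\overline{P}\nabla\varphi$ does not produce extra lower-order terms; once these two book-keeping items are checked the rest is a direct assembly of Lemmas~\ref{L:Ave_Lp}, \ref{L:Ave_Diff}, and~\ref{L:Ave_Der} together with \eqref{E:Lp_CoV_Surf}. I do not expect a genuine obstacle here---the main point is simply to track how the $\varepsilon$-smallness of $d\overline{W}$ and $\psi_\varepsilon$ compensates the $\varepsilon^{-1/p}$ loss incurred when restricting an averaged quantity to the boundary.
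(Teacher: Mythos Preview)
Your proposal is correct and matches the paper's proof for \eqref{E:ADD_Dom}: the paper uses exactly the same three-term decomposition (writing it as $u+\bar v$ with $u=\overline{P}\nabla\varphi-\overline{M(\overline{P}\nabla\varphi)}$ and $\bar v=\overline{M(d\overline{W}\nabla\varphi)}-\overline{M((\partial_n\varphi)\psi_\varepsilon)}$, which is your $(\overline{P}-B)\nabla\varphi$ term after the identity $\overline{P}-B=d\overline{W}$), and estimates the pieces via \eqref{E:Ave_Diff_Dom} and \eqref{E:Ave_Lp_Dom} just as you do.

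For \eqref{E:ADD_Bo} there is a minor difference worth noting. The paper does not repeat the piecewise analysis on the boundary; instead it applies \eqref{E:Poin_Bo} directly to the full quantity $\overline{P}\nabla\varphi-\overline{\nabla_\Gamma M\varphi}$, using that $\overline{\nabla_\Gamma M\varphi}$ is a constant extension (so its normal derivative vanishes by \eqref{E:NorDer_Con}) and that \eqref{E:ADD_Dom} has already been established. This gives the boundary bound in one line. Your route---estimating each piece separately on $\Gamma_\varepsilon^i$ via \eqref{E:Ave_Diff_Bo} and the combination \eqref{E:Lp_CoV_Surf}+\eqref{E:Ave_Lp_Surf}---is equally valid and yields the same order, but is slightly longer. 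Both arguments rely on the same underlying smallness of $d\overline{W}$ and $\psi_\varepsilon$; the paper's version simply recycles the bulk estimate rather than redoing it.
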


\begin{proof}
  By \eqref{E:Ave_Der} and \eqref{E:Ave_Der_Aux} we have $\overline{P}\nabla\varphi-\overline{\nabla_\Gamma M\varphi}=u+\bar{v}$ in $\Omega_\varepsilon$, where
  \begin{align*}
    u(x) &:= \overline{P}(x)\nabla\varphi(x)-\left[M\Bigr(\overline{P}\nabla\varphi\Bigl)\right](\pi(x)), \quad x\in\Omega_\varepsilon, \\
    v(y) &:= \left[M\Bigl(d\overline{W}\nabla\varphi\Bigr)\right](y)-\bigl[M\bigl((\partial_n\varphi)\psi_\varepsilon\bigr)\bigr](y), \quad y\in\Gamma.
  \end{align*}
  We apply \eqref{E:Ave_Diff_Dom} to $u$ and use \eqref{E:NorDer_Con} to get
  \begin{align} \label{Pf_ADD:Est_Diff}
    \|u\|_{L^p(\Omega_\varepsilon)} \leq c\varepsilon\left\|\partial_n\Bigl(\overline{P}\nabla\varphi\Bigr)\right\|_{L^p(\Omega_\varepsilon)} \leq c\varepsilon\|\varphi\|_{W^{2,p}(\Omega_\varepsilon)}.
  \end{align}
  Also, from \eqref{E:Ave_Lp_Dom} and
  \begin{align*}
    \left|d\overline{W}\nabla\varphi\right| \leq c\varepsilon|\nabla\varphi|, \quad |(\partial_n\varphi)\psi_\varepsilon| \leq c\varepsilon|\nabla\varphi| \quad\text{in}\quad \Omega_\varepsilon
  \end{align*}
  by \eqref{E:ADA_Bound} and $|d|\leq c\varepsilon$ in $\Omega_\varepsilon$ it follows that
  \begin{align} \label{Pf_ADD:Est_Res}
    \|\bar{v}\|_{L^p(\Omega_\varepsilon)} \leq c\left(\left\|d\overline{W}\nabla\varphi\right\|_{L^p(\Omega_\varepsilon)}+\|(\partial_n\varphi)\psi_\varepsilon\|_{L^p(\Omega_\varepsilon)}\right) \leq c\varepsilon\|\nabla\varphi\|_{L^p(\Omega_\varepsilon)}.
  \end{align}
  Combining \eqref{Pf_ADD:Est_Diff} and \eqref{Pf_ADD:Est_Res} we obtain
  \begin{align*}
    \left\|\overline{P}\nabla\varphi-\overline{\nabla_\Gamma M\varphi}\right\|_{L^p(\Omega_\varepsilon)} \leq \|u\|_{L^p(\Omega_\varepsilon)}+\|\bar{v}\|_{L^p(\Omega_\varepsilon)} \leq c\varepsilon\|\varphi\|_{W^{2,p}(\Omega_\varepsilon)}.
  \end{align*}
  Hence \eqref{E:ADD_Dom} holds.
  Also, \eqref{E:ADD_Bo} follows from \eqref{E:NorDer_Con}, \eqref{E:Poin_Bo}, and \eqref{E:ADD_Dom}.
\end{proof}

\begin{lemma} \label{L:Ave_N_W1p}
  There exists a constant $c>0$ independent of $\varepsilon$ such that
  \begin{align} \label{E:Ave_N_W1p}
    \|Mu\cdot n\|_{W^{1,p}(\Gamma)} \leq c\varepsilon^{1-1/p}\|u\|_{W^{2,p}(\Omega_\varepsilon)}
  \end{align}
  for all $u\in W^{2,p}(\Omega_\varepsilon)^3$, $p\in[1,\infty)$ satisfying \eqref{E:Bo_Imp} on $\Gamma_\varepsilon^0$  or on $\Gamma_\varepsilon^1$.
\end{lemma}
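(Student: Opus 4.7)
The key observation is that since the averaging variable $r$ does not affect the argument $y\in\Gamma$ of $n(y)$, we have the identity
\begin{align*}
Mu(y)\cdot n(y) = \frac{1}{\varepsilon g(y)}\int_{\varepsilon g_0(y)}^{\varepsilon g_1(y)} u(y+rn(y))\cdot n(y)\,dr = M(u\cdot\bar{n})(y), \quad y\in\Gamma,
\end{align*}
so it suffices to estimate $\|M\varphi\|_{W^{1,p}(\Gamma)}$ for $\varphi := u\cdot\bar{n}\in W^{2,p}(\Omega_\varepsilon)$. The $L^p(\Gamma)$-bound of order $\varepsilon^{1-1/p}$ is already provided by Lemma~\ref{L:Ave_N_Lp}, so the task reduces to bounding $\|\nabla_\Gamma M\varphi\|_{L^p(\Gamma)}$ by the same power of $\varepsilon$.

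First I would apply the tangential gradient formula \eqref{E:Ave_Der} from Lemma~\ref{L:Ave_Der}, which gives
\begin{align*}
\nabla_\Gamma M\varphi = M(B\nabla\varphi) + M\bigl((\partial_n\varphi)\psi_\varepsilon\bigr) \quad\text{on}\quad \Gamma,
\end{align*}
with $B$ and $\psi_\varepsilon$ as in \eqref{E:Ave_Der_Aux}. For the first term, note that $B=(I_3-d\overline{W})\overline{P}$, so $B\nabla\varphi$ is essentially a multiple of $\overline{P}\nabla(u\cdot\bar{n})$ (up to a uniformly bounded matrix factor, by \eqref{E:ADA_Bound}). Applying the $L^p$-bound \eqref{E:Ave_Lp_Surf} for $M$ followed by the Poincar\'{e}-type estimate \eqref{E:Poin_Dnor} from Lemma~\ref{L:Poin_Nor} (valid since $u$ satisfies \eqref{E:Bo_Imp} on $\Gamma_\varepsilon^0$ or on $\Gamma_\varepsilon^1$) yields
\begin{align*}
\|M(B\nabla\varphi)\|_{L^p(\Gamma)} \leq c\varepsilon^{-1/p}\bigl\|\overline{P}\nabla(u\cdot\bar{n})\bigr\|_{L^p(\Omega_\varepsilon)} \leq c\varepsilon^{1-1/p}\|u\|_{W^{2,p}(\Omega_\varepsilon)}.
\end{align*}

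For the second term, the relation \eqref{E:NorDer_Con} applied to $\bar{n}$ gives $\partial_n\bar{n}=0$, so $\partial_n\varphi = \bar{n}\cdot\partial_n u$ and hence $|\partial_n\varphi|\leq|\nabla u|$ pointwise in $\Omega_\varepsilon$. Combined with $|\psi_\varepsilon|\leq c\varepsilon$ from \eqref{E:ADA_Bound} and another application of \eqref{E:Ave_Lp_Surf}, this gives
\begin{align*}
\bigl\|M\bigl((\partial_n\varphi)\psi_\varepsilon\bigr)\bigr\|_{L^p(\Gamma)} \leq c\varepsilon^{-1/p}\|(\partial_n\varphi)\psi_\varepsilon\|_{L^p(\Omega_\varepsilon)} \leq c\varepsilon^{1-1/p}\|u\|_{W^{1,p}(\Omega_\varepsilon)}.
\end{align*}
Summing these two estimates with the $L^p(\Gamma)$-bound from Lemma~\ref{L:Ave_N_Lp} yields \eqref{E:Ave_N_W1p}. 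There is no real obstacle here; the slip-type information from \eqref{E:Bo_Imp} is absorbed entirely into the Poincar\'{e} estimate \eqref{E:Poin_Dnor}, which is precisely where the crucial gain of an $\varepsilon$ factor (beyond the naive $\varepsilon^{-1/p}$ loss from $M$) comes from.
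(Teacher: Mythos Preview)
Your proof is correct and follows essentially the same approach as the paper: you identify $Mu\cdot n=M(u\cdot\bar{n})$, apply the tangential-gradient formula \eqref{E:Ave_Der}, bound $M(B\nabla\varphi)$ via \eqref{E:Ave_Lp_Surf} and the Poincar\'{e}-type estimate \eqref{E:Poin_Dnor}, and bound $M((\partial_n\varphi)\psi_\varepsilon)$ using $|\psi_\varepsilon|\leq c\varepsilon$ together with $\partial_n\bar{n}=0$. This is precisely the paper's argument.
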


\begin{proof}
  By \eqref{E:Ave_Der} we have
  \begin{align*}
    \nabla_\Gamma(Mu\cdot n) = \nabla_\Gamma M(u\cdot\bar{n}) = M\bigl(B\nabla(u\cdot\bar{n})\bigr)+M(\partial_n(u\cdot\bar{n})\psi_\varepsilon) \quad\text{on}\quad \Gamma.
  \end{align*}
  To the right-hand side we apply \eqref{E:Ave_Lp_Surf} and
  \begin{align*}
    |B\nabla(u\cdot\bar{n})| \leq c\left|\overline{P}\nabla(u\cdot\bar{n})\right|, \quad |\partial_n(u\cdot\bar{n})\psi_\varepsilon| \leq c\varepsilon|\nabla u| \quad\text{in}\quad \Omega_\varepsilon
  \end{align*}
  by \eqref{E:Wein_Bound}, \eqref{E:NorDer_Con}, and \eqref{E:ADA_Bound} to deduce that
  \begin{align*}
    \|\nabla_\Gamma(Mu\cdot n)\|_{L^p(\Gamma)} &\leq c\left(\left\|M\bigl(B\nabla(u\cdot\bar{n})\bigr)\right\|_{L^p(\Gamma)}+\|M(\partial_n(u\cdot\bar{n})\psi_\varepsilon)\|_{L^p(\Gamma)}\right) \\
    &\leq c\varepsilon^{-1/p}\left(\|B\nabla(u\cdot\bar{n})\|_{L^p(\Omega_\varepsilon)}+\|\partial_n(u\cdot\bar{n})\psi_\varepsilon\|_{L^p(\Omega_\varepsilon)}\right) \\
    &\leq c\left(\varepsilon^{-1/p}\left\|\overline{P}\nabla(u\cdot\bar{n})\right\|_{L^p(\Omega_\varepsilon)}+\varepsilon^{1-1/p}\|\nabla u\|_{L^p(\Omega_\varepsilon)}\right).
  \end{align*}
  Applying \eqref{E:Poin_Dnor} to the first term on the last line we obtain
  \begin{align*}
    \|\nabla_\Gamma(Mu\cdot n)\|_{L^p(\Gamma)} \leq c\varepsilon^{1-1/p}\|u\|_{W^{2,p}(\Omega_\varepsilon)}.
  \end{align*}
  The inequality \eqref{E:Ave_N_W1p} follows from this inequality and \eqref{E:Ave_N_Lp}.
\end{proof}

Next we establish estimates for the weighted surface divergence of the average of a solenoidal vector field on $\Omega_\varepsilon$.
They are useful for the proof of the global existence of a strong solution as well as the study of a singular limit problem for \eqref{E:NS_Eq}--\eqref{E:NS_In}.

\begin{lemma} \label{L:Ave_Div}
  Let $p\in[1,\infty)$.
  There exists $c>0$ independent of $\varepsilon$ such that
  \begin{align} \label{E:Ave_Div_Lp}
    \|\mathrm{div}_\Gamma(gMu)\|_{L^p(\Gamma)} \leq c\varepsilon^{1-1/p}\|u\|_{W^{1,p}(\Omega_\varepsilon)}
  \end{align}
  for all $u\in W^{1,p}(\Omega_\varepsilon)^3$ satisfying $\mathrm{div}\,u=0$ in $\Omega_\varepsilon$ and \eqref{E:Bo_Imp} on $\Gamma_\varepsilon$.
  If in addition $u\in W^{2,p}(\Omega_\varepsilon)^3$, then we have
  \begin{align} \label{E:Ave_Div_W1p}
    \|\mathrm{div}_\Gamma(gMu)\|_{W^{1,p}(\Gamma)} \leq c\varepsilon^{1-1/p}\|u\|_{W^{2,p}(\Omega_\varepsilon)}.
  \end{align}
\end{lemma}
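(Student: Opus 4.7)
The plan hinges on deriving a pointwise identity for $\mathrm{div}_\Gamma(gMu)$ that manifestly exhibits $O(\varepsilon)$ smallness, by combining Lemma~\ref{L:Ave_Der} with the hypotheses $\mathrm{div}\,u=0$ in $\Omega_\varepsilon$ and $u\cdot n_\varepsilon=0$ on $\Gamma_\varepsilon$. Starting from $\nabla_\Gamma M\varphi = M(B\nabla\varphi)+M((\partial_n\varphi)\psi_\varepsilon)$ applied componentwise and summed, I obtain
\begin{equation*}
  \mathrm{div}_\Gamma Mu = M(\mathrm{tr}(B\nabla u)) + M(\psi_\varepsilon\cdot\partial_n u).
\end{equation*}
Since $B=\overline{P}-d\overline{W}$ by Lemma~\ref{L:Form_W}, and since $\mathrm{tr}(\overline{P}\nabla u) = \mathrm{div}\,u-\bar n\cdot\partial_n u = -\partial_n(u\cdot\bar n)$ (using the divergence-free condition together with $\partial_n\bar n=0$ from \eqref{E:NorDer_Con}), I find $\mathrm{tr}(B\nabla u)= -\partial_n(u\cdot\bar n) -d\,\mathrm{tr}(\overline{W}\nabla u)$.

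The two normal-derivative averages are then rewritten as boundary traces. The fundamental theorem of calculus gives $M(\partial_n\varphi)(y) = (\varphi_1^\sharp-\varphi_0^\sharp)/(\varepsilon g)$; applied with $\varphi=u\cdot\bar n$, the impermeability identity \eqref{E:Exp_Bo} converts $(u\cdot\bar n)_i^\sharp$ into $\varepsilon u_i^\sharp\cdot\tau_\varepsilon^i$, yielding $M(\partial_n(u\cdot\bar n))=(u_1^\sharp\cdot\tau_\varepsilon^1-u_0^\sharp\cdot\tau_\varepsilon^0)/g$. An integration by parts in $r$, using $\psi_\varepsilon^\sharp|_{r=\varepsilon g_i}=\varepsilon\nabla_\Gamma g_i$ and $\partial_r\psi_\varepsilon^\sharp=\nabla_\Gamma g/g$, yields $M(\psi_\varepsilon\cdot\partial_n u) = (\nabla_\Gamma g_1\cdot u_1^\sharp-\nabla_\Gamma g_0\cdot u_0^\sharp)/g - (\nabla_\Gamma g\cdot Mu)/g$. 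Substituting into $\mathrm{div}_\Gamma(gMu)=g\,\mathrm{div}_\Gamma Mu+\nabla_\Gamma g\cdot Mu$, the $\nabla_\Gamma g\cdot Mu$ contributions cancel and I obtain the clean pointwise formula
\begin{equation*}
  \mathrm{div}_\Gamma(gMu) = u_1^\sharp\cdot(\nabla_\Gamma g_1-\tau_\varepsilon^1) - u_0^\sharp\cdot(\nabla_\Gamma g_0-\tau_\varepsilon^0) - gM(d\,\mathrm{tr}(\overline{W}\nabla u)).
\end{equation*}

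Each term in this identity visibly carries an $O(\varepsilon)$ factor. By \eqref{E:Tau_Diff}, $|\nabla_\Gamma g_i-\tau_\varepsilon^i|\le c\varepsilon$, and \eqref{E:Lp_CoV_Surf} together with \eqref{E:Poin_Bo} gives $\|u_i^\sharp\|_{L^p(\Gamma)}\le c\varepsilon^{-1/p}\|u\|_{W^{1,p}(\Omega_\varepsilon)}$, so the boundary contributions are bounded by $c\varepsilon^{1-1/p}\|u\|_{W^{1,p}(\Omega_\varepsilon)}$. For the bulk term, $|d|\le c\varepsilon$ in $\Omega_\varepsilon$ gives $\|d\,\mathrm{tr}(\overline{W}\nabla u)\|_{L^p(\Omega_\varepsilon)}\le c\varepsilon\|\nabla u\|_{L^p(\Omega_\varepsilon)}$, and Lemma~\ref{L:Ave_Lp} yields the same order. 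This proves \eqref{E:Ave_Div_Lp}. For the $W^{1,p}$ upgrade \eqref{E:Ave_Div_W1p}, differentiate the pointwise identity once more: the tangential derivatives of the boundary traces remain $O(\varepsilon)$ either from $\nabla_\Gamma g_i-\tau_\varepsilon^i$ or from its derivative (both controlled by \eqref{E:Tau_Diff}), while the chain-rule derivatives of $u_i^\sharp$ are absorbed via \eqref{E:Poin_Bo} applied to $\nabla u$. For the bulk term, re-applying Lemma~\ref{L:Ave_Der} with $\varphi=d\,\mathrm{tr}(\overline{W}\nabla u)$ and using $B\bar n=0$ shows that $\nabla_\Gamma M(d\,\mathrm{tr}(\overline W\nabla u))$ still carries an explicit $O(\varepsilon)$ prefactor.

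The main obstacle is identifying and justifying the cancellation behind the boxed identity above. Each of $M(\partial_n(u\cdot\bar n))$, $M(\psi_\varepsilon\cdot\partial_n u)$, and $\nabla_\Gamma g\cdot Mu$ is individually only $O(\varepsilon^{-1/p})$ in $L^p(\Gamma)$, which is \emph{too large by a factor of $\varepsilon$} for the claimed estimate; only the joint use of the impermeability condition on the entire boundary $\Gamma_\varepsilon$ (via \eqref{E:Exp_Bo}) together with the explicit structure of $\psi_\varepsilon$ and $B$ produces the collapse to $O(\varepsilon^{1-1/p})$. A secondary technical point is that, for the $W^{1,p}$ bound, applying Lemma~\ref{L:Ave_Wmp} naively to $d\,\mathrm{tr}(\overline{W}\nabla u)$ loses the $\varepsilon$-gain (because $|\nabla d|=1$), so one must re-invoke Lemma~\ref{L:Ave_Der} and exploit $B\bar n=0$ to carry the smallness through the extra tangential derivative.
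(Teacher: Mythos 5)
Your proof is correct, and its backbone coincides with the paper's: both start from Lemma~\ref{L:Ave_Der}, use $\mathrm{div}\,u=0$ to reduce $\mathrm{tr}[B\nabla u]$ to $-\partial_n(u\cdot\bar{n})-d\,\mathrm{tr}[\overline{W}\nabla u]$, convert $gM(\partial_n(u\cdot\bar{n}))$ into boundary traces via the fundamental theorem of calculus in $r$ and the impermeability identity \eqref{E:Exp_Bo}, and then estimate with \eqref{E:Tau_Diff}, \eqref{E:Poin_Bo}, \eqref{E:Lp_CoV_Surf}, and \eqref{E:Ave_Lp_Surf}. The genuine difference lies in the decomposition: the paper keeps $\eta_4=gM(\partial_nu\cdot\psi_\varepsilon)$ as a separate summand, estimated directly from $|\psi_\varepsilon|\leq c\varepsilon$, and consequently also carries the term $\eta_1=\sum_{i}(-1)^i(u_i^\sharp-Mu)\cdot\tau_\varepsilon^i$, whose bound requires the averaging estimate \eqref{E:Ave_Diff_Bo}; you instead integrate $M(\psi_\varepsilon\cdot\partial_nu)$ by parts in $r$, which cancels the $\nabla_\Gamma g\cdot Mu$ contributions exactly and collapses the identity to three terms in which all the smallness comes from $|\nabla_\Gamma g_i-\tau_\varepsilon^i|\leq c\varepsilon$ and $|d|\leq c\varepsilon$ (your three-term formula is indeed equal to the paper's $\eta_1+\eta_2+\eta_3+\eta_4$). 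This buys a leaner list of ingredients---no $u_i^\sharp-Mu$ difference, hence no appeal to \eqref{E:Ave_Diff_Bo}---at the cost of one extra integration by parts along the normal lines, harmless for $u\in W^{1,p}(\Omega_\varepsilon)^3$. For the $W^{1,p}$ upgrade the two arguments coincide again: you differentiate the boundary traces using the second inequality of \eqref{E:Tau_Diff} and absorb $(\nabla u)_i^\sharp$ through \eqref{E:Poin_Bo}, and you re-apply Lemma~\ref{L:Ave_Der} to $d\,\mathrm{tr}[\overline{W}\nabla u]$ exploiting $B\bar{n}=0$, which is precisely the paper's treatment of $\nabla_\Gamma\eta_3$ (only note that the product rule also produces $M(d\,\mathrm{tr}[\overline{W}\nabla u])\nabla_\Gamma g$, which is again of the right order by $|d|\leq c\varepsilon$ and \eqref{E:Ave_Lp_Surf}).
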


\begin{proof}
  We use the notations \eqref{E:Pull_Dom} and \eqref{E:Pull_Bo}.
  Let $u\in W^{1,p}(\Omega_\varepsilon)^3$ satisfy $\mathrm{div}\,u=0$ in $\Omega_\varepsilon$ and \eqref{E:Bo_Imp} on $\Gamma_\varepsilon$.
  We define functions on $\Gamma$ by
  \begin{align} \label{Pf_ADiv:Phi_Aux}
    \begin{gathered}
      \eta_1 := \sum_{i=0,1}(-1)^i(u_i^\sharp-Mu)\cdot\tau_\varepsilon^i, \quad \eta_2 := \sum_{i=0,1}(-1)^iMu\cdot(\tau_\varepsilon^i-\nabla_\Gamma g_i), \\
      \eta_3 := -gM\left(d\,\mathrm{tr}\Bigl[\overline{W}\nabla u\Bigr]\right), \quad \eta_4 := gM(\partial_nu\cdot\psi_\varepsilon),
    \end{gathered}
  \end{align}
  where $\tau_\varepsilon^0$ and $\tau_\varepsilon^1$ are given by \eqref{E:Def_NB_Aux}.
  First we show that
  \begin{align} \label{Pf_ADiv:Sum}
    \mathrm{div}_\Gamma(gMu) = \eta_1+\eta_2+\eta_3+\eta_4 \quad\text{on}\quad \Gamma.
  \end{align}
  By \eqref{E:Ave_Der} and \eqref{E:Ave_Der_Aux} we have
  \begin{align*}
    g\,\mathrm{div}_\Gamma(Mu) &= g\,\mathrm{tr}[\nabla_\Gamma Mu] = gM(\mathrm{tr}[B\nabla u])+gM(\mathrm{tr}[\psi_\varepsilon\otimes\partial_n u]) \\
    &= gM\left(\mathrm{tr}\Bigl[\overline{P}\nabla u\Bigr]\right)+\eta_3+\eta_4
  \end{align*}
  on $\Gamma$.
  Moreover, since $\mathrm{div}\,u=0$ and $(\bar{n}\otimes\bar{n})\nabla u=\bar{n}\otimes\partial_nu$ in $\Omega_\varepsilon$,
  \begin{align*}
    \mathrm{tr}\Bigl[\overline{P}\nabla u\Bigr] = \mathrm{div}\,u-\mathrm{tr}[(\bar{n}\otimes\bar{n})\nabla u] = -\bar{n}\cdot\partial_nu \quad\text{in}\quad \Omega_\varepsilon.
  \end{align*}
  By these equalities and $\mathrm{div}_\Gamma(gMu)=\nabla_\Gamma g\cdot Mu+g\,\mathrm{div}_\Gamma(Mu)$ on $\Gamma$ we get
  \begin{align} \label{Pf_ADiv:For1}
    \mathrm{div}_\Gamma(gMu) = \nabla_\Gamma g\cdot Mu-gM(\partial_nu\cdot\bar{n})+\eta_3+\eta_4 \quad\text{on}\quad \Gamma.
  \end{align}
  Let us calculate the second term on the right-hand side.
  Since
  \begin{align*}
    (\partial_nu\cdot \bar{n})^\sharp(y,r) = \frac{\partial}{\partial r}\Bigl((u\cdot\bar{n})^\sharp(y,r)\Bigr), \quad y\in\Gamma,\,r\in(\varepsilon g_0(y),\varepsilon g_1(y))
  \end{align*}
  by $\partial_n\bar{n}=0$ in $\Omega_\varepsilon$, we have
  \begin{align*}
    g(y)[M(\partial_nu\cdot\bar{n})](y) &= \frac{1}{\varepsilon}\int_{\varepsilon g_0(y)}^{\varepsilon g_1(y)}\frac{\partial}{\partial r}\Bigl((u\cdot\bar{n})^\sharp(y,r)\Bigr)\,dr \\
    &= \frac{1}{\varepsilon}\{(u\cdot\bar{n})^\sharp(y,\varepsilon g_1(y))-(u\cdot\bar{n})^\sharp(y,\varepsilon g_0(y))\}
  \end{align*}
  for $y\in\Gamma$.
  Moreover, since $u$ satisfies \eqref{E:Bo_Imp} on $\Gamma_\varepsilon$,
  \begin{align*}
    (u\cdot\bar{n})^\sharp(y,\varepsilon g_i(y)) = \varepsilon(u\cdot\bar{\tau}_\varepsilon^i)^\sharp(y,\varepsilon g_i(y)) = \varepsilon(u_i^\sharp\cdot\tau_\varepsilon^i)(y), \quad y\in\Gamma
  \end{align*}
  by \eqref{E:Exp_Bo}.
  Hence
  \begin{align*}
    gM(\partial_nu\cdot\bar{n}) = u_1^\sharp\cdot\tau_\varepsilon^1-u_0^\sharp\cdot\tau_\varepsilon^0 = \nabla_\Gamma g\cdot Mu-\eta_1-\eta_2 \quad\text{on}\quad \Gamma.
  \end{align*}
  Combining this with \eqref{Pf_ADiv:For1} we obtain \eqref{Pf_ADiv:Sum}.
  Let us estimate the $L^p(\Gamma)$-norms of the functions $\eta_1,\dots,\eta_4$.
  By \eqref{E:Tau_Bound}, \eqref{E:Lp_CoV_Surf}, and \eqref{E:Ave_Diff_Bo},
  \begin{align} \label{Pf_ADiv:Lp_Phi1}
    \|\eta_1\|_{L^p(\Gamma)} \leq c\sum_{i=0,1}\left\|u-\overline{Mu}\right\|_{L^p(\Gamma_\varepsilon^i)} \leq c\varepsilon^{1-1/p}\|u\|_{W^{1,p}(\Omega_\varepsilon)}.
  \end{align}
  The first inequality of \eqref{E:Tau_Diff} and \eqref{E:Ave_Lp_Surf} imply that
  \begin{align} \label{Pf_ADiv:Lp_Phi2}
    \|\eta_2\|_{L^p(\Gamma)} \leq c\varepsilon\|Mu\|_{L^p(\Gamma)} \leq c\varepsilon^{1-1/p}\|u\|_{L^p(\Omega_\varepsilon)}.
  \end{align}
  To $\eta_3$ and $\eta_4$ we apply \eqref{E:Ave_Lp_Surf} and
  \begin{align*}
    \left|d\,\mathrm{tr}\Bigl[\overline{W}\nabla u\Bigr]\right|\leq c\varepsilon|\nabla u|, \quad |\partial_nu\cdot\psi_\varepsilon|\leq c\varepsilon|\nabla u| \quad\text{in}\quad \Omega_\varepsilon
  \end{align*}
  by \eqref{E:ADA_Bound}, $|d|\leq c\varepsilon$ in $\Omega_\varepsilon$, and the boundedness of $W$ on $\Gamma$ to get
  \begin{align} \label{Pf_ADiv:Lp_Phi34}
    \begin{aligned}
      \|\eta_3\|_{L^p(\Gamma)} &\leq c\varepsilon^{-1/p}\left\|d\,\mathrm{tr}\Bigl[\overline{W}\nabla u\Bigr]\right\|_{L^p(\Omega_\varepsilon)} \leq c\varepsilon^{1-1/p}\|\nabla u\|_{L^p(\Omega_\varepsilon)}, \\
      \|\eta_4\|_{L^p(\Gamma)} &\leq c\varepsilon^{-1/p}\|\partial_nu\cdot\psi_\varepsilon\|_{L^p(\Omega_\varepsilon)} \leq c\varepsilon^{1-1/p}\|\nabla u\|_{L^p(\Omega_\varepsilon)}.
    \end{aligned}
  \end{align}
  Applying \eqref{Pf_ADiv:Lp_Phi1}, \eqref{Pf_ADiv:Lp_Phi2}, and \eqref{Pf_ADiv:Lp_Phi34} to \eqref{Pf_ADiv:Sum} we obtain \eqref{E:Ave_Div_Lp}.

  Now we assume $u\in W^{2,p}(\Omega_\varepsilon)^3$ and estimate the $L^p(\Gamma)$-norms of the tangential gradients of $\eta_1,\dots,\eta_4$.
  By \eqref{E:Pull_Bo} and $\nabla_\Gamma\bigl(y+\varepsilon g_i(y)n(y)\bigr)=P(y)+\varepsilon G_i(y)$, where
  \begin{align*}
    G_i(y) := \nabla_\Gamma g_i(y)\otimes n(y)-g_i(y)W(y), \quad y\in\Gamma,\,i=0,1,
  \end{align*}
  we have $\nabla_\Gamma u_i^\sharp=(P+\varepsilon G_i)(\nabla u)_i^\sharp$ on $\Gamma$ for $i=0,1$.
  Hence
  \begin{align*}
    \nabla_\Gamma\eta_1 &= \sum_{i=0,1}(-1)^i\{(\nabla_\Gamma u_i^\sharp-\nabla_\Gamma Mu)\tau_\varepsilon^i+(\nabla_\Gamma\tau_\varepsilon^i)(u_i^\sharp-Mu)\} \\
    &= \sum_{i=0,1}(-1)^i\Bigl[\bigl\{\bigl(P(\nabla u)_i^\sharp-\nabla_\Gamma Mu\bigr)+\varepsilon G_i(\nabla u)_i^\sharp\bigr\}\tau_\varepsilon^i+(\nabla_\Gamma\tau_\varepsilon^i)(u_i^\sharp-Mu)\Bigr]
  \end{align*}
  on $\Gamma$.
  Since $G_0$ and $G_1$ are bounded on $\Gamma$, we see by \eqref{E:Tau_Bound} that
  \begin{align*}
    |\nabla_\Gamma\eta_1| \leq c\sum_{i=0,1}(|P(\nabla u)_i^\sharp-\nabla_\Gamma Mu|+\varepsilon|(\nabla u)_i^\sharp|+|u_i^\sharp-Mu|) \quad\text{on}\quad \Gamma.
  \end{align*}
  From this inequality and \eqref{E:Lp_CoV_Surf} we deduce that
  \begin{align*}
    &\|\nabla_\Gamma\eta_1\|_{L^p(\Gamma)} \\
    &\qquad \leq c\sum_{i=0,1}\left(\|P(\nabla u)_i^\sharp-\nabla_\Gamma Mu\|_{L^p(\Gamma)}+\varepsilon\|(\nabla u)_i^\sharp\|_{L^p(\Gamma)}+\|u_i^\sharp-Mu\|_{L^p(\Gamma)}\right) \\
    &\qquad \leq c\sum_{i=0,1}\left(\left\|\overline{P}\nabla u-\overline{\nabla_\Gamma Mu}\right\|_{L^p(\Gamma_\varepsilon^i)}+\varepsilon\|\nabla u\|_{L^p(\Gamma_\varepsilon^i)}+\left\|u-\overline{Mu}\right\|_{L^p(\Gamma_\varepsilon^i)}\right).
  \end{align*}
  To the right-hand side we apply \eqref{E:Poin_Bo}, \eqref{E:Ave_Diff_Bo}, and \eqref{E:ADD_Bo} to obtain
  \begin{align} \label{Pf_ADiv:Lp_Grad_Phi1}
    \|\nabla_\Gamma\eta_1\|_{L^p(\Gamma)} \leq c\varepsilon^{1-1/p}\|u\|_{W^{2,p}(\Omega_\varepsilon)}.
  \end{align}
  Next we estimate the $L^p(\Gamma)$-norm of $\nabla_\Gamma\eta_2$.
  From \eqref{E:Tau_Diff} and
  \begin{align*}
    \nabla_\Gamma\eta_2 = \sum_{i=0,1}(-1)^i\{(\nabla_\Gamma Mu)(\tau_\varepsilon^i-\nabla_\Gamma g_i)+(\nabla_\Gamma\tau_\varepsilon^i-\nabla_\Gamma^2 g_i)Mu\} \quad\text{on}\quad \Gamma
  \end{align*}
  it follows that $|\nabla_\Gamma\eta_2|\leq c\varepsilon(|Mu|+|\nabla_\Gamma Mu|)$ on $\Gamma$.
  Hence by \eqref{E:Ave_Wmp_Surf} with $m=1$,
  \begin{align} \label{Pf_ADiv:Lp_Grad_Phi2}
    \|\nabla_\Gamma\eta_2\|_{L^p(\Gamma)} \leq c\varepsilon\|Mu\|_{W^{1,p}(\Gamma)} \leq c\varepsilon^{1-1/p}\|u\|_{W^{1,p}(\Omega_\varepsilon)}.
  \end{align}
  For the tangential gradient of $\eta_3=-gM(d\phi)$ with $\phi:=\mathrm{tr}[\overline{W}\nabla u]$ on $\Omega_\varepsilon$ we have
  \begin{align*}
    \nabla_\Gamma\eta_3 = -M(d\phi)\nabla_\Gamma g-g\bigl\{M(\phi B\nabla d)+M(dB\nabla\phi)+M\bigl(\partial_n(d\phi)\psi_\varepsilon\bigr)\bigr\} \quad\text{on}\quad \Gamma
  \end{align*}
  by \eqref{E:Ave_Der}.
  Here the second term on the right-hand side vanishes since $B\nabla d=0$ in $\Omega_\varepsilon$ by $\nabla d=\bar{n}$ in $\Omega_\varepsilon$ and $Pn=0$ on $\Gamma$.
  These facts and \eqref{E:Ave_Lp_Surf} imply that
  \begin{align*}
    \|\nabla_\Gamma\eta_3\|_{L^p(\Gamma)} &\leq c\left(\|M(d\phi)\|_{L^p(\Gamma)}+\|M(dB\nabla\phi)\|_{L^p(\Gamma)}+\left\|M\bigl(\partial_n(d\phi)\psi_\varepsilon\bigr)\right\|_{L^p(\Gamma)}\right) \\
    &\leq c\varepsilon^{-1/p}\left(\|d\phi\|_{L^p(\Omega_\varepsilon)}+\|dB\nabla\phi\|_{L^p(\Omega_\varepsilon)}+\|\partial_n(d\phi)\psi_\varepsilon\|_{L^p(\Omega_\varepsilon)}\right).
  \end{align*}
  To the second line we apply
  \begin{align*}
    |d\phi| \leq c\varepsilon|\nabla u|, \quad |dB\nabla\phi| \leq c\varepsilon(|\nabla u|+|\nabla^2u|), \quad |\partial_n(d\phi)\psi_\varepsilon| \leq c\varepsilon(|\nabla u|+|\nabla^2u|)
  \end{align*}
  in $\Omega_\varepsilon$ by \eqref{E:ADA_Bound}, $|d|\leq c\varepsilon$ in $\Omega_\varepsilon$, and $\partial_nd=\bar{n}\cdot\nabla d=|\bar{n}|^2=1$ in $N$ to obtain
  \begin{align} \label{Pf_ADiv:Lp_Grad_Phi3}
    \|\nabla_\Gamma\eta_3\|_{L^p(\Gamma)} \leq c\varepsilon^{1-1/p}\|u\|_{W^{2,p}(\Omega_\varepsilon)}.
  \end{align}
  Let us estimate the $L^p(\Gamma)$-norm of $\nabla_\Gamma\eta_4$.
  Setting $\xi:=\partial_nu\cdot\psi_\varepsilon$ on $\Omega_\varepsilon$ we have
  \begin{align*}
    \nabla_\Gamma\eta_4=(M\xi)\nabla_\Gamma g+g\bigl\{M(B\nabla\xi)+M\bigl((\partial_n\xi)\psi_\varepsilon\bigr)\bigr\} \quad\text{on}\quad \Gamma
  \end{align*}
  by \eqref{E:Ave_Der}.
  From this equality and \eqref{E:Ave_Lp_Surf} we deduce that
  \begin{align} \label{Pf_ADiv:GP4_Aux_1}
    \begin{aligned}
      \|\nabla_\Gamma\eta_4\|_{L^p(\Gamma)} &\leq c\left(\|M\xi\|_{L^p(\Gamma)}+\|M(B\nabla\xi)\|_{L^p(\Gamma)}+\left\|M\bigl((\partial_n\xi)\psi_\varepsilon\bigr)\right\|_{L^p(\Gamma)}\right) \\
      &\leq c\varepsilon^{-1/p}\left(\|\xi\|_{L^p(\Omega_\varepsilon)}+\|B\nabla\xi\|_{L^p(\Omega_\varepsilon)}+\|(\partial_n\xi)\psi_\varepsilon\|_{L^p(\Omega_\varepsilon)}\right).
    \end{aligned}
  \end{align}
  We apply \eqref{E:ADA_Bound} and \eqref{E:ADA_Grad_Bound} to $\xi=\partial_nu\cdot\psi_\varepsilon$ and $(\partial_n\xi)\psi_\varepsilon$ to get
  \begin{align} \label{Pf_ADiv:GP4_Aux_2}
    \begin{aligned}
      \|\xi\|_{L^p(\Omega_\varepsilon)} &\leq c\varepsilon\|\nabla u\|_{L^p(\Omega_\varepsilon)}, \\
      \|(\partial_n\xi)\psi_\varepsilon\|_{L^p(\Omega_\varepsilon)} &\leq c\varepsilon\left(\|\nabla u\|_{L^p(\Omega_\varepsilon)}+\|\nabla^2u\|_{L^p(\Omega_\varepsilon)}\right).
      \end{aligned}
  \end{align}
  Moreover, by \eqref{E:ADA_Grad_Bound} and $P(n\otimes\nabla_\Gamma g)=(Pn)\otimes\nabla_\Gamma g=0$ on $\Gamma$ we see that
  \begin{align*}
    \left|\overline{P}\nabla\psi_\varepsilon\right| = \left|\overline{P}\left(\nabla\psi_\varepsilon-\frac{1}{\bar{g}}\bar{n}\otimes\overline{\nabla_\Gamma g}\right)\right| \leq c\varepsilon \quad\text{in}\quad \Omega_\varepsilon.
  \end{align*}
  Using this inequality and \eqref{E:ADA_Bound} to $B\nabla\xi=B\{\nabla(\partial_nu)\}\psi_\varepsilon+B(\nabla\psi_\varepsilon)\partial_nu$ we get
  \begin{align} \label{Pf_ADiv:GP4_Aux_3}
    \|B\nabla\xi\|_{L^p(\Omega_\varepsilon)} \leq c\varepsilon\left(\|\nabla u\|_{L^p(\Omega_\varepsilon)}+\|\nabla^2u\|_{L^p(\Omega_\varepsilon)}\right).
  \end{align}
  From \eqref{Pf_ADiv:GP4_Aux_1}--\eqref{Pf_ADiv:GP4_Aux_3} it follows that
  \begin{align} \label{Pf_ADiv:Lp_Grad_Phi4}
    \|\nabla_\Gamma\eta_4\|_{L^p(\Omega_\varepsilon)}\leq c\varepsilon^{1-1/p}\|u\|_{W^{2,p}(\Omega_\varepsilon)}.
  \end{align}
  Finally, from \eqref{Pf_ADiv:Sum}, \eqref{Pf_ADiv:Lp_Grad_Phi1}--\eqref{Pf_ADiv:Lp_Grad_Phi3}, and \eqref{Pf_ADiv:Lp_Grad_Phi4} we deduce that
  \begin{align*}
    \left\|\nabla_\Gamma\bigl(\mathrm{div}_\Gamma(gMu)\bigr)\right\|_{L^p(\Gamma)} \leq \sum_{j=1}^4\|\nabla_\Gamma\eta_j\|_{L^p(\Gamma)} \leq c\varepsilon^{1-1/p}\|u\|_{W^{2,p}(\Omega_\varepsilon)}
  \end{align*}
  and conclude that \eqref{E:Ave_Div_W1p} is valid.
\end{proof}

\begin{lemma} \label{L:ADiv_Tan}
  There exists a constant $c>0$ independent of $\varepsilon$ such that
  \begin{align} \label{E:ADiv_Tan_Lp}
    \|\mathrm{div}_\Gamma(gM_\tau u)\|_{L^p(\Gamma)} \leq c\varepsilon^{1-1/p}\|u\|_{W^{1,p}(\Omega_\varepsilon)}
  \end{align}
  for all $u\in W^{1,p}(\Omega_\varepsilon)^3$, $p\in[1,\infty)$ satisfying $\mathrm{div}\,u=0$ in $\Omega_\varepsilon$ and \eqref{E:Bo_Imp} on $\Gamma_\varepsilon$.
  If in addition $u\in W^{2,p}(\Omega_\varepsilon)^3$, then we have
  \begin{align} \label{E:ADiv_Tan_W1p}
    \|\mathrm{div}_\Gamma(gM_\tau u)\|_{W^{1,p}(\Gamma)} \leq c\varepsilon^{1-1/p}\|u\|_{W^{2,p}(\Omega_\varepsilon)}.
  \end{align}
\end{lemma}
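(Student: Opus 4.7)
The plan is to reduce Lemma~\ref{L:ADiv_Tan} directly to Lemmas~\ref{L:Ave_Div}, \ref{L:Ave_N_Lp}, and~\ref{L:Ave_N_W1p} via the orthogonal decomposition $Mu = M_\tau u + (Mu\cdot n)n$ on $\Gamma$. Multiplying by $g$ and taking the surface divergence gives
\begin{align*}
  \mathrm{div}_\Gamma(gMu) = \mathrm{div}_\Gamma(gM_\tau u)+\mathrm{div}_\Gamma\bigl(g(Mu\cdot n)n\bigr).
\end{align*}
For the last term, since $n\cdot\nabla_\Gamma\varphi = 0$ on $\Gamma$ for every scalar $\varphi$ by \eqref{E:P_TGr}, and $\mathrm{div}_\Gamma n = -H$ by \eqref{E:Def_WHK}, we obtain
\begin{align*}
  \mathrm{div}_\Gamma\bigl(g(Mu\cdot n)n\bigr) = n\cdot\nabla_\Gamma\bigl(g(Mu\cdot n)\bigr)+g(Mu\cdot n)\mathrm{div}_\Gamma n = -gH(Mu\cdot n).
\end{align*}
Hence the key identity is
\begin{align*}
  \mathrm{div}_\Gamma(gM_\tau u) = \mathrm{div}_\Gamma(gMu)+gH(Mu\cdot n) \quad\text{on}\quad \Gamma.
\end{align*}

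From this point the proof is purely a bookkeeping matter. For \eqref{E:ADiv_Tan_Lp}, I would apply \eqref{E:Ave_Div_Lp} to the first term and \eqref{E:Ave_N_Lp} to the second term (using that both $g$ and $H$ are bounded on $\Gamma$ by the $C^5$-regularity of $\Gamma$ and $g_0,g_1\in C^4(\Gamma)$); each contributes $c\varepsilon^{1-1/p}\|u\|_{W^{1,p}(\Omega_\varepsilon)}$, and adding them yields \eqref{E:ADiv_Tan_Lp}. For \eqref{E:ADiv_Tan_W1p} when $u\in W^{2,p}(\Omega_\varepsilon)^3$, the same identity is used, but now \eqref{E:Ave_Div_W1p} bounds the first term in $W^{1,p}(\Gamma)$ by $c\varepsilon^{1-1/p}\|u\|_{W^{2,p}(\Omega_\varepsilon)}$, while \eqref{E:Ave_N_W1p} bounds $\|Mu\cdot n\|_{W^{1,p}(\Gamma)}$ by the same quantity. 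Since $gH\in W^{1,\infty}(\Gamma)$ (as $g,H\in C^3(\Gamma)$), multiplication by $gH$ is bounded on $W^{1,p}(\Gamma)$, so $\|gH(Mu\cdot n)\|_{W^{1,p}(\Gamma)}\leq c\varepsilon^{1-1/p}\|u\|_{W^{2,p}(\Omega_\varepsilon)}$.

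There is no real obstacle: the only subtlety is recognizing that the normal component of $Mu$ contributes nothing to $\nabla_\Gamma$-differentiation along $n$ (so only the mean curvature term survives from $\mathrm{div}_\Gamma((Mu\cdot n)n)$), which reduces the problem to the already-established estimates on $\mathrm{div}_\Gamma(gMu)$ and on $Mu\cdot n$. The impermeable boundary condition $u\cdot n_\varepsilon=0$ on $\Gamma_\varepsilon$ enters only implicitly, through its use in the two cited lemmas.
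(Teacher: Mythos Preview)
Your proof is correct and follows essentially the same approach as the paper: both derive the identity $\mathrm{div}_\Gamma(gM_\tau u) = \mathrm{div}_\Gamma(gMu) + gH(Mu\cdot n)$ on $\Gamma$ from $M_\tau u = Mu - (Mu\cdot n)n$ and then invoke Lemmas~\ref{L:Ave_Div}, \ref{L:Ave_N_Lp}, and~\ref{L:Ave_N_W1p}. Your write-up is slightly more explicit about why multiplication by $gH$ is bounded on $W^{1,p}(\Gamma)$, but the argument is the same.
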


\begin{proof}
  By $M_\tau u=Mu-(Mu\cdot n)n$ on $\Gamma$, \eqref{E:P_TGr}, and \eqref{E:Def_WHK},
  \begin{align*}
    \mathrm{div}_\Gamma(gM_\tau u) = \mathrm{div}_\Gamma(gMu)+g(Mu\cdot n)H \quad\text{on}\quad \Gamma.
  \end{align*}
  Hence $|\mathrm{div}_\Gamma(gM_\tau u)| \leq c(|\mathrm{div}_\Gamma(gMu)|+|Mu\cdot n|)$ and
  \begin{align*}
    \left|\nabla_\Gamma\bigl(\mathrm{div}_\Gamma(gM_\tau u)\bigr)\right| \leq c\left(\left|\nabla_\Gamma\bigl(\mathrm{div}_\Gamma(gMu)\bigr)\right|+|Mu\cdot n|+|\nabla_\Gamma(Mu\cdot n)|\right)
  \end{align*}
  on $\Gamma$.
  These inequalities, \eqref{E:Ave_N_Lp}, and \eqref{E:Ave_N_W1p}--\eqref{E:Ave_Div_W1p} imply \eqref{E:ADiv_Tan_Lp} and \eqref{E:ADiv_Tan_W1p}.
\end{proof}

By Lemmas~\ref{L:Ave_Der_Diff} and~\ref{L:Ave_Div} we obtain a relation for the normal derivative (with respect to $\Gamma$) of a vector field on $\Omega_\varepsilon$ and its averaged tangential component.

\begin{lemma} \label{L:DnU_N_Ave}
  Let $p\in[1,\infty)$.
  There exists $c>0$ independent of $\varepsilon$ such that
  \begin{align} \label{E:DnU_N_Ave}
    \left\|\partial_nu\cdot\bar{n}-\frac{1}{\bar{g}}\overline{M_\tau u}\cdot\overline{\nabla_\Gamma g}\right\|_{L^p(\Omega_\varepsilon)} \leq c\varepsilon\|u\|_{W^{2,p}(\Omega_\varepsilon)}
  \end{align}
  for all $u\in W^{2,p}(\Omega_\varepsilon)^3$ satisfying $\mathrm{div}\,u=0$ in $\Omega_\varepsilon$ and \eqref{E:Bo_Imp} on $\Gamma_\varepsilon$.
  Here $\partial_nu$ is the normal derivative of $u$ given by \eqref{E:Def_NorDer}.
\end{lemma}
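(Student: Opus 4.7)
The plan is to convert the pointwise quantity $\partial_n u\cdot\bar n$ into a tangential divergence on $\Gamma$ using the solenoidal condition, approximate that divergence by the surface divergence of the averaged vector field $Mu$ on $\Gamma$, and finally reduce to the tangential part $M_\tau u$ using the earlier estimates on $\|\mathrm{div}_\Gamma(gM_\tau u)\|_{L^p(\Gamma)}$ and $\|Mu\cdot n\|_{L^p(\Gamma)}$.

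First, since $I_3=\overline{P}+\bar n\otimes\bar n$ and $\mathrm{div}\,u=0$ in $\Omega_\varepsilon$, the identity
\begin{align*}
  0 = \mathrm{tr}[\nabla u] = \mathrm{tr}\bigl[\overline{P}\nabla u\bigr]+\bar n\cdot\partial_n u
\end{align*}
gives $\partial_n u\cdot\bar n = -\mathrm{tr}[\overline{P}\nabla u]$ in $\Omega_\varepsilon$. Applying Lemma~\ref{L:Ave_Der_Diff} componentwise to $u_1,u_2,u_3$ yields
\begin{align*}
  \bigl\|\overline{P}\nabla u-\overline{\nabla_\Gamma Mu}\bigr\|_{L^p(\Omega_\varepsilon)}\leq c\varepsilon\|u\|_{W^{2,p}(\Omega_\varepsilon)},
\end{align*}
so taking the trace provides
\begin{align*}
  \bigl\|\partial_n u\cdot\bar n+\overline{\mathrm{div}_\Gamma Mu}\bigr\|_{L^p(\Omega_\varepsilon)}\leq c\varepsilon\|u\|_{W^{2,p}(\Omega_\varepsilon)}.
\end{align*}

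Next, I would decompose $Mu=M_\tau u+(Mu\cdot n)n$ on $\Gamma$. Using $\nabla_\Gamma(Mu\cdot n)\cdot n=0$ by \eqref{E:P_TGr} and $\mathrm{div}_\Gamma n=-H$ from \eqref{E:Def_WHK}, this gives the identity
\begin{align*}
  \mathrm{div}_\Gamma Mu = \mathrm{div}_\Gamma M_\tau u-(Mu\cdot n)H \quad\text{on}\quad \Gamma.
\end{align*}
Moreover, writing $\mathrm{div}_\Gamma(gM_\tau u)=g\,\mathrm{div}_\Gamma M_\tau u+\nabla_\Gamma g\cdot M_\tau u$ we obtain
\begin{align*}
  \mathrm{div}_\Gamma M_\tau u = -\frac{1}{g}\nabla_\Gamma g\cdot M_\tau u+\frac{1}{g}\mathrm{div}_\Gamma(gM_\tau u) \quad\text{on}\quad \Gamma,
\end{align*}
and combining the two displays,
\begin{align*}
  \mathrm{div}_\Gamma Mu+\frac{1}{g}\nabla_\Gamma g\cdot M_\tau u = \frac{1}{g}\mathrm{div}_\Gamma(gM_\tau u)-(Mu\cdot n)H.
\end{align*}

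Now I would bound the two terms on the right-hand side. By \eqref{E:ADiv_Tan_Lp} and Lemma~\ref{L:Ave_N_Lp} (applied to $u$ satisfying \eqref{E:Bo_Imp} on $\Gamma_\varepsilon$), together with the boundedness of $g^{-1}$ and $H$ on $\Gamma$, we have
\begin{align*}
  \Bigl\|\tfrac{1}{g}\mathrm{div}_\Gamma(gM_\tau u)-(Mu\cdot n)H\Bigr\|_{L^p(\Gamma)}\leq c\varepsilon^{1-1/p}\|u\|_{W^{1,p}(\Omega_\varepsilon)}.
\end{align*}
Constant extension in the normal direction costs a factor $\varepsilon^{1/p}$ by \eqref{E:Con_Lp}, turning this into an $O(\varepsilon)$ bound in $L^p(\Omega_\varepsilon)$. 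Thus
\begin{align*}
  \Bigl\|\overline{\mathrm{div}_\Gamma Mu}+\tfrac{1}{\bar g}\overline{\nabla_\Gamma g}\cdot\overline{M_\tau u}\Bigr\|_{L^p(\Omega_\varepsilon)}\leq c\varepsilon\|u\|_{W^{1,p}(\Omega_\varepsilon)},
\end{align*}
which combined with the first estimate yields \eqref{E:DnU_N_Ave} by the triangle inequality. The main bookkeeping obstacle, rather than any deep difficulty, is to make sure the vector analogue of Lemma~\ref{L:Ave_Der_Diff} is applied to each component cleanly and that the two separate error sources (from the $\overline P\nabla u$ approximation and from the tangential divergence of $gM_\tau u$) are added consistently.
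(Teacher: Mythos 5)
Your argument is correct and follows essentially the same route as the paper: use $\mathrm{div}\,u=0$ to write $\partial_nu\cdot\bar{n}=-\mathrm{tr}[\overline{P}\nabla u]$, replace $\overline{P}\nabla u$ by $\overline{\nabla_\Gamma Mu}$ via Lemma~\ref{L:Ave_Der_Diff}, and absorb the remaining weighted-divergence term using the average estimates together with \eqref{E:Con_Lp}. The only (harmless) difference is bookkeeping: the paper bounds the leftover term directly as $g^{-1}\mathrm{div}_\Gamma(gMu)$ via \eqref{E:Ave_Div_Lp}, whereas you split off the normal part and invoke \eqref{E:ADiv_Tan_Lp} and \eqref{E:Ave_N_Lp}, which is exactly how Lemma~\ref{L:ADiv_Tan} is derived from Lemma~\ref{L:Ave_Div} anyway.
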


\begin{proof}
  By $Q=n\otimes n$, $I_3=P+Q$ on $\Gamma$ and $\partial_nu=(\bar{n}\cdot\nabla)u=(\nabla u)^T\bar{n}$ in $\Omega_\varepsilon$,
  \begin{align*}
    \partial_nu\cdot\bar{n} = \mathrm{tr}[\bar{n}\otimes\partial_nu] = \mathrm{tr}\Bigl[\overline{Q}\nabla u\Bigr] = \mathrm{div}\,u-\mathrm{tr}\Bigl[\overline{P}\nabla u\Bigr] \quad\text{in}\quad \Omega_\varepsilon.
  \end{align*}
  Also, since $\nabla_\Gamma g$ is tangential on $\Gamma$,
  \begin{align*}
    \frac{1}{g}M_\tau u\cdot\nabla_\Gamma g &= \frac{1}{g}Mu\cdot\nabla_\Gamma g = \frac{1}{g}\mathrm{div}_\Gamma(gMu)-\mathrm{div}_\Gamma(Mu) \\
    &= \frac{1}{g}\mathrm{div}_\Gamma(gMu)-\mathrm{tr}[\nabla_\Gamma Mu]
  \end{align*}
  on $\Gamma$.
  From these equalities, $\mathrm{div}\,u=0$ in $\Omega_\varepsilon$, and \eqref{E:Width_Bound} we deduce that
  \begin{align*}
    \left\|\partial_nu\cdot\bar{n}-\frac{1}{\bar{g}}\overline{M_\tau u}\cdot\overline{\nabla_\Gamma g}\right\|_{L^p(\Omega_\varepsilon)} \leq \left\|\overline{P}\nabla u-\overline{\nabla_\Gamma Mu}\right\|_{L^p(\Omega_\varepsilon)}+c\left\|\overline{\mathrm{div}_\Gamma(gMu)}\right\|_{L^p(\Omega_\varepsilon)}.
  \end{align*}
  Applying \eqref{E:Con_Lp}, \eqref{E:ADD_Dom}, and \eqref{E:Ave_Div_Lp} to the right-hand side we obtain \eqref{E:DnU_N_Ave}.
\end{proof}

When $u\in L^2(\Omega_\varepsilon)^3$ we can consider $\mathrm{div}_\Gamma(gM_\tau u)$ as an element of $H^{-1}(\Gamma)$.
In particular, if $u\in L_\sigma^2(\Omega_\varepsilon)$ then we have its $H^{-1}(\Gamma)$-estimate similar to \eqref{E:ADiv_Tan_Lp}.

\begin{lemma} \label{L:ADiv_Tan_Hin}
  There exists a constant $c>0$ independent of $\varepsilon$ such that
  \begin{align} \label{E:ADiv_Tan_Hin}
    \|\mathrm{div}_\Gamma(gM_\tau u)\|_{H^{-1}(\Gamma)} \leq c\varepsilon^{1/2}\|u\|_{L^2(\Omega_\varepsilon)}
  \end{align}
  for all $u\in L_\sigma^2(\Omega_\varepsilon)$.
\end{lemma}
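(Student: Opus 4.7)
The plan is to estimate $\mathrm{div}_\Gamma(gM_\tau u)$ in $H^{-1}(\Gamma)$ by duality, testing against an arbitrary $\eta\in H^1(\Gamma)$ and using the solenoidal condition on $u$ in $\Omega_\varepsilon$. Since $gM_\tau u$ belongs to $L^2(\Gamma,T\Gamma)$ by \eqref{E:Ave_Lp_Surf}, the formula \eqref{E:Sdiv_Hin} together with $M_\tau u\cdot n=0$ gives
\[
\langle\mathrm{div}_\Gamma(gM_\tau u),\eta\rangle_\Gamma = -(gM_\tau u,\nabla_\Gamma\eta)_{L^2(\Gamma)}.
\]
The naive bound from \eqref{E:Ave_Lp_Surf} only yields $c\varepsilon^{-1/2}\|u\|_{L^2(\Omega_\varepsilon)}\|\eta\|_{H^1(\Gamma)}$, so the gain of a factor $\varepsilon$ must come from exploiting that $u\in L_\sigma^2(\Omega_\varepsilon)$ is orthogonal to all gradients in $L^2(\Omega_\varepsilon)^3$.

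The bridge is to compare $(gM_\tau u,\nabla_\Gamma\eta)_{L^2(\Gamma)}$ with the bulk integral $(u,\overline{\nabla_\Gamma\eta})_{L^2(\Omega_\varepsilon)}$. Applying the co-area formula \eqref{E:CoV_Dom}, using that $\nabla_\Gamma\eta$ is tangential to absorb the projection $P$, and writing $J=1+(J-1)$ in view of \eqref{E:Jac_Diff}, I would obtain
\[
(u,\overline{\nabla_\Gamma\eta})_{L^2(\Omega_\varepsilon)} = \varepsilon(gM_\tau u,\nabla_\Gamma\eta)_{L^2(\Gamma)}+\int_\Gamma R\cdot\nabla_\Gamma\eta\,d\mathcal{H}^2,
\]
where $R(y):=\int_{\varepsilon g_0(y)}^{\varepsilon g_1(y)}P(y)u^\sharp(y,r)(J(y,r)-1)\,dr$. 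A direct Cauchy--Schwarz estimate combined with \eqref{E:CoV_Equiv} gives $\|R\|_{L^2(\Gamma)}\le c\varepsilon^{3/2}\|u\|_{L^2(\Omega_\varepsilon)}$, so this remainder contributes $c\varepsilon^{3/2}\|u\|_{L^2(\Omega_\varepsilon)}\|\eta\|_{H^1(\Gamma)}$.

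It remains to bound $(u,\overline{\nabla_\Gamma\eta})_{L^2(\Omega_\varepsilon)}$ itself. Here the key point is that the constant extension $\bar\eta$ lies in $H^1(\Omega_\varepsilon)$ by Lemma~\ref{L:Con_Lp_W1p}, so $\nabla\bar\eta\in G^2(\Omega_\varepsilon)$ and the Helmholtz--Leray orthogonality $u\perp\nabla\bar\eta$ holds for $u\in L_\sigma^2(\Omega_\varepsilon)$. Consequently
\[
(u,\overline{\nabla_\Gamma\eta})_{L^2(\Omega_\varepsilon)} = (u,\overline{\nabla_\Gamma\eta}-\nabla\bar\eta)_{L^2(\Omega_\varepsilon)},
\]
and the difference inside is controlled by \eqref{E:ConDer_Diff} together with $|d|\le c\varepsilon$ on $\Omega_\varepsilon$, giving $\|\overline{\nabla_\Gamma\eta}-\nabla\bar\eta\|_{L^2(\Omega_\varepsilon)}\le c\varepsilon\|\overline{\nabla_\Gamma\eta}\|_{L^2(\Omega_\varepsilon)}\le c\varepsilon^{3/2}\|\nabla_\Gamma\eta\|_{L^2(\Gamma)}$ by \eqref{E:Con_Lp}. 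Thus $|(u,\overline{\nabla_\Gamma\eta})_{L^2(\Omega_\varepsilon)}|\le c\varepsilon^{3/2}\|u\|_{L^2(\Omega_\varepsilon)}\|\eta\|_{H^1(\Gamma)}$. Combining the two estimates and dividing by $\varepsilon$ yields
\[
|\langle\mathrm{div}_\Gamma(gM_\tau u),\eta\rangle_\Gamma|=|(gM_\tau u,\nabla_\Gamma\eta)_{L^2(\Gamma)}|\le c\varepsilon^{1/2}\|u\|_{L^2(\Omega_\varepsilon)}\|\eta\|_{H^1(\Gamma)},
\]
which is exactly \eqref{E:ADiv_Tan_Hin}. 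The principal obstacle is to recognize that the $\varepsilon^{1/2}$ gain is obtained by playing two $O(\varepsilon)$ approximations against each other --- $J\approx 1$ in the Jacobian and $\nabla\bar\eta\approx\overline{\nabla_\Gamma\eta}$ in the bulk --- and that the bulk orthogonality $u\perp\nabla\bar\eta$ is what allows the second approximation to be exploited for $u$ with no derivatives.
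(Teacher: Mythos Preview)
Your proof is correct and follows essentially the same approach as the paper's: both arguments test against $\eta\in H^1(\Gamma)$, reduce to controlling $(gM_\tau u,\nabla_\Gamma\eta)_{L^2(\Gamma)}$, and gain the factor $\varepsilon$ by comparing with the bulk integral $\int_{\Omega_\varepsilon}u\cdot\nabla\bar\eta\,dx=0$ via the two $O(\varepsilon)$ approximations $J\approx 1$ and $\nabla\bar\eta\approx\overline{\nabla_\Gamma\eta}$. The only cosmetic difference is that the paper splits the error into two pieces $J_1,J_2$ after writing the full bulk integral, whereas you insert $\overline{\nabla_\Gamma\eta}$ as an intermediate step first; the estimates are identical.
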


\begin{proof}
  We use the notation \eqref{E:Pull_Dom} and suppress the arguments of functions.
  Let $\eta$ be an arbitrary function in $H^1(\Gamma)$.
  By \eqref{E:Sdiv_Hin} we have
  \begin{align*}
    \langle\mathrm{div}_\Gamma(gM_\tau u),\eta\rangle_\Gamma = -\int_\Gamma gM_\tau u\cdot\nabla_\Gamma\eta\,d\mathcal{H}^2-\int_\Gamma g(M_\tau u\cdot n)\eta H\,d\mathcal{H}^2.
  \end{align*}
  The second term on the right-hand side vanishes by $M_\tau u\cdot n=0$ on $\Gamma$.
  To estimate the first term, we observe by $M_\tau u\cdot\nabla_\Gamma\eta=Mu\cdot\nabla_\Gamma\eta$ on $\Gamma$ and \eqref{E:Def_Ave} that
  \begin{align*}
    \int_\Gamma gM_\tau u\cdot\nabla_\Gamma\eta\,d\mathcal{H}^2 = \varepsilon^{-1}\int_\Gamma\int_{\varepsilon g_0}^{\varepsilon g_1}u^\sharp\cdot\nabla_\Gamma\eta\,dr\,d\mathcal{H}^2.
  \end{align*}
  From this equality and the change of variables formula \eqref{E:CoV_Dom} it follows that
  \begin{align*}
    \left|\varepsilon^{-1}\int_{\Omega_\varepsilon}u\cdot\nabla\bar{\eta}\,dx-\int_\Gamma gM_\tau u\cdot\nabla_\Gamma\eta\,d\mathcal{H}^2\right| \leq \varepsilon^{-1}(J_1+J_2),
  \end{align*}
  where $\bar{\eta}=\eta\circ\pi$ is the constant extension of $\eta$ and
  \begin{align*}
    J_1 := \left|\int_{\Omega_\varepsilon}u\cdot\Bigl(\nabla\bar{\eta}-\overline{\nabla_\Gamma\eta}\Bigr)dx\right|, \quad J_2 := \left|\int_\Gamma\int_{\varepsilon g_0}^{\varepsilon g_1}(u^\sharp\cdot\nabla_\Gamma\eta)(J-1)\,dr\,d\mathcal{H}^2\right|.
  \end{align*}
  To $J_1$ we apply \eqref{E:ConDer_Diff} with $|d|\leq c\varepsilon$ in $\Omega_\varepsilon$, H\"{o}lder's inequality, and \eqref{E:Con_Lp} to get
  \begin{align*}
    J_1 \leq c\varepsilon\|u\|_{L^2(\Omega_\varepsilon)}\left\|\overline{\nabla_\Gamma\eta}\right\|_{L^2(\Omega_\varepsilon)} \leq c\varepsilon^{3/2}\|u\|_{L^2(\Omega_\varepsilon)}\|\nabla_\Gamma\eta\|_{L^2(\Gamma)}.
  \end{align*}
  We also have the same inequality for $J_2$ by \eqref{E:Jac_Diff}, \eqref{E:CoV_Equiv}, and \eqref{E:Con_Lp}.
  Hence
  \begin{multline*}
    \left|\varepsilon^{-1}\int_{\Omega_\varepsilon}u\cdot\nabla\bar{\eta}\,dx-\int_\Gamma gM_\tau u\cdot\nabla_\Gamma\eta\,d\mathcal{H}^2\right| \\
    \leq \varepsilon^{-1}(J_1+J_2) \leq c\varepsilon^{1/2}\|u\|_{L^2(\Omega_\varepsilon)}\|\nabla_\Gamma\eta\|_{L^2(\Gamma)}.
  \end{multline*}
  Here $\int_{\Omega_\varepsilon}u\cdot\nabla\bar{\eta}\,dx=0$ by $u\in L_\sigma^2(\Omega_\varepsilon)$ and $\nabla\bar{\eta}\in L_\sigma^2(\Omega_\varepsilon)^\perp$.
  Therefore,
  \begin{align*}
    |\langle\mathrm{div}_\Gamma(gM_\tau u),\eta\rangle_\Gamma| = \left|\int_\Gamma gM_\tau u\cdot\nabla_\Gamma\eta\,d\mathcal{H}^2\right| \leq c\varepsilon^{1/2}\|u\|_{L^2(\Omega_\varepsilon)}\|\nabla_\Gamma\eta\|_{L^2(\Gamma)}.
  \end{align*}
  Since this inequality holds for all $\eta\in H^1(\Gamma)$, the inequality \eqref{E:ADiv_Tan_Hin} is valid.
\end{proof}

\subsection{Decomposition of vector fields into the average and residual parts} \label{SS:Ave_Ex}
In the study of the Navier--Stokes equations in a thin domain it is convenient to decompose a three-dimensional vector field into an almost two-dimensional vector field and its residual term and analyze them separately.
Moreover, we can derive a good $L^\infty$-estimate for the residual term if it satisfies the impermeable boundary condition.
To give such a decomposition we use the impermeable extension operator $E_\varepsilon$ given by \eqref{E:Def_ExTan} in Section~\ref{SS:Tool_TE}.

\begin{definition} \label{D:Def_ExAve}
  For a vector field $u$ on $\Omega_\varepsilon$ we define the average part of $u$ by
  \begin{align} \label{E:Def_ExAve}
    u^a(x) := E_\varepsilon M_\tau u(x) = \overline{M_\tau u}(x)+\left\{\overline{M_\tau u}(x)\cdot\Psi_\varepsilon(x)\right\}\bar{n}(x), \quad x\in N,
  \end{align}
  where $\Psi_\varepsilon$ is the vector field given by \eqref{E:Def_ExAux} and $M_\tau u$ is the averaged tangential component of $u$ given by \eqref{E:Def_Tan_Ave}.
  We also call $u^r:=u-u^a$ the residual part of $u$.
\end{definition}

By Lemmas~\ref{L:ExTan_Wmp}, \ref{L:Ave_Lp}, and \ref{L:Ave_Wmp} we observe that if $u\in H^m(\Omega_\varepsilon)^3$, $m=0,1,2$, then $u^a$ and $u^r$ belong to the same space (here we write $H^0=L^2$).

\begin{lemma} \label{L:Hm_UaUr}
  For $u\in H^m(\Omega_\varepsilon)^3$, $m=0,1,2$ we have $u^a, u^r\in H^m(\Omega_\varepsilon)^3$ and
  \begin{align} \label{E:Hm_UaUr}
    \|u^a\|_{H^m(\Omega_\varepsilon)} \leq c\|u\|_{H^m(\Omega_\varepsilon)}, \quad \|u^r\|_{H^m(\Omega_\varepsilon)} \leq c\|u\|_{H^m(\Omega_\varepsilon)}
  \end{align}
  with a constant $c>0$ independent of $\varepsilon$ and $u$.
\end{lemma}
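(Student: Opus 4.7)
The plan is to bound $u^a$ by composing the two estimates already established for the building blocks $M_\tau$ and $E_\varepsilon$, and then obtain the bound on $u^r$ by the triangle inequality.

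First I would check that $M_\tau u$ lies in $H^m(\Gamma, T\Gamma)$ so that $E_\varepsilon M_\tau u$ makes sense. For $u\in H^m(\Omega_\varepsilon)^3$, Lemma~\ref{L:Ave_Wmp} (applied componentwise with $p=2$) gives $Mu\in H^m(\Gamma)^3$ with $\|Mu\|_{H^m(\Gamma)}\le c\varepsilon^{-1/2}\|u\|_{H^m(\Omega_\varepsilon)}$. Since $P\in C^{\ell-1}(\Gamma)^{3\times 3}$ with $\ell=5$, multiplication by $P$ preserves $H^m(\Gamma)$ for $m=0,1,2$, so $M_\tau u = PMu\in H^m(\Gamma)^3$ with
\begin{align*}
  \|M_\tau u\|_{H^m(\Gamma)} \leq c\|Mu\|_{H^m(\Gamma)} \leq c\varepsilon^{-1/2}\|u\|_{H^m(\Omega_\varepsilon)}.
\end{align*}
Moreover $M_\tau u\cdot n = PMu\cdot n = 0$ on $\Gamma$ because $Pn=0$, so in fact $M_\tau u\in H^m(\Gamma,T\Gamma)$.

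Next I would apply Lemma~\ref{L:ExTan_Wmp} with $p=2$ to $v=M_\tau u$, which yields $u^a = E_\varepsilon M_\tau u\in H^m(\Omega_\varepsilon)^3$ together with
\begin{align*}
  \|u^a\|_{H^m(\Omega_\varepsilon)} \leq c\varepsilon^{1/2}\|M_\tau u\|_{H^m(\Gamma)} \leq c\varepsilon^{1/2}\cdot c\varepsilon^{-1/2}\|u\|_{H^m(\Omega_\varepsilon)} = c\|u\|_{H^m(\Omega_\varepsilon)}.
\end{align*}
The $\varepsilon$-powers from the two estimates cancel exactly, giving the desired uniform bound on $u^a$. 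Finally, since $u^r = u - u^a$, the triangle inequality gives
\begin{align*}
  \|u^r\|_{H^m(\Omega_\varepsilon)} \leq \|u\|_{H^m(\Omega_\varepsilon)} + \|u^a\|_{H^m(\Omega_\varepsilon)} \leq c\|u\|_{H^m(\Omega_\varepsilon)},
\end{align*}
which completes \eqref{E:Hm_UaUr}. There is no genuine obstacle here: the argument is essentially bookkeeping of the $\varepsilon^{\pm 1/2}$ factors, and the only care needed is verifying the tangency $M_\tau u\cdot n=0$ so that the extension operator $E_\varepsilon$ is applied to an admissible surface vector field.
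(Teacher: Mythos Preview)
Your proof is correct and follows exactly the approach indicated in the paper, which simply cites Lemmas~\ref{L:ExTan_Wmp}, \ref{L:Ave_Lp}, and~\ref{L:Ave_Wmp} without writing out the details. The only trivial remark is that for $m=0$ you should invoke Lemma~\ref{L:Ave_Lp} (specifically \eqref{E:Ave_Lp_Surf}) rather than Lemma~\ref{L:Ave_Wmp}, since the latter is stated only for $m=1,2$; otherwise your bookkeeping of the $\varepsilon^{\pm 1/2}$ factors and the use of the triangle inequality for $u^r$ match the paper's intent exactly.
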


Since the average part $u^a$ can be seen as almost two-dimensional, we expect to have a good $L^2$-estimate for the product of $u^a$ and a function on $\Omega_\varepsilon$.
Indeed, we can apply the following product estimate for functions on $\Gamma$ and $\Omega_\varepsilon$.

\begin{lemma} \label{L:Prod}
  There exists a constant $c>0$ independent of $\varepsilon$ such that
  \begin{align} \label{E:Prod_Surf}
    \|\bar{\eta}\varphi\|_{L^2(\Omega_\varepsilon)} &\leq c\|\eta\|_{L^2(\Gamma)}^{1/2}\|\eta\|_{H^1(\Gamma)}^{1/2}\|\varphi\|_{L^2(\Omega_\varepsilon)}^{1/2}\|\varphi\|_{H^1(\Omega_\varepsilon)}^{1/2}
  \end{align}
  for all $\eta\in H^1(\Gamma)$ and $\varphi\in H^1(\Omega_\varepsilon)$, where $\bar{\eta}=\eta\circ\pi$ is the constant extension of $\eta$.
\end{lemma}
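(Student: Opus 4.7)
The plan is to reduce the estimate to a slicewise application of the two-dimensional Ladyzhenskaya inequality on $\Gamma$ (Lemma~\ref{L:La_Surf}). Since $\bar{\eta}(y+rn(y))=\eta(y)$ is independent of $r$, I parametrize $\Omega_\varepsilon$ by the cylinder $\Gamma\times(0,1)$ via the change of variables $\Xi_\varepsilon(y,s):=y+\varepsilon(g_0(y)+sg(y))n(y)$, so that $dr=\varepsilon g(y)\,ds$. Setting $\Psi(y,s):=\varphi(\Xi_\varepsilon(y,s))$, the change of variables \eqref{E:CoV_Dom} together with \eqref{E:Width_Bound} and \eqref{E:Jac_Bound} (which make $gJ$ uniformly pinched between positive constants) yields
\[
\|\bar{\eta}\varphi\|_{L^2(\Omega_\varepsilon)}^2 \leq c\varepsilon\int_0^1\|\eta\Psi(\cdot,s)\|_{L^2(\Gamma)}^2\,ds,
\]
and, reversing the roles, the identity $\int_0^1\|\Psi(\cdot,s)\|_{L^2(\Gamma)}^2\,ds\leq c\varepsilon^{-1}\|\varphi\|_{L^2(\Omega_\varepsilon)}^2$, which will be used below.

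Next, for each fixed $s\in(0,1)$, H\"older's inequality and two applications of Ladyzhenskaya's inequality on $\Gamma$ (applied to $\eta\in H^1(\Gamma)$ and to $\Psi(\cdot,s)\in H^1(\Gamma)$) give
\[
\|\eta\Psi(\cdot,s)\|_{L^2(\Gamma)}^2 \leq \|\eta\|_{L^4(\Gamma)}^2\|\Psi(\cdot,s)\|_{L^4(\Gamma)}^2 \leq c\|\eta\|_{L^2(\Gamma)}\|\eta\|_{H^1(\Gamma)}\|\Psi(\cdot,s)\|_{L^2(\Gamma)}\|\Psi(\cdot,s)\|_{H^1(\Gamma)}.
\]
Substituting this into the previous display and applying Cauchy--Schwarz in $s$ to $\int_0^1\|\Psi(\cdot,s)\|_{L^2(\Gamma)}\|\Psi(\cdot,s)\|_{H^1(\Gamma)}\,ds$ reduces the problem to bounding $\int_0^1\|\Psi(\cdot,s)\|_{L^2(\Gamma)}^2\,ds$ and $\int_0^1\|\Psi(\cdot,s)\|_{H^1(\Gamma)}^2\,ds$ by $c\varepsilon^{-1}\|\varphi\|_{L^2(\Omega_\varepsilon)}^2$ and $c\varepsilon^{-1}\|\varphi\|_{H^1(\Omega_\varepsilon)}^2$, respectively.

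The bound on the first integral is already recorded above. For the second, the chain rule gives $\nabla_\Gamma\Psi(y,s)=\{\nabla_\Gamma\Xi_\varepsilon(y,s)\}^T\nabla\varphi(\Xi_\varepsilon(y,s))$, and a direct computation using $\nabla_\Gamma n = -W$ yields
\[
\nabla_\Gamma\Xi_\varepsilon(y,s)=P(y)+\varepsilon\bigl\{\nabla_\Gamma(g_0+sg)(y)\otimes n(y)-(g_0(y)+sg(y))W(y)\bigr\},
\]
which is uniformly bounded in $(y,s,\varepsilon)$ by the boundedness of $n$, $W$, $g_0$, $g_1$, and their tangential gradients on $\Gamma$. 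Hence $|\nabla_\Gamma\Psi(y,s)|\leq c|\nabla\varphi(\Xi_\varepsilon(y,s))|$, and reversing the change of variables exactly as in the first step gives $\int_0^1\|\nabla_\Gamma\Psi(\cdot,s)\|_{L^2(\Gamma)}^2\,ds\leq c\varepsilon^{-1}\|\nabla\varphi\|_{L^2(\Omega_\varepsilon)}^2$. Combining all the pieces, the two $\varepsilon^{-1}$ factors from the $s$-integrals cancel the leading $\varepsilon$, producing
\[
\|\bar{\eta}\varphi\|_{L^2(\Omega_\varepsilon)}^2 \leq c\|\eta\|_{L^2(\Gamma)}\|\eta\|_{H^1(\Gamma)}\|\varphi\|_{L^2(\Omega_\varepsilon)}\|\varphi\|_{H^1(\Omega_\varepsilon)},
\]
and \eqref{E:Prod_Surf} follows by taking square roots. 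A density argument using Lemma~\ref{L:Wmp_Appr} together with the density of $C^1(\overline{\Omega}_\varepsilon)$ in $H^1(\Omega_\varepsilon)$ justifies the pointwise change of variables for arbitrary $\eta\in H^1(\Gamma)$ and $\varphi\in H^1(\Omega_\varepsilon)$. The main (though routine) step is the chain-rule bookkeeping for $\nabla_\Gamma\Psi$; the scaling works precisely because $H^1(\Gamma)$ only sees tangential variations in $y$ (whose Jacobian is $O(1)$), while a hypothetical $\partial_s$ contribution, which would carry an extra factor of $\varepsilon$ from $\partial_s\Xi_\varepsilon=\varepsilon g\,n$, never enters the estimate.
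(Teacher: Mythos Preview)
Your argument is correct and takes a genuinely different route from the paper's. The paper does not slice $\Omega_\varepsilon$ and apply Ladyzhenskaya to $\varphi$ on each level set; instead, after the same first step
\[
\|\bar{\eta}\varphi\|_{L^2(\Omega_\varepsilon)}^2 \leq c\varepsilon\int_\Gamma g|\eta|^2 M(|\varphi|^2)\,d\mathcal{H}^2 \leq c\varepsilon\|\eta\|_{L^4(\Gamma)}^2\|M(|\varphi|^2)\|_{L^2(\Gamma)},
\]
it applies Ladyzhenskaya only to $\eta$ and controls $\|M(|\varphi|^2)\|_{L^2(\Gamma)}$ via the Sobolev embedding $W^{1,1}(\Gamma)\hookrightarrow L^2(\Gamma)$, using the formula \eqref{E:Ave_Der} for $\nabla_\Gamma M(|\varphi|^2)$ to obtain $\|M(|\varphi|^2)\|_{W^{1,1}(\Gamma)}\leq c\varepsilon^{-1}\|\varphi\|_{L^2(\Omega_\varepsilon)}\|\varphi\|_{H^1(\Omega_\varepsilon)}$. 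Your approach trades that embedding for a second use of Ladyzhenskaya on each slice $\Psi(\cdot,s)$ followed by Cauchy--Schwarz in $s$; it is arguably more elementary because it avoids both the average-operator calculus and the $W^{1,1}\hookrightarrow L^2$ step, at the cost of the chain-rule bookkeeping and the density argument needed to justify slicewise $H^1(\Gamma)$-regularity of $\Psi$. One minor point: with the paper's convention $(\nabla_\Gamma v)_{ij}=\underline{D}_iv_j$, the chain rule reads $\nabla_\Gamma\Psi=(\nabla_\Gamma\Xi_\varepsilon)\,(\nabla\varphi\circ\Xi_\varepsilon)$ without the transpose, but this does not affect the bound $|\nabla_\Gamma\Psi|\leq c|\nabla\varphi\circ\Xi_\varepsilon|$.
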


\begin{proof}
  Throughout the proof we use the notation \eqref{E:Pull_Dom} and suppress the arguments of functions.
  By \eqref{E:CoV_Equiv} and \eqref{E:Def_Ave} we have
  \begin{align*}
    \|\bar{\eta}\varphi\|_{L^2(\Omega_\varepsilon)}^2 \leq c\int_\Gamma|\eta|^2\left(\int_{\varepsilon g_0}^{\varepsilon g_1}|\varphi^\sharp|^2\,dr\right)d\mathcal{H}^2 = c\varepsilon\int_\Gamma g|\eta|^2M(|\varphi|^2)\,d\mathcal{H}^2.
  \end{align*}
  Noting that $g$ is bounded on $\Gamma$, we apply H\"{o}lder's inequality to the last term to get
  \begin{align} \label{Pf_Pr:Est_L2}
    \|\bar{\eta}\varphi\|_{L^2(\Omega_\varepsilon)}^2 \leq c\varepsilon\|\eta\|_{L^4(\Gamma)}^2\|M(|\varphi|^2)\|_{L^2(\Gamma)}.
  \end{align}
  The $L^4(\Gamma)$-norm of $\eta$ is estimated by Ladyzhenskaya's inequality \eqref{E:La_Surf}.
  To estimate the last term let us show $M(|\varphi|^2)\in W^{1,1}(\Gamma)$.
  By \eqref{E:Width_Bound} and \eqref{E:CoV_Equiv},
  \begin{align} \label{Pf_Pr:L1_MPhi2}
    \|M(|\varphi|^2)\|_{L^1(\Gamma)} = \int_\Gamma\frac{1}{\varepsilon g}\left(\int_{\varepsilon g_0}^{\varepsilon g_1}|\varphi^\sharp|^2\,dr\right)d\mathcal{H}^2 \leq c\varepsilon^{-1}\|\varphi\|_{L^2(\Omega_\varepsilon)}^2.
  \end{align}
  Also, by \eqref{E:Ave_Der}, \eqref{E:ADA_Bound}, and $\nabla(|\varphi|^2)=2\varphi\nabla\varphi$ in $\Omega_\varepsilon$ we get
  \begin{align*}
    |\nabla_\Gamma M(|\varphi|^2)| \leq |M\bigl(B\nabla(|\varphi|^2)\bigr)|+|M\bigl(\partial_n(|\varphi|^2)\psi_\varepsilon\bigr)| \leq cM(|\varphi\nabla\varphi|) \quad\text{on}\quad \Gamma.
  \end{align*}
  Hence from \eqref{E:Ave_Lp_Surf} and H\"{o}lder's inequality we deduce that
  \begin{align} \label{Pf_Pr:L1_Grad}
    \begin{aligned}
      \|\nabla_\Gamma M(|\varphi|^2)\|_{L^1(\Gamma)} &\leq c\|M(|\varphi\nabla\varphi|)\|_{L^1(\Gamma)} \leq c\varepsilon^{-1}\|\varphi\nabla\varphi\|_{L^1(\Omega_\varepsilon)} \\
      &\leq c\varepsilon^{-1}\|\varphi\|_{L^2(\Omega_\varepsilon)}\|\nabla\varphi\|_{L^2(\Omega_\varepsilon)}.
    \end{aligned}
  \end{align}
  Now we observe that the Sobolev embedding $W^{1,1}(\Gamma)\hookrightarrow L^2(\Gamma)$ is valid since $\Gamma\subset\mathbb{R}^3$ is a two-dimensional compact surface without boundary (see e.g.~\cite[Theorem~2.20]{Au98}).
  By this fact, \eqref{Pf_Pr:L1_MPhi2}, \eqref{Pf_Pr:L1_Grad}, and $\|\varphi\|_{L^2(\Omega_\varepsilon)}\leq\|\varphi\|_{H^1(\Omega_\varepsilon)}$ we obtain
  \begin{align*}
    \|M(|\varphi|^2)\|_{L^2(\Gamma)} \leq c\|M(|\varphi|^2)\|_{W^{1,1}(\Gamma)} \leq c\varepsilon^{-1}\|\varphi\|_{L^2(\Omega_\varepsilon)}\|\varphi\|_{H^1(\Omega_\varepsilon)}.
  \end{align*}
  Finally, we apply the above inequality and \eqref{E:La_Surf} to \eqref{Pf_Pr:Est_L2} to get
  \begin{align*}
    \|\bar{\eta}\varphi\|_{L^2(\Omega_\varepsilon)}^2 \leq c\|\eta\|_{L^2(\Gamma)}\|\eta\|_{H^1(\Gamma)}\|\varphi\|_{L^2(\Omega_\varepsilon)}\|\varphi\|_{H^1(\Omega_\varepsilon)},
  \end{align*}
  which shows \eqref{E:Prod_Surf}.
\end{proof}

\begin{lemma} \label{L:Prod_Ua}
  For $\varphi\in H^1(\Omega_\varepsilon)$, $u\in H^1(\Omega_\varepsilon)^3$, and $u^a$ given by \eqref{E:Def_ExAve},
  \begin{align} \label{E:Prod_Ua}
    \bigl\|\,|u^a|\,\varphi\bigr\|_{L^2(\Omega_\varepsilon)} &\leq c\varepsilon^{-1/2}\|\varphi\|_{L^2(\Omega_\varepsilon)}^{1/2}\|\varphi\|_{H^1(\Omega_\varepsilon)}^{1/2}\|u\|_{L^2(\Omega_\varepsilon)}^{1/2}\|u\|_{H^1(\Omega_\varepsilon)}^{1/2}
  \end{align}
  with a constant $c>0$ independent of $\varepsilon$, $\varphi$, and $u$.
  If in addition $u\in H^2(\Omega_\varepsilon)^3$, then
  \begin{align} \label{E:Prod_Grad_Ua}
    \bigl\|\,|\nabla u^a|\,\varphi\bigr\|_{L^2(\Omega_\varepsilon)} &\leq c\varepsilon^{-1/2}\|\varphi\|_{L^2(\Omega_\varepsilon)}^{1/2}\|\varphi\|_{H^1(\Omega_\varepsilon)}^{1/2}\|u\|_{H^1(\Omega_\varepsilon)}^{1/2}\|u\|_{H^2(\Omega_\varepsilon)}^{1/2}.
  \end{align}
\end{lemma}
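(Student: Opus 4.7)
The plan is to reduce both inequalities to the scalar product estimate of Lemma~\ref{L:Prod} applied to the constant extension of $M_\tau u$ (and, for the gradient case, of $\nabla_\Gamma M_\tau u$), and then to convert surface norms of these averaged objects back into bulk norms of $u$ by means of the average estimates \eqref{E:Ave_Lp_Surf} and \eqref{E:Ave_Wmp_Surf}; the $\varepsilon^{-1/2}$ loss in the statement is exactly the accumulated loss from those two ingredients.

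For \eqref{E:Prod_Ua}, I would first observe that from the definition \eqref{E:Def_ExAve} together with the bound $|\Psi_\varepsilon|\le c\varepsilon$ from \eqref{E:ExAux_Bound} and $|\bar n|=1$, one has the pointwise estimate $|u^a|\le c\,|\overline{M_\tau u}|$ in $\Omega_\varepsilon$, so it suffices to control $\||\overline{M_\tau u}|\,\varphi\|_{L^2(\Omega_\varepsilon)}$. Writing $|\overline{M_\tau u}|^2=\sum_{i=1}^3|\overline{[M_\tau u]_i}|^2$ and applying Lemma~\ref{L:Prod} componentwise with $\eta=[M_\tau u]_i\in H^1(\Gamma)$ gives
\[
\||\overline{M_\tau u}|\,\varphi\|_{L^2(\Omega_\varepsilon)}\le c\|M_\tau u\|_{L^2(\Gamma)}^{1/2}\|M_\tau u\|_{H^1(\Gamma)}^{1/2}\|\varphi\|_{L^2(\Omega_\varepsilon)}^{1/2}\|\varphi\|_{H^1(\Omega_\varepsilon)}^{1/2}.
\]
Since $M_\tau u=PMu$ and $P\in C^{\ell-1}(\Gamma)^{3\times 3}$ is bounded together with its tangential derivatives, Lemmas~\ref{L:Ave_Lp} and~\ref{L:Ave_Wmp} (with $p=2$, $m=1$) yield $\|M_\tau u\|_{L^2(\Gamma)}\le c\varepsilon^{-1/2}\|u\|_{L^2(\Omega_\varepsilon)}$ and $\|M_\tau u\|_{H^1(\Gamma)}\le c\varepsilon^{-1/2}\|u\|_{H^1(\Omega_\varepsilon)}$, which combined with the previous display give \eqref{E:Prod_Ua}.

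For \eqref{E:Prod_Grad_Ua}, I would use the pointwise bound $|\nabla E_\varepsilon v|\le c\bigl(|\bar v|+|\overline{\nabla_\Gamma v}|\bigr)$ obtained in the proof of Lemma~\ref{L:ExTan_Wmp} with $v=M_\tau u$, so that
\[
|\nabla u^a|\le c\bigl(|\overline{M_\tau u}|+|\overline{\nabla_\Gamma M_\tau u}|\bigr)\quad\text{in }\Omega_\varepsilon.
\]
The first term is controlled by \eqref{E:Prod_Ua} together with the trivial bound $\|u\|_{L^2(\Omega_\varepsilon)}^{1/2}\|u\|_{H^1(\Omega_\varepsilon)}^{1/2}\le\|u\|_{H^1(\Omega_\varepsilon)}^{1/2}\|u\|_{H^2(\Omega_\varepsilon)}^{1/2}$. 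For the second term I would apply Lemma~\ref{L:Prod} componentwise with $\eta=[\nabla_\Gamma M_\tau u]_{ij}$, obtaining
\[
\||\overline{\nabla_\Gamma M_\tau u}|\,\varphi\|_{L^2(\Omega_\varepsilon)}\le c\|\nabla_\Gamma M_\tau u\|_{L^2(\Gamma)}^{1/2}\|\nabla_\Gamma M_\tau u\|_{H^1(\Gamma)}^{1/2}\|\varphi\|_{L^2(\Omega_\varepsilon)}^{1/2}\|\varphi\|_{H^1(\Omega_\varepsilon)}^{1/2},
\]
and then use \eqref{E:Ave_Wmp_Surf} with $m=1$ to get $\|\nabla_\Gamma M_\tau u\|_{L^2(\Gamma)}\le c\varepsilon^{-1/2}\|u\|_{H^1(\Omega_\varepsilon)}$ and with $m=2$ to get $\|\nabla_\Gamma M_\tau u\|_{H^1(\Gamma)}\le c\|M_\tau u\|_{H^2(\Gamma)}\le c\varepsilon^{-1/2}\|u\|_{H^2(\Omega_\varepsilon)}$. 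Summing the two contributions produces \eqref{E:Prod_Grad_Ua}.

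There is no genuine obstacle here since every pointwise and norm estimate used has been prepared in the preceding subsections; the only small care point is handling the vector-valued nature of $M_\tau u$ (and its tangential gradient), which is done simply by invoking Lemma~\ref{L:Prod} on each scalar component and summing, using that all relevant geometric tensors ($P$, $\bar n$, $\Psi_\varepsilon$ and its derivatives bounded through \eqref{E:ExAux_Bound}) have $\varepsilon$-independent bounds.
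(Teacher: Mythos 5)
Your proposal is correct and follows essentially the same route as the paper: a pointwise bound $|u^a|\leq c|\overline{M_\tau u}|$ (and $|\nabla u^a|\leq c(|\overline{M_\tau u}|+|\overline{\nabla_\Gamma M_\tau u}|)$), then the product estimate of Lemma~\ref{L:Prod} combined with the average bounds \eqref{E:Ave_Lp_Surf} and \eqref{E:Ave_Wmp_Surf}. The only cosmetic difference is that you apply Lemma~\ref{L:Prod} componentwise and sum, while the paper applies it directly to the scalar functions $\eta=|Mu|$ and $\eta=|\nabla_\Gamma Mu|$; both variants are valid and give the same result.
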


\begin{proof}
  By \eqref{E:ExAux_Bound} and $|M_\tau u|=|PMu|\leq|Mu|$ on $\Gamma$,
  \begin{align*}
    |u^a| \leq (1+|\Psi_\varepsilon|)\left|\overline{M_\tau u}\right| \leq c\left|\overline{Mu}\right| \quad\text{in}\quad \Omega_\varepsilon.
  \end{align*}
  We apply this inequality and \eqref{E:Prod_Surf} with $\eta=|Mu|$ to the $L^2$-norm of $|u^a|\,\varphi$ on $\Omega_\varepsilon$ and then use \eqref{E:Ave_Lp_Surf} and \eqref{E:Ave_Wmp_Surf} to obtain \eqref{E:Prod_Ua}.

  Let us prove \eqref{E:Prod_Grad_Ua}.
  We differentiate both sides of \eqref{E:Def_ExAve} to get
  \begin{align*}
    \nabla u^a = \nabla\Bigl(\overline{M_\tau u}\Bigr)+\left[\left\{\nabla\Bigl(\overline{M_\tau u}\Bigr)\right\}\Psi_\varepsilon+(\nabla\Psi_\varepsilon)\overline{M_\tau u}\right]\otimes\bar{n}+\Bigl(\overline{M_\tau u}\cdot\Psi_\varepsilon\Bigr)\nabla\bar{n}
  \end{align*}
  in $\Omega_\varepsilon$.
  Hence by \eqref{E:ConDer_Bound}, \eqref{E:NorG_Bound}, \eqref{E:ExAux_Bound}, $M_\tau u=PMu$ on $\Gamma$, and $P\in C^4(\Gamma)^{3\times 3}$,
  \begin{align*}
    |\nabla u^a| \leq c\left(\left|\overline{M_\tau u}\right|+\left|\overline{\nabla_\Gamma M_\tau u}\right|\right) \leq c\left(\left|\overline{Mu}\right|+\left|\overline{\nabla_\Gamma Mu}\right|\right) \quad\text{in}\quad \Omega_\varepsilon.
  \end{align*}
  Applying this inequality and \eqref{E:Prod_Surf} with $\eta=|Mu|$ and $\eta=|\nabla_\Gamma Mu|$ to the $L^2$-norm of $|\nabla u^a|\,\varphi$ on $\Omega_\varepsilon$ and then using \eqref{E:Ave_Lp_Surf} and \eqref{E:Ave_Wmp_Surf} we obtain \eqref{E:Prod_Grad_Ua}.
\end{proof}

Next we derive Poincar\'{e} type inequalities for the residual part $u^r=u-u^a$.
Since $u^a\cdot n_\varepsilon=0$ on $\Gamma_\varepsilon$ by Lemma~\ref{L:ExTan_Imp}, $u^r\cdot n_\varepsilon=0$ on $\Gamma_\varepsilon$ if $u$ itself satisfies the same boundary condition.
This is essential for derivation of Poincar\'{e} type inequalities.

\begin{lemma} \label{L:Po_Ur}
  Let $u\in H^1(\Omega_\varepsilon)^3$ satisfy \eqref{E:Bo_Imp} on $\Gamma_\varepsilon$.
  Then
  \begin{align} \label{E:Po_Ur}
    \|u^r\|_{L^2(\Omega_\varepsilon)} \leq c\varepsilon\|u^r\|_{H^1(\Omega_\varepsilon)}
  \end{align}
  for $u^r=u-u^a$, where $c>0$ is a constant independent of $\varepsilon$ and $u$.
\end{lemma}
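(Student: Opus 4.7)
The plan is to decompose $u^r$ into its $\overline{P}$-tangential and $\bar n$-normal parts (with respect to $\Gamma$, not $\Gamma_\varepsilon$) and handle each using a separate Poincaré-type estimate already established. The two key structural properties of $u^r$ that drive everything are: (i) $u^r\cdot n_\varepsilon=0$ on $\Gamma_\varepsilon$, which follows from $u\cdot n_\varepsilon=0$ on $\Gamma_\varepsilon$ and Lemma~\ref{L:ExTan_Imp} applied to $u^a=E_\varepsilon M_\tau u$; and (ii) $M_\tau u^r=0$ on $\Gamma$, i.e.\ the averaged tangential component of the residual vanishes.

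To verify (ii), note that since $\overline{P}\bar n=0$ and $Pa\cdot n=0$ on $\Gamma$ for $a\in\mathbb{R}^3$, the definition \eqref{E:Def_ExAve} gives $\overline{P}u^a=\overline{M_\tau u}$ in $\Omega_\varepsilon$. Because $\overline{M_\tau u}$ is constant in the normal direction of $\Gamma$, the average operator leaves it invariant: $M(\overline{P}u^a)=M_\tau u$ on $\Gamma$. Since $\overline{P}(y+rn(y))=P(y)$, we have $M(\overline{P}w)=PMw=M_\tau w$ for any vector field $w$. Applying this with $w=u^a$ and $w=u$ yields
\begin{equation*}
  M_\tau u^a = M(\overline{P}u^a) = M_\tau u \quad\text{on}\quad \Gamma,
  \qquad\text{hence}\qquad M_\tau u^r = M_\tau u - M_\tau u^a = 0 \quad\text{on}\quad \Gamma.
\end{equation*}

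With (i) and (ii) in hand, I would argue as follows. First, $u^r\in H^1(\Omega_\varepsilon)^3$ by Lemma~\ref{L:Hm_UaUr} and satisfies $u^r\cdot n_\varepsilon=0$ on $\Gamma_\varepsilon$, so Lemma~\ref{L:Poin_Nor} with $p=2$ gives
\begin{equation*}
  \|u^r\cdot\bar n\|_{L^2(\Omega_\varepsilon)} \le c\varepsilon\|u^r\|_{H^1(\Omega_\varepsilon)}.
\end{equation*}
Second, component-wise application of Lemma~\ref{L:Ave_Diff} to each entry of $\overline{P}u^r$ yields
\begin{equation*}
  \Bigl\|\overline{P}u^r-\overline{M(\overline{P}u^r)}\Bigr\|_{L^2(\Omega_\varepsilon)} \le c\varepsilon\Bigl\|\partial_n(\overline{P}u^r)\Bigr\|_{L^2(\Omega_\varepsilon)}.
\end{equation*}
From (ii) and the identity $M(\overline{P}u^r)=M_\tau u^r=0$ the averaged term vanishes, and from $\partial_n\overline{P}=0$ (constant extension) together with \eqref{E:NorDer_Con} we get $\partial_n(\overline{P}u^r)=\overline{P}\partial_nu^r$, which is controlled by $|\nabla u^r|$. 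Combining these,
\begin{equation*}
  \left\|\overline{P}u^r\right\|_{L^2(\Omega_\varepsilon)} \le c\varepsilon\|\nabla u^r\|_{L^2(\Omega_\varepsilon)} \le c\varepsilon\|u^r\|_{H^1(\Omega_\varepsilon)}.
\end{equation*}

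Finally, since $\{\overline{P},\overline{Q}\}$ is a pointwise orthogonal decomposition of $\mathbb{R}^3$, the pointwise identity $|u^r|^2=|\overline{P}u^r|^2+|u^r\cdot\bar n|^2$ combined with the two displayed estimates yields \eqref{E:Po_Ur}. I do not anticipate a real obstacle: the only place requiring care is the verification of (ii), because $M_\tau u^a$ involves both the constant extension of $M_\tau u$ and the normal correction $(\overline{M_\tau u}\cdot\Psi_\varepsilon)\bar n$, and one must check that the normal correction drops out after applying $P$ and that the tangential piece is genuinely preserved under $M$. Both facts rely only on $Pn=0$ on $\Gamma$ and the fact that $\overline{M_\tau u}$ is constant in the normal direction, so the computation is clean.
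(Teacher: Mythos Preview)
Your proof is correct and follows essentially the same approach as the paper: decompose $u^r$ into its tangential and normal parts with respect to $\Gamma$, handle the normal part via Lemma~\ref{L:Poin_Nor} (using that $u^r\cdot n_\varepsilon=0$ on $\Gamma_\varepsilon$), and handle the tangential part via \eqref{E:Ave_Diff_Dom}. The only cosmetic difference is that the paper writes the tangential identity as $u_\tau^r=u_\tau-\overline{Mu_\tau}$ directly rather than first checking $M_\tau u^r=0$, but this is the same computation.
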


\begin{proof}
  We use the notation \eqref{E:Def_U_TN} for the tangential and normal components (with respect to $\Gamma$) of a vector field on $\Omega_\varepsilon$.
  By \eqref{E:NorDer_Con}, \eqref{E:Def_ExAve}, and $u_\tau^r=\overline{P}u^r$ in $\Omega_\varepsilon$,
  \begin{align*}
    u_\tau^r = u_\tau-\overline{M_\tau u} = u_\tau-\overline{Mu_\tau}, \quad \partial_nu_\tau = \partial_nu_\tau^r = \overline{P}\partial_nu^r \quad\text{in}\quad \Omega_\varepsilon.
  \end{align*}
  Thus we apply \eqref{E:Ave_Diff_Dom} and $|Pa|\leq|a|$ on $\Gamma$ for $a\in\mathbb{R}^3$ to get
  \begin{align} \label{Pf_PUr:Est_Tau}
    \|u_\tau^r\|_{L^2(\Omega_\varepsilon)} = \left\|u_\tau-\overline{Mu_\tau}\right\|_{L^2(\Omega_\varepsilon)} \leq c\varepsilon\|\partial_nu_\tau\|_{L^2(\Omega_\varepsilon)} \leq c\varepsilon\|\partial_nu^r\|_{L^2(\Omega_\varepsilon)}.
  \end{align}
  Also, $u^r=u-u^a$ satisfies \eqref{E:Bo_Imp} on $\Gamma_\varepsilon$ by the assumption on $u$, since $u^a$ satisfies \eqref{E:Bo_Imp} on $\Gamma_\varepsilon$ by Lemma~\ref{L:ExTan_Imp} and \eqref{E:Def_ExAve}.
  Hence we can apply \eqref{E:Poin_Nor} to $u^r$ to get
  \begin{align*}
    \|u_n^r\|_{L^2(\Omega_\varepsilon)} = \|u^r\cdot\bar{n}\|_{L^2(\Omega_\varepsilon)} \leq c\varepsilon\|u^r\|_{H^1(\Omega_\varepsilon)}.
  \end{align*}
  By this inequality, \eqref{Pf_PUr:Est_Tau}, and
  \begin{align*}
    \|u^r\|_{L^2(\Omega_\varepsilon)}^2 = \|u_\tau^r\|_{L^2(\Omega_\varepsilon)}^2+\|u_n^r\|_{L^2(\Omega_\varepsilon)}^2, \quad \|\partial_nu^r\|_{L^2(\Omega_\varepsilon)}\leq c\|u^r\|_{H^1(\Omega_\varepsilon)}
  \end{align*}
  we conclude that \eqref{E:Po_Ur} is valid.
\end{proof}

\begin{lemma} \label{L:Po_Grad_Ur}
  Suppose that the inequalities \eqref{E:Fric_Upper} hold and $u\in H^2(\Omega_\varepsilon)^3$ satisfies $\mathrm{div}\,u=0$ in $\Omega_\varepsilon$ and the slip boundary conditions \eqref{E:Bo_Imp}--\eqref{E:Bo_Slip} on $\Gamma_\varepsilon$.
  Then
  \begin{align} \label{E:Po_Grad_Ur}
    \|\nabla u^r\|_{L^2(\Omega_\varepsilon)} \leq c\left(\varepsilon\|u\|_{H^2(\Omega_\varepsilon)}+\|u\|_{L^2(\Omega_\varepsilon)}\right)
  \end{align}
  for $u^r=u-u^a$, where $c>0$ is a constant independent of $\varepsilon$ and $u$.
\end{lemma}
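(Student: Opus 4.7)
My plan is to split $\nabla u^r$ according to the orthogonal decomposition $I_3=\overline{P}+\overline{Q}$, writing $\nabla u^r=\overline{P}\nabla u^r+\bar{n}\otimes\partial_n u^r$ so that $|\nabla u^r|^2=|\overline{P}\nabla u^r|^2+|\partial_n u^r|^2$ pointwise, and to estimate each block by comparing $u$ and $u^a$ through a common surface object, namely $\overline{\nabla_\Gamma M_\tau u}$.

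For the tangential block $\overline{P}\nabla u^r$, I would first apply \eqref{E:ADD_Dom} componentwise to get $\|\overline{P}\nabla u-\overline{\nabla_\Gamma Mu}\|_{L^2(\Omega_\varepsilon)}\leq c\varepsilon\|u\|_{H^2(\Omega_\varepsilon)}$. Since $M_\tau u=Mu-(Mu\cdot n)n$, the discrepancy $\nabla_\Gamma((Mu\cdot n)n)$ is controlled in $L^2(\Gamma)$ by Lemmas~\ref{L:Ave_N_Lp} and~\ref{L:Ave_N_W1p} (both of which use $u\cdot n_\varepsilon=0$) and then transported to $L^2(\Omega_\varepsilon)$ by \eqref{E:Con_Lp}, upgrading the previous estimate to $\|\overline{P}\nabla u-\overline{\nabla_\Gamma M_\tau u}\|_{L^2(\Omega_\varepsilon)}\leq c\varepsilon\|u\|_{H^2(\Omega_\varepsilon)}$. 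Next, multiplying \eqref{E:ExTan_Grad} with $v=M_\tau u$ on the left by $\overline{P}$ kills the $\overline{Q}$-term and preserves $\overline{\nabla_\Gamma M_\tau u}$ column by column (each column is tangential), which combined with \eqref{E:Con_Lp} and \eqref{E:Ave_Wmp_Surf} yields $\|\overline{P}\nabla u^a-\overline{\nabla_\Gamma M_\tau u}\|_{L^2(\Omega_\varepsilon)}\leq c\varepsilon\|u\|_{H^1(\Omega_\varepsilon)}\leq c\varepsilon\|u\|_{H^2(\Omega_\varepsilon)}$. Subtracting the two gives $\|\overline{P}\nabla u^r\|_{L^2(\Omega_\varepsilon)}\leq c\varepsilon\|u\|_{H^2(\Omega_\varepsilon)}$.

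For the normal block $\partial_n u^r$, differentiating \eqref{E:Def_ExAve} and using \eqref{E:NorDer_Con} (which annihilates $\partial_n$ of any constant extension, so in particular $\partial_n\overline{M_\tau u}=0$ and $\partial_n\bar{n}=0$) yields $\partial_n u^a=(\overline{M_\tau u}\cdot\partial_n\Psi_\varepsilon)\bar{n}$. The second estimate in \eqref{E:ExAux_TNDer} reduces this to $\bar{g}^{-1}(\overline{M_\tau u}\cdot\overline{\nabla_\Gamma g})\bar{n}$ up to an $L^2$-remainder of size $c\varepsilon\|\overline{M_\tau u}\|_{L^2(\Omega_\varepsilon)}\leq c\varepsilon\|u\|_{L^2(\Omega_\varepsilon)}$, via \eqref{E:Con_Lp} and \eqref{E:Ave_Lp_Surf}. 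On the other side, I would split $\partial_n u=\overline{P}\partial_n u+(\partial_n u\cdot\bar{n})\bar{n}$ and apply Lemma~\ref{L:PDnU_WU} (needing the slip condition plus \eqref{E:Fric_Upper}) for the tangential piece to get $\overline{P}\partial_n u=-\overline{W}u$ modulo $c\varepsilon\|u\|_{H^2(\Omega_\varepsilon)}$, together with Lemma~\ref{L:DnU_N_Ave} (needing $\mathrm{div}\,u=0$ and $u\cdot n_\varepsilon=0$) for the normal piece to get $(\partial_n u)\cdot\bar{n}=\bar{g}^{-1}\overline{M_\tau u}\cdot\overline{\nabla_\Gamma g}$ modulo $c\varepsilon\|u\|_{H^2(\Omega_\varepsilon)}$. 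The two $\bar{g}^{-1}(\overline{M_\tau u}\cdot\overline{\nabla_\Gamma g})\bar{n}$ contributions cancel exactly in $\partial_n u-\partial_n u^a$, leaving $\partial_n u^r=-\overline{W}u$ modulo an $L^2$-remainder of size $c(\varepsilon\|u\|_{H^2(\Omega_\varepsilon)}+\varepsilon\|u\|_{L^2(\Omega_\varepsilon)})$, and boundedness of $W$ on $\Gamma$ gives $\|\partial_n u^r\|_{L^2(\Omega_\varepsilon)}\leq c(\|u\|_{L^2(\Omega_\varepsilon)}+\varepsilon\|u\|_{H^2(\Omega_\varepsilon)})$. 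Combining the two blocks proves \eqref{E:Po_Grad_Ur}.

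The delicate point is this normal-direction cancellation: both $(\partial_n u)\cdot\bar{n}$ and $\partial_n u^a\cdot\bar{n}$ share the leading behaviour $\bar{g}^{-1}(\overline{M_\tau u}\cdot\overline{\nabla_\Gamma g})$, which is only $O(1)$ in $L^2(\Omega_\varepsilon)$ (not $O(\varepsilon)$), so the bound \eqref{E:Po_Grad_Ur} cannot be recovered by estimating $\partial_n u$ and $\partial_n u^a$ in isolation. One really needs the matched identifications supplied by Lemmas~\ref{L:ExAux_Bound} and~\ref{L:DnU_N_Ave} together with the slip condition to make the leading terms coincide and disappear, leaving the benign contribution $-\overline{W}u$ which is absorbed into $\|u\|_{L^2(\Omega_\varepsilon)}$.
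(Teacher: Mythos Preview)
Your proof is correct and uses the same essential ingredients as the paper (Lemmas~\ref{L:PDnU_WU}, \ref{L:DnU_N_Ave}, and the estimate \eqref{E:ExAux_TNDer}), but you organise the decomposition differently. The paper first splits $u^r=u_\tau^r+u_n^r$ into its tangential and normal components and then applies $\overline{P}$ and $\overline{Q}$ to each gradient, producing four blocks $\overline{P}\nabla u_\tau^r$, $\overline{Q}\nabla u_\tau^r$, $\overline{P}\nabla u_n^r$, $\overline{Q}\nabla u_n^r$; the block $\overline{P}\nabla u_n^r$ is then handled via \eqref{E:Poin_Dnor} applied to $u^r$ (exploiting that $u^r$ itself satisfies the impermeable condition) together with \eqref{E:Po_Ur}. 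You instead keep $u^r$ intact and split only $\nabla u^r=\overline{P}\nabla u^r+\bar{n}\otimes\partial_n u^r$, comparing both $\overline{P}\nabla u$ and $\overline{P}\nabla u^a$ to the common reference $\overline{\nabla_\Gamma M_\tau u}$ through \eqref{E:ADD_Dom} and \eqref{E:ExTan_Grad}. This trades the paper's use of \eqref{E:Poin_Dnor} and \eqref{E:Po_Ur} for an appeal to \eqref{E:Ave_N_W1p} (to pass from $Mu$ to $M_\tau u$) and \eqref{E:ExTan_Grad}; the resulting argument is a bit shorter, and your remark about the cancellation of the $\bar g^{-1}(\overline{M_\tau u}\cdot\overline{\nabla_\Gamma g})\bar n$ terms is exactly the mechanism behind the paper's treatment of $\overline{Q}\nabla u_n^r$.
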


\begin{proof}
  As in the proof of Lemma~\ref{L:Po_Ur} we use the notation \eqref{E:Def_U_TN}.
  Based on the equalities $u^r=u_\tau^r+u_n^r$ and $I_3=\overline{P}+\overline{Q}$ we split the gradient matrix of $u^r$ into
  \begin{align} \label{Pf_PGUr:Sum}
    \nabla u^r=\overline{P}\nabla u_\tau^r+\overline{Q}\nabla u_\tau^r+\overline{P}\nabla u_n^r+\overline{Q}\nabla u_n^r \quad\text{in}\quad \Omega_\varepsilon
  \end{align}
  and estimate the $L^2(\Omega_\varepsilon)$-norm of each term on the right-hand side.

  First we consider $\overline{P}\nabla u_\tau^r$.
  By $u_\tau^r=u_\tau-\overline{Mu_\tau}$ in $\Omega_\varepsilon$, \eqref{E:P_TGr}, \eqref{E:WReso_P}, and \eqref{E:ConDer_Dom},
  \begin{align*}
    \overline{P}\nabla u_\tau^r = \Bigl(\overline{P}\nabla u_\tau-\overline{\nabla_\Gamma Mu_\tau}\Bigr)-\left\{I_3-\Bigl(I_3-d\overline{W}\Bigr)^{-1}\right\}\overline{\nabla_\Gamma Mu_\tau} \quad\text{in}\quad \Omega_\varepsilon.
  \end{align*}
  Hence we apply \eqref{E:Wein_Diff} with $|d|\leq c\varepsilon$ in $\Omega_\varepsilon$, \eqref{E:Con_Lp}, \eqref{E:Ave_Wmp_Surf}, and \eqref{E:ADD_Dom} to get
  \begin{align} \label{Pf_PGUr:L2_PT}
    \begin{aligned}
      \left\|\overline{P}\nabla u_\tau^r\right\|_{L^2(\Omega_\varepsilon)} &\leq \left\|\overline{P}\nabla u_\tau-\overline{\nabla_\Gamma Mu_\tau}\right\|_{L^2(\Omega_\varepsilon)}+c\varepsilon\left\|\overline{\nabla_\Gamma Mu_\tau}\right\|_{L^2(\Omega_\varepsilon)} \\
      &\leq c\varepsilon\|u_\tau\|_{H^2(\Omega_\varepsilon)} \leq c\varepsilon\|u\|_{H^2(\Omega_\varepsilon)}.
    \end{aligned}
  \end{align}
  Here the last inequality follows from $u_\tau=\overline{P}u$ in $\Omega_\varepsilon$ and $P\in C^4(\Gamma)^{3\times3}$.

  Next we deal with $\overline{Q}\nabla u_\tau^r=\bar{n}\otimes\partial_nu_\tau^r$.
  By $u_\tau^r=\overline{P}u-\overline{M_\tau u}$ in $\Omega_\varepsilon$ and \eqref{E:NorDer_Con},
  \begin{gather*}
    \partial_nu_\tau^r = \overline{P}\partial_nu, \quad \left|\overline{Q}\nabla u_\tau^r\right| = |\bar{n}\otimes\partial_nu_\tau^r| = |\partial_nu_\tau^r| = \left|\overline{P}\partial_nu\right| \quad\text{in}\quad \Omega_\varepsilon.
  \end{gather*}
  Since we assume that the inequalities \eqref{E:Fric_Upper} are valid and $u$ satisfies \eqref{E:Bo_Imp}--\eqref{E:Bo_Slip} on $\Gamma_\varepsilon$, we can apply \eqref{E:PDnU_WU} to $u$.
  Hence we obtain
  \begin{align} \label{Pf_PGUr:L2_QT}
    \begin{aligned}
      \left\|\overline{Q}\nabla u_\tau^r\right\|_{L^2(\Omega_\varepsilon)} = \left\|\overline{P}\partial_nu\right\|_{L^2(\Omega_\varepsilon)} &\leq \left\|\overline{P}\partial_nu+\overline{W}u\right\|_{L^2(\Omega_\varepsilon)}+\left\|\overline{W}u\right\|_{L^2(\Omega_\varepsilon)} \\
      &\leq c\left(\varepsilon\|u\|_{H^2(\Omega_\varepsilon)}+\|u\|_{L^2(\Omega_\varepsilon)}\right).
    \end{aligned}
  \end{align}
  Let us estimate the $L^2(\Omega_\varepsilon)$-norm of $\overline{P}\nabla u_n^r$.
  Since $u_n^r=(u^r\cdot\bar{n})\bar{n}$ in $\Omega_\varepsilon$ we have
  \begin{align*}
    \overline{P}\nabla u_n^r= \Bigl[\overline{P}\nabla(u^r\cdot\bar{n})\Bigr]\otimes\bar{n}-(u^r\cdot\bar{n})\overline{P}\nabla\bar{n} \quad\text{in}\quad \Omega_\varepsilon.
  \end{align*}
  By this formula, \eqref{E:NorG_Bound}, and $|n|=1$ and $|P|=2$ on $\Gamma$,
  \begin{align*}
    \left\|\overline{P}\nabla u_n^r\right\|_{L^2(\Omega_\varepsilon)} \leq c\left(\left\|\overline{P}\nabla(u^r\cdot\bar{n})\right\|_{L^2(\Omega_\varepsilon)}+\|u^r\|_{L^2(\Omega_\varepsilon)}\right).
  \end{align*}
  Here $u^r$ satisfies \eqref{E:Bo_Imp} on $\Gamma_\varepsilon$ by the assumption on $u$ since $u^a$ satisfies \eqref{E:Bo_Imp} on $\Gamma_\varepsilon$ by Lemma~\ref{L:ExTan_Imp} and \eqref{E:Def_ExAve}.
  Thus we apply \eqref{E:Poin_Dnor} and \eqref{E:Po_Ur} to the above inequality and then use \eqref{E:Hm_UaUr} to get
  \begin{align} \label{Pf_PGUr:L2_PN}
    \left\|\overline{P}\nabla u_n^r\right\|_{L^2(\Omega_\varepsilon)} \leq c\varepsilon\|u^r\|_{H^2(\Omega_\varepsilon)} \leq c\varepsilon\|u\|_{H^2(\Omega_\varepsilon)}.
  \end{align}
  Now let us consider $\overline{Q}\nabla u_n^r=\bar{n}\otimes\partial_nu_n^r$.
  Since
  \begin{align*}
    u_n^r = (u^r\cdot\bar{n})\bar{n} = \Bigl(u\cdot\bar{n}-\overline{M_\tau u}\cdot\Psi_\varepsilon\Bigr)\bar{n}, \quad \partial_nu_n^r = \Bigl(\partial_nu\cdot\bar{n}-\overline{M_\tau u}\cdot\partial_n\Psi_\varepsilon\Bigr)\bar{n}
  \end{align*}
  in $\Omega_\varepsilon$ by \eqref{E:NorDer_Con} and \eqref{E:Def_ExAve}, we have
  \begin{align*}
    \left|\overline{Q}\nabla u_n^r\right| &= |\partial_nu_n^r| = \left|\partial_nu\cdot\bar{n}-\overline{M_\tau u}\cdot\partial_n\Psi_\varepsilon\right| \\
    &\leq \left|\partial_nu\cdot\bar{n}-\frac{1}{\bar{g}}\overline{M_\tau u}\cdot\overline{\nabla_\Gamma g}\right|+\left|\overline{M_\tau u}\right|\left|\partial_n\Psi_\varepsilon-\frac{1}{\bar{g}}\overline{\nabla_\Gamma g}\right|
  \end{align*}
  in $\Omega_\varepsilon$.
  By this inequality, $|M_\tau u|\leq |Mu|$ on $\Gamma$, \eqref{E:ExAux_TNDer}, \eqref{E:Ave_Lp_Dom}, and \eqref{E:DnU_N_Ave},
  \begin{align} \label{Pf_PGUr:L2_QN}
    \begin{aligned}
      \left\|\overline{Q}\nabla u_n^r\right\|_{L^2(\Omega_\varepsilon)} &\leq \left\|\partial_nu\cdot\bar{n}-\frac{1}{\bar{g}}\overline{M_\tau u}\cdot\overline{\nabla_\Gamma g}\right\|_{L^2(\Omega_\varepsilon)}+c\varepsilon\left\|\overline{Mu}\right\|_{L^2(\Omega_\varepsilon)} \\
      &\leq c\varepsilon\|u\|_{H^2(\Omega_\varepsilon)}.
    \end{aligned}
  \end{align}
  Here we used the conditions on $u$ except for \eqref{E:Bo_Slip} to apply \eqref{E:DnU_N_Ave}.
  Finally, applying \eqref{Pf_PGUr:L2_PT}--\eqref{Pf_PGUr:L2_QN} to \eqref{Pf_PGUr:Sum} we get \eqref{E:Po_Grad_Ur}.
\end{proof}

As a consequence of Lemmas~\ref{L:Po_Ur} and~\ref{L:Po_Grad_Ur}, we obtain a good $L^\infty$-estimate for the residual part $u^r$ on $\Omega_\varepsilon$.

\begin{lemma} \label{L:Linf_Ur}
  Suppose that the inequalities \eqref{E:Fric_Upper} hold and $u\in H^2(\Omega_\varepsilon)^3$ satisfies $\mathrm{div}\,u=0$ in $\Omega_\varepsilon$ and \eqref{E:Bo_Imp}--\eqref{E:Bo_Slip} on $\Gamma_\varepsilon$.
  Then
  \begin{align} \label{E:Linf_Ur}
    \|u^r\|_{L^\infty(\Omega_\varepsilon)} \leq c\left(\varepsilon^{1/2}\|u\|_{H^2(\Omega_\varepsilon)}+\|u\|_{L^2(\Omega_\varepsilon)}^{1/2}\|u\|_{H^2(\Omega_\varepsilon)}^{1/2}\right)
  \end{align}
  for $u^r=u-u^a$, where $c>0$ is a constant independent of $\varepsilon$ and $u$.
\end{lemma}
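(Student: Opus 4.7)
The plan is to apply Agmon's inequality (Lemma~\ref{L:Agmon}) componentwise to $u^r$ and control each factor using the Poincar\'{e} type estimates for the residual part established in Lemmas~\ref{L:Po_Ur} and~\ref{L:Po_Grad_Ur}, together with the $H^2$-bound from Lemma~\ref{L:Hm_UaUr}. The assumptions of Lemmas~\ref{L:Po_Ur} and~\ref{L:Po_Grad_Ur} are satisfied, so all four tools are at our disposal.

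First I would combine Lemmas~\ref{L:Po_Ur} and~\ref{L:Po_Grad_Ur} to derive a sharpened $L^2$-bound:
\begin{align*}
  \|u^r\|_{L^2(\Omega_\varepsilon)} \leq c\varepsilon\|u^r\|_{H^1(\Omega_\varepsilon)} \leq c\varepsilon\bigl(\|u^r\|_{L^2(\Omega_\varepsilon)}+\|\nabla u^r\|_{L^2(\Omega_\varepsilon)}\bigr),
\end{align*}
so that for $\varepsilon$ small the term $c\varepsilon\|u^r\|_{L^2}$ can be absorbed into the left side, yielding
\begin{align*}
  \|u^r\|_{L^2(\Omega_\varepsilon)} \leq c\varepsilon\|\nabla u^r\|_{L^2(\Omega_\varepsilon)} \leq c\varepsilon\bigl(\varepsilon\|u\|_{H^2(\Omega_\varepsilon)}+\|u\|_{L^2(\Omega_\varepsilon)}\bigr).
\end{align*}
Similarly $\|\partial_nu^r\|_{L^2(\Omega_\varepsilon)}\leq c(\varepsilon\|u\|_{H^2(\Omega_\varepsilon)}+\|u\|_{L^2(\Omega_\varepsilon)})$, while $\|\partial_n^2u^r\|_{L^2(\Omega_\varepsilon)}\leq c\|u^r\|_{H^2(\Omega_\varepsilon)}\leq c\|u\|_{H^2(\Omega_\varepsilon)}$ by Lemma~\ref{L:Hm_UaUr}. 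Putting these together gives the crucial bound
\begin{align*}
  \|u^r\|_{L^2(\Omega_\varepsilon)}+\varepsilon\|\partial_nu^r\|_{L^2(\Omega_\varepsilon)}+\varepsilon^2\|\partial_n^2u^r\|_{L^2(\Omega_\varepsilon)} \leq c\varepsilon\bigl(\varepsilon\|u\|_{H^2(\Omega_\varepsilon)}+\|u\|_{L^2(\Omega_\varepsilon)}\bigr),
\end{align*}
which is precisely the combination appearing in the last factor of Agmon's inequality.

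Next I would apply Lemma~\ref{L:Agmon} to each component of $u^r$ to obtain
\begin{align*}
  \|u^r\|_{L^\infty(\Omega_\varepsilon)}^4 &\leq c\varepsilon^{-2}\|u^r\|_{L^2(\Omega_\varepsilon)}\|u^r\|_{H^2(\Omega_\varepsilon)}^2\bigl(\|u^r\|_{L^2(\Omega_\varepsilon)}+\varepsilon\|\partial_nu^r\|_{L^2(\Omega_\varepsilon)}+\varepsilon^2\|\partial_n^2u^r\|_{L^2(\Omega_\varepsilon)}\bigr).
\end{align*}
Substituting the refined $L^2$-bound on $u^r$, the composite bound above, and $\|u^r\|_{H^2}\leq c\|u\|_{H^2}$, I get
\begin{align*}
  \|u^r\|_{L^\infty(\Omega_\varepsilon)}^4 \leq c\varepsilon^{-2}\cdot\varepsilon\bigl(\varepsilon\|u\|_{H^2}+\|u\|_{L^2}\bigr)\cdot\|u\|_{H^2}^2\cdot\varepsilon\bigl(\varepsilon\|u\|_{H^2}+\|u\|_{L^2}\bigr) = c\bigl(\varepsilon\|u\|_{H^2}+\|u\|_{L^2}\bigr)^2\|u\|_{H^2}^2,
\end{align*}
so that $\|u^r\|_{L^\infty(\Omega_\varepsilon)}\leq c(\varepsilon\|u\|_{H^2}+\|u\|_{L^2})^{1/2}\|u\|_{H^2}^{1/2}$. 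Since $(a+b)^{1/2}\leq a^{1/2}+b^{1/2}$ for $a,b\geq0$, this expands to $c\varepsilon^{1/2}\|u\|_{H^2}+c\|u\|_{L^2}^{1/2}\|u\|_{H^2}^{1/2}$, which is \eqref{E:Linf_Ur}.

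There is no serious obstacle here; the work is essentially a careful bookkeeping of powers of $\varepsilon$. The one step that requires care is the derivation of the sharpened $\|u^r\|_{L^2}$ estimate, where one must make sure that combining the Poincar\'{e} inequality \eqref{E:Po_Ur} (applicable because $u^r\cdot n_\varepsilon=0$ on $\Gamma_\varepsilon$ via Lemma~\ref{L:ExTan_Imp}) with the gradient bound \eqref{E:Po_Grad_Ur} produces the correct $\varepsilon$-power so that, after the fourth root in Agmon's inequality, the prefactor $c\varepsilon^{-1/2}$ is exactly cancelled. Everything else follows by direct substitution.
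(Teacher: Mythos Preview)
Your proof is correct and follows essentially the same route as the paper: apply Agmon's inequality \eqref{E:Agmon} to $u^r$, bound the three normal-derivative terms via Lemmas~\ref{L:Hm_UaUr}, \ref{L:Po_Ur}, and~\ref{L:Po_Grad_Ur}, and then take a square root. The only cosmetic difference is that you obtain the sharpened $L^2$-bound on $u^r$ by absorption (valid for small $\varepsilon$), whereas the paper reaches the same bound without absorption by inserting $\|u^r\|_{L^2(\Omega_\varepsilon)}\leq c\|u\|_{L^2(\Omega_\varepsilon)}$ from \eqref{E:Hm_UaUr} directly into the right-hand side of \eqref{E:Po_Ur}.
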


\begin{proof}
  Since $u^r\in H^2(\Omega_\varepsilon)^3$ by Lemma~\ref{L:Hm_UaUr}, we have
  \begin{multline*}
    \|u^r\|_{L^\infty(\Omega_\varepsilon)} \leq c\varepsilon^{-1/2}\|u^r\|_{L^2(\Omega_\varepsilon)}^{1/4}\|u^r\|_{H^2(\Omega_\varepsilon)}^{1/2} \\
    \times\left(\|u^r\|_{L^2(\Omega_\varepsilon)}+\varepsilon\|\partial_nu^r\|_{L^2(\Omega_\varepsilon)}+\varepsilon^2\|\partial_n^2u^r\|_{L^2(\Omega_\varepsilon)}\right)^{1/4}
  \end{multline*}
  by Agmon's inequality \eqref{E:Agmon}.
  To the right-hand side we apply
  \begin{align*}
    \|u^r\|_{L^2(\Omega_\varepsilon)} &\leq c\varepsilon\left(\|u^r\|_{L^2(\Omega_\varepsilon)}+\|\nabla u^r\|_{L^2(\Omega_\varepsilon)}\right) \leq c\varepsilon\left(\varepsilon\|u\|_{H^2(\Omega_\varepsilon)}+\|u\|_{L^2(\Omega_\varepsilon)}\right), \\
    \|\partial_nu^r\|_{L^2(\Omega_\varepsilon)} &\leq c\|\nabla u^r\|_{L^2(\Omega_\varepsilon)} \leq c\left(\varepsilon\|u\|_{H^2(\Omega_\varepsilon)}+\|u\|_{L^2(\Omega_\varepsilon)}\right), \\
    \|\partial_n^2u^r\|_{L^2(\Omega_\varepsilon)} &\leq c\|u^r\|_{H^2(\Omega_\varepsilon)} \leq c\|u\|_{H^2(\Omega_\varepsilon)}
  \end{align*}
  by \eqref{E:Hm_UaUr}, \eqref{E:Po_Ur}, and \eqref{E:Po_Grad_Ur} to get
  \begin{align*}
    \|u^r\|_{L^\infty(\Omega_\varepsilon)} \leq c\left(\varepsilon\|u\|_{H^2(\Omega_\varepsilon)}+\|u\|_{L^2(\Omega_\varepsilon)}\right)^{1/2}\|u\|_{H^2(\Omega_\varepsilon)}^{1/2}.
  \end{align*}
  Using $(a+b)^{1/2}\leq a^{1/2}+b^{1/2}$ for $a,b\geq0$ to this inequality we obtain \eqref{E:Linf_Ur}.
\end{proof}

Finally, let us estimate the $L^2(\Omega_\varepsilon)$-norm of $u\otimes u$ and $(u\cdot\nabla)u$ by using the product estimate for the average part and the $L^\infty$-estimate for the residual part.

\begin{lemma} \label{L:Est_UU}
  Suppose that the inequalities \eqref{E:Fric_Upper} hold.
  Then there exists a constant $c>0$ independent of $\varepsilon$ such that
  \begin{multline} \label{E:Est_UU}
    \|u\otimes u\|_{L^2(\Omega_\varepsilon)} \leq c\left(\varepsilon^{-1/2}\|u\|_{L^2(\Omega_\varepsilon)}\|u\|_{H^1(\Omega_\varepsilon)}\right. \\
    \left.+\varepsilon^{1/2}\|u\|_{L^2(\Omega_\varepsilon)}\|u\|_{H^2(\Omega_\varepsilon)}+\|u\|_{L^2(\Omega_\varepsilon)}^{3/2}\|u\|_{H^2(\Omega_\varepsilon)}^{1/2}\right)
  \end{multline}
  and
  \begin{align} \label{E:Est_UGU}
    \|(u\cdot\nabla)u\|_{L^2(\Omega_\varepsilon)} \leq c\left(\varepsilon^{-1/2}\|u\|_{L^2(\Omega_\varepsilon)}+\varepsilon^{1/2}\|u\|_{H^1(\Omega_\varepsilon)}\right)\|u\|_{H^2(\Omega_\varepsilon)}
  \end{align}
  for all  $u\in H^2(\Omega_\varepsilon)^3$ satisfying $\mathrm{div}\,u=0$ in $\Omega_\varepsilon$ and \eqref{E:Bo_Imp}--\eqref{E:Bo_Slip} on $\Gamma_\varepsilon$.
\end{lemma}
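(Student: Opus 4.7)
The plan is to exploit the decomposition $u = u^a + u^r$ from Definition~\ref{D:Def_ExAve} together with the pointwise bounds $|u \otimes u| \le |u^a|\,|u| + |u^r|\,|u|$ and $|(u \cdot \nabla)u| \le |u^a|\,|\nabla u| + |u^r|\,|\nabla u|$. Since $u^a = E_\varepsilon M_\tau u$ is essentially two-dimensional, the factor $|u^a|$ can be paired against an $H^1$-function via Lemma~\ref{L:Prod_Ua}. On the other hand, since $u$ is impermeable by hypothesis and $u^a$ is impermeable by Lemma~\ref{L:ExTan_Imp}, the residual $u^r = u - u^a$ also satisfies $u^r \cdot n_\varepsilon = 0$ on $\Gamma_\varepsilon$, so Lemma~\ref{L:Linf_Ur} is applicable and delivers the sharp $L^\infty$-bound \eqref{E:Linf_Ur} for $u^r$. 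Both estimates then reduce to treating the $u^a$- and $u^r$-parts separately.

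For \eqref{E:Est_UU} I would first apply \eqref{E:Prod_Ua} componentwise with $\varphi$ running through the components of $u$; summing the squares produces $\||u^a|\,|u|\|_{L^2(\Omega_\varepsilon)} \le c\varepsilon^{-1/2}\|u\|_{L^2(\Omega_\varepsilon)}\|u\|_{H^1(\Omega_\varepsilon)}$, precisely the first summand on the right of \eqref{E:Est_UU}. For the residual piece I write $\||u^r|\,|u|\|_{L^2(\Omega_\varepsilon)} \le \|u^r\|_{L^\infty(\Omega_\varepsilon)}\|u\|_{L^2(\Omega_\varepsilon)}$ and insert \eqref{E:Linf_Ur}; the two summands inside $\|u^r\|_{L^\infty}$ immediately produce the remaining two terms $c\varepsilon^{1/2}\|u\|_{L^2}\|u\|_{H^2}$ and $c\|u\|_{L^2}^{3/2}\|u\|_{H^2}^{1/2}$.

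For \eqref{E:Est_UGU} the same strategy applies with $\nabla u$ replacing one of the $u$-factors. Applying \eqref{E:Prod_Ua} componentwise with $\varphi = \partial_k u_j$ and using $\|\nabla u\|_{L^2} \le \|u\|_{H^1}$ and $\|\nabla u\|_{H^1} \le \|u\|_{H^2}$, I obtain $\||u^a|\,|\nabla u|\|_{L^2(\Omega_\varepsilon)} \le c\varepsilon^{-1/2}\|u\|_{L^2}^{1/2}\|u\|_{H^1}\|u\|_{H^2}^{1/2}$, while $\||u^r|\,|\nabla u|\|_{L^2} \le \|u^r\|_{L^\infty}\|u\|_{H^1}$ together with \eqref{E:Linf_Ur} yields $c\varepsilon^{1/2}\|u\|_{H^1}\|u\|_{H^2} + c\|u\|_{L^2}^{1/2}\|u\|_{H^1}\|u\|_{H^2}^{1/2}$.

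The main obstacle is that the resulting cross term $\|u\|_{L^2}^{1/2}\|u\|_{H^1}\|u\|_{H^2}^{1/2}$ does not fit into the target form $c(\varepsilon^{-1/2}\|u\|_{L^2} + \varepsilon^{1/2}\|u\|_{H^1})\|u\|_{H^2}$ through Young's inequality alone, since any weighted Young's step introduces an unwanted $\varepsilon^{-3/2}$ factor. To resolve this, I would invoke the Stokes-operator interpolation \eqref{E:St_Inter}, namely $\|u\|_{H^1} \le c\|u\|_{L^2}^{1/2}\|u\|_{H^2}^{1/2}$ with $c$ independent of $\varepsilon$; this is available because the stated hypotheses on $u$ combined with Assumptions~\ref{Assump_1} and~\ref{Assump_2} place $u$ in $D(A_\varepsilon)$. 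Substituting this interpolation converts $\|u\|_{L^2}^{1/2}\|u\|_{H^1}\|u\|_{H^2}^{1/2}$ into $c\|u\|_{L^2}\|u\|_{H^2}$; multiplying the corresponding $|u^a|\,|\nabla u|$ contribution by $\varepsilon^{-1/2}$ gives $c\varepsilon^{-1/2}\|u\|_{L^2}\|u\|_{H^2}$, and the analogous residual cross term is absorbed into it using $1 \le \varepsilon^{-1/2}$ for $\varepsilon \in (0,1)$. This assembles into \eqref{E:Est_UGU}.
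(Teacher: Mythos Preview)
Your proposal is correct and follows essentially the same argument as the paper: split $u=u^a+u^r$, handle the $u^a$-part via \eqref{E:Prod_Ua} and the $u^r$-part via $\|u^r\|_{L^\infty}$ from \eqref{E:Linf_Ur}, and then use the interpolation \eqref{E:St_Inter} to collapse the cross term $\|u\|_{L^2}^{1/2}\|u\|_{H^1}\|u\|_{H^2}^{1/2}$ into $\|u\|_{L^2}\|u\|_{H^2}$, finishing with $1\le\varepsilon^{-1/2}$.
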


\begin{proof}
  Let $u^a$ be the average part of $u$ given by \eqref{E:Def_ExAve} and $u^r=u-u^a$ the residual part.
  Since $u\in H^2(\Omega_\varepsilon)^3$ satisfies $\mathrm{div}\,u=0$ in $\Omega_\varepsilon$ and \eqref{E:Bo_Imp}--\eqref{E:Bo_Slip} on $\Gamma_\varepsilon$,
  \begin{align*}
    \|u^a\otimes u\|_{L^2(\Omega_\varepsilon)} &\leq c\varepsilon^{-1/2}\|u\|_{L^2(\Omega_\varepsilon)}\|u\|_{H^1(\Omega_\varepsilon)}, \\
    \|u^r\otimes u\|_{L^2(\Omega_\varepsilon)} &\leq \|u^r\|_{L^\infty(\Omega_\varepsilon)}\|u\|_{L^2(\Omega_\varepsilon)} \\
    &\leq c\left(\varepsilon^{1/2}\|u\|_{H^2(\Omega_\varepsilon)}+\|u\|_{L^2(\Omega_\varepsilon)}^{1/2}\|u\|_{H^2(\Omega_\varepsilon)}^{1/2}\right)\|u\|_{L^2(\Omega_\varepsilon)}
  \end{align*}
  by \eqref{E:Prod_Ua} and \eqref{E:Linf_Ur}.
  Applying these inequalities to the right-hand side of
  \begin{align*}
    \|u\otimes u\|_{L^2(\Omega_\varepsilon)} \leq \|u^a\otimes u\|_{L^2(\Omega_\varepsilon)}+\|u^r\otimes u\|_{L^2(\Omega_\varepsilon)}
  \end{align*}
  we obtain \eqref{E:Est_UU}.
  To prove \eqref{E:Est_UGU} we observe by \eqref{E:St_Inter} and \eqref{E:Prod_Ua} that
  \begin{align*}
    \|(u^a\cdot\nabla)u\|_{L^2(\Omega_\varepsilon)} &\leq c\varepsilon^{-1/2}\|u\|_{L^2(\Omega_\varepsilon)}^{1/2}\|u\|_{H^1(\Omega_\varepsilon)}\|u\|_{H^2(\Omega_\varepsilon)}^{1/2} \\
    &\leq c\varepsilon^{-1/2}\|u\|_{L^2(\Omega_\varepsilon)}\|u\|_{H^2(\Omega_\varepsilon)}.
  \end{align*}
  Also, by \eqref{E:St_Inter} and \eqref{E:Linf_Ur} we get
  \begin{align*}
    \|(u^r\cdot\nabla)u\|_{L^2(\Omega_\varepsilon)} &\leq c\|u^r\|_{L^\infty(\Omega_\varepsilon)}\|\nabla u\|_{L^2(\Omega_\varepsilon)} \\
    &\leq c\left(\varepsilon^{1/2}\|u\|_{H^2(\Omega_\varepsilon)}+\|u\|_{L^2(\Omega_\varepsilon)}^{1/2}\|u\|_{H^2(\Omega_\varepsilon)}^{1/2}\right)\|u\|_{H^1(\Omega_\varepsilon)} \\
    &= c\left(\|u\|_{L^2(\Omega_\varepsilon)}^{1/2}\|u\|_{H^1(\Omega_\varepsilon)}\|u\|_{H^2(\Omega_\varepsilon)}^{1/2}+\varepsilon^{1/2}\|u\|_{H^1(\Omega_\varepsilon)}\|u\|_{H^2(\Omega_\varepsilon)}\right) \\
    &\leq c\left(\|u\|_{L^2(\Omega_\varepsilon)}+\varepsilon^{1/2}\|u\|_{H^1(\Omega_\varepsilon)}\right)\|u\|_{H^2(\Omega_\varepsilon)}.
  \end{align*}
  We use these two estimates to the right-hand side of
  \begin{align*}
    \|(u\cdot\nabla)u\|_{L^2(\Omega_\varepsilon)} \leq \|(u^a\cdot\nabla)u\|_{L^2(\Omega_\varepsilon)}+\|(u^r\cdot\nabla)u\|_{L^2(\Omega_\varepsilon)}
  \end{align*}
  and note that $1\leq \varepsilon^{-1/2}$ by $\varepsilon<1$ to obtain \eqref{E:Est_UGU}.
\end{proof}

\subsection{Average of bilinear and trilinear forms} \label{SS:Ave_BT}
We consider approximation of bilinear and trilinear forms for functions on $\Omega_\varepsilon$ by those for functions on $\Gamma$ and the average operators.
The results in this subsection are fundamental for the study of a singular limit problem for \eqref{E:NS_Eq}--\eqref{E:NS_In}.
As in the previous subsections we denote by $\bar{\eta}=\eta\circ\pi$ the constant extension of a function $\eta$ on $\Gamma$, use the notations \eqref{E:Pull_Dom} and \eqref{E:Pull_Bo}, and suppress the arguments of functions.

First we consider the $L^2$-inner products on $\Omega_\varepsilon$ and $\Gamma_\varepsilon^i$, $i=0,1$.

\begin{lemma} \label{L:Ave_BiL2_Dom}
  There exists a constant $c>0$ independent of $\varepsilon$ such that
  \begin{align} \label{E:Ave_BiL2_Dom}
    \left|\int_{\Omega_\varepsilon}\varphi\bar{\eta}\,dx-\varepsilon\int_\Gamma g(M\varphi)\eta\,d\mathcal{H}^2\right| \leq c\varepsilon^{3/2}\|\varphi\|_{L^2(\Omega_\varepsilon)}\|\eta\|_{L^2(\Gamma)}
  \end{align}
  for all $\varphi\in L^2(\Omega_\varepsilon)$ and $\eta\in L^2(\Gamma)$.
\end{lemma}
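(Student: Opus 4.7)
The plan is to reduce everything to the change of variables formula \eqref{E:CoV_Dom} and exploit the smallness of $J-1$ on $\Omega_\varepsilon$ given by \eqref{E:Jac_Diff}.

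First I would rewrite the bulk integral using \eqref{E:CoV_Dom} and the fact that $\bar\eta(y+rn(y))=\eta(y)$:
\begin{align*}
  \int_{\Omega_\varepsilon}\varphi\bar{\eta}\,dx = \int_\Gamma \eta(y)\int_{\varepsilon g_0(y)}^{\varepsilon g_1(y)}\varphi^\sharp(y,r)J(y,r)\,dr\,d\mathcal{H}^2(y).
\end{align*}
On the other hand, by the definition \eqref{E:Def_Ave} of $M$,
\begin{align*}
  \varepsilon\int_\Gamma g(M\varphi)\eta\,d\mathcal{H}^2 = \int_\Gamma \eta(y)\int_{\varepsilon g_0(y)}^{\varepsilon g_1(y)}\varphi^\sharp(y,r)\,dr\,d\mathcal{H}^2(y).
\end{align*}
Subtracting, the difference on the left-hand side of \eqref{E:Ave_BiL2_Dom} equals
\begin{align*}
  \int_\Gamma \eta(y)\int_{\varepsilon g_0(y)}^{\varepsilon g_1(y)}\varphi^\sharp(y,r)\bigl(J(y,r)-1\bigr)\,dr\,d\mathcal{H}^2(y),
\end{align*}
so the remaining task is a straightforward estimate of this integral.

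Next I would pull out the Jacobian bound \eqref{E:Jac_Diff}, which gives $|J-1|\leq c\varepsilon$ uniformly on $\Omega_\varepsilon$. After that I would apply the Cauchy--Schwarz inequality in the inner variable $r$ using \eqref{E:Width_Bound} to control the length of the $r$-interval by $c\varepsilon$, producing
\begin{align*}
  \int_{\varepsilon g_0(y)}^{\varepsilon g_1(y)}|\varphi^\sharp(y,r)|\,dr \leq c\varepsilon^{1/2}\left(\int_{\varepsilon g_0(y)}^{\varepsilon g_1(y)}|\varphi^\sharp(y,r)|^2\,dr\right)^{1/2}.
\end{align*}
Then applying Cauchy--Schwarz again in the $y$ variable and invoking \eqref{E:CoV_Equiv} to pass from the iterated integral of $|\varphi^\sharp|^2$ back to $\|\varphi\|_{L^2(\Omega_\varepsilon)}^2$, I obtain
\begin{align*}
  \left|\int_\Gamma \eta\int_{\varepsilon g_0}^{\varepsilon g_1}\varphi^\sharp(J-1)\,dr\,d\mathcal{H}^2\right| \leq c\varepsilon\cdot\varepsilon^{1/2}\|\eta\|_{L^2(\Gamma)}\|\varphi\|_{L^2(\Omega_\varepsilon)},
\end{align*}
which is exactly \eqref{E:Ave_BiL2_Dom}.

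There is no real obstacle here; the estimate is essentially a one-line consequence of \eqref{E:CoV_Dom}, \eqref{E:Jac_Diff}, and \eqref{E:CoV_Equiv}. The only bookkeeping point to watch is the distribution of the $\varepsilon$-powers: the factor $\varepsilon$ comes from $|J-1|$, while an additional $\varepsilon^{1/2}$ arises from Cauchy--Schwarz against the thin $r$-interval of length $O(\varepsilon)$, and these combine to give exactly the exponent $3/2$ claimed in the statement.
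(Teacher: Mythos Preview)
Your proof is correct and follows essentially the same approach as the paper: both reduce the difference to $\int_\Gamma\int_{\varepsilon g_0}^{\varepsilon g_1}\varphi^\sharp\eta(J-1)\,dr\,d\mathcal{H}^2$ via \eqref{E:CoV_Dom} and \eqref{E:Def_Ave}, then extract $|J-1|\leq c\varepsilon$ from \eqref{E:Jac_Diff} and finish with Cauchy--Schwarz and \eqref{E:CoV_Equiv}. The only cosmetic difference is that the paper applies H\"older once over $\Omega_\varepsilon$ and uses \eqref{E:Con_Lp} to get $\|\bar\eta\|_{L^2(\Omega_\varepsilon)}\leq c\varepsilon^{1/2}\|\eta\|_{L^2(\Gamma)}$, whereas you split the Cauchy--Schwarz into the $r$ and $y$ variables separately; the $\varepsilon$-bookkeeping and the result are identical.
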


\begin{proof}
  By the formula \eqref{E:CoV_Dom} and the definition \eqref{E:Def_Ave} of $M$,
  \begin{align*}
    \int_{\Omega_\varepsilon}\varphi\bar{\eta}\,dx-\varepsilon\int_\Gamma g(M\varphi)\eta\,d\mathcal{H}^2 = \int_\Gamma\int_{\varepsilon g_0}^{\varepsilon g_1}\varphi^\sharp\eta(J-1)\,dr\,d\mathcal{H}^2.
  \end{align*}
  We use \eqref{E:Jac_Diff}, \eqref{E:CoV_Equiv}, H\"{o}lder's inequality, and \eqref{E:Con_Lp} to the right-hand side to get
  \begin{align*}
    \left|\int_\Gamma\int_{\varepsilon g_0}^{\varepsilon g_1}\varphi^\sharp\eta(J-1)\,dr\,d\mathcal{H}^2\right| \leq c\varepsilon\|\varphi\|_{L^2(\Omega_\varepsilon)}\|\bar{\eta}\|_{L^2(\Omega_\varepsilon)} \leq c\varepsilon^{3/2}\|\varphi\|_{L^2(\Omega_\varepsilon)}\|\eta\|_{L^2(\Gamma)}.
  \end{align*}
  Hence we obtain \eqref{E:Ave_BiL2_Dom}.
\end{proof}

\begin{lemma} \label{L:Ave_BiL2_Bo}
  There exists a constant $c>0$ independent of $\varepsilon$ such that
  \begin{align} \label{E:Ave_BiL2_Bo}
    \left|\int_{\Gamma_\varepsilon^i}\varphi\bar{\eta}\,d\mathcal{H}^2-\int_\Gamma(M\varphi)\eta\,d\mathcal{H}^2\right| \leq \varepsilon^{1/2}\|\varphi\|_{H^1(\Omega_\varepsilon)}\|\eta\|_{L^2(\Gamma)}, \quad i=0,1
  \end{align}
  for all $\varphi\in H^1(\Omega_\varepsilon)$ and $\eta\in L^2(\Gamma)$.
\end{lemma}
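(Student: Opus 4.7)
The plan is to reduce the boundary integral on $\Gamma_\varepsilon^i$ to a surface integral on $\Gamma$ via the change of variables formula \eqref{E:CoV_Surf} and then separate the discrepancy into (i) a term measuring the deviation of the trace $\varphi_i^\sharp$ from the average $M\varphi$, and (ii) a term capturing the deviation of the Jacobian from one. Both terms will be $O(\varepsilon^{1/2})$ by Poincaré-type estimates already established.

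\smallskip
First, apply Lemma~\ref{L:CoV_Surf} to $\varphi\bar\eta$. Since $\bar\eta(y+\varepsilon g_i(y)n(y))=\eta(y)$, we obtain
\begin{align*}
  \int_{\Gamma_\varepsilon^i}\varphi\bar\eta\,d\mathcal{H}^2
  &= \int_\Gamma \varphi_i^\sharp\,\eta\, J(y,\varepsilon g_i(y))\sqrt{1+\varepsilon^2|\tau_\varepsilon^i(y)|^2}\,d\mathcal{H}^2.
\end{align*}
Subtracting $\int_\Gamma (M\varphi)\eta\,d\mathcal{H}^2$ I split the difference as $A+B$, where
\begin{align*}
  A &:= \int_\Gamma(\varphi_i^\sharp-M\varphi)\eta\,d\mathcal{H}^2,\\
  B &:= \int_\Gamma \varphi_i^\sharp\,\eta\bigl\{J(y,\varepsilon g_i(y))\sqrt{1+\varepsilon^2|\tau_\varepsilon^i(y)|^2}-1\bigr\}\,d\mathcal{H}^2.
\end{align*}

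\smallskip
For $A$, note that $\overline{M\varphi}$ restricted to $\Gamma_\varepsilon^i$ coincides with $M\varphi$ (as a function of $y\in\Gamma$), so $(\varphi-\overline{M\varphi})_i^\sharp=\varphi_i^\sharp-M\varphi$. Hence by \eqref{E:Lp_CoV_Surf} with $p=2$ and then \eqref{E:Ave_Diff_Bo} with $p=2$,
\[
  \|\varphi_i^\sharp-M\varphi\|_{L^2(\Gamma)}\le c\|\varphi-\overline{M\varphi}\|_{L^2(\Gamma_\varepsilon^i)}\le c\varepsilon^{1/2}\|\partial_n\varphi\|_{L^2(\Omega_\varepsilon)}\le c\varepsilon^{1/2}\|\varphi\|_{H^1(\Omega_\varepsilon)},
\]
and Cauchy--Schwarz gives $|A|\le c\varepsilon^{1/2}\|\varphi\|_{H^1(\Omega_\varepsilon)}\|\eta\|_{L^2(\Gamma)}$.

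\smallskip
For $B$, combine \eqref{E:Jac_Diff} (giving $|J-1|\le c\varepsilon$) with the elementary estimate $|\sqrt{1+\varepsilon^2|\tau_\varepsilon^i|^2}-1|\le c\varepsilon^2$ (used already in the proof of Lemma~\ref{L:Korn_Grad}, cf.\ inequality (Pf\_KG:Sqrt)), together with the uniform bound \eqref{E:Tau_Bound}, to conclude that the bracketed factor is $O(\varepsilon)$ uniformly on $\Gamma$. Then Cauchy--Schwarz, \eqref{E:Lp_CoV_Surf}, and the trace inequality \eqref{E:Poin_Bo} with $p=2$ yield
\[
  |B|\le c\varepsilon\|\varphi_i^\sharp\|_{L^2(\Gamma)}\|\eta\|_{L^2(\Gamma)}\le c\varepsilon\|\varphi\|_{L^2(\Gamma_\varepsilon^i)}\|\eta\|_{L^2(\Gamma)}\le c\varepsilon^{1/2}\|\varphi\|_{H^1(\Omega_\varepsilon)}\|\eta\|_{L^2(\Gamma)}.
\]

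\smallskip
Adding the bounds on $|A|$ and $|B|$ gives \eqref{E:Ave_BiL2_Bo}. The argument is a straightforward application of the change of variables and the already-proven Poincaré/averaging estimates; the only minor subtlety is keeping track of the cancellation that produces the $\varepsilon^{1/2}$ factor (as opposed to $\varepsilon^{3/2}$ in the bulk version \eqref{E:Ave_BiL2_Dom}), which comes from the boundary trace inequality that costs a factor of $\varepsilon^{-1/2}$ relative to the bulk estimate.
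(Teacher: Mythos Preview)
Your proof is correct and follows essentially the same approach as the paper: apply the change of variables formula \eqref{E:CoV_Surf}, split the difference into the same two pieces (your $A$ and $B$ are the paper's $K_2$ and $K_1$), and bound each using \eqref{E:Ave_Diff_Bo}, \eqref{E:Lp_CoV_Surf}, \eqref{E:Jac_Diff}, and \eqref{E:Poin_Bo} in the same way.
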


\begin{proof}
  Using the change of variables formula \eqref{E:CoV_Surf} we have
  \begin{align} \label{Pf_ABLB:Split}
    \int_{\Gamma_\varepsilon^i}\varphi\bar{\eta}\,d\mathcal{H}^2-\int_\Gamma(M\varphi)\eta\,d\mathcal{H}^2 = K_1+K_2,
  \end{align}
  where we use the notations \eqref{E:Pull_Bo} and $J_i^\sharp(y):=J(y,\varepsilon g_i(y))$ for $y\in\Gamma$ to define
  \begin{align*}
    K_1 := \int_\Gamma\varphi_i^\sharp\eta\left(J_i^\sharp\sqrt{1+\varepsilon^2|\tau_\varepsilon^i|^2}-1\right)d\mathcal{H}^2, \quad K_2 := \int_\Gamma(\varphi_i^\sharp-M\varphi)\eta\,d\mathcal{H}^2.
  \end{align*}
  By \eqref{E:Tau_Bound}, \eqref{E:Jac_Diff}, and \eqref{Pf_KG:Sqrt} we observe that
  \begin{align*}
    \left|J_i^\sharp\sqrt{1+\varepsilon^2|\tau_\varepsilon^i|^2}-1\right| \leq |J_i^\sharp-1|\sqrt{1+\varepsilon^2|\tau_\varepsilon^i|^2}+\left(\sqrt{1+\varepsilon^2|\tau_\varepsilon^i|^2}-1\right) \leq c\varepsilon
  \end{align*}
  on $\Gamma$.
  From this inequality, \eqref{E:Lp_CoV_Surf}, and \eqref{E:Poin_Bo} it follows that
  \begin{align*}
    |K_1| \leq c\varepsilon\|\varphi_i^\sharp\|_{L^2(\Gamma)}\|\eta\|_{L^2(\Gamma)} \leq c\varepsilon\|\varphi\|_{L^2(\Gamma_\varepsilon^i)}\|\eta\|_{L^2(\Gamma)} \leq c\varepsilon^{1/2}\|\varphi\|_{H^1(\Omega_\varepsilon)}\|\eta\|_{L^2(\Gamma)}.
  \end{align*}
  Also, by \eqref{E:Lp_CoV_Surf} and \eqref{E:Ave_Diff_Bo},
  \begin{align*}
    |K_2| &\leq \|\varphi_i^\sharp-M\varphi\|_{L^2(\Gamma)}\|\eta\|_{L^2(\Gamma)} \leq c\left\|\varphi-\overline{M\varphi}\right\|_{L^2(\Gamma_\varepsilon^i)}\|\eta\|_{L^2(\Gamma)} \\
    &\leq c\varepsilon^{1/2}\|\varphi\|_{H^1(\Omega_\varepsilon)}\|\eta\|_{L^2(\Gamma)}.
  \end{align*}
  Applying these two estimates to \eqref{Pf_ABLB:Split} we obtain \eqref{E:Ave_BiL2_Bo}.
\end{proof}

Next we deal with bilinear forms including the strain rate tensor
\begin{align*}
  D(u) = (\nabla u)_S = \frac{\nabla u+(\nabla u)^T}{2}
\end{align*}
for a vector field $u$ on $\Omega_\varepsilon$.

\begin{lemma} \label{L:Ave_BiH1_TT}
  There exists a constant $c>0$ independent of $\varepsilon$ such that
  \begin{align} \label{E:Ave_BiH1_TT}
    \left|\int_{\Omega_\varepsilon}D(u):\overline{A}\,dx-\varepsilon\int_\Gamma gD_\Gamma(M_\tau u):A\,d\mathcal{H}^2\right| \leq c\varepsilon^{3/2}\|u\|_{H^1(\Omega_\varepsilon)}\|A\|_{L^2(\Gamma)}
  \end{align}
  for all $u\in H^1(\Omega_\varepsilon)^3$ satisfying \eqref{E:Bo_Imp} on $\Gamma_\varepsilon$ and $A\in L^2(\Gamma)^{3\times3}$ satisfying
  \begin{align} \label{E:A_Cond}
     PA = AP = A \quad\text{on}\quad \Gamma.
   \end{align}
  Here $D_\Gamma(M_\tau u)$ is the surface strain rate tensor given by \eqref{E:Strain_Surf}.
\end{lemma}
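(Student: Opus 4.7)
The plan is to peel off the approximation in two stages: first convert the volume integral over $\Omega_\varepsilon$ into a surface integral involving the componentwise average $M(D(u))$ with error $O(\varepsilon^{3/2})$, then show that $PM(D(u))P$ differs from $D_\Gamma(M_\tau u)$ by $O(\varepsilon^{1/2})$ in $L^2(\Gamma)$. For the first stage I would apply Lemma~\ref{L:Ave_BiL2_Dom} componentwise to $\varphi = D(u)_{ij}$ and $\eta = A_{ij}$, sum on $i,j$, and use $\|D(u)\|_{L^2(\Omega_\varepsilon)}\leq\|u\|_{H^1(\Omega_\varepsilon)}$ to obtain
\begin{align*}
  \left|\int_{\Omega_\varepsilon}D(u):\overline{A}\,dx-\varepsilon\int_\Gamma gM(D(u)):A\,d\mathcal{H}^2\right|\leq c\varepsilon^{3/2}\|u\|_{H^1(\Omega_\varepsilon)}\|A\|_{L^2(\Gamma)}.
\end{align*}
Because $A=PAP$, the integrand $M(D(u)):A$ equals $PM(D(u))P:A$, so matters reduce, via Cauchy--Schwarz and the boundedness of $g$ on $\Gamma$, to proving the $L^2(\Gamma)$-bound
\begin{align*}
  \|PM(D(u))P-D_\Gamma(M_\tau u)\|_{L^2(\Gamma)}\leq c\varepsilon^{1/2}\|u\|_{H^1(\Omega_\varepsilon)}.
\end{align*}

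To verify this bound I would apply Lemma~\ref{L:Ave_Der} componentwise to get $\nabla_\Gamma Mu=M(B\nabla u)+M(\psi_\varepsilon\otimes\partial_nu)$ with $B=(I_3-d\overline{W})\overline{P}$. Since $\overline{P}(y+rn(y))=P(y)$ for $y\in\Gamma$, flanking $P$'s can be absorbed inside $M$; using $PW=W$ and $\overline{P}\psi_\varepsilon=\psi_\varepsilon$ (the latter because $\nabla_\Gamma g_i$ is tangential), this yields
\begin{align*}
  P(\nabla_\Gamma Mu)P=M(\overline{P}\nabla u\,\overline{P})-M(d\overline{W}\,\overline{P}\nabla u\,\overline{P})+M(\psi_\varepsilon\otimes\overline{P}\partial_nu).
\end{align*}
To account for the outer projection in $M_\tau u=PMu$ I would expand $\nabla_\Gamma(PMu)$ by the product rule; the identity $\underline{D}_iP_{jk}=W_{ij}n_k+n_jW_{ik}$ combined with $PW=W$ and $Pn=0$ collapses all but one contribution and gives $P\nabla_\Gamma(PMu)P=P(\nabla_\Gamma Mu)P+WM(u\cdot\bar n)$. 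Taking symmetric parts (noting $W^T=W$) produces
\begin{align*}
  D_\Gamma(M_\tau u)=M\bigl((\overline{P}\nabla u\,\overline{P})_S\bigr)-M\bigl((d\overline{W}\,\overline{P}\nabla u\,\overline{P})_S\bigr)+M\bigl((\psi_\varepsilon\otimes\overline{P}\partial_nu)_S\bigr)+WM(u\cdot\bar n),
\end{align*}
whose leading term is precisely $PM(D(u))P$, so the difference consists of three explicit remainders. Applying \eqref{E:Ave_Lp_Surf} with $p=2$ together with $|d|\leq c\varepsilon$ and $|\psi_\varepsilon|\leq c\varepsilon$ from \eqref{E:ADA_Bound} bounds the first two remainders in $L^2(\Gamma)$ by $c\varepsilon^{1/2}\|u\|_{H^1(\Omega_\varepsilon)}$, and Lemma~\ref{L:Ave_N_Lp} with $p=2$ bounds the last one by the same quantity.

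The main obstacle is the bookkeeping for the outer projection: because $M_\tau u=PMu$ rather than simply $Mu$, tangentially differentiating produces the extra term $WM(u\cdot\bar n)$, and it is exactly here that the hypothesis $u\cdot n_\varepsilon=0$ on $\Gamma_\varepsilon$ is consumed via Lemma~\ref{L:Ave_N_Lp}. Without the impermeable boundary condition, $M(u\cdot\bar n)$ would only be $O(1)$ and the whole estimate would fail; everything else is careful but routine manipulation of projections, Weingarten maps, and the geometric auxiliary quantities $B$ and $\psi_\varepsilon$.
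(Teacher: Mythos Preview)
Your proof is correct and follows essentially the same approach as the paper. Both arguments reduce the volume integral to a surface integral via Lemma~\ref{L:Ave_BiL2_Dom}, then control $PM(D(u))P-D_\Gamma(M_\tau u)$ using Lemma~\ref{L:Ave_Der} with the bounds \eqref{E:ADA_Bound} for the $d\overline{W}$- and $\psi_\varepsilon$-terms and Lemma~\ref{L:Ave_N_Lp} for the $(Mu\cdot n)W$ contribution coming from the outer projection in $M_\tau u=PMu$; the only cosmetic difference is that the paper routes through the intermediate quantity $D_\Gamma(Mu)$ before passing to $D_\Gamma(M_\tau u)$, whereas you expand $P(\nabla_\Gamma M_\tau u)P$ directly.
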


\begin{proof}
  By \eqref{E:A_Cond} and $P^T=P$ on $\Gamma$,
  \begin{align*}
    PM\bigl(D(u)\bigr)P:A = M\bigl(D(u)\bigr):A \quad\text{on}\quad \Gamma.
  \end{align*}
  From this equality, \eqref{E:Ave_BiL2_Dom}, and $\|D(u)\|_{L^2(\Omega_\varepsilon)}\leq c\|u\|_{H^1(\Omega_\varepsilon)}$ it follows that
  \begin{align} \label{Pf_ABTT:First}
    \left|\int_{\Omega_\varepsilon}D(u):\overline{A}\,dx-\varepsilon\int_\Gamma gPM\bigl(D(u)\bigr)P:A\,d\mathcal{H}^2\right| \leq c\varepsilon^{3/2}\|u\|_{H^1(\Omega_\varepsilon)}\|A\|_{L^2(\Gamma)}.
  \end{align}
  By \eqref{E:Ave_Der}, \eqref{E:Ave_Der_Aux}, \eqref{E:ADA_Bound}, and $|d|\leq c\varepsilon$ in $\Omega_\varepsilon$ we have
  \begin{align} \label{Pf_ABTT:Diff_AD}
    |\nabla_\Gamma Mu-PM(\nabla u)| &\leq \left|M\Bigl(d\overline{W}\nabla u\Bigr)\right|+|M(\psi_\varepsilon\otimes \partial_nu)| \leq c\varepsilon M(|\nabla u|)
  \end{align}
  on $\Gamma$.
  Noting that $P\nabla_\Gamma Mu=\nabla_\Gamma Mu$ and $|P|=2$ on $\Gamma$, we see by \eqref{Pf_ABTT:Diff_AD} that
  \begin{align*}
    \left|D_\Gamma(Mu)-PM\bigl(D(u)\bigr)P\right| \leq |\{\nabla_\Gamma Mu-PM(\nabla u)\}P| \leq c\varepsilon M(|\nabla u|) \quad\text{on}\quad \Gamma.
  \end{align*}
  From this inequality, the boundedness of $g$ on $\Gamma$, and \eqref{E:Ave_Lp_Surf} we deduce that
  \begin{multline} \label{Pf_ABTT:Second}
    \left|\int_\Gamma gD_\Gamma(Mu):A\,d\mathcal{H}^2-\int_\Gamma gPM\bigl(D(u)\bigr)P:A\,d\mathcal{H}^2\right| \\
    \leq c\varepsilon\|M(|\nabla u|)\|_{L^2(\Gamma)}\|A\|_{L^2(\Gamma)} \leq c\varepsilon^{1/2}\|u\|_{H^1(\Omega_\varepsilon)}\|A\|_{L^2(\Gamma)}.
  \end{multline}
  Moreover, by $Mu=(Mu\cdot n)n+M_\tau u$ and $-\nabla_\Gamma n=W$ on $\Gamma$ we get
  \begin{align*}
    \nabla_\Gamma Mu = \nabla_\Gamma(Mu\cdot n)\otimes n-(Mu\cdot n)W+\nabla_\Gamma M_\tau u \quad\text{on}\quad \Gamma.
  \end{align*}
  Since $(a\otimes n)P=a\otimes(P^Tn)=a\otimes(Pn)=0$ on $\Gamma$ for any $a\in\mathbb{R}^3$,
  \begin{align*}
    P(\nabla_\Gamma Mu)P-P(\nabla_\Gamma M_\tau u)P = -(Mu\cdot n)PWP = -(Mu\cdot n)W \quad\text{on}\quad \Gamma
  \end{align*}
  by \eqref{E:Form_W}.
  From this equality and the boundedness of $W$ on $\Gamma$ it follows that
  \begin{align*}
    |D_\Gamma(Mu)-D_\Gamma(M_\tau u)| \leq |P(\nabla_\Gamma Mu)P-P(\nabla_\Gamma M_\tau u)P| \leq c|Mu\cdot n| \quad\text{on}\quad \Gamma.
  \end{align*}
  By this inequality and \eqref{E:Ave_N_Lp} (note that $u$ satisfies \eqref{E:Bo_Imp} on $\Gamma_\varepsilon$) we get
  \begin{align*}
    \left|\int_\Gamma gD_\Gamma(Mu):A\,d\mathcal{H}^2-\int_\Gamma gD_\Gamma(M_\tau u):A\,d\mathcal{H}^2\right| &\leq c\|Mu\cdot n\|_{L^2(\Gamma)}\|A\|_{L^2(\Gamma)} \\
    &\leq c\varepsilon^{1/2}\|u\|_{H^1(\Omega_\varepsilon)}\|A\|_{L^2(\Gamma)}.
  \end{align*}
  Combining this inequality, \eqref{Pf_ABTT:First}, and \eqref{Pf_ABTT:Second} we obtain \eqref{E:Ave_BiH1_TT}.
\end{proof}

\begin{lemma} \label{L:Ave_BiH1_NN}
  There exists a constant $c>0$ independent of $\varepsilon$ such that
  \begin{align} \label{E:Ave_BiH1_NN}
    \left|\int_{\Omega_\varepsilon}\Bigl(D(u):\overline{Q}\Bigr)\bar{\eta}\,dx-\varepsilon\int_\Gamma(M_\tau u\cdot\nabla_\Gamma g)\eta\,d\mathcal{H}^2\right| \leq c\varepsilon^{3/2}\|u\|_{H^1(\Omega_\varepsilon)}\|\eta\|_{L^2(\Gamma)}
  \end{align}
  for all $u\in H^1(\Omega_\varepsilon)^3$ satisfying \eqref{E:Bo_Imp} on $\Gamma_\varepsilon$ and $\eta\in L^2(\Gamma)$.
\end{lemma}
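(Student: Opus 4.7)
The plan is to reduce the volume integral on the left to a boundary term by recognising the integrand as a total normal derivative, then exploit the impermeable boundary condition to produce the factor $\varepsilon$ and the weight $\nabla_\Gamma g$.

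First I would observe that $D(u):\overline{Q}=\bar n\cdot D(u)\bar n=(\partial_n u)\cdot\bar n$, because $\overline{Q}=\bar n\otimes\bar n$ and the two off-symmetric contributions coincide. Since $\bar n$ is constant in the normal direction, $((\partial_n u)\cdot\bar n)^\sharp(y,r)=\partial_r(u\cdot\bar n)^\sharp(y,r)$. Applying the co-area formula \eqref{E:CoV_Dom} and then integrating by parts in $r$ (justified by a routine approximation of $u\in H^1(\Omega_\varepsilon)^3$ by smoother fields) gives
\begin{align*}
\int_{\Omega_\varepsilon}\bigl(D(u):\overline{Q}\bigr)\bar\eta\,dx
&=\int_\Gamma\eta(y)\Bigl[(u\cdot\bar n)^\sharp(y,r)J(y,r)\Bigr]_{r=\varepsilon g_0(y)}^{\varepsilon g_1(y)}d\mathcal{H}^2(y)\\
&\qquad-\int_\Gamma\int_{\varepsilon g_0}^{\varepsilon g_1}(u\cdot\bar n)^\sharp(y,r)\,\eta(y)\,\partial_rJ(y,r)\,dr\,d\mathcal{H}^2(y).
\end{align*}
The second term $R$ on the right is controlled at once: $|\partial_rJ|\le c$ by \eqref{E:Jac_Bound}, so by Cauchy--Schwarz, \eqref{E:CoV_Equiv}, the Poincar\'e inequality \eqref{E:Poin_Nor} applied to $u$ (valid since $u\cdot n_\varepsilon=0$ on $\Gamma_\varepsilon$), and \eqref{E:Con_Lp} for $\bar\eta$, one obtains $|R|\le c\,\|u\cdot\bar n\|_{L^2(\Omega_\varepsilon)}\|\bar\eta\|_{L^2(\Omega_\varepsilon)}\le c\varepsilon^{3/2}\|u\|_{H^1(\Omega_\varepsilon)}\|\eta\|_{L^2(\Gamma)}$.

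Next I would treat the boundary term. By Lemma~\ref{L:Exp_Bo}, $(u\cdot\bar n)^\sharp(y,\varepsilon g_i(y))=\varepsilon\,u_i^\sharp(y)\cdot\tau_\varepsilon^i(y)$ for $i=0,1$, so the boundary contribution equals
\[
\varepsilon\int_\Gamma\bigl\{J_1^\sharp\,u_1^\sharp\cdot\tau_\varepsilon^1-J_0^\sharp\,u_0^\sharp\cdot\tau_\varepsilon^0\bigr\}\eta\,d\mathcal{H}^2,
\]
with $J_i^\sharp(y):=J(y,\varepsilon g_i(y))$. Since $\tau_\varepsilon^i$ is tangential on $\Gamma$, I split
\[
J_i^\sharp\,u_i^\sharp\cdot\tau_\varepsilon^i
=M_\tau u\cdot\nabla_\Gamma g_i
+M_\tau u\cdot(\tau_\varepsilon^i-\nabla_\Gamma g_i)
+\bigl((u_i^\sharp)_\tau-M_\tau u\bigr)\cdot\tau_\varepsilon^i
+(J_i^\sharp-1)\,u_i^\sharp\cdot\tau_\varepsilon^i.
\]
Subtracting $i=0$ from $i=1$, the leading piece produces $M_\tau u\cdot(\nabla_\Gamma g_1-\nabla_\Gamma g_0)=M_\tau u\cdot\nabla_\Gamma g$, which is the target.

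The three remainder pieces are each $O(\varepsilon^{1/2})$ in $L^2(\Gamma)$ after multiplying by $\|u\|_{H^1}$: the second inequality of \eqref{E:Tau_Diff} and \eqref{E:Ave_Lp_Surf} control the term with $\tau_\varepsilon^i-\nabla_\Gamma g_i$; the trace bound \eqref{E:Ave_Diff_Bo} applied to $u_\tau=\overline P u$, together with \eqref{E:Lp_CoV_Surf}, gives $\|(u_i^\sharp)_\tau-M_\tau u\|_{L^2(\Gamma)}\le c\varepsilon^{1/2}\|u\|_{H^1(\Omega_\varepsilon)}$ (using $M_\tau u=PMu=M(\overline P u)$ since $P$ is constant in the normal direction); and \eqref{E:Jac_Diff} together with \eqref{E:Lp_CoV_Surf} and \eqref{E:Poin_Bo} treats the $J_i^\sharp-1$ factor. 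Multiplying by the prefactor $\varepsilon$ from the boundary term yields the claimed $c\varepsilon^{3/2}\|u\|_{H^1(\Omega_\varepsilon)}\|\eta\|_{L^2(\Gamma)}$ bound, which combined with the estimate of $R$ finishes the proof. The main subtlety is the bookkeeping of the three cancellations on the boundary (Jacobian, normal-geometric coefficient $\tau_\varepsilon^i\to\nabla_\Gamma g_i$, and trace-vs-average); none is hard individually, but one must avoid using the divergence-free hypothesis, so the route via integration by parts in $r$ (rather than through Lemma~\ref{L:DnU_N_Ave}) is essential.
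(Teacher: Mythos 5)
Your argument is correct and follows essentially the same route as the paper: both recognize $D(u):\overline{Q}$ as the $r$-derivative of $(u\cdot\bar{n})^\sharp$, reduce to the boundary values in $r$, and use the impermeability condition (you via Lemma~\ref{L:Exp_Bo}, the paper via \eqref{E:Comp_N}) together with \eqref{E:Tau_Diff}, \eqref{E:Ave_Diff_Bo}, \eqref{E:Jac_Diff}, and \eqref{E:Poin_Bo}/\eqref{E:Poin_Nor} to extract the leading term $\varepsilon\,M_\tau u\cdot\nabla_\Gamma g$ with $O(\varepsilon^{3/2})$ remainders. The only differences are bookkeeping — the paper replaces the Jacobian $J$ by $1$ up front and then applies the fundamental theorem of calculus exactly, so no $\partial_rJ$ term or integration by parts in $r$ appears, whereas you keep $J$ and control the resulting bulk remainder with \eqref{E:Poin_Nor}; note also that the inequality you need from \eqref{E:Tau_Diff} is the first one, $|\tau_\varepsilon^i-\nabla_\Gamma g_i|\leq c\varepsilon$, not the second.
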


\begin{proof}
  By the change of variables formula \eqref{E:CoV_Dom}, the inequalities \eqref{E:Jac_Diff}, \eqref{E:CoV_Equiv}, and \eqref{E:Con_Lp}, and $|Q|=1$ on $\Gamma$ we have
  \begin{multline} \label{Pf_ABNN:First}
    \left|\int_{\Omega_\varepsilon}\Bigl(D(u):\overline{Q}\Bigr)\bar{\eta}\,dx-\int_\Gamma\left(\int_{\varepsilon g_0}^{\varepsilon g_1}D(u)^\sharp:Q\,dr\right)\eta\,d\mathcal{H}^2\right| \\
    \leq c\varepsilon^{3/2}\|u\|_{H^1(\Omega_\varepsilon)}\|\eta\|_{L^2(\Gamma)}.
  \end{multline}
  Let us compute the integral of $D(u)^\sharp(y,r):Q(y)$ in $r$.
  Since $Q=n\otimes n$ is symmetric,
  \begin{align*}
    D(u)(y+rn(y)): Q(y) &= \nabla u(y+rn(y)):n(y)\otimes n(y) \\
    &= [(n(y)\cdot\nabla)u](y+rn(y))\cdot n(y) \\
    &= \frac{\partial}{\partial r}\Bigl(u(y+rn(y))\Bigr)\cdot n(y)
  \end{align*}
  for all $y\in\Gamma$ and $r\in(\varepsilon g_0(y),\varepsilon g_1(y))$, i.e. $D(u)^\sharp:Q=(\partial u^\sharp/\partial r)\cdot n$.
  Hence
  \begin{align} \label{Pf_ABNN:Int_R}
    \int_{\varepsilon g_0}^{\varepsilon g_1}D(u)^\sharp:Q\,dr = \left(\int_{\varepsilon g_0}^{\varepsilon g_1}\frac{\partial u^\sharp}{\partial r}\,dr\right)\cdot n = u_1^\sharp\cdot n-u_0^\sharp\cdot n \quad\text{on}\quad \Gamma.
  \end{align}
  Here and hereafter we use the notation \eqref{E:Pull_Bo}.
  Since $u$ satisfies \eqref{E:Bo_Imp} on $\Gamma_\varepsilon$,
  \begin{align*}
    (-1)^{i+1}u_i^\sharp\cdot n = u_i^\sharp\cdot\{(-1)^{i+1}(n-\varepsilon\nabla_\Gamma g_i)-n_{\varepsilon,i}^\sharp\}+\varepsilon(-1)^{i+1}u_i^\sharp\cdot\nabla_\Gamma g_i
  \end{align*}
  on $\Gamma$ for $i=0,1$.
  From this equality and $g=g_1-g_0$ on $\Gamma$ it follows that
  \begin{align*}
    u_1^\sharp\cdot n-u_0^\sharp\cdot n &= \sum_{i=0,1}(-1)^{i+1}u_i^\sharp\cdot n \\
    &= \sum_{i=0,1}u_i^\sharp\cdot\{(-1)^{i+1}(n-\varepsilon\nabla_\Gamma g_i)-n_{\varepsilon,i}^\sharp\} \\
    &\qquad +\varepsilon\sum_{i=0,1}(-1)^{i+1}(u_i^\sharp-Mu)\cdot\nabla_\Gamma g_i+\varepsilon Mu\cdot\nabla_\Gamma g
  \end{align*}
  on $\Gamma$.
  Applying \eqref{E:Comp_N} to the second line we get
  \begin{align} \label{Pf_ABNN:Diff}
    |(u_1^\sharp\cdot n-u_0^\sharp\cdot n)-\varepsilon Mu\cdot\nabla_\Gamma g| \leq c\varepsilon\sum_{i=0,1}(\varepsilon|u_i^\sharp|+|u_i^\sharp-Mu|) \quad\text{on}\quad \Gamma.
  \end{align}
  Combining \eqref{Pf_ABNN:Int_R} and \eqref{Pf_ABNN:Diff} and using H\"{o}lder's inequality we see that
  \begin{multline*}
    \left|\int_\Gamma\left(\int_{\varepsilon g_0}^{\varepsilon g_1}D(u)^\sharp:Q\,dr\right)\eta\,d\mathcal{H}^2-\varepsilon\int_\Gamma(Mu\cdot\nabla_\Gamma g)\eta\,d\mathcal{H}^2\right| \\
    \leq c\varepsilon\sum_{i=0,1}\left(\varepsilon\|u_i^\sharp\|_{L^2(\Gamma)}+\|u_i^\sharp-Mu\|_{L^2(\Gamma)}\right)\|\eta\|_{L^2(\Gamma)}.
  \end{multline*}
  Moreover, by \eqref{E:Lp_CoV_Surf}, \eqref{E:Poin_Bo}, and \eqref{E:Ave_Diff_Bo},
  \begin{align*}
    \varepsilon\|u_i^\sharp\|_{L^2(\Gamma)}+\|u_i^\sharp-Mu\|_{L^2(\Gamma)} &\leq c\left(\varepsilon\|u\|_{L^2(\Gamma_\varepsilon^i)}+\left\|u-\overline{Mu}\right\|_{L^2(\Gamma_\varepsilon^i)}\right) \\
    &\leq c\varepsilon^{1/2}\|u\|_{H^1(\Omega_\varepsilon)}.
  \end{align*}
  From the above two estimates we deduce that
  \begin{multline} \label{Pf_ABNN:Second}
    \left|\int_\Gamma\left(\int_{\varepsilon g_0}^{\varepsilon g_1}D(u)^\sharp:Q\,dr\right)\eta\,d\mathcal{H}^2-\varepsilon\int_\Gamma(Mu\cdot\nabla_\Gamma g)\eta\,d\mathcal{H}^2\right| \\
    \leq c\varepsilon^{3/2}\|u\|_{H^1(\Omega_\varepsilon)}\|\eta\|_{L^2(\Gamma)}.
  \end{multline}
  Since $\nabla_\Gamma g$ is tangential on $\Gamma$, we have $Mu\cdot\nabla_\Gamma g=M_\tau u\cdot\nabla_\Gamma g$ on $\Gamma$ and thus the inequality \eqref{E:Ave_BiH1_NN} follows from \eqref{Pf_ABNN:First} and \eqref{Pf_ABNN:Second}.
\end{proof}

\begin{lemma} \label{L:Ave_BiH1_TN}
  Let $u^\varepsilon\in H^2(\Omega_\varepsilon)^3$ and $v\in L^2(\Gamma,T\Gamma)$.
  Suppose that the inequalities \eqref{E:Fric_Upper} are valid and $u$ satisfies \eqref{E:Bo_Slip} on $\Gamma_\varepsilon^0$ or on $\Gamma_\varepsilon^1$.
  Then we have
  \begin{align} \label{E:Ave_BiH1_TN}
    \left|\int_{\Omega_\varepsilon}D(u):\bar{v}\otimes\bar{n}\,dx\right| \leq c\varepsilon^{3/2}\|u\|_{H^2(\Omega_\varepsilon)}\|v\|_{L^2(\Gamma)},
  \end{align}
  where $c>0$ is a constant independent of $\varepsilon$, $u$, and $v$.
\end{lemma}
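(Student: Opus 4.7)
The plan is to reduce the integral to a product of two already-estimated quantities. First, I would observe that for any matrix $A \in \mathbb{R}^{3 \times 3}$ and vectors $a, b \in \mathbb{R}^3$ we have $A : a \otimes b = Ab \cdot a$, so the integrand equals $D(u)\bar{n} \cdot \bar{v}$ pointwise in $\Omega_\varepsilon$. Since $v$ is tangential on $\Gamma$ (i.e. $v \cdot n = 0$), the constant extension satisfies $\bar{v} \cdot \bar{n} = 0$ in $\Omega_\varepsilon$, and therefore $\overline{Q}\bar{v} = (\bar{v}\cdot\bar{n})\bar{n} = 0$ and $\overline{P}\bar{v} = \bar{v}$. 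Using $\overline{P}^T = \overline{P}$, this gives
\[
D(u)\bar{n} \cdot \bar{v} = D(u)\bar{n} \cdot \overline{P}\bar{v} = \overline{P}D(u)\bar{n} \cdot \bar{v} \quad \text{in } \Omega_\varepsilon.
\]

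Next I would apply H\"older's inequality in $L^2(\Omega_\varepsilon)$ to obtain
\[
\left|\int_{\Omega_\varepsilon} D(u):\bar{v}\otimes\bar{n}\,dx\right| \leq \left\|\overline{P}D(u)\bar{n}\right\|_{L^2(\Omega_\varepsilon)} \|\bar{v}\|_{L^2(\Omega_\varepsilon)}.
\]
The first factor on the right is precisely what Lemma~\ref{L:Poin_Str} estimates: under the standing hypotheses \eqref{E:Fric_Upper} on the friction coefficients and the assumption that $u$ satisfies \eqref{E:Bo_Slip} on $\Gamma_\varepsilon^0$ or $\Gamma_\varepsilon^1$, we have $\|\overline{P}D(u)\bar{n}\|_{L^2(\Omega_\varepsilon)} \leq c\varepsilon\|u\|_{H^2(\Omega_\varepsilon)}$. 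The second factor is controlled by the scaling estimate \eqref{E:Con_Lp} from Lemma~\ref{L:Con_Lp_W1p}, which gives $\|\bar{v}\|_{L^2(\Omega_\varepsilon)} \leq c\varepsilon^{1/2}\|v\|_{L^2(\Gamma)}$ (since $v \in L^2(\Gamma, T\Gamma) \subset L^2(\Gamma)^3$).

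Multiplying these two bounds yields $c\varepsilon \cdot \varepsilon^{1/2} = c\varepsilon^{3/2}$, which is exactly the desired inequality \eqref{E:Ave_BiH1_TN}. There is no real obstacle here: the entire strength of the estimate has been absorbed into Lemma~\ref{L:Poin_Str}, whose $O(\varepsilon)$ gain comes from combining the slip boundary condition (which forces $\overline{P}D(u)\bar{n}$ to be small of order $\varepsilon$ on the appropriate component of $\Gamma_\varepsilon$, via \eqref{E:Fric_Upper}, \eqref{E:Comp_N}, and \eqref{E:Comp_P}) with the Poincar\'e-type trace inequality \eqref{E:Poin_Dom}. The tangentiality of $v$ is what lets us insert $\overline{P}$ and invoke that lemma, while the extra $\varepsilon^{1/2}$ is the standard scaling factor for the constant extension from $\Gamma$ to the thin domain $\Omega_\varepsilon$.
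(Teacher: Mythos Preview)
Your proof is correct and essentially identical to the paper's own argument: both rewrite the integrand as $\bar v\cdot\overline P D(u)\bar n$ using the tangentiality of $v$, apply H\"older, and then invoke Lemma~\ref{L:Poin_Str} together with \eqref{E:Con_Lp}. The only cosmetic difference is that the paper reaches the pointwise identity via $\mathrm{tr}[D(u)^T(\bar v\otimes\bar n)]$ and the symmetry of $D(u)$, whereas you use $A:a\otimes b=Ab\cdot a$ directly.
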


\begin{proof}
  Since $v$ is tangential on $\Gamma$,
  \begin{align*}
    D(u):\bar{v}\otimes\bar{n} = \mathrm{tr}[D(u)^T(\bar{v}\otimes\bar{n})] = (D(u)^T\bar{v})\cdot\bar{n} = \bar{v}\cdot(D(u)\bar{n}) = \bar{v}\cdot\overline{P}D(u)\bar{n}
  \end{align*}
  in $\Omega_\varepsilon$.
  Hence by \eqref{E:Con_Lp} and \eqref{E:Poin_Str} we see that
  \begin{align*}
    \left|\int_{\Omega_\varepsilon}D(u):\bar{v}\otimes\bar{n}\,dx\right| \leq c\left\|\overline{P}D(u)\bar{n}\right\|_{L^2(\Omega_\varepsilon)}\|\bar{v}\|_{L^2(\Omega_\varepsilon)} \leq c\varepsilon^{3/2}\|u\|_{H^2(\Omega_\varepsilon)}\|v\|_{L^2(\Gamma)}.
  \end{align*}
  Here we used the inequalities \eqref{E:Fric_Upper} and the condition on $u$ to apply \eqref{E:Poin_Str}.
\end{proof}

Now let us derive estimates for trilinear forms.
The main tools for the estimates are the product estimate \eqref{E:Prod_Surf} for functions on $\Gamma$ and $\Omega_\varepsilon$ and the $L^\infty$-estimate \eqref{E:Linf_Ur} for the residual part of a vector field on $\Omega_\varepsilon$.

\begin{lemma} \label{L:Ave_TrT}
  Let $u_1\in H^2(\Omega_\varepsilon)^3$, $u_2\in H^1(\Omega_\varepsilon)^3$, and $A\in L^2(\Gamma)^{3\times3}$.
  Suppose that the inequalities \eqref{E:Fric_Upper} are valid, $u_1$ satisfies $\mathrm{div}\,u_1=0$ in $\Omega_\varepsilon$ and \eqref{E:Bo_Imp}--\eqref{E:Bo_Slip} on $\Gamma_\varepsilon$, and $A$ satisfies \eqref{E:A_Cond} on $\Gamma$.
  Then
  \begin{multline} \label{E:Ave_TrT}
    \left|\int_{\Omega_\varepsilon}u_1\otimes u_2:\overline{A}\,dx-\varepsilon\int_\Gamma g(M_\tau u_1)\otimes(M_\tau u_2):A\,d\mathcal{H}^2\right| \\
    \leq cR_\varepsilon(u_1,u_2)\|A\|_{L^2(\Gamma)},
  \end{multline}
  where $c>0$ is a constant independent of $\varepsilon$, $u_1$, $u_2$, and $A$ and
  \begin{multline} \label{E:Ave_TrT_Re}
    R_\varepsilon(u_1,u_2) := \varepsilon\|u_1\|_{H^1(\Omega_\varepsilon)}\|u_2\|_{H^1(\Omega_\varepsilon)} \\
    +\left(\varepsilon\|u_1\|_{H^2(\Omega_\varepsilon)}+\varepsilon^{1/2}\|u_1\|_{L^2(\Omega_\varepsilon)}^{1/2}\|u_1\|_{H^2(\Omega_\varepsilon)}^{1/2}\right)\|u_2\|_{L^2(\Omega_\varepsilon)}.
  \end{multline}
\end{lemma}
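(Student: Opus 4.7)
The plan is to split $u_1=u_1^a+u_1^r$ via Definition~\ref{D:Def_ExAve} and handle the two pieces separately. A key preliminary observation is that the hypothesis \eqref{E:A_Cond} forces $An=0$ and $A^Tn=0$ on $\Gamma$: from $AP=A$ one deduces $AQ=0$, whence $(An)\otimes n=0$ and so $An=0$, and $A^Tn=0$ follows analogously from $PA=A$. Extended constantly to $\Omega_\varepsilon$, this gives $(a\otimes\bar n):\overline A=0$ and $(\bar n\otimes a):\overline A=0$ for every vector $a$. Since $u_1^a-\overline{M_\tau u_1}=(\overline{M_\tau u_1}\cdot\Psi_\varepsilon)\bar n$ by \eqref{E:Def_ExAve}, this normal contribution is annihilated by $\overline A$, and pointwise in $\Omega_\varepsilon$,
\begin{equation*}
u_1^a\otimes u_2:\overline A=\overline{M_\tau u_1}\otimes u_2:\overline A.
\end{equation*}

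For the residual piece, I will estimate by H\"{o}lder's inequality
\begin{equation*}
\left|\int_{\Omega_\varepsilon}u_1^r\otimes u_2:\overline A\,dx\right|\leq \|u_1^r\|_{L^\infty(\Omega_\varepsilon)}\|u_2\|_{L^2(\Omega_\varepsilon)}\|\overline A\|_{L^2(\Omega_\varepsilon)}.
\end{equation*}
The hypotheses on $u_1$ are exactly those required to invoke the $L^\infty$-bound \eqref{E:Linf_Ur}, and \eqref{E:Con_Lp} yields $\|\overline A\|_{L^2(\Omega_\varepsilon)}\leq c\varepsilon^{1/2}\|A\|_{L^2(\Gamma)}$; combining these produces exactly the second line of $R_\varepsilon(u_1,u_2)$.

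For the average piece, a direct pullback via \eqref{E:CoV_Dom}, using that $M_\tau u_1$ and $A$ depend only on $y\in\Gamma$, gives
\begin{equation*}
\int_{\Omega_\varepsilon}\overline{M_\tau u_1}\otimes u_2:\overline A\,dx=\varepsilon\int_\Gamma g(M_\tau u_1)\otimes(Mu_2):A\,d\mathcal H^2+E,
\end{equation*}
with $E:=\int_\Gamma(M_\tau u_1)\otimes h:A\,d\mathcal H^2$ and $h(y):=\int_{\varepsilon g_0(y)}^{\varepsilon g_1(y)}u_2^\sharp(y,r)(J(y,r)-1)\,dr$. From $|J-1|\leq c\varepsilon$ and Cauchy--Schwarz in $r$ one has $|h(y)|^2\leq c\varepsilon^3\int|u_2^\sharp(y,r)|^2\,dr$, so after a further Cauchy--Schwarz on $\Gamma$ and \eqref{E:CoV_Dom},
\begin{equation*}
|E|\leq c\varepsilon^{3/2}\,\|A\|_{L^2(\Gamma)}\,\bigl\|\,\overline{|M_\tau u_1|}\,|u_2|\,\bigr\|_{L^2(\Omega_\varepsilon)}.
\end{equation*}
Applying the product inequality \eqref{E:Prod_Surf} to the scalar functions $|M_\tau u_1|\in H^1(\Gamma)$ and $|u_2|\in H^1(\Omega_\varepsilon)$, followed by the surface--to--bulk bounds $\|M_\tau u_1\|_{L^2(\Gamma)}\leq c\varepsilon^{-1/2}\|u_1\|_{L^2(\Omega_\varepsilon)}$ and $\|M_\tau u_1\|_{H^1(\Gamma)}\leq c\varepsilon^{-1/2}\|u_1\|_{H^1(\Omega_\varepsilon)}$ from \eqref{E:Ave_Lp_Surf} and \eqref{E:Ave_Wmp_Surf} (together with $|M_\tau u_1|\leq|Mu_1|$), yields $|E|\leq c\varepsilon\|u_1\|_{H^1(\Omega_\varepsilon)}\|u_2\|_{H^1(\Omega_\varepsilon)}\|A\|_{L^2(\Gamma)}$, matching the first term of $R_\varepsilon$. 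Finally, $Mu_2$ can be replaced by $M_\tau u_2$ in the main term at no cost, since $(M_\tau u_1)\otimes\bigl[(Mu_2\cdot n)n\bigr]:A=(Mu_2\cdot n)(M_\tau u_1)\cdot An=0$ by the opening observation.

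The chief technical obstacle lies in the estimate of $E$: because $A$ is merely in $L^2(\Gamma)$, one cannot meaningfully isolate the pointwise product $A\cdot M_\tau u_1$, and a naive H\"{o}lder estimate would lose an uncompensated factor of $\varepsilon^{-1/2}$ when translating the surface norm of $M_\tau u_1$ into bulk norms of $u_1$. The remedy is to couple $|M_\tau u_1|$ on $\Gamma$ with $|u_2|$ on $\Omega_\varepsilon$ through the Ladyzhenskaya-type product inequality \eqref{E:Prod_Surf}, which supplies precisely the interpolated $L^2$--$H^1$ combination that absorbs the lost power of $\varepsilon$ and leaves the first term of $R_\varepsilon(u_1,u_2)$.
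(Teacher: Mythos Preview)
Your proof is correct and follows essentially the same approach as the paper's: both split off the residual part of $u_1$ and control it via the $L^\infty$-estimate \eqref{E:Linf_Ur}, then handle the average part through the change of variables \eqref{E:CoV_Dom}, the Jacobian bound $|J-1|\leq c\varepsilon$, and the product inequality \eqref{E:Prod_Surf}. The only cosmetic difference is that the paper invokes \eqref{E:A_Cond} once at the outset to write $u_1\otimes u_2:\overline{A}=u_{1,\tau}\otimes u_{2,\tau}:\overline{A}$ (so that $u_{1,\tau}-\overline{M_\tau u_1}=\overline{P}u_1^r$ appears directly), whereas you use the consequences $An=A^Tn=0$ at several separate points to kill the normal contributions of $u_1^a$ and of $Mu_2-M_\tau u_2$.
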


\begin{proof}
  We use the notations \eqref{E:Def_U_TN} for the tangential and normal components (with respect to $\Gamma$) of a vector field on $\Omega_\varepsilon$.
  By \eqref{E:A_Cond} and $P^T=P$ on $\Gamma$,
  \begin{align*}
    u_1\otimes u_2:\overline{A} = \overline{P}(u_1\otimes u_2)\overline{P}:\overline{A} = u_{1,\tau}\otimes u_{2,\tau}:\overline{A} \quad\text{in}\quad \Omega_\varepsilon.
  \end{align*}
  Using this equality we decompose the difference
  \begin{align} \label{Pf_TrT:Split}
    \int_{\Omega_\varepsilon}u_1\otimes u_2:\overline{A}\,dx-\varepsilon\int_\Gamma g(M_\tau u_1)\otimes(M_\tau u_2):A\,d\mathcal{H}^2 = J_1+J_2
  \end{align}
  into
  \begin{align*}
    J_1 &:= \int_{\Omega_\varepsilon}u_{1,\tau}\otimes u_{2,\tau}:\overline{A}\,dx-\int_{\Omega_\varepsilon}\Bigl(\overline{M_\tau u_1}\Bigr)\otimes u_{2,\tau}:\overline{A}\,dx, \\
    J_2 &:= \int_{\Omega_\varepsilon}\Bigl(\overline{M_\tau u_1}\Bigr)\otimes u_{2,\tau}:\overline{A}\,dx-\varepsilon\int_\Gamma g(M_\tau u_1)\otimes(M_\tau u_2):A\,d\mathcal{H}^2.
  \end{align*}
  Let $u_1^a$ be the average part of $u_1$ given by \eqref{E:Def_ExAve} and $u_1^r:=u_1-u_1^a$.
  Since
  \begin{align*}
    u_{1,\tau}-\overline{M_\tau u_1} = \overline{P}u_1-\overline{P}u_1^a = \overline{P}u_1^r, \quad u_{2,\tau} = \overline{P}u_2 \quad\text{in}\quad \Omega_\varepsilon
  \end{align*}
  and $|Pa|\leq|a|$ on $\Gamma$ for $a\in\mathbb{R}^3$,
  \begin{align*}
    |J_1| = \left|\int_{\Omega_\varepsilon}\Bigl(\overline{P}u_1^r\Bigr)\otimes u_{2,\tau}:\overline{A}\,dx\right| \leq c\|u_1^r\|_{L^\infty(\Omega_\varepsilon)}\|u_2\|_{L^2(\Omega_\varepsilon)}\left\|\overline{A}\right\|_{L^2(\Omega_\varepsilon)}.
  \end{align*}
  We apply \eqref{E:Con_Lp} and \eqref{E:Linf_Ur} to the right-hand side to obtain
  \begin{align} \label{Pf_TrT:I1}
    |J_1| \leq c\left(\varepsilon\|u_1\|_{H^2(\Omega_\varepsilon)}+\varepsilon^{1/2}\|u_1\|_{L^2(\Omega_\varepsilon)}^{1/2}\|u_1\|_{H^2(\Omega_\varepsilon)}^{1/2}\right)\|u_2\|_{L^2(\Omega_\varepsilon)}\|A\|_{L^2(\Gamma)}.
  \end{align}
  Here we used the inequalities \eqref{E:Fric_Upper} and the conditions on $u_1$ to apply \eqref{E:Linf_Ur}.

  Let us estimate $J_2$.
  By $M_\tau u_2=Mu_{2,\tau}$ on $\Gamma$, \eqref{E:CoV_Dom}, and \eqref{E:Def_Ave},
  \begin{align*}
    J_2 = \int_\Gamma (M_\tau u_1)\otimes\left(\int_{\varepsilon g_0}^{\varepsilon g_1}u_{2,\tau}^\sharp(J-1)\,dr\right):A\,d\mathcal{H}^2.
  \end{align*}
  To the right-hand side we apply \eqref{E:Jac_Diff}, \eqref{E:CoV_Equiv}, and
  \begin{align*}
    |M_\tau u_1| = |PMu_1| \leq |Mu_1| \quad\text{on}\quad \Gamma, \quad |u_{2,\tau}| = \left|\overline{P}u_2\right| \leq |u_2| \quad\text{in}\quad \Omega_\varepsilon
  \end{align*}
  to deduce that
  \begin{align*}
    |J_2| \leq c\varepsilon\int_{\Omega_\varepsilon}\left|\overline{M_\tau u_1}\right||u_{2,\tau}|\left|\overline{A}\right|\,dx \leq c\varepsilon\left\|\,\left|\overline{Mu_1}\right|\,|u_2|\,\right\|_{L^2(\Omega_\varepsilon)}\left\|\overline{A}\right\|_{L^2(\Omega_\varepsilon)}.
  \end{align*}
  Moreover, from \eqref{E:Ave_Lp_Surf}, \eqref{E:Ave_Wmp_Surf}, and \eqref{E:Prod_Surf} it follows that
  \begin{align*}
    \left\|\,\left|\overline{Mu_1}\right|\,|u_2|\,\right\|_{L^2(\Omega_\varepsilon)} &\leq c\|Mu_1\|_{L^2(\Gamma)}^{1/2}\|Mu_1\|_{H^1(\Gamma)}^{1/2}\|u_2\|_{L^2(\Omega_\varepsilon)}^{1/2}\|u_2\|_{H^1(\Omega_\varepsilon)}^{1/2} \\
    &\leq c\varepsilon^{-1/2}\|u_1\|_{L^2(\Omega_\varepsilon)}^{1/2}\|u_1\|_{H^1(\Omega_\varepsilon)}^{1/2}\|u_2\|_{L^2(\Omega_\varepsilon)}^{1/2}\|u_2\|_{H^1(\Omega_\varepsilon)}^{1/2}.
  \end{align*}
  We apply this inequality and \eqref{E:Con_Lp} to the above estimate for $J_2$ to get
  \begin{align} \label{Pf_TrT:I2}
    \begin{aligned}
      |J_2| &\leq c\varepsilon\|u_1\|_{L^2(\Omega_\varepsilon)}^{1/2}\|u_1\|_{H^1(\Omega_\varepsilon)}^{1/2}\|u_2\|_{L^2(\Omega_\varepsilon)}^{1/2}\|u_2\|_{H^1(\Omega_\varepsilon)}^{1/2}\|A\|_{L^2(\Gamma)}. \\
      &\leq c\varepsilon\|u_1\|_{H^1(\Omega_\varepsilon)}\|u_2\|_{H^1(\Omega_\varepsilon)}\|A\|_{L^2(\Gamma)}.
    \end{aligned}
  \end{align}
  By \eqref{Pf_TrT:Split}--\eqref{Pf_TrT:I2} we obtain \eqref{E:Ave_TrT} with $R_\varepsilon(u_1,u_2)$ given by \eqref{E:Ave_TrT_Re}.
\end{proof}

\begin{lemma} \label{L:Ave_TrN}
  Let $u_1,u_2\in H^1(\Omega_\varepsilon)^3$ and $v\in H^1(\Gamma)^3$.
  Suppose that $u_2$ satisfies \eqref{E:Bo_Imp} on $\Gamma_\varepsilon^0$ or on $\Gamma_\varepsilon^1$.
  Then there exists a constant $c>0$ independent of $\varepsilon$, $u_1$, $u_2$, and $v$ such that
  \begin{align} \label{E:Ave_TrN}
    \left|\int_{\Omega_\varepsilon}u_1\otimes u_2:\bar{v}\otimes\bar{n}\,dx\right| \leq c\varepsilon\|u_1\|_{H^1(\Omega_\varepsilon)}\|u_2\|_{H^1(\Omega_\varepsilon)}\|v\|_{H^1(\Gamma)}.
  \end{align}
\end{lemma}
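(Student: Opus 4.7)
The plan is to exploit the tensor structure of the integrand: since
\[
u_1\otimes u_2 : \bar{v}\otimes\bar{n} = (u_1\cdot\bar{v})(u_2\cdot\bar{n}),
\]
the integral splits into a product of a factor involving $\bar{v}$ (a constant extension from $\Gamma$ paired with a generic $H^1$ vector field on $\Omega_\varepsilon$) and a factor $u_2\cdot\bar{n}$ which must be small because $u_2$ satisfies the impermeable boundary condition on one of the two boundary components. This is precisely the setup in which Lemmas~\ref{L:Prod} and~\ref{L:Poin_Nor} combine cleanly.

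First, I would apply the Cauchy--Schwarz inequality to obtain
\[
\left|\int_{\Omega_\varepsilon}(u_1\cdot\bar{v})(u_2\cdot\bar{n})\,dx\right|
\leq \|u_1\cdot\bar{v}\|_{L^2(\Omega_\varepsilon)}\,\|u_2\cdot\bar{n}\|_{L^2(\Omega_\varepsilon)}.
\]
For the first factor, writing $u_1\cdot\bar{v}=\sum_{i=1}^3 \bar{v}_i\,(u_1)_i$ and applying the product estimate~\eqref{E:Prod_Surf} componentwise together with the trivial interpolation $\|\cdot\|_{L^2}^{1/2}\|\cdot\|_{H^1}^{1/2}\leq \|\cdot\|_{H^1}$, I get
\[
\|u_1\cdot\bar{v}\|_{L^2(\Omega_\varepsilon)}\leq c\|v\|_{H^1(\Gamma)}\|u_1\|_{H^1(\Omega_\varepsilon)}.
\]
For the second factor, the hypothesis that $u_2$ satisfies \eqref{E:Bo_Imp} on $\Gamma_\varepsilon^0$ or on $\Gamma_\varepsilon^1$ is exactly what is needed to apply Lemma~\ref{L:Poin_Nor} with $p=2$, giving
\[
\|u_2\cdot\bar{n}\|_{L^2(\Omega_\varepsilon)}\leq c\varepsilon\|u_2\|_{H^1(\Omega_\varepsilon)}.
\]
Multiplying the two bounds yields \eqref{E:Ave_TrN}.

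There is no serious obstacle here; the result is essentially a repackaging of two earlier lemmas, with the $\varepsilon$ gain coming entirely from the Poincaré-type inequality~\eqref{E:Poin_Nor} for the normal component of $u_2$, and the dependence on $v\in H^1(\Gamma)^3$ (no tangentiality assumption needed) being absorbed by~\eqref{E:Prod_Surf}. The only point requiring a moment of care is verifying that the product estimate~\eqref{E:Prod_Surf}, which is stated for scalar functions, applies to each component of the dot product $u_1\cdot\bar{v}$ without loss—this is immediate by summing the scalar estimates and using $\|v_i\|_{H^1(\Gamma)}\leq \|v\|_{H^1(\Gamma)}$ and $\|(u_1)_i\|_{H^1(\Omega_\varepsilon)}\leq \|u_1\|_{H^1(\Omega_\varepsilon)}$.
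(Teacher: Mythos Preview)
Your proof is correct and follows essentially the same approach as the paper: rewrite the integrand as $(u_1\cdot\bar{v})(u_2\cdot\bar{n})$, apply Cauchy--Schwarz, then use the product estimate \eqref{E:Prod_Surf} for the first factor and the Poincar\'e-type inequality \eqref{E:Poin_Nor} for the second. The paper's version is slightly terser (it does not spell out the componentwise application of \eqref{E:Prod_Surf}), but the argument is the same.
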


\begin{proof}
  By $u_1\otimes u_2:\bar{v}\otimes\bar{n}=(u_1\cdot\bar{v})(u_2\cdot\bar{n})$ in $\Omega_\varepsilon$, \eqref{E:Poin_Nor}, and \eqref{E:Prod_Surf} we have
  \begin{align*}
    \left|\int_{\Omega_\varepsilon}u_1\otimes u_2:\bar{v}\otimes\bar{n}\,dx\right| &\leq \|u_1\cdot\bar{v}\|_{L^2(\Omega_\varepsilon)}\|u_2\cdot\bar{n}\|_{L^2(\Omega_\varepsilon)} \\
    &\leq c\varepsilon\|u_1\|_{L^2(\Omega_\varepsilon)}^{1/2}\|u_1\|_{H^1(\Omega_\varepsilon)}^{1/2}\|v\|_{L^2(\Gamma)}^{1/2}\|v\|_{H^1(\Gamma)}^{1/2}\|u_2\|_{H^1(\Omega_\varepsilon)} \\
    &\leq c\varepsilon\|u_1\|_{H^1(\Omega_\varepsilon)}\|u_2\|_{H^1(\Omega_\varepsilon)}\|v\|_{H^1(\Gamma)}.
  \end{align*}
  Here we used the condition on $u_2$ to apply \eqref{E:Poin_Nor} to $\|u_2\cdot\bar{n}\|_{L^2(\Omega_\varepsilon)}$.
\end{proof}

%%% Section 7 %%%
\section{Estimate for the trilinear term} \label{S:Tri}
The purpose of this section is to give an estimate for the trilinear term, i.e. the $L^2$-inner product of the inertial and viscous terms, which is essential for the proof of the global existence of a strong solution to \eqref{E:NS_Eq}--\eqref{E:NS_In}.
Throughout this section we impose Assumptions~\ref{Assump_1} and~\ref{Assump_2} and fix the constant $\varepsilon_0$ given in Lemma~\ref{L:Bili_Core}.
For $\varepsilon\in(0,\varepsilon_0)$ let $\mathcal{H}_\varepsilon$ be the subspace of $L^2(\Omega_\varepsilon)^3$ given by \eqref{E:Def_Heps} and $A_\varepsilon$ the Stokes operator on $\mathcal{H}_\varepsilon$ defined in Section~\ref{SS:St_Def}.

\begin{lemma} \label{L:Tri_Est}
  For any $\alpha>0$ there exist $c_\alpha^1,c_\alpha^2>0$ independent of $\varepsilon$ such that
  \begin{multline} \label{E:Tri_Est}
    \left|\bigl((u\cdot\nabla)u,A_\varepsilon u\bigr)_{L^2(\Omega_\varepsilon)}\right| \leq \left(\alpha+c_\alpha^1\varepsilon^{1/2}\|u\|_{H^1(\Omega_\varepsilon)}\right)\|u\|_{H^2(\Omega_\varepsilon)}^2 \\
    +c_\alpha^2\left(\|u\|_{L^2(\Omega_\varepsilon)}^2\|u\|_{H^1(\Omega_\varepsilon)}^4+\varepsilon^{-1}\|u\|_{L^2(\Omega_\varepsilon)}^2\|u\|_{H^1(\Omega_\varepsilon)}^2\right)
  \end{multline}
  for all $\varepsilon\in(0,\varepsilon_0)$ and $u\in D(A_\varepsilon)$.
  (In fact, $c_\alpha^1$ does not depend on $\alpha$.)
\end{lemma}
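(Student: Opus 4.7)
The plan is to use the average-residual decomposition $u = u^a + u^r$ of Definition~\ref{D:Def_ExAve} (where $u^a = E_\varepsilon M_\tau u$ is essentially two-dimensional and $u^r$ satisfies the impermeable boundary condition with good Poincar\'e-type control) to write
\[
\bigl((u\cdot\nabla)u, A_\varepsilon u\bigr)_{L^2(\Omega_\varepsilon)} = \bigl((u^r\cdot\nabla)u, A_\varepsilon u\bigr)_{L^2} + \bigl((u^a\cdot\nabla)u, A_\varepsilon u\bigr)_{L^2}
\]
and to estimate the two pieces by different tools. In both, I replace $\|A_\varepsilon u\|_{L^2}$ by $c\|u\|_{H^2(\Omega_\varepsilon)}$ using the uniform norm equivalence of Lemma~\ref{L:Stokes_H2}, which is available precisely because $u\in D(A_\varepsilon)$ satisfies the slip boundary conditions of Assumption~\ref{Assump_1}.

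For the residual contribution I would dominate by $\|u^r\|_{L^\infty}\|u\|_{H^1}\|u\|_{H^2}$ via Cauchy--Schwarz and invoke the Agmon-type bound of Lemma~\ref{L:Linf_Ur},
\[
\|u^r\|_{L^\infty(\Omega_\varepsilon)} \leq c\bigl(\varepsilon^{1/2}\|u\|_{H^2(\Omega_\varepsilon)} + \|u\|_{L^2(\Omega_\varepsilon)}^{1/2}\|u\|_{H^2(\Omega_\varepsilon)}^{1/2}\bigr),
\]
which produces $c\varepsilon^{1/2}\|u\|_{H^1}\|u\|_{H^2}^2$ (already of the form of the $c_\alpha^1\varepsilon^{1/2}\|u\|_{H^1}\|u\|_{H^2}^2$ summand) together with $c\|u\|_{L^2}^{1/2}\|u\|_{H^1}\|u\|_{H^2}^{3/2}$; the second term is dispatched by Young's inequality with exponents $(4/3, 4)$ into $\alpha\|u\|_{H^2}^2 + c_\alpha\|u\|_{L^2}^2\|u\|_{H^1}^4$.

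For the average contribution I would apply the product estimate of Lemma~\ref{L:Prod_Ua} componentwise to $|u^a||\nabla u|$ to obtain
\[
\|(u^a\cdot\nabla)u\|_{L^2(\Omega_\varepsilon)} \leq c\varepsilon^{-1/2}\|u\|_{L^2(\Omega_\varepsilon)}^{1/2}\|u\|_{H^1(\Omega_\varepsilon)}\|u\|_{H^2(\Omega_\varepsilon)}^{1/2},
\]
whence $|((u^a\cdot\nabla)u, A_\varepsilon u)| \leq c\varepsilon^{-1/2}\|u\|_{L^2}^{1/2}\|u\|_{H^1}\|u\|_{H^2}^{3/2}$. The right Young's inequality on this expression---combined with the interpolation inequality $\|u\|_{H^1} \leq c\|u\|_{L^2}^{1/2}\|u\|_{H^2}^{1/2}$ of Lemma~\ref{L:St_Inter} used to reshape the bound before applying Young---produces $\alpha\|u\|_{H^2}^2$ plus a residual of the claimed $\varepsilon^{-1}\|u\|_{L^2}^2\|u\|_{H^1}^2$ form.

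The main obstacle will be the last Young's inequality for the average contribution. A naive choice of Young exponents $(4/3, 4)$ on the raw bound $c\varepsilon^{-1/2}\|u\|_{L^2}^{1/2}\|u\|_{H^1}\|u\|_{H^2}^{3/2}$ gives $\alpha\|u\|_{H^2}^2 + c_\alpha\varepsilon^{-2}\|u\|_{L^2}^2\|u\|_{H^1}^4$, whose $\varepsilon^{-2}$ prefactor is strictly worse than the claimed $\varepsilon^{-1}$. The resolution lies in splitting $\|u\|_{L^2}^{1/2}\|u\|_{H^1}\|u\|_{H^2}^{3/2}$ into two factors so that, after applying $\|u\|_{H^1}^2 \leq c\|u\|_{L^2}\|u\|_{H^2}$ to trade one half-power of $\|u\|_{H^2}^{1/2}$ against a half-power of $\|u\|_{L^2}^{1/2}$, Young's inequality with exponents $(2,2)$ produces the $\varepsilon^{-1}\|u\|_{L^2}^2\|u\|_{H^1}^2$ scaling. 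Getting this balance exactly right---so that the coefficient in front of $\|u\|_{H^2}^2$ is at worst $\alpha$ (absorbable) or $\varepsilon^{1/2}\|u\|_{H^1}$ (matching the $c_\alpha^1$-summand) while the remaining term fits one of the two lower-order shapes in the claim---is the crux of the argument.
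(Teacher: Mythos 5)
Your treatment of the residual part is fine and matches the paper's handling of the corresponding term, but the average part has a genuine gap, and it is exactly at the place you flag as the crux. The inequality $\|u\|_{H^1}^2\leq c\|u\|_{L^2}\|u\|_{H^2}$ only lets you bound $\|u\|_{H^1}$ from above by $\|u\|_{L^2}^{1/2}\|u\|_{H^2}^{1/2}$; it cannot be used to replace $\|u\|_{H^2}^{1/2}$ in an upper bound by $\|u\|_{H^1}/\|u\|_{L^2}^{1/2}$, which is what "trading a half-power of $\|u\|_{H^2}$ against a half-power of $\|u\|_{L^2}$" would require. And no admissible rebalancing of $\varepsilon^{-1/2}\|u\|_{L^2}^{1/2}\|u\|_{H^1}\|u\|_{H^2}^{3/2}$ works: any Young splitting that sends the $\|u\|_{H^2}^{3/2}$ factor into $\alpha\|u\|_{H^2}^2$ forces the complementary factor to the fourth power and produces $\varepsilon^{-2}\|u\|_{L^2}^2\|u\|_{H^1}^4$, while interpolating the full $\|u\|_{H^1}$ first gives $\varepsilon^{-1/2}\|u\|_{L^2}\|u\|_{H^2}^2$, whose coefficient $\varepsilon^{-1/2}\|u\|_{L^2}$ is neither an absorbable small constant nor of the allowed shape $\varepsilon^{1/2}\|u\|_{H^1}$. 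Indeed, taking $\|u\|_{L^2}\sim\varepsilon^{1/2}$, $\|u\|_{H^1}\sim\varepsilon^{1/4}$, $\|u\|_{H^2}\sim1$ (consistent with the interpolation inequality) shows that $\varepsilon^{-1/2}\|u\|_{L^2}^{1/2}\|u\|_{H^1}\|u\|_{H^2}^{3/2}$ is comparable to $\|u\|_{H^2}^2$ while every term on the right-hand side of \eqref{E:Tri_Est} is $o(\|u\|_{H^2}^2)$ as $\varepsilon\to0$; so the average contribution, estimated by brute force against $\|A_\varepsilon u\|_{L^2}\leq c\|u\|_{H^2}$, simply cannot be dominated by the claimed right-hand side.

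The paper avoids pairing $(u^a\cdot\nabla)u$ with the full $A_\varepsilon u$ at all. It first writes $(u\cdot\nabla)u=\omega\times u+\tfrac12\nabla(|u|^2)$ with $\omega=\mathrm{curl}\,u$ and discards the gradient part by orthogonality to $A_\varepsilon u\in\mathcal{H}_\varepsilon$. The average piece $\omega\times u^a$ is then tested separately against $A_\varepsilon u+\nu\Delta u$ and against $-\nu\Delta u$: the first factor is only of size $\|u\|_{H^1}$ by Lemma~\ref{L:Comp_Sto_Lap} (this is where the $H^2$-power drops to $1/2$ and your $(2,2)$-Young with interpolation does give $\alpha\|u\|_{H^2}^2+c_\alpha\varepsilon^{-1}\|u\|_{L^2}^2\|u\|_{H^1}^2$), while for the Laplacian piece one writes $-\Delta u=\mathrm{curl}\,\omega$ and integrates by parts via Lemma~\ref{L:IbP_Curl}, using the slip-boundary corrector $G(u)$, so that derivatives land on $\omega\times u^a$. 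The resulting worst terms involve $\mathrm{div}\,u^a$ and $\overline{P}\,\mathrm{curl}\,u^a$, which are small of order $\varepsilon$ by Lemmas~\ref{L:ExTan_Div}, \ref{L:ADiv_Tan}, and~\ref{L:Tan_Curl_Ua}; this structural cancellation, not a functional inequality, is what produces the $\varepsilon^{1/2}\|u\|_{H^1}\|u\|_{H^2}^2$ and $\varepsilon^{-1}\|u\|_{L^2}^2\|u\|_{H^1}^2$ shapes. Without some version of these integrations by parts and the smallness of $\mathrm{div}\,u^a$ and of the tangential curl of $u^a$, your direct decomposition cannot close.
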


The main tools for the proof of Lemma~\ref{L:Tri_Est} are the estimates given in Sections~\ref{SS:St_URE} and~\ref{SS:Ave_Ex}.
We also use the following inequality for the tangential component (with respect to $\Gamma$) of the curl of the average part $u^a$ (for the proof, see Appendix~\ref{S:Ap_VM}).

\begin{lemma} \label{L:Tan_Curl_Ua}
  For $u\in C^1(\Omega_\varepsilon)^3$ let $u^a:=E_\varepsilon M_\tau u$ be given by \eqref{E:Def_ExAve}.
  Then
  \begin{align} \label{E:Tan_Curl_Ua}
    \left|\overline{P}\,\mathrm{curl}\,u^a\right| \leq c\left(\left|\overline{Mu}\right|+\varepsilon\left|\overline{\nabla_\Gamma Mu}\right|\right) \quad\text{in}\quad \Omega_\varepsilon,
  \end{align}
  where $c>0$ is a constant independent of $\varepsilon$ and $u$.
\end{lemma}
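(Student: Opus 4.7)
The plan is to decompose $u^a=\bar v+f\bar n$ with $v:=M_\tau u\in C^1(\Gamma,T\Gamma)$ and $f:=\bar v\cdot\Psi_\varepsilon$, and use the product rule
\[ \mathrm{curl}\,u^a=\mathrm{curl}\,\bar v+\nabla f\times\bar n+f\,\mathrm{curl}\,\bar n\quad\text{in}\quad\Omega_\varepsilon. \]
The last two terms are straightforward: from \eqref{E:ExAux_Bound}, \eqref{E:ConDer_Bound}, and \eqref{E:NorG_Bound} one obtains $|f|\leq c\varepsilon|\bar v|$, $|\mathrm{curl}\,\bar n|\leq c$, and $|\nabla f|\leq c(|\bar v|+\varepsilon|\overline{\nabla_\Gamma v}|)$ by differentiating $\bar v\cdot\Psi_\varepsilon$ and using the uniform bound on $\nabla\Psi_\varepsilon$. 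Their combined contribution to $\mathrm{curl}\,u^a$ is therefore pointwise bounded by $c(|\bar v|+\varepsilon|\overline{\nabla_\Gamma v}|)$.

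The heart of the proof is controlling $\overline P\,\mathrm{curl}\,\bar v$. First I reduce to an on-$\Gamma$ computation: by $|d|\leq c\varepsilon$ in $\Omega_\varepsilon$ together with \eqref{E:ConDer_Dom} and \eqref{E:Wein_Diff} one has $|\nabla\bar v(x)-\overline{\nabla_\Gamma v}(x)|\leq c\varepsilon|\overline{\nabla_\Gamma v}(x)|$, which yields
\[ \bigl|\mathrm{curl}\,\bar v(x)-\overline{\mathrm{curl}_\Gamma v}(x)\bigr|\leq c\varepsilon\bigl|\overline{\nabla_\Gamma v}(x)\bigr|\quad\text{in}\quad\Omega_\varepsilon, \]
where $(\mathrm{curl}_\Gamma v)_i:=\sum_{j,k}\varepsilon_{ijk}\underline D_jv_k$ is the tangential curl on $\Gamma$. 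It then suffices to establish the pointwise identity
\begin{equation}\label{E:TanCurl}
 P\,\mathrm{curl}_\Gamma v=(Wv)\times n\quad\text{on}\quad\Gamma
\end{equation}
for every $v\in C^1(\Gamma,T\Gamma)$, since the right-hand side is tangential on $\Gamma$ (because $Wv$ is tangential by $PW=W$ and $(Wv)\times n\perp n$) and pointwise bounded by $c|v|$ from boundedness of $W$ on $\Gamma$.

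The identity \eqref{E:TanCurl} is the main obstacle. To prove it, I split $\nabla_\Gamma v=P(\nabla_\Gamma v)P+(Wv)\otimes n$ via \eqref{E:Grad_W}; only the antisymmetric part of $\nabla_\Gamma v$ contributes to the $\varepsilon$-contraction defining $\mathrm{curl}_\Gamma v$. A short direct calculation with the $\varepsilon$-tensor shows that the rank-one piece $(Wv)\otimes n$ contributes exactly $(Wv)\times n$. The tangent-plane piece $P(\nabla_\Gamma v)P$ has antisymmetric part supported on the tangent plane in both indices; any such skew tensor on a $2$-plane dualises under the $\varepsilon$-tensor to a vector parallel to $n$, so its contribution is purely normal. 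Concretely, writing $P_{ab}=\delta_{ab}-n_an_b$ and expanding $\sum_{j,k}\varepsilon_{ijk}[P(\nabla_\Gamma v)P]_{A,jk}$ by the $\varepsilon$-$\delta$ identity (while using $(\nabla_\Gamma v)^Tn=0$ since $Pn=0$) reduces this quantity to $-2\mu\,n_i$ for a scalar $\mu$, which is annihilated by $P$. Combining the three estimates and finally replacing $v=PMu$ — so that $|\bar v|\leq|\overline{Mu}|$ and $|\overline{\nabla_\Gamma v}|\leq c(|\overline{Mu}|+|\overline{\nabla_\Gamma Mu}|)$ by the $C^4$-smoothness of $P$ on $\Gamma$ — yields the claimed pointwise bound on $\overline P\,\mathrm{curl}\,u^a$.
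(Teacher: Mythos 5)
Your proof is correct, but it takes a genuinely different route from the paper's. The paper covers $\Gamma$ by patches carrying a local orthonormal frame $\{\tau_1,\tau_2,n\}$, expresses $\mathrm{curl}\,u^a$ through the moving-frame formula of Lemma~\ref{L:Curl_Exp}, and observes that $\overline{P}\,\mathrm{curl}\,u^a$ only retains the terms $\partial_nu^a\cdot\bar{\tau}_i$ and $(\bar{\tau}_i\cdot\nabla)u^a\cdot\bar{n}$; splitting $u^a=u^a_\tau+u^a_n$, the dangerous first-order term is removed via $(\bar{\tau}_i\cdot\nabla)u^a_\tau\cdot\bar{n}=-u^a_\tau\cdot(\bar{\tau}_i\cdot\nabla)\bar{n}$, which uses $u^a_\tau\cdot\bar{n}=0$. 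You instead work globally: the product rule $\mathrm{curl}\,u^a=\mathrm{curl}\,\bar{v}+\nabla f\times\bar{n}+f\,\mathrm{curl}\,\bar{n}$ with $f=\bar{v}\cdot\Psi_\varepsilon$, reduction to $\Gamma$ via \eqref{E:ConDer_Diff} with $|d|\leq c\varepsilon$, and the coordinate-free identity $P\,\mathrm{curl}_\Gamma v=(Wv)\times n$ for tangential $v$, which follows from \eqref{E:Grad_W} since the skew part of $P(\nabla_\Gamma v)P$ annihilates $n$ on both sides and hence dualises to a multiple of $n$. Your identity encapsulates the same mechanism as the paper's trick (the tangential part of the curl of a tangential field is a zeroth-order curvature term $Wv$), but packaged as a pointwise algebraic statement rather than a frame computation: you avoid the covering argument and Lemma~\ref{L:Curl_Exp}, at the cost of introducing the surface curl and the $\varepsilon$-$\delta$ bookkeeping, while the paper avoids those by estimating directional derivatives patchwise. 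The remaining steps in your argument (the bounds $|f|\leq c\varepsilon|\bar{v}|$, $|\nabla f|\leq c(|\bar{v}|+\varepsilon|\overline{\nabla_\Gamma v}|)$, $|\mathrm{curl}\,\bar{n}|\leq c$ from \eqref{E:ExAux_Bound}, \eqref{E:ConDer_Bound}, \eqref{E:NorG_Bound}, and the final replacement $v=PMu$ using $P\in C^4(\Gamma)^{3\times3}$) are exactly the right ingredients and match the constants' $\varepsilon$-independence required by the lemma.
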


\begin{proof}[Proof of Lemma~\ref{L:Tri_Est}]
  The proof is basically the same as that of \cite[Proposition~6.1]{Ho10}, but we require further calculations.
  For $u\in D(A_\varepsilon)$ let $u^a$ be the average part of $u$ given by \eqref{E:Def_ExAve}, $u^r:=u-u^a$ the residual part, and $\omega:=\mathrm{curl}\,u$.
  Since
  \begin{align*}
    (u\cdot\nabla)u = \omega\times u+\frac{1}{2}\nabla(|u|^2), \quad A_\varepsilon u \in \mathcal{H}_\varepsilon \subset L_\sigma^2(\Omega_\varepsilon), \quad \nabla(|u|^2) \in L_\sigma^2(\Omega_\varepsilon)^\perp,
  \end{align*}
  we have $(\nabla(|u|^2),A_\varepsilon u)_{L^2(\Omega_\varepsilon)}=0$ and thus
  \begin{align*}
    \bigl((u\cdot\nabla)u,A_\varepsilon u\bigr)_{L^2(\Omega_\varepsilon)} = (\omega\times u,A_\varepsilon u)_{L^2(\Omega_\varepsilon)} = J_1+J_2+J_3,
  \end{align*}
  where
  \begin{gather*}
    J_1 := (\omega\times u^r,A_\varepsilon u)_{L^2(\Omega_\varepsilon)},\\
    J_2 := (\omega\times u^a,A_\varepsilon u+\nu\Delta u)_{L^2(\Omega_\varepsilon)}, \quad J_3 := (\omega\times u^a,-\nu\Delta u)_{L^2(\Omega_\varepsilon)}.
  \end{gather*}
  Let us estimate $J_1$, $J_2$, and $J_3$ separately.
  By \eqref{E:Stokes_H2} and \eqref{E:Linf_Ur},
  \begin{align*}
    |J_1| &\leq \|u^r\|_{L^\infty(\Omega_\varepsilon)}\|\omega\|_{L^2(\Omega_\varepsilon)}\|A_\varepsilon u\|_{L^2(\Omega_\varepsilon)} \\
    &\leq c\left(\varepsilon^{1/2}\|u\|_{H^2(\Omega_\varepsilon)}+\|u\|_{L^2(\Omega_\varepsilon)}^{1/2}\|u\|_{H^2(\Omega_\varepsilon)}^{1/2}\right)\|u\|_{H^1(\Omega_\varepsilon)}\|u\|_{H^2(\Omega_\varepsilon)} \\
    &= c\varepsilon^{1/2}\|u\|_{H^1(\Omega_\varepsilon)}\|u\|_{H^2(\Omega_\varepsilon)}^2+c\|u\|_{L^2(\Omega_\varepsilon)}^{1/2}\|u\|_{H^1(\Omega_\varepsilon)}\|u\|_{H^2(\Omega_\varepsilon)}^{3/2}.
  \end{align*}
  To the last term we apply Young's inequality $ab\leq \alpha a^{4/3}+c_\alpha b^4$ to get
  \begin{align} \label{Pf_TE:Est_I1}
    |J_1| \leq \left(\alpha+c\varepsilon^{1/2}\|u\|_{H^1(\Omega_\varepsilon)}\right)\|u\|_{H^2(\Omega_\varepsilon)}^2+c_\alpha\|u\|_{L^2(\Omega_\varepsilon)}^2\|u\|_{H^1(\Omega_\varepsilon)}^4.
  \end{align}
  (Note that the constant $c$ in the above inequality does not depend on $\alpha$.)

  Next we deal with $J_2$.
  By \eqref{E:Prod_Ua} we have
  \begin{align} \label{Pf_TE:L2_Phi}
    \begin{aligned}
      \|\omega\times u^a\|_{L^2(\Omega_\varepsilon)} &\leq c\varepsilon^{-1/2}\|\omega\|_{L^2(\Omega_\varepsilon)}^{1/2}\|\omega\|_{H^1(\Omega_\varepsilon)}^{1/2}\|u\|_{L^2(\Omega_\varepsilon)}^{1/2}\|u\|_{H^1(\Omega_\varepsilon)}^{1/2} \\
      &\leq c\varepsilon^{-1/2}\|u\|_{L^2(\Omega_\varepsilon)}^{1/2}\|u\|_{H^1(\Omega_\varepsilon)}\|u\|_{H^2(\Omega_\varepsilon)}^{1/2}.
    \end{aligned}
  \end{align}
  From this inequality, \eqref{E:Comp_Sto_Lap}, and \eqref{E:St_Inter} it follows that
  \begin{align*}
    |J_2| &\leq \|\omega\times u^a\|_{L^2(\Omega_\varepsilon)}\|A_\varepsilon u+\nu\Delta u\|_{L^2(\Omega_\varepsilon)} \\
    &\leq c\varepsilon^{-1/2}\|u\|_{L^2(\Omega_\varepsilon)}^{1/2}\|u\|_{H^1(\Omega_\varepsilon)}^2\|u\|_{H^2(\Omega_\varepsilon)}^{1/2} \\
    &\leq c\varepsilon^{-1/2}\|u\|_{L^2(\Omega_\varepsilon)}\|u\|_{H^1(\Omega_\varepsilon)}\|u\|_{H^2(\Omega_\varepsilon)}.
  \end{align*}
   Applying Young's inequality $ab\leq \alpha a^2+c_\alpha b^2$ to the last line we further get
  \begin{align} \label{Pf_TE:Est_I2}
      |J_2| \leq \alpha\|u\|_{H^2(\Omega_\varepsilon)}^2+c_\alpha\varepsilon^{-1}\|u\|_{L^2(\Omega_\varepsilon)}^2\|u\|_{H^1(\Omega_\varepsilon)}^2.
  \end{align}
  The estimate for $J_3$ is more complicated.
  Let $\Phi:=\omega\times u^a$.
  Since $\omega\in H^1(\Omega_\varepsilon)^3$ and $u^a\in H^2(\Omega_\varepsilon)^3$, we have $\Phi\in H^1(\Omega_\varepsilon)^3$ by the Sobolev embeddings $H^1(\Omega_\varepsilon)\hookrightarrow L^4(\Omega_\varepsilon)$ and $H^2(\Omega_\varepsilon)\hookrightarrow L^\infty(\Omega_\varepsilon)$ (see~\cite{AdFo03}).
  Also, since $-\Delta u=\mathrm{curl}\,\omega$ by $\mathrm{div}\,u=0$ in $\Omega_\varepsilon$,
  \begin{align*}
    J_3 &= -\nu(\Delta u,\Phi)_{L^2(\Omega_\varepsilon)} = \nu(\mathrm{curl}\,\omega,\Phi)_{L^2(\Omega_\varepsilon)} \\
    &= -\nu(\mathrm{curl}\,G(u),\Phi)_{L^2(\Omega_\varepsilon)}+\nu(\omega+G(u),\mathrm{curl}\,\Phi)_{L^2(\Omega_\varepsilon)} = J_3^1+J_3^2+J_3^3
  \end{align*}
  by \eqref{E:IbP_Curl}.
  Here $G(u)$ is given by \eqref{E:Def_Gu} and
  \begin{gather*}
    J_3^1 := -\nu(\mathrm{curl}\,G(u),\Phi)_{L^2(\Omega_\varepsilon)}, \\
    J_3^2 := \nu(G(u),\mathrm{curl}\,\Phi)_{L^2(\Omega_\varepsilon)}, \quad J_3^3 := \nu(\omega,\mathrm{curl}\,\Phi)_{L^2(\Omega_\varepsilon)}.
  \end{gather*}
  Noting that $\Phi=\omega\times u^a$, we apply \eqref{E:G_Bound} and \eqref{Pf_TE:L2_Phi} to $J_3^1$ to deduce that
  \begin{align*}
    |J_3^1| \leq c\|\nabla G(u)\|_{L^2(\Omega_\varepsilon)}\|\Phi\|_{L^2(\Omega_\varepsilon)} \leq c\varepsilon^{-1/2}\|u\|_{L^2(\Omega_\varepsilon)}^{1/2}\|u\|_{H^1(\Omega_\varepsilon)}^2\|u\|_{H^2(\Omega_\varepsilon)}^{1/2}.
  \end{align*}
  Then using \eqref{E:St_Inter} and Young's inequality $ab\leq \alpha a^2+c_\alpha b^2$ we get
  \begin{align} \label{Pf_TE:Est_J1}
    \begin{aligned}
    |J_3^1| &\leq c\varepsilon^{-1/2}\|u\|_{L^2(\Omega_\varepsilon)}\|u\|_{H^1(\Omega_\varepsilon)}\|u\|_{H^2(\Omega_\varepsilon)} \\
    &\leq \alpha\|u\|_{H^2(\Omega_\varepsilon)}^2+c_\alpha\varepsilon^{-1}\|u\|_{L^2(\Omega_\varepsilon)}^2\|u\|_{H^1(\Omega_\varepsilon)}^2.
    \end{aligned}
  \end{align}
  Let us estimate $J_3^2$.
  The curl of $\Phi=\omega\times u^a$ is bounded by
  \begin{align*}
    |\mathrm{curl}\,\Phi| \leq c(|\nabla\omega||u^a|+|\omega||\nabla u^a|) \leq c(|u^a||\nabla^2u|+|\nabla u^a||\nabla u|) \quad\text{in}\quad \Omega_\varepsilon.
  \end{align*}
  By this inequality, \eqref{E:G_Bound}, and H\"{o}lder's inequality we get
  \begin{align*}
    |J_3^2| &\leq c\int_{\Omega_\varepsilon}|u|(|u^a||\nabla^2u|+|\nabla u^a||\nabla u|)\,dx \\
    &\leq c\left(\|\,|u^a|\,|u|\,\|_{L^2(\Omega_\varepsilon)}\|\nabla^2u\|_{L^2(\Omega_\varepsilon)}+\|\,|\nabla u^a|\,|u|\,\|_{L^2(\Omega_\varepsilon)}\|\nabla u\|_{L^2(\Omega_\varepsilon)}\right).
  \end{align*}
  To the last line we apply \eqref{E:Prod_Ua} and \eqref{E:Prod_Grad_Ua} to obtain
  \begin{align*}
    |J_3^2| &\leq c\varepsilon^{-1/2}\left(\|u\|_{L^2(\Omega_\varepsilon)}\|u\|_{H^1(\Omega_\varepsilon)}\|u\|_{H^2(\Omega_\varepsilon)}+\|u\|_{L^2(\Omega_\varepsilon)}^{1/2}\|u\|_{H^1(\Omega_\varepsilon)}^2\|u\|_{H^2(\Omega_\varepsilon)}^{1/2}\right) \\
    &\leq c\varepsilon^{-1/2}\|u\|_{L^2(\Omega_\varepsilon)}\|u\|_{H^1(\Omega_\varepsilon)}\|u\|_{H^2(\Omega_\varepsilon)},
  \end{align*}
  where the second inequality follows from \eqref{E:St_Inter}.
  Hence Young's inequality yields
  \begin{align} \label{Pf_TE:Est_J2}
    |J_3^2| \leq \alpha\|u\|_{H^2(\Omega_\varepsilon)}^2+c_\alpha\varepsilon^{-1}\|u\|_{L^2(\Omega_\varepsilon)}^2\|u\|_{H^1(\Omega_\varepsilon)}^2.
  \end{align}
  To estimate $J_3^3=\nu(\omega,\mathrm{curl}\,\Phi)_{L^2(\Omega_\varepsilon)}$ we observe by $\Phi=\omega\times u^a$ that
  \begin{align*}
    \mathrm{curl}\,\Phi = (u^a\cdot\nabla)\omega-(\omega\cdot\nabla)u^a+(\mathrm{div}\,u^a)\omega-(\mathrm{div}\,\omega)u^a \quad\text{in}\quad \Omega_\varepsilon.
  \end{align*}
  Moreover, since $u^a$ satisfies $u^a\cdot n_\varepsilon=0$ on $\Gamma_\varepsilon$ by Lemma~\ref{L:ExTan_Imp} and \eqref{E:Def_ExAve}, we have
  \begin{align*}
    \int_{\Omega_\varepsilon}\omega\cdot(u^a\cdot\nabla)\omega\,dx = -\frac{1}{2}\int_{\Omega_\varepsilon}(\mathrm{div}\,u^a)|\omega|^2\,dx
  \end{align*}
  by integration by parts.
  By these equalities and $\mathrm{div}\,\omega=\mathrm{div}\,\mathrm{curl}\,u=0$ in $\Omega_\varepsilon$ we get
  \begin{align} \label{Pf_TE:J3_Exp}
    \begin{aligned}
      J_3^3 & = \nu(\omega,(u^a\cdot\nabla)\omega-(\omega\cdot\nabla)u^a+(\mathrm{div}\,u^a)\omega)_{L^2(\Omega_\varepsilon)} \\
      & = \frac{\nu}{2}(\mathrm{div}\,u^a,|\omega|^2)_{L^2(\Omega_\varepsilon)}-\nu(\omega,(\omega\cdot\nabla)u^a)_{L^2(\Omega_\varepsilon)}.
    \end{aligned}
  \end{align}
  Since $u^a=E_\varepsilon M_\tau u$ is given by \eqref{E:Def_ExAve}, we see that
  \begin{multline*}
    (\mathrm{div}\,u^a,|\omega|^2)_{L^2(\Omega_\varepsilon)} = \int_{\Omega_\varepsilon}\frac{1}{\bar{g}}\left(\overline{\mathrm{div}_\Gamma(gM_\tau u)}\right)|\omega|^2\,dx \\
    +\int_{\Omega_\varepsilon}\left(\mathrm{div}(E_\varepsilon M_\tau u)-\frac{1}{\bar{g}}\overline{\mathrm{div}_\Gamma(gM_\tau u)}\right)|\omega|^2\,dx.
  \end{multline*}
  Applying \eqref{E:Width_Bound}, \eqref{E:ExTan_Div}, and H\"{o}lder's inequality to the right-hand side we have
  \begin{align*}
    |(\mathrm{div}\,u^a,|\omega|^2)_{L^2(\Omega_\varepsilon)}| \leq c(K_1+\varepsilon K_2+\varepsilon K_3)\|\omega\|_{L^2(\Omega_\varepsilon)},
  \end{align*}
  where
  \begin{gather*}
    K_1 := \left\|\,\left|\overline{\mathrm{div}_\Gamma(gM_\tau u)}\right|\,|\omega|\,\right\|_{L^2(\Omega_\varepsilon)}, \\
    K_2 := \left\|\,\left|\overline{M_\tau u}\right|\,|\omega|\,\right\|_{L^2(\Omega_\varepsilon)}, \quad K_3 := \left\|\,\left|\overline{\nabla_\Gamma M_\tau u}\right|\,|\omega|\,\right\|_{L^2(\Omega_\varepsilon)}.
  \end{gather*}
  To $K_1$ we apply \eqref{E:Prod_Surf} and then use \eqref{E:ADiv_Tan_Lp} and \eqref{E:ADiv_Tan_W1p} to get
  \begin{align*}
    K_1 &\leq c\|\mathrm{div}_\Gamma(gM_\tau u)\|_{L^2(\Gamma)}^{1/2}\|\mathrm{div}_\Gamma(gM_\tau u)\|_{H^1(\Gamma)}^{1/2}\|\omega\|_{L^2(\Omega_\varepsilon)}^{1/2}\|\omega\|_{H^1(\Omega_\varepsilon)}^{1/2} \\
    &\leq c\varepsilon^{1/2}\|u\|_{H^1(\Omega_\varepsilon)}\|u\|_{H^2(\Omega_\varepsilon)} \leq c\varepsilon^{1/2}\|u\|_{H^2(\Omega_\varepsilon)}^2.
  \end{align*}
  Also, by $M_\tau u=PMu$ on $\Gamma$, $P\in C^4(\Gamma)^{3\times3}$, \eqref{E:Ave_Lp_Surf}, and \eqref{E:Ave_Wmp_Surf},
  \begin{align*}
    \|M_\tau u\|_{H^k(\Gamma)} &\leq c\|Mu\|_{H^k(\Gamma)} \leq c\varepsilon^{-1/2}\|u\|_{H^k(\Omega_\varepsilon)}, \\
    \|\nabla_\Gamma M_\tau u\|_{H^k(\Gamma)} &\leq c\|Mu\|_{H^{k+1}(\Gamma)} \leq c\varepsilon^{-1/2}\|u\|_{H^{k+1}(\Omega_\varepsilon)}
  \end{align*}
  for $k=0,1$ (with $H^0=L^2$).
  Using \eqref{E:Prod_Surf} and these inequalities we obtain
  \begin{align*}
    K_2 &\leq c\varepsilon^{-1/2}\|u\|_{L^2(\Omega_\varepsilon)}^{1/2}\|u\|_{H^1(\Omega_\varepsilon)}\|u\|_{H^2(\Omega_\varepsilon)}^{1/2} \leq c\varepsilon^{-1/2}\|u\|_{H^2(\Omega_\varepsilon)}^2, \\
    K_3 &\leq c\varepsilon^{-1/2}\|u\|_{H^1(\Omega_\varepsilon)}\|u\|_{H^2(\Omega_\varepsilon)} \leq c\varepsilon^{-1/2}\|u\|_{H^2(\Omega_\varepsilon)}^2.
  \end{align*}
  From these inequalities and $\|\omega\|_{L^2(\Omega_\varepsilon)}\leq c\|u\|_{H^1(\Omega_\varepsilon)}$ we deduce that
  \begin{align} \label{Pf_TE:DivUa_O}
    \begin{aligned}
      |(\mathrm{div}\,u^a,|\omega|^2)_{L^2(\Omega_\varepsilon)}| &\leq c(K_1+\varepsilon K_2+\varepsilon K_3)\|\omega\|_{L^2(\Omega_\varepsilon)} \\
      &\leq c\varepsilon^{1/2}\|u\|_{H^1(\Omega_\varepsilon)}\|u\|_{H^2(\Omega_\varepsilon)}^2.
    \end{aligned}
  \end{align}
  Let us estimate $(\omega,(\omega\cdot\nabla)u^a)_{L^2(\Omega_\varepsilon)}$.
  By $\omega=\mathrm{curl}\,u^r+\mathrm{curl}\,u^a$ we have
  \begin{align*}
    (\omega,(\omega\cdot\nabla)u^a)_{L^2(\Omega_\varepsilon)} = (\omega,(\mathrm{curl}\,u^r\cdot\nabla)u^a)_{L^2(\Omega_\varepsilon)}+(\omega,(\mathrm{curl}\,u^a\cdot\nabla)u^a)_{L^2(\Omega_\varepsilon)}.
  \end{align*}
  The first term on the right-hand side is bounded by
  \begin{align*}
    |(\omega,(\mathrm{curl}\,u^r\cdot\nabla)u^a)_{L^2(\Omega_\varepsilon)}| \leq c\|\nabla u^r\|_{L^2(\Omega_\varepsilon)}\|\,|\nabla u^a|\,|\omega|\,\|_{L^2(\Omega_\varepsilon)}.
  \end{align*}
  To the right-hand side we apply \eqref{E:Po_Grad_Ur} and
  \begin{align} \label{Pf_TE:GUa_O}
    \begin{aligned}
      \|\,|\nabla u^a|\,|\omega|\,\|_{L^2(\Omega_\varepsilon)} &\leq c\varepsilon^{-1/2}\|\omega\|_{L^2(\Omega_\varepsilon)}^{1/2}\|\omega\|_{H^1(\Omega_\varepsilon)}^{1/2}\|u\|_{H^1(\Omega_\varepsilon)}^{1/2}\|u\|_{H^2(\Omega_\varepsilon)}^{1/2} \\
      &\leq c\varepsilon^{-1/2}\|u\|_{H^1(\Omega_\varepsilon)}\|u\|_{H^2(\Omega_\varepsilon)}
    \end{aligned}
  \end{align}
  by \eqref{E:Prod_Grad_Ua}.
  Then we get
  \begin{multline} \label{Pf_TE:OUrUa}
    |(\omega,(\mathrm{curl}\,u^r\cdot\nabla)u^a)_{L^2(\Omega_\varepsilon)}| \\
    \leq c\left(\varepsilon^{1/2}\|u\|_{H^1(\Omega_\varepsilon)}\|u\|_{H^2(\Omega_\varepsilon)}^2+\varepsilon^{-1/2}\|u\|_{L^2(\Omega_\varepsilon)}\|u\|_{H^1(\Omega_\varepsilon)}\|u\|_{H^2(\Omega_\varepsilon)}\right).
  \end{multline}
  Also, we decompose $(\omega,(\mathrm{curl}\,u^a\cdot\nabla)u^a)_{L^2(\Omega_\varepsilon)}$ into the sum of
  \begin{align*}
    L_1 := \left(\omega,\bigl((\overline{P}\,\mathrm{curl}\,u^a)\cdot\nabla\bigr)u^a\right)_{L^2(\Omega_\varepsilon)}, \quad L_2 := (\omega,(\mathrm{curl}\,u^a\cdot\bar{n})\partial_nu^a)_{L^2(\Omega_\varepsilon)}.
  \end{align*}
  To $L_1$ we apply \eqref{E:Tan_Curl_Ua} and H\"{o}lder's inequality to get
  \begin{align*}
    |L_1| &\leq c\int_{\Omega_\varepsilon}|\omega|\left(\left|\overline{Mu}\right|+\varepsilon\left|\overline{\nabla_\Gamma Mu}\right|\right)|\nabla u^a|\,dx \\
    &\leq c\|\,|\nabla u^a|\,|\omega|\,\|_{L^2(\Omega_\varepsilon)}\left(\left\|\overline{Mu}\right\|_{L^2(\Omega_\varepsilon)}+\varepsilon\left\|\overline{\nabla_\Gamma Mu}\right\|_{L^2(\Omega_\varepsilon)}\right).
  \end{align*}
  Hence from \eqref{E:Con_Lp}, \eqref{E:Ave_Lp_Surf}, \eqref{E:Ave_Wmp_Surf}, \eqref{Pf_TE:GUa_O}, and $\|u\|_{H^1(\Omega_\varepsilon)}\leq\|u\|_{H^2(\Omega_\varepsilon)}$ it follows that
  \begin{align*}
    |L_1| &\leq c\varepsilon^{-1/2}\|u\|_{H^1(\Omega_\varepsilon)}\|u\|_{H^2(\Omega_\varepsilon)}\left(\|u\|_{L^2(\Omega_\varepsilon)}+\varepsilon\|u\|_{H^1(\Omega_\varepsilon)}\right) \\
    &\leq c\left(\varepsilon^{1/2}\|u\|_{H^1(\Omega_\varepsilon)}\|u\|_{H^2(\Omega_\varepsilon)}^2+\varepsilon^{-1/2}\|u\|_{L^2(\Omega_\varepsilon)}\|u\|_{H^1(\Omega_\varepsilon)}\|u\|_{H^2(\Omega_\varepsilon)}\right).
  \end{align*}
  To estimate $L_2$ we see by the definition \eqref{E:Def_ExAve} of $u^a$, \eqref{E:NorDer_Con}, and \eqref{E:ExAux_Bound} that
  \begin{align*}
    |\partial_nu^a| = \left|\overline{M_\tau u}\cdot\partial_n\Psi_\varepsilon\right| \leq c\left|\overline{M_\tau u}\right| = c\left|\overline{PMu}\right| \leq c\left|\overline{Mu}\right| \quad\text{in}\quad \Omega_\varepsilon.
  \end{align*}
  By this inequality, $|\mathrm{curl}\,u^a\cdot\bar{n}|\leq c|\nabla u^a|$ in $\Omega_\varepsilon$, \eqref{E:Ave_Lp_Dom}, and \eqref{Pf_TE:GUa_O},
  \begin{align*}
    |L_2| &\leq c\int_{\Omega_\varepsilon}|\omega||\nabla u^a|\left|\overline{Mu}\right|\,dx \leq c\|\,|\nabla u^a|\,|\omega|\,\|_{L^2(\Omega_\varepsilon)}\left\|\overline{Mu}\right\|_{L^2(\Omega_\varepsilon)} \\
    &\leq c\varepsilon^{-1/2}\|u\|_{L^2(\Omega_\varepsilon)}\|u\|_{H^1(\Omega_\varepsilon)}\|u\|_{H^2(\Omega_\varepsilon)}.
  \end{align*}
  Applying the above estimates to $(\omega,(\mathrm{curl}\,u^a\cdot\nabla)u^a)_{L^2(\Omega_\varepsilon)}=L_1+L_2$ we obtain
  \begin{multline*}
    |(\omega,(\mathrm{curl}\,u^a\cdot\nabla)u^a)_{L^2(\Omega_\varepsilon)}| \\
    \leq c\left(\varepsilon^{1/2}\|u\|_{H^1(\Omega_\varepsilon)}\|u\|_{H^2(\Omega_\varepsilon)}^2+\varepsilon^{-1/2}\|u\|_{L^2(\Omega_\varepsilon)}\|u\|_{H^1(\Omega_\varepsilon)}\|u\|_{H^2(\Omega_\varepsilon)}\right).
  \end{multline*}
  From this inequality and \eqref{Pf_TE:OUrUa} we deduce that
  \begin{align*}
    &|(\omega,(\omega\cdot\nabla)u^a)_{L^2(\Omega_\varepsilon)}| \\
    &\qquad \leq |(\omega,(\mathrm{curl}\,u^r\cdot\nabla)u^a)_{L^2(\Omega_\varepsilon)}|+|(\omega,(\mathrm{curl}\,u^a\cdot\nabla)u^a)_{L^2(\Omega_\varepsilon)}| \\
    &\qquad \leq c\left(\varepsilon^{1/2}\|u\|_{H^1(\Omega_\varepsilon)}\|u\|_{H^2(\Omega_\varepsilon)}^2+\varepsilon^{-1/2}\|u\|_{L^2(\Omega_\varepsilon)}\|u\|_{H^1(\Omega_\varepsilon)}\|u\|_{H^2(\Omega_\varepsilon)}\right).
  \end{align*}
  Using Young's inequality $ab\leq \alpha a^2+c_\alpha b^2$ to the last term we further get
  \begin{multline*}
    |(\omega,(\omega\cdot\nabla)u^a)_{L^2(\Omega_\varepsilon)}| \\
    \leq \left(\alpha+c\varepsilon^{1/2}\|u\|_{H^1(\Omega_\varepsilon)}\right)\|u\|_{H^2(\Omega_\varepsilon)}^2+c_\alpha\varepsilon^{-1}\|u\|_{L^2(\Omega_\varepsilon)}^2\|u\|_{H^1(\Omega_\varepsilon)}^2.
  \end{multline*}
  We apply this inequality and \eqref{Pf_TE:DivUa_O} to \eqref{Pf_TE:J3_Exp} to show that
  \begin{align} \label{Pf_TE:Est_J3}
    \begin{aligned}
      |J_3^3| &\leq c\left(|(\mathrm{div}\,u^a,|\omega|^2)_{L^2(\Omega_\varepsilon)}|+|(\omega,(\omega\cdot\nabla)u^a)_{L^2(\Omega_\varepsilon)}|\right) \\
      &\leq c\left(\alpha+\varepsilon^{1/2}\|u\|_{H^1(\Omega_\varepsilon)}\right)\|u\|_{H^2(\Omega_\varepsilon)}^2+c_\alpha\varepsilon^{-1}\|u\|_{L^2(\Omega_\varepsilon)}^2\|u\|_{H^1(\Omega_\varepsilon)}^2.
    \end{aligned}
  \end{align}
  Since $J_3=J_3^1+J_3^2+J_3^3$, we see by \eqref{Pf_TE:Est_J1}, \eqref{Pf_TE:Est_J2}, and \eqref{Pf_TE:Est_J3} that
  \begin{align*}
    |J_3| \leq c\left(\alpha+\varepsilon^{1/2}\|u\|_{H^1(\Omega_\varepsilon)}\right)\|u\|_{H^2(\Omega_\varepsilon)}^2+c_\alpha\varepsilon^{-1}\|u\|_{L^2(\Omega_\varepsilon)}^2\|u\|_{H^1(\Omega_\varepsilon)}^2
  \end{align*}
  and this inequality combined with \eqref{Pf_TE:Est_I1} and \eqref{Pf_TE:Est_I2} yields
  \begin{align*}
    \left|\bigl((u\cdot\nabla)u,A_\varepsilon u\bigr)_{L^2(\Omega_\varepsilon)}\right| &\leq |J_1|+|J_2|+|J_3| \\
    &\leq \left(c_1\alpha+c_2\varepsilon^{1/2}\|u\|_{H^1(\Omega_\varepsilon)}\right)\|u\|_{H^2(\Omega_\varepsilon)}^2 \\
    &\qquad +c_\alpha\left(\|u\|_{L^2(\Omega_\varepsilon)}^2\|u\|_{H^1(\Omega_\varepsilon)}^4+\varepsilon^{-1}\|u\|_{L^2(\Omega_\varepsilon)}^2\|u\|_{H^1(\Omega_\varepsilon)}^2\right)
  \end{align*}
  with positive constants $c_1$, $c_2$, and $c_\alpha$ independent of $\varepsilon$.
  Replacing $c_1\alpha$ by $\alpha$ in the above inequality we obtain \eqref{E:Tri_Est}.
\end{proof}

Finally, we fix $\alpha$ and write \eqref{E:Tri_Est} in terms of the Stokes operator $A_\varepsilon$.

\begin{corollary} \label{C:Tri_Est_A}
  There exist $d_1,d_2>0$ independent of $\varepsilon$ such that
  \begin{multline} \label{E:Tri_Est_A}
    \left|\bigl((u\cdot\nabla)u,A_\varepsilon u\bigr)_{L^2(\Omega_\varepsilon)}\right| \leq \left(\frac{1}{4}+d_1\varepsilon^{1/2}\|A_\varepsilon^{1/2}u\|_{L^2(\Omega_\varepsilon)}\right)\|A_\varepsilon u\|_{L^2(\Omega_\varepsilon)}^2 \\
    +d_2\left(\|u\|_{L^2(\Omega_\varepsilon)}^2\|A_\varepsilon^{1/2}u\|_{L^2(\Omega_\varepsilon)}^4+\varepsilon^{-1}\|u\|_{L^2(\Omega_\varepsilon)}^2\|A_\varepsilon^{1/2}u\|_{L^2(\Omega_\varepsilon)}^2\right)
  \end{multline}
  for all $\varepsilon\in(0,\varepsilon_0)$ and $u\in D(A_\varepsilon)$.
\end{corollary}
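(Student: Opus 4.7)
The plan is to reduce Corollary~\ref{C:Tri_Est_A} directly to Lemma~\ref{L:Tri_Est} by choosing the free parameter $\alpha$ in that lemma appropriately and converting each Sobolev norm to the corresponding fractional-power-of-$A_\varepsilon$ norm via Lemmas~\ref{L:Stokes_H1} and~\ref{L:Stokes_H2}. There is no substantial new difficulty here; the task is to bookkeep the constants.

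Concretely, let $c_S>0$ denote a common constant (independent of $\varepsilon$) such that the uniform norm equivalences
\begin{align*}
  \|u\|_{H^1(\Omega_\varepsilon)} &\leq c_S\|A_\varepsilon^{1/2}u\|_{L^2(\Omega_\varepsilon)}, \\
  \|u\|_{H^2(\Omega_\varepsilon)} &\leq c_S\|A_\varepsilon u\|_{L^2(\Omega_\varepsilon)}
\end{align*}
hold for all $\varepsilon\in(0,\varepsilon_0)$ and $u\in D(A_\varepsilon)$, as provided by Lemmas~\ref{L:Stokes_H1} and~\ref{L:Stokes_H2}. The first step is to fix $\alpha:=1/(8c_S^2)$ in Lemma~\ref{L:Tri_Est}; this is the choice that makes the resulting numerical prefactor of $\|A_\varepsilon u\|_{L^2(\Omega_\varepsilon)}^2$ come out at most $1/4$ after the norm conversion. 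With $\alpha$ frozen, the two constants $c_\alpha^1$ and $c_\alpha^2$ appearing in \eqref{E:Tri_Est} become concrete numbers $C_1,C_2>0$ independent of $\varepsilon$ and $u$.

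Next, I would insert these bounds into the right-hand side of \eqref{E:Tri_Est}. Applying $\|u\|_{H^2(\Omega_\varepsilon)}^2\leq c_S^2\|A_\varepsilon u\|_{L^2(\Omega_\varepsilon)}^2$ to the leading bracket gives a contribution
\begin{align*}
  c_S^2\Bigl(\alpha+C_1\varepsilon^{1/2}\|u\|_{H^1(\Omega_\varepsilon)}\Bigr)\|A_\varepsilon u\|_{L^2(\Omega_\varepsilon)}^2,
\end{align*}
and by the choice of $\alpha$ together with $\|u\|_{H^1(\Omega_\varepsilon)}\leq c_S\|A_\varepsilon^{1/2}u\|_{L^2(\Omega_\varepsilon)}$ this is bounded by
\begin{align*}
  \left(\tfrac{1}{4}+d_1\varepsilon^{1/2}\|A_\varepsilon^{1/2}u\|_{L^2(\Omega_\varepsilon)}\right)\|A_\varepsilon u\|_{L^2(\Omega_\varepsilon)}^2
\end{align*}
with $d_1:=c_S^3C_1$. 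For the remaining two terms in \eqref{E:Tri_Est} the same norm conversion gives
\begin{align*}
  \|u\|_{L^2(\Omega_\varepsilon)}^2\|u\|_{H^1(\Omega_\varepsilon)}^4 &\leq c_S^4\|u\|_{L^2(\Omega_\varepsilon)}^2\|A_\varepsilon^{1/2}u\|_{L^2(\Omega_\varepsilon)}^4, \\
  \varepsilon^{-1}\|u\|_{L^2(\Omega_\varepsilon)}^2\|u\|_{H^1(\Omega_\varepsilon)}^2 &\leq c_S^2\varepsilon^{-1}\|u\|_{L^2(\Omega_\varepsilon)}^2\|A_\varepsilon^{1/2}u\|_{L^2(\Omega_\varepsilon)}^2,
\end{align*}
so setting $d_2:=C_2\max\{c_S^4,c_S^2\}$ yields the remaining contribution in the form claimed.

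Combining the two estimates produces \eqref{E:Tri_Est_A} with the constants $d_1,d_2>0$ independent of $\varepsilon$ and $u$. The only thing one has to be careful about is the order of operations: $\alpha$ must be chosen \emph{before} any $\varepsilon$-dependent manipulation, because the constant $C_1=c_\alpha^1$ obtained from Lemma~\ref{L:Tri_Est} depends on that choice (although, as noted in the lemma, $c_\alpha^1$ is in fact $\alpha$-independent, so this is painless). No further idea is needed; the argument is purely algebraic once Lemma~\ref{L:Tri_Est} is in hand, and the real work already lies in that lemma.
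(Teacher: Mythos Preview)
Your proposal is correct and follows essentially the same approach as the paper: apply Lemma~\ref{L:Tri_Est}, convert the $H^1$- and $H^2$-norms to $A_\varepsilon^{1/2}$- and $A_\varepsilon$-norms via Lemmas~\ref{L:Stokes_H1} and~\ref{L:Stokes_H2}, and then choose $\alpha$ so that the resulting coefficient of $\|A_\varepsilon u\|_{L^2(\Omega_\varepsilon)}^2$ is at most $1/4$. The only cosmetic difference is that the paper converts norms first and then sets $\alpha=1/(4c)$, whereas you fix $\alpha=1/(8c_S^2)$ in advance; both orderings work.
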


\begin{proof}
  Applying \eqref{E:Stokes_H1} and \eqref{E:Stokes_H2} to the right-hand side of \eqref{E:Tri_Est} we get
  \begin{multline*}
    \left|\bigl((u\cdot\nabla)u,A_\varepsilon u\bigr)_{L^2(\Omega_\varepsilon)}\right| \leq \left(c\alpha+d_\alpha^1\varepsilon^{1/2}\|A_\varepsilon^{1/2}u\|_{L^2(\Omega_\varepsilon)}\right)\|A_\varepsilon u\|_{L^2(\Omega_\varepsilon)}^2 \\
    +d_\alpha^2\left(\|u\|_{L^2(\Omega_\varepsilon)}^2\|A_\varepsilon^{1/2}u\|_{L^2(\Omega_\varepsilon)}^4+\varepsilon^{-1}\|u\|_{L^2(\Omega_\varepsilon)}^2\|A_\varepsilon^{1/2}u\|_{L^2(\Omega_\varepsilon)}^2\right)
  \end{multline*}
  with positive constants $c$, $d_\alpha^1$, and $d_\alpha^2$ independent of $\varepsilon$.
  We take $\alpha=1/4c$ in the above inequality to obtain \eqref{E:Tri_Est_A}.
\end{proof}

%%% Section 8 %%%
\section{Global existence and uniform estimates of a strong solution} \label{S:GE}
Based on the results in the previous sections we prove Theorems~\ref{T:GE} and~\ref{T:UE}.
As in Section~\ref{S:Tri} we impose Assumptions~\ref{Assump_1} and~\ref{Assump_2} and fix the constant $\varepsilon_0$ given in Lemma~\ref{L:Bili_Core}.
For $\varepsilon\in(0,\varepsilon_0)$ let $\mathcal{H}_\varepsilon$ and $\mathcal{V}_\varepsilon$ be the function spaces given by \eqref{E:Def_Heps} and $A_\varepsilon$ the Stokes operator on $\mathcal{H}_\varepsilon$.
We also write $\bar{\eta}=\eta\circ\pi$ for the constant extension of a function of $\eta$ on $\Gamma$ in the normal direction of $\Gamma$.

First we recall the well-known result on the local-in-time existence of a strong solution to the Navier--Stokes equations (see e.g.~\cite{BoFa13,CoFo88,So01,Te79}).

\begin{theorem} \label{T:LE}
  Let $u_0^\varepsilon\in \mathcal{V}_\varepsilon$, $f^\varepsilon\in L^\infty(0,\infty;L^2(\Omega_\varepsilon)^3)$ and suppose that $f^\varepsilon(t)\in\mathcal{R}_g^\perp$ for a.a. $t\in(0,\infty)$ when the condition (A3) of Assumption~\ref{Assump_2} is satisfied.
  Then there exists $T_0\in(0,\infty)$ depending on $\Omega_\varepsilon$, $\nu$, $u_0^\varepsilon$, and $f^\varepsilon$ such that the problem \eqref{E:NS_Eq}--\eqref{E:NS_In} admits a strong solution $u^\varepsilon$ on $[0,T_0)$ satisfying
  \begin{align*}
    u^\varepsilon \in C([0,T];\mathcal{V}_\varepsilon)\cap L^2(0,T;D(A_\varepsilon))\cap H^1(0,T;\mathcal{H}_\varepsilon) \quad\text{for all}\quad T\in(0,T_0).
  \end{align*}
  If $u^\varepsilon$ is maximally defined on the time interval $[0,T_{\max})$ and $T_{\max}$ is finite, then
  \begin{align*}
    \lim_{t\to T_{\max}^-}\|A_\varepsilon^{1/2}u^\varepsilon(t)\|_{L^2(\Omega_\varepsilon)} = \infty.
  \end{align*}
\end{theorem}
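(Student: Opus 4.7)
The plan is to treat \eqref{E:NS_Eq}--\eqref{E:NS_In} as the abstract evolution equation
\begin{equation*}
  \partial_tu^\varepsilon+A_\varepsilon u^\varepsilon+\mathbb{P}_\varepsilon(u^\varepsilon\cdot\nabla)u^\varepsilon = \mathbb{P}_\varepsilon f^\varepsilon \text{ in } \mathcal{H}_\varepsilon, \quad u^\varepsilon|_{t=0}=u_0^\varepsilon,
\end{equation*}
and to construct a local strong solution by the standard Faedo--Galerkin method based on eigenfunctions of the Stokes operator. Since $\Omega_\varepsilon$ is a bounded domain of class $C^4$, Lemma~\ref{L:Stokes_H2} gives $D(A_\varepsilon)\hookrightarrow H^2(\Omega_\varepsilon)^3\hookrightarrow\mathcal{H}_\varepsilon$ with compact inclusion, so $A_\varepsilon$ admits an orthonormal basis $\{\phi_k\}_{k=1}^\infty$ of $\mathcal{H}_\varepsilon$ consisting of eigenfunctions with eigenvalues $\lambda_k\to\infty$. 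Setting $V_m:=\mathrm{span}\{\phi_1,\dots,\phi_m\}$ and $u_m(t):=\sum_{k=1}^m c_{m,k}(t)\phi_k$, the Galerkin projection of the abstract equation against $V_m$ yields a finite-dimensional ODE for $(c_{m,k})$ with polynomial nonlinearity, solvable on a maximal interval by the Cauchy--Lipschitz theorem, with initial datum $u_m(0)=\sum_{k=1}^m(u_0^\varepsilon,\phi_k)_{L^2(\Omega_\varepsilon)}\phi_k$.

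The core step is the $\mathcal{V}_\varepsilon$-level a priori estimate obtained by testing the projected equation with $A_\varepsilon u_m\in V_m$, giving
\begin{equation*}
  \tfrac{1}{2}\tfrac{d}{dt}\|A_\varepsilon^{1/2}u_m\|_{L^2(\Omega_\varepsilon)}^2+\|A_\varepsilon u_m\|_{L^2(\Omega_\varepsilon)}^2 = -\bigl((u_m\cdot\nabla)u_m,A_\varepsilon u_m\bigr)_{L^2(\Omega_\varepsilon)}+(\mathbb{P}_\varepsilon f^\varepsilon,A_\varepsilon u_m)_{L^2(\Omega_\varepsilon)}.
\end{equation*}
I bound the trilinear term via Corollary~\ref{C:Tri_Est_A}, the forcing term by Young's inequality, and the lower-order terms through \eqref{E:Stokes_H1}, absorbing a fraction of $\|A_\varepsilon u_m\|_{L^2(\Omega_\varepsilon)}^2$ into the left-hand side. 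The result is a Riccati-type inequality
\begin{equation*}
  \tfrac{d}{dt}y_m(t) \leq C_1\bigl(y_m(t)^3+y_m(t)\bigr)+C_2, \qquad y_m(t):=\|A_\varepsilon^{1/2}u_m(t)\|_{L^2(\Omega_\varepsilon)}^2,
\end{equation*}
with constants depending on $\varepsilon$, $\nu$, and $\|\mathbb{P}_\varepsilon f^\varepsilon\|_{L^\infty(0,\infty;L^2(\Omega_\varepsilon))}$. A standard comparison argument yields $T_0>0$ depending only on $y_m(0)\leq\|A_\varepsilon^{1/2}u_0^\varepsilon\|_{L^2(\Omega_\varepsilon)}^2$ on which $y_m$ remains bounded uniformly in $m$; integration then controls $u_m$ uniformly in $L^\infty(0,T;\mathcal{V}_\varepsilon)\cap L^2(0,T;D(A_\varepsilon))$ for each $T\in(0,T_0)$, and using the equation one obtains a uniform bound on $\partial_tu_m$ in $L^2(0,T;\mathcal{H}_\varepsilon)$.

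The passage to the limit $m\to\infty$ is routine: weak-$\star$ compactness plus the Aubin--Lions lemma applied to $D(A_\varepsilon)\hookrightarrow\mathcal{V}_\varepsilon\hookrightarrow\mathcal{H}_\varepsilon$ yields a subsequence converging strongly in $L^2(0,T;\mathcal{V}_\varepsilon)$, which is enough to identify the nonlinear term in the limit. The limit $u^\varepsilon$ inherits the stated regularity, and $u^\varepsilon\in C([0,T];\mathcal{V}_\varepsilon)$ follows from the standard embedding $L^2(0,T;D(A_\varepsilon))\cap H^1(0,T;\mathcal{H}_\varepsilon)\hookrightarrow C([0,T];D(A_\varepsilon^{1/2}))$ together with $\mathcal{V}_\varepsilon=D(A_\varepsilon^{1/2})$. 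For the blow-up alternative, the local solvability time $T_0(y)$ from the Riccati analysis depends lower-semicontinuously on $y=\|A_\varepsilon^{1/2}u^\varepsilon(t)\|_{L^2(\Omega_\varepsilon)}^2$; thus if $T_{\max}<\infty$ and $y$ remained bounded as $t\to T_{\max}^-$, one could restart the construction from some $t_\ast$ near $T_{\max}$ and extend the solution beyond $T_{\max}$, a contradiction.

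The main obstacle I anticipate is ensuring that the abstract solution in $\mathcal{H}_\varepsilon$ actually recovers the original momentum equation \eqref{E:NS_Eq} under condition (A3) of Assumption~\ref{Assump_2}, where $\mathcal{H}_\varepsilon=L_\sigma^2(\Omega_\varepsilon)\cap\mathcal{R}_g^\perp$ is a proper subspace of $L_\sigma^2(\Omega_\varepsilon)$ and $\mathbb{P}_\varepsilon$ is not the usual Helmholtz--Leray projection. This is exactly the subtlety treated in Remark~\ref{R:Recov_NS}: the hypothesis $f^\varepsilon(t)\in\mathcal{R}_g^\perp$, combined with $D(w)=0$ for $w\in\mathcal{R}_g$ and the perfect slip conditions $\gamma_\varepsilon^0=\gamma_\varepsilon^1=0$, forces the extraneous $\mathcal{R}_g$-component in the orthogonal decomposition $L^2(\Omega_\varepsilon)^3=\mathcal{H}_\varepsilon\oplus\mathcal{R}_g\oplus G^2(\Omega_\varepsilon)$ to vanish, so that the strong solution of the abstract equation does genuinely solve \eqref{E:NS_Eq}--\eqref{E:NS_In}.
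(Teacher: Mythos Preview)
The paper does not prove Theorem~\ref{T:LE}; it simply cites it as a well-known result with references to the standard monographs~\cite{BoFa13,CoFo88,So01,Te79}. Your Faedo--Galerkin outline is precisely the argument found in those references and is correct in structure, including the discussion of the (A3) subtlety via Remark~\ref{R:Recov_NS}.

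One technical remark: invoking Corollary~\ref{C:Tri_Est_A} for the trilinear bound is not the cleanest route to the Riccati inequality you state. That corollary is engineered for $\varepsilon$-uniform estimates and leaves a term $d_1\varepsilon^{1/2}\|A_\varepsilon^{1/2}u_m\|_{L^2(\Omega_\varepsilon)}\|A_\varepsilon u_m\|_{L^2(\Omega_\varepsilon)}^2$ on the right-hand side, which cannot be absorbed into the dissipation without a separate smallness or continuation argument (this is exactly the mechanism driving the global-existence proof in Section~\ref{S:GE}). For the purely local statement of Theorem~\ref{T:LE}, where the constants are allowed to depend on $\varepsilon$, the standard three-dimensional estimate
\[
  \bigl|\bigl((u_m\cdot\nabla)u_m,A_\varepsilon u_m\bigr)_{L^2(\Omega_\varepsilon)}\bigr|
  \leq C_\varepsilon\|u_m\|_{H^1(\Omega_\varepsilon)}^{3/2}\|u_m\|_{H^2(\Omega_\varepsilon)}^{3/2}
  \leq \tfrac{1}{4}\|A_\varepsilon u_m\|_{L^2(\Omega_\varepsilon)}^2 + C_\varepsilon'\,y_m^3,
\]
obtained from H\"older, Sobolev embedding, interpolation, and Lemmas~\ref{L:Stokes_H1}--\ref{L:Stokes_H2}, gives your Riccati inequality directly and is what the cited references actually use.
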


Note that the assumption $f^\varepsilon(t)\in\mathcal{R}_g^\perp$ for a.a. $t\in(0,\infty)$ is required to recover the original problem \eqref{E:NS_Eq}--\eqref{E:NS_In} properly from its abstract form (see Remark~\ref{R:Recov_NS}).

To establish the global-in-time existence of a strong solution $u^\varepsilon$ we show that the $L^2(\Omega_\varepsilon)$-norm of $A_\varepsilon^{1/2}u^\varepsilon(t)$ is bounded uniformly in $t$.
We argue by a standard energy method and use the uniform Gronwall inequality (see~\cite[Lemma~D.3]{SeYo02}).

\begin{lemma}[Uniform Gronwall inequality] \label{L:Uni_Gronwall}
  Let $z$, $\xi$, and $\zeta$ be nonnegative functions in $L_{loc}^1([0,T);\mathbb{R})$, $T\in(0,\infty]$.
  Suppose that $z\in C(0,T;\mathbb{R})$ and
  \begin{align*}
    \frac{dz}{dt}(t) \leq \xi(t)z(t)+\zeta(t) \quad\text{for a.a.}\quad t\in(0,T).
  \end{align*}
  Then $z\in L_{loc}^\infty(0,T;\mathbb{R})$ and
  \begin{align*}
    z(t_2) \leq \left(\frac{1}{t_2-t_1}\int_{t_1}^{t_2}z(s)\,ds+\int_{t_1}^{t_2}\zeta(s)\,ds\right)\exp\left(\int_{t_1}^{t_2}\xi(s)\,ds\right)
  \end{align*}
  for all $t_1,t_2\in(0,T)$ with $t_1<t_2$.
\end{lemma}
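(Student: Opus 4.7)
The plan is a two-step argument: I would first derive a pointwise Gronwall-type bound for $z(t_2)$ in terms of $z(s)$ with a \emph{variable} starting time $s\in[t_1,t_2]$, then average the resulting inequality over $s$ to replace $z(s)$ with its mean $\frac{1}{t_2-t_1}\int_{t_1}^{t_2}z(s)\,ds$. This averaging trick is exactly what allows one to dispense with an initial value of $z$, which is the whole point of the uniform Gronwall lemma.

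For the first step I would fix $t_1<t_2$ in $(0,T)$ and multiply the differential inequality by the integrating factor $\exp(-\int_{t_1}^{t}\xi(\tau)\,d\tau)$. Since $z$ is continuous on $(0,T)$ and $\xi z+\zeta\in L^1(t_1,t_2)$, the product $t\mapsto z(t)\exp(-\int_{t_1}^{t}\xi)$ is absolutely continuous on $[t_1,t_2]$ and its derivative is $\leq \zeta(t)\exp(-\int_{t_1}^{t}\xi)$ a.e. Integrating from $s$ to $t_2$ and rearranging yields the classical Gronwall bound
\begin{equation*}
z(t_2) \leq z(s)\exp\left(\int_s^{t_2}\xi(\tau)\,d\tau\right) + \int_s^{t_2}\zeta(\tau)\exp\left(\int_\tau^{t_2}\xi(\sigma)\,d\sigma\right)d\tau.
\end{equation*}
Using the nonnegativity of $\xi$ to enlarge both exponentials to $\exp(\int_{t_1}^{t_2}\xi)$ and the nonnegativity of $\zeta$ to enlarge $\int_s^{t_2}\zeta$ to $\int_{t_1}^{t_2}\zeta$, I would reduce this to
\begin{equation*}
z(t_2) \leq \left(z(s)+\int_{t_1}^{t_2}\zeta(\tau)\,d\tau\right)\exp\left(\int_{t_1}^{t_2}\xi(\sigma)\,d\sigma\right) \qquad \text{for every } s\in[t_1,t_2].
\end{equation*}

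For the second step I would integrate this inequality in $s$ over $[t_1,t_2]$ and divide by $t_2-t_1$. Only $z(s)$ depends on $s$, and its average over $[t_1,t_2]$ is precisely the mean value appearing in the statement; this gives the claimed inequality. To deduce $z\in L^\infty_{loc}(0,T;\mathbb{R})$ I would fix any compact subinterval $[a,b]\subset(0,T)$, pick $t_1\in(0,a)$, and apply the bound with $t_2$ ranging over $[a,b]$: the right-hand side is a continuous function of $t_2$ on $[a,b]$ (by the $L^1_{loc}$ hypothesis on $z$, $\xi$, $\zeta$) and hence bounded, so $z$ is bounded on $[a,b]$.

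The only place where care is genuinely needed is the absolute continuity of $z$ used in the integrating-factor computation, which is not strictly implied by $z\in C(0,T)$ together with a scalar a.e.\ differential inequality. The standard convention is to read the hypothesis $dz/dt\leq\xi z+\zeta$ a.e.\ as the equivalent integral inequality $z(t)-z(s)\leq\int_s^t(\xi z+\zeta)\,d\tau$ for $s\leq t$; this is the form that arises from energy identities, and it is the form in which the lemma is invoked in the sequel. Under this reading the argument above is entirely elementary and no serious technical obstacle remains.
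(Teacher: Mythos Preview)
Your proof is correct and is the standard argument for the uniform Gronwall inequality. The paper itself does not prove this lemma: it simply states it and cites \cite[Lemma~D.3]{SeYo02}, so there is no proof in the paper to compare against. Your two-step approach (integrating-factor Gronwall from a floating base point $s$, then averaging over $s\in[t_1,t_2]$) is precisely the classical one, and your remark about interpreting the a.e.\ differential inequality in integral form is the right way to handle the absolute-continuity issue.
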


We also use an estimate for the duality product between a vector field on $\Omega_\varepsilon$ and the constant extension of a tangential vector field on $\Gamma$.

\begin{lemma} \label{L:Con_Dual}
  There exists a constant $c>0$ independent of $\varepsilon$ such that
  \begin{align} \label{E:Con_Dual}
    \left|(\bar{v},u)_{L^2(\Omega_\varepsilon)}\right| \leq c\varepsilon^{1/2}\|v\|_{H^{-1}(\Gamma,T\Gamma)}\|u\|_{H^1(\Omega_\varepsilon)}
  \end{align}
  for all $v\in L^2(\Gamma,T\Gamma)$ and $u\in H^1(\Omega_\varepsilon)^3$.
\end{lemma}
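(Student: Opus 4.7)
The strategy is to rewrite $(\bar v, u)_{L^2(\Omega_\varepsilon)}$ as an honest duality pairing in $H^{-1}(\Gamma, T\Gamma) \times H^1(\Gamma, T\Gamma)$ so that only the $H^{-1}$-norm of $v$ enters the bound. Applying the change of variables formula \eqref{E:CoV_Dom} and exploiting the tangentiality $v = Pv$, I would first rewrite
\begin{equation*}
  (\bar v, u)_{L^2(\Omega_\varepsilon)} = \int_\Gamma v(y) \cdot w(y)\, d\mathcal{H}^2(y), \qquad w(y) := P(y)\!\int_{\varepsilon g_0(y)}^{\varepsilon g_1(y)}\! u(y + r n(y))\,J(y, r)\, dr.
\end{equation*}
By construction $w = Pw$ is tangential on $\Gamma$, and via the identification \eqref{E:L2_Hin} of $L^2(\Gamma, T\Gamma)$ as a subspace of $H^{-1}(\Gamma, T\Gamma)$ the right-hand side equals $[v, w]_{T\Gamma}$, whence
\begin{equation*}
  |(\bar v, u)_{L^2(\Omega_\varepsilon)}| \le \|v\|_{H^{-1}(\Gamma, T\Gamma)}\, \|w\|_{H^1(\Gamma)}.
\end{equation*}
The proof thus reduces to the uniform estimate $\|w\|_{H^1(\Gamma)} \le c\, \varepsilon^{1/2} \|u\|_{H^1(\Omega_\varepsilon)}$.

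The $L^2$-part is immediate: Cauchy--Schwarz in $r$ combined with the uniform boundedness of $P$, $g$, and $J$ gives $|w(y)|^2 \le c\,\varepsilon \int_{\varepsilon g_0}^{\varepsilon g_1} |u^\sharp|^2\, dr$, and \eqref{E:CoV_Equiv} then yields $\|w\|_{L^2(\Gamma)} \le c\,\varepsilon^{1/2} \|u\|_{L^2(\Omega_\varepsilon)}$. For the tangential gradient I would argue by density and first assume $u \in C^1(\overline{\Omega}_\varepsilon)^3$, so that the Leibniz rule for integrals with variable limits applies. Writing $w = P \tilde u$ with $\tilde u(y) := \int_{\varepsilon g_0(y)}^{\varepsilon g_1(y)} u^\sharp(y, r)\, J(y, r)\, dr$, the contribution of $\nabla_\Gamma P$ is handled exactly as $w$ itself, whereas $\nabla_\Gamma \tilde u$ splits into a boundary part of the form $\varepsilon\, \nabla_\Gamma g_i \otimes (u^\sharp J)|_{r = \varepsilon g_i}$, coming from differentiation of the variable limits, and an interior part $\int \nabla_\Gamma [u^\sharp J]\, dr$, where the chain rule produces $\nabla_\Gamma u^\sharp = (P - r W)(\nabla u)^\sharp$. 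The interior integrand is pointwise bounded by $c(|u|^\sharp + |\nabla u|^\sharp)$ using \eqref{E:Jac_Bound} and $|P - rW| \le c$, and Cauchy--Schwarz in $r$ together with \eqref{E:CoV_Equiv} then yields an $L^2(\Gamma)$-bound of order $\varepsilon^{1/2} \|u\|_{H^1(\Omega_\varepsilon)}$.

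The main technical point is the boundary contribution, which involves the traces $u|_{\Gamma_\varepsilon^i}$. These are controlled by the trace inequality \eqref{E:Poin_Bo}:
\begin{equation*}
  \varepsilon \|u\|_{L^2(\Gamma_\varepsilon^i)} \le c\,\varepsilon\bigl(\varepsilon^{-1/2} \|u\|_{L^2(\Omega_\varepsilon)} + \varepsilon^{1/2}\|\nabla u\|_{L^2(\Omega_\varepsilon)}\bigr) \le c\,\varepsilon^{1/2} \|u\|_{H^1(\Omega_\varepsilon)},
\end{equation*}
so that the loss of $\varepsilon^{-1/2}$ in the trace inequality is precisely compensated by the factor $\varepsilon$ obtained by differentiating $\varepsilon g_i(y)$, yielding the desired gain $\varepsilon^{1/2}$. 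Combining the boundary and interior pieces gives $\|w\|_{H^1(\Gamma)} \le c\,\varepsilon^{1/2} \|u\|_{H^1(\Omega_\varepsilon)}$ for $u \in C^1(\overline{\Omega}_\varepsilon)^3$, and the general case follows by density of $C^1(\overline{\Omega}_\varepsilon)^3$ in $H^1(\Omega_\varepsilon)^3$ together with continuity of both sides of \eqref{E:Con_Dual} in $u$.
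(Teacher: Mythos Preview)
Your proof is correct and follows essentially the same route as the paper: rewrite $(\bar v,u)_{L^2(\Omega_\varepsilon)}$ via the change of variables \eqref{E:CoV_Dom} as $(v,P\eta)_{L^2(\Gamma)}=[v,P\eta]_{T\Gamma}$ for the fiber integral $\eta(y)=\int_{\varepsilon g_0}^{\varepsilon g_1}u^\sharp J\,dr$, and then show $\|P\eta\|_{H^1(\Gamma)}\le c\varepsilon^{1/2}\|u\|_{H^1(\Omega_\varepsilon)}$. The only technical difference is in the gradient estimate: the paper absorbs the boundary contributions from differentiating the variable limits back into an interior integral via the $\psi_\varepsilon$ device of Lemma~\ref{L:Ave_Der} (so no trace inequality is needed), whereas you keep the boundary terms explicit and control them with \eqref{E:Poin_Bo}; both arguments give the required $\varepsilon^{1/2}$ gain.
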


\begin{proof}
  We use the notation \eqref{E:Pull_Dom} and define
  \begin{align*}
    \eta(y) := \int_{\varepsilon g_0(y)}^{\varepsilon g_1(y)}u^\sharp(y,r)J(y,r)\,dr, \quad y\in\Gamma.
  \end{align*}
  In what follows, we suppress the arguments of functions.
  Let us show $\eta\in H^1(\Gamma)^3$.
  By \eqref{E:Jac_Bound}, H\"{o}lder's inequality, and \eqref{E:CoV_Equiv},
  \begin{align} \label{Pf_CDu:Eta_L2}
    \|\eta\|_{L^2(\Gamma)}^2 \leq \int_\Gamma\varepsilon g\left(\int_{\varepsilon g_0}^{\varepsilon g_1}|u^\sharp|^2\,dr\right)d\mathcal{H}^2 \leq c\varepsilon\|u\|_{L^2(\Omega_\varepsilon)}^2.
  \end{align}
  Also, by the same calculations as in the proof of Lemma~\ref{L:Ave_Der} we have
  \begin{align*}
    \nabla_\Gamma\eta = \int_{\varepsilon g_0}^{\varepsilon g_1}\left\{\frac{\partial}{\partial r}\Bigl(J\psi_\varepsilon^\sharp\otimes u^\sharp \Bigr)+J(B\nabla u)^\sharp+\nabla_\Gamma J\otimes u^\sharp\right\}\,dr \quad\text{on}\quad \Gamma,
  \end{align*}
  where $B$ and $\psi_\varepsilon$ are given by \eqref{E:Ave_Der_Aux}.
  By this equality, \eqref{E:Jac_Bound}, \eqref{E:ADA_Bound}, and \eqref{E:ADA_Grad_Bound},
  \begin{align*}
    |\nabla_\Gamma\eta| \leq c\int_{\varepsilon g_0}^{\varepsilon g_1}(|u^\sharp|+|(\nabla u)^\sharp|)\,dr \quad\text{on}\quad \Gamma.
  \end{align*}
  Hence H\"{o}lder's inequality and \eqref{E:CoV_Equiv} imply that
  \begin{align} \label{Pf_Cdu:Eta_H1}
    \|\nabla_\Gamma\eta\|_{L^2(\Gamma)}^2 \leq c\int_\Gamma\varepsilon g\left(\int_{\varepsilon g_0}^{\varepsilon g_1}(|u^\sharp|^2+|(\nabla u)^\sharp|^2)\,dr\right)d\mathcal{H}^2 \leq c\varepsilon\|u\|_{H^1(\Omega_\varepsilon)}^2.
  \end{align}
  From \eqref{Pf_CDu:Eta_L2} and \eqref{Pf_Cdu:Eta_H1} we deduce that $\eta\in H^1(\Gamma)^3$ and
  \begin{align} \label{Pf_Cdu:PEta}
    \|P\eta\|_{H^1(\Gamma)} \leq c\|\eta\|_{H^1(\Gamma)} \leq c\varepsilon^{1/2}\|u\|_{H^1(\Omega_\varepsilon)}.
  \end{align}
  Now we observe by \eqref{E:CoV_Dom} and $v\in L^2(\Gamma,T\Gamma)$ that
  \begin{align*}
    (\bar{v},u)_{L^2(\Omega_\varepsilon)} = \int_\Gamma v\cdot\left(\int_{\varepsilon g_0}^{\varepsilon g_1}u^\sharp J\,dr\right)d\mathcal{H}^2 = (v,\eta)_{L^2(\Gamma)} = (v,P\eta)_{L^2(\Gamma)}.
  \end{align*}
  Moreover, since $(v,P\eta)_{L^2(\Gamma)} = [v,P\eta]_{T\Gamma}$ by $P\eta\in H^1(\Gamma,T\Gamma)$,
  \begin{align*}
    \left|(\bar{v},u)_{L^2(\Omega_\varepsilon)}\right| = \bigl|[v,P\eta]_{T\Gamma}\bigr| \leq \|v\|_{H^{-1}(\Gamma,T\Gamma)}\|P\eta\|_{H^1(\Gamma)}.
  \end{align*}
  Applying \eqref{Pf_Cdu:PEta} to this inequality we obtain \eqref{E:Con_Dual}.
\end{proof}

Now we are ready to establish the global-in-time existence of a strong solution to the Navier--Stokes equations \eqref{E:NS_Eq}--\eqref{E:NS_In}.

\begin{proof}[Proof of Theorem~\ref{T:GE}]
  We follow the idea of the proofs of~\cite[Theorem~7.4]{Ho10} and~\cite[Theorem~3.1]{HoSe10}.
  In what follows, we write $c$ for a general positive constant independent of $\varepsilon$, $c_0$, and $T_{\max}$ and use the notation \eqref{E:Def_U_TN} for the tangential and normal components (with respect to $\Gamma$) of a vector field on $\Omega_\varepsilon$.

  Under Assumptions~\ref{Assump_1} and~\ref{Assump_2}, let $\varepsilon_0$ be the constant given in Lemma~\ref{L:Bili_Core}.
  For $\varepsilon\in(0,\varepsilon_0)$ let $u_0^\varepsilon\in \mathcal{V}_\varepsilon$ and $f\in L^\infty(0,\infty;L^2(\Omega_\varepsilon)^3)$ satisfy \eqref{E:GE_Data} with
  \begin{align} \label{Pf_GE:C0}
    c_0 := \min\left\{1,\frac{d_3^2}{4},\frac{d_3^2}{4d_4}\right\}, \quad d_3 := \frac{1}{4d_1},
  \end{align}
  where $d_1$ is the positive constant given in Corollary~\ref{C:Tri_Est_A} and $d_4$ is a positive constant given later.
  Noting that $M_\tau u_0^\varepsilon=Mu_{0,\tau}^\varepsilon$ on $\Gamma$ and
  \begin{align*}
    u_0^\varepsilon = u_{0,n}^\varepsilon+\Bigl(u_{0,\tau}^\varepsilon-\overline{Mu_{0,\tau}^\varepsilon}\Bigr)+\overline{M_\tau u_0^\varepsilon}, \quad |u_{0,n}^\varepsilon| = |u_0^\varepsilon\cdot\bar{n}| \quad\text{in}\quad \Omega_\varepsilon,
  \end{align*}
  we apply \eqref{E:Con_Lp}, \eqref{E:Poin_Nor}, and \eqref{E:Ave_Diff_Dom} to the right-hand side of
  \begin{align*}
    \|u_0^\varepsilon\|_{L^2(\Omega_\varepsilon)}^2 &\leq c\left(\|u_0^\varepsilon\cdot\bar{n}\|_{L^2(\Omega_\varepsilon)}^2+\left\|u_{0,\tau}^\varepsilon-\overline{Mu_{0,\tau}^\varepsilon}\right\|_{L^2(\Omega_\varepsilon)}^2+\left\|\overline{M_\tau u_0^\varepsilon}\right\|_{L^2(\Omega_\varepsilon)}^2\right)
  \end{align*}
  and then use $\|u_{0,\tau}^\varepsilon\|_{H^1(\Omega_\varepsilon)}\leq c\|u_0^\varepsilon\|_{H^1(\Omega_\varepsilon)}$ and \eqref{E:Stokes_H1} to get
  \begin{align} \label{Pf_GE:Ini_Data}
    \|u_0^\varepsilon\|_{L^2(\Omega_\varepsilon)}^2 \leq c\left(\varepsilon^2\|A_\varepsilon^{1/2}u_0^\varepsilon\|_{L^2(\Omega_\varepsilon)}^2+\varepsilon\|M_\tau u_0^\varepsilon\|_{L^2(\Gamma)}^2\right).
  \end{align}
  Hence from \eqref{E:GE_Data} it follows that
  \begin{align} \label{Pf_GE:Ini_L2}
    \|u_0^\varepsilon\|_{L^2(\Omega_\varepsilon)}^2 \leq cc_0.
  \end{align}
  Let $u^\varepsilon$ be a strong solution to the Navier--Stokes equations \eqref{E:NS_Eq}--\eqref{E:NS_In} defined on the maximal time interval $[0,T_{\max})$.
  First we derive estimates for
  \begin{align*}
    \|u^\varepsilon(t)\|_{L^2(\Omega_\varepsilon)}^2, \quad \int_t^{\min\{t+1,T_{\max}\}}\|A_\varepsilon^{1/2}u^\varepsilon(s)\|_{L^2(\Omega_\varepsilon)}^2\,ds, \quad t\in[0,T_{\max})
  \end{align*}
  with explicit dependence of constants on $\varepsilon$.
  Taking the $L^2(\Omega_\varepsilon)$-inner product of
  \begin{align} \label{Pf_GE:Ab_NS}
    \partial_tu^\varepsilon+A_\varepsilon u^\varepsilon+\mathbb{P}_\varepsilon(u^\varepsilon\cdot\nabla)u^\varepsilon = \mathbb{P}_\varepsilon f^\varepsilon \quad\text{on}\quad (0,T_{\max})
  \end{align}
  with $u^\varepsilon$ and using
  \begin{align*}
    (\mathbb{P}_\varepsilon(u^\varepsilon\cdot\nabla)u^\varepsilon,u^\varepsilon)_{L^2(\Omega_\varepsilon)} &= \bigl((u^\varepsilon\cdot\nabla)u^\varepsilon,u^\varepsilon\bigr)_{L^2(\Omega_\varepsilon)} = 0
  \end{align*}
  by integration by parts, $\mathrm{div}\,u^\varepsilon=0$ in $\Omega_\varepsilon$, and $u^\varepsilon\cdot n_\varepsilon=0$ on $\Gamma_\varepsilon$ we get
  \begin{align} \label{Pf_GE:L2_Inner}
    \frac{1}{2}\frac{d}{dt}\|u^\varepsilon\|_{L^2(\Omega_\varepsilon)}^2+\|A_\varepsilon^{1/2}u^\varepsilon\|_{L^2(\Omega_\varepsilon)}^2 = (\mathbb{P}_\varepsilon f^\varepsilon,u^\varepsilon)_{L^2(\Omega_\varepsilon)} \quad\text{on}\quad (0,T_{\max}).
  \end{align}
  We split the right-hand side into
  \begin{align*}
    J_1 := \Bigl(\mathbb{P}_\varepsilon f^\varepsilon,u^\varepsilon-\overline{M_\tau u^\varepsilon}\Bigr)_{L^2(\Omega_\varepsilon)}, \quad J_2 := \Bigl(\mathbb{P}_\varepsilon f^\varepsilon,\overline{M_\tau u^\varepsilon}\Bigr)_{L^2(\Omega_\varepsilon)}
  \end{align*}
  and estimate them separately.
  Since
  \begin{align*}
    u^\varepsilon-\overline{M_\tau u^\varepsilon} = u_n^\varepsilon+\Bigl(u_\tau^\varepsilon-\overline{Mu_\tau^\varepsilon}\Bigr), \quad |u_n^\varepsilon| = |u^\varepsilon\cdot\bar{n}| \quad\text{in}\quad \Omega_\varepsilon,
  \end{align*}
  we observe by \eqref{E:Poin_Nor}, \eqref{E:Ave_Diff_Dom}, and $\|u_\tau^\varepsilon\|_{H^1(\Omega_\varepsilon)}\leq c\|u^\varepsilon\|_{H^1(\Omega_\varepsilon)}$ that
  \begin{align*}
    \left\|u^\varepsilon-\overline{M_\tau u^\varepsilon}\right\|_{L^2(\Omega_\varepsilon)} \leq \|u^\varepsilon\cdot\bar{n}\|_{L^2(\Omega_\varepsilon)}+\left\|u_\tau^\varepsilon-\overline{Mu_\tau^\varepsilon}\right\|_{L^2(\Omega_\varepsilon)} \leq c\varepsilon\|u^\varepsilon\|_{H^1(\Omega_\varepsilon)}.
  \end{align*}
  This inequality and \eqref{E:Stokes_H1} imply
  \begin{align*}
    |J_1| \leq \|\mathbb{P}_\varepsilon f^\varepsilon\|_{L^2(\Omega_\varepsilon)}\left\|u^\varepsilon-\overline{M_\tau u^\varepsilon}\right\|_{L^2(\Omega_\varepsilon)} \leq c\varepsilon\|\mathbb{P}_\varepsilon f^\varepsilon\|_{L^2(\Omega_\varepsilon)}\|A_\varepsilon^{1/2}u^\varepsilon\|_{L^2(\Omega_\varepsilon)}.
  \end{align*}
  To $J_2$ we use \eqref{E:Ave_Inner} and \eqref{E:Con_Dual} to get
  \begin{align*}
    |J_2| &\leq \left|\Bigl(\overline{M_\tau\mathbb{P}_\varepsilon f^\varepsilon},u^\varepsilon\Bigr)_{L^2(\Omega_\varepsilon)}\right|+\left|\Bigl(\mathbb{P}_\varepsilon f^\varepsilon,\overline{M_\tau u^\varepsilon}\Bigr)_{L^2(\Omega_\varepsilon)}-\Bigl(\overline{M_\tau\mathbb{P}_\varepsilon f^\varepsilon},u^\varepsilon\Bigr)_{L^2(\Omega_\varepsilon)}\right| \\
    &\leq c\left(\varepsilon^{1/2}\|M_\tau\mathbb{P}_\varepsilon f^\varepsilon\|_{H^{-1}(\Gamma,T\Gamma)}\|u^\varepsilon\|_{H^1(\Omega_\varepsilon)}+\varepsilon\|\mathbb{P}_\varepsilon f^\varepsilon\|_{L^2(\Omega_\varepsilon)}\|u^\varepsilon\|_{L^2(\Omega_\varepsilon)}\right).
  \end{align*}
  We apply these estimates to $(\mathbb{P}_\varepsilon f^\varepsilon,u^\varepsilon)_{L^2(\Omega_\varepsilon)}=J_1+J_2$ and use
  \begin{align} \label{Pf_GE:L2_Po}
    \|u^\varepsilon\|_{L^2(\Omega_\varepsilon)} \leq \|u^\varepsilon\|_{H^1(\Omega_\varepsilon)} \leq c\|A_\varepsilon^{1/2}u^\varepsilon\|_{L^2(\Omega_\varepsilon)}
  \end{align}
  by \eqref{E:Stokes_H1} and Young's inequality to obtain
  \begin{multline*}
    |(\mathbb{P}_\varepsilon f^\varepsilon,u^\varepsilon)_{L^2(\Omega_\varepsilon)}| \\
    \leq \frac{1}{2}\|A_\varepsilon^{1/2}u^\varepsilon\|_{L^2(\Omega_\varepsilon)}^2+c\left(\varepsilon^2\|\mathbb{P}_\varepsilon f^\varepsilon\|_{L^2(\Omega_\varepsilon)}^2+\varepsilon\|M_\tau\mathbb{P}_\varepsilon f^\varepsilon\|_{H^{-1}(\Gamma,T\Gamma)}^2\right).
  \end{multline*}
  From this inequality and \eqref{Pf_GE:L2_Inner} we deduce that
  \begin{multline} \label{Pf_GE:L2_InEst1}
    \frac{d}{dt}\|u^\varepsilon\|_{L^2(\Omega_\varepsilon)}^2+\|A_\varepsilon^{1/2}u^\varepsilon\|_{L^2(\Omega_\varepsilon)}^2 \\
    \leq c\left(\varepsilon^2\|\mathbb{P}_\varepsilon f^\varepsilon\|_{L^2(\Omega_\varepsilon)}^2+\varepsilon\|M_\tau\mathbb{P}_\varepsilon f^\varepsilon\|_{H^{-1}(\Gamma,T\Gamma)}^2\right) \quad\text{on}\quad (0,T_{\max}).
  \end{multline}
  By \eqref{Pf_GE:L2_Po} we further get
  \begin{multline*}
    \frac{d}{dt}\|u^\varepsilon\|_{L^2(\Omega_\varepsilon)}^2+\frac{1}{a_1}\|u^\varepsilon\|_{L^2(\Omega_\varepsilon)}^2 \\
    \leq c\left(\varepsilon^2\|\mathbb{P}_\varepsilon f^\varepsilon\|_{L^2(\Omega_\varepsilon)}^2+\varepsilon\|M_\tau\mathbb{P}_\varepsilon f^\varepsilon\|_{H^{-1}(\Gamma,T\Gamma)}^2\right) \quad\text{on}\quad (0,T_{\max})
  \end{multline*}
  with a constant $a_1>0$ independent of $\varepsilon$, $c_0$, and $T_{\max}$.
  For each $t\in(0,T_{\max})$ we multiply both sides of this inequality at $s\in(0,t)$ by $e^{(s-t)/a_1}$ and integrate them over $(0,t)$.
  Then we have
  \begin{multline} \label{Pf_GE:LinfL2}
    \|u^\varepsilon(t)\|_{L^2(\Omega_\varepsilon)}^2 \leq e^{-t/a_1}\|u_0^\varepsilon\|_{L^2(\Omega_\varepsilon)}^2 \\
    +ca_1(1-e^{-t/a_1})\left(\varepsilon^2\|\mathbb{P}_\varepsilon f^\varepsilon\|_{L^\infty(0,\infty;L^2(\Omega_\varepsilon))}^2+\varepsilon\|M_\tau\mathbb{P}_\varepsilon f^\varepsilon\|_{L^\infty(0,\infty;H^{-1}(\Gamma,T\Gamma))}^2\right).
  \end{multline}
  Also, integrating \eqref{Pf_GE:L2_InEst1} over $(t,t_\ast)$ with $t_\ast:=\min\{t+1,T_{\max}\}$ we deduce that
  \begin{multline} \label{Pf_GE:L2H1}
    \int_t^{t_\ast}\|A_\varepsilon^{1/2}u^\varepsilon(s)\|_{L^2(\Omega_\varepsilon)}^2\,ds \leq \|u_0^\varepsilon\|_{L^2(\Omega_\varepsilon)}^2 \\
    +c\left(\varepsilon^2\|\mathbb{P}_\varepsilon f^\varepsilon\|_{L^\infty(0,\infty;L^2(\Omega_\varepsilon))}^2+\varepsilon\|M_\tau\mathbb{P}_\varepsilon f^\varepsilon\|_{L^\infty(0,\infty;H^{-1}(\Gamma,T\Gamma))}^2\right).
  \end{multline}
  Hence we apply \eqref{E:GE_Data} and \eqref{Pf_GE:Ini_L2} to the right-hand sides of \eqref{Pf_GE:LinfL2} and \eqref{Pf_GE:L2H1} to get
  \begin{align} \label{Pf_GE:L2_Ener}
    \|u^\varepsilon(t)\|_{L^2(\Omega_\varepsilon)}^2+\int_t^{t_\ast}\|A_\varepsilon^{1/2}u^\varepsilon(s)\|_{L^2(\Omega_\varepsilon)}^2\,ds \leq cc_0 \quad\text{for all}\quad t\in[0,T_{\max}).
  \end{align}
  Next we show that $\|A_\varepsilon^{1/2}u^\varepsilon(t)\|_{L^2(\Omega_\varepsilon)}$ is uniformly bounded in $t\in[0,T_{\max})$ (note that it is continuous on $[0,T_{\max})$ by $u^\varepsilon\in C([0,T_{\max});\mathcal{V}_\varepsilon)$).
  Our goal is to prove
  \begin{align} \label{Pf_GE:Goal}
    \varepsilon^{1/2}\|A_\varepsilon^{1/2}u^\varepsilon(t)\|_{L^2(\Omega_\varepsilon)} < d_3 = \frac{1}{4d_1} \quad\text{for all}\quad t\in[0,T_{\max}).
  \end{align}
  If \eqref{Pf_GE:Goal} is valid, then Theorem~\ref{T:LE} implies that $T_{\max}=\infty$, i.e. the strong solution $u^\varepsilon$ exists on the whole time interval $[0,\infty)$.
  First note that \eqref{Pf_GE:Goal} is valid at $t=0$ by \eqref{E:GE_Data} and \eqref{Pf_GE:C0}.
  Let us prove \eqref{Pf_GE:Goal} for all $t\in(0,T_{\max})$ by contradiction.
  Assume to the contrary that there exists $T\in(0,T_{\max})$ such that
  \begin{align}
    \varepsilon^{1/2}\|A_\varepsilon^{1/2}u^\varepsilon(t)\|_{L^2(\Omega_\varepsilon)} &< d_3 \quad\text{for all}\quad t\in[0,T), \label{Pf_GE:Cont_Ine} \\
    \varepsilon^{1/2}\|A_\varepsilon^{1/2}u^\varepsilon(T)\|_{L^2(\Omega_\varepsilon)} &= d_3. \label{Pf_GE:Cont_Eq}
  \end{align}
  We consider \eqref{Pf_GE:Ab_NS} on $(0,T]$ and take its $L^2(\Omega_\varepsilon)$-inner product with $A_\varepsilon u^\varepsilon$ to get
  \begin{multline} \label{PF_GE:H1_Inner}
    \frac{1}{2}\frac{d}{dt}\|A_\varepsilon^{1/2}u^\varepsilon\|_{L^2(\Omega_\varepsilon)}^2+\|A_\varepsilon u^\varepsilon\|_{L^2(\Omega_\varepsilon)}^2 \\
    \leq \left|\bigl((u^\varepsilon\cdot\nabla)u^\varepsilon,A_\varepsilon u^\varepsilon\bigr)_{L^2(\Omega_\varepsilon)}\right|+|(\mathbb{P}_\varepsilon f^\varepsilon,A_\varepsilon u^\varepsilon)_{L^2(\Omega_\varepsilon)}| \quad\text{on}\quad (0,T].
  \end{multline}
  By \eqref{E:Tri_Est_A} and \eqref{Pf_GE:Cont_Ine}--\eqref{Pf_GE:Cont_Eq} with $d_3=1/4d_1$ we have
  \begin{multline*}
    \left|\bigl((u^\varepsilon\cdot\nabla)u^\varepsilon,A_\varepsilon u^\varepsilon\bigr)_{L^2(\Omega_\varepsilon)}\right| \leq \frac{1}{2}\|A_\varepsilon u^\varepsilon\|_{L^2(\Omega_\varepsilon)}^2 \\
    +d_2\left(\|u^\varepsilon\|_{L^2(\Omega_\varepsilon)}^2\|A_\varepsilon^{1/2}u^\varepsilon\|_{L^2(\Omega_\varepsilon)}^4+\varepsilon^{-1}\|u^\varepsilon\|_{L^2(\Omega_\varepsilon)}^2\|A_\varepsilon^{1/2}u^\varepsilon\|_{L^2(\Omega_\varepsilon)}^2\right)
  \end{multline*}
  on $(0,T]$.
  Also, Young's inequality yields
  \begin{align*}
    |(\mathbb{P}_\varepsilon f^\varepsilon,A_\varepsilon u^\varepsilon)_{L^2(\Omega_\varepsilon)}| \leq \frac{1}{4}\|A_\varepsilon u^\varepsilon\|_{L^2(\Omega_\varepsilon)}^2+\|\mathbb{P}_\varepsilon f^\varepsilon\|_{L^2(\Omega_\varepsilon)}^2.
  \end{align*}
  Applying these inequalities to the right-hand side of \eqref{PF_GE:H1_Inner} we obtain
  \begin{align} \label{Pf_GE:H1_InEst1}
    \frac{d}{dt}\|A_\varepsilon^{1/2}u^\varepsilon\|_{L^2(\Omega_\varepsilon)}^2+\frac{1}{2}\|A_\varepsilon u^\varepsilon\|_{L^2(\Omega_\varepsilon)}^2 \leq \xi\|A_\varepsilon^{1/2}u^\varepsilon\|_{L^2(\Omega_\varepsilon)}^2+\zeta \quad\text{on}\quad (0,T],
  \end{align}
  where the functions $\xi$ and $\zeta$ are given by
  \begin{align} \label{Pf_GE:Def_Xi}
    \begin{aligned}
      \xi(t) &:= 2d_2\|u^\varepsilon(t)\|_{L^2(\Omega_\varepsilon)}^2\|A_\varepsilon^{1/2}u^\varepsilon(t)\|_{L^2(\Omega_\varepsilon)}^2, \\
     \zeta(t) &:= 2\left(d_2\varepsilon^{-1}\|u^\varepsilon(t)\|_{L^2(\Omega_\varepsilon)}^2\|A_\varepsilon^{1/2}u^\varepsilon(t)\|_{L^2(\Omega_\varepsilon)}^2+\|\mathbb{P}_\varepsilon f^\varepsilon(t)\|_{L^2(\Omega_\varepsilon)}^2\right)
    \end{aligned}
  \end{align}
  for $t\in(0,T]$.
  By \eqref{Pf_GE:L2_Ener}, \eqref{Pf_GE:Cont_Ine}, and \eqref{Pf_GE:Cont_Eq} we see that
  \begin{align*}
    \xi \leq cc_0\varepsilon^{-1}, \quad \zeta \leq c\left(c_0\varepsilon^{-1}\|A_\varepsilon^{1/2}u^\varepsilon\|_{L^2(\Omega_\varepsilon)}^2+\|\mathbb{P}_\varepsilon f^\varepsilon\|_{L^2(\Omega_\varepsilon)}^2\right) \quad\text{on}\quad (0,T].
  \end{align*}
  Applying these inequalities to \eqref{Pf_GE:H1_InEst1} we have
  \begin{multline} \label{Pf_GE:H1_InEst2}
    \frac{d}{dt}\|A_\varepsilon^{1/2}u^\varepsilon\|_{L^2(\Omega_\varepsilon)}^2+\frac{1}{2}\|A_\varepsilon u^\varepsilon\|_{L^2(\Omega_\varepsilon)}^2 \\
    \leq c\left(c_0\varepsilon^{-1}\|A_\varepsilon^{1/2}u^\varepsilon\|_{L^2(\Omega_\varepsilon)}^2+\|\mathbb{P}_\varepsilon f^\varepsilon\|_{L^2(\Omega_\varepsilon)}^2\right) \quad\text{on}\quad (0,T].
  \end{multline}
  From \eqref{E:Stokes_Po} and \eqref{Pf_GE:H1_InEst2} we further deduce that
  \begin{align*}
    \frac{d}{dt}\|A_\varepsilon^{1/2}u^\varepsilon\|_{L^2(\Omega_\varepsilon)}^2+\frac{1}{a_2}\|A_\varepsilon^{1/2}u^\varepsilon\|_{L^2(\Omega_\varepsilon)}^2 \leq c\left(c_0\varepsilon^{-1}\|A_\varepsilon^{1/2}u^\varepsilon\|_{L^2(\Omega_\varepsilon)}^2+\|\mathbb{P}_\varepsilon f^\varepsilon\|_{L^2(\Omega_\varepsilon)}^2\right)
  \end{align*}
  on $(0,T]$ with a constant $a_2>0$ independent of $\varepsilon$, $c_0$, and $T_{\max}$.
  For $t\in(0,T]$ we multiply both sides of the above inequality at $s\in(0,t)$ by $e^{(s-t)/a_2}$ and integrate them over $(0,t)$ to get
  \begin{multline} \label{Pf_GE:LinfH1_Int}
    \|A_\varepsilon^{1/2}u^\varepsilon(t)\|_{L^2(\Omega_\varepsilon)}^2 \\
    \leq e^{-t/a_2}\|A_\varepsilon^{1/2}u_0^\varepsilon\|_{L^2(\Omega_\varepsilon)}^2+cc_0\varepsilon^{-1}\int_0^te^{(s-t)/a_2}\|A_\varepsilon^{1/2}u^\varepsilon(s)\|_{L^2(\Omega_\varepsilon)}^2\,ds \\
    +ca_2(1-e^{-t/a_2})\|\mathbb{P}_\varepsilon f^\varepsilon\|_{L^\infty(0,\infty;L^2(\Omega_\varepsilon))}^2.
  \end{multline}
  When $t\leq T_\ast:=\min\{1,T\}$ we apply \eqref{E:GE_Data}, \eqref{Pf_GE:L2_Ener}, and $c_0\leq 1$ to \eqref{Pf_GE:LinfH1_Int} to obtain
  \begin{align} \label{Pf_GE:LinfH1_1}
    \|A_\varepsilon^{1/2}u^\varepsilon(t)\|_{L^2(\Omega_\varepsilon)}^2 \leq cc_0(1+c_0)\varepsilon^{-1} \leq cc_0\varepsilon^{-1} \quad\text{for all}\quad t\in(0,T_\ast].
  \end{align}
  Now we suppose that $T\geq 1$ and estimate $\|A_\varepsilon^{1/2}u^\varepsilon(t)\|_{L^2(\Omega_\varepsilon)}$ for $t\in[1,T]$.
  Since
  \begin{align*}
    \frac{d}{dt}\|A_\varepsilon^{1/2}u^\varepsilon\|_{L^2(\Omega_\varepsilon)}^2 \leq \xi\|A_\varepsilon^{1/2}u^\varepsilon\|_{L^2(\Omega_\varepsilon)}^2+\zeta \quad\text{on}\quad (0,T]
  \end{align*}
  by \eqref{Pf_GE:H1_InEst1}, we can use Lemma~\ref{L:Uni_Gronwall} with $z(t)=\|A_\varepsilon^{1/2}u^\varepsilon(t)\|_{L^2(\Omega_\varepsilon)}^2$ to obtain
  \begin{multline} \label{Pf_GE:Uni_Gron}
    \|A_\varepsilon^{1/2}u^\varepsilon(t)\|_{L^2(\Omega_\varepsilon)}^2 \\
    \leq \left(\int_{t-1}^t\|A_\varepsilon^{1/2}u^\varepsilon(s)\|_{L^2(\Omega_\varepsilon)}^2\,ds+\int_{t-1}^t\zeta(s)\,ds\right)\exp\left(\int_{t-1}^t\xi(s)\,ds\right)
  \end{multline}
  for all $t\in[1,T]$.
  Moreover, the functions $\xi$ and $\zeta$ given by \eqref{Pf_GE:Def_Xi} satisfy
  \begin{align*}
    \int_{t-1}^t\xi(s)\,ds &\leq cc_0\int_{t-1}^t\|A_\varepsilon^{1/2}u^\varepsilon(s)\|_{L^2(\Omega_\varepsilon)}^2\,ds \leq c, \\
    \int_{t-1}^t\zeta(s)\,ds &\leq c\left(c_0\varepsilon^{-1}\int_{t-1}^t\|A_\varepsilon^{1/2}u^\varepsilon(s)\|_{L^2(\Omega_\varepsilon)}^2\,ds+\|\mathbb{P}_\varepsilon f^\varepsilon\|_{L^\infty(0,\infty;L^2(\Omega_\varepsilon))}^2\right) \\
    &\leq cc_0\varepsilon^{-1}
  \end{align*}
  by \eqref{E:GE_Data}, \eqref{Pf_GE:L2_Ener}, and $c_0\leq1$.
  Using these inequalities and \eqref{Pf_GE:L2_Ener} to \eqref{Pf_GE:Uni_Gron} we have
  \begin{align} \label{Pf_GE:LinfH1_2}
    \|A_\varepsilon^{1/2}u^\varepsilon(t)\|_{L^2(\Omega_\varepsilon)}^2 \leq cc_0\varepsilon^{-1} \quad\text{for all}\quad t\in[1,T].
  \end{align}
  Now we combine \eqref{Pf_GE:LinfH1_1} and \eqref{Pf_GE:LinfH1_2} to observe that
  \begin{align*}
    \|A_\varepsilon^{1/2}u^\varepsilon(t)\|_{L^2(\Omega_\varepsilon)}^2 \leq d_4c_0\varepsilon^{-1} \quad\text{for all}\quad t\in(0,T]
  \end{align*}
  with a constant $d_4>0$ independent of $\varepsilon$, $c_0$, and $T$.
  Hence if we define the constant $c_0$ by \eqref{Pf_GE:C0}, then by setting $t=T$ in the above inequality we get
  \begin{align*}
    \|A_\varepsilon^{1/2}u^\varepsilon(T)\|_{L^2(\Omega_\varepsilon)}^2 \leq \frac{d_3^2\varepsilon^{-1}}{4}, \quad\text{i.e.}\quad \varepsilon^{1/2}\|A_\varepsilon^{1/2}u^\varepsilon(T)\|_{L^2(\Omega_\varepsilon)} \leq \frac{d_3}{2} < d_3,
  \end{align*}
  which contradicts with \eqref{Pf_GE:Cont_Eq}.
  Therefore, the inequality \eqref{Pf_GE:Goal} is valid for all $t\in[0,T_{\max})$ and we conclude by Theorem~\ref{T:LE} that $T_{\max}=\infty$, i.e. the strong solution $u^\varepsilon$ to \eqref{E:NS_Eq}--\eqref{E:NS_In} exists on the whole time interval $[0,\infty)$.
\end{proof}

Using the inequalities given in the proof of Theorem~\ref{T:GE}, we can also show the uniform estimates \eqref{E:UE_L2} and \eqref{E:UE_H1} for a strong solution.

\begin{proof}[Proof of Theorem~\ref{T:UE}]
  Let $\varepsilon_0,c_0\in(0,1)$ be the constants given in Theorem~\ref{T:GE}.
  Since $\alpha$ and $\beta$ are positive we can take $\varepsilon_1\in(0,\varepsilon_0)$ such that $c_1\varepsilon^\alpha\leq c_0$ and $c_2\varepsilon^\beta\leq c_0$ for all $\varepsilon\in(0,\varepsilon_1)$.
  Hence for $\varepsilon\in(0,\varepsilon_1)$ if $u_0^\varepsilon$ and $f^\varepsilon$ satisfy \eqref{E:UE_Data} then the inequality \eqref{E:GE_Data} holds and Theorem~\ref{T:GE} gives the existence of a global strong solution $u^\varepsilon$ to \eqref{E:NS_Eq}--\eqref{E:NS_In}.

  Let us derive the estimates \eqref{E:UE_L2} and \eqref{E:UE_H1} for the strong solution $u^\varepsilon$.
  Hereafter we denote by $c$ a general positive constant independent of $\varepsilon$.
  First note that
  \begin{align} \label{Pf_UE:Ini}
    \|u_0^\varepsilon\|_{L^2(\Omega_\varepsilon)}^2 \leq c(\varepsilon^{1+\alpha}+\varepsilon^\beta)
  \end{align}
  by \eqref{E:UE_Data} and \eqref{Pf_GE:Ini_Data}.
  We apply this inequality and \eqref{E:UE_Data} to \eqref{Pf_GE:LinfL2} to get
  \begin{align} \label{Pf_UE:U_LinfL2}
    \|u^\varepsilon(t)\|_{L^2(\Omega_\varepsilon)}^2 \leq c(\varepsilon^{1+\alpha}+\varepsilon^\beta) \quad\text{for all}\quad t\geq 0.
  \end{align}
  Also, integrating \eqref{Pf_GE:L2_InEst1} over $[0,t]$ and using \eqref{E:UE_Data} and \eqref{Pf_UE:Ini} we have
  \begin{align} \label{Pf_UE:U_L2H1}
    \int_0^t\|A_\varepsilon^{1/2}u^\varepsilon(s)\|_{L^2(\Omega_\varepsilon)}^2\,ds \leq c(\varepsilon^{1+\alpha}+\varepsilon^\beta)(1+t) \quad\text{for all}\quad t\geq 0.
  \end{align}
  Combining \eqref{Pf_UE:U_LinfL2} and \eqref{Pf_UE:U_L2H1} with \eqref{E:Stokes_H1} we obtain \eqref{E:UE_L2}.

  Next let us prove \eqref{E:UE_H1}.
  We use \eqref{E:UE_Data} and \eqref{Pf_UE:Ini} to \eqref{Pf_GE:L2H1} to deduce that
  \begin{align} \label{Pf_UE:L2H1_Short}
    \int_t^{t+1}\|A_\varepsilon^{1/2}u^\varepsilon(s)\|_{L^2(\Omega_\varepsilon)}^2\,ds \leq c(\varepsilon^{1+\alpha}+\varepsilon^\beta) \quad\text{for all}\quad t\geq 0.
  \end{align}
  Note that $t_\ast=\min\{t+1,T_{\max}\}=t+1$ in \eqref{Pf_GE:L2H1} by $T_{\max}=\infty$.
  Since \eqref{Pf_GE:Goal} and \eqref{Pf_UE:U_LinfL2} are valid for all $t\geq 0$, we can derive \eqref{Pf_GE:LinfH1_Int} for all $t\geq 0$ as in the proof of Theorem~\ref{T:GE}.
  When $t\in[0,1]$, we apply \eqref{E:UE_Data} and \eqref{Pf_UE:U_L2H1} to \eqref{Pf_GE:LinfH1_Int} to obtain
  \begin{align} \label{Pf_UE:U_LinfH1_1}
    \|A_\varepsilon^{1/2}u^\varepsilon(t)\|_{L^2(\Omega_\varepsilon)}^2 \leq c(\varepsilon^{-1+\alpha}+\varepsilon^{-1+\beta}) \quad\text{for all}\quad t\in[0,1].
  \end{align}
  Let $t\geq1$.
  In \eqref{Pf_GE:Uni_Gron} the functions $\xi$ and $\zeta$ given by \eqref{Pf_GE:Def_Xi} satisfy
  \begin{align*}
    \int_{t-1}^t\xi(s)\,ds &\leq c\int_{t-1}^t\|A_\varepsilon^{1/2}u^\varepsilon(s)\|_{L^2(\Omega_\varepsilon)}^2\,ds \leq c, \\
    \int_{t-1}^t\zeta(s)\,ds &\leq c\left(\varepsilon^{-1}\int_{t-1}^t\|A_\varepsilon^{1/2}u^\varepsilon(s)\|_{L^2(\Omega_\varepsilon)}^2\,ds+\|\mathbb{P}_\varepsilon f^\varepsilon\|_{L^\infty(0,\infty;L^2(\Omega_\varepsilon))}^2\right) \\
    &\leq c(\varepsilon^{-1+\alpha}+\varepsilon^{-1+\beta})
  \end{align*}
  by \eqref{E:UE_Data}, \eqref{Pf_GE:L2_Ener}, and \eqref{Pf_UE:L2H1_Short}.
  Applying these estimates and \eqref{Pf_UE:L2H1_Short} to \eqref{Pf_GE:Uni_Gron} we get
  \begin{align} \label{Pf_UE:U_LinfH1_2}
    \|A_\varepsilon^{1/2}u^\varepsilon(t)\|_{L^2(\Omega_\varepsilon)}^2 \leq c(\varepsilon^{-1+\alpha}+\varepsilon^{-1+\beta}) \quad\text{for all}\quad t\geq 1.
  \end{align}
  By \eqref{E:Stokes_H1}, \eqref{Pf_UE:U_LinfH1_1}, and \eqref{Pf_UE:U_LinfH1_2} we obtain the first inequality of \eqref{E:UE_H1}.
  To derive the second one we see that \eqref{Pf_GE:H1_InEst2} holds on $[0,\infty)$ since \eqref{Pf_GE:L2_Ener} and \eqref{Pf_GE:Goal} are valid on $[0,\infty)$.
  Hence we integrate \eqref{Pf_GE:H1_InEst2} over $[0,t]$ and use \eqref{E:UE_Data} and \eqref{Pf_UE:U_L2H1} to get
  \begin{align*}
    \int_0^t\|A_\varepsilon u^\varepsilon(s)\|_{L^2(\Omega_\varepsilon)}^2\,ds \leq c(\varepsilon^{-1+\alpha}+\varepsilon^{-1+\beta})(1+t) \quad\text{for all}\quad t\geq 0.
  \end{align*}
  This inequality combined with \eqref{E:Stokes_H2} yields the second inequality of \eqref{E:UE_H1}.
\end{proof}

For the study of a singular limit problem in Section~\ref{S:SL} we need estimates for the tensor product and the time derivative of a strong solution with $\beta=1$.
Let us derive them by using the inequalities given in Lemma~\ref{L:Est_UU}.

\begin{theorem} \label{T:Est_Ue}
  Under the assumptions of Theorem~\ref{T:GE}, let $c_1,c_2>0$, $\alpha\in(0,1]$, $\beta=1$, and $\varepsilon_1$ be the constant given in Theorem~\ref{T:UE}.
  For $\varepsilon\in(0,\varepsilon_1)$ let $u_0^\varepsilon\in \mathcal{V}_\varepsilon$ and $f^\varepsilon\in L^2(0,\infty;L^2(\Omega_\varepsilon)^3)$ satisfy \eqref{E:UE_Data}.
  When the condition (A3) of Assumption~\ref{Assump_2} is satisfied, suppose that $f^\varepsilon(t)\in\mathcal{R}_g^\perp$ for a.a. $t\in(0,\infty)$.
  Then there exists a global strong solution $u^\varepsilon$ to \eqref{E:NS_Eq}--\eqref{E:NS_In} such that
  \begin{align} \label{E:Est_Ue}
    \begin{gathered}
      \|u^\varepsilon(t)\|_{L^2(\Omega_\varepsilon)}^2 \leq c\varepsilon, \quad \int_0^t\|u^\varepsilon(s)\|_{H^1(\Omega_\varepsilon)}^2\,ds \leq c\varepsilon(1+t), \\
      \|u^\varepsilon(t)\|_{H^1(\Omega_\varepsilon)}^2 \leq c\varepsilon^{-1+\alpha}, \quad \int_0^t\|u^\varepsilon(s)\|_{H^2(\Omega_\varepsilon)}^2\,ds \leq c\varepsilon^{-1+\alpha}(1+t)
    \end{gathered}
  \end{align}
  for all $t\geq0$, and
  \begin{align}
    \int_0^t\|[u^\varepsilon\otimes u^\varepsilon](s)\|_{L^2(\Omega_\varepsilon)}^2\,ds &\leq c\varepsilon(1+t), \label{E:Est_UeUe} \\
    \int_0^t\|\partial_tu^\varepsilon(s)\|_{L^2(\Omega_\varepsilon)}^2\,ds &\leq c\varepsilon^{-1+\alpha}(1+t) \label{E:Est_DtUe}
  \end{align}
  for all $t\geq0$, where $c>0$ is a constant independent of $\varepsilon$, $u^\varepsilon$, and $t$.
\end{theorem}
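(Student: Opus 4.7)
The plan is to derive each of the three displays from Theorem~\ref{T:UE}, the product estimates of Lemma~\ref{L:Est_UU}, and the $H^2$--$A_\varepsilon$ equivalence of Lemma~\ref{L:Stokes_H2}. First, the estimates \eqref{E:Est_Ue} are immediate specializations of \eqref{E:UE_L2}--\eqref{E:UE_H1} to $\beta=1$: since $\varepsilon\in(0,1)$ and $\alpha\in(0,1]$, we have $\varepsilon^{1+\alpha}+\varepsilon^\beta\leq 2\varepsilon$, and because $\alpha\leq 1$ forces $\varepsilon^{-1+\alpha}\geq 1$ we also have $\varepsilon^{-1+\alpha}+\varepsilon^{-1+\beta}=\varepsilon^{-1+\alpha}+1\leq 2\varepsilon^{-1+\alpha}$. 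The global strong solution $u^\varepsilon$ and the satisfaction of \eqref{E:UE_Data} are inherited directly from Theorem~\ref{T:UE}.

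For \eqref{E:Est_UeUe} I would apply \eqref{E:Est_UU} pointwise in time to $u^\varepsilon(s)\in D(A_\varepsilon)$ (whose solenoidality and slip conditions hold by membership in $D(A_\varepsilon)$) and square to obtain three terms of the shape $\varepsilon^{-1}\|u^\varepsilon\|_{L^2}^2\|u^\varepsilon\|_{H^1}^2$, $\varepsilon\|u^\varepsilon\|_{L^2}^2\|u^\varepsilon\|_{H^2}^2$, and $\|u^\varepsilon\|_{L^2}^3\|u^\varepsilon\|_{H^2}$. Using the pointwise bound $\|u^\varepsilon(s)\|_{L^2}^2\leq c\varepsilon$ from \eqref{E:Est_Ue}, the first two reduce to $c\|u^\varepsilon\|_{H^1}^2$ and $c\varepsilon^2\|u^\varepsilon\|_{H^2}^2$, whose time-integrals are bounded by \eqref{E:Est_Ue} (the $\varepsilon^2\cdot\varepsilon^{-1+\alpha}=\varepsilon^{1+\alpha}$ contribution being absorbed into $c\varepsilon$ since $\alpha>0$). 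For the cubic term I would use Young's inequality
\begin{align*}
  \|u^\varepsilon\|_{L^2(\Omega_\varepsilon)}\|u^\varepsilon\|_{H^2(\Omega_\varepsilon)} \leq \tfrac{1}{2}\varepsilon^{-1}\|u^\varepsilon\|_{L^2(\Omega_\varepsilon)}^2+\tfrac{1}{2}\varepsilon\|u^\varepsilon\|_{H^2(\Omega_\varepsilon)}^2,
\end{align*}
multiplied by the remaining factor $\|u^\varepsilon\|_{L^2}^2\leq c\varepsilon$, giving $\|u^\varepsilon\|_{L^2}^3\|u^\varepsilon\|_{H^2}\leq c\|u^\varepsilon\|_{L^2}^2+c\varepsilon^2\|u^\varepsilon\|_{H^2}^2$, which integrates to $c\varepsilon(1+t)$ by the sup-bound on $\|u^\varepsilon\|_{L^2}^2$ and the $L^2_tH^2_x$ estimate in \eqref{E:Est_Ue}.

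For \eqref{E:Est_DtUe} I would extract $\partial_tu^\varepsilon$ from the abstract evolution equation
\begin{align*}
  \partial_tu^\varepsilon = -A_\varepsilon u^\varepsilon-\mathbb{P}_\varepsilon(u^\varepsilon\cdot\nabla)u^\varepsilon+\mathbb{P}_\varepsilon f^\varepsilon \quad\text{in}\quad \mathcal{H}_\varepsilon,
\end{align*}
use that $\mathbb{P}_\varepsilon$ is a contraction on $L^2(\Omega_\varepsilon)^3$, and bound the three pieces separately. The Stokes term satisfies $\|A_\varepsilon u^\varepsilon\|_{L^2}\leq c\|u^\varepsilon\|_{H^2}$ by Lemma~\ref{L:Stokes_H2}, hence its $L^2_t$-norm is controlled by $c\varepsilon^{-1+\alpha}(1+t)$ through \eqref{E:Est_Ue}; the forcing term is directly handled by \eqref{E:UE_Data}. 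For the convective term I would square \eqref{E:Est_UGU} and observe that \eqref{E:Est_Ue} makes the prefactor satisfy $\varepsilon^{-1}\|u^\varepsilon\|_{L^2}^2+\varepsilon\|u^\varepsilon\|_{H^1}^2\leq c(1+\varepsilon^\alpha)\leq c$, so $\|(u^\varepsilon\cdot\nabla)u^\varepsilon\|_{L^2}^2\leq c\|u^\varepsilon\|_{H^2}^2$ and the integral is again controlled by \eqref{E:Est_Ue}.

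Since every ingredient has already been assembled, no step poses a genuine obstacle; the only point requiring care is the calibration of the Young splitting on the cubic term $\|u^\varepsilon\|_{L^2}^3\|u^\varepsilon\|_{H^2}$, where the weight $\varepsilon^{\pm 1}$ is chosen precisely so that the uniform bound $\|u^\varepsilon\|_{L^2}^2\leq c\varepsilon$ balances against the $L^2_tH^2_x$ bound of order $\varepsilon^{-1+\alpha}$ to produce the optimal rate $c\varepsilon(1+t)$.
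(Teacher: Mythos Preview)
Your proof is correct and follows essentially the same strategy as the paper: specialize Theorem~\ref{T:UE} to $\beta=1$, feed \eqref{E:Est_Ue} into the product estimate \eqref{E:Est_UU} for \eqref{E:Est_UeUe}, and into \eqref{E:Est_UGU} for the convective term in \eqref{E:Est_DtUe}. Two minor technical deviations are worth noting. For the cubic term $\|u^\varepsilon\|_{L^2}^3\|u^\varepsilon\|_{H^2}$ the paper applies H\"older in time, $\int_0^t\|u^\varepsilon\|_{L^2}^3\|u^\varepsilon\|_{H^2}\,ds\leq c\varepsilon^{3/2}t^{1/2}\bigl(\int_0^t\|u^\varepsilon\|_{H^2}^2\,ds\bigr)^{1/2}$, rather than your pointwise Young splitting; both yield the same final bound. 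For \eqref{E:Est_DtUe} the paper takes the $L^2(\Omega_\varepsilon)$-inner product of the abstract equation with $\partial_tu^\varepsilon$ and uses $(A_\varepsilon u^\varepsilon,\partial_tu^\varepsilon)_{L^2}=\tfrac12\tfrac{d}{dt}\|A_\varepsilon^{1/2}u^\varepsilon\|_{L^2}^2$, whereas you bound $\|\partial_tu^\varepsilon\|_{L^2}$ directly from the equation via $\|A_\varepsilon u^\varepsilon\|_{L^2}\leq c\|u^\varepsilon\|_{H^2}$ (Lemma~\ref{L:Stokes_H2}); your route is slightly more direct and equally valid.
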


\begin{proof}
  A global strong solution $u^\varepsilon$ exists by Theorem~\ref{T:UE}.
  Also, the estimates \eqref{E:Est_Ue} follow from \eqref{E:UE_L2}--\eqref{E:UE_H1} since $\beta=1$ and $\varepsilon\leq\varepsilon^\alpha$ by $\alpha\leq 1$ and $\varepsilon<1$.

  Let us derive \eqref{E:Est_UeUe}.
  Hereafter we suppress the argument $s$ of integrands.
  Since $u^\varepsilon\in L_{loc}^2([0,\infty);D(A_\varepsilon))$ satisfies the conditions of Lemma~\ref{L:Est_UU},
  \begin{multline} \label{Pf_EsSt:UeUe}
    \int_0^t\|u^\varepsilon\otimes u^\varepsilon\|_{L^2(\Omega_\varepsilon)}^2\,ds \leq c\left(\varepsilon^{-1}\int_0^t\|u^\varepsilon\|_{L^2(\Omega_\varepsilon)}^2\|u\|_{H^1(\Omega_\varepsilon)}^2\,ds\right.\\
    \left.+\varepsilon\int_0^t\|u^\varepsilon\|_{L^2(\Omega_\varepsilon)}^2\|u^\varepsilon\|_{H^2(\Omega_\varepsilon)}^2\,ds+\int_0^t\|u^\varepsilon\|_{L^2(\Omega_\varepsilon)}^3\|u^\varepsilon\|_{H^2(\Omega_\varepsilon)}\,ds\right)
  \end{multline}
  for all $t\geq 0$ by \eqref{E:Est_UU}.
  From \eqref{E:Est_Ue} we deduce that
  \begin{align}
    \int_0^t\|u^\varepsilon\|_{L^2(\Omega_\varepsilon)}^2\|u^\varepsilon\|_{H^1(\Omega_\varepsilon)}^2\,ds &\leq c\varepsilon^2(1+t), \label{Pf_EsSt:L2H1} \\
    \int_0^t\|u^\varepsilon\|_{L^2(\Omega_\varepsilon)}^2\|u^\varepsilon\|_{H^2(\Omega_\varepsilon)}^2\,ds &\leq c\varepsilon^\alpha(1+t). \label{Pf_EsSt:L2H2}
  \end{align}
  Also, H\"{o}lder's inequality and \eqref{E:Est_Ue} imply that
  \begin{align} \label{Pf_EsSt:L2H2_31}
    \begin{aligned}
      \int_0^t\|u^\varepsilon\|^3_{L^2(\Omega_\varepsilon)}\|u^\varepsilon\|_{H^2(\Omega_\varepsilon)}\,ds &\leq c\varepsilon^{3/2}t^{1/2}\left(\int_0^t\|u^\varepsilon\|_{H^2(\Omega_\varepsilon)}^2\,ds\right)^{1/2} \\
      &\leq c\varepsilon^{1+\alpha/2}(1+t).
    \end{aligned}
  \end{align}
  Applying \eqref{Pf_EsSt:L2H1}--\eqref{Pf_EsSt:L2H2_31} to \eqref{Pf_EsSt:UeUe} and noting that $\varepsilon^\alpha,\varepsilon^{\alpha/2}\leq 1$ we obtain \eqref{E:Est_UeUe}.

  Next we prove \eqref{E:Est_DtUe}.
  We take the $L^2(\Omega_\varepsilon)$-inner product of $\partial_tu^\varepsilon$ with
  \begin{align*}
    \partial_tu^\varepsilon+A_\varepsilon u^\varepsilon+\mathbb{P}_\varepsilon[(u^\varepsilon\cdot\nabla)u^\varepsilon] = \mathbb{P}_\varepsilon f^\varepsilon \quad\text{on}\quad (0,\infty)
  \end{align*}
  and then integrate it over $(0,t)$ with $t>0$ and use Young's inequality to get
  \begin{multline} \label{Pf_EsSt:DtUe}
    \int_0^t\|\partial_tu^\varepsilon\|_{L^2(\Omega_\varepsilon)}^2\,ds+\|A_\varepsilon^{1/2}u^\varepsilon(t)\|_{L^2(\Omega_\varepsilon)}^2 \\
    \leq \|A_\varepsilon^{1/2}u_0^\varepsilon\|_{L^2(\Omega_\varepsilon)}^2+c\int_0^t\left(\|(u^\varepsilon\cdot\nabla)u^\varepsilon\|_{L^2(\Omega_\varepsilon)}^2+\|\mathbb{P}_\varepsilon f^\varepsilon\|_{L^2(\Omega_\varepsilon)}^2\right)ds.
  \end{multline}
  Let us estimate the integral of $(u^\varepsilon\cdot\nabla)u^\varepsilon$.
  By \eqref{E:Est_UGU} we have
  \begin{align*}
    \int_0^t\|(u^\varepsilon\cdot\nabla)u^\varepsilon\|_{L^2(\Omega_\varepsilon)}^2\,ds \leq c\int_0^t\left(\varepsilon^{-1}\|u^\varepsilon\|_{L^2(\Omega_\varepsilon)}^2+\varepsilon\|u^\varepsilon\|_{H^1(\Omega_\varepsilon)}^2\right)\|u^\varepsilon\|_{H^2(\Omega_\varepsilon)}^2\,ds.
  \end{align*}
  To the right-hand side we apply \eqref{Pf_EsSt:L2H2} and
  \begin{align} \label{Pf_EsSt:H1H2}
    \int_0^r\|u^\varepsilon\|_{H^1(\Omega_\varepsilon)}^2\|u^\varepsilon\|_{H^2(\Omega_\varepsilon)}^2\,ds \leq c\varepsilon^{-2+2\alpha}(1+t)
  \end{align}
  by \eqref{E:Est_Ue}, and then use $\varepsilon^\alpha\leq 1$ by $\varepsilon<1$ to deduce that
  \begin{align*}
    \int_0^t\|(u^\varepsilon\cdot\nabla)u^\varepsilon\|_{L^2(\Omega_\varepsilon)}^2\,ds \leq c\varepsilon^{-1+\alpha}(1+t).
  \end{align*}
  Applying this inequality and \eqref{E:UE_Data} to \eqref{Pf_EsSt:DtUe} we obtain \eqref{E:Est_DtUe}.
\end{proof}

%%% Section 9 %%%
\section{Weighted solenoidal spaces on a closed surface} \label{S:WSol}
In the study of a singular limit problem for the Navier--Stokes equations \eqref{E:NS_Eq}--\eqref{E:NS_In} we deal with a weighted solenoidal space of the form
\begin{align*}
  H_{g\sigma}^1(\Gamma,T\Gamma) = \{v\in H^1(\Gamma,T\Gamma) \mid \text{$\mathrm{div}_\Gamma(gv) = 0$ on $\Gamma$}\}.
\end{align*}
The purpose of this section is to give several properties of weighted solenoidal spaces on a closed surface.
Throughout this section we assume that $\Gamma$ is a two-dimensional closed, connected, and oriented surface in $\mathbb{R}^3$ of class $C^2$.
We use the notations for the surface quantities on $\Gamma$ given in Section~\ref{SS:Pre_Surf}.

\subsection{Ne\v{c}as inequality on a closed surface} \label{SS:WS_Nec}
Let $q\in L^2(\Gamma)$.
We consider $q$ and its weak tangential gradient as elements in $H^{-1}(\Gamma)$ and $H^{-1}(\Gamma,T\Gamma)$ given by \eqref{E:L2_Hin} and \eqref{E:TGr_HinT}.
Then we immediately get
\begin{align*}
  \|q\|_{H^{-1}(\Gamma)}+\|\nabla_\Gamma q\|_{H^{-1}(\Gamma,T\Gamma)} \leq c\|q\|_{L^2(\Gamma)}.
\end{align*}
For bounded Lipschitz domains in $\mathbb{R}^m$, $m\in\mathbb{N}$ the inverse inequality is also valid and known as the Ne\v{c}as inequality (see~\cite[Theorem~IV.1.1]{BoFa13} and~\cite[Chapter~3, Lemma~7.1]{Ne12}).
Let us show the Ne\v{c}as inequality on the closed surface $\Gamma$.

\begin{lemma} \label{L:Necas}
  There exists a constant $c>0$ such that
  \begin{align} \label{E:Necas}
    \|q\|_{L^2(\Gamma)} \leq c\left(\|q\|_{H^{-1}(\Gamma)}+\|\nabla_\Gamma q\|_{H^{-1}(\Gamma,T\Gamma)}\right)
  \end{align}
  for all $q\in L^2(\Gamma)$.
\end{lemma}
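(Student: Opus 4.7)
The plan is to localize $q$ via a partition of unity on $\Gamma$ and reduce the claim to the classical Ne\v{c}as inequality on a bounded Lipschitz domain in $\mathbb{R}^2$. Since $\Gamma$ is compact and of class $C^2$, I would first fix a finite open cover $\{\mu_k(U_k)\}_{k=1}^{k_0}$ of $\Gamma$ by images of $C^2$-parametrizations $\mu_k\colon U_k\to\Gamma$ with each $U_k\subset\mathbb{R}^2$ a bounded smooth (hence Lipschitz) domain, together with a subordinate $C^2$-partition of unity $\{\eta_k\}_{k=1}^{k_0}$ on $\Gamma$. Setting $q_k:=\eta_k q$ on $\Gamma$ and $\tilde q_k:=q_k\circ\mu_k\in L^2(U_k)$, the standard localization estimate in parametric charts (the same one used in the proof of Lemma~\ref{L:La_Surf}) yields
\begin{equation*}
\|q\|_{L^2(\Gamma)}^2 \leq c\sum_{k=1}^{k_0}\|\tilde q_k\|_{L^2(U_k)}^2,
\end{equation*}
so it suffices to bound each $\|\tilde q_k\|_{L^2(U_k)}$ by the right-hand side of \eqref{E:Necas}.

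To this end I would apply the classical Ne\v{c}as inequality on the bounded Lipschitz domain $U_k$ (see, e.g., \cite[Theorem~IV.1.1]{BoFa13} or \cite[Chapter~3, Lemma~7.1]{Ne12}) to $\tilde q_k$, which provides
\begin{equation*}
\|\tilde q_k\|_{L^2(U_k)} \leq c\left(\|\tilde q_k\|_{H^{-1}(U_k)}+\|\nabla_s\tilde q_k\|_{H^{-1}(U_k)^2}\right),
\end{equation*}
where $\nabla_s$ denotes the gradient in $s\in\mathbb{R}^2$. The first term is dominated by $\|q_k\|_{H^{-1}(\Gamma)}$ by pulling back through the $C^2$-diffeomorphism $\mu_k$, after noting that this pullback is a bounded isomorphism between $H^1_0(U_k)$ and an appropriate subspace of $H^1(\Gamma)$ and therefore between their duals; then multiplication by $\eta_k\in C^2(\Gamma)$ produces $\|q_k\|_{H^{-1}(\Gamma)}\leq c\|q\|_{H^{-1}(\Gamma)}$ via \eqref{E:Def_Mul_Hin}. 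For the gradient term, in the chart $\mu_k$ each component $\partial_{s_j}\tilde q_k$ is a smooth linear combination of the pullbacks of the components of the tangential gradient $\nabla_\Gamma q_k$; using the identification of $H^{-1}(\Gamma,T\Gamma)$ with a quotient of $H^{-1}(\Gamma)^3$ established in Lemma~\ref{L:HinT_Homeo}, I obtain $\|\nabla_s\tilde q_k\|_{H^{-1}(U_k)^2}\leq c\|\nabla_\Gamma q_k\|_{H^{-1}(\Gamma,T\Gamma)}$. Finally, interpreting the product rule $\nabla_\Gamma(\eta_k q)=\eta_k\nabla_\Gamma q+q\nabla_\Gamma\eta_k$ in $H^{-1}(\Gamma,T\Gamma)$ via \eqref{E:Def_Mul_HinT} and estimating $q\nabla_\Gamma\eta_k$ componentwise by \eqref{E:Def_Mul_Hin} together with the $C^2$-regularity of $\eta_k$ yields
\begin{equation*}
\|\nabla_\Gamma q_k\|_{H^{-1}(\Gamma,T\Gamma)} \leq c\left(\|\nabla_\Gamma q\|_{H^{-1}(\Gamma,T\Gamma)}+\|q\|_{H^{-1}(\Gamma)}\right).
\end{equation*}
Assembling these estimates and summing over $k=1,\dots,k_0$ delivers \eqref{E:Necas}.

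The main obstacle I expect is the careful bookkeeping in the two steps that move between the intrinsic tangential objects on $\Gamma$ and the flat Euclidean objects on the chart domain $U_k$: namely, verifying that $\|\nabla_s\tilde q_k\|_{H^{-1}(U_k)^2}$ is controlled by $\|\nabla_\Gamma q_k\|_{H^{-1}(\Gamma,T\Gamma)}$ through the smooth bounded Riemannian metric of $\Gamma$ in the chart $\mu_k$ combined with Lemma~\ref{L:HinT_Homeo}, and justifying the product rule for $\nabla_\Gamma(\eta_k q)$ as an identity in $H^{-1}(\Gamma,T\Gamma)$ for $q\in L^2(\Gamma)$ (which is not in $H^1(\Gamma)$). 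Both are routine duality computations using the weak definitions of $\underline{D}_i\eta$ and multiplication given in Section~\ref{SS:Pre_Surf}, but they must be performed with care so that the constants depend only on $\Gamma$ and the fixed partition of unity, not on $q$.
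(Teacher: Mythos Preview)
Your proposal is correct and follows essentially the same route as the paper: localize by a $C^2$ partition of unity, reduce to a Euclidean Ne\v{c}as inequality in a chart, and transfer the $H^{-1}$ norms back to $\Gamma$ by explicit duality, including the product-rule estimate for $\nabla_\Gamma(\eta_k q)$ that you singled out. The only cosmetic difference is that the paper extends the localized function by zero and invokes the whole-space Ne\v{c}as inequality on $\mathbb{R}^2$ (via the Fourier characterization) rather than the bounded-domain version you cite, and it carries out the duality transfer by concretely constructing the surface test functions from the flat ones rather than appealing abstractly to Lemma~\ref{L:HinT_Homeo}.
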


In the proof of Lemma~\ref{L:Necas} we use the Na\v{c}as inequality on the whole space
\begin{align} \label{E:Necas_Whole}
  \|\tilde{q}\|_{L^2(\mathbb{R}^2)} \leq c\left(\|\tilde{q}\|_{H^{-1}(\mathbb{R}^2)}+\|\nabla_s\tilde{q}\|_{H^{-1}(\mathbb{R}^2)}\right)
\end{align}
for $\tilde{q}\in L^2(\mathbb{R}^2)$, where $H^{-1}(\mathbb{R}^2)$ is the dual space of $H^1(\mathbb{R}^2)$ (via the $L^2(\mathbb{R}^2)$-inner product).
Also, $\tilde{q}\in H^{-1}(\mathbb{R}^2)$ and $\nabla_s\tilde{q}\in H^{-1}(\mathbb{R}^2)^2$ are given by
\begin{align*}
  \langle\tilde{q},\xi\rangle_{\mathbb{R}^2} := (\tilde{q},\xi)_{L^2(\mathbb{R}^2)}, \quad \langle\nabla_s\tilde{q},\varphi\rangle_{\mathbb{R}^2} := -(\tilde{q},\partial_{s_1}\varphi_1+\partial_{s_2}\varphi_2)_{L^2(\mathbb{R}^2)}
\end{align*}
for $\xi\in H^1(\mathbb{R}^2)$ and $\varphi=(\varphi_1,\varphi_2)\in H^1(\mathbb{R}^2)^2$, where $\langle\cdot,\cdot\rangle_{\mathbb{R}^2}$ is the duality product between $H^{-1}(\mathbb{R}^2)$ and $H^1(\mathbb{R}^2)$.
The inequality \eqref{E:Necas_Whole} follows from the characterization of the $L^2$-Sobolev spaces on $\mathbb{R}^2$ by the Fourier transform.
For details, see the proofs of~\cite[Proposition~IV.1.2]{BoFa13} and~\cite[Chapter~3, Lemma~7.1]{Ne12}.

\begin{proof}
  First we note that it is sufficient to show \eqref{E:Necas} when $q$ is compactly supported in a relatively open subset of $\Gamma$ on which we can take a local coordinate system.
  To see this, let $q\in L^2(\Gamma)$ and $\eta\in C^2(\Gamma)$.
  For $\xi\in H^1(\Gamma)$ we have
  \begin{align*}
    |\langle \eta q,\xi \rangle_\Gamma| = |\langle q,\eta\xi \rangle_\Gamma| \leq \|q\|_{H^{-1}(\Gamma)}\|\eta\xi\|_{H^1(\Gamma)} \leq c\|\eta\|_{W^{1,\infty}(\Gamma)}\|q\|_{H^{-1}(\Gamma)}\|\xi\|_{H^1(\Gamma)},
  \end{align*}
  where $c>0$ is a constant independent of $q$, $\eta$, and $\xi$.
  Also,
  \begin{align*}
    [\nabla_\Gamma(\eta q),v]_{T\Gamma} &= -(\eta q,\mathrm{div}_\Gamma v)_{L^2(\Gamma)} = -\bigl(q,\mathrm{div}_\Gamma(\eta v)\bigr)_{L^2(\Gamma)}+(q,\nabla_\Gamma\eta\cdot v)_{L^2(\Gamma)} \\
    &= [\nabla_\Gamma q,\eta v]_{T\Gamma}+\langle q,\nabla_\Gamma\eta\cdot v\rangle_\Gamma
  \end{align*}
  for all $v\in H^1(\Gamma,T\Gamma)$ by \eqref{E:TGr_HinT} (note that $\eta v\in H^1(\Gamma,T\Gamma)$) and thus
  \begin{align*}
    |[\nabla_\Gamma(\eta q),v]_{T\Gamma}| &\leq \|\nabla_\Gamma q\|_{H^{-1}(\Gamma,T\Gamma)}\|\eta v\|_{H^1(\Gamma)}+\|q\|_{H^{-1}(\Gamma)}\|\nabla_\Gamma\eta\cdot v\|_{H^1(\Gamma)} \\
    &\leq c\|\eta\|_{W^{2,\infty}(\Gamma)}\left(\|q\|_{H^{-1}(\Gamma)}+\|\nabla_\Gamma q\|_{H^{-1}(\Gamma,T\Gamma)}\right)\|v\|_{H^1(\Gamma)}.
  \end{align*}
  From the above inequalities it follows that
  \begin{align*}
    \|\eta q\|_{H^{-1}(\Gamma)} &\leq c\|\eta\|_{W^{1,\infty}(\Gamma)}\|q\|_{H^{-1}(\Gamma)}, \\
    \|\nabla_\Gamma(\eta q)\|_{H^{-1}(\Gamma,T\Gamma)} &\leq c\|\eta\|_{W^{2,\infty}(\Gamma)}\left(\|q\|_{H^{-1}(\Gamma)}+\|\nabla_\Gamma q\|_{H^{-1}(\Gamma,T\Gamma)}\right).
  \end{align*}
  Hence if we localize $q$ by a partition of unity on $\Gamma$ consisting of functions in $C^2(\Gamma)$ (such functions exist by the $C^2$-regularity of $\Gamma$) and prove \eqref{E:Necas} for each localized function, then we can get \eqref{E:Necas} for $q$ by the above inequalities.

  Now we assume that $q\in L^2(\Gamma)$ is supported in $\mu(\mathcal{K})$, where $\mu\colon U\to\mathbb{R}^2$ is a $C^2$ local parametrization of $\Gamma$ with an open set $U$ in $\mathbb{R}^2$ and $\mathcal{K}$ is a compact subset of $U$.
  Let $\theta=(\theta_{ij})_{i,j}$ be the Riemannian metric of $\Gamma$ given by \eqref{E:Def_Met} and $\theta^{-1}=(\theta^{ij})_{i,j}$ its inverse.
  We set $\tilde{q}:=q\circ\mu$ on $U$ and extend it to $\mathbb{R}^2$ by setting zero outside of $U$.
  Then $\tilde{q}\in L^2(\mathbb{R}^2)$ and it is supported in $\mathcal{K}$ by the assumption on $q$ and Lemma~\ref{L:Metric}.
  Hence we observe by \eqref{E:Necas_Whole} and \eqref{E:Lp_Loc} that
  \begin{align} \label{Pf_Nec:Whole}
    \|q\|_{L^2(\Gamma)} \leq c\|\tilde{q}\|_{L^2(\mathbb{R}^2)} \leq c\left(\|\tilde{q}\|_{H^{-1}(\mathbb{R}^2)}+\|\nabla_s\tilde{q}\|_{H^{-1}(\mathbb{R}^2)}\right).
  \end{align}
  To estimate the right-hand side of \eqref{Pf_Nec:Whole} by that of \eqref{E:Necas}, we first consider $\langle \tilde{q},\xi\rangle_{\mathbb{R}^2} = (\tilde{q},\xi)_{L^2(\mathbb{R}^2)}$ for $\xi\in H^1(\mathbb{R}^2)$.
  We may assume that $\xi$ is supported in $\mathcal{K}$ since $\tilde{q}$ is so.
  Noting that $\det\theta>0$ on $\mathcal{K}$ by \eqref{E:Metric}, we define a function $\eta$ on $\mu(\mathcal{K})\subset\Gamma$ by
  \begin{align} \label{Pf_Nec:Def_Eta}
    \eta(\mu(s)) := \frac{\xi(s)}{\sqrt{\det\theta(s)}}, \quad s\in \mathcal{K}
  \end{align}
  and extend it to $\Gamma$ by setting zero outside of $\mu(\mathcal{K})$.
  Then we have
  \begin{align} \label{Pf_Nec:Dual_L2}
    \langle\tilde{q},\xi\rangle_{\mathbb{R}^2} = (\tilde{q},\xi)_{L^2(\mathbb{R}^2)} = \int_{\mathcal{K}}\tilde{q}\left(\frac{\xi}{\sqrt{\det\theta}}\right)\sqrt{\det\theta}\,ds = \int_{\mu(\mathcal{K})}q\eta\,d\mathcal{H}^2.
  \end{align}
  Let us show $\eta\in H^1(\Gamma)$.
  Since $\eta$ is supported in $\mu(\mathcal{K})$, we see by \eqref{E:Metric} that
  \begin{align} \label{Pf_Nec:Eta}
    \|\eta\|_{L^2(\Gamma)}^2 = \int_{\mathcal{K}}|\eta\circ\mu|^2\sqrt{\det\theta}\,ds \leq c\|\xi\|_{L^2(\mathcal{K})}^2 = c\|\xi\|_{L^2(\mathbb{R}^2)}^2.
  \end{align}
  Also, differentiating \eqref{Pf_Nec:Def_Eta} with respect to $s_i$ and using \eqref{E:Metric} we get
  \begin{align*}
    |\partial_{s_i}(\eta\circ\mu)(s)| \leq c(|\xi(s)|+|\partial_{s_i}\xi(s)|), \quad s\in\mathcal{K},\,i=1,2.
  \end{align*}
  By $(\nabla_\Gamma\eta)\circ\mu=\sum_{i,j=1}^2\theta^{ij}\partial_{s_i}(\eta\circ\mu)\partial_{s_j}\mu$ in $\mathcal{K}$, \eqref{E:Metric}, and the above inequality,
  \begin{align*}
    |(\nabla_\Gamma\eta)\circ\mu|^2 = \sum_{i,j=1}^2\theta^{ij}\partial_{s_i}(\eta\circ\mu)\partial_{s_j}(\eta\circ\mu) \leq c(|\xi|^2+|\nabla_s\xi|^2) \quad\text{in}\quad \mathcal{K}.
  \end{align*}
  Here $\nabla_s$ is the gradient operator in $s\in\mathbb{R}^2$.
  Noting that $\eta$ and $\xi$ are supported in $\mu(\mathcal{K})$ and $\mathcal{K}$, respectively, we use this inequality and \eqref{E:Metric} to obtain
  \begin{align*}
    \|\nabla_\Gamma\eta\|_{L^2(\Gamma)}^2 = \int_{\mathcal{K}}|(\nabla_\Gamma\eta)\circ\mu|^2\sqrt{\det\theta}\,ds \leq c\|\xi\|_{H^1(\mathcal{K})}^2 = c\|\xi\|_{H^1(\mathbb{R}^2)}^2.
  \end{align*}
  By this inequality and \eqref{Pf_Nec:Eta} we have $\eta\in H^1(\Gamma)$ and $\|\eta\|_{H^1(\Gamma)}\leq c\|\xi\|_{H^1(\mathbb{R}^2)}$.
  Hence
  \begin{align*}
    \langle\tilde{q},\xi\rangle_{\mathbb{R}^2} = \int_{\mu(\mathcal{K})}q\eta\,d\mathcal{H}^2 = (q,\eta)_{L^2(\Gamma)} = \langle q,\eta\rangle_\Gamma
  \end{align*}
  by \eqref{Pf_Nec:Dual_L2} and the above inequality implies that
  \begin{align*}
    \bigl|\langle\tilde{q},\xi\rangle_{\mathbb{R}^2}\bigr| = \bigl|\langle q,\eta\rangle_\Gamma\bigr| \leq \|q\|_{H^{-1}(\Gamma)}\|\eta\|_{H^1(\Gamma)} \leq c\|q\|_{H^{-1}(\Gamma)}\|\xi\|_{H^1(\mathbb{R}^2)}
  \end{align*}
  for all $\xi\in H^1(\mathbb{R}^2)$.
  Therefore,
  \begin{align} \label{Pf_Nec:TilQ}
    \|\tilde{q}\|_{H^{-1}(\mathbb{R}^2)} \leq c\|q\|_{H^{-1}(\Gamma)}.
  \end{align}
  Next we consider the duality product $\langle\nabla_s\tilde{q},\varphi\rangle_{\mathbb{R}^2}=-(\tilde{q},\partial_{s_1}\varphi_1+\partial_{s_2}\varphi_2)_{L^2(\mathbb{R}^2)}$ for $\varphi=(\varphi_1,\varphi_2)\in H^1(\mathbb{R}^2)^2$.
  We may assume again that $\varphi$ is supported in $\mathcal{K}$ since $\tilde{q}$ is so.
  Noting that the surface divergence of $X\circ\mu=\sum_{i=1}^2X_i\partial_{s_i}\mu$ on $U$ is given by
  \begin{align*}
    \mathrm{div}_\Gamma X(\mu(s)) = \frac{1}{\sqrt{\det\theta(s)}}\sum_{i=1}^2\frac{\partial}{\partial s_i}\Bigl(X_i(s)\sqrt{\det\theta(s)}\Bigr), \quad s\in U,
  \end{align*}
  we define a tangential vector field $\Phi$ on $\mu(\mathcal{K})$ by
  \begin{align} \label{Pf_Nec:Def_Phi}
    \Phi(\mu(s)) := \frac{1}{\sqrt{\det\theta(s)}}\sum_{i=1}^2\varphi_i(s)\partial_{s_i}\mu(s), \quad s\in\mathcal{K}
  \end{align}
  and extend it to $\Gamma$ by setting zero outside of $\mu(\mathcal{K})$.
  Then since
  \begin{align*}
    \mathrm{div}_\Gamma\Phi(\mu(s)) = \frac{\partial_{s_1}\varphi_1(s)+\partial_{s_2}\varphi_2(s)}{\sqrt{\det\theta(s)}}, \quad s\in\mathcal{K}
  \end{align*}
  and $\tilde{q}=q\circ\mu$ is supported in $\mathcal{K}$, we have
  \begin{align} \label{Pf_Nec:Dual_H1}
    \begin{aligned}
      \langle\nabla_s\tilde{q},\varphi\rangle_{\mathbb{R}^2} &= -(\tilde{q},\partial_{s_1}\varphi_1+\partial_{s_2}\varphi_2)_{L^2(\mathbb{R}^2)} \\
      &= -\int_{\mathcal{K}}\tilde{q}\left(\frac{\partial_{s_1}\varphi_1+\partial_{s_2}\varphi_2}{\sqrt{\det\theta}}\right)\sqrt{\det\theta}\,ds = -\int_{\mu(\mathcal{K})}q\,\mathrm{div}_\Gamma\Phi\,d\mathcal{H}^2.
    \end{aligned}
  \end{align}
  Let us estimate the $H^1(\Gamma)$-norm of $\Phi$.
  By \eqref{Pf_Nec:Def_Phi} and \eqref{E:Metric} we have
  \begin{align*}
    |\Phi\circ\mu|^2 = \frac{1}{\det\theta}\sum_{i,j=1}^2\theta_{ij}\varphi_i\varphi_j \leq c|\varphi|^2 \quad\text{in}\quad \mathcal{K}.
  \end{align*}
  Since $\Phi$ and $\varphi$ are supported in $\mu(\mathcal{K})$ and $\mathcal{K}$, the above inequality and \eqref{E:Metric} imply
  \begin{align} \label{Pf_Nec:Phi}
    \|\Phi\|_{L^2(\Gamma)}^2 = \int_{\mathcal{K}}|\Phi\circ\mu|^2\sqrt{\det\theta}\,ds \leq c\|\varphi\|_{L^2(\mathcal{K})}^2 = c\|\varphi\|_{L^2(\mathbb{R}^2)}^2.
  \end{align}
  To estimate the tangential derivatives of $\Phi$, let $\{e_1,e_2,e_3\}$ be the standard basis of $\mathbb{R}^3$ and $\Phi_k:=\Phi\cdot e_k$ for $k=1,2,3$.
  Then $\Phi_k$ is supported in $\mu(\mathcal{K})$ and
  \begin{align*}
    \Phi_k(\mu(s)) = \frac{1}{\sqrt{\det\theta(s)}}\sum_{j=1}^2\varphi_j(s)\partial_{s_j}\mu(s)\cdot e_k, \quad s\in\mathcal{K}.
  \end{align*}
  We differentiate both sides with respect to $s_i$ and use \eqref{E:Mu_Bound} and \eqref{E:Metric} to get
  \begin{align*}
    \left|\partial_{s_i}(\Phi_k\circ\mu)(s)\right| \leq c\left(|\varphi(s)|+|\nabla_s\varphi(s)|\right), \quad s\in\mathcal{K},\,i=1,2.
  \end{align*}
  From this inequality and \eqref{E:Metric} we deduce that
  \begin{align*}
    |(\nabla_\Gamma\Phi_k)\circ\mu|^2 = \sum_{i,j=1}^2\theta^{ij}\partial_{s_i}(\Phi_k\circ\mu)\partial_{s_j}(\Phi_k\circ\mu) \leq c(|\varphi|^2+|\nabla_s\varphi|^2) \quad\text{in}\quad \mathcal{K}
  \end{align*}
  for $k=1,2,3$ and thus $|(\nabla_\Gamma\Phi)\circ\mu|^2\leq c(|\varphi|^2+|\nabla_s\varphi|^2)$ in $\mathcal{K}$.
  Noting that $\Phi$ and $\varphi$ are supported in $\mu(\mathcal{K})$ and $\mathcal{K}$, we use this inequality and \eqref{E:Metric} to get
  \begin{align*}
    \|\nabla_\Gamma\Phi\|_{L^2(\Gamma)}^2 = \int_{\mathcal{K}}|(\nabla_\Gamma\Phi)\circ\mu|^2\sqrt{\det\theta}\,ds \leq c\|\varphi\|_{H^1(\mathcal{K})}^2 = c\|\varphi\|_{H^1(\mathbb{R}^2)}^2.
  \end{align*}
  This inequality and \eqref{Pf_Nec:Phi} show that $\Phi\in H^1(\Gamma,T\Gamma)$ and $\|\Phi\|_{H^1(\Gamma)}\leq c\|\varphi\|_{H^1(\mathbb{R}^2)}$.
  Now we return to \eqref{Pf_Nec:Dual_H1} and use \eqref{E:TGr_HinT} to get
  \begin{align*}
    \langle\nabla_s\tilde{q},\varphi\rangle_{\mathbb{R}^2} = -\int_{\mu(\mathcal{K})}q\,\mathrm{div}_\Gamma\Phi\,d\mathcal{H}^2 = -(q,\mathrm{div}_\Gamma\Phi)_{L^2(\Gamma)} = [\nabla_\Gamma q,\Phi]_{T\Gamma}.
  \end{align*}
  Hence by $\|\Phi\|_{H^1(\Gamma)}\leq c\|\varphi\|_{H^1(\mathbb{R}^2)}$ we obtain
  \begin{align*}
    \bigl|\langle\nabla_s\tilde{q},\varphi\rangle_{\mathbb{R}^2}\bigr| = \bigl|[\nabla_\Gamma q,\Phi]_{T\Gamma}\bigr| &\leq \|\nabla_\Gamma q\|_{H^{-1}(\Gamma,T\Gamma)}\|\Phi\|_{H^1(\Gamma)} \\
    &\leq c\|\nabla_\Gamma q\|_{H^{-1}(\Gamma,T\Gamma)}\|\varphi\|_{H^1(\mathbb{R}^2)}
  \end{align*}
  for all $\varphi\in H^1(\mathbb{R}^2)^2$, which implies that
  \begin{align*}
    \|\nabla_s\tilde{q}\|_{H^{-1}(\mathbb{R}^2)} \leq c\|\nabla_\Gamma q\|_{H^{-1}(\Gamma,T\Gamma)}.
  \end{align*}
  Finally, we apply this inequality and \eqref{Pf_Nec:TilQ} to \eqref{Pf_Nec:Whole} to get \eqref{E:Necas}.
\end{proof}

Based on \eqref{E:Necas} we prove Poincar\'{e}'s inequality for $q\in L^2(\Gamma)$.
First we show that $\nabla_\Gamma q$ vanishes in $H^{-1}(\Gamma,T\Gamma)$ if and only if $q$ is constant on $\Gamma$.

\begin{lemma} \label{L:TGr_HinT_Con}
  Let $q\in L^2(\Gamma)$.
  Then
  \begin{align*}
    \nabla_\Gamma q = 0 \quad\text{in}\quad H^{-1}(\Gamma,T\Gamma)
  \end{align*}
  if and only if $q$ is constant on $\Gamma$.
\end{lemma}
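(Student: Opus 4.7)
The plan is to handle the two directions separately. The ``if'' direction is routine: if $q$ is a constant on $\Gamma$, then for any $v\in H^1(\Gamma,T\Gamma)$, the definition \eqref{E:TGr_HinT} gives
\[
[\nabla_\Gamma q,v]_{T\Gamma} = -(q,\mathrm{div}_\Gamma v)_{L^2(\Gamma)} = -q\int_\Gamma\mathrm{div}_\Gamma v\,d\mathcal{H}^2,
\]
and the integral on the right vanishes because the identity derived before \eqref{E:IbP_TD} reduces it to $-\int_\Gamma(v\cdot n)H\,d\mathcal{H}^2$, which is zero since $v$ is tangential.

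For the nontrivial direction, I would argue by localization rather than by invoking the Ne\v{c}as inequality \eqref{E:Necas}, because the latter would require a Poincar\'e-type bound for the $H^{-1}(\Gamma)$-norm that is not readily available in this setup. Assume $\nabla_\Gamma q=0$ in $H^{-1}(\Gamma,T\Gamma)$. Fix an arbitrary $C^2$ local parametrization $\mu\colon U\to\Gamma$ with $U\subset\mathbb{R}^2$ open and connected, and set $\tilde q:=q\circ\mu\in L^2_{\mathrm{loc}}(U)$. For any test function $\varphi=(\varphi_1,\varphi_2)\in C_c^\infty(U)^2$, supported in some compact $\mathcal{K}\subset U$, define a tangential vector field $\Phi$ on $\Gamma$ by the formula \eqref{Pf_Nec:Def_Phi} from the proof of Lemma~\ref{L:Necas} on $\mu(\mathcal{K})$ and extend by zero. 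As shown there, $\Phi\in H^1(\Gamma,T\Gamma)$.

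The computation \eqref{Pf_Nec:Dual_H1} then yields
\[
[\nabla_\Gamma q,\Phi]_{T\Gamma} = -(q,\mathrm{div}_\Gamma\Phi)_{L^2(\Gamma)} = \langle\nabla_s\tilde q,\varphi\rangle_{U},
\]
where the left-hand side is zero by hypothesis. Since $\varphi$ was arbitrary in $C_c^\infty(U)^2$, this means $\nabla_s\tilde q=0$ as a distribution on $U$, so $\tilde q$ is constant on the connected open set $U$, and hence $q$ is constant on $\mu(U)$. Covering the compact surface $\Gamma$ by finitely many such patches $\mu_k(U_k)$ with each $U_k$ connected and with pairwise intersections $\mu_k(U_k)\cap\mu_j(U_j)$ nonempty when needed, and using the connectedness of $\Gamma$, the local constants must agree on overlaps, so $q$ is globally constant on $\Gamma$.

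The only real subtlety is the patching step: I need to ensure that $\Phi$ constructed from a compactly supported $\varphi$ extends by zero as an $H^1$ field on $\Gamma$, which is automatic from \eqref{Pf_Nec:Def_Phi} and the bounds in the Ne\v{c}as proof, and that one can exhaust $\Gamma$ by connected coordinate patches whose overlaps connect the whole surface, which follows from compactness and connectedness of $\Gamma$. No new analytic difficulty arises beyond what is already contained in the proof of Lemma~\ref{L:Necas}.
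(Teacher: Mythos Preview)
Your proof is correct, and the ``if'' direction matches the paper's. For the ``only if'' direction, however, the paper takes a different and somewhat more intrinsic route. Instead of localizing to a coordinate patch, the paper tests against $v=\eta Pe_i$ for $\eta\in C^1(\Gamma)$ and $i=1,2,3$, computes $\mathrm{div}_\Gamma v=\underline{D}_i\eta+\eta Hn_i$ via \eqref{E:Form_W}, and observes that
\[
0=[\nabla_\Gamma q,v]_{T\Gamma}=-(q,\underline{D}_i\eta+\eta Hn_i)_{L^2(\Gamma)}
\]
is exactly the defining relation \eqref{E:Def_WTD} for $\underline{D}_iq=0$ in $L^2(\Gamma)$. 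This upgrades $q$ to $H^1(\Gamma)$ with $\nabla_\Gamma q=0$ in $L^2(\Gamma)^3$, and then Poincar\'e's inequality \eqref{E:Poin_Surf_Lp} applied to $q-|\Gamma|^{-1}\int_\Gamma q$ finishes. Your localization argument is perfectly valid and arguably more elementary, since it reduces everything to the standard fact that a distribution on a connected open set in $\mathbb{R}^2$ with vanishing gradient is constant; the paper's approach, on the other hand, avoids charts entirely and delivers the stronger intermediate conclusion $q\in H^1(\Gamma)$, which fits naturally with the surface-Sobolev framework used throughout.
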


\begin{proof}
  Suppose that $q$ is constant on $\Gamma$.
  Then for all $v\in H^1(\Gamma,T\Gamma)$ we have
  \begin{align*}
    [\nabla_\Gamma q,v]_{T\Gamma} = -q\int_\Gamma\mathrm{div}_\Gamma v\,d\mathcal{H}^2 = 0
  \end{align*}
  by \eqref{E:TGr_HinT} and the Stokes theorem.
  Hence $\nabla_\Gamma q=0$ in $H^{-1}(\Gamma,T\Gamma)$.

  Conversely, assume that $\nabla_\Gamma q=0$ in $H^{-1}(\Gamma,T\Gamma)$.
  We first prove $q\in H^1(\Gamma)$.
  For $\eta\in C^1(\Gamma)$ and $i=1,2,3$ we set $v:=\eta Pe_i$ on $\Gamma$, where $\{e_1,e_2,e_3\}$ is the standard basis of $\mathbb{R}^3$.
  Then $v\in H^1(\Gamma,T\Gamma)$ since $P$ is of class $C^1$ on $\Gamma$.
  Moreover,
  \begin{align*}
    \mathrm{div}_\Gamma v = \nabla_\Gamma \eta\cdot Pe_i+\eta(\mathrm{div}_\Gamma P\cdot e_i) = \underline{D}_i\eta+\eta Hn_i \quad\text{on}\quad \Gamma
  \end{align*}
  by $P^T=P$ on $\Gamma$, \eqref{E:P_TGr}, and \eqref{E:Form_W}.
  From this equality we deduce that
  \begin{align*}
    0 = [\nabla_\Gamma q,v]_{T\Gamma} = -(q,\mathrm{div}_\Gamma v)_{L^2(\Gamma)} = -(q,\underline{D}_i\eta+\eta Hn_i)_{L^2(\Gamma)}
  \end{align*}
  for all $\eta\in C^1(\Gamma)$.
  Hence by the definition of the weak tangential derivative in $L^2(\Gamma)$ (see \eqref{E:Def_WTD}) we get $\underline{D}_iq=0$ in $L^2(\Gamma)$ for $i=1,2,3$, which shows that $q\in H^1(\Gamma)$ and $\nabla_\Gamma q=0$ in $L^2(\Gamma)^3$.
  Now let $|\Gamma|$ be the area of $\Gamma$ and
  \begin{align*}
    \hat{q} := q-\frac{1}{|\Gamma|}\int_\Gamma q\,d\mathcal{H}^2 \in H^1(\Gamma).
  \end{align*}
  Then since $\int_\Gamma\hat{q}\,d\mathcal{H}^2=0$ we can apply Poincar\'{e}'s inequality \eqref{E:Poin_Surf_Lp} to $\hat{q}$ to get
  \begin{align*}
    \|\hat{q}\|_{L^2(\Gamma)} \leq c\|\nabla_\Gamma\hat{q}\|_{L^2(\Gamma)} = c\|\nabla_\Gamma q\|_{L^2(\Gamma)} = 0.
  \end{align*}
  Hence $\hat{q}=0$ on $\Gamma$ and $q=|\Gamma|^{-1}\int_\Gamma q\,d\mathcal{H}^2$ is constant on $\Gamma$.
\end{proof}

Next we estimate $q\in L^2(\Gamma)$ in $H^{-1}(\Gamma)$ by its tangential gradient.

\begin{lemma} \label{L:Poin_HinT}
  There exists a constant $c>0$ such that
  \begin{align} \label{E:Poin_HinT}
    \|q\|_{H^{-1}(\Gamma)} \leq c\left(\left|\int_\Gamma q\,d\mathcal{H}^2\right|+\|\nabla_\Gamma q\|_{H^{-1}(\Gamma,T\Gamma)}\right)
  \end{align}
  for all $q\in L^2(\Gamma)$.
\end{lemma}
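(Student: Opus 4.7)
My plan is to prove \eqref{E:Poin_HinT} by a standard compactness-and-contradiction argument, using the Ne\v{c}as inequality \eqref{E:Necas} to get an $L^2$-bound that enables extraction of a convergent subsequence, and Lemma~\ref{L:TGr_HinT_Con} to identify the limit.

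Suppose \eqref{E:Poin_HinT} fails. Then there is a sequence $\{q_k\}_{k=1}^\infty$ in $L^2(\Gamma)$ with
\[
  \|q_k\|_{H^{-1}(\Gamma)} > k\left(\left|\int_\Gamma q_k\,d\mathcal{H}^2\right|+\|\nabla_\Gamma q_k\|_{H^{-1}(\Gamma,T\Gamma)}\right), \quad k\in\mathbb{N}.
\]
After dividing by $\|q_k\|_{H^{-1}(\Gamma)}$ I may assume $\|q_k\|_{H^{-1}(\Gamma)}=1$ for every $k$, so that
\begin{equation*}
  \left|\int_\Gamma q_k\,d\mathcal{H}^2\right| < \frac{1}{k}, \qquad \|\nabla_\Gamma q_k\|_{H^{-1}(\Gamma,T\Gamma)} < \frac{1}{k}.
\end{equation*}
Applying the Ne\v{c}as inequality \eqref{E:Necas} to $q_k$ and using these bounds together with $\|q_k\|_{H^{-1}(\Gamma)}=1$ yields a uniform $L^2$-bound $\|q_k\|_{L^2(\Gamma)}\leq c$. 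This is the crucial step that the Ne\v{c}as inequality gives: a control in the \emph{stronger} norm $L^2$ from the weaker norms on $q$ and $\nabla_\Gamma q$.

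Next I would extract subsequential limits. Since $\{q_k\}$ is bounded in the Hilbert space $L^2(\Gamma)$, a subsequence (still denoted $\{q_k\}$) converges weakly in $L^2(\Gamma)$ to some $q\in L^2(\Gamma)$. The embedding $L^2(\Gamma)\hookrightarrow H^{-1}(\Gamma)$ is compact (since $H^1(\Gamma)\hookrightarrow L^2(\Gamma)$ is compact on the closed surface $\Gamma$), hence $q_k\to q$ strongly in $H^{-1}(\Gamma)$, giving $\|q\|_{H^{-1}(\Gamma)}=1$. The weak $L^2$-convergence implies $\int_\Gamma q\,d\mathcal{H}^2=\lim_k\int_\Gamma q_k\,d\mathcal{H}^2=0$. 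For any $v\in H^1(\Gamma,T\Gamma)$ the identity \eqref{E:TGr_HinT} and weak $L^2$-convergence give
\[
  [\nabla_\Gamma q,v]_{T\Gamma}=-(q,\mathrm{div}_\Gamma v)_{L^2(\Gamma)}=-\lim_{k\to\infty}(q_k,\mathrm{div}_\Gamma v)_{L^2(\Gamma)}=\lim_{k\to\infty}[\nabla_\Gamma q_k,v]_{T\Gamma},
\]
while $|[\nabla_\Gamma q_k,v]_{T\Gamma}|\leq k^{-1}\|v\|_{H^1(\Gamma)}\to 0$. Therefore $\nabla_\Gamma q=0$ in $H^{-1}(\Gamma,T\Gamma)$.

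Finally, Lemma~\ref{L:TGr_HinT_Con} forces $q$ to be constant on $\Gamma$, and combining this with $\int_\Gamma q\,d\mathcal{H}^2=0$ yields $q=0$, contradicting $\|q\|_{H^{-1}(\Gamma)}=1$. Hence \eqref{E:Poin_HinT} holds. The main (and only nontrivial) ingredient is the combination of the Ne\v{c}as inequality \eqref{E:Necas} with the compactness of $L^2(\Gamma)\hookrightarrow H^{-1}(\Gamma)$; once the $L^2$-bound is secured, the rest is routine weak/strong compactness. No step requires significant new calculation.
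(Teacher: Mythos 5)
Your proposal is correct and follows essentially the same route as the paper's own proof: a contradiction argument with normalization $\|q_k\|_{H^{-1}(\Gamma)}=1$, the Ne\v{c}as inequality \eqref{E:Necas} to obtain a uniform $L^2(\Gamma)$-bound, the compact embedding $L^2(\Gamma)\hookrightarrow H^{-1}(\Gamma)$ to extract a limit $q$ with $\|q\|_{H^{-1}(\Gamma)}=1$, and then \eqref{E:TGr_Hin} together with Lemma~\ref{L:TGr_HinT_Con} and the zero-mean condition to force $q=0$, a contradiction. No gaps; nothing further is needed.
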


\begin{proof}
  Assume to the contrary that there exists $q_k\in L^2(\Gamma)$ such that
  \begin{align*}
    \|q_k\|_{H^{-1}(\Gamma)} > k\left(\left|\int_\Gamma q_k\,d\mathcal{H}^2\right|+\|\nabla_\Gamma q_k\|_{H^{-1}(\Gamma,T\Gamma)}\right)
  \end{align*}
  for each $k\in\mathbb{N}$.
  Replacing $q_k$ with $q_k/\|q_k\|_{H^{-1}(\Gamma)}$ we may assume that
  \begin{align} \label{Pf_PHin:Contra}
    \|q_k\|_{H^{-1}(\Gamma)} = 1, \quad \left|\int_\Gamma q_k\,d\mathcal{H}^2\right|+\|\nabla_\Gamma q_k\|_{H^{-1}(\Gamma,T\Gamma)} < k^{-1}.
  \end{align}
  From the second inequality it follows that
  \begin{align} \label{Pf_PHin:Conv}
    \lim_{k\to\infty}\int_\Gamma q_k\,d\mathcal{H}^2 = \lim_{k\to\infty}(q_k,1)_{L^2(\Gamma)} = 0, \quad \lim_{k\to\infty}\|\nabla_\Gamma q_k\|_{H^{-1}(\Gamma,T\Gamma)} = 0.
  \end{align}
  Since $\{q_k\}_{k=1}^\infty$ is bounded in $L^2(\Gamma)$ by \eqref{E:Necas} and \eqref{Pf_PHin:Contra} and the embedding $L^2(\Gamma)\hookrightarrow H^{-1}(\Gamma)$ is compact, (up to a subsequence) $\{q_k\}_{k=1}^\infty$ converges to some $q\in L^2(\Gamma)$ weakly in $L^2(\Gamma)$ and strongly in $H^{-1}(\Gamma)$.
  Then by the first equality of \eqref{Pf_PHin:Contra},
  \begin{align} \label{Pf_PHin:Lim_Hin}
    \|q\|_{H^{-1}(\Gamma)} = \lim_{k\to\infty}\|q_k\|_{H^{-1}(\Gamma)} = 1.
  \end{align}
  By \eqref{E:TGr_HinT} and the weak convergence of $\{q_k\}_{k=1}^\infty$ to $q$ in $L^2(\Gamma)$ we see that $\{\nabla_\Gamma q_k\}_{k=1}^\infty$ converges to $\nabla_\Gamma q$ weakly in $H^{-1}(\Gamma,T\Gamma)$.
  Hence by \eqref{Pf_PHin:Conv} we get
  \begin{align*}
    \int_\Gamma q\,d\mathcal{H}^2 = (q,1)_{L^2(\Gamma)} = 0, \quad \|\nabla_\Gamma q\|_{H^{-1}(\Gamma,T\Gamma)} = 0.
  \end{align*}
  By these equalities and Lemma~\ref{L:TGr_HinT_Con} we find that $q=0$ on $\Gamma$ and thus $\|q\|_{H^{-1}(\Gamma)}=0$, which contradicts with \eqref{Pf_PHin:Lim_Hin}.
  Hence \eqref{E:Poin_HinT} is valid.
\end{proof}

Combining \eqref{E:Necas} and \eqref{E:Poin_HinT} we obtain Poincar\'{e}'s inequality for $q\in L^2(\Gamma)$.

\begin{lemma} \label{L:Poin_L2_HinT}
  There exists a constant $c>0$ such that
  \begin{align} \label{E:Poin_L2_HinT}
    \|q\|_{L^2(\Gamma)} \leq c\left(\left|\int_\Gamma q\,d\mathcal{H}^2\right|+\|\nabla_\Gamma q\|_{H^{-1}(\Gamma,T\Gamma)}\right)
  \end{align}
  for all $q\in L^2(\Gamma)$.
\end{lemma}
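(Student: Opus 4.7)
The plan is to derive \eqref{E:Poin_L2_HinT} as a direct consequence of the two preceding results, namely the Ne\v{c}as inequality \eqref{E:Necas} on $\Gamma$ (Lemma~\ref{L:Necas}) and the $H^{-1}$-Poincar\'{e} inequality \eqref{E:Poin_HinT} (Lemma~\ref{L:Poin_HinT}). Since both results have already been established, no further analytic work is needed; the task is simply to chain the two bounds.

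First, I would take an arbitrary $q\in L^2(\Gamma)$ and apply \eqref{E:Necas} to write
\[
  \|q\|_{L^2(\Gamma)} \leq c_1\left(\|q\|_{H^{-1}(\Gamma)}+\|\nabla_\Gamma q\|_{H^{-1}(\Gamma,T\Gamma)}\right).
\]
Then I would substitute the bound \eqref{E:Poin_HinT} for the $H^{-1}(\Gamma)$-norm on the right-hand side, giving
\[
  \|q\|_{L^2(\Gamma)} \leq c_1\left\{c_2\left(\left|\int_\Gamma q\,d\mathcal{H}^2\right|+\|\nabla_\Gamma q\|_{H^{-1}(\Gamma,T\Gamma)}\right)+\|\nabla_\Gamma q\|_{H^{-1}(\Gamma,T\Gamma)}\right\}.
\]
Absorbing constants yields \eqref{E:Poin_L2_HinT} with $c:=c_1(c_2+1)$.

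There is essentially no obstacle here, since both ingredients have been proved; the only minor point to verify is that the duality product on $H^{-1}(\Gamma,T\Gamma)$ appearing in the two lemmas is interpreted consistently (as the equivalence class $P\nabla_\Gamma q=\nabla_\Gamma q$ in $H^{-1}(\Gamma)^3$ via Lemma~\ref{L:HinT_Homeo} and the convention \eqref{E:TGr_HinT}), which is already the convention used in Lemmas~\ref{L:Necas} and~\ref{L:Poin_HinT}. Thus the statement follows by simple chaining and no delicate step is involved.
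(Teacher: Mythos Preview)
Your proposal is correct and matches the paper's approach exactly: the paper simply states that combining \eqref{E:Necas} and \eqref{E:Poin_HinT} yields \eqref{E:Poin_L2_HinT}, and your chaining of the two bounds is precisely this combination spelled out in detail.
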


\subsection{Characterization of the annihilator of a weighted solenoidal space} \label{SS:WS_An}
Let $g\in C^1(\Gamma)$ satisfy $g\geq c$ on $\Gamma$ with some constant $c>0$.
We define a weighted solenoidal space of tangential vector fields
\begin{align*}
  H_{g\sigma}^1(\Gamma,T\Gamma) := \{v\in H^1(\Gamma,T\Gamma) \mid \text{$\mathrm{div}_\Gamma(gv)=0$ on $\Gamma$}\}.
\end{align*}
If $q\in L^2(\Gamma)$, then \eqref{E:TGr_HinT} and \eqref{E:Def_Mul_HinT} imply that
\begin{align*}
  [g\nabla_\Gamma q, v]_{T\Gamma} = -\bigl(q,\mathrm{div}_\Gamma(gv)\bigr)_{L^2(\Gamma)} = 0 \quad\text{for all}\quad v\in H_{g\sigma}^1(\Gamma,T\Gamma).
\end{align*}
Let us prove the converse of this statement for an element of $H^{-1}(\Gamma,T\Gamma)$, which is a weighted version of de Rham's theorem.

\begin{theorem} \label{T:DeRham_T}
  Suppose that $f\in H^{-1}(\Gamma,T\Gamma)$ satisfies
  \begin{align*}
    [f,v]_{T\Gamma} = 0 \quad\text{for all}\quad v \in H_{g\sigma}^1(\Gamma,T\Gamma).
  \end{align*}
  Then there exists a unique $q\in L^2(\Gamma)$ such that
  \begin{align*}
    f = g\nabla_\Gamma q \quad\text{in}\quad H^{-1}(\Gamma,T\Gamma), \quad \int_\Gamma q\,d\mathcal{H}^2 = 0.
  \end{align*}
  Moreover, there exists a constant $c>0$ independent of $f$ such that
  \begin{align} \label{E:DeRham_T_Ineq}
    \|q\|_{L^2(\Gamma)} \leq c\|f\|_{H^{-1}(\Gamma,T\Gamma)}.
  \end{align}
\end{theorem}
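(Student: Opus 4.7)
The plan is to apply the Banach closed range theorem to the bounded linear map
\[ T : L_0^2(\Gamma) \to H^{-1}(\Gamma, T\Gamma), \quad Tq := g\nabla_\Gamma q, \]
where $L_0^2(\Gamma) := \{q \in L^2(\Gamma) \mid \int_\Gamma q\, d\mathcal{H}^2 = 0\}$. Once $T$ is shown to have closed range and its adjoint is identified with a map whose kernel is exactly $H_{g\sigma}^1(\Gamma, T\Gamma)$, the closed range theorem (in its reflexive-space formulation, using $[\cdot,\cdot]_{T\Gamma}$ as the duality pairing between $H^{-1}(\Gamma, T\Gamma)$ and $H^1(\Gamma, T\Gamma)$) yields $\mathrm{Range}(T) = (\ker T^\ast)^\perp$; this is precisely the existence statement, and the closed-range estimate will simultaneously furnish \eqref{E:DeRham_T_Ineq}.

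The key technical step is the uniform lower bound $\|q\|_{L^2(\Gamma)} \le c\|Tq\|_{H^{-1}(\Gamma, T\Gamma)}$ for $q \in L_0^2(\Gamma)$, which gives both injectivity and closedness of the range. To obtain it, I first observe that $g \in C^1(\Gamma)$ is bounded below on the compact surface, so $1/g \in W^{1,\infty}(\Gamma)$; by the multiplication rule \eqref{E:Def_Mul_HinT}, multiplication by $1/g$ is bounded on $H^{-1}(\Gamma, T\Gamma)$, and a short computation using \eqref{E:Def_Mul_HinT} twice gives $(1/g)(g\nabla_\Gamma q) = \nabla_\Gamma q$ in $H^{-1}(\Gamma, T\Gamma)$. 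Hence
\[ \|\nabla_\Gamma q\|_{H^{-1}(\Gamma, T\Gamma)} \le c\,\|g\nabla_\Gamma q\|_{H^{-1}(\Gamma, T\Gamma)}, \]
and combining this with the mean-zero case of the Poincar\'e-type inequality \eqref{E:Poin_L2_HinT} delivers the required bound.

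To identify the adjoint, I compute for $v \in H^1(\Gamma, T\Gamma)$ and $q \in L_0^2(\Gamma)$, via \eqref{E:Def_Mul_HinT} and \eqref{E:TGr_HinT},
\[ [Tq, v]_{T\Gamma} = [\nabla_\Gamma q, gv]_{T\Gamma} = -(q, \mathrm{div}_\Gamma(gv))_{L^2(\Gamma)}, \]
so $T^\ast$ sends $v$ to the $L_0^2(\Gamma)$-component of $-\mathrm{div}_\Gamma(gv)$. Since $gv$ is tangential on the closed surface, the identity recorded just before \eqref{E:IbP_TD} gives $\int_\Gamma \mathrm{div}_\Gamma(gv)\, d\mathcal{H}^2 = 0$, so $-\mathrm{div}_\Gamma(gv)$ already lies in $L_0^2(\Gamma)$ and therefore $\ker T^\ast = H_{g\sigma}^1(\Gamma, T\Gamma)$. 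Uniqueness of $q$ follows by applying the same multiplication-by-$1/g$ trick to a difference $q_1 - q_2 \in L_0^2(\Gamma)$ with $T(q_1 - q_2) = 0$, which forces $\nabla_\Gamma(q_1 - q_2) = 0$ in $H^{-1}(\Gamma, T\Gamma)$, whence $q_1 = q_2$ by Lemma~\ref{L:TGr_HinT_Con} and the mean-zero condition.

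The principal obstacle is the closed-range estimate, where the $W^{1,\infty}$-multiplier action on $H^{-1}(\Gamma, T\Gamma)$ coming from \eqref{E:Def_Mul_HinT} must interact cleanly with the Ne\v{c}as-based Poincar\'e inequality \eqref{E:Poin_L2_HinT}; the rest is essentially bookkeeping in the duality between $H^{-1}(\Gamma, T\Gamma)$ and $H^1(\Gamma, T\Gamma)$, together with the observation that restricting to $L_0^2(\Gamma)$ from the outset absorbs the constant kernel of $\nabla_\Gamma$ identified in Lemma~\ref{L:TGr_HinT_Con} and makes the adjoint well-defined as a map into $L_0^2(\Gamma)$.
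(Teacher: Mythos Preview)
Your proof is correct and takes essentially the same approach as the paper: both rely on the Poincar\'e-type estimate $\|q\|_{L^2(\Gamma)}\le c\|g\nabla_\Gamma q\|_{H^{-1}(\Gamma,T\Gamma)}$ for mean-zero $q$ (obtained from Lemma~\ref{L:Poin_L2_HinT} together with the $1/g$-multiplier trick, which is exactly Lemma~\ref{L:TGr_G_HinT}), use it to show the range $\mathcal{X}=\{g\nabla_\Gamma q\}$ is closed, and then invoke a standard reflexive-space duality result. The only cosmetic difference is packaging: the paper applies the bipolar identity $(\mathcal{X}^\perp)^\perp=\mathcal{X}$ after checking $\mathcal{X}^\perp\subset H_{g\sigma}^1(\Gamma,T\Gamma)$, whereas you compute $\ker T^\ast=H_{g\sigma}^1(\Gamma,T\Gamma)$ and cite the closed range theorem; these are equivalent formulations of the same Hahn--Banach consequence.
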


We give auxiliary lemmas for Theorem~\ref{T:DeRham_T}.

\begin{lemma} \label{L:TGr_G_HinT}
  There exists a constant $c>0$ such that
  \begin{align} \label{E:TGr_G_HinT}
    c^{-1}\|\nabla_\Gamma q\|_{H^{-1}(\Gamma,T\Gamma)} \leq \|g\nabla_\Gamma q\|_{H^{-1}(\Gamma,T\Gamma)} \leq c\|\nabla_\Gamma q\|_{H^{-1}(\Gamma,T\Gamma)}
  \end{align}
  for all $q\in L^2(\Gamma)$.
\end{lemma}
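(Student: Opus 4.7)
The plan is to deduce both inequalities directly from the definition \eqref{E:Def_Mul_HinT} of multiplication in $H^{-1}(\Gamma,T\Gamma)$ together with the fact that $g$ and $g^{-1}$ are pointwise multipliers of $H^{1}(\Gamma,T\Gamma)$. Since $g\in C^{1}(\Gamma)$ and $g\geq c>0$ on the compact surface $\Gamma$, both $g$ and $g^{-1}$ belong to $C^{1}(\Gamma)\subset W^{1,\infty}(\Gamma)$, and there exists a constant $c>0$ such that for every $v\in H^{1}(\Gamma,T\Gamma)$ we have $gv,\,g^{-1}v\in H^{1}(\Gamma,T\Gamma)$ with
\begin{equation*}
  \|gv\|_{H^{1}(\Gamma)}\leq c\|v\|_{H^{1}(\Gamma)},\qquad \|g^{-1}v\|_{H^{1}(\Gamma)}\leq c\|v\|_{H^{1}(\Gamma)}.
\end{equation*}
(Note that $gv$ and $g^{-1}v$ are again tangential because $v$ is.)

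For the upper bound, I would take arbitrary $q\in L^{2}(\Gamma)$ and $v\in H^{1}(\Gamma,T\Gamma)$, apply \eqref{E:Def_Mul_HinT} and then the multiplier estimate above:
\begin{equation*}
  \bigl|[g\nabla_{\Gamma}q,v]_{T\Gamma}\bigr|
  =\bigl|[\nabla_{\Gamma}q,gv]_{T\Gamma}\bigr|
  \leq \|\nabla_{\Gamma}q\|_{H^{-1}(\Gamma,T\Gamma)}\|gv\|_{H^{1}(\Gamma)}
  \leq c\|\nabla_{\Gamma}q\|_{H^{-1}(\Gamma,T\Gamma)}\|v\|_{H^{1}(\Gamma)}.
\end{equation*}
Taking the supremum over $v$ with $\|v\|_{H^{1}(\Gamma)}\leq 1$ yields the right-hand inequality of \eqref{E:TGr_G_HinT}.

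For the lower bound, I would run the same argument with the roles of $g$ and $g^{-1}$ interchanged. Writing $v=g\cdot(g^{-1}v)$ and using \eqref{E:Def_Mul_HinT} in the reverse direction,
\begin{equation*}
  \bigl|[\nabla_{\Gamma}q,v]_{T\Gamma}\bigr|
  =\bigl|[g\nabla_{\Gamma}q,g^{-1}v]_{T\Gamma}\bigr|
  \leq \|g\nabla_{\Gamma}q\|_{H^{-1}(\Gamma,T\Gamma)}\|g^{-1}v\|_{H^{1}(\Gamma)}
  \leq c\|g\nabla_{\Gamma}q\|_{H^{-1}(\Gamma,T\Gamma)}\|v\|_{H^{1}(\Gamma)},
\end{equation*}
which after taking the supremum over $v$ gives the left-hand inequality.

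There is essentially no obstacle here; the only point that requires a line of care is verifying that multiplication by $g$ (resp.\ $g^{-1}$) preserves $H^{1}(\Gamma,T\Gamma)$ with a uniform operator bound. This follows at once from the Leibniz rule $\nabla_{\Gamma}(gv)=v\otimes\nabla_{\Gamma}g+g\nabla_{\Gamma}v$, the boundedness of $g$, $g^{-1}$, and their tangential gradients on the compact surface $\Gamma$ (consequences of $g\in C^{1}(\Gamma)$ and $g\geq c>0$), and the fact that tangentiality is preserved because $g$ is scalar.
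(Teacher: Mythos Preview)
Your proof is correct and follows essentially the same approach as the paper: both use the definition \eqref{E:Def_Mul_HinT} to transfer the factor $g$ (respectively $g^{-1}$) onto the test function, together with the fact that $g,g^{-1}\in C^{1}(\Gamma)$ are multipliers on $H^{1}(\Gamma,T\Gamma)$. The paper writes out the left-hand inequality and says ``similarly'' for the right, whereas you spell out both; otherwise the arguments are identical.
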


\begin{proof}
  Since $g\in C^1(\Gamma)$ is bounded from below by a positive constant,
  \begin{align*}
    \left|[\nabla_\Gamma q,v]_{T\Gamma}\right| = \left|[g\nabla_\Gamma q,g^{-1}v]_{T\Gamma}\right| &\leq \|g\nabla_\Gamma q\|_{H^{-1}(\Gamma,T\Gamma)}\|g^{-1}v\|_{H^1(\Gamma)} \\
    &\leq c\|g\nabla_\Gamma q\|_{H^{-1}(\Gamma,T\Gamma)}\|v\|_{H^1(\Gamma)}
  \end{align*}
  for all $v\in H^1(\Gamma,T\Gamma)$.
  Hence the left-hand inequality of \eqref{E:TGr_G_HinT} holds.
  Similarly, we can show the right-hand inequality of \eqref{E:TGr_G_HinT}.
\end{proof}

\begin{lemma} \label{L:TGr_Closed}
  The subspace
  \begin{align} \label{E:Sub_TGr}
    \mathcal{X} := \{g\nabla_\Gamma q\in H^{-1}(\Gamma,T\Gamma) \mid q\in L^2(\Gamma)\}
  \end{align}
  is closed in $H^{-1}(\Gamma,T\Gamma)$.
\end{lemma}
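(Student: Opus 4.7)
The plan is to show $\mathcal{X}$ is closed by identifying it with the image of a bounded linear operator whose inverse (on its image) is also bounded, applied to a Banach space. The key reduction is to factor out the constants, which lie in the kernel of $\nabla_\Gamma$ by Lemma~\ref{L:TGr_HinT_Con}, and then use the Poincar\'e-type inequality \eqref{E:Poin_L2_HinT} to control $q$ in $L^2(\Gamma)$ by $g\nabla_\Gamma q$ in $H^{-1}(\Gamma,T\Gamma)$.

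First I would introduce the mean-zero subspace
\begin{align*}
  L_0^2(\Gamma) := \left\{q\in L^2(\Gamma) \,\Big|\, \int_\Gamma q\,d\mathcal{H}^2=0\right\},
\end{align*}
which is closed in $L^2(\Gamma)$, hence a Hilbert space. By Lemma~\ref{L:TGr_HinT_Con} and the definition \eqref{E:Def_Mul_HinT}, $g\nabla_\Gamma(q+c)=g\nabla_\Gamma q$ in $H^{-1}(\Gamma,T\Gamma)$ for any constant $c$, so for any $q\in L^2(\Gamma)$ the element $q_0:=q-|\Gamma|^{-1}\int_\Gamma q\,d\mathcal{H}^2\in L_0^2(\Gamma)$ satisfies $g\nabla_\Gamma q=g\nabla_\Gamma q_0$. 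Therefore $\mathcal{X}=\{g\nabla_\Gamma q\mid q\in L_0^2(\Gamma)\}$, and it suffices to work with mean-zero representatives.

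Next I would prove the two-sided estimate
\begin{align*}
  c^{-1}\|q\|_{L^2(\Gamma)} \leq \|g\nabla_\Gamma q\|_{H^{-1}(\Gamma,T\Gamma)} \leq c\|q\|_{L^2(\Gamma)} \quad\text{for all}\quad q\in L_0^2(\Gamma).
\end{align*}
The upper bound is immediate from the trivial estimate $\|\nabla_\Gamma q\|_{H^{-1}(\Gamma,T\Gamma)}\leq c\|q\|_{L^2(\Gamma)}$ (which follows from \eqref{E:TGr_HinT} and H\"older's inequality) combined with the right-hand inequality of Lemma~\ref{L:TGr_G_HinT}. The lower bound combines the left-hand inequality of Lemma~\ref{L:TGr_G_HinT} with the Poincar\'e inequality of Lemma~\ref{L:Poin_L2_HinT}, where the mean-value term vanishes by the choice $q\in L_0^2(\Gamma)$.

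Closedness of $\mathcal{X}$ is then a routine functional-analytic consequence. Given a sequence $\{f_k\}_{k=1}^\infty\subset\mathcal{X}$ converging to some $f\in H^{-1}(\Gamma,T\Gamma)$, write $f_k=g\nabla_\Gamma q_k$ with $q_k\in L_0^2(\Gamma)$. The lower bound above applied to $q_k-q_l\in L_0^2(\Gamma)$ shows that $\{q_k\}$ is Cauchy in $L^2(\Gamma)$, so it converges to some $q\in L^2(\Gamma)$; the mean-zero condition is preserved in the limit, so $q\in L_0^2(\Gamma)$. The upper bound applied to $q-q_k$ then gives $g\nabla_\Gamma q_k\to g\nabla_\Gamma q$ in $H^{-1}(\Gamma,T\Gamma)$, so $f=g\nabla_\Gamma q\in\mathcal{X}$. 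There is essentially no obstacle here: the hard analytic work is already packaged in Lemmas~\ref{L:TGr_G_HinT} and~\ref{L:Poin_L2_HinT}, and the only care required is the separation of the constant mode so that the Poincar\'e estimate applies.
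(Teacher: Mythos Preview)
Your proof is correct and follows essentially the same approach as the paper: reduce to mean-zero representatives via Lemma~\ref{L:TGr_HinT_Con}, obtain the lower bound $\|q\|_{L^2(\Gamma)}\leq c\|g\nabla_\Gamma q\|_{H^{-1}(\Gamma,T\Gamma)}$ from Lemmas~\ref{L:Poin_L2_HinT} and~\ref{L:TGr_G_HinT}, and conclude by a Cauchy sequence argument together with the trivial upper bound. The only cosmetic difference is that you package the two-sided estimate and the space $L_0^2(\Gamma)$ explicitly, whereas the paper carries out the same steps inline.
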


\begin{proof}
  Let $\{q_k\}_{k=1}^\infty$ be a sequence in $L^2(\Gamma)$ such that $\{g\nabla_\Gamma q_k\}_{k=1}^\infty$ converges to some $f$ strongly in $H^{-1}(\Gamma,T\Gamma)$.
  For $k\in\mathbb{N}$ we subtract the average of $q_k$ over $\Gamma$ from $q_k$ to assume $\int_\Gamma q_k\,d\mathcal{H}^2=0$ without changing $g\nabla_\Gamma q_k$ (see Lemma~\ref{L:TGr_HinT_Con}).
  Then
  \begin{align*}
    \|q_k-q_l\|_{L^2(\Gamma)} \leq c\|g\nabla_\Gamma q_k-g\nabla_\Gamma q_l\|_{H^{-1}(\Gamma,T\Gamma)} \to 0 \quad\text{as}\quad k,l\to\infty
  \end{align*}
  by \eqref{E:Poin_L2_HinT} and \eqref{E:TGr_G_HinT}.
  Hence $\{q_k\}_{k=1}^\infty$ is a Cauchy sequence in $L^2(\Gamma)$ and converges to some $q$ strongly in $L^2(\Gamma)$.
  Moreover, by \eqref{E:TGr_HinT} and \eqref{E:Def_Mul_HinT} we easily get
  \begin{align*}
    \|g\nabla_\Gamma q-g\nabla_\Gamma q_k\|_{H^{-1}(\Gamma,T\Gamma)} \leq c\|q-q_k\|_{L^2(\Gamma)} \to 0 \quad\text{as}\quad k\to\infty.
  \end{align*}
  Since $\{g\nabla_\Gamma q_k\}_{k=1}^\infty$ converges to $f$ strongly in $H^{-1}(\Gamma,T\Gamma)$, we conclude by the above convergence that $f=g\nabla_\Gamma q\in\mathcal{X}$.
  Therefore, $\mathcal{X}$ is closed in $H^{-1}(\Gamma,T\Gamma)$.
\end{proof}

To prove Theorem~\ref{T:DeRham_T} we use basic results of functional analysis.
Let $\mathcal{B}$ be a Banach space.
For a subset $\mathcal{X}$ of $\mathcal{B}$ we define the annihilator of $\mathcal{X}$ by
\begin{align*}
    \mathcal{X}^\perp:=\{f\in\mathcal{B}' \mid \text{${}_{\mathcal{B}'}\langle f,v\rangle_{\mathcal{B}}=0$ for all $v\in\mathcal{X}$}\},
\end{align*}
where $\mathcal{B}'$ is the dual of $\mathcal{B}$ and ${}_{\mathcal{B}'}\langle\cdot,\cdot\rangle_{\mathcal{B}}$ is the duality product between $\mathcal{B}'$ and $\mathcal{B}$.

\begin{lemma} \label{L:Re_FA_1}
  For subsets $\mathcal{X}$ and $\mathcal{Y}$ of $\mathcal{B}$, if $\mathcal{X}\subset\mathcal{Y}$ in $\mathcal{B}$ then $\mathcal{Y}^\perp\subset\mathcal{X}^\perp$ in $\mathcal{B}'$.
\end{lemma}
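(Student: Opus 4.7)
The statement is a standard monotonicity property of the annihilator operation with respect to set inclusion, and the proof will be essentially a one-line unwinding of definitions. My plan is as follows.

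First I would fix an arbitrary $f\in\mathcal{Y}^\perp$ and show $f\in\mathcal{X}^\perp$, which by definition of the annihilator means verifying ${}_{\mathcal{B}'}\langle f,v\rangle_{\mathcal{B}}=0$ for every $v\in\mathcal{X}$. Given such a $v$, the hypothesis $\mathcal{X}\subset\mathcal{Y}$ immediately places $v$ inside $\mathcal{Y}$, and then the defining property of $f\in\mathcal{Y}^\perp$ (namely ${}_{\mathcal{B}'}\langle f,w\rangle_{\mathcal{B}}=0$ for all $w\in\mathcal{Y}$) applied with $w=v$ yields the conclusion.

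Since $f\in\mathcal{Y}^\perp$ was arbitrary, this shows $\mathcal{Y}^\perp\subset\mathcal{X}^\perp$. No topological or structural hypotheses on $\mathcal{B}$, $\mathcal{X}$, or $\mathcal{Y}$ beyond the set inclusion are needed; in particular, $\mathcal{X}$ and $\mathcal{Y}$ need not be linear subspaces or closed. There is no real obstacle here, as the proof is purely a matter of chasing quantifiers through the definition of the annihilator.
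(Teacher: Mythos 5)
Your proof is correct and is exactly the quantifier-chasing argument the paper has in mind: it simply notes that Lemma~\ref{L:Re_FA_1} is an immediate consequence of the definition of the annihilator, which is what you have spelled out. Nothing further is needed.
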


\begin{lemma} \label{L:Re_FA_2}
  If $\mathcal{B}$ is reflexive and $\mathcal{X}$ is a closed subspace of $\mathcal{B}$, then $(\mathcal{X}^\perp)^\perp=\mathcal{X}$.
\end{lemma}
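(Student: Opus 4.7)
The plan is to establish both inclusions $\mathcal{X} \subset (\mathcal{X}^\perp)^\perp$ and $(\mathcal{X}^\perp)^\perp \subset \mathcal{X}$, where the second annihilator lives a priori in $\mathcal{B}''$ and is identified with a subset of $\mathcal{B}$ through the canonical isometric isomorphism $J\colon\mathcal{B}\to\mathcal{B}''$ defined by $J(x)(f):=f(x)$, which is surjective by the hypothesis that $\mathcal{B}$ is reflexive. Thus the identity to be proved reads $J^{-1}\bigl((\mathcal{X}^\perp)^\perp\bigr)=\mathcal{X}$, where $(\mathcal{X}^\perp)^\perp:=\{\xi\in\mathcal{B}''\mid {}_{\mathcal{B}''}\langle\xi,f\rangle_{\mathcal{B}'}=0\text{ for all }f\in\mathcal{X}^\perp\}$.

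The forward inclusion is routine: for $x\in\mathcal{X}$ and any $f\in\mathcal{X}^\perp$, one has $f(x)={}_{\mathcal{B}'}\langle f,x\rangle_{\mathcal{B}}=0$ by the very definition of $\mathcal{X}^\perp$, hence $J(x)(f)=0$ for all such $f$, which means $J(x)\in(\mathcal{X}^\perp)^\perp$. No use of reflexivity or closedness of $\mathcal{X}$ is required here.

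The reverse inclusion is the substantive step and proceeds by contraposition using the Hahn--Banach separation theorem. Suppose $x_0\in\mathcal{B}\setminus\mathcal{X}$; since $\mathcal{X}$ is a closed subspace (hence a closed convex subset) and $\{x_0\}$ is compact and convex with $\{x_0\}\cap\mathcal{X}=\emptyset$, the geometric form of Hahn--Banach yields $f_0\in\mathcal{B}'$ and $\alpha\in\mathbb{R}$ strictly separating them, i.e.\ $\mathrm{Re}\,f_0(x)\leq\alpha<\mathrm{Re}\,f_0(x_0)$ for every $x\in\mathcal{X}$. Because $\mathcal{X}$ is a \emph{linear} subspace, the bound $\mathrm{Re}\,f_0(x)\leq\alpha$ holding on $\mathcal{X}$ forces $\mathrm{Re}\,f_0\equiv0$ on $\mathcal{X}$ (replace $x$ by $\lambda x$ and let $\lambda\to\pm\infty$), and the analogous scaling trick applied to $ix$ in the complex case shows $f_0\equiv0$ on $\mathcal{X}$, i.e.\ $f_0\in\mathcal{X}^\perp$. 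Yet $f_0(x_0)\neq0$ (strictly greater than $\alpha\geq0$ in real part), so $J(x_0)(f_0)\neq0$ and therefore $J(x_0)\notin(\mathcal{X}^\perp)^\perp$. Contraposing, $J^{-1}\bigl((\mathcal{X}^\perp)^\perp\bigr)\subset\mathcal{X}$, which combined with the first inclusion completes the proof.

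The only place where any nontrivial tool enters is the invocation of Hahn--Banach separation; this is also where the closedness assumption on $\mathcal{X}$ is essential (without it one would only recover $\overline{\mathcal{X}}$). Reflexivity plays no role in either inclusion individually but is needed precisely to phrase the conclusion as an equality between subsets of the same space $\mathcal{B}$, since without the surjectivity of $J$ one would only obtain $(\mathcal{X}^\perp)^\perp=J(\mathcal{X})$ inside $\mathcal{B}''$. I anticipate no real obstacle; the delicate bookkeeping is simply to state cleanly the identification via $J$ so that writing ``$(\mathcal{X}^\perp)^\perp=\mathcal{X}$'' is unambiguous.
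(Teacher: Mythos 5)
Your proof is correct: the forward inclusion is the trivial one, and the reverse inclusion via Hahn--Banach separation (using closedness of $\mathcal{X}$ to produce $f_0\in\mathcal{X}^\perp$ with $f_0(x_0)\neq0$, and reflexivity only to identify $(\mathcal{X}^\perp)^\perp\subset\mathcal{B}''$ with a subset of $\mathcal{B}$) is exactly the standard argument. The paper does not write out a proof but simply cites the Hahn--Banach theorem (Rudin, Theorem 4.7), so your argument is essentially the same approach, just spelled out in full.
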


Lemma~\ref{L:Re_FA_1} is an immediate consequence of the definition of the annihilator.
Also, Lemma~\ref{L:Re_FA_2} follows from the Hahn--Banach theorem, see e.g.~\cite[Theorem~4.7]{Ru91}.

\begin{proof}[Proof of Theorem~\ref{T:DeRham_T}]
  Since $H^{-1}(\Gamma,T\Gamma)$ is the dual of the Hilbert space $H^1(\Gamma,T\Gamma)$, it is also a Hilbert space and thus reflexive.
  Let $\mathcal{X}$ be the subspace of $H^{-1}(\Gamma,T\Gamma)$ given by \eqref{E:Sub_TGr} and $v\in\mathcal{X}^\perp\subset H^1(\Gamma,T\Gamma)$.
  Then
  \begin{align*}
    0 = [g\nabla_\Gamma q,v]_{T\Gamma} = -\bigl(q,\mathrm{div}_\Gamma(gv)\bigr)_{L^2(\Gamma)}
  \end{align*}
  for all $q\in L^2(\Gamma)$ by $g\nabla_\Gamma q\in\mathcal{X}$, \eqref{E:TGr_HinT}, and \eqref{E:Def_Mul_HinT}.
  This implies $\mathrm{div}_\Gamma(gv)=0$ on $\Gamma$, i.e. $v\in H_{g\sigma}^1(\Gamma,T\Gamma)$.
  Hence $\mathcal{X}^\perp\subset H_{g\sigma}^1(\Gamma,T\Gamma)$ in $H^1(\Gamma,T\Gamma)$ and, by Lemma~\ref{L:Re_FA_1},
  \begin{align*}
    H_{g\sigma}^1(\Gamma,T\Gamma)^\perp = \{f\in H^{-1}(\Gamma,T\Gamma) \mid \text{$[f,v]_{T\Gamma}=0$ for all $v\in H_{g\sigma}^1(\Gamma,T\Gamma)$}\} \subset (\mathcal{X}^\perp)^\perp.
  \end{align*}
  Since $\mathcal{X}$ is closed in $H^{-1}(\Gamma,T\Gamma)$ by Lemma~\ref{L:TGr_Closed}, we have $(\mathcal{X}^\perp)^\perp=\mathcal{X}$ by Lemma~\ref{L:Re_FA_2}.
  Hence $H_{g\sigma}^1(\Gamma,T\Gamma)^\perp\subset\mathcal{X}$, i.e. for $f\in H_{g\sigma}^1(\Gamma,T\Gamma)^\perp$ there exists $q\in L^2(\Gamma)$ such that $f=g\nabla_\Gamma q$ in $H^{-1}(\Gamma,T\Gamma)$.
  Moreover, subtracting the average of $q$ over $\Gamma$ from $q$ we may assume $\int_\Gamma q\,d\mathcal{H}^2=0$ without changing $g\nabla_\Gamma q$ (see Lemma~\ref{L:TGr_HinT_Con}).
  Therefore, the existence part of the theorem is valid.
  To prove the uniqueness, suppose that $q_1,q_2\in L^2(\Gamma)$ satisfy
  \begin{align*}
    g\nabla_\Gamma q_1 = g\nabla_\Gamma q_2 \quad\text{in}\quad H^{-1}(\Gamma,T\Gamma), \quad \int_\Gamma q_1\,d\mathcal{H}^2 = \int_\Gamma q_2\,d\mathcal{H}^2 = 0.
  \end{align*}
  Then $\nabla_\Gamma(q_1-q_2)=0$ in $H^{-1}(\Gamma,T\Gamma)$ by \eqref{E:TGr_G_HinT} and thus $q_1-q_2$ is constant on $\Gamma$ by Lemma~\ref{L:TGr_HinT_Con}.
  Since $\int_\Gamma(q_1-q_2)\,d\mathcal{H}^2=0$, the constant $q_1-q_2$ is equal to zero, i.e. $q_1=q_2$ on $\Gamma$.
  Hence the uniqueness is also valid.
  Finally, the estimate \eqref{E:DeRham_T_Ineq} follows from \eqref{E:Poin_L2_HinT} with $\int_\Gamma q\,d\mathcal{H}^2=0$ and \eqref{E:TGr_G_HinT}.
\end{proof}

\subsection{Weighted Helmholtz--Leray decomposition of tangential vector fields} \label{SS:WS_HL}
The aim of this subsection is to give the weighted Helmholtz--Leray decomposition in $L^2(\Gamma,T\Gamma)$ and show estimates for its gradient part.
For $v\in L^2(\Gamma)^3$ we consider $\mathrm{div}_\Gamma(gv)$ in $H^{-1}(\Gamma)$ by \eqref{E:Sdiv_Hin} and define a subspace of $L^2(\Gamma,T\Gamma)$ by
\begin{align*}
  L_{g\sigma}^2(\Gamma,T\Gamma) := \{v\in L^2(\Gamma,T\Gamma) \mid \text{$\mathrm{div}_\Gamma(gv)=0$ in $H^{-1}(\Gamma)$}\}.
\end{align*}
By \eqref{E:Sdiv_Hin} and $g\in C^1(\Gamma)$ we easily deduce that
\begin{align*}
  \|\mathrm{div}_\Gamma(gv)\|_{H^{-1}(\Gamma)} \leq c\|v\|_{L^2(\Gamma)} \quad\text{for all}\quad v\in L^2(\Gamma)^3.
\end{align*}
Hence $L_{g\sigma}^2(\Gamma,T\Gamma)$ is closed in $L^2(\Gamma,T\Gamma)$, which is a closed subspace of $L^2(\Gamma)^3$.
Let $L_{g\sigma}^2(\Gamma,T\Gamma)^\perp$ be the orthogonal complement of $L_{g\sigma}^2(\Gamma,T\Gamma)$ in $L^2(\Gamma,T\Gamma)$.

\begin{lemma} \label{L:L2gs_Orth}
  The orthogonal complement of $L_{g\sigma}^2(\Gamma,T\Gamma)$ is of the form
  \begin{align*}
    L_{g\sigma}^2(\Gamma,T\Gamma)^\perp = \{g\nabla_\Gamma q \in L^2(\Gamma,T\Gamma) \mid q\in H^1(\Gamma)\}.
  \end{align*}
\end{lemma}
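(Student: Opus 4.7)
The plan is to prove the two inclusions separately, relying essentially on the weighted de Rham theorem (Theorem~\ref{T:DeRham_T}) for the nontrivial direction.

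For the easy inclusion $\supset$, let $q\in H^1(\Gamma)$ and $v\in L_{g\sigma}^2(\Gamma,T\Gamma)$. First note $g\nabla_\Gamma q\in L^2(\Gamma,T\Gamma)$ since $\nabla_\Gamma q\cdot n=0$ by \eqref{E:P_TGr} and $g\in C^1(\Gamma)$. I would apply the definition \eqref{E:Sdiv_Hin} of the surface divergence in $H^{-1}(\Gamma)$ to the vector field $gv\in L^2(\Gamma)^3$ and pair with $q\in H^1(\Gamma)$:
\begin{align*}
  \langle\mathrm{div}_\Gamma(gv),q\rangle_\Gamma = -(gv,\nabla_\Gamma q+qHn)_{L^2(\Gamma)} = -(v,g\nabla_\Gamma q)_{L^2(\Gamma)},
\end{align*}
where the curvature term drops because $v\cdot n=0$ on $\Gamma$. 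Since $\mathrm{div}_\Gamma(gv)=0$ in $H^{-1}(\Gamma)$ by assumption, we conclude $(g\nabla_\Gamma q,v)_{L^2(\Gamma)}=0$, giving $g\nabla_\Gamma q\in L_{g\sigma}^2(\Gamma,T\Gamma)^\perp$.

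For the nontrivial inclusion $\subset$, take $w\in L_{g\sigma}^2(\Gamma,T\Gamma)^\perp\subset L^2(\Gamma,T\Gamma)$. The first step is to define $f\in H^{-1}(\Gamma,T\Gamma)$ by $[f,v]_{T\Gamma}:=(w,v)_{L^2(\Gamma)}$ for $v\in H^1(\Gamma,T\Gamma)$. Since every $v\in H_{g\sigma}^1(\Gamma,T\Gamma)$ automatically belongs to $L_{g\sigma}^2(\Gamma,T\Gamma)$ (strong vanishing of $\mathrm{div}_\Gamma(gv)$ on $\Gamma$ implies the $H^{-1}(\Gamma)$ vanishing), the functional $f$ annihilates $H_{g\sigma}^1(\Gamma,T\Gamma)$. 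Theorem~\ref{T:DeRham_T} then yields a (unique) $q\in L^2(\Gamma)$ with $f=g\nabla_\Gamma q$ in $H^{-1}(\Gamma,T\Gamma)$, i.e.
\begin{align*}
  (w,v)_{L^2(\Gamma)} = [g\nabla_\Gamma q,v]_{T\Gamma} = -\bigl(q,\mathrm{div}_\Gamma(gv)\bigr)_{L^2(\Gamma)} \quad\text{for all}\quad v\in H^1(\Gamma,T\Gamma).
\end{align*}

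The main obstacle is the regularity upgrade $q\in L^2(\Gamma)\Rightarrow q\in H^1(\Gamma)$ and the identification $w=g\nabla_\Gamma q$ pointwise a.e., since de Rham alone only yields an equality in $H^{-1}(\Gamma,T\Gamma)$. I would overcome this by testing against a carefully chosen family: for each $i=1,2,3$ and $\eta\in C^1(\Gamma)$, set $v_i:=(\eta/g)Pe_i$, which lies in $H^1(\Gamma,T\Gamma)$ because $g\in C^1(\Gamma)$ is bounded below by a positive constant and $P\in C^1(\Gamma)^{3\times3}$. Then $gv_i=\eta Pe_i$ and a direct calculation using $P\nabla_\Gamma\eta=\nabla_\Gamma\eta$, $P^T=P$, and $\mathrm{div}_\Gamma P=Hn$ from \eqref{E:Form_W} gives $\mathrm{div}_\Gamma(gv_i)=\underline{D}_i\eta+\eta Hn_i$. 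On the other hand, tangentiality of $w$ gives $w\cdot Pe_i=w_i$, so $(w,v_i)_{L^2(\Gamma)}=(w_i/g,\eta)_{L^2(\Gamma)}$. Substituting into the identity above,
\begin{align*}
  (w_i/g,\eta)_{L^2(\Gamma)} = -(q,\underline{D}_i\eta+\eta Hn_i)_{L^2(\Gamma)} \quad\text{for all}\quad \eta\in C^1(\Gamma),
\end{align*}
which is exactly the defining identity \eqref{E:Def_WTD} for the weak tangential derivative. Hence $\underline{D}_iq=w_i/g\in L^2(\Gamma)$ for $i=1,2,3$, so $q\in H^1(\Gamma)$ and $\nabla_\Gamma q=w/g$ a.e., yielding $w=g\nabla_\Gamma q$ in $L^2(\Gamma,T\Gamma)$ and completing the proof.
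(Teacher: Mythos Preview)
Your proof is correct and follows essentially the same approach as the paper: the easy inclusion via \eqref{E:Sdiv_Hin}, the application of Theorem~\ref{T:DeRham_T} to obtain $q\in L^2(\Gamma)$, and the regularity upgrade by testing against $v=g^{-1}\eta Pe_i$ to identify $\underline{D}_iq=w_i/g$ via \eqref{E:Def_WTD}. The arguments are virtually identical up to notation.
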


\begin{proof}
  Let $\mathcal{X}:=\{g\nabla_\Gamma q \in L^2(\Gamma,T\Gamma) \mid q\in H^1(\Gamma)\}$.
  By \eqref{E:Sdiv_Hin} we have
  \begin{align*}
    (v,g\nabla_\Gamma q)_{L^2(\Gamma)} = -\langle\mathrm{div}_\Gamma(gv),q\rangle_\Gamma = 0 \quad\text{for all}\quad v\in L_{g\sigma}^2(\Gamma,T\Gamma),\, q\in H^1(\Gamma).
  \end{align*}
  Thus $\mathcal{X}\subset L_{g\sigma}^2(\Gamma,T\Gamma)^\perp$.
  Conversely, let $f\in L_{g\sigma}^2(\Gamma,T\Gamma)^\perp$.
  Then, as an element of $H^{-1}(\Gamma,T\Gamma)$ (see Section~\ref{SS:Pre_Surf}), $f=Pf$ satisfies
  \begin{align*}
    [f,v]_{T\Gamma} = (f,v)_{L^2(\Gamma)} = 0 \quad\text{for all}\quad v\in H_{g\sigma}^1(\Gamma,T\Gamma)\subset L_{g\sigma}^2(\Gamma,T\Gamma).
  \end{align*}
  Hence $f=g\nabla_\Gamma q$ in $H^{-1}(\Gamma,T\Gamma)$ with some $q\in L^2(\Gamma)$ by Theorem~\ref{T:DeRham_T}.
  Let us prove $q\in H^1(\Gamma)$.
  For $\eta\in C^1(\Gamma)$ and $i=1,2,3$ let $v:=g^{-1}\eta Pe_i$ on $\Gamma$, where $\{e_1,e_2,e_3\}$ is the standard basis of $\mathbb{R}^3$.
  Then $v\in H^1(\Gamma,T\Gamma)$ since $P$ and $g$ are of class $C^1$ and $g \geq c>0$ on $\Gamma$.
  Moreover, by $P^T=P$ on $\Gamma$, \eqref{E:P_TGr}, and \eqref{E:Form_W},
  \begin{align*}
    \mathrm{div}_\Gamma(gv) = \nabla_\Gamma\eta\cdot Pe_i+\eta(\mathrm{div}_\Gamma P\cdot e_i) = \underline{D}_i\eta+\eta Hn_i \quad\text{on}\quad \Gamma.
  \end{align*}
  By this equality, $f=g\nabla_\Gamma q$ in $H^{-1}(\Gamma,T\Gamma)$, and $P^Tf=Pf=f$ on $\Gamma$ we obtain
  \begin{align*}
    -(q,\underline{D}_i\eta+\eta Hn_i)_{L^2(\Gamma)} &= -\bigl(q,\mathrm{div}_\Gamma(gv)\bigr)_{L^2(\Gamma)} = [g\nabla_\Gamma q,v]_{T\Gamma} \\
    &= [f,v]_{T\Gamma} = (f,v)_{L^2(\Gamma)} = (g^{-1}f_i,\eta)_{L^2(\Gamma)}
  \end{align*}
  for all $\eta\in C^1(\Gamma)$, where $f_i$ is the $i$-th component of $f$.
  This implies $\underline{D}_iq=g^{-1}f_i\in L^2(\Gamma)$ for $i=1,2,3$ by the definition of the weak tangential derivative in $L^2(\Gamma)$ (see \eqref{E:Def_WTD}).
  Hence $q\in H^1(\Gamma)$ and $f=g\nabla_\Gamma q\in\mathcal{X}$, which shows $L_{g\sigma}^2(\Gamma,T\Gamma)^\perp\subset\mathcal{X}$.
\end{proof}

For $q_1,q_2\in H^1(\Gamma)$ we have $g\nabla_\Gamma q_1=g\nabla_\Gamma q_2$ on $\Gamma$ if and only if $q_1-q_2$ is constant on $\Gamma$ by \eqref{E:Width_Bound} and Lemma~\ref{L:TGr_HinT_Con}.
By this fact and Lemma~\ref{L:L2gs_Orth} we obtain the weighted Helmholtz--Leray decomposition of tangential vector fields on $\Gamma$.

\begin{theorem} \label{T:HL_L2T}
  For each $v\in L^2(\Gamma,T\Gamma)$ we have the orthogonal decomposition
  \begin{align*}
    v = v_g+g\nabla_\Gamma q \quad\text{in}\quad L^2(\Gamma,T\Gamma), \quad v_g\in L_{g\sigma}^2(\Gamma,T\Gamma), \, g\nabla_\Gamma q \in L_{g\sigma}^2(\Gamma,T\Gamma)^\perp.
  \end{align*}
  Here $q\in H^1(\Gamma)$ is uniquely determined up to a constant.
\end{theorem}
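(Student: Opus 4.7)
The plan is to invoke the standard Hilbert space orthogonal projection theorem, since essentially all of the hard analytic work has been absorbed into Lemma~\ref{L:L2gs_Orth} (which in turn relied on the weighted de Rham theorem, Theorem~\ref{T:DeRham_T}). The remaining task is just to stitch these pieces together and argue uniqueness.

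First I would observe that $L^2(\Gamma,T\Gamma)$, being a closed subspace of $L^2(\Gamma)^3$, is itself a Hilbert space with the inherited inner product. As noted in the paragraph preceding Lemma~\ref{L:L2gs_Orth}, the bound $\|\mathrm{div}_\Gamma(gv)\|_{H^{-1}(\Gamma)}\leq c\|v\|_{L^2(\Gamma)}$ combined with the definition of $L^2_{g\sigma}(\Gamma,T\Gamma)$ shows that this weighted solenoidal space is closed in $L^2(\Gamma,T\Gamma)$. The Hilbert space projection theorem therefore yields, for each $v\in L^2(\Gamma,T\Gamma)$, a unique orthogonal decomposition $v=v_g+w$ with $v_g\in L^2_{g\sigma}(\Gamma,T\Gamma)$ and $w\in L^2_{g\sigma}(\Gamma,T\Gamma)^\perp$.

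Next I would apply Lemma~\ref{L:L2gs_Orth} directly to identify $w$: it says precisely that $L^2_{g\sigma}(\Gamma,T\Gamma)^\perp=\{g\nabla_\Gamma q\in L^2(\Gamma,T\Gamma)\mid q\in H^1(\Gamma)\}$, so there exists $q\in H^1(\Gamma)$ with $w=g\nabla_\Gamma q$ in $L^2(\Gamma,T\Gamma)$. This produces the decomposition asserted in the theorem.

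For uniqueness of $q$ up to a constant, I would proceed exactly as the remark immediately preceding the theorem suggests: if $q_1,q_2\in H^1(\Gamma)$ both represent the same gradient part, i.e.\ $g\nabla_\Gamma q_1=g\nabla_\Gamma q_2$ in $L^2(\Gamma,T\Gamma)$, then dividing by $g$ (which is bounded below by a positive constant by \eqref{E:Width_Bound}) gives $\nabla_\Gamma(q_1-q_2)=0$ in $L^2(\Gamma)^3$. Viewing this equality in $H^{-1}(\Gamma,T\Gamma)$ via the identification in Section~\ref{SS:Pre_Surf}, Lemma~\ref{L:TGr_HinT_Con} forces $q_1-q_2$ to be constant on $\Gamma$, which is exactly the claimed uniqueness.

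There is really no serious obstacle here: the entire content of the theorem has been prepared by the preceding subsections, so the proof is essentially a two-line invocation of (i) the Hilbert projection theorem and (ii) Lemma~\ref{L:L2gs_Orth}, with uniqueness handled by Lemma~\ref{L:TGr_HinT_Con}. If anything requires a brief word, it is only to make sure that the two structures match: the orthogonality used in the projection theorem is the $L^2(\Gamma,T\Gamma)$ inner product, while Lemma~\ref{L:L2gs_Orth} characterizes the orthogonal complement with respect to the same inner product, so there is no gap between the two.
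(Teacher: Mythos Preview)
Your proposal is correct and mirrors the paper's own argument exactly: the paper does not give a separate proof of this theorem but simply notes, in the paragraph immediately preceding it, that the decomposition follows from Lemma~\ref{L:L2gs_Orth} together with the uniqueness statement obtained from \eqref{E:Width_Bound} and Lemma~\ref{L:TGr_HinT_Con}. Your expansion of this into an explicit invocation of the Hilbert projection theorem is precisely what the paper has in mind.
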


Note that here we derived the weighted Helmholtz--Leray decomposition without introducing the notion of differential forms.
When $g\equiv1$ it was also proved in the recent work~\cite{Re18} without calculus of differential forms, where the solenoidal part is further decomposed into the curl of some function and a harmonic field whose surface divergence and curl vanish.

Next we consider approximation of surface solenoidal vector fields.
In general, for $v\in L_{g\sigma}^2(\Gamma,T\Gamma)$ with $g\equiv1$ and $\eta\in C^1(\Gamma)$, $\mathrm{div}_\Gamma(\eta v)=\nabla_\Gamma\eta\cdot v$ does not vanish in $H^{-1}(\Gamma)$.
Hence standard localization and mollification arguments with a partition of unity on $\Gamma$ do not work on approximation of surface solenoidal vector fields by smooth ones.
Instead, we use a solution to Poisson's equation on $\Gamma$.

\begin{lemma} \label{L:Pois_Surf}
  For each $\eta\in H^{-1}(\Gamma)$ satisfying $\langle\eta,1\rangle_\Gamma=0$ there exists a unique weak solution $q\in H^1(\Gamma)$ to Poisson's equation
  \begin{align} \label{E:Pois_Surf}
    -\Delta_\Gamma q = \eta \quad\text{on}\quad \Gamma, \quad \int_\Gamma q\,d\mathcal{H}^2 = 0
  \end{align}
  in the sense that
  \begin{align} \label{E:Weak_Pois}
    (\nabla_\Gamma q,\nabla_\Gamma\xi)_{L^2(\Gamma)} = \langle\eta,\xi\rangle_\Gamma \quad\text{for all}\quad \xi\in H^1(\Gamma).
  \end{align}
  Moreover, there exists a constant $c>0$ such that
  \begin{align} \label{E:H1_Pois}
    \|q\|_{H^1(\Gamma)} \leq c\|\eta\|_{H^{-1}(\Gamma)}.
  \end{align}
  If in addition $\eta\in L^2(\Gamma)$, then $q\in H^2(\Gamma)$ and
  \begin{align} \label{E:H2_Pois}
    \|q\|_{H^2(\Gamma)} \leq c\|\eta\|_{L^2(\Gamma)}.
  \end{align}
\end{lemma}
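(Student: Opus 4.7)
The plan is to proceed in two separate steps, first handling the existence, uniqueness, and $H^1$ estimate by a direct application of the Lax--Milgram theorem on the mean-zero subspace of $H^1(\Gamma)$, and then upgrading to $H^2$ regularity when $\eta\in L^2(\Gamma)$ by a standard localization argument combined with interior elliptic regularity in local charts.

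First I would introduce the closed subspace
\[
  \tilde{H}^1(\Gamma) := \Bigl\{\xi\in H^1(\Gamma) \;\Bigm|\; \int_\Gamma\xi\,d\mathcal{H}^2=0\Bigr\}
\]
and work on the bilinear form $a(q,\xi):=(\nabla_\Gamma q,\nabla_\Gamma \xi)_{L^2(\Gamma)}$. By Lemma~\ref{L:Poin_Surf_Lp} with $p=2$, Poincar\'{e}'s inequality gives $\|\xi\|_{L^2(\Gamma)}\leq c\|\nabla_\Gamma\xi\|_{L^2(\Gamma)}$ on $\tilde{H}^1(\Gamma)$, so $a$ is continuous and coercive on this Hilbert space, with $a(\xi,\xi)\geq c^{-1}\|\xi\|_{H^1(\Gamma)}^2$. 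The functional $\xi\mapsto\langle\eta,\xi\rangle_\Gamma$ is bounded on $\tilde{H}^1(\Gamma)$ by $\|\eta\|_{H^{-1}(\Gamma)}\|\xi\|_{H^1(\Gamma)}$. The Lax--Milgram theorem then produces a unique $q\in\tilde{H}^1(\Gamma)$ satisfying \eqref{E:Weak_Pois} for all $\xi\in\tilde{H}^1(\Gamma)$, with $\|q\|_{H^1(\Gamma)}\leq c\|\eta\|_{H^{-1}(\Gamma)}$, which is \eqref{E:H1_Pois}. To extend the identity \eqref{E:Weak_Pois} to arbitrary $\xi\in H^1(\Gamma)$, I would write $\xi=\tilde{\xi}+c_\xi$ with $c_\xi:=|\Gamma|^{-1}\int_\Gamma\xi\,d\mathcal{H}^2$ and $\tilde{\xi}\in\tilde{H}^1(\Gamma)$; since $\nabla_\Gamma\xi=\nabla_\Gamma\tilde{\xi}$ on $\Gamma$ and $\langle\eta,c_\xi\rangle_\Gamma=c_\xi\langle\eta,1\rangle_\Gamma=0$ by the compatibility assumption, both sides of \eqref{E:Weak_Pois} are unchanged when $\xi$ is replaced by $\tilde{\xi}$.

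For the $H^2$ regularity under $\eta\in L^2(\Gamma)$, I would localize $q$ using a $C^\ell$ partition of unity subordinate to a finite atlas $\{\mu_k\colon U_k\to\Gamma\}_{k=1}^{k_0}$ of local parametrizations of $\Gamma$. On each chart, with metric $\theta=(\theta_{ij})_{i,j}$ given by \eqref{E:Def_Met} (whose inverse is smooth and whose determinant is bounded below on compact subsets by \eqref{E:Metric}), the Laplace--Beltrami operator reads
\[
  \Delta_\Gamma q\circ\mu_k = \frac{1}{\sqrt{\det\theta_k}}\sum_{i,j=1}^2\partial_{s_i}\Bigl(\sqrt{\det\theta_k}\,\theta_k^{ij}\partial_{s_j}(q\circ\mu_k)\Bigr),
\]
which is a uniformly elliptic second-order operator on a compactly contained subset of $U_k$. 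Multiplying $q$ by a cut-off $\chi_k\in C_c^\ell(U_k)$ from the partition, the localized function $\chi_k(q\circ\mu_k)$ satisfies an elliptic equation on $\mathbb{R}^2$ with $L^2$ right-hand side controlled by $\|\eta\|_{L^2(\Gamma)}+\|q\|_{H^1(\Gamma)}$, so that the standard interior $H^2$ estimate (applied to a compactly supported function) yields $\chi_k(q\circ\mu_k)\in H^2(\mathbb{R}^2)$ with the corresponding bound. Summing over $k$ and using the equivalence \eqref{E:Lp_Loc}--\eqref{E:W1p_Loc} together with \eqref{E:H1_Pois} produces $q\in H^2(\Gamma)$ with $\|q\|_{H^2(\Gamma)}\leq c(\|\eta\|_{L^2(\Gamma)}+\|q\|_{H^1(\Gamma)})\leq c\|\eta\|_{L^2(\Gamma)}$.

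The only delicate point, and therefore the main obstacle, is the $H^2$ regularity step: one has to be careful that the second-order weak Sobolev norm $\|q\|_{H^2(\Gamma)}$ (defined via iterated weak tangential derivatives $\underline{D}_i\underline{D}_jq$ in Section~\ref{SS:Pre_Surf}) is equivalent to the sum of the localized Euclidean norms $\sum_k\|\chi_k(q\circ\mu_k)\|_{H^2(\mathbb{R}^2)}$, which requires tracking commutators between $\underline{D}_i$ and the chart transition together with terms generated by the mean-curvature correction in \eqref{E:Def_WTD}. Once this equivalence is established through the smoothness of $\mu_k$ and $\theta_k^{-1}$, the rest of the argument is standard; existence and uniqueness at the $H^1$ level, by contrast, are a clean Lax--Milgram application with no real difficulty.
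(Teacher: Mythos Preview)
Your proposal is correct and matches the paper's approach exactly: the paper states that existence, uniqueness, and \eqref{E:H1_Pois} follow from Poincar\'{e}'s inequality \eqref{E:Poin_Surf_Lp} and the Lax--Milgram theorem, while the $H^2$-regularity and \eqref{E:H2_Pois} are obtained by a localization argument and the elliptic regularity theorem, referring to \cite[Theorems~3.1 and~3.3]{DzEl13} for details. Your write-up is in fact more detailed than the paper's own sketch, and the obstacle you flag (equivalence of the intrinsic $H^2(\Gamma)$ norm with the chartwise Euclidean $H^2$ norms) is precisely the content handled in the proof of Lemma~\ref{L:Wmp_Appr} in Appendix~\ref{S:Ap_DG}, so you may invoke that computation directly.
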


The existence and uniqueness of a weak solution to \eqref{E:Pois_Surf} and the estimate \eqref{E:H1_Pois} follow from Poincar\'{e}'s inequality \eqref{E:Poin_Surf_Lp} and the Lax--Milgram theorem.
Also, the $H^2$-regularity of a weak solution and \eqref{E:H2_Pois} are proved by a localization argument and the elliptic regularity theorem.
For details, see \cite[Theorems~3.1 and~3.3]{DzEl13}.

\begin{lemma} \label{L:H1gs_Dense}
  The space $H_{g\sigma}^1(\Gamma,T\Gamma)$ is dense in $L_{g\sigma}^2(\Gamma,T\Gamma)$.
\end{lemma}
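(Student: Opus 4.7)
The plan is to approximate an arbitrary $v \in L_{g\sigma}^2(\Gamma,T\Gamma)$ by smooth tangential vector fields and then correct them so that they become weighted solenoidal. First I would use Lemma~\ref{L:Wmp_Tan_Appr} to pick a sequence $\tilde{v}_k \in C^{\ell-1}(\Gamma,T\Gamma)$ converging to $v$ in $L^2(\Gamma,T\Gamma)$. These approximants are not yet $g$-solenoidal, but applying the weighted Helmholtz--Leray decomposition (Theorem~\ref{T:HL_L2T}) gives
\[
\tilde{v}_k = w_k + g\nabla_\Gamma q_k, \qquad w_k \in L_{g\sigma}^2(\Gamma,T\Gamma),\ q_k \in H^1(\Gamma),\ \textstyle\int_\Gamma q_k\,d\mathcal{H}^2 = 0.
\]
The two things to establish are then (i) $w_k \in H^1(\Gamma,T\Gamma)$, so that $w_k \in H_{g\sigma}^1(\Gamma,T\Gamma)$, and (ii) $w_k \to v$ in $L^2$.

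For (ii) I would observe that the weighted Helmholtz--Leray decomposition is an orthogonal decomposition in $L^2(\Gamma,T\Gamma)$, so the map $\Pi\colon L^2(\Gamma,T\Gamma) \to L_{g\sigma}^2(\Gamma,T\Gamma)$ sending a vector field to its $g$-solenoidal part is the orthogonal projection. By hypothesis $\Pi v = v$, and $\Pi\tilde{v}_k = w_k$, so $\|v - w_k\|_{L^2(\Gamma)} = \|\Pi(v - \tilde{v}_k)\|_{L^2(\Gamma)} \le \|v-\tilde{v}_k\|_{L^2(\Gamma)} \to 0$.

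The main obstacle is step (i), the $H^1$-regularity of $w_k = \tilde{v}_k - g\nabla_\Gamma q_k$, equivalently the $H^2$-regularity of $q_k$. The potential $q_k$ satisfies the weighted weak equation
\[
(g^2\nabla_\Gamma q_k,\nabla_\Gamma \xi)_{L^2(\Gamma)} = (g\tilde{v}_k,\nabla_\Gamma\xi)_{L^2(\Gamma)} \quad \text{for all}\ \xi\in H^1(\Gamma),
\]
which I would rewrite by expanding $\mathrm{div}_\Gamma(g^2\nabla_\Gamma q_k) = g^2\Delta_\Gamma q_k + 2g\nabla_\Gamma g\cdot\nabla_\Gamma q_k$ as the unweighted Poisson equation $-\Delta_\Gamma q_k = \eta_k$ with
\[
\eta_k := -\frac{1}{g}\,\mathrm{div}_\Gamma\tilde{v}_k - \frac{1}{g^2}\nabla_\Gamma g\cdot\tilde{v}_k + \frac{2}{g}\nabla_\Gamma g\cdot\nabla_\Gamma q_k \in L^2(\Gamma),
\]
using $g\in C^1(\Gamma)$ with $g\ge c>0$, $\tilde{v}_k\in C^1(\Gamma,T\Gamma)$, and $\nabla_\Gamma q_k\in L^2(\Gamma)^3$. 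The integrability condition $\int_\Gamma \eta_k\,d\mathcal{H}^2 = 0$ follows by testing the weak formulation with $\xi\equiv 1$, so the $H^2$-regularity estimate \eqref{E:H2_Pois} in Lemma~\ref{L:Pois_Surf} applies and yields $q_k\in H^2(\Gamma)$. Since $g\in C^1(\Gamma)$, this gives $g\nabla_\Gamma q_k\in H^1(\Gamma)^3$ and hence $w_k\in H^1(\Gamma,T\Gamma)$. Combining with $w_k\in L_{g\sigma}^2(\Gamma,T\Gamma)$ proves $w_k\in H_{g\sigma}^1(\Gamma,T\Gamma)$ and completes the density argument.
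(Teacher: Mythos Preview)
Your proof is correct but takes a somewhat different route from the paper's. The paper does not invoke the weighted Helmholtz--Leray decomposition; instead it directly solves the \emph{unweighted} Poisson equation $-\Delta_\Gamma q_k = -\mathrm{div}_\Gamma(g\tilde v_k)$ and sets $v_k := \tilde v_k - g^{-1}\nabla_\Gamma q_k$ (note the factor $g^{-1}$, not $g$). Since $\mathrm{div}_\Gamma(g\cdot g^{-1}\nabla_\Gamma q_k)=\Delta_\Gamma q_k$, this immediately gives $v_k\in H_{g\sigma}^1(\Gamma,T\Gamma)$, and the $H^2$-regularity of $q_k$ comes straight from Lemma~\ref{L:Pois_Surf} with no bootstrap. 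For convergence the paper uses $\|\mathrm{div}_\Gamma(g\tilde v_k)\|_{H^{-1}(\Gamma)}=\|\mathrm{div}_\Gamma[g(\tilde v_k-v)]\|_{H^{-1}(\Gamma)}\le c\|\tilde v_k-v\|_{L^2(\Gamma)}$ together with the $H^1$-estimate~\eqref{E:H1_Pois}. Your approach trades simplicity in the two steps: using the orthogonal projection $\mathbb{P}_g$ makes the $L^2$-convergence a one-line contraction argument, but you then have to bootstrap the regularity of the potential since it solves a \emph{weighted} elliptic problem. One point of rigor: the passage from the weighted weak equation to $-\Delta_\Gamma q_k=\eta_k$ should be carried out at the weak level (e.g.\ by substituting $\xi\mapsto g^{-2}\varphi$ in $(g^2\nabla_\Gamma q_k,\nabla_\Gamma\xi)=(g\tilde v_k,\nabla_\Gamma\xi)$), since the formal expansion of $\mathrm{div}_\Gamma(g^2\nabla_\Gamma q_k)$ already presupposes $q_k\in H^2(\Gamma)$; once that weak form is in hand, testing with $\varphi\equiv 1$ indeed gives $\int_\Gamma\eta_k\,d\mathcal{H}^2=0$ and Lemma~\ref{L:Pois_Surf} applies.
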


\begin{proof}
  Let $v\in L_{g\sigma}^2(\Gamma,T\Gamma)$.
  Since $\Gamma$ is of class $C^2$, we can take a sequence $\{\tilde{v}_k\}_{k=1}^\infty$ in $C^1(\Gamma,T\Gamma)$ that converges to $v$ strongly in $L^2(\Gamma,T\Gamma)$ by Lemma~\ref{L:Wmp_Tan_Appr}.
  For $k\in\mathbb{N}$ it follows from $\mathrm{div}_\Gamma(gv)=0$ in $H^{-1}(\Gamma)$ and \eqref{E:Sdiv_Hin} that
  \begin{align} \label{Pf_HgDe:Hin_Conv}
    \|\mathrm{div}_\Gamma(g\tilde{v}_k)\|_{H^{-1}(\Gamma)} = \|\mathrm{div}_\Gamma[g(\tilde{v}_k-v)]\|_{H^{-1}(\Gamma)} \leq c\|\tilde{v}_k-v\|_{L^2(\Gamma)}.
  \end{align}
  Let $\eta_k:=-\mathrm{div}_\Gamma(g\tilde{v}_k)$.
  Since $\eta_k\in L^2(\Gamma)$, there exists a unique solution $q_k\in H^2(\Gamma)$ to \eqref{E:Pois_Surf} with source term $\eta_k$ by Lemma~\ref{L:Pois_Surf}.
  Moreover, by \eqref{E:H1_Pois} and \eqref{Pf_HgDe:Hin_Conv},
  \begin{align*}
    \|q_k\|_{H^1(\Gamma)} \leq c\|\eta_k\|_{H^{-1}(\Gamma)} = c\|\mathrm{div}_\Gamma(g\tilde{v}_k)\|_{H^{-1}(\Gamma)} \leq c\|v-\tilde{v}_k\|_{L^2(\Gamma)}.
  \end{align*}
  Hence $v_k:=\tilde{v}_k-g^{-1}\nabla_\Gamma q_k\in H_{g\sigma}^1(\Gamma,T\Gamma)$ and
  \begin{align*}
    \|v-v_k\|_{L^2(\Gamma)} \leq \|v-\tilde{v}_k\|_{L^2(\Gamma)}+c\|q_k\|_{H^1(\Gamma)} \leq c\|v-\tilde{v}_k\|_{L^2(\Gamma)} \to 0 \quad\text{as}\quad k\to\infty
  \end{align*}
  by $g\geq c>0$ on $\Gamma$ and the strong convergence of $\{\tilde{v}_k\}_{k=1}^\infty$ to $v$ in $L^2(\Gamma,T\Gamma)$.
\end{proof}

Let $\mathbb{P}_g$ be the orthogonal projection from $L^2(\Gamma,T\Gamma)$ onto $L_{g\sigma}^2(\Gamma,T\Gamma)$.
We call it the weighted Helmholtz--Leray projection.
In the study of a singular limit problem for \eqref{E:NS_Eq}--\eqref{E:NS_In} we need to estimate the difference $v-\mathbb{P}_gv$ for $v\in L^2(\Gamma,T\Gamma)$.

\begin{lemma} \label{L:HLT_Est_L2}
  There exists a constant $c>0$ such that
  \begin{align} \label{E:HLT_Est_L2}
    \|v-\mathbb{P}_gv\|_{L^2(\Gamma)} \leq c\|\mathrm{div}_\Gamma(gv)\|_{H^{-1}(\Gamma)}
  \end{align}
  for all $v\in L^2(\Gamma,T\Gamma)$.
  If in addition $v\in H^1(\Gamma,T\Gamma)$, then $\mathbb{P}_gv\in H_{g\sigma}^1(\Gamma,T\Gamma)$ and
  \begin{align} \label{E:HLT_Est_H1}
    \|v-\mathbb{P}_gv\|_{H^1(\Gamma)} \leq c\|\mathrm{div}_\Gamma(gv)\|_{L^2(\Gamma)}.
  \end{align}
\end{lemma}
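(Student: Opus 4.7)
The plan is to reduce both inequalities to estimates on the scalar potential $q$ produced by the weighted Helmholtz--Leray decomposition and then invoke Lemma~\ref{L:Pois_Surf}. By Theorem~\ref{T:HL_L2T}, for $v\in L^2(\Gamma,T\Gamma)$ I would write $v=\mathbb{P}_gv+g\nabla_\Gamma q$ with $q\in H^1(\Gamma)$, normalized so that $\int_\Gamma q\,d\mathcal{H}^2=0$. Since $g\in C^1(\Gamma)$ is bounded above and below by positive constants, the target estimates reduce to
\begin{align*}
  \|\nabla_\Gamma q\|_{L^2(\Gamma)} &\leq c\|\mathrm{div}_\Gamma(gv)\|_{H^{-1}(\Gamma)},\\
  \|q\|_{H^2(\Gamma)} &\leq c\|\mathrm{div}_\Gamma(gv)\|_{L^2(\Gamma)},
\end{align*}
the latter giving $\|g\nabla_\Gamma q\|_{H^1(\Gamma)}\leq c\|q\|_{H^2(\Gamma)}$ via the Leibniz rule.

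To obtain the weak formulation I would apply \eqref{E:Sdiv_Hin} to $\mathrm{div}_\Gamma(gv)$ tested against $\xi\in H^1(\Gamma)$. Using that $v$ is tangential and then substituting $v=\mathbb{P}_gv+g\nabla_\Gamma q$, the contribution from $g\mathbb{P}_gv$ vanishes by $\mathbb{P}_gv\in L_{g\sigma}^2(\Gamma,T\Gamma)$, yielding
\begin{align*}
  (g^2\nabla_\Gamma q,\nabla_\Gamma\xi)_{L^2(\Gamma)}=-\langle\mathrm{div}_\Gamma(gv),\xi\rangle_\Gamma\qquad\text{for all}\ \xi\in H^1(\Gamma).
\end{align*}
Choosing $\xi=q$, using $g\geq c>0$, and invoking Poincar\'{e}'s inequality \eqref{E:Poin_Surf_Lp} (applicable since $\int_\Gamma q\,d\mathcal{H}^2=0$) to absorb $\|q\|_{H^1(\Gamma)}\leq c\|\nabla_\Gamma q\|_{L^2(\Gamma)}$ on the right-hand side, I would deduce the first bound and hence \eqref{E:HLT_Est_L2}.

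For \eqref{E:HLT_Est_H1}, with $v\in H^1(\Gamma,T\Gamma)$ the right-hand side $\mathrm{div}_\Gamma(gv)$ belongs to $L^2(\Gamma)$. The plan is to recast the weighted equation as a standard Poisson equation on $\Gamma$. Substituting the test function $\xi/g^2\in H^1(\Gamma)$ (admissible since $g\in C^1(\Gamma)$ with $g\geq c>0$) into the weak formulation and using the Leibniz rule to unfold $\nabla_\Gamma(\xi/g^2)$, I would obtain that $q$ is a weak solution to
\begin{align*}
  -\Delta_\Gamma q=h:=\frac{1}{g^2}\bigl(2g\nabla_\Gamma g\cdot\nabla_\Gamma q-\mathrm{div}_\Gamma(gv)\bigr)\qquad\text{on}\ \Gamma
\end{align*}
with $h\in L^2(\Gamma)$ and $\int_\Gamma h\,d\mathcal{H}^2=0$ (the latter follows by choosing $\xi\equiv 1$ in the original formulation). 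Then Lemma~\ref{L:Pois_Surf} delivers $q\in H^2(\Gamma)$ together with $\|q\|_{H^2(\Gamma)}\leq c\|h\|_{L^2(\Gamma)}$, and combining this with the first bound absorbs $\|\nabla_\Gamma q\|_{L^2(\Gamma)}$ into $\|\mathrm{div}_\Gamma(gv)\|_{L^2(\Gamma)}$. The containment $\mathbb{P}_gv\in H_{g\sigma}^1(\Gamma,T\Gamma)$ then follows from $\mathbb{P}_gv=v-g\nabla_\Gamma q\in H^1(\Gamma,T\Gamma)\cap L_{g\sigma}^2(\Gamma,T\Gamma)$.

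The only delicate step is justifying the recasting $-\Delta_\Gamma q=h$: testing against $\xi/g^2$ is clean, but one must verify $\xi/g^2\in H^1(\Gamma)$ uniformly (granted by $g\in C^1(\Gamma)$ with $g\geq c>0$) and confirm the compatibility $\int_\Gamma h\,d\mathcal{H}^2=0$ required by Lemma~\ref{L:Pois_Surf}; both hinge on the standing assumptions on $g$ and the weak formulation tested against constants. Everything else is a straightforward chain of bounded multiplications and Poincar\'{e}-type absorptions.
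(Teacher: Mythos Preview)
Your proof is correct and takes a genuinely different route from the paper's.  The paper solves the \emph{unweighted} Poisson equation $-\Delta_\Gamma q=-\mathrm{div}_\Gamma(gv)$ directly via Lemma~\ref{L:Pois_Surf} and then identifies $\mathbb{P}_gv$ with $v-g^{-1}\nabla_\Gamma q$.  You instead start from the orthogonal decomposition $v-\mathbb{P}_gv=g\nabla_\Gamma q$ of Theorem~\ref{T:HL_L2T}, derive the \emph{weighted} equation $(g^2\nabla_\Gamma q,\nabla_\Gamma\xi)_{L^2(\Gamma)}=-\langle\mathrm{div}_\Gamma(gv),\xi\rangle_\Gamma$, obtain the $H^1$ bound by testing with $\xi=q$, and only then recast as a Poisson equation (via $\xi\mapsto\xi/g^2$) to invoke the $H^2$ regularity of Lemma~\ref{L:Pois_Surf}.

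Your route is in fact the safer one.  The paper's candidate $g^{-1}\nabla_\Gamma q$ does make $v-g^{-1}\nabla_\Gamma q\in L_{g\sigma}^2(\Gamma,T\Gamma)$, but for non-constant $g$ it need not lie in $L_{g\sigma}^2(\Gamma,T\Gamma)^\perp=\{g\nabla_\Gamma p\mid p\in H^1(\Gamma)\}$ (that would force $g^{-2}\nabla_\Gamma q$ to be a gradient), so the appeal to ``uniqueness of the weighted Helmholtz--Leray decomposition'' is not immediate.  The paper's $L^2$ bound is still recoverable by best approximation ($\|v-\mathbb{P}_gv\|_{L^2(\Gamma)}\le\|v-u\|_{L^2(\Gamma)}$ for any $u\in L_{g\sigma}^2(\Gamma,T\Gamma)$), but your argument handles both the $H^1$ estimate and the containment $\mathbb{P}_gv\in H_{g\sigma}^1(\Gamma,T\Gamma)$ more transparently.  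The only price you pay is the extra bootstrap step, which is routine given $g\in C^1(\Gamma)$ with $g\ge c>0$.
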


\begin{proof}
  Let $v\in L^2(\Gamma,T\Gamma)$ and $\eta:=-\mathrm{div}_\Gamma(gv)\in H^{-1}(\Gamma)$.
  Since
  \begin{align*}
    \langle\eta,1\rangle_\Gamma = (gv,Hn)_{L^2(\Gamma)} = \int_\Gamma g(v\cdot n)H\,d\mathcal{H}^2 = 0
  \end{align*}
  by \eqref{E:Sdiv_Hin} and $v\cdot n=0$ on $\Gamma$, there exists a unique weak solution $q\in H^1(\Gamma)$ to \eqref{E:Pois_Surf} with $\eta$ by Lemma~\ref{L:Pois_Surf}.
  Then $\mathbb{P}_gv=v-g^{-1}\nabla_\Gamma q\in L_{g\sigma}^2(\Gamma,T\Gamma)$ by the uniqueness of the weighted Helmholtz--Leray decomposition.
  Moreover, since
  \begin{align*}
    \|q\|_{H^1(\Gamma)} \leq c\|\eta\|_{H^{-1}(\Gamma)} = c\|\mathrm{div}_\Gamma(gv)\|_{H^{-1}(\Gamma)}
  \end{align*}
  by \eqref{E:H1_Pois} and $g\in C^1(\Gamma)$ is bounded from below by a positive constant,
  \begin{align*}
    \|v-\mathbb{P}_gv\|_{L^2(\Gamma)} = \|g^{-1}\nabla_\Gamma q\|_{L^2(\Gamma)} \leq c\|q\|_{H^1(\Gamma)} \leq c\|\mathrm{div}_\Gamma(gv)\|_{H^{-1}(\Gamma)}.
  \end{align*}
  Hence the inequality \eqref{E:HLT_Est_L2} holds.

  If $v\in H^1(\Gamma,T\Gamma)$, then $\eta=-\mathrm{div}_\Gamma(gv)\in L^2(\Gamma)$ and Lemma~\ref{L:Pois_Surf} implies that $q\in H^2(\Gamma)$ and $\mathbb{P}_gv=v-g^{-1}\nabla_\Gamma q\in H_{g\sigma}^1(\Gamma,T\Gamma)$.
  Also, since
  \begin{align*}
    \|q\|_{H^2(\Gamma)} \leq c\|\eta\|_{L^2(\Gamma)} =c\|\mathrm{div}_\Gamma(gv)\|_{L^2(\Gamma)}
  \end{align*}
  by \eqref{E:H2_Pois} and $g\in C^1(\Gamma)$ is bounded from below by a positive constant,
  \begin{align*}
    \|v-\mathbb{P}_gv\|_{H^1(\Gamma)} = \|g^{-1}\nabla_\Gamma q\|_{H^1(\Gamma)} \leq c\|q\|_{H^2(\Gamma)} \leq c\|\mathrm{div}_\Gamma(gv)\|_{L^2(\Gamma)},
  \end{align*}
  i.e. the inequality \eqref{E:HLT_Est_H1} is valid.
\end{proof}

\begin{lemma} \label{L:HLT_Bound}
  There exists a constant $c>0$ such that
  \begin{align} \label{E:HLT_Bound}
    \|\mathbb{P}_gv\|_{H^k(\Gamma)} \leq c\|v\|_{H^k(\Gamma)}
  \end{align}
  for all $v\in H^k(\Gamma,T\Gamma)$, $k=0,1$ (note that $H^0=L^2$).
\end{lemma}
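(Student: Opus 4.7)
The plan is to handle the two cases $k=0$ and $k=1$ separately, both essentially as consequences of results already established in this subsection.

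For $k=0$, the inequality is immediate: since $\mathbb{P}_g$ is the orthogonal projection from $L^2(\Gamma,T\Gamma)$ onto the closed subspace $L_{g\sigma}^2(\Gamma,T\Gamma)$, we automatically have $\|\mathbb{P}_g v\|_{L^2(\Gamma)} \leq \|v\|_{L^2(\Gamma)}$ for every $v \in L^2(\Gamma,T\Gamma)$.

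For $k=1$, I would first note that Lemma~\ref{L:HLT_Est_H1} asserts $\mathbb{P}_g v \in H_{g\sigma}^1(\Gamma,T\Gamma) \subset H^1(\Gamma,T\Gamma)$ whenever $v \in H^1(\Gamma,T\Gamma)$, together with the estimate $\|v - \mathbb{P}_g v\|_{H^1(\Gamma)} \leq c\|\mathrm{div}_\Gamma(gv)\|_{L^2(\Gamma)}$. Combining this with the triangle inequality gives
\begin{align*}
  \|\mathbb{P}_g v\|_{H^1(\Gamma)} \leq \|v\|_{H^1(\Gamma)} + \|v - \mathbb{P}_g v\|_{H^1(\Gamma)} \leq \|v\|_{H^1(\Gamma)} + c\|\mathrm{div}_\Gamma(gv)\|_{L^2(\Gamma)}.
\end{align*}
Since $g \in C^1(\Gamma)$ is bounded on the compact surface $\Gamma$ along with $\nabla_\Gamma g$, the product rule $\mathrm{div}_\Gamma(gv) = \nabla_\Gamma g \cdot v + g\,\mathrm{div}_\Gamma v$ yields $\|\mathrm{div}_\Gamma(gv)\|_{L^2(\Gamma)} \leq c\|v\|_{H^1(\Gamma)}$, and the claim \eqref{E:HLT_Bound} follows.

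There is no real obstacle here: the argument is just an assembly of Lemma~\ref{L:HLT_Est_H1}, the product rule, and the orthogonality of $\mathbb{P}_g$ on $L^2(\Gamma,T\Gamma)$. The only mild care needed is to observe that in the $H^1$-case Lemma~\ref{L:HLT_Est_H1} already provides the $H^1$-regularity of $\mathbb{P}_g v$ (so the $H^1$-norm on the left-hand side of \eqref{E:HLT_Bound} makes sense) via the explicit representation $\mathbb{P}_g v = v - g^{-1}\nabla_\Gamma q$ with $q \in H^2(\Gamma)$ used in its proof.
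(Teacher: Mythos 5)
Your proposal is correct and follows exactly the paper's own argument: $k=0$ by orthogonality of $\mathbb{P}_g$ on $L^2(\Gamma,T\Gamma)$, and $k=1$ by combining the estimate \eqref{E:HLT_Est_H1} of Lemma~\ref{L:HLT_Est_L2} with the bound $\|\mathrm{div}_\Gamma(gv)\|_{L^2(\Gamma)}\leq c\|v\|_{H^1(\Gamma)}$ via the triangle inequality. Nothing further is needed.
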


\begin{proof}
  If $k=0$, then \eqref{E:HLT_Bound} holds with $c=1$ since $\mathbb{P}_g$ is the orthogonal projection from $L^2(\Gamma,T\Gamma)$ onto $L_{g\sigma}^2(\Gamma,T\Gamma)$.
  Moreover, the inequality \eqref{E:HLT_Bound} for $k=1$ follows from \eqref{E:HLT_Est_H1} and $\|\mathrm{div}_\Gamma(gv)\|_{L^2(\Gamma)}\leq c\|v\|_{H^1(\Gamma)}$.
\end{proof}

Next we consider the time derivative of $v-\mathbb{P}_gv$.
We derive an estimate for the time derivative of a weak solution to Poisson's equation \eqref{E:Pois_Surf}.

\begin{lemma} \label{L:Pois_Dt}
  Let $T>0$.
  Suppose that $\eta\in H^1(0,T;H^{-1}(\Gamma))$ satisfies
  \begin{align*}
    \langle\eta(t),1\rangle_\Gamma = 0 \quad\text{for all}\quad t\in[0,T].
  \end{align*}
  For each $t\in[0,T]$ let $q(t)\in H^1(\Gamma)$ be a unique weak solution to \eqref{E:Pois_Surf} with source term $\eta(t)$.
  Then $q\in H^1(0,T;H^1(\Gamma))$ and there exists a constant $c>0$ such that
  \begin{align} \label{E:Pois_Dt}
    \|\partial_tq\|_{L^2(0,T;H^1(\Gamma))} \leq c\|\partial_t\eta\|_{L^2(0,T;H^{-1}(\Gamma))}.
  \end{align}
  Moreover, for a.a. $t\in(0,T)$ the time derivative $\partial_tq(t)\in H^1(\Gamma)$ is a unique weak solution to \eqref{E:Pois_Surf} with source term $\partial_t\eta(t)$.
\end{lemma}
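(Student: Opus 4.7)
The plan is to exploit the Poisson solution map as a bounded linear operator and use that bounded linear operators commute with distributional time derivatives. Let $H^{-1}_0(\Gamma) := \{\eta \in H^{-1}(\Gamma) \mid \langle \eta, 1 \rangle_\Gamma = 0\}$ and $H^1_0(\Gamma) := \{q \in H^1(\Gamma) \mid \int_\Gamma q \, d\mathcal{H}^2 = 0\}$. By Lemma~\ref{L:Pois_Surf}, the map $S \colon H^{-1}_0(\Gamma) \to H^1_0(\Gamma)$ sending $\eta$ to the unique weak solution of \eqref{E:Pois_Surf} is a well-defined bounded linear operator with $\|S\eta\|_{H^1(\Gamma)} \leq c\|\eta\|_{H^{-1}(\Gamma)}$.

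First I would check that $\partial_t \eta(t) \in H^{-1}_0(\Gamma)$ for a.a.\ $t \in (0,T)$. Since $\tau \mapsto \langle \tau, 1 \rangle_\Gamma$ is a bounded linear functional on $H^{-1}(\Gamma)$, the scalar function $t \mapsto \langle \eta(t), 1 \rangle_\Gamma$ lies in $H^1(0,T;\mathbb{R})$ with weak derivative $\langle \partial_t \eta(t), 1 \rangle_\Gamma$; as the former vanishes identically, so does the latter a.e. Hence $\tilde q(t) := S(\partial_t \eta(t))$ is well-defined for a.a.\ $t$, lies in $L^2(0,T;H^1(\Gamma))$, and satisfies
\begin{equation*}
  \|\tilde q\|_{L^2(0,T;H^1(\Gamma))} \leq c\|\partial_t \eta\|_{L^2(0,T;H^{-1}(\Gamma))}
\end{equation*}
by boundedness of $S$. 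Similarly $q(t) = S(\eta(t))$ lies in $L^2(0,T;H^1(\Gamma))$.

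The central claim is that $\partial_t q = \tilde q$ as elements of $\mathcal{D}'(0,T;H^1(\Gamma))$, from which $q \in H^1(0,T;H^1(\Gamma))$ and the estimate \eqref{E:Pois_Dt} follow at once. This rests on the commutation identity
\begin{equation*}
  \int_0^T \varphi(t)\, S(\eta(t))\,dt \;=\; S\!\left( \int_0^T \varphi(t)\, \eta(t)\,dt \right) \quad \text{in } H^1(\Gamma)
\end{equation*}
for every $\varphi \in C_c^\infty(0,T)$, which is the standard fact that a bounded linear operator commutes with Bochner integrals (verified, for instance, by pairing both sides with any $\zeta \in L^2(\Gamma) \subset H^{-1}(\Gamma)$ and invoking Fubini). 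Applying this identity with $-\partial_t \varphi$ in place of $\varphi$, combined with the definition \eqref{E:Dt_Dist_L2} of the distributional derivative, yields
\begin{equation*}
  (\partial_t q)(\varphi) = -\int_0^T \partial_t\varphi(t)\, q(t)\,dt = S\bigl((\partial_t \eta)(\varphi)\bigr) = \int_0^T \varphi(t)\, \tilde q(t)\,dt = \tilde q(\varphi),
\end{equation*}
where the middle equality uses $(\partial_t \eta)(\varphi) = \int_0^T \varphi(t)\,\partial_t \eta(t)\,dt$ valid since $\partial_t \eta \in L^2(0,T;H^{-1}(\Gamma))$. The pointwise identity $\tilde q(t) = S(\partial_t \eta(t))$ is precisely the statement that $\partial_t q(t)$ solves \eqref{E:Pois_Surf} weakly with source $\partial_t \eta(t)$.

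The only mildly delicate step is the Bochner-integral commutation identity; it is a textbook consequence of $S$ being a bounded linear operator between Hilbert spaces, but must be handled with some care because the source space is $H^{-1}(\Gamma)$ rather than an $L^p$-space of functions. No genuine analytic obstacle appears, and the remaining verifications are routine applications of the definitions introduced in Section~\ref{SS:Pre_Surf}.
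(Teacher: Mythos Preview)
Your argument is correct, but it follows a genuinely different route from the paper's. The paper works with difference quotients: it shows that $D_h q(t):=(q(t+h)-q(t))/h$ is the unique weak solution to \eqref{E:Pois_Surf} with source $D_h\eta(t)$, applies the a~priori bound \eqref{E:H1_Pois} pointwise in $t$, invokes the standard difference-quotient estimate for $\eta\in H^1(0,T;H^{-1}(\Gamma))$ to obtain a uniform bound on $\|D_hq\|_{L^2(\delta,T-\delta;H^1(\Gamma))}$, and then lets $h\to0$ and $\delta\to0$ (the latter via monotone convergence). You instead treat the Poisson solution map $S$ as a bounded linear operator between Hilbert spaces and use that such operators commute with Bochner integrals, hence with distributional time derivatives, to conclude $\partial_t(S\eta)=S(\partial_t\eta)$ in one stroke. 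Your approach is shorter and more conceptual, and it immediately identifies $\partial_t q(t)$ as the solution with source $\partial_t\eta(t)$; the paper's difference-quotient method is more hands-on and self-contained, requiring no appeal to Bochner-integral machinery, and it separates the regularity statement from the identification of the equation satisfied by $\partial_t q$ (the latter is done by a second change-of-variable argument).
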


Note that, when $\eta\in H^1(0,T;H^{-1}(\Gamma))$, $\eta(t)\in H^{-1}(\Gamma)$ is well-defined for each $t\in[0,T]$ since $H^1(0,T;H^{-1}(\Gamma))$ is continuously embedded into $C([0,T];H^{-1}(\Gamma))$.

\begin{proof}
  Since $\eta\in L^2(0,T;H^{-1}(\Gamma))$, we have $q\in L^2(0,T;H^1(\Gamma))$ by \eqref{E:H1_Pois}.
  Let us prove $\partial_tq\in L^2(0,T;H^1(\Gamma))$ by means of the difference quotient.
  Fix $\delta\in(0,T/2)$ and $h\in\mathbb{R}\setminus\{0\}$ with $|h|<\delta/2$.
  For $t\in(\delta,T-\delta)$ we define
  \begin{align*}
    D_hq(t) := \frac{q(t+h)-q(t)}{h} \in H^1(\Gamma), \quad D_h\eta(t) := \frac{\eta(t+h)-\eta(t)}{h} \in H^{-1}(\Gamma).
  \end{align*}
  Note that these definitions make sense since $t+h\in(\delta/2,T-\delta/2)$.
  Moreover,
  \begin{align*}
    \int_\Gamma D_hq(t)\,d\mathcal{H}^2 = 0, \quad \langle D_h\eta(t),1\rangle_\Gamma = 0, \quad t\in(\delta,T-\delta)
  \end{align*}
  since $q(t)$ and $\eta(t)$ satisfy the same equalities for all $t\in[0,T]$.
  For each $\xi\in H^1(\Gamma)$ we subtract \eqref{E:Weak_Pois} for $q(t)$ from that for $q(t+h)$ and divide both sides by $h$ to get
  \begin{align} \label{Pf_PoDt:Weak}
    (\nabla_\Gamma D_hq(t),\nabla_\Gamma\xi)_{L^2(\Gamma)} = \langle D_h\eta(t),\xi\rangle_\Gamma.
  \end{align}
  Hence $D_hq(t)$ is a unique weak solution to \eqref{E:Pois_Surf} with $D_h\eta(t)$.
  By \eqref{E:H1_Pois} we have
  \begin{align*}
    \|D_hq(t)\|_{H^1(\Gamma)} \leq c\|D_h\eta(t)\|_{H^{-1}(\Gamma)}, \quad t\in(\delta,T-\delta),
  \end{align*}
  where $c>0$ is a constant independent of $t$, $\delta$, and $h$, and thus
  \begin{align*}
    \|D_hq\|_{L^2(\delta,T-\delta;H^1(\Gamma))} \leq c\|D_h\eta\|_{L^2(\delta,T-\delta;H^{-1}(\Gamma))}.
  \end{align*}
  Moreover, since $\eta\in H^1(0,T;H^{-1}(\Gamma))$,
  \begin{align*}
    \|D_h\eta\|_{L^2(\delta,T-\delta;H^{-1}(\Gamma))} \leq c\|\partial_t\eta\|_{L^2(0,T;H^{-1}(\Gamma))}
  \end{align*}
  with a constant $c>0$ independent of $h$ and $\delta$ (see~\cite[Section~5.8, Theorem~3 (i)]{Ev10}).
  Combining the above two estimates we obtain
  \begin{align*}
    \|D_hq\|_{L^2(\delta,T-\delta;H^1(\Gamma))} \leq c\|\partial_t\eta\|_{L^2(0,T;H^{-1}(\Gamma))}
  \end{align*}
  for all $h\in\mathbb{R}\setminus\{0\}$ with $|h|<\delta/2$.
  Since the right-hand side of this inequality is independent of $h$, it follows that $\partial_tq\in L^2(\delta,T-\delta;H^1(\Gamma))$ and
  \begin{align*}
    \|\partial_tq\|_{L^2(\delta,T-\delta;H^1(\Gamma))} \leq c\|\partial_t\eta\|_{L^2(0,T;H^{-1}(\Gamma))}
  \end{align*}
  for all $\delta\in(0,T/2)$ (see \cite[Section~5.8, Theorem~3 (ii)]{Ev10}).
  In particular, we have $\partial_tq(t)\in H^1(\Gamma)$ for a.a. $t\in(0,T)$.
  Since the right-hand side of the above inequality is independent of $\delta$, the monotone convergence theorem yields
  \begin{align*}
    \|\partial_tq\|_{L^2(0,T;H^1(\Gamma))} = \lim_{\delta\to0}\|\partial_tq\|_{L^2(\delta,T-\delta;H^1(\Gamma))} \leq c\|\partial_t\eta\|_{L^2(0,T;H^{-1}(\Gamma))}.
  \end{align*}
  Hence $\partial_tq\in L^2(0,T;H^1(\Gamma))$ and the inequality \eqref{E:Pois_Dt} is valid.

  Next we show that $\partial_tq(t)$ is a unique weak solution to \eqref{E:Pois_Surf} with source term $\partial_t\eta(t)$ for a.a. $t\in(0,T)$.
  Let $\xi\in H^1(\Gamma)$ and $\varphi\in C_c^\infty(0,T)$.
  Suppose that $\varphi$ is supported in $(\delta,T-\delta)$ with $\delta\in(0,T/2)$.
  We extend $\varphi$ to $\mathbb{R}$ by setting zero outside of $(0,T)$.
  For $h\in\mathbb{R}\setminus\{0\}$, $|h|<\delta/2$ we multiply both sides of \eqref{Pf_PoDt:Weak} by $\varphi(t)$, integrate them over $(\delta,T-\delta)$, and make the change of a variable
  \begin{align*}
    \int_\delta^{T-\delta}\psi(t+h)\varphi(t)\,dt = \int_{\delta+h}^{T-\delta+h}\psi(s)\varphi(s-h)\,ds
  \end{align*}
  for $\psi(t)=(\nabla_\Gamma q(t),\nabla_\Gamma\xi)_{L^2(\Gamma)}$, $\langle\eta(t),\xi\rangle_\Gamma$ to get
  \begin{align*}
    -\int_0^T(\nabla_\Gamma q(t),\nabla_\Gamma\xi)_{L^2(\Gamma)}D_{-h}\varphi(t)\,dt = -\int_0^T\langle\eta(t),\xi\rangle_\Gamma D_{-h}\varphi(t)\,dt,
  \end{align*}
  where $D_{-h}\varphi(t):=\{\varphi(t-h)-\varphi(t)\}/(-h)$ (note that $\varphi$ is supported in $(\delta,T-\delta)$).
  Let $h\to0$ in this equality.
  Then since $D_{-h}\varphi$ converges to $\partial_t\varphi$ uniformly on $(0,T)$,
  \begin{align*}
    -\int_0^T(\nabla_\Gamma q(t),\nabla_\Gamma\xi)_{L^2(\Gamma)}\partial_t\varphi(t)\,dt = -\int_0^T\langle\eta(t),\xi\rangle_\Gamma\partial_t\varphi(t)\,dt
  \end{align*}
  for all $\varphi \in C_c^\infty(0,T)$.
  Hence for all $\xi\in H^1(\Gamma)$ and a.a. $t\in(0,T)$ we have
  \begin{align*}
    ([\nabla_\Gamma(\partial_tq)](t),\nabla_\Gamma\xi)_{L^2(\Gamma)} = \langle\partial_t\eta(t),\xi\rangle_\Gamma
  \end{align*}
  by $q\in H^1(0,T;H^1(\Gamma))$ and $\eta\in H^1(0,T;H^{-1}(\Gamma))$ (note that $\partial_t(\nabla_\Gamma q)=\nabla_\Gamma(\partial_tq)$ a.e. on $\Gamma\times(0,T)$ by the space-time regularity of $q$).
  In the same way we can show
  \begin{align*}
    \int_\Gamma\partial_tq(t)\,d\mathcal{H}^2 = 0 \quad\text{for a.a.}\quad t\in(0,T)
  \end{align*}
  since $q(t)$ satisfies the same equality for all $t\in[0,T]$.
  Hence $\partial_tq(t)$ is a unique weak solution to \eqref{E:Pois_Surf} with source term $\partial_t\eta(t)$ for a.a. $t\in(0,T)$.
\end{proof}

Based on Lemma~\ref{L:Pois_Dt} we give an estimate for the time derivative of $v-\mathbb{P}_gv$.

\begin{lemma} \label{L:HLT_Est_Dt}
  Let $v\in H^1(0,T;L^2(\Gamma,T\Gamma))$, $T>0$.
  Then
  \begin{align*}
    \mathbb{P}_gv\in H^1(0,T;L_{g\sigma}^2(\Gamma,T\Gamma))
  \end{align*}
  and there exists a constant $c>0$ such that
  \begin{align} \label{E:HLT_Est_Dt}
    \|\partial_tv-\partial_t\mathbb{P}_gv\|_{L^2(0,T;L^2(\Gamma))} \leq c\|\mathrm{div}_\Gamma(g\partial_t v)\|_{L^2(0,T;H^{-1}(\Gamma))}.
  \end{align}
\end{lemma}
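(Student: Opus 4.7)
The plan is to reduce the statement to the time-derivative estimate for Poisson's equation on $\Gamma$ provided by Lemma~\ref{L:Pois_Dt}. Given $v\in H^1(0,T;L^2(\Gamma,T\Gamma))$, first I would define $\eta(t):=-\mathrm{div}_\Gamma(gv(t))\in H^{-1}(\Gamma)$ via \eqref{E:Sdiv_Hin}. Since the linear map $w\mapsto -\mathrm{div}_\Gamma(gw)$ is bounded from $L^2(\Gamma,T\Gamma)$ into $H^{-1}(\Gamma)$ (with $g\in C^1(\Gamma)$ fixed in time), this map commutes with time differentiation in the sense of $\mathcal{D}'(0,T;H^{-1}(\Gamma))$, giving $\eta\in H^1(0,T;H^{-1}(\Gamma))$ and $\partial_t\eta(t)=-\mathrm{div}_\Gamma(g\partial_tv(t))$ for a.a.\ $t$. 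The compatibility $\langle\eta(t),1\rangle_\Gamma=0$ holds for every $t$ by \eqref{E:Sdiv_Hin} and $v(t)\cdot n=0$ on $\Gamma$.

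Next I would invoke Lemma~\ref{L:Pois_Dt} to produce a unique weak solution $q(t)\in H^1(\Gamma)$ of \eqref{E:Pois_Surf} with source $\eta(t)$ (with zero mean), satisfying $q\in H^1(0,T;H^1(\Gamma))$ and
\begin{equation*}
  \|\partial_tq\|_{L^2(0,T;H^1(\Gamma))} \leq c\|\partial_t\eta\|_{L^2(0,T;H^{-1}(\Gamma))} = c\|\mathrm{div}_\Gamma(g\partial_tv)\|_{L^2(0,T;H^{-1}(\Gamma))}.
\end{equation*}
By construction of the weighted Helmholtz--Leray projection in Theorem~\ref{T:HL_L2T} and Lemma~\ref{L:HLT_Est_L2}, the solenoidal part is given pointwise in $t$ by $\mathbb{P}_gv(t)=v(t)-g^{-1}\nabla_\Gamma q(t)$ in $L^2(\Gamma,T\Gamma)$. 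Using $g^{-1}\in C^1(\Gamma)$ bounded below by a positive constant, the operator $q\mapsto g^{-1}\nabla_\Gamma q$ is bounded from $H^1(\Gamma)$ into $L^2(\Gamma,T\Gamma)$, hence $g^{-1}\nabla_\Gamma q\in H^1(0,T;L^2(\Gamma,T\Gamma))$, and therefore $\mathbb{P}_gv\in H^1(0,T;L^2(\Gamma,T\Gamma))$; since $\mathbb{P}_gv(t)\in L_{g\sigma}^2(\Gamma,T\Gamma)$ for every $t$, we conclude $\mathbb{P}_gv\in H^1(0,T;L_{g\sigma}^2(\Gamma,T\Gamma))$.

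Finally, taking the distributional time derivative of $v-\mathbb{P}_gv = g^{-1}\nabla_\Gamma q$ and using that $g^{-1}\nabla_\Gamma$ is a time-independent bounded operator yields $\partial_tv-\partial_t\mathbb{P}_gv = g^{-1}\nabla_\Gamma\partial_tq$ in $L^2(0,T;L^2(\Gamma,T\Gamma))$, so
\begin{equation*}
  \|\partial_tv-\partial_t\mathbb{P}_gv\|_{L^2(0,T;L^2(\Gamma))} \leq c\|\partial_tq\|_{L^2(0,T;H^1(\Gamma))} \leq c\|\mathrm{div}_\Gamma(g\partial_tv)\|_{L^2(0,T;H^{-1}(\Gamma))},
\end{equation*}
which is \eqref{E:HLT_Est_Dt}. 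The only subtlety, and thus the main point requiring care rather than substantial obstacle, is justifying that the composition and commutation of time differentiation with the (time-independent) linear operators $-\mathrm{div}_\Gamma(g\,\cdot\,)$ and $g^{-1}\nabla_\Gamma(\,\cdot\,)$ genuinely holds in the $H^1$-in-time sense; this is handled by testing against $\varphi\in C_c^\infty(0,T)$ through the definition \eqref{E:Dt_Dist_L2} (mirroring Lemma~\ref{L:Dt_TGr_Com}), after which the quantitative estimate follows immediately from Lemma~\ref{L:Pois_Dt}.
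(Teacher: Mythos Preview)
Your proposal is correct and follows essentially the same approach as the paper: define $\eta=-\mathrm{div}_\Gamma(gv)$, verify $\eta\in H^1(0,T;H^{-1}(\Gamma))$ with $\partial_t\eta=-\mathrm{div}_\Gamma(g\partial_tv)$ and $\langle\eta(t),1\rangle_\Gamma=0$, apply Lemma~\ref{L:Pois_Dt} to obtain $q\in H^1(0,T;H^1(\Gamma))$ with the stated bound, and then use $\mathbb{P}_gv=v-g^{-1}\nabla_\Gamma q$ together with $g\geq c>0$ to conclude. The subtlety you flag about commuting $\partial_t$ with the time-independent operators is exactly what the paper handles by remarking that $g$ and $P$ are independent of time.
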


\begin{proof}
  Let $\eta:=-\mathrm{div}_\Gamma(gv)$.
  Since $v\in H^1(0,T;L^2(\Gamma,T\Gamma))$, we have
  \begin{align*}
    \eta \in H^1(0,T;H^{-1}(\Gamma)), \quad \partial_t\eta = -\mathrm{div}_\Gamma(g\partial_tv)\in L^2(0,T;H^{-1}(\Gamma)).
  \end{align*}
  Here the second relation is due to the fact that $g$ and $P$ are independent of time (note that $P$ appears in the definition of the tangential derivatives).
  For each $t\in[0,T]$ let $q(t)\in H^1(\Gamma)$ be a unique weak solution to \eqref{E:Pois_Surf} with $\eta(t)=-\mathrm{div}_\Gamma[gv(t)]$, which satisfies $\langle\eta(t),1\rangle_\Gamma=0$ as in the proof of Lemma~\ref{L:HLT_Est_L2}.
  Then Lemma~\ref{L:Pois_Dt} implies that $q\in H^1(0,T;H^1(\Gamma))$ and
  \begin{align*}
    \|\partial_tq\|_{L^2(0,T;H^1(\Gamma))} \leq c\|\partial_t\eta\|_{L^2(0,T;H^{-1}(\Gamma))} = c\|\mathrm{div}_\Gamma(g\partial_tv)\|_{L^2(0,T;H^{-1}(\Gamma))}.
  \end{align*}
  Moreover, for a.a. $t\in(0,T)$ the time derivative $\partial_tq(t)\in H^1(\Gamma)$ is a unique weak solution to \eqref{E:Pois_Surf} with source term $\partial_t\eta(t)=-\mathrm{div}_\Gamma[g\partial_tv(t)]$.
  By these facts and
  \begin{align*}
    \mathbb{P}_gv = v-g^{-1}\nabla_\Gamma q, \quad \partial_t\mathbb{P}_gv = \partial_tv-g^{-1}\nabla_\Gamma(\partial_tq) \quad\text{a.e. on}\quad \Gamma\times(0,T)
  \end{align*}
  we observe that $\mathbb{P}_gv\in H^1(0,T;L_{g\sigma}^2(\Gamma,T\Gamma))$ and
  \begin{align*}
    \|\partial_tv-\partial_t\mathbb{P}_gv\|_{L^2(0,T;L^2(\Gamma))} &= \|g^{-1}\nabla_\Gamma(\partial_tq)\|_{L^2(0,T;L^2(\Gamma))} \\
    &\leq c\|\partial_tq\|_{L^2(0,T;H^1(\Gamma))} \\
    &\leq c\|\mathrm{div}_\Gamma(g\partial_tv)\|_{L^2(0,T;H^{-1}(\Gamma))},
  \end{align*}
  where we also used $g\geq c>0$ on $\Gamma$.
  Thus the claim is valid.
\end{proof}

\subsection{Solenoidal spaces of general vector fields} \label{SS:WS_NTS}
In this subsection we briefly investigate solenoidal spaces of general (not necessarily tangential) vector fields on $\Gamma$.
Although the results of this subsection are not used in the sequel, we believe that they are useful for the future study of surface fluid equations including fluid equations on an evolving surface (see e.g.~\cite{JaOlRe17,KoLiGi17,KoLiGi18Er,Mi18}).
For the sake of simplicity, we only consider the case $g\equiv1$ and give a remark on the case of general $g$ at the end of this subsection.

Let $q\in L^2(\Gamma)$.
By \eqref{E:L2_Hin} and \eqref{E:TGr_Hin} we have
\begin{align} \label{E:TGrHn_Hin}
  \langle \nabla_\Gamma q+qHn,v\rangle_\Gamma = -(q,\mathrm{div}_\Gamma v)_{L^2(\Gamma)}
\end{align}
for all $v\in H^1(\Gamma)^3$.
Hence $\langle \nabla_\Gamma q+qHn,v\rangle_\Gamma=0$ for all $v$ in the solenoidal space
\begin{align*}
  H_\sigma^1(\Gamma) := \{v\in H^1(\Gamma)^3 \mid \text{$\mathrm{div}_\Gamma v = 0$ on $\Gamma$}\}.
\end{align*}
Our goal is to prove that each element of the annihilator of $H_\sigma^1(\Gamma)$ is of the form $\nabla_\Gamma q+qHn$.
To this end, we give two properties of a functional of this form.

\begin{lemma} \label{L:TGrHn_Hin_Con}
  Let $q\in L^2(\Gamma)$.
  Then
  \begin{align*}
    \nabla_\Gamma q+qHn = 0 \quad\text{in}\quad H^{-1}(\Gamma)^3
  \end{align*}
  if and only if $q=0$ on $\Gamma$.
\end{lemma}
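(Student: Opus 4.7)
The ``if'' direction is trivial, so the plan is to address only the converse. The idea is to specialize the testing identity \eqref{E:TGrHn_Hin} to two families of test fields---tangential ones, which will force $q$ to be constant, and normal ones, which will force $qH$ to vanish---and then combine these using the geometric fact that the mean curvature cannot vanish identically on a closed surface in $\mathbb{R}^3$.

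Assume $\nabla_\Gamma q+qHn=0$ in $H^{-1}(\Gamma)^3$. By \eqref{E:TGrHn_Hin} this is equivalent to
\begin{align*}
  (q,\mathrm{div}_\Gamma v)_{L^2(\Gamma)}=0 \quad \text{for all}\ v\in H^1(\Gamma)^3.
\end{align*}
First I would restrict to tangential test fields $v\in H^1(\Gamma,T\Gamma)$. For such $v$, \eqref{E:TGr_HinT} gives $[\nabla_\Gamma q,v]_{T\Gamma}=-(q,\mathrm{div}_\Gamma v)_{L^2(\Gamma)}=0$, so $\nabla_\Gamma q=0$ in $H^{-1}(\Gamma,T\Gamma)$. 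Lemma~\ref{L:TGr_HinT_Con} then yields that $q$ is constant on $\Gamma$, say $q\equiv c$.

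Next I would test against purely normal fields $v=\eta n$ with $\eta\in H^1(\Gamma)$. Since $n\cdot\nabla_\Gamma\eta=0$ by \eqref{E:P_TGr} and $\mathrm{div}_\Gamma n=-H$ by \eqref{E:Def_WHK}, one computes $\mathrm{div}_\Gamma(\eta n)=-\eta H$, so the previous vanishing relation becomes
\begin{align*}
  (qH,\eta)_{L^2(\Gamma)}=0 \quad \text{for all}\ \eta\in H^1(\Gamma),
\end{align*}
which gives $qH=0$ on $\Gamma$.

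The last step---and the only place any real content enters---is to conclude $c=0$ from ``$q\equiv c$'' and ``$cH=0$ a.e.\ on $\Gamma$.'' The hard part is the geometric fact that $H$ cannot vanish identically on a compact closed surface in $\mathbb{R}^3$. I would establish this by considering $y\mapsto|y|^2$ restricted to $\Gamma$: at a point $y^*$ where this function attains its maximum, the outward unit normal satisfies $n(y^*)=\pm y^*/|y^*|$, and a standard second-order comparison of $\Gamma$ with the sphere of radius $|y^*|$ centered at the origin forces both principal curvatures at $y^*$ to have the same sign and absolute value at least $1/|y^*|$, so $H(y^*)\neq 0$. Combined with $cH=0$ this yields $c=0$, i.e.\ $q=0$, completing the proof.
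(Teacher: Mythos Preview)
Your proof is correct and follows essentially the same line as the paper's: test tangentially to reduce $q$ to a constant via Lemma~\ref{L:TGr_HinT_Con}, then test against $v=\eta n$ to obtain $qH=0$, and finally invoke a geometric fact about $H$ on closed surfaces. The only difference is in that last step: the paper appeals to the integral inequality $\tfrac{1}{4}\int_\Gamma H^2\,d\mathcal{H}^2\geq 4\pi$ (quoted from \cite{GiTr01}) to get $\int_\Gamma H^2>0$, whereas your extremal-point comparison argument at a point of maximum $|y|$ is more elementary and self-contained; either route yields the conclusion.
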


\begin{proof}
  First note that for all $q\in L^2(\Gamma)$ we have
  \begin{align} \label{Pf_TnHC:Ineq}
    \|\nabla_\Gamma q\|_{H^{-1}(\Gamma,T\Gamma)} \leq \|\nabla_\Gamma q+qHn\|_{H^{-1}(\Gamma)}
  \end{align}
  since $[\nabla_\Gamma q,v]_{T\Gamma}=\langle\nabla_\Gamma q+qHn,v\rangle_\Gamma$ for all $v\in H^1(\Gamma,T\Gamma)$ by \eqref{E:TGr_HinT} and \eqref{E:TGrHn_Hin}.

  Suppose that $\nabla_\Gamma q+qHn=0$ in $H^{-1}(\Gamma)^3$.
  Then $\nabla_\Gamma q=0$ in $H^{-1}(\Gamma,T\Gamma)$ by \eqref{Pf_TnHC:Ineq} and thus $q$ is constant on $\Gamma$ by Lemma~\ref{L:TGr_HinT_Con}.
  To prove $q=0$ we set $v:=\xi n$ in \eqref{E:TGrHn_Hin} with $\xi\in H^1(\Gamma)$ (note that $n\in C^1(\Gamma)^3$) and use \eqref{E:P_TGr} and \eqref{E:Def_WHK} to get
  \begin{align*}
    0 = \langle \nabla_\Gamma q+qHn,\xi n\rangle_\Gamma = -\bigl(q,\mathrm{div}_\Gamma(\xi n)\bigr)_{L^2(\Gamma)} = q\int_\Gamma \xi H\,d\mathcal{H}^2.
  \end{align*}
  Since $H\in C(\Gamma)\subset L^2(\Gamma)$ and $H^1(\Gamma)$ is dense in $L^2(\Gamma)$ (see Lemma~\ref{L:Wmp_Appr}), we observe by the above equality and a density argument that
  \begin{align*}
    q\int_\Gamma H^2\,d\mathcal{H}^2 = 0.
  \end{align*}
  Moreover, for the compact surface $\Gamma$ in $\mathbb{R}^3$ it is known (see (16.32) in~\cite{GiTr01}) that
  \begin{align*}
    \frac{1}{4}\int_\Gamma H^2\,d\mathcal{H}^2 \geq 4\pi.
  \end{align*}
  (Note that in our definition $H$ is not divided by the dimension of $\Gamma$.)
  Hence $q=0$.

  Conversely, if $q=0$ on $\Gamma$, then $\nabla_\Gamma q+qHn=0$ in $H^{-1}(\Gamma)^3$ by \eqref{E:TGrHn_Hin}.
\end{proof}

\begin{lemma} \label{L:Poin_L2_HinN}
  There exists a constant $c>0$ such that
  \begin{align} \label{E:Poin_L2_HinN}
    \|q\|_{L^2(\Gamma)} \leq c\|\nabla_\Gamma q+qHn\|_{H^{-1}(\Gamma)}
  \end{align}
  for all $q\in L^2(\Gamma)$.
\end{lemma}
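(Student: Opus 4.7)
The plan is to argue by contradiction, following the strategy used in the proof of Lemma~\ref{L:Poin_HinT}, with the key ingredient being the Ne\v{c}as inequality \eqref{E:Necas} together with the pointwise bound \eqref{Pf_TnHC:Ineq} and Lemma~\ref{L:TGrHn_Hin_Con}. Suppose the estimate fails. Then after normalization there exists a sequence $\{q_k\}_{k=1}^\infty$ in $L^2(\Gamma)$ with
\begin{align*}
  \|q_k\|_{L^2(\Gamma)} = 1, \quad \|\nabla_\Gamma q_k+q_kHn\|_{H^{-1}(\Gamma)} < k^{-1}.
\end{align*}
From \eqref{Pf_TnHC:Ineq} I immediately obtain $\|\nabla_\Gamma q_k\|_{H^{-1}(\Gamma,T\Gamma)} \to 0$ as $k\to\infty$. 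Since $\{q_k\}$ is bounded in $L^2(\Gamma)$ and the embedding $L^2(\Gamma)\hookrightarrow H^{-1}(\Gamma)$ is compact, a subsequence (still denoted by $\{q_k\}$) converges strongly in $H^{-1}(\Gamma)$ and weakly in $L^2(\Gamma)$ to some $q\in L^2(\Gamma)$.

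Next I upgrade this to strong $L^2$-convergence by a Cauchy argument. Applying \eqref{E:Necas} to the difference $q_k-q_l\in L^2(\Gamma)$ gives
\begin{align*}
  \|q_k-q_l\|_{L^2(\Gamma)} \leq c\left(\|q_k-q_l\|_{H^{-1}(\Gamma)}+\|\nabla_\Gamma q_k-\nabla_\Gamma q_l\|_{H^{-1}(\Gamma,T\Gamma)}\right).
\end{align*}
The first term tends to zero since $\{q_k\}$ is Cauchy in $H^{-1}(\Gamma)$, and the second term tends to zero since each $\|\nabla_\Gamma q_k\|_{H^{-1}(\Gamma,T\Gamma)}$ does. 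Hence $\{q_k\}$ is Cauchy in $L^2(\Gamma)$ and converges strongly to $q$, so in particular $\|q\|_{L^2(\Gamma)}=1$.

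Now I pass to the limit in the defining relations. Since $q_k\to q$ strongly in $L^2(\Gamma)$ and $H\in C(\Gamma)$ is bounded, $q_kHn\to qHn$ strongly in $L^2(\Gamma)^3$, hence in $H^{-1}(\Gamma)^3$. Combined with the strong convergence $\nabla_\Gamma q_k+q_kHn\to 0$ in $H^{-1}(\Gamma)^3$, this forces $\nabla_\Gamma q+qHn=0$ in $H^{-1}(\Gamma)^3$. By Lemma~\ref{L:TGrHn_Hin_Con} we conclude $q=0$ on $\Gamma$, which contradicts $\|q\|_{L^2(\Gamma)}=1$ and completes the proof.

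The substantive work is already absorbed into the Ne\v{c}as inequality \eqref{E:Necas}, the elementary bound \eqref{Pf_TnHC:Ineq} (which compares the normal-part-inclusive norm with the tangential one), and Lemma~\ref{L:TGrHn_Hin_Con} (whose proof invokes the Willmore-type bound $\int_\Gamma H^2\,d\mathcal{H}^2\geq 16\pi$ to force a constant function annihilated by $H$ to be zero). The only delicate point in the plan itself is the Cauchy-in-$L^2$ step: one must be careful to apply Ne\v{c}as to the difference $q_k-q_l$ rather than to $q_k$ alone, so that both terms on the right-hand side become small simultaneously. No further averaging or orthogonality condition is needed—unlike in Lemma~\ref{L:Poin_HinT}, the normal term $qHn$ already prevents nontrivial constants from lying in the kernel.
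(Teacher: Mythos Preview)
Your proof is correct and follows the same overall strategy as the paper's: contradiction, compactness via $L^2(\Gamma)\hookrightarrow H^{-1}(\Gamma)$, and Lemma~\ref{L:TGrHn_Hin_Con} to kill the limit. The difference lies in the normalization. The paper first reduces \eqref{E:Poin_L2_HinN} to the auxiliary estimate $\|q\|_{H^{-1}(\Gamma)}\le c\|\nabla_\Gamma q+qHn\|_{H^{-1}(\Gamma)}$ (via \eqref{E:Necas} and \eqref{Pf_TnHC:Ineq}) and then runs the contradiction argument with $\|q_k\|_{H^{-1}(\Gamma)}=1$; since the compact embedding already gives strong $H^{-1}$ convergence, the norm of the limit is automatically $1$ and only weak $L^2$ convergence is needed to identify $\nabla_\Gamma q+qHn=0$ through \eqref{E:TGrHn_Hin}. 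You instead normalize in $L^2$, which forces the extra Cauchy-in-$L^2$ step (applying Ne\v{c}as to differences) to recover $\|q\|_{L^2(\Gamma)}=1$. Both routes are valid; the paper's choice of normalization in the weaker norm simply avoids that additional step.
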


\begin{proof}
  By the Ne\v{c}as inequality \eqref{E:Necas} and \eqref{Pf_TnHC:Ineq} it is sufficient to show that
  \begin{align} \label{Pf_PLHN:Goal}
    \|q\|_{H^{-1}(\Gamma)} \leq c\|\nabla_\Gamma q+qHn\|_{H^{-1}(\Gamma)}
  \end{align}
  for all $q\in L^2(\Gamma)$.
  Assume to the contrary that there exists $q_k\in L^2(\Gamma)$ such that
  \begin{align} \label{Pf_PLHN:Contra}
    \|q_k\|_{H^{-1}(\Gamma)} > k\|\nabla_\Gamma q_k+q_kHn\|_{H^{-1}(\Gamma)}
  \end{align}
  for each $k\in\mathbb{N}$.
  Replacing $q_k$ with $q_k/\|q_k\|_{H^{-1}(\Gamma)}$ we may assume $\|q_k\|_{H^{-1}(\Gamma)}=1$.
  Then $\{q_k\}_{k=1}^\infty$ is bounded in $L^2(\Gamma)$ by \eqref{E:Necas}, \eqref{Pf_TnHC:Ineq}, and \eqref{Pf_PLHN:Contra}.
  By this fact and the compact embedding $L^2(\Gamma)\hookrightarrow H^{-1}(\Gamma)$, (up to a subsequence) $\{q_k\}_{k=1}^\infty$ converges to some $q\in L^2(\Gamma)$ weakly in $L^2(\Gamma)$ and strongly in $H^{-1}(\Gamma)$, and thus
  \begin{align} \label{Pf_PLHN:Q_Hin}
    \|q\|_{H^{-1}(\Gamma)} = \lim_{k\to\infty}\|q_k\|_{H^{-1}(\Gamma)} = 1.
  \end{align}
  Moreover, the weak convergence of $\{q_k\}_{k=1}^\infty$ to $q$ in $L^2(\Gamma)$ and \eqref{E:TGrHn_Hin} imply that
  \begin{align*}
    \lim_{k\to\infty}(\nabla_\Gamma q_k+q_kHn) = \nabla_\Gamma q+qHn \quad\text{weakly in}\quad H^{-1}(\Gamma)^3.
  \end{align*}
  By this fact and \eqref{Pf_PLHN:Contra} with $\|q_k\|_{H^{-1}(\Gamma)}=1$ we have
  \begin{align*}
    \|\nabla_\Gamma q+qHn\|_{H^{-1}(\Gamma)} \leq \liminf_{k\to\infty}\|\nabla_\Gamma q_k+q_kHn\|_{H^{-1}(\Gamma)} =0.
  \end{align*}
  Hence $\nabla_\Gamma q+qHn=0$ in $H^{-1}(\Gamma)$ and $q=0$ on $\Gamma$ by Lemma~\ref{L:TGrHn_Hin_Con}.
  This yields $\|q\|_{H^{-1}(\Gamma)}=0$, which contradicts with \eqref{Pf_PLHN:Q_Hin}.
  Therefore, \eqref{Pf_PLHN:Goal} is valid.
\end{proof}

Now we prove de Rham's theorem for the annihilator of $H_\sigma^1(\Gamma)$.

\begin{theorem} \label{T:DeRham_Ge}
  Suppose that $f\in H^{-1}(\Gamma)^3$ satisfies
  \begin{align*}
    \langle f,v\rangle_\Gamma = 0 \quad\text{for all}\quad  v\in H_\sigma^1(\Gamma).
  \end{align*}
  Then there exists a unique $q\in L^2(\Gamma)$ such that
  \begin{align*}
    f = \nabla_\Gamma q+qHn \quad\text{in}\quad H^{-1}(\Gamma)^3, \quad \|q\|_{L^2(\Gamma)} \leq c\|f\|_{H^{-1}(\Gamma)}
  \end{align*}
  with a constant $c>0$ independent of $f$.
\end{theorem}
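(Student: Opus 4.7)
The plan is to mirror the argument for Theorem~\ref{T:DeRham_T} almost verbatim, replacing the tangential-gradient map $q\mapsto g\nabla_\Gamma q$ by the map $q\mapsto \nabla_\Gamma q+qHn$ from $L^2(\Gamma)$ into $H^{-1}(\Gamma)^3$. The two ingredients from the tangential case that made that proof work were (i) uniqueness of $q$, provided by Lemma~\ref{L:TGr_HinT_Con}, and (ii) a coercivity estimate $\|q\|_{L^2}\le c\|g\nabla_\Gamma q\|_{H^{-1}(\Gamma,T\Gamma)}$ coming from Lemma~\ref{L:Poin_L2_HinT} (combined with the Ne\v{c}as inequality). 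Here the analogues are already in hand: Lemma~\ref{L:TGrHn_Hin_Con} gives injectivity and Lemma~\ref{L:Poin_L2_HinN} gives $\|q\|_{L^2(\Gamma)}\le c\|\nabla_\Gamma q+qHn\|_{H^{-1}(\Gamma)}$, which is what the proof really needs.

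Concretely I would set
\begin{align*}
  \mathcal{X} := \{\nabla_\Gamma q+qHn\in H^{-1}(\Gamma)^3\mid q\in L^2(\Gamma)\}
\end{align*}
and first show $\mathcal{X}$ is closed in $H^{-1}(\Gamma)^3$. If $q_k\in L^2(\Gamma)$ and $\nabla_\Gamma q_k+q_kHn\to f$ strongly in $H^{-1}(\Gamma)^3$, then \eqref{E:Poin_L2_HinN} applied to $q_k-q_l$ makes $\{q_k\}$ Cauchy in $L^2(\Gamma)$, so $q_k\to q$ in $L^2(\Gamma)$; continuity of $q\mapsto \nabla_\Gamma q+qHn$ from $L^2(\Gamma)$ to $H^{-1}(\Gamma)^3$ (immediate from \eqref{E:TGrHn_Hin} and boundedness of $H$ and $n$) then identifies $f=\nabla_\Gamma q+qHn\in\mathcal{X}$. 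Next, using \eqref{E:TGrHn_Hin}, I would check
\begin{align*}
  \mathcal{X}^\perp=\{v\in H^1(\Gamma)^3\mid \langle\nabla_\Gamma q+qHn,v\rangle_\Gamma=0\ \forall q\in L^2(\Gamma)\}=\{v\in H^1(\Gamma)^3\mid \mathrm{div}_\Gamma v=0\}=H_\sigma^1(\Gamma),
\end{align*}
where the second equality uses that $(q,\mathrm{div}_\Gamma v)_{L^2(\Gamma)}=0$ for every $q\in L^2(\Gamma)$ forces $\mathrm{div}_\Gamma v=0$.

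With $\mathcal{X}$ closed in the reflexive (Hilbert) space $H^{-1}(\Gamma)^3$, Lemma~\ref{L:Re_FA_2} gives $(\mathcal{X}^\perp)^\perp=\mathcal{X}$, while Lemma~\ref{L:Re_FA_1} applied to $\mathcal{X}^\perp=H_\sigma^1(\Gamma)$ gives $H_\sigma^1(\Gamma)^\perp=(\mathcal{X}^\perp)^\perp=\mathcal{X}$. Hence any $f\in H^{-1}(\Gamma)^3$ annihilating $H_\sigma^1(\Gamma)$ can be written as $\nabla_\Gamma q+qHn$ for some $q\in L^2(\Gamma)$. Uniqueness of $q$ is immediate from Lemma~\ref{L:TGrHn_Hin_Con} (no constant-ambiguity arises here, in contrast to Theorem~\ref{T:DeRham_T}, because the map has trivial kernel), and the estimate $\|q\|_{L^2(\Gamma)}\le c\|f\|_{H^{-1}(\Gamma)}$ is exactly \eqref{E:Poin_L2_HinN} applied to this $q$.

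The only subtle point is establishing that the map $q\mapsto \nabla_\Gamma q+qHn$ is coercive from $L^2(\Gamma)$ into $H^{-1}(\Gamma)^3$, but this is already Lemma~\ref{L:Poin_L2_HinN}, whose proof crucially uses the topological fact $\int_\Gamma H^2\,d\mathcal{H}^2\ge 16\pi>0$ for closed surfaces in $\mathbb{R}^3$. Once that is granted, the abstract duality-annihilator machinery used in Theorem~\ref{T:DeRham_T} runs through without any modification, and no additional obstacle is expected.
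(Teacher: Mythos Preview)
Your proposal is correct and follows essentially the same approach as the paper: define $\mathcal{X}=\{\nabla_\Gamma q+qHn\mid q\in L^2(\Gamma)\}$, use Lemma~\ref{L:Poin_L2_HinN} to show $\mathcal{X}$ is closed, identify $\mathcal{X}^\perp$ with $H_\sigma^1(\Gamma)$ via \eqref{E:TGrHn_Hin}, and conclude by the reflexivity argument of Lemmas~\ref{L:Re_FA_1}--\ref{L:Re_FA_2}. The paper only records the inclusion $\mathcal{X}^\perp\subset H_\sigma^1(\Gamma)$ (which suffices), while you establish the equality; otherwise the arguments are the same.
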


\begin{proof}
  Using \eqref{E:Poin_L2_HinN} we can show as in the proof of Lemma~\ref{L:TGr_Closed} that
  \begin{align*}
    \mathcal{X} := \{\nabla_\Gamma q+qHn\in H^{-1}(\Gamma)^3 \mid q\in L^2(\Gamma)\}
  \end{align*}
  is closed in $H^{-1}(\Gamma)^3$.
  Moreover, $\mathcal{X}^\perp\subset H_\sigma^1(\Gamma)$ in $H^1(\Gamma)^3$ by \eqref{E:TGrHn_Hin}.
  Since the dual $H^{-1}(\Gamma)^3$ of the Hilbert space $H^1(\Gamma)^3$ is reflexive, by Lemmas~\ref{L:Re_FA_1} and~\ref{L:Re_FA_2} we get
  \begin{align*}
    H_\sigma^1(\Gamma)^\perp = \{f\in H^{-1}(\Gamma)^3 \mid \text{$\langle f,v\rangle_\Gamma=0$ for all $v\in H_\sigma^1(\Gamma)$}\} \subset (\mathcal{X}^\perp)^\perp = \mathcal{X}
  \end{align*}
  in $H^{-1}(\Gamma)^3$.
  Hence the existence part of the theorem is valid.
  Also, the uniqueness and the estimate immediately follow from Lemma~\ref{L:Poin_L2_HinN}.
\end{proof}

Next we derive the Helmholtz--Leray decomposition in $L^2(\Gamma)^3$.
Let
\begin{align*}
  L_\sigma^2(\Gamma) := \{v\in L^2(\Gamma)^3 \mid \text{$\mathrm{div}_\Gamma v=0$ in $H^{-1}(\Gamma)$}\}.
\end{align*}
It is closed in $L^2(\Gamma)^3$ since $\|\mathrm{div}_\Gamma v\|_{H^{-1}(\Gamma)}\leq c\|v\|_{L^2(\Gamma)}$ for $v\in L^2(\Gamma)^3$ by \eqref{E:Sdiv_Hin}.

\begin{lemma} \label{L:L2sGe_Orth}
  The orthogonal complement of $L_\sigma^2(\Gamma)$ in $L^2(\Gamma)^3$ is of the form
  \begin{align*}
    L_\sigma^2(\Gamma)^\perp = \{\nabla_\Gamma q+qHn\in L^2(\Gamma)^3 \mid q\in H^1(\Gamma)\}.
  \end{align*}
\end{lemma}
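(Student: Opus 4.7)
The plan is to mirror the structure of the proof of Lemma \ref{L:L2gs_Orth}, substituting the tangential de Rham theorem (Theorem \ref{T:DeRham_T}) with its general-field analog (Theorem \ref{T:DeRham_Ge}) and handling the extra normal contribution $qHn$.

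First I would establish the easy inclusion $\mathcal{Y} := \{\nabla_\Gamma q + qHn \mid q \in H^1(\Gamma)\} \subset L_\sigma^2(\Gamma)^\perp$. Indeed, for $v \in L_\sigma^2(\Gamma)$ and $q \in H^1(\Gamma)$, the identity \eqref{E:Sdiv_Hin} gives
\begin{align*}
  (v, \nabla_\Gamma q + qHn)_{L^2(\Gamma)} = -\langle \mathrm{div}_\Gamma v, q \rangle_\Gamma = 0,
\end{align*}
since $\mathrm{div}_\Gamma v = 0$ in $H^{-1}(\Gamma)$.

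For the reverse inclusion, let $f \in L_\sigma^2(\Gamma)^\perp \subset L^2(\Gamma)^3$. Viewed as an element of $H^{-1}(\Gamma)^3$ through \eqref{E:L2_Hin}, it satisfies $\langle f, v \rangle_\Gamma = (f,v)_{L^2(\Gamma)} = 0$ for all $v \in H_\sigma^1(\Gamma) \subset L_\sigma^2(\Gamma)$. Theorem \ref{T:DeRham_Ge} then yields $q \in L^2(\Gamma)$ with $f = \nabla_\Gamma q + qHn$ in $H^{-1}(\Gamma)^3$. The crux is to upgrade $q$ to $H^1(\Gamma)$ via an appropriate choice of test vector field. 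I would take $v := \eta P e_i$ for $\eta \in C^1(\Gamma)$ and $i=1,2,3$, which lies in $H^1(\Gamma)^3$ because $P \in C^1(\Gamma)^{3\times 3}$. Exactly as in the proof of Lemma \ref{L:L2gs_Orth}, using \eqref{E:P_TGr} together with \eqref{E:Form_W} one computes
\begin{align*}
  \mathrm{div}_\Gamma(\eta P e_i) = \nabla_\Gamma \eta \cdot P e_i + \eta (\mathrm{div}_\Gamma P \cdot e_i) = \underline{D}_i \eta + \eta H n_i.
\end{align*}
Since $f \in L^2(\Gamma)^3$, the relation $\langle f, v \rangle_\Gamma = -(q, \mathrm{div}_\Gamma v)_{L^2(\Gamma)}$ furnished by \eqref{E:TGrHn_Hin} becomes
\begin{align*}
  ([Pf]_i, \eta)_{L^2(\Gamma)} = (f, \eta P e_i)_{L^2(\Gamma)} = -(q, \underline{D}_i \eta + \eta H n_i)_{L^2(\Gamma)}.
\end{align*}
By the definition \eqref{E:Def_WTD} of the weak tangential derivative in $L^2(\Gamma)$, this shows $\underline{D}_i q = [Pf]_i \in L^2(\Gamma)$ for $i=1,2,3$. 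Hence $q \in H^1(\Gamma)$ and in fact $\nabla_\Gamma q = Pf$ almost everywhere on $\Gamma$.

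With $q \in H^1(\Gamma)$ in hand, $\nabla_\Gamma q + qHn$ is a well-defined element of $L^2(\Gamma)^3$, and the difference $f - (\nabla_\Gamma q + qHn) \in L^2(\Gamma)^3$ vanishes in $H^{-1}(\Gamma)^3$; since the embedding $L^2(\Gamma)^3 \hookrightarrow H^{-1}(\Gamma)^3$ via \eqref{E:L2_Hin} is injective, the equality $f = \nabla_\Gamma q + qHn$ holds in $L^2(\Gamma)^3$, giving $f \in \mathcal{Y}$. I do not anticipate a real obstacle here: the structural blueprint provided by Lemma \ref{L:L2gs_Orth} transfers almost verbatim, and the only mild subtlety is checking that testing against $\eta P e_i$ (rather than $g^{-1} \eta P e_i$ as in the weighted tangential case) correctly picks out the tangential component of $f$ through the identity $\nabla_\Gamma q = Pf$, which is consistent since $\nabla_\Gamma q$ is automatically tangential by \eqref{E:P_TGr}.
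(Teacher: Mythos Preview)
Your proof is correct and follows essentially the same approach as the paper. The only cosmetic difference is in the choice of test vector field for the regularity step: the paper takes $v = \eta e_i$ (so that $\mathrm{div}_\Gamma v = \underline{D}_i\eta$ and one obtains $\underline{D}_i q = f_i - qHn_i$), whereas you take $v = \eta P e_i$ and obtain $\underline{D}_i q = [Pf]_i$; both choices yield $q \in H^1(\Gamma)$ and the identity $f = \nabla_\Gamma q + qHn$ in $L^2(\Gamma)^3$.
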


\begin{proof}
  The proof is similar to that of Lemma~\ref{L:L2gs_Orth}.
  By \eqref{E:Sdiv_Hin} we immediately get $\nabla_\Gamma q+qHn\in L_\sigma^2(\Gamma)^\perp$ for $q\in H^1(\Gamma)$.
  Conversely, let $f\in L_\sigma^2(\Gamma)^\perp$.
  Since
  \begin{align*}
    \langle f,v\rangle_\Gamma = (f,v)_{L^2(\Gamma)} = 0 \quad\text{for all}\quad v\in H_\sigma^1(\Gamma)\subset L_\sigma^2(\Gamma),
  \end{align*}
  Theorem~\ref{T:DeRham_Ge} implies that $f=\nabla_\Gamma q+qHn$ in $H^{-1}(\Gamma)^3$ with $q\in L^2(\Gamma)$.
  To prove $q\in H^1(\Gamma)$ let $v:=\eta e_i$ on $\Gamma$ for $\eta\in C^1(\Gamma)$ and $i=1,2,3$, where $\{e_1,e_2,e_3\}$ is the standard basis of $\mathbb{R}^3$.
  Then since $v\in H^1(\Gamma)^3$ and $\mathrm{div}_\Gamma v=\underline{D}_i\eta$ on $\Gamma$,
  \begin{align*}
    -(q,\underline{D}_i\eta)_{L^2(\Gamma)} &= -(q,\mathrm{div}_\Gamma v)_{L^2} = \langle\nabla_\Gamma q+qHn,v\rangle_\Gamma \\
    &= \langle f,v\rangle_\Gamma = (f,v)_{L^2(\Gamma)} = (f_i,\eta)_{L^2(\Gamma)},
  \end{align*}
  where $f_i$ is the $i$-th component of $f$.
  From this equality we deduce that
  \begin{align*}
    -(q,\underline{D}_i\eta+\eta Hn_i)_{L^2(\Gamma)} = (f_i-qHn_i,\eta)_{L^2(\Gamma)} \quad\text{for all}\quad \eta\in C^1(\Gamma).
  \end{align*}
  Hence $\underline{D}_iq=f_i-qHn_i\in L^2(\Gamma)$ by the definition of the weak tangential derivative in $L^2(\Gamma)$ (see \eqref{E:Def_WTD}).
  This shows that $q\in H^1(\Gamma)$ and $f=\nabla_\Gamma q+qHn$ in $L^2(\Gamma)^3$.
\end{proof}

The result of Lemma~\ref{L:L2sGe_Orth} was given in~\cite[Lemma~2.7]{KoLiGi17} and~\cite[Theorem~1.1]{KoLiGi18Er}.
Here we presented another proof of it.
By Lemmas~\ref{L:TGrHn_Hin_Con} and~\ref{L:L2sGe_Orth} we obtain the Helmholtz--Leray decomposition in $L^2(\Gamma)^3$ with uniqueness of the scalar potential.

\begin{theorem} \label{T:HL_L2Ge}
  For each $v\in L^2(\Gamma)^3$ we have the orthogonal decomposition
  \begin{align*}
    v = v_\sigma+\nabla_\Gamma q+qHn \quad\text{in}\quad L^2(\Gamma)^3, \quad v_\sigma\in L_\sigma^2(\Gamma),\,\nabla_\Gamma q+qHn\in L_\sigma^2(\Gamma)^\perp.
  \end{align*}
  Here $q\in H^1(\Gamma)$ is uniquely determined.
\end{theorem}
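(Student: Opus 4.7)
The plan is to deduce Theorem~\ref{T:HL_L2Ge} directly from the two preparatory results about $L_\sigma^2(\Gamma)^\perp$, using standard Hilbert space orthogonality together with the uniqueness criterion from Lemma~\ref{L:TGrHn_Hin_Con}. Since $L_\sigma^2(\Gamma)$ is a closed subspace of the Hilbert space $L^2(\Gamma)^3$ (closedness was noted just before Lemma~\ref{L:L2sGe_Orth}), the first step is to invoke the orthogonal projection theorem to write any $v\in L^2(\Gamma)^3$ uniquely as $v = v_\sigma + w$ with $v_\sigma \in L_\sigma^2(\Gamma)$ and $w \in L_\sigma^2(\Gamma)^\perp$, the two summands being $L^2$-orthogonal.

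Next I would apply Lemma~\ref{L:L2sGe_Orth}, which identifies the orthogonal complement as
\[
L_\sigma^2(\Gamma)^\perp = \{\nabla_\Gamma q+qHn \in L^2(\Gamma)^3 \mid q\in H^1(\Gamma)\},
\]
to produce some $q \in H^1(\Gamma)$ with $w = \nabla_\Gamma q + qHn$. This gives the decomposition claimed in the statement.

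For the uniqueness of $q$, suppose $q_1, q_2 \in H^1(\Gamma)$ both represent the same $w$. Then $\nabla_\Gamma(q_1 - q_2) + (q_1 - q_2) Hn = 0$ in $L^2(\Gamma)^3$, and hence also in $H^{-1}(\Gamma)^3$ under the embedding \eqref{E:L2_Hin}. Lemma~\ref{L:TGrHn_Hin_Con} then forces $q_1 - q_2 = 0$ on $\Gamma$. Note that the uniqueness here is stronger than in the tangential-case weighted decomposition (Theorem~\ref{T:HL_L2T}), where $q$ was only determined up to an additive constant; the extra term $qHn$ is precisely what rules out nonzero constants, since for a constant $c$ the functional $\nabla_\Gamma c + cHn = cHn$ does not vanish (this is essentially the content of Lemma~\ref{L:TGrHn_Hin_Con}, whose proof relied on $\int_\Gamma H^2 \, d\mathcal{H}^2 \geq 16\pi > 0$).

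There is no real obstacle: the proof is a two-line corollary of the orthogonal projection theorem, Lemma~\ref{L:L2sGe_Orth}, and Lemma~\ref{L:TGrHn_Hin_Con}. The only conceptual point worth noting is that what might look like a technicality (the $qHn$ term) is exactly what makes the scalar potential uniquely determined rather than determined modulo constants, and this reflects the extrinsic geometry of the embedded surface through its mean curvature.
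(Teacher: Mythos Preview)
Your proof is correct and takes essentially the same approach as the paper, which simply states that the result follows from Lemmas~\ref{L:TGrHn_Hin_Con} and~\ref{L:L2sGe_Orth}. You have just spelled out the details: orthogonal projection onto the closed subspace $L_\sigma^2(\Gamma)$, identification of the complement via Lemma~\ref{L:L2sGe_Orth}, and uniqueness of $q$ via Lemma~\ref{L:TGrHn_Hin_Con}.
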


The Helmholtz--Leray decomposition in Theorem~\ref{T:HL_L2Ge} was already stated in~\cite{KoLiGi18Er} without an explicit formulation (see a remark after~\cite[Theorem~1.1]{KoLiGi18Er}).

\begin{remark} \label{R:HL_L2Ge}
  In general, Theorem~\ref{T:HL_L2Ge} applied to a tangential vector field on $\Gamma$ does not imply the tangential Helmholtz--Leray decomposition (with $g\equiv1$) given in Theorem~\ref{T:HL_L2T}.
  To see this, suppose that $\Gamma$ is strictly convex and thus the mean curvature $H$ of $\Gamma$ does not vanish on the whole surface.
  We take $v\in L^2(\Gamma,T\Gamma)$ such that $\mathrm{div}_\Gamma v\neq 0$ in $H^{-1}(\Gamma)$.
  By Theorem~\ref{T:HL_L2Ge} we get the orthogonal decomposition
  \begin{align*}
    v = v_\sigma+\nabla_\Gamma q+qHn \quad\text{in}\quad L^2(\Gamma)^3, \quad v_\sigma \in L_\sigma^2(\Gamma),\,\nabla_\Gamma q+qHn \in L_\sigma^2(\Gamma)^\perp
  \end{align*}
  with a unique $q\in H^1(\Gamma)$.
  Since $v$ is tangential on $\Gamma$,
  \begin{align*}
    0 = v_\sigma\cdot n+qH, \quad\text{i.e.}\quad v_\sigma\cdot n = -qH \quad\text{on}\quad \Gamma.
  \end{align*}
  Moreover, $q\neq 0$ in $H^1(\Gamma)$ by $\mathrm{div}_\Gamma v\neq 0$ in $H^{-1}(\Gamma)$.
  From this property and the fact that $H$ does not vanish on the whole surface $\Gamma$ by its strict convexity, it follows that $v_\sigma\cdot n=-qH\neq0$ in $L^2(\Gamma)$.
  Hence the solenoidal part $v_\sigma$ of $v$ given by Theorem~\ref{T:HL_L2Ge} is not tangential on $\Gamma$, while the solenoidal part $v_g$ (with $g\equiv1$) of the same $v$ given by Theorem~\ref{T:HL_L2T} must be tangential on $\Gamma$.
\end{remark}

The vector field $\nabla_\Gamma q+qHn$ appears in the interface equations of two-phase flows~\cite{BaGaNu15,BoPr10,NiVoWe12} as well as the Navier--Stokes equations on an evolving surface~\cite{JaOlRe17,KoLiGi17}.
By \eqref{E:P_TGr} and \eqref{E:Form_W} we observe that the surface divergence of $qP$ is of this form:
\begin{align*}
  \mathrm{div}_\Gamma(qP) = P\nabla_\Gamma q+q\,\mathrm{div}_\Gamma P = \nabla_\Gamma q+qHn.
\end{align*}
The tensor $qP$ is a part of the Boussinesq--Scriven surface stress tensor~\cite{Ar89,Bo1913,Sc60}
\begin{align*}
  S_\Gamma = \{q+(\lambda_s-\mu_s)\mathrm{div}_\Gamma v\}P+2\mu_s D_\Gamma(v).
\end{align*}
Here $q$ is the surface tension, $\lambda_s$ the surface dilatational viscosity, $\mu_s$ the surface shear viscosity, $v$ the total velocity of surface flow, and $D_\Gamma(v)$ the surface strain rate tensor given by \eqref{E:Strain_Surf}.

Finally, we give a remark on the case of general $g$.
Let $g\in C^1(\Gamma)$ be bounded from below by a positive constant.
We define weighted solenoidal spaces
\begin{align*}
  L_{g\sigma}^2(\Gamma) &:= \{v\in L^2(\Gamma)^3 \mid \text{$\mathrm{div}_\Gamma(gv)=0$ in $H^{-1}(\Gamma)$}\}, \\
    H_{g\sigma}^1(\Gamma) &:= \{v\in H^1(\Gamma)^3 \mid \text{$\mathrm{div}_\Gamma(gv)=0$ on $\Gamma$}\}.
\end{align*}
By \eqref{E:Def_Mul_Hin}, \eqref{E:TGr_Hin}, and \eqref{E:Sdiv_Hin} we have
\begin{alignat*}{2}
  \langle g(\nabla_\Gamma q+qHn),v\rangle_\Gamma &= -\bigl(q,\mathrm{div}_\Gamma(gv)\bigr)_{L^2(\Gamma)}, &\quad &q\in L^2(\Gamma),\,v\in H^1(\Gamma)^3, \\
  \langle \mathrm{div}_\Gamma(gw),\eta\rangle_\Gamma &= -\bigl(w,g(\nabla_\Gamma \eta+\eta Hn)\bigr)_{L^2(\Gamma)}, &\quad &w\in L^2(\Gamma)^3,\,\eta\in H^1(\Gamma).
\end{alignat*}
Using these formulas and applying Theorem~\ref{T:DeRham_Ge} or Lemma~\ref{L:L2sGe_Orth} to $g^{-1}f$ for $f$ in $H_{g\sigma}^1(\Gamma)^\perp$ or $L_{g\sigma}^2(\Gamma)^\perp$, we can show the following weighted version of the main results in this subsection.

\begin{theorem} \label{T:DeRham_Ge_W}
  Suppose that $f\in H^{-1}(\Gamma)^3$ satisfies
  \begin{align*}
    \langle f,v\rangle_\Gamma = 0 \quad\text{for all}\quad v\in H_{g\sigma}^1(\Gamma).
  \end{align*}
  Then there exists a unique $q\in L^2(\Gamma)$ such that
  \begin{align*}
    f = g(\nabla_\Gamma q+qHn) \quad\text{in}\quad H^{-1}(\Gamma)^3, \quad \|q\|_{L^2(\Gamma)} \leq c\|f\|_{H^{-1}(\Gamma)}
  \end{align*}
  with a constant $c>0$ independent of $f$.
\end{theorem}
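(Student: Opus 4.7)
The plan is to reduce Theorem~\ref{T:DeRham_Ge_W} to the unweighted version Theorem~\ref{T:DeRham_Ge} via multiplication by $g^{\pm 1}$. Since $g \in C^1(\Gamma)$ is bounded from below by a positive constant, both $g$ and $g^{-1}$ lie in $W^{1,\infty}(\Gamma)$, so the multiplication maps $w \mapsto gw$ and $w \mapsto g^{-1}w$ are bounded automorphisms of $H^1(\Gamma)^3$, and by \eqref{E:Def_Mul_Hin} they induce bounded automorphisms of $H^{-1}(\Gamma)^3$ that are mutual adjoints in the sense $\langle g^{\pm 1}f, w\rangle_\Gamma = \langle f, g^{\pm 1}w\rangle_\Gamma$.

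First I would observe the key correspondence: $w \in H_\sigma^1(\Gamma)$ (i.e. $\mathrm{div}_\Gamma w = 0$) if and only if $g^{-1}w \in H_{g\sigma}^1(\Gamma)$, since $\mathrm{div}_\Gamma(g \cdot g^{-1}w) = \mathrm{div}_\Gamma w$. Hence if $f \in H^{-1}(\Gamma)^3$ annihilates $H_{g\sigma}^1(\Gamma)$, then for any $w \in H_\sigma^1(\Gamma)$,
\begin{align*}
  \langle g^{-1}f, w\rangle_\Gamma = \langle f, g^{-1}w\rangle_\Gamma = 0,
\end{align*}
so $g^{-1}f$ annihilates $H_\sigma^1(\Gamma)$. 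Applying Theorem~\ref{T:DeRham_Ge} to $g^{-1}f$ yields a unique $q \in L^2(\Gamma)$ satisfying $g^{-1}f = \nabla_\Gamma q + qHn$ in $H^{-1}(\Gamma)^3$ and $\|q\|_{L^2(\Gamma)} \leq c\|g^{-1}f\|_{H^{-1}(\Gamma)} \leq c\|f\|_{H^{-1}(\Gamma)}$, where the last inequality uses the boundedness of multiplication by $g^{-1}$ on $H^1(\Gamma)^3$.

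To recover $f = g(\nabla_\Gamma q + qHn)$ in $H^{-1}(\Gamma)^3$, I would test against arbitrary $v \in H^1(\Gamma)^3$: by the definition of multiplication in $H^{-1}$ and the identity for $g^{-1}f$,
\begin{align*}
  \langle g(\nabla_\Gamma q + qHn), v\rangle_\Gamma &= \langle \nabla_\Gamma q + qHn, gv\rangle_\Gamma = \langle g^{-1}f, gv\rangle_\Gamma = \langle f, v\rangle_\Gamma.
\end{align*}
For uniqueness, if $q_1, q_2 \in L^2(\Gamma)$ both satisfy the representation, then testing $g(\nabla_\Gamma(q_1 - q_2) + (q_1-q_2)Hn) = 0$ against $g^{-1}w$ for arbitrary $w \in H^1(\Gamma)^3$ gives $\nabla_\Gamma(q_1 - q_2) + (q_1 - q_2)Hn = 0$ in $H^{-1}(\Gamma)^3$, and Lemma~\ref{L:TGrHn_Hin_Con} forces $q_1 = q_2$.

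There is no genuine obstacle here: the argument is a mechanical reduction, and the only point requiring care is ensuring that the $g^{\pm 1}$-multiplication identities \eqref{E:Def_Mul_Hin} transport the annihilator condition and the representation formula consistently between the weighted and unweighted settings. The estimate constant $c$ depends only on $\|g\|_{W^{1,\infty}(\Gamma)}$, $\|g^{-1}\|_{W^{1,\infty}(\Gamma)}$, and the constant from Theorem~\ref{T:DeRham_Ge}.
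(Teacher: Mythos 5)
Your proposal is correct and follows essentially the same route the paper indicates: the paper also proves Theorem~\ref{T:DeRham_Ge_W} by applying the unweighted Theorem~\ref{T:DeRham_Ge} to $g^{-1}f$, using the duality formulas relating $g(\nabla_\Gamma q+qHn)$ and $\mathrm{div}_\Gamma(gv)$ together with the fact that $v\mapsto g^{\pm1}v$ is a bounded automorphism of $H^1(\Gamma)^3$ exchanging $H_\sigma^1(\Gamma)$ and $H_{g\sigma}^1(\Gamma)$. Your handling of the transfer of the annihilator condition, the recovery of $f=g(\nabla_\Gamma q+qHn)$, the estimate, and the uniqueness via Lemma~\ref{L:TGrHn_Hin_Con} matches the paper's intended argument.
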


\begin{theorem} \label{T:HL_L2Ge_W}
  For each $v\in L^2(\Gamma)^3$ we have the orthogonal decomposition
  \begin{align*}
    v = v_g+g(\nabla_\Gamma q+qHn) \quad\text{in}\quad L^2(\Gamma)^3, \quad v_g\in L_{g\sigma}^2(\Gamma),\,g(\nabla_\Gamma q+qHn)\in L_{g\sigma}^2(\Gamma)^\perp.
  \end{align*}
  Here $q\in H^1(\Gamma)$ is uniquely determined.
\end{theorem}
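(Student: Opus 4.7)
The plan is to reduce the weighted decomposition to the unweighted one already established in Theorem~\ref{T:HL_L2Ge}, exploiting the fact that $g\in C^1(\Gamma)$ is bounded both from above and below by positive constants, so that multiplication by $g^{\pm 1}$ is a bounded automorphism of $L^2(\Gamma)^3$ that intertwines the weighted and unweighted solenoidal spaces. The strategy is to first characterize the orthogonal complement $L_{g\sigma}^2(\Gamma)^\perp$ as a weighted analogue of Lemma~\ref{L:L2sGe_Orth}, and then the required orthogonal decomposition follows directly by projecting onto the closed subspace $L_{g\sigma}^2(\Gamma)$.

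For the easy inclusion $\{g(\nabla_\Gamma q + qHn) : q\in H^1(\Gamma)\} \subset L_{g\sigma}^2(\Gamma)^\perp$, I would use the integration-by-parts identity displayed just before the statement of Theorem~\ref{T:HL_L2Ge_W}, namely
\begin{align*}
  \langle g(\nabla_\Gamma q+qHn),v\rangle_\Gamma = -\bigl(q,\mathrm{div}_\Gamma(gv)\bigr)_{L^2(\Gamma)},
\end{align*}
which vanishes for $v\in L_{g\sigma}^2(\Gamma)$ provided we check the duality pairing reduces to an $L^2$-inner product. This is the case because $q\in H^1(\Gamma)$ and $g\in C^1(\Gamma)$ imply $g(\nabla_\Gamma q + qHn)\in L^2(\Gamma)^3$.

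For the opposite inclusion, I would take $f\in L_{g\sigma}^2(\Gamma)^\perp$ and set $\tilde f := g^{-1}f\in L^2(\Gamma)^3$, which is well defined by the lower bound on $g$. The key observation is that the map $w\mapsto g^{-1}w$ is a bijection of $L_\sigma^2(\Gamma)$ onto $L_{g\sigma}^2(\Gamma)$, since $\mathrm{div}_\Gamma(g\cdot g^{-1}w) = \mathrm{div}_\Gamma w = 0$. Therefore, for any $w\in L_\sigma^2(\Gamma)$,
\begin{align*}
  (\tilde f,w)_{L^2(\Gamma)} = (f,g^{-1}w)_{L^2(\Gamma)} = 0,
\end{align*}
so that $\tilde f\in L_\sigma^2(\Gamma)^\perp$. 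Applying Lemma~\ref{L:L2sGe_Orth} to $\tilde f$ yields a unique $q\in H^1(\Gamma)$ with $\tilde f = \nabla_\Gamma q + qHn$ in $L^2(\Gamma)^3$, hence $f = g(\nabla_\Gamma q + qHn)$ in $L^2(\Gamma)^3$, completing the characterization of $L_{g\sigma}^2(\Gamma)^\perp$ and delivering the orthogonal decomposition stated in the theorem. Uniqueness of $q$ transfers from Theorem~\ref{T:HL_L2Ge}: if $g(\nabla_\Gamma q_1 + q_1 Hn) = g(\nabla_\Gamma q_2 + q_2 Hn)$ in $L^2(\Gamma)^3$, dividing by $g\geq c>0$ yields $\nabla_\Gamma(q_1 - q_2) + (q_1-q_2)Hn = 0$, and the uniqueness clause in Theorem~\ref{T:HL_L2Ge} forces $q_1 = q_2$.

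There is essentially no substantive obstacle: all the heavy lifting (Ne\v{c}as inequality \eqref{E:Necas}, the Poincar\'{e} estimate \eqref{E:Poin_L2_HinN} for the operator $q\mapsto \nabla_\Gamma q+qHn$, closedness of its range, and the de Rham-type identification in Theorem~\ref{T:DeRham_Ge}) has already been carried out for the unweighted case, and the bi-Lipschitz weight $g$ transports these structural results across without further analysis. The only point demanding mild care is the bookkeeping that $g^{-1}\colon L_\sigma^2(\Gamma)\to L_{g\sigma}^2(\Gamma)$ and $g\colon L_{g\sigma}^2(\Gamma)\to L_\sigma^2(\Gamma)$ are mutual inverses in $L^2(\Gamma)^3$, which is immediate from the definitions of the two weighted solenoidal spaces.
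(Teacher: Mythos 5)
Your proof is correct and follows essentially the same route the paper indicates: the paper itself derives Theorem~\ref{T:HL_L2Ge_W} by using the two displayed duality identities and applying Lemma~\ref{L:L2sGe_Orth} to $g^{-1}f$ for $f\in L_{g\sigma}^2(\Gamma)^\perp$, which is exactly your argument (including the transfer of uniqueness via Lemma~\ref{L:TGrHn_Hin_Con}). The only cosmetic point is that for the easy inclusion, with $v$ merely in $L_{g\sigma}^2(\Gamma)$, the relevant identity is the second displayed formula $\langle \mathrm{div}_\Gamma(gv),q\rangle_\Gamma=-\bigl(v,g(\nabla_\Gamma q+qHn)\bigr)_{L^2(\Gamma)}$ (valid for $v\in L^2(\Gamma)^3$, $q\in H^1(\Gamma)$), rather than the first one, which is stated for $v\in H^1(\Gamma)^3$.
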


%%% Section 10 %%%
\section{Singular limit problem as the thickness tends to zero} \label{S:SL}
In this section we study a singular limit problem for the Navier--Stokes equations \eqref{E:NS_Eq}--\eqref{E:NS_In} as the curved thin domain $\Omega_\varepsilon$ degenerates into the closed surface $\Gamma$.
Our goal is to derive limit equations on $\Gamma$ and compare them with \eqref{E:NS_Eq}--\eqref{E:NS_In}.

Throughout this section we impose Assumptions~\ref{Assump_1} and~\ref{Assump_2}, fix the constants $\varepsilon_1$ and $\varepsilon_\sigma$ given in Theorem~\ref{T:UE} and Lemma~\ref{L:HP_Dom}, and suppose that $\varepsilon\in(0,\varepsilon'_1)$ with $\varepsilon'_1:=\min\{\varepsilon_1,\varepsilon_\sigma\}$ and the assumptions of Theorem~\ref{T:Est_Ue} are satisfied.
We also denote by $\bar{\eta}=\eta\circ\pi$ the constant extension of a function $\eta$ on $\Gamma$.
Let $\mathcal{H}_\varepsilon$ and $\mathcal{V}_\varepsilon$ be the function spaces defined by \eqref{E:Def_Heps}, $A_\varepsilon$ the Stokes operator on $\mathcal{H}_\varepsilon$ (see Section~\ref{SS:St_Def}), and $M$ and $M_\tau$ the average operators given in Definition~\ref{D:Average}.

\subsection{Weak formulation for the bulk system} \label{SS:SL_NSW}
Our ansatz is a weak formulation for \eqref{E:NS_Eq}--\eqref{E:NS_In} satisfied by a strong solution.
Let
\begin{align*}
  u^\varepsilon \in C([0,\infty);\mathcal{V}_\varepsilon)\cap L_{loc}^2([0,\infty);D(A_\varepsilon))\cap H_{loc}^1([0,\infty);\mathcal{H}_\varepsilon)
\end{align*}
be the global strong solution to \eqref{E:NS_Eq}--\eqref{E:NS_In} given by Theorem~\ref{T:Est_Ue}.
It satisfies
\begin{align} \label{E:NS_Weak}
  \int_0^T\{(\partial_tu^\varepsilon,\varphi)_{L^2(\Omega_\varepsilon)}+a_\varepsilon(u^\varepsilon,\varphi)+b_\varepsilon(u^\varepsilon,u^\varepsilon,\varphi)\}\,dt = \int_0^T(\mathbb{P}_\varepsilon f^\varepsilon,\varphi)_{L^2(\Omega_\varepsilon)}\,dt
\end{align}
for all $T>0$ and $\varphi\in L^2(0,T;\mathcal{V}_\varepsilon)$, and $u^\varepsilon|_{t=0}=u_0^\varepsilon$ in $\mathcal{V}_\varepsilon$.
Here $a_\varepsilon$ is the bilinear form given by \eqref{E:Def_Bili_Dom}, i.e.
\begin{align*}
  a_\varepsilon(u_1,u_2) = 2\nu\bigl(D(u_1),D(u_2)\bigr)_{L^2(\Omega_\varepsilon)}+\gamma_\varepsilon^0(u_1,u_2)_{L^2(\Gamma_\varepsilon^0)}+\gamma_\varepsilon^1(u_1,u_2)_{L^2(\Gamma_\varepsilon^1)}
\end{align*}
for $u_1,u_2\in H^1(\Omega_\varepsilon)^3$ and $b_\varepsilon$ is a trilinear form defined by
\begin{align} \label{E:Def_Tri_Bo}
  b_\varepsilon(u_1,u_2,u_3) := -(u_1\otimes u_2,\nabla u_3)_{L^2(\Omega_\varepsilon)}
\end{align}
for $u_1,u_2,u_3\in H^1(\Omega_\varepsilon)^3$.
Note that $u_1\otimes u_2\in L^2(\Omega_\varepsilon)^{3\times3}$ for $u_1,u_2\in H^1(\Omega_\varepsilon)^3$ by the Sobolev embedding $H^1(\Omega_\varepsilon)\hookrightarrow L^4(\Omega_\varepsilon)$ (see~\cite{AdFo03}).
Also, if $u\in\mathcal{V}_\varepsilon$ then
\begin{align*}
  \int_{\Omega_\varepsilon}(u_1\cdot\nabla)u_2\cdot u_3\,dx = b(u_1,u_2,u_3)
\end{align*}
by integration by parts, $\mathrm{div}\,u_1=0$ in $\Omega_\varepsilon$, and $u_1\cdot n_\varepsilon=0$ on $\Gamma_\varepsilon$.

Our goal is to derive the limit of the weak formulation \eqref{E:NS_Weak} as well as to show the convergence of the average of the strong solution $u^\varepsilon$ as $\varepsilon\to0$.

\subsection{Average of the weak formulation} \label{SS:SL_Ave}
The first step is to derive a weak formulation satisfied by the averaged tangential component $M_\tau u^\varepsilon$ of the strong solution $u^\varepsilon$, in which we take a test function from the weighted solenoidal space
\begin{align*}
  \mathcal{V}_g := H_{g\sigma}^1(\Gamma,T\Gamma) = \{v\in H^1(\Gamma,T\Gamma) \mid \text{$\mathrm{div}_\Gamma(gv)=0$ on $\Gamma$}\}.
\end{align*}
Since the constant extension of a vector field in $\mathcal{V}_g$ is not in $\mathcal{V}_\varepsilon$, we need to construct an appropriate test function in $\mathcal{V}_\varepsilon$ from a weighted solenoidal vector field on $\Gamma$.
To this end, we use the impermeable extension operator $E_\varepsilon$ given by \eqref{E:Def_ExTan} and the Helmholtz--Leray projection $\mathbb{L}_\varepsilon$ onto $L_\sigma^2(\Omega_\varepsilon)$ given in Section~\ref{SS:St_HL}.

\begin{lemma} \label{L:Const_Test}
  For $\eta\in \mathcal{V}_g$ let $\eta_\varepsilon:=\mathbb{L}_\varepsilon E_\varepsilon\eta$.
  Then $\eta_\varepsilon\in H^1(\Omega_\varepsilon)^3\cap L_\sigma^2(\Omega_\varepsilon)$ and
  \begin{gather}
    \|\eta_\varepsilon-\bar{\eta}\|_{L^2(\Omega_\varepsilon)}+\left\|\nabla\eta_\varepsilon-\overline{F(\eta)}\right\|_{L^2(\Omega_\varepsilon)} \leq c\varepsilon^{3/2}\|\eta\|_{H^1(\Gamma)}, \label{E:Test_Dom} \\
    \|\eta_\varepsilon-\bar{\eta}\|_{L^2(\Gamma_\varepsilon)} \leq c\varepsilon\|\eta\|_{H^1(\Gamma)}, \label{E:Test_Bo}
  \end{gather}
  where $c>0$ is a constant independent of $\varepsilon$ and $\eta$, and
  \begin{align} \label{E:Test_Aux}
    F(\eta) := \nabla_\Gamma\eta+\frac{1}{g}(\eta\cdot\nabla_\Gamma g)Q \quad\text{on}\quad \Gamma.
  \end{align}
\end{lemma}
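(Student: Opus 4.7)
\medskip
\noindent\textbf{Proof plan.} My plan is to decompose
\[
\eta_\varepsilon - \bar\eta \;=\; (\eta_\varepsilon - E_\varepsilon\eta) + (E_\varepsilon\eta - \bar\eta)
\]
and control each summand with an $\varepsilon^{3/2}$ factor. First, since $\eta \in H^1(\Gamma,T\Gamma)$, Lemma~\ref{L:ExTan_Imp} gives $E_\varepsilon\eta\cdot n_\varepsilon = 0$ on $\Gamma_\varepsilon$, and Lemma~\ref{L:ExTan_Wmp} gives $E_\varepsilon\eta \in H^1(\Omega_\varepsilon)^3$. Hence the Helmholtz--Leray projection $\eta_\varepsilon = \mathbb{L}_\varepsilon E_\varepsilon\eta$ lies in $H^1(\Omega_\varepsilon)^3\cap L^2_\sigma(\Omega_\varepsilon)$ by Lemma~\ref{L:HP_Dom}, which also yields the uniform bound
\[
\|\eta_\varepsilon - E_\varepsilon\eta\|_{H^1(\Omega_\varepsilon)} \;\leq\; c\,\|\mathrm{div}(E_\varepsilon\eta)\|_{L^2(\Omega_\varepsilon)}.
\]
Since $\eta \in \mathcal{V}_g$ means $\mathrm{div}_\Gamma(g\eta) = 0$ on $\Gamma$, Lemma~\ref{L:Lp_ET_Div} (specifically \eqref{E:Lp_ETD_Sol} with $p=2$) gives $\|\mathrm{div}(E_\varepsilon\eta)\|_{L^2(\Omega_\varepsilon)} \leq c\varepsilon^{3/2}\|\eta\|_{H^1(\Gamma)}$, so the projection correction $\eta_\varepsilon - E_\varepsilon\eta$ is already controlled with the desired power of $\varepsilon$ in both $L^2$ and $H^1$.

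Second, the extension defect $E_\varepsilon\eta - \bar\eta = (\bar\eta\cdot\Psi_\varepsilon)\bar n$ is handled by $|\Psi_\varepsilon| \leq c\varepsilon$ from \eqref{E:ExAux_Bound} together with \eqref{E:Con_Lp}: this gives $\|E_\varepsilon\eta - \bar\eta\|_{L^2(\Omega_\varepsilon)} \leq c\varepsilon\|\bar\eta\|_{L^2(\Omega_\varepsilon)} \leq c\varepsilon^{3/2}\|\eta\|_{L^2(\Gamma)}$. For the gradient part the key tool is Lemma~\ref{L:ExTan_Div}, more precisely the pointwise bound \eqref{E:ExTan_Grad}:
\[
\left|\nabla E_\varepsilon\eta - \overline{F(\eta)}\right| \;\leq\; c\varepsilon\bigl(|\bar\eta| + |\overline{\nabla_\Gamma\eta}|\bigr) \quad\text{in}\quad \Omega_\varepsilon,
\]
where $F(\eta)$ is exactly the matrix appearing in \eqref{E:Test_Aux}. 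Integrating and applying \eqref{E:Con_Lp} yields the required $\varepsilon^{3/2}$ estimate for $\|\nabla E_\varepsilon\eta - \overline{F(\eta)}\|_{L^2(\Omega_\varepsilon)}$. Combining the two summands proves \eqref{E:Test_Dom}.

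Third, for the boundary estimate \eqref{E:Test_Bo} I would again split into the two terms. On the projection correction apply the trace inequality \eqref{E:Poin_Bo} to $\eta_\varepsilon - E_\varepsilon\eta$:
\[
\|\eta_\varepsilon - E_\varepsilon\eta\|_{L^2(\Gamma_\varepsilon)} \;\leq\; c\varepsilon^{-1/2}\|\eta_\varepsilon - E_\varepsilon\eta\|_{H^1(\Omega_\varepsilon)} \;\leq\; c\varepsilon\|\eta\|_{H^1(\Gamma)},
\]
by the $H^1$-bound obtained in the first step. On $\Gamma_\varepsilon$ the extension defect satisfies $|E_\varepsilon\eta - \bar\eta| \leq c\varepsilon|\bar\eta|$ by the identity $\Psi_\varepsilon|_{\Gamma_\varepsilon^i} = \varepsilon\bar\tau_\varepsilon^i$ and \eqref{E:Tau_Bound}; combining with \eqref{E:Poin_Bo} applied to $\bar\eta$ (for which $\partial_n\bar\eta = 0$ by \eqref{E:NorDer_Con}) and \eqref{E:Con_Lp} gives $\|\bar\eta\|_{L^2(\Gamma_\varepsilon)} \leq c\|\eta\|_{L^2(\Gamma)}$, so $\|E_\varepsilon\eta - \bar\eta\|_{L^2(\Gamma_\varepsilon)} \leq c\varepsilon\|\eta\|_{L^2(\Gamma)}$, completing \eqref{E:Test_Bo}.

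\medskip
\noindent\textbf{Main obstacle.} The only delicate point is ensuring that the Helmholtz--Leray correction $\eta_\varepsilon - E_\varepsilon\eta$ shares the full $\varepsilon^{3/2}$ decay rather than picking up the $\varepsilon^{-1/2}$ loss one might expect from the uniform $H^1$-estimate of Lemma~\ref{L:HP_Dom}; this works precisely because the weighted solenoidality $\mathrm{div}_\Gamma(g\eta) = 0$ lets Lemma~\ref{L:Lp_ET_Div} give the sharper bound \eqref{E:Lp_ETD_Sol} with the extra factor of $\varepsilon$ beyond the generic $\varepsilon^{1+1/p}$ rate for non-solenoidal $v$. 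Everything else reduces to combining the pointwise identities for $E_\varepsilon\eta$ from Section~\ref{SS:Tool_TE} with the trace and constant-extension inequalities already established.
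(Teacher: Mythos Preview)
Your proposal is correct and follows essentially the same approach as the paper: the same decomposition $\eta_\varepsilon-\bar\eta=(\eta_\varepsilon-E_\varepsilon\eta)+(E_\varepsilon\eta-\bar\eta)$, the same key ingredients \eqref{E:HP_Dom}, \eqref{E:Lp_ETD_Sol}, \eqref{E:ExTan_Grad}, and \eqref{E:ExAux_Bound}, and the same use of \eqref{E:Poin_Bo} for the boundary estimate. The only cosmetic difference is that for \eqref{E:Test_Bo} the paper applies \eqref{E:Poin_Bo} directly to $\eta_\varepsilon-\bar\eta$ (using $\partial_n\bar\eta=0$ and then bounding $\|\eta_\varepsilon\|_{H^1(\Omega_\varepsilon)}\leq c\varepsilon^{1/2}\|\eta\|_{H^1(\Gamma)}$), whereas you split into the two summands first; both routes give the same $c\varepsilon$ bound.
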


Note that $\mathcal{V}_\varepsilon$ may be smaller than $H^1(\Omega_\varepsilon)^3\cap L_\sigma^2(\Omega_\varepsilon)$ when the condition (A3) of Assumption~\ref{Assump_2} is satisfied (see \eqref{E:Def_Heps}).
We deal with this point at the first step of derivation of a weak formulation for $M_\tau u^\varepsilon$ (see Lemma~\ref{L:Mu_Weak}).

\begin{proof}
  Since $E_\varepsilon\eta\in H^1(\Omega_\varepsilon)^3$ by $\eta\in H^1(\Gamma)^3$ and Lemma~\ref{L:ExTan_Wmp}, $\eta_\varepsilon=\mathbb{L}_\varepsilon E_\varepsilon\eta$ belongs to $H^1(\Omega_\varepsilon)^3\cap L_\sigma^2(\Omega_\varepsilon)$ (see Section~\ref{SS:St_HL}).
  Let us show \eqref{E:Test_Dom} and \eqref{E:Test_Bo}.
  By the definition \eqref{E:Def_ExTan} of $E_\varepsilon\eta$ and the inequalities \eqref{E:Con_Lp} and \eqref{E:ExAux_Bound} we have
  \begin{align} \label{Pf_CTe:L2}
    \|E_\varepsilon\eta-\bar{\eta}\|_{L^2(\Omega_\varepsilon)} \leq c\varepsilon\|\bar{\eta}\|_{L^2(\Omega_\varepsilon)} \leq c\varepsilon^{3/2}\|\eta\|_{L^2(\Gamma)}.
  \end{align}
  Also, from \eqref{E:Con_Lp} and \eqref{E:ExTan_Grad} it follows that
  \begin{align} \label{Pf_CTe:H1}
    \left\|\nabla E_\varepsilon\eta-\overline{F(\eta)}\right\|_{L^2(\Omega_\varepsilon)} \leq c\varepsilon\left(\|\bar{\eta}\|_{L^2(\Omega_\varepsilon)}+\left\|\overline{\nabla_\Gamma\eta}\right\|_{L^2(\Omega_\varepsilon)}\right) \leq c\varepsilon^{3/2}\|\eta\|_{H^1(\Gamma)}.
  \end{align}
  Since $E_\varepsilon\eta\in H^1(\Omega_\varepsilon)^3$ satisfies $E_\varepsilon\eta\cdot n_\varepsilon=0$ on $\Gamma_\varepsilon$ by Lemma~\ref{L:ExTan_Imp}, we have
  \begin{align*}
    \|\eta_\varepsilon-E_\varepsilon\eta\|_{H^1(\Omega_\varepsilon)} \leq c\|\mathrm{div}(E_\varepsilon\eta)\|_{L^2(\Omega_\varepsilon)}
  \end{align*}
  by \eqref{E:HP_Dom} with $u=E_\varepsilon \eta$ and $\mathbb{L}_\varepsilon u=\eta_\varepsilon$.
  Noting that $\eta\in \mathcal{V}_g$ satisfies $\mathrm{div}_\Gamma(gv)=0$ on $\Gamma$, we further apply \eqref{E:Lp_ETD_Sol} to the right-hand side to deduce that
  \begin{align} \label{Pf_CTe:HL}
    \|\eta_\varepsilon-E_\varepsilon\eta\|_{H^1(\Omega_\varepsilon)} \leq c\varepsilon^{3/2}\|\eta\|_{H^1(\Gamma)}.
  \end{align}
  By \eqref{Pf_CTe:L2}--\eqref{Pf_CTe:HL} we obtain \eqref{E:Test_Dom}.
  To prove \eqref{E:Test_Bo} we use \eqref{E:Poin_Bo} and \eqref{E:Test_Dom} to get
  \begin{align*}
    \|\eta_\varepsilon-\bar{\eta}\|_{L^2(\Gamma_\varepsilon)} &\leq c\left(\varepsilon^{-1/2}\|\eta_\varepsilon-\bar{\eta}\|_{L^2(\Omega_\varepsilon)}+\varepsilon^{1/2}\|\partial_n\eta_\varepsilon-\partial_n\bar{\eta}\|_{L^2(\Omega_\varepsilon)}\right) \\
    &\leq c\left(\varepsilon\|\eta\|_{H^1(\Gamma)}+\varepsilon^{1/2}\|\eta_\varepsilon\|_{H^1(\Omega_\varepsilon)}\right).
  \end{align*}
  Here we used \eqref{E:NorDer_Con} for $\partial_n\bar{\eta}$ in the second inequality.
  Moreover, by \eqref{E:ExTan_Wmp} and \eqref{Pf_CTe:HL},
  \begin{align*}
    \|\eta_\varepsilon\|_{H^1(\Omega_\varepsilon)} \leq \|E_\varepsilon\eta\|_{H^1(\Omega_\varepsilon)}+\|\eta_\varepsilon-E_\varepsilon\eta\|_{H^1(\Omega_\varepsilon)} \leq c\varepsilon^{1/2}\|\eta\|_{H^1(\Gamma)}.
  \end{align*}
  Hence the inequality \eqref{E:Test_Bo} follows.
\end{proof}

Next we approximate the bilinear and trilinear forms $a_\varepsilon$ and $b_\varepsilon$ by bilinear and trilinear forms for tangential vector fields on $\Gamma$.
Let $\gamma^0,\gamma^1\geq0$.
We define
\begin{multline} \label{E:Def_Bili_Surf}
  a_g(v_1,v_2) := 2\nu\left\{\bigl(gD_\Gamma(v_1),D_\Gamma(v_2)\bigr)_{L^2(\Gamma)}+(g^{-1}(v_1\cdot\nabla_\Gamma g),v_2\cdot\nabla_\Gamma g)_{L^2(\Gamma)}\right\} \\
  +(\gamma^0+\gamma^1)(v_1,v_2)_{L^2(\Gamma)}
\end{multline}
for $v_1,v_2\in H^1(\Gamma,T\Gamma)$, where $D_\Gamma(v_1)$ is given by \eqref{E:Strain_Surf}, and
\begin{align} \label{E:Def_Tri_Surf}
  b_g(v_1,v_2,v_3) := -(g(v_1\otimes v_2),\nabla_\Gamma v_3)_{L^2(\Gamma)}
\end{align}
for $v_1,v_2,v_3\in H^1(\Gamma,T\Gamma)$ (note that $v_1\otimes v_2\in L^2(\Gamma)^{3\times3}$ by $H^1(\Gamma)\hookrightarrow L^4(\Gamma)$, see Lemma~\ref{L:La_Surf}).
First we give their basic properties.

\begin{lemma} \label{L:Bi_Surf}
  There exists a constant $c>0$ such that
  \begin{align} \label{E:Bi_Surf}
    \|\nabla_\Gamma v\|_{L^2(\Gamma)}^2 \leq c\left\{a_g(v,v)+\|v\|_{L^2(\Gamma)}^2\right\}
  \end{align}
  for all $v\in H^1(\Gamma,T\Gamma)$.
\end{lemma}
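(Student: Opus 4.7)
The plan is to derive inequality \eqref{E:Bi_Surf} as a direct consequence of the Korn inequality on the closed surface $\Gamma$ (Lemma~\ref{L:Korn_STG}), which states
\[
  \|\nabla_\Gamma v\|_{L^2(\Gamma)}^2 \leq c\left(\|D_\Gamma(v)\|_{L^2(\Gamma)}^2+\|v\|_{L^2(\Gamma)}^2\right)
\]
for all $v\in H^1(\Gamma,T\Gamma)$. Since the weighted bilinear form $a_g$ given by \eqref{E:Def_Bili_Surf} contains the term $2\nu(gD_\Gamma(v),D_\Gamma(v))_{L^2(\Gamma)}$, the main task is to absorb the unweighted $L^2$-norm of $D_\Gamma(v)$ into $a_g(v,v)$ using the pointwise positive lower bound on $g$.

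First I would recall that $g=g_1-g_0\in C^4(\Gamma)$ satisfies $g(y)\geq c>0$ for all $y\in\Gamma$ by \eqref{E:Width_Bound}. This immediately yields the pointwise bound $|D_\Gamma(v)(y)|^2 \leq c\,g(y)|D_\Gamma(v)(y)|^2$ on $\Gamma$, and therefore, after integration,
\[
  \|D_\Gamma(v)\|_{L^2(\Gamma)}^2 \leq c\bigl(gD_\Gamma(v),D_\Gamma(v)\bigr)_{L^2(\Gamma)}.
\]
Next I would observe that, under the present assumption $\nu>0$, $\gamma^0,\gamma^1\geq0$, and $g\geq c>0$, each of the three summands in the definition \eqref{E:Def_Bili_Surf} of $a_g(v,v)$ is nonnegative, so
\[
  2\nu\bigl(gD_\Gamma(v),D_\Gamma(v)\bigr)_{L^2(\Gamma)} \leq a_g(v,v).
\]
Combining this with the previous display gives $\|D_\Gamma(v)\|_{L^2(\Gamma)}^2 \leq c\,a_g(v,v)$. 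Plugging back into the Korn inequality \eqref{E:Korn_STG} from Lemma~\ref{L:Korn_STG} then yields \eqref{E:Bi_Surf} with a constant $c>0$ depending only on $\Gamma$, $\nu$, and the lower bound of $g$.

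There is no serious obstacle here: the only nontrivial input is the surface Korn inequality, which is already established in Section~\ref{SS:Korn_Surf}, and the remaining step is elementary algebra together with the uniform positive lower bound on $g$. The argument is short enough that I would write it inline, noting explicitly that the nonnegative surface friction contribution $(\gamma^0+\gamma^1)\|v\|_{L^2(\Gamma)}^2$ and the nonnegative weighted penalty term $2\nu\|g^{-1/2}(v\cdot\nabla_\Gamma g)\|_{L^2(\Gamma)}^2$ in $a_g(v,v)$ are simply discarded after being bounded below by zero.
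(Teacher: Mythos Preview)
Your proposal is correct and follows essentially the same approach as the paper: both apply the surface Korn inequality \eqref{E:Korn_STG}, use the lower bound on $g$ from \eqref{E:Width_Bound} to absorb $\|D_\Gamma(v)\|_{L^2(\Gamma)}^2$ into $2\nu\|g^{1/2}D_\Gamma(v)\|_{L^2(\Gamma)}^2$, and then discard the remaining nonnegative terms of $a_g(v,v)$.
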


\begin{proof}
  By \eqref{E:Width_Bound} and the Korn inequality \eqref{E:Korn_STG} we have
  \begin{align*}
    \|\nabla_\Gamma v\|_{L^2(\Gamma)}^2 &\leq c\left(\|D_\Gamma(v)\|_{L^2(\Gamma)}^2+\|v\|_{L^2(\Gamma)}^2\right) \\
    &\leq c\left(2\nu\|g^{1/2}D_\Gamma(v)\|_{L^2(\Gamma)}^2+\|v\|_{L^2(\Gamma)}^2\right) \\
    &\leq c\left\{a_g(v,v)+\|v\|_{L^2(\Gamma)}^2\right\}
  \end{align*}
  for all $v\in H^1(\Gamma,T\Gamma)$.
  Hence \eqref{E:Bi_Surf} holds.
\end{proof}

\begin{lemma} \label{L:Tri_Surf}
  There exists a constant $c>0$ such that
  \begin{align} \label{E:Tri_Surf}
    |b_g(v_1,v_2,v_3)| \leq c\|v_1\|_{L^2(\Gamma)}^{1/2}\|v_1\|_{H^1(\Gamma)}^{1/2}\|v_2\|_{L^2(\Gamma)}^{1/2}\|v_2\|_{H^1(\Gamma)}^{1/2}\|v_3\|_{H^1(\Gamma)}
  \end{align}
  for all $v_1,v_2,v_3\in H^1(\Gamma,T\Gamma)$.
  Moreover,
  \begin{align} \label{E:TriS_Vg}
    b_g(v_1,v_2,v_3) = -b_g(v_1,v_3,v_2), \quad b_g(v_1,v_2,v_2) = 0
  \end{align}
  for all $v_1\in \mathcal{V}_g$ and $v_2,v_3\in H^1(\Gamma,T\Gamma)$.
\end{lemma}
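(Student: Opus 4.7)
The plan is to prove the estimate \eqref{E:Tri_Surf} via Hölder and Ladyzhenskaya's inequalities, and then obtain the algebraic identities \eqref{E:TriS_Vg} by integration by parts on the closed surface combined with a density argument.

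For \eqref{E:Tri_Surf}, I would start from the definition
\[
b_g(v_1,v_2,v_3) = -\int_\Gamma g\,(v_1 \otimes v_2):\nabla_\Gamma v_3 \, d\mathcal{H}^2,
\]
use the uniform boundedness of $g \in C^1(\Gamma)$, and apply Hölder's inequality in the form $L^4 \cdot L^4 \cdot L^2$ to get $|b_g(v_1,v_2,v_3)| \leq c\,\|v_1\|_{L^4(\Gamma)}\|v_2\|_{L^4(\Gamma)}\|\nabla_\Gamma v_3\|_{L^2(\Gamma)}$. Then I would invoke Ladyzhenskaya's inequality \eqref{E:La_Surf} componentwise to conclude $\|v_i\|_{L^4(\Gamma)} \leq c\,\|v_i\|_{L^2(\Gamma)}^{1/2}\|v_i\|_{H^1(\Gamma)}^{1/2}$ for $i=1,2$, which combined with $\|\nabla_\Gamma v_3\|_{L^2(\Gamma)} \leq \|v_3\|_{H^1(\Gamma)}$ gives \eqref{E:Tri_Surf}.

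For the skew-symmetry identity $b_g(v_1,v_2,v_3) = -b_g(v_1,v_3,v_2)$ with $v_1 \in \mathcal{V}_g$, I would first establish it for $v_2,v_3 \in C^1(\Gamma,T\Gamma)$. Writing $b_g(v_1,v_2,v_3) = -\int_\Gamma g\,v_2 \cdot (v_1 \cdot \nabla_\Gamma) v_3 \, d\mathcal{H}^2$ and using the product rule $v_2 \cdot (v_1 \cdot \nabla_\Gamma) v_3 + v_3 \cdot (v_1 \cdot \nabla_\Gamma) v_2 = v_1 \cdot \nabla_\Gamma(v_2 \cdot v_3)$, I obtain
\[
b_g(v_1,v_2,v_3) + b_g(v_1,v_3,v_2) = -\int_\Gamma g\,v_1 \cdot \nabla_\Gamma(v_2 \cdot v_3)\, d\mathcal{H}^2.
\]
Since $\xi := v_2 \cdot v_3 \in C^1(\Gamma)$ and $gv_1 \in H^1(\Gamma,T\Gamma)$ is tangential with $\mathrm{div}_\Gamma(gv_1)=0$, the identity $g v_1 \cdot \nabla_\Gamma \xi = \mathrm{div}_\Gamma(\xi g v_1) - \xi\,\mathrm{div}_\Gamma(g v_1)$ and the Stokes theorem on the closed surface $\Gamma$ applied to the tangential field $\xi g v_1$ (for which $\int_\Gamma \mathrm{div}_\Gamma(\xi g v_1)\,d\mathcal{H}^2 = 0$) force the right-hand side to vanish.

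To extend the identity to arbitrary $v_2,v_3 \in H^1(\Gamma,T\Gamma)$, I would approximate them in $H^1(\Gamma,T\Gamma)$ by sequences in $C^1(\Gamma,T\Gamma)$ via Lemma~\ref{L:Wmp_Tan_Appr} and pass to the limit, using the continuity of $b_g$ in each argument that follows from \eqref{E:Tri_Surf}. The second identity $b_g(v_1,v_2,v_2) = 0$ is then obtained by specializing $v_3 = v_2$ in the skew-symmetry relation. The main subtlety of the argument lies in the integration-by-parts step: for general $H^1$ tangential fields on the two-dimensional $\Gamma$, the product $v_2 \cdot v_3$ need not lie in $H^1(\Gamma)$, so a direct invocation of the weak tangential gradient pairing is not available and the density argument is the cleanest route.
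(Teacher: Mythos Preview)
Your proposal is correct and follows essentially the same route as the paper: H\"older plus Ladyzhenskaya for \eqref{E:Tri_Surf}, and the product-rule/Stokes computation exploiting $\mathrm{div}_\Gamma(gv_1)=0$ for \eqref{E:TriS_Vg}. The only difference is procedural: you handle the regularity issue for $v_2\cdot v_3$ via a $C^1$ density argument, whereas the paper carries out the Leibniz identity directly in components, writing $g(v_1\otimes v_2):\nabla_\Gamma v_3 = \mathrm{div}_\Gamma[g(v_2\cdot v_3)v_1] - (v_2\cdot v_3)\,\mathrm{div}_\Gamma(gv_1) - g(v_1\otimes v_3):\nabla_\Gamma v_2$ and applying Stokes to the tangential field $g(v_2\cdot v_3)v_1\in W^{1,1}(\Gamma,T\Gamma)$ without approximation.
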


\begin{proof}
  The inequality \eqref{E:Tri_Surf} follows from H\"{o}lder's inequality and Ladyzhenskaya's inequality \eqref{E:La_Surf}.
  Let $v_1\in \mathcal{V}_g$ and $v_2,v_3\in H^1(\Gamma,T\Gamma)$.
  For $a\in\mathbb{R}^3$ and $i=1,2,3$ we denote by $a^i$ the $i$-th component of $a$.
  Since
  \begin{align*}
    g(v_1\otimes v_2):\nabla_\Gamma v_3 &= \sum_{i,j=1}^3gv_1^iv_2^j\underline{D}_iv_3^j \\
    &= \sum_{i,j=1}^3\{\underline{D}_i(gv_1^iv_2^jv_3^j)-v_2^jv_3^j\underline{D}_i(gv_1^i)-gv_1^iv_3^j\underline{D}_iv_2^j\} \\
    &= \mathrm{div}_\Gamma[g(v_2\cdot v_3)v_1]-(v_2\cdot v_3)\mathrm{div}_\Gamma(gv_1)-v_1\otimes v_3:\nabla_\Gamma v_2
  \end{align*}
  on $\Gamma$ and $\mathrm{div}_\Gamma(gv_1)=0$ by $v_1\in \mathcal{V}_g$, we have
  \begin{align*}
    b_g(v_1,v_2,v_3) = \int_\Gamma\mathrm{div}_\Gamma[g(v_2\cdot v_3)v_1]\,d\mathcal{H}^2-b_g(v_1,v_3,v_2).
  \end{align*}
  Here the first term on the right-hand side vanishes by the Stokes theorem, since $g(v_2\cdot v_3)v_1$ is tangential on the closed surface $\Gamma$.
  Hence the first equality of \eqref{E:TriS_Vg} follows.
  We also get the second equality by setting $v_2=v_3$ in the first one.
\end{proof}

Let us approximate $a_\varepsilon$ by $a_g$ and $b_\varepsilon$ by $b_g$.

\begin{lemma} \label{L:Appr_Bili}
  Let $u\in H^2(\Omega_\varepsilon)^3$ satisfy the slip boundary conditions \eqref{E:Bo_Imp}--\eqref{E:Bo_Slip} on $\Gamma_\varepsilon$.
  Also, let $\eta\in \mathcal{V}_g$ and $\eta_\varepsilon:=\mathbb{L}_\varepsilon E_\varepsilon\eta\in H^1(\Omega_\varepsilon)^3\cap L_\sigma^2(\Omega_\varepsilon)$.
  Then
  \begin{align} \label{E:Appr_Bili}
    |a_\varepsilon(u,\eta_\varepsilon)-\varepsilon a_g(M_\tau u,\eta)| \leq cR_\varepsilon^a(u)\|\eta\|_{H^1(\Gamma)},
  \end{align}
  where $c>0$ is a constant independent of $\varepsilon$, $u$, and $\eta$, and
  \begin{align} \label{E:ApBi_Re}
    R_\varepsilon^a(u) := \varepsilon^{3/2}\|u\|_{H^2(\Omega_\varepsilon)}+\varepsilon^{1/2}\|u\|_{L^2(\Omega_\varepsilon)}\sum_{i=0,1}|\varepsilon^{-1}\gamma_\varepsilon^i-\gamma^i|.
  \end{align}
\end{lemma}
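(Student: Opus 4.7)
\emph{Approach.} The plan is to split $a_\varepsilon(u,\eta_\varepsilon)$ into its viscous part $2\nu(D(u),D(\eta_\varepsilon))_{L^2(\Omega_\varepsilon)}$ and its friction part $\sum_{i=0,1}\gamma_\varepsilon^i(u,\eta_\varepsilon)_{L^2(\Gamma_\varepsilon^i)}$, and then match each against the corresponding term in $\varepsilon a_g(M_\tau u,\eta)$ using the approximation results from Sections~\ref{SS:Tool_Slip} and \ref{SS:Ave_BT}. For the viscous part the first step is to replace $D(\eta_\varepsilon)$ by $\overline{F(\eta)_S}$: since $D(u)$ is symmetric, $(D(u),D(\eta_\varepsilon))_{L^2(\Omega_\varepsilon)}=(D(u),\nabla\eta_\varepsilon)_{L^2(\Omega_\varepsilon)}$, and the error in replacing $\nabla\eta_\varepsilon$ by $\overline{F(\eta)}$ is, by \eqref{E:Test_Dom}, at most $c\varepsilon^{3/2}\|u\|_{H^1(\Omega_\varepsilon)}\|\eta\|_{H^1(\Gamma)}$, which is absorbed into $R_\varepsilon^a(u)\|\eta\|_{H^1(\Gamma)}$.

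\emph{Algebraic decomposition of $F(\eta)_S$ and the three viscous pieces.} Since $\eta$ is tangential, \eqref{E:Grad_W} gives $\nabla_\Gamma\eta=P(\nabla_\Gamma\eta)P+(W\eta)\otimes n$, and therefore
\begin{equation*}
F(\eta)_S = D_\Gamma(\eta) + \bigl[(W\eta)\otimes n\bigr]_S + \tfrac{1}{g}(\eta\cdot\nabla_\Gamma g)Q.
\end{equation*}
I will apply Lemma~\ref{L:Ave_BiH1_TT} with $A=D_\Gamma(\eta)$ (the condition $PA=AP=A$ is automatic) to produce the contribution $2\nu\varepsilon(gD_\Gamma(M_\tau u),D_\Gamma(\eta))_{L^2(\Gamma)}$ with error $c\varepsilon^{3/2}\|u\|_{H^1}\|\eta\|_{H^1}$. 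For the third piece, the identity $D(u):\overline{\bigl(\tfrac{1}{g}\eta\cdot\nabla_\Gamma g\bigr)Q}=(D(u):\overline{Q})\,\overline{\tfrac{1}{g}\eta\cdot\nabla_\Gamma g}$ brings Lemma~\ref{L:Ave_BiH1_NN} into play (with scalar $g^{-1}\eta\cdot\nabla_\Gamma g$), producing $2\nu\varepsilon(M_\tau u\cdot\nabla_\Gamma g,g^{-1}\eta\cdot\nabla_\Gamma g)_{L^2(\Gamma)}$. Summing these two gives exactly the viscous portion of $\varepsilon a_g(M_\tau u,\eta)$. The cross piece $[(W\eta)\otimes n]_S$ is where $u\in H^2(\Omega_\varepsilon)$ and the slip condition \eqref{E:Bo_Slip} enter: symmetry of $D(u)$ reduces $D(u):([(W\eta)\otimes n]_S)$ to $D(u):((W\eta)\otimes n)$, and since $W\eta$ is tangential by $PW=W$ in \eqref{E:Form_W}, Lemma~\ref{L:Ave_BiH1_TN} applied with $v=W\eta$ absorbs this entire term into $c\varepsilon^{3/2}\|u\|_{H^2(\Omega_\varepsilon)}\|\eta\|_{L^2(\Gamma)}$.

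\emph{Boundary term and the friction-asymptotic defect.} Writing $\gamma_\varepsilon^i=\varepsilon\gamma^i+\varepsilon(\varepsilon^{-1}\gamma_\varepsilon^i-\gamma^i)$, I handle the main part $\varepsilon\gamma^i(u,\eta_\varepsilon)_{L^2(\Gamma_\varepsilon^i)}$ by first using \eqref{E:Test_Bo} together with the trace bound \eqref{E:Poin_Bo} to replace $\eta_\varepsilon$ by $\bar\eta$ (error $c\varepsilon^{3/2}\|u\|_{H^1}\|\eta\|_{H^1}$), and then invoking Lemma~\ref{L:Ave_BiL2_Bo} componentwise to identify $(u,\bar\eta)_{L^2(\Gamma_\varepsilon^i)}$ with $(M_\tau u,\eta)_{L^2(\Gamma)}$ (tangentiality of $\eta$ turns $Mu\cdot\eta$ into $M_\tau u\cdot\eta$) up to another $c\varepsilon^{1/2}\|u\|_{H^1}\|\eta\|_{L^2}$, which is again absorbed into the $\varepsilon^{3/2}\|u\|_{H^2}$ part of $R_\varepsilon^a(u)$ after multiplication by $\varepsilon\gamma^i$. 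For the defect part $\varepsilon(\varepsilon^{-1}\gamma_\varepsilon^i-\gamma^i)(u,\eta_\varepsilon)_{L^2(\Gamma_\varepsilon^i)}$, I expand by the same Lemma~\ref{L:Ave_BiL2_Bo} and bound $|(Mu,\eta)_{L^2(\Gamma)}|\le c\varepsilon^{-1/2}\|u\|_{L^2(\Omega_\varepsilon)}\|\eta\|_{L^2(\Gamma)}$ via \eqref{E:Ave_Lp_Surf}; multiplying by $\varepsilon$ gives precisely the $\varepsilon^{1/2}\|u\|_{L^2(\Omega_\varepsilon)}|\varepsilon^{-1}\gamma_\varepsilon^i-\gamma^i|$ contribution, while the residual pieces are tamed using the uniform bound $|\varepsilon^{-1}\gamma_\varepsilon^i-\gamma^i|\le c$ from \eqref{E:Fric_Upper}.

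\emph{Main obstacle.} The delicate step is the cross term involving $[(W\eta)\otimes n]_S$: its naive size is only $O(\varepsilon)$ and not $O(\varepsilon^{3/2})$, because the normal component of the stress vector $D(u)\bar n$ is not small on $\Omega_\varepsilon$ in general. It is precisely the interplay of \eqref{E:Bo_Imp} and \eqref{E:Bo_Slip} through the tangential stress estimate \eqref{E:Poin_Str} (which underlies Lemma~\ref{L:Ave_BiH1_TN}) that gains the extra half-power of $\varepsilon$, at the price of requiring the full $\|u\|_{H^2(\Omega_\varepsilon)}$-norm; this is the reason that $\|u\|_{H^2(\Omega_\varepsilon)}$ appears in $R_\varepsilon^a(u)$ and why the hypothesis $u\in H^2(\Omega_\varepsilon)^3$ with both slip conditions is essential. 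All the remaining pieces scale natively in $\varepsilon^{3/2}$ and demand only $H^1$-regularity.
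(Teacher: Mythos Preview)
Your proposal is correct and follows essentially the same approach as the paper: both replace $\nabla\eta_\varepsilon$ by $\overline{F(\eta)}$ via \eqref{E:Test_Dom}, decompose $F(\eta)$ using \eqref{E:Grad_W} into the tangential--tangential piece $P(\nabla_\Gamma\eta)P$, the cross piece $(W\eta)\otimes n$, and the normal piece $g^{-1}(\eta\cdot\nabla_\Gamma g)Q$, and then invoke Lemmas~\ref{L:Ave_BiH1_TT}, \ref{L:Ave_BiH1_TN}, and \ref{L:Ave_BiH1_NN} respectively. The only cosmetic difference is in the bookkeeping of the friction terms: the paper keeps the full coefficient $\gamma_\varepsilon^i$ through the replacement steps $\eta_\varepsilon\to\bar\eta\to(M_\tau u,\eta)_{L^2(\Gamma)}$ (its $K_1,K_2$) and isolates the defect $(\gamma_\varepsilon^i-\varepsilon\gamma^i)$ only at the end (its $K_3$), whereas you split $\gamma_\varepsilon^i=\varepsilon\gamma^i+\varepsilon(\varepsilon^{-1}\gamma_\varepsilon^i-\gamma^i)$ first---both lead to the same bounds.
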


\begin{proof}
  Let $F(\eta)$ be the matrix given by \eqref{E:Test_Aux} and
  \begin{align*}
    J_1 &:= \bigl(D(u),D(\eta_\varepsilon)\bigr)_{L^2(\Omega_\varepsilon)}-\Bigl(D(u),\overline{F(\eta)}\Bigr)_{L^2(\Omega_\varepsilon)}, \\
    J_2 &:= \Bigl(D(u),\overline{F(\eta)}\Bigr)_{L^2(\Omega_\varepsilon)} \\
    &\qquad\qquad -\varepsilon\left\{\bigl(gD_\Gamma(M_\tau u),D_\Gamma(\eta)\bigr)_{L^2(\Gamma)}+\bigl(M_\tau u\cdot\nabla_\Gamma g,g^{-1}(\eta\cdot\nabla_\Gamma g)\bigr)_{L^2(\Gamma)}\right\}.
  \end{align*}
  We also define
  \begin{gather*}
    K_1 := \sum_{i=0,1}\gamma_\varepsilon^i\{(u,\eta_\varepsilon)_{L^2(\Gamma_\varepsilon^i)}-(u,\bar{\eta})_{L^2(\Gamma_\varepsilon^i)}\}, \\
    K_2 := \sum_{i=0,1}\gamma_\varepsilon^i\{(u,\bar{\eta})_{L^2(\Gamma_\varepsilon^i)}-(M_\tau u,\eta)_{L^2(\Gamma)}\}, \quad K_3 := \sum_{i=0,1}(\gamma_\varepsilon^i-\varepsilon\gamma^i)(M_\tau u,\eta)_{L^2(\Gamma)}
  \end{gather*}
  so that $a_\varepsilon(u,\eta_\varepsilon)-\varepsilon a_g(M_\tau u,\eta) = 2\nu(J_1+J_2)+K_1+K_2+K_3$.
  Let us estimate each term on the right-hand side.
  Since $D(u)$ is symmetric, $D(u):D(\eta_\varepsilon) = D(u):\nabla\eta_\varepsilon$ in $\Omega_\varepsilon$.
  By this equality and \eqref{E:Test_Dom} we have
  \begin{align} \label{Pf_ApBi:I1}
    |J_1| \leq \|D(u)\|_{L^2(\Omega_\varepsilon)}\left\|\nabla\eta_\varepsilon-\overline{F(\eta)}\right\|_{L^2(\Omega_\varepsilon)} \leq c\varepsilon^{3/2}\|u\|_{H^1(\Omega_\varepsilon)}\|\eta\|_{H^1(\Gamma)}.
  \end{align}
  Next we deal with $J_2$.
  Since $\eta\cdot n=0$ on $\Gamma$ we can apply \eqref{E:Grad_W} to get
  \begin{align*}
    F(\eta) = A+v\otimes n+\xi Q, \quad A := P(\nabla_\Gamma\eta)P, \quad v := W\eta, \quad \xi := g^{-1}(\eta\cdot\nabla_\Gamma g)
  \end{align*}
  on $\Gamma$.
  Using this decomposition and $D_\Gamma(M_\tau u):D_\Gamma(\eta)=D_\Gamma(M_\tau u):A$ on $\Gamma$ by the symmetry of $D_\Gamma(M_\tau u)$, we split $J_2=J_2^1+J_2^2+J_2^3$ into
  \begin{gather*}
    J_2^1 := \Bigl(D(u),\overline{A}\Bigr)_{L^2(\Omega_\varepsilon)}-\varepsilon(gD_\Gamma(M_\tau u),A)_{L^2(\Gamma)}, \\
    J_2^2 := \Bigl(D(u),\overline{\xi Q}\Bigr)_{L^2(\Omega_\varepsilon)}-\varepsilon(M_\tau u\cdot\nabla_\Gamma g,\xi)_{L^2(\Gamma)}, \quad J_2^3 := (D(u),\bar{v}\otimes\bar{n})_{L^2(\Omega_\varepsilon)}.
  \end{gather*}
  Since $u$ and $A$ satisfies the conditions of Lemma~\ref{L:Ave_BiH1_TT}, we can use \eqref{E:Ave_BiH1_TT} to get
  \begin{align*}
    |J_2^1| \leq c\varepsilon^{3/2}\|u\|_{H^1(\Omega_\varepsilon)}\|A\|_{L^2(\Gamma)} \leq c\varepsilon^{3/2}\|u\|_{H^1(\Omega_\varepsilon)}\|\eta\|_{H^1(\Gamma)}.
  \end{align*}
  Noting that $u$ satisfies \eqref{E:Bo_Imp} on $\Gamma_\varepsilon$, we deduce from \eqref{E:Ave_BiH1_NN} that
  \begin{align*}
    |J_2^2| \leq c\varepsilon^{3/2}\|u\|_{H^1(\Omega_\varepsilon)}\|\xi\|_{L^2(\Gamma)} \leq c\varepsilon^{3/2}\|u\|_{H^1(\Omega_\varepsilon)}\|\eta\|_{L^2(\Gamma)}.
  \end{align*}
  Also, since $v=W\eta\in H^1(\Gamma,T\Gamma)$ and $u$ satisfies \eqref{E:Bo_Slip} on $\Gamma_\varepsilon$, by \eqref{E:Ave_BiH1_TN} we have
  \begin{align*}
    |J_2^3| \leq c\varepsilon^{3/2}\|u\|_{H^2(\Omega_\varepsilon)}\|v\|_{L^2(\Gamma)} \leq c\varepsilon^{3/2}\|u\|_{H^2(\Omega_\varepsilon)}\|\eta\|_{L^2(\Gamma)}.
  \end{align*}
  From the above three estimates it follows that
  \begin{align} \label{Pf_ApBi:I2}
    |J_2| \leq |J_2^1|+|J_2^2|+|J_2^3| \leq c\varepsilon^{3/2}\|u\|_{H^2(\Omega_\varepsilon)}\|\eta\|_{H^1(\Gamma)}.
  \end{align}
  Let us estimate $K_1$, $K_2$, and $K_3$.
  To $K_1$ we apply \eqref{E:Fric_Upper}, \eqref{E:Poin_Bo}, and \eqref{E:Test_Bo} to get
  \begin{align} \label{Pf_ApBi:J1}
    |K_1| \leq c\varepsilon\|u\|_{L^2(\Gamma_\varepsilon)}\|\eta_\varepsilon-\bar{\eta}\|_{L^2(\Gamma_\varepsilon)} \leq c\varepsilon^{3/2}\|u\|_{H^1(\Omega_\varepsilon)}\|\eta\|_{H^1(\Gamma)}.
  \end{align}
  Also, since $\eta$ is tangential on $\Gamma$, we have $M_\tau u\cdot\eta=Mu\cdot\eta$ on $\Gamma$ and thus
  \begin{align} \label{Pf_ApBi:J2}
    |K_2| \leq c\varepsilon\sum_{i=0,1}\left|(u,\bar{\eta})_{L^2(\Gamma_\varepsilon^i)}-(Mu,\eta)_{L^2(\Gamma)}\right| \leq c\varepsilon^{3/2}\|u\|_{H^1(\Omega_\varepsilon)}\|\eta\|_{L^2(\Gamma)}
  \end{align}
  by \eqref{E:Fric_Upper} and \eqref{E:Ave_BiL2_Bo}.
  To $K_3$ we just use \eqref{E:Ave_Lp_Surf} to obtain
  \begin{align} \label{Pf_ApBi:J3}
    \begin{aligned}
      |K_3| &\leq c\varepsilon^{-1/2}\|u\|_{L^2(\Omega_\varepsilon)}\|\eta\|_{L^2(\Gamma)}\sum_{i=0,1}|\gamma_\varepsilon^i-\varepsilon\gamma^i| \\
      &= c\varepsilon^{1/2}\|u\|_{L^2(\Omega_\varepsilon)}\|\eta\|_{L^2(\Gamma)}\sum_{i=0,1}|\varepsilon^{-1}\gamma_\varepsilon^i-\gamma^i|.
    \end{aligned}
  \end{align}
  Finally, we deduce from \eqref{Pf_ApBi:I1}--\eqref{Pf_ApBi:J3} that
  \begin{align*}
    |a_\varepsilon(u,\eta_\varepsilon)-\varepsilon a_g(M_\tau u,\eta)| &\leq c(|J_1|+|J_2|+|K_1|+|K_2|+|K_3|) \\
    &\leq cR_\varepsilon^a(u)\|\eta\|_{H^1(\Gamma)},
  \end{align*}
  where $R_\varepsilon^a(u)$ is given by \eqref{E:ApBi_Re}.
  Hence \eqref{E:Appr_Bili} is valid.
\end{proof}

\begin{lemma} \label{L:Appr_Tri}
  Let $u_1\in H^2(\Omega_\varepsilon)^3$ satisfy $\mathrm{div}\,u_1=0$ in $\Omega_\varepsilon$ and \eqref{E:Bo_Imp}--\eqref{E:Bo_Slip} on $\Gamma_\varepsilon$.
  Also, let $u_2\in H^1(\Omega_\varepsilon)^3$ satisfy \eqref{E:Bo_Imp} on $\Gamma_\varepsilon^0$ or on $\Gamma_\varepsilon^1$.
  Then
  \begin{align} \label{E:Appr_Tri}
    |b_\varepsilon(u_1,u_2,\eta_\varepsilon)-\varepsilon b_g(M_\tau u_1,M_\tau u_2,\eta)| \leq cR_\varepsilon^b(u_1,u_2)\|\eta\|_{H^1(\Gamma)}
  \end{align}
  for all $\eta\in \mathcal{V}_g$ with $\eta_\varepsilon:=\mathbb{L}_\varepsilon E_\varepsilon\eta\in H^1(\Omega_\varepsilon)^3\cap L_\sigma^2(\Omega_\varepsilon)$, where
  \begin{multline} \label{E:ApTr_Re}
    R_\varepsilon^b(u_1,u_2) := \varepsilon^{3/2}\|u_1\otimes u_2\|_{L^2(\Omega_\varepsilon)}+\varepsilon\|u_1\|_{H^1(\Omega_\varepsilon)}\|u_2\|_{H^1(\Omega_\varepsilon)} \\
    +\left(\varepsilon\|u_1\|_{H^2(\Omega_\varepsilon)}+\varepsilon^{1/2}\|u_1\|_{L^2(\Omega_\varepsilon)}^{1/2}\|u_1\|_{H^2(\Omega_\varepsilon)}^{1/2}\right)\|u_2\|_{L^2(\Omega_\varepsilon)}
  \end{multline}
  and $c>0$ is a constant independent of $\varepsilon$, $u_1$, $u_2$, and $\eta$.
\end{lemma}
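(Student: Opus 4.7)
The plan is to mimic the structure of the proof of Lemma~\ref{L:Appr_Bili}, replacing the three-dimensional test field $\eta_\varepsilon$ by the constant extension of the natural candidate $F(\eta)$ given by \eqref{E:Test_Aux}, and then decomposing $F(\eta)$ into three pieces that can be handled by the trilinear averaging estimates already established in Section~\ref{SS:Ave_BT}. Writing $b_\varepsilon(u_1,u_2,\eta_\varepsilon) = -(u_1\otimes u_2,\nabla\eta_\varepsilon)_{L^2(\Omega_\varepsilon)}$ and appealing to \eqref{E:Test_Dom} of Lemma~\ref{L:Const_Test}, the first step is to replace $\nabla\eta_\varepsilon$ by $\overline{F(\eta)}$ at the cost of
\begin{equation*}
  \|u_1\otimes u_2\|_{L^2(\Omega_\varepsilon)}\left\|\nabla\eta_\varepsilon-\overline{F(\eta)}\right\|_{L^2(\Omega_\varepsilon)} \leq c\varepsilon^{3/2}\|u_1\otimes u_2\|_{L^2(\Omega_\varepsilon)}\|\eta\|_{H^1(\Gamma)},
\end{equation*}
which accounts for the first term of $R_\varepsilon^b$ in \eqref{E:ApTr_Re}.

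Next I decompose $F(\eta) = A + v\otimes n + \xi Q$ on $\Gamma$ with $A := P(\nabla_\Gamma \eta)P$, $v := W\eta$, and $\xi := g^{-1}(\eta\cdot\nabla_\Gamma g)$, exactly as in the proof of Lemma~\ref{L:Appr_Bili} (using $\eta\cdot n=0$ and \eqref{E:Grad_W}). The matrix $A$ satisfies \eqref{E:A_Cond} since $P^2=P$, and $u_1$ satisfies all hypotheses needed, so Lemma~\ref{L:Ave_TrT} gives
\begin{equation*}
  \left|\bigl(u_1\otimes u_2,\overline{A}\bigr)_{L^2(\Omega_\varepsilon)} - \varepsilon\bigl(g(M_\tau u_1)\otimes M_\tau u_2,A\bigr)_{L^2(\Gamma)}\right| \leq cR_\varepsilon(u_1,u_2)\|\eta\|_{H^1(\Gamma)},
\end{equation*}
where $R_\varepsilon(u_1,u_2)$ is given by \eqref{E:Ave_TrT_Re}; this produces the remaining terms of $R_\varepsilon^b$. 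The crucial algebraic point is that, because $M_\tau u_1$ and $M_\tau u_2$ are tangential on $\Gamma$, the identity $(M_\tau u_1\otimes M_\tau u_2):A = (M_\tau u_1\otimes M_\tau u_2):\nabla_\Gamma\eta$ holds (each $P$-insertion acts as the identity on the tangential factors), so the surface integral equals $-\varepsilon b_g(M_\tau u_1,M_\tau u_2,\eta)$, matching the desired limit form.

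For the two remaining pieces I apply Lemma~\ref{L:Ave_TrN}. The $v\otimes n$-piece is direct since $v=W\eta\in H^1(\Gamma,T\Gamma)$ with $\|v\|_{H^1(\Gamma)}\leq c\|\eta\|_{H^1(\Gamma)}$; the $\xi Q$-piece is rewritten as $(\xi\bar n)\otimes\bar n$ and handled by the same lemma applied to the surface field $\xi n\in H^1(\Gamma)^3$ (inspecting the proof of Lemma~\ref{L:Ave_TrN}, one sees that tangentiality of the surface factor is never used, only $v\in H^1(\Gamma)^3$). Each of these two contributions is bounded by $c\varepsilon\|u_1\|_{H^1(\Omega_\varepsilon)}\|u_2\|_{H^1(\Omega_\varepsilon)}\|\eta\|_{H^1(\Gamma)}$, absorbed by the second term of $R_\varepsilon^b$.

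Summing the four estimates by the triangle inequality yields \eqref{E:Appr_Tri}. I do not anticipate a genuine technical obstacle, since the substantive work has been done in Lemmas~\ref{L:Const_Test}, \ref{L:Ave_TrT}, and~\ref{L:Ave_TrN}; the main things to verify carefully are the algebraic identification of $\bigl(g(M_\tau u_1)\otimes M_\tau u_2,A\bigr)_{L^2(\Gamma)}$ with $-b_g(M_\tau u_1,M_\tau u_2,\eta)$ via tangentiality of the averages, and the observation that Lemma~\ref{L:Ave_TrN} applies to the non-tangential field $\xi n$.
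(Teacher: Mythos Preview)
Your proposal is correct and follows essentially the same route as the paper. The only cosmetic difference is that the paper bundles the two normal-direction pieces into a single term by writing $F(\eta)=A+v\otimes n$ with $v:=W\eta+g^{-1}(\eta\cdot\nabla_\Gamma g)n\in H^1(\Gamma)^3$ and applying Lemma~\ref{L:Ave_TrN} once, whereas you keep $v\otimes n$ (with $v=W\eta$) and $\xi Q=(\xi n)\otimes n$ separate and invoke Lemma~\ref{L:Ave_TrN} twice; since $\xi Q=(\xi n)\otimes n$ these are literally the same decomposition, and your observation that the proof of Lemma~\ref{L:Ave_TrN} never uses tangentiality of the surface factor is exactly what the paper exploits as well.
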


\begin{proof}
  Let $F(\eta)$ be the matrix given by \eqref{E:Test_Aux}.
  By \eqref{E:Def_Tri_Bo} and \eqref{E:Test_Dom} we have
  \begin{multline} \label{Pf_ApTr:TAux}
    \left|b_\varepsilon(u_1,u_2,\eta_\varepsilon)+\Bigl(u_1\otimes u_2,\overline{F(\eta)}\Bigr)_{L^2(\Omega_\varepsilon)}\right| \\
    \leq \|u_1\otimes u_2\|_{L^2(\Omega_\varepsilon)}\left\|\nabla\eta_\varepsilon-\overline{F(\eta)}\right\|_{L^2(\Omega_\varepsilon)} \\
    \leq c\varepsilon^{3/2}\|u_1\otimes u_2\|_{L^2(\Omega_\varepsilon)}\|\eta\|_{H^1(\Gamma)}.
  \end{multline}
  Let $A:=P(\nabla_\Gamma\eta)P$ and $v:=W\eta+g^{-1}(\eta\cdot\nabla_\Gamma g)n$ so that $F(\eta) = A+v\otimes n$ on $\Gamma$ by \eqref{E:Grad_W}.
  Since $u_1$ and $A$ satisfies the conditions of Lemma~\ref{L:Ave_TrT},
  \begin{multline} \label{Pf_ApTr:TT}
    \left|\Bigl(u_1\otimes u_2,\overline{A}\Bigr)_{L^2(\Omega_\varepsilon)}-\varepsilon(g(M_\tau u_1)\otimes(M_\tau u_2),A)_{L^2(\Gamma)}\right| \\
    \leq cR_\varepsilon(u_1,u_2)\|A\|_{L^2(\Gamma)} \leq cR_\varepsilon(u_1,u_2)\|\eta\|_{H^1(\Gamma)}
  \end{multline}
  by \eqref{E:Ave_TrT}.
  Here $R_\varepsilon(u_1,u_2)$ is given by \eqref{E:Ave_TrT_Re}.
  Also, since $u_2$ satisfies \eqref{E:Bo_Imp} on $\Gamma_\varepsilon^0$ or on $\Gamma_\varepsilon^1$ and $v\in H^1(\Gamma)^3$, the inequality \eqref{E:Ave_TrN} yields
  \begin{align} \label{Pf_ApTr:TN}
    \begin{aligned}
      |(u_1\otimes u_2,\bar{v}\otimes\bar{n})_{L^2(\Omega_\varepsilon)}| &\leq c\varepsilon\|u_1\|_{H^1(\Omega_\varepsilon)}\|u_2\|_{H^1(\Omega_\varepsilon)}\|v\|_{H^1(\Gamma)} \\
      &\leq c\varepsilon\|u_1\|_{H^1(\Omega_\varepsilon)}\|u_2\|_{H^1(\Omega_\varepsilon)}\|\eta\|_{H^1(\Gamma)}.
    \end{aligned}
  \end{align}
  Noting that $F(\eta)=A+v\otimes n$ on $\Gamma$ we deduce from \eqref{Pf_ApTr:TAux}--\eqref{Pf_ApTr:TN} that
  \begin{align} \label{Pf_ApTr:Goal}
    \left|b_\varepsilon(u_1,u_2,\eta_\varepsilon)+\varepsilon(g(M_\tau u_1)\otimes(M_\tau u_2),A)_{L^2(\Gamma)}\right| \leq cR_\varepsilon^b(u_1,u_2)\|\eta\|_{H^1(\Gamma)}
  \end{align}
  with $R_\varepsilon^b(u_1,u_2)$ given by \eqref{E:ApTr_Re}.
  Now we observe that
  \begin{align*}
    (M_\tau u_1)\otimes(M_\tau u_2):A = (M_\tau u_1)\otimes(M_\tau u_2):\nabla_\Gamma\eta \quad\text{on}\quad \Gamma
  \end{align*}
  by $A=P(\nabla_\Gamma\eta)P$ and $PM_\tau u_i=M_\tau u_i$ on $\Gamma$ for $i=1,2$.
  Hence
  \begin{align*}
    (g(M_\tau u_1)\otimes(M_\tau u_2),A)_{L^2(\Gamma)} = -b_g(M_\tau u_1,M_\tau u_2,\eta)
  \end{align*}
  by \eqref{E:Def_Tri_Surf} and the inequality \eqref{E:Appr_Tri} follows from \eqref{Pf_ApTr:Goal}.
\end{proof}

Now let us derive a weak formulation for $M_\tau u^\varepsilon$ from \eqref{E:NS_Weak}.

\begin{lemma} \label{L:Mu_Weak}
  Under the assumptions of Theorem~\ref{T:Est_Ue}, let $\varepsilon\in(0,\varepsilon'_1)$ and $u^\varepsilon$ be the global strong solution to \eqref{E:NS_Eq}--\eqref{E:NS_In} given by Theorem~\ref{T:Est_Ue}.
  Then
  \begin{align*}
    M_\tau u^\varepsilon \in C([0,\infty);H^1(\Gamma,T\Gamma))\cap H_{loc}^1([0,\infty);L^2(\Gamma,T\Gamma))
  \end{align*}
  and for all $\eta\in L^2(0,T;\mathcal{V}_g)$, $T>0$ we have
  \begin{multline} \label{E:Mu_Weak}
    \int_0^T\{(g\partial_tM_\tau u^\varepsilon,\eta)_{L^2(\Gamma)}+a_g(M_\tau u^\varepsilon,\eta)+b_g(M_\tau u^\varepsilon,M_\tau u^\varepsilon,\eta)\}\,dt \\
    = \int_0^T(gM_\tau\mathbb{P}_\varepsilon f^\varepsilon,\eta)_{L^2(\Gamma)}\,dt+R_\varepsilon^1(\eta).
  \end{multline}
  Here the residual term $R_\varepsilon^1(\eta)$ satisfies
  \begin{align} \label{E:Mu_Weak_Re}
    |R_\varepsilon^1(\eta)| \leq c\left(\varepsilon^{\alpha/4}+\sum_{i=0,1}|\varepsilon^{-1}\gamma_\varepsilon^i-\gamma^i|\right)(1+T)^{1/2}\|\eta\|_{L^2(0,T;H^1(\Gamma))}
  \end{align}
  with a constant $c>0$ independent of $\varepsilon$, $u^\varepsilon$, $\eta$, and $T$.
\end{lemma}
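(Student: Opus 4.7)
The plan is to insert into the bulk weak formulation \eqref{E:NS_Weak} the test function $\eta_\varepsilon(t) := \mathbb{L}_\varepsilon E_\varepsilon \eta(t)$ associated to an arbitrary $\eta \in L^2(0,T;\mathcal{V}_g)$, then to convert each resulting bulk functional into its surface counterpart by means of the average-operator machinery of Section~\ref{S:Ave} and the approximation Lemmas~\ref{L:Const_Test}, \ref{L:Appr_Bili}, \ref{L:Appr_Tri}. Before carrying this out, I first record the regularity claim: since $u^\varepsilon \in C([0,\infty);\mathcal{V}_\varepsilon)\cap H^1_{loc}([0,\infty);\mathcal{H}_\varepsilon)$, applying Lemma~\ref{L:Ave_Wmp} with $m=1$ together with $M_\tau = PM$ and $P\in C^4(\Gamma)^{3\times 3}$ yields $M_\tau u^\varepsilon \in C([0,\infty);H^1(\Gamma,T\Gamma))$, while Lemma~\ref{L:Ave_Dt} gives $\partial_t M_\tau u^\varepsilon = M_\tau \partial_t u^\varepsilon \in L^2_{loc}([0,\infty);L^2(\Gamma,T\Gamma))$. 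A subtlety is that, under condition (A3) of Assumption~\ref{Assump_2}, $\mathcal{V}_\varepsilon$ is strictly smaller than $H^1(\Omega_\varepsilon)^3 \cap L^2_\sigma(\Omega_\varepsilon)$ and $\eta_\varepsilon$ is not a priori admissible in \eqref{E:NS_Weak}; however, by Remark~\ref{R:Recov_NS} the strong solution $u^\varepsilon$ satisfies the original momentum equation in $L^2(\Omega_\varepsilon)^3$, so testing against $\eta_\varepsilon$ (which is solenoidal and impermeable) eliminates both the pressure gradient and the $\mathcal{R}_g$ contribution, reproducing \eqref{E:NS_Weak} with $\varphi = \eta_\varepsilon$ in all three cases of Assumption~\ref{Assump_2}.

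For the conversion I will handle the four terms of \eqref{E:NS_Weak} separately. The bilinear and trilinear terms $a_\varepsilon(u^\varepsilon,\eta_\varepsilon)$ and $b_\varepsilon(u^\varepsilon,u^\varepsilon,\eta_\varepsilon)$ are directly covered by Lemmas~\ref{L:Appr_Bili} and~\ref{L:Appr_Tri}, which produce $\varepsilon a_g(M_\tau u^\varepsilon,\eta)$ and $\varepsilon b_g(M_\tau u^\varepsilon,M_\tau u^\varepsilon,\eta)$ up to residuals controlled by $R_\varepsilon^a(u^\varepsilon)\|\eta\|_{H^1(\Gamma)}$ and $R_\varepsilon^b(u^\varepsilon,u^\varepsilon)\|\eta\|_{H^1(\Gamma)}$; admissibility of these lemmas holds since $u^\varepsilon(t)\in D(A_\varepsilon)$ satisfies \eqref{E:Bo_Imp}--\eqref{E:Bo_Slip}. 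For the time-derivative term I will split $(\partial_t u^\varepsilon,\eta_\varepsilon)_{L^2(\Omega_\varepsilon)} = (\partial_t u^\varepsilon,\bar\eta)_{L^2(\Omega_\varepsilon)} + (\partial_t u^\varepsilon,\eta_\varepsilon - \bar\eta)_{L^2(\Omega_\varepsilon)}$, bound the difference summand by $c\varepsilon^{3/2}\|\partial_t u^\varepsilon\|_{L^2(\Omega_\varepsilon)}\|\eta\|_{H^1(\Gamma)}$ via \eqref{E:Test_Dom}, and rewrite the main summand as $(\overline{P}\partial_t u^\varepsilon,\bar\eta)_{L^2(\Omega_\varepsilon)}$ (because $\bar\eta$ is tangential), then apply Lemma~\ref{L:Ave_BiL2_Dom} componentwise together with $M(\overline{P}\partial_t u^\varepsilon)=PM\partial_t u^\varepsilon = \partial_t M_\tau u^\varepsilon$ to extract $\varepsilon(g\partial_t M_\tau u^\varepsilon,\eta)_{L^2(\Gamma)}$ up to error $c\varepsilon^{3/2}\|\partial_t u^\varepsilon\|_{L^2(\Omega_\varepsilon)}\|\eta\|_{L^2(\Gamma)}$. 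The forcing term $(\mathbb{P}_\varepsilon f^\varepsilon,\eta_\varepsilon)_{L^2(\Omega_\varepsilon)}$ is treated identically, yielding $\varepsilon(gM_\tau \mathbb{P}_\varepsilon f^\varepsilon,\eta)_{L^2(\Gamma)}$ up to $c\varepsilon^{3/2}\|\mathbb{P}_\varepsilon f^\varepsilon\|_{L^2(\Omega_\varepsilon)}\|\eta\|_{H^1(\Gamma)}$. Dividing the resulting identity through by $\varepsilon$ and integrating over $[0,T]$ will produce \eqref{E:Mu_Weak}.

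It then remains to bound the five residual terms in $R_\varepsilon^1(\eta)$. By Cauchy--Schwarz each enters as the $L^2(0,T)$-norm of a coefficient multiplied by $\|\eta\|_{L^2(0,T;H^1(\Gamma))}$; plugging in the uniform estimates \eqref{E:Est_Ue}--\eqref{E:Est_DtUe} shows that the time-derivative, forcing, bilinear, and first three summands of $R_\varepsilon^b$ all contribute at most $c\varepsilon^{\alpha/2}(1+T)^{1/2}$, together with an additional $c\sum_i|\varepsilon^{-1}\gamma_\varepsilon^i-\gamma^i|T^{1/2}$ coming from the friction-coefficient mismatch inside $R_\varepsilon^a$. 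The decisive term is $\varepsilon^{1/2}\|u^\varepsilon\|_{L^2}^{3/2}\|u^\varepsilon\|_{H^2}^{1/2}$ from $R_\varepsilon^b(u^\varepsilon,u^\varepsilon)$: after dividing by $\varepsilon$, applying the pointwise bound $\|u^\varepsilon\|_{L^2}^{3/2}\leq c\varepsilon^{3/4}$, and using Cauchy--Schwarz twice together with $\int_0^T\|u^\varepsilon\|_{H^2}^2\,ds\leq c\varepsilon^{-1+\alpha}(1+T)$, one arrives at a bound of the form $c\varepsilon^{\alpha/4}(1+T)^{1/2}\|\eta\|_{L^2(0,T;H^1(\Gamma))}$; this is precisely the sharpness that dictates the exponent $\alpha/4$ in \eqref{E:Mu_Weak_Re}. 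The main obstacle I anticipate is the bookkeeping: the residual is a sum of roughly ten contributions with distinct $\varepsilon$-scalings and distinct time-integrability, and one must track each carefully to confirm that all are dominated, uniformly in $T$ through a factor $(1+T)^{1/2}$, by the right-hand side of \eqref{E:Mu_Weak_Re}. The admissibility issue under (A3) is a secondary but essential point requiring Remark~\ref{R:Recov_NS}.
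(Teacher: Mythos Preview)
Your proposal is correct and follows essentially the same route as the paper: insert $\eta_\varepsilon=\mathbb{L}_\varepsilon E_\varepsilon\eta$ into \eqref{E:NS_Weak}, convert the four terms via Lemmas~\ref{L:Ave_BiL2_Dom}, \ref{L:Appr_Bili}, \ref{L:Appr_Tri} and \eqref{E:Test_Dom}, divide by~$\varepsilon$, and bound the residuals using \eqref{E:Est_Ue}--\eqref{E:Est_DtUe}, with the $\varepsilon^{\alpha/4}$ exponent forced by the $\varepsilon^{1/2}\|u^\varepsilon\|_{L^2}^{3/2}\|u^\varepsilon\|_{H^2}^{1/2}$ summand of $R_\varepsilon^b$. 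The only divergence is in the admissibility argument under~(A3): the paper writes $\eta_\varepsilon=\mathbb{P}_\varepsilon\eta_\varepsilon+w_\varepsilon$ with $w_\varepsilon\in\mathcal{R}_g$, substitutes the admissible $\mathbb{P}_\varepsilon\eta_\varepsilon$ into \eqref{E:NS_Weak}, and then checks directly that each $w_\varepsilon$ contribution vanishes (using $D(w_\varepsilon)=0$, $\gamma_\varepsilon^i=0$, $\partial_tu^\varepsilon,\mathbb{P}_\varepsilon f^\varepsilon\in\mathcal{H}_\varepsilon$, and the antisymmetry of $\nabla w_\varepsilon$); your route via Remark~\ref{R:Recov_NS} reaches the same identity and is an acceptable alternative.
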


\begin{proof}
  The space-time regularity of $M_\tau u^\varepsilon$ follows from that of $u^\varepsilon$ and Lemmas~\ref{L:Ave_Lp}, \ref{L:Ave_Dt}, and~\ref{L:Ave_Wmp}.
  Let us derive \eqref{E:Mu_Weak}.
  For $\eta\in L^2(0,T;\mathcal{V}_g)$, $T>0$ let
  \begin{align*}
    \eta_\varepsilon := \mathbb{L}_\varepsilon E_\varepsilon\eta \in L^2(0,T;H^1(\Omega_\varepsilon)^3\cap L_\sigma^2(\Omega_\varepsilon)).
  \end{align*}
  In what follows, we suppress the time variable $t\in(0,T)$.
  If the condition (A1) or (A2) of Assumption~\ref{Assump_2} is satisfied, then $\eta_\varepsilon\in\mathcal{V}_\varepsilon$ by \eqref{E:Def_Heps} and we can take $\eta_\varepsilon$ as a test function in the weak formulation \eqref{E:NS_Weak}.
  On the other hand, if the condition (A3) is satisfied, $\mathcal{V}_\varepsilon$ may be smaller than $H^1(\Omega_\varepsilon)^3\cap L_\sigma^2(\Omega_\varepsilon)$ and we cannot substitute $\eta_\varepsilon$ for $\varphi$ in \eqref{E:NS_Weak}.
  In this case, we use the orthogonal projection $\mathbb{P}_\varepsilon$ onto $\mathcal{H}_\varepsilon$ and consider $\mathbb{P}_\varepsilon\eta_\varepsilon\in\mathcal{H}_\varepsilon$.
  Then since $\eta_\varepsilon\in L_\sigma^2(\Omega_\varepsilon)=\mathcal{H}_\varepsilon\oplus\mathcal{R}_g$ with $\mathcal{R}_g$ given by \eqref{E:Def_Rg}, there exists $w_\varepsilon(x)=a_\varepsilon\times x+b_\varepsilon\in\mathcal{R}_g$ such that $\eta_\varepsilon=\mathbb{P}_\varepsilon\eta_\varepsilon+w_\varepsilon$.
  By this relation and $\eta_\varepsilon,w_\varepsilon\in H^1(\Omega_\varepsilon)^3$ we get $\mathbb{P}_\varepsilon\eta_\varepsilon=\eta_\varepsilon-w_\varepsilon\in H^1(\Omega_\varepsilon)^3\cap\mathcal{H}_\varepsilon=\mathcal{V}_\varepsilon$ and thus we can substitute it for $\varphi$ in \eqref{E:NS_Weak}.
  In the resulting equality we have
  \begin{align*}
    (\partial_tu^\varepsilon,w_\varepsilon)_{L^2(\Omega_\varepsilon)} = (\mathbb{P}_\varepsilon f^\varepsilon,w_\varepsilon)_{L^2(\Omega_\varepsilon)} = 0
  \end{align*}
  by $\partial_tu^\varepsilon,\mathbb{P}_\varepsilon f^\varepsilon\in\mathcal{H}_\varepsilon$ and $w_\varepsilon\in\mathcal{H}_\varepsilon^\perp$.
  Moreover, since we assume $\gamma_\varepsilon^0=\gamma_\varepsilon^1=0$ in the condition (A3) and $D(w_\varepsilon)=0$ in $\Omega_\varepsilon$ by $w_\varepsilon(x)=a_\varepsilon\times x+b_\varepsilon$,
  \begin{align*}
    a_\varepsilon(u^\varepsilon,w_\varepsilon) = 2\nu\bigl(D(u^\varepsilon),D(w_\varepsilon)\bigr)_{L^2(\Omega_\varepsilon)} = 0.
  \end{align*}
  We also get $b_\varepsilon(u^\varepsilon,u^\varepsilon,w_\varepsilon)=-(u^\varepsilon,a_\varepsilon\times u^\varepsilon)_{L^2(\Omega_\varepsilon)}=0$ by direct calculations.
  Hence all terms including $w_\varepsilon$ vanish in \eqref{E:NS_Weak} with $\varphi=\mathbb{P}_\varepsilon\eta_\varepsilon=\eta_\varepsilon-w_\varepsilon$.
  Therefore, under any condition of (A1), (A2), and (A3), we deduce from \eqref{E:NS_Weak} that
  \begin{align*}
    \int_0^T\{(\partial_tu^\varepsilon,\eta_\varepsilon)_{L^2(\Omega_\varepsilon)}+a_\varepsilon(u^\varepsilon,\eta_\varepsilon)+b_\varepsilon(u^\varepsilon,u^\varepsilon,\eta_\varepsilon)\}\,dt = \int_0^T(\mathbb{P}_\varepsilon f^\varepsilon,\eta_\varepsilon)_{L^2(\Omega_\varepsilon)}\,dt
  \end{align*}
  with $\eta_\varepsilon=\mathbb{L}_\varepsilon E_\varepsilon\eta$ for $\eta\in L^2(0,T;\mathcal{V}_g)$.
  We divide both sides of this equality by $\varepsilon$ and replace each term of the resulting equality by the corresponding term of \eqref{E:Mu_Weak}.
  Then we get \eqref{E:Mu_Weak} with $R_\varepsilon^1(\eta):=\varepsilon^{-1}(J_1+J_2+J_3+J_4)$, where
  \begin{align*}
    J_1 &:= -\int_0^T(\partial_tu^\varepsilon,\eta_\varepsilon)_{L^2(\Omega_\varepsilon)}\,dt+\varepsilon\int_0^T(g\partial_tM_\tau u^\varepsilon,\eta)_{L^2(\Gamma)}\,dt, \\
    J_2 &:= -\int_0^Ta_\varepsilon(u^\varepsilon,\eta_\varepsilon)\,dt+\varepsilon\int_0^Ta_g(M_\tau u^\varepsilon,\eta)\,dt, \\
    J_3 &:= -\int_0^T b_\varepsilon(u^\varepsilon,u^\varepsilon,\eta_\varepsilon)\,dt+\varepsilon\int_0^Tb_g(M_\tau u^\varepsilon,M_\tau u^\varepsilon,\eta)\,dt, \\
    J_4 &:= -\int_0^T(\mathbb{P}_\varepsilon f^\varepsilon,\eta_\varepsilon)_{L^2(\Omega_\varepsilon)}\,dt+\varepsilon\int_0^T(gM_\tau\mathbb{P}_\varepsilon f^\varepsilon,\eta)_{L^2(\Gamma)}\,dt.
  \end{align*}
  Let us estimate these differences.
  First note that
  \begin{align*}
    (g\partial_tM_\tau u^\varepsilon,\eta)_{L^2(\Gamma)} = (gM_\tau(\partial_tu^\varepsilon),\eta)_{L^2(\Gamma)} = (gM(\partial_tu^\varepsilon),\eta)_{L^2(\Gamma)}
  \end{align*}
  by Lemma~\ref{L:Ave_Dt} and the fact that $\eta$ is tangential on $\Gamma$.
  Thus, by \eqref{E:Ave_BiL2_Dom} and \eqref{E:Test_Bo},
  \begin{align*}
    &|(\partial_tu^\varepsilon,\eta_\varepsilon)_{L^2(\Omega_\varepsilon)}-\varepsilon(g\partial_tM_\tau u^\varepsilon,\eta)_{L^2(\Gamma)}| \\
    &\qquad \leq |(\partial_tu^\varepsilon,\bar{\eta})_{L^2(\Omega_\varepsilon)}-\varepsilon(gM(\partial_tu^\varepsilon),\eta)_{L^2(\Gamma)}|+\|\partial_tu^\varepsilon\|_{L^2(\Omega_\varepsilon)}\|\eta_\varepsilon-\bar{\eta}\|_{L^2(\Omega_\varepsilon)} \\
    &\qquad \leq c\varepsilon^{3/2}\|\partial_tu^\varepsilon\|_{L^2(\Omega_\varepsilon)}\|\eta\|_{L^2(\Gamma)}.
  \end{align*}
  From this inequality, H\"{o}lder's inequality, and \eqref{E:Est_DtUe} it follows that
  \begin{align} \label{Pf_MuW:Dt}
    \begin{aligned}
      |J_1| &\leq c\varepsilon^{3/2}\|\partial_tu^\varepsilon\|_{L^2(0,T;L^2(\Omega_\varepsilon))}\|\eta\|_{L^2(0,T;L^2(\Gamma))} \\
      &\leq c\varepsilon^{1+\alpha/2}(1+T)^{1/2}\|\eta\|_{L^2(0,T;L^2(\Gamma))}.
    \end{aligned}
  \end{align}
  In the same way, we apply \eqref{E:Ave_BiL2_Dom} and \eqref{E:Test_Bo} to $J_4$ and then use \eqref{E:UE_Data} to get
  \begin{align} \label{Pf_MuW:F}
    |J_4| \leq c\varepsilon^{1+\alpha/2}T^{1/2}\|\eta\|_{L^2(0,T;L^2(\Gamma))}.
  \end{align}
  Next we deal with $J_2$.
  By \eqref{E:Appr_Bili} we see that
  \begin{align*}
    |J_2| \leq c\left(\int_0^TR_\varepsilon^a(u^\varepsilon)^2\,dt\right)^{1/2}\|\eta\|_{L^2(0,T;H^1(\Gamma))},
  \end{align*}
  where $R_\varepsilon^a(u^\varepsilon)$ is given by \eqref{E:ApBi_Re}.
  Moreover, by \eqref{E:Est_Ue} we have
  \begin{align*}
    \int_0^TR_\varepsilon^a(u^\varepsilon)^2\,dt &\leq c\left(\varepsilon^3\int_0^T\|u^\varepsilon\|_{H^2(\Omega_\varepsilon)}^2\,dt+\varepsilon\gamma(\varepsilon)^2\int_0^T\|u^\varepsilon\|_{L^2(\Omega_\varepsilon)}^2\,dt\right) \\
    &\leq c\varepsilon^2\{\varepsilon^\alpha+\gamma(\varepsilon)^2\}(1+T)
  \end{align*}
  with $\gamma(\varepsilon):=\sum_{i=0,1}|\varepsilon^{-1}\gamma_\varepsilon^i-\gamma^i|$.
  Therefore,
  \begin{align} \label{Pf_MuW:A}
    |J_2| \leq c\varepsilon\{\varepsilon^{\alpha/2}+\gamma(\varepsilon)\}(1+T)^{1/2}\|\eta\|_{L^2(0,T;H^1(\Gamma))}.
  \end{align}
  Now let us estimate $J_3$.
  By \eqref{E:Appr_Tri} we have
  \begin{align*}
    |J_3| \leq c\left(\int_0^TR_\varepsilon^b(u^\varepsilon,u^\varepsilon)^2\,dt\right)^{1/2}\|\eta\|_{L^2(0,T;H^1(\Gamma))},
  \end{align*}
  where $R_\varepsilon^b(u^\varepsilon,u^\varepsilon)$ is given by \eqref{E:ApTr_Re}.
  To estimate the right-hand side, we see that
  \begin{align*}
    \int_0^T\|u^\varepsilon\|_{H^1(\Omega_\varepsilon)}^4\,dt \leq \left(\sup_{t\in(0,T)}\|u^\varepsilon(t)\|_{H^1(\Omega_\varepsilon)}^2\right)\int_0^T\|u^\varepsilon\|_{H^1(\Omega_\varepsilon)}^2\,dt \leq c\varepsilon^\alpha(1+T)
  \end{align*}
  by \eqref{E:Est_Ue}.
  Using this inequality, \eqref{E:Est_UeUe}, \eqref{Pf_EsSt:L2H2}, and \eqref{Pf_EsSt:L2H2_31} we deduce that
  \begin{align*}
    &\int_0^TR_\varepsilon^b(u^\varepsilon,u^\varepsilon)^2\,dt \\
    &\qquad \leq c\left(\varepsilon^3\int_0^T\|u^\varepsilon\otimes u^\varepsilon\|_{L^2(\Omega_\varepsilon)}^2\,dt+\varepsilon^2\int_0^T\|u^\varepsilon\|_{H^1(\Omega_\varepsilon)}^4\,dt\right. \\
    &\qquad\qquad \left.+\varepsilon^2\int_0^T\|u^\varepsilon\|_{L^2(\Omega_\varepsilon)}^2\|u^\varepsilon\|_{H^2(\Omega_\varepsilon)}^2\,dt+\varepsilon\int_0^T\|u^\varepsilon\|_{L^2(\Omega_\varepsilon)}^3\|u^\varepsilon\|_{H^2(\Omega_\varepsilon)}\,dt\right)\\
    &\qquad \leq c\varepsilon^2(\varepsilon^2+\varepsilon^\alpha+\varepsilon^{\alpha/2})(1+T) \leq c\varepsilon^{2+\alpha/2}(1+T).
  \end{align*}
  Hence we obtain
  \begin{align} \label{Pf_MuW:B}
    |J_3| \leq c\varepsilon^{1+\alpha/4}(1+T)^{1/2}\|\eta\|_{L^2(0,T;H^1(\Gamma))}.
  \end{align}
  Finally, by \eqref{Pf_MuW:Dt}--\eqref{Pf_MuW:B} we observe that
  \begin{align*}
    |R_\varepsilon^1(\eta)| \leq \varepsilon^{-1}\sum_{k=1}^4|J_k| \leq c\{\varepsilon^{\alpha/4}+\varepsilon^{\alpha/2}+\gamma(\varepsilon)\}(1+T)^{1/2}\|\eta\|_{L^2(0,T;H^1(\Gamma))}
  \end{align*}
  and thus \eqref{E:Mu_Weak_Re} holds by $\gamma(\varepsilon)=\sum_{i=0,1}|\varepsilon^{-1}\gamma_\varepsilon^i-\gamma^i|$ and $\varepsilon^{\alpha/2}\leq\varepsilon^{\alpha/4}$.
\end{proof}

\begin{remark} \label{R:Mu_Weak}
  By $u^\varepsilon\in L_{loc}^2([0,\infty);D(A_\varepsilon))$ and Lemma~\ref{L:Ave_Wmp} we also have
  \begin{align*}
    M_\tau u^\varepsilon \in L_{loc}^2([0,\infty);H^2(\Gamma,T\Gamma)).
  \end{align*}
  We do not use this property in the sequel.
\end{remark}

\subsection{Energy estimate for the average of a solution} \label{SS:SL_Ener}
Next we prove the energy estimate for the averaged tangential component $M_\tau u^\varepsilon$.
In the weak formulation \eqref{E:Mu_Weak} for $M_\tau u^\varepsilon$ we cannot take $M_\tau u^\varepsilon$ itself as a test function since it is not in $\mathcal{V}_g$, i.e. $\mathrm{div}_\Gamma(gM_\tau u^\varepsilon)$ does not vanish on $\Gamma$ in general.
To overcome this difficulty we use the weighted Helmholtz--Leray projection $\mathbb{P}_g$ onto $L_{g\sigma}^2(\Gamma,T\Gamma)$ given in Section~\ref{SS:WS_HL} and replace $M_\tau u^\varepsilon$ in \eqref{E:Mu_Weak} with $\mathbb{P}_gM_\tau u^\varepsilon$.

\begin{lemma} \label{L:Rep_WHL_L2}
  Let $u\in L_\sigma^2(\Omega_\varepsilon)$.
  Then $\mathbb{P}_gM_\tau u\in L_{g\sigma}^2(\Gamma,T\Gamma)$ and
  \begin{align} \label{E:Rep_WHL_L2}
    \|M_\tau u-\mathbb{P}_gM_\tau u\|_{L^2(\Gamma)} \leq c\varepsilon^{1/2}\|u\|_{L^2(\Omega_\varepsilon)}
  \end{align}
  with a constant $c>0$ independent of $\varepsilon$ and $u$.
  Moreover, if $u\in H^1(\Omega_\varepsilon)^3\cap L_\sigma^2(\Omega_\varepsilon)$ then $\mathbb{P}_gM_\tau u\in \mathcal{V}_g$ and
  \begin{align} \label{E:Rep_WHL_H1}
    \|M_\tau u-\mathbb{P}_gM_\tau u\|_{H^1(\Gamma)} \leq c\varepsilon^{1/2}\|u\|_{H^1(\Omega_\varepsilon)}.
  \end{align}
\end{lemma}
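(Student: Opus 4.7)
The proof is essentially a composition of two estimates already established in the paper: the bound on $v-\mathbb{P}_gv$ in terms of the weighted surface divergence of $v$ (Lemma~\ref{L:HLT_Est_L2}), and the bounds on $\mathrm{div}_\Gamma(gM_\tau u)$ in terms of $u$ (Lemmas~\ref{L:ADiv_Tan} and~\ref{L:ADiv_Tan_Hin}). The membership statements follow from the mapping properties of $\mathbb{P}_g$ recorded in Section~\ref{SS:WS_HL}.

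For the first assertion, let $u\in L_\sigma^2(\Omega_\varepsilon)$. Since $M_\tau u = PMu$ is tangential on $\Gamma$ by construction, and $M_\tau u\in L^2(\Gamma,T\Gamma)$ by \eqref{E:Ave_Lp_Surf}, the orthogonal projection $\mathbb{P}_gM_\tau u$ is well-defined and automatically lies in $L_{g\sigma}^2(\Gamma,T\Gamma)$. Applying the first inequality of Lemma~\ref{L:HLT_Est_L2} to $v=M_\tau u$, followed by the $H^{-1}(\Gamma)$-estimate \eqref{E:ADiv_Tan_Hin} of Lemma~\ref{L:ADiv_Tan_Hin} (which uses precisely $u\in L_\sigma^2(\Omega_\varepsilon)$), I obtain
\begin{align*}
  \|M_\tau u-\mathbb{P}_gM_\tau u\|_{L^2(\Gamma)} \leq c\|\mathrm{div}_\Gamma(gM_\tau u)\|_{H^{-1}(\Gamma)} \leq c\varepsilon^{1/2}\|u\|_{L^2(\Omega_\varepsilon)},
\end{align*}
which gives \eqref{E:Rep_WHL_L2}.

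For the second assertion, suppose in addition that $u\in H^1(\Omega_\varepsilon)^3\cap L_\sigma^2(\Omega_\varepsilon)$. Then $M_\tau u\in H^1(\Gamma,T\Gamma)$ by Lemma~\ref{L:Ave_Wmp} (applied componentwise, using that $P\in C^4(\Gamma)^{3\times 3}$ so $M_\tau u=PMu$ inherits the regularity of $Mu$). The second part of Lemma~\ref{L:HLT_Est_L2} then yields $\mathbb{P}_gM_\tau u\in H_{g\sigma}^1(\Gamma,T\Gamma)=\mathcal{V}_g$ together with the $H^1$-bound in terms of $\|\mathrm{div}_\Gamma(gM_\tau u)\|_{L^2(\Gamma)}$. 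To control the right-hand side I invoke \eqref{E:ADiv_Tan_Lp} of Lemma~\ref{L:ADiv_Tan} with $p=2$, whose hypotheses $\mathrm{div}\,u=0$ in $\Omega_\varepsilon$ and $u\cdot n_\varepsilon=0$ on $\Gamma_\varepsilon$ are ensured by $u\in L_\sigma^2(\Omega_\varepsilon)$. Composing,
\begin{align*}
  \|M_\tau u-\mathbb{P}_gM_\tau u\|_{H^1(\Gamma)} \leq c\|\mathrm{div}_\Gamma(gM_\tau u)\|_{L^2(\Gamma)} \leq c\varepsilon^{1/2}\|u\|_{H^1(\Omega_\varepsilon)},
\end{align*}
which is \eqref{E:Rep_WHL_H1}.

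There is no real obstacle here; the technical heart of the lemma has been absorbed into the two preparatory results \eqref{E:ADiv_Tan_Lp} and \eqref{E:ADiv_Tan_Hin}, whose proofs exploit the impermeability condition on $\Gamma_\varepsilon$ via \eqref{E:Exp_Bo} and the cancellation built into \eqref{E:CoV_Dom} to extract the extra factor $\varepsilon^{1/2}$. The only matter deserving care is to verify that the assumption $u\in L_\sigma^2(\Omega_\varepsilon)$ suffices to apply Lemma~\ref{L:ADiv_Tan_Hin} (for which the solenoidal condition allows one to replace $u\cdot\nabla\bar\eta$ in the ambient integral by zero), and that in the $H^1$-case the hypotheses of Lemma~\ref{L:ADiv_Tan} reduce to the $L^2$-solenoidality already present.
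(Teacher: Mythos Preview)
Your proof is correct and follows essentially the same approach as the paper: both combine the estimate \eqref{E:HLT_Est_L2}/\eqref{E:HLT_Est_H1} from Lemma~\ref{L:HLT_Est_L2} with the divergence bounds \eqref{E:ADiv_Tan_Hin} and \eqref{E:ADiv_Tan_Lp}, using Lemmas~\ref{L:Ave_Lp} and~\ref{L:Ave_Wmp} for the regularity of $M_\tau u$.
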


\begin{proof}
  For $u\in L_\sigma^2(\Omega_\varepsilon)$ we have $M_\tau u\in L^2(\Gamma,T\Gamma)$ by Lemma~\ref{L:Ave_Lp} and thus $\mathbb{P}_gM_\tau u\in L_{g\sigma}^2(\Gamma,T\Gamma)$ by the definition of $\mathbb{P}_g$.
  Moreover, applying \eqref{E:HLT_Est_L2} to $v=M_\tau u$ and then using \eqref{E:ADiv_Tan_Hin} we obtain \eqref{E:Rep_WHL_L2}.

  If $u\in H^1(\Omega_\varepsilon)^3\cap L_\sigma^2(\Omega_\varepsilon)$ then Lemma~\ref{L:Ave_Wmp} implies $M_\tau u\in H^1(\Gamma,T\Gamma)$.
  Hence we get $\mathbb{P}_gM_\tau u\in \mathcal{V}_g$ by Lemma~\ref{L:HLT_Est_L2}.
  Also, the inequality \eqref{E:Rep_WHL_H1} follows from \eqref{E:ADiv_Tan_Lp} and \eqref{E:HLT_Est_H1} with $v=M_\tau u$.
\end{proof}

\begin{lemma} \label{L:Rep_WHL_Dt}
  For $T>0$, let $u\in H^1(0,T;L_\sigma^2(\Omega_\varepsilon))$.
  Then
  \begin{align} \label{E:Rep_WHL_Reg}
    \mathbb{P}_gM_\tau u\in H^1(0,T;L_{g\sigma}^2(\Gamma,T\Gamma))
  \end{align}
  and there exists a constant $c>0$ independent of $\varepsilon$ and $u$ such that
  \begin{align} \label{E:Rep_WHL_Dt}
    \|\partial_tM_\tau u-\partial_t\mathbb{P}_gM_\tau u\|_{L^2(0,T;L^2(\Gamma))} \leq c\varepsilon^{1/2}\|\partial_tu\|_{L^2(0,T;L^2(\Omega_\varepsilon))}.
  \end{align}
\end{lemma}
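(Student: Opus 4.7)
The plan is to reduce the statement to Lemma~\ref{L:HLT_Est_Dt} on the surface, with the small parameter $\varepsilon^{1/2}$ extracted from the weighted surface divergence of the time derivative via Lemma~\ref{L:ADiv_Tan_Hin}. First, I would establish the temporal regularity of $M_\tau u$. Because $u\in H^1(0,T;L_\sigma^2(\Omega_\varepsilon))\subset H^1(0,T;L^2(\Omega_\varepsilon))$, Lemma~\ref{L:Ave_Dt} applied componentwise gives
\[
\partial_t M_\tau u = M_\tau(\partial_t u) \in L^2(0,T;L^2(\Gamma,T\Gamma)),
\]
so $M_\tau u\in H^1(0,T;L^2(\Gamma,T\Gamma))$. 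Lemma~\ref{L:HLT_Est_Dt} then yields \eqref{E:Rep_WHL_Reg} and the preliminary estimate
\[
\|\partial_t M_\tau u-\partial_t \mathbb{P}_g M_\tau u\|_{L^2(0,T;L^2(\Gamma))} \leq c\|\mathrm{div}_\Gamma(g\partial_t M_\tau u)\|_{L^2(0,T;H^{-1}(\Gamma))}.
\]

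The second step is to rewrite the right-hand side so that Lemma~\ref{L:ADiv_Tan_Hin} applies. Since the weight $g$ and the projection $P$ defining the tangential component are time-independent, one has $\mathrm{div}_\Gamma(g\partial_t M_\tau u) = \mathrm{div}_\Gamma(gM_\tau\partial_t u)$ (as an identity in $H^{-1}(\Gamma)$ for a.a.~$t$). The crucial observation is that $\partial_t u(t)\in L_\sigma^2(\Omega_\varepsilon)$ for a.a.~$t\in(0,T)$: indeed, $L_\sigma^2(\Omega_\varepsilon)$ is a closed subspace of $L^2(\Omega_\varepsilon)^3$, and $u\in H^1(0,T;L_\sigma^2(\Omega_\varepsilon))$ by hypothesis, so its distributional time derivative takes values in the same closed subspace for almost every $t$. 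This allows us to apply Lemma~\ref{L:ADiv_Tan_Hin} pointwise in time to $\partial_tu(t)$ in place of $u$, giving
\[
\|\mathrm{div}_\Gamma(gM_\tau\partial_t u)(t)\|_{H^{-1}(\Gamma)} \leq c\varepsilon^{1/2}\|\partial_t u(t)\|_{L^2(\Omega_\varepsilon)} \quad\text{for a.a. } t\in(0,T).
\]

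Squaring, integrating over $(0,T)$, and taking square roots combines with the preliminary estimate from Lemma~\ref{L:HLT_Est_Dt} to produce \eqref{E:Rep_WHL_Dt} with a constant independent of $\varepsilon$, $u$, and $T$. The only point requiring some care is the interchange identity $\partial_t(M_\tau u) = M_\tau(\partial_t u)$ when $u$ only has values in $L_\sigma^2$ rather than in a Sobolev space; but this is exactly the content of Lemma~\ref{L:Ave_Dt}, whose proof uses a duality argument with a fixed (time-independent) test function on $\Gamma$, and the same argument shows that the tangential projection $P$ commutes with $\partial_t$ since $P$ is independent of time. I expect no substantial obstacle beyond bookkeeping: the three ingredients (commutation of $M_\tau$ with $\partial_t$, the abstract $H^1$-in-time bound for $\mathbb{P}_g$, and the $\varepsilon^{1/2}$-smallness of $\mathrm{div}_\Gamma(gM_\tau\cdot)$ on the solenoidal space) fit together cleanly.
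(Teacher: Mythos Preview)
Your proposal is correct and follows essentially the same approach as the paper: both use Lemma~\ref{L:Ave_Dt} to commute $M_\tau$ with $\partial_t$, then Lemma~\ref{L:HLT_Est_Dt} to reduce to $\|\mathrm{div}_\Gamma(g\partial_t M_\tau u)\|_{L^2(0,T;H^{-1}(\Gamma))}$, and finally Lemma~\ref{L:ADiv_Tan_Hin} applied to $\partial_t u(t)\in L_\sigma^2(\Omega_\varepsilon)$ to extract the factor $\varepsilon^{1/2}$.
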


\begin{proof}
  By the space-time regularity of $u$ and Lemmas~\ref{L:Ave_Lp} and~\ref{L:Ave_Dt} we see that $M_\tau u$ is in $H^1(0,T;L^2(\Gamma,T\Gamma))$.
  Hence we get \eqref{E:Rep_WHL_Reg} by Lemma~\ref{L:HLT_Est_Dt} and
  \begin{align*}
    \|\partial_tM_\tau u-\partial_t\mathbb{P}_gM_\tau u\|_{L^2(0,T;L^2(\Gamma))} \leq c\|\mathrm{div}_\Gamma(g\partial_tM_\tau u)\|_{L^2(0,T;H^{-1}(\Gamma))}
  \end{align*}
  by \eqref{E:HLT_Est_Dt}.
  Noting that $\partial_tM_\tau u=M_\tau(\partial_tu)$ in $L^2(0,T;L^2(\Gamma,T\Gamma))$ by Lemma~\ref{L:Ave_Dt}, we deduce from $\partial_tu\in L^2(0,T;L_\sigma^2(\Omega_\varepsilon))$ and \eqref{E:ADiv_Tan_Hin} that
  \begin{align*}
    \|\mathrm{div}_\Gamma(g\partial_tM_\tau u)\|_{L^2(0,T;H^{-1}(\Gamma))} &= \|\mathrm{div}_\Gamma[gM_\tau(\partial_tu)]\|_{L^2(0,T;H^{-1}(\Gamma))} \\
    &\leq c\varepsilon^{1/2}\|\partial_tu\|_{L^2(0,T;L^2(\Omega_\varepsilon))}.
  \end{align*}
  Combining the above two inequalities we obtain \eqref{E:Rep_WHL_Dt}.
\end{proof}

Using Lemmas~\ref{L:Rep_WHL_L2} and~\ref{L:Rep_WHL_Dt} we derive a weak formulation for $\mathbb{P}_gM_\tau u^\varepsilon$.

\begin{lemma} \label{L:PMu_Weak}
  Let $u^\varepsilon$ be as in Lemma~\ref{L:Mu_Weak}.
  Then
  \begin{align*}
    v^\varepsilon := \mathbb{P}_gM_\tau u^\varepsilon \in C([0,\infty);\mathcal{V}_g)\cap H_{loc}^1([0,\infty),L_{g\sigma}^2(\Gamma,T\Gamma))
  \end{align*}
  and there exists a constant $c>0$ independent of $\varepsilon$ and $u^\varepsilon$ such that
  \begin{align} \label{E:Diff_PMu}
    \begin{aligned}
      \|M_\tau u^\varepsilon(t)-v^\varepsilon(t)\|_{L^2(\Gamma)}^2 &\leq c\varepsilon^2, \\
      \int_0^t\|M_\tau u^\varepsilon(s)-v^\varepsilon(s)\|_{H^1(\Gamma)}^2\,ds &\leq c\varepsilon^2(1+t)
      \end{aligned}
  \end{align}
  for all $t\geq 0$.
  Moreover, for all $\eta\in L^2(0,T;\mathcal{V}_g)$, $T>0$ we have
  \begin{multline} \label{E:PMu_Weak}
    \int_0^T\{(g\partial_tv^\varepsilon,\eta)_{L^2(\Gamma)}+a_g(v^\varepsilon,\eta)+b_g(v^\varepsilon,v^\varepsilon,\eta)\}\,dt \\
    = \int_0^T(gM_\tau\mathbb{P}_\varepsilon f^\varepsilon,\eta)_{L^2(\Gamma)}\,dt+R_\varepsilon^1(\eta)+R_\varepsilon^2(\eta),
  \end{multline}
  where $R_\varepsilon^1(\eta)$ is given in Lemma~\ref{L:Mu_Weak} and $R_\varepsilon^2(\eta)$ satisfies
  \begin{align} \label{E:PMu_Weak_Re}
    |R_\varepsilon^2(\eta)| \leq c\varepsilon^{\alpha/2}(1+T)^{1/2}\|\eta\|_{L^2(0,T;H^1(\Gamma))}
  \end{align}
  with a constant $c>0$ independent of $\varepsilon$, $v^\varepsilon$, $\eta$, and $T$.
\end{lemma}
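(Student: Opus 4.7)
The plan is to establish the three assertions in order: first the space-time regularity of $v^\varepsilon$, then the difference estimates \eqref{E:Diff_PMu} as direct applications of Lemma~\ref{L:Rep_WHL_L2} combined with Theorem~\ref{T:Est_Ue}, and finally the weak formulation \eqref{E:PMu_Weak} by substituting $M_\tau u^\varepsilon=v^\varepsilon+w^\varepsilon$ (with $w^\varepsilon:=M_\tau u^\varepsilon-v^\varepsilon$) into \eqref{E:Mu_Weak} and collecting all $w^\varepsilon$-contributions into $R_\varepsilon^2(\eta)$.

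For the regularity, I would observe that $u^\varepsilon\in C([0,\infty);\mathcal{V}_\varepsilon)\subset C([0,\infty);H^1(\Omega_\varepsilon)^3\cap L_\sigma^2(\Omega_\varepsilon))$, so Lemma~\ref{L:Ave_Wmp} gives $M_\tau u^\varepsilon\in C([0,\infty);H^1(\Gamma,T\Gamma))$, and the boundedness of $\mathbb{P}_g$ on $H^1(\Gamma,T\Gamma)$ from Lemma~\ref{L:HLT_Bound} (together with the observation in Lemma~\ref{L:Rep_WHL_L2} that $\mathbb{P}_gM_\tau u\in\mathcal{V}_g$ when $u\in H^1\cap L_\sigma^2$) yields $v^\varepsilon\in C([0,\infty);\mathcal{V}_g)$. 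The time regularity $v^\varepsilon\in H^1_{loc}([0,\infty);L_{g\sigma}^2(\Gamma,T\Gamma))$ follows directly from $u^\varepsilon\in H^1_{loc}([0,\infty);\mathcal{H}_\varepsilon)\subset H^1_{loc}([0,\infty);L_\sigma^2(\Omega_\varepsilon))$ and Lemma~\ref{L:Rep_WHL_Dt}. For \eqref{E:Diff_PMu}, applying \eqref{E:Rep_WHL_L2} pointwise in $t$ with $\|u^\varepsilon(t)\|_{L^2(\Omega_\varepsilon)}^2\leq c\varepsilon$ from \eqref{E:Est_Ue} gives the first bound, and integrating \eqref{E:Rep_WHL_H1}${}^2$ over $(0,t)$ together with $\int_0^t\|u^\varepsilon\|_{H^1(\Omega_\varepsilon)}^2\,ds\leq c\varepsilon(1+t)$ gives the second.

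For the weak formulation, the time derivative and bilinear form $a_g$ split linearly, and the trilinear identity
\[
b_g(M_\tau u^\varepsilon,M_\tau u^\varepsilon,\eta)=b_g(v^\varepsilon,v^\varepsilon,\eta)+b_g(w^\varepsilon,M_\tau u^\varepsilon,\eta)+b_g(v^\varepsilon,w^\varepsilon,\eta)
\]
lets me write
\[
R_\varepsilon^2(\eta)=-\int_0^T\bigl\{(g\partial_tw^\varepsilon,\eta)_{L^2(\Gamma)}+a_g(w^\varepsilon,\eta)+b_g(w^\varepsilon,M_\tau u^\varepsilon,\eta)+b_g(v^\varepsilon,w^\varepsilon,\eta)\bigr\}\,dt.
\]
For the time-derivative term, Lemma~\ref{L:Rep_WHL_Dt} combined with \eqref{E:Est_DtUe} gives
$\|\partial_tw^\varepsilon\|_{L^2(0,T;L^2(\Gamma))}\leq c\varepsilon^{1/2}\|\partial_tu^\varepsilon\|_{L^2(0,T;L^2(\Omega_\varepsilon))}\leq c\varepsilon^{\alpha/2}(1+T)^{1/2}$, yielding a contribution of order $\varepsilon^{\alpha/2}(1+T)^{1/2}\|\eta\|_{L^2(0,T;L^2(\Gamma))}$. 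The $a_g$-term is controlled by $\|w^\varepsilon\|_{L^2(0,T;H^1(\Gamma))}\leq c\varepsilon(1+T)^{1/2}$ from \eqref{E:Diff_PMu}.

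For the two trilinear terms, I would apply \eqref{E:Tri_Surf} and use the uniform bounds $\|M_\tau u^\varepsilon\|_{L^\infty(0,\infty;L^2(\Gamma))}\leq c$ and $\|M_\tau u^\varepsilon\|_{L^2(0,T;H^1(\Gamma))}\leq c(1+T)^{1/2}$, obtained from $M_\tau\colon L^2(\Omega_\varepsilon)\to L^2(\Gamma)$ and $H^1(\Omega_\varepsilon)\to H^1(\Gamma)$ with the $\varepsilon^{-1/2}$ loss (Lemmas~\ref{L:Ave_Lp} and~\ref{L:Ave_Wmp}) combined with \eqref{E:Est_Ue}. The same bounds transfer to $v^\varepsilon=M_\tau u^\varepsilon-w^\varepsilon$. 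Together with $\|w^\varepsilon\|_{L^\infty_tL^2_x}\leq c\varepsilon$ and $\|w^\varepsilon\|_{L^2_tH^1_x}\leq c\varepsilon(1+T)^{1/2}$, H\"older in $t$ with exponents $(\infty,4,4,2)$ produces a bound of order $\varepsilon(1+T)^{1/2}\|\eta\|_{L^2(0,T;H^1(\Gamma))}$ for each trilinear term. Since $\alpha\in(0,1]$ forces $\varepsilon\leq\varepsilon^{\alpha/2}$, the estimate \eqref{E:PMu_Weak_Re} follows with the $\varepsilon^{\alpha/2}$ coming from the time-derivative term.

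The main obstacle is controlling $\int_0^T(g\partial_tw^\varepsilon,\eta)_{L^2(\Gamma)}\,dt$ at the correct rate $\varepsilon^{\alpha/2}$, which forces the residual exponent in \eqref{E:PMu_Weak_Re}; this is where Lemma~\ref{L:Rep_WHL_Dt} must be coupled precisely with the $\partial_t u^\varepsilon$ estimate of Theorem~\ref{T:Est_Ue}. All remaining pieces are routine bookkeeping of H\"older exponents in the time variable for the trilinear terms.
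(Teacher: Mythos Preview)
Your proposal is correct and follows essentially the same route as the paper. The regularity, the difference estimates \eqref{E:Diff_PMu}, and the treatment of the time-derivative and $a_g$ contributions to $R_\varepsilon^2(\eta)$ are handled exactly as in the paper; for the trilinear term the paper works directly with $L^4(\Gamma)$ norms (via Ladyzhenskaya and \eqref{E:Rep_WHL_L2}--\eqref{E:Rep_WHL_H1}) to arrive at the pointwise bound $|b_g(M_\tau u^\varepsilon,M_\tau u^\varepsilon,\eta)-b_g(v^\varepsilon,v^\varepsilon,\eta)|\leq c\|u^\varepsilon\|_{L^2(\Omega_\varepsilon)}\|u^\varepsilon\|_{H^1(\Omega_\varepsilon)}\|\eta\|_{H^1(\Gamma)}$ and then invokes \eqref{Pf_EsSt:L2H1}, whereas you package the same interpolation inside \eqref{E:Tri_Surf} and distribute the time H\"older exponents afterwards---both computations yield the $c\varepsilon(1+T)^{1/2}$ bound, so the difference is purely cosmetic.
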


\begin{proof}
  By Lemmas~\ref{L:Mu_Weak}, \ref{L:Rep_WHL_L2}, and \ref{L:Rep_WHL_Dt} we obtain the space-time regularity of $v^\varepsilon$.
  Also, the inequalities \eqref{E:Diff_PMu} immediately follow from \eqref{E:Est_Ue}, \eqref{E:Rep_WHL_L2}, and \eqref{E:Rep_WHL_H1}.
  For $\eta\in L^2(0,T;\mathcal{V}_g)$ let $R_\varepsilon^2(\eta):=J_1+J_2+J_3$ with
  \begin{align*}
    J_1 &:= -\int_0^T(g\partial_t M_\tau u^\varepsilon,\eta)_{L^2(\Gamma)}\,dt+\int_0^T(g\partial_tv^\varepsilon,\eta)_{L^2(\Gamma)}\,dt, \\
    J_2 &:= -\int_0^Ta_g(M_\tau u^\varepsilon,\eta)\,dt+\int_0^Ta_g(v^\varepsilon,\eta)\,dt, \\
    J_3 &:= -\int_0^Tb_g(M_\tau u^\varepsilon,M_\tau u^\varepsilon,\eta)\,dt+\int_0^Tb_g(v^\varepsilon,v^\varepsilon,\eta)\,dt.
  \end{align*}
  Then we get \eqref{E:PMu_Weak} by \eqref{E:Mu_Weak}.
  Let us estimate $J_1$, $J_2$, and $J_3$.
  For $J_1$,
  \begin{align} \label{Pf_PMuW:I1}
    \begin{aligned}
      |J_1| &\leq c\|\partial_tM_\tau u^\varepsilon-\partial_tv^\varepsilon\|_{L^2(0,T;L^2(\Gamma))}\|\eta\|_{L^2(0,T;L^2(\Gamma))} \\
      &\leq c\varepsilon^{1/2}\|\partial_tu^\varepsilon\|_{L^2(0,T;L^2(\Omega_\varepsilon))}\|\eta\|_{L^2(0,T;L^2(\Gamma))} \\
      &\leq c\varepsilon^{\alpha/2}(1+T)^{1/2}\|\eta\|_{L^2(0,T;L^2(\Gamma))}
    \end{aligned}
  \end{align}
  by \eqref{E:Est_DtUe} and \eqref{E:Rep_WHL_Dt}.
  Also, we use \eqref{E:Width_Bound} and \eqref{E:Diff_PMu} to get
  \begin{align} \label{Pf_PMuW:I2}
    \begin{aligned}
      |J_2| &\leq c\|M_\tau u^\varepsilon-v^\varepsilon\|_{L^2(0,T;H^1(\Gamma))}\|\eta\|_{L^2(0,T;H^1(\Gamma))} \\
      &\leq c\varepsilon(1+T)^{1/2}\|\eta\|_{L^2(0,T;H^1(\Gamma))}.
    \end{aligned}
  \end{align}
  Now let us consider $J_3$.
  Using H\"{o}lder's inequality twice we have
  \begin{multline*}
    |b_g(M_\tau u^\varepsilon,M_\tau u^\varepsilon,\eta)-b_g(v^\varepsilon,v^\varepsilon,\eta)| \\
    \leq \|M_\tau u^\varepsilon-v^\varepsilon\|_{L^4(\Gamma)}\left(\|M_\tau u^\varepsilon\|_{L^4(\Gamma)}+\|v^\varepsilon\|_{L^4(\Gamma)}\right)\|\nabla_\Gamma\eta\|_{L^2(\Gamma)}.
  \end{multline*}
  Moreover, by \eqref{E:La_Surf}, \eqref{E:Ave_Lp_Surf}, \eqref{E:Ave_Wmp_Surf}, \eqref{E:HLT_Bound}, \eqref{E:Rep_WHL_L2}, and \eqref{E:Rep_WHL_H1} we observe that
  \begin{align*}
    \|w\|_{L^4(\Gamma)} &\leq c\varepsilon^{-1/2}\|u^\varepsilon\|_{L^2(\Omega_\varepsilon)}^{1/2}\|u^\varepsilon\|_{H^1(\Omega_\varepsilon)}^{1/2}, \quad w = M_\tau u^\varepsilon,v^\varepsilon, \\
    \|M_\tau u^\varepsilon-v^\varepsilon\|_{L^4(\Gamma)} &\leq c\varepsilon^{1/2}\|u^\varepsilon\|_{L^2(\Omega_\varepsilon)}^{1/2}\|u^\varepsilon\|_{H^1(\Omega_\varepsilon)}^{1/2}.
  \end{align*}
  Hence
  \begin{align*}
    |b_g(M_\tau u^\varepsilon,M_\tau u^\varepsilon,\eta)-b_g(v^\varepsilon,v^\varepsilon,\eta)| \leq c\|u^\varepsilon\|_{L^2(\Omega_\varepsilon)}\|u^\varepsilon\|_{H^1(\Omega_\varepsilon)}\|\eta\|_{H^1(\Gamma)}
  \end{align*}
  and we use \eqref{Pf_EsSt:L2H1} to obtain
  \begin{align} \label{Pf_PMuW:I3}
    \begin{aligned}
      |J_3| &\leq c\left(\int_0^T\|u^\varepsilon\|_{L^2(\Omega_\varepsilon)}^2\|u^\varepsilon\|_{H^1(\Omega_\varepsilon)}^2\,dt\right)^{1/2}\|\eta\|_{L^2(0,T;H^1(\Gamma))} \\
      &\leq c\varepsilon(1+T)^{1/2}\|\eta\|_{L^2(0,T;H^1(\Gamma))}.
    \end{aligned}
  \end{align}
  Applying \eqref{Pf_PMuW:I1}--\eqref{Pf_PMuW:I3} and $\varepsilon\leq\varepsilon^{\alpha/2}$ to $R_\varepsilon^2(\eta)=J_1+J_2+J_3$ we get \eqref{E:PMu_Weak_Re}.
\end{proof}

Based on \eqref{E:PMu_Weak} we prove the energy estimate for $v^\varepsilon=\mathbb{P}_gM_\tau u^\varepsilon$.

\begin{lemma} \label{L:PMu_Energy}
  Let $u^\varepsilon$ be as in Lemma~\ref{L:Mu_Weak} and $v^\varepsilon=\mathbb{P}_gM_\tau u^\varepsilon$.
  Then
  \begin{align} \label{E:PMu_Energy}
    \max_{t\in[0,T]}\|v^\varepsilon(t)\|_{L^2(\Gamma)}^2+\int_0^T\|\nabla_\Gamma v^\varepsilon(t)\|_{L^2(\Gamma)}^2\,dt \leq c_T
  \end{align}
  for all $T>0$, where $c_T>0$ is a constant depending on $T$ but independent of $\varepsilon$.
\end{lemma}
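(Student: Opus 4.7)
The plan is to apply the standard energy method to the weak formulation \eqref{E:PMu_Weak} with the admissible test function $\eta = v^\varepsilon$ itself, and then to close the estimate via Gronwall after carefully showing that all the nonhomogeneous contributions (the forcing, the two residuals $R_\varepsilon^1$, $R_\varepsilon^2$, and the initial data) are bounded uniformly in $\varepsilon\in(0,\varepsilon'_1)$. Admissibility follows from Lemma~\ref{L:PMu_Weak}, which gives $v^\varepsilon\in L^2(0,T;\mathcal{V}_g)$ together with $\partial_tv^\varepsilon\in L^2(0,T;L_{g\sigma}^2(\Gamma,T\Gamma))$; by the standard Lions lemma the map $t\mapsto\|g^{1/2}v^\varepsilon(t)\|_{L^2(\Gamma)}^2$ is then absolutely continuous with
\[
\tfrac{1}{2}\tfrac{d}{dt}\|g^{1/2}v^\varepsilon\|_{L^2(\Gamma)}^2 = (g\partial_tv^\varepsilon,v^\varepsilon)_{L^2(\Gamma)}.
\]
Since $v^\varepsilon(t)\in\mathcal{V}_g$, the trilinear contribution vanishes by the antisymmetry \eqref{E:TriS_Vg}, and plugging $\eta=v^\varepsilon$ into \eqref{E:PMu_Weak} reduces the identity to a balance between the kinetic, dissipative, forcing, and two residual terms.

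For the dissipative term, the weighted Korn inequality \eqref{E:Bi_Surf} yields $a_g(v^\varepsilon,v^\varepsilon)\geq c^{-1}\|\nabla_\Gamma v^\varepsilon\|_{L^2(\Gamma)}^2-\|v^\varepsilon\|_{L^2(\Gamma)}^2$, giving full $H^1$-coercivity at the price of a lower-order $L^2$-term. For the forcing term, although $M_\tau\mathbb{P}_\varepsilon f^\varepsilon\in L^2(\Gamma,T\Gamma)$, I would not estimate it in $L^2$; instead I rewrite
\[
(gM_\tau\mathbb{P}_\varepsilon f^\varepsilon,v^\varepsilon)_{L^2(\Gamma)} = [M_\tau\mathbb{P}_\varepsilon f^\varepsilon,gv^\varepsilon]_{T\Gamma},
\]
using that $gv^\varepsilon\in H^1(\Gamma,T\Gamma)$ with $\|gv^\varepsilon\|_{H^1(\Gamma)}\leq c\|v^\varepsilon\|_{H^1(\Gamma)}$, and then invoke the uniform-in-$\varepsilon$ bound on $\|M_\tau\mathbb{P}_\varepsilon f^\varepsilon\|_{L^\infty(0,\infty;H^{-1}(\Gamma,T\Gamma))}$ provided by \eqref{E:UE_Data} with $\beta=1$. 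Via Young's inequality this term is dominated by $c_T+\delta\int_0^T\|\nabla_\Gamma v^\varepsilon\|_{L^2(\Gamma)}^2\,dt$ with $\delta$ arbitrarily small.

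The residuals are handled uniformly in $\varepsilon$ via \eqref{E:Mu_Weak_Re} and \eqref{E:PMu_Weak_Re}: the prefactor $\sum_{i=0,1}|\varepsilon^{-1}\gamma_\varepsilon^i-\gamma^i|$ is automatically bounded by a constant independent of $\varepsilon$ thanks to Assumption~\ref{Assump_1} (with any fixed choice of $\gamma^i\geq 0$, e.g.\ $\gamma^i=0$), so
\[
|R_\varepsilon^1(v^\varepsilon)|+|R_\varepsilon^2(v^\varepsilon)| \leq C(1+T)^{1/2}\|v^\varepsilon\|_{L^2(0,T;H^1(\Gamma))},
\]
and Young's inequality again absorbs the $\|\nabla_\Gamma v^\varepsilon\|^2$ contribution into the dissipation on the left, leaving an $L^2$-error of size $c_T$. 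The initial value satisfies $\|v^\varepsilon(0)\|_{L^2(\Gamma)}=\|\mathbb{P}_gM_\tau u_0^\varepsilon\|_{L^2(\Gamma)}\leq\|M_\tau u_0^\varepsilon\|_{L^2(\Gamma)}\leq c$ by \eqref{E:UE_Data} with $\beta=1$. Combining these estimates yields
\[
\|v^\varepsilon(t)\|_{L^2(\Gamma)}^2+\tfrac{1}{2}\int_0^t\|\nabla_\Gamma v^\varepsilon\|_{L^2(\Gamma)}^2\,ds \leq c_T+c\int_0^t\|v^\varepsilon(s)\|_{L^2(\Gamma)}^2\,ds
\]
for all $t\in[0,T]$, and Gronwall's inequality closes the argument and gives \eqref{E:PMu_Energy}.

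The main delicate point is not any single estimate but the coordination of them: we must route the forcing through the $H^{-1}$-$H^1$ duality (rather than through $L^2$) because the uniform bounds available from \eqref{E:UE_Data} are only of $H^{-1}$-type for $M_\tau\mathbb{P}_\varepsilon f^\varepsilon$, and simultaneously the residual $R_\varepsilon^1$ only gives control on $\|\eta\|_{L^2(0,T;H^1(\Gamma))}$ rather than on a weaker norm; both must therefore be absorbed using the same $\|\nabla_\Gamma v^\varepsilon\|_{L^2}^2$ reservoir produced by $a_g$. Once the Korn-type coercivity is exploited, the uniformity of everything in $\varepsilon$ follows from Assumption~\ref{Assump_1} and the hypothesis $\beta=1$ without needing condition (c) of Theorem~\ref{T:SL_Weak}.
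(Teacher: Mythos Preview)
Your proposal is correct and follows essentially the same route as the paper: test \eqref{E:PMu_Weak} with $\eta=v^\varepsilon$ (the paper uses $\eta=1_{[0,t]}v^\varepsilon$, which is the same thing in integrated form), kill the trilinear term via \eqref{E:TriS_Vg}, extract coercivity from \eqref{E:Bi_Surf}, route the forcing through the $H^{-1}$--$H^1$ duality using \eqref{E:UE_Data} with $\beta=1$, bound the residuals via \eqref{E:Mu_Weak_Re}--\eqref{E:PMu_Weak_Re} and Assumption~\ref{Assump_1}, and close with Gronwall. Your observation that condition~(c) of Theorem~\ref{T:SL_Weak} is not needed here (only the boundedness of $\varepsilon^{-1}\gamma_\varepsilon^i$ from \eqref{E:Fric_Upper}) matches the paper exactly.
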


\begin{proof}
  For $t\in[0,T]$ let $1_{[0,t]}\colon\mathbb{R}\to\mathbb{R}$ be the characteristic function of $[0,t]\subset\mathbb{R}$.
  Since $v^\varepsilon\in C([0,\infty);\mathcal{V}_g)$ we can substitute $\eta:=1_{[0,t]}v^\varepsilon$ for \eqref{E:PMu_Weak}.
  Then
  \begin{multline} \label{Pf_PMuE:Weak}
    \int_0^t\{(g\partial_sv^\varepsilon,v^\varepsilon)_{L^2(\Gamma)}+a_g(v^\varepsilon,v^\varepsilon)\}\,ds \\
    = \int_0^t(gM_\tau\mathbb{P}_\varepsilon f^\varepsilon,v^\varepsilon)_{L^2(\Gamma)}\,ds+R_\varepsilon^1(v^\varepsilon)+R_\varepsilon^2(v^\varepsilon)
  \end{multline}
  by \eqref{E:TriS_Vg} and $R_\varepsilon^1(v^\varepsilon)$ and $R_\varepsilon^2(v^\varepsilon)$ satisfy \eqref{E:Mu_Weak_Re} and \eqref{E:PMu_Weak_Re}.
  Let us calculate each term of \eqref{Pf_PMuE:Weak}.
  Since $g$ is independent of time and nonnegative by \eqref{E:Width_Bound},
  \begin{align} \label{Pf_PMuE:Dt}
    \begin{aligned}
      \int_0^t(g\partial_sv^\varepsilon,v^\varepsilon)_{L^2(\Gamma)}\,ds &= \frac{1}{2}\int_0^t\frac{d}{ds}\|g^{1/2}v^\varepsilon\|_{L^2(\Gamma)}^2\,ds \\
      &= \frac{1}{2}\|g^{1/2}v^\varepsilon(t)\|_{L^2(\Gamma)}^2-\frac{1}{2}\|g^{1/2}v^\varepsilon(0)\|_{L^2(\Gamma)}^2.
    \end{aligned}
  \end{align}
  Also, we see by \eqref{E:Bi_Surf} that
  \begin{align} \label{Pf_PMuE:A}
    \int_0^t\|\nabla_\Gamma v^\varepsilon\|_{L^2(\Gamma)}^2\,ds \leq c\int_0^t\left\{a_g(v^\varepsilon,v^\varepsilon)+\|v^\varepsilon\|_{L^2(\Gamma)}^2\right\}\,ds.
  \end{align}
  For the right-hand side of \eqref{Pf_PMuE:Weak}, we consider $M_\tau\mathbb{P}_\varepsilon f^\varepsilon=PM_\tau\mathbb{P}_\varepsilon f^\varepsilon$ as an element of $H^{-1}(\Gamma,T\Gamma)$ (see Section~\ref{SS:Pre_Surf}).
  Then we have
  \begin{align} \label{Pf_PMuE:F}
    \begin{aligned}
      \int_0^t(gM_\tau\mathbb{P}_\varepsilon f^\varepsilon,v^\varepsilon)_{L^2(\Gamma)}\,ds &= \int_0^t[M_\tau\mathbb{P}_\varepsilon f^\varepsilon,gv^\varepsilon]_{T\Gamma}\,ds \\
      &\leq \int_0^t\|M_\tau\mathbb{P}_\varepsilon f^\varepsilon\|_{H^{-1}(\Gamma,T\Gamma)}\|gv^\varepsilon\|_{H^1(\Gamma)}\,ds \\
      &\leq c\int_0^t\|M_\tau\mathbb{P}_\varepsilon f^\varepsilon\|_{H^{-1}(\Gamma,T\Gamma)}\|v^\varepsilon\|_{H^1(\Gamma)}\,ds.
    \end{aligned}
  \end{align}
  To estimate the residual terms, we see that $\varepsilon^{-1}\gamma_\varepsilon^0$ and $\varepsilon^{-1}\gamma_\varepsilon^1$ are bounded by \eqref{E:Fric_Upper}.
  Hence by \eqref{E:Mu_Weak_Re} and \eqref{E:PMu_Weak_Re} (with $T$ replaced by $t$) we obtain
  \begin{align} \label{Pf_PMuE:Re}
    |R_\varepsilon^1(v^\varepsilon)|+|R_\varepsilon^2(v^\varepsilon)| \leq c(1+t)^{1/2}\left(\int_0^t\|v^\varepsilon\|_{H^1(\Gamma)}^2\,ds\right)^{1/2}.
  \end{align}
  Now we deduce from \eqref{Pf_PMuE:Weak}--\eqref{Pf_PMuE:Re} that
  \begin{multline*}
    \|g^{1/2}v^\varepsilon(t)\|_{L^2(\Gamma)}^2+\int_0^t\|\nabla_\Gamma v^\varepsilon\|_{L^2(\Gamma)}^2\,ds \\
    \leq c\left\{\|g^{1/2}v^\varepsilon(0)\|_{L^2(\Gamma)}^2+\int_0^t\left(\|v^\varepsilon\|_{L^2(\Gamma)}^2+\|M_\tau\mathbb{P}_\varepsilon f^\varepsilon\|_{H^{-1}(\Gamma,T\Gamma)}\|v^\varepsilon\|_{H^1(\Gamma)}\right)ds\right\}\\
    +c(1+t)^{1/2}\left(\int_0^t\|v^\varepsilon\|_{H^1(\Gamma)}^2\,ds\right)^{1/2}.
  \end{multline*}
  Noting that $\|v^\varepsilon\|_{H^1(\Gamma)}^2=\|v^\varepsilon\|_{L^2(\Gamma)}^2+\|\nabla_\Gamma v^\varepsilon\|_{L^2(\Gamma)}^2$, we apply Young's inequality to the last two terms of the above inequality to get
  \begin{multline*}
    \|g^{1/2}v^\varepsilon(t)\|_{L^2(\Gamma)}^2+\int_0^t\|\nabla_\Gamma v^\varepsilon\|_{L^2(\Gamma)}^2\,ds \\
    \leq c\left\{\|g^{1/2}v^\varepsilon(0)\|_{L^2(\Gamma)}^2+\int_0^t\left(\|v^\varepsilon\|_{L^2(\Gamma)}^2+\|M_\tau\mathbb{P}_\varepsilon f^\varepsilon\|_{H^{-1}(\Gamma,T\Gamma)}^2\right)ds+1+t\right\}\\
    +\frac{1}{2}\int_0^t\|\nabla_\Gamma v^\varepsilon\|_{L^2(\Gamma)}^2\,ds.
  \end{multline*}
  Then we make the last term absorbed into the left-hand side and use the inequalities \eqref{E:UE_Data} with $\beta=1$, \eqref{E:Width_Bound}, and
  \begin{align*}
    \|g^{1/2}v^\varepsilon(0)\|_{L^2(\Gamma)} &\leq c\|v^\varepsilon(0)\|_{L^2(\Gamma)} \leq c\|M_\tau u^\varepsilon(0)\|_{L^2(\Gamma)} = c\|M_\tau u_0^\varepsilon\|_{L^2(\Gamma)}
  \end{align*}
  by \eqref{E:HLT_Bound} with $k=0$ (note that $v^\varepsilon=\mathbb{P}_gM_\tau u^\varepsilon$) and $u^\varepsilon(0)=u_0^\varepsilon$ in $\mathcal{V}_\varepsilon$ to obtain
  \begin{align} \label{Pf_PMuE:Gron}
    \|v^\varepsilon(t)\|_{L^2(\Gamma)}^2+\int_0^t\|\nabla_\Gamma v^\varepsilon\|_{L^2(\Gamma)}^2\,ds \leq c\left(1+t+\int_0^t\|v^\varepsilon\|_{L^2(\Gamma)}^2\,ds\right)
  \end{align}
  for all $t\in[0,T]$.
  From this inequality we deduce that
  \begin{align*}
    \|v^\varepsilon(t)\|_{L^2(\Gamma)}^2+1 \leq c\left\{1+\int_0^t\left(\|v^\varepsilon\|_{L^2(\Gamma)}^2+1\right)ds\right\}, \quad t\in[0,T]
  \end{align*}
  and thus Gronwall's inequality yields
  \begin{align*}
    \|v^\varepsilon(t)\|_{L^2(\Gamma)}^2+1 \leq ce^{ct} \leq ce^{cT}, \quad t\in[0,T].
  \end{align*}
  Applying this inequality to \eqref{Pf_PMuE:Gron} with $t=T$ we also get
  \begin{align*}
    \int_0^T\|\nabla_\Gamma v^\varepsilon\|_{L^2(\Gamma)}^2\,dt \leq c(1+T+e^{cT}).
  \end{align*}
  Hence we conclude that \eqref{E:PMu_Energy} holds with $c_T:=c(1+T+e^{cT})$, where $c>0$ is a constant independent of $\varepsilon$ and $T$.
\end{proof}

As a consequence of \eqref{E:Diff_PMu} and \eqref{E:PMu_Energy} we get the energy estimate for $M_\tau u^\varepsilon$.

\begin{corollary} \label{C:Mu_Energy}
  Let $u^\varepsilon$ be as in Lemma~\ref{L:Mu_Weak}.
  Then
  \begin{align} \label{E:Mu_Energy}
    \max_{t\in[0,T]}\|M_\tau u^\varepsilon(t)\|_{L^2(\Gamma)}^2+\int_0^T\|\nabla_\Gamma M_\tau u^\varepsilon(t)\|_{L^2(\Gamma)}^2\,dt \leq c_T
  \end{align}
  for all $T>0$, where $c_T>0$ is a constant depending on $T$ but independent of $\varepsilon$.
\end{corollary}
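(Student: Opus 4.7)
The plan is to deduce the bound on $M_\tau u^\varepsilon$ from the analogous bound on $v^\varepsilon = \mathbb{P}_g M_\tau u^\varepsilon$ (established in Lemma~\ref{L:PMu_Energy}) together with the estimates \eqref{E:Diff_PMu} that quantify how close $M_\tau u^\varepsilon$ is to $v^\varepsilon$. Since $v^\varepsilon$ is (a weighted solenoidal version of) the averaged tangential velocity, and the discrepancy $M_\tau u^\varepsilon - v^\varepsilon$ is of order $\varepsilon$ in both $L^2(\Gamma)$ and $L^2(0,T;H^1(\Gamma))$, the corollary should reduce to a triangle-inequality argument.

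Concretely, I would first write, for each fixed $t\in[0,T]$,
\begin{align*}
  \|M_\tau u^\varepsilon(t)\|_{L^2(\Gamma)}^2 \leq 2\|v^\varepsilon(t)\|_{L^2(\Gamma)}^2+2\|M_\tau u^\varepsilon(t)-v^\varepsilon(t)\|_{L^2(\Gamma)}^2,
\end{align*}
and apply the first inequality of \eqref{E:Diff_PMu} to control the last term by $c\varepsilon^2$. Taking the supremum over $t\in[0,T]$ and combining with the supremum bound in \eqref{E:PMu_Energy} yields the desired $L^\infty(0,T;L^2(\Gamma))$ estimate; the extra $\varepsilon^2$ contribution is bounded by $1$ since $\varepsilon\in(0,1)$, and gets absorbed into $c_T$.

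For the gradient term, I would proceed analogously by writing
\begin{align*}
  \|\nabla_\Gamma M_\tau u^\varepsilon\|_{L^2(\Gamma)}^2 \leq 2\|\nabla_\Gamma v^\varepsilon\|_{L^2(\Gamma)}^2+2\|\nabla_\Gamma(M_\tau u^\varepsilon-v^\varepsilon)\|_{L^2(\Gamma)}^2,
\end{align*}
integrating in time over $(0,T)$, and using the second inequality of \eqref{E:Diff_PMu} (which controls the $L^2(0,T;H^1(\Gamma))$ norm of $M_\tau u^\varepsilon-v^\varepsilon$ by $c\varepsilon^2(1+T)$) together with the integral bound from \eqref{E:PMu_Energy}. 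The resulting constant depends on $T$ but not on $\varepsilon$, which is exactly what \eqref{E:Mu_Energy} asserts.

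There is essentially no real obstacle here — the result is a clean corollary of the two preceding lemmas — so the only thing to be careful about is the bookkeeping of constants (ensuring that all $\varepsilon$-dependent contributions are uniformly bounded for $\varepsilon\in(0,\varepsilon_1')$ and can be absorbed into the $T$-dependent constant $c_T$). In particular one uses $\varepsilon^2\leq 1$ and $\varepsilon^2(1+T)\leq 1+T$, so no cancellation or further structural argument is needed.
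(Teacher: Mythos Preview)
Your proposal is correct and matches the paper's approach exactly: the paper simply states that \eqref{E:Mu_Energy} follows from \eqref{E:Diff_PMu} and \eqref{E:PMu_Energy}, which is precisely the triangle-inequality argument you outline.
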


\subsection{Estimate for the time derivative of the average} \label{SS:SL_EDt}
By \eqref{E:Mu_Energy} we observe that (a subsequence of) $M_\tau u^\varepsilon$ converges weakly in an appropriate function space on $\Gamma$.
However, for the convergence of the trilinear term in \eqref{E:Mu_Weak} we also require the strong convergence of $M_\tau u^\varepsilon$.
In this subsection we estimate the time derivative of $M_\tau u^\varepsilon$ to apply the Aubin--Lions lemma for the strong convergence of $M_\tau u^\varepsilon$.
We first construct an appropriate test function.

\begin{lemma} \label{L:Mu_Dt_Test}
  For $w\in H^1(\Gamma,T\Gamma)$ there exist $\eta\in \mathcal{V}_g$, $q\in H^2(\Gamma)$, and a constant $c>0$ independent of $w$ such that $w=g\eta+g\nabla_\Gamma q$ on $\Gamma$ and
  \begin{align} \label{E:Mu_Dt_Test}
    \|\eta\|_{H^1(\Gamma)} \leq c\|w\|_{H^1(\Gamma)}.
  \end{align}
\end{lemma}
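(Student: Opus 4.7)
The plan is to reduce the problem to solving a weighted Poisson equation on $\Gamma$. Given $w\in H^1(\Gamma,T\Gamma)$, the identity $w=g\eta+g\nabla_\Gamma q$ with $\mathrm{div}_\Gamma(g\eta)=0$ is equivalent, after taking $\mathrm{div}_\Gamma$ of both sides, to requiring $q$ to satisfy the Neumann-type problem
\begin{align*}
  \mathrm{div}_\Gamma(g\nabla_\Gamma q) = \mathrm{div}_\Gamma w \quad\text{on}\quad \Gamma, \quad \int_\Gamma q\,d\mathcal{H}^2=0,
\end{align*}
whose natural weak formulation is: find $q\in H^1(\Gamma)$ with $\int_\Gamma q\,d\mathcal{H}^2=0$ such that
\begin{align*}
  (g\nabla_\Gamma q,\nabla_\Gamma\xi)_{L^2(\Gamma)} = (w,\nabla_\Gamma\xi)_{L^2(\Gamma)} \quad\text{for all}\quad \xi\in H^1(\Gamma).
\end{align*}
Once $q$ is found with $H^2$-regularity, I will set $\eta:=g^{-1}w-\nabla_\Gamma q$ and check that all required properties hold.

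First I would establish existence of a weak solution via Lax--Milgram applied on the quotient space $H^1(\Gamma)/\mathbb{R}$: the bilinear form $(g\nabla_\Gamma q,\nabla_\Gamma\xi)_{L^2(\Gamma)}$ is continuous and, because $g\geq c>0$, coercive after quotienting out the constants by means of Poincar\'e's inequality \eqref{E:Poin_Surf_Lp}. Compatibility with constants is automatic on both sides (the tangential gradient of a constant vanishes), so the formulation makes sense, yielding a unique $q\in H^1(\Gamma)$ with $\int_\Gamma q\,d\mathcal{H}^2=0$ and $\|q\|_{H^1(\Gamma)}\leq c\|w\|_{L^2(\Gamma)}$.

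The main obstacle is the $H^2$-regularity of $q$, since $g$ appears as a variable coefficient so Lemma~\ref{L:Pois_Surf} does not apply directly. I would handle this by rewriting the equation in the non-divergence form
\begin{align*}
  -\Delta_\Gamma q = g^{-1}\bigl(\nabla_\Gamma g\cdot\nabla_\Gamma q-\mathrm{div}_\Gamma w\bigr) \quad\text{on}\quad \Gamma,
\end{align*}
where the right-hand side lies in $L^2(\Gamma)$ (using $g\in C^1(\Gamma)$, $g\geq c>0$, $q\in H^1(\Gamma)$, and $w\in H^1(\Gamma,T\Gamma)$). Although the right-hand side need not have zero mean, one can instead apply a standard localization and difference-quotient argument in local charts on $\Gamma$ (the $C^5$-regularity of $\Gamma$ is more than sufficient) to upgrade $q$ to $H^2(\Gamma)$ with
\begin{align*}
  \|q\|_{H^2(\Gamma)} \leq c\bigl(\|\mathrm{div}_\Gamma w\|_{L^2(\Gamma)}+\|q\|_{H^1(\Gamma)}\bigr) \leq c\|w\|_{H^1(\Gamma)}.
\end{align*}
Alternatively, since $g$ is smooth and positive, one may multiply the source of the Poisson equation by a suitable correction to enforce zero mean, solve via Lemma~\ref{L:Pois_Surf}, and iterate a fixed-point/perturbation argument; either route gives the same estimate.

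Finally, I would define $\eta:=g^{-1}w-\nabla_\Gamma q$. Then $\eta$ is tangential (both $w/g$ and $\nabla_\Gamma q$ are tangential), belongs to $H^1(\Gamma,T\Gamma)$ by the regularity of $w$, $g$, and $q$, and satisfies $\mathrm{div}_\Gamma(g\eta)=\mathrm{div}_\Gamma w-\mathrm{div}_\Gamma(g\nabla_\Gamma q)=0$ by construction, so $\eta\in\mathcal{V}_g$. The bound \eqref{E:Mu_Dt_Test} follows from
\begin{align*}
  \|\eta\|_{H^1(\Gamma)} \leq \|g^{-1}w\|_{H^1(\Gamma)}+\|\nabla_\Gamma q\|_{H^1(\Gamma)} \leq c\|w\|_{H^1(\Gamma)}+c\|q\|_{H^2(\Gamma)} \leq c\|w\|_{H^1(\Gamma)},
\end{align*}
using $g\in C^1(\Gamma)$, $g\geq c>0$, and the $H^2$-estimate above, which completes the proof.
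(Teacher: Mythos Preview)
Your approach is correct and essentially identical to the paper's: solve the weighted Poisson equation $\mathrm{div}_\Gamma(g\nabla_\Gamma q)=\mathrm{div}_\Gamma w$ via Lax--Milgram and Poincar\'e, upgrade $q$ to $H^2(\Gamma)$, then set $\eta:=g^{-1}w-\nabla_\Gamma q$. The only difference is in the regularity step: you assert that the right-hand side of the non-divergence form $-\Delta_\Gamma q=g^{-1}(\nabla_\Gamma g\cdot\nabla_\Gamma q-\mathrm{div}_\Gamma w)$ need not have zero mean and therefore detour through a localization/difference-quotient argument, but in fact it does have zero mean---after replacing the test function $\varphi$ by $g^{-1}\varphi$ in the weak formulation one obtains $(\nabla_\Gamma q,\nabla_\Gamma\varphi)_{L^2(\Gamma)}=(\tilde\xi,\varphi)_{L^2(\Gamma)}$ for all $\varphi\in H^1(\Gamma)$, and taking $\varphi\equiv 1$ gives $\int_\Gamma\tilde\xi\,d\mathcal{H}^2=0$---so Lemma~\ref{L:Pois_Surf} applies directly and yields the $H^2$-estimate without further work.
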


\begin{proof}
  Let $w\in H^1(\Gamma,T\Gamma)$ and $\xi:=-\mathrm{div}_\Gamma w\in L^2(\Gamma)$.
  Since $w$ is tangential on $\Gamma$, the integral of $\xi$ over $\Gamma$ vanishes by the Stokes theorem.
  Also,
  \begin{align} \label{Pf_MuDT:Poin}
    \|q\|_{L^2(\Gamma)} \leq c\|\nabla_\Gamma q\|_{L^2(\Gamma)} \leq c\|g^{1/2}\nabla_\Gamma q\|_{L^2(\Gamma)}
  \end{align}
  for all $q\in H^1(\Gamma)$ with $\int_\Gamma q\,d\mathcal{H}^2=0$ by Poincar\'{e}'s inequality \eqref{E:Poin_Surf_Lp} and \eqref{E:Width_Bound}.
  Hence the Lax--Milgram theorem shows that the problem
  \begin{align*}
    -\mathrm{div}_\Gamma(g\nabla_\Gamma q) = \xi \quad\text{on}\quad \Gamma, \quad \int_\Gamma q\,d\mathcal{H}^2 = 0
  \end{align*}
  admits a unique weak solution $q\in H^1(\Gamma)$ in the sense that
  \begin{align} \label{Pf_MuDT:Weak}
    (g\nabla_\Gamma q,\nabla_\Gamma\varphi)_{L^2(\Gamma)} = (\xi,\varphi)_{L^2(\Gamma)} \quad\text{for all}\quad \varphi\in H^1(\Gamma).
  \end{align}
  From this equality with $\varphi= q$ and \eqref{Pf_MuDT:Poin} we deduce that
  \begin{align} \label{Pf_MuDT:Q_H1}
    \|q\|_{H^1(\Gamma)} \leq c\|\xi\|_{L^2(\Gamma)} = c\|\mathrm{div}_\Gamma w\|_{L^2(\Gamma)} \leq c\|w\|_{H^1(\Gamma)}.
  \end{align}
  Moreover, replacing $\varphi$ by $g^{-1}\varphi$ in \eqref{Pf_MuDT:Weak} we get
  \begin{align*}
    (\nabla_\Gamma q,\nabla_\Gamma\varphi)_{L^2(\Gamma)} = (g^{-1}(\xi+\nabla_\Gamma g\cdot\nabla_\Gamma q),\varphi)_{L^2(\Gamma)} \quad\text{for all}\quad \varphi\in H^1(\Gamma),
  \end{align*}
  which combined with \eqref{E:Poin_Surf_Lp} shows that $q$ is a unique weak solution to
  \begin{align*}
    -\Delta_\Gamma\psi = \tilde{\xi} := g^{-1}(\xi+\nabla_\Gamma g\cdot\nabla_\Gamma q) \in L^2(\Gamma), \quad \int_\Gamma\psi\,d\mathcal{H}^2 = 0.
  \end{align*}
  (Note that the integral of $\tilde{\xi}$ over $\Gamma$ vanishes by \eqref{Pf_MuDT:Weak}.)
  Hence by Lemma~\ref{L:Pois_Surf} and the inequalities \eqref{E:Width_Bound} and \eqref{Pf_MuDT:Q_H1} we see that $q\in H^2(\Gamma)$ and
  \begin{align} \label{Pf_MuDT:Q_H2}
    \|q\|_{H^2(\Gamma)} \leq c\|g^{-1}(\xi+\nabla_\Gamma g\cdot\nabla_\Gamma q)\|_{L^2(\Gamma)} \leq c\|w\|_{H^1(\Gamma)}.
  \end{align}
  Now we set $\eta:=g^{-1}w-\nabla_\Gamma q$ on $\Gamma$.
  Then $\eta\in \mathcal{V}_g$ by $q\in H^2(\Gamma)$ and $\mathrm{div}_\Gamma(g\nabla_\Gamma q)=-\xi=\mathrm{div}_\Gamma w$ on $\Gamma$.
  Moreover, from \eqref{E:Width_Bound} and \eqref{Pf_MuDT:Q_H2} it follows that
  \begin{align*}
    \|\eta\|_{H^1(\Gamma)} \leq c\left(\|w\|_{H^1(\Gamma)}+\|\nabla_\Gamma q\|_{H^1(\Gamma)}\right) \leq c\|w\|_{H^1(\Gamma)}.
  \end{align*}
  Hence we obtain $w=g\eta+g\nabla_\Gamma q$ on $\Gamma$ and \eqref{E:Mu_Dt_Test}.
\end{proof}

As in the previous subsection, we estimate the time derivative of $v^\varepsilon$ and then derive an estimate for the time derivative of $M_\tau u^\varepsilon$ by using a difference estimate.

\begin{lemma} \label{L:PMu_Dt}
  Let $u^\varepsilon$ be as in Lemma~\ref{L:Mu_Weak} and $v^\varepsilon=\mathbb{P}_gM_\tau u^\varepsilon$.
  Then
  \begin{align} \label{E:PMu_Dt}
    \|\partial_tv^\varepsilon\|_{L^2(0,T;H^{-1}(\Gamma,T\Gamma))} \leq c_T
  \end{align}
  for all $T>0$, where $c_T>0$ is a constant depending on $T$ but independent of $\varepsilon$.
\end{lemma}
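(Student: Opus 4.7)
The plan is to characterize the $L^2(0,T;H^{-1}(\Gamma,T\Gamma))$-norm of $\partial_tv^\varepsilon$ by duality and test against a generic $w\in L^2(0,T;H^1(\Gamma,T\Gamma))$. Since $L^2(0,T;H^1(\Gamma,T\Gamma))$ is a separable Hilbert space, it suffices to show
\[
  \left|\int_0^T[\partial_tv^\varepsilon(t),w(t)]_{T\Gamma}\,dt\right| \leq c_T\|w\|_{L^2(0,T;H^1(\Gamma))}.
\]
Because $v^\varepsilon\in H^1_{loc}([0,\infty);L_{g\sigma}^2(\Gamma,T\Gamma))$ by Lemma~\ref{L:PMu_Weak}, for a.a. $t\in(0,T)$ we have $\partial_tv^\varepsilon(t)\in L_{g\sigma}^2(\Gamma,T\Gamma)\subset L^2(\Gamma,T\Gamma)$, so the duality pairing coincides with the $L^2$-inner product.

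The key device is the decomposition of Lemma~\ref{L:Mu_Dt_Test}: for each $t$, write $w(t)=g\eta(t)+g\nabla_\Gamma q(t)$ with $\eta(t)\in\mathcal{V}_g$, $q(t)\in H^2(\Gamma)$, and $\|\eta(t)\|_{H^1(\Gamma)}\leq c\|w(t)\|_{H^1(\Gamma)}$. Since $\partial_tv^\varepsilon(t)\in L_{g\sigma}^2(\Gamma,T\Gamma)$ and $g\nabla_\Gamma q(t)\in L_{g\sigma}^2(\Gamma,T\Gamma)^\perp$ by Lemma~\ref{L:L2gs_Orth}, the gradient part is annihilated:
\[
  \int_0^T(\partial_tv^\varepsilon,w)_{L^2(\Gamma)}\,dt = \int_0^T(g\partial_tv^\varepsilon,\eta)_{L^2(\Gamma)}\,dt.
\]
Now $\eta\in L^2(0,T;\mathcal{V}_g)$ is an admissible test function in the weak formulation~\eqref{E:PMu_Weak}, which yields
\[
  \int_0^T(g\partial_tv^\varepsilon,\eta)_{L^2(\Gamma)}\,dt = -\int_0^T\{a_g(v^\varepsilon,\eta)+b_g(v^\varepsilon,v^\varepsilon,\eta)\}\,dt+\int_0^T(gM_\tau\mathbb{P}_\varepsilon f^\varepsilon,\eta)_{L^2(\Gamma)}\,dt+R_\varepsilon^1(\eta)+R_\varepsilon^2(\eta).
\]

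Each term on the right is controlled by $\|\eta\|_{L^2(0,T;H^1(\Gamma))}$, hence by $\|w\|_{L^2(0,T;H^1(\Gamma))}$. The bilinear term satisfies $|a_g(v^\varepsilon,\eta)|\leq c\|v^\varepsilon\|_{H^1(\Gamma)}\|\eta\|_{H^1(\Gamma)}$ directly from~\eqref{E:Def_Bili_Surf}, and Cauchy--Schwarz in time together with the energy estimate~\eqref{E:PMu_Energy} bounds its integral by $c_T\|\eta\|_{L^2(0,T;H^1(\Gamma))}$. The trilinear term is the main point: by Lemma~\ref{L:Tri_Surf} with $v_1=v_2=v^\varepsilon$,
\[
  |b_g(v^\varepsilon,v^\varepsilon,\eta)| \leq c\|v^\varepsilon\|_{L^2(\Gamma)}\|v^\varepsilon\|_{H^1(\Gamma)}\|\eta\|_{H^1(\Gamma)},
\]
so integrating and pulling out the essential supremum in time,
\[
  \int_0^T|b_g(v^\varepsilon,v^\varepsilon,\eta)|\,dt \leq c\sup_{[0,T]}\|v^\varepsilon\|_{L^2(\Gamma)}\,\|v^\varepsilon\|_{L^2(0,T;H^1(\Gamma))}\|\eta\|_{L^2(0,T;H^1(\Gamma))},
\]
which is bounded by $c_T\|\eta\|_{L^2(0,T;H^1(\Gamma))}$ again by~\eqref{E:PMu_Energy}. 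The forcing term is handled via the $H^{-1}$-pairing $|(gM_\tau\mathbb{P}_\varepsilon f^\varepsilon,\eta)_{L^2(\Gamma)}|\leq c\|M_\tau\mathbb{P}_\varepsilon f^\varepsilon\|_{H^{-1}(\Gamma,T\Gamma)}\|\eta\|_{H^1(\Gamma)}$, whose time integral is controlled by~\eqref{E:UE_Data} with $\beta=1$. Finally, the residuals $R_\varepsilon^1(\eta)$ and $R_\varepsilon^2(\eta)$ already satisfy the bounds~\eqref{E:Mu_Weak_Re} and~\eqref{E:PMu_Weak_Re} in terms of $\|\eta\|_{L^2(0,T;H^1(\Gamma))}$, with constants uniform in $\varepsilon$ thanks to Assumption~\ref{Assump_1} (which controls $\varepsilon^{-1}\gamma_\varepsilon^i$). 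Combining these bounds with $\|\eta\|_{L^2(0,T;H^1(\Gamma))}\leq c\|w\|_{L^2(0,T;H^1(\Gamma))}$ from Lemma~\ref{L:Mu_Dt_Test} yields~\eqref{E:PMu_Dt}.

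The main obstacle I anticipate is not technical but structural: verifying that the Helmholtz--Leray decomposition of $w$ produced by Lemma~\ref{L:Mu_Dt_Test} is measurable in time with $\eta\in L^2(0,T;\mathcal{V}_g)$. Since the decomposition is obtained by solving a uniformly elliptic problem whose data $w$ depends measurably on $t$, standard arguments show that $\eta(t)$ and $q(t)$ inherit this measurability, so the time-dependent version of Lemma~\ref{L:Mu_Dt_Test} follows directly from the pointwise one together with the estimate~\eqref{E:Mu_Dt_Test} applied for a.a. $t\in(0,T)$.
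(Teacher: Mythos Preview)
Your proof is correct and follows essentially the same route as the paper: decompose the test function $w$ via Lemma~\ref{L:Mu_Dt_Test}, use $\partial_tv^\varepsilon(t)\in L_{g\sigma}^2(\Gamma,T\Gamma)$ to kill the gradient part, substitute $\eta$ into the weak formulation~\eqref{E:PMu_Weak}, and bound each term with the energy estimate~\eqref{E:PMu_Energy}, the trilinear estimate~\eqref{E:Tri_Surf}, the data bound~\eqref{E:UE_Data}, and the residual bounds~\eqref{E:Mu_Weak_Re}--\eqref{E:PMu_Weak_Re}. Your remark on time-measurability of the decomposition is a reasonable aside that the paper leaves implicit.
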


Note that we estimate $\partial_tv^\varepsilon$ in the dual $H^{-1}(\Gamma,T\Gamma)$ of $H^1(\Gamma,T\Gamma)$, not in the dual of the weighted solenoidal space $\mathcal{V}_g$ (see Remark~\ref{R:Mu_Dt} below).

\begin{proof}
  Let $w\in L^2(0,T;H^1(\Gamma,T\Gamma))$.
  By Lemma~\ref{L:Mu_Dt_Test} we have $w=g\eta+g\nabla_\Gamma q$ with $\eta\in L^2(0,T;\mathcal{V}_g)$ and $q\in L^2(0,T;H^2(\Gamma))$.
  Since $\partial_tv^\varepsilon(t)\in L_{g\sigma}^2(\Gamma,T\Gamma)$ and $g\nabla_\Gamma q(t)\in L_{g\sigma}^2(\Gamma,T\Gamma)^\perp$ for a.a. $t\in(0,T)$ by Lemmas~\ref{L:L2gs_Orth} and~\ref{L:PMu_Weak},
  \begin{align*}
    \int_0^T(\partial_tv^\varepsilon,g\nabla_\Gamma q)_{L^2(\Gamma)}\,dt = 0.
  \end{align*}
  By this equality and $g\eta=w-g\nabla_\Gamma q$ we have
  \begin{align*}
    \int_0^T(g\partial_tv^\varepsilon,\eta)_{L^2(\Gamma)}\,dt = \int_0^T(\partial_tv^\varepsilon,g\eta)_{L^2(\Gamma)}\,dt = \int_0^T(\partial_tv^\varepsilon,w)_{L^2(\Gamma)}\,dt.
  \end{align*}
  We substitute $\eta=g^{-1}w-\nabla_\Gamma q$ for \eqref{E:PMu_Weak} and use this equality.
  Then
  \begin{multline} \label{Pf_PMuDt:Eq}
    \int_0^T(\partial_tv^\varepsilon,w)_{L^2(\Gamma)}\,dt = -\int_0^Ta_g(v^\varepsilon,\eta)\,dt-\int_0^Tb_g(v^\varepsilon,v^\varepsilon,\eta)\,dt \\
    +\int_0^T(gM_\tau\mathbb{P}_\varepsilon f^\varepsilon,\eta)_{L^2(\Gamma)}\,dt+R_\varepsilon^1(\eta)+R_\varepsilon^2(\eta),
  \end{multline}
  where $R_\varepsilon^1(\eta)$ and $R_\varepsilon^2(\eta)$ are given in Lemmas~\ref{L:Mu_Weak} and~\ref{L:PMu_Weak}.
  To the first term on the right-hand side we apply \eqref{E:Width_Bound}, \eqref{E:PMu_Energy}, and \eqref{E:Mu_Dt_Test} to get
  \begin{align*}
    \left|\int_0^Ta_g(v^\varepsilon,\eta)\,dt\right| \leq c\|v^\varepsilon\|_{L^2(0,T;H^1(\Gamma))}\|\eta\|_{L^2(0,T;H^1(\Gamma))} \leq c_T\|w\|_{L^2(0,T;H^1(\Gamma))}.
  \end{align*}
  Here and in what follows we denote by $c_T$ a general positive constant depending on $T$ but independent of $\varepsilon$.
  Also, by \eqref{E:Tri_Surf}, \eqref{E:PMu_Energy}, and \eqref{E:Mu_Dt_Test},
  \begin{align*}
    \left|\int_0^Tb_g(v^\varepsilon,v^\varepsilon,\eta)\,dt\right| &\leq c\int_0^T\|v^\varepsilon\|_{L^2(\Gamma)}\|v^\varepsilon\|_{H^1(\Gamma)}\|\eta\|_{H^1(\Gamma)}\,dt \\
    &\leq c\|v^\varepsilon\|_{L^\infty(0,T;L^2(\Gamma))}\|v^\varepsilon\|_{L^2(0,T;H^1(\Gamma))}\|\eta\|_{L^2(0,T;H^1(\Gamma))} \\
    &\leq c_T\|w\|_{L^2(0,T;H^1(\Gamma))}.
  \end{align*}
  For the other terms we proceed as in the proof of Lemma~\ref{L:PMu_Energy} (see \eqref{Pf_PMuE:F}--\eqref{Pf_PMuE:Re}) and use \eqref{E:UE_Data} with $\beta=1$ and \eqref{E:Mu_Dt_Test}.
  Then we get
  \begin{align*}
    \left|\int_0^T(gM_\tau\mathbb{P}_\varepsilon f^\varepsilon,\eta)_{L^2(\Gamma)}\,dt\right| &\leq c\int_0^T\|M_\tau\mathbb{P}_\varepsilon f^\varepsilon\|_{H^{-1}(\Gamma,T\Gamma)}\|\eta\|_{H^1(\Gamma)}\,dt \\
    &\leq cT^{1/2}\|\eta\|_{L^2(0,T;H^1(\Gamma))} \leq cT^{1/2}\|w\|_{L^2(0,T;H^1(\Gamma))}
  \end{align*}
  and
  \begin{align*}
    |R_\varepsilon^1(\eta)|+|R_\varepsilon^2(\eta)| \leq c(1+T)^{1/2}\|\eta\|_{L^2(0,T;H^1(\Gamma))} \leq c(1+T)^{1/2}\|w\|_{L^2(0,T;H^1(\Gamma))}.
  \end{align*}
  Applying these inequalities to the right-hand side of \eqref{Pf_PMuDt:Eq} we obtain
  \begin{align*}
    \left|\int_0^T(\partial_tv^\varepsilon,w)_{L^2(\Gamma)}\,dt\right| \leq c_T\|w\|_{L^2(0,T;H^1(\Gamma))}
  \end{align*}
  for all $w\in L^2(0,T;H^1(\Gamma,T\Gamma))$.
  Hence \eqref{E:PMu_Dt} holds.
\end{proof}

\begin{corollary} \label{C:Mu_Dt}
  Let $u^\varepsilon$ be as in Lemma~\ref{L:Mu_Weak}.
  Then
  \begin{align} \label{E:Mu_Dt}
    \|\partial_tM_\tau u^\varepsilon\|_{L^2(0,T;H^{-1}(\Gamma,T\Gamma))} \leq c_T
  \end{align}
  for all $T>0$, where $c_T>0$ is a constant depending on $T$ but independent of $\varepsilon$.
\end{corollary}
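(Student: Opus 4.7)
The plan is to derive Corollary~\ref{C:Mu_Dt} directly from Lemma~\ref{L:PMu_Dt} by controlling the discrepancy between $\partial_tM_\tau u^\varepsilon$ and $\partial_tv^\varepsilon=\partial_t\mathbb{P}_gM_\tau u^\varepsilon$ in the weaker norm $H^{-1}(\Gamma,T\Gamma)$. The idea is that the weighted Helmholtz--Leray projection differs from the identity by a small-in-$\varepsilon$ correction when applied to $M_\tau u^\varepsilon$, because $u^\varepsilon$ is solenoidal in $\Omega_\varepsilon$, and the same smallness is inherited by the time derivatives.

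More precisely, I would first note that the space-time regularity in Lemma~\ref{L:Mu_Weak} together with the continuous embedding $L^2(\Gamma,T\Gamma)\hookrightarrow H^{-1}(\Gamma,T\Gamma)$ (obtained by identifying $v\in L^2(\Gamma,T\Gamma)$ with the functional $w\mapsto(v,w)_{L^2(\Gamma)}=[v,w]_{T\Gamma}$ on $H^1(\Gamma,T\Gamma)$) guarantees that both $\partial_tM_\tau u^\varepsilon$ and $\partial_tv^\varepsilon$ make sense as elements of $L^2(0,T;H^{-1}(\Gamma,T\Gamma))$. Then by the triangle inequality
\begin{align*}
  \|\partial_tM_\tau u^\varepsilon\|_{L^2(0,T;H^{-1}(\Gamma,T\Gamma))}
  &\leq \|\partial_tv^\varepsilon\|_{L^2(0,T;H^{-1}(\Gamma,T\Gamma))} \\
  &\qquad +\|\partial_tM_\tau u^\varepsilon-\partial_tv^\varepsilon\|_{L^2(0,T;H^{-1}(\Gamma,T\Gamma))}.
\end{align*}
The first term is controlled by $c_T$ through Lemma~\ref{L:PMu_Dt}. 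For the second term, since $u^\varepsilon\in H_{loc}^1([0,\infty);\mathcal{H}_\varepsilon)\subset H_{loc}^1([0,\infty);L_\sigma^2(\Omega_\varepsilon))$, Lemma~\ref{L:Rep_WHL_Dt} yields
\begin{equation*}
  \|\partial_tM_\tau u^\varepsilon-\partial_tv^\varepsilon\|_{L^2(0,T;L^2(\Gamma))}
  \leq c\varepsilon^{1/2}\|\partial_tu^\varepsilon\|_{L^2(0,T;L^2(\Omega_\varepsilon))},
\end{equation*}
and applying \eqref{E:Est_DtUe} bounds the right-hand side by $c\varepsilon^{\alpha/2}(1+T)^{1/2}\leq c(1+T)^{1/2}$ uniformly in $\varepsilon$. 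The embedding $L^2(\Gamma,T\Gamma)\hookrightarrow H^{-1}(\Gamma,T\Gamma)$ then upgrades this into the desired $H^{-1}$-bound, yielding \eqref{E:Mu_Dt}.

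This argument involves no substantive obstacle: once Lemmas~\ref{L:PMu_Dt} and~\ref{L:Rep_WHL_Dt} are in place together with the energy estimate \eqref{E:Est_DtUe}, the result is an immediate consequence of the triangle inequality and the trivial embedding of $L^2(\Gamma,T\Gamma)$ into its dual space $H^{-1}(\Gamma,T\Gamma)$. The only point requiring mild care is book-keeping the regularity so that $\partial_tM_\tau u^\varepsilon$ and $\partial_tv^\varepsilon$ are legitimately elements of $L^2(0,T;H^{-1}(\Gamma,T\Gamma))$ rather than merely distributions; this is ensured by Lemmas~\ref{L:Mu_Weak} and~\ref{L:PMu_Weak}.
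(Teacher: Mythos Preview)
Your proposal is correct and follows essentially the same approach as the paper: split $\partial_tM_\tau u^\varepsilon$ via the triangle inequality into $\partial_tv^\varepsilon$ (bounded by Lemma~\ref{L:PMu_Dt}) and the difference $\partial_tM_\tau u^\varepsilon-\partial_tv^\varepsilon$ (bounded in $L^2(\Gamma)$ by Lemma~\ref{L:Rep_WHL_Dt} and \eqref{E:Est_DtUe}, then embedded into $H^{-1}(\Gamma,T\Gamma)$). The paper's proof is just a terser version of what you wrote.
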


\begin{proof}
  Let $v^\varepsilon=\mathbb{P}_gM_\tau u^\varepsilon$.
  From \eqref{E:Est_DtUe} and \eqref{E:Rep_WHL_Dt} we deduce that
  \begin{align*}
    \|\partial_tM_\tau u^\varepsilon-\partial_tv^\varepsilon\|_{L^2(0,T;H^{-1}(\Gamma,T\Gamma))} &\leq \|\partial_tM_\tau u^\varepsilon-\partial_tv^\varepsilon\|_{L^2(0,T;L^2(\Gamma))} \\
    &\leq \varepsilon^{1/2}\|\partial_tu^\varepsilon\|_{L^2(0,T;L^2(\Omega_\varepsilon))} \\
    &\leq c\varepsilon^{\alpha/2}(1+T)^{1/2}.
  \end{align*}
  By this inequality, \eqref{E:PMu_Dt}, and $\varepsilon^{\alpha/2}\leq 1$ we obtain \eqref{E:Mu_Dt}.
\end{proof}

\begin{remark} \label{R:Mu_Dt}
  In construction of a weak solution to the Navier--Stokes equations, we usually estimate the time derivative of an approximate solution in the dual of a solenoidal space.
  However, in Lemma~\ref{L:PMu_Dt} we estimate $\partial_tv^\varepsilon$ in $H^{-1}(\Gamma,T\Gamma)$, not in the dual $\mathcal{V}'_g$ of $\mathcal{V}_g$.
  This is because we multiply $\partial_tv^\varepsilon$ by $g$ in \eqref{E:PMu_Weak}.
  When $f\in\mathcal{V}'_g$, we cannot define a functional $gf\colon v\mapsto{}_{\mathcal{V}'_g}\langle f,gv \rangle_{\mathcal{V}_g}$ for $v\in \mathcal{V}_g$ since $gv$ is not in $\mathcal{V}_g$ in general (here ${}_{\mathcal{V}'_g}\langle \cdot,\cdot \rangle_{\mathcal{V}_g}$ stands for the duality product between $\mathcal{V}'_g$ and $\mathcal{V}_g$).
  To avoid this issue, we consider $\partial_tv^\varepsilon$ and $\partial_tM_\tau u^\varepsilon$ in $H^{-1}(\Gamma,T\Gamma)$ (see \eqref{E:Def_Mul_HinT}).
\end{remark}

\subsection{Weak convergence of the average and characterization of the limit} \label{SS:SL_WeCh}
The goal of this subsection is to prove Theorem~\ref{T:SL_Weak}.
We proceed as in the case of a bounded domain in $\mathbb{R}^2$ (see e.g.~\cite{BoFa13,CoFo88,So01,Te79}).
First we give the definition of a weak solution to the limit equations \eqref{E:Limit_Eq}--\eqref{E:Limit_Div} based on \eqref{E:Mu_Weak}.

\begin{definition} \label{D:Lim_W_T}
  Let $T>0$, $v_0\in L_{g\sigma}^2(\Gamma,T\Gamma)$, and $f\in L^2(0,T;H^{-1}(\Gamma,T\Gamma))$.
  We say that a vector field
  \begin{align*}
    v \in L^\infty(0,T;L_{g\sigma}^2(\Gamma,T\Gamma))\cap L^2(0,T;\mathcal{V}_g) \quad\text{with}\quad \partial_t v\in L^1(0,T;H^{-1}(\Gamma,T\Gamma))
  \end{align*}
  is a weak solution to the equations \eqref{E:Limit_Eq}--\eqref{E:Limit_Div} on $[0,T)$ if it satisfies
  \begin{align} \label{E:Limit_Weak}
    \int_0^T\{[g\partial_tv,\eta]_{T\Gamma}+a_g(v,\eta)+b_g(v,v,\eta)\}\,dt = \int_0^T[gf,\eta]_{T\Gamma}\,dt
  \end{align}
  for all $\eta\in C_c(0,T;\mathcal{V}_g)$ and $v|_{t=0}=v_0$ in $H^{-1}(\Gamma,T\Gamma)$.
\end{definition}

\begin{definition} \label{D:Lim_W_Inf}
  Let $v_0\in L_{g\sigma}^2(\Gamma,T\Gamma)$ and $f\in L_{loc}^2([0,\infty);H^{-1}(\Gamma,T\Gamma))$.
  We say that $v$ is a weak solution to \eqref{E:Limit_Eq}--\eqref{E:Limit_Div} on $[0,\infty)$ if it is a weak solution to \eqref{E:Limit_Eq}--\eqref{E:Limit_Div} on $[0,T)$ for all $T>0$.
\end{definition}

For $T>0$, a weak solution to \eqref{E:Limit_Eq}--\eqref{E:Limit_Div} on $[0,T)$ is continuous on $[0,T]$ with values in $H^{-1}(\Gamma,T\Gamma)$ and thus the initial condition makes sense.
In fact, it becomes a continuous function with values in $L^2(\Gamma,T\Gamma)$.

\begin{lemma} \label{L:Lim_W_L2}
  Let $T>0$ and $f\in L^2(0,T;H^{-1}(\Gamma,T\Gamma))$.
  Suppose that
  \begin{align*}
    v \in L^\infty(0,T;L_{g\sigma}^2(\Gamma,T\Gamma))\cap L^2(0,T;\mathcal{V}_g)\quad\text{with}\quad \partial_tv\in L^1(0,T;H^{-1}(\Gamma,T\Gamma))
  \end{align*}
  satisfies \eqref{E:Limit_Weak} for all $\eta\in C_c(0,T;\mathcal{V}_g)$.
  Then
  \begin{align*}
    v \in C([0,T];L_{g\sigma}^2(\Gamma,T\Gamma)), \quad \partial_tv\in L^2(0,T;H^{-1}(\Gamma,T\Gamma)),
  \end{align*}
  and \eqref{E:Limit_Weak} is valid for all $\eta\in L^2(0,T;\mathcal{V}_g)$.
\end{lemma}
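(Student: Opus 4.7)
My strategy is to proceed in three steps: first upgrade the integrability of $\partial_t v$ from $L^1$ to $L^2$ with values in $H^{-1}(\Gamma,T\Gamma)$, then apply the standard Gelfand-triple continuity result to obtain $v\in C([0,T];L^2_{g\sigma}(\Gamma,T\Gamma))$, and finally extend the weak formulation to arbitrary $\eta\in L^2(0,T;\mathcal{V}_g)$ by density.

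\textbf{Step 1: a canonical representative of $\partial_t v$.} For a.a.\ $t\in(0,T)$ I define a functional $\tilde F(t)\in H^{-1}(\Gamma,T\Gamma)$ by
\[
  [\tilde F(t),w]_{T\Gamma} := -a_g(v(t),w)-b_g(v(t),v(t),w)+[gf(t),w]_{T\Gamma},\qquad w\in H^1(\Gamma,T\Gamma).
\]
Using the obvious $H^1$-continuity of $a_g$, the trilinear bound \eqref{E:Tri_Surf}, and the fact that $w\mapsto gw$ is bounded on $H^1(\Gamma,T\Gamma)$, together with the Ladyzhenskaya inequality \eqref{E:La_Surf} to control $\|v(t)\|_{L^4}^2\le c\|v(t)\|_{L^2}\|v(t)\|_{H^1}$, I obtain
\[
  \|\tilde F(t)\|_{H^{-1}(\Gamma,T\Gamma)} \le c\bigl(\|v(t)\|_{H^1(\Gamma)}+\|v(t)\|_{L^2(\Gamma)}\|v(t)\|_{H^1(\Gamma)}+\|f(t)\|_{H^{-1}(\Gamma,T\Gamma)}\bigr),
\]
which lies in $L^2(0,T)$ by the hypotheses on $v$ and $f$. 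The weak formulation \eqref{E:Limit_Weak}, tested with $\varphi\eta$ for arbitrary $\varphi\in C_c^\infty(0,T)$ and $\eta\in\mathcal{V}_g$, then reads
\[
  [\partial_tv(t),g\eta]_{T\Gamma}=[g\partial_tv(t),\eta]_{T\Gamma}=[\tilde F(t),\eta]_{T\Gamma}\quad\text{for a.a.\ }t\in(0,T),\;\eta\in\mathcal{V}_g.
\]

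\textbf{Step 2: from $\mathcal{V}_g'$ to $H^{-1}(\Gamma,T\Gamma)$ (main obstacle).} The identity just obtained only controls $\partial_tv$ on the subspace $g\mathcal{V}_g\subset H^1(\Gamma,T\Gamma)$, not on all of $H^1(\Gamma,T\Gamma)$. To bridge this gap I use Lemma~\ref{L:Mu_Dt_Test}: any $w\in H^1(\Gamma,T\Gamma)$ decomposes as $w=g\eta+g\nabla_\Gamma q$ with $\eta\in\mathcal{V}_g$, $q\in H^2(\Gamma)$, and $\|\eta\|_{H^1(\Gamma)}\le c\|w\|_{H^1(\Gamma)}$. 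The crucial observation is that $\partial_tv$ annihilates the gradient component: since $\partial_tv\in L^1(0,T;H^{-1}(\Gamma,T\Gamma))$, the map $t\mapsto [v(t),g\nabla_\Gamma q]_{T\Gamma}=(v(t),g\nabla_\Gamma q)_{L^2(\Gamma)}$ is absolutely continuous with derivative $[\partial_tv(t),g\nabla_\Gamma q]_{T\Gamma}$; but Lemma~\ref{L:L2gs_Orth} gives $g\nabla_\Gamma q\in L_{g\sigma}^2(\Gamma,T\Gamma)^\perp$ and $v(t)\in L_{g\sigma}^2(\Gamma,T\Gamma)$ for a.a.\ $t$, so that $(v(t),g\nabla_\Gamma q)_{L^2}\equiv 0$ and its derivative vanishes a.e. Combining this with Step~1 yields
\[
  [\partial_tv(t),w]_{T\Gamma}=[\partial_tv(t),g\eta]_{T\Gamma}=[\tilde F(t),\eta]_{T\Gamma},
\]
and \eqref{E:Mu_Dt_Test} gives $\|\partial_tv(t)\|_{H^{-1}(\Gamma,T\Gamma)}\le c\|\tilde F(t)\|_{H^{-1}(\Gamma,T\Gamma)}$, whence $\partial_tv\in L^2(0,T;H^{-1}(\Gamma,T\Gamma))$. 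This identification of $\partial_tv$ through the solenoidal constraint is the technical heart of the argument.

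\textbf{Step 3: Gelfand-triple continuity and density extension.} Since $\mathcal{V}_g\hookrightarrow L^2_{g\sigma}(\Gamma,T\Gamma)\hookrightarrow \mathcal{V}_g'$ is a Gelfand triple (the inner product on $L^2_{g\sigma}(\Gamma,T\Gamma)$ is equivalent to $(g\,\cdot,\cdot)_{L^2(\Gamma)}$ because of \eqref{E:Width_Bound}, and $H^{-1}(\Gamma,T\Gamma)$ embeds continuously into $\mathcal{V}_g'$ by restriction), the standard regularity lemma for functions with values in a Gelfand triple (e.g.\ \cite[Chapter~III, Lemma~1.2]{Te79}) applied to $v\in L^2(0,T;\mathcal{V}_g)$ and $\partial_tv\in L^2(0,T;\mathcal{V}_g')$ yields $v\in C([0,T];L^2_{g\sigma}(\Gamma,T\Gamma))$ after modification on a null set. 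Finally, every term in the weak formulation \eqref{E:Limit_Weak} is continuous in $\eta\in L^2(0,T;\mathcal{V}_g)$: the parabolic term via $\partial_tv\in L^2(0,T;H^{-1})$ and $\eta\mapsto g\eta$ bounded on $H^1$, the viscous term trivially, the trilinear term through $b_g(v(t),v(t),\eta(t))\le c\|v(t)\|_{L^4(\Gamma)}^2\|\eta(t)\|_{H^1(\Gamma)}$ with $\|v\|_{L^4}^2\in L^2(0,T)$ by \eqref{E:La_Surf}, and the source term via $f\in L^2(0,T;H^{-1})$. A density argument (approximating $\eta\in L^2(0,T;\mathcal{V}_g)$ by elements of $C_c(0,T;\mathcal{V}_g)$) therefore extends the validity of \eqref{E:Limit_Weak} to all $\eta\in L^2(0,T;\mathcal{V}_g)$, completing the proof.
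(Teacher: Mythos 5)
Your proof is correct, and its core coincides with the paper's: the decisive steps---annihilating the gradient part of a general test field via Lemma~\ref{L:L2gs_Orth} (i.e. $[\partial_tv,g\nabla_\Gamma q]_{T\Gamma}=0$ because $(v(t),g\nabla_\Gamma q)_{L^2(\Gamma)}$ vanishes for a.a.\ $t$), the decomposition $w=g\eta+g\nabla_\Gamma q$ from Lemma~\ref{L:Mu_Dt_Test} to bound $\partial_tv$ in $H^{-1}(\Gamma,T\Gamma)$, and the density extension of \eqref{E:Limit_Weak}---are exactly those of the paper. The differences are in packaging. First, you argue pointwise in $t$ with the functional $\tilde F(t)$, so the identities $[g\partial_tv(t),\eta]_{T\Gamma}=[\tilde F(t),\eta]_{T\Gamma}$ and $[\partial_tv(t),g\nabla_\Gamma q]_{T\Gamma}=0$ a priori hold only for a.a.\ $t$ with exceptional sets depending on $\eta$ and $q$; to get the pointwise estimate $\|\partial_tv(t)\|_{H^{-1}(\Gamma,T\Gamma)}\le c\|\tilde F(t)\|_{H^{-1}(\Gamma,T\Gamma)}$ you should fix a countable dense family and use that the decomposition $w\mapsto(\eta,q)$ is linear and bounded, whereas the paper sidesteps this by estimating the time-integrated pairing $\int_0^T[\partial_tv,w]_{T\Gamma}\,dt$ against $w\in C_c(0,T;H^1(\Gamma,T\Gamma))$ and concluding by density in $L^2(0,T;H^1(\Gamma,T\Gamma))$. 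Second, for the continuity you invoke the abstract lemma directly in the Gelfand triple $\mathcal{V}_g\hookrightarrow L_{g\sigma}^2(\Gamma,T\Gamma)\hookrightarrow\mathcal{V}_g'$, which yields $v\in C([0,T];L_{g\sigma}^2(\Gamma,T\Gamma))$ in one stroke but tacitly uses the density of $\mathcal{V}_g$ in $L_{g\sigma}^2(\Gamma,T\Gamma)$ (Lemma~\ref{L:H1gs_Dense}) and the consistency of the restricted derivative; the paper instead interpolates in the unconstrained triple $H^1(\Gamma,T\Gamma)\hookrightarrow L^2(\Gamma,T\Gamma)\hookrightarrow H^{-1}(\Gamma,T\Gamma)$ to get $v\in C([0,T];L^2(\Gamma,T\Gamma))$ and then places $v(t)$ in $L_{g\sigma}^2(\Gamma,T\Gamma)$ for every $t$ by closedness of that subspace together with the density of the times at which $v(t)$ already lies in it. Both routes are valid; yours is marginally more direct, the paper's slightly more elementary.
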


Note that here the initial condition $v|_{t=0}=v_0$ in $H^{-1}(\Gamma,T\Gamma)$ is not imposed.

\begin{proof}
  We estimate $\partial_tv$ as in the proof of Lemma~\ref{L:PMu_Dt}, where we used $\partial_tv^\varepsilon(t)\in L_{g\sigma}^2(\Gamma,T\Gamma)$ for a.a. $t\in(0,T)$.
  This is not valid for $\partial_tv$, but we have
  \begin{align} \label{Pf_LWL2:Anni}
    [\partial_tv(t),g\nabla_\Gamma q]_{T\Gamma} = 0 \quad\text{for all $q\in H^2(\Gamma)$ and a.a. $t\in(0,T)$}.
  \end{align}
  Indeed, for all $\xi\in C_c^\infty(0,T)$,
  \begin{align*}
    \int_0^T\xi(t)[\partial_tv(t),g\nabla_\Gamma q]_{T\Gamma}\,dt &= -\int_0^T\partial_t\xi(t)[v(t),g\nabla_\Gamma q]_{T\Gamma}\,dt \\
    &= -\int_0^T\partial_t\xi(t)(v(t),g\nabla_\Gamma q)_{L^2(\Gamma)}\,dt = 0
  \end{align*}
  by $v(t)\in L_{g\sigma}^2(\Gamma,T\Gamma)$ for a.a. $t\in(0,T)$ and $g\nabla_\Gamma q\in L_{g\sigma}^2(\Gamma,T\Gamma)^\perp$ (see Lemma~\ref{L:L2gs_Orth}).
  Hence \eqref{Pf_LWL2:Anni} is valid.
  Now let $w\in C_c(0,T;H^1(\Gamma,T\Gamma))$.
  By Lemma~\ref{L:Mu_Dt_Test} we can take $\eta\in C_c(0,T;\mathcal{V}_g)$ and $q\in C_c(0,T;H^2(\Gamma))$ such that $w=g\eta+g\nabla_\Gamma q$.
  Moreover,
  \begin{align*}
    \int_0^T[\partial_tv,w]_{T\Gamma}\,dt = \int_0^T\bigl([\partial_tv,g\eta]_{T\Gamma}+[\partial_tv,g\nabla_\Gamma q]_{T\Gamma}\bigr)\,dt = \int_0^T[g\partial_tv,\eta]_{T\Gamma}\,dt
  \end{align*}
  by \eqref{Pf_LWL2:Anni}.
  We substitute $\eta$ for \eqref{E:Limit_Weak}.
  Then using the above equality,
  \begin{align} \label{Pf_LWL2:B}
    \left|\int_0^Tb_g(v,v,\eta)\,dt\right| \leq c\|v\|_{L^\infty(0,T;L^2(\Gamma))}\|v\|_{L^2(0,T;H^1(\Gamma))}\|\eta\|_{L^2(0,T;H^1(\Gamma))}
  \end{align}
  by \eqref{E:Tri_Surf}, and \eqref{E:Mu_Dt_Test} we calculate as in the proof of Lemma~\ref{L:PMu_Dt} to get
  \begin{align*}
    \left|\int_0^T[\partial_tv,w]_{T\Gamma}\,dt\right| \leq c\|w\|_{L^2(0,T;H^1(\Gamma))} \quad\text{for all}\quad w\in C_c(0,T;H^1(\Gamma,T\Gamma)).
  \end{align*}
  Since $C_c(0,T;H^1(\Gamma,T\Gamma))$ is dense in $L^2(0,T;H^1(\Gamma,T\Gamma))$, this inequality implies
  \begin{align} \label{Pf_LWL2:Dt}
    \partial_tv\in L^2(0,T;H^{-1}(\Gamma,T\Gamma)).
  \end{align}
  Combining this property with $v \in L^2(0,T;\mathcal{V}_g) \subset L^2(0,T;H^1(\Gamma,T\Gamma))$ we apply the interpolation result of Lions--Magenes~\cite[Chapter~1, Theorem~3.1]{LiMa72} (see also~\cite[Chapter~III, Lemma~1.2]{Te79}) to $v$ to obtain $v \in C([0,T];L^2(\Gamma,T\Gamma))$.
  Moreover, since $v\in L^\infty(0,T;L_{g\sigma}^2(\Gamma,T\Gamma))$, the vector field $v(t)$ is in $L_{g\sigma}^2(\Gamma,T\Gamma)$ for a.a. $t\in(0,T)$ and, in particular, for all $t$ in a dense subset of $[0,T]$.
  Hence, by the continuity of $v(t)$ on $[0,T]$ in $L^2(\Gamma,T\Gamma)$ and the fact that $L_{g\sigma}^2(\Gamma,T\Gamma)$ is closed in $L^2(\Gamma,T\Gamma)$, we get $v(t)\in L_{g\sigma}^2(\Gamma,T\Gamma)$ for all $t\in[0,T]$ and thus $v\in C([0,T];L_{g\sigma}^2(\Gamma,T\Gamma))$.

  Finally, since $C_c(0,T;\mathcal{V}_g)$ is dense in $L^2(0,T;\mathcal{V}_g)$ and both sides of \eqref{E:Limit_Weak} are linear and continuous for $\eta\in L^2(0,T;\mathcal{V}_g)$ by \eqref{Pf_LWL2:B} and \eqref{Pf_LWL2:Dt}, the equality \eqref{E:Limit_Weak} is also valid for all $\eta\in L^2(0,T;\mathcal{V}_g)$.
\end{proof}

By Lemma~\ref{L:Lim_W_L2} the initial condition for a weak solution to \eqref{E:Limit_Eq}--\eqref{E:Limit_Div} makes sense in $L^2(\Gamma,T\Gamma)$.
Let us prove the uniqueness of a weak solution.

\begin{lemma} \label{L:Limit_Uni}
  For given $v_0\in L_{g\sigma}^2(\Gamma,T\Gamma)$ and $f\in L^2(0,T;H^{-1}(\Gamma,T\Gamma))$, $T>0$ there exists at most one weak solution to \eqref{E:Limit_Eq}--\eqref{E:Limit_Div} on $[0,T)$.
\end{lemma}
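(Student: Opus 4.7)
The plan is the standard energy/Gronwall argument adapted to the weighted bilinear and trilinear forms on $\Gamma$. Let $v_1,v_2$ be two weak solutions to \eqref{E:Limit_Eq}--\eqref{E:Limit_Div} on $[0,T)$ with the same data $(v_0,f)$, and set $w:=v_1-v_2$. By linearity of the first three terms in \eqref{E:Limit_Weak},
\begin{align*}
  \int_0^T\bigl\{[g\partial_tw,\eta]_{T\Gamma}+a_g(w,\eta)+b_g(v_1,v_1,\eta)-b_g(v_2,v_2,\eta)\bigr\}\,dt = 0
\end{align*}
for all $\eta\in C_c(0,T;\mathcal{V}_g)$, and $w|_{t=0}=0$ in $L_{g\sigma}^2(\Gamma,T\Gamma)$ thanks to Lemma~\ref{L:Lim_W_L2}. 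Writing $b_g(v_1,v_1,\eta)-b_g(v_2,v_2,\eta)=b_g(w,v_1,\eta)+b_g(v_2,w,\eta)$ and invoking Lemma~\ref{L:Lim_W_L2} again to extend the identity to all $\eta\in L^2(0,T;\mathcal{V}_g)$, we may take $\eta=w$, which lies in $L^2(0,T;\mathcal{V}_g)$ since $\mathcal{V}_g$ is a linear subspace containing both $v_1,v_2$.

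The first key step is the weighted energy identity
\begin{align*}
  [g\partial_tw(t),w(t)]_{T\Gamma}=[\partial_tw(t),gw(t)]_{T\Gamma}=\tfrac{1}{2}\tfrac{d}{dt}\|g^{1/2}w(t)\|_{L^2(\Gamma)}^2 \quad\text{for a.a.\ }t\in(0,T),
\end{align*}
where the first equality uses \eqref{E:Def_Mul_HinT} and the second follows from the standard Lions--Magenes lemma applied to $w\in L^2(0,T;H^1(\Gamma,T\Gamma))$ with $\partial_tw\in L^2(0,T;H^{-1}(\Gamma,T\Gamma))$ (a regularity that Lemma~\ref{L:Lim_W_L2} guarantees), noting that $gw\in L^2(0,T;H^1(\Gamma,T\Gamma))$ because $g\in C^1(\Gamma)$ is bounded below by a positive constant and is time-independent. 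The second key step is that the cubic term $b_g(v_2,w,w)$ vanishes by the second identity of \eqref{E:TriS_Vg}, since $v_2(t)\in\mathcal{V}_g$ for a.a.\ $t$. What survives is the single cross term $b_g(w,v_1,w)$, to which we apply Ladyzhenskaya's inequality \eqref{E:La_Surf}, H\"{o}lder's inequality, and Young's inequality:
\begin{align*}
  |b_g(w,v_1,w)| \leq c\|w\|_{L^4(\Gamma)}^2\|\nabla_\Gamma v_1\|_{L^2(\Gamma)} \leq c\|w\|_{L^2(\Gamma)}\|w\|_{H^1(\Gamma)}\|v_1\|_{H^1(\Gamma)}.
\end{align*}
Combined with the coercivity bound $a_g(w,w)\geq c^{-1}\|\nabla_\Gamma w\|_{L^2(\Gamma)}^2-c\|w\|_{L^2(\Gamma)}^2$ coming from \eqref{E:Bi_Surf} and Young's inequality to absorb $\|w\|_{H^1(\Gamma)}^2$ into the dissipation, this yields the differential inequality
\begin{align*}
  \tfrac{d}{dt}\|g^{1/2}w(t)\|_{L^2(\Gamma)}^2 \leq c\bigl(1+\|v_1(t)\|_{H^1(\Gamma)}^2\bigr)\|w(t)\|_{L^2(\Gamma)}^2 \quad\text{for a.a.\ }t\in(0,T).
\end{align*}

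Since $v_1\in L^2(0,T;\mathcal{V}_g)$, the coefficient $1+\|v_1(\cdot)\|_{H^1(\Gamma)}^2$ belongs to $L^1(0,T)$, so Gronwall's inequality together with $w(0)=0$ and \eqref{E:Width_Bound} forces $\|w(t)\|_{L^2(\Gamma)}=0$ for every $t\in[0,T]$, whence $v_1=v_2$. The main obstacle is the justification of the weighted energy identity displayed above; once this is in place, the rest is the classical two-dimensional Navier--Stokes uniqueness argument, where the weighted tangential setting causes no new difficulty because $g$ is smooth, strictly positive, and time-independent.
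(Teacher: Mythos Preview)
Your proposal is correct and follows essentially the same route as the paper: take the difference $w=v_1-v_2$, use it as a test function, invoke the weighted energy identity (the paper's \eqref{Pf_LU:Int_Dt}), kill $b_g(v_2,w,w)$ via \eqref{E:TriS_Vg}, estimate $b_g(w,v_1,w)$ with Ladyzhenskaya, absorb the gradient via \eqref{E:Bi_Surf} and Young, and conclude by Gronwall. The only cosmetic difference is that the paper tests against $\eta=1_{[0,t]}w$ to obtain the integral energy inequality directly, whereas you state a pointwise-in-time differential inequality; strictly speaking, taking $\eta=w$ over all of $(0,T)$ gives only a single scalar identity, so you should also use $\eta=1_{[0,t]}w$ (or a time-localized test function) to pass from the weak formulation to your differential inequality.
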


\begin{proof}
  For weak solutions $v_1$ and $v_2$ to \eqref{E:Limit_Eq}--\eqref{E:Limit_Div} let $w:=v_1-v_2$.
  Then
  \begin{align} \label{Pf_LU:Class}
    w\in C([0,T];L_{g\sigma}^2(\Gamma,T\Gamma))\cap L^2(0,T;\mathcal{V}_g), \quad \partial_tw \in L^2(0,T;H^{-1}(\Gamma,T\Gamma))
  \end{align}
  and $w|_{t=0}=0$ in $L^2(\Gamma,T\Gamma)$ by Lemma~\ref{L:Lim_W_L2}.
  For $\eta\in L^2(0,T;\mathcal{V}_g)$ we subtract the weak formulation \eqref{E:Limit_Weak} for $v_2$ from that for $v_1$ to get
  \begin{align} \label{Pf_LU:Weak}
    \int_0^T\{[g\partial_tw,\eta]_{T\Gamma}+a_g(w,\eta)+b_g(w,v_1,\eta)+b_g(v_2,w,\eta)\}\,ds = 0.
  \end{align}
  For $t\in[0,T]$ let $1_{[0,t]}$ be the characteristic function of $[0,t]\subset\mathbb{R}$ and $\eta:=1_{[0,t]}w$.
  Since $\eta\in L^2(0,T;\mathcal{V}_g)$ we can substitute it for \eqref{Pf_LU:Weak}.
  Then we use \eqref{E:Bi_Surf},
  \begin{align} \label{Pf_LU:Int_Dt}
    \begin{aligned}
      \int_0^t[g\partial_sw,w]_{T\Gamma}\,ds &= \frac{1}{2}\int_0^t\frac{d}{ds}\|g^{1/2}w\|_{L^2(\Gamma)}^2\,ds \\
      &= \frac{1}{2}\|g^{1/2}w(t)\|_{L^2(\Gamma)}^2-\frac{1}{2}\|g^{1/2}w(0)\|_{L^2(\Gamma)}^2 \\
      &\geq c\left(\|w(t)\|_{L^2(\Gamma)}^2-\|w(0)\|_{L^2(\Gamma)}^2\right)
    \end{aligned}
  \end{align}
  by \eqref{Pf_LU:Class} and $c^{-1}\leq g\leq c$ on $\Gamma$ with a constant $c>0$ (see Section~\ref{SS:Pre_Dom}), and
  \begin{align*}
    |b_g(w,v_1,w)| = |b_g(w,w,v_1)| \leq c\|w\|_{L^2(\Gamma)}\|w\|_{H^1(\Gamma)}\|v_1\|_{H^1(\Gamma)}
  \end{align*}
  and $b_g(v_2,w,w)=0$ by $v_2,w\in\mathcal{V}_g$ and \eqref{E:Tri_Surf}--\eqref{E:TriS_Vg} to obtain
  \begin{multline*}
    \|w(t)\|_{L^2(\Gamma)}^2+\int_0^t\|\nabla_\Gamma w\|_{L^2(\Gamma)}^2\,ds \\
    \leq c\left\{\|w(0)\|_{L^2(\Gamma)}^2+\int_0^t\left(\|w\|_{L^2(\Gamma)}^2+\|w\|_{L^2(\Gamma)}\|w\|_{H^1(\Gamma)}\|v_1\|_{H^1(\Gamma)}\right)ds\right\}.
  \end{multline*}
  We further apply Young's inequality to the last term to get
  \begin{multline*}
    \|w(t)\|_{L^2(\Gamma)}^2+\int_0^t\|\nabla_\Gamma w\|_{L^2(\Gamma)}^2\,ds \\
    \leq c\left\{\|w(0)\|_{L^2(\Gamma)}^2+\int_0^t\left(1+\|v_1\|_{H^1(\Gamma)}^2\right)\|w\|_{L^2(\Gamma)}^2\,ds\right\}+\frac{1}{2}\int_0^t\|\nabla_\Gamma w\|_{L^2(\Gamma)}^2\,ds.
  \end{multline*}
  Then we make the last term absorbed into the left-hand side and use $w|_{t=0}=0$ in $L^2(\Gamma,T\Gamma)$ to find that (we omit the integral of $\|\nabla_\Gamma w\|_{L^2(\Gamma)}^2$ on the left-hand side)
  \begin{align*}
    \|w(t)\|_{L^2(\Gamma)}^2 \leq c\int_0^t\left(1+\|v_1\|_{H^1(\Gamma)}^2\right)\|w\|_{L^2(\Gamma)}^2\,ds \quad\text{for all}\quad t\in[0,T].
  \end{align*}
  Since $1+\|v_1\|_{H^1(\Gamma)}^2$ is integrable on $(0,T)$, we can apply Gronwall's inequality to this inequality to get $\|w(t)\|_{L^2(\Gamma)}^2=0$ for all $t\in[0,T]$.
  Hence $v_1=v_2$.
\end{proof}

Next we show the existence of an associated pressure after giving two auxiliary results.
Recall that we identity $H^{-1}(\Gamma,T\Gamma)$ with the quotient space
\begin{align*}
  \mathcal{Q} = \{[f]\mid f\in H^{-1}(\Gamma)^3\}, \quad [f] = \{\tilde{f}\in H^{-1}(\Gamma)^3 \mid \text{$Pf=P\tilde{f}$ in $H^{-1}(\Gamma)^3$}\}
\end{align*}
and take $Pf$ (or $f$ when $Pf=f$ in $H^{-1}(\Gamma)^3$) as a representative of the equivalence class $[f]$ to write $[Pf,v]_{T\Gamma}=\langle f,v\rangle_\Gamma$ for $v\in H^1(\Gamma,T\Gamma)$ (see Section~\ref{SS:Pre_Surf}).

\begin{lemma} \label{L:LWP_Aux_1}
  Let $A\in L^2(\Gamma)^{3\times3}$ satisfy $A^T=PA=AP=A$ on $\Gamma$.
  Then
  \begin{align} \label{E:LWP_Aux_1}
    \bigl(A,D_\Gamma(\eta)\bigr)_{L^2(\Gamma)} = -[P\mathrm{div}_\Gamma A,\eta]_{T\Gamma}
  \end{align}
  for all $\eta\in H^1(\Gamma,T\Gamma)$.
\end{lemma}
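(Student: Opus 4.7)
The plan is to reduce the identity to two algebraic manipulations followed by integration by parts, interpreting $\mathrm{div}_\Gamma A$ in the weak sense of \eqref{E:Def_TD_Hin}.

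First I would show that the pairing $A:D_\Gamma(\eta)$ can be replaced by $A:\nabla_\Gamma \eta$ pointwise on $\Gamma$. By the definition \eqref{E:Strain_Surf} and the identity $X:PYP=(PXP):Y$ for $3\times3$ matrices combined with the hypothesis $PAP=A$ (which follows from $PA=A$ and $AP=A$), one gets $A:D_\Gamma(\eta)=A:(\nabla_\Gamma\eta)_S$. The symmetry $A^T=A$ then collapses the symmetric part, yielding $A:(\nabla_\Gamma\eta)_S=A:\nabla_\Gamma\eta$. Therefore
\begin{equation*}
  \bigl(A,D_\Gamma(\eta)\bigr)_{L^2(\Gamma)} = \bigl(A,\nabla_\Gamma\eta\bigr)_{L^2(\Gamma)}.
\end{equation*}

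Next I would interpret $\mathrm{div}_\Gamma A$ as an element of $H^{-1}(\Gamma)^3$ by applying \eqref{E:Def_TD_Hin} entrywise: for $v\in H^1(\Gamma)^3$,
\begin{equation*}
  \langle \mathrm{div}_\Gamma A, v\rangle_\Gamma
  = \sum_{i,j=1}^{3}\langle \underline{D}_i A_{ij}, v_j\rangle_\Gamma
  = -\sum_{i,j=1}^{3}\bigl(A_{ij},\underline{D}_i v_j+v_j H n_i\bigr)_{L^2(\Gamma)}
  = -\bigl(A,\nabla_\Gamma v\bigr)_{L^2(\Gamma)} - \bigl(A^T n, H v\bigr)_{L^2(\Gamma)}.
\end{equation*}
The crucial observation is that the boundary-type term vanishes: from $A=AP$ and $Pn=0$ we obtain $An=0$, and since $A^T=A$ this gives $A^T n=0$ on $\Gamma$. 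Hence $\langle \mathrm{div}_\Gamma A, v\rangle_\Gamma = -(A,\nabla_\Gamma v)_{L^2(\Gamma)}$ for every $v\in H^1(\Gamma)^3$.

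Finally I would specialize to $v=\eta\in H^1(\Gamma,T\Gamma)$. Using the identification introduced in Section~\ref{SS:Pre_Surf}, $[P\mathrm{div}_\Gamma A,\eta]_{T\Gamma}=\langle \mathrm{div}_\Gamma A, \eta\rangle_\Gamma$ for tangential $\eta$, so combining the two steps yields
\begin{equation*}
  \bigl(A,D_\Gamma(\eta)\bigr)_{L^2(\Gamma)}
  = \bigl(A,\nabla_\Gamma\eta\bigr)_{L^2(\Gamma)}
  = -\langle \mathrm{div}_\Gamma A, \eta\rangle_\Gamma
  = -[P\mathrm{div}_\Gamma A,\eta]_{T\Gamma},
\end{equation*}
which is \eqref{E:LWP_Aux_1}. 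There is no serious obstacle here; the only point that requires care is recognizing that the mean-curvature boundary term arising from \eqref{E:Def_TD_Hin} drops out thanks precisely to the structural assumption $AP=A$, which forces $An=0$.
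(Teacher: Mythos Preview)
Your proof is correct and follows essentially the same approach as the paper's: both reduce $A:D_\Gamma(\eta)$ to $A:\nabla_\Gamma\eta$ using $A^T=A$ and $PAP=A$, then apply the weak tangential derivative definition \eqref{E:Def_TD_Hin} componentwise and use $A^Tn=0$ to eliminate the mean-curvature term before invoking the identification $[P\mathrm{div}_\Gamma A,\eta]_{T\Gamma}=\langle\mathrm{div}_\Gamma A,\eta\rangle_\Gamma$. The only cosmetic difference is that you first write the duality formula for a general $v\in H^1(\Gamma)^3$ and then specialize to tangential $\eta$, whereas the paper works directly with $\eta$.
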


\begin{proof}
  The assumption on $A$ yields $A:D_\Gamma(\eta)=A:\nabla_\Gamma\eta$ and $A^Tn=0$ on $\Gamma$.
  Using these equalities and \eqref{E:Def_TD_Hin} and noting that $\eta\in H^1(\Gamma,T\Gamma)$ we get
  \begin{align*}
    \bigl(A,D_\Gamma(\eta)\bigr)_{L^2(\Gamma)} &= (A,\nabla_\Gamma\eta)_{L^2(\Gamma)} = \sum_{i,j=1}^3(A_{ij},\underline{D}_i\eta_j)_{L^2(\Gamma)} \\
    &= -\sum_{i,j=1}^3\left\{\langle \underline{D}_iA_{ij}, \eta_j\rangle_\Gamma+(A_{ij}Hn_i,\eta_j)_{L^2(\Gamma)}\right\} \\
    &= -\left\{\langle \mathrm{div}_\Gamma A,\eta\rangle_\Gamma+(HA^Tn,\eta)_{L^2(\Gamma)}\right\} = -[P\mathrm{div}_\Gamma A,\eta]_{T\Gamma}.
  \end{align*}
  Hence \eqref{E:LWP_Aux_1} holds.
\end{proof}

\begin{lemma} \label{L:LWP_Aux_2}
  Let $v\in H^1(\Gamma,T\Gamma)$.
  Then
  \begin{align*}
    \overline{\nabla}_vv := P(v\cdot\nabla_\Gamma)v \in L^{4/3}(\Gamma,T\Gamma)
  \end{align*}
  and we can consider $\overline{\nabla}_vv$ as an element of $H^{-1}(\Gamma,T\Gamma)$ by
  \begin{align*}
    \Bigl[\overline{\nabla}_vv,\eta\Bigr]_{T\Gamma} := \int_\Gamma\overline{\nabla}_vv\cdot\eta\,d\mathcal{H}^2, \quad \eta\in H^1(\Gamma,T\Gamma).
  \end{align*}
  Moreover, if $v\in\mathcal{V}_g$ and $\eta\in H^1(\Gamma,T\Gamma)$, then
  \begin{align} \label{E:LWP_Aux_2}
    b_g(v,v,\eta) = \int_\Gamma g\overline{\nabla}_vv\cdot\eta\,d\mathcal{H}^2 = \Bigl[g\overline{\nabla}_vv,\eta\Bigr]_{T\Gamma}.
  \end{align}
\end{lemma}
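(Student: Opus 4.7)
First I will establish the regularity claim $\overline{\nabla}_vv\in L^{4/3}(\Gamma,T\Gamma)$. Since $v\cdot n=0$ on $\Gamma$, the vector field $P(v\cdot\nabla_\Gamma)v$ is automatically tangential, so it suffices to show the $L^{4/3}$ integrability. The pointwise bound $|\overline{\nabla}_vv|\leq|v||\nabla_\Gamma v|$ together with H\"older's inequality (with exponents $4$ and $2$) gives
\begin{align*}
  \|\overline{\nabla}_vv\|_{L^{4/3}(\Gamma)} \leq \|v\|_{L^4(\Gamma)}\|\nabla_\Gamma v\|_{L^2(\Gamma)},
\end{align*}
and Ladyzhenskaya's inequality \eqref{E:La_Surf} then yields a bound by $\|v\|_{L^2(\Gamma)}^{1/2}\|v\|_{H^1(\Gamma)}^{3/2}$. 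To identify $\overline{\nabla}_vv$ with an element of $H^{-1}(\Gamma,T\Gamma)$, I will note that for $\eta\in H^1(\Gamma,T\Gamma)$ we have $\eta\in L^4(\Gamma)^3$ by the embedding implicit in \eqref{E:La_Surf}, so
\begin{align*}
  \left|\int_\Gamma\overline{\nabla}_vv\cdot\eta\,d\mathcal{H}^2\right| \leq \|\overline{\nabla}_vv\|_{L^{4/3}(\Gamma)}\|\eta\|_{L^4(\Gamma)} \leq c\|v\|_{L^2(\Gamma)}^{1/2}\|v\|_{H^1(\Gamma)}^{3/2}\|\eta\|_{H^1(\Gamma)},
\end{align*}
which defines a bounded linear functional on $H^1(\Gamma,T\Gamma)$, and this is the pairing used to consider $\overline{\nabla}_vv$ as lying in $H^{-1}(\Gamma,T\Gamma)$.

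For the identity \eqref{E:LWP_Aux_2}, the key observation is that the first equality in \eqref{E:TriS_Vg} of Lemma~\ref{L:Tri_Surf} gives $b_g(v,v,\eta)=-b_g(v,\eta,v)$ for $v\in\mathcal{V}_g$ and $\eta\in H^1(\Gamma,T\Gamma)$. Unfolding the definition \eqref{E:Def_Tri_Surf} of $b_g$ and using the pointwise identity
\begin{align*}
  (v\otimes\eta):\nabla_\Gamma v = \sum_{i,j=1}^3v_i\eta_j\underline{D}_iv_j = \eta\cdot(v\cdot\nabla_\Gamma)v \quad\text{on}\quad \Gamma,
\end{align*}
I obtain $b_g(v,\eta,v)=-\int_\Gamma g\,\eta\cdot(v\cdot\nabla_\Gamma)v\,d\mathcal{H}^2$, hence
\begin{align*}
  b_g(v,v,\eta) = \int_\Gamma g\,\eta\cdot(v\cdot\nabla_\Gamma)v\,d\mathcal{H}^2.
\end{align*}
Since $\eta$ is tangential on $\Gamma$, we have $\eta=P\eta$ and $P^T=P$, so $\eta\cdot(v\cdot\nabla_\Gamma)v=\eta\cdot P(v\cdot\nabla_\Gamma)v=\eta\cdot\overline{\nabla}_vv$, which yields the first equality in \eqref{E:LWP_Aux_2}. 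The second equality is just the definition of the pairing used to embed $\overline{\nabla}_vv$ in $H^{-1}(\Gamma,T\Gamma)$ (multiplied by the bounded scalar $g$, using \eqref{E:Def_Mul_HinT}).

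The only point requiring care is the legitimacy of applying \eqref{E:TriS_Vg}: its proof in Lemma~\ref{L:Tri_Surf} uses integration by parts of $g(v_1\otimes v_2):\nabla_\Gamma v_3$ together with $\mathrm{div}_\Gamma(gv_1)=0$ and the Stokes theorem on a tangential vector field. Here we apply it with $v_1=v,v_2=\eta,v_3=v$ where $v,\eta\in H^1(\Gamma,T\Gamma)$ and $v\in\mathcal{V}_g$, which is exactly the hypothesis of Lemma~\ref{L:Tri_Surf}, so no extra regularity is needed. I do not expect any substantive obstacle; the main thing to keep straight is the index bookkeeping in the tensor contraction and the role of the projection $P$, both of which are trivialized by the tangentiality of $\eta$.
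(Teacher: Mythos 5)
Your proposal is correct and follows essentially the same route as the paper: H\"older plus Ladyzhenskaya's inequality \eqref{E:La_Surf} for the $L^{4/3}$ bound and the duality pairing, then the antisymmetry \eqref{E:TriS_Vg} combined with the pointwise identity $(v\otimes\eta):\nabla_\Gamma v=(v\cdot\nabla_\Gamma)v\cdot\eta$ and the tangentiality of $\eta$ to insert the projection $P$. No gaps.
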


Note that $\overline{\nabla}_vv$ is the covariant derivative of $v$ along itself (see Appendix~\ref{S:Ap_RC}).

\begin{proof}
  Let $v\in H^1(\Gamma,T\Gamma)$.
  By H\"{o}lder's inequality and \eqref{E:La_Surf},
  \begin{align*}
    \left\|\overline{\nabla}_vv\right\|_{L^{4/3}(\Gamma)} \leq \|v\|_{L^4(\Gamma)}\|\nabla_\Gamma v\|_{L^2(\Gamma)} \leq c\|v\|_{L^2(\Gamma)}^{1/2}\|v\|_{H^1(\Gamma)}^{3/2} =: c_v.
  \end{align*}
  Hence $\overline{\nabla}_vv\in L^{4/3}(\Gamma,T\Gamma)$.
  We again use H\"{o}lder's inequality and \eqref{E:La_Surf} to get
  \begin{align*}
    \left|\int_\Gamma \overline{\nabla}_vv\cdot\eta\,dx\right| \leq \left\|\overline{\nabla}_vv\right\|_{L^{4/3}(\Gamma)}\|\eta\|_{L^4(\Gamma)} \leq c_v\|\eta\|_{H^1(\Gamma)}
  \end{align*}
  for all $\eta\in H^1(\Gamma,T\Gamma)$, which shows $\overline{\nabla}_vv\in H^{-1}(\Gamma,T\Gamma)$.
  Also,
  \begin{align*}
    b_g(v,v,\eta) = -b_g(v,\eta,v) = \int_\Gamma g(v\otimes\eta):\nabla_\Gamma v\,d\mathcal{H}^2
  \end{align*}
  for $v\in\mathcal{V}_g$ and $\eta\in H^1(\Gamma,T\Gamma)$ by \eqref{E:TriS_Vg}.
  To the last term we apply
  \begin{align*}
    v\otimes\eta:\nabla_\Gamma v = (v\cdot\nabla_\Gamma)v\cdot\eta = [P(v\cdot\nabla_\Gamma)v]\cdot\eta = \overline{\nabla}_vv\cdot\eta \quad\text{on}\quad \Gamma
  \end{align*}
  by $\eta\cdot n=0$ on $\Gamma$ to obtain \eqref{E:LWP_Aux_2}.
\end{proof}

\begin{lemma} \label{L:LW_Pres}
  For a weak solution $v$ to \eqref{E:Limit_Eq}--\eqref{E:Limit_Div} on $[0,T)$, $T>0$ there exists a unique $\hat{q}\in C([0,T];L^2(\Gamma))$ such that $\int_\Gamma\hat{q}(t)\,d\mathcal{H}^2=0$ for all $t\in[0,T]$ and
  \begin{multline} \label{E:LW_Pres}
    g\Bigl(\partial_tv+\overline{\nabla}_vv\Bigr)-2\nu\left\{P\,\mathrm{div}_\Gamma[gD_\Gamma(v)]-\frac{1}{g}(\nabla_\Gamma g\otimes\nabla_\Gamma g)v\right\} \\
    +(\gamma^0+\gamma^1)v+g\nabla_\Gamma q = gf \quad\text{in}\quad \mathcal{D}'(0,T;H^{-1}(\Gamma,T\Gamma))
  \end{multline}
  with $q:=\partial_t\hat{q}\in\mathcal{D}'(0,T;L^2(\Gamma))$ (see Section~\ref{SS:Pre_Surf}).
\end{lemma}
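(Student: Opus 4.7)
The plan is to argue by a weighted de Rham theorem (Theorem~\ref{T:DeRham_T}) applied time-by-time to a primitive in $t$ of the momentum equation, rather than to the pointwise equation itself; this way the only regularity of $\partial_tv$ used is its existence as a distribution, and everything sits in $C([0,T];H^{-1}(\Gamma,T\Gamma))$. Then $\hat{q}$ is recovered as the potential associated with this primitive, and $q=\partial_t\hat{q}$ produces \eqref{E:LW_Pres} upon time differentiation.

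Concretely, I would define, for $t\in[0,T]$, the functional $G(t)\in H^{-1}(\Gamma,T\Gamma)$ by
\begin{align*}
  [G(t),\eta]_{T\Gamma} &:= \bigl(g(v(t)-v(0)),\eta\bigr)_{L^2(\Gamma)}+\int_0^t a_g(v(s),\eta)\,ds \\
  &\qquad +\int_0^t\Bigl[g\overline{\nabla}_{v(s)}v(s),\eta\Bigr]_{T\Gamma}\,ds-\int_0^t[gf(s),\eta]_{T\Gamma}\,ds,
\end{align*}
for $\eta\in H^1(\Gamma,T\Gamma)$, so that $G(t)$ corresponds formally to the primitive in $t$ of $-g\nabla_\Gamma q$ appearing in \eqref{E:LW_Pres} after invoking Lemma~\ref{L:LWP_Aux_1} for the viscous part (rewritten as $2\nu(gD_\Gamma(v),D_\Gamma(\eta))_{L^2(\Gamma)}+2\nu(g^{-1}(v\cdot\nabla_\Gamma g),\eta\cdot\nabla_\Gamma g)_{L^2(\Gamma)}$ to avoid taking $P\,\mathrm{div}_\Gamma$ of a merely $L^2$ matrix) and Lemma~\ref{L:LWP_Aux_2} for the convective part. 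Using the regularity of a weak solution guaranteed by Lemma~\ref{L:Lim_W_L2} together with the integrability estimate $\|\overline{\nabla}_{v}v\|_{L^{4/3}(\Gamma)}\le c\|v\|_{L^2(\Gamma)}^{1/2}\|v\|_{H^1(\Gamma)}^{3/2}\in L^{4/3}(0,T)$ from Lemma~\ref{L:LWP_Aux_2}, and the boundedness of the $H^1$-norm on $[0,T]$ of $v$ against $\|\eta\|_{H^1(\Gamma)}$, a direct check shows that $G\in C([0,T];H^{-1}(\Gamma,T\Gamma))$ with $G(0)=0$. Next, taking an arbitrary $\eta\in\mathcal{V}_g$ (time-independent), the weak formulation \eqref{E:Limit_Weak} applied to the test function $1_{[0,t]}\eta\in L^2(0,T;\mathcal{V}_g)$ (using the extended admissibility from Lemma~\ref{L:Lim_W_L2}) together with an integration by parts in time for the $[g\partial_tv,\eta]_{T\Gamma}$ term yields $[G(t),\eta]_{T\Gamma}=0$ for every $t\in[0,T]$ and every $\eta\in\mathcal{V}_g$.

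With this annihilation property in hand, Theorem~\ref{T:DeRham_T} produces, for each $t\in[0,T]$, a unique $\hat{q}(t)\in L^2(\Gamma)$ with $\int_\Gamma\hat{q}(t)\,d\mathcal{H}^2=0$ and $G(t)=g\nabla_\Gamma\hat{q}(t)$ in $H^{-1}(\Gamma,T\Gamma)$, together with the estimate $\|\hat{q}(t)\|_{L^2(\Gamma)}\le c\|G(t)\|_{H^{-1}(\Gamma,T\Gamma)}$ from \eqref{E:DeRham_T_Ineq}. Applying the same estimate to differences $G(t)-G(s)$ and combining with the $H^{-1}(\Gamma,T\Gamma)$-continuity of $G$ gives $\hat{q}\in C([0,T];L^2(\Gamma))$; uniqueness of $\hat{q}$ follows from the uniqueness clause of Theorem~\ref{T:DeRham_T}. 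Finally, setting $q:=\partial_t\hat{q}\in\mathcal{D}'(0,T;L^2(\Gamma))$ and using Lemma~\ref{L:Dt_TGr_Com} together with the identification \eqref{E:Def_TGrDt_Hin} to commute time differentiation with $\nabla_\Gamma$, the identity $G(t)=g\nabla_\Gamma\hat{q}(t)$ differentiated in $t$ in $\mathcal{D}'(0,T;H^{-1}(\Gamma,T\Gamma))$ yields exactly \eqref{E:LW_Pres}.

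The main obstacle is the low time-regularity of $\partial_tv$: one cannot evaluate the momentum equation pointwise in $t$ and then invoke de Rham, because $\partial_tv$ is only an $H^{-1}(\Gamma,T\Gamma)$-valued distribution in time (indeed only the weighted $g\partial_tv$ paired against $\mathcal{V}_g$-test functions is controlled a~priori, cf.\ Remark~\ref{R:Mu_Dt}). The integrated construction above is what bypasses this, reducing the regularity question for $q$ to a distributional time derivative of a continuous function, and simultaneously sidestepping the issue that the viscous term $P\,\mathrm{div}_\Gamma[gD_\Gamma(v)]$ only has meaning after testing against $H^1(\Gamma,T\Gamma)$ vector fields (via \eqref{E:LWP_Aux_1}). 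The remaining verifications—measurability and continuity of $G$ in $t$, and the passage from $1_{[0,t]}\eta$ as an admissible test function to the identity $[G(t),\eta]_{T\Gamma}=0$—are routine given Lemma~\ref{L:Lim_W_L2} and the Fundamental Theorem of Calculus for absolutely continuous Bochner integrals.
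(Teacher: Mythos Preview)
Your proposal is correct and follows essentially the same route as the paper: define the time-primitive $F(t)=gv(t)-gv_0+\int_0^t(A_gv+g\overline{\nabla}_vv-gf)\,ds$ in $C([0,T];H^{-1}(\Gamma,T\Gamma))$, test the weak formulation with $1_{[0,t]}\xi$ for $\xi\in\mathcal{V}_g$ to obtain $[F(t),\xi]_{T\Gamma}=0$, apply Theorem~\ref{T:DeRham_T} pointwise in $t$ to produce $\hat{q}\in C([0,T];L^2(\Gamma))$, and then differentiate in $\mathcal{D}'(0,T;H^{-1}(\Gamma,T\Gamma))$ via \eqref{E:Def_TGrDt_Hin}. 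The only discrepancy is a sign: with your convention $G(t)=g\nabla_\Gamma\hat{q}(t)$ you end up with $-g\nabla_\Gamma q$ in \eqref{E:LW_Pres}, so you should take $G(t)=-g\nabla_\Gamma\hat{q}(t)$ (as the paper does) or flip the sign of $\hat{q}$.
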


\begin{proof}
  Let $v$ be a weak solution to \eqref{E:Limit_Eq}--\eqref{E:Limit_Div} on $[0,T)$.
  Then
  \begin{align} \label{Pf_LWP:Reg_V}
    v \in H^1(0,T;H^{-1}(\Gamma,T\Gamma))\subset C([0,T];H^{-1}(\Gamma,T\Gamma))
  \end{align}
  by Lemma~\ref{L:Lim_W_L2}.
  Also, since the given data $f$, $B_g(v,v):=g\overline{\nabla}_vv$, and
  \begin{align} \label{Pf_LWP:Ag}
    A_gv := -2\nu\left\{P\mathrm{div}_\Gamma[gD_\Gamma(v)]-\frac{1}{g}(\nabla_\Gamma g\otimes\nabla_\Gamma g)v\right\}+(\gamma^0+\gamma^1)v
  \end{align}
  belong to $L^2(0,T;H^{-1}(\Gamma,T\Gamma))$ by Definition~\ref{D:Lim_W_T} and Lemma~\ref{L:LWP_Aux_2}, we see that
    \begin{gather*}
    \widehat{A}_gv(t) := \int_0^tA_gv(s)\,ds, \quad \widehat{B}_g(v,v)(t) := \int_0^tB_g(v(s),v(s))\,ds, \\
    \hat{f}(t) := \int_0^tf(s)\,ds, \quad t\in[0,T]
  \end{gather*}
  are continuous functions on $[0,T]$ with values in $H^{-1}(\Gamma,T\Gamma)$.
  Therefore,
  \begin{align*}
    F := gv-gv_0+\widehat{A}_gv+\widehat{B}_g(v,v)-g\hat{f} \in C([0,T];H^{-1}(\Gamma,T\Gamma)).
  \end{align*}
  Let us show that $F$ satisfies the condition of Theorem~\ref{T:DeRham_T}.
  We apply \eqref{E:LWP_Aux_1} with $A=gD_\Gamma(v)$, \eqref{E:LWP_Aux_2}, and $(v\cdot\nabla_\Gamma g)\nabla_\Gamma g=(\nabla_\Gamma g\otimes\nabla_\Gamma g)v$ on $\Gamma$ to \eqref{E:Limit_Weak} to get
  \begin{align*}
    \int_0^T\bigl([g\partial_tv,\eta]_{T\Gamma}+[A_gv,\eta]_{T\Gamma}+[B_g(v,v),\eta]_{T\Gamma}\bigr)\,ds = \int_0^T[gf,\eta]_{T\Gamma}\,ds
  \end{align*}
  for all $\eta\in L^2(0,T;\mathcal{V}_g)$ (see Lemma~\ref{L:Lim_W_L2}).
  For $t\in[0,T]$ and $\xi\in\mathcal{V}_g$ we substitute $\eta(s):=1_{[0,t]}(s)\xi$, $s\in[0,T]$ for this equality, where $1_{[0,t]}$ is the characteristic function of $[0,t]\subset\mathbb{R}$.
  Then since $\xi$ is independent of time and
  \begin{align*}
    \int_0^t\partial_sv(s)\,ds = v(t)-v_0 \quad\text{in}\quad H^{-1}(\Gamma,T\Gamma)
  \end{align*}
  by \eqref{Pf_LWP:Reg_V}, we get $[F(t),\xi]_{T\Gamma}=0$ for all $\xi\in\mathcal{V}_g$, i.e. $F(t)$ satisfies the condition of Theorem~\ref{T:DeRham_T}.
  Hence for each $t\in[0,T]$ there exists a unique $\hat{q}(t)\in L^2(\Gamma)$ such that
  \begin{align*}
    F(t) = -g\nabla_\Gamma\hat{q}(t) \quad\text{in}\quad H^{-1}(\Gamma,T\Gamma), \quad \int_\Gamma\hat{q}(t)\,d\mathcal{H}^2 = 0.
  \end{align*}
  Moreover, by \eqref{E:DeRham_T_Ineq} and $F\in C([0,T];H^{-1}(\Gamma,T\Gamma))$ we see that
  \begin{align*}
    \hat{q} \in C([0,T];L^2(\Gamma)) \subset L^2(0,T;L^2(\Gamma))
  \end{align*}
  and thus $q:=\partial_t\hat{q}\in \mathcal{D}'(0,T;L^2(\Gamma))$ is well-defined.
  Now we have
  \begin{align*}
  0 = -\int_0^T\partial_t\varphi(t)\{F(t)+g\nabla_\Gamma\hat{q}(t)\}\,dt = \int_0^T\varphi(t)\{\partial_tF(t)+g\partial_t(\nabla_\Gamma\hat{q})(t)\}\,dt
  \end{align*}
  in $H^{-1}(\Gamma,T\Gamma)$ for all $\varphi\in C_c^\infty(0,T)$, which means that
  \begin{align*}
    \partial_tF(t)+g\partial_t(\nabla_\Gamma\hat{q})(t) = 0 \quad\text{in}\quad \mathcal{D}'(0,T;H^{-1}(\Gamma,T\Gamma)).
  \end{align*}
  This implies \eqref{E:LW_Pres} since $\partial_t(\nabla_\Gamma\hat{q})=\nabla_\Gamma q$ by \eqref{E:Def_TGrDt_Hin},
  \begin{align*}
    \partial_tF = g\partial_tv+A_gv+B_g(v,v)-gf = g\Bigl(\partial_tv+\overline{\nabla}_vv\Bigr)+A_gv-gf,
  \end{align*}
  and $A_gv$ is of the form \eqref{Pf_LWP:Ag}.
\end{proof}

Now let us prove Theorem~\ref{T:SL_Weak}.
First we present an auxiliary result on the weak limit of the averaged tangential component of a vector field in $L_\sigma^2(\Omega_\varepsilon)$.

\begin{lemma} \label{L:WC_Sole}
  For $\varepsilon\in(0,1)$ let $u^\varepsilon\in L_\sigma^2(\Omega_\varepsilon)$.
  Also, let $v\in L^2(\Gamma,T\Gamma)$.
  Suppose that there exist $\varepsilon'\in(0,1)$, $c>0$, and $\alpha>0$ such that
  \begin{align*}
    \|u^\varepsilon\|_{L^2(\Omega_\varepsilon)}^2 \leq c\varepsilon^{-1+\alpha} \quad\text{for all}\quad \varepsilon\in(0,\varepsilon')
  \end{align*}
  and that $M_\tau u^\varepsilon$ converges to $v$ weakly in $L^2(\Gamma,T\Gamma)$ as $\varepsilon\to0$.
  Then $v\in L_{g\sigma}^2(\Gamma,T\Gamma)$.
\end{lemma}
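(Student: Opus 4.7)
The plan is to show that $\mathrm{div}_\Gamma(gv) = 0$ in $H^{-1}(\Gamma)$ by combining two convergence statements: one that forces the distributional divergence of $gM_\tau u^\varepsilon$ to vanish in the limit, and another that identifies the weak limit of those divergences with $\mathrm{div}_\Gamma(gv)$.

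First I would apply Lemma~\ref{L:ADiv_Tan_Hin} directly to $u^\varepsilon \in L_\sigma^2(\Omega_\varepsilon)$. This yields
\begin{align*}
\|\mathrm{div}_\Gamma(gM_\tau u^\varepsilon)\|_{H^{-1}(\Gamma)} \leq c\varepsilon^{1/2}\|u^\varepsilon\|_{L^2(\Omega_\varepsilon)} \leq c\varepsilon^{\alpha/2}
\end{align*}
for all $\varepsilon \in (0,\varepsilon')$, using the hypothesis $\|u^\varepsilon\|_{L^2(\Omega_\varepsilon)}^2 \leq c\varepsilon^{-1+\alpha}$. Since $\alpha > 0$, the right-hand side tends to zero as $\varepsilon \to 0$, so $\mathrm{div}_\Gamma(gM_\tau u^\varepsilon) \to 0$ strongly, and in particular weakly, in $H^{-1}(\Gamma)$.

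Second, I would identify $\mathrm{div}_\Gamma(gv)$ as the weak $H^{-1}(\Gamma)$ limit of $\mathrm{div}_\Gamma(gM_\tau u^\varepsilon)$. By \eqref{E:Sdiv_Hin}, for each fixed $\eta \in H^1(\Gamma)$,
\begin{align*}
\langle\mathrm{div}_\Gamma(gM_\tau u^\varepsilon),\eta\rangle_\Gamma = -(M_\tau u^\varepsilon, g\nabla_\Gamma\eta + g\eta Hn)_{L^2(\Gamma)} = -(M_\tau u^\varepsilon, g\nabla_\Gamma\eta)_{L^2(\Gamma)},
\end{align*}
where the normal term drops out because $M_\tau u^\varepsilon \cdot n = 0$ on $\Gamma$. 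Since $g\nabla_\Gamma\eta \in L^2(\Gamma)^3$ is independent of $\varepsilon$ and $M_\tau u^\varepsilon \rightharpoonup v$ weakly in $L^2(\Gamma,T\Gamma) \subset L^2(\Gamma)^3$, passing to the limit and using once more that $v\cdot n = 0$ on $\Gamma$ gives
\begin{align*}
\langle\mathrm{div}_\Gamma(gM_\tau u^\varepsilon),\eta\rangle_\Gamma \to -(v, g\nabla_\Gamma\eta)_{L^2(\Gamma)} = -(v, g\nabla_\Gamma\eta + g\eta Hn)_{L^2(\Gamma)} = \langle\mathrm{div}_\Gamma(gv),\eta\rangle_\Gamma.
\end{align*}
Hence $\mathrm{div}_\Gamma(gM_\tau u^\varepsilon) \rightharpoonup \mathrm{div}_\Gamma(gv)$ weakly in $H^{-1}(\Gamma)$.

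By uniqueness of weak limits in $H^{-1}(\Gamma)$, we conclude $\mathrm{div}_\Gamma(gv) = 0$ in $H^{-1}(\Gamma)$, which together with $v\in L^2(\Gamma,T\Gamma)$ gives $v \in L_{g\sigma}^2(\Gamma,T\Gamma)$. There is no real obstacle here; the only subtlety to check is the compatibility of the tangentiality of $v$ (which follows from weak closedness of $L^2(\Gamma,T\Gamma)$ in $L^2(\Gamma)^3$) so that the $\eta Hn$ term drops on both sides. The estimate in Lemma~\ref{L:ADiv_Tan_Hin} does the essential work: it encodes quantitatively the fact that solenoidality on the thin domain forces approximate weighted solenoidality of the averaged tangential field on $\Gamma$.
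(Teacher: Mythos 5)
Your proof is correct and follows essentially the same route as the paper: both rest on the bound from Lemma~\ref{L:ADiv_Tan_Hin} together with the identification, via \eqref{E:Sdiv_Hin} and the weak convergence of $M_\tau u^\varepsilon$, of $\mathrm{div}_\Gamma(gv)$ as the weak $H^{-1}(\Gamma)$ limit of $\mathrm{div}_\Gamma(gM_\tau u^\varepsilon)$. The only cosmetic difference is that the paper concludes by weak lower semicontinuity of the $H^{-1}(\Gamma)$-norm, while you conclude by uniqueness of weak limits; these are interchangeable.
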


\begin{proof}
  By \eqref{E:Sdiv_Hin} and the weak convergence of $\{M_\tau u^\varepsilon\}_\varepsilon$ to $v$ in $L^2(\Gamma,T\Gamma)$ we see that $\mathrm{div}_\Gamma(gM_\tau u^\varepsilon)$ converges to $\mathrm{div}_\Gamma(gv)$ weakly in $H^{-1}(\Gamma)$ as $\varepsilon\to0$.
  Moreover,
  \begin{align*}
    \|\mathrm{div}_\Gamma(gM_\tau u^\varepsilon)\|_{H^{-1}(\Gamma)} \leq c\varepsilon^{1/2}\|u^\varepsilon\|_{L^2(\Omega_\varepsilon)} \leq c\varepsilon^{\alpha/2}
  \end{align*}
  for all $\varepsilon\in(0,\varepsilon')$ with $\alpha>0$ by \eqref{E:ADiv_Tan_Hin} and the assumption on $u^\varepsilon$.
  Hence
  \begin{align*}
    \|\mathrm{div}_\Gamma(gv)\|_{H^{-1}(\Gamma)} \leq \liminf_{\varepsilon\to0}\|\mathrm{div}_\Gamma(gM_\tau u^\varepsilon)\|_{H^{-1}(\Gamma)} = 0
  \end{align*}
  and $\mathrm{div}_\Gamma(gv)=0$ in $H^{-1}(\Gamma)$, which means that $v\in L_{g\sigma}^2(\Gamma,T\Gamma)$.
\end{proof}

\begin{proof}[Proof of Theorem~\ref{T:SL_Weak}]
  Suppose that the assumptions of Theorem~\ref{T:SL_Weak} are satisfied.
  Let $\varepsilon_1$, $\varepsilon_\sigma$, and $\varepsilon_2$ be the constants given in Theorem~\ref{T:UE}, Lemma~\ref{L:HP_Dom}, and the condition (a) of Theorem~\ref{T:SL_Weak}, and let $\varepsilon_3:=\min\{\varepsilon_1,\varepsilon_\sigma,\varepsilon_2\}\in(0,1)$.
  For each $\varepsilon\in(0,\varepsilon_3)$ the inequalities \eqref{E:UE_Data} with $\beta=1$ are valid since the condition (a) is satisfied and $\{M_\tau u_0^\varepsilon\}_\varepsilon$ and $\{M_\tau\mathbb{P}_\varepsilon f^\varepsilon\}_\varepsilon$ are bounded in $L^2(\Gamma,T\Gamma)$ and $L^\infty(0,\infty;H^{-1}(\Gamma,T\Gamma))$ by the condition (b).
  Hence by Theorem~\ref{T:Est_Ue} there exists a global strong solution $u^\varepsilon$ to \eqref{E:NS_Eq}--\eqref{E:NS_In} satisfying \eqref{E:Est_Ue}--\eqref{E:Est_DtUe}.
  Moreover, by \eqref{E:Ave_N_Lp} and \eqref{E:Est_Ue},
  \begin{align*}
    \sup_{t\in[0,\infty)}\|Mu^\varepsilon(t)\cdot n\|_{L^2(\Gamma)} \leq c\varepsilon^{1/2}\sup_{t\in[0,\infty)}\|u^\varepsilon(t)\|_{H^1(\Omega_\varepsilon)} \leq c\varepsilon^{\alpha/2} \to 0
  \end{align*}
  as $\varepsilon\to0$.
  Thus $\{Mu^\varepsilon\cdot n\}_\varepsilon$ converges to zero strongly in $C([0,\infty);L^2(\Gamma))$.

  Now let us consider the averaged tangential component $M_\tau u^\varepsilon$.
  First note that the weak limit $v_0$ of $\{M_\tau u_0^\varepsilon\}_\varepsilon$ is in $L_{g\sigma}^2(\Gamma,T\Gamma)$ by the condition (a), the inequality \eqref{E:Stokes_H1}, and Lemma~\ref{L:WC_Sole}.
  Since all results in the previous subsections apply to $u^\varepsilon$, for fixed $T>0$ we see by \eqref{E:Mu_Energy} and \eqref{E:Mu_Dt} that
  \begin{itemize}
    \item $\{M_\tau u^\varepsilon\}_\varepsilon$ is bounded in $L^\infty(0,T;L^2(\Gamma,T\Gamma))\cap L^2(0,T;H^1(\Gamma,T\Gamma))$,
    \item $\{\partial_tM_\tau u^\varepsilon\}_\varepsilon$ is bounded in $L^2(0,T;H^{-1}(\Gamma,T\Gamma))$.
  \end{itemize}
  Hence there exist $\varepsilon_k\in(0,\varepsilon_3)$, $k\in\mathbb{N}$ and a vector field
  \begin{align*}
    v \in L^\infty(0,T;L^2(\Gamma,T\Gamma))\cap L^2(0,T;H^1(\Gamma,T\Gamma))
  \end{align*}
  with $\partial_tv \in L^2(0,T;H^{-1}(\Gamma,T\Gamma))$ such that $\lim_{k\to\infty}\varepsilon_k=0$ and
  \begin{align} \label{Pf_SLW:W_Conv}
    \begin{alignedat}{3}
      \lim_{k\to\infty}M_\tau u^{\varepsilon_k} &= v &\quad &\text{weakly-$\star$ in} &\quad &L^\infty(0,T;L^2(\Gamma,T\Gamma)), \\
      \lim_{k\to\infty}M_\tau u^{\varepsilon_k} &= v &\quad &\text{weakly in} &\quad &L^2(0,T;H^1(\Gamma,T\Gamma)), \\
      \lim_{k\to\infty}\partial_tM_\tau u^{\varepsilon_k} &= \partial_tv &\quad &\text{weakly in} &\quad &L^2(0,T;H^{-1}(\Gamma,T\Gamma)).
    \end{alignedat}
  \end{align}
  Moreover, by the Aubin--Lions lemma (see e.g.~\cite[Theorem~II.5.16]{BoFa13}) there exists a subsequence of $\{M_\tau u^{\varepsilon_k}\}_{k=1}^\infty$, which we denote by $\{M_\tau u^{\varepsilon_k}\}_{k=1}^\infty$ again, such that
  \begin{align} \label{Pf_SLW:St_Conv}
    \lim_{k\to\infty}M_\tau u^{\varepsilon_k} = v \quad\text{strongly in}\quad L^2(0,T;L^2(\Gamma,T\Gamma)).
  \end{align}
  Then $v(t)\in L_{g\sigma}^2(\Gamma,T\Gamma)$ for a.a. $t\in (0,T)$ by \eqref{E:Est_Ue} and Lemma~\ref{L:WC_Sole}.
  Hence
  \begin{align*}
    v \in L^\infty(0,T;L_{g\sigma}^2(\Gamma,T\Gamma))\cap L^2(0,T;\mathcal{V}_g).
  \end{align*}
  Let us show that $v$ satisfies \eqref{E:Limit_Weak} for all $\eta\in C_c(0,T;\mathcal{V}_g)$.
  In what follows, we write $c$ for a general positive constant that may depend on $v$ and $\eta$ but is independent of $\varepsilon_k$ and $u^{\varepsilon_k}$.
  We consider the weak formulation \eqref{E:Mu_Weak} for $M_\tau u^{\varepsilon_k}$:
  \begin{multline} \label{Pf_SLW:Weak_Muek}
    \int_0^T\{[g\partial_tM_\tau u^{\varepsilon_k},\eta]_{T\Gamma}+a_g(M_\tau u^{\varepsilon_k},\eta)+b_g(M_\tau u^{\varepsilon_k},M_\tau u^{\varepsilon_k},\eta)\}\,dt \\
    = \int_0^T[gM_\tau\mathbb{P}_{\varepsilon_k}f^{\varepsilon_k},\eta]_{T\Gamma}\,dt+R_{\varepsilon_k}^1(\eta).
  \end{multline}
  Here $\partial_tM_\tau u^{\varepsilon_k}$ and $M_\tau\mathbb{P}_{\varepsilon_k}f^{\varepsilon_k}$ are considered in $H^{-1}(\Gamma,T\Gamma)$ (see Section~\ref{SS:Pre_Surf}).
  Let $k\to\infty$ in \eqref{Pf_SLW:Weak_Muek}.
  Then by the assumption (b), \eqref{E:Def_Mul_HinT} and \eqref{Pf_SLW:W_Conv} we get
  \begin{align} \label{Pf_SLW:Conv_Lin}
    \begin{aligned}
      \lim_{k\to\infty}\int_0^T[g\partial_tM_\tau u^{\varepsilon_k},\eta]_{T\Gamma}\,dt &= \int_0^T[g\partial_tv,\eta]_{T\Gamma}\,dt, \\
      \lim_{k\to\infty}\int_0^Ta_g(M_\tau u^{\varepsilon_k},\eta)\,dt &= \int_0^Ta_g(v,\eta)\,dt, \\
      \lim_{k\to\infty}\int_0^T[gM_\tau\mathbb{P}_{\varepsilon_k}f^{\varepsilon_k},\eta]_{T\Gamma}\,dt &= \int_0^T[gf,\eta]_{T\Gamma}\,dt.
    \end{aligned}
  \end{align}
  Also, by \eqref{E:Mu_Weak_Re}, the assumption (c), and $\alpha>0$,
  \begin{align} \label{Pf_SLW:Conv_Re}
    |R_{\varepsilon_k}^1(\eta)| \leq c\left(\varepsilon_k^{\alpha/4}+\sum_{i=0,1}|\varepsilon_k^{-1}\gamma_{\varepsilon_k}^i-\gamma^i|\right)(1+T)^{1/2}\|\eta\|_{L^2(0,T;H^1(\Gamma))} \to 0
  \end{align}
  as $k\to\infty$.
  To show the convergence of the trilinear term, we set
  \begin{align*}
    J_1^k &:= \int_0^Tb_g(M_\tau u^{\varepsilon_k},M_\tau u^{\varepsilon_k},\eta)\,dt-\int_0^Tb_g(v,M_\tau u^{\varepsilon_k},\eta)\,dt, \\
    J_2^k &:= \int_0^Tb_g(v,M_\tau u^{\varepsilon_k},\eta)\,dt-\int_0^Tb_g(v,v,\eta)\,dt.
  \end{align*}
  Since $\|\eta(t)\|_{H^1(\Gamma)}$ is bounded on $[0,T]$ by $\eta\in C_c(0,T;\mathcal{V}_g)$, we see by \eqref{E:Tri_Surf} that
  \begin{align*}
    |J_1^k| &\leq c\int_0^T\|M_\tau u^{\varepsilon_k}-v\|_{L^2(\Gamma)}^{1/2}\|M_\tau u^{\varepsilon_k}-v\|_{H^1(\Gamma)}^{1/2}\|M_\tau u^{\varepsilon_k}\|_{H^1(\Gamma)}\|\eta\|_{H^1(\Gamma)}\,dt \\
    &\leq c\|M_\tau u^{\varepsilon_k}-v\|_{L^2(0,T;L^2(\Gamma))}^{1/2}\|M_\tau u^{\varepsilon_k}-v\|_{L^2(0,T;H^1(\Gamma))}^{1/2}\|M_\tau u^{\varepsilon_k}\|_{L^2(0,T;H^1(\Gamma))}.
  \end{align*}
  Applying \eqref{E:Mu_Energy} and \eqref{Pf_SLW:St_Conv} to the last line we obtain
  \begin{align} \label{Pf_SLW:I1}
    |J_1^k| \leq c\|M_\tau u^{\varepsilon_k}-v\|_{L^2(0,T;L^2(\Gamma))}^{1/2} \to 0 \quad\text{as}\quad k\to\infty.
  \end{align}
  For $J_2^k$, we consider the linear functional
  \begin{align*}
    \Phi(\xi) := \int_0^Tb_g(v,\xi,\eta)\,dt, \quad \xi \in L^2(0,T;H^1(\Gamma,T\Gamma)).
  \end{align*}
  By \eqref{E:Tri_Surf} and the boundedness of $\|\eta(t)\|_{H^1(\Gamma)}$ on $[0,T]$ we get
  \begin{align*}
    |\Phi(\xi)| \leq c\|\eta\|_{L^\infty(0,T;H^1(\Gamma))}\|v\|_{L^2(0,T;H^1(\Gamma))}\|\xi\|_{L^2(0,T;H^1(\Gamma))}
  \end{align*}
  for all $\xi\in L^2(0,T;H^1(\Gamma,T\Gamma))$.
  Hence $\Phi$ is bounded on $L^2(0,T;H^1(\Gamma,T\Gamma))$ and the weak convergence \eqref{Pf_SLW:W_Conv} in $L^2(0,T;H^1(\Gamma,T\Gamma))$ implies that
  \begin{align*}
    \lim_{k\to\infty}J_2^k = \lim_{k\to\infty}\{\Phi(M_\tau u^{\varepsilon_k})-\Phi(v)\} = 0.
  \end{align*}
  Combining this equality with \eqref{Pf_SLW:I1} we obtain
  \begin{align} \label{Pf_SLW:Conv_Tri}
    \lim_{k\to\infty}\int_0^Tb_g(M_\tau u^{\varepsilon_k},M_\tau u^{\varepsilon_k},\eta)\,dt = \int_0^Tb_g(v,v,\eta)\,dt.
  \end{align}
  We send $k\to\infty$ in \eqref{Pf_SLW:Weak_Muek} and apply \eqref{Pf_SLW:Conv_Lin}, \eqref{Pf_SLW:Conv_Re}, and \eqref{Pf_SLW:Conv_Tri} to show that $v$ satisfies \eqref{E:Limit_Weak} for all $\eta\in C_c(0,T;\mathcal{V}_g)$.
  Moreover, by Lemma~\ref{L:Lim_W_L2} we see that
  \begin{align*}
    v\in C([0,T],L_{g\sigma}^2(\Gamma,T\Gamma)), \quad \partial_tv \in L^2(0,T;H^{-1}(\Gamma,T\Gamma))
  \end{align*}
  and \eqref{E:Limit_Weak} is valid for all $\eta\in L^2(0,T;\mathcal{V}_g)$.

  To show that $v$ is a weak solution to \eqref{E:Limit_Eq}--\eqref{E:Limit_Div} on $[0,T)$ it remains to verify the initial condition.
  Let $\xi\in \mathcal{V}_g$ and $\varphi\in C^\infty([0,T])$ satisfy $\varphi(0)=1$ and $\varphi(T)=0$.
  We substitute $\eta:=\varphi\xi\in L^2(0,T;\mathcal{V}_g)$ for \eqref{E:Limit_Weak} and \eqref{Pf_SLW:Weak_Muek} and carry out integration by parts for $\partial_tv$ and $\partial_tM_\tau u^{\varepsilon_k}$.
  Then we get
  \begin{align} \label{Pf_SLW:Ini}
    (gv(0),\xi)_{L^2(\Gamma)} = J_\infty, \quad (gM_\tau u_0^{\varepsilon_k},\xi)_{L^2(\Gamma)} = J_k
  \end{align}
  by $\varphi(0)=1$ and $\varphi(T)=0$ (also note that $\xi$ is independent of time), where
  \begin{align*}
    J_\infty := -\int_0^T\partial_t\varphi(gv,\xi)_{L^2(\Gamma)}\,dt+\int_0^T\{a_g(v,\eta)+b_g(v,v,\eta)\}\,dt-\int_0^T[gf,\eta]_{T\Gamma}\,dt
  \end{align*}
  and
  \begin{multline*}
    J_k := -\int_0^T\partial_t\varphi(gM_\tau u^{\varepsilon_k},\xi)_{L^2(\Gamma)}\,dt+\int_0^T\{a_g(M_\tau u^{\varepsilon_k},\eta)+b_g(M_\tau u^{\varepsilon_k},M_\tau u^{\varepsilon_k},\eta)\}\,dt \\
    -\int_0^T[gM_\tau\mathbb{P}_{\varepsilon_k}f^{\varepsilon_k},\eta]_{T\Gamma}\,dt-R_{\varepsilon_k}^1(\eta).
  \end{multline*}
  We send $k\to\infty$ in the second equality of \eqref{Pf_SLW:Ini}.
  Then the left-hand side converges to $(gv_0,\xi)_{L^2(\Gamma)}$ by the assumption (b).
  Also, we use \eqref{Pf_SLW:Conv_Lin}, \eqref{Pf_SLW:Conv_Re}, \eqref{Pf_SLW:Conv_Tri}, and
  \begin{align*}
    \lim_{k\to\infty}\int_0^T\partial_t\varphi(gM_\tau u^{\varepsilon_k},\xi)_{L^2(\Gamma)}\,dt = \int_0^T\partial_t\varphi(gv,\xi)_{L^2(\Gamma)}\,dt
  \end{align*}
  by \eqref{Pf_SLW:St_Conv} to find that $\lim_{k\to\infty}J_k=J_\infty$ (note that in the proof of \eqref{Pf_SLW:Conv_Tri} we only used the boundedness of $\|\eta(t)\|_{H^1(\Gamma)}$ on $[0,T]$).
  Hence
  \begin{align*}
    (gv(0),\xi)_{L^2(\Gamma)} = J_\infty = (gv_0,\xi)_{L^2(\Gamma)} \quad\text{for all}\quad \xi\in \mathcal{V}_g.
  \end{align*}
  Since $\mathcal{V}_g$ is dense in $L_{g\sigma}^2(\Gamma,T\Gamma)$ (see Lemma~\ref{L:H1gs_Dense}), the above equality is also valid for all $\xi\in L_{g\sigma}^2(\Gamma,T\Gamma)$.
  Thus, setting $\xi:=v(0)-v_0$ we get
  \begin{align*}
    (g\{v(0)-v_0\},v(0)-v_0)_{L^2(\Gamma)} = \|g^{1/2}\{v(0)-v_0\}\|_{L^2(\Gamma)}^2 = 0,
  \end{align*}
  which combined with \eqref{E:Width_Bound} shows $v|_{t=0}=v_0$ on $\Gamma$.
  Therefore, $v$ is a unique weak solution to \eqref{E:Limit_Eq}--\eqref{E:Limit_Div} on $[0,T)$ (here the uniqueness follows from Lemma~\ref{L:Limit_Uni}).

  Let us prove the convergence of the full sequence
  \begin{align} \label{Pf_SLW:Full_Conv}
    \begin{alignedat}{3}
      \lim_{\varepsilon\to0}M_\tau u^\varepsilon &= v &\quad &\text{weakly in} &\quad L^2(0,T;H^1(\Gamma,T\Gamma)), \\
      \lim_{\varepsilon\to0}\partial_tM_\tau u^\varepsilon &= \partial_tv &\quad &\text{weakly in} &\quad L^2(0,T;H^{-1}(\Gamma,T\Gamma)).
    \end{alignedat}
  \end{align}
  By the boundedness of $\{M_\tau u^\varepsilon\}_\varepsilon$ and $\{\partial_t M_\tau u^\varepsilon\}_\varepsilon$ (see \eqref{E:Mu_Energy} and \eqref{E:Mu_Dt}) and the uniqueness of a weak solution to \eqref{E:Limit_Eq}--\eqref{E:Limit_Div} (see Lemma~\ref{L:Limit_Uni}) we can show as above that for any sequence $\{\varepsilon_l\}_{l=1}^\infty$ of positive numbers convergent to zero there exists its subsequence $\{\varepsilon_k\}_{k=1}^\infty$ such that $\{M_\tau u^{\varepsilon_k}\}_{k=1}^\infty$ converges to $v$ in the sense of \eqref{Pf_SLW:W_Conv} and \eqref{Pf_SLW:St_Conv}.
  This proves \eqref{Pf_SLW:Full_Conv}.

  Since the strong solution $u^\varepsilon$ to \eqref{E:NS_Eq}--\eqref{E:NS_In} exists globally in time for $\varepsilon\in(0,\varepsilon_3)$, by the above arguments we get a unique weak solution
  \begin{align*}
    v_T \in C([0,T];L_{g\sigma}^2(\Gamma,T\Gamma))\cap L^2(0,T;\mathcal{V}_g)\cap H^1(0,T;H^{-1}(\Gamma,T\Gamma))
  \end{align*}
  to \eqref{E:Limit_Eq}--\eqref{E:Limit_Div} on $[0,T)$ satisfying \eqref{Pf_SLW:Full_Conv} for all $T>0$.
  Moreover, if $T<T'$ then $v_T=v_{T'}$ on $[0,T]$ by the uniqueness of a weak solution.
  Hence we can define
  \begin{align*}
    v \in C([0,\infty);L_{g\sigma}^2(\Gamma,T\Gamma))\cap L_{loc}^2([0,\infty);\mathcal{V}_g)\cap H_{loc}^1([0,\infty);H^{-1}(\Gamma,T\Gamma))
  \end{align*}
  by $v:=v_T$ on $[0,T]$ for each $T>0$, which is a unique weak solution to \eqref{E:Limit_Eq}--\eqref{E:Limit_Div} on $[0,\infty)$ and satisfies \eqref{Pf_SLW:Full_Conv} for all $T>0$.
\end{proof}

As a consequence of Theorem~\ref{T:SL_Weak}, we obtain the existence of a weak solution to \eqref{E:Limit_Eq}--\eqref{E:Limit_Div} for the initial velocity $v_0$ and the external force $f$ given by the weak and weak-$\star$ limit of $M_\tau u_0^\varepsilon$ and $M_\tau\mathbb{P}_\varepsilon f^\varepsilon$, respectively.
For general $v_0\in L_{g\sigma}^2(\Gamma,T\Gamma)$ and $f\in L_{loc}^2([0,\infty);H^{-1}(\Gamma,T\Gamma))$ we can construct a global weak solution to \eqref{E:Limit_Eq}--\eqref{E:Limit_Div} by the Galerkin method as in the case of the Navier--Stokes equations in a two-dimensional bounded domain (see e.g.~\cite{BoFa13,CoFo88,Te79}).
Here we just give the outline of construction of a weak solution by the Galerkin method.

\emph{Countable basis of $\mathcal{V}_g$.}
We define a bilinear form
\begin{align*}
  \tilde{a}_g(v_1,v_2) := a_g(v_1,v_2)+(v_1,v_2)_{L^2(\Gamma)}, \quad v_1,v_2\in H^1(\Gamma,T\Gamma).
\end{align*}
By \eqref{E:Bi_Surf} we observe that $\tilde{a}_g$ is bounded and coercive on the closed subspace $\mathcal{V}_g$ of $H^1(\Gamma,T\Gamma)$.
Hence it induces a linear homeomorphism $\widetilde{A}_g$ from $\mathcal{V}_g$ onto the dual space $\mathcal{V}_g'$ by the Lax--Milgram theorem.
We consider $\widetilde{A}_g$ as an unbounded operator on $L_{g\sigma}^2(\Gamma,T\Gamma)$ equipped with inner product
\begin{align*}
  (v_1,v_2)_{L_g^2(\Gamma)} := (g^{1/2}v_1,g^{1/2}v_2)_{L^2(\Gamma)}, \quad v_1,v_2\in L_{g\sigma}^2(\Gamma,T\Gamma),
\end{align*}
which is equivalent to the canonical $L^2(\Gamma)$-inner product by \eqref{E:Width_Bound}.
Then as in the case of the Stokes operator on a bounded domain (see e.g.~\cite[Theorem~IV.5.5]{BoFa13}) we can show that there exists a sequence $\{w_k\}_{k=1}^\infty$ of eigenvectors of $\widetilde{A}_g$ that is an orthonormal basis of $L_{g\sigma}^2(\Gamma,T\Gamma)$ equipped with inner product $(\cdot,\cdot)_{L_g^2(\Gamma)}$ as well as an orthogonal basis of $\mathcal{V}_g$ equipped with inner product $\tilde{a}_g(\cdot,\cdot)$.
In particular,
\begin{align} \label{E:Weight_Orth}
  (gw_i,w_j)_{L^2(\Gamma)} = (w_i,w_j)_{L_g^2(\Gamma)} = \delta_{ij}, \quad i,j\in\mathbb{N},
\end{align}
where $\delta_{ij}$ is the Kronecker delta.

\emph{Approximate problem.}
For $k\in\mathbb{N}$ let $\mathcal{V}_g^k$ be the linear span of $\{w_i\}_{i=1}^k$.
We seek for an approximate solution $v_k(t)=\sum_{i=1}^k\xi_i(t)w_i\in\mathcal{V}_g^k$ that satisfies
\begin{align} \label{E:LimG_Appr}
  (g\partial_tv_k(t),\eta_k)_{L^2(\Gamma)}+a_g(v_k(t),\eta_k)+b_g(v_k(t),v_k(t),\eta_k) = [gf(t),\eta_k]_{T\Gamma}
\end{align}
for all $\eta_k\in \mathcal{V}_g^k$ and $t\in(0,T)$, $T>0$ with initial condition (here we approximate $f(t)$ by a continuous function).
This problem is equivalent to a system of ODEs
\begin{align*}
  \sum_{i=1}^k(gw_i,w_j)_{L^2(\Gamma)}\frac{d\xi_i}{dt}(t) = \mathcal{P}_j(\xi(t))+[gf(t),w_j]_{T\Gamma}, \quad j=1,\dots,k
\end{align*}
with polynomials $\mathcal{P}_1,\dots,\mathcal{P}_k$ of $\xi=(\xi_1,\dots,\xi_k)$.
Applying \eqref{E:Weight_Orth} to the left-hand side we see that this system reduces to $d\xi_j/dt=\mathcal{P}_j(\xi)+[gf,w_j]_{T\Gamma}$, which we can solve locally by the Cauchy--Lipschitz theorem.
By \eqref{E:Weight_Orth} we can also derive the energy estimate for $v_k$ and show its global existence.

\emph{Estimate for the time derivative of the approximate solution.}
As in Lemma~\ref{L:PMu_Dt} we estimate $\partial_tv_k$ in $H^{-1}(\Gamma,T\Gamma)$ (see also Remark~\ref{R:Mu_Dt}).
For $w\in H^1(\Gamma,T\Gamma)$ we use Lemma~\ref{L:Mu_Dt_Test} to get $w=g\eta+g\nabla_\Gamma q$ with $\eta\in \mathcal{V}_g$ satisfying \eqref{E:Mu_Dt_Test} and $q\in H^2(\Gamma)$.
Since $\{w_k\}_{k=1}^\infty$ is an orthogonal basis of $\mathcal{V}_g$ equipped with inner product $\tilde{a}_g(\cdot,\cdot)$,
\begin{align*}
  \eta = \sum_{i=1}^\infty \tilde{a}_g(\eta,\tilde{w_i})\tilde{w}_i \quad\text{in}\quad \mathcal{V}_g, \quad \tilde{w}_i := \frac{w_i}{a_g(w_i,w_i)^{1/2}}.
\end{align*}
Then we set $\eta_k:=\sum_{i=1}^k\tilde{a}_g(\eta,\tilde{w}_k)\tilde{w}_k\in \mathcal{V}_g^k$ to get
\begin{align*}
  \|\eta_k\|_{H^1(\Gamma)}\leq c\|\eta\|_{H^1(\Gamma)}\leq c\|w\|_{H^1(\Gamma)}
\end{align*}
by \eqref{E:Mu_Dt_Test} and the equivalence of $\tilde{a}_g(\cdot,\cdot)$ to $(\cdot,\cdot)_{H^1(\Gamma)}$ on $\mathcal{V}_g$.
Moreover,
\begin{align*}
  (g\partial_tv_k,\eta_k)_{L^2(\Gamma)} &= (g\partial_tv_k,\eta)_{L^2(\Gamma)} = (\partial_tv_k,g\eta)_{L^2(\Gamma)} \\
  &= (\partial_tv_k,w-g\nabla_\Gamma q)_{L^2(\Gamma)} = (\partial_tv_k,w)_{L^2(\Gamma)},
\end{align*}
where the first equality follows from \eqref{E:Weight_Orth} and the last equality is due to $\partial_tv_k\in L_{g\sigma}^2(\Gamma,T\Gamma)$ and $g\nabla_\Gamma q\in L_{g\sigma}^2(\Gamma,T\Gamma)^\perp$ (note that here we take the canonical $L^2(\Gamma)$-inner product).
Hence, substituting $\eta_k$ for \eqref{E:LimG_Appr} and using these relations, we can show as in the proof of Lemma~\ref{L:PMu_Dt} that
\begin{align*}
  \|\partial_tv_k(t)\|_{H^{-1}(\Gamma,T\Gamma)} \leq c\left\{\left(1+\|v_k(t)\|_{L^2(\Gamma)}\right)\|v_k(t)\|_{H^1(\Gamma)}+\|f(t)\|_{H^{-1}(\Gamma,T\Gamma)}\right\}
\end{align*}
for all $t\in(0,T)$.
By this inequality and the energy estimate for $v_k$ we obtain the boundedness of $\{\partial_tv_k\}_{k=1}^\infty$ in $L^2(0,T;H^{-1}(\Gamma,T\Gamma))$ and we can prove the convergence of $\{v_k\}_{k=1}^\infty$ to a weak solution to \eqref{E:Limit_Eq}--\eqref{E:Limit_Div} as in the proof of Theorem~\ref{T:SL_Weak}.

\subsection{Strong convergence of the average and error estimates} \label{SS:SL_ErST}
In this subsection we show the strong convergence of the averaged tangential component of a strong solution to \eqref{E:NS_Eq}--\eqref{E:NS_In} towards a weak solution to \eqref{E:Limit_Eq}--\eqref{E:Limit_Div}.
We also give estimates in $\Omega_\varepsilon$ for the difference between solutions to the bulk and limit equations.

\begin{theorem} \label{T:Diff_Mu_V}
  Under the assumptions of Theorem~\ref{T:Est_Ue}, let $\varepsilon\in(0,\varepsilon'_1)$ and $u^\varepsilon$ be the global strong solution to \eqref{E:NS_Eq}--\eqref{E:NS_In} given by Theorem~\ref{T:Est_Ue}.
  Also, let $v$ be a weak solution to \eqref{E:Limit_Eq}--\eqref{E:Limit_Div} on $[0,\infty)$ with given data $v_0\in L_{g\sigma}^2(\Gamma,T\Gamma)$ and $f\in L^\infty(0,\infty;H^{-1}(\Gamma,T\Gamma))$.
  Then for all $T>0$ we have
  \begin{multline} \label{E:Diff_Mu_V}
    \max_{t\in[0,T]}\|M_\tau u^\varepsilon(t)-v(t)\|_{L^2(\Gamma)}^2+\int_0^T\|\nabla_\Gamma M_\tau u^\varepsilon(t)-\nabla_\Gamma v(t)\|_{L^2(\Gamma)}^2\,dt \\
    \leq c_T\left\{\delta(\varepsilon)^2+\|M_\tau u_0^\varepsilon-v_0\|_{L^2(\Gamma)}^2+\|M_\tau\mathbb{P}_\varepsilon f^\varepsilon-f\|_{L^\infty(0,\infty;H^{-1}(\Gamma,T\Gamma))}^2\right\},
  \end{multline}
  where $c_T>0$ is a constant depending only on $T$ and
  \begin{align} \label{E:DMuV_Re}
    \delta(\varepsilon) := \varepsilon^{\alpha/4}+\sum_{i=0,1}|\varepsilon^{-1}\gamma_\varepsilon^i-\gamma^i|.
  \end{align}
\end{theorem}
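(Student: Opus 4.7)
The plan is to mimic the uniqueness argument of Lemma~\ref{L:Limit_Uni}, but applied to $v$ and the weighted solenoidal projection $v^\varepsilon := \mathbb{P}_g M_\tau u^\varepsilon$ rather than to $M_\tau u^\varepsilon$ itself, since only $v^\varepsilon$ and $v$ live in $\mathcal{V}_g$ and can therefore be used as test functions in the weak formulations \eqref{E:PMu_Weak} and \eqref{E:Limit_Weak}. First I would set $w^\varepsilon := v^\varepsilon - v \in C([0,T];L_{g\sigma}^2(\Gamma,T\Gamma)) \cap L^2(0,T;\mathcal{V}_g)$, subtract \eqref{E:Limit_Weak} from \eqref{E:PMu_Weak}, and substitute $\eta = 1_{[0,t]} w^\varepsilon$ for $t \in [0,T]$, so as to obtain
\[
\int_0^t [g\partial_s w^\varepsilon, w^\varepsilon]_{T\Gamma}\,ds + \int_0^t a_g(w^\varepsilon,w^\varepsilon)\,ds + \int_0^t \{b_g(v^\varepsilon,v^\varepsilon,w^\varepsilon) - b_g(v,v,w^\varepsilon)\}\,ds = I_f + R_\varepsilon^1(w^\varepsilon) + R_\varepsilon^2(w^\varepsilon),
\]
where $I_f = \int_0^t [g(M_\tau \mathbb{P}_\varepsilon f^\varepsilon - f), w^\varepsilon]_{T\Gamma}\,ds$ is the forcing discrepancy.

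Next I would process each term on this equality. For the time derivative I use exactly the calculation \eqref{Pf_LU:Int_Dt}, producing $\tfrac{1}{2}\|g^{1/2}w^\varepsilon(t)\|_{L^2}^2 - \tfrac{1}{2}\|g^{1/2}w^\varepsilon(0)\|_{L^2}^2$. For the viscous term, Lemma~\ref{L:Bi_Surf} gives the lower bound $c\|\nabla_\Gamma w^\varepsilon\|_{L^2}^2 - c\|w^\varepsilon\|_{L^2}^2$. For the nonlinear term I rewrite the difference as $b_g(w^\varepsilon, v^\varepsilon, w^\varepsilon) + b_g(v, w^\varepsilon, w^\varepsilon)$; since $v \in \mathcal{V}_g$ the second summand vanishes by \eqref{E:TriS_Vg}, and Lemma~\ref{L:Tri_Surf} bounds the first by $c\|w^\varepsilon\|_{L^2}\|w^\varepsilon\|_{H^1}\|v^\varepsilon\|_{H^1}$, which after Young's inequality is absorbed into $\tfrac14 \|\nabla_\Gamma w^\varepsilon\|_{L^2}^2$ plus $c(1+\|v^\varepsilon\|_{H^1}^2)\|w^\varepsilon\|_{L^2}^2$. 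The forcing term $I_f$ is estimated by $\|M_\tau \mathbb{P}_\varepsilon f^\varepsilon - f\|_{L^\infty(0,\infty;H^{-1}(\Gamma,T\Gamma))}\cdot\|w^\varepsilon\|_{L^2(0,T;H^1)}$ followed by Young. The residuals $R_\varepsilon^1(w^\varepsilon)$ and $R_\varepsilon^2(w^\varepsilon)$ are controlled by \eqref{E:Mu_Weak_Re} and \eqref{E:PMu_Weak_Re}, using Assumption~\ref{Assump_1} so that $\sum_i |\varepsilon^{-1}\gamma_\varepsilon^i - \gamma^i|$ is bounded, to yield $c\delta(\varepsilon)(1+T)^{1/2}\|w^\varepsilon\|_{L^2(0,T;H^1)}$ and hence, after Young, a contribution of order $c_T \delta(\varepsilon)^2$ plus another absorbable gradient term.

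Combining these estimates and absorbing all quarter-strength gradient contributions into the coercive term, I arrive at
\[
\|w^\varepsilon(t)\|_{L^2}^2 + \int_0^t \|\nabla_\Gamma w^\varepsilon\|_{L^2}^2\,ds \leq c\|w^\varepsilon(0)\|_{L^2}^2 + c_T\bigl\{\delta(\varepsilon)^2 + \|M_\tau \mathbb{P}_\varepsilon f^\varepsilon - f\|_{L^\infty(0,\infty;H^{-1}(\Gamma,T\Gamma))}^2\bigr\} + c\int_0^t (1+\|v^\varepsilon\|_{H^1}^2)\|w^\varepsilon\|_{L^2}^2\,ds.
\]
Because $1+\|v^\varepsilon\|_{H^1}^2$ is integrable on $(0,T)$ (uniformly in $\varepsilon$) by Lemma~\ref{L:PMu_Energy}, Gronwall's inequality yields the $L^\infty L^2 \cap L^2 H^1$ estimate for $w^\varepsilon$. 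Finally, to pass from $w^\varepsilon = v^\varepsilon - v$ to $M_\tau u^\varepsilon - v$ I add and subtract $v^\varepsilon$ and invoke \eqref{E:Diff_PMu} to absorb the extra contribution of order $\varepsilon$ into $\delta(\varepsilon)^2$; for the initial data I use $v^\varepsilon(0) - v_0 = \mathbb{P}_g M_\tau u_0^\varepsilon - v_0$ together with $\|M_\tau u_0^\varepsilon - \mathbb{P}_g M_\tau u_0^\varepsilon\|_{L^2} \leq c\varepsilon^{1/2}\|u_0^\varepsilon\|_{L^2(\Omega_\varepsilon)} \leq c\varepsilon^{(1+\alpha)/2}$ from \eqref{E:Rep_WHL_L2} and \eqref{Pf_GE:Ini_Data}, which is again of order $\delta(\varepsilon)$.

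The main obstacle I expect is the bookkeeping around the residuals $R_\varepsilon^1, R_\varepsilon^2$: one must check that their bounds from Lemmas~\ref{L:Mu_Weak} and~\ref{L:PMu_Weak} are truly of the form $\delta(\varepsilon)\cdot\|\eta\|_{L^2(0,T;H^1)}$ uniformly in time, so that applying them with $\eta = 1_{[0,t]} w^\varepsilon$ for arbitrary $t \leq T$ does not destroy the Gronwall structure. A secondary issue is the weighted projection step: the swap between $M_\tau u^\varepsilon$ and its projection $v^\varepsilon$ produces $\mathrm{div}_\Gamma(g M_\tau u^\varepsilon) \neq 0$ terms whose $H^{-1}$-size, via \eqref{E:ADiv_Tan_Hin}, must be traced carefully to confirm they enter the final bound at order $\varepsilon$, which is subsumed by $\delta(\varepsilon)^2$.
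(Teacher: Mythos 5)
Your proposal is correct and follows essentially the same route as the paper: the paper first proves the comparison for $v^\varepsilon=\mathbb{P}_gM_\tau u^\varepsilon$ by subtracting \eqref{E:Limit_Weak} from \eqref{E:PMu_Weak}, testing with $1_{[0,t]}w^\varepsilon$, handling the time-derivative, viscous, trilinear, forcing, and residual terms exactly as you describe, and closing with Gronwall via \eqref{E:PMu_Energy}, before passing from $v^\varepsilon$ to $M_\tau u^\varepsilon$ through \eqref{E:Diff_PMu} and $\varepsilon^2\leq\delta(\varepsilon)^2$. The only immaterial deviation is the initial data: since $v_0\in L_{g\sigma}^2(\Gamma,T\Gamma)$, the paper simply writes $\|v^\varepsilon(0)-v_0\|_{L^2(\Gamma)}=\|\mathbb{P}_g(M_\tau u_0^\varepsilon-v_0)\|_{L^2(\Gamma)}\leq\|M_\tau u_0^\varepsilon-v_0\|_{L^2(\Gamma)}$ rather than your triangle-inequality argument via \eqref{E:Rep_WHL_L2}, which also works.
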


As in Section~\ref{SS:SL_Ener}, we first compare the auxiliary vector field $v^\varepsilon=\mathbb{P}_gM_\tau u^\varepsilon$ with $v$ and then derive \eqref{E:Diff_Mu_V} by using the estimates for the difference $M_\tau u^\varepsilon-v^\varepsilon$.

\begin{lemma} \label{L:Diff_Ve_V}
  Under the same assumptions as in Theorem~\ref{T:Diff_Mu_V}, we have
  \begin{multline} \label{E:Diff_Ve_V}
    \max_{t\in[0,T]}\|v^\varepsilon(t)-v(t)\|_{L^2(\Gamma)}^2+\int_0^T\|\nabla_\Gamma v^\varepsilon(t)-\nabla_\Gamma v(t)\|_{L^2(\Gamma)}^2\,dt \\
    \leq c_T\left\{\delta(\varepsilon)^2+\|v^\varepsilon(0)-v_0\|_{L^2(\Gamma)}^2+\|M_\tau\mathbb{P}_\varepsilon f^\varepsilon-f\|_{L^\infty(0,\infty;H^{-1}(\Gamma,T\Gamma))}^2\right\}
  \end{multline}
  for all $T>0$, where $v^\varepsilon=\mathbb{P}_gM_\tau u^\varepsilon$ is given in Lemma~\ref{L:PMu_Weak}, $c_T>0$ is a constant depending only on $T$, and $\delta(\varepsilon)$ is given by \eqref{E:DMuV_Re}.
\end{lemma}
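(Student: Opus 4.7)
The plan is to derive an energy inequality for $w^\varepsilon := v^\varepsilon - v$ by subtracting the weak formulation \eqref{E:Limit_Weak} for $v$ from the perturbed weak formulation \eqref{E:PMu_Weak} for $v^\varepsilon$, testing with $w^\varepsilon$ itself on each subinterval $[0,t]$, and closing the resulting inequality by Gronwall. First I would verify that $w^\varepsilon \in L^2(0,T;\mathcal{V}_g)$ with $\partial_t w^\varepsilon \in L^2(0,T;H^{-1}(\Gamma,T\Gamma))$: the former holds since $v^\varepsilon \in C([0,\infty);\mathcal{V}_g)$ by Lemma~\ref{L:PMu_Weak} and $v \in L^2_{loc}([0,\infty);\mathcal{V}_g)$ by hypothesis, while the latter follows from Lemma~\ref{L:PMu_Dt} for $v^\varepsilon$ and Lemma~\ref{L:Lim_W_L2} for $v$. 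This regularity makes $\eta = 1_{[0,t]}w^\varepsilon$ a legitimate test function in both weak formulations, and Lemma~\ref{L:PMu_Weak} can be applied with $T$ replaced by $t$ so the residual bounds \eqref{E:Mu_Weak_Re} and \eqref{E:PMu_Weak_Re} yield a factor $(1+t)^{1/2}$.

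Subtracting the two formulations and testing with $\eta = 1_{[0,t]} w^\varepsilon$ gives
\begin{multline*}
  \int_0^t[g\partial_s w^\varepsilon, w^\varepsilon]_{T\Gamma}\,ds + \int_0^t a_g(w^\varepsilon, w^\varepsilon)\,ds \\
  + \int_0^t\{b_g(w^\varepsilon, v^\varepsilon, w^\varepsilon) + b_g(v, w^\varepsilon, w^\varepsilon)\}\,ds \\
  = \int_0^t[g(M_\tau\mathbb{P}_\varepsilon f^\varepsilon - f), w^\varepsilon]_{T\Gamma}\,ds + R_\varepsilon^1(1_{[0,t]}w^\varepsilon) + R_\varepsilon^2(1_{[0,t]}w^\varepsilon),
\end{multline*}
where, arguing as in the proof of Lemma~\ref{L:Limit_Uni}, the time-derivative term evaluates to $\frac{1}{2}\|g^{1/2}w^\varepsilon(t)\|_{L^2}^2 - \frac{1}{2}\|g^{1/2}w^\varepsilon(0)\|_{L^2}^2$. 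The key cancellations exploit that $w^\varepsilon, v \in \mathcal{V}_g$: by \eqref{E:TriS_Vg} applied to $v\in\mathcal{V}_g$ one has $b_g(v, w^\varepsilon, w^\varepsilon) = 0$, and similarly $b_g(w^\varepsilon, v^\varepsilon, w^\varepsilon) = -b_g(w^\varepsilon, w^\varepsilon, v^\varepsilon)$, which by \eqref{E:Tri_Surf} is bounded by $c\|w^\varepsilon\|_{L^2}\|w^\varepsilon\|_{H^1}\|v^\varepsilon\|_{H^1}$.

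Combining this with the obvious bounds
\begin{align*}
  \left|\int_0^t[g(M_\tau\mathbb{P}_\varepsilon f^\varepsilon - f), w^\varepsilon]_{T\Gamma}\,ds\right| &\leq c\int_0^t\|M_\tau\mathbb{P}_\varepsilon f^\varepsilon - f\|_{H^{-1}(\Gamma,T\Gamma)}\|w^\varepsilon\|_{H^1}\,ds, \\
  |R_\varepsilon^1(1_{[0,t]}w^\varepsilon)| + |R_\varepsilon^2(1_{[0,t]}w^\varepsilon)| &\leq c\,\delta(\varepsilon)(1+t)^{1/2}\|w^\varepsilon\|_{L^2(0,t;H^1(\Gamma))},
\end{align*}
using \eqref{E:Bi_Surf} to bound $a_g(w^\varepsilon,w^\varepsilon)$ below by $c\|\nabla_\Gamma w^\varepsilon\|_{L^2}^2 - c\|w^\varepsilon\|_{L^2}^2$, and applying Young's inequality to absorb the $H^1$-factors of $w^\varepsilon$ into the dissipation, one arrives at
\begin{multline*}
  \|w^\varepsilon(t)\|_{L^2}^2 + \int_0^t\|\nabla_\Gamma w^\varepsilon\|_{L^2}^2\,ds \leq c\|w^\varepsilon(0)\|_{L^2}^2 \\
  + c\int_0^t\bigl(1+\|v^\varepsilon\|_{H^1}^2\bigr)\|w^\varepsilon\|_{L^2}^2\,ds + c_T\bigl\{\delta(\varepsilon)^2 + \|M_\tau\mathbb{P}_\varepsilon f^\varepsilon - f\|_{L^\infty(0,T;H^{-1}(\Gamma,T\Gamma))}^2\bigr\}.
\end{multline*}

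Since \eqref{E:PMu_Energy} makes $1 + \|v^\varepsilon\|_{H^1}^2$ integrable on $[0,T]$ with a bound depending only on $T$ and not on $\varepsilon$, Gronwall's inequality yields \eqref{E:Diff_Ve_V}. The main obstacle is the bookkeeping of the $\varepsilon$-dependent residual terms: Lemma~\ref{L:PMu_Weak} must be re-applied on each interval $[0,t]$ so that $R^1_\varepsilon + R^2_\varepsilon$, evaluated at $1_{[0,t]}w^\varepsilon$, carry only an acceptable growth in $t$, and the Young splitting of $c\,\delta(\varepsilon)(1+t)^{1/2}\|w^\varepsilon\|_{L^2(0,t;H^1)}$ has to leave the $\|\nabla_\Gamma w^\varepsilon\|_{L^2(0,t;L^2)}$ piece absorbable by $\int_0^t a_g(w^\varepsilon,w^\varepsilon)\,ds$ and produce only an $L^2$-in-time term on $\|w^\varepsilon\|_{L^2}$ suitable for Gronwall.
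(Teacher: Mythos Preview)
Your proposal is correct and matches the paper's proof essentially step for step: subtract the two weak formulations, test with $1_{[0,t]}w^\varepsilon$, use \eqref{E:TriS_Vg} to kill $b_g(v,w^\varepsilon,w^\varepsilon)$ and rewrite $b_g(w^\varepsilon,v^\varepsilon,w^\varepsilon)=-b_g(w^\varepsilon,w^\varepsilon,v^\varepsilon)$, estimate via \eqref{E:Tri_Surf}, use \eqref{E:Bi_Surf} for coercivity, bound the residuals by \eqref{E:Mu_Weak_Re}--\eqref{E:PMu_Weak_Re}, absorb with Young, and close with Gronwall using the $\varepsilon$-uniform bound \eqref{E:PMu_Energy} on $\|v^\varepsilon\|_{L^2(0,T;H^1)}$. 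The paper compresses these calculations by referring back to the proofs of Lemmas~\ref{L:PMu_Energy} and~\ref{L:Limit_Uni}, but the content is identical.
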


\begin{proof}
  For the sake of simplicity, we set
  \begin{align*}
    w^\varepsilon := v^\varepsilon-v, \quad w_0^\varepsilon := v^\varepsilon(0)-v_0 = \mathbb{P}_gM_\tau u_0^\varepsilon-v_0, \quad F^\varepsilon := M_\tau \mathbb{P}_\varepsilon f^\varepsilon-f.
  \end{align*}
  Let $T>0$.
  We subtract both sides of \eqref{E:Limit_Weak} from those of \eqref{E:PMu_Weak} to get
  \begin{multline*}
    \int_0^T\{[g\partial_sw^\varepsilon,\eta]_{T\Gamma}+a_g(w^\varepsilon,\eta)+b_g(w^\varepsilon,v^\varepsilon,\eta)+b_g(v,w^\varepsilon,\eta)\}\,ds \\
    = \int_0^T[gF^\varepsilon,\eta]_{T\Gamma}\,ds+R_\varepsilon^1(\eta)+R_\varepsilon^2(\eta)
  \end{multline*}
  for all $\eta\in L^2(0,T;\mathcal{V}_g)$ (see also Lemma~\ref{L:Lim_W_L2}), where $R_\varepsilon^1(\eta)$ and $R_\varepsilon^2(\eta)$ are given in Lemmas~\ref{L:Mu_Weak} and \ref{L:PMu_Weak}.
  For each $t\in[0,T]$ let $1_{[0,t]}\colon\mathbb{R}\to\mathbb{R}$ be the characteristic function of $[0,t]$.
  We substitute $\eta=1_{[0,t]}w^\varepsilon$ for the above equality and calculate as in the proofs of Lemmas~\ref{L:PMu_Energy} and~\ref{L:Limit_Uni} by using \eqref{E:Bi_Surf}--\eqref{E:TriS_Vg}, \eqref{E:Mu_Weak_Re}, \eqref{E:PMu_Weak_Re}, \eqref{Pf_LU:Int_Dt}, and Young's inequality.
  Then we get
  \begin{multline} \label{Pf_DVeV:Gron}
    \|w^\varepsilon(t)\|_{L^2(\Gamma)}^2+\int_0^t\|\nabla_\Gamma w^\varepsilon\|_{L^2(\Gamma)}^2\,ds \\
    \leq c\left\{\|w_0^\varepsilon\|_{L^2(\Gamma)}^2+\int_0^t\left(1+\|v^\varepsilon\|_{H^1(\Gamma)}^2\right)\|w^\varepsilon\|_{L^2(\Gamma)}^2\,ds \right. \\
    \left. +\int_0^t\|F^\varepsilon\|_{H^{-1}(\Gamma,T\Gamma)}^2\,ds+\delta(\varepsilon)^2(1+t)\right\}
  \end{multline}
  for all $t\in[0,T]$.
  Here $\delta(\varepsilon)$ given by \eqref{E:DMuV_Re} comes from \eqref{E:Mu_Weak_Re} and \eqref{E:PMu_Weak_Re} (note that $\varepsilon^{\alpha/2}\leq\varepsilon^{\alpha/4}$).
  This inequality implies that
  \begin{align*}
    \xi(t) \leq c\left\{\xi(0)+\int_0^t\left(\varphi(s)\xi(s)+\|F^\varepsilon(s)\|_{H^{-1}(\Gamma,T\Gamma)}^2\right)ds\right\} \quad\text{for all}\quad t\in[0,T],
  \end{align*}
  where $\xi(t):=\delta(\varepsilon)^2+\|w^\varepsilon(t)\|_{L^2(\Gamma)}^2$ and $\varphi(t):=1+\|v^\varepsilon(t)\|_{H^1(\Gamma)}^2$.
  Hence by Gronwall's inequality we have
  \begin{align*}
    \xi(t) \leq c\left(\xi(0)+\int_0^t\|F^\varepsilon(s)\|_{H^{-1}(\Gamma,T\Gamma)}^2\,ds\right)\exp\left(c\int_0^t\varphi(s)\,ds\right), \quad t\in[0,T].
  \end{align*}
  From this inequality and the estimate \eqref{E:PMu_Energy} for $\|v^\varepsilon\|_{H^1(\Gamma)}^2$ we deduce that
  \begin{align*}
    \|w^\varepsilon(t)\|_{L^2(\Gamma)}^2 \leq c_T\left\{\delta(\varepsilon)^2+\|w_0^\varepsilon\|_{L^2(\Gamma)}^2+\|F^\varepsilon\|_{L^\infty(0,\infty;H^{-1}(\Gamma,T\Gamma))}^2\right\}
  \end{align*}
  for all $t\in[0,T]$, where $c_T>0$ is a constant depending only on $T$.
  Applying this inequality and \eqref{E:PMu_Energy} to \eqref{Pf_DVeV:Gron} we also get the same estimate for the time integral of $\|\nabla_\Gamma w^\varepsilon\|_{L^2(\Gamma)}^2$.
  Therefore, the inequality \eqref{E:Diff_Ve_V} is valid.
\end{proof}

\begin{proof}[Proof of Theorem~\ref{T:Diff_Mu_V}]
  Let $v^\varepsilon=\mathbb{P}_gM_\tau u^\varepsilon$.
  Since $v_0\in L_{g\sigma}^2(\Gamma,T\Gamma)$ and $\mathbb{P}_g$ is the orthogonal projection from $L^2(\Gamma,T\Gamma)$ onto $L_{g\sigma}^2(\Gamma,T\Gamma)$,
  \begin{align*}
    \|v^\varepsilon(0)-v_0\|_{L^2(\Gamma)} = \|\mathbb{P}_g(M_\tau u_0^\varepsilon-v_0)\|_{L^2(\Gamma)} \leq \|M_\tau u_0^\varepsilon-v_0\|_{L^2(\Gamma)}.
  \end{align*}
  By this inequality, \eqref{E:Diff_PMu}, and \eqref{E:Diff_Ve_V} we obtain \eqref{E:Diff_Mu_V} (note that $\varepsilon^2\leq\delta(\varepsilon)^2$).
\end{proof}

Now we see that Theorem~\ref{T:SL_Strong} is an immediate consequence of Theorem~\ref{T:Diff_Mu_V}.

\begin{proof}[Proof of Theorem~\ref{T:SL_Strong}]
  The condition (b') implies the condition (b) of Theorem~\ref{T:SL_Weak}.
  Thus the statements of Theorem~\ref{T:SL_Weak} are valid.
  Also, for each $T>0$ the right-hand side of \eqref{E:Diff_Mu_V} converges to zero as $\varepsilon\to0$ by \eqref{E:DMuV_Re}, $\alpha>0$, and the conditions (b') and (c).
  Hence $M_\tau u^\varepsilon$ converges to $v$ strongly in $C([0,T];L^2(\Gamma,T\Gamma))$ and $L^2(0,T;H^1(\Gamma,T\Gamma))$ as $\varepsilon\to0$.
\end{proof}

Next let us estimate the difference between a strong solution to \eqref{E:NS_Eq}--\eqref{E:NS_In} and a weak solution to \eqref{E:Limit_Eq}--\eqref{E:Limit_Div} in $\Omega_\varepsilon$.
Recall that we denote by $\bar{\eta}=\eta\circ\pi$ the constant extension of a function $\eta$ on $\Gamma$ in the normal direction of $\Gamma$.

\begin{theorem} \label{T:Diff_Ue_Cv}
  Under the same assumptions as in Theorem~\ref{T:Diff_Mu_V}, we have
  \begin{multline} \label{E:Diff_Ue_Cv}
    \max_{t\in[0,T]}\|u^\varepsilon(t)-\bar{v}(t)\|_{L^2(\Omega_\varepsilon)}^2+\int_0^T\left\|\overline{P}\nabla u^\varepsilon(t)-\overline{\nabla_\Gamma v}(t)\right\|_{L^2(\Omega_\varepsilon)}^2\,dt \\
  \leq c_T\varepsilon\left\{\delta(\varepsilon)^2+\|M_\tau u_0^\varepsilon-v_0\|_{L^2(\Gamma)}^2+\|M_\tau\mathbb{P}_\varepsilon f^\varepsilon-f\|_{L^\infty(0,\infty;H^{-1}(\Gamma,T\Gamma))}^2\right\}
  \end{multline}
  for all $T>0$, where $c_T>0$ is a constant depending only on $T$.
  In particular,
  \begin{align*}
    \lim_{\varepsilon\to0}\varepsilon^{-1}\|u^\varepsilon-\bar{v}\|_{C([0,T];L^2(\Omega_\varepsilon))}^2 = \lim_{\varepsilon\to0}\varepsilon^{-1}\left\|\overline{P}\nabla u^\varepsilon-\overline{\nabla_\Gamma v}\right\|_{L^2(0,T;L^2(\Omega_\varepsilon))}^2 = 0
  \end{align*}
  for all $T>0$ provided that $\lim_{\varepsilon\to0}\varepsilon^{-1}\gamma_\varepsilon^i=\gamma^i$ for $i=0,1$ and
  \begin{align*}
    \lim_{\varepsilon\to0}\|M_\tau u_0^\varepsilon-v_0\|_{L^2(\Gamma)}^2 = \lim_{\varepsilon\to0}\|M_\tau\mathbb{P}_\varepsilon f^\varepsilon-f\|_{L^\infty(0,\infty;H^{-1}(\Gamma,T\Gamma))}^2 = 0.
  \end{align*}
\end{theorem}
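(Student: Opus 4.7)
The strategy is to reduce the bulk estimate to the surface estimate \eqref{E:Diff_Mu_V} by comparing $u^\varepsilon$ with the constant extension of its averaged tangential component, and likewise for the gradient. Concretely, I would decompose
\[
u^\varepsilon-\bar v=\bigl(u^\varepsilon-\overline{M_\tau u^\varepsilon}\bigr)+\overline{M_\tau u^\varepsilon-v},\qquad \overline{P}\nabla u^\varepsilon-\overline{\nabla_\Gamma v}=\bigl(\overline{P}\nabla u^\varepsilon-\overline{\nabla_\Gamma M_\tau u^\varepsilon}\bigr)+\overline{\nabla_\Gamma M_\tau u^\varepsilon-\nabla_\Gamma v},
\]
control each first parenthesis by something of size $\varepsilon\,\|u^\varepsilon\|_{H^k(\Omega_\varepsilon)}$ (whose time integral is absorbed into $c\varepsilon\,\delta(\varepsilon)^2$ thanks to \eqref{E:Est_Ue}), and transfer each second parenthesis from $\Gamma$ to $\Omega_\varepsilon$ via \eqref{E:Con_Lp}, which costs a factor $\varepsilon^{1/2}$, yielding exactly the prefactor $c_T\varepsilon$ in \eqref{E:Diff_Ue_Cv}.

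For the $L^2$ part I would split further $u^\varepsilon-\overline{M_\tau u^\varepsilon}=(u^\varepsilon\cdot\bar n)\bar n+(\overline{P}u^\varepsilon-\overline{M(\overline{P}u^\varepsilon)})$ (using $M_\tau u^\varepsilon=PMu^\varepsilon=M(\overline{P}u^\varepsilon)|_\Gamma$), then apply the Poincar\'e-type inequality \eqref{E:Poin_Nor} to the normal component and the averaging inequality \eqref{E:Ave_Diff_Dom} to the tangential component. Each contribution is bounded by $c\varepsilon\|u^\varepsilon(t)\|_{H^1(\Omega_\varepsilon)}$, and the pointwise-in-time estimate \eqref{E:Est_Ue} gives $\|u^\varepsilon(t)\|_{H^1(\Omega_\varepsilon)}^2\le c\varepsilon^{-1+\alpha}$, so
\[
\|u^\varepsilon(t)-\overline{M_\tau u^\varepsilon(t)}\|_{L^2(\Omega_\varepsilon)}^2\le c\varepsilon^{1+\alpha}\le c\varepsilon\,\delta(\varepsilon)^2.
\]
For the extension term, \eqref{E:Con_Lp} gives $\|\overline{M_\tau u^\varepsilon(t)-v(t)}\|_{L^2(\Omega_\varepsilon)}^2\le c\varepsilon\|M_\tau u^\varepsilon(t)-v(t)\|_{L^2(\Gamma)}^2$, and Theorem~\ref{T:Diff_Mu_V} controls the right-hand side uniformly for $t\in[0,T]$. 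Summing these produces the first term on the left of \eqref{E:Diff_Ue_Cv}.

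The gradient part is the more delicate piece. Here I would first invoke \eqref{E:ADD_Dom} componentwise on $u^\varepsilon$ to obtain $\|\overline{P}\nabla u^\varepsilon-\overline{\nabla_\Gamma Mu^\varepsilon}\|_{L^2(\Omega_\varepsilon)}\le c\varepsilon\|u^\varepsilon\|_{H^2(\Omega_\varepsilon)}$, then bridge from $\nabla_\Gamma Mu^\varepsilon$ to $\nabla_\Gamma M_\tau u^\varepsilon$ by writing $Mu^\varepsilon=M_\tau u^\varepsilon+(Mu^\varepsilon\cdot n)n$ and estimating $\|\nabla_\Gamma[(Mu^\varepsilon\cdot n)n]\|_{L^2(\Gamma)}$ via \eqref{E:Ave_N_W1p} with $p=2$ (this is where the impermeable boundary condition enters); multiplying by the $\varepsilon^{1/2}$ from \eqref{E:Con_Lp} produces an extra $\varepsilon^{1/2}\cdot\varepsilon^{1/2}=\varepsilon$, so altogether
\[
\|\overline{P}\nabla u^\varepsilon-\overline{\nabla_\Gamma M_\tau u^\varepsilon}\|_{L^2(\Omega_\varepsilon)}\le c\varepsilon\,\|u^\varepsilon\|_{H^2(\Omega_\varepsilon)}.
\]
Squaring, integrating over $[0,T]$, and using the $L^2_t H^2$ bound from \eqref{E:Est_Ue} yields $c\varepsilon^{1+\alpha}(1+T)\le c\varepsilon\,\delta(\varepsilon)^2(1+T)$. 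The companion term $\overline{\nabla_\Gamma M_\tau u^\varepsilon-\nabla_\Gamma v}$ is handled by \eqref{E:Con_Lp} and the integrated gradient bound in \eqref{E:Diff_Mu_V}, again producing the factor $\varepsilon$.

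Combining the four bounds gives \eqref{E:Diff_Ue_Cv}. The limit claim then follows from \eqref{E:DMuV_Re}: under the stated convergences one has $\delta(\varepsilon)\to 0$ and the two data-norm terms vanish, so dividing \eqref{E:Diff_Ue_Cv} by $\varepsilon$ still yields a quantity tending to zero. The principal obstacle is organizational rather than analytical: one has to keep track of which decomposition ($u$ vs.\ $Mu$ vs.\ $M_\tau u$ vs.\ $v$) is being used at each step and verify that the extra $\varepsilon^{1/2}$ coming from \eqref{E:Ave_N_W1p} combines with the $\varepsilon^{1/2}$ from \eqref{E:Con_Lp} to match the powers of $\varepsilon$ appearing in \eqref{E:Est_Ue}, so that no spurious negative power of $\varepsilon$ survives.
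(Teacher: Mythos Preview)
Your proposal is correct and follows essentially the same route as the paper: the paper likewise reduces \eqref{E:Diff_Ue_Cv} to \eqref{E:Diff_Mu_V} by inserting $\overline{Mu^\varepsilon}$ and $\overline{\nabla_\Gamma Mu^\varepsilon}$, then bounding the residual pieces via \eqref{E:Ave_Diff_Dom}, \eqref{E:Ave_N_Lp}, \eqref{E:ADD_Dom}, \eqref{E:Ave_N_W1p}, and \eqref{E:Con_Lp}, together with the a priori estimates \eqref{E:Est_Ue}. The only cosmetic difference is that for the $L^2$ term the paper splits off $(\overline{Mu^\varepsilon}\cdot\bar n)\bar n$ and uses \eqref{E:Ave_N_Lp}, whereas you split off $(u^\varepsilon\cdot\bar n)\bar n$ and use \eqref{E:Poin_Nor}; both yield $c\varepsilon\|u^\varepsilon\|_{H^1(\Omega_\varepsilon)}$ and are equivalent.
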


\begin{proof}
  For the sake of simplicity, we denote by $R_\varepsilon$ the right-hand side of \eqref{E:Diff_Mu_V}.
  Also, we fix $T>0$ and suppress $t\in[0,T]$.
  Let us estimate $u^\varepsilon-\bar{v}$.
  Since
  \begin{align*}
    u^\varepsilon-\bar{v} = \Bigl(u^\varepsilon-\overline{Mu^\varepsilon}\Bigr)+\Bigl(\overline{Mu^\varepsilon}\cdot\bar{n}\Bigr)\bar{n}+\Bigl(\overline{M_\tau u^\varepsilon}-\bar{v}\Bigr) \quad\text{in}\quad \Omega_\varepsilon,
  \end{align*}
  we apply \eqref{E:Con_Lp}, \eqref{E:Ave_Diff_Dom}, and \eqref{E:Ave_N_Lp} to the right-hand side to get
  \begin{align*}
    \|u^\varepsilon-\bar{v}\|_{L^2(\Omega_\varepsilon)}^2 \leq c\varepsilon\left(\varepsilon\|u^\varepsilon\|_{H^1(\Omega_\varepsilon)}^2+\|M_\tau u^\varepsilon-v\|_{L^2(\Gamma)}^2\right).
  \end{align*}
  Hence we see by \eqref{E:Est_Ue}, \eqref{E:Diff_Mu_V}, and \eqref{E:DMuV_Re} that
  \begin{align} \label{Pf_DUeCv:L2}
    \|u^\varepsilon(t)-\bar{v}(t)\|_{L^2(\Omega_\varepsilon)}^2 \leq c\varepsilon(\varepsilon^\alpha+R_\varepsilon) \leq c\varepsilon R_\varepsilon \quad\text{for all}\quad t\in[0,T].
  \end{align}
  Next we consider the second term on the left-hand side of \eqref{E:Diff_Ue_Cv}.
  Let
  \begin{gather*}
    J_1 := \left\|\overline{P}\nabla u^\varepsilon-\overline{\nabla_\Gamma Mu^\varepsilon}\right\|_{L^2(\Omega_\varepsilon)}, \quad J_2 := \left\|\overline{\nabla_\Gamma[(Mu^\varepsilon\cdot n)n]}\right\|_{L^2(\Omega_\varepsilon)}, \\
    J_3 := \left\|\overline{\nabla_\Gamma M_\tau u^\varepsilon}-\overline{\nabla_\Gamma v}\right\|_{L^2(\Omega_\varepsilon)}.
  \end{gather*}
  By \eqref{E:Con_Lp} and \eqref{E:ADD_Dom} we have
  \begin{align*}
    J_1 \leq c\varepsilon\|u^\varepsilon\|_{H^2(\Omega_\varepsilon)}, \quad J_3 \leq c\varepsilon^{1/2}\|\nabla_\Gamma M_\tau u^\varepsilon-\nabla_\Gamma v\|_{L^2(\Gamma)}.
  \end{align*}
  Also, the inequalities \eqref{E:Con_Lp} and \eqref{E:Ave_N_W1p} imply that
  \begin{align*}
    J_2 \leq c\varepsilon^{1/2}\|Mu^\varepsilon\cdot n\|_{H^1(\Gamma)} \leq c\varepsilon\|u^\varepsilon\|_{H^2(\Omega_\varepsilon)}.
  \end{align*}
  From these inequalities we deduce that
  \begin{align*}
    \left\|\overline{P}\nabla u^\varepsilon-\overline{\nabla_\Gamma v}\right\|_{L^2(\Omega_\varepsilon)} &\leq J_1+J_2+J_3 \\
    &\leq c\varepsilon^{1/2}\left(\varepsilon^{1/2}\|u^\varepsilon\|_{H^2(\Omega_\varepsilon)}+\|\nabla_\Gamma M_\tau u^\varepsilon-\nabla_\Gamma v\|_{L^2(\Gamma)}\right).
  \end{align*}
  Thus, by \eqref{E:Est_Ue} and \eqref{E:Diff_Mu_V},
  \begin{align*}
    \int_0^T\left\|\overline{P}\nabla u^\varepsilon-\overline{\nabla_\Gamma v}\right\|_{L^2(\Omega_\varepsilon)}^2\,dt &\leq c\varepsilon\int_0^T\left(\varepsilon\|u^\varepsilon\|_{H^2(\Omega_\varepsilon)}^2+\|\nabla_\Gamma M_\tau u^\varepsilon-\nabla_\Gamma v\|_{L^2(\Gamma)}^2\right)dt \\
    &\leq c\varepsilon\{\varepsilon^\alpha(1+T)+R_\varepsilon\} \leq c\varepsilon(1+T)R_\varepsilon.
  \end{align*}
  Combining this inequality and \eqref{Pf_DUeCv:L2} we obtain \eqref{E:Diff_Ue_Cv}.
\end{proof}

We also compare the normal derivative (with respect to $\Gamma$) of a strong solution to \eqref{E:NS_Eq}--\eqref{E:NS_In} with a weak solution to \eqref{E:Limit_Eq}--\eqref{E:Limit_Div}.

\begin{theorem} \label{T:Diff_DnUe_Cv}
  Under the same assumptions as in Theorem~\ref{T:Diff_Mu_V}, we have
  \begin{multline} \label{E:Diff_DnUe_Cv}
    \int_0^T\left(\left\|\overline{P}\partial_nu^\varepsilon(t)+\overline{Wv}(t)\right\|_{L^2(\Omega_\varepsilon)}^2+\left\|\partial_nu^\varepsilon(t)\cdot\bar{n}-\frac{1}{\bar{g}}\bar{v}(t)\cdot\overline{\nabla_\Gamma g}\right\|_{L^2(\Omega_\varepsilon)}^2\right)dt \\
    \leq c_T\varepsilon\left\{\delta(\varepsilon)^2+\|M_\tau u_0^\varepsilon-v_0\|_{L^2(\Gamma)}^2+\|M_\tau\mathbb{P}_\varepsilon f^\varepsilon-f\|_{L^\infty(0,\infty;H^{-1}(\Gamma,T\Gamma))}^2\right\}
  \end{multline}
  for all $T>0$, where $\partial_nu^\varepsilon=(\bar{n}\cdot\nabla)u^\varepsilon$ is the normal derivative of $u^\varepsilon$ given by \eqref{E:Def_NorDer} and $c_T>0$ is a constant depending only on $T$.
  Hence, setting
  \begin{align*}
    V := -Wv+\frac{1}{g}(v\cdot\nabla_\Gamma g)n \quad\text{on}\quad \Gamma\times(0,\infty)
  \end{align*}
  we have (note that $Wv$ is tangential on $\Gamma$)
  \begin{align*}
    \lim_{\varepsilon\to0}\varepsilon^{-1}\left\|\partial_nu^\varepsilon-\overline{V}\right\|_{L^2(0,T;L^2(\Omega_\varepsilon))}^2 = 0
  \end{align*}
  for all $T>0$ provided that $\lim_{\varepsilon\to0}\varepsilon^{-1}\gamma_\varepsilon^i=\gamma^i$ for $i=0,1$ and
  \begin{align*}
    \lim_{\varepsilon\to0}\|M_\tau u_0^\varepsilon-v_0\|_{L^2(\Gamma)}^2 = \lim_{\varepsilon\to0}\|M_\tau\mathbb{P}_\varepsilon f^\varepsilon-f\|_{L^\infty(0,\infty;H^{-1}(\Gamma,T\Gamma))}^2 = 0.
  \end{align*}
\end{theorem}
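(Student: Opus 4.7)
The plan is to decompose $\partial_n u^\varepsilon$ into its tangential and normal components with respect to $\Gamma$ via $\partial_n u^\varepsilon = \overline{P}\partial_n u^\varepsilon + (\partial_n u^\varepsilon\cdot\bar n)\bar n$, handle each component separately by inserting an intermediate quantity, and bound the resulting differences with the already-established boundary-layer estimates (Lemmas \ref{L:PDnU_WU} and \ref{L:DnU_N_Ave}) and with the difference estimates between $u^\varepsilon$, $v$, and $M_\tau u^\varepsilon$ (Theorems \ref{T:Diff_Ue_Cv} and \ref{T:Diff_Mu_V}). Throughout, I abbreviate the right-hand side of \eqref{E:Diff_Mu_V} by $R_\varepsilon$, noting that by \eqref{E:DMuV_Re} the term $\varepsilon^{1+\alpha}$ arising from \eqref{E:Est_Ue} is always absorbed in $\varepsilon R_\varepsilon$ since $\delta(\varepsilon)^2\geq\varepsilon^{\alpha/2}$.

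For the tangential part, I would write
\begin{equation*}
\overline{P}\partial_n u^\varepsilon + \overline{Wv} = \bigl(\overline{P}\partial_n u^\varepsilon+\overline{W}u^\varepsilon\bigr)-\overline{W}(u^\varepsilon-\bar v),
\end{equation*}
where I use that $Wv$ is tangential so $\overline{Wv}=\overline{W}\bar v$. Since $u^\varepsilon(t)\in D(A_\varepsilon)$ for a.a. $t$ and Assumption \ref{Assump_1} ensures \eqref{E:Fric_Upper}, Lemma \ref{L:PDnU_WU} with $p=2$ controls the first bracket by $c\varepsilon\|u^\varepsilon\|_{H^2(\Omega_\varepsilon)}$, while boundedness of $W$ on $\Gamma$ controls the second by $c\|u^\varepsilon-\bar v\|_{L^2(\Omega_\varepsilon)}$. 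Integrating in time, applying \eqref{E:Est_Ue} to the first contribution and \eqref{E:Diff_Ue_Cv} to the second yields $\int_0^T\|\overline{P}\partial_n u^\varepsilon+\overline{Wv}\|_{L^2(\Omega_\varepsilon)}^2\,dt\leq c_T\varepsilon R_\varepsilon$.

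For the normal part, the analogous decomposition is
\begin{equation*}
\partial_n u^\varepsilon\cdot\bar n-\frac{1}{\bar g}\bar v\cdot\overline{\nabla_\Gamma g} = \left(\partial_n u^\varepsilon\cdot\bar n-\frac{1}{\bar g}\overline{M_\tau u^\varepsilon}\cdot\overline{\nabla_\Gamma g}\right)+\frac{1}{\bar g}\overline{(M_\tau u^\varepsilon-v)}\cdot\overline{\nabla_\Gamma g},
\end{equation*}
since $v$ is tangential so $\overline{M_\tau u^\varepsilon}-\bar v=\overline{M_\tau u^\varepsilon-v}$. Because $u^\varepsilon$ is solenoidal and satisfies the impermeable boundary condition, Lemma \ref{L:DnU_N_Ave} bounds the first bracket by $c\varepsilon\|u^\varepsilon\|_{H^2(\Omega_\varepsilon)}$, and \eqref{E:Width_Bound} together with \eqref{E:Con_Lp} bounds the $L^2(\Omega_\varepsilon)$-norm of the second term by $c\varepsilon^{1/2}\|M_\tau u^\varepsilon-v\|_{L^2(\Gamma)}$. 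Squaring, integrating in time, and invoking \eqref{E:Est_Ue} and \eqref{E:Diff_Mu_V} delivers the same bound $c_T\varepsilon R_\varepsilon$ for the normal contribution. Summing the tangential and normal parts proves \eqref{E:Diff_DnUe_Cv}.

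The convergence corollary is immediate: under the stated hypotheses each summand of $R_\varepsilon$ tends to zero, so dividing \eqref{E:Diff_DnUe_Cv} by $\varepsilon$ and observing that $\partial_n u^\varepsilon-\overline{V}$ decomposes orthogonally with respect to $\overline{P}$ and $\bar n\otimes\bar n$ into precisely the two quantities estimated in \eqref{E:Diff_DnUe_Cv} gives $\varepsilon^{-1}\|\partial_n u^\varepsilon-\overline{V}\|_{L^2(0,T;L^2(\Omega_\varepsilon))}^2\to 0$. There is no real obstacle: all the work is contained in Lemmas \ref{L:PDnU_WU} and \ref{L:DnU_N_Ave}, which were engineered exactly to play this role, and the only mildly delicate point is to verify that the powers of $\varepsilon$ on both sides match, which is guaranteed by the lower bound $\varepsilon^{\alpha/2}$ inherent in $\delta(\varepsilon)^2$.
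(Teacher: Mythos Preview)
Your proof is correct and follows essentially the same approach as the paper: split $\partial_n u^\varepsilon$ into tangential and normal parts, control the tangential part via Lemma~\ref{L:PDnU_WU} and the normal part via Lemma~\ref{L:DnU_N_Ave}, then close with the difference estimates and \eqref{E:Est_Ue}. The only cosmetic difference is that for the tangential term the paper inserts $\overline{W}\,\overline{Mu^\varepsilon}$ as an intermediate (using \eqref{E:Ave_Diff_Dom} and \eqref{E:Con_Lp} to pass from $u^\varepsilon$ to $Mu^\varepsilon$ to $v$), whereas you jump directly from $\overline{W}u^\varepsilon$ to $\overline{W}\bar v$ by invoking the already-established Theorem~\ref{T:Diff_Ue_Cv}; your route is slightly more economical but equivalent in substance.
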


\begin{proof}
  We fix $T>0$ and suppress $t\in[0,T]$.
  Let
  \begin{gather*}
    J_1 := \left\|\overline{P}\partial_nu^\varepsilon+\overline{W}u^\varepsilon\right\|_{L^2(\Omega_\varepsilon)}, \quad J_2 := \left\|\overline{WMu^\varepsilon}-\overline{W}u^\varepsilon\right\|_{L^2(\Omega_\varepsilon)}, \\
    J_3 := \left\|\overline{Wv}-\overline{WMu^\varepsilon}\right\|_{L^2(\Omega_\varepsilon)}.
  \end{gather*}
  We apply \eqref{E:PDnU_WU} to $J_1$ and \eqref{E:Ave_Diff_Dom} to $J_2$ to get
  \begin{align*}
    J_1 \leq c\varepsilon\|u^\varepsilon\|_{H^2(\Omega_\varepsilon)}, \quad J_2 \leq c\left\|u^\varepsilon-\overline{Mu^\varepsilon}\right\|_{L^2(\Omega_\varepsilon)} \leq c\varepsilon\|u^\varepsilon\|_{H^1(\Omega_\varepsilon)}.
  \end{align*}
  Also, noting that $WMu^\varepsilon=WM_\tau u^\varepsilon$ on $\Gamma$ by \eqref{E:Form_W}, we use \eqref{E:Con_Lp} to get
  \begin{align*}
    J_3 \leq c\varepsilon^{1/2}\|Wv-WM_\tau u^\varepsilon\|_{L^2(\Gamma)} \leq c\varepsilon^{1/2}\|v-M_\tau u^\varepsilon\|_{L^2(\Gamma)}.
  \end{align*}
  From these inequalities we deduce that
  \begin{align*}
    \left\|\overline{P}\partial_nu^\varepsilon+\overline{Wv}\right\|_{L^2(\Omega_\varepsilon)} &\leq J_1+J_2+J_3 \\
    &\leq c\varepsilon^{1/2}\left(\varepsilon^{1/2}\|u^\varepsilon\|_{H^2(\Omega_\varepsilon)}+\|M_\tau u^\varepsilon-v\|_{L^2(\Gamma)}\right).
  \end{align*}
  We also observe by \eqref{E:Width_Bound}, \eqref{E:Con_Lp}, and \eqref{E:DnU_N_Ave} that
  \begin{align*}
    &\left\|\partial_nu^\varepsilon\cdot\bar{n}-\frac{1}{\bar{g}}\bar{v}\cdot\overline{\nabla_\Gamma g}\right\|_{L^2(\Omega_\varepsilon)} \\
    &\qquad \leq \left\|\partial_nu^\varepsilon\cdot\bar{n}-\frac{1}{\bar{g}}\overline{M_\tau u^\varepsilon}\cdot\overline{\nabla_\Gamma g}\right\|_{L^2(\Omega_\varepsilon)}+\left\|\frac{1}{\bar{g}}\Bigl(\overline{M_\tau u^\varepsilon}-\bar{v}\Bigr)\cdot\overline{\nabla_\Gamma g}\right\|_{L^2(\Omega_\varepsilon)} \\
    &\qquad \leq c\varepsilon^{1/2}\left(\varepsilon^{1/2}\|u^\varepsilon\|_{H^2(\Omega_\varepsilon)}+\|M_\tau u^\varepsilon-v\|_{L^2(\Gamma)}\right).
  \end{align*}
  Hence, as in the proof of Theorem~\ref{T:Diff_Ue_Cv}, we integrate the square of the above inequalities over $(0,T)$ and then use \eqref{E:Est_Ue} and \eqref{E:Diff_Mu_V} to obtain \eqref{E:Diff_DnUe_Cv}.
\end{proof}

\begin{remark} \label{R:Diff_DnUe_Cv}
  In \eqref{E:Diff_DnUe_Cv} the Weingarten map $W$ represents the curvatures of the limit surface $\Gamma$.
  On the other hand, the functions $g_0$ and $g_1$ with $g=g_1-g_0$ are used to define the inner and outer boundaries of the curved thin domain $\Omega_\varepsilon$.
  Therefore, roughly speaking, the tangential component (with respect to $\Gamma$) of the normal derivative $\partial_nu^\varepsilon$ of the bulk velocity depends only on the shape of $\Gamma$, while the geometry of the boundaries of $\Omega_\varepsilon$ affects only the normal component of $\partial_nu^\varepsilon$.
\end{remark}

\begin{appendices}
%%% Appendix A %%%
\section{Notations and basic formulas on vectors and matrices} \label{S:Ap_VM}
We fix notations and give basic formulas on vectors and matrices, and use them to prove Lemmas~\ref{L:Korn_Aux} and~\ref{L:Tan_Curl_Ua}.

For a matrix $A\in\mathbb{R}^{3\times3}$ we denote by $A^T$ and $A_S:=(A+A^T)/2$ the transpose and the symmetric part of $A$, respectively.
We define the tensor product of vectors $a\in\mathbb{R}^l$ and $b\in\mathbb{R}^m$ with $l,m\in\mathbb{N}$ as
  \begin{align*}
    a\otimes b :=
    \begin{pmatrix}
      a_1b_1 & \cdots & a_1b_m \\
      \vdots & & \vdots \\
      a_lb_1 & \cdots & a_lb_m
    \end{pmatrix}, \quad
    a = (a_1,\dots,a_l), \, b = (b_1,\dots,b_m).
  \end{align*}
For vector fields $u=(u_1,u_2,u_3)$ and $\varphi$ on an open set in $\mathbb{R}^3$ we write
\begin{gather*}
  \nabla u :=
  \begin{pmatrix}
    \partial_1u_1 & \partial_1u_2 & \partial_1u_3 \\
    \partial_2u_1 & \partial_2u_2 & \partial_2u_3 \\
    \partial_3u_1 & \partial_3u_2 & \partial_3u_3
  \end{pmatrix}, \quad
  |\nabla^2u|^2 := \sum_{i,j,k=1}^3|\partial_i\partial_ju_k|^2 \quad\left(\partial_i := \frac{\partial}{\partial x_i}\right), \\
  (\varphi\cdot\nabla)u := (\varphi\cdot\nabla u_1,\varphi\cdot\nabla u_2,\varphi\cdot\nabla u_3) = (\nabla u)^T\varphi.
\end{gather*}
We define the inner product of matrices $A,B\in\mathbb{R}^{3\times3}$ and the norm of $A$ as
\begin{align*}
  A: B := \mathrm{tr}[A^TB] = \sum_{i=1}^3AE_i\cdot BE_i, \quad |A| := \sqrt{A:A},
\end{align*}
where $\{E_1,E_2,E_3\}$ is an orthonormal basis of $\mathbb{R}^3$.
Note that $A:B$ does not depend on a choice of $\{E_1,E_2,E_3\}$.
In particular, taking the standard basis of $\mathbb{R}^3$ we get
\begin{align*}
  A:B = \sum_{i,j=1}^3A_{ij}B_{ij} = B:A = A^T:B^T, \quad AB:C = A:CB^T = B:A^TC
\end{align*}
for $A,B,C\in\mathbb{R}^{3\times3}$.
Also, for $a,b\in\mathbb{R}^3$ we have $|a\otimes b|=|a||b|$.

\begin{lemma} \label{L:Norm_Mat}
  Let $n_0,\tau_1,\tau_2\in\mathbb{R}^3$ and $A_1,A_2\in\mathbb{R}^{3\times3}$ satisfy
  \begin{align*}
    |n_0| = 1, \quad n_0\cdot\tau_1 = n_0\cdot\tau_2 = 0, \quad A_1^Tn_0 = 0, \quad A_2n_0 = A_2^Tn_0 = 0.
  \end{align*}
  Then for $B:=A_1+n_0\otimes a$ and $C:=A_2+\tau_1\otimes n_0+n_0\otimes\tau_2+cn_0\otimes n_0$ with $a\in\mathbb{R}^3$ and $c\in\mathbb{R}$ we have
  \begin{align} \label{E:Norm_PA}
    |B|^2 = |A_1|^2+|a|^2, \quad |C|^2 = |A_2|^2+|\tau_1|^2+|\tau_2|^2+|c|^2.
  \end{align}
\end{lemma}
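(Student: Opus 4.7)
The plan is to carry out a direct expansion of $|B|^2 = B:B$ and $|C|^2 = C:C$ using the bilinearity of the Frobenius inner product, and to verify that every cross term vanishes by virtue of the imposed orthogonality conditions. The only ``facts'' needed are the two elementary identities
\begin{align*}
A:(b \otimes c) &= A^Tb \cdot c = b \cdot Ac, \quad A \in \mathbb{R}^{3\times 3},\,b,c\in \mathbb{R}^3, \\
(a \otimes b):(c \otimes d) &= (a \cdot c)(b \cdot d), \quad a,b,c,d\in \mathbb{R}^3,
\end{align*}
which follow from writing out the sums in the standard basis.

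For the first identity of \eqref{E:Norm_PA}, I would expand
\begin{align*}
|B|^2 = |A_1|^2 + 2\,A_1:(n_0\otimes a) + |n_0\otimes a|^2.
\end{align*}
The middle term equals $A_1^Tn_0\cdot a$, which is zero by the assumption $A_1^Tn_0=0$, and the last term equals $|n_0|^2|a|^2=|a|^2$ since $|n_0|=1$. This yields $|B|^2=|A_1|^2+|a|^2$.

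For the second identity, set $D_1:=\tau_1\otimes n_0$, $D_2:=n_0\otimes\tau_2$, and $D_3:=c\,n_0\otimes n_0$, and expand $|C|^2 = |A_2|^2 + |D_1|^2 + |D_2|^2 + |D_3|^2 + 2\sum_{i<j}D_i:D_j + 2\sum_i A_2:D_i$. Using $|n_0|=1$, the diagonal terms give $|D_1|^2=|\tau_1|^2$, $|D_2|^2=|\tau_2|^2$, and $|D_3|^2=c^2$. The three cross terms $D_i:D_j$ are $(\tau_1\cdot n_0)(n_0\cdot\tau_2)$, $c(\tau_1\cdot n_0)|n_0|^2$, and $c|n_0|^2(\tau_2\cdot n_0)$, each of which vanishes from $n_0\cdot\tau_1=n_0\cdot\tau_2=0$. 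The three mixed terms with $A_2$ rewrite, using the first displayed identity, as $\tau_1\cdot A_2n_0$, $\tau_2\cdot A_2^Tn_0$, and $cn_0\cdot A_2^Tn_0$, each of which vanishes from $A_2n_0=A_2^Tn_0=0$. Collecting these contributions gives the stated formula.

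There is no genuine obstacle here: the whole lemma is a bookkeeping calculation, and the main ``work'' is simply identifying the correct orthogonality condition that annihilates each cross term. The lemma is used in the proof of Lemma~\ref{L:Korn_Aux} with $n_0=\bar n$, $A_1$ corresponding to $\overline{P}\nabla U$ and $a$ to $\partial_nU$ (so $A_1^T\bar n=0$ holds by construction), and similarly for the symmetric version controlling $D(u)$.
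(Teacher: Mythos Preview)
Your proof is correct and follows essentially the same approach as the paper: a direct expansion of the Frobenius norm in which every cross term is killed by one of the orthogonality hypotheses. The only cosmetic difference is that the paper first computes the matrix products $B^TB$ and $C^TC$ and then takes traces, whereas you expand $B:B$ and $C:C$ term by term via the identities $A:(b\otimes c)=A^Tb\cdot c$ and $(a\otimes b):(c\otimes d)=(a\cdot c)(b\cdot d)$; the underlying algebra is identical.
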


\begin{proof}
  For $B=A_1+n_0\otimes a$ we use $|n_0|=1$ and $A_1^Tn_0=0$ to get
  \begin{align*}
    B^TB = A_1^TA_1+(A_1^Tn_0)\otimes a+a\otimes(A_1^Tn_0)+|n_0|^2a\otimes a = A_1^TA_1+a\otimes a.
  \end{align*}
  By this equality and $\mathrm{tr}[a\otimes a]=|a|^2$ we obtain the first equality of \eqref{E:Norm_PA}.
  Similarly, for $C=A_2+\tau_1\otimes n_0+n_0\otimes\tau_2+cn_0\otimes n_0$ we have
  \begin{multline*}
    C^TC = A_2^TA_2+\tau_2\otimes\tau_2+(|\tau_1|^2+|c|^2)n_0\otimes n_0 \\
    +(A_2^T\tau_1)\otimes n_0+n_0\otimes(A_2^T\tau_1)+c(\tau_2\otimes n_0+n_0\otimes\tau_2)
  \end{multline*}
  by $|n_0|=1$, $n_0\cdot\tau_1=0$, and $A_2^Tn_0=0$.
  From this equality and
  \begin{gather*}
    \mathrm{tr}[\tau_2\otimes\tau_2] = |\tau_2|^2, \quad \mathrm{tr}[n_0\otimes n_0] = |n_0|^2 = 1, \\
    \mathrm{tr}[\tau_2\otimes n_0] = \mathrm{tr}[n_0\otimes\tau_2] = n_0\cdot\tau_2 = 0, \\
    \mathrm{tr}[(A_2^T\tau_1)\otimes n_0] = \mathrm{tr}[n_0\otimes(A_2^T\tau_1)] = (A_2^T\tau_1)\cdot n_0 = \tau_1\cdot(A_2n_0) = 0
  \end{gather*}
  by $A_2n_0=0$ the second equality of \eqref{E:Norm_PA} follows.
\end{proof}

Based on the formulas \eqref{E:Norm_PA} we prove Lemma~\ref{L:Korn_Aux}.

\begin{proof}[Proof of Lemma~\ref{L:Korn_Aux}]
  Let $\Phi_\varepsilon$ be the bijection from $\Omega_1$ onto $\Omega_\varepsilon$ given by \eqref{E:Def_Korn_Aux}.
  Since its inverse is of the form $\Phi_\varepsilon^{-1}(x)=\pi(x)+\varepsilon^{-1}d(x)\bar{n}(x)$ for $x\in\Omega_\varepsilon$,
  \begin{align*}
    \nabla\Phi_\varepsilon^{-1}(x) = \left\{I_3-d(x)\overline{W}(x)\right\}^{-1}\left\{I_3-\varepsilon^{-1}d(x)\overline{W}(x)\right\}\overline{P}(x)+\varepsilon^{-1}\overline{Q}(x)
  \end{align*}
  for $x\in\Omega_\varepsilon$ by \eqref{E:Form_W}, \eqref{E:Pi_Der}, \eqref{E:Nor_Grad}, and $\nabla d(x)=\bar{n}(x)$.
  For $X\in\Omega_1$ let $x=\Phi_\varepsilon(X)$ in this equality.
  Then by $d(\Phi_\varepsilon(X))=\varepsilon d(X)$ and $\pi(\Phi_\varepsilon(X))=\pi(X)$ we get
  \begin{gather*}
    \nabla\Phi_\varepsilon^{-1}(\Phi_\varepsilon(X))=\Lambda_\varepsilon(X)\overline{P}(X)+\varepsilon^{-1}\overline{Q}(X), \\
    \Lambda_\varepsilon(X) := \left\{I_3-\varepsilon d(X)\overline{W}(X)\right\}^{-1}\left\{I_3-d(X)\overline{W}(X)\right\}.
  \end{gather*}
  For $u\in H^1(\Omega_\varepsilon)^3$ let $U:=u\circ\Phi_\varepsilon\colon\Omega_1\to\mathbb{R}^3$.
  From the above formula and
  \begin{align*}
    \nabla u(\Phi_\varepsilon(X)) = \nabla\Phi_\varepsilon^{-1}(\Phi_\varepsilon(X))\nabla U(X), \quad \overline{Q}(X)\nabla U(X) = \bar{n}(X)\otimes\partial_n U(X)
  \end{align*}
  it follows that
  \begin{align} \label{Pf_KA:Grad_U}
    (\nabla u\circ\Phi_\varepsilon)(X) = \Lambda_\varepsilon(X)\overline{P}(X)\nabla U(X)+\varepsilon^{-1}\bar{n}(X)\otimes\partial_nU(X), \quad X\in\Omega_1.
  \end{align}
  Let $n_0:=\bar{n}$, $A_1:=\Lambda_\varepsilon\overline{P}\nabla U$, and $a:=\varepsilon^{-1}\partial_nU$ so that $(\nabla u)\circ\Phi_\varepsilon=A_1+n_0\otimes a$ in $\Omega_1$.
  Then $|n_0|=1$ and $A_1^Tn_0=0$ in $\Omega_1$ since $P^Tn=Pn=0$ on $\Gamma$ and $\Lambda_\varepsilon\overline{P}=\overline{P}\Lambda_\varepsilon$ in $\Omega_1$ by \eqref{E:Form_W} and \eqref{E:WReso_P}.
  Hence we can use the first equality of \eqref{E:Norm_PA} to get
  \begin{align*}
    |(\nabla u)\circ\Phi_\varepsilon|^2 = \left|\Lambda_\varepsilon\overline{P}\nabla U\right|^2+\varepsilon^{-2}|\partial_nU|^2 \geq c\left(\left|\overline{P}\nabla U\right|^2+\varepsilon^{-2}|\partial_nU|^2\right) \quad\text{in}\quad \Omega_1,
  \end{align*}
  where the second inequality follows from \eqref{E:Wein_Bound}.
  By this inequality and \eqref{E:L2_Omega_1},
  \begin{align*}
    \varepsilon^{-1}\|\nabla u\|_{L^2(\Omega_\varepsilon)}^2 \geq c\|(\nabla u)\circ\Phi_\varepsilon\|_{L^2(\Omega_1)}^2 \geq c\left(\left\|\overline{P}\nabla U\right\|_{L^2(\Omega_1)}^2+\varepsilon^{-2}\|\partial_nU\|_{L^2(\Omega_1)}^2\right).
  \end{align*}
  Hence \eqref{E:KAux_Grad} holds.
  We also have $U\in H^1(\Omega_1)^3$ by $u\in H^1(\Omega_\varepsilon)^3$, \eqref{E:L2_Omega_1}, \eqref{E:KAux_Grad}, and
  \begin{align*}
    |\nabla U|^2 = \left|\overline{P}\nabla U\right|^2+\left|\overline{Q}\nabla U\right|^2, \quad \left|\overline{Q}\nabla U\right| = |\bar{n}\otimes\partial_nU| = |\partial_nU| \quad\text{in}\quad \Omega_1.
  \end{align*}
  To prove \eqref{E:KAux_Du} we observe by $I_3=P+Q$ on $\Gamma$ and \eqref{E:NorDer_Con} that
  \begin{align*}
    \overline{P}\nabla U &= \overline{P}(\nabla U)\overline{P}+\overline{P}(\nabla U)\overline{Q} = \overline{P}(\nabla U)\overline{P}+\Bigl[\overline{P}(\nabla U)\bar{n}\Bigr]\otimes\bar{n}, \\
    \partial_nU &= \partial_n\Bigl[\overline{P}U+(U\cdot\bar{n})\bar{n}\Bigr] = \overline{P}\partial_nU+\{\partial_n(U\cdot\bar{n})\}\bar{n}
  \end{align*}
  in $\Omega_1$.
  We apply these equalities to \eqref{Pf_KA:Grad_U} and then use $\Lambda_\varepsilon\overline{P}=\overline{P}\Lambda_\varepsilon$ in $\Omega_1$ by \eqref{E:Form_W} and \eqref{E:WReso_P} to obtain
  \begin{multline*}
    (\nabla u)\circ\Phi_\varepsilon = \overline{P}\Lambda_\varepsilon(\nabla U)\overline{P}+\Bigl[\overline{P}\Lambda_\varepsilon(\nabla U)\bar{n}\Bigr]\otimes\bar{n} \\
    +\varepsilon^{-1}\bar{n}\otimes\Bigl(\overline{P}\partial_nU\Bigr)+\varepsilon^{-1}\{\partial_n(U\cdot\bar{n})\}\bar{n}\otimes\bar{n} \quad\text{in}\quad \Omega_1.
  \end{multline*}
  Hence $D(u)\circ\Phi_\varepsilon=\{(\nabla u)\circ\Phi_\varepsilon+(\nabla u)^T\circ\Phi_\varepsilon\}/2$ is of the form
  \begin{gather*}
    D(u)\circ\Phi_\varepsilon = A_2+\tau\otimes n_0+n_0\otimes\tau+\varepsilon^{-1}\{\partial_n(U\cdot\bar{n})\}n_0\otimes n_0 \quad\text{in}\quad \Omega_1, \\
    n_0 := \bar{n}, \quad A_2 := \overline{P}F_\varepsilon(U)_S\overline{P}, \quad \tau := \frac{1}{2}\overline{P}\{\Lambda_\varepsilon(\nabla U)\bar{n}+\varepsilon^{-1}\partial_nU\}.
  \end{gather*}
  Here $F_\varepsilon(U)_S$ is the symmetric part of the matrix $F_\varepsilon(U):=\Lambda_\varepsilon\nabla U$.
  Since $\tau\cdot n_0=0$ and $A_2n_0=A_2^Tn_0=0$ in $\Omega_1$ by $P^Tn=Pn=0$ on $\Gamma$, we can apply the second equality of \eqref{E:Norm_PA} to $C=D(u)\circ\Phi_\varepsilon$ to obtain
  \begin{align*}
    |D(u)\circ\Phi_\varepsilon|^2 = |A_2|^2+2|\tau|^2+\varepsilon^{-2}|\partial_n(U\cdot\bar{n})|^2 \geq |A_2|^2+\varepsilon^{-2}|\partial_n(U\cdot\bar{n})|^2
  \end{align*}
  in $\Omega_1$.
  From this inequality and \eqref{E:L2_Omega_1} we deduce that
  \begin{align*}
    \varepsilon^{-1}\|D(u)\|_{L^2(\Omega_\varepsilon)}^2 &\geq c\|D(u)\circ\Phi_\varepsilon\|_{L^2(\Omega_1)}^2 \geq c\left(\left\|A_2\right\|_{L^2(\Omega_1)}^2+\varepsilon^{-2}\|\partial_n(U\cdot\bar{n})\|_{L^2(\Omega_1)}^2\right).
  \end{align*}
  This shows \eqref{E:KAux_Du} since $A_2=\overline{P}F_\varepsilon(U)_S\overline{P}$ and $F_\varepsilon(U)$ is of the form \eqref{E:KAux_Matrix}.
\end{proof}

Next we give a formula on the curl of a vector field and show Lemma~\ref{L:Tan_Curl_Ua}.

\begin{lemma} \label{L:Curl_Exp}
  Let $E_1$, $E_2$, and $E_3$ be vector fields on an open subset $U$ of $\mathbb{R}^3$ such that $\{E_1(x),E_2(x),E_3(x)\}$ is an orthonormal basis of $\mathbb{R}^3$ for each $x\in\mathbb{R}^3$ and
  \begin{align*}
    E_1\times E_2 = E_3, \quad E_2\times E_3 = E_1, \quad E_3\times E_1 = E_2 \quad\text{in}\quad U.
  \end{align*}
  Then for $u\in C^1(U)^3$ we have
  \begin{multline} \label{E:Curl_Exp}
    \mathrm{curl}\,u = \{(E_2\cdot\nabla)u\cdot E_3-(E_3\cdot\nabla)u\cdot E_2\}E_1 \\
    +\{(E_3\cdot\nabla)u\cdot E_1-(E_1\cdot\nabla)u\cdot E_3\}E_2 \\
    +\{(E_1\cdot\nabla)u\cdot E_2-(E_2\cdot\nabla)u\cdot E_1\}E_3 \quad\text{in}\quad U.
  \end{multline}
\end{lemma}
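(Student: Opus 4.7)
The identity \eqref{E:Curl_Exp} is a pointwise statement: for each fixed $x \in U$, both sides are vectors in $\mathbb{R}^3$, and the directional derivative $[(E_i \cdot \nabla)u](x)$ depends only on the value $E_i(x) \in \mathbb{R}^3$ together with the 1-jet of $u$ at $x$, with no derivatives of the frame entering. My plan is therefore to fix $x \in U$, set $a_i := E_i(x)$, and reduce the claim to a purely algebraic identity in the constant right-handed orthonormal basis $\{a_1, a_2, a_3\}$ of $\mathbb{R}^3$.

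First I would prove the frame-invariant representation
\begin{equation} \label{E:Prop_Star}
  \mathrm{curl}\,u(x) = \sum_{i=1}^{3} a_i \times [(a_i \cdot \nabla)u](x).
\end{equation}
In the standard Cartesian basis this is just the familiar $\mathrm{curl}\,u = \sum_j e_j \times \partial_j u$. For a general right-handed orthonormal basis $\{a_1, a_2, a_3\}$, let $R \in SO(3)$ be the rotation with $R e_i = a_i$; then $(a_i \cdot \nabla) = R_{ji}\,\partial_j$ and $a_i = R_{ki}\,e_k$, so the right-hand side of \eqref{E:Prop_Star} becomes $\sum_{i,j,k} R_{ki} R_{ji}\, e_k \times \partial_j u$, and the orthogonality relation $\sum_i R_{ki} R_{ji} = \delta_{jk}$ collapses this to $\sum_j e_j \times \partial_j u = \mathrm{curl}\,u(x)$.

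Next I would expand \eqref{E:Prop_Star} in the basis $\{a_1, a_2, a_3\}$. For each $i$ I write
\begin{equation*}
  [(a_i \cdot \nabla)u](x) = \sum_{j=1}^{3} \bigl\{[(a_i \cdot \nabla)u](x) \cdot a_j\bigr\} a_j,
\end{equation*}
substitute into \eqref{E:Prop_Star}, and use $a_i \times a_i = 0$ together with the right-handedness relations $a_1 \times a_2 = a_3$, $a_2 \times a_3 = a_1$, $a_3 \times a_1 = a_2$ (and their antisymmetric counterparts). Collecting the coefficient of $a_1$ gives $[(a_2 \cdot \nabla)u \cdot a_3] - [(a_3 \cdot \nabla)u \cdot a_2]$, and cyclically for $a_2$ and $a_3$. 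Restoring $a_i = E_i(x)$ and recalling that directional derivatives involve no derivatives of the frame yields \eqref{E:Curl_Exp} at $x$, and since $x \in U$ was arbitrary, the identity holds on $U$.

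The argument is purely algebraic once the pointwise reduction is made, so there is no substantive obstacle. The only point requiring care is to observe that in $(E_i \cdot \nabla)u$ the vector field $E_i$ is evaluated undifferentiated, so it may legitimately be frozen to its value $a_i = E_i(x)$ at the chosen point; this is what justifies treating $\{a_1, a_2, a_3\}$ as a constant right-handed orthonormal basis in the two algebraic steps above.
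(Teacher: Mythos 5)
Your proof is correct, but it takes a different route from the paper's. You first establish the frame-invariant representation $\mathrm{curl}\,u(x)=\sum_{i=1}^3 E_i(x)\times[(E_i\cdot\nabla)u](x)$ by rotating the Cartesian identity $\mathrm{curl}\,u=\sum_j e_j\times\partial_ju$ (using only orthogonality of the rotation matrix), and then expand each directional derivative in the frozen basis and collect coefficients via the right-handedness relations; the pointwise reduction is justified exactly as you say, since no derivatives of the frame enter. The paper instead expands $\mathrm{curl}\,u=\sum_i(\mathrm{curl}\,u\cdot E_i)E_i$ and computes each component directly: for instance $\mathrm{curl}\,u\cdot E_1=\mathrm{curl}\,u\cdot(E_2\times E_3)=E_2\cdot(E_3\times\mathrm{curl}\,u)$, and then uses the identity $a\times\mathrm{curl}\,u=(\nabla u)a-(\nabla u)^Ta$ with $a=E_3$ to land immediately on $(E_2\cdot\nabla)u\cdot E_3-(E_3\cdot\nabla)u\cdot E_2$. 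Your argument buys a reusable intermediate identity (the curl as $\sum_i E_i\times(E_i\cdot\nabla)u$ in any right-handed orthonormal frame) at the cost of an extra expansion step, while the paper's triple-product computation is shorter and needs the right-handedness only through $E_1=E_2\times E_3$ and its cyclic analogues; both are purely algebraic and equally rigorous.
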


\begin{proof}
  By the assumption, $\mathrm{curl}\,u=\sum_{i=1}^3(\mathrm{curl}\,u\cdot E_i)E_i$.
  Since $E_1=E_2\times E_3$,
  \begin{align*}
    \mathrm{curl}\,u\cdot E_1 &= \mathrm{curl}\,u\cdot(E_2\times E_3) = E_2\cdot(E_3\times\mathrm{curl}\,u) \\
    &= E_2\cdot\{(\nabla u)E_3-(\nabla u)^TE_3\} \\
    &= (\nabla u)^TE_2\cdot E_3-(\nabla u)^TE_3\cdot E_2 \\
    &= (E_2\cdot\nabla)u\cdot E_3-(E_3\cdot\nabla)u\cdot E_2.
  \end{align*}
  Calculating $\mathrm{curl}\,u\cdot E_i$, $i=2,3$ in the same way we obtain \eqref{E:Curl_Exp}.
\end{proof}

\begin{proof}[Proof of Lemma~\ref{L:Tan_Curl_Ua}]
  Let $u\in C^1(\Omega_\varepsilon)^3$ and $u^a:=E_\varepsilon M_\tau u$ be given by \eqref{E:Def_ExAve}.
  Since the surface $\Gamma$ is compact, we can take finite relatively open subsets $O_k$ of $\Gamma$ and pairs of tangential vector fields $\{\tau_1^k,\tau_2^k\}$ on $O_k$, $k=1,\dots,k_0$ such that $\Gamma=\bigcup_{k=1}^{k_0}O_k$, the triplet $\{\tau_1^k,\tau_2^k,n\}$ forms an orthonormal basis of $\mathbb{R}^3$ on $O_k$, and
  \begin{align*}
    \tau_1^k\times\tau_2^k = n, \quad \tau_2^k\times n = \tau_1^k, \quad n\times\tau_1^k = \tau_2^k \quad\text{on}\quad O_k
  \end{align*}
  for each $k=1,\dots,k_0$.
  Then since $\Omega_\varepsilon=\bigcup_{k=1}^{k_0}U_k$ with
  \begin{align*}
    U_k:=\{y+rn(y)\mid y\in O_k,\,r\in(\varepsilon g_0(y),\varepsilon g_1(y))\}, \quad k=1,\dots,k_0,
  \end{align*}
  it is sufficient to show \eqref{E:Tan_Curl_Ua} in $U_k$ for each $k=1,\dots,k_0$.
  From now on, we fix and suppress $k$ and carry out calculations in $U$ unless otherwise stated.
  We apply \eqref{E:Curl_Exp} to $u^a$ with $E_1:=\bar{\tau}_1$, $E_2:=\bar{\tau}_2$, and $E_3:=\bar{n}$.
  Then since $P\tau_i=\tau_i$ for $i=1,2$ and $Pn=0$ on $O$, we have
  \begin{align*}
    \overline{P}\,\mathrm{curl}\,u^a = \{(\bar{\tau}_2\cdot\nabla)u^a\cdot\bar{n}-(\bar{n}\cdot\nabla)u^a\cdot\bar{\tau}_2\}\bar{\tau}_1+\{(\bar{n}\cdot\nabla)u^a\cdot\bar{\tau}_1-(\bar{\tau}_1\cdot\nabla)u^a\cdot\bar{n}\}\bar{\tau}_2.
  \end{align*}
  By this equality, $(\bar{n}\cdot\nabla)u^a=\partial_nu^a$, and $|\bar{\tau}_1|=|\bar{\tau}_2|=|\bar{n}|=1$ we get
  \begin{align} \label{Pf_TCUa:Est}
    \left|\overline{P}\,\mathrm{curl}\,u^a\right| \leq c\left(|\partial_nu^a|+|(\bar{\tau}_1\cdot\nabla)u^a\cdot\bar{n}|+|(\bar{\tau}_2\cdot\nabla)u^a\cdot\bar{n}|\right).
  \end{align}
  Let us estimate each term on the right-hand side.
  By \eqref{E:NorDer_Con}, \eqref{E:ExAux_Bound}, and \eqref{E:Def_ExAve},
  \begin{align} \label{Pf_TCUa:DN}
    |\partial_nu^a| = \left|\overline{M_\tau u}\cdot\partial_n\Psi_\varepsilon\right| \leq c\left|\overline{M_\tau u}\right| = c\left|\overline{PMu}\right| \leq c\left|\overline{Mu}\right|.
  \end{align}
  To estimate the other terms we set
  \begin{align*}
    u_\tau^a := \overline{P}u^a = \overline{M_\tau u}, \quad u_n^a := (u^a\cdot\bar{n})\bar{n} = \Bigl(\overline{M_\tau u}\cdot\Psi_\varepsilon\Bigr)\bar{n}
  \end{align*}
  so that $u^a=u_\tau^a+u_n^a$.
  Let $i=1,2$.
  Since $u_\tau^a\cdot\bar{n}=0$, we have
  \begin{align*}
    (\bar{\tau}_i\cdot\nabla)u_\tau^a\cdot\bar{n} = (\bar{\tau}_i\cdot\nabla)(u_\tau^a\cdot\bar{n})-u_\tau^a\cdot(\bar{\tau}_i\cdot\nabla)\bar{n} = -u_\tau^a\cdot(\bar{\tau}_i\cdot\nabla)\bar{n}.
  \end{align*}
  Hence by \eqref{E:NorG_Bound} and $|\bar{\tau}_i|=1$ we get
  \begin{align} \label{Pf_TCUa:Utaua}
    |(\bar{\tau}_i\cdot\nabla)u_\tau^a\cdot\bar{n}| \leq c|u_\tau^a| \leq c\left|\overline{Mu}\right|, \quad i=1,2.
  \end{align}
  Next we deal with $(\bar{\tau}_i\cdot\nabla)u_n^a\cdot\bar{n}$.
  Since $\tau_i=P\tau_i$, $P=P^T$, and $|\tau_i|=1$ on $O$,
  \begin{align*}
    |(\bar{\tau}_i\cdot\nabla)u_n^a| = \left|(\nabla u_n^a)^T\overline{P}\bar{\tau}_i\right| = \left|\Bigl[\overline{P}(\nabla u_n^a)\Bigr]^T\bar{\tau}_i\right| \leq \left|\overline{P}(\nabla u_n^a)\right|.
  \end{align*}
  Moreover, by $u_n^a=(\overline{M_\tau u}\cdot\Psi_\varepsilon)\bar{n}$ we have
  \begin{align*}
    \overline{P}(\nabla u_n^a) = \left[\left\{\overline{P}\nabla\Bigl(\overline{M_\tau u}\Bigr)\right\}\Psi_\varepsilon+\Bigl(\overline{P}\nabla\Psi_\varepsilon\Bigr)\overline{M_\tau u}\right]\otimes\bar{n}+\Bigl(\overline{M_\tau u}\cdot\Psi_\varepsilon\Bigr)\overline{P}\nabla\bar{n}
  \end{align*}
  and thus the inequalities \eqref{E:ConDer_Bound}, \eqref{E:NorG_Bound}, and \eqref{E:ExAux_Bound}--\eqref{E:ExAux_TNDer} imply that
  \begin{align*}
    \left|\overline{P}(\nabla u_n^a)\right| \leq c\varepsilon\left(\left|\overline{M_\tau u}\right|+\left|\overline{\nabla_\Gamma M_\tau u}\right|\right) \leq c\varepsilon\left(\left|\overline{Mu}\right|+\left|\overline{\nabla_\Gamma Mu}\right|\right).
  \end{align*}
  Here the last inequality follows from $M_\tau u=PMu$ on $\Gamma$ and $P\in C^4(\Gamma)^{3\times3}$.
  Hence
  \begin{align} \label{Pf_TCUa:Uan}
    |(\bar{\tau}_i\cdot\nabla)u_n^a\cdot\bar{n}| \leq c\varepsilon\left(\left|\overline{Mu}\right|+\left|\overline{\nabla_\Gamma Mu}\right|\right), \quad i=1,2.
  \end{align}
  Noting that $u^a=u_\tau^a+u_n^a$, we apply \eqref{Pf_TCUa:DN}--\eqref{Pf_TCUa:Uan} to \eqref{Pf_TCUa:Est} to obtain \eqref{E:Tan_Curl_Ua} in $U$.
\end{proof}

%%% Appendix B %%%
\section{Calculations involving differential geometry of surfaces} \label{S:Ap_DG}
The purpose of this appendix is to provide the proofs of the lemmas in Section~\ref{S:Pre} and related results, which involve calculations of the surface quantities on $\Gamma$, $\Gamma_\varepsilon^0$, and $\Gamma_\varepsilon^1$.
We also show the formula \eqref{Pf_A:Det_Zeta} in the proof of Lemma~\ref{L:Agmon}.

Let $\Gamma$ be a two-dimensional closed, connected, and oriented surface in $\mathbb{R}^3$ of class $C^\ell$, $\ell\geq2$.
First we give auxiliary inequalities for the Riemannian metric of $\Gamma$.

\begin{lemma} \label{L:Metric}
  Let $U$ be an open set in $\mathbb{R}^2$, $\mu\colon U\to\Gamma$ a $C^\ell$ local parametrization of $\Gamma$, and $\mathcal{K}$ a compact subset of $U$.
  Then there exists a constant $c>0$ such that
  \begin{align} \label{E:Mu_Bound}
    |\partial_{s_i}\mu(s)| \leq c, \quad |\partial_{s_i}\partial_{s_j}\mu(s)| \leq c \quad\text{for all}\quad s\in\mathcal{K},\,i,j=1,2.
  \end{align}
  We define the Riemannian metric $\theta=(\theta_{ij})_{i,j}$ of $\Gamma$ by
  \begin{align} \label{E:Def_Met}
    \theta(s) := \nabla_s\mu(s)\{\nabla_s\mu(s)\}^T, \quad s\in U, \quad \nabla_s\mu :=
    \begin{pmatrix}
      \partial_{s_1}\mu_1 & \partial_{s_1}\mu_2 & \partial_{s_1}\mu_3 \\
      \partial_{s_2}\mu_1 & \partial_{s_2}\mu_2 & \partial_{s_2}\mu_3
    \end{pmatrix}
  \end{align}
  and denote by $\theta^{-1}=(\theta^{ij})_{i,j}$ the inverse matrix of $\theta$.
  Then
  \begin{align} \label{E:Metric}
      |\theta^k(s)| \leq c, \quad |\partial_{s_i}\theta^k(s)| \leq c, \quad c^{-1} \leq \det\theta(s) \leq c
  \end{align}
  for all $s\in\mathcal{K}$, $i=1,2$, and $k=\pm1$.

  Let $p\in[1,\infty)$.
  If $\eta\in L^p(\Gamma)$ is supported in $\mu(\mathcal{K})$, then $\tilde{\eta}:=\eta\circ\mu\in L^p(U)$ and
  \begin{align} \label{E:Lp_Loc}
    c^{-1}\|\tilde{\eta}\|_{L^p(U)} \leq \|\eta\|_{L^p(\Gamma)} \leq c\|\tilde{\eta}\|_{L^p(U)}.
  \end{align}
  If in addition $\eta\in W^{1,p}(\Gamma)$, then $\tilde{\eta}\in W^{1,p}(U)$ and
  \begin{align} \label{E:W1p_Loc}
    c^{-1}\|\nabla_s\tilde{\eta}\|_{L^p(U)} \leq \|\nabla_\Gamma\eta\|_{L^p(\Gamma)} \leq c\|\nabla_s\tilde{\eta}\|_{L^p(U)}.
  \end{align}
  Here $\nabla_s$ is the gradient operator in $s\in\mathbb{R}^2$.
\end{lemma}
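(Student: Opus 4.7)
The plan is to establish the four assertions in order, leaning on compactness of $\mathcal{K}$ and the $C^\ell$-regularity of $\mu$ with $\ell \geq 2$. First I would observe that \eqref{E:Mu_Bound} is immediate: by assumption $\mu \in C^\ell(U;\mathbb{R}^3)$ with $\ell \geq 2$, so $\partial_{s_i}\mu$ and $\partial_{s_i}\partial_{s_j}\mu$ are continuous on $U$ and therefore bounded on the compact subset $\mathcal{K}$.

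For \eqref{E:Metric}, the bound on $|\theta(s)|$ follows directly from the definition $\theta = (\nabla_s\mu)(\nabla_s\mu)^T$ and \eqref{E:Mu_Bound}. Because $\mu$ is a local parametrization, $\{\partial_{s_1}\mu(s),\partial_{s_2}\mu(s)\}$ is linearly independent at every $s\in U$, so $\theta(s)$ is symmetric positive definite and $\det\theta > 0$ on $U$. Continuity of $\det\theta$ on $\mathcal{K}$ then gives the two-sided bound $c^{-1}\leq \det\theta(s)\leq c$. The bound on $|\theta^{-1}|$ follows from the explicit cofactor formula for the inverse of a $2\times 2$ matrix, the bound on $|\theta|$, and the lower bound on $\det\theta$. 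The derivative bounds $|\partial_{s_i}\theta|\leq c$ on $\mathcal{K}$ follow again from \eqref{E:Mu_Bound}, and for $\theta^{-1}$ one applies the identity $\partial_{s_i}(\theta^{-1}) = -\theta^{-1}(\partial_{s_i}\theta)\theta^{-1}$ together with the previous estimates.

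For \eqref{E:Lp_Loc}, the area element on $\Gamma$ in the local coordinates induced by $\mu$ is $\sqrt{\det\theta(s)}\,ds$, so the change of variables formula gives
\begin{equation*}
  \int_\Gamma |\eta|^p \, d\mathcal{H}^2 = \int_{\mathcal{K}} |\tilde{\eta}(s)|^p \sqrt{\det\theta(s)}\, ds
\end{equation*}
for $\eta$ supported in $\mu(\mathcal{K})$, and the two-sided bound on $\det\theta$ from \eqref{E:Metric} yields the norm equivalence. For \eqref{E:W1p_Loc}, the key identity is
\begin{equation*}
  (\nabla_\Gamma\eta)\circ\mu = \sum_{i,j=1}^2 \theta^{ij}(\partial_{s_j}\tilde{\eta})\,\partial_{s_i}\mu,
\end{equation*}
which can be verified by extending $\eta$ to a neighborhood of $\Gamma$, writing $\nabla_\Gamma\eta = P\nabla\tilde\eta$, and expanding $P$ in the basis $\{\partial_{s_1}\mu,\partial_{s_2}\mu\}$. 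Taking the inner product with itself and using $\partial_{s_i}\mu\cdot\partial_{s_j}\mu=\theta_{ij}$ gives $|(\nabla_\Gamma\eta)\circ\mu|^2 = \sum_{i,j}\theta^{ij}\partial_{s_i}\tilde\eta\,\partial_{s_j}\tilde\eta$. Since $\theta^{-1}$ is positive definite with eigenvalues uniformly bounded above and below on $\mathcal{K}$ (by \eqref{E:Metric}), this quadratic form is equivalent to $|\nabla_s\tilde\eta|^2$, and combining with the change of variables formula yields \eqref{E:W1p_Loc}.

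The main obstacle, though a mild one, will be the identification of $(\nabla_\Gamma\eta)\circ\mu$ in local coordinates: one must carefully justify that the formula is independent of the extension $\tilde\eta$ used to define $\nabla_\Gamma\eta$ and correctly identify the dual basis $\sum_j \theta^{ij}\partial_{s_j}\mu$ as the one projecting onto $\partial_{s_i}\mu$. Everything else reduces to routine compactness and positive-definiteness arguments.
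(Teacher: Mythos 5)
Your proposal is correct and follows essentially the same route as the paper's proof: boundedness of $\mu$ and $\theta^{\pm1}$ by continuity and compactness of $\mathcal{K}$ (with the identity $\partial_{s_i}\theta^{-1}=-\theta^{-1}(\partial_{s_i}\theta)\theta^{-1}$), the change of variables formula with area element $\sqrt{\det\theta}\,ds$ for \eqref{E:Lp_Loc}, and the local-coordinate formula $(\nabla_\Gamma\eta)\circ\mu=\sum_{i,j}\theta^{ij}\partial_{s_i}\tilde{\eta}\,\partial_{s_j}\mu$ giving $|(\nabla_\Gamma\eta)\circ\mu|^2=\theta^{-1}\nabla_s\tilde{\eta}\cdot\nabla_s\tilde{\eta}$ for \eqref{E:W1p_Loc}. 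The only cosmetic difference is that you deduce the uniform equivalence $c^{-1}|a|^2\leq\theta^{-1}(s)a\cdot a\leq c|a|^2$ from the already-proved bounds on $|\theta^{\pm1}|$ and $\det\theta$, whereas the paper obtains it by a direct compactness argument on $\mathcal{K}\times S^1$; both are valid.
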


\begin{proof}
  The inequalities \eqref{E:Mu_Bound} follow from the $C^\ell$-regularity of $\mu$ on $U$ and the compactness of $\mathcal{K}$.
  Using them and $\partial_{s_i}\theta^{-1}=-\theta^{-1}(\partial_{s_i}\theta)\theta^{-1}$ in $U$ we obtain the first and second inequalities of \eqref{E:Metric}.
  Also, the third inequality is valid since $\det\theta$ is continuous and strictly positive on $U$ and $\mathcal{K}$ is compact in $U$.

  Let $\eta\in L^p(\Gamma)$, $p\in[1,\infty)$ be supported in $\mu(\mathcal{K})$ and $\tilde{\eta}:=\eta\circ\mu$ on $U$.
  Since
  \begin{align} \label{Pf_Met:Int}
    \int_\Gamma|\eta(y)|^p\,d\mathcal{H}^2(y) = \int_U|\tilde{\eta}(s)|^p\sqrt{\det\theta(s)}\,ds
  \end{align}
  by the definition of an integral over a surface, we have \eqref{E:Lp_Loc} by \eqref{E:Metric}.

  To prove \eqref{E:W1p_Loc} let us show
  \begin{align} \label{Pf_Met:Aux}
    c^{-1}|a|^2 \leq \theta^{-1}(s)a\cdot a \leq c|a|^2, \quad s\in\mathcal{K},\,a\in\mathbb{R}^2.
  \end{align}
  For $s\in U$ and $a=(a_1,a_2)\in\mathbb{R}^2$ we set $X(s,a):=\sum_{i,j=1}^2\theta^{ij}(s)a_i\partial_{s_j}\mu(s)$.
  Since $\partial_{s_1}\mu(s)$ and $\partial_{s_2}\mu(s)$ are linearly independent, $X(s,a)$ vanishes if and only if
  \begin{align*}
    \sum_{i=1,2}\theta^{ij}(s)a_i = \sum_{i=1,2}\theta^{ji}(s)a_i = 0 \quad\text{for}\quad j=1,2, \quad\text{i.e.}\quad \theta^{-1}(s)a = 0,
  \end{align*}
  which is equivalent to $a=0$ (note that $\theta^{-1}$ is symmetric since $\theta$ is so).
  Thus
  \begin{align*}
    |X(s,a)|^2 = \sum_{i,j=1}^2\theta^{ij}(s)a_ia_j = \theta^{-1}(s)a\cdot a, \quad (s,a)\in U\times\mathbb{R}^2
  \end{align*}
  is continuous and does not vanish for $a\neq0$.
  In particular, it is bounded from above and below by positive constants on the compact set $\mathcal{K}\times S^1$, where $S^1$ is the unit circle in $\mathbb{R}^2$.
  Hence \eqref{Pf_Met:Aux} follows.
  Now let $\eta\in W^{1,p}(\Gamma)$ be supported in $\mu(\mathcal{K})$ and $\tilde{\eta}=\eta\circ\mu$ on $U$.
  Since
  \begin{align} \label{Pf_Met:Grad}
    \begin{aligned}
      \nabla_\Gamma\eta(\mu(s)) &= \sum_{i,j=1}^2\theta^{ij}(s)\partial_{s_i}\tilde{\eta}(s)\partial_{s_j}\mu(s), \\
      |\nabla_\Gamma\eta(\mu(s))|^2 &= \sum_{i,j=1}^2\theta^{ij}(s)\partial_{s_i}\tilde{\eta}(s)\partial_{s_j}\tilde{\eta}(s) = \theta^{-1}(s)\nabla_s\tilde{\eta}(s)\cdot\nabla_s\tilde{\eta}(s)
    \end{aligned}
  \end{align}
  for $s\in U$, the inequality \eqref{Pf_Met:Aux} yields
  \begin{align*}
    c^{-1}|\nabla_s\tilde{\eta}(s)| \leq |\nabla_\Gamma\eta(\mu(s))| \leq c|\nabla_s\tilde{\eta}(s)|, \quad s\in\mathcal{K}.
  \end{align*}
  Since $\eta$ is supported in $\mu(\mathcal{K})$, we obtain \eqref{E:W1p_Loc} by this inequality and \eqref{Pf_Met:Int}.
\end{proof}

Hereafter we always write $\theta=(\theta_{ij})_{i,j}$ and $\theta^{-1}=(\theta^{ij})_{i,j}$ for the Riemannian metric of $\Gamma$ given by \eqref{E:Def_Met} and its inverse.
Let us prove the lemmas in Section~\ref{SS:Pre_Surf}.

\begin{proof}[Proof of Lemma~\ref{L:Wein}]
  Since $W$ has the eigenvalues zero, $\kappa_1$, and $\kappa_2$,
  \begin{align*}
    \det[I_3-rW(y)] = \{1-r\kappa_1(y)\}\{1-r\kappa_2(y)\} > 0, \quad y\in\Gamma,\,r\in(-\delta,\delta)
  \end{align*}
  by \eqref{E:Curv_Bound}.
  Hence $I_3-rW(y)$ is invertible.
  Also, the equality \eqref{E:WReso_P} follows from \eqref{E:Form_W}.

  Let us prove \eqref{E:Wein_Bound} and \eqref{E:Wein_Diff}.
  We fix and suppress $y\in\Gamma$.
  Since $W$ is real and symmetric by Lemma~\ref{L:Form_W} and has the eigenvalues $\kappa_1$, $\kappa_2$, and zero with $Wn=0$, we can take an orthonormal basis $\{\tau_1,\tau_2,n\}$ of $\mathbb{R}^3$ such that $W\tau_i=\kappa_i\tau_i$, $i=1,2$.
  Then for $r\in(-\delta,\delta)$, $i=1,2$, and $k=\pm1$ we have
  \begin{align} \label{Pf_We:Inv}
    (I_3-rW)^k\tau_i = (1-r\kappa_i)^k\tau_i, \quad (I_3-rW)^kn = n.
  \end{align}
  Since $\{\tau_1,\tau_2,n\}$ is an orthonormal basis of $\mathbb{R}^3$, these formulas imply that
  \begin{align*}
    (I_3-rW)^ka &= \sum_{i=1,2}(a\cdot\tau_i)(I_3-rW)^k\tau_i+(a\cdot n)(I_3-rW)^kn \\
    &= \sum_{i=1,2}(a\cdot\tau_i)(1-r\kappa_i)^k\tau_i+(a\cdot n)n
  \end{align*}
  for all $a\in\mathbb{R}^3$ and $k=\pm1$.
  Hence
  \begin{align*}
    \bigl|(I_3-rW)^ka\bigr|^2 = \sum_{i=1,2}(a\cdot\tau_i)^2(1-r\kappa_i)^{2k}+(a\cdot n)^2
  \end{align*}
  and \eqref{E:Wein_Bound} follows from \eqref{E:Curv_Bound} and $|a|^2=(a\cdot\tau_1)^2+(a\cdot\tau_2)^2+(a\cdot n)^2$.
  Also,
  \begin{align*}
    \bigl|I_3-(I_3-rW)^{-1}\bigr|^2 = \sum_{i=1,2}|1-(1-r\kappa_i)^{-1}|^2 = \sum_{i=1,2}|r\kappa_i(1-r\kappa_i)^{-1}|^2 \leq c|r|^2
  \end{align*}
  by \eqref{Pf_We:Inv}, $|\tau_1|=|\tau_2|=1$, and \eqref{E:Curv_Bound}.
  Hence \eqref{E:Wein_Diff} is valid.
\end{proof}

\begin{proof}[Proof of Lemma~\ref{L:Pi_Der}]
  Let $x\in N$.
  Since $\pi(x)\in\Gamma$,
  \begin{gather*}
    \pi(x) = x-d(x)n(\pi(x)) = x-d(x)\bar{n}(\pi(x)), \\
    -\nabla\bar{n}(\pi(x)) = -\overline{\nabla_\Gamma n}(\pi(x)) = \overline{W}(\pi(x)) = \overline{W}(x)
  \end{gather*}
  by \eqref{E:Nor_Coord} and \eqref{E:ConDer_Surf}.
  We differentiate both sides of the first equality with respect to $x$ and use the second equality and $\nabla d(x)=\bar{n}(x)=\bar{n}(\pi(x))$ to get
  \begin{align*}
    \nabla\pi(x) = I_3-\bar{n}(x)\otimes\bar{n}(x)-d(x)\nabla\pi(x)\nabla\bar{n}(\pi(x)) = \overline{P}(x)+d(x)\nabla\pi(x)\overline{W}(x)
  \end{align*}
  for $x\in N$ and thus
  \begin{align*}
    \pi(x)\left\{I_3-d(x)\overline{W}(x)\right\} = \overline{P}(x), \quad x\in N.
  \end{align*}
  The equality \eqref{E:Pi_Der} follows from this equality and \eqref{E:WReso_P} since $I_3-d\overline{W}$ is invertible in $N$ by Lemma~\ref{L:Wein}.
  Also, we get \eqref{E:ConDer_Dom} by \eqref{E:P_TGr} and \eqref{E:Pi_Der}.
  The inequalities \eqref{E:ConDer_Bound} and \eqref{E:ConDer_Diff} follow from \eqref{E:Wein_Bound}, \eqref{E:Wein_Diff}, and \eqref{E:ConDer_Dom}.

  Now let $\Gamma$ be of class $C^3$ and $\eta\in C^2(\Gamma)$.
  For $i=1,2,3$ we differentiate both sides of \eqref{E:ConDer_Dom} with respect to $x_i$ to get
  \begin{align} \label{Pf_PD:Second}
    \partial_i\nabla\bar{\eta} = \left\{\partial_i\Bigl(I_3-d\overline{W}\Bigr)^{-1}\right\}\overline{\nabla_\Gamma\eta}+\Bigl(I_3-d\overline{W}\Bigr)^{-1}\partial_i\Bigl(\overline{\nabla_\Gamma\eta}\Bigr) \quad\text{in}\quad N.
  \end{align}
  To estimate the right-hand side we differentiate both sides of
  \begin{align*}
    \left\{I_3-d(x)\overline{W}(x)\right\}^{-1}\left\{I_3-d(x)\overline{W}(x)\right\} = I_3, \quad x\in N
  \end{align*}
  with respect to $x_i$ and use $\nabla d(x)=\bar{n}(x)$ to get
  \begin{align} \label{Pf_PD:Der_Res}
    \partial_i\Bigl(I_3-d\overline{W}\Bigr)^{-1} = \Bigl(I_3-d\overline{W}\Bigr)^{-1}\Bigl(\bar{n}_i\overline{W}+d\partial_i\overline{W}\Bigr)\Bigl(I_3-d\overline{W}\Bigr)^{-1} \quad\text{in}\quad N.
  \end{align}
  The right-hand side of \eqref{Pf_PD:Der_Res} is bounded on $N$ by \eqref{E:Wein_Bound} and \eqref{E:ConDer_Dom} since $W$ is of class $C^1$ on $\Gamma$ by the $C^3$-regularity of $\Gamma$.
  By this fact, \eqref{E:Wein_Bound}, \eqref{E:ConDer_Bound}, and \eqref{Pf_PD:Second},
  \begin{align*}
    |\partial_i\nabla\bar{\eta}| \leq c\left(\left|\overline{\nabla_\Gamma\eta}\right|+\left|\overline{\nabla_\Gamma^2\eta}\right|\right) \quad\text{in}\quad N,\,i=1,2,3,
  \end{align*}
  which shows \eqref{E:Con_Hess}.
  Moreover, by \eqref{E:ConDer_Surf}, \eqref{Pf_PD:Second}, \eqref{Pf_PD:Der_Res}, and $d=0$ on $\Gamma$,
  \begin{align*}
    \partial_i\nabla\bar{\eta} = n_iW\nabla_\Gamma\eta+\underline{D}_i(\nabla_\Gamma\eta) , \quad\text{i.e.}\quad \partial_i\partial_j\bar{\eta} = n_i\sum_{k=1}^3W_{jk}\underline{D}_k\eta+\underline{D}_i\underline{D}_j\eta
  \end{align*}
  on $\Gamma$ for $i,j=1,2,3$, which implies that
  \begin{align*}
    \Delta\bar{\eta} = \sum_{i=1}^3\partial_i^2\bar{\eta} = \sum_{i,k=1}^3(n_iW_{ik}\underline{D}_k\eta+\underline{D}_i^2\eta) = W^Tn\cdot\nabla_\Gamma\eta+\Delta_\Gamma\eta \quad\text{on}\quad \Gamma.
  \end{align*}
  Since $W^Tn=Wn=0$ by Lemma~\ref{L:Form_W}, we obtain $\Delta\bar{\eta}=\Delta_\Gamma\eta$ on $\Gamma$.
\end{proof}

\begin{proof}[Proof of Lemma~\ref{L:Wmp_Appr}]
  Here we only show the density of $C^\ell(\Gamma)$ in $W^{m,p}(\Gamma)$ for $\ell=m=2$ and $p\in[1,\infty)$.
  The assertion in other cases are proved similarly.

  Let $\eta\in W^{2,p}(\Gamma)$.
  Since $\Gamma$ is compact and of class $C^2$, by a localization argument with a partition of unity on $\Gamma$ we may assume that there exist an open set $U$ in $\mathbb{R}^2$, a compact subset $\mathcal{K}$ of $U$, and a $C^2$ local parametrization $\mu\colon U\to\Gamma$ of $\Gamma$ such that $\eta$ is supported in $\mu(\mathcal{K})$.
  Let $\tilde{\eta}:=\eta\circ\mu$ on $U$.
  Note that $\tilde{\eta}$ is supported in the compact subset $\mathcal{K}$ of $U$.
  We show that there exists $c>0$ such that
  \begin{align} \label{Pf_WA:Equ_W2p}
    c^{-1}\|\tilde{\eta}\|_{W^{2,p}(U)} \leq \|\eta\|_{W^{2,p}(\Gamma)} \leq c\|\tilde{\eta}\|_{W^{2,p}(U)}.
  \end{align}
  By \eqref{E:Lp_Loc} and \eqref{E:W1p_Loc} it is sufficient to consider the second order derivatives of $\eta$ and $\tilde{\eta}$.
  Let $\{e_1,e_2,e_3\}$ be the standard basis of $\mathbb{R}^3$ and
  \begin{align*}
    \tilde{\eta}_k(s) := \underline{D}_k\eta(\mu(s)) = \nabla_\Gamma\eta(\mu(s))\cdot e_k, \quad s\in U,\,k=1,2,3.
  \end{align*}
  By the right-hand inequality of \eqref{E:W1p_Loc} with $\eta$ replaced by $\underline{D}_k\eta$ we have
  \begin{align*}
    \|\nabla_\Gamma\underline{D}_k\eta\|_{L^p(\Gamma)} \leq c\|\nabla_s\tilde{\eta}_k\|_{L^p(U)}.
  \end{align*}
  To estimate the right-hand side we observe by \eqref{Pf_Met:Grad} that
  \begin{align*}
    \tilde{\eta}_k(s) = \nabla_\Gamma\eta(\mu(s))\cdot e_k = \sum_{i,j=1}^2\theta^{ij}(s)\partial_{s_i}\tilde{\eta}(s)\partial_{s_j}\mu(s)\cdot e_k, \quad s\in U.
  \end{align*}
  We differentiate both sides with respect to $s$ and use \eqref{E:Mu_Bound} and \eqref{E:Metric} to get
  \begin{align*}
    |\nabla_s\tilde{\eta}_k(s)| \leq c(|\nabla_s\tilde{\eta}(s)|+|\nabla_s^2\tilde{\eta}(s)|), \quad s\in\mathcal{K}.
  \end{align*}
  Since $\tilde{\eta}$ is supported in $\mathcal{K}$, the above inequalities show that
  \begin{align*}
    \|\nabla_\Gamma\underline{D}_k\eta\|_{L^p(\Gamma)} \leq c\|\nabla_s\tilde{\eta}_k\|_{L^p(U)} \leq c\left(\|\nabla_s\tilde{\eta}\|_{L^p(U)}+\|\nabla_s^2\tilde{\eta}\|_{L^p(U)}\right).
  \end{align*}
  for $k=1,2,3$.
  Therefore,
  \begin{align} \label{Pf_WA:Lp_Hess_R}
    \|\nabla_\Gamma^2\eta\|_{L^p(\Gamma)} \leq c\left(\|\nabla_s\tilde{\eta}\|_{L^p(U)}+\|\nabla_s^2\tilde{\eta}\|_{L^p(U)}\right) \leq c\|\tilde{\eta}\|_{W^{2,p}(U)}.
  \end{align}
  Next we take the inner product of the first equality of \eqref{Pf_Met:Grad} with $\partial_{s_k}\mu(s)$ to get
  \begin{align*}
    \nabla_\Gamma\eta(\mu(s))\cdot\partial_{s_k}\mu(s) = \sum_{i,j=1}^2\theta^{ij}(s)\theta_{jk}(s)\partial_{s_i}\tilde{\eta}(s) = \partial_{s_k}\tilde{\eta}(s), \quad s\in U,\,k=1,2,3.
  \end{align*}
  Let $\overline{\nabla_\Gamma\eta}$ be the constant extension of $\nabla_\Gamma\eta$ in the normal direction of $\Gamma$.
  For $s\in U$ and $l=1,2,3$ we observe by \eqref{E:ConDer_Surf} with $y=\mu(s)\in\Gamma$ that
  \begin{align*}
    \partial_{s_l}\Bigl(\nabla_\Gamma\eta(\mu(s))\Bigr) &= \partial_{s_l}\Bigl(\overline{\nabla_\Gamma\eta}(\mu(s))\Bigr) = \left[\left\{\nabla\Bigl(\overline{\nabla_\Gamma\eta}\Bigr)\right\}(\mu(s))\right]^T\partial_{s_l}\mu(s) \\
    &= [\nabla_\Gamma^2\eta(\mu(s))]^T\partial_{s_l}\mu(s).
  \end{align*}
  Hence for $s\in U$ and $k,l=1,2,3$ we have
  \begin{align*}
    \partial_{s_l}\partial_{s_k}\tilde{\eta}(s) &= \partial_{s_l}\Bigl(\nabla_\Gamma\eta(\mu(s))\cdot\partial_{s_k}\mu(s)\Bigr) \\
    &= [\nabla_\Gamma^2\eta(\mu(s))]^T\partial_{s_l}\mu(s)\cdot\partial_{s_k}\mu(s)+\nabla_\Gamma\eta(\mu(s))\cdot\partial_{s_l}\partial_{s_k}\mu(s)
  \end{align*}
  and by applying \eqref{E:Mu_Bound} to the last line we deduce that
  \begin{align*}
    |\nabla_s^2\tilde{\eta}(s)| \leq c(|\nabla_\Gamma\eta(\mu(s))|+|\nabla_\Gamma^2\eta(\mu(s))|), \quad s\in\mathcal{K}.
  \end{align*}
  Noting that $\eta$ is supported in $\mu(\mathcal{K})$, we use this inequality, \eqref{E:Metric}, and \eqref{Pf_Met:Int} to get
  \begin{align} \label{Pf_WA:Lp_Hess_L}
    \|\nabla_s^2\tilde{\eta}\|_{L^p(U)} \leq c\left(\|\nabla_\Gamma\eta\|_{L^p(\Gamma)}+\|\nabla_\Gamma^2\eta\|_{L^p(\Gamma)}\right) \leq c\|\eta\|_{W^{2,p}(\Gamma)}.
  \end{align}
  By \eqref{E:Lp_Loc}, \eqref{E:W1p_Loc}, \eqref{Pf_WA:Lp_Hess_R}, and \eqref{Pf_WA:Lp_Hess_L} we obtain \eqref{Pf_WA:Equ_W2p}.

  Now we have $\tilde{\eta}\in W^{2,p}(U)$ by $\eta\in W^{2,p}(\Gamma)$ and the left-hand inequality of \eqref{Pf_WA:Equ_W2p}.
  Since $\tilde{\eta}$ is compactly supported in $U$, by a standard mollification argument (see e.g. \cite[Lemma~3.16]{AdFo03}) we can take a sequence $\{\tilde{\eta}_k\}_{k=1}^\infty$ in $C_c^\infty(U)$ that converges to $\tilde{\eta}$ strongly in $W^{2,p}(U)$.
  Hence the right-hand inequality of \eqref{Pf_WA:Equ_W2p} yields
  \begin{align*}
    \|\eta-\eta_k\|_{W^{2,p}(\Gamma)} \leq c\|\tilde{\eta}-\tilde{\eta}_k\|_{W^{2,p}(U)} \to 0 \quad\text{as}\quad k\to\infty,
  \end{align*}
  where $\eta_k(\mu(s)):=\tilde{\eta}_k(s)$ for $s\in U$ and $k\in\mathbb{N}$ and we extend $\eta_k$ to $\Gamma$ by setting zero outside of $\mu(U)$.
  Since $\mu$ is of class $C^2$ on $U$ and $\tilde{\eta}_k\in C_c^\infty(U)$, we have $\eta_k\in C^2(\Gamma)$ for each $k\in\mathbb{N}$.
  Hence $C^2(\Gamma)$ is dense in $W^{2,p}(\Gamma)$.
\end{proof}

Next we assume that $\Gamma$ is of class $C^5$ and prove the formulas and inequalities in Section~\ref{SS:Pre_Dom} for the surface quantities on the boundary of the curved thin domain.

\begin{proof}[Proof of Lemma~\ref{L:NB_Aux}]
  First note that, since $W\in C^3(\Gamma)^{3\times3}$ by the $C^5$-regularity of $\Gamma$ and $g_0,g_1\in C^4(\Gamma)$, they are bounded on $\Gamma$ along with their first and second order tangential derivatives.

  Let $\tau_\varepsilon^i$ and $n_\varepsilon^i$, $i=0,1$ be the vector fields on $\Gamma$ given by \eqref{E:Def_NB_Aux} and \eqref{E:Def_NB}.
  Then the first inequalities of \eqref{E:Tau_Bound} and \eqref{E:Tau_Diff} immediately follow from \eqref{E:Wein_Bound} and \eqref{E:Wein_Diff}.
  To show the second inequalities of \eqref{E:Tau_Bound} and \eqref{E:Tau_Diff} we set
  \begin{align} \label{Pf_NBA:Def_R}
    R_\varepsilon^i(y) := \{I_3-\varepsilon g_i(y)W(y)\}^{-1}, \quad y\in\Gamma
  \end{align}
  and apply $\underline{D}_k$, $k=1,2,3$ to both sides of $R_\varepsilon^i(I_3-\varepsilon g_iW)=I_3$ on $\Gamma$ to get
  \begin{align} \label{Pf_NBA:D_Inv}
    \underline{D}_kR_\varepsilon^i = \varepsilon R_\varepsilon^i\{(\underline{D}_kg_i)W+g_i\underline{D}_kW\}R_\varepsilon^i \quad\text{on}\quad \Gamma.
  \end{align}
  Hence by \eqref{E:Wein_Bound} there exists a constant $c>0$ independent of $\varepsilon$ such that
  \begin{align} \label{Pf_NBA:Est_D_Inv}
    |\underline{D}_kR_\varepsilon^i| \leq c\varepsilon \quad\text{on}\quad \Gamma.
  \end{align}
  Applying \eqref{E:Wein_Bound}, \eqref{E:Wein_Diff}, and \eqref{Pf_NBA:Est_D_Inv} to $\underline{D}_k\tau_\varepsilon^i=(\underline{D}_kR_\varepsilon^i)\nabla_\Gamma g_i+R_\varepsilon^i(\underline{D}_k\nabla_\Gamma g)$ we obtain
  \begin{align*}
    |\underline{D}_k\tau_\varepsilon^i| \leq c, \quad |\underline{D}_k\tau_\varepsilon^i-\underline{D}_k\nabla_\Gamma g| &\leq |(\underline{D}_kR_\varepsilon^i)\nabla_\Gamma g_i|+|(R_\varepsilon^i-I_3)(\underline{D}_k\nabla_\Gamma g)| \leq c\varepsilon
  \end{align*}
  on $\Gamma$ for $k=1,2,3$.
  Hence the second inequalities of \eqref{E:Tau_Bound} and \eqref{E:Tau_Diff} are valid.
  We further apply $\underline{D}_l$, $l=1,2,3$ to both sides of \eqref{Pf_NBA:D_Inv} and use \eqref{E:Wein_Bound} and \eqref{Pf_NBA:Est_D_Inv} to obtain $|\underline{D}_l\underline{D}_kR_\varepsilon^i|\leq c\varepsilon$ on $\Gamma$.
  Using this inequality, \eqref{E:Wein_Bound}, and \eqref{Pf_NBA:Est_D_Inv} to
  \begin{multline*}
    \underline{D}_l\underline{D}_k\tau_\varepsilon^i = (\underline{D}_l\underline{D}_kR_\varepsilon^i)\nabla_\Gamma g_i+(\underline{D}_kR_\varepsilon^i)(\underline{D}_l\nabla_\Gamma g_i) \\+(\underline{D}_lR_\varepsilon^i)(\underline{D}_k\nabla_\Gamma g_i)+R_\varepsilon^i(\underline{D}_l\underline{D}_k\nabla_\Gamma g_i)
  \end{multline*}
  we get $|\underline{D}_l\underline{D}_k\tau_\varepsilon^i|\leq c$ on $\Gamma$ for $k,l=1,2,3$.
  This shows the third inequality of \eqref{E:Tau_Bound}.

  Next we show \eqref{E:N_Bound} and \eqref{E:N_Diff}.
  By \eqref{E:Def_NB} we immediately get the first equality of \eqref{E:N_Bound}.
  The other inequalities of \eqref{E:N_Bound} follow from \eqref{E:Tau_Bound}.
  To prove \eqref{E:N_Diff} let
  \begin{align*}
    \varphi_\varepsilon := \frac{1}{\sqrt{1+\varepsilon^2|\tau_\varepsilon^1|^2}}-\frac{1}{\sqrt{1+\varepsilon^2|\tau_\varepsilon^0|^2}}, \quad \tau_\varepsilon := -\frac{\tau_\varepsilon^1}{\sqrt{1+\varepsilon^2|\tau_\varepsilon^1|^2}}+\frac{\tau_\varepsilon^0}{\sqrt{1+\varepsilon^2|\tau_\varepsilon^0|^2}}
  \end{align*}
  so that $n_\varepsilon^0+n_\varepsilon^1=\varphi_\varepsilon n+\varepsilon\tau_\varepsilon$ on $\Gamma$.
  From \eqref{E:Tau_Bound} we deduce that
  \begin{align} \label{Pf_NBA:Est_Tau}
    |\tau_\varepsilon| \leq c, \quad |\nabla_\Gamma\tau_\varepsilon| \leq c \quad\text{on}\quad \Gamma
  \end{align}
  with a constant $c>0$ independent of $\varepsilon$.
  Also,
  \begin{align} \label{Pf_NBA:Est_Phi}
    |\varphi_\varepsilon| \leq \frac{\varepsilon^2}{2}\bigl||\tau_\varepsilon^1|^2-|\tau_\varepsilon^0|^2\bigr| \leq c\varepsilon^2 \quad\text{on}\quad \Gamma
  \end{align}
  by the mean value theorem for $(1+s)^{-1/2}$, $s\geq 0$ and \eqref{E:Tau_Bound}.
  Since
  \begin{align*}
    \nabla_\Gamma\left(\frac{1}{\sqrt{1+\varepsilon^2|\tau_\varepsilon^i|^2}}\right) = -\frac{\varepsilon^2(\nabla_\Gamma\tau_\varepsilon^i)\tau_\varepsilon^i}{(1+\varepsilon^2|\tau_\varepsilon^i|^2)^{3/2}} \quad\text{on}\quad \Gamma,\, i=0,1,
  \end{align*}
  we have $|\nabla_\Gamma\varphi_\varepsilon|\leq c\varepsilon^2$ on $\Gamma$ by \eqref{E:Tau_Bound}.
  Applying this inequality, \eqref{Pf_NBA:Est_Tau}, and \eqref{Pf_NBA:Est_Phi} to $n_\varepsilon^0+n_\varepsilon^1=\varphi_\varepsilon n+\varepsilon\tau_\varepsilon$ and its tangential gradient matrix we obtain \eqref{E:N_Diff}.
\end{proof}

\begin{proof}[Proof of Lemma~\ref{L:Nor_Bo}]
  For $i=0,1$ let $\tau_\varepsilon^i$ and $n_\varepsilon^i$ be the vector fields on $\Gamma$ given by \eqref{E:Def_NB_Aux} and \eqref{E:Def_NB}, and $\bar{n}_\varepsilon^i=n_\varepsilon^i\circ\pi$ the constant extension of $n_\varepsilon^i$.
  Since $|n_\varepsilon^i|=1$ on $\Gamma$ and both $n_\varepsilon$ and $\bar{n}_\varepsilon^i$ have the direction of $(-1)^{i+1}\bar{n}$ on $\Gamma_\varepsilon^i$, it is sufficient for \eqref{E:Nor_Bo} to show that $\bar{n}_\varepsilon^i$ is perpendicular to the tangent plane of $\Gamma_\varepsilon^i$.
  Let $\mu\colon U\to\Gamma$ be a local parametrization of $\Gamma$ with an open set $U$ of $\mathbb{R}^2$ and
  \begin{align*}
    \mu_\varepsilon^i(s) := \mu(s)+\varepsilon g_i(\mu(s))n(\mu(s)), \quad s\in U.
  \end{align*}
  Then $\mu_\varepsilon^i$ is a local parametrization of $\Gamma_\varepsilon^i$ and $\{\partial_{s_1}\mu_\varepsilon^i(s),\partial_{s_2}\mu_\varepsilon^i(s)\}$ is a basis of the tangent plane of $\Gamma_\varepsilon^i$ at $\mu_\varepsilon^i(s)$.
  Hence to show that $\bar{n}_\varepsilon^i$ is perpendicular to the tangent plane of $\Gamma_\varepsilon^i$ it suffices to prove
  \begin{align} \label{Pf_NoBo:Perp}
    \bar{n}_\varepsilon^i(\mu_\varepsilon^i(s))\cdot\partial_{s_k}\mu_\varepsilon^i(s) = 0, \quad s\in U,\,k=1,2.
  \end{align}
  Moreover, $\bar{n}_\varepsilon^i(\mu_\varepsilon^i(s))=n_\varepsilon^i(\mu(s))$ for $s\in U$ by $\pi(\mu_\varepsilon^i(s))=\mu(s)\in\Gamma$.
  By this fact and \eqref{E:Def_NB} the condition \eqref{Pf_NoBo:Perp} reduces to
  \begin{align} \label{Pf_NoBo:Goal}
    n(\mu(s))\cdot\partial_{s_k}\mu_\varepsilon^i(s) = \varepsilon\tau_\varepsilon^i(\mu(s))\cdot\partial_{s_k}\mu_\varepsilon^i(s), \quad s\in U,\,k=1,2.
  \end{align}
  From now on, we write $\eta^\flat(s):=\eta(\mu(s))$, $s\in U$ for a function $\eta$ on $\Gamma$ and suppress the argument $s\in U$.
  Note that, since $\mu(s)\in\Gamma$ for $s\in U$ and $\eta=\bar{\eta}$ on $\Gamma$,
  \begin{align} \label{Pf_NoBo:Sharp}
    \partial_{s_k}\eta^\flat(s) = \partial_{s_k}\bigl(\bar{\eta}(\mu(s))\bigr) = \partial_{s_k}\mu(s)\cdot\nabla\bar{\eta}(\mu(s)) = \partial_{s_k}\mu(s)\cdot\nabla_\Gamma\eta(\mu(s))
  \end{align}
  for $s\in U$ and $k=1,2$ by \eqref{E:ConDer_Surf}.
  Let us prove \eqref{Pf_NoBo:Goal}.
  For $k=1,2$ we differentiate $\mu_\varepsilon^i=\mu+\varepsilon g_i^\flat n^\flat$ with respect to $s_k$.
  Then we have
  \begin{align*}
    \partial_{s_k}\mu_\varepsilon^i = (I_3-\varepsilon g_i^\flat W^\flat)\partial_{s_k}\mu+\varepsilon\{\partial_{s_k}\mu\cdot(\nabla_\Gamma g_i)^\flat\}n^\flat
  \end{align*}
  by \eqref{Pf_NoBo:Sharp} and $-\nabla_\Gamma n=W=W^T$ on $\Gamma$.
  Since $\partial_{s_k}\mu(s)$ is tangent to $\Gamma$ at $\mu(s)$ and $W^Tn=Wn=0$ on $\Gamma$, we deduce from the above equality that
  \begin{align*}
    n^\flat\cdot\partial_{s_k}\mu_\varepsilon^i = \varepsilon\partial_{s_k}\mu\cdot(\nabla_\Gamma g_i)^\flat.
  \end{align*}
  Also, since $\tau_\varepsilon^i=(I_3-\varepsilon g_iW)^{-1}\nabla_\Gamma g_i$ is tangential and $W$ is symmetric on $\Gamma$,
  \begin{align*}
    \varepsilon\tau_\varepsilon^{i,\flat}\cdot\partial_{s_k}\mu_\varepsilon^i = \varepsilon(I_3-\varepsilon g_i^\flat W^\flat)^{-1}(\nabla_\Gamma g_i)^\flat\cdot(I_3-\varepsilon g_i^\flat W^\flat)\partial_{s_k}\mu = \varepsilon(\nabla_\Gamma g_i)^\flat\cdot\partial_{s_k}\mu.
  \end{align*}
  The above two equalities imply \eqref{Pf_NoBo:Goal} and thus the claim is valid.
\end{proof}

\begin{proof}[Proof of Lemma~\ref{L:Comp_Nor}]
  Throughout the proof we write $c$ for a general positive constant independent of $\varepsilon$ and denote by $\bar{\eta}=\eta\circ\pi$ the constant extension of a function $\eta$ on $\Gamma$.
  First note that $\bar{n}$, $\overline{P}$, and $\overline{W}$ are bounded on $N$ independently of $\varepsilon$ along their first order derivatives by the $C^5$-regularity of $\Gamma$ and \eqref{E:ConDer_Bound}.

  For $i=0,1$ let $\tau_\varepsilon^i$ and $n_\varepsilon^i$ be given by \eqref{E:Def_NB_Aux} and \eqref{E:Def_NB}, and
  \begin{align*}
    \varphi_\varepsilon^i(x) := \frac{1}{\sqrt{1+\varepsilon^2|\bar{\tau}_\varepsilon^i(x)|^2}}-1, \quad x\in N.
  \end{align*}
  We apply the mean value theorem for $(1+s)^{-1/2}$, $s\geq0$ to $\varphi_\varepsilon^i$, compute the first and second order derivatives of $\varphi_\varepsilon^i$ directly, and use \eqref{E:ConDer_Bound}, \eqref{E:Con_Hess}, and \eqref{E:Tau_Bound} to get
  \begin{align} \label{Pf_CN:Aux_1}
    |\partial_x^\alpha\varphi_\varepsilon^i(x)| \leq c\varepsilon^2, \quad x\in N,\,|\alpha|=0,1,2,
  \end{align}
  where $\partial_x^\alpha=\partial_1^{\alpha_1}\partial_2^{\alpha_2}\partial_3^{\alpha_3}$ for $\alpha=(\alpha_1,\alpha_2,\alpha_3)\in\mathbb{Z}^3$ with $\alpha_j\geq0$, $j=1,2,3$.
  Since
  \begin{align*}
    n_\varepsilon-(-1)^{i+1}\Bigl(\bar{n}-\varepsilon\overline{\nabla_\Gamma g_i}\Bigr) = (-1)^{i+1}\varphi_\varepsilon^i(\bar{n}-\varepsilon\bar{\tau}_\varepsilon^i)-(-1)^{i+1}\varepsilon\Bigl(\bar{\tau}_\varepsilon^i-\overline{\nabla_\Gamma g_i}\Bigr)
  \end{align*}
  on $\Gamma_\varepsilon^i$ by \eqref{E:Def_NB} and \eqref{E:Nor_Bo}, we obtain \eqref{E:Comp_N} by \eqref{E:Tau_Bound}, \eqref{E:Tau_Diff}, and \eqref{Pf_CN:Aux_1}.
  Also, the inequalities \eqref{E:Comp_P} follow from \eqref{E:Comp_N} and the definitions of $P$, $Q$, $P_\varepsilon$, and $Q_\varepsilon$.

  Next we prove \eqref{E:Comp_W}.
  For $x\in N$ we set
  \begin{align*}
    \Phi_\varepsilon^i(x) := (-1)^{i+1}\left\{\varphi_\varepsilon^i(x)\bar{n}(x)-\frac{\varepsilon\bar{\tau}_\varepsilon^i(x)}{\sqrt{1+\varepsilon^2|\bar{\tau}_\varepsilon^i(x)|^2}}\right\}.
  \end{align*}
  Then we see by \eqref{E:ConDer_Bound}, \eqref{E:Con_Hess}, \eqref{E:Tau_Bound}, and \eqref{Pf_CN:Aux_1} that
  \begin{align} \label{Pf_CN:Aux_2}
    |\partial_x^\alpha\Phi_\varepsilon^i(x)| \leq c\varepsilon, \quad x\in N,\,|\alpha|=0,1,2.
  \end{align}
  Since $\bar{n}_\varepsilon^i(x)=(-1)^{i+1}\bar{n}(x)+\Phi_\varepsilon^i(x)$ for $x\in N$, by \eqref{E:Nor_Grad} we have
  \begin{align*}
    \nabla\bar{n}_\varepsilon^i(x) = (-1)^i\{I_3-d(x)\overline{W}(x)\}^{-1}\overline{W}(x)+\nabla\Phi_\varepsilon^i(x), \quad x\in N.
  \end{align*}
  Moreover, since $\bar{n}_\varepsilon^i$ is an extension of $n_\varepsilon|_{\Gamma_\varepsilon^i}$ to $N$, the Weingarten map of $\Gamma_\varepsilon^i$ is given by $W_\varepsilon=-P_\varepsilon\nabla\bar{n}_\varepsilon^i$ on $\Gamma_\varepsilon^i$.
  Thus the above equality yields
  \begin{align} \label{Pf_CN:Wein}
    W_\varepsilon(x) = P_\varepsilon(x)\left\{(-1)^{i+1}\overline{R}_\varepsilon^i(x)\overline{W}(x)-\nabla\Phi_\varepsilon^i(x)\right\}, \quad x\in\Gamma_\varepsilon^i,
  \end{align}
  where $R_\varepsilon^i$ is given by \eqref{Pf_NBA:Def_R}.
  Noting that $PR_\varepsilon^iW=R_\varepsilon^iPW=R_\varepsilon^iW$ on $\Gamma$ by \eqref{E:Form_W} and \eqref{E:WReso_P}, we deduce from the above equality that
  \begin{align*}
    \left|W_\varepsilon-(-1)^{i+1}\overline{W}\right| &\leq \left|\Bigl(P_\varepsilon-\overline{P}\Bigr)\overline{R}_\varepsilon^i\overline{W}\right|+\left|\Bigl(\overline{R}_\varepsilon^i-I_3\Bigr)\overline{W}\right|+|P_\varepsilon\nabla\Phi_\varepsilon^i| \quad\text{on}\quad \Gamma_\varepsilon^i.
  \end{align*}
  Hence we obtain the first inequality of \eqref{E:Comp_W} by applying \eqref{E:Wein_Bound}, \eqref{E:Wein_Diff}, \eqref{E:Comp_P}, and \eqref{Pf_CN:Aux_2} to the above inequality.
  Also, the second inequality of \eqref{E:Comp_W} follows from the first one since $H=\mathrm{tr}[W]$ and $H_\varepsilon=\mathrm{tr}[W_\varepsilon]$.

  Let us show \eqref{E:Comp_DW}.
  Based on \eqref{Pf_CN:Wein} we define an extension of $W_\varepsilon|_{\Gamma_\varepsilon^i}$ to $N$ by
  \begin{align*}
    \widetilde{W}_\varepsilon^i(x) := \overline{P}_\varepsilon^i(x)\left\{(-1)^{i+1}\overline{R}_\varepsilon^i(x)\overline{W}(x)-\nabla\Phi_\varepsilon^i(x)\right\}, \quad x\in N,
  \end{align*}
  where $P_\varepsilon^i:=I_3-n_\varepsilon^i\otimes n_\varepsilon^i$ on $\Gamma$.
  For $x\in N$ let
  \begin{gather*}
    E_\varepsilon^i(x) := (-1)^{i+1}\left\{\overline{P}_\varepsilon^i(x)-\overline{P}(x)\right\}\overline{R}_\varepsilon^i(x)\overline{W}(x), \\
    F_\varepsilon^i(x) := (-1)^{i+1}\overline{P}(x)\left\{\overline{R}_\varepsilon^i(x)-I_3\right\}\overline{W}(x), \quad G_\varepsilon^i(x) := -\overline{P}_\varepsilon^i(x)\nabla\Phi_\varepsilon^i(x)
  \end{gather*}
  so that $\widetilde{W}_\varepsilon^i=(-1)^{i+1}\overline{W}+E_\varepsilon^i+F_\varepsilon^i+G_\varepsilon^i$ in $N$ by \eqref{E:Form_W}.
  Then by \eqref{E:ConDer_Dom} we get
  \begin{align} \label{Pf_CN:Der_Wein}
    \partial_j\widetilde{W}_\varepsilon^i = \sum_{k=1}^3(-1)^{i+1}\left[\Bigl(I_3-d\overline{W}\Bigr)^{-1}\right]_{jk}\overline{\underline{D}_kW}+\partial_jE_\varepsilon^i+\partial_jF_\varepsilon^i+\partial_jG_\varepsilon^i
  \end{align}
  in $N$ for $j=1,2,3$.
  To estimate the last three terms we see that
  \begin{align*}
    \overline{P}_\varepsilon^i-\overline{P} = (-1)^i(\bar{n}\otimes\Phi_\varepsilon^i+\Phi_\varepsilon^i\otimes\bar{n})-\Phi_\varepsilon^i\otimes\Phi_\varepsilon^i \quad\text{in}\quad N
  \end{align*}
  by $\bar{n}_\varepsilon^i=(-1)^{i+1}\bar{n}+\Phi_\varepsilon^i$ in $N$ and the definitions of $P$ and $P_\varepsilon^i$.
  Hence
  \begin{align*}
    \left|\overline{P}_\varepsilon^i-\overline{P}\right| \leq c\varepsilon, \quad \left|\partial_j\overline{P}_\varepsilon^i-\partial_j\overline{P}\right| \leq c\varepsilon \quad\text{in}\quad N
  \end{align*}
  for $j=1,2,3$ by $|n|=1$ on $\Gamma$, \eqref{E:NorG_Bound}, and \eqref{Pf_CN:Aux_2}.
  These inequalities, \eqref{E:Wein_Bound}, \eqref{E:Wein_Diff}, \eqref{E:ConDer_Bound}, \eqref{Pf_NBA:Est_D_Inv}, and \eqref{Pf_CN:Aux_2} show that
  \begin{align*}
    |\partial_jE_\varepsilon^i| \leq c\varepsilon, \quad |\partial_jF_\varepsilon^i| \leq c\varepsilon, \quad |\partial_jG_\varepsilon^i| \leq c\varepsilon \quad\text{in}\quad N.
  \end{align*}
  Applying \eqref{E:Wein_Diff} and the above inequalities to \eqref{Pf_CN:Der_Wein} we get
  \begin{align} \label{Pf_CN:Diff_DW}
    \left|\partial_j\widetilde{W}_\varepsilon^i(x)-(-1)^{i+1}\overline{\underline{D}_jW}(x)\right| \leq c(|d(x)|+\varepsilon), \quad x\in N,\, j=1,2,3.
  \end{align}
  Now we observe that $\underline{D}_j^\varepsilon W_\varepsilon=\sum_{k=1}^3[P_\varepsilon]_{jk}\partial_k\widetilde{W}_\varepsilon^i$ on $\Gamma_\varepsilon^i$ since $\widetilde{W}_\varepsilon^i$ is an extension of $W_\varepsilon|_{\Gamma_\varepsilon^i}$ to $N$.
  From this fact and $\underline{D}_jW=\sum_{k=1}^3P_{jk}\underline{D}_kW$ on $\Gamma$ by \eqref{E:P_TGr} we have
  \begin{multline*}
    \left|\underline{D}_j^\varepsilon W_\varepsilon-(-1)^{i+1}\overline{\underline{D}_jW}\right| \\
    \leq \sum_{k=1}^3\left(\left|\Bigl[P_\varepsilon-\overline{P}\Bigr]_{jk}\partial_k\widetilde{W}_\varepsilon^i\right|+\left|\overline{P}_{jk}\left\{\partial_k\widetilde{W}_\varepsilon^i-(-1)^{i+1}\overline{\underline{D}_kW}\right\}\right|\right)
  \end{multline*}
  on $\Gamma_\varepsilon^i$ for $j=1,2,3$.
  Applying \eqref{E:Comp_P} and \eqref{Pf_CN:Diff_DW} with $|d|=\varepsilon|\bar{g}_i|\leq c\varepsilon$ on $\Gamma_\varepsilon^i$ to the right-hand side we conclude that \eqref{E:Comp_DW} is valid.
\end{proof}

\begin{proof}[Proof of Lemma~\ref{L:Diff_SQ_IO}]
  Let $\overline{P}=P\circ\pi$ be the constant extension of $P$.
  Since
  \begin{align*}
    \overline{P}(y+\varepsilon g_0(y)n(y)) = \overline{P}(y+\varepsilon g_1(y)n(y)) = P(y), \quad y\in\Gamma,
  \end{align*}
  we observe that
  \begin{align*}
    |P_\varepsilon(y+\varepsilon g_1(y)n(y))-P_\varepsilon(y+\varepsilon g_0(y)n(y))| \leq \sum_{i=0,1}\left|\Bigl[P_\varepsilon-\overline{P}\Bigr](y+\varepsilon g_i(y)n(y))\right|.
  \end{align*}
  To the right-hand side we apply \eqref{E:Comp_P} to get \eqref{E:Diff_PQ_IO} for $F_\varepsilon=P_\varepsilon$.
  Using \eqref{E:Comp_P}--\eqref{E:Comp_DW} we can show the other inequalities in the same way.
\end{proof}

Let us prove the formula \eqref{Pf_A:Det_Zeta} in the proof of Lemma~\ref{L:Agmon}.

\begin{proof}[Proof of \eqref{Pf_A:Det_Zeta}]
  Let $\mu\colon(0,1)^2\to\Gamma$ be a local parametrization of $\Gamma$ and $\zeta$ the mapping given by \eqref{Pf_A:Def_Z}.
  In what follows, we write $\eta^\flat(s'):=\eta(\mu(s'))$, $s'\in(0,1)^2$ for a function $\eta$ on $\Gamma$ and suppress the arguments $s'$ and $s\in(0,1)^3$.
  Since
  \begin{align*}
     \partial_{s_i}\zeta = (I_3-h_\varepsilon W^\flat)\partial_{s_i}\mu+\eta_\varepsilon^in^\flat, \quad \partial_{s_3}\zeta = \varepsilon g^\flat n^\flat \quad\text{on}\quad (0,1)^3,\,i=1,2
  \end{align*}
  by \eqref{Pf_A:Deri_Zeta} with $h_\varepsilon$ and $\eta_\varepsilon^i$ given by \eqref{Pf_A:DZ_Aux}, the gradient matrix of $\zeta$ is of the form
  \begin{align*}
    \nabla_s\zeta =
    \begin{pmatrix}
      \partial_{s_1}\zeta_1 & \partial_{s_1}\zeta_2 & \partial_{s_1}\zeta_3 \\
      \partial_{s_2}\zeta_1 & \partial_{s_2}\zeta_2 & \partial_{s_2}\zeta_3 \\
      \partial_{s_3}\zeta_1 & \partial_{s_3}\zeta_2 & \partial_{s_3}\zeta_3
    \end{pmatrix} =
    \begin{pmatrix}
      \nabla_s\mu(I_3-h_\varepsilon W^\flat)^T+\eta_\varepsilon\otimes n^\flat \\
      \varepsilon g^\flat(n^\flat)^T
    \end{pmatrix}.
  \end{align*}
  Here we consider $n^\flat\in\mathbb{R}^3$ and $\eta_\varepsilon:=(\eta_\varepsilon^1,\eta_\varepsilon^2)\in\mathbb{R}^2$ as column vectors and, by abuse of notation, write $\nabla_s\mu$ for the gradient matrix of $\mu(s')$, $s'\in(0,1)^2$ of the form \eqref{E:Def_Met}.
  Since $\partial_{s_1}\mu$ and $\partial_{s_2}\mu$ are tangent to $\Gamma$ at $\mu(s')$ we have $(\nabla_s\mu)n^\flat=0$.
  Moreover,
  \begin{align*}
    W^\flat n^\flat=0, \quad (\eta_\varepsilon\otimes n^\flat)n^\flat=|n^\flat|^2\eta_\varepsilon=\eta_\varepsilon, \quad (\eta_\varepsilon\otimes n^\flat)(n^\flat\otimes \eta_\varepsilon) = \eta_\varepsilon\otimes\eta_\varepsilon.
  \end{align*}
  From these equalities and the symmetry of the matrix $W^\flat$ it follows that
  \begin{align*}
    \nabla_s\zeta(\nabla_s\zeta)^T =
    \begin{pmatrix}
      \nabla_s\mu(I_3-h_\varepsilon W^\flat)^2(\nabla_s\mu)^T+\eta_\varepsilon\otimes\eta_\varepsilon & \varepsilon g^\flat\eta_\varepsilon \\
      \varepsilon g^\flat\eta_\varepsilon^T & \varepsilon^2(g^\flat)^2
    \end{pmatrix}.
  \end{align*}
  Hence by elementary row operations we have
  \begin{align*}
    \det[\nabla_s\zeta(\nabla_s\zeta)^T] &= \det
    \begin{pmatrix}
      \nabla_s\mu(I_3-h_\varepsilon W^\flat)^2(\nabla_s\mu)^T+\eta_\varepsilon\otimes\eta_\varepsilon & \varepsilon g^\flat\eta_\varepsilon \\
      \varepsilon g^\flat\eta_\varepsilon^T & \varepsilon^2(g^\flat)^2
    \end{pmatrix} \\
    &= \det
    \begin{pmatrix}
      \nabla_s\mu(I_3-h_\varepsilon W^\flat)^2(\nabla_s\mu)^T & 0 \\
      \varepsilon g^\flat\eta_\varepsilon^T & \varepsilon^2 (g^\flat)^2
    \end{pmatrix} \\
    &= \varepsilon^2 (g^\flat)^2\det[\nabla_s\mu(I_3-h_\varepsilon W^\flat)^2(\nabla_s\mu)^T].
  \end{align*}
  Let $\theta=\nabla_s\mu(\nabla_s\mu)^T$ be the Riemannian metric of $\Gamma$.
  If the relation
  \begin{align} \label{Pf_DZ:Det_Pr}
    \det[\nabla_s\mu(I_3-h_\varepsilon W^\flat)^2(\nabla_s\mu)^T] = J(\mu,h_\varepsilon)^2\det\theta
  \end{align}
  is valid, then by the above equality we get
  \begin{align*}
    \det\nabla_s\zeta = \bigl(\det[\nabla_s\zeta(\nabla_s\zeta)^T]\bigr)^{1/2} = \varepsilon g^\flat J(\mu,h_\varepsilon)\sqrt{\det\theta},
  \end{align*}
  i.e. the formula \eqref{Pf_A:Det_Zeta} holds.
  To prove \eqref{Pf_DZ:Det_Pr} we define $3\times 3$ matrices
  \begin{align*}
    A :=
    \begin{pmatrix}
      \nabla_s\mu \\
      (n^\flat)^T
    \end{pmatrix}, \quad
    A_h :=
    \begin{pmatrix}
      \nabla_s\mu(I_3-h_\varepsilon W^\flat) \\
      (n^\flat)^T
    \end{pmatrix}.
  \end{align*}
  Then by $(\nabla_s\mu)n^\flat=0$, $W^\flat n^\flat=0$, and the symmetry of $W^\flat$ we have
  \begin{gather*}
    A_h = A(I_3-h_\varepsilon W^\flat), \\
    AA^T =
    \begin{pmatrix}
      \theta & 0 \\
      0 & 1
    \end{pmatrix}, \quad
    A_hA_h^T =
    \begin{pmatrix}
      \nabla_s\mu(I_3-h^\flat W^\flat)^2(\nabla_s\mu)^T & 0 \\
      0 & 1
    \end{pmatrix}.
  \end{gather*}
  From these equalities and $\det(I_3-h_\varepsilon W^\flat)=J(\mu,h_\varepsilon)$ we deduce that
  \begin{align*}
    \begin{aligned}
      \det[\nabla_s\mu(I_3-h_\varepsilon W^\flat)^2(\nabla_s\mu)^T] &= \det[A_hA_h^T] = \det[A(I_3-h_\varepsilon W^\flat)^2A^T] \\
      &= \det[(I_3-h_\varepsilon W^\flat)^2]\det[AA^T] \\
      &= J(\mu,h_\varepsilon)^2\det\theta.
    \end{aligned}
  \end{align*}
  (Note that $A$ and $I_3-h_\varepsilon W^\flat$ are $3\times3$ matrices.)
  Hence \eqref{Pf_DZ:Det_Pr} is valid.
\end{proof}

Let us derive a change of variables formula for an integral over a parametrized surface used in Lemma~\ref{L:CoV_Surf}.
For $h\in C^1(\Gamma)$ satisfying $|h|<\delta$ on $\Gamma$ we set
\begin{align} \label{E:Def_Para_Surf}
  \Gamma_h := \{y+h(y)n(y)\mid y\in\Gamma\} \subset \mathbb{R}^3.
\end{align}
Note that $\Gamma_h\subset N$ by $|h|<\delta$ on $\Gamma$ (see Section~\ref{SS:Pre_Surf}).
We also define
\begin{align} \label{E:N_Para}
  \tau_h(y) := \{I_3-h(y)W(y)\}^{-1}\nabla_\Gamma h(y), \quad n_h(y) := \frac{n(y)-\tau_h(y)}{\sqrt{1+|\tau_h(y)|^2}}
\end{align}
for $y\in\Gamma$.
Note that $\tau_h$ is tangential on $\Gamma$.
We assume that the orientation of $\Gamma_h$ is the same as that of $\Gamma$.
Then as in the proof of Lemma~\ref{L:Nor_Bo} we can show that the constant extension $\bar{n}_h=n_h\circ\pi$ of $n_h$ gives the unit outward normal vector field of $\Gamma_h$.
For $\varphi\in C^1(\Gamma_h)$ we define the tangential gradient $\nabla_{\Gamma_h}\varphi$ as
\begin{align} \label{E:Para_TGr}
  \nabla_{\Gamma_h}\varphi(x) := \{I_3-\bar{n}_h(x)\otimes\bar{n}_h(x)\}\nabla\tilde{\varphi}(x), \quad x\in\Gamma_h,
\end{align}
where $\tilde{\varphi}$ is an arbitrary extension of $\varphi$ to $N$ satisfying $\tilde{\varphi}|_{\Gamma_h}=\varphi$.

\begin{lemma} \label{L:CoV_Para}
  Suppose that $\Gamma$ is of class $C^2$ and $h\in C^1(\Gamma)$ satisfies $|h|<\delta$ on $\Gamma$.
  Let $\Gamma_h$ be the parametrized surface given by \eqref{E:Def_Para_Surf}.
  For $\varphi\in L^1(\Gamma_h)$ we have
  \begin{align} \label{E:CoV_Para}
    \int_{\Gamma_h}\varphi(x)\,d\mathcal{H}^2(x) = \int_\Gamma\varphi_h^\sharp(y)J(y,h(y))\sqrt{1+|\tau_h(y)|^2}\,d\mathcal{H}^2(y),
  \end{align}
  where $\varphi_h^\sharp(y):=\varphi(y+h(y)n(y))$, $y\in\Gamma$ and $J$ and $\tau_h$ are given by \eqref{E:Def_Jac} and \eqref{E:N_Para}.
\end{lemma}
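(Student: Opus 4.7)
The plan is to reduce \eqref{E:CoV_Para} to a pointwise identity for the surface area element of $\Gamma_h$, proved via a local parametrization. First I would use a partition of unity on $\Gamma$ to localize: it suffices to verify \eqref{E:CoV_Para} when $\varphi$ is supported in a patch $\mu(U)$ with $\mu\colon U\to\Gamma$ a $C^2$ local parametrization of $\Gamma$. Then $\mu_h(s):=\mu(s)+h(\mu(s))n(\mu(s))$ is a $C^1$ local parametrization of the image of $\mu(U)$ under $y\mapsto y+h(y)n(y)$, and integrals over $\Gamma_h$ pull back via $\mu_h$ while integrals over $\Gamma$ pull back via $\mu$. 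Writing $h^\flat=h\circ\mu$, $n^\flat=n\circ\mu$, $W^\flat=W\circ\mu$, the Jacobian identity I need to prove is
\begin{equation*}
\sqrt{\det[\nabla_s\mu_h(\nabla_s\mu_h)^T]}=J(\mu,h^\flat)\sqrt{1+|\tau_h(\mu)|^2}\sqrt{\det\theta},
\end{equation*}
so that the resulting pullback integrals match after cancelling the factor $\sqrt{\det\theta}$.

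The next step is to differentiate $\mu_h$. Using the chain rule together with $\partial_{s_i}n^\flat=-W^\flat\partial_{s_i}\mu$ (since $-\nabla_\Gamma n=W$ and $\partial_{s_i}\mu$ is tangent) I obtain
\begin{equation*}
\partial_{s_i}\mu_h=(I_3-h^\flat W^\flat)\partial_{s_i}\mu+\eta_i\,n^\flat,\qquad \eta_i:=\partial_{s_i}\mu\cdot\nabla_\Gamma h(\mu).
\end{equation*}
Because $W^\flat n^\flat=0$ and $\partial_{s_j}\mu\cdot n^\flat=0$, the Gram matrix $G=\nabla_s\mu_h(\nabla_s\mu_h)^T$ splits as $G=B+\eta\otimes\eta$, where $B_{ij}=[(I_3-h^\flat W^\flat)\partial_{s_i}\mu]\cdot[(I_3-h^\flat W^\flat)\partial_{s_j}\mu]$ and $\eta=(\eta_1,\eta_2)^T$. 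The matrix determinant lemma gives $\det G=\det B\cdot(1+\eta^T B^{-1}\eta)$, and $\det B=J(\mu,h^\flat)^2\det\theta$ is exactly the identity \eqref{Pf_DZ:Det_Pr} already proved in the preceding formula derivation, since the $3\times 3$ argument there applied to $(I_3-h^\flat W^\flat)$ only uses $W^\flat n^\flat=0$.

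The key remaining identity is $\eta^T B^{-1}\eta=|\tau_h(\mu)|^2$, which is the step I expect to be the main obstacle and want to handle carefully. For this I introduce the tangential basis $X_i:=(I_3-h^\flat W^\flat)\partial_{s_i}\mu$ of $T_{\mu(s)}\Gamma$; its Gram matrix is precisely $B$. For any tangent vector $v$, writing $v=\sum_i v^i X_i$ gives $v^i=\sum_j(B^{-1})^{ij}(v\cdot X_j)$ and $|v|^2=\sum_{ij}B_{ij}v^iv^j$, hence $|v|^2=(v\cdot X)^T B^{-1}(v\cdot X)$. Applying this to $v=\tau_h=(I_3-h^\flat W^\flat)^{-1}\nabla_\Gamma h$ and using the symmetry of $I_3-h^\flat W^\flat$ on the tangent space gives $\tau_h\cdot X_j=(I_3-h^\flat W^\flat)\tau_h\cdot\partial_{s_j}\mu=\nabla_\Gamma h\cdot\partial_{s_j}\mu=\eta_j$, so $|\tau_h(\mu)|^2=\eta^T B^{-1}\eta$ as required. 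Combining everything yields $\sqrt{\det G}=J(\mu,h^\flat)\sqrt{1+|\tau_h(\mu)|^2}\sqrt{\det\theta}$. Pulling integrals back by $\mu$ and $\mu_h$ and cancelling $\sqrt{\det\theta}$ via the standard surface integration formula on $\Gamma$ then produces \eqref{E:CoV_Para} for compactly supported $\varphi$, after which a partition-of-unity argument and standard $L^1$-density arguments extend the identity to all $\varphi\in L^1(\Gamma_h)$.
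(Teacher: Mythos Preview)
Your proof is correct and follows the same overall structure as the paper's: localize by a partition of unity, parametrize $\Gamma_h$ via $\mu_h=\mu+h^\flat n^\flat$, compute $\partial_{s_i}\mu_h=(I_3-h^\flat W^\flat)\partial_{s_i}\mu+\eta_i n^\flat$, split the Gram matrix as $B+\eta\otimes\eta$, and invoke \eqref{Pf_DZ:Det_Pr} for $\det B=J(\mu,h^\flat)^2\det\theta$.

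The one place where you diverge from the paper is in extracting the scalar factor $1+|\tau_h|^2$. The paper applies the matrix determinant lemma in the \emph{subtractive} form $\det(\theta_h-\eta\otimes\eta)=(1-\eta^T\theta_h^{-1}\eta)\det\theta_h$, then interprets $\eta^T\theta_h^{-1}\eta$ as $|\nabla_{\Gamma_h}\bar h|^2$ at the corresponding point of $\Gamma_h$, and finally carries out a separate computation with the unit normal $n_h$ to show $1-|\nabla_{\Gamma_h}\bar h|^2=(1+|\tau_h|^2)^{-1}$. You instead apply the \emph{additive} form $\det(B+\eta\otimes\eta)=(1+\eta^T B^{-1}\eta)\det B$ and identify $\eta^T B^{-1}\eta=|\tau_h|^2$ directly by the Gram-matrix argument with the basis $X_i=(I_3-h^\flat W^\flat)\partial_{s_i}\mu$ and the observation $\tau_h\cdot X_j=(I_3-h^\flat W^\flat)\tau_h\cdot\partial_{s_j}\mu=\nabla_\Gamma h\cdot\partial_{s_j}\mu=\eta_j$. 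Your route is shorter and avoids the detour through $\nabla_{\Gamma_h}\bar h$; the paper's route has the minor benefit of giving that quantity a geometric interpretation along the way. Both are valid and yield the same pointwise Jacobian identity.
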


\begin{proof}
  Since $\Gamma$ is compact, we can take a finite number of open sets $U_k$ in $\mathbb{R}^2$ and local parametrizations $\mu^k\colon U_k\to\Gamma$, $k=1,\dots,k_0$ such that $\{\mu^k(U_k)\}_{k=1}^{k_0}$ is an open covering of $\Gamma$.
  Let $\{\eta^k\}_{k=1}^{k_0}$ be a partition of unity on $\Gamma$ subordinate to $\{\mu^k(U_k)\}_{k=1}^{k_0}$.
  For $k=1,\dots,k_0$ and $s\in U_k$ we define
  \begin{align*}
  \mu_h^k(s) := \mu_k(s)+h(\mu_k(s))n(\mu_k(s)), \quad \eta_h^k(\mu_h^k(s)) := \eta^k(\mu^k(s)).
  \end{align*}
  Then $\mu_h^k\colon U_k\to\Gamma_h$, $k=1,\dots,k_0$ are local parametrizations of $\Gamma_h$, $\{\mu_h^k(U_k)\}_{k=1}^{k_0}$ is an open covering of $\Gamma_h$, and $\{\eta_h^k\}_{k=1}^{k_0}$ is a partition of unity on $\Gamma_h$ subordinate to $\{\mu_h^k(U_k)\}_{k=1}^{k_0}$.
  Moreover, for $k=1,\dots,k_0$ we observe that
  \begin{align*}
    \eta_h^k(\mu_h^k(s))\varphi(\mu_h^k(s)) = \eta^k(\mu^k(s))\varphi_h^\sharp(\mu^k(s)), \quad s\in U_k,
  \end{align*}
  i.e. $(\eta_h^k\varphi)_h^\sharp=\eta^k\varphi_h^\sharp$ on $\mu^k(U_k)\subset \Gamma$ by $\varphi_h^\sharp(y)=\varphi(y+h(y)n(y))$ for $y\in\Gamma$.
  Hence it suffices to prove \eqref{E:CoV_Para} for $\varphi^k:=\eta_h^k\varphi$ instead of $\varphi$.

  From now on, we fix and suppress $k$.
  Let $\mu\colon U\to\Gamma$ be a local parametrization of $\Gamma$ with an open set $U$ in $\mathbb{R}^2$, $\nabla_s\mu$ and $\theta$ the gradient matrix of $\mu$ and the Riemannian metric of $\Gamma$ given by \eqref{E:Def_Met}, $\mu_h(s):=\mu(s)+h(\mu(s))n(\mu(s))$ for $s\in U$, and $\nabla_s\mu_h$ and $\theta_h$ the gradient matrix of $\mu_h$ and the Riemannian metric of $\Gamma_h$ similarly defined as in \eqref{E:Def_Met}.
  By the first part of the proof we may assume that $\varphi\circ\mu_h$ is compactly supported in $U$.
  Then since $\varphi\circ\mu_h=\varphi_h^\sharp\circ\mu$ on $U$ and
  \begin{align*}
    \int_{\Gamma_h}\varphi\,d\mathcal{H}^2 &= \int_U\varphi\circ\mu_h\sqrt{\det\theta_h}\,ds, \\
    \int_\Gamma\varphi_h^\sharp J(\cdot,h)\sqrt{1+|\tau_h|^2}\,d\mathcal{H}^2 &= \int_U(\varphi_h^\sharp\circ\mu)J(\mu,h\circ\mu)\sqrt{(1+|\tau_h\circ\mu|^2)\det\theta}\,ds,
  \end{align*}
  it is sufficient for \eqref{E:CoV_Para} to show that
  \begin{align} \label{Pf_CoVP:Goal}
    \sqrt{\det\theta_h} = J(\mu,h\circ\mu)\sqrt{(1+|\tau_h\circ\mu|^2)\det\theta} \quad\text{on}\quad U.
  \end{align}
  Hereafter we write $\eta^\flat(s):=\eta(\mu(s))$, $s\in U$ for a function $\eta$ on $\Gamma$ and suppress the argument $s\in U$.
  Let $\bar{h}=h\circ\pi$ be the constant extension of $h$.
  First we prove
  \begin{align} \label{Pf_CoVP:First}
    (1-|(\nabla_{\Gamma_h}\bar{h})\circ\mu_h|^2)\det\theta_h = J(\mu,h^\flat)^2\det\theta.
  \end{align}
  We differentiate $\mu_h=\mu+h^\flat n^\flat$ and use \eqref{Pf_NoBo:Sharp} and $-\nabla_\Gamma n=W=W^T$ on $\Gamma$ to get
  \begin{align*}
    \nabla_s\mu_h=\nabla_s\mu(I_3-h^\flat W^\flat)+\nabla_sh^\flat\otimes n^\flat.
  \end{align*}
  Since $\partial_{s_1}\mu$ and $\partial_{s_2}\mu$ are tangent to $\Gamma$ at $\mu(s)$, we have $(\nabla_s\mu)n^\flat=0$.
  Also,
  \begin{align*}
    W^\flat n^\flat = 0, \quad (\nabla_sh^\flat\otimes n^\flat)(n^\flat\otimes\nabla_s h^\flat) = |n^\flat|^2\nabla_sh^\flat\otimes\nabla_sh^\flat = \nabla_sh^\flat\otimes\nabla_sh^\flat.
  \end{align*}
  Noting that $W^\flat$ is symmetric, we deduce from these equalities that
  \begin{align*}
    \theta_h=\nabla_s\mu_h(\nabla_s\mu_h)^T=\nabla_s\mu(I_3-h^\flat W^\flat)^2(\nabla_s\mu)^T+\nabla_sh^\flat\otimes\nabla_sh^\flat
  \end{align*}
  and thus
  \begin{align*}
    \det(\theta_h-\nabla_sh^\flat\otimes\nabla_sh^\flat) = \det[\nabla_s\mu(I_3-h^\flat W^\flat)^2(\nabla_s\mu)^T].
  \end{align*}
  To this equality we apply \eqref{Pf_DZ:Det_Pr} with $h_\varepsilon$ replaced by $h^\flat$ and
  \begin{align*}
    \det(\theta_h-\nabla_sh^\flat\otimes\nabla_sh^\flat) &= \det[I_3-(\theta_h^{-1}\nabla_sh^\flat)\otimes\nabla_sh^\flat]\det\theta_h \\
    &= \{1-(\theta_h^{-1}\nabla_sh^\flat)\cdot\nabla_sh^\flat\}\det\theta_h
  \end{align*}
  by $\det(I_2+a\otimes b)=1+a\cdot b$ for $a,b\in\mathbb{R}^2$, where $\theta_h^{-1}=(\theta_h^{ij})_{i,j}$ is the inverse matrix of $\theta_h$.
  Then we obtain
  \begin{align} \label{Pf_CoVP:Det}
    \{1-(\theta_h^{-1}\nabla_sh^\flat)\cdot\nabla_sh^\flat\}\det\theta_h = J(\mu,h^\flat)^2\det\theta.
  \end{align}
  Now we recall that the tangential gradient of $\bar{h}=h\circ\pi$ on $\Gamma_h$ is expressed as
  \begin{align*}
    \nabla_{\Gamma_h}\bar{h}(\mu_h(s)) = \sum_{i,j=1}^2\theta_h^{ij}(s)\frac{\partial(\bar{h}\circ\mu_h)}{\partial s_i}(s)\partial_{s_j}\mu_h(s), \quad s\in U.
  \end{align*}
  Since $\pi(\mu_h(s))=\mu(s)$ for $s\in U$, we have $\bar{h}(\mu_h(s))=h(\mu(s))=h^\flat(s)$ and thus
  \begin{gather*}
    (\nabla_{\Gamma_h}\bar{h})\circ\mu_h = \sum_{i,j=1}^2\theta_h^{ij}(\partial_{s_i}h^\flat)\partial_{s_j}\mu_h, \\
    \left|(\nabla_{\Gamma_h}\bar{h})\circ\mu_h\right|^2 = \sum_{i,j=1}^2\theta_h^{ij}(\partial_{s_i}h^\flat)(\partial_{s_j}h^\flat) = (\theta_h^{-1}\nabla_sh^\flat)\cdot\nabla_sh^\flat.
  \end{gather*}
  Applying this equality to the left-hand side of \eqref{Pf_CoVP:Det} we obtain \eqref{Pf_CoVP:First}.

  Let $\tau_h$ and $n_h$ be given by \eqref{E:N_Para} and $\alpha:=(1+|\tau_h|^2)^{1/2}$ on $\Gamma$.
  We next show
  \begin{align} \label{Pf_CoVP:Second}
    1-\left|\nabla_{\Gamma_h}\bar{h}(y+h(y)n(y))\right|^2 = \frac{1}{\alpha(y)^2} = \frac{1}{1+|\tau_h(y)|^2}, \quad y\in\Gamma.
  \end{align}
  For $y\in\Gamma$ we see by \eqref{E:ConDer_Dom}, $d(y+h(y)n(y))=h(y)$, and $\pi(y+h(y)n(y))=y$ that
  \begin{align*}
    \nabla\bar{h}(y+h(y)n(y)) = \{I_3-h(y)W(y)\}^{-1}\nabla_\Gamma h(y) = \tau_h(y).
  \end{align*}
  From this equality, \eqref{E:Para_TGr}, $\bar{n}_h(y+h(y)n(y))=n_h(y)$, and
  \begin{align} \label{PF_CoVP:Tau}
    n_h = \alpha^{-1}(n-\tau_h), \quad \tau_h\cdot n = 0, \quad \alpha^2 = 1+|\tau_h|^2 \quad\text{on}\quad \Gamma
  \end{align}
  we deduce that (here we suppress the argument $y\in\Gamma$ of functions on $\Gamma$)
  \begin{align*}
    \nabla_{\Gamma_h}\bar{h}(y+hn) = (I_3-n_h\otimes n_h)\nabla\bar{h}(y+hn)= \alpha^{-2}(|\tau_h|^2n+\tau_h).
  \end{align*}
  Hence we again use the second and third equalities of \eqref{PF_CoVP:Tau} to obtain
  \begin{align*}
    1-\left|\nabla_{\Gamma_h}\bar{h}(y+hn)\right|^2 = 1-\alpha^{-4}(|\tau_h|^4+|\tau_h|^2) = \alpha^{-2}.
  \end{align*}
  This shows \eqref{Pf_CoVP:Second} and we conclude by \eqref{Pf_CoVP:First} and \eqref{Pf_CoVP:Second} that \eqref{Pf_CoVP:Goal} is valid.
\end{proof}

Finally, we give a regularity result for a Killing vector field on a surface.
Recall that for $v\in H^1(\Gamma)^3$ we define the surface strain rate tensor $D_\Gamma(v)$ by \eqref{E:Strain_Surf}.

\begin{lemma} \label{L:Kil_Reg}
  If $\Gamma$ is of class $C^\ell$ with $\ell\geq3$ and $v\in H^1(\Gamma,T\Gamma)$ satisfies $D_\Gamma(v)=0$ on $\Gamma$, then $v$ is of class $C^{\ell-3}$ on $\Gamma$.
  In particular, $v$ is smooth if $\Gamma$ is smooth.
\end{lemma}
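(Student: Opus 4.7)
The approach is to extract a second-order elliptic equation from the first-order Killing condition $D_\Gamma(v) = 0$ and then apply standard elliptic regularity theory in local coordinates.

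First, I would observe that $D_\Gamma(v) = 0$ automatically forces $\mathrm{div}_\Gamma v = 0$. Indeed, since $v$ is tangential, \eqref{E:Grad_W} gives $\nabla_\Gamma v = P(\nabla_\Gamma v)P + (Wv)\otimes n$, and taking the trace yields $\mathrm{div}_\Gamma v = \mathrm{tr}[P(\nabla_\Gamma v)P] + (Wv)\cdot n$. The second term vanishes because $Wv$ is tangential, and the first term equals $\mathrm{tr}[D_\Gamma(v)] = 0$ since a trace is unchanged by symmetrization.

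Next, starting from $D_\Gamma(v) = 0$ and testing against a smooth tangential vector field $\varphi$, integration by parts (via \eqref{E:IbP_TD} applied componentwise) yields $P\,\mathrm{div}_\Gamma D_\Gamma(v) = 0$ in the distributional sense on $\Gamma$. Combining this with the Bochner--Weitzenb\"{o}ck-type identity quoted in Remark~\ref{R:Limit_Eq}, namely $2P\,\mathrm{div}_\Gamma[D_\Gamma(v)] = \Delta_B v + Kv$ for $C^2$ tangential vector fields with $\mathrm{div}_\Gamma v = 0$ (valid by \cite[Lemma~2.5]{Mi18}, and extended to $H^1$ by density with smooth divergence-free tangential approximants), I would conclude that $v$ weakly solves
\begin{align*}
  \Delta_B v + Kv = 0 \quad\text{on}\quad \Gamma.
\end{align*}

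With this elliptic equation in hand, I would localize via a partition of unity subordinate to a finite atlas of $C^\ell$ parametrizations $\mu \colon U \to \Gamma$ and express $v$ through its intrinsic components $(v^1, v^2)$ by $v(\mu(s)) = \sum_i v^i(s)\partial_{s_i}\mu(s)$. In local coordinates, the equation $\Delta_B v + Kv = 0$ becomes a second-order linear elliptic system for $(v^1, v^2)$ with principal part $-\theta^{ij}\partial_i\partial_j$ (uniformly elliptic by \eqref{E:Metric}); its lower-order coefficients are polynomial combinations of the metric $\theta$, Christoffel symbols, their first derivatives, and the Gaussian curvature $K$. For $\Gamma$ of class $C^\ell$ the top-order coefficients lie in $C^{\ell-1}$ while the lower-order coefficients lie in $C^{\ell-3}$. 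Classical elliptic bootstrapping (for instance, via the interior estimates in \cite[Chapter~8]{GiTr01}) then improves $v^i \in H_{loc}^1(U)$ successively to $v^i \in H_{loc}^{\ell-1}(U)$, and Sobolev embedding in $\mathbb{R}^2$ yields $v^i \in C^{\ell-3}(U)$; patching through the partition of unity gives $v \in C^{\ell-3}(\Gamma)$.

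The main delicate point is the density argument required to pass the Bochner--Weitzenb\"{o}ck identity from the smooth case to $v \in H^1(\Gamma, T\Gamma)$; if one wishes to avoid it, an equivalent strategy is to write the pointwise Killing equation in local coordinates as $\partial_i v_j + \partial_j v_i = 2\Gamma^k_{ij} v_k$ interpreted weakly, then differentiate each equation once and combine the three resulting relations (using the Ricci-type identity for mixed second partials) to obtain, by hand, an elliptic second-order system with the same regularity structure, after which the bootstrap proceeds identically.
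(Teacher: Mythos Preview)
Your approach is correct but differs from the paper's. Both of you first extract $\mathrm{div}_\Gamma v=0$ from the Killing condition and then derive a second-order elliptic equation to bootstrap; the difference is \emph{which} equation. The paper works with the Cartesian components of $v$ and shows directly from $D_\Gamma(v)=0$ (via the exchange formula \eqref{E:TD_Exc} and \eqref{E:IbP_TD}) that each scalar $v_i$ weakly solves
\[
  \Delta_\Gamma v_i = \nabla_\Gamma(n_iH)\cdot v \quad\text{on}\quad \Gamma,
\]
a coupled system whose principal part is the scalar Laplace--Beltrami acting componentwise and whose zeroth-order coefficients $\underline{D}_j(n_iH)$ lie in $C^{\ell-3}$. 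Your route instead produces the intrinsic equation $\Delta_B v + Kv = 0$ via the Weitzenb\"ock-type identity. Both lead to a bootstrap with coefficients of regularity $C^{\ell-3}$ and hence to $v\in C^{\ell-3}(\Gamma)$.

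What each buys: the paper's argument is more self-contained---it never invokes the Bochner Laplacian or the identity from \cite{Mi18}, and the componentwise scalar $\Delta_\Gamma$ makes the local elliptic regularity step entirely standard. Your argument is more geometric and recovers the classical fact that Killing fields on a surface lie in $\ker(\Delta_B+K)$, but it carries the extra burden of justifying the Weitzenb\"ock identity at $H^1$ regularity. Your worry about the density step is slightly misplaced: rather than approximating by smooth \emph{divergence-free} fields, one can pass the full bilinear identity $2\int_\Gamma D_\Gamma(v){:}D_\Gamma(\varphi) = \int_\Gamma \overline{\nabla}v{:}\overline{\nabla}\varphi + \int_\Gamma(\mathrm{div}_\Gamma v)(\mathrm{div}_\Gamma\varphi) - \int_\Gamma Kv\cdot\varphi$ from $C^2$ to $H^1$ tangential fields by Lemma~\ref{L:Wmp_Tan_Appr}, and then use $D_\Gamma(v)=0$ and $\mathrm{div}_\Gamma v=0$ a posteriori.
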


To prove Lemma~\ref{L:Kil_Reg} we use the formula (see e.g.~\cite[Lemma~2.2]{Mi18})
\begin{align} \label{E:TD_Exc}
  \underline{D}_i\underline{D}_j\xi-\underline{D}_j\underline{D}_i\xi = [W\nabla_\Gamma\xi]_in_j-[W\nabla_\Gamma\xi]_jn_i \quad\text{on}\quad \Gamma,\,i,j=1,2,3
\end{align}
for $\xi\in C^2(\Gamma)$.
Here we denote by $[W\nabla_\Gamma\xi]_i$ the $i$-th component of $W\nabla_\Gamma\xi$.

\begin{proof}
  Let $v\in H^1(\Gamma,T\Gamma)$ satisfy $D_\Gamma(v)=0$ on $\Gamma$.
  First we prove $\mathrm{div}_\Gamma v=0$ on $\Gamma$.
  We fix and suppress $y\in\Gamma$.
  Let $\tau_1$ and $\tau_2$ be tangential vectors at $y$ such that $\{\tau_1,\tau_2,n\}$ is an orthonormal basis of $\mathbb{R}^3$.
  Then
  \begin{align*}
    \mathrm{div}_\Gamma v &= \mathrm{tr}[\nabla_\Gamma v] = \sum_{i=1,2}(\nabla_\Gamma v)\tau_i\cdot\tau_i+(\nabla_\Gamma v)n\cdot n = \sum_{i=1,2}P(\nabla_\Gamma v)P\tau_i\cdot\tau_i \\
    &= \mathrm{tr}[P(\nabla_\Gamma v)P] = \mathrm{tr}[D_\Gamma(v)] = 0
  \end{align*}
  by $P\nabla_\Gamma v=\nabla_\Gamma v$, $P\tau_i=\tau_i$ for $i=1,2$, $P^Tn=Pn=0$, and $D_\Gamma(v)=0$.

  Next we show that the $i$-th component of $v$ satisfies Poisson's equation
    \begin{align} \label{Pf_KiRe:Poi}
    \Delta_\Gamma v_i = \mathrm{div}_\Gamma(n_iHv) = \nabla_\Gamma(n_iH)\cdot v \quad\text{on}\quad \Gamma
  \end{align}
  for $i=1,2,3$.
  Here the second equality follows from $\mathrm{div}_\Gamma v=0$ on $\Gamma$.
  Noting that $v$ is tangential on $\Gamma$, we apply \eqref{E:Grad_W}, $P^T=P$, and $D_\Gamma(v)=0$ on $\Gamma$ to get
  \begin{align*}
    \nabla_\Gamma v+(\nabla_\Gamma v)^T &= 2D_\Gamma(v)+(Wv)\otimes n+n\otimes(Wv) \\
    &= (Wv)\otimes n+n\otimes(Wv)
  \end{align*}
  on $\Gamma$.
  Thus for $i=1,2,3$ we have
  \begin{align*}
    \nabla_\Gamma v_i = -\underline{D}_iv+n_iWv+[Wv]_in \quad\text{on}\quad \Gamma, \quad \underline{D}_iv=(\underline{D}_iv_1,\underline{D}_iv_2,\underline{D}_iv_3).
  \end{align*}
  Let $\xi\in C^2(\Gamma)$.
  By the above equality, \eqref{E:P_TGr}, and $W^T=W$ on $\Gamma$ we have
  \begin{align} \label{Pf_KiRe:TGr_Vi}
    (\nabla_\Gamma v_i,\nabla_\Gamma\xi)_{L^2(\Gamma)} = -(\underline{D}_iv,\nabla_\Gamma\xi)_{L^2(\Gamma)}+(n_iv,W\nabla_\Gamma\xi)_{L^2(\Gamma)}.
  \end{align}
  Let us compute $J_1:=-(\underline{D}_iv,\nabla_\Gamma\xi)_{L^2(\Gamma)}$.
  By \eqref{E:Def_WTD} with $\eta=v_k$ and $\eta_i=\underline{D}_iv_k$,
  \begin{align*}
    J_1 = -\sum_{k=1}^3(\underline{D}_iv_k,\underline{D}_k\xi)_{L^2(\Gamma)} = \sum_{k=1}^3\{(v_k,\underline{D}_i\underline{D}_k\xi)_{L^2(\Gamma)}+(v_kHn_i,\underline{D}_k\xi)_{L^2(\Gamma)}\}.
  \end{align*}
  To the first term on the right-hand side we apply \eqref{E:TD_Exc}.
  Then we get
  \begin{align*}
    \sum_{k=1}^3(v_k,\underline{D}_i\underline{D}_k\xi)_{L^2(\Gamma)} &= \sum_{k=1}^3(v_k,\underline{D}_k\underline{D}_i\xi+[W\nabla_\Gamma\xi]_in_k-[W\nabla_\Gamma\xi]_kn_i)_{L^2(\Gamma)}.
  \end{align*}
  Moreover, by \eqref{E:Def_WTD}, $\mathrm{div}_\Gamma v=0$, and $v\cdot n=0$ on $\Gamma$,
  \begin{align*}
    \sum_{k=1}^3(v_k,\underline{D}_k\underline{D}_i\xi)_{L^2(\Gamma)} &= -\sum_{k=1}^3(\underline{D}_kv_k+v_kHn_k,\underline{D}_i\xi)_{L^2(\Gamma)} \\
    &= -(\mathrm{div}_\Gamma v+(v\cdot n)H,\underline{D}_i\xi)_{L^2(\Gamma)} = 0, \\
    \sum_{k=1}^3(v_k,[W\nabla_\Gamma\xi]_in_k)_{L^2(\Gamma)} &= (v\cdot n,[W\nabla_\Gamma\xi]_i)_{L^2(\Gamma)} = 0.
  \end{align*}
  By these equalities and $\sum_{k=1}^3(v_k,[W_\Gamma\xi]_kn_i)_{L^2(\Gamma)}=(n_iv,W\nabla_\Gamma\xi)_{L^2(\Gamma)}$ we get
  \begin{align} \label{Pf_KiRe:J1}
    J_1 = -(n_iv,W\nabla_\Gamma\xi)_{L^2(\Gamma)}+\sum_{k=1}^3(v_kHn_i,\underline{D}_k\xi)_{L^2(\Gamma)}.
  \end{align}
  To the last term we again use \eqref{E:Def_WTD} and $v\cdot n=0$ on $\Gamma$.
  Then it follows that
  \begin{align*}
    \sum_{k=1}^3(v_kHn_i,\underline{D}_k\xi)_{L^2(\Gamma)} &= -\sum_{k=1}^3(\underline{D}_k(v_kHn_i)+(v_kHn_i)Hn_k,\xi)_{L^2(\Gamma)} \\
    &= -(\mathrm{div}_\Gamma(n_iHv)+n_iH^2(v\cdot n),\xi)_{L^2(\Gamma)} \\
    &= -(\mathrm{div}_\Gamma(n_iHv),\xi)_{L^2(\Gamma)}.
  \end{align*}
  Combining this equality with \eqref{Pf_KiRe:TGr_Vi} and \eqref{Pf_KiRe:J1} we obtain
  \begin{align*}
    (\nabla_\Gamma v_i,\nabla_\Gamma\xi)_{L^2(\Gamma)} = -(\mathrm{div}_\Gamma(n_iHv),\xi)_{L^2(\Gamma)} \quad\text{for all}\quad \xi \in C^2(\Gamma).
  \end{align*}
  Since $C^2(\Gamma)$ is dense in $H^1(\Gamma)$ by Lemma~\ref{L:Wmp_Appr}, this equality is valid for all $\xi\in H^1(\Gamma)$.
  Hence $v_i$ satisfies \eqref{Pf_KiRe:Poi} in the weak sense for $i=1,2,3$.

  Now we recall that $\Gamma$ is assumed to be of class $C^\ell$ with $\ell\geq3$.
  Let us prove the $C^{\ell-3}$-regularity of $v$ on $\Gamma$.
  By a localization argument with a partition of unity on $\Gamma$ we may assume that $v$ is compactly supported in a relatively open subset $\mu(U)$ of $\Gamma$, where $U$ is an open set in $\mathbb{R}^2$ and $\mu\colon U\to\Gamma$ is a local parametrization of $\Gamma$.
  Let $\theta$ be the Riemannian metric of $\Gamma$ on $U$, $\theta^{-1}=(\theta^{kl})_{k,l}$ its inverse, and
  \begin{align*}
    \tilde{v}_i(s) := v_i(\mu(s)), \quad b_i^j(s) := [\underline{D}_j(n_iH)](\mu(s)), \quad s\in U,\,i,j=1,2,3.
  \end{align*}
  For $i=1,2,3$ the function $\tilde{v}_i$ is compactly supported in $U$ and belongs to $H^1(U)$ by Lemma~\ref{L:Metric}.
  Moreover, by \eqref{Pf_KiRe:Poi} it is a weak solution to the linear elliptic PDE
  \begin{align*}
    \frac{1}{\sqrt{\det\theta}}\sum_{k,l=1}^2\partial_{s_k}\Bigl(\theta^{kl}(\partial_{s_l}\tilde{v}_i)\sqrt{\det\theta}\Bigr) = \sum_{j=1}^3b_i^j\tilde{v}_j \quad\text{on}\quad U.
  \end{align*}
  Noting that $\theta,\theta^{-1}\in C^{\ell-1}(U)^{2\times2}$ and $b_i^j\in C^{\ell-3}(U)$ by the $C^\ell$-regularity of $\Gamma$ (see Section~\ref{SS:Pre_Surf}) and $\tilde{v}_i$ is compactly supported in $U$, we apply the elliptic regularity theorem (see~\cite{Ev10,GiTr01}) and a bootstrap argument to the above equation to get $\tilde{v}_i\in H^{\ell-1}(U)$.
  Hence the Sobolev embedding theorem (see~\cite{AdFo03}) yields $\tilde{v}_i\in C^{\ell-3}(U)$ for $i=1,2,3$, which implies the $C^{\ell-3}$-regularity of $v=(v_1,v_2,v_3)$ on $\Gamma$ (note that $v$ is compactly supported in $\mu(U)$ and $\mu$ is a $C^\ell$ mapping by the regularity of $\Gamma$).
\end{proof}

%%% Appendix C %%%
\section{Riemannian connection on a surface} \label{S:Ap_RC}
In this appendix we introduce the Riemannian (or Levi-Civita) connection on a surface in $\mathbb{R}^3$ and give several formulas for the covariant derivatives.

Let $\Gamma$ be a two-dimensional closed, connected, and oriented surface in $\mathbb{R}^3$.
We assume that $\Gamma$ is of class $C^3$ and use the notations for the surface quantities on $\Gamma$ given in Section~\ref{SS:Pre_Surf}.
For $X\in C^1(\Gamma,T\Gamma)$ and $Y\in C(\Gamma,T\Gamma)$ we define the covariant derivative of $X$ along $Y$ as
\begin{align} \label{E:Def_Covari}
  \overline{\nabla}_YX := P\left\{(Y\cdot\nabla)\widetilde{X}\right\} \quad\text{on}\quad \Gamma.
\end{align}
Here $\widetilde{X}$ is a $C^1$-extension of $X$ to an open neighborhood of $\Gamma$ with $\widetilde{X}|_\Gamma=X$.
Note that $(Y\cdot\nabla)\widetilde{X}=(Y\cdot\nabla_\Gamma)X$ since $Y$ is tangential on $\Gamma$.
Hence the value of $\overline{\nabla}_YX$ does not depend on the choice of an extension of $X$.
The directional derivative $(Y\cdot\nabla_\Gamma)X$ is expressed in terms of $\overline{\nabla}_YX$ and the Weingarten map $W$.

\begin{lemma} \label{L:Gauss}
  For $X\in C^1(\Gamma,T\Gamma)$ and $Y\in C(\Gamma,T\Gamma)$ we have
  \begin{align} \label{E:Gauss}
    (Y\cdot\nabla)\widetilde{X} = (Y\cdot\nabla_\Gamma)X = \overline{\nabla}_YX+(WX\cdot Y)n \quad\text{on}\quad \Gamma,
  \end{align}
  where $\widetilde{X}$ is any $C^1$-extension of $X$ to an open neighborhood of $\Gamma$ satisfying $\widetilde{X}|_\Gamma=X$.
\end{lemma}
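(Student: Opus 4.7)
The first equality is essentially the independence of tangential directional derivatives from the choice of extension: since $Y$ is tangential to $\Gamma$, the directional derivative $(Y\cdot\nabla)\widetilde{X}$ at a point $y\in\Gamma$ only sees variations of $\widetilde{X}$ along curves lying in $\Gamma$ through $y$, so it coincides with $(Y\cdot\nabla_\Gamma)X = (\nabla_\Gamma X)^TY$ and is independent of the chosen extension. This is the same argument already used for scalar-valued functions (see the discussion after the definition of $\nabla_\Gamma\eta$ in Section~\ref{SS:Pre_Surf}).

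For the second equality, the plan is to decompose $(Y\cdot\nabla)\widetilde{X}$ using the orthogonal decomposition $I_3=P+Q$ with $Q=n\otimes n$. By the definition \eqref{E:Def_Covari} of the covariant derivative, the tangential part is precisely $\overline{\nabla}_YX$, so the only thing to verify is the identity
\[
Q(Y\cdot\nabla)\widetilde{X} = \bigl(((Y\cdot\nabla)\widetilde{X})\cdot n\bigr)\,n = (WX\cdot Y)\,n \quad\text{on}\quad \Gamma.
\]

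The key step to extract the normal component is to differentiate the identity $X\cdot n=0$, which holds on $\Gamma$ because $X$ is tangential. Applying the tangential directional derivative $Y\cdot\nabla_\Gamma$ and using the product rule gives
\[
0 = (Y\cdot\nabla_\Gamma)(X\cdot n) = \bigl((Y\cdot\nabla_\Gamma)X\bigr)\cdot n + X\cdot(Y\cdot\nabla_\Gamma)n.
\]
For the second term I will use $\nabla_\Gamma n=-W$ together with the symmetry $W^T=W$ from Lemma~\ref{L:Form_W}: then $(Y\cdot\nabla_\Gamma)n = (\nabla_\Gamma n)^TY = -WY$, so $X\cdot(Y\cdot\nabla_\Gamma)n = -WY\cdot X = -WX\cdot Y$. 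Combining these gives $((Y\cdot\nabla)\widetilde{X})\cdot n = WX\cdot Y$, and multiplying by $n$ yields the desired normal component. Adding the tangential and normal parts produces \eqref{E:Gauss}.

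The proof is essentially a short calculation and I do not foresee a genuine obstacle; the only point requiring a little care is the correct identification of $(Y\cdot\nabla_\Gamma)n$ with $-WY$ (rather than with $(\nabla_\Gamma n)Y$), which is handled cleanly by the symmetry of $W$ established in Lemma~\ref{L:Form_W}.
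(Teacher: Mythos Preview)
Your proof is correct and follows essentially the same approach as the paper: both decompose $(Y\cdot\nabla_\Gamma)X$ into tangential and normal parts, identify the tangential part with $\overline{\nabla}_YX$ by definition, and compute the normal component by differentiating the identity $X\cdot n=0$ along $Y$ and using $(Y\cdot\nabla_\Gamma)n=-W^TY=-WY$. The paper's write-up is slightly more compressed, but the argument is the same.
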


\begin{proof}
  Since $X\cdot n=0$ and $-\nabla_\Gamma n=W$ on $\Gamma$,
  \begin{align*}
    (Y\cdot\nabla_\Gamma)X\cdot n &= Y\cdot\nabla_\Gamma(X\cdot n)-X\cdot(Y\cdot\nabla_\Gamma)n \\
    &= X\cdot(-\nabla_\Gamma n)^TY = X\cdot W^TY = WX\cdot Y
  \end{align*}
  on $\Gamma$.
  Combining this with \eqref{E:Def_Covari} we obtain \eqref{E:Gauss}.
\end{proof}

The formula \eqref{E:Gauss} is called the Gauss formula (see e.g.~\cite[Section~4.2]{Ch15} and~\cite[Section~VII.3]{KoNo96}).
Let us prove fundamental properties of the covariant derivative.

\begin{lemma} \label{L:RiCo}
  The following equalities hold on $\Gamma$:
  \begin{itemize}
    \item For $X\in C^1(\Gamma,T\Gamma)$, $Y,Z\in C(\Gamma,T\Gamma)$, and $\eta,\xi\in C(\Gamma)$,
    \begin{align} \label{E:RiCo_AfY}
      \overline{\nabla}_{\eta Y+\xi Z}X = \eta\overline{\nabla}_YX+\xi\overline{\nabla}_ZX.
    \end{align}
    \item For $X\in C^1(\Gamma,T\Gamma)$, $Y\in C(\Gamma,T\Gamma)$, and $\eta\in C^1(\Gamma)$,
    \begin{align} \label{E:RiCo_AfX}
      \overline{\nabla}_Y(\eta X) = (Y\cdot\nabla_\Gamma\eta)X+\eta\overline{\nabla}_YX.
    \end{align}
    \item For $X,Y\in C^1(\Gamma,T\Gamma)$ and $Z\in C(\Gamma,T\Gamma)$,
    \begin{align} \label{E:RiCo_Met}
      Z\cdot\nabla_\Gamma(X\cdot Y) = \overline{\nabla}_ZX\cdot Y+X\cdot\overline{\nabla}_ZY.
    \end{align}
    \item For $X,Y\in C^1(\Gamma,T\Gamma)$ and $\eta\in C^2(\Gamma)$,
    \begin{align} \label{E:RiCo_Tor}
      X\cdot\nabla_\Gamma(Y\cdot\nabla_\Gamma\eta)-Y\cdot\nabla_\Gamma(X\cdot\nabla_\Gamma\eta) = \Bigl(\overline{\nabla}_XY-\overline{\nabla}_YX\Bigr)\cdot\nabla_\Gamma\eta.
    \end{align}
  \end{itemize}
\end{lemma}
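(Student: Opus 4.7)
The plan is to verify the four identities in Lemma~\ref{L:RiCo} by working with the definition $\overline{\nabla}_YX=P\{(Y\cdot\nabla)\widetilde{X}\}$ directly, combined with the Gauss formula \eqref{E:Gauss} and the elementary fact that $n\cdot v=0$ whenever $v$ is tangential. Throughout, one picks $C^1$- (or $C^2$-) extensions $\widetilde{X}$, $\widetilde{Y}$, $\tilde\eta$ of the given tangential fields and scalar to an open neighborhood of $\Gamma$; the resulting values will not depend on the choice because $X\cdot\nabla_\Gamma\varphi=(X\cdot\nabla)\tilde\varphi$ on $\Gamma$ for tangential $X$.

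For \eqref{E:RiCo_AfY}, I would simply note $(\eta Y+\xi Z)\cdot\nabla = \eta(Y\cdot\nabla)+\xi(Z\cdot\nabla)$ and pull $\eta,\xi$ through the linear projection $P$. For \eqref{E:RiCo_AfX}, the Leibniz rule gives $(Y\cdot\nabla)(\tilde\eta\widetilde{X})=(Y\cdot\nabla\tilde\eta)\widetilde{X}+\tilde\eta(Y\cdot\nabla)\widetilde{X}$ on $\Gamma$; applying $P$, using $PX=X$ since $X$ is tangential, and replacing $Y\cdot\nabla\tilde\eta$ by $Y\cdot\nabla_\Gamma\eta$ (independent of the extension, because $Y$ is tangential) yields the claim.

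For the metric identity \eqref{E:RiCo_Met}, the scalar $X\cdot Y$ has tangential derivative equal to the ambient directional derivative:
\begin{align*}
  Z\cdot\nabla_\Gamma(X\cdot Y) = (Z\cdot\nabla)(\widetilde{X}\cdot\widetilde{Y}) = \{(Z\cdot\nabla)\widetilde{X}\}\cdot Y+X\cdot\{(Z\cdot\nabla)\widetilde{Y}\} \quad\text{on}\quad \Gamma.
\end{align*}
Now apply the Gauss formula \eqref{E:Gauss} to each term: $(Z\cdot\nabla)\widetilde{X}=\overline{\nabla}_ZX+(WX\cdot Z)n$ and similarly for $\widetilde{Y}$. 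Since $X\cdot n=Y\cdot n=0$ on $\Gamma$, the normal components contribute nothing when dotted against $Y$ or $X$, and \eqref{E:RiCo_Met} drops out.

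The most delicate identity is the torsion-free property \eqref{E:RiCo_Tor}; I would use a $C^2$-extension $\tilde\eta$ (for instance the constant extension $\bar\eta=\eta\circ\pi$, which is $C^2$ since $\Gamma$ is $C^3$), write $Y\cdot\nabla_\Gamma\eta=\widetilde{Y}\cdot\nabla\tilde\eta$, and expand
\begin{align*}
  X\cdot\nabla_\Gamma(Y\cdot\nabla_\Gamma\eta) = \{(X\cdot\nabla)\widetilde{Y}\}\cdot\nabla\tilde\eta+\widetilde{Y}\cdot(\nabla^2\tilde\eta)\widetilde{X}
\end{align*}
and symmetrically for the $Y$-derivative. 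Subtracting, the Hessian terms cancel because $\nabla^2\tilde\eta$ is symmetric. The remaining piece is $\{(X\cdot\nabla)\widetilde{Y}-(Y\cdot\nabla)\widetilde{X}\}\cdot\nabla\tilde\eta$, and applying the Gauss formula to each summand produces $\overline{\nabla}_XY-\overline{\nabla}_YX$ plus a normal component with coefficient $WY\cdot X-WX\cdot Y$, which vanishes by the symmetry of $W$ established in Lemma~\ref{L:Form_W}. Since $\overline{\nabla}_XY-\overline{\nabla}_YX$ is tangential, replacing $\nabla\tilde\eta$ by $\nabla_\Gamma\eta$ in the dot product is legal and gives \eqref{E:RiCo_Tor}. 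The main (and only mildly subtle) point is this last cancellation, which is where the symmetry of the Weingarten map is essential.
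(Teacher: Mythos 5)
Your proposal is correct, and for the first three identities it coincides with the paper's proof (which treats \eqref{E:RiCo_AfY} and \eqref{E:RiCo_AfX} as immediate from \eqref{E:Def_Covari} and proves \eqref{E:RiCo_Met} exactly as you do, via the Gauss formula \eqref{E:Gauss} and $X\cdot n=Y\cdot n=0$). For the torsion-free identity \eqref{E:RiCo_Tor}, however, you take a genuinely different route. The paper expands the left-hand side componentwise in the tangential derivatives $\underline{D}_i$, splits it into a first-order part $J_1$ and a second-order part $J_2$, handles $J_1$ by the Gauss formula together with $\nabla_\Gamma\eta\cdot n=0$, and kills $J_2$ by the commutator formula \eqref{E:TD_Exc}, $\underline{D}_i\underline{D}_j\eta-\underline{D}_j\underline{D}_i\eta=[W\nabla_\Gamma\eta]_in_j-[W\nabla_\Gamma\eta]_jn_i$, which is imported from an external reference. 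You instead work with ambient $C^2$-extensions: the second-order terms cancel by the symmetry of the Euclidean Hessian $\nabla^2\tilde\eta$, and in the remaining first-order difference the normal contributions from the Gauss formula cancel because $WY\cdot X=WX\cdot Y$ (symmetry of $W$ from Lemma~\ref{L:Form_W}); replacing $\nabla\tilde\eta$ by $\nabla_\Gamma\eta$ is then legitimate since $\overline{\nabla}_XY-\overline{\nabla}_YX$ is tangential. Your version avoids the exchange formula \eqref{E:TD_Exc} altogether and is arguably more elementary, at the modest cost of introducing extensions and checking extension-independence (and, if you use the constant extension $\bar\eta=\eta\circ\pi$, you even get $n\cdot\nabla\bar\eta=0$, giving a second, independent reason the normal term drops); the paper's intrinsic computation buys a proof that never leaves $\Gamma$ and reuses a formula already available in the authors' toolkit.
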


\begin{proof}
  The equalities \eqref{E:RiCo_AfY} and \eqref{E:RiCo_AfX} immediately follow from \eqref{E:Def_Covari}.
  Also, we apply \eqref{E:Gauss} and $X\cdot n=Y\cdot n=0$ on $\Gamma$ to the right-hand side of
  \begin{align*}
    Z\cdot\nabla_\Gamma(X\cdot Y) = (Z\cdot\nabla_\Gamma)X\cdot Y+X\cdot(Z\cdot\nabla_\Gamma)Y \quad\text{on}\quad \Gamma
  \end{align*}
  to get \eqref{E:RiCo_Met}.
  Let us prove \eqref{E:RiCo_Tor}.
  The left-hand side of \eqref{E:RiCo_Tor} is of the form
  \begin{gather*}
    \sum_{i,j=1}^3\{X_i\underline{D}_i(Y_j\underline{D}_j\eta)-Y_i\underline{D}_i(X_j\underline{D}_j\eta)\} = J_1+J_2, \\
    J_1 := \sum_{i,j=1}^3(X_i\underline{D}_iY_j-Y_i\underline{D}_iX_j)\underline{D}_j\eta, \quad J_2 := \sum_{i,j=1}^3(X_iY_j-X_jY_i)\underline{D}_i\underline{D}_j\eta.
  \end{gather*}
  By \eqref{E:Gauss} and $\nabla_\Gamma\eta\cdot n=0$ on $\Gamma$ we have
  \begin{align*}
    J_1 = \{(X\cdot\nabla_\Gamma)Y-(Y\cdot\nabla_\Gamma)X\}\cdot\nabla_\Gamma\eta = \Bigl(\overline{\nabla}_XY-\overline{\nabla}_YX\Bigr)\cdot\nabla_\Gamma\eta.
  \end{align*}
  Also, using \eqref{E:TD_Exc} and $X\cdot n=Y\cdot n=0$ on $\Gamma$ we observe that
  \begin{align*}
    J_2 &= \sum_{i,j=1}^3X_iY_j(\underline{D}_i\underline{D}_j\eta-\underline{D}_j\underline{D}_i\eta) = \sum_{i,j=1}^3X_iY_j([W\nabla\eta]_in_j-[W\nabla_\Gamma\eta]_jn_i) \\
    &= (X\cdot W\nabla_\Gamma\eta)(Y\cdot n)-(X\cdot n)(Y\cdot W\nabla_\Gamma\eta) = 0.
  \end{align*}
  Combining the above three equalities we obtain \eqref{E:RiCo_Tor}.
\end{proof}

By Lemma~\ref{L:RiCo} the assignment $\overline{\nabla}\colon(X,Y)\mapsto\overline{\nabla}_YX$ defines the Riemannian (or Levi-Civita) connection on $\Gamma$ (see e.g.~\cite{Ch15,Jo11,Pe06,Ta11_I}).
Note that the formula \eqref{E:RiCo_Tor} stands for the torsion-free condition $[X,Y]=\overline{\nabla}_XY-\overline{\nabla}_YX$, where $[X,Y]=XY-YX$ is the Lie bracket of $X$ and $Y$.

Let $U$ be a relatively open subset of $\Gamma$ and $\tau_1$ and $\tau_2$ be $C^1$ tangential vector fields on $U$ such that $\{\tau_1(y),\tau_2(y)\}$ is an orthonormal basis of the tangent plane of $\Gamma$ at each $y\in U$ (by the regularity of $\Gamma$ such vector fields exist on a sufficiently small $U$).
We call the pair $\{\tau_1,\tau_2\}$ a local orthonormal frame of the tangent plane of $\Gamma$ on $U$, or briefly a local orthonormal frame on $U$.
Note that
\begin{align} \label{E:MC_Local}
  H = \mathrm{tr}[W] = W\tau_1\cdot\tau_1+W\tau_2\cdot\tau_2 \quad\text{on}\quad U
\end{align}
since $\{\tau_1,\tau_2,n\}$ is an orthonormal basis of $\mathbb{R}^3$ and $Wn=0$ on $\Gamma$.
We express several quantities related to the tangential gradient matrix of tangential vector fields on $\Gamma$ in terms of the covariant derivatives and the local orthonormal frame.

\begin{lemma} \label{L:Form_Cov}
  Let $\{\tau_1,\tau_2\}$ be a local orthonormal frame of the tangent plane of $\Gamma$ on a relatively open subset $U$ of $\Gamma$.
  For $X,Y\in C^1(\Gamma,T\Gamma)$ we have
  \begin{align}
    \mathrm{div}_\Gamma X &= \sum_{i=1,2}\overline{\nabla}_iX\cdot\tau_i, \label{E:Sdiv_Cov} \\
    \nabla_\Gamma X:W &= \sum_{i=1,2}\overline{\nabla}_iX\cdot W\tau_i = \sum_{i=1,2}W\overline{\nabla}_iX\cdot\tau_i, \label{E:Wtr_Cov} \\
    \nabla_\Gamma X:\nabla_\Gamma Y &= \sum_{i=1,2}\overline{\nabla}_iX\cdot\overline{\nabla}_iY+WX\cdot WY, \label{E:Inn_Cov} \\
    \nabla_\Gamma X:(\nabla_\Gamma Y)P &= \sum_{i=1,2}\overline{\nabla}_iX\cdot\overline{\nabla}_iY, \label{E:InnP_Cov} \\
    W\nabla_\Gamma X:(\nabla_\Gamma Y)P &= \sum_{i=1,2}\overline{\nabla}_{W\tau_i}X\cdot\overline{\nabla}_iY \label{E:Winn_Cov}
  \end{align}
  on $U$, where $\overline{\nabla}_i:=\overline{\nabla}_{\tau_i}$ for $i=1,2$.
\end{lemma}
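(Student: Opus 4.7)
The plan is to derive all five identities from two ingredients: the Gauss formula \eqref{E:Gauss} and the elementary identity
\begin{equation*}
  A:B \;=\; \sum_{k=1}^{3} A^T E_k \cdot B^T E_k
\end{equation*}
valid for any $A,B\in\mathbb{R}^{3\times 3}$ and any orthonormal basis $\{E_1,E_2,E_3\}$ of $\mathbb{R}^3$ (this is just $A:B=A^T\!:\!B^T$ combined with $\sum_k M E_k \cdot N E_k = M\!:\!N$, which in turn is a routine trace computation). The natural basis to choose on $U$ is $\{\tau_1,\tau_2,n\}$, because then for any tangential $X\in C^1(\Gamma,T\Gamma)$ the vectors $(\nabla_\Gamma X)^T\tau_i=(\tau_i\cdot\nabla_\Gamma)X$ are exactly the directional derivatives that the Gauss formula \eqref{E:Gauss} decomposes as $\overline{\nabla}_iX+(WX\cdot\tau_i)n$, while $(\nabla_\Gamma X)^T n=0$ thanks to $\nabla_\Gamma X = P\nabla\tilde{X}$ and $Pn=0$.

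With this setup the five identities reduce to short direct computations. For \eqref{E:Sdiv_Cov}, apply the basis identity to $A=\nabla_\Gamma X$ and $B=I_3$, noting that $(I_3)^T\tau_i=\tau_i$ and $(I_3)^T n=n$; the $n$-term vanishes because $(\nabla_\Gamma X)^T n=0$, and the cross-term $(WX\cdot\tau_i)(n\cdot\tau_i)$ vanishes by orthogonality. For \eqref{E:Wtr_Cov}, take $B=W$ and use $W^T\tau_i=W\tau_i$ together with $Wn=0$; the cross-term vanishes because $W\tau_i\cdot n=\tau_i\cdot Wn=0$, and the second equality is simply the symmetry of $W$. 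For \eqref{E:Inn_Cov}, take $A=\nabla_\Gamma X$, $B=\nabla_\Gamma Y$: each $(\tau_i\cdot\nabla_\Gamma)X\cdot(\tau_i\cdot\nabla_\Gamma)Y$ expands via \eqref{E:Gauss} into $\overline{\nabla}_iX\cdot\overline{\nabla}_iY+(WX\cdot\tau_i)(WY\cdot\tau_i)$ (cross-terms vanish by $\overline{\nabla}_i(\cdot)\cdot n=0$); summing over $i$ and using that $\{\tau_1,\tau_2\}$ completes the tangential orthonormal expansion while $WX,WY$ are tangential, the second sum becomes $WX\cdot WY$.

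For \eqref{E:InnP_Cov} and \eqref{E:Winn_Cov}, the key observation is that $((\nabla_\Gamma Y)P)^T = P(\nabla_\Gamma Y)^T$, so applying the basis identity gives terms of the form $(\tau_i\cdot\nabla_\Gamma)X\cdot P(\tau_i\cdot\nabla_\Gamma)Y$ (respectively $(W\tau_i\cdot\nabla_\Gamma)X\cdot P(\tau_i\cdot\nabla_\Gamma)Y$ when $A$ is replaced by $W\nabla_\Gamma X$, using $W^T=W$ and $Wn=0$ to kill the $n$-slot). Since $P(\tau_i\cdot\nabla_\Gamma)Y = \overline{\nabla}_iY$ by \eqref{E:Gauss} and $Pn=0$, the Gauss decomposition of the first factor contributes only its tangential part once we inner-product against the tangential $\overline{\nabla}_iY$, yielding the claimed sums. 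Note that $W\tau_i$ is tangential (from $PW=W$), so $\overline{\nabla}_{W\tau_i}X$ is well-defined via \eqref{E:RiCo_AfY}.

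There is no genuine obstacle here; the arguments are formal bookkeeping using orthonormality and the Gauss formula. The one place to be careful is keeping track of which factor in each inner-product term is tangential and which may have a normal component: every cross-term of the form $(\text{normal})\cdot(\text{tangential})$ must be identified and discarded before reassembling the sums. Writing out the five cases in the order above (\eqref{E:Sdiv_Cov}, \eqref{E:Wtr_Cov}, \eqref{E:Inn_Cov}, \eqref{E:InnP_Cov}, \eqref{E:Winn_Cov}) lets each computation reuse the structure of the previous one, keeping the proof short.
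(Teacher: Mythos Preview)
Your proposal is correct and follows essentially the same approach as the paper: both use the orthonormal basis $\{\tau_1,\tau_2,n\}$ to expand the matrix inner products via $A:B=\sum_k A^TE_k\cdot B^TE_k$, then apply the Gauss formula \eqref{E:Gauss} to decompose $(\nabla_\Gamma X)^T\tau_i=\overline{\nabla}_iX+(WX\cdot\tau_i)n$ while using $(\nabla_\Gamma X)^Tn=0$, $W^T=W$, $Wn=0$, and $P(\tau_i\cdot\nabla_\Gamma)Y=\overline{\nabla}_iY$ to eliminate the unwanted terms. The organization and level of detail match the paper's proof closely.
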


\begin{proof}
  We carry out calculations on $U$.
  By \eqref{E:P_TGr} and \eqref{E:Gauss} we have
  \begin{align} \label{Pf_FC:Form}
    \begin{aligned}
      (\nabla_\Gamma X)^T\tau_i &= (\tau_i\cdot\nabla_\Gamma)X = \overline{\nabla}_iX+(WX\cdot\tau_i)n, \quad i=1,2, \\
      (\nabla_\Gamma X)^Tn &= (n\cdot\nabla_\Gamma)X = 0.
    \end{aligned}
  \end{align}
  Since $\{\tau_1,\tau_2,n\}$ forms an orthonormal basis of $\mathbb{R}^3$,
  \begin{align*}
    \mathrm{div}_\Gamma X = \mathrm{tr}[\nabla_\Gamma X] = \sum_{i=1,2}(\nabla_\Gamma X)^T\tau_i\cdot\tau_i+(\nabla_\Gamma X)^Tn\cdot n.
  \end{align*}
  The equality \eqref{E:Sdiv_Cov} follows from this equality and \eqref{Pf_FC:Form}.
  We also obtain \eqref{E:Wtr_Cov} by applying \eqref{Pf_FC:Form}, $W^T=W$, and $Wn=0$ to the right-hand side of
  \begin{align*}
    \nabla_\Gamma X:W = (\nabla_\Gamma X)^T:W^T = \sum_{i=1,2}(\nabla_\Gamma X)^T\tau_i\cdot W^T\tau_i+(\nabla_\Gamma X)^Tn\cdot W^Tn.
  \end{align*}
  Let us prove \eqref{E:Inn_Cov}.
  By \eqref{E:Gauss}, \eqref{Pf_FC:Form}, and $\overline{\nabla}_iX\cdot n=\overline{\nabla}_iY\cdot n=0$,
  \begin{align*}
    \nabla_\Gamma X:\nabla_\Gamma Y &= (\nabla_\Gamma X)^T:(\nabla_\Gamma Y)^T \\
    &= \sum_{i=1,2}(\nabla_\Gamma X)^T\tau_i\cdot(\nabla_\Gamma Y)^T\tau_i+(\nabla_\Gamma X)^Tn\cdot(\nabla_\Gamma Y)^Tn \\
    &= \sum_{i=1,2}\Bigl\{\overline{\nabla}_iX+(WX\cdot\tau_i)n\Bigr\}\cdot\Bigl\{\overline{\nabla}_iY+(WY\cdot\tau_i)n\Bigr\} \\
    &= \sum_{i=1,2}\overline{\nabla}_iX\cdot\overline{\nabla}_iY+\sum_{i=1,2}(WX\cdot\tau_i)(WY\cdot\tau_i).
  \end{align*}
  Here the last term is equal to $WX\cdot WY$ since $WX$ and $WY$ are tangential on $\Gamma$ and $\{\tau_1,\tau_2\}$ is a local orthonormal frame.
  Hence \eqref{E:Inn_Cov} is valid.
  Also, since
  \begin{align*}
  (\nabla_\Gamma X)^TW\tau_i &= (W\tau_i\cdot\nabla_\Gamma)X = \overline{\nabla}_{W\tau_i}X+(WX\cdot W\tau_i)n, \\
  P(\nabla_\Gamma Y)^T\tau_i &= P\Bigl\{\overline{\nabla}_iY+(WY\cdot\tau_i)n\Bigr\} = P\overline{\nabla}_iY = \overline{\nabla}_iY
  \end{align*}
  by \eqref{E:Gauss} and $Pn=0$, we can prove \eqref{E:InnP_Cov} and \eqref{E:Winn_Cov} as above by using the above relations, \eqref{Pf_FC:Form}, $P^T=P$, and $W^T=W$.
\end{proof}

Next we give an integration by parts formula for integrals over $\Gamma$ of the covariant derivatives along vector fields of a local orthonormal frame.

\begin{lemma} \label{L:IbP_Cov}
  Let $\{\tau_1,\tau_2\}$ be a local orthonormal frame of the tangent plane of $\Gamma$ on a relatively open subset $U$ of $\Gamma$ and $\overline{\nabla}_i:=\overline{\nabla}_{\tau_i}$ for $i=1,2$.
  Suppose that $X\in C^2(\Gamma,T\Gamma)$ and $Y\in C^1(\Gamma,T\Gamma)$ are compactly supported in $U$.
  Then we have
  \begin{align} \label{E:IbP_Cov}
    \sum_{i=1,2}\int_\Gamma\Bigl(\overline{\nabla}_i\overline{\nabla}_iX-\overline{\nabla}_{\overline{\nabla}_i\tau_i}X\Bigr)\cdot Y\,d\mathcal{H}^2 = -\sum_{i=1,2}\int_\Gamma\overline{\nabla}_iX\cdot\overline{\nabla}_iY\,d\mathcal{H}^2.
  \end{align}
\end{lemma}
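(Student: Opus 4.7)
My plan is to deduce \eqref{E:IbP_Cov} from the metric compatibility \eqref{E:RiCo_Met} and the divergence theorem on the closed surface $\Gamma$, with the orthonormality of the frame $\{\tau_1,\tau_2\}$ playing the crucial role at the end. First, I would apply \eqref{E:RiCo_Met} with $Z=\tau_i$ to the pair $\overline{\nabla}_iX$ and $Y$ to rewrite
\begin{align*}
  \overline{\nabla}_i\overline{\nabla}_iX\cdot Y = \tau_i\cdot\nabla_\Gamma(\overline{\nabla}_iX\cdot Y)-\overline{\nabla}_iX\cdot\overline{\nabla}_iY \quad\text{on}\quad \Gamma.
\end{align*}
Setting $\eta_i:=\overline{\nabla}_iX\cdot Y\in C^1(\Gamma)$ (which is compactly supported in $U$ by the hypothesis on $X,Y$), I would express $\tau_i\cdot\nabla_\Gamma\eta_i=\mathrm{div}_\Gamma(\eta_i\tau_i)-\eta_i\,\mathrm{div}_\Gamma\tau_i$, so that integration over the closed surface $\Gamma$ annihilates the divergence term and yields
\begin{align*}
  \int_\Gamma\overline{\nabla}_i\overline{\nabla}_iX\cdot Y\,d\mathcal{H}^2 = -\int_\Gamma\overline{\nabla}_iX\cdot\overline{\nabla}_iY\,d\mathcal{H}^2-\int_\Gamma(\overline{\nabla}_iX\cdot Y)\,\mathrm{div}_\Gamma\tau_i\,d\mathcal{H}^2.
\end{align*}

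After summing over $i=1,2$, it then remains to identify the last term with $\sum_i\int_\Gamma\overline{\nabla}_{\overline{\nabla}_i\tau_i}X\cdot Y\,d\mathcal{H}^2$. This is the step that uses orthonormality. I would introduce the connection coefficients $\Gamma_{ij}^k:=\overline{\nabla}_i\tau_j\cdot\tau_k$ and observe, by applying \eqref{E:RiCo_Met} to the constant function $\tau_j\cdot\tau_k=\delta_{jk}$, the antisymmetry $\Gamma_{ij}^k=-\Gamma_{ik}^j$. Expanding $\mathrm{div}_\Gamma\tau_i=\sum_j\overline{\nabla}_j\tau_i\cdot\tau_j=\sum_j\Gamma_{ji}^j$ via \eqref{E:Sdiv_Cov} and using the antisymmetry to get $\Gamma_{ji}^j=-\Gamma_{jj}^i$, I would rewrite $\mathrm{div}_\Gamma\tau_i=-\sum_j\overline{\nabla}_j\tau_j\cdot\tau_i$. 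On the other hand, expanding $\overline{\nabla}_i\tau_i=\sum_j(\overline{\nabla}_i\tau_i\cdot\tau_j)\tau_j$ and using \eqref{E:RiCo_AfY} gives $\overline{\nabla}_{\overline{\nabla}_i\tau_i}X=\sum_j\Gamma_{ii}^j\overline{\nabla}_jX$. A short relabelling then shows
\begin{align*}
  \sum_{i=1,2}(\overline{\nabla}_iX\cdot Y)\,\mathrm{div}_\Gamma\tau_i = -\sum_{i=1,2}\overline{\nabla}_{\overline{\nabla}_i\tau_i}X\cdot Y \quad\text{on}\quad \Gamma,
\end{align*}
which plugged into the summed identity gives \eqref{E:IbP_Cov}.

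The only nontrivial ingredient is the combinatorial identity relating the two skew contractions of the connection coefficients; this is not deep, but because of the contracted indices one has to be careful to apply the antisymmetry $\Gamma_{ij}^k=-\Gamma_{ik}^j$ in the correct pair of slots. The compact support of $X,Y$ in $U$ is needed only to make sense of $\eta_i\tau_i$ as a globally defined tangential vector field on $\Gamma$ (extended by zero) so that the Stokes theorem applies and the divergence integral vanishes; this is routine.
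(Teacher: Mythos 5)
Your argument is correct, and its backbone is the same as the paper's — metric compatibility \eqref{E:RiCo_Met} plus the Stokes theorem on the closed surface, with orthonormality of the frame supplying the extra term — but the mechanics at the key step differ. The paper introduces the single auxiliary tangential field $Z:=\sum_{i=1,2}(\overline{\nabla}_iX\cdot Y)\tau_i$ (extended by zero outside $U$), observes that $Z\cdot V=\overline{\nabla}_VX\cdot Y$ for every tangential $V$, and computes $\mathrm{div}_\Gamma Z$ via \eqref{E:Sdiv_Cov} and \eqref{E:RiCo_Met}; the term $\overline{\nabla}_{\overline{\nabla}_i\tau_i}X\cdot Y$ then appears directly as $Z\cdot\overline{\nabla}_i\tau_i$, so no connection coefficients ever enter, and a single application of the Stokes theorem to $Z$ finishes the proof. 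You instead integrate by parts summand by summand through the product rule $\tau_i\cdot\nabla_\Gamma\eta_i=\mathrm{div}_\Gamma(\eta_i\tau_i)-\eta_i\,\mathrm{div}_\Gamma\tau_i$, and are left with the frame identity $\sum_i(\overline{\nabla}_iX\cdot Y)\,\mathrm{div}_\Gamma\tau_i=-\sum_i\overline{\nabla}_{\overline{\nabla}_i\tau_i}X\cdot Y$, which you establish from the antisymmetry of $\overline{\nabla}_i\tau_j\cdot\tau_k$ in $(j,k)$ (equivalently $\mathrm{div}_\Gamma\tau_i=-\sum_j\overline{\nabla}_j\tau_j\cdot\tau_i$) together with \eqref{E:RiCo_AfY}; your index bookkeeping checks out, and orthonormality enters here exactly where the paper uses the expansion $V=\sum_j(V\cdot\tau_j)\tau_j$. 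The paper's packaging is slightly slicker (one divergence computation, no Christoffel-type coefficients), while yours is more explicit but equally valid; your justification of the two applications of the Stokes theorem via the compact supports in $U$, and your use of \eqref{E:RiCo_Met} and \eqref{E:Sdiv_Cov} for the merely locally defined fields $\tau_i$, $\overline{\nabla}_iX$, are unproblematic since all these identities are local.
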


\begin{proof}
  The proof is basically the same as that of~\cite[Proposition~34]{Pe06}.
  Let
  \begin{align*}
    Z := \sum_{i=1,2}\Bigl(\overline{\nabla}_iX\cdot Y\Bigr)\tau_i \quad\text{on}\quad U.
  \end{align*}
  We extend $Z$ to $\Gamma$ by setting zero outside of $U$ to get $Z\in C^1(\Gamma,T\Gamma)$, since $\tau_1$ and $\tau_2$ are of class $C^1$ on $U$ and $X\in C^2(\Gamma,T\Gamma)$ and $Y\in C^1(\Gamma,T\Gamma)$ are compactly supported in $U$.
  Moreover, since $\{\tau_1,\tau_2\}$ is a local orthonormal frame, we see by \eqref{E:RiCo_AfY} that $Z\cdot V=\overline{\nabla}_VX\cdot Y$ on $\Gamma$ for all $V\in C(\Gamma,T\Gamma)$.
  By this fact and \eqref{E:RiCo_Met},
  \begin{align*}
    \overline{\nabla}_iZ\cdot\tau_i = \tau_i\cdot\nabla_\Gamma(Z\cdot\tau_i)-Z\cdot\overline{\nabla}_i\tau_i = \tau_i\cdot\nabla_\Gamma\Bigl(\overline{\nabla}_iX\cdot Y\Bigr)-\overline{\nabla}_{\overline{\nabla}_i\tau_i}X\cdot Y \quad\text{on}\quad U
  \end{align*}
  for $i=1,2$.
  Applying \eqref{E:RiCo_Met} again to the first term on the right-hand side we get
  \begin{align*}
    \overline{\nabla}_iZ\cdot\tau_i = \Bigl(\overline{\nabla}_i\overline{\nabla}_iX-\overline{\nabla}_{\overline{\nabla}_i\tau_i}X\Bigr)\cdot Y+\overline{\nabla}_iX\cdot\overline{\nabla}_iY \quad\text{on}\quad U.
  \end{align*}
  By this equality and \eqref{E:Sdiv_Cov} we see that
  \begin{align} \label{Pf_ICov:Div}
    \mathrm{div}_\Gamma Z = \sum_{i=1,2}\left\{\Bigl(\overline{\nabla}_i\overline{\nabla}_iX-\overline{\nabla}_{\overline{\nabla}_i\tau_i}X\Bigr)\cdot Y+\overline{\nabla}_iX\cdot\overline{\nabla}_iY\right\} \quad\text{on}\quad U.
  \end{align}
  Since $X$, $Y$, and $Z$ are supported in $U$, we may assume that \eqref{Pf_ICov:Div} holds on the whole surface $\Gamma$.
  Hence we obtain \eqref{E:IbP_Cov} by integrating both sides of \eqref{Pf_ICov:Div} over $\Gamma$, since the integral of $\mathrm{div}_\Gamma Z$ over $\Gamma$ vanishes by the Stokes theorem (note that $Z$ is tangential and $\Gamma$ has no boundary).
\end{proof}

The formula \eqref{E:IbP_Cov} gives a relation between two Laplacians acting on tangential vector fields on $\Gamma$.
For $X\in C^2(\Gamma,T\Gamma)$, $Y\in C^1(\Gamma,T\Gamma)$, and $Z\in C(\Gamma,T\Gamma)$ we define the second covariant derivative by
\begin{align*}
  \overline{\nabla}_{Z,Y}^2X := \overline{\nabla}_Z\overline{\nabla}_YX-\overline{\nabla}_{\overline{\nabla}_ZY}X \quad\text{on}\quad \Gamma.
\end{align*}
The trace of the second covariant derivative is called the connection Laplacian:
\begin{align*}
  \mathrm{tr}\overline{\nabla}^2X := \sum_{i=1,2}\overline{\nabla}_{\tau_i,\tau_i}^2X = \sum_{i=1,2}\Bigl(\overline{\nabla}_i\overline{\nabla}_iX-\overline{\nabla}_{\overline{\nabla}_i\tau_i}X\Bigr) \quad\text{on}\quad \Gamma, \quad X\in C^2(\Gamma,T\Gamma).
\end{align*}
Here $\{\tau_1,\tau_2\}$ is a local orthonormal frame and $\overline{\nabla}_i=\overline{\nabla}_{\tau_i}$ for $i=1,2$.
On the other hand, the Bochner Laplacian $\Delta_BX$ of $X\in C^2(\Gamma,T\Gamma)$ is defined as a tangential vector field on $\Gamma$ satisfying
\begin{align} \label{E:Def_Boch}
  \int_\Gamma\Delta_BX\cdot Y\,d\mathcal{H}^2 = -\sum_{i=1,2}\int_\Gamma\overline{\nabla}_iX\cdot\overline{\nabla}_iY\,d\mathcal{H}^2
\end{align}
for all $Y\in C^1(\Gamma,T\Gamma)$, where we localize $X$ and $Y$ by using a partition of unity on $\Gamma$.
(Note that there are other definitions of these Laplacians in which one takes the opposite sign.)
Then by \eqref{E:IbP_Cov} these Laplacians agree, i.e.
\begin{align*}
  \mathrm{tr}\overline{\nabla}^2X = \Delta_BX \quad\text{on}\quad \Gamma, \quad X\in C^2(\Gamma,T\Gamma).
\end{align*}
When the surface $\Gamma$ is embedded in $\mathbb{R}^3$, we can also consider the Laplace--Beltrami operator acting on each component of a vector field $X=(X_1,X_2,X_3)$ on $\Gamma$:
\begin{align*}
  \Delta_\Gamma X = (\Delta_\Gamma X_1,\Delta_\Gamma X_2,\Delta_\Gamma X_3), \quad \Delta_\Gamma X_j = \sum_{i=1}^3\underline{D}_i^2X_j, \quad j=1,2,3.
\end{align*}
Let us give a relation between the two Laplacians $\Delta_B$ and $\Delta_\Gamma$.

\begin{lemma} \label{L:Boch_Lap}
  For $X\in C^2(\Gamma,T\Gamma)$ we have
  \begin{align} \label{E:Boch_Lap}
    \Delta_BX = P\Delta_\Gamma X+W^2X \quad\text{on}\quad \Gamma.
  \end{align}
\end{lemma}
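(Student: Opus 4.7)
The plan is to test the putative identity against arbitrary $Y\in C^1(\Gamma,T\Gamma)$ via the weak characterization \eqref{E:Def_Boch} of $\Delta_BX$, and reduce everything to the tangential integration by parts formula \eqref{E:IbP_TD} together with the already-established identity \eqref{E:Inn_Cov}. The main observation is that both $\Delta_BX$ and the proposed expression $P\Delta_\Gamma X+W^2X$ are tangential on $\Gamma$ (the latter because $Wn=0$ by \eqref{E:Form_W} gives $W^2X$ tangential, and $P\Delta_\Gamma X$ is tangential by construction), so it suffices to match their $L^2$-pairings with every tangential test field.

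First I would rewrite the right-hand side of \eqref{E:Def_Boch} by applying \eqref{E:Inn_Cov} pointwise to convert the frame-dependent sum of covariant derivatives into the frame-independent expression
\[
\sum_{i=1,2}\overline{\nabla}_iX\cdot\overline{\nabla}_iY=\nabla_\Gamma X:\nabla_\Gamma Y-WX\cdot WY.
\]
This is the key step that lets me avoid working with a partition of unity subordinate to coordinate patches and handle $X,Y$ globally on $\Gamma$. Next I would integrate by parts componentwise: writing $\nabla_\Gamma X:\nabla_\Gamma Y=\sum_{i,j}\underline{D}_iX_j\,\underline{D}_iY_j$ and applying \eqref{E:IbP_TD} with $\eta=\underline{D}_iX_j$, $\xi=Y_j$, the boundary-like term $\int_\Gamma Y_j(\underline{D}_iX_j)Hn_i\,d\mathcal{H}^2$ collapses upon summing in $i$ to $\int_\Gamma Y_j\,H\,(n\cdot\nabla_\Gamma X_j)\,d\mathcal{H}^2$, which vanishes by \eqref{E:P_TGr}. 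This yields
\[
\int_\Gamma\nabla_\Gamma X:\nabla_\Gamma Y\,d\mathcal{H}^2=-\int_\Gamma Y\cdot\Delta_\Gamma X\,d\mathcal{H}^2=-\int_\Gamma Y\cdot P\Delta_\Gamma X\,d\mathcal{H}^2,
\]
where the last equality uses $Y\cdot n=0$.

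For the remaining term, the symmetry $W^T=W$ from Lemma~\ref{L:Form_W} gives $\int_\Gamma WX\cdot WY\,d\mathcal{H}^2=\int_\Gamma W^2X\cdot Y\,d\mathcal{H}^2$. Combining these two computations,
\[
-\sum_{i=1,2}\int_\Gamma\overline{\nabla}_iX\cdot\overline{\nabla}_iY\,d\mathcal{H}^2=\int_\Gamma\bigl(P\Delta_\Gamma X+W^2X\bigr)\cdot Y\,d\mathcal{H}^2,
\]
so by \eqref{E:Def_Boch} the difference $\Delta_BX-(P\Delta_\Gamma X+W^2X)$ pairs to zero against every $Y\in C^1(\Gamma,T\Gamma)$. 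Since both quantities are tangential and continuous, this forces pointwise equality, establishing \eqref{E:Boch_Lap}.

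I do not expect any serious obstacle here: the potentially delicate point is the localization baked into the definition \eqref{E:Def_Boch} (which is a priori only meaningful after a partition of unity subordinate to frames), but applying \eqref{E:Inn_Cov} at the outset turns the local-frame expression into a manifestly intrinsic one, after which the argument is global and the partition of unity disappears. The only verification needed is that formula \eqref{E:Inn_Cov} is stated under the hypothesis $X,Y\in C^1(\Gamma,T\Gamma)$, which is amply satisfied since $X\in C^2(\Gamma,T\Gamma)$.
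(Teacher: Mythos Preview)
Your proof is correct and follows essentially the same route as the paper: both use \eqref{E:Inn_Cov} to rewrite $\sum_i\overline{\nabla}_iX\cdot\overline{\nabla}_iY$ as $\nabla_\Gamma X:\nabla_\Gamma Y-WX\cdot WY$, then integrate by parts componentwise via \eqref{E:IbP_TD} (with the curvature term vanishing by $n\cdot\nabla_\Gamma X_j=0$), and finish using $W^T=W$. The only cosmetic difference is that the paper first localizes to a patch carrying a frame before invoking \eqref{E:Inn_Cov}, whereas you observe---correctly---that \eqref{E:Inn_Cov} already makes the integrand intrinsic, so the partition of unity can be dispensed with.
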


\begin{proof}
  The relation \eqref{E:Boch_Lap} is proved in \cite[Appendix~B]{Mi18}.
  Here we give another proof of it.
  By a localization argument with a partition of unity on $\Gamma$, it is sufficient to show \eqref{E:Boch_Lap} on a relatively open subset $U$ of $\Gamma$ on which we can take a local orthonormal frame $\{\tau_1,\tau_2\}$.
  Let $Y\in C^1(\Gamma,T\Gamma)$ be compactly supported in $U$.
  By \eqref{E:Inn_Cov} and \eqref{E:Def_Boch} we have
  \begin{align} \label{Pf_BL:First}
    \begin{aligned}
      \int_\Gamma\Delta_BX\cdot Y\,d\mathcal{H}^2 &= -\sum_{i=1,2}\int_\Gamma\overline{\nabla}_iX\cdot\overline{\nabla}_iY\,d\mathcal{H}^2 \\
      &= -\int_\Gamma\nabla_\Gamma X:\nabla_\Gamma Y\,d\,\mathcal{H}^2+\int_\Gamma WX\cdot WY\,d\mathcal{H}^2.
    \end{aligned}
  \end{align}
  To the first integral on the last line we use \eqref{E:IbP_TD} to get
  \begin{align*}
    \int_\Gamma\nabla_\Gamma X:\nabla_\Gamma Y\,d\,\mathcal{H}^2 &= \sum_{i,j=1}^3\int_\Gamma(\underline{D}_iX_j)(\underline{D}_iY_j)\,d\mathcal{H}^2 \\
    &= -\sum_{i,j=1}^3\int_\Gamma\{(\underline{D}_i^2X_j)Y_j+(\underline{D}_iX_j)Y_jHn_i\}\,d\mathcal{H}^2 \\
    &= -\int_\Gamma\Delta_\Gamma X\cdot Y\,d\mathcal{H}^2-\int_\Gamma\{(n\cdot\nabla_\Gamma)X\cdot Y\}H\,d\mathcal{H}^2.
  \end{align*}
  From this equality, \eqref{Pf_BL:First}, $(n\cdot\nabla_\Gamma)X=0$, and $W^T=W$ on $\Gamma$ we deduce that
  \begin{align*}
    \int_\Gamma\Delta_BX\cdot Y\,d\mathcal{H}^2 = \int_\Gamma(\Delta_\Gamma X+W^2X)\cdot Y\,d\mathcal{H}^2.
  \end{align*}
  Thus, setting $Y:=Pv$ and noting that $\Delta_BX$ and $W^2X$ are tangential on $\Gamma$ we get
  \begin{align*}
    \int_\Gamma \Delta_BX\cdot v\,d\mathcal{H}^2 = \int_\Gamma(P\Delta_\Gamma X+W^2X)\cdot v\,d\mathcal{H}^2
  \end{align*}
  for all $v\in C^1(\Gamma)^3$ compactly supported in $U$.
  Hence by the fundamental lemma of calculus of variations we conclude that \eqref{E:Boch_Lap} holds on $U$.
\end{proof}

Note that the normal component of $\Delta_\Gamma X$ does not vanish in general even if $X$ is tangential on $\Gamma$.
Indeed, by $X\cdot n=0$ and $-\nabla_\Gamma n=W$ on $\Gamma$,
\begin{align*}
  \Delta_\Gamma X\cdot n = \mathrm{div}_\Gamma(WX)+W:\nabla_\Gamma X = \mathrm{div}_\Gamma W\cdot X+2W:\nabla_\Gamma X \quad\text{on}\quad \Gamma.
\end{align*}
When $\Gamma$ is a flat domain in $\mathbb{R}^2$, we have $W=-\nabla_\Gamma n=0$ by $n=(0,0,1)$.
Hence the normal component of $\Delta_\Gamma X$ vanishes and $\Delta_BX=\Delta_\Gamma X$ reduces to the usual Laplacian on $\mathbb{R}^2$ acting on each component of $X=(X_1,X_2)$.

\begin{remark} \label{R:RC}
  When $\Gamma$ is of class $C^3$, the space $C^2(\Gamma,T\Gamma)$ is dense in $H^m(\Gamma,T\Gamma)$ for $m=0,1,2$ by Lemma~\ref{L:Wmp_Tan_Appr}.
  Hence the formulas given in this appendix are also valid (a.e. on $\Gamma$) if we replace $C^m(\Gamma,T\Gamma)$, $m=0,1,2$ with $H^m(\Gamma,T\Gamma)$.
\end{remark}

%%% Appendix D %%%
\section{Infinitesimal rigid displacements on a closed surface} \label{S:Ap_IR}
We provide several results on infinitesimal rigid displacements of $\mathbb{R}^3$ which have tangential restrictions on a closed surface.

Let $\Gamma$ be a closed, connected, and oriented surface in $\mathbb{R}^3$ of class $C^2$ and $\mathcal{R}$ the function space given by \eqref{E:Def_R}.

\begin{lemma} \label{L:IR_Surf}
  Let $w(x)=a\times x+b\in\mathcal{R}$.
  If $w\not\equiv0$, then $a\neq0$, $a\cdot b=0$, and $\Gamma$ is axially symmetric around the axis parallel to the vector $a$ and passing through the point $b_a:=|a|^{-2}(a\times b)$.
  Conversely, if $\Gamma$ is axially symmetric around the axis parallel to $a\neq0$ and passing through $\tilde{b}\in\mathbb{R}^3$, then $\tilde{w}(x)=a\times(x-\tilde{b})\in\mathcal{R}\setminus\{0\}$.
\end{lemma}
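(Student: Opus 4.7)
The plan is to prove both implications by analyzing the one-parameter flow of the affine vector field $w(x) = a\times x + b$ together with a simple maximum-principle observation: for any nonzero $v\in\mathbb{R}^3$, the linear functional $x\mapsto v\cdot x$ attains its maximum on the compact surface $\Gamma$ at some $x_0$, and at such a point the outward unit normal satisfies $n(x_0)=v/|v|$ (the tangential projection of $v$ vanishes at the extremum, and the sign is forced by the fact that $\Gamma$ bounds a region lying on the side $v\cdot x \le v\cdot x_0$).

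For the forward direction, I would first rule out $a=0$: if $a=0$ then $w\equiv b\neq 0$, and evaluating $w\cdot n=0$ at the maximum point of $b\cdot x$ on $\Gamma$ would yield $b\cdot(b/|b|)=|b|=0$, a contradiction. With $a\neq 0$, I evaluate $(a\times x+b)\cdot n=0$ at the maximum point $x_0$ of $a\cdot x$, where $n(x_0)=a/|a|$; the term $(a\times x_0)\cdot a$ vanishes identically, leaving $a\cdot b=0$.

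With $a\cdot b=0$ in hand, the identity $a\times(a\times b)=(a\cdot b)a-|a|^2 b=-|a|^2 b$ gives $a\times b_a=-b$, so
\[
w(x) = a\times x + b = a\times(x-b_a), \qquad b_a := |a|^{-2}(a\times b),
\]
with $a\cdot b_a=0$. The ODE $\dot y = a\times y$ on $\mathbb{R}^3$ has as its flow rotation by angle $|a|t$ about the line through the origin in direction $a/|a|$; translating by $b_a$, the flow of $w$ is rotation by $|a|t$ about the axis $L:=\{b_a+sa:s\in\mathbb{R}\}$. Because $w$ is tangent to the $C^2$ closed surface $\Gamma$, the standard invariance theorem for tangent vector fields on closed submanifolds gives that each rotation about $L$ maps $\Gamma$ into itself, which is exactly axial symmetry of $\Gamma$ about $L$.

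For the converse, given that $\Gamma$ is invariant under all rotations $R_t$ about $L=\{\tilde b+sa:s\in\mathbb{R}\}$ with $a\neq 0$, set $\tilde w(x):=a\times(x-\tilde b)=a\times x+(-a\times\tilde b)$, which is affine and not identically zero since $a\neq 0$ (it vanishes only on the one-dimensional axis $L$, while $\mathbb{R}^3\setminus L$ is nonempty). For each $x\in\Gamma$ the orbit $t\mapsto R_t(x)$ stays in $\Gamma$ by hypothesis, and its derivative at $t=0$ equals $\tilde w(x)$, so $\tilde w(x)$ is tangent to $\Gamma$ at $x$; hence $\tilde w|_\Gamma\cdot n=0$ and $\tilde w\in\mathcal{R}\setminus\{0\}$. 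The main point requiring care across both directions is the flow-invariance principle (that a $C^1$ tangent vector field on a closed $C^2$ submanifold of $\mathbb{R}^3$ has its integral curves trapped inside the submanifold), together with the easily mis-signed cross-product identity $a\times b_a=-b$; all other steps are direct consequences of the maximum-principle observation in the opening paragraph.
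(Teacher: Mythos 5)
Your proof is correct, and its skeleton is the same as the paper's: identify the flow of $w$ as a rotation about the axis through $b_a$ in direction $a$, use tangency ($w\cdot n=0$ on $\Gamma$) to conclude that this flow preserves $\Gamma$, and prove the converse by differentiating the rotation orbits at $t=0$. The one genuine difference is how you obtain $a\cdot b=0$ (and $b=0$ in the case $a=0$): you evaluate the tangency condition at a point of $\Gamma$ maximizing the linear functional $x\mapsto a\cdot x$ (resp.\ $b\cdot x$), where the normal is parallel to $a$ (resp.\ $b$), whereas the paper solves the flow ODE explicitly in an orthonormal frame adapted to $a$ and reads off $b\cdot a=0$ from the boundedness of the component of the flow along $a$ on the compact surface. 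Your extremum argument is more elementary and lets you write $w(x)=a\times(x-b_a)$ at once, so the flow is a pure rotation without any screw-motion discussion; the paper's explicit solution does double duty, delivering both $a\cdot b=0$ and the rotation formula in one computation. Note also that the sign of the normal at the extremum is irrelevant for your purposes, since $\pm(a\cdot b)=0$ and $\pm|b|=0$ give the same conclusions, so the only point where care is genuinely needed is the flow-invariance principle, which the paper also invokes without proof.
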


\begin{proof}
  Suppose that $w(x)=a\times x+b\in\mathcal{R}$ does not identically vanish.
  If $a=0$, then $w\cdot n=b\cdot n=0$ on $\Gamma$, which yields $b=0$ since $\Gamma$ is closed.
  Hence $a\neq0$ when $w\not\equiv0$.
  Now we consider the flow map $x(\cdot,t)\colon\mathbb{R}^3\to\mathbb{R}^3$ of $w$ (here $\dot{x}=\partial x/\partial t$)
  \begin{align} \label{Pf_IRS:Flow}
    x(X,0) = X \in \mathbb{R}^3, \quad \dot{x}(X,t) = w(x(X,t),t) = a\times x(X,t)+b, \quad t>0.
  \end{align}
  Since $a\neq0$, we can take an orthonormal basis $\{E_1,E_2,E_3\}$ of $\mathbb{R}^3$ such that
  \begin{align} \label{Pf_IRS:ONB}
    E_3 = |a|^{-1}a, \quad E_1\times E_2 = E_3, \quad E_2\times E_3 = E_1, \quad E_3\times E_1 = E_2.
  \end{align}
  In what follows, we write $x_E^i:=x\cdot E_i$ for $x\in\mathbb{R}^3$ and $i=1,2,3$.
  Then since
  \begin{align*}
    a\times x = |a|E_3\times(x_E^1E_1+x_E^2E_2+x_E^3E_3) = -|a|x_E^2E_1+|a|x_E^1E_2,
  \end{align*}
  the system \eqref{Pf_IRS:Flow} for $x(t)=\sum_{i=1}^3x_E^i(t)E_i$ is equivalent to
  \begin{align*}
    \left\{
    \begin{aligned}
      \dot{x}_E^1(t) &= -|a|x_E^2(t)+b_E^1, & & x_E^1(0) = X_E^1, \\
      \dot{x}_E^2(t) &= |a|x_E^1(t)+b_E^2, & & x_E^2(0) = X_E^2, \\
      \dot{x}_E^3(t) &= b_E^3, & & x_E^3(0) = X_E^3.
    \end{aligned}
    \right.
  \end{align*}
  Here we suppressed the argument $X$.
  We solve this system to get
  \begin{align} \label{Pf_IRS:Sol}
    \left\{
    \begin{aligned}
      x_E^1(t)+|a|^{-1}b_E^2 &= (X_E^1+|a|^{-1}b_E^2)\cos(|a|t)-(X_E^2-|a|^{-1}b_E^1)\sin(|a|t), \\
      x_E^2(t)-|a|^{-1}b_E^1 &= (X_E^1+|a|^{-1}b_E^2)\sin(|a|t)+(X_E^2-|a|^{-1}b_E^1)\cos(|a|t), \\
      x_E^3(t) &= X_E^3+b_E^3t.
    \end{aligned}
    \right.
  \end{align}
  Now we observe by $w\cdot n=0$ on $\Gamma$ that $x(\cdot,t)$ maps $\Gamma$ into itself for all $t>0$.
  This fact and the compactness of $\Gamma$ implies that $x_E^3(t)=X_E^3+b_E^3t$ remains bounded for $X\in\Gamma$, which yields $b_E^3=b\cdot E_3=0$, i.e. $a\cdot b=0$.
  Also, setting
  \begin{align} \label{Pf_IRS:Ba}
    b_a := -|a|^{-1}b_E^2E_1+|a|^{-1}b_E^1E_2 = |a|^{-2}a\times b,
  \end{align}
  where the second equality follows from $b=b_E^1E_1+b_E^2E_2$ and \eqref{Pf_IRS:ONB}, we get
  \begin{align*}
    x(X,t)-b_a &= \{x_E^1(X,t)+|a|^{-1}b_E^2\}E_1+\{x_E^2(X,t)-|a|^{-1}b_E^1\}E_2+x_E^3(X,t)E_3.
  \end{align*}
  Hence by \eqref{Pf_IRS:Sol} with $b_E^3=0$ we obtain
  \begin{align} \label{Pf_IRS:Rot}
    x(X,t)-b_a = P_aR_a(t)P_a^T(X-b_a), \quad X\in\mathbb{R}^3,\,t\geq0,
  \end{align}
  where $P_a:=(E_1 \, E_2 \, E_3)$ is a $3\times 3$ matrix whose $i$-th column is $E_i$ for $i=1,2,3$ and
  \begin{align} \label{Pf_IRS:Def_Ra}
    R_a(t) :=
    \begin{pmatrix}
      \cos(|a|t) & -\sin(|a|t) & 0 \\
      \sin(|a|t) & \cos(|a|t) & 0 \\
      0 & 0 & 1
    \end{pmatrix}.
  \end{align}
  By \eqref{Pf_IRS:Rot} we observe that the flow map $x(\cdot,t)$ of $w$ is given by the rotation through the angle $|a|t$ around the axis parallel to $E_3=|a|^{-1}a$ and passing through $b_a$.
  Since $x(\cdot,t)$ maps $\Gamma$ into itself for all $t>0$ by $w\cdot n=0$ on $\Gamma$, we conclude that $\Gamma$ is axially symmetric around the axis parallel to $a$ and passing through $b_a$.

  Conversely, suppose that $\Gamma$ is axially symmetric around the axis parallel to $a\neq0$ and passing through $\tilde{b}\in\mathbb{R}^3$.
  Let $\{E_1,E_2,E_3\}$ be an orthonormal basis of $\mathbb{R}^3$ satisfying \eqref{Pf_IRS:ONB}.
  Then by the first part of the proof we see that the mapping
  \begin{align*}
    \Phi_t(X) := P_aR_a(t)P_a^T(X-\tilde{b})+\tilde{b}, \quad X\in\mathbb{R}^3
  \end{align*}
  preserves $\Gamma$ for all $t\in\mathbb{R}$, where $P_a=(E_1 \, E_2 \, E_3)$ and $R_a(t)$ is given by \eqref{Pf_IRS:Def_Ra}.
  Hence for each $Y\in\Gamma$ the time derivative of $\Phi_t(Y)$ at $t=0$ gives a tangent vector on $\Gamma$ at $Y$.
  Moreover, setting $Z:=Y-\tilde{b}$ and $Z_E^i:=Z\cdot E_i$, $i=1,2,3$ we have
  \begin{align*}
    \frac{d}{dt}\Phi_t(Y)\Bigl|_{t=0} &= \frac{d}{dt}P_aR_a(t)P_a^TZ\Bigl|_{t=0} = P_a
    \begin{pmatrix}
      0 & -|a| & 0 \\
      |a| & 0 & 0 \\
      0 & 0 & 0
    \end{pmatrix}
    P_a^TZ \\
    &= |a|(-Z_E^2E_1+Z_E^1E_2) = |a|E_3\times(Z_E^1E_1+Z_E^2E_2+Z_E^3E_3) \\
    &= |a|E_3\times Z = a\times(Y-\tilde{b})
  \end{align*}
  by \eqref{Pf_IRS:ONB}, $E_3\times E_3=0$, and $Z=\sum_{i=1}^3Z_E^iE_i$.
  Hence $\tilde{w}(x)=a\times(x-\tilde{b})\in\mathcal{R}\setminus\{0\}$.
\end{proof}

\begin{lemma} \label{L:SR_Eigen}
  Let $w(x)=a\times x+b\in\mathcal{R}$ satisfy $w\not\equiv0$.
  Then
  \begin{align} \label{E:SR_Eigen}
    W(y)w(y) = \lambda(y)w(y), \quad a\times n(y) = -\lambda(y)w(y) \quad\text{for all}\quad y\in\Gamma
  \end{align}
  with some constant $\lambda(y)\in\mathbb{R}$.
  Here $W=-\nabla_\Gamma n$ is the Weingarten map of $\Gamma$.
\end{lemma}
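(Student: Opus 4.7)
\medskip

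\noindent\textbf{Proof proposal.} The plan is to derive the vector identity $W(y)w(y)=-a\times n(y)$ from a rigid-motion flow argument, and then show separately that $W(y)w(y)$ is parallel to $w(y)$ by an elementary scalar triple product computation using the identity $a\cdot w=0$. The two observations together yield both equalities in \eqref{E:SR_Eigen}, with the eigenvalue $\lambda(y)$ being the proportionality constant.

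First I would set up the flow. Since $w\not\equiv0$, Lemma~\ref{L:IR_Surf} gives $a\neq0$, $a\cdot b=0$, and $\Gamma$ is axially symmetric around the axis parallel to $a$ passing through $b_a=|a|^{-2}(a\times b)$. Moreover, by the calculation in the proof of Lemma~\ref{L:IR_Surf}, the flow map of $w$ is the rigid motion $\phi_t(y)=\rho(t)(y-b_a)+b_a$ with $\rho(t)=P_aR_a(t)P_a^T$ an ambient rotation around the axis of $a$. Because $\phi_t$ is an isometry of $\mathbb{R}^3$ mapping $\Gamma$ onto itself and its differential equals the linear rotation $\rho(t)$, tangent vectors and (by orthogonality plus continuity in $t$) the unit outward normal transform as $n(\phi_t(y))=\rho(t)n(y)$ for every $y\in\Gamma$ and every $t\in\mathbb{R}$. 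Differentiating this at $t=0$, the right-hand side yields $\dot\rho(0)n(y)=a\times n(y)$ (this is exactly the computation carried out at the end of the proof of Lemma~\ref{L:IR_Surf}), while the left-hand side equals $(w(y)\cdot\nabla)\bar n(y)=(\nabla\bar n(y))^Tw(y)$. Using $\nabla\bar n(y)=\nabla_\Gamma n(y)=-W(y)$ on $\Gamma$ (see \eqref{E:Def_WHK} and Lemma~\ref{L:Form_W}) and the symmetry of $W$, the latter becomes $-W(y)w(y)$. Equating the two gives the central identity
\[
W(y)w(y)=-\bigl(a\times n(y)\bigr),\qquad y\in\Gamma.
\]

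Next I would prove that $W(y)w(y)$ is parallel to $w(y)$. Fix $y\in\Gamma$ with $w(y)\neq0$ (the case $w(y)=0$ is trivial, see below). By Lemma~\ref{L:Form_W}, $W(y)w(y)$ is tangential to $\Gamma$. Let $w^\perp(y)$ be any tangent vector at $y$ perpendicular to $w(y)$; then $\{w(y),w^\perp(y),n(y)\}$ is an orthogonal basis of $\mathbb{R}^3$. Computing $W(y)w(y)\cdot w^\perp(y)=-(a\times n(y))\cdot w^\perp(y)=-\det\bigl(a,n(y),w^\perp(y)\bigr)$, we note that $w(y)=a\times(y-b_a)$ forces $a\cdot w(y)=0$, so $a$ lies in the two-plane $\mathrm{span}\{n(y),w^\perp(y)\}$ (the orthogonal complement of $w(y)$); hence the triple product vanishes and $W(y)w(y)\cdot w^\perp(y)=0$. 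As $W(y)w(y)$ is tangential and orthogonal to $w^\perp(y)$, it must be a scalar multiple $\lambda(y)w(y)$ of $w(y)$. Combining with the central identity gives $a\times n(y)=-\lambda(y)w(y)$, establishing both equalities in \eqref{E:SR_Eigen}. If $w(y)=0$ at some $y$, then $y$ lies on the rotation axis of $\Gamma$, so by axial symmetry $n(y)$ is parallel to $a$ and both sides of both identities vanish; one simply sets $\lambda(y)=0$.

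I do not anticipate a serious obstacle: the identity $W(y)w(y)=-a\times n(y)$ is essentially forced by the flow picture of Lemma~\ref{L:IR_Surf}, and the parallelism $W(y)w(y)\parallel w(y)$ is a one-line consequence of $a\perp w$ via the scalar triple product. The only mildly delicate point is justifying the sign in $n(\phi_t(y))=+\rho(t)n(y)$, which is handled by the continuity of $t\mapsto n(\phi_t(y))$ starting from $t=0$.
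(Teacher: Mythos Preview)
Your proof is correct and takes a genuinely different route from the paper's. The paper reduces (via Lemma~\ref{L:IR_Surf}) to the normalized situation $a=e_3$, $b_a=0$, then represents $\Gamma$ explicitly as a surface of revolution $\mu(s,\vartheta)=(\varphi(s)\cos\vartheta,\varphi(s)\sin\vartheta,\psi(s))$, writes down $n(\mu(s,\vartheta))$ by the cross-product formula, and differentiates in $\vartheta$ to obtain both the eigenvector equation and the explicit eigenvalue $\lambda(y)=\psi'(s)/\varphi(s)$; the second identity in \eqref{E:SR_Eigen} is then read off from the same coordinate formulas. Your argument is coordinate-free: you first derive the single identity $W(y)w(y)=-a\times n(y)$ by differentiating $n(\phi_t(y))=\rho(t)n(y)$ at $t=0$, and then obtain the parallelism $W(y)w(y)\parallel w(y)$ from the neat observation that $a\perp w(y)$ forces $\det(a,n(y),w^\perp(y))=0$. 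The two approaches share the same underlying step (differentiating the normal along the rotational flow), but the paper's buys an explicit formula for $\lambda(y)$ in terms of the generating curve, while yours avoids any parametrization and handles the degenerate points $w(y)=0$ more cleanly, since the flow identity $W(y)w(y)=-a\times n(y)$ already forces $a\times n(y)=0$ there without a separate geometric argument.
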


\begin{proof}
  Since $w(x)=a\times x+b\in\mathcal{R}$ and $w\not\equiv0$, Lemma~\ref{L:IR_Surf} implies that $a\neq0$, $a\cdot b=0$, and $\Gamma$ is axially symmetric around the axis parallel to $a$ and passing through $b_a=|a|^{-2}(a\times b)$.
  Also, since $a\times b_a=-b$ by \eqref{Pf_IRS:ONB}, \eqref{Pf_IRS:Ba}, and $a\cdot b=0$, we have $w(x)=a\times(x-b_a)$.
  Hence by a translation along $b_a$ and a rotation of coordinates we may assume that $\Gamma$ is axially symmetric around the $x_3$-axis and $w(x)=\alpha(e_3\times x)$, where $\alpha=|a|>0$ and $e_3=(0,0,1)$.
  Replacing $w$ by $\alpha^{-1}w$ we may further assume $\alpha=1$, i.e. $a=e_3$ and $w(x)=e_3\times x$.
  Under these assumptions, $\Gamma$ is represented as a surface of revolution
  \begin{align} \label{Pf_SRE:SoR}
    \Gamma = \{\mu(s,\vartheta) = (\varphi(s)\cos\vartheta,\varphi(s)\sin\vartheta,\psi(s)) \mid s\in[0,L],\,\vartheta\in[0,2\pi]\}.
  \end{align}
  Here $\gamma(s)=(\varphi(s),0,\psi(s))$ is a $C^2$ curve parametrized by the arc length $s\in[0,L]$, $L>0$ such that $\varphi(s)>0$ for $s\neq0,L$.
  We may further assume that if $\varphi(s)=0$ for $s=0,L$ then $\psi'(s)=0$, otherwise $\Gamma$ is not of class $C^2$ at $(0,0,\psi(s))$.
  By the arc length parametrization of $\gamma$ we have
  \begin{align} \label{Pf_SRE:Arc}
    \{\varphi'(s)\}^2+\{\psi'(s)\}^2 = 1, \quad s\in[0,L].
  \end{align}
  Let $y=\mu(s,\vartheta)\in\Gamma$.
  We suppress the arguments of $\mu$ and its derivatives.
  From
  \begin{align} \label{Pf_SRE:Tau}
    \partial_s\mu =
    \begin{pmatrix}
      \varphi'(s)\cos\vartheta \\ \varphi'(s)\sin\vartheta \\ \psi'(s)
    \end{pmatrix}, \quad
    \partial_\vartheta\mu =
    \begin{pmatrix}
      -\varphi(s)\sin\vartheta \\ \varphi(s)\cos\vartheta \\ 0
    \end{pmatrix}
    = e_3\times \mu = w(y)
  \end{align}
  and \eqref{Pf_SRE:Arc} we deduce that
  \begin{align*}
    \partial_s\mu\times\partial_\vartheta\mu = \varphi(s)
    \begin{pmatrix}
      -\psi'(s)\cos\vartheta \\ -\psi'(s)\sin\vartheta \\ \varphi'(s)
    \end{pmatrix}, \quad
    |\partial_s\mu\times\partial_\vartheta\mu| = \varphi(s).
  \end{align*}
  Suppose that $\varphi(s)>0$.
  Without loss of generality, we may assume that the direction of $\partial_s\mu\times\partial_\vartheta\mu$ is the same as that of the unit outward normal $n(y)$.
  Then
  \begin{align} \label{Pf_SRE:Nor}
    n(y) = n(\mu(s,\vartheta)) = \frac{\partial_s\mu\times\partial_\vartheta\mu}{|\partial_s\mu\times\partial_\vartheta\mu|} =
    \begin{pmatrix}
      -\psi'(s)\cos\vartheta \\ -\psi'(s)\sin\vartheta \\ \varphi'(s)
    \end{pmatrix}.
  \end{align}
  We differentiate $n(\mu(s,\vartheta))$ with respect to $\vartheta$ and use \eqref{Pf_SRE:Tau} to get
  \begin{align*}
    \frac{\partial}{\partial \vartheta}\bigl(n(\mu(s,\vartheta))\bigr) =
    \begin{pmatrix}
      \psi'(s)\sin\vartheta \\ -\psi'(s)\cos\vartheta \\ 0
    \end{pmatrix}
    = -\lambda(y)w(y), \quad \lambda(y) := \frac{\psi'(s)}{\varphi(s)}.
  \end{align*}
  Moreover, by \eqref{E:ConDer_Surf} with $y=\mu(s,\vartheta)\in\Gamma$, $-\nabla_\Gamma n=W=W^T$ on $\Gamma$, and \eqref{Pf_SRE:Tau},
  \begin{align*}
    \frac{\partial}{\partial \vartheta}\bigl(n(\mu(s,\vartheta))\bigr) = [\nabla_\Gamma n(y)]^T\partial_\vartheta\mu = -W(y)w(y).
  \end{align*}
  Hence $W(y)w(y)=\lambda(y)w(y)$.
  We also have $e_3\times n(y)=-\lambda(y)w(y)$ by \eqref{Pf_SRE:Tau} and \eqref{Pf_SRE:Nor}.
  Therefore, \eqref{E:SR_Eigen} is valid when $\varphi(s)>0$ (note that we assume $a=e_3$).

  Now suppose that $\varphi(s)=0$ for $s=0,L$.
  Then $\psi'(s)=0$ by our assumption and thus the tangent plane of $\Gamma$ at $y=(0,0,\psi(s))$ is orthogonal to the $x_3$-axis.
  Hence $n(y)=\pm e_3$ and $e_3\times n(y)=0$.
  Moreover, $w(y)=0$ by \eqref{Pf_SRE:Tau} and $\varphi(s)=0$.
  By these facts we conclude that the equalities \eqref{E:SR_Eigen} are valid for any $\lambda(y)\in\mathbb{R}$.
\end{proof}

Let $\mathcal{K}(\Gamma)$ be the space of Killing vector fields on $\Gamma$ given by \eqref{E:Def_Kil}.
We show that $\mathcal{K}(\Gamma)=\mathcal{R}|_\Gamma=\{w|_\Gamma\mid w\in\mathcal{R}\}$ if $\Gamma$ is axially symmetric.

\begin{lemma} \label{L:IR_Kil}
  Suppose that $\Gamma$ is of class $C^5$ and $\mathcal{R}\neq\{0\}$.
  Then $\mathcal{K}(\Gamma)=\mathcal{R}|_\Gamma$.
\end{lemma}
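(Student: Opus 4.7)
By Remark~\ref{R:Killing} we already have $\mathcal{R}|_\Gamma\subset\mathcal{K}(\Gamma)$, so it suffices to establish the reverse inclusion. Since $\mathcal{R}\neq\{0\}$, Lemma~\ref{L:IR_Surf} shows $\Gamma$ is axially symmetric; after a rigid motion of $\mathbb{R}^3$ (which preserves both $\mathcal{R}$ and $\mathcal{K}(\Gamma)$) I would place the symmetry axis along the $x_3$-axis and parametrize $\Gamma$ exactly as in \eqref{Pf_SRE:SoR} via $\mu(s,\vartheta)$, with arc-length profile $(\varphi(s),0,\psi(s))$, $(\varphi')^2+(\psi')^2=1$, and induced metric $ds^2+\varphi(s)^2\,d\vartheta^2$.

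Given $v\in\mathcal{K}(\Gamma)$, on the open set where $\varphi>0$ I would write $v=\xi\,\partial_s\mu+\eta\,\partial_\vartheta\mu$ and translate $D_\Gamma(v)=0$ into the standard Killing equations for this warped-product metric, namely
\begin{align*}
  \partial_s\xi=0,\qquad \varphi\,\partial_\vartheta\eta+\varphi'\,\xi=0,\qquad \partial_\vartheta\xi+\varphi^2\,\partial_s\eta=0.
\end{align*}
The first gives $\xi=\xi(\vartheta)$; integrating the second yields $\eta(s,\vartheta)=A(s)-(\varphi'/\varphi)(s)\,\Xi(\vartheta)$ with $\Xi'=\xi$. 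Substituting into the third and separating the variables $s$ and $\vartheta$ forces two constants $C_0,c_1\in\mathbb{R}$: $\varphi^2 A'\equiv-C_0$, the profile identity $\varphi\varphi''-(\varphi')^2\equiv c_1$ (active whenever $\Xi\not\equiv 0$), and $\xi'(\vartheta)=C_0+c_1\Xi(\vartheta)$, which differentiates to $\xi''=c_1\xi$.

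The $2\pi$-periodicity of $\xi$ forces $c_1=-k^2$ for some integer $k\geq 0$. If $k\geq 1$, I would examine $\varphi\varphi''-(\varphi')^2=-k^2$ at a pole $s_*$ where $\varphi(s_*)=0$: the $C^5$-regularity of $\Gamma$ together with the arc-length constraint forces the expansion $\varphi(s)=(s-s_*)+O((s-s_*)^3)$, so the left-hand side tends to $-1$ at $s_*$, pinning down $k=1$. Solving $\varphi\varphi''-(\varphi')^2=-1$ via $u=\ln\varphi$ (which reduces it after one quadrature to $(\varphi')^2=C\varphi^2+1$) together with $(\varphi')^2+(\psi')^2=1$ forces $\varphi(s)=\sin(s-s_0)$, so $\Gamma$ is a round sphere; on a round sphere $\mathcal{K}(\Gamma)=\mathcal{R}|_\Gamma$ is classical, both being the three-dimensional algebra of rotations about axes through the center. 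In the remaining case $k=0$ (or when the profile constraint is inapplicable because $\Xi\equiv 0$), periodicity of $\eta$ combined with $\varphi\not\equiv\text{const}$ (which follows from closedness of $\Gamma$, since a cylinder is not closed in $\mathbb{R}^3$) forces $\xi\equiv 0$ and $\eta\equiv\text{const}$, so $v$ is a scalar multiple of $\partial_\vartheta\mu$, i.e.\ the restriction of the rotation $e_3\times x\in\mathcal{R}$.

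The principal obstacle will be the sphere subcase: I must verify with care that the ODE analysis, combined with the pole asymptotics dictated by the $C^5$-regularity of $\Gamma$, genuinely pins down $c_1=-1$ (rather than some $-k^2$ with $k\geq 2$), and then translate the resulting Killing field with $\xi(\vartheta)=\alpha\cos\vartheta+\beta\sin\vartheta$ back into the restriction of an explicit element $a\times x+b\in\mathcal{R}$, with $a$ a horizontal vector and $b$ determined by the sphere's center. The rest is routine manipulation of Killing's equations on the warped-product metric together with careful use of the closedness of $\Gamma$ both to exclude the cylinder and to handle the transition to the poles.
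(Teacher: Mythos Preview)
Your approach follows the paper's closely: both reduce to the surface-of-revolution parametrization \eqref{Pf_SRE:SoR} and derive the same Killing equations \eqref{Pf_IRK:Kil_Eq} (your $\xi,\eta$ are the paper's $X^s,X^\vartheta$). The divergence is in the endgame. From the mixed-partials compatibility you and the paper both obtain that, whenever $\xi\not\equiv 0$, the quantity $\varphi\varphi''-(\varphi')^2$ is a constant $c_1$. At this point the paper differentiates once more to get $\varphi\varphi'''-\varphi'\varphi''=0$, recognizes this as $\partial_sK=0$ via the Gaussian curvature formula \eqref{Pf_IRK:Gau}, and invokes Liebmann's theorem to force $\Gamma$ to be a sphere; having assumed $\Gamma$ is not a sphere, this is a contradiction, so $\xi\equiv0$ and the Killing field is a multiple of $e_3\times y$. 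You instead attempt to solve the profile ODE explicitly, using pole asymptotics to pin down $c_1=-1$.

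There is a genuine gap in your pole argument: a closed surface of revolution need not have poles. If the generating curve is a closed loop disjoint from the axis (the torus case), then $\varphi>0$ everywhere and there is no $s_*$ with $\varphi(s_*)=0$, so your step ``examine $\varphi\varphi''-(\varphi')^2=-k^2$ at a pole'' cannot be carried out. The paper's route via constant Gaussian curvature plus Liebmann handles both topologies at once. Your argument can be repaired in the torus case by observing directly that $\varphi\varphi''-(\varphi')^2\equiv c_1$ has no nonconstant positive \emph{periodic} solution (set $u=\ln\varphi$, integrate once to get $(\varphi')^2=|c_1|+C\varphi^2$, and note that the extremum condition $\varphi'=0$ then forces $\varphi$ to take a single value at both its maximum and minimum), which again forces $\xi\equiv0$; but as written your plan overlooks this case.
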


\begin{proof}
  For $w(x)=a\times x+b$ with $a=(a_1,a_2,a_3)\in\mathbb{R}^3$ and $b\in\mathbb{R}^3$ we have
  \begin{align*}
    \nabla_\Gamma w = P\nabla w = PA \quad\text{on}\quad \Gamma, \quad A :=
    \begin{pmatrix}
      0 & a_3 & -a_2 \\
      -a_3 & 0 & a_1 \\
      a_2 & -a_1 & 0
    \end{pmatrix}
    = -A^T.
  \end{align*}
  Hence $D_\Gamma(w)=P\{\nabla_\Gamma w+(\nabla_\Gamma w)^T\}P/2=0$ on $\Gamma$ by $P^T=P^2=P$.
  In particular, $w\in\mathcal{K}(\Gamma)$ if it is tangential on $\Gamma$, which shows $\mathcal{R}|_\Gamma\subset\mathcal{K}(\Gamma)$.

  Suppose that $\Gamma$ is a sphere in $\mathbb{R}^3$.
  By a translation we may assume that $\Gamma$ is centered at the origin.
  Then $\mathcal{R}|_\Gamma=\{w(y)=a\times y,\,y\in\Gamma\mid a\in\mathbb{R}^3\}$ and thus it is a three-dimensional subspace of $\mathcal{K}(\Gamma)$.
  Since the dimension of $\mathcal{K}(\Gamma)$ is at most three (see e.g.~\cite[Theorem~35]{Pe06}), we have $\mathcal{K}(\Gamma)=\mathcal{R}|_\Gamma$.

  Next suppose that $\Gamma$ is not a sphere.
  Since $\mathcal{R}\neq\{0\}$, it is axially symmetric by Lemma~\ref{L:IR_Surf} and we may assume as in the proof of Lemma~\ref{L:SR_Eigen} that $\Gamma$ is of the form \eqref{Pf_SRE:SoR} with $C^5$ functions $\varphi$ and $\psi$ satisfying \eqref{Pf_SRE:Arc} and $\varphi(s)>0$ for $s\neq0,L$.
  Then the Gaussian curvature of $\Gamma$ is given by (see e.g.~\cite[Section~5.7]{ON06})
  \begin{align} \label{Pf_IRK:Gau}
    K(\mu(s,\vartheta)) = -\frac{\varphi''(s)}{\varphi(s)}, \quad s\in(0,L),\,\vartheta\in[0,2\pi].
  \end{align}
  We use this formula later.
  Differentiating both sides of \eqref{Pf_SRE:Arc} we also have
  \begin{align} \label{Pf_IRK:D_Arc}
    \varphi'(s)\varphi''(s)+\psi'(s)\psi''(s) = 0, \quad s\in(0,L).
  \end{align}
  Let $X\in\mathcal{K}(\Gamma)$ be of the form
  \begin{align*}
    X(\mu(s,\vartheta)) = X^s(s,\vartheta)\partial_s\mu(s,\vartheta)+X^\vartheta(s,\vartheta)\partial_\vartheta\mu(s,\vartheta), \quad s\in[0,L],\,\vartheta\in[0,2\pi].
  \end{align*}
  Note that $X\in C^2(\Gamma,T\Gamma)$ by the $C^5$-regularity of $\Gamma$ and Lemma~\ref{L:Kil_Reg}.
  By $D_\Gamma(X)=0$ on $\Gamma$ we have $(Y\cdot\nabla_\Gamma)X\cdot Z+Y\cdot(Z\cdot\nabla_\Gamma)X=0$ on $\Gamma$ for all $Y,Z\in C(\Gamma,T\Gamma)$.
  We substitute $\partial_s\mu$ and $\partial_\vartheta\mu$ for $Y$ and $Z$ and then use \eqref{Pf_SRE:Arc}, \eqref{Pf_SRE:Tau}, \eqref{Pf_IRK:D_Arc},
  \begin{gather*}
    (\partial_s\mu\cdot\nabla_\Gamma)X = \frac{\partial(X\circ\mu)}{\partial s}= (\partial_sX^s)\partial_s\mu+X^s\partial_s^2\mu+(\partial_sX^\vartheta)\partial_\vartheta\mu+X^\vartheta\partial_s\partial_\vartheta\mu, \\
    \partial_s^2\mu =
    \begin{pmatrix}
      \varphi''(s)\cos\vartheta \\ \varphi''(s)\sin\vartheta \\ \psi''(s)
    \end{pmatrix}, \quad
    \partial_s\partial_\vartheta\mu =
    \begin{pmatrix}
      -\varphi'(s)\sin\vartheta \\ \varphi'(s)\cos\vartheta \\ 0
    \end{pmatrix}, \quad
    \partial_\vartheta^2\mu =
    \begin{pmatrix}
      -\varphi(s)\cos\vartheta \\ -\varphi(s)\sin\vartheta \\ 0
    \end{pmatrix},
  \end{gather*}
  and a similar equality for $(\partial_\vartheta\mu\cdot\nabla_\Gamma)X$ to get
  \begin{align} \label{Pf_IRK:Kil_Eq}
    \partial_sX^s = 0, \quad \partial_\vartheta X^s+\varphi^2\partial_sX^\vartheta = 0, \quad \varphi^2\partial_\vartheta X^\vartheta+\varphi\varphi'X^s = 0.
  \end{align}
  If $X^s\equiv0$ then $X^\vartheta\equiv c$ is constant by the second and third equations of \eqref{Pf_IRK:Kil_Eq} (note that $\varphi>0$ on $(0,L)$ and $X$ is of class $C^2$).
  In this case,
  \begin{align*}
    X(y) = c\partial_\vartheta\mu(s,\vartheta) = c(e_3\times y)\in\mathcal{R}|_\Gamma, \quad y=\mu(s,\vartheta)\in\Gamma,\,e_3=(0,0,1)
  \end{align*}
  by \eqref{Pf_SRE:Tau}.
  Let us show that each $X\in\mathcal{K}(\Gamma)$ is of this form (here the arguments are essentially the same as in~\cite[Section~74]{Ei49}).
  Assume to the contrary that $X^s\not\equiv0$.
  By the first equation of \eqref{Pf_IRK:Kil_Eq}, $X^s=X^s(\vartheta)$ is independent of $s$.
  Since $X^s$ continuous and $X^s\not\equiv0$, it does not vanish on some open interval $I\subset[0,2\pi]$.
  By the second and third equations of \eqref{Pf_IRK:Kil_Eq},
  \begin{align*}
    \partial_sX^\vartheta(s,\vartheta) = -\frac{\partial_\vartheta X^s(\vartheta)}{\{\varphi(s)\}^2}, \quad \partial_\vartheta X^\vartheta(s,\vartheta) = -\frac{\varphi'(s)X^s(\vartheta)}{\varphi(s)}
  \end{align*}
  for $s\in(0,L)$ and $\vartheta\in[0,2\pi]$ (note that $\varphi(s)>0$ for $s\neq0,L$).
  Since $\partial_\vartheta\partial_s X^\vartheta=\partial_s\partial_\vartheta X^\vartheta$ by the $C^2$-regularity of $X$, we deduce from the above equations that
  \begin{align*}
    \frac{\partial_\vartheta^2 X^s(\vartheta)}{X^s(\vartheta)} = \varphi(s)\varphi''(s)-\{\varphi'(s)\}^2, \quad s\in(0,L),\,\vartheta\in I.
  \end{align*}
  Noting that the left-hand side is independent of $s$ and the function $\varphi$ is of class $C^5$, we differentiate both sides of this equality with respect to $s$ to get
  \begin{align*}
    \varphi(s)\varphi'''(s)-\varphi'(s)\varphi''(s) = 0, \quad s\in(0,L).
  \end{align*}
  Now we observe by this equality and \eqref{Pf_IRK:Gau} that
  \begin{align*}
    \frac{\partial}{\partial s}\bigl(K(\mu(s,\vartheta))\bigr) = -\frac{\varphi(s)\varphi'''(s)-\varphi'(s)\varphi''(s)}{\{\varphi(s)\}^2} = 0, \quad s\in(0,L),\,\vartheta\in[0,2\pi].
  \end{align*}
  By this fact and the continuity of $K$ and $\mu$ on $\Gamma$ and $[0,L]\times[0,2\pi]$, the Gaussian curvature $K$ is constant on the whole closed surface $\Gamma$.
  Thus Liebmann's theorem (see e.g.~\cite[Section~6.3, Theorem~3.7]{ON06}) implies that $\Gamma$ is a sphere, which contradicts with our assumption that $\Gamma$ is not a sphere.
  Hence $\mathcal{K}(\Gamma)$ contains only vector fields of the form $w(y)=c(e_3\times y)$ with $c\in\mathbb{R}$.
  This shows $\mathcal{K}(\Gamma)\subset\mathcal{R}|_\Gamma$ and, since $\mathcal{R}|_\Gamma$ is a subspace of $\mathcal{K}(\Gamma)$, we conclude that $\mathcal{K}(\Gamma)=\mathcal{R}|_\Gamma$.
  \end{proof}

Now we assume again that $\Gamma$ is of class $C^2$ and take $g_0,g_1\in C^1(\Gamma)$ satisfying \eqref{E:Width_Bound}.
Let $\mathcal{R}_0$, $\mathcal{R}_1$, and $\mathcal{R}_g$ be the subspaces of $\mathcal{R}$ given by \eqref{E:Def_Rg} and $\Omega_\varepsilon$ the curved thin domain of the form \eqref{E:Def_CTD} with boundary $\Gamma_\varepsilon$.
As in Section~\ref{SS:Pre_Dom} we scale $g_i$ to assume $|g_i|<\delta$ on $\Gamma$ for $i=0,1$, where $\delta$ is the radius of the tubular neighborhood $N$ of $\Gamma$ given in Section~\ref{SS:Pre_Surf}, and thus $\overline{\Omega}_\varepsilon\subset N$ for all $\varepsilon\in(0,1)$.

\begin{lemma} \label{L:IR_CTD}
  For an infinitesimal rigid displacement $w(x)=a\times x+b$ of $\mathbb{R}^3$ with $a,b\in\mathbb{R}^3$ the following conditions are equivalent:
  \begin{itemize}
    \item[(a)] For all $\varepsilon\in(0,1)$ the restriction of $w$ on $\Gamma_\varepsilon$ satisfies $w|_{\Gamma_\varepsilon}\cdot n_\varepsilon=0$ on $\Gamma_\varepsilon$.
    \item[(b)] There exists a sequence $\{\varepsilon_k\}_{k=1}^\infty$ of positive numbers such that
    \begin{align*}
      \lim_{k\to\infty}\varepsilon_k = 0, \quad w|_{\Gamma_{\varepsilon_k}}\cdot n_{\varepsilon_k} = 0 \quad\text{on}\quad \Gamma_{\varepsilon_k} \quad\text{for all}\quad k\in\mathbb{N}.
    \end{align*}
    \item[(c)] The vector field $w$ belongs to $\mathcal{R}_0\cap\mathcal{R}_1$.
  \end{itemize}
\end{lemma}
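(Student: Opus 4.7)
The plan is to reduce each of the three conditions on $w(x)=a\times x+b$ to an identity on $\Gamma$ by using the explicit formula for $n_\varepsilon$ from Lemma~\ref{L:Nor_Bo} and the affine structure of $w$. Since $(a)\Rightarrow(b)$ is immediate, the real content is $(b)\Rightarrow(c)$ and $(c)\Rightarrow(a)$.

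First I would write, for $i=0,1$ and $x=y+\varepsilon g_i(y)n(y)\in\Gamma_\varepsilon^i$, the expansion $w(x)=w(y)+\varepsilon g_i(y)(a\times n(y))$, and use Lemma~\ref{L:Nor_Bo} together with \eqref{E:Def_NB} to expand $n_\varepsilon(x)=n_\varepsilon^i(y)$. After multiplying by the harmless denominator $(-1)^{i+1}\sqrt{1+\varepsilon^2|\tau_\varepsilon^i|^2}$ and using $(a\times n)\cdot n=0$, the condition $w(x)\cdot n_\varepsilon(x)=0$ on $\Gamma_\varepsilon^i$ becomes the pointwise identity on $\Gamma$
\begin{align} \label{E:IRCTD_Key}
  w(y)\cdot n(y) = \varepsilon\Bigl\{w(y)\cdot\tau_\varepsilon^i(y)+\varepsilon g_i(y)\bigl(a\times n(y)\bigr)\cdot\tau_\varepsilon^i(y)\Bigr\},
\end{align}
where $\tau_\varepsilon^i$ is the tangential vector field defined by \eqref{E:Def_NB_Aux}.

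For $(b)\Rightarrow(c)$: the identity \eqref{E:IRCTD_Key} holds with $\varepsilon=\varepsilon_k$ for every $k$. The left-hand side is independent of $\varepsilon$, and the bracket on the right is uniformly bounded in $\varepsilon$ by the first inequality of \eqref{E:Tau_Bound}. Letting $k\to\infty$ therefore forces $w(y)\cdot n(y)=0$ on $\Gamma$, i.e.\ $w\in\mathcal{R}$. Substituting this back into \eqref{E:IRCTD_Key}, dividing by $\varepsilon_k$, and sending $k\to\infty$ using the first inequality of \eqref{E:Tau_Diff} (which gives $\tau_\varepsilon^i\to\nabla_\Gamma g_i$ uniformly on $\Gamma$) yields $w|_\Gamma\cdot\nabla_\Gamma g_i=0$ on $\Gamma$ for each $i=0,1$, i.e.\ $w\in\mathcal{R}_0\cap\mathcal{R}_1$.

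For $(c)\Rightarrow(a)$: the case $w\equiv0$ is trivial, so assume $w\not\equiv0$. Then by Lemma~\ref{L:SR_Eigen} there is a scalar function $\lambda$ on $\Gamma$ with $W w=\lambda w$ and $a\times n=-\lambda w$ at every $y\in\Gamma$. Since $W$ is symmetric, the resolvent $(I_3-\varepsilon g_i W)^{-1}$ is symmetric as well, so the eigenvalue relation gives
\begin{equation*}
  w\cdot\tau_\varepsilon^i = w\cdot\{I_3-\varepsilon g_iW\}^{-1}\nabla_\Gamma g_i = \bigl(1-\varepsilon g_i\lambda\bigr)^{-1}w\cdot\nabla_\Gamma g_i = 0
\end{equation*}
at points where $w\ne 0$ (and trivially where $w=0$, since then also $a\times n=0$); here the invertibility of $I_3-\varepsilon g_iW$ is guaranteed by Lemma~\ref{L:Wein}. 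Using $a\times n=-\lambda w$ once more gives $(a\times n)\cdot\tau_\varepsilon^i=-\lambda\,w\cdot\tau_\varepsilon^i=0$, so both terms on the right of \eqref{E:IRCTD_Key} vanish, and the identity holds for every $\varepsilon\in(0,1)$ and $y\in\Gamma$. The main subtlety, and the only non-routine step, is this use of Lemma~\ref{L:SR_Eigen} to convert the weaker scalar condition $w\cdot\nabla_\Gamma g_i=0$ into the seemingly stronger $w\cdot\tau_\varepsilon^i=0$; without the eigenvector structure of $w$ under $W$, the two are not equivalent.
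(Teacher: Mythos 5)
Your proposal is correct and follows essentially the same route as the paper's proof: the reduction to the identity on $\Gamma$ via Lemma~\ref{L:Nor_Bo} is the paper's \eqref{Pf_IRC:Reduce} rearranged, the limit arguments for (b)$\Rightarrow$(c) use \eqref{E:Tau_Bound} and \eqref{E:Tau_Diff} exactly as in the paper, and (c)$\Rightarrow$(a) rests on the same use of Lemma~\ref{L:SR_Eigen} together with the symmetry of $W$. The only cosmetic difference is that you keep the eigenvalue $\lambda$ abstract and infer $1-\varepsilon g_i\lambda\neq0$ from the invertibility in Lemma~\ref{L:Wein}, whereas the paper identifies $\lambda$ with a principal curvature and invokes \eqref{E:Curv_Bound}; both are valid.
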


\begin{proof}
  For $\varepsilon\in(0,1)$ and $i=0,1$ let $\tau_\varepsilon^i$ be given by \eqref{E:Def_NB_Aux}.
  Then
  \begin{align*}
    n_\varepsilon(y+\varepsilon g_i(y)n(y)) = (-1)^{i+1}\frac{n(y)-\varepsilon\tau_\varepsilon^i(y)}{\sqrt{1+\varepsilon^2|\tau_\varepsilon^i(y)|^2}}, \quad y+\varepsilon g_i(y)n(y) \in \Gamma_\varepsilon^i
  \end{align*}
  with $y\in\Gamma$ by Lemma~\ref{L:Nor_Bo}.
  Moreover, for $w(x)=a\times x+b$ we have
  \begin{align*}
    w(y+\varepsilon g_i(y)n(y)) = w(y)+\varepsilon g_i(y)\{a\times n(y)\}, \quad \{a\times n(y)\}\cdot n(y) = 0, \quad y\in\Gamma.
  \end{align*}
  Hence the condition $w|_{\Gamma_\varepsilon^i}\cdot n_\varepsilon=0$ on $\Gamma_\varepsilon^i$ is equivalent to
  \begin{align} \label{Pf_IRC:Reduce}
    w|_\Gamma\cdot n-\varepsilon w|_\Gamma\cdot\tau_\varepsilon^i-\varepsilon^2g_i(a\times n)\cdot \tau_\varepsilon^i = 0 \quad\text{on}\quad\Gamma.
  \end{align}
  Now let us prove the lemma.
  The condition (a) clearly implies (b).
  Next we show that (b) yields (c).
  Suppose that (b) is satisfied.
  Then by \eqref{Pf_IRC:Reduce} we have
  \begin{align*}
    w|_\Gamma\cdot n-\varepsilon_kw|_\Gamma\cdot\tau_{\varepsilon_k}^i-\varepsilon_k^2g_i(a\times n)\cdot \tau_{\varepsilon_k}^i = 0 \quad\text{on}\quad\Gamma
  \end{align*}
  for $k\in\mathbb{N}$ and $i=0,1$.
  Letting $k\to\infty$ in this equality we obtain $w|_\Gamma\cdot n=0$ on $\Gamma$ by \eqref{E:Tau_Bound}.
  Hence $w\in\mathcal{R}$.
  Moreover, from the above equalities we deduce that
  \begin{align*}
    w|_\Gamma\cdot\tau_{\varepsilon_k}^i+\varepsilon_kg_i(a\times n)\cdot\tau_{\varepsilon_k}^i = 0 \quad\text{on}\quad \Gamma.
  \end{align*}
  Since $\{\tau_{\varepsilon_k}^i\}_{k=1}^\infty$ converges to $\nabla_\Gamma g_i$ uniformly on $\Gamma$ by \eqref{E:Tau_Diff}, we send $k\to\infty$ in this equality to get $w|_\Gamma\cdot\nabla_\Gamma g_i=0$ on $\Gamma$ for $i=0,1$.
  Thus $w\in\mathcal{R}_0\cap\mathcal{R}_1$, i.e. (c) is valid.

  Let us show that (c) implies (a).
  If $w\equiv0$ then (a) is trivial.
  Suppose that $w\not\equiv0$ belongs to $\mathcal{R}_0\cap\mathcal{R}_1$.
  Let $\varepsilon\in(0,1)$ and $i=0,1$.
  Since the condition $w|_{\Gamma_\varepsilon^i}\cdot n_\varepsilon=0$ on $\Gamma_\varepsilon^i$ is equivalent to \eqref{Pf_IRC:Reduce} and $w\in\mathcal{R}_0\cap\mathcal{R}_1\subset\mathcal{R}$ satisfies $w|_\Gamma\cdot n=0$ on $\Gamma$, it is sufficient for (a) to show that
  \begin{align} \label{Pf_IRC:Goal}
    w(y)\cdot\tau_\varepsilon^i(y) = 0, \quad \{a\times n(y)\}\cdot\tau_\varepsilon^i(y) = 0 \quad\text{for all}\quad y\in\Gamma.
  \end{align}
  Hereafter we fix and suppress the argument $y$.
  If $w=0$, then $a\times n=0$ by \eqref{E:SR_Eigen} and the equalities \eqref{Pf_IRC:Goal} are valid.
  Suppose $w\neq 0$.
  Then $w$ is the eigenvector of $W$ corresponding to the eigenvalue $\lambda$ by \eqref{E:SR_Eigen}.
  Since $W$ has the eigenvalues $\kappa_1$, $\kappa_2$, and zero with $Wn=0$ and $w\neq n$ by $w\cdot n=0$, we have $\lambda=\kappa_1$ or $\lambda=\kappa_2$.
  Without loss of generality, we may assume $\lambda=\kappa_1$, i.e. $Ww=\kappa_1w$.
  Then since
  \begin{align*}
    (I_3-\varepsilon g_iW)w = (1-\varepsilon g_i\kappa_1)w, \quad 1-\varepsilon g_i\kappa_1 > 0,
  \end{align*}
  and $I_3-\varepsilon g_iW$ is invertible by $|g_i|<\delta$ on $\Gamma$, \eqref{E:Curv_Bound}, and Lemma~\ref{L:Wein},
  \begin{align} \label{Pf_IRC:WRe_W}
    (I_3-\varepsilon g_iW)^{-1}w = (1-\varepsilon g_i\kappa_1)^{-1}w.
  \end{align}
  We use \eqref{E:Def_NB_Aux}, the symmetry of $W$, \eqref{Pf_IRC:WRe_W}, and $w\cdot\nabla_\Gamma g_i=0$ by $w\in\mathcal{R}_i$ to get
  \begin{align} \label{Pf_IRC:W_Tau}
    w\cdot\tau_\varepsilon^i = (I_3-\varepsilon g_iW)^{-1}w\cdot\nabla_\Gamma g_i = (1-\varepsilon g_i\kappa_1)^{-1}(w\cdot\nabla_\Gamma g_i) = 0.
  \end{align}
  Moreover, by \eqref{E:SR_Eigen} with $\lambda=\kappa_1$ and \eqref{Pf_IRC:WRe_W},
  \begin{align*}
    (I_3-\varepsilon g_iW)^{-1}(a\times n) = -\kappa_1(I_3-\varepsilon g_iW)^{-1}w = -\kappa_1(1-\varepsilon g_i\kappa_1)^{-1}w.
  \end{align*}
  Using this equality we can show $(a\times n)\cdot\tau_\varepsilon^i=0$ as in \eqref{Pf_IRC:W_Tau}.
  Thus we get \eqref{Pf_IRC:Goal} and conclude that $w|_{\Gamma_\varepsilon^i}\cdot n_\varepsilon=0$ on $\Gamma_\varepsilon^i$ for all $\varepsilon\in(0,1)$ and $i=0,1$, i.e. (a) holds.
\end{proof}

By Lemmas~\ref{L:IR_Surf} and~\ref{L:IR_CTD} we observe that the nontriviality of $\mathcal{R}_0\cap\mathcal{R}_1$ implies the uniform axial symmetry of $\Omega_\varepsilon$.

\begin{lemma} \label{L:CTD_AS}
  If there exists $w(x)=a\times x+b\in\mathcal{R}_0\cap\mathcal{R}_1$ such that $w\not\equiv0$, then $a\neq 0$, $a\cdot b=0$, and $\Omega_\varepsilon$ is axially symmetric around the axis parallel to $a$ and passing through $b_a=|a|^{-2}(a\times b)$ for all $\varepsilon\in(0,1)$.
\end{lemma}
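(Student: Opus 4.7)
The plan is to combine Lemmas~\ref{L:IR_Surf} and~\ref{L:IR_CTD} with the explicit description of the flow of $w$ on $\mathbb{R}^3$ obtained in the proof of Lemma~\ref{L:IR_Surf}. The statements $a\neq 0$ and $a\cdot b=0$, as well as axial symmetry of the limit surface $\Gamma$ itself, are immediate: since $\mathcal{R}_0\cap\mathcal{R}_1\subset\mathcal{R}$ and $w\not\equiv 0$, Lemma~\ref{L:IR_Surf} applies directly and yields $a\neq 0$, $a\cdot b=0$, and that $\Gamma$ is invariant under the one-parameter family of rotations around the axis $\ell$ parallel to $a$ and passing through $b_a = |a|^{-2}(a\times b)$.

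Next I would invoke the implication $(c)\Rightarrow(a)$ of Lemma~\ref{L:IR_CTD}: since $w\in\mathcal{R}_0\cap\mathcal{R}_1$, the restriction of $w$ to $\Gamma_\varepsilon$ is tangential for \emph{every} $\varepsilon\in(0,1)$, i.e.\ $w|_{\Gamma_\varepsilon}\cdot n_\varepsilon=0$ on $\Gamma_\varepsilon$. Then I would reuse the flow-map computation already carried out in the proof of Lemma~\ref{L:IR_Surf} (equations \eqref{Pf_IRS:Flow}--\eqref{Pf_IRS:Rot}), which shows that the flow $x(\cdot,t)$ on all of $\mathbb{R}^3$ of the ODE $\dot x = w(x) = a\times x + b$ is precisely
\[
x(X,t) = b_a + P_a R_a(t) P_a^T(X-b_a), \qquad X\in\mathbb{R}^3,\ t\in\mathbb{R},
\]
that is, the rotation of $\mathbb{R}^3$ around $\ell$ by angle $|a|t$. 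This formula depends only on $a$ and $b$, not on $\varepsilon$, so the same one-parameter group of rotations acts on $\mathbb{R}^3$ regardless of $\varepsilon$.

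Finally, the tangency condition $w|_{\Gamma_\varepsilon}\cdot n_\varepsilon=0$ means that $\Gamma_\varepsilon=\partial\Omega_\varepsilon$ is an invariant set of this flow; combined with the facts that $x(\cdot,t)$ is a diffeomorphism of $\mathbb{R}^3$ equal to the identity at $t=0$ and that $\Omega_\varepsilon$ is a connected open set with $\partial\Omega_\varepsilon=\Gamma_\varepsilon$, a standard continuity argument gives $x(\Omega_\varepsilon,t)=\Omega_\varepsilon$ for every $t\in\mathbb{R}$. Hence $\Omega_\varepsilon$ is axially symmetric around $\ell$, which is exactly the conclusion of the lemma.

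The main obstacle in this approach is largely bookkeeping; once Lemma~\ref{L:IR_CTD} hands over tangency on $\Gamma_\varepsilon$ for every $\varepsilon\in(0,1)$, the explicit rotational form of the flow (already derived) makes axial symmetry of $\Omega_\varepsilon$ automatic, and the axis of symmetry is the \emph{same} $\ell$ for all $\varepsilon$. The only genuinely nontrivial point is arguing that invariance of $\partial\Omega_\varepsilon$ implies invariance of $\Omega_\varepsilon$; but since $\{x(\cdot,t)\}_{t\in\mathbb{R}}$ is a continuous family of homeomorphisms of $\mathbb{R}^3$ with $x(\cdot,0)=\mathrm{id}$ and the connected open set $\Omega_\varepsilon$ cannot be swept across its own boundary, this is routine.
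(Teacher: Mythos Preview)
Your proposal is correct and follows essentially the same route as the paper: use Lemma~\ref{L:IR_CTD} $(c)\Rightarrow(a)$ to get tangency of $w$ on $\Gamma_\varepsilon$, then deduce axial symmetry via the rotational flow of $w$. The only difference is that the paper applies Lemma~\ref{L:IR_Surf} directly to each closed surface $\Gamma_\varepsilon^0$ and $\Gamma_\varepsilon^1$ (the lemma holds for any closed surface on which $w$ is tangential, not just $\Gamma$), concluding their axial symmetry around the same axis in one stroke and then inferring that of $\Omega_\varepsilon$; you instead reopen the flow computation from the proof of Lemma~\ref{L:IR_Surf} and add a topological step passing from invariance of $\partial\Omega_\varepsilon$ to invariance of $\Omega_\varepsilon$. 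Both are valid; the paper's version is a bit shorter since it treats Lemma~\ref{L:IR_Surf} as a black box applicable to $\Gamma_\varepsilon^i$ rather than redoing its flow argument.
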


\begin{proof}
  Let $w(x)=a\times x+b\in\mathcal{R}_0\cap\mathcal{R}_1$.
  Then $w|_{\Gamma_\varepsilon^i}\cdot n_\varepsilon=0$ on $\Gamma_\varepsilon^i$ for each $\varepsilon\in(0,1)$ and $i=0,1$ by Lemma~\ref{L:IR_CTD}.
  Hence if $w\not\equiv0$ then Lemma~\ref{L:IR_Surf} implies that $a\neq0$, $a\cdot b=0$, and both $\Gamma_\varepsilon^0$ and $\Gamma_\varepsilon^1$ are axially symmetric around the axis parallel to $a$ and passing through $b_a$, which yields the same axial symmetry of $\Omega_\varepsilon$.
\end{proof}

Next we see that the triviality of $\mathcal{R}_g$ yields the axial asymmetry of $\Omega_\varepsilon$.

\begin{lemma} \label{L:CTD_Rg}
  If $\mathcal{R}_g=\{0\}$, then there exists $\tilde{\varepsilon}\in(0,1)$ such that $\Omega_\varepsilon$ is not axially symmetric around any axis for all $\varepsilon\in(0,\tilde{\varepsilon})$.
\end{lemma}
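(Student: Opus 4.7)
The plan is to argue by contradiction. Suppose that there exists a sequence $\{\varepsilon_k\}_{k=1}^\infty$ of positive numbers with $\lim_{k\to\infty}\varepsilon_k=0$ such that $\Omega_{\varepsilon_k}$ is axially symmetric around some axis for every $k\in\mathbb{N}$. For each $k$, the generator of the associated one-parameter group of rotations has the form $w_k(x)=a_k\times x+b_k$ with $a_k\neq 0$, $a_k\cdot b_k=0$ (after translating the base point of the axis along it), and satisfies $w_k|_{\Gamma_{\varepsilon_k}^i}\cdot n_{\varepsilon_k}=0$ on $\Gamma_{\varepsilon_k}^i$ for $i=0,1$. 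Normalizing we may assume $|a_k|=1$.

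First I would translate the boundary tangency into identities on $\Gamma$. Exactly as in the derivation of \eqref{Pf_IRC:Reduce}, the condition $w_k|_{\Gamma_{\varepsilon_k}^i}\cdot n_{\varepsilon_k}=0$ is equivalent to
\begin{align*}
    w_k(y)\cdot n(y)-\varepsilon_k w_k(y)\cdot\tau_{\varepsilon_k}^i(y)-\varepsilon_k^2 g_i(y)\{a_k\times n(y)\}\cdot\tau_{\varepsilon_k}^i(y)=0,\quad y\in\Gamma,
\end{align*}
for $i=0,1$. Summing these two identities and dividing their difference by $\varepsilon_k$ yields
\begin{align*}
    2w_k\cdot n &=\varepsilon_k w_k\cdot(\tau_{\varepsilon_k}^0+\tau_{\varepsilon_k}^1)+O(\varepsilon_k^2),\\
    w_k\cdot(\tau_{\varepsilon_k}^1-\tau_{\varepsilon_k}^0) &=-\varepsilon_k\bigl\{g_1(a_k\times n)\cdot\tau_{\varepsilon_k}^1-g_0(a_k\times n)\cdot\tau_{\varepsilon_k}^0\bigr\}
\end{align*}
on $\Gamma$. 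Using \eqref{E:Tau_Bound} and the uniform convergence $\tau_{\varepsilon_k}^i\to\nabla_\Gamma g_i$ from \eqref{E:Tau_Diff}, the formal $k\to\infty$ limits of these two identities read $w|_\Gamma\cdot n=0$ and $w|_\Gamma\cdot\nabla_\Gamma g=0$, which would place the limit $w(x)=a\times x+b$ in $\mathcal{R}_g$. Combined with $w\not\equiv 0$ (which follows from $|a|=1$), this would contradict $\mathcal{R}_g=\{0\}$.

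The main obstacle is to extract a convergent subsequence of $\{w_k\}_{k=1}^\infty$, namely to establish the boundedness of $\{b_k\}_{k=1}^\infty$ (the boundedness of $\{a_k\}$ holds by construction). I would do this by a second contradiction: if $|b_k|\to\infty$ along a subsequence, set $\tilde{w}_k:=w_k/|b_k|$, $\tilde{a}_k:=a_k/|b_k|\to 0$, $\tilde{b}_k:=b_k/|b_k|$. Dividing the boundary identity above by $|b_k|$ and passing to a limit $\tilde{b}_k\to\tilde{b}$ with $|\tilde{b}|=1$, the uniform boundedness of $\tau_{\varepsilon_k}^i$ forces $\tilde{b}\cdot n=0$ on $\Gamma$. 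Then the smooth function $y\mapsto\tilde{b}\cdot y$ on the compact surface $\Gamma$ has tangential gradient $\nabla_\Gamma(\tilde{b}\cdot y)=P\tilde{b}=\tilde{b}\neq 0$ everywhere, contradicting the existence of a critical point at any extremum of this continuous function on the compact set $\Gamma$. Hence $\{b_k\}_{k=1}^\infty$ is bounded.

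Extracting a convergent subsequence $a_k\to a$, $b_k\to b$ with $|a|=1$ and $a\cdot b=0$, so that $w(x):=a\times x+b\not\equiv 0$, and passing to the limit in the sum and difference identities above as indicated, we obtain $w\in\mathcal{R}_g\setminus\{0\}$, which contradicts $\mathcal{R}_g=\{0\}$. Therefore the assumption was false and the lemma follows.
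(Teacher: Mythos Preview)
Your argument is correct and follows the same contrapositive strategy as the paper: assume a sequence $\varepsilon_k\to 0$ with $\Omega_{\varepsilon_k}$ axially symmetric, translate the tangency condition $w_k\cdot n_{\varepsilon_k}=0$ on $\Gamma_{\varepsilon_k}^i$ into the identity on $\Gamma$ (exactly \eqref{Pf_IRC:Reduce}), pass to the limit to get $w|_\Gamma\cdot n=0$, and take the difference of the $i=0,1$ identities divided by $\varepsilon_k$ to obtain $w|_\Gamma\cdot\nabla_\Gamma g=0$, yielding $w\in\mathcal{R}_g\setminus\{0\}$.

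The one genuine difference is how you establish the boundedness of $\{b_k\}$. The paper uses a purely geometric observation: since all $\Omega_{\varepsilon_k}$ lie in a fixed ball $B_R$, the axis of symmetry $l_k$ must intersect $B_R$ (otherwise the image of $B_R$ under rotation by $\pi$ about $l_k$ would be disjoint from $B_R$, contradicting invariance of $\Omega_{\varepsilon_k}$), so one may simply choose $b_k\in l_k\cap B_R$. Your analytic alternative---normalizing by $|b_k|$ to produce a nonzero constant vector $\tilde b$ with $\tilde b\cdot n=0$ on $\Gamma$, then noting that $\nabla_\Gamma(\tilde b\cdot y)=P\tilde b=\tilde b\neq 0$ contradicts the existence of an extremum of the continuous function $y\mapsto \tilde b\cdot y$ on the compact surface---is equally valid and is a nice self-contained argument that avoids any picture. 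The paper's version is slightly shorter; yours has the advantage of staying entirely within the analytic framework already set up.
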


\begin{proof}
  We prove the contrapositive statement: if there exists a sequence $\{\varepsilon_k\}_{k=1}^\infty$ convergent to zero such that $\Omega_{\varepsilon_k}$ is (and thus $\Gamma_{\varepsilon_k}^0$ and $\Gamma_{\varepsilon_k}^1$ are) axially symmetric around some axis $l_k$ for each $k\in\mathbb{N}$, then $\mathcal{R}_g\neq\{0\}$.

  Suppose that such a sequence $\{\varepsilon_k\}_{k=1}^\infty$ exists and that for each $k\in\mathbb{N}$ the axis $l_k$ is of the form $l_k=\{sa_k+b_k\mid s\in\mathbb{R}\}$ with $a_k,b_k\in\mathbb{R}^3$, $a_k\neq0$, i.e. $l_k$ is parallel to $a_k$ and passing through $b_k$.
  Replacing $a_k$ with $|a_k|^{-1}a_k$ we may assume $a_k\in S^2$ for all $k\in\mathbb{N}$ without changing $l_k$ (here $S^2$ is the unit sphere in $\mathbb{R}^3$).
  Since $\Omega_\varepsilon$ is contained in the bounded set $N$ for all $\varepsilon\in(0,1)$, there exists an open ball $B_R$ centered at the origin of radius $R>0$ such that $\Omega_{\varepsilon_k}\subset B_R$ for all $k\in\mathbb{N}$.
  Then, by the axial symmetry of $\Omega_{\varepsilon_k}$ around the axis $l_k$, the intersection $l_k\cap B_R$ is not empty: otherwise the ball generated by the rotation of $B_R$ through the angle $\pi$ around $l_k$ does not intersect with $B_R$ and thus $\Omega_{\varepsilon_k}\subset B_R$ is not axially symmetric around $l_k$.
  Hence we may assume $b_k\in l_k\cap B_R$ for all $k\in\mathbb{N}$ by replacing $b_k$ with $b_k-sa_k$ for an appropriate $s\in\mathbb{R}$.
  Now $\{a_k\}_{k=1}^\infty$ and $\{b_k\}_{k=1}^\infty$ are bounded in $\mathbb{R}^3$ and thus converge (up to a subsequence) to some $a\in S^2$ and $b\in\mathbb{R}^3$, respectively.

  Let us prove $w(x):=a\times(x-b)\in\mathcal{R}_g$.
  For $k\in\mathbb{N}$ and $i=0,1$ let $\tau_{\varepsilon_k}^i$ be the vector field on $\Gamma$ given by \eqref{E:Def_NB_Aux} and $w_k(x):=a_k\times(x-b_k)$, $x\in\mathbb{R}^3$.
  Then
  \begin{align} \label{Pf_CRg:Lim_Wk}
    \lim_{k\to\infty}\tau_{\varepsilon_k}^i(y) = \nabla_\Gamma g_i(y), \quad \lim_{k\to\infty}w_k(y) = w(y) \quad\text{for all}\quad y\in\Gamma
  \end{align}
  by \eqref{E:Tau_Diff}, $\lim_{k\to\infty}a_k=a$, and $\lim_{k\to\infty}b_k=b$.
  For each $k\in\mathbb{N}$ and $i=0,1$, since $\Gamma_{\varepsilon_k}^i$ is axially symmetric around the axis $l_k$, Lemma~\ref{L:IR_Surf} implies that $w_k|_{\Gamma_{\varepsilon_k}^i}\cdot n_{\varepsilon_k}=0$ on $\Gamma_{\varepsilon_k}^i$.
  By the proof of Lemma~\ref{L:IR_CTD} (see \eqref{Pf_IRC:Reduce}) this condition is equivalent to
  \begin{align} \label{Pf_CRg:Reduce}
    w_k|_\Gamma\cdot n-\varepsilon_k w_k|_\Gamma\cdot\tau_{\varepsilon_k}^i-\varepsilon_k^2g_i(a_k\times n)\cdot\tau_{\varepsilon_k}^i = 0 \quad\text{on}\quad \Gamma.
  \end{align}
  Letting $k\to\infty$ in \eqref{Pf_CRg:Reduce} we get $w|_\Gamma\cdot n=0$ on $\Gamma$ by \eqref{Pf_CRg:Lim_Wk} and $\lim_{k\to\infty}a_k=a$.
  Thus $w\in\mathcal{R}$.
  Next we subtract \eqref{Pf_CRg:Reduce} for $i=1$ from that for $i=0$ and divide the resulting equality by $\varepsilon_k$.
  Then since $w_k|_\Gamma\cdot n$ does not depend on $i$ we have
  \begin{align*}
    w_k|_\Gamma\cdot(\tau_{\varepsilon_k}^1-\tau_{\varepsilon_k}^0)+\varepsilon_k(a_k\times n)\cdot(g_1\tau_{\varepsilon_k}^1-g_0\tau_{\varepsilon_k}^0) = 0 \quad\text{on}\quad \Gamma.
  \end{align*}
  We send $k\to\infty$ in this equality and use \eqref{Pf_CRg:Lim_Wk} and $\lim_{k\to\infty}a_k=a$ to obtain
  \begin{align*}
    w|_\Gamma\cdot(\nabla_\Gamma g_1-\nabla_\Gamma g_0) = w|_\Gamma\cdot\nabla_\Gamma g = 0 \quad\text{on}\quad \Gamma.
  \end{align*}
  This shows $w\in\mathcal{R}_g$.
  Since $w\not\equiv0$ by $a\in S^2$, we conclude that $\mathcal{R}_g\neq\{0\}$.
\end{proof}

Finally, we show that the restriction on $\Omega_\varepsilon$ of a vector field in $\mathcal{R}_0\cap\mathcal{R}_1$ belongs to the solenoidal space $L_\sigma^2(\Omega_\varepsilon)=\{u\in L^2(\Omega_\varepsilon)^3\mid\text{$\mathrm{div}\,u=0$ in $\Omega_\varepsilon$, $u\cdot n_\varepsilon=0$ on $\Gamma_\varepsilon$}\}$.

\begin{lemma} \label{L:IR_Sole}
  The inclusion $\mathcal{R}_0\cap\mathcal{R}_1\subset L_\sigma^2(\Omega_\varepsilon)$ holds for each $\varepsilon\in(0,1)$.
\end{lemma}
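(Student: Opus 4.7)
The plan is to verify each of the three defining conditions of $L_\sigma^2(\Omega_\varepsilon)$ directly for an arbitrary $w\in\mathcal{R}_0\cap\mathcal{R}_1$, written in the explicit form $w(x)=a\times x+b$ with $a,b\in\mathbb{R}^3$. Since the containment is trivial when $w\equiv 0$, I would assume $w\not\equiv 0$; the work then splits into (i) $L^2$-integrability, (ii) the divergence-free condition, and (iii) the impermeable boundary condition on $\Gamma_\varepsilon$.

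First, $w$ is an affine map on $\mathbb{R}^3$, hence smooth on $\overline{\Omega}_\varepsilon$, and $\Omega_\varepsilon$ is bounded (it is contained in the tubular neighborhood $N$), so $w|_{\Omega_\varepsilon}\in L^2(\Omega_\varepsilon)^3$. Next, a direct calculation gives $\nabla w(x)=A$ where $A$ is the antisymmetric $3\times 3$ matrix associated to $a$, so $\mathrm{div}\,w=\mathrm{tr}[A]=0$ identically in $\mathbb{R}^3$ and in particular in $\Omega_\varepsilon$. Finally, for the impermeable boundary condition $w\cdot n_\varepsilon=0$ on $\Gamma_\varepsilon$, I would simply invoke the equivalence (a)$\Leftrightarrow$(c) of Lemma~\ref{L:IR_CTD}: since $w\in\mathcal{R}_0\cap\mathcal{R}_1$ (condition (c) of that lemma), the restriction $w|_{\Gamma_\varepsilon}$ satisfies $w\cdot n_\varepsilon=0$ on $\Gamma_\varepsilon$ for all $\varepsilon\in(0,1)$ (condition (a)).

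Combining these three observations yields $w|_{\Omega_\varepsilon}\in L_\sigma^2(\Omega_\varepsilon)$, which proves the inclusion. There is no real obstacle here: all the geometric content is already encoded in Lemma~\ref{L:IR_CTD}, which reduces the boundary condition on each $\Gamma_\varepsilon$ to the two tangency conditions $w|_\Gamma\cdot\nabla_\Gamma g_i=0$ built into the definition of $\mathcal{R}_i$, $i=0,1$. The algebraic facts $\mathrm{div}\,w=0$ and the smoothness of $w$ are both immediate from the affine form $w(x)=a\times x+b$.
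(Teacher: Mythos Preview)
Your proof is correct and follows essentially the same approach as the paper: verify $\mathrm{div}\,w=0$ by direct calculation on the affine form $w(x)=a\times x+b$, and obtain the boundary condition $w\cdot n_\varepsilon=0$ on $\Gamma_\varepsilon$ by invoking Lemma~\ref{L:IR_CTD}. The paper's proof is just a terser version of what you wrote.
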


\begin{proof}
  Let $w(x)=a\times x+b\in\mathcal{R}_0\cap\mathcal{R}_1$.
  Then $\mathrm{div}\,w=0$ in $\mathbb{R}^3$ by direct calculations.
  Moreover, $w|_{\Gamma_\varepsilon}\cdot n_\varepsilon=0$ on $\Gamma_\varepsilon$ by Lemma~\ref{L:IR_CTD}.
  Hence $w\in L_\sigma^2(\Omega_\varepsilon)$.
\end{proof}

\end{appendices}

%%% Acknowledgments %%%
\section*{Acknowledgments}
The author would like to thank Professor Yoshikazu Giga for his valuable comments on this work.
He is also grateful to Mr. Yuuki Shimizu for fruitful discussions about Killing vector fields on a surface.

The work of the author was supported by Grant-in-Aid for JSPS Fellows No. 16J02664 and the Program for Leading Graduate Schools, MEXT, Japan.

%%% References %%%
\begin{bibdiv}
\begin{biblist}

\bibselect{Bib_NS_arxiv}

\end{biblist}
\end{bibdiv}

\end{document}